\numberwithin{equation}{section}
\numberwithin{figure}{section}
\DeclareFontFamily{U}{mathb}{\hyphenchar\font45}
\DeclareFontShape{U}{mathb}{m}{n}{
      <5> <6> <7> <8> <9> <10> gen * mathb
      <10.95> mathb10 <12> <14.4> <17.28> <20.74> <24.88> mathb12
      }{}
\DeclareSymbolFont{mathb}{U}{mathb}{m}{n}
\DeclareMathSymbol{\precneq}{3}{mathb}{"AC}
\tikzset{
  font={\fontsize{10pt}{12}\selectfont}}
  \tikzset{>=latex}
\DeclareMathAlphabet{\pazocal}{OMS}{zplm}{m}{n}
\newcommand{\leqnomode}{\tagsleft@true\let\veqno\@@leqno}
\newcommand{\reqnomode}{\tagsleft@false\let\veqno\@@eqno}
\newtheorem {theorem}    {Theorem}[section]
\theoremstyle{definition}
\newtheorem {lemma}      [theorem]    {Lemma}
\theoremstyle{definition}
\newtheorem{definition}[theorem]{Definition}
\theoremstyle{definition}
\newtheorem{remark}[theorem]{Remark}
\newcommand{\defeq}{\vcentcolon=}
\newcommand{\gen}[1]{\langle #1 \rangle}
\newcommand\numberthis{\addtocounter{equation}{1}\tag{\theequation}}
\def\TM{{\rm TM}}
\def\a{\alpha}                
\def\b{\beta}
\def\eps{\varepsilon}
\def\a{\alpha}
\def\b{\beta}
\def\N{\mathbb{N}}     
\def\lab{{\text{Lab}}}
\renewcommand{\int}{\mathrm{int}}
\def\b{\beta}
\def\vertexradius{.1}
\def\vertex(#1){\fill (#1) circle (\vertexradius)}
\begin{document}

\title{\bf Quasi-isometric Higman Embeddings and the Dehn Function}
\maketitle
\begin{center}

Francis Wagner

\end{center}

\bigskip

\begin{center}

\textbf{Abstract}

\end{center}

\begin{addmargin}[2em]{2em}

This is the first of a sequence of papers devoted to studying the link between the complexity of the Word Problem for a finitely generated recursively presented group $G$ and the isoperimetric functions of the finitely presented groups in which $G$ embeds.  We prove here that if a finitely generated group has a presentation $\pazocal{P}$ whose relators can be enumerated by a computational model satisfying certain technical requirements, then the group embeds quasi-isometrically into a finitely presented group whose Dehn function is bounded above by a function of the model's computational complexity and the Dehn function of $\pazocal{P}$.  This generalizes a previous result of the author pertaining to the embeddings of free Burnside groups and gives a recipe for establishing such Higman embeddings into groups with desired geometric properties.  As an example of the use of this embedding scheme, we find a substantial improvement to the seminal result of Birget, Ol'shanskii, Rips, and Sapir showing that the Word Problem of a finitely generated group is in class NP if and only if the group embeds into a finitely presented group with polynomial Dehn function.  


\end{addmargin}

\bigskip


\section{Introduction}

An $S$-machine is a computational model resembling a non-deterministic multi-tape Turing machine invented for the purpose of constructing finitely presented groups with desirable geometric properties.  See \cite{SBR} and \cite{CW} for the interpretation as a computational model and \cite{BORS}, \cite{OS00}, \cite{OS01}, \cite{OSconj}, \cite{OS06}, \cite{OS19}, \cite{OS22}, \cite{OSnon-amen}, and \cite{W} for applications of this theory.

The goal of this paper is to refine the techniques developed for this tool to study the Dehn functions of the finitely presented groups into which a given recursively presented group embeds.  In particular, we achieve the following refinement of the Higman embedding theorem \cite{Higman}:

\begin{theorem} \label{theorem-higman}

Let $X$ be a finite generating set for a group $G$.  Suppose there exists a multi-tape, non-deterministic Turing machine $\pazocal{T}$ which enumerates the set of words in $X\cup X^{-1}$ which represent the identity in $G$.  Letting $\TM_\pazocal{T}$ denote the time function of $\pazocal{T}$, suppose there exists a superadditive function $f:\N\to[0,\infty)$ {\frenchspacing (i.e. $f(n+m)\geq f(n)+f(m)$ for all $n,m\in\N$)} such that $\TM_\pazocal{T}\preceq f$.  Then for any $\eps>0$, there exists a quasi-isometric embedding of $G$ into a finitely presented group $H_\eps$ such that $\delta_{H_\eps}(n)\preceq n^4+f(n)^{2+\eps}$.

\end{theorem}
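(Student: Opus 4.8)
The plan is to route everything through $S$-machines, in two stages. First I would prove the general statement promised in the abstract: if a presentation $\langle X\mid\pazocal{R}\rangle$ has its relator set $\pazocal{R}$ enumerated by a ``tame'' $S$-machine $\pazocal{S}$ (symmetric, with a unique accept configuration and controlled configuration growth), then $\langle X\mid\pazocal{R}\rangle$ embeds quasi-isometrically into a finitely presented group $H$ whose Dehn function is bounded by an explicit function of the time function $\TM_{\pazocal{S}}$ and of $\delta_{\langle X\mid\pazocal{R}\rangle}$. To derive Theorem~\ref{theorem-higman} I would apply this with $\pazocal{R}$ equal to the set $\pazocal{W}$ of all words over $X\cup X^{-1}$ trivial in $G$: then $\langle X\mid\pazocal{W}\rangle$ presents $G$ itself and $\delta_{\langle X\mid\pazocal{W}\rangle}(n)\preceq n$, since a null-homotopic word of length $\leq n$ is, after free reduction, one of the relators; so that term gets absorbed into the $n^4$ below. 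It remains to produce a tame $S$-machine $\pazocal{S}$ with language $\pazocal{W}$: this is the standard simulation of a multitape nondeterministic Turing machine by an $S$-machine, arranged so that $\pazocal{S}$ accepts $w$ iff $\pazocal{T}$ eventually outputs $w$ (which happens within $\TM_{\pazocal{T}}(|w|)$ steps), and so that the time overhead is at worst linear -- any mild polynomial loss being absorbed by the $\eps$. Hence $\TM_{\pazocal{S}}\preceq n+\TM_{\pazocal{T}}\preceq f$, and the $S$-machine complexity feeding the general theorem is governed by $f$.

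For the general theorem I would use the Sapir--Birget--Rips / Ol'shanskii--Sapir machinery. The group $H$ is built in layers: the $S$-machine group of $\pazocal{S}$ (generators: the tape letters, the state letters, and a copy of $X$; relators: the transition rules of $\pazocal{S}$), a family of \emph{hub} relators forcing the accept configuration to be trivial, and auxiliary relators -- commutation relations together with a short tower of HNN extensions and amalgams -- gluing in $G$ so that a word $w$ over $X$ is trivial in $H$ precisely when $\pazocal{S}$ accepts the input encoding $w$, i.e. precisely when $w\in\pazocal{W}$. Injectivity of $G\hookrightarrow H$ comes from the normal-form/no-collapse analysis of this construction. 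The subtler claim is that the embedding is quasi-isometric, i.e. $|g|_X\asymp|g|_H$ for all $g\in G$: one inequality is immediate, and for the other I would analyze van Kampen diagrams over $H$ whose boundary label lies in $X\cup X^{-1}$, showing via the band/trapezium/hub structure that such a diagram can be rebuilt over $\langle X\mid\pazocal{W}\rangle$ without increasing the boundary length, so that the $G$-layer is undistorted. This is exactly the place where a generic Higman embedding fails to be quasi-isometric, and where the construction must be tuned.

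The main obstacle is the Dehn-function upper bound. For a reduced van Kampen diagram $\Delta$ over $H$ with $|\partial\Delta|\leq n$, I would cut along maximal $\theta$-bands and $q$-bands, gather the $S$-machine part into trapezia, and bound each trapezium: it encodes a computation of $\pazocal{S}$ of length $t$ on configurations whose width is at most the space used, and both are $\preceq\TM_{\pazocal{S}}(n)\preceq f(n)$, so a single trapezium has area $\preceq f(n)^2$. When $\Delta$ breaks into sub-computations on inputs of lengths $n_1,\dots,n_k$ with $\sum n_i\preceq n$, superadditivity of $f$ (hence of $f^2$) lets these costs add to $\preceq f(n)^2$, the $+\eps$ absorbing the polynomial corrections from the tame-$S$-machine conversion and the diagram surgery; so the $S$-machine contribution is $\preceq n^4+f(n)^{2+\eps}$. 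The hub-and-$G$ part costs $\preceq\delta_{\langle X\mid\pazocal{W}\rangle}(n)$ times a polynomial, hence is polynomial in $n$; and the residual geometric part (free reductions, $a$-bands crossing $q$-bands in nested combs) contributes the $n^4$ baseline standard for such groups. Summing gives $\delta_{H_\eps}(n)\preceq n^4+f(n)^{2+\eps}$. Alongside this estimate, the second genuinely hard point is to carry it out while \emph{simultaneously} keeping $G$ undistorted, which tightly constrains how $G$ may be attached.
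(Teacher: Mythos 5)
Your overall strategy does match the paper's: prove a general embedding theorem for groups whose relator set is enumerated by a suitable $S$-machine, apply it to the presentation of $G$ by \emph{all} trivial words (so that the Dehn function of that presentation contributes essentially nothing), and let $\eps$ absorb the overhead of converting $\pazocal{T}$ into an $S$-machine. The first genuine gap is in that conversion. The ``standard simulation'' of a nondeterministic Turing machine by an $S$-machine (Sapir--Birget--Rips) has polynomial, not linear, time overhead, and a polynomial loss is \emph{not} absorbed by $\eps$: if $\TM_{\pazocal{S}}\preceq f^{d}$ for some $d>1$, then your own (time)$\times$(space) estimate makes a single trapezium cost on the order of $f^{2d}$, and the final bound degrades to $n^4+f(n)^{2d+\eps}$. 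What the argument actually requires --- and what the paper invokes as a separate, recent theorem of Chornomaz and the author --- is a simulation with overhead $T\mapsto T^{1+\eps/2}$, so that squaring yields exactly $f(n)^{2+\eps}$. Treating this as routine hides the ingredient that fixes the exponent $2+\eps$.

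The second gap is the assertion that the non-computational part of a diagram contributes an ``$n^4$ baseline standard for such groups'' while the computational part contributes only (time)$\times$(space)$\preceq f(n)^2$. The standard band/trapezium/hub machinery does not deliver this decoupling: it yields the Birget--Ol'shanskii--Rips--Sapir bound $n^2f(n^2)^4$, in which the hub interactions and the computations are entangled and the exponent on $f$ is $4$, not $2$. Arranging that disks and the shuttling of the input word between sectors cost only a polynomial in $n$ independent of $f$, while each computation costs only height times width, is precisely the content of the paper's new apparatus: the Move machine with its move condition (controlling the ``impeding'' $a$-trapezia where the input is transported), the mass condition on the chosen presentation, and the $G$-weight and mixture bookkeeping used in the induction on perimeter. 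Your sketch correctly names the two hard points (the area estimate and undistortedness of $G$) but supplies no mechanism for either beyond machinery that is already known to give only the weaker bound.
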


\Cref{theorem-higman} amounts to a significant refinement of the momentous result of Birget, Ol'shanskii, Rips, and Sapir in \cite{BORS}, which obtains an embedding of $G$ into a finitely presented group with Dehn function $n^2f(n^2)^4$.  

\Cref{theorem-higman} is a corollary of the following more technical result that relies on two concepts developed herein.  The first of these concepts is the theory of a `Move machine', an $S$-machine with a specific computational makeup whose associated groups possess a certain structural property.  The second is the theory of a `mass condition' satisfied by a presentation with finite generating set, a generalization of the Dehn function of the presentation.  

The definitions of these concepts are discussed further in Sections \ref{sec-intro-move} and \ref{sec-intro-mass}.  For the precise formulation of a Move machine and the `move condition' referenced in the statement, see Sections \ref{sec-move} and \ref{sec-Move-conditions}.

\begin{theorem} \label{main-theorem}

For $i=1,2$, let $\pazocal{P}_i=\gen{X\mid\pazocal{R}_i}$ be two presentations for a group $G$ with $|X|<\infty$, $\pazocal{R}_2\subseteq\pazocal{R}_1$, and $1\notin\pazocal{R}_i$.  Let $f_1$ and $f_2$ be non-decreasing functions on the nonnegative reals such that $f_1(n)\geq n$ and $f_2(n)\geq1$ for all $n\in\N$.  Suppose:

\begin{enumerate}

\item Every non-trivial word over $X\cup X^{-1}$ which represents the identity element of $G$ has length at least some sufficiently large constant $M$.

\item There exists a $(\pazocal{R}_1,f_1,C)$-Move machine satisfying the $(n^2f_2(n),G)$-move condition.

\item There exists an $S$-machine $\textbf{S}$ that recognizes $\pazocal{R}_1$ and satisfies $\TM_\textbf{S}\preceq f_1$.

\item The presentation $\pazocal{P}_2$ satisfies the $(f_1(n)^2,n^2f_2(n))$-mass condition.

\end{enumerate}

Then there exists a quasi-isometric (indeed bi-Lipschitz) embedding of $G$ into a finitely presented group $H$ such that $\delta_H(n)\preceq n^2f_2(n)$.

\end{theorem}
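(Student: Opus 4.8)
The plan is to build the target group $H$ by attaching, to the free product of $G$ (via $\pazocal{P}_1$) with a group encoding the $S$-machine $\mathbf{S}$, a hub-type relation in the spirit of the Sapir–Birget–Rips construction, but organized so that the $\pazocal{R}_1$-relators are ``verified'' by the $S$-machine while the ``bulk'' of any van Kampen diagram is controlled by the Move machine. Concretely, I would first pass from $\mathbf{S}$ to an enriched $S$-machine that not only recognizes $\pazocal{R}_1$ with time bound $f_1$ (hypothesis (3)) but also, via the $(\pazocal{R}_1,f_1,C)$-Move machine of hypothesis (2), admits the structural ``Move'' property on its associated group; this is where the Move machine formalism of Section~\ref{sec-move} is invoked. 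I would then form the finitely presented group $H$ whose generators are $X$, the tape/state letters of the enriched machine, and finitely many auxiliary letters, and whose relations are: the $S$-machine relations, the relations $\pazocal{R}_2$ (the small set of ``genuine'' relators that survive into the finite presentation), and a single hub relation forcing an accepting configuration of the machine to be trivial. The embedding $G\hookrightarrow H$ is then $x\mapsto x$; injectivity uses hypothesis (1) (that all non-trivial relators of $G$ are long, giving a Dehn-type estimate that keeps short words honest) together with the standard ``no accepting computation starts from a non-accepted word'' argument.

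The heart of the argument is the upper bound $\delta_H(n)\preceq n^2 f_2(n)$, and here the proof splits a van Kampen diagram $\Delta$ over $H$ with $|\partial\Delta|\le n$ into three regimes. First, one uses the Move machine's move condition — the $(n^2 f_2(n),G)$-move condition of hypothesis (2) — to show that any maximal subdiagram consisting of $S$-machine cells can be ``swept'' to a thin annular/band form at cost $O(n^2 f_2(n))$; this replaces the brute-force $n^2 f(n^2)^4$ estimate of \cite{BORS} and is exactly what the Move machine was designed to afford. Second, once the $S$-machine part is in normal form, one is left with a diagram over $\pazocal{P}_2$ together with a bounded number of ``trapezia'' whose boundary lengths are controlled by $f_1(n)$; the $(f_1(n)^2, n^2 f_2(n))$-mass condition on $\pazocal{P}_2$ (hypothesis (4)) is precisely the hypothesis that lets one fill such a diagram — whose relevant perimeter parameter is $f_1(n)^2$ after the $S$-machine simulation, but whose ``mass'' is only $n$ — at cost $n^2 f_2(n)$ rather than $\delta_{\pazocal{P}_2}(f_1(n)^2)$. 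Third, the residual pieces (the hub, the commutation relations between different bands, and the $O(1)$-many auxiliary cells) contribute only a polynomial correction, absorbed into the $n^2 f_2(n)$ since $f_2(n)\ge 1$ and $f_1(n)\ge n$.

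For the lower-complexity bookkeeping I would also verify the ``quasi-isometric (indeed bi-Lipschitz)'' clause: the distortion of $G$ in $H$ is linear because a word $w$ over $X$ that equals a word $u$ over the larger generating set with $|u|\le D$ can, using the $S$-machine time bound $f_1$ and the hub, be rewritten back into $X\cup X^{-1}$ with a length increase bounded by a constant — the key point being that $X$ remains a ``free part'' of the presentation except through $\pazocal{R}_2\subseteq\pazocal{R}_1$, and short relations over $X$ don't shrink distances by more than a constant thanks to hypothesis (1).

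I expect the main obstacle to be the second regime: making precise, and proving, that after normalizing the $S$-machine part the leftover diagram genuinely satisfies the hypotheses of the mass condition with the stated parameters $(f_1(n)^2, n^2 f_2(n))$. This requires a careful ``trapezium'' analysis showing that a band of length $\le f_1(n)$ of $S$-machine cells, when its two sides are read off, produces $\pazocal{P}_2$-subwords whose total length is $O(f_1(n)^2)$ but whose number of ``independent'' relator insertions needed is only $O(n)$ — i.e. that the diagram's relevant ``mass'' does not blow up with $f_1$. Getting the exponents to match (so that one lands on $n^2 f_2(n)$ and, in the application, on $n^4 + f(n)^{2+\eps}$ rather than something worse) is the delicate part; it is where the superadditivity of $f$ in Theorem~\ref{theorem-higman} and the specific shape of the mass condition are used, and it is the step I would write out in full detail.
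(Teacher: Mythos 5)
Your proposal correctly identifies the broad architecture that the statement forces (an $S$-machine group with a hub, the Move machine controlling the bulk of the diagrams, the mass condition converting relators of $G$ into fillings), but as written it contains a concrete error and omits the mechanism that actually makes the Dehn function bound work. The error: you list $\pazocal{R}_2$ among the defining relations of $H$, describing it as ``the small set of genuine relators that survive into the finite presentation.'' Neither $\pazocal{R}_1$ nor $\pazocal{R}_2$ is assumed finite, so this does not yield a finitely presented group. In the actual construction $H=G(\textbf{M})$ is presented by the finitely many $(\theta,q)$- and $(\theta,a)$-relations together with a single hub; every $w\in\pazocal{R}_1$ is then \emph{proved} trivial in $H$ (by running the accepting computations of $I(w,H_w)$ and $J(w,H_w)$, which differ exactly by $w$ in the special input sector, and collapsing both via the hub), and only afterwards may the infinite families of $a$-relations and disk relations be adjoined as redundant relations to aid the diagram analysis. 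Your proposal has this dependency reversed, and with it the reason the embedding is into a \emph{finitely presented} group.

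The second gap is in the upper bound itself. Your ``three regimes'' plan presumes one can first sweep the $S$-machine cells into a normal form and then fill what remains, with the hub cells contributing ``only a polynomial correction.'' In the actual diagrams the machine cells, the $a$-cells (relators of $G$), and the disks are inextricably interleaved: $a$-bands end on $(\theta,q)$-cells and on disks, $\theta$-bands must be transposed through $a$-cells and disks, and a disk's perimeter is a priori controlled only after a separate combinatorial argument (the clove/spoke/shaft analysis) shows most of its $t$-spokes reach $\partial\Delta$. The paper's bound is obtained not by decomposition but by a minimal-counterexample induction on a modified perimeter $p(\Delta)=|\partial\Delta|+\sigma_\lambda(\Delta^*)$ for a weighted area ($G$-weight), with the mixture invariant $\mu(\Delta)$ added to the inductive estimate precisely because the bound $\mathrm{wt}_G(\Delta)\preceq\phi(|\partial\Delta|)$ alone is not inductively stable. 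Nothing in your sketch supplies a substitute for this; in particular the assertion that the residual pieces are a polynomial correction is exactly the hard part, not a bookkeeping step. The same applies to the bi-Lipschitz clause, which in the paper requires showing that $g$-minimal diagrams contain no disks — a nontrivial rerun of the clove analysis, not a consequence of $X$ being ``a free part of the presentation.''
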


Note that the sets of relators $\pazocal{R}_i$ are not assumed to be finite, and indeed the group $G$ need not possess any finite presentation.

Further, note that \Cref{main-theorem} can be viewed as a extensive generalization of the main result of the present author in \cite{W}: 

Given a sufficiently large exponent $n$, let $B$ be the free Burnside group with exponent $n$ and finite generating set $X$.  

\begin{enumerate}

\item The graded small-cancellation arguments of \cite{Ivanov} show that the non-trivial words representing the identity in $B$ have length at least $n$.  

\item The treatment of `impeding trapezia' in \cite{W} show that the machine $\textbf{M}_3(1)$ constructed therein (restricted to the base $P_0Q_0P_1$) is an $(F(X),n,0)$-Move machine satisfying the $(n^2,B)$-move condition.

\item The machine $\textbf{M}_1$ constructed in \cite{W} is shown to enumerate the `standard' set of relators of $B$ in linear time.  

\item The `mass' defined in \cite{W} of the diagrams over the inductive presentation of $B$ constructed in \cite{Ivanov} (which has generating set $X$ and is obtained by carefully selecting which of the `standard' relators to include) is shown to be bounded in a way that implies this presentation satisfies the $(n^2,n^2)$-mass condition. 

\end{enumerate}

Thus, \Cref{main-theorem} implies the existence of a quasi-isometric embedding of $B$ into a finitely presented group with quadratic Dehn function, recovering the main theorem of \cite{W}.

It is useful to observe that condition (1) in the statement of \Cref{main-theorem} is not restrictive.  Indeed, we obtain the following result by applying the present author's initial embedding in \cite{WMal}:

\begin{theorem} \label{main-corollary}

For $i=1,2$, let $\pazocal{P}_i=\gen{X\mid\pazocal{R}_i}$ be two presentations for a group $G$ with $|X|<\infty$, $\pazocal{R}_2\subseteq\pazocal{R}_1$, and $1\notin\pazocal{R}_i$.  Let $f_1$ and $f_2$ be non-decreasing functions on the nonnegative reals such that $f_1(n)\geq n$ and $f_2(n)\geq1$ for all $n\in\N$.  Suppose:

\begin{enumerate}

\item For any finite alphabet $Y$, there exists a $(F(Y),f_1,C)$-Move machine satisfying the computational $c$-move condition for some constant $c$.

\item There exists an $S$-machine $\textbf{S}$ that recognizes $\pazocal{R}_1$ and satisfies $\TM_\textbf{S}\preceq f_1$.

\item The presentation $\pazocal{P}_2$ satisfies the $(f_1(n)^2,n^2f_2(n))$-mass condition.

\end{enumerate}

Then there exists a quasi-isometric (indeed bi-Lipschitz) embedding of $G$ into a finitely presented group $H$ such that $\delta_H(n)\preceq n^2f_2(n)$.

\end{theorem}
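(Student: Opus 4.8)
The strategy is to deduce this from \Cref{main-theorem} by pre-composing the embedding with the author's prior embedding from \cite{WMal}, which has the effect of enforcing condition (1) of \Cref{main-theorem} (every non-trivial relator being long) without disturbing the other hypotheses in an essential way. First I would invoke the construction of \cite{WMal} to replace the pair $\pazocal{P}_1,\pazocal{P}_2$ by presentations $\pazocal{P}_1',\pazocal{P}_2'$ of a group $G'$ containing $G$ quasi-isometrically (bi-Lipschitz, in fact, if the embedding of \cite{WMal} is), where $G'$ has a finite generating set $X'$, $\pazocal{R}_2'\subseteq\pazocal{R}_1'$, $1\notin\pazocal{R}_i'$, and crucially every non-trivial word over $X'\cup X'^{-1}$ representing the identity in $G'$ has length at least the required constant $M$. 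Since the embedding of \cite{WMal} is explicit and computable, the new relator sets $\pazocal{R}_i'$ are obtained from the $\pazocal{R}_i$ by a transparent combinatorial recipe, and the Dehn-type quantities controlling $\pazocal{P}_i'$ differ from those of $\pazocal{P}_i$ only by the (at worst linear, or fixed polynomial) distortion introduced by that embedding.

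The main work is then to verify that each of the four hypotheses of \Cref{main-theorem} holds for $\pazocal{P}_1',\pazocal{P}_2'$ with appropriately updated functions. Condition (1) holds by the choice of $\pazocal{P}_i'$ above. For condition (2): hypothesis (1) of \Cref{main-corollary} supplies, for the finite alphabet $Y=X'$, an $(F(X'),f_1,C)$-Move machine satisfying the \emph{computational} $c$-move condition; the point is that the computational $c$-move condition, together with the structure of the prior embedding and the $S$-machine recognizing $\pazocal{R}_1'$, upgrades to the genuine $(n^2f_2(n),G')$-move condition once one has a presentation in hand — essentially because the computational move condition is the presentation-independent version, and passing to $G'$ specializes it. This is the step I expect to be the main obstacle: one must check that the combinatorial-geometric estimates packaged in the move condition survive the change of presentation, i.e. that the `mass' bounds and trapezium estimates for $\pazocal{P}_i'$ match those for $\pazocal{P}_i$ up to the allowed slack in $f_1,f_2$. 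For condition (3): hypothesis (2) gives an $S$-machine $\textbf{S}$ recognizing $\pazocal{R}_1$ with $\TM_\textbf{S}\preceq f_1$; since $\pazocal{R}_1'$ is obtained from $\pazocal{R}_1$ by a polynomial-time (indeed essentially linear-time) transformation, composing $\textbf{S}$ with a machine implementing that transformation yields an $S$-machine recognizing $\pazocal{R}_1'$ whose time function is still $\preceq f_1$ (using $f_1(n)\geq n$ and that $f_1$ is non-decreasing, so the composition is absorbed). For condition (4): hypothesis (3) says $\pazocal{P}_2$ satisfies the $(f_1(n)^2,n^2f_2(n))$-mass condition, and since the embedding of \cite{WMal} distorts lengths and areas by at most a fixed polynomial factor, $\pazocal{P}_2'$ satisfies the $(f_1(n)^2,n^2f_2(n))$-mass condition as well (possibly after absorbing constants into the implied $\preceq$).

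Having checked (1)--(4) for $\pazocal{P}_1',\pazocal{P}_2'$, \Cref{main-theorem} yields a bi-Lipschitz embedding of $G'$ into a finitely presented group $H$ with $\delta_H(n)\preceq n^2f_2(n)$. Composing with the bi-Lipschitz embedding $G\hookrightarrow G'$ from \cite{WMal} gives a bi-Lipschitz (hence quasi-isometric) embedding of $G$ into $H$ with the asserted Dehn function bound, completing the proof. The only genuinely delicate bookkeeping is ensuring that all the auxiliary functions produced along the way remain $\preceq$ the target functions $f_1$ and $n^2f_2(n)$; this follows from the standing assumptions $f_1(n)\geq n$, $f_2(n)\geq 1$, and monotonicity, which make the relevant function classes closed under the bounded compositions and polynomial rescalings that arise.
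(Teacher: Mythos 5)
Your proposal is correct and follows essentially the same route as the paper: replace $\pazocal{P}_1,\pazocal{P}_2$ by the blown-up presentations of \cite{WMal} (which gives the bi-Lipschitz embedding $G\hookrightarrow\tilde{G}$ and forces every non-trivial relator to have length at least $M$), verify hypotheses (2)--(4) of \Cref{main-theorem} for the new presentations by adapting the $S$-machine and the mass bound, and then apply \Cref{main-theorem} and compose embeddings. The step you flag as the main obstacle --- upgrading the computational $c$-move condition to the $(n^2f_2(n),\tilde{G})$-move condition --- requires no change-of-presentation analysis: it is exactly \Cref{computational move is move} together with \Cref{rmk-computational-move-generalized}, since the computational move condition is a property of the Move machine alone and applies to any group generated by the input alphabet.
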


The `computational $c$-move condition' referenced in condition (1) of the statement of \Cref{main-corollary} is defined in \Cref{sec-move} and shown in \Cref{sec-Move-conditions} to assure that the Move machine satisfies the $(f,\Gamma)$-move condition for any appropriate choices of function $f$ and group $\Gamma=\gen{Y}$.  Hence, \Cref{theorem-higman} follows as a corollary of \Cref{main-corollary} through the main theorem of Chornomaz and the present author in \cite{CW} and the construction herein of a simple $S$-machine $\textbf{Move}_{1,X}$ which is shown to be a $(F(X),n^2,0)$-Move machine satisfying the computational $2$-move condition (see Section 14).

A subsequent paper will be dedicated to the inductive construction (using $\textbf{Move}_{1,X}$ as a base) of a $(F(X),n^{1+1/k},1)$-Move machine $\textbf{Move}_{k,X}$ satisfying the computational $c$-move condition.  Applying \Cref{main-corollary}, this construction will produce a refinement of \Cref{theorem-higman} that is almost optimal in a particular sense.

We emphasize, however, that the ambient groups arising through the construction of $\textbf{Move}_{k,X}$ will not generally be able to have quadratic Dehn function, a property that is plainly in stark contrast to the specific case of the present author's construction in \cite{W}.  As the Move machine in \cite{W} does not satisfy the computational $c$-move condition for any $c$, the prospect of facilitating a further optimal refinement is the main motivation for the definition of the move condition and the statement of \Cref{main-theorem}.

Finally, we observe that the embeddings constructed for the proofs of these statements enjoy several additional notable properties.  For example, it can be added to each statement that the embeddings are Frattini and satisfy the Congruence Extension Property.  It is thus the aim of this paper to provide a `recipe' (or as desired a `black box') for constructing a Higman embedding which satisfies several desirable algebraic and topological properties borne out of combinatorial and algorithmic properties of the initial group.

\medskip

\subsection{Background} \label{sec-intro-background} \

If $X$ is a finite generating set for a group $G$, then a presentation $\pazocal{P}=\gen{X\mid\pazocal{R}}$ for $G$ is said to be \textit{recursive} if $\pazocal{R}$ is a recursively enumerable set of words in $X\cup X^{-1}$.  In this case, $G$ is said to be a \textit{recursively presented} group.  It can be readily seen that a finitely generated subgroup of a finitely presented group is recursively presented.  The converse to this, {\frenchspacing i.e. that} any finitely generated recursively presented group can be embedded into a finitely presented group, is a deep and celebrated result of Higman \cite{Higman}.

In the years since Higman's embedding theorem, a great deal of work has gone into analyzing its consequences and limitations, with numerous distinguished results that can be expressed as refinements of this central theorem.  However, as many of these results were achieved through the general technique of associating a finitely presented group to a Turing machine \cite{Aanderaa}, many desirable geometric characteristics were unable to be realized in such a refinement.

This changed in the 1990s with Sapir's invention of the \textit{$S$-machine} \cite{SBR}.  There are many equivalent interpretations of $S$-machines (see \cite{S06}, \cite{CW}), but the two vital to this manuscript are as follows: (1) An $S$-machine can be viewed as a rewriting system resembling a multi-tape, non-deterministic Turing machine that works with group words, and (2) An $S$-machine can be viewed as a multiple HNN-extension of a free group.  With these interpretations, it was shown that $S$-machines form a `robust' computational model in that any non-deterministic Turing machine can be emulated by an $S$-machine \cite{SBR}, while these machines also provide a relational framework which is convenient for study through meticulous and well-established group-theoretic techniques.  Accordingly, $S$-machines facilitate the construction of finitely presented groups satisfying various desirable properties.

A major motivation for the invention of the $S$-machine was the study of the \textit{Dehn function} of groups.  Given a presentation $\pazocal{P}$ for a group $G$ with associated finite generating set $X$, the Dehn function $\delta_\pazocal{P}$ is the smallest isoperimetric function for the Cayley $2$-complex of $G$ with respect to this presentation.  Dehn functions are considered up to a natural equivalence $\sim$ on non-decreasing functions $f:\N\to[0,\infty)$ induced by the preorder $\preceq$ such that $f\preceq g$ if and only if there exists a constant $C\in\N$ such that for all $n\in\N$,
$$f(n)\leq Cg(Cn)+Cn+C$$
This equivalence is natural for the consideration of Dehn functions in the sense that the Dehn function of two finite presentations of the same group (or indeed two presentations of quasi-isometric groups) are equivalent.  As such, if $G$ is finitely presented then one can define the Dehn function of $G$, $\delta_G$, as the equivalence class of the Dehn function of any of its finite presentations.

The Dehn function has many applications in the study of finitely presented groups.  For example, the Dehn function quantifies a notion of curvature in these groups: A finitely presented group $G$ is hyperbolic if and only if $\delta_G\sim n$ \cite{Gromov}, while groups possessing forms of nonpositive curvature generally have quadratic Dehn function.  Further, it is well-known that the Dehn function of the fundamental group of a compact Riemannian manifold $M$ is equivalent to the smallest isoperimetric function of the universal cover $\tilde{M}$.

The Dehn function also has close a connection to a particular computational property of the group: The Word Problem for a finitely presented group is decidable if and only if its Dehn function is a computable function, and moreover the Dehn function of a group bounds the (non-deterministic) computational complexity of the Word Problem.  

A partial converse to this was established through the invention of $S$-machines \cite{BORS}: If $X$ is a finite generating set of a group $G$, then for any multi-tape, non-deterministic Turing machine $\pazocal{T}$  which enumerates the set of words over $X\cup X^{-1}$ which represent the identity, there exists a finitely presented group into which $G$ embeds whose Dehn function is polynomially bounded by the complexity of $\pazocal{T}$.  

This seminal result gives a strong link between the algorithmic concept of the Word Problem and the geometric concept of the Dehn function, producing an example of a geometric refinement of the Higman embedding theorem.  However, for groups whose Word Problem is in the NP complexity class, the nature of the `polynomial bound' of this result leaves room for potential refinement.  For example, a group for which there exists a linear time non-deterministic algorithm for deciding the Word Problem is only promised an embedding into a finitely presented group with Dehn function bounded above by $n^{10}$ begging the question:

\begin{center}

Can this bound be improved?  In particular, if there exists a linear time non-deterministic algorithm for deciding the Word Problem of a finitely generated group $G$, then does there exist an embedding of $G$ into a finitely presented group whose Dehn function is at most $n^k$ for some $k<10$?


\end{center}

A specific case of this question was addressed by the present author in \cite{W}: 

Using the graded small-cancellation theory of Ol'shanskii \cite{O}, one can find a linear time non-deterministic algorithm deciding the Word Problem of the infinite free Burnside groups $B(m,n)$ for sufficiently large exponent, so that the above question asks what functions can be realized as the Dehn function of a finitely presented group in which such a group embeds.

%

This question only becomes more compelling when one investigates other related geometric considerations: Ghys and de la Harpe showed that no hyperbolic group can contain an infinite torsion subgroup \cite{Ghys-delaHarpe}, so that the isoperimetric gap (\cite{Gromov}, \cite{Bowditch}, \cite{O91}) implies the minimal function that can be realized as such is quadratic; moreover, many generalizations of the notion of nonpositive curvature in groups produce classes of groups for which the existence of an analogue of the result of Ghys and de la Harpe is known.

However, the present author showed that the class of quadratic Dehn function groups definitively does not possess such an analogue: For sufficiently large $n$, $B(m,n)$ embeds quasi-isometrically into a finitely presented group with quadratic Dehn function.

It is the goal of this manuscript to adapt the techniques of \cite{W} to the much more general setting of \Cref{main-theorem}.  As a main application, this result will be shown to imply the refinement \Cref{theorem-higman} of the seminal result of \cite{BORS}, and will then be used in a subsequent paper alongside the emulation result of \cite{CW} to exemplify a `quasi-optimal' refinement of this result.  

Note that while the refinements of \cite{BORS} will improve the general bound on the Dehn function in which a given group may embed, there are specific environments where the techniques herein can give better bounds than the general bound promised.  For example, even the `quasi-optimal' refinement that will be established in the forthcoming paper will only guarantee that a free Burnside group of sufficiently large exponent can be embedded into a finitely presented group with `almost-quadratic' Dehn function, whereas the specific application in \cite{W} improves this to exactly quadratic.  Hence, this paper will also serve as a recipe for the construction of various interesting embeddings, particularly those that may potentially solve several of the questions left open by this sequence related to the so-called `essential Dehn function' introduced in the sequel.

\medskip


\subsection{$S$-machines} \label{sec-intro-S} \

In order to put the main results of this paper in proper perspective, some technical terminology regarding $S$-machines is necessary.  We outline here only the necessary details and in as simple of terms as possible; for full definitions, see \Cref{sec-S-machines}.

The reader may think of an $S$-machine as a multi-tape, non-deterministic Turing machine which works with words from the free group rather than those from the free monoid.  However, $S$-machines may also be interpreted as multiple HNN-extensions of the free group, and it is through this interpretation that one can realize the constructions from which this model has yielded many groundbreaking results.

To bridge this gap, we present $S$-machines as rewriting systems.

Let $Q$ be a finite set of possible `states' and $A$ be a finite `alphabet'.  Duplicate the set of states to create two sets $Q_0$ and $Q_1$ in bijection with $Q$.  

Consider a word of the form $W\equiv q_0~w~q_1$ (where henceforth $\equiv$ indicates visual equality) where $w\in F(A)$ and the letters $q_i\in Q_i$ correspond to a particular state $q$ in $Q$.  A command $\theta$ of a machine, called an \textit{$S$-rule}, operates on $W$ by looking at its contents and then, should they be `suitable', by rewriting it as the reduced word $W'=q_0'~xwy~q_1'$ where $q_i'\in Q_i$ correspond to some fixed state $q'\in Q$ and $x,y\in F(A)$ are some fixed (perhaps empty) words.

One can interpret this rewriting as taking place on a bi-infinite tape \`{a} la the operation of a Turing machine.  However, one can also realize this rewriting in a group-theoretic setting.

For this, taking the aforementioned letters as generators of a group, impose the following conjugation relations:

\begin{itemize}

\item $q_0^\theta=q_0'x$

\item $a^\theta=a$ for all letters $a$ comprising $w$

\item $q_1^\theta=yq_1'$

\end{itemize}

\begin{figure}[H]
	\centering
	\includegraphics[scale=1.25]{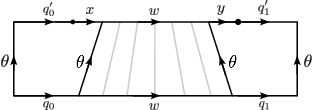}
	\caption{} 
	\label{fig-one-sector-rule}       
\end{figure} 

Then it follows that $W^\theta=W'$.  This is seen in the van Kampen diagram of \Cref{fig-one-sector-rule}, which can be interpreted as a `relational strip'.  Written along the `bottom' of this strip is the word $W$, while $W'$ is written along the `top'.  Meanwhile, each cell corresponds to one of the relations of the form above, with the shaded lines indicating the presence of relators of the second type (of which there are $\|w\|$).

Note, however, that if $v\in F(A)$ is comprised only of letters appearing in $w$, then per these relations $\theta$ also conjugates the word $V\equiv q_0~v~q_1$ to the word $V'=q_0'~xvy~q_1'$.  As such, in the interpretation of $S$-machines as computational models, a rule is in some sense `blind' to what is written on the tape.  With that said, some control can be gained by limiting what letters are allowed to comprise the word written on the tape.  This is done by restricting the relations of the second type above, producing a `domain' of the rule $\theta$.

Further, note that this relation also implies $(W')^{\theta^{-1}}=W$ in the group, suggesting the existence of an `inverse rule'.  This rule is realized by necessitating that the letters of $x$ and $y$ are in the domain of $\theta^{\pm1}$ and considering $\theta^{-1}$ as the $S$-rule with the same domain as $\theta$ that takes a word of the form $q_0'~u~q_1'$ to the reduced word $q_0~x^{-1}uy^{-1}~q_1$.

The setup we are concerned with, however, is a multi-tape one.  For this, we simply add more copies of the set of states, i.e $Q_0,Q_1,\dots,Q_N$, and consider words of the form $q_0w_1q_1w_2\dots w_Nq_N$.  In an attempt to ensure that these words progress in the `correct order' when applying a rule, we introduce multiple copies of a rule $\theta$ indexed by the tapes and replace the conjugation relations of the state letters with ones that incrementally change the copy of $\theta$.  For example, the first relation above would now appear as $\theta_0^{-1}q_0\theta_1=q_0'x$.  Note that this change still allows the application of $\theta$ to be realized as a `relational band' as in \Cref{fig-one-sector-rule}.

An $S$-machine is then defined to be a finite, symmetric set of rules which apply to a finite set of states with corresponding finite tape alphabets.  

Naturally, a `computation' of such a machine is the consecutive application of a number of its rules to an `admissible word' to which it applies.  As such, computations can be realized diagrammatically by stacking the `relational bands' corresponding to each application on top of one another, forming a diagram called a `trapezium' as seen in \Cref{fig-trapezium}.

\begin{figure}[H]
	\centering
	\includegraphics[scale=1.25]{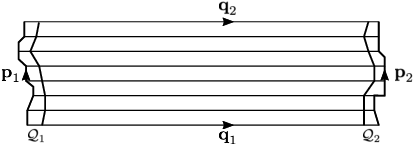}
	\caption{} 
	\label{fig-trapezium}       
\end{figure} 

Note that non-determinacy is inherent in the setup of this model: Given a rule $\theta$ and a word $W$ to which it applies, the rule $\theta^{-1}$ is applicable to the result $W'$ of applying $\theta$ to $W$, in which case the application would again be $W$.  Hence, in order for a computation to `progress' in any meaningful way, we must allow multiple rules to apply to the same word.

By fixing `start' and `end' states and an `input tape', the $S$-machine can be seen to `recognize' a language analogous to how a non-deterministic multi-tape Turing machine enumerates a language.  The `time function' of the $S$-machine is then defined in just the same way as it is for such a Turing machine.

One complication arising from this setup, however, is that from a relational viewpoint a rule may apply to a `non-standard' reduced word like $q_i^{-1}~w_i~q_i$.  To account for this, we must consider a wider range of `admissible words', including those in which the state letters do not progress in the `correct order', but for which there exists a diagrammatic analogue of a `trapezium'.

To be precise, the `base' of an admissible word $W$ is the (unreduced) concatenation of the letters $Q_i^{\pm1}$ in the order in which the state letters of $W$ appear.  As the application of a rule to an admissible word does not change the base of the word, there is a base associated to any computation, and by extension to any trapezium. 

The `standard base' of the machine is the base $Q_0\dots Q_N$, {\frenchspacing i.e. the} base that progresses in the `correct order' and contains every letter exactly once.  As the name suggests, computations in the standard base are those that are interpretable as computations of the particular multi-tape computational model first imagined.  A more general type of base that is of interest in what follows is a `full base'.  Roughly speaking, a base is full if either it starts and ends with the same letter or if it is not a proper subword of the base of another admissible word.

\subsection{Move machines} \label{sec-intro-move} \

Fix three finite sets $X$, $Y$, and $Z$ such that there exists a fixed bijection between $X$ and $Y$.  
%
%
%
%

Let $\textbf{S}$ be an $S$-machine with standard base $Q_0\dots Q_N$ such that the alphabet of the `first' tape between $Q_0$ and $Q_1$ is $X$ and that of the `last' tape between $Q_{N-1}$ and $Q_N$ is $Y\cup Z$.

Suppose $\pazocal{C}$ is a computation of $\textbf{S}$ in the standard base that begins in the start state with all tapes empty except for the first tape, on which the word $w\in F(X)$ is written.  Further, suppose $\pazocal{C}$ terminates in the end state with all tapes empty except for the last one, on which the word $v\in F(Y\cup Z)$ is written.  

Then, $\pazocal{C}$ is called a `move computation of $w$' if, disregarding the letters of $Z\cup Z^{-1}$, $v$ is freely equal to the word corresponding to $w$ with respect to the fixed bijection.  If these words are actually the same (and not just freely equal) and the number of letters from $Z\cup Z^{-1}$ comprising $v$ is at most $C\|w\|$, then $\pazocal{C}$ is called a `$C$-move computation of $w$'.

Now, fix a subset $R\subseteq F(X)$ and a function $f:\N\to\N$.  Then, $\textbf{S}$ is called a `$(R,f,C)$-Move machine' if:

\begin{itemize}

\item All rules of $\textbf{S}$ have full domain in the first tape and change this sector only on the right.

\item Given a reduced computation of $\textbf{S}$ in the standard base that begins in the start state and terminates in the end state with all tapes empty except the last tape is a move computation of some word.

\item For every $w\in R$, there exists a $C$-move computation of $w$ with length asymptotically bounded by $f(\|w\|)$.

\item Any reduced computation of $\textbf{S}$ in the standard base that begins and ends in the start state with all tapes empty except the first tape must be empty.

\item For any reduced computation of $\textbf{S}$ in the standard base that begins and ends in the end state with all tapes empty except the last tape, the corresponding words are freely equal if the letters of $Z\cup Z^{-1}$ are disregarded.

\item Given a computation $\pazocal{C}:W_0\to\dots\to W_t$ of $\textbf{S}$ in a full base, the size of the tape words of $W_i$ is bounded above by the sizes of those of $W_0$ and $W_t$.

\end{itemize}



The term `Move machine' is indicative of the key property that, up to the insertion of a bounded number of `noise' in the way of letters of $Z\cup Z^{-1}$, there are computations of the machine which `move' the words of $R$ from the first sector to the last in time $f$.

Suppose that in addition to the properties above, for any reduced computation $\pazocal{C}:W_0\to\dots\to W_t$ of $\textbf{S}$ in a full base, the number of applied rules that change tape words corresponding to the first sector of the standard base is bounded by a multiple $c$ of the sizes tape words of $W_0$ and $W_t$ corresponding to other sectors.  Then $\textbf{S}$ is said to satisfy the `computational $c$-move condition'.

Now, fix a normal subgroup $N$ of $F(X)$ and let $\Omega$ be the set of non-trivial cyclically reduced words in $N$.  Generalizing the notion diagrammatic notions of the previous section, an `$a$-trapezium over $\Omega$' is a diagram formed by allowing the presence of `$a$-cells' whose boundary labels are words in $\Omega$ between the `relational bands' of a trapezium corresponding to a computation of $\textbf{S}$.


Note that by construction a base may be assigned to an $a$-trapezium in just the same manner as it is to a trapezium.  Moreover, one can assign the `height' of the trapezium to be the number of bands that comprises it, while its `top length' and `bottom length' are the lengths of the words written on the corresponding bands.

It is useful to count the areas of the cells of an $a$-trapezium in a non-uniform way.  In particular, given a non-decreasing function $\phi:\N\to\N$, the `$\phi$-weight' of an $a$-cell $\pi$ is taken to be $\phi(|\partial\pi|)$, while the $\phi$-weight of all other types of cells are taken to be 1.  Naturally, this is extended by setting the $\phi$-weight of an $a$-trapezium to be the sum of the $\phi$-weights of its cells.

Then, given a non-decreasing function $f:\N\to\N$ with $\phi\sim f$, the Move machine $\textbf{S}$ is said to satisfy the `$(f,G)$-move condition' for $G\cong F(X)/N$ if there exists another non-decreasing function $\psi:\N\to\N$ with $\psi\sim f$ and a constant $K>0$ such that for any $a$-trapezium $\Delta$ over $\Omega$ with full base, there exists another such $a$-trapezium with the same boundary label and $\phi$-weight at most 
$$Kh(|\textbf{bot}(\Delta)|+|\textbf{top}(\Delta)|)+\psi(|\textbf{bot}(\Delta)|+|\textbf{top}(\Delta)|)$$ where $h$ is the height of $\Delta$, $|\textbf{bot}(\Delta)|$ is the bottom length of $\Delta$, and $|\textbf{top}(\Delta)|$ is the top length.

The $(f,G)$-move condition is the central property that assures the Dehn function of the group constructed herein has the desired bound.  It is shown in \Cref{sec-Move-conditions}, however, that any Move machine that satisfies a computational move condition also satisfies the $(f,G)$-move condition for the appropriate $f$ and $G$.  The computational move condition is thus useful for providing a straightforward method (though not a necessary one) for verifying the move condition.

\medskip

\subsection{Mass conditions} \label{sec-intro-mass} \

Finally, we describe here the definition of the `mass condition'.

Fix a presentation $\pazocal{P}=\gen{X\mid\pazocal{R}}$ for a group $G$ with $|X|<\infty$ and a non-decreasing function $\rho_1:\N\to[0,\infty)$.  If $\Delta$ is a van Kampen diagram over $\pazocal{P}$, then define the `$\rho_1$(n)-mass' of a cell $\pi$ of $\Delta$ to be $\rho_1(|\partial\pi|)$.  We extend this naturally by setting the `$\rho_1(n)$-mass' of $\Delta$ to be the sum of the $\rho_1(n)$-masses of its cells.

Then, given a function $\rho_2:\N\to[0,\infty)$, the presentation $\pazocal{P}$ is said to satisfy the `$(\rho_1,\rho_2)$-mass condition' if for every $C_1>0$ there exists $C_2>0$ such that for every word $w$ over $X$ that represents the identity in $G$, there exists a van Kampen diagram over $\pazocal{P}$ with boundary label $w$ whose $\rho_1(C_1n)$-mass is at most $C_2\rho_2(C_2\|w\|)$.

Note that if $\rho_1$ is the constant function $\rho_1(n)\equiv1$, then the $\rho_1(n)$-mass of a diagram is simply its area.  As such, the $(\rho_1,\rho_2)$-mass condition would in this case simply indicate that the Dehn function of the presentation is bounded above by $\rho_2$.

\medskip


\subsection{Outline of contents} \

In Section 2, we recall the definition and relevant basic properties in the theory of $S$-machines, before giving a full introduction to the concept of Move machines in Section 3.  In Sections 5 and 6, we construct the main machine $\textbf{M}$ for the purpose of proving \Cref{main-theorem}, using the inductive parameters listed in Section 4 as a guide for the makeup.  In Sections 7-11, we study the groups associated to $\textbf{M}$ using van Kampen diagrams, proving that the finitely presented group $G(\textbf{M})$ is suitable for the proof of \Cref{main-theorem}.  Note that the arguments given in these sections are analogous to those of the present author in \cite{W}.  Finally, we complete the proofs of the main theorems in Sections 12-14.

\medskip

\textbf{Acknowledgements:} The author would like to acknowledge Jingyin Huang for several useful discussions relating to the content of this paper.  The author would also like to thank Bogdan Chornomaz and Alexander Ol'shanskii for their significant contributions to many of the pieces that went into this paper.

\medskip


\section{$S$-machines} \label{sec-S-machines}

\subsection{Rewriting systems} \

In this section, we give a formal definition of the computational model of $S$-machines.  As in \Cref{sec-intro-S}, this definition is presented as a rewriting system of group words, following the conventions of \cite{BORS}, \cite{O18}, \cite{OS01}, \cite{OSconj}, \cite{OS06}, \cite{OS19}, \cite{SBR}, \cite{W}, and others.

A \textit{hardware} is a pair of finite sets $(Y,Q)$ with a fixed partition $Q=\sqcup_{i=0}^NQ_i$ and $Y=\sqcup_{i=1}^NY_i$ is called.  The subsets $Q_i$ and $Y_i$ are called the \textit{parts} of $Q$ and $Y$, respectively.  Moreover, any letter of $Q_i\sqcup Q_i^{-1}$ is called a \textit{state letter}, while any letter of $Y_i\sqcup Y_i^{-1}$ is called a \textit{tape letter} or \textit{$a$-letter}.

An \textit{admissible word} $W$ over $(Y,Q)$ is a reduced word over $(Y\cup Q)^{\pm1}$ of the form $q_0^{\eps_0}w_1q_1^{\eps_1}\dots w_sq_s^{\eps_s}$ such that for each $i\in\{1,\dots,s\}$, there exists an integer $j(i)\in\{1,\dots,N\}$ such that subword $q_{i-1}^{\eps_{i-1}}w_iq_i^{\eps_i}$ is either:

\begin{enumerate}

\item an element of $(Q_{j(i)-1}F(Y_{j(i)})Q_{j(i)})^{\pm1}$,

\item of the form $qwq^{-1}$ where $q\in Q_{j(i)-1}$ and $w\in F(Y_{j(i)})$, or

\item is of the form $q^{-1}wq$ where $q\in Q_{j(i)}$ and $w\in F(Y_{j(i)})$.

\end{enumerate}

In this case, the \textit{base} of $W$ is the (possibly unreduced) word $Q_{j(0)}^{\eps_0}Q_{j(1)}^{\eps_1}\dots Q_{j(s)}^{\eps_s}$, while the subword $q_{i-1}^{\eps_{i-1}}w_iq_i^{\eps_i}$ is called the \textit{$Q_{j(i)-1}^{\eps_{i-1}}Q_{j(i)}^{\eps_i}$-sector} of $W$.  Note that an admissible word may have many different subwords which are sectors of the same name.  

The \textit{standard base} of $(Y,Q)$ is the base $Q_0Q_1\dots Q_N$, while a \textit{configuration} is an admissible word over $(Y,Q)$ whose base is standard.  Conversely,  the base $B$ of an admissible word is said to be \textit{full} if it satisfies one of the following conditions:

\begin{itemize}

\item $B$ or $B^{-1}$ is the standard base

\item The first or last letters of $B$ are the same and no proper subword of $B$ satisfies this condition.  In this case, $B$ is said to be `faulty'.

\item The first letter of $B$ is $Q_0$, the last letter is $Q_0^{-1}$, and $B$ has no faulty subwords.

\item The first letter of $B$ is $Q_N^{-1}$, the last letter is $Q_N$, and $B$ has no faulty subwords.

\end{itemize}

Suppose $W$ is an admissible word whose base $B\equiv xvx$ is faulty. If $v$ has the form $v_1yv_2$ for some letter $y$, then the word $B'\equiv yv_2xv_1y$ is also a faulty base of an admissible word $W'$ satisfying $|W'|_a=|W|_a$. In this case, $B'$ is called a \textit{cyclic permutation} of $B$. 

The \textit{$a$-length} of the admissible word $W$ is the number $|W|_a$ of $a$-letters comprising $W$.  The \textit{$q$-length} $|W|_q$ is defined similarly.

An \textit{$S$-rule} on the hardware $(Y,Q)$ is a rewriting rule $\theta$ with the following associated information:

\begin{itemize}

\item A subset $Q(\theta)=\sqcup_{i=0}^N Q_i(\theta)$ of $Q$ such that each $Q_i(\theta)$ is a singleton $\{q_i\}$ consisting of a letter from $Q_i$.

\item A subset $Q'(\theta)=\sqcup_{i=0}^N Q_i'(\theta)$ of $Q$ such that each $Q_i'(\theta)$ is a singleton $\{q_i'\}$ consisting of a letter from $Q_i$.

\item A subset $Y(\theta)=\sqcup_{i=1}^N Y_i(\theta)$ of $Y$ such that each $Y_i(\theta)$ is a subset of $Y_i$.

\item A set of words $\{\a_{i,\theta},\omega_{i,\theta}\in F(Y_i(\theta)):i=1,\dots,N\}$

\end{itemize}

An admissible word $W$ is said to be \textit{$\theta$-admissible} if every letter comprising it is a letter of $Q(\theta)^{\pm1}$ or $Y(\theta)^{\pm1}$.  In this case, the result of applying $\theta$ to $W$ is the admissible word $W\cdot\theta$ obtained by simultaneously:

\begin{itemize}

\item Replacing any occurrence of the $q_i^{\pm1}$ with the subword $(\omega_{i,\theta}q_i'\a_{i+1,\theta})^{\pm1}$, where for completeness $\omega_{0,\theta}$ and $\a_{N,\theta}$ are empty.

\item Making any necessary free reductions so that the word is again reduced.

\item Making any necessary trims so that the word is again admissible.

\end{itemize}

It is common to represent $\theta$ in this case with the notation:
$$\theta=[q_0\to q_0'\a_{1,\theta}, \ q_1\to \omega_{1,\theta}q_1'\a_{2,\theta}, \ \dots, \ q_{N-1}\to \omega_{N-1,\theta}q_{N-1}'\a_{N,\theta}, \ q_N\to \omega_{N,\theta}q_N']$$
If $Y_i(\theta)=\emptyset$, then $\theta$ is said to \textit{lock the $Q_{i-1}Q_i$-sector}.  In this case, $\a_{i,\theta}$ and $\omega_{i,\theta}$ are both necessarily trivial, while the notation $q_{i-1}\xrightarrow{\ell}\omega_{i-1,\theta}q_i'$ is used in the representation of the rule.

Note that the notation defined above does not quite capture the full details of the rule, as it omits the \textit{domain} $Y(\theta)$.  In most cases, however, each set $Y_i(\theta)$ is either taken to be $Y_i$ or $\emptyset$, so that the use of the locking notation suffices.  As such, this is taken as an implicit assumption, with any other situation explicitly detailed.

The \textit{inverse $S$-rule} of $\theta$ is then the $S$-rule $\theta^{-1}$ on $(Y,Q)$ given by taking:

\begin{itemize}

\item $Q(\theta^{-1})=Q'(\theta)$ and $Q'(\theta^{-1})=Q(\theta)$,

\item $Y(\theta^{-1})=Y(\theta)$, and

\item $\omega_{\theta^{-1},i}=\omega_{\theta,i}^{-1}$ and $\a_{\theta^{-1},i}=\a_{\theta,i}^{-1}$.

\end{itemize}

This terminology is justified by noting that an admissible word $W$ is $\theta$-admissible if and only if $W\cdot\theta$ is $\theta^{-1}$-admissible, with $(W\cdot\theta)\cdot\theta^{-1}\equiv W$.

An \textit{$S$-machine} is then a rewriting system $\textbf{S}$ with a fixed hardware $(Y,Q)$ and a \textit{software} consisting of a finite symmetric set of $S$-rules $\Theta(\textbf{S})$ over $(Y,Q)$.  It is convenient to partition the hardware $\Theta(\textbf{S})=\Theta^+(\textbf{S})\sqcup\Theta^-(\textbf{S})$ such that $\theta\in\Theta^+(\textbf{S})$ if and only if $\theta^{-1}\in\Theta^-(\textbf{S})$; in this case, $\theta$ is called a \textit{positive rule} of $\textbf{S}$, while its inverse is called a \textit{negative rule}.

A \textit{computation} of $\textbf{S}$ is a finite sequence $\pazocal{C}:W_0\to\dots\to W_t$ of admissible words with fixed rules $\theta_1,\dots,\theta_t\in \Theta(\textbf{S})$ such that $W_{i-1}\cdot\theta_i\equiv W_i$.  In this case, the number $t$ is called the \textit{length} of $\pazocal{C}$ and the word $\theta_1\dots\theta_t$ is called the \textit{history} of $\pazocal{C}$.  The computation $\pazocal{C}$ is said to be \textit{reduced} if its history is a reduced word in $F(\Theta^+(\textbf{S}))$; note that any computation can be made reduced without altering its initial or terminal admissible words simply by deleting any pairs of consecutive mutually inverse rules.

Given an $S$-machine $\textbf{S}$ with hardware $(Y,Q)$, it is common to fix in every part $Q_i$ of $Q$ a \textit{start} and an \textit{end} state letter.  A configuration of $\textbf{S}$ is called a \textit{start} or \textit{end} configuration if all the state letters that comprise it are start or end state letters, respectively.  

Given a reduced computation $\pazocal{C}:W_0\to\dots\to W_t$ of $\textbf{S}$ with faulty base $B$ and history $H$, for any cyclic permutation $B'$ of $B$ there exists a reduced computation $\pazocal{C}':W_0'\to\dots\to W_t'$ with base $B'$ and history $H$ and so that $|W_j|_a=|W_j'|_a$ for all $0\leq j\leq t$.

The $S$-machine $\textbf{S}$ is said to be \textit{recognizing} if there are fixed sectors $Q_{i-1}Q_i$ which are deemed to be \textit{input sectors}.  A start configuration $W$ of the recognizing $S$-machine $\textbf{S}$ is called an \textit{input configuration} if all of its non-input sectors have empty tape word.   In this case, the word obtained from $W$ by deleting all of its state letters is called its \textit{input}.  In contrast, the \textit{accept configuration} is the end configuration for which every sector has empty tape word.

A configuration $W$ of the recognizing $S$-machine $\textbf{S}$ is said to be \textit{accepted} if there exists an \textit{accepting computation} which begins with the configuration $W$ and terminates with the accept configuration.  If $W$ is an input configuration, then its input is also said to be accepted.

For a configuration $W$ accepted by $\textbf{S}$, define $\TM_{\textbf{S}}(W)$ be the minimal time of a computation that accepts $W$.  The \textit{time function} of $\textbf{S}$ is then the non-decreasing function $\TM_{\textbf{S}}:\N\to\N$ given by: $$\TM_{\textbf{S}}(n)=\max\{\TM_{\textbf{S}}(W): W\text{ is an accepted input configuration of } \textbf{S}, \ |W|_a\leq n\}$$

If two recognizing $S$-machines have the same language of accepted words and $\Theta$-equivalent time functions, then they are said to be \textit{equivalent}.

The following statement simplifies how one approaches the rules of a recognizing $S$-machine:

\medskip

\begin{lemma} [Lemma 2.1 of \cite{O18}] \label{simplify rules}
 
Every recognizing $S$-machine $\textbf{S}$ is equivalent to a recognizing $S$-machine such that $|\a_{\theta,i}|_a,|\omega_{\theta,i}|_a\leq1$ for every rule $\theta$.

\end{lemma}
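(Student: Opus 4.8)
\textbf{Proof plan for \Cref{simplify rules}.}

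The plan is to reduce the $a$-length of the words $\a_{\theta,i}$ and $\omega_{\theta,i}$ to at most $1$ by introducing intermediate states, subdividing each rule into a short chain of rules that perform the insertion one letter at a time. Concretely, fix a positive rule $\theta$ of $\textbf{S}$ and a sector index $i$ for which, say, $\a_{\theta,i}=a_1a_2\cdots a_k$ with $k\geq 2$. I would replace $\theta$ by a sequence of $k$ rules $\theta^{(1)},\dots,\theta^{(k)}$: the first $k-1$ of these leave all state letters fixed except that, in every part $j$, they pass through freshly created ``scratch'' states $q_j^{(1)},\dots,q_j^{(k-1)}$ interpolating between $q_j$ (the old $Q_j(\theta)$ letter) and $q_j'$ (the old $Q_j'(\theta)$ letter), and each $\theta^{(m)}$ inserts only the single letter $a_m$ on the appropriate side of the appropriate sector while locking nothing that $\theta$ did not lock. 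The last rule $\theta^{(k)}$ arrives at the genuine target states $q_j'$. Doing this simultaneously for every sector $i$ and every positive rule $\theta$ (and taking inverses to keep the rule set symmetric) produces a new $S$-machine $\textbf{S}'$ with $|\a_{\theta',i}|_a,|\omega_{\theta',i}|_a\leq 1$ for all its rules.

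The key steps, in order, are: (1) carry out the subdivision construction above, being careful that the new scratch states are pairwise distinct across different rules $\theta$ (so that no unintended computations arise) and that the domains $Y(\theta^{(m)})$ are chosen equal to $Y(\theta)$ in every unlocked sector, so admissibility is preserved at each intermediate step; (2) check that the new rules are well-defined $S$-rules on the enlarged hardware and that $\Theta(\textbf{S}')$ is finite and symmetric --- both immediate since each $\theta$ is replaced by boundedly many rules; (3) show that every accepting computation of $\textbf{S}$ lifts to an accepting computation of $\textbf{S}'$ by replacing each application of $\theta$ by the block $\theta^{(1)}\cdots\theta^{(k)}$ (with the $k$ maximized over sectors, inserting the relevant letters and idling in sectors where $\a_{\theta,i}$ or $\omega_{\theta,i}$ is already short); (4) conversely, show every reduced accepting computation of $\textbf{S}'$ must, because the scratch states appear in no start or end configuration and each scratch state $q_j^{(m)}$ is ``entered'' only by $\theta^{(m)}$ and ``left'' only by $\theta^{(m+1)}$, decompose into such blocks and hence project to an accepting computation of $\textbf{S}$; (5) conclude that the languages of accepted inputs coincide and that $\TM_{\textbf{S}'}(n)\leq K\cdot\TM_{\textbf{S}}(n)$ and $\TM_{\textbf{S}}(n)\leq\TM_{\textbf{S}'}(n)$ for a constant $K$ bounding the maximal $a$-length among the original $\a_{\theta,i},\omega_{\theta,i}$, so the time functions are $\Theta$-equivalent and $\textbf{S}$, $\textbf{S}'$ are equivalent in the sense defined above.

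The main obstacle is step (4): the analysis of reduced computations of $\textbf{S}'$ in which a scratch state is reached but the ``block'' is not completed in the obvious way --- for instance a partial block followed by an inverse partial block, or interleavings across different parts $Q_j$ of the hardware when the insertion lengths differ from sector to sector and the intermediate rules must idle in some parts while acting in others. One must argue that reducedness of the history forbids a partial block from being undone piecemeal in a way that escapes the projection, and that the only rules applicable to a configuration containing a scratch letter $q_j^{(m)}$ are $\theta^{(m)}$ and $(\theta^{(m+1)})^{-1}$ together with their mirror images, so the computation is forced through complete blocks. Handling the bookkeeping uniformly over all parts --- padding the shorter insertions with ``identity'' sub-rules so that every block has the same length --- is what makes this rigorous without an unwieldy case analysis. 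Everything else is routine.
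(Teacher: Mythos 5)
Your subdivision argument — replacing each rule whose insertion words are long by a chain of rules through fresh intermediate states, one letter at a time, padded so all parts of the hardware march in step — is correct and is essentially the argument behind Lemma 2.1 of the cited reference \cite{O18}; the paper itself does not reprove the lemma but imports it. Your identification of the one delicate point (that a reduced accepting computation of the new machine is forced through complete blocks, because a scratch state admits only the next rule of its chain or the inverse of the previous one, and a non-backtracking walk on a path graph cannot return to its starting vertex) is exactly the right observation, and the time-function comparison you give establishes equivalence in the sense defined in the paper.
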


As a result of Lemma \ref{simplify rules} and the assumption that every $a$-letter of $\a_{\theta,i}$ and $\omega_{\theta,i}$ is in the domain of $\theta$, it may be assumed that each part of every rule of an $S$-machine is of the form $q_i\to bq_i'a$ where $\|a\|,\|b\|\leq1$ (note that the corresponding part of $\theta^{-1}$ is then interpreted as $q_i'\to b^{-1}q_ia^{-1}$).

In fact, the analogous statement of \cite{O18} implies that one can enforce the stronger condition that $\|a\|+\|b\|\leq1$.  However, it will be convenient to allow $\|a\|=\|b\|=1$ in the definitions of the rules of the $S$-machines constructed in future sections, so we use the weaker statement of \Cref{simplify rules} for consistency.

\medskip


\subsection{Some elementary properties of $S$-machines} \

This section serves to recall several features of $S$-machines which follow from the definition adopted above and will serve useful in the analysis of the machines constructed in the ensuing sections.

First, the following statement is an immediate consequence of the definition of admissible words.

\begin{lemma} \label{locked sectors}

If the rule $\theta$ locks the $Q_iQ_{i+1}$-sector, i.e it has a part $q_i\xrightarrow{\ell}aq_i'$ for some $q_i,q_i'\in Q_i$, then the base of any $\theta$-admissible word has no subword of the form $Q_iQ_i^{-1}$ or $Q_{i+1}^{-1}Q_{i+1}$.

\end{lemma}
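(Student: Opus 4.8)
The statement to prove is Lemma~\ref{locked sectors}: if a rule $\theta$ locks the $Q_iQ_{i+1}$-sector, then no $\theta$-admissible word has a base containing $Q_iQ_i^{-1}$ or $Q_{i+1}^{-1}Q_{i+1}$. The plan is to unwind the definition of admissibility and the definition of ``locking'' and observe that these two are incompatible. First I would recall that $\theta$ locking the $Q_iQ_{i+1}$-sector means $Y_{i+1}(\theta)=\emptyset$, and hence a $\theta$-admissible word $W$ may only contain $a$-letters lying in $Y(\theta)^{\pm1}=\big(\sqcup_j Y_j(\theta)\big)^{\pm1}$; in particular it contains no $a$-letter from $Y_{i+1}^{\pm1}$.

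Next I would examine what a base subword $Q_iQ_i^{-1}$ or $Q_{i+1}^{-1}Q_{i+1}$ forces. Suppose the base of $W$ has a subword $Q_iQ_i^{-1}$; by the definition of admissible words, the corresponding subword of $W$ is of the form $qwq^{-1}$ with $q\in Q_i$ and $w\in F(Y_{i+1})$ (case~(2) of the definition, with $j(i)=i+1$). Similarly, a base subword $Q_{i+1}^{-1}Q_{i+1}$ corresponds to a subword $q^{-1}wq$ with $q\in Q_{i+1}$ and $w\in F(Y_{i+1})$ (case~(3), again $j=i+1$). In either case, since $W$ is a \emph{reduced} word, the middle factor $w$ cannot be empty — otherwise $qq^{-1}$ or $q^{-1}q$ would cancel — so $w$ contains at least one letter of $Y_{i+1}^{\pm1}$.

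Finally I would combine the two observations: $W$ is $\theta$-admissible, so every $a$-letter it contains lies in $Y(\theta)^{\pm1}$, whereas the letter of $w$ found above lies in $Y_{i+1}^{\pm1}$ with $Y_{i+1}(\theta)=\emptyset$; hence that letter is not in $Y(\theta)^{\pm1}$, a contradiction. This rules out both forbidden base subwords and completes the proof. The only point requiring any care — and the ``main obstacle'', such as it is — is correctly matching the index $j(i)$ appearing in the definition of admissible words with the tape alphabet $Y_{i+1}$ that the locked sector uses, i.e.\ keeping straight that the $Q_iQ_{i+1}$-sector is governed by $Y_{i+1}$; once the bookkeeping of indices is pinned down, the argument is an immediate consequence of the definitions, as the lemma's preamble in the text already anticipates.
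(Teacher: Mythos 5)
Your proof is correct and is exactly the argument the paper has in mind: the paper gives no explicit proof, stating only that the lemma "is an immediate consequence of the definition of admissible words," and your write-up is the spelled-out version of that — a base subword $Q_iQ_i^{-1}$ or $Q_{i+1}^{-1}Q_{i+1}$ forces a nonempty reduced tape word in $F(Y_{i+1})$ (cases (2)–(3) of the definition of admissible words), which is incompatible with $Y_{i+1}(\theta)=\emptyset$. Your index bookkeeping (the $Q_iQ_{i+1}$-sector being governed by $Y_{i+1}$) is also right.
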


Through the rest of this manuscript, we will often use copies of words over disjoint alphabets. To be precise, let $A$ and $B$ be disjoint alphabets and $\varphi:A\to B$ be an injection.  Then for any word $a_1^{\eps_1}\dots a_k^{\eps_k}$ with $a_i\in A$ and $\eps_i\in\{\pm1\}$, its \textit{copy} over the alphabet $B$ formed by $\varphi$ is the word $\varphi(a_1)^{\eps_1}\dots\varphi(a_k)^{\eps_k}$. Typically, the injection defining the copy will be contextually clear.

Alternatively, a copy of an alphabet $A$ is a disjoint alphabet $A'$ which is in one-to-one correspondence with $A$. For a word over $A$, its copy over $A'$ is defined by the bijection defining the correspondence between the alphabets.


\begin{lemma} [Lemma 2.7 of \cite{O18}] \label{multiply one letter}

Let $X_i$ be a subset of $Y_i\cup Y_i^{-1}$ with $X_i\cap X_i^{-1}=\emptyset$.  Let $\Theta_i^+$ be a set of positive rules in correspondence with $X_i$ such that each rule multiplies the $Q_{i-1}Q_i$-sector by the corresponding letter on the left (respectively on the right).  Let $\pazocal{C}:W_0\to\dots\to W_t$ be a reduced computation with base $Q_{i-1}Q_i$ and history $H\in F(\Theta_i^+)$.  Denote the tape word of $W_j$ as $u_j$ for each $0\leq j\leq t$.  Then:


\begin{enumerate} [label=({\alph*})]

\item $H$ is the natural copy of the reduced form of the word $u_tu_0^{-1}$ read from right to left (respectively the word $u_0^{-1}u_t$ read left to right). In particular, if $u_0\equiv u_t$, then the computation is empty

\item $\|H\|\leq\|u_0\|+\|u_t\|$

\item if $\|u_{j-1}\|<\|u_j\|$ for some $1\leq j\leq t-1$, then $\|u_j\|<\|u_{j+1}\|$

\item $\|u_j\|\leq\max(\|u_0\|,\|u_t\|)$

\end{enumerate}

\end{lemma}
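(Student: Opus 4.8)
The plan is to reduce everything to the observation that, in this very restricted setting, each rule of $\Theta_i^+$ has exactly one part that matters (the $Q_{i-1}Q_i$-sector part), and that part simply multiplies the tape word by a single letter of $X_i$ on one side — say the right, the left case being symmetric. So if the history is $H\equiv\theta_1\cdots\theta_t$ with $\theta_j$ corresponding to the letter $x_j\in X_i$ (under the given correspondence, where a negative rule corresponds to an inverse letter), then $u_j$ is the reduced form of $u_{j-1}x_j$. First I would prove part~(a): unwinding the recursion, $u_t$ is the reduced form of $u_0 x_1 x_2\cdots x_t$, so the reduced form of $u_0^{-1}u_t$ equals the reduced form of $x_1\cdots x_t$; but the computation is reduced, meaning $H$ is reduced in $F(\Theta_i^+)$, and since the correspondence $\Theta_i^+\leftrightarrow X_i$ is a bijection of free generating sets, $x_1\cdots x_t$ is already a reduced word. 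Reading the claim as stated (right multiplication, word read left to right, or equivalently its natural copy), this gives (a); and if $u_0\equiv u_t$ then $x_1\cdots x_t$ reduces to the empty word, forcing $t=0$ since it is already reduced, so the computation is empty.

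Next I would handle parts~(c) and~(b), which are really the combinatorial heart. Part~(c): suppose $\|u_{j-1}\|<\|u_j\|$, i.e. appending $x_j$ to $u_{j-1}$ did not cancel, so $u_j\equiv u_{j-1}x_j$ and the last letter of $u_j$ is $x_j$. Now $u_{j+1}$ is the reduced form of $u_j x_{j+1}$. If this cancelled, we would need $x_{j+1}=x_j^{-1}$, i.e. $\theta_{j+1}=\theta_j^{-1}$, contradicting that $H$ is reduced. Hence no cancellation occurs at step $j+1$ either, so $\|u_{j+1}\|=\|u_j\|+1>\|u_j\|$. This is the step I expect to be the main obstacle — not because it is deep, but because one has to be careful that the ``one letter'' being appended is genuinely determined by the rule and that reducedness of $H$ in $F(\Theta_i^+)$ really does forbid $\theta_{j+1}=\theta_j^{-1}$ at a consecutive position; both follow from the hypotheses but deserve an explicit sentence. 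Given (c), part~(d) follows: the sequence $\|u_0\|,\dots,\|u_t\|$ changes by $\pm1$ at each step (it is always reduce-of-$(u_{j-1}\cdot\text{single letter})$, so either $+1$ or $-1$), and (c) says once it starts increasing it never decreases again; so it is ``valley-shaped'' — nonincreasing then nondecreasing — hence its maximum is attained at an endpoint, giving $\|u_j\|\le\max(\|u_0\|,\|u_t\|)$.

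Finally, part~(b): from (a), $\|H\|=t$ equals the length of the reduced word $x_1\cdots x_t$, which is the length of the reduced form of $u_0^{-1}u_t$, and that is at most $\|u_0\|+\|u_t\|$ by the triangle-type bound for reduced-word length (the reduced form of a product has length at most the sum of the lengths). Alternatively, and perhaps cleaner to present, one reads $\|H\|=t$ directly off the valley shape: the descent from $\|u_0\|$ down to the minimum $m$ takes $\|u_0\|-m$ steps and the ascent from $m$ up to $\|u_t\|$ takes $\|u_t\|-m$ steps, so $t=(\|u_0\|-m)+(\|u_t\|-m)\le\|u_0\|+\|u_t\|$. I would state the proof for right-multiplication and then remark that the left-multiplication case is obtained verbatim by replacing ``append on the right'' with ``prepend on the left'' and reading the word $u_0^{-1}u_t$ accordingly — no new idea is needed. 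The whole argument is essentially the standard identification of a reduced computation over a ``multiply-by-one-letter'' band with the corresponding reduced word in the free group, and all four claims are elementary consequences of that identification plus reducedness of $H$.
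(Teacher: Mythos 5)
Your proof is correct and is essentially the standard argument for this lemma; the paper does not prove it but cites Lemma 2.7 of \cite{O18}, whose proof proceeds exactly as you describe — identify the computation with successive one-letter multiplications of the tape word, observe that each step changes $\|u_j\|$ by $\pm1$, and use reducedness of $H$ together with $X_i\cap X_i^{-1}=\emptyset$ to rule out a cancellation immediately following a non-cancellation. Your explicit care at the step ``$x_{\theta_{j+1}}^{\eps_{j+1}}=x_{\theta_j}^{-\eps_j}$ forces either $x_{\theta_{j+1}}=x_{\theta_j}^{-1}$ (impossible since $X_i\cap X_i^{-1}=\emptyset$) or $\theta_{j+1}^{\eps_{j+1}}=\theta_j^{-\eps_j}$ (impossible since $H$ is reduced)'' is exactly the point that makes (a), (c), and hence (b) and (d) go through.
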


Note that in the setting of \Cref{multiply one letter}, for any two words $w_1,w_2\in F(Y_i)$ comprised entirely of letters from $X_i\cup X_i^{-1}$, there exists a computation of the same form as that described in \Cref{multiply one letter} with initial and terminal tape words $w_1$ and $w_2$, respectively.

%
%
%
%
%
%
%

\begin{lemma} [Lemma 2.8 of \cite{O18}] \label{multiply two letters}

Let $X_\ell$ and $X_r$ be disjoint subsets of $Y_i$ which are copies of some set $X$.  Let $\Theta_i^+$ be a set of positive rules in correspondence with $X$ such that each rule multiplies the $Q_{i-1}Q_i$-sector on the left by the corresponding letter's copy in $X_\ell$ (or its inverse) and on the right by the copy in $X_r$ (or its inverse).  Let $\pazocal{C}:W_0\to\dots\to W_t$ be a reduced computation with base $Q_{i-1}Q_i$ and history $H\in F(\Theta_i^+)$.  Denote the tape word of $W_j$ as $u_j$ for each $0\leq j\leq t$.  Then:


\begin{enumerate}[label=({\alph*})]

\item if $\|u_{j-1}\|<\|u_j\|$ for some $0\leq j\leq t-1$, then $\|u_j\|<\|u_{j+1}\|$

\item $\|u_j\|\leq\max(\|u_0\|,\|u_t\|)$ for each $j$

\item $\|H\|\leq\frac{1}{2}(\|u_0\|+\|u_t\|)$.

\end{enumerate}

\end{lemma}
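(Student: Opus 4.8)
The plan is to mimic the proof of Lemma 2.7 (\Cref{multiply one letter}), which has an essentially identical structure, and adapt its three conclusions to the two-sided multiplication setting. The central observation is the same: for a reduced computation $\pazocal{C}: W_0\to\dots\to W_t$ with base $Q_{i-1}Q_i$ and history $H\in F(\Theta_i^+)$, each rule $\theta$ in $\Theta_i^+$ acts on the tape word $u$ by $u\mapsto x^{-1}ux'$ or the corresponding sign variant, where $x\in X_\ell^{\pm 1}$ and $x'\in X_r^{\pm 1}$ are the two copies of the same letter of $X$; its inverse acts by the inverse substitution. Thus along the computation, the ``left-end letter'' of $u_j$ comes from $X_\ell^{\pm 1}$ and the ``right-end letter'' comes from $X_r^{\pm 1}$, and since $X_\ell$ and $X_r$ are disjoint these can never interact under free reduction. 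I would first record this as the key structural remark.

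For part (a), the argument is the standard ``no peaks grow'' monotonicity: suppose $\|u_{j-1}\|<\|u_j\|$, so the rule $\theta_j$ genuinely lengthened the word, meaning no cancellation occurred at either end when prepending $x^{-1}$ (or $x$) and appending $x'$ (or $x'^{-1}$). I would argue that if $\theta_{j+1}$ shortened or preserved the length, then cancellation must occur at (at least) one of the two ends; because $X_\ell \cap X_r = \emptyset$, cancellation at the left end forces the last-applied left-end letter to be cancelled, which means $\theta_{j+1}$ applies the inverse left-multiplication to the one $\theta_j$ just performed, and similarly on the right. Since the history is reduced, $\theta_{j+1}\neq\theta_j^{-1}$; the only way to cancel on, say, the left without cancelling on the right is impossible because the left and right substitutions of a single rule in $\Theta_i^+$ are tied together (they are the two copies of the same $X$-letter with linked signs), so cancelling the left letter exactly means $\theta_{j+1}=\theta_j^{-1}$. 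This yields a contradiction, hence $\|u_j\|<\|u_{j+1}\|$. I expect this is where the bookkeeping about signs and the disjointness of $X_\ell, X_r$ does the real work, so this is the main obstacle — getting the case analysis airtight so that ``cancellation at one end'' provably forces $\theta_{j+1}=\theta_j^{-1}$.

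Part (b) then follows formally from (a) exactly as in \Cref{multiply one letter}: the sequence $\|u_0\|,\dots,\|u_t\|$ cannot have a strict interior local maximum (by (a), once it strictly increases it keeps strictly increasing until the end, and symmetrically reading the computation backwards once it strictly decreases it was strictly decreasing from the start), so the maximum is attained at an endpoint, giving $\|u_j\|\leq\max(\|u_0\|,\|u_t\|)$ for every $j$.

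For part (c), I would track how each single rule application changes $\|u\|$. Each application of a rule in $\Theta_i^{\pm 1}$ changes the length by at most $2$ (one letter added or removed at each of the two ends), so the naive bound is $\|H\|\leq\frac12(\text{total length fluctuation})$. To get the clean bound $\|H\|\leq\frac12(\|u_0\|+\|u_t\|)$, I would use (b): by (a)–(b) the length profile is unimodal-from-the-endpoints — more precisely it first strictly decreases (possibly trivially), reaches its minimum, then the computation from that point is length-nondecreasing; combined with the backward version, one sees the profile decreases then increases. On the decreasing part each step drops $\|u\|$ by exactly $2$ (a strict decrease of a word obtained by a $\pm1$-at-each-end substitution must be a decrease by $2$, since a decrease by $1$ at one end with no change at the other is incompatible with the linked two-sided substitution unless both ends cancel), and similarly on the increasing part each step raises $\|u\|$ by exactly $2$; hence the number of steps is $\frac12\big((\|u_0\|-\|u_{\min}\|)+(\|u_t\|-\|u_{\min}\|)\big)\leq\frac12(\|u_0\|+\|u_t\|)$. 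One should double-check the degenerate cases (empty $u_0$ or $u_t$, or $\|u_0\|=\|u_t\|$) but these are routine, and the $\preceq$-slack is not even needed here since the bound is exact.
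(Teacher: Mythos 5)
The paper itself gives no proof of this lemma (it is quoted as Lemma 2.8 of \cite{O18}), so I am judging your argument on its own terms. Your proofs of (a) and (b) are essentially correct: since no cancellation occurred in the step $u_{j-1}\to u_j$, the first and last letters of $u_j$ are exactly the two letters contributed by $\theta_j$, and a cancellation at either end in the next step pins down both the $X$-letter and the sign of $\theta_{j+1}$, forcing $\theta_{j+1}=\theta_j^{-1}$ against reducedness of $H$; (b) then follows formally.

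Part (c), however, has a genuine gap. Each step changes $\|u_j\|$ by an element of $\{-2,0,+2\}$, and part (a) only rules out a strict increase being followed by a non-increase; it does \emph{not} rule out steps of length change $0$, and the profile is therefore not ``strictly down by $2$, then strictly up by $2$.'' Concretely, take $u_0\equiv x_{1,\ell}^{-1}x_{2,\ell}^{-1}\dots x_{k,\ell}^{-1}$ and $H\equiv\theta_{x_1}\theta_{x_2}\dots\theta_{x_k}$ (with $\theta_x:u\mapsto x_\ell u x_r$). This is a reduced computation with $\|u_0\|=\|u_1\|=\dots=\|u_k\|=k$: every step cancels on the left and appends on the right, ending at $u_k\equiv x_{1,r}\dots x_{k,r}$. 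Your formula $t=\tfrac12\bigl((\|u_0\|-\|u_{\min}\|)+(\|u_t\|-\|u_{\min}\|)\bigr)$ would give $t=0$ here, whereas $t=k$; the stated bound $t\leq\tfrac12(\|u_0\|+\|u_t\|)=k$ holds only with equality and for a different reason. (This is not a degenerate case — it is exactly how the historical sectors of $\textbf{M}_2$ record a history.) The standard fix is a projection argument: the retractions $F(Y_i)\to F(X_\ell)$ and $F(Y_i)\to F(X_r)$ are homomorphisms, so $\bar u_t=(a_t\cdots a_1)\bar u_0$ in $F(X_\ell)$, where $a_j$ is the left letter of the $j$-th rule; reducedness of $H$ makes $a_t\cdots a_1$ a reduced word of length $t$, whence $t\leq\|\bar u_0\|+\|\bar u_t\|$, and symmetrically $t\leq\|\hat u_0\|+\|\hat u_t\|$ for the $X_r$-projection. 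Adding these and using $\|\bar u_j\|+\|\hat u_j\|\leq\|u_j\|$ (the two alphabets are disjoint) gives $2t\leq\|u_0\|+\|u_t\|$.
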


%
%
%
%
%
%

\begin{lemma} [Lemma 3.6 of \cite{OS12}] \label{unreduced base}

Suppose $\pazocal{C}:W_0\to\dots\to W_t$ is a reduced computation of an $S$-machine with base $Q_iQ_i^{-1}$ (respectively $Q_i^{-1}Q_i$). For $0\leq j\leq t$, let $u_j$ be the tape word of $W_j$. Suppose each rule of $\pazocal{C}$ multiplies the $Q_iQ_{i+1}$-sector (respectively the $Q_{i-1}Q_i$-sector) by a letter from the left (respectively from the right), with different rules corresponding to different letters. Then $\|u_j\|\leq\max(\|u_0\|,\|u_t\|)$ for all $j$ and the history of $\pazocal{C}$ has the form $H_1H_2^\ell H_3$, where $\ell\geq0$, $\|H_2\|\leq\min(\|u_0\|,\|u_t\|)$, $\|H_1\|\leq\|u_0\|/2$, and $\|H_3\|\leq\|u_t\|/2$.

\end{lemma}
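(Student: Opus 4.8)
The plan is to follow the template of the proofs of Lemmas~\ref{multiply one letter} and~\ref{multiply two letters}, but to account carefully for the fact that the base $Q_iQ_i^{-1}$ is \emph{unreduced}: an admissible word with this base has the shape $q\,u\,q^{-1}$ with $q\in Q_i$ and $u\in F(Y_{i+1})$ (or, for the base $Q_i^{-1}Q_i$, the shape $q^{-1}uq$ with $u\in F(Y_i)$). Since each rule $\theta$ multiplies the relevant sector by a single tape letter $x_\theta$ (say on the left, with distinct letters for distinct rules), applying $\theta$ to $q\,u\,q^{-1}$ produces the \emph{reduced} form of $q\,(x_\theta\, u\, x_\theta^{-1})\,q^{-1}$. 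The key observation is that this is \emph{conjugation} of the tape word $u$ by the single letter $x_\theta$: either $u$ grows by $2$ in length (when no cancellation occurs at either end), stays the same length (when it cancels on one side and grows on the other), or shrinks by $2$ (when $u$ begins and ends with $x_\theta^{-1}$, so cancellation occurs on both sides simultaneously).

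First I would set up this conjugation picture precisely and record the trichotomy above for a single step, together with the bookkeeping of which letter was prepended. Second, I would prove the length bound $\|u_j\|\le\max(\|u_0\|,\|u_t\|)$ by an ``unimodality'' argument: I claim the sequence $\|u_0\|,\dots,\|u_t\|$ cannot have a strict local minimum in the interior, i.e.\ if $\|u_{j-1}\|>\|u_j\|$ and $\|u_j\|<\|u_{j+1}\|$ for some interior $j$, then $\theta_j$ and $\theta_{j+1}$ must be mutually inverse, contradicting reducedness of $\pazocal{C}$. Indeed, $\|u_{j-1}\|>\|u_j\|$ forces $\theta_j$ to cause double cancellation, so $u_{j-1}=x_{\theta_j}^{-1}\,u_j\,x_{\theta_j}$ (in reduced form $u_j$ begins and ends with $x_{\theta_j}^{-1}$, using here that distinct rules correspond to distinct letters, so there is no ambiguity); then $\|u_j\|<\|u_{j+1}\|$ forces $\theta_{j+1}$ to cause no cancellation, i.e.\ $u_j$ does not begin with $x_{\theta_{j+1}}^{-1}$; matching these gives $x_{\theta_{j+1}}=x_{\theta_j}$, and since the prepended letter and the state determine the rule, $\theta_{j+1}=\theta_j^{-1}$. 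Hence $j\mapsto\|u_j\|$ is ``valley-free,'' so it decreases then increases (or is monotone), giving the desired bound, and also showing each step changes the length by exactly $\pm2$ or $0$ except that a $0$-step followed the same way is constrained.

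Third, for the structural claim about the history $H=H_1H_2^\ell H_3$: I would split $\pazocal{C}$ at the time $j_0$ where $\|u_{j_0}\|$ attains its minimum $m=\min_j\|u_j\|$ (take the first such $j_0$ if there are several, or rather the maximal interval $[j_0,j_1]$ on which $\|u_j\|=m$, which is an interval by unimodality). On $[0,j_0]$ the lengths strictly decrease by $2$ at each step (a $0$-step would create a valley or a plateau not at the minimum — one checks that consecutive $0$-steps in the strictly-decreasing regime are ruled out by reducedness just as above), so $j_0\le(\|u_0\|-m)/2\le\|u_0\|/2$, giving $\|H_1\|\le\|u_0\|/2$ where $H_1$ is the prefix of length $j_0$. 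Symmetrically the suffix $H_3$ on $[j_1,t]$ has $\|H_3\|\le\|u_t\|/2$. On the middle interval $[j_0,j_1]$ the tape word has constant length $m$; here every step is a $0$-step, i.e.\ a cyclic conjugation of a length-$m$ word, so there are at most $m$ distinct such words up to this action and the middle history is (a power of) a word $H_2$ with $\|H_2\|\le m\le\min(\|u_0\|,\|u_t\|)$ — here I may need to invoke that a reduced computation of length exceeding $m$ with constant tape length must be periodic with period $\le m$, which is the analogue within this lemma of the ``one-relator''/periodicity phenomenon and can be shown by a pigeonhole on configurations $(q_j,u_j)$: two equal configurations would force $\theta_{j+1}\cdots\theta_{j'}$ to be trivial in $F(\Theta^+)$, contradicting reducedness, hence all configurations on the plateau are distinct, and since $q_j$ is constant (the rule's target state is determined and on a plateau of conjugations the state cycles through at most $|Q_i|$ values, which we fold into the constant $C$ implicitly or argue $q$ is fixed) there are $\le m$ of them.

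The main obstacle I anticipate is the middle-interval analysis: controlling the length of $H_2$ requires ruling out long reduced histories that preserve the tape length, which is exactly where the ``different rules correspond to different letters'' hypothesis and reducedness must be leveraged together via a periodicity/pigeonhole argument on configurations; the endpoint estimates $\|H_1\|\le\|u_0\|/2$, $\|H_3\|\le\|u_t\|/2$ and the bound $\|u_j\|\le\max(\|u_0\|,\|u_t\|)$ are comparatively routine once the single-step conjugation trichotomy and the no-interior-valley lemma are in hand.
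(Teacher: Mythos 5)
The paper itself gives no proof of this lemma --- it is imported directly from \cite{OS12} --- so your argument has to stand on its own. Your overall strategy (the one-step conjugation trichotomy $+2/0/-2$, a monotonicity constraint coming from reducedness, and a plateau of cyclic shifts with periodic history) is the right one, but two of your key steps are wrong as stated. First, the monotonicity lemma you formulate, ``no strict interior local minimum,'' is false: with positive rules $\theta_a,\theta_b$ prepending the letters $a,b$, take $u_0=a^{-1}ba$ and apply $\theta_a$ twice (a reduced history); the lengths are $3,1,3$, a strict valley. Your derivation of it is also a non sequitur: from ``$u_j$ does not begin with $x_{\theta_{j+1}}^{-1}$ and does not end with $x_{\theta_{j+1}}$'' you cannot conclude $x_{\theta_{j+1}}=x_{\theta_j}$ --- two negative conditions never force an equality. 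Worse, the shape you then deduce (``decreases then increases'') is precisely a valley, so the argument contradicts itself. What you actually need is the opposite constraint, the analogue of Lemma~\ref{multiply one letter}(c): a strictly increasing step leaves $u_j$ beginning with $x_{\theta_j}$ and ending with $x_{\theta_j}^{-1}$, so any cancellation at the next step forces $x_{\theta_{j+1}}=x_{\theta_j}^{-1}$, hence $\theta_{j+1}=\theta_j^{-1}$, contradicting reducedness; so a $+2$ step is followed only by $+2$ steps. The same one-line check shows a $0$-step cannot be followed by a $-2$ step, nor by a $0$-step shifting in the opposite direction. Together these give the true shape $(-2)^a(0)^b(+2)^c$ for the sequence of length changes, from which $\|u_j\|\le\max(\|u_0\|,\|u_t\|)$, $\|H_1\|\le\|u_0\|/2$ and $\|H_3\|\le\|u_t\|/2$ all fall out.

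Second, your treatment of the plateau is broken. The pigeonhole claim that a return to the same configuration would make the intervening history freely trivial is false --- a full cyclic rotation of a length-$m$ word returns to the same configuration after $m$ steps with a perfectly reduced history --- and ``all plateau configurations are distinct'' contradicts the very periodicity you are trying to establish. The correct mechanism is determinism, not pigeonhole: once all plateau steps are known to shift in the same direction (which is exactly the ``no opposite-direction $0$-steps'' check above, and is missing from your write-up), the rule applied at each plateau step is forced to be the unique one whose letter is the last letter (resp.\ the inverse of the first letter) of the current word, so the plateau history literally reads the letters of the minimal word cyclically and is a factor of $H_2^{\infty}$ with $\|H_2\|=m\le\min(\|u_0\|,\|u_t\|)$. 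Even after these repairs you should check how the leftover partial period of the plateau is absorbed into $H_1$ or $H_3$ consistently with the stated bounds $\|u_0\|/2$ and $\|u_t\|/2$; that bookkeeping is where the cited constants live and your sketch does not address it.
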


\medskip


\subsection{Primitive machines} \

One of the most useful tools in shaping the computational makeup of an $S$-machine is the implementation of \textit{primitive machines}.  These are simple $S$-machines that `run' the state letters past a certain tape word and back, locking the sector in between.  In practice, this type of machine is used as a submachine to force the base of an admissible word to be reduced in order to carry out specific types of computations.

The first primitive machine is the machine customarily named $\textbf{LR}(Y)$ for some alphabet $Y$.  The standard base of the machine is $QPR$, with $Q=\{q_1,q_2\}$, $P=\{p_1,p_2\}$, and $R=\{r_1,r_2\}$.  The state letters with subscript $1$ are the start state letters, while those with subscript $2$ are the end state letters.  The tape alphabets $Y_1$ and $Y_2$ of the machine are two copies of $Y$.  For simplicity, the letter of $Y_i$ corresponding to the letter $y\in Y$ is denoted $y_i$.

The positive rules of $\textbf{LR}(Y)$ are defined as follows:

\begin{itemize}

\item For every $y\in Y$, a rule $\tau_1(y)=[p_1\to p_1, \ q_1\to y_1^{-1}q_1y_2, \ r_1\to r_1]$

\item The \textit{connecting rule} $\zeta=[p_1\xrightarrow{\ell} p_2, \ q_1\to q_2, \ r_1\to r_2]$

\item For every $y\in Y$, a rule $\tau_2(y)=[p_2\to p_2, \ q_2\to y_1q_2y_2^{-1}, \ r_2\to r_2]$

\end{itemize}

For any configuration $W$ of $\textbf{LR}(Y)$, one can associate a reduced word over $Y\cup Y^{-1}$ by deleting the state letters and taking the natural copies of the tape letters.  Note that the application of any rule does not alter this associated word.  Uses of this or a similar observation is called a \text{projection argument}.

One can interpret the function of the rules as follows: The rules $\tau_1(y)^{\pm1}$ are used to delete the letters from the $PQ$-sector and move them to the $QR$-sector; this continues until the $PQ$-sector is empty, at which point the connecting rule may be applied; from there, the rules $\tau_2(y)^{\pm1}$ are used to move the letters back from the $QR$-sector to the $PQ$-sector.  This informal description motivate the naming of the machine, as one can view the state letter of $P$ as `moving left toward $Q$ and then right toward $R$'.

These notions are made precise by the following statement, which can be understood simply through a projection argument:

\begin{lemma}[Lemma 3.1 of \cite{O18}] \label{primitive computations}

Let $\pazocal{C}:W_0\to\cdots\to W_t$ be a reduced computation of $\textbf{LR}(Y)$ in the standard base. Then:

\begin{enumerate}[label=({\arabic*})]

\item if $|W_{i-1}|_a<|W_i|_a$ for some $1\leq i\leq t-1$, then $|W_i|_a<|W_{i+1}|_a$

\item $|W_i|_a\leq\max(|W_0|_a,|W_t|_a)$ for each $i$

\item Suppose $W_0\equiv q_1~u~p_1r_1$ and $W_t\equiv q_2~v~p_2r_2$ for some $u,v\in F(Y_1)$.  Then $u\equiv v$, $|W_i|_a=\|u\|\defeq\ell$ for each $i$, $t=2\ell+1$, and the $QP$-sector is locked in the rule $W_\ell\to W_{\ell+1}$. Moreover, letting $\bar{u}$ be the word obtained by reading $u$ from right to left, the history $H$ of $\pazocal{C}$ is a copy of $\bar{u}\zeta u$.

\item if $W_0\equiv q_j~u~p_jr_j$ and $W_t\equiv q_j~v~p_jr_j$ for some $u,v$ and $j\in\{1,2\}$, then $u\equiv v$ and the computation is empty (i.e $t=0$)

\item if $W_0$ is of the form $q_j~u~p_jr_j$ or $q_jp_j~v~r_j$ for $j\in\{1,2\}$, then $|W_i|_a\geq|W_0|_a$ for every $i$.

\end{enumerate}

\end{lemma}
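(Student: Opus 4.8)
### Proof plan

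The plan is to prove all five items by a \emph{projection argument} on the tape letters together with a careful bookkeeping of which rules are applicable in the standard base $QPR$. First I would set up the projection: to every configuration $W$ of $\textbf{LR}(Y)$ in the standard base, associate the reduced word $\pi(W)$ over $Y\cup Y^{-1}$ obtained by deleting the state letters $q_i,p_i,r_i$ and replacing each tape letter $y_1$ or $y_2$ by the corresponding $y\in Y$. Inspecting the three families of rules $\tau_1(y)$, $\zeta$, $\tau_2(y)$, one checks directly that applying any of them does not change $\pi(W)$ (for $\tau_1(y)$: a $y_1^{-1}$ is removed from the $QP$-sector and a $y_2$ appended to the $PR$-sector — same projection; similarly for $\tau_2(y)$; $\zeta$ only changes state letters). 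So $\pi$ is a computation invariant; this is the backbone of everything.

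Next I would establish the ``monotone valley'' items (1), (2), (5). The key local observation is: in the standard base $QPR$, the only rules that increase $|W|_a$ are those of the form $\tau_1(y)$ applied when the $QP$-sector is nonempty but produces growth in the $PR$-sector — wait, more precisely, each rule $\tau_1(y)^{\pm1}$ moves a letter between the $QP$- and $PR$-sectors, and likewise $\tau_2(y)^{\pm1}$, while $\zeta^{\pm1}$ changes no $a$-letters; so $|W_{i-1}|_a$ and $|W_i|_a$ differ by at most $1$ at each step, and the step direction is governed by whether the moved letter cancels or adds in the target sector, which (since the history is reduced) cannot oscillate $+1,-1,+1$ in the middle without the middle two rules being mutually inverse — this is exactly the content of \Cref{multiply one letter}(c) and \Cref{multiply two letters}(a) applied to the relevant two-letter sub-bases. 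Formally, I would reduce (1) to those lemmas by restricting the computation to the sub-base $QP$ (governed by rules acting like left-multiplication, cf. \Cref{multiply one letter}) and to the sub-base $PR$; combining gives the unimodality, and (1)$\Rightarrow$(2) is immediate since a reduced-history computation whose $a$-length is unimodal attains its max at an endpoint. For (5): if $W_0$ has the form $q_j\,u\,p_jr_j$ (all $a$-letters in the $QP$-sector, $PR$-sector empty) or $q_jp_j\,v\,r_j$ (all $a$-letters in $PR$, $QP$ empty), then at step $0$ the $a$-length is already as small as it can get among configurations with projection $\pi(W_0)$ of that shape — any rule first moves a letter into the empty sector, which strictly increases $|W|_a$ — and then (1) forbids a later return below $|W_0|_a$; a short case analysis finishes it.

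Then I would handle (3), the main computational statement, and (4). For (3): start from $W_0\equiv q_1\,u\,p_1r_1$ with $u\in F(Y_1)$, so the start state letters are $q_1,p_1,r_1$ and only the $QP$-sector is nonempty. The only rules applicable to $W_0$ are the $\tau_1(y)^{\pm1}$ (since $\zeta$ needs the $QP$-sector empty — it locks that sector — and $\tau_2$ needs the end state letters). By \Cref{multiply one letter}(a) applied to the $QP$-sub-base with history in $F(\Theta^+)$ corresponding to the $\tau_1$'s, the $\tau_1$-phase drains $u$ to the empty word in exactly $\ell=\|u\|$ steps (a reduced history equal to a copy of $\bar u$), with $|W_i|_a=\ell$ throughout (each step removes one $y_1$ from $QP$ and adds one $y_2$ to $PR$). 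After this the configuration is $q_1\,\tilde u\,p_1r_1$ with $\tilde u\in F(Y_2)$ the copy of $u$, the $QP$-sector empty, so the only applicable rule (given the reduced history, ruling out $\tau_1^{-1}$) is the connecting rule $\zeta$ — this is the step $W_\ell\to W_{\ell+1}$ locking $QP$. Then the symmetric $\tau_2$-phase applies: \Cref{multiply one letter}(a) again (now $\tau_2$ multiplies $PR$ on the right, equivalently moves $y_2$'s back), giving another $\ell$ steps with history a copy of $u$ (reading appropriately), reconstructing $v\equiv u$ in the $QP$-sector — hence $W_t\equiv q_2\,v\,p_2r_2$ forces $v\equiv u$, $t=2\ell+1$, and $H$ is a copy of $\bar u\,\zeta\,u$. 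The one subtlety to nail down is that no \emph{other} rule can sneak in during either phase: this follows because the state letters are fixed within a phase, $\zeta$ is the unique rule changing start state letters to end state letters and it locks $QP$, and a reduced history cannot backtrack; I would spell this out as the non-obvious step. Finally (4): if $W_0$ and $W_t$ share the same state-letter triple $q_j,p_j,r_j$, then since $\zeta^{\pm1}$ is the only rule changing the $P$-state letter and $\zeta$ flips $p_1\leftrightarrow p_2$, the history must contain $\zeta$ an even number of times; combined with the reducedness of the history and the structure of the $\tau$-phases between consecutive $\zeta$'s (each such phase being, by (3) and \Cref{multiply one letter}(a), forced to be a drain followed by — impossible unless empty, because after $\zeta$ the sectors' roles swap and the next $\zeta^{-1}$ needs the now-$QP$-sector empty, forcing the intervening $\tau$-history to both empty and fill the same sector, hence be trivial), one propagates emptiness back to conclude the whole history is trivial and $u\equiv v$. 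I expect the bookkeeping in (4) — tracking which sector is ``active'' across alternating $\zeta$'s and showing each $\tau$-block collapses — to be the most delicate part, though it is entirely forced once the invariant $\pi$ and \Cref{multiply one letter} are in hand.
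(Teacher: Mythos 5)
Your plan is correct and is precisely the route the paper intends: it gives no written proof, citing Lemma~3.1 of \cite{O18} and noting the statement "can be understood simply through a projection argument," which is exactly your invariant $\pi$ combined with \Cref{multiply one letter} applied to the restrictions to the two sectors. The subtleties you flag (block boundaries at $\zeta^{\pm1}$, and collapsing the intermediate $\tau$-blocks in (4) because every connecting rule forces the working sector to be empty) are the right ones and resolve as you expect.
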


It is a useful observation that for any $u\in F(Y)$, there exists a computation of $\textbf{LR}(Y)$ of the form detailed in \Cref{primitive computations}(3).  

The next statement helps understand computations of the machine with an unreduced base:

\begin{lemma} [Lemma 3.4 of \cite{OS19}] \label{primitive unreduced} Suppose $W_0\to\dots\to W_t$ is a reduced computation of $\textbf{LR}(Y)$ with base $QPP^{-1}Q^{-1}$ (or $R^{-1}P^{-1}PR$) such that $W_0\equiv q_jp_j~u~p_j^{-1}q_j^{-1}$ (or $W_0\equiv r_j^{-1}p_j^{-1}~v~p_jr_j$) for $j\in\{1,2\}$ and some word $u$ (or $v$). Then $|W_0|_a\leq\dots\leq|W_t|_a$ and all state letters of $W_t$ have the index $j$.

\end{lemma}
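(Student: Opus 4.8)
The plan is to reduce the statement to a projection argument analogous to the one underlying \Cref{primitive computations}. First I would fix notation: the base is $QPP^{-1}Q^{-1}$, and by the symmetric structure of $\textbf{LR}(Y)$ (swapping the roles of the sectors $QP$ and $PR$ and reversing orientation), it suffices to treat this base; the case $R^{-1}P^{-1}PR$ is entirely analogous. Write $W_0 \equiv q_j p_j~u~p_j^{-1}q_j^{-1}$. The key observation is that in the base $QPP^{-1}Q^{-1}$ there are two $QP$-sectors glued along the $p_j$-letter (one read left-to-right, one read right-to-left), and no $PR$-sector appears at all. Consequently, among the positive rules $\tau_1(y)$, $\zeta$, $\tau_2(y)$, the connecting rule $\zeta$ \emph{locks} the $PQ$-sector (its $P$-part is $p_1 \xrightarrow{\ell} p_2$), so by \Cref{locked sectors} a $\zeta^{\pm 1}$-admissible word cannot have a subword $Q_iQ_i^{-1}$ of the relevant form — hence $\zeta$ is never applicable in this base. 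Therefore every rule of the computation is some $\tau_1(y)^{\pm 1}$ or $\tau_2(y)^{\pm 1}$, and in particular all state letters keep the index $j$ throughout (the second claim of the lemma), since only $\zeta^{\pm 1}$ changes the index.

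Next I would analyze the $a$-length. Having ruled out $\zeta$, the remaining rules $\tau_1(y)^{\pm1}$ and $\tau_2(y)^{\pm1}$ each modify the $QP$-sector by multiplying by a single $Y_1$-letter on one side and a single $Y_2$-letter on the other — but in the base $QPP^{-1}Q^{-1}$, the two occurrences of the $QP$-sector are mirror images sharing the $p_j$-letter, so applying a rule simultaneously lengthens (or shortens) both occurrences symmetrically about the central $p_j^{\pm1}$. Here is where I expect the main subtlety: one must check that the $P$-part of each $\tau_k(y)$ is trivial (indeed $p_1 \to p_1$ and $p_2 \to p_2$), so the central subword $p_j^{-1}q_j^{-1}$ and $q_j p_j$ are glued consistently and no cancellation or trimming occurs at the middle $p_j^{\pm1}$; this is what makes the word genuinely behave like a "doubled" version of a computation in base $QP$. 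Granting this, the monotonicity should follow from \Cref{unreduced base} (or \Cref{multiply one letter}) applied to the single $QP$-sector: a reduced computation whose rules multiply a sector by letters on one side, with distinct rules giving distinct letters, has tape-word length history of the form $H_1 H_2^\ell H_3$ with the stated bounds, and more to the point the length is unimodal. But the lemma we want claims something stronger, namely $|W_0|_a \leq \cdots \leq |W_t|_a$ (monotone nondecreasing, not merely unimodal).

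To get the \emph{nondecreasing} conclusion I would invoke part (5) of \Cref{primitive computations}: a reduced computation of $\textbf{LR}(Y)$ whose initial configuration has the form $q_j~u~p_j r_j$ (state letters all at the "outer" position) cannot decrease $a$-length. The unreduced-base analogue of this is what is being asserted, and the argument is the same projection/band argument: in $W_0 \equiv q_j p_j~u~p_j^{-1} q_j^{-1}$, the $a$-letters $u$ sit in the "$PQ$-position" with the $p_j$-letters adjacent to them, which for the rules $\tau_k(y)$ is precisely the side that can only be multiplied, not cancelled, so the first rule applied must \emph{increase} $a$-length, and then by the unimodality (\Cref{unreduced base}, the $H_2^\ell H_3$ tail) it can never come back down below $|W_0|_a$; more carefully, $|W_j|_a \geq |W_0|_a$ for all $j$ by the same reasoning that proves \Cref{primitive computations}(5), and combined with unimodality this forces the sequence to be nondecreasing. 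The main obstacle, as flagged, is the bookkeeping at the central $p_j^{\pm1}$-letter: one must verify carefully that in this faulty base no admissibility-trimming ever deletes the central state letter, so that the "doubled $QP$-sector" picture is valid throughout the computation; once that is nailed down, everything reduces to the one-sector lemmas already available.
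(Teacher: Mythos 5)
The paper does not actually prove this statement—it is quoted verbatim from \cite{OS19}—so your argument has to stand on its own, and it has a genuine gap at its central step: the exclusion of the connecting rule. You assert that $\zeta$ locks a sector and therefore, by \Cref{locked sectors}, is never applicable in the base $QPP^{-1}Q^{-1}$, but you never check which forbidden subword actually occurs. \Cref{locked sectors} forbids exactly $UU^{-1}$ and $V^{-1}V$ when the $UV$-sector is locked; the fold $PP^{-1}$ in the first base carries the alphabet of the sector to the \emph{right} of $P$ and is forbidden only if $\zeta$ locks that sector, while the fold $P^{-1}P$ in the second base carries the alphabet of the sector to the \emph{left} of $P$ and is forbidden only if $\zeta$ locks that one. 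Since $\zeta$ locks exactly one of the two sectors of $\textbf{LR}(Y)$, \Cref{locked sectors} disposes of $\zeta$ for exactly one of the two bases. This also invalidates your opening symmetry reduction: $\textbf{LR}(Y)$ is not symmetric in its two sectors (one is locked by $\zeta$ and one is not; the $\tau$-rules deplete one and fill the other), so the two bases require separate treatment. For the base whose folded tape word lies in the unlocked alphabet, $W_0$ has empty outer sectors and \emph{is} $\zeta$-admissible; the correct residual observation is that $\zeta^{\pm1}$ can then occur only as the very first letter of the history, because after one $\tau$-step the outer sectors are nonempty and, the history being reduced, they never empty again (\Cref{multiply one letter}(a) with empty initial tape word). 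That residual possibility is precisely what threatens the index claim, and your proposal never confronts it.

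The monotonicity argument is also logically incomplete. Knowing that every term is at least $|W_0|_a$ and that the sequence is "unimodal" does not yield $|W_0|_a\leq\cdots\leq|W_t|_a$: a unimodal sequence may rise and then fall back to a value still above its initial term. The mechanism you want is different and simpler. The folded tape word sits between the two $p_j^{\pm1}$ and can be altered only through the $P$-parts of the rules, so at each step it is at worst conjugated by a single letter (length change in $\{-2,0,2\}$), while each of the two outer sectors, starting empty, gains exactly one uncancellable letter per $\tau$-step; hence the total $a$-length changes by $2+\Delta\geq 0$ at every $\tau$-step and by $0$ at a $\zeta$-step. Your "doubled $QP$-sector" picture misplaces where the growth occurs—the central word is not a doubled copy of anything, and the growth happens in the outer sectors—and the worry you flag about trimming at the central $p_j^{\pm1}$, which you leave unresolved, is a non-issue exactly because the $P$-parts of the rules carry no tape letters.
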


The next primitive machine is the analogous machine $\textbf{RL}(Y)$ which instead `runs' the state letter right and then left.  To be precise, identifying the hardware of this machine with that of $\textbf{LR}(Y)$, the positive rules of $\textbf{RL}(Y)$ are:

\begin{itemize}

\item For every $y\in Y$, a rule $\tau_s(y)=[p_s\to p_s, \ q_s\to y_1q_sy_2^{-1}, \ r_s\to r_s]$

\item The \textit{connecting rule} $\xi=[p_s\to p_f, \ q_s\xrightarrow{\ell} q_f, \ r_s\to r_f]$

\item For every $y\in Y$, a rule $\tau_f(y)=[p_f\to p_f, \ q_f\to y_1^{-1}q_fy_2, \ r_f\to r_f]$

\end{itemize}

There are obvious analogues of Lemmas \ref{primitive computations} and \ref{primitive unreduced} in the setting of $\textbf{RL}(Y)$, which can be verified in much the same ways.

More generally, a primitive machine is the concatenation of one of the machines above with itself a number of times.  

For example, for a fixed $k\geq1$, the primitive machine $\textbf{LR}_k(Y)$ has standard base $QPR$ with $Q=\{q_i\}_{i=1}^{2k}$, $P=\{p_i\}_{i=1}^{2k}$, and $R=\{r_i\}_{i=1}^{2k}$, while the tape alphabets are again copies of $Y$.  The state letters with subscript $1$ are again the start state letters, while those with subscript $2k$ are now taken as the end state letters.  The positive rules of $\textbf{LR}_k(Y)$ are then as follows:

\begin{itemize}

\item For every $i\in\{1,\dots,k\}$ and every $y\in Y$, the rule $$\tau_{2i-1}(y)=[p_{2i-1}\to p_{2i-1}, \ q_{2i-1}\to y_1^{-1}q_{2i-1}y_2, \ r_{2i-1}\to r_{2i-1}]$$

\item For every $i\in\{1,\dots,k\}$, the connecting rule $$\zeta_{2i-1}=\{p_{2i-1}\xrightarrow{\ell} p_{2i}, \ q_{2i-1}\to q_{2i}, \ r_{2i-1}\to r_{2i}]$$

\item For every $i\in\{1,\dots,k\}$ and every $y\in Y$, the rule $$\tau_{2i}(y)=[p_{2i}\to p_{2i}, \ q_{2i}\to y_1q_{2i}y_2^{-1}, \ r_{2i}\to r_{2i}]$$

\item For every $i\in\{1,\dots,k-1\}$, the connecting rule $$\zeta_{2i}=[p_{2i}\to p_{2i+1}, \ q_{2i}\xrightarrow{\ell} q_{2i+1}, \ r_{2i}\to r_{2i+1}]$$

\end{itemize}

Note that $\textbf{LR}_1(Y)=\textbf{LR}(Y)$.

The analogous primitive machine $\textbf{RL}_k(Y)$ is defined similarly.

The analogues of Lemmas \ref{primitive computations} and \ref{primitive unreduced} hold for the machines $\textbf{LR}_k(Y)$ and $\textbf{RL}_k(Y)$. For example, the following is the analogue of Lemma \ref{primitive computations}(3):

\begin{lemma} \label{LR_k analogue}

Let $\pazocal{C}:W_0\to\dots\to W_t$ be a reduced computation of $\textbf{LR}_k(Y)$ in the standard base. If $W_0\equiv q_0^{(1)}up_{{\color{white}1} }^{(1)}q_1^{(1)}$ and $W_t\equiv q_0^{(2k)}vp_{{\color{white}1} }^{(2k)}q_1^{(2k)}$ for some $u,v\in F(Y^{(1)})$, then $u\equiv v$, $|W_i|_a=|W_0|_a\defeq l$ for each $i$, and $t=2lk+2k-1$.

\end{lemma}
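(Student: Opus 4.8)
\textbf{Proof plan for \Cref{LR_k analogue}.} The strategy is to reduce to the single-step machine $\textbf{LR}(Y)=\textbf{LR}_1(Y)$ and then iterate, using the key structural fact that $\textbf{LR}_k(Y)$ is, by construction, a concatenation of $k$ copies of the $\textbf{LR}$ machine glued at the connecting rules $\zeta_{2i}$. First I would invoke a projection argument as in \Cref{primitive computations}: deleting all state letters and identifying the tape alphabets $Y^{(1)}$ and $Y^{(2)}$ yields a reduced word over $Y\cup Y^{-1}$ that is unchanged by every rule. Since $W_0$ gives the word $u$ (its $Y^{(2)}$-sector being empty) and $W_t$ gives $v$, this immediately forces $u\equiv v$ once we know that the tape word of $W_t$ lies entirely in $F(Y^{(1)})$, which is part of the hypothesis. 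This also pins down $|W_i|_a$ in terms of the invariant word, giving $|W_i|_a = \|u\| \defeq \ell$ provided the $Y^{(2)}$-component of each intermediate tape word is balanced against the $Y^{(1)}$-component; to get the exact equality $|W_i|_a = \ell$ for \emph{all} $i$ (not merely $|W_i|_a \le \max(|W_0|_a,|W_t|_a)$) I would track the total number of $a$-letters more carefully using the analogue of \Cref{primitive computations}(1)--(2) for $\textbf{LR}_k$.

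The heart of the argument is the length count $t = 2\ell k + 2k - 1$. I would argue that the history $H$ of $\pazocal{C}$ must factor as a concatenation of $k$ blocks, each of which is (a copy of) $\bar u\,\zeta_{2i-1}\,u$ for the respective pair $(q_{2i-1},p_{2i-1},r_{2i-1})$, interleaved with the $k-1$ connecting rules $\zeta_{2i}$ ($i=1,\dots,k-1$). The justification is that the state letters of any admissible word in the standard base all carry a common index, and applying a positive rule changes this index only via a connecting rule $\zeta_j$; moreover $\zeta_{2i-1}$ locks the $QP$-sector and $\zeta_{2i}$ locks the $PR$-sector, so \Cref{locked sectors}-type reasoning forces the $\zeta_j$'s to occur in the order $\zeta_1,\zeta_2,\dots,\zeta_{2k-1}$ along the reduced history (any deviation would create an unreduced base, contradicting that $\pazocal{C}$ is reduced with standard base). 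Between consecutive connecting rules the computation is a reduced computation of a single $\textbf{LR}$-type block starting from a configuration of the form $q^{(j)}\,u\,p^{(j)}r^{(j)}$ (or its $PR$-loaded counterpart), so \Cref{primitive computations}(3) applies verbatim and contributes exactly $2\ell+1$ rules for each of the $k$ odd blocks, while the $k-1$ even connecting rules contribute $1$ each: total $k(2\ell+1) + (k-1) = 2\ell k + 2k - 1$, as claimed.

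The main obstacle I anticipate is the bookkeeping needed to show that the reduced history genuinely decomposes into these blocks in the correct order — i.e.\ ruling out a reduced computation in which the state index oscillates (applying $\zeta_{2i-1}$, then $\zeta_{2i-1}^{-1}$, etc.) or in which the sector contents are not yet empty/full when a connecting rule is applied. For the first concern, reducedness of the history and the fact that $\zeta_j^{\pm1}$ is the \emph{only} rule changing the index between $j$ and $j\pm1$ should suffice, but one must be careful that after an odd connecting rule $\zeta_{2i-1}$ the configuration really is forced into the shape $q_{2i}\,v\,p_{2i}r_{2i}$ with the $QP$-sector empty: this is exactly what the locking clause of $\zeta_{2i-1}$ guarantees, since a $\zeta_{2i-1}$-admissible word cannot have a nonempty $QP$-sector. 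For the second concern, \Cref{primitive computations}(4)--(5) (applied within a block) guarantee the endpoints of each block have the required form. Once these are in place, everything else is the mechanical count above. I would also note in passing that the existence of such a computation for any given $u\in F(Y)$ follows by concatenating the computations furnished by the observation after \Cref{primitive computations}, though the lemma as stated only asserts the structural constraints.
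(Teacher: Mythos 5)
Your proposal is correct, and it is essentially the intended argument: the paper omits a proof of \Cref{LR_k analogue}, presenting it only as the analogue of \Cref{primitive computations}(3), and your reconstruction via the projection argument, the forced ordering $\zeta_1,\zeta_2,\dots,\zeta_{2k-1}$ of the connecting rules (with oscillations ruled out by reducedness together with the locking clauses and \Cref{multiply one letter}-type reasoning within each block), and the per-block count $2\ell+1$ plus the $k-1$ even connecting rules is exactly how the cited Lemma 3.1 of \cite{O18} extends to $\textbf{LR}_k(Y)$. The only nitpick is the phrase ``between consecutive connecting rules'': each full $\textbf{LR}$ cycle of length $2\ell+1$ contains the odd connecting rule $\zeta_{2i-1}$ in its interior, as your earlier factorization $\bar u\,\zeta_{2i-1}\,u$ correctly records.
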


As with its analogue, it is useful to observe that for any $u\in F(Y)$, there exists a computation of $\textbf{LR}_k(Y)$ of the form detailed in \Cref{LR_k analogue}.

When the alphabet $Y$ is contextually clear, it is convenient to omit it from the names of the machines. So, there will be reference in subsequent constructions to the machines $\textbf{LR}$, $\textbf{RL}$, $\textbf{LR}_k$, and $\textbf{RL}_k$.

\medskip


\section{Move machines} \label{sec-move}

In this section, we discuss the critical tool of `Move machines'.

As discussed in the introduction, an $S$-machine is called a Move machine if it possesses a certain computational property with respect to certain parameters.  However, it is also imperative to the hypotheses of \Cref{main-theorem} that the Move machine satisfy a certain `move condition' with respect to more parameters, a highly technical property relating to van Kampen diagrams over a particular group presentation associated to the machine.

To keep the focus of the discussion of this section on $S$-machines and away from their associated groups, though, we introduce a stronger, entirely computational property called the `computational $c$-move condition'.  In \Cref{sec-Move-conditions}, we will see that a Move machine satisfying the computational $c$-move condition satisfies the $(f,G)$-move condition for any relevant parameters.

Finally, given an alphabet $X$, we construct in this section the machine $\textbf{Move}_{1,X}$ and show that it is a $(F(X),n^2,0)$-Move machine which satisfies the computational $c$-move condition.




\subsection{Move machines and the computational move condition} \

Let $X$, $Y$, and $Z$ be finite sets such that $X$ is in correspondence with $Y$.  Given a reduced word $v\in F(Y\cup Z)$, define:

\begin{itemize}

\item $\bar{v}$ as the (perhaps unreduced) word over $X\cup X^{-1}$ obtained from $v$ by deleting all letters from $Z\cup Z^{-1}$ and replacing the letters of $Y^{\pm1}$ with the corresponding letters of $X^{\pm1}$.

\item $|v|_Z$ as the number of letters of $v$ which are from $Z\cup Z^{-1}$

\end{itemize}

Let $\textbf{S}$ be an $S$-machine with standard base $Q_0\dots Q_N$ such that the alphabet of the input $Q_0Q_1$-sector is $X$ and that of the $Q_{N-1}Q_N$-sector is $Y\cup Z$.  

For any $v\in F(Y\cup Z)$, let $A_\textbf{S}(v)$ be the end configuration which has $v$ written in its $Q_{N-1}Q_N$-sector and every other sector empty.

Given an admissible word $W$ of $\textbf{S}$, the number of tape letters from the input tape alphabet (or its inverse) comprising $W$ is denoted $|W|_I$, while the number of other tape letters in $W$ is denoted $|W|_{NI}=|W|_a-|W|_I$.


Suppose $\pazocal{C}:W_0\to\dots\to W_t$ is a reduced computation of $\textbf{S}$ in the standard base such that $W_0$ is the input configuration corresponding to $w$ and $W_t\equiv A_\textbf{S}(v)$.  Then $\pazocal{C}$ is called a \textit{setup computation}.

%
%
%



If in this case $\bar{v}$ is freely equal to $w$, then $\pazocal{C}$ is called a \textit{move computation of $w$}.  More specifically, given $C\in\N$, the move computation $\pazocal{C}$ is called a \textit{$C$-move computation of $w$} if $\bar{v}\equiv w$ and $|v|_Z\leq C\|w\|$.


Now, fix a subset $R\subseteq F(X)$, a constant $C\in\N$, and a non-decreasing function $f:\N\to\N$.  Suppose:

\begin{enumerate}[label=(Mv{\arabic*})]

\item No rule of $\textbf{S}$ multiplies the input sector by a letter on the left.

\item Every rule of $\textbf{S}$ has domain $X$ in the input sector.

\item Any setup computation of $\textbf{S}$ is a move computation of some word over $X\cup X^{-1}$.

\item There exists a non-decreasing function $g:\N\to\N$ with $g\sim f$ such that for every $w\in R$, there exists a $C$-move computation of $w$ of length $\leq g(\|w\|)$.

\item Any reduced computation between input configurations is empty.

\item Given $u,v\in F(Y\cup Z)$, if there exists a reduced computation of $\textbf{S}$ between $A_\textbf{S}(u)$ and $A_\textbf{S}(v)$, then $\bar{u}$ is freely equal to $\bar{v}$.

\item There exists $d\in\N$ such that for all reduced computations $\pazocal{D}:V_0\to\dots\to V_s$ with a full base, $|V_i|_a\leq d\max(|V_0|_a,|V_s|_a)$ for all $i$.

\item There exists $k\in\N$ such that for all reduced computations $\pazocal{D}:V_0\to\dots\to V_s$ with a full base, $|V_i|_{NI}\leq k\max(|V_0|_{NI},|V_s|_{NI})$ for all $i$.

\end{enumerate}

Then $\textbf{S}$ is called a \textit{$(R,f,C)$-Move machine}.


%

\begin{remark}

The presence of the alphabet $Z$ is to facilitate the introduction of `noise' when performing a move computation.  The benefit of this will not be seen in practice in this manuscript, as $Z$ is empty in the machine $\textbf{Move}_{1,X}$ constructed herein.  However, the alphabet $Z$ is critical for the inductive construction of the machines $\textbf{Move}_{k,X}$ in the sequel, justifying our general approach to Move machines herein.

\end{remark}

\begin{remark}

Using projection arguments, \Cref{multiply one letter}, and \Cref{unreduced base}, then one can show that the machine $\textbf{M}_3(1)$ constructed in \cite{W} (when restricted to the base $P_0Q_0P_1$) is a $(F(X),n,0)$-Move machine.  Through our main construction in Sections \ref{sec-auxiliary} and \ref{sec-main-machine}, we will comment on each of the defining properties (Mv1)-(Mv8) as their importance becomes apparent.

\end{remark}

For any machine satisfying property (Mv1), a rule which multiplies the input sector by a letter on the right is called a \textit{moving rule}.  Given a reduced computation of such a machine with history $H$, the number of moving rules comprising $H$ is denoted $|H|_{mv}$.

Given the base $B$ of an admissible word of $\textbf{S}$, denote $I(B)$ as the number of occurrences of $Q_1^{\pm1}$ in $B$, with the caveat that we disregard $Q_1$ as the first letter or $Q_1^{-1}$ as the last letter of $B$.

A reduced computation $\pazocal{D}:V_0\to\dots\to V_s$ of the Move machine $\textbf{S}$ with base $B$ and history $H$ is called a \textit{$c$-narrow computation} if $I(B)|H|_{mv}\leq c\max(|V_0|_{NI},|V_s|_{NI})$.

%
%
%
%

Then, the Move machine $\textbf{S}$ is said to \textit{satisfy the computational $c$-move condition} if every computation of $\textbf{S}$ in a full base is a $c$-narrow computation.

\begin{remark}

Note that the parameters defining a given Move machine are not necessarily optimal: A $(R,f,C)$-Move machine is necessarily a $(R',f',C')$-Move machine if $R'\subseteq R$, $f\preceq f'$, and $C'\geq C$.  Similarly, if such a Move machine satisfies the computational $c$-move condition, then it also satisfies the computational $c'$-move condition for any $c'\geq c$.

\end{remark}

\subsection{One-letter Machines} \

As a preliminary step in the construction of a Move machine satisfying the computational $c$-move condition, we introduce here a class of machines that will serve as `submachines' of $\textbf{Move}_{1,X}$ in particular sense.  Given a finite set $X$ and a fixed element $y\in X$, this $S$-machine is called the \textit{one-letter machine} $\textbf{One}_{X,y}$.  

The standard base of $\textbf{One}_{X,y}$ is $Q_0Q_1Q_2Q_3$, while the tape alphabet of the $Q_{i-1}Q_i$-sector is a copy $X_i$ of $X$.  Each part of the state letters is denoted by $Q_i=\{q_i(s),q_i(1),\dots,q_i(6),q_i(f)\}$, where $q_i(s)$ and $q_i(f)$ function as the start and end letters, respectively, of the given part of the hardware.

The software of the machine can be understood by the following informal description of the manner in which a `typical' computation is designed to run:

\begin{itemize}

\item The positive rule $\sigma_{s}$ changes every state letter from $q_i(s)$ to $q_i(1)$ and locks the $Q_1Q_2$-sector.  

\item The machine then operates on the subword $Q_1Q_2Q_3$ of the standard base as the machine $\textbf{RL}(X)$, with the letters $q_i(1)$ and $q_i(2)$ functioning as the start and end letters of this portion of the machine.

\item The positive rule $\tau_s$ changes every state letter from $q_i(2)$ to $q_i(3)$ and locks the $Q_1Q_2$-sector.

\item The positive rule $\theta_y=[q_0(3)\to q_0(4), \ q_1(3)\xrightarrow{\ell} y_1^{-1}q_1(4), \ q_2(3)\to q_2(4)y_3, \ q_3(3)\to q_3(4)]$, where $y_i$ is the copy of $y$ in the tape alphabet $X_i$.

\item The positive rule $\tau_{f}$ changes every state letter from $q_i(4)$ to $q_i(5)$ and locks the $Q_1Q_2$-sector.

\item The machine then operates on the subword $Q_1Q_2Q_3$ of the standard base as the machine $\textbf{RL}(X)$, with the letters $q_i(5)$ and $q_i(6)$ functioning as the start and end letters of this portion of the machine.

\item The positive rule $\sigma_f$ changes every state letter from $q_i(6)$ to $q_i(f)$ and locks the $Q_1Q_2$-sector.  

\end{itemize}

Note that the machine $\textbf{One}_{X,y}$ satisfies conditions (Mv1) and (Mv2), while the only rules of its software which are moving rules are $\theta_y^{\pm1}$.

Moreover, there is an analogue of the projection arguments of primitive machines for these machines: Given a configuration $W$ of $\textbf{One}_{X,y}$, define the \textit{projection} $\pi(W)$ to be the reduced word over $X\cup X^{-1}$ obtained from $W$ by removing the state letters, taking the natural copy of the tape words, and reducing.  The next statement then follows immediately from the construction:

\begin{lemma} \label{one-letter projection}

For any rule $\theta$ of $\textbf{One}_{X,y}$, if $W$ is a configuration which is $\theta$-admissible, then $\pi(W\cdot\theta)\equiv\pi(W)$.

\end{lemma}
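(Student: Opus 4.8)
The plan is to verify \Cref{one-letter projection} by a straightforward case analysis on the rules $\theta \in \Theta(\textbf{One}_{X,y})$, checking in each case that applying $\theta$ to a $\theta$-admissible configuration $W$ leaves the projection $\pi(W)$ unchanged. The key observation is that $\pi$ is defined by deleting state letters, taking the natural copies of the tape words, and reducing; so what matters is only how each rule modifies the tape words sector by sector. First I would dispose of the ``locking'' rules $\sigma_s$, $\tau_s$, $\theta_y$, $\tau_f$, $\sigma_f$ and the connecting rules of the embedded copies of $\textbf{RL}(X)$: these do not multiply any non-locked sector by any tape letter (the $\a_{i,\theta}$ and $\omega_{i,\theta}$ are trivial except in $\theta_y$), so they change no tape word at all and hence fix $\pi(W)$ trivially. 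For the moving rule $\theta_y$ (and likewise $\theta_y^{-1}$), the effect on tape words is to multiply the $Q_0Q_1$-sector word $u_1$ on the right by $y_1^{-1}$ and to multiply the $Q_2Q_3$-sector word $u_3$ on the left by $y_3$ (the $Q_1Q_2$-sector being locked); since $y_1$ and $y_3$ are the copies of the same letter $y \in X$, in $\pi(W)$ these become a right multiplication by $y^{-1}$ and a left multiplication by $y$ at the same interface, which cancel upon reduction — exactly as in the standard projection argument for $\textbf{LR}$/$\textbf{RL}$.

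The remaining rules to handle are the $\textbf{RL}(X)$-type rules operating on the subword $Q_1Q_2Q_3$, namely $\tau_s(x)$, $\tau_f(x)$, and the connecting rule $\xi$ for $x \in X$, appearing in two guises (the first pass with letters $q_i(1)/q_i(2)$ and the second pass with $q_i(5)/q_i(6)$). By the defining rules of $\textbf{RL}(Y)$, a rule of the form $\tau_s(x)$ multiplies the $Q_1Q_2$-sector on the left by $x_2$ (the copy of $x$ in $X_2$) and the $Q_2Q_3$-sector on the right by $x_3^{-1}$; again since $x_2$ and $x_3$ are copies of the same $x$, the induced operations on $\pi(W)$ are left multiplication by $x$ and right multiplication by $x^{-1}$ at the same interface (the state letter of $Q_2$), which cancel after reduction. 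The analogous computation handles $\tau_f(x)$, and $\xi$ is a locking/connecting rule that changes no tape word. This exhausts all rules of $\textbf{One}_{X,y}$, giving $\pi(W\cdot\theta) \equiv \pi(W)$ in every case.

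I do not expect a genuine obstacle here; this is the routine ``projection argument'' flagged earlier in the text (cf. the discussion following the rules of $\textbf{LR}(Y)$ and \Cref{primitive computations}), now applied to $\textbf{One}_{X,y}$. The only point requiring a small amount of care is bookkeeping the convention that $\pi(W)$ is the \emph{reduced} word: one must note that the two tape-word changes caused by a single application of $\theta_y$ (or of a $\textbf{RL}$-rule) are inserted on opposite sides of a single state-letter position and are mutually inverse after passing to natural copies, so that they annihilate each other in the free reduction defining $\pi$. Making this explicit — that no other tape letters lie between the two inserted letters once the state letter is deleted — is what turns the informal argument into a proof, but it is immediate from the location of the modifications prescribed by each rule. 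A remark that the statement extends verbatim to inverse rules (since $\pi(W \cdot \theta) \equiv \pi(W)$ is symmetric in $W$ and $W\cdot\theta$) completes the verification.
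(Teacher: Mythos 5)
Your proposal is correct and is exactly the argument the paper intends (the paper dispenses with it as "follows immediately from the construction"): every rule of $\textbf{One}_{X,y}$ either fixes all tape words or inserts two copies of the same letter, with opposite signs, on the two sides of a single state-letter position, so they cancel once the state letters are deleted in $\pi$. The only blemish is a terminological slip in the $\textbf{RL}$-case — "multiplies the $Q_1Q_2$-sector on the left" should read "on the right (i.e.\ adjacent to the $Q_2$-letter)" and symmetrically for the $Q_2Q_3$-sector — but your parenthetical "at the same interface" shows you have the correct picture, and the cancellation argument goes through as stated.
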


Through projection arguments given by \Cref{one-letter projection}, the next statements follow in much the same way as \Cref{primitive computations}:

\begin{lemma} \label{one-letter standard}

Let $\pazocal{C}:W_0\to\dots\to W_t$ be a reduced computation of $\textbf{One}_{X,y}$ with base $Q_1Q_2Q_3$ and history $H$.

\begin{enumerate}

\item Suppose the state letters comprising $W_0$ are start letters and those comprising $W_t$ are end letters.  Then, there exists $w\in F(X)$ such that $W_0\equiv q_1(s)q_2(s)~ w_3 ~ q_3(s)$ and $W_t= q_1(f) q_2(f) ~ y_3w_3 ~ q_3(f)$, where $w_3$ and $y_3$ are the natural copies of $w$ and $y$, respectively, over $X_3$.  Moreover, $\pazocal{C}$ is uniquely determined by $w$, $t=7+2\|w\|+2\|yw\|\leq 4\|w\|+9$, and $|H|_{mv}=1$.

\item If the state letters comprising both $W_0$ and $W_t$ are all start letters or all end letters, then $t=0$.

\item $|H|_{mv}\leq1$.

\item $|W_i|_a\leq\max(|W_0|_a,|W_t|_a)$ for all $i$.


\end{enumerate}

\end{lemma}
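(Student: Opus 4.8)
Looking at Lemma 3.6 (\texttt{one-letter standard}), I need to prove four structural facts about reduced computations of $\textbf{One}_{X,y}$ restricted to the base $Q_1Q_2Q_3$. Here is the plan.

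\textbf{Overall approach.} The machine $\textbf{One}_{X,y}$ restricted to $Q_1Q_2Q_3$ is, by construction, the machine $\textbf{RL}(X)$ run twice (sandwiched by the sector-locking rules $\sigma_s,\tau_s,\theta_y,\tau_f,\sigma_f$, which all lock the $Q_1Q_2$-sector). The key engine is therefore the analogue of \Cref{primitive computations} for $\textbf{RL}(X)$, together with the projection invariance \Cref{one-letter projection}. My plan is to first pin down what happens under each of the ``phase-transition'' rules $\sigma_s,\tau_s,\theta_y,\tau_f,\sigma_f$ by bookkeeping on state-letter indices, then glue the phases together using the $\textbf{RL}(X)$ analogue of \Cref{primitive computations}(3),(4),(5). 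Because every rule has a part of the form $q_i\to(\text{something})q_i'(\text{something})$ with $q_i,q_i'$ in the \emph{same} part $Q_i$, the history $H$ of $\pazocal{C}$ decomposes as $H\equiv H_0\,\sigma_s^{\pm1}\,H_1\,\tau_s^{\pm1}\,H_2\,\theta_y^{\pm1}\,H_3\,\tau_f^{\pm1}\,H_4\,\sigma_f^{\pm1}\,H_5$ with each $H_j$ (except $H_2$, which uses only $\textbf{RL}$-rules with the appropriate start/end index) a history in a single ``phase'' — and in fact, since $\pazocal{C}$ is reduced and the state-index pattern is monotone through the phases, at most one instance of each transition rule can occur.

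\textbf{Part (1).} Here $W_0$ has all start letters and $W_t$ all end letters, so exactly one copy of each of $\sigma_s,\tau_s,\theta_y,\tau_f,\sigma_f$ (positive) is applied in that order, and the intervening histories $H_0,H_5$ are empty while $H_1,H_3$ are reduced computations of $\textbf{RL}(X)$ in the appropriate start/end indices. Apply the $\textbf{RL}(X)$ analogue of \Cref{primitive computations}(3) to each of those two sub-computations: the first is determined by a word $u\in F(X)$ with $u\equiv$ (the $Q_2Q_3$-tape word, which starts empty after $\sigma_s$ locks $Q_1Q_2$), forcing $u$ to be the copy $w_3$ of the input word $w$; similarly the second sub-computation is determined by the tape word present after $\theta_y$, which by the definition of $\theta_y$ is $y_3w_3$. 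Counting lengths: the locking transitions contribute $5$ rules; $H_1$ has length $2\|w\|+1$ and $H_3$ has length $2\|yw\|+1$ by \Cref{primitive computations}(3); total $t=5+(2\|w\|+1)+(2\|yw\|+1)=7+2\|w\|+2\|yw\|\le 4\|w\|+9$ (since $\|yw\|\le\|w\|+1$). Only $\theta_y$ among all these is a moving rule, so $|H|_{mv}=1$. Uniqueness of $\pazocal{C}$ follows because $w$ determines $u$ and $y_3w_3$, and \Cref{primitive computations}(3) says each $\textbf{RL}$ phase is then uniquely determined.

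\textbf{Parts (2)--(4).} For (2): if $W_0,W_t$ both have all start letters, then no positive transition can be ``net applied'' — a careful index-parity argument (the state index can only return to $s$ if each transition rule appears with its inverse cancelling, contradicting reducedness unless absent) forces $H$ to use only $\textbf{RL}(X)$-rules with start-index, and then the $\textbf{RL}(X)$ analogue of \Cref{primitive computations}(4) (the ``same endpoints, empty computation'' case) gives $t=0$; the all-end-letters case is symmetric. For (3): at most one copy of $\theta_y^{\pm1}$ can appear in a reduced $\pazocal{C}$, because between two occurrences the state index would have to leave and return to level $3/4$, which (given the monotone phase structure and reducedness) is impossible; hence $|H|_{mv}\le 1$. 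For (4): apply the $\textbf{RL}(X)$ analogue of \Cref{primitive computations}(1),(2) to each $\textbf{RL}$-phase to get $|W_i|_a\le\max$ of the phase endpoints, and note the transition rules $\sigma_s,\tau_s,\theta_y,\tau_f,\sigma_f$ change $|W|_a$ by at most a bounded amount in a way that is absorbed — more precisely, chain the per-phase bounds together, using that the locking rules don't increase the relevant $a$-length beyond the phase boundaries, to conclude $|W_i|_a\le\max(|W_0|_a,|W_t|_a)$ throughout.

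\textbf{Main obstacle.} The genuinely delicate point is the combinatorial case analysis establishing that $\pazocal{C}$ decomposes into the clean phase structure when the boundary configurations are \emph{not} the canonical start/end configurations — i.e.\ ruling out ``partial'' excursions where, say, $\theta_y$ fires and then $\theta_y^{-1}$ fires later after an $\textbf{RL}$-detour. Reducedness alone does not immediately forbid this; one must use that the $\textbf{RL}(X)$-phases between two potential transitions are themselves governed by \Cref{primitive computations} (so a nonempty such phase forces the tape word to grow and shrink in a controlled way, incompatible with returning to the index that would allow the inverse transition). I expect this to be where the bulk of the careful writing goes; parts (1) and the length count are then essentially bookkeeping on top of the $\textbf{RL}(X)$ analogue of \Cref{primitive computations}.
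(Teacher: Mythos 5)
Your proposal is correct and follows the same route the paper intends: the paper itself only asserts that this lemma "follows in much the same way as Lemma \ref{primitive computations}" via the projection argument of Lemma \ref{one-letter projection}, and your phase decomposition along the transition rules $\sigma_s,\tau_s,\theta_y,\tau_f,\sigma_f$, combined with the $\textbf{RL}(X)$ analogues of Lemma \ref{primitive computations}(1)--(5) and reducedness to exclude back-and-forth excursions, is exactly that argument made explicit. You also correctly identify the only delicate point (forcing the clean phase structure for non-canonical boundary configurations, which is where \Cref{primitive computations}(4)--(5) and the emptiness of the $Q_1Q_2$-sector at transition-admissible words do the work).
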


\begin{lemma} \label{one-letter standard mv}

Let $\pazocal{C}:W_0\to\dots\to W_t$ be a reduced computation of $\textbf{One}_{X,y}$ with base $Q_1Q_2Q_3$ and history $H$.  Then $\max(|W_0|_a,|W_t|_a)\geq|H|_{mv}$.

\end{lemma}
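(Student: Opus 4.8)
The plan is to first invoke \Cref{one-letter standard}(3), which already gives $|H|_{mv}\leq 1$, and then to rule out a moving rule occurring in a computation whose two endpoints both have empty tape. If $|H|_{mv}=0$ there is nothing to prove, so assume $|H|_{mv}=1$; it then suffices to show $\max(|W_0|_a,|W_t|_a)\geq 1$, i.e.\ that at least one of $W_0,W_t$ contains an $a$-letter.

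Suppose not, so $|W_0|_a=|W_t|_a=0$. The main tool is the projection $\pi$, defined exactly as before \Cref{one-letter projection} but on admissible words with base $Q_1Q_2Q_3$ (delete the state letters, replace each tape word by its natural copy over $X\cup X^{-1}$, and reduce); just as in the proof of \Cref{one-letter standard}, $\pi$ is invariant under every rule of $\textbf{One}_{X,y}$. Since $W_0$ has no $a$-letters, $\pi(W_0)$ is the empty word, and hence $\pi(W_i)$ is the empty word for every $0\leq i\leq t$. Let $W_{j-1}\to W_j$ be the unique moving step of $\pazocal{C}$; its rule is $\theta_y$ or $\theta_y^{-1}$, and in either case exactly one of $W_{j-1},W_j$ has all state letters of index $3$ and is $\theta_y$-admissible --- call it $W$ --- while the other equals $W':=W\cdot\theta_y$ and has all state letters of index $4$. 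Because $\theta_y$ locks the $Q_1Q_2$-sector, that sector of $W$ is empty, so $\pi(W)$ is simply the reduced natural copy of the word occupying the $Q_2Q_3$-sector of $W$; since $\pi(W)$ is empty and that word is already reduced, the $Q_2Q_3$-sector of $W$ is empty as well, so $W\equiv q_1(3)\,q_2(3)\,q_3(3)$.

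Applying $\theta_y$ to $W$, the part $q_2(3)\to q_2(4)y_3$ prepends the single tape letter $y_3$ to the (now empty) $Q_2Q_3$-sector, so $W'\equiv q_1(4)\,q_2(4)\,y_3\,q_3(4)$, and therefore $\pi(W')$ is the nonempty letter of $X$ corresponding to $y$, contradicting that $\pi(W')$ is empty. Hence some endpoint of $\pazocal{C}$ contains an $a$-letter and $\max(|W_0|_a,|W_t|_a)\geq 1=|H|_{mv}$, completing the proof. I expect the only points requiring care to be the verification that $\pi$ is rule-invariant on the base $Q_1Q_2Q_3$ (essentially already used in the proof of \Cref{one-letter standard}) and the bookkeeping of the two possible orientations of the moving rule; once these are handled the computation itself is immediate.
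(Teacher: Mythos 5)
Your overall strategy (reduce to $|H|_{mv}=1$ via \Cref{one-letter standard}(3), locate the unique moving transition, and show it forces an $a$-letter somewhere) is sound, but the step you yourself single out as ``the only point requiring care'' is exactly the one that fails: the projection $\pi$ restricted to the base $Q_1Q_2Q_3$ is \emph{not} invariant under the moving rule $\theta_y^{\pm1}$. The rule $\theta_y$ preserves the projection of a full configuration only because the part $q_1(3)\xrightarrow{\ell}y_1^{-1}q_1(4)$ appends a compensating $y_1^{-1}$ to the $Q_0Q_1$-sector; that sector is invisible in the base $Q_1Q_2Q_3$, so the restricted projection changes from $\overline{w}$ to $y\overline{w}$ across the moving step --- which is precisely what your own final computation exhibits ($\pi(W)$ empty but $\pi(W')=y$). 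As written, the contradiction you reach is therefore a contradiction with your false invariance claim rather than with the hypothesis $|W_0|_a=|W_t|_a=0$, so the argument is not valid as stated.

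The gap is repairable within your framework: $\pi$ \emph{is} invariant under every non-moving rule (the $\textbf{RL}(X)$ rules only shuffle letters between the $Q_1Q_2$- and $Q_2Q_3$-sectors, and the remaining transition rules change no tape word), and since $|H|_{mv}=1$ the segments $W_0\to\dots\to W_{j-1}$ and $W_j\to\dots\to W_t$ contain no moving rule. Propagating emptiness forward from $W_0$ and backward from $W_t$ gives $\pi(W_{j-1})=\pi(W_j)=1$, after which your direct computation of $\theta_y$ on $W$ yields a genuine contradiction. Note also that the paper avoids the projection here entirely: it observes that $W_{j-1}$ and $W_j$ have $Q_2Q_3$-tape words $w_3$ and $y_3w_3$, so $\max(|W_{j-1}|_a,|W_j|_a)=\max(\|w\|,\|yw\|)\geq1$, and then invokes the space bound $|W_i|_a\leq\max(|W_0|_a,|W_t|_a)$ of \Cref{one-letter standard}(4) to push this to the endpoints --- a shorter route you may prefer.
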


\begin{proof}

By \Cref{one-letter standard}(3), it suffices to assume that $|H|_{mv}=1$.

Let $j\in\{1,\dots,t\}$ such that $W_{j-1}\to W_j$ is the transition corresponding to the move rule $\theta_y^{\pm1}$.  Then there exists $w\in F(X)$ such that $W_{j-1}\equiv q_1(3)q_2(3) ~ w_3 ~ q_3(3)$ and $W_j=q_1(4)q_2(4) ~ y_3w_3 ~ q_3(4)$ or vice versa.  In particular, $\max(|W_{j-1}|_a,|W_j|_a)=\max(\|w\|,\|yw\|)\geq1$.

Hence, the statement follows by \Cref{one-letter standard}(4).

\end{proof}


\begin{lemma} \label{one-letter standard 2}

Let $\pazocal{C}:W_0\to\dots\to W_t$ is a reduced computation of $\textbf{One}_{X,y}$ with base $Q_1Q_2Q_3$ and history $H$.  Fix $w\in F(X)$ and let $w_3$ be the natural copy of $w$ over $X_3$.  Suppose either:

\begin{itemize}

\item $W_0\equiv q_1(s)q_2(s) ~ w_3 ~ q_3(s)$ and $y^{-1}$ is not a prefix of $w$, or

\item $W_0\equiv q_1(f)q_2(f) ~ w_3 ~ q_3(f)$ and $y$ is not a prefix of $w$.

\end{itemize}

Then for any suffix $H'$ of $H$, $|W_t\cdot(H')^{-1}|_a\leq|W_t|_a-|H'|_{mv}$.

\end{lemma}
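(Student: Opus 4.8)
The plan is to reformulate the statement and then exploit the very rigid structure of $\textbf{One}_{X,y}$. Writing $H\equiv\theta_1\cdots\theta_t$, the suffixes of $H$ are exactly the words $H_j\defeq\theta_{j+1}\cdots\theta_t$ ($0\le j\le t$), and $W_t\cdot H_j^{-1}\equiv W_j$; so it suffices to show $|W_j|_a+|H_j|_{mv}\le|W_t|_a$ for every $j$. The two inputs I would use are \Cref{one-letter standard}(3), which gives $|H|_{mv}\le1$ (recall the only moving rules of $\textbf{One}_{X,y}$ are $\theta_y^{\pm1}$), and \Cref{one-letter standard}(4), which bounds intermediate $a$-lengths by those of the endpoints. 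I would also use the projection argument of \Cref{one-letter projection} in the form valid for admissible words of base $Q_1Q_2Q_3$: every \emph{non-moving} rule preserves the projection $\pi$ onto $F(X)$ obtained by deleting state letters and taking natural copies of the tape letters (this fails for $\theta_y^{\pm1}$ on this base, which is why the hypotheses appear).

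First I would handle the case that $H$ contains no moving rule. Then every $\theta_i$ preserves $\pi$, so $\pi(W_j)\equiv\pi(W_0)$; since $W_0$ has empty $Q_1Q_2$-sector in either case of the statement, $\pi(W_0)\equiv w$. Hence $|W_t|_a\ge\|\pi(W_t)\|=\|w\|=|W_0|_a$, so $\max(|W_0|_a,|W_t|_a)=|W_t|_a$ and \Cref{one-letter standard}(4) gives $|W_j|_a\le|W_t|_a$, which is the claim as $|H_j|_{mv}=0$. This step does not use the prefix hypothesis.

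Next, suppose $H$ contains exactly one moving rule $\theta_m$. I would order the (synchronized) levels of the state letters linearly as $s<1<2<3<4<5<6<f$ and observe that every non-moving rule preserves the level or shifts it one step, never between $3$ and $4$, while $\theta_y$ (resp.\ $\theta_y^{-1}$) takes level $3$ to $4$ (resp.\ $4$ to $3$) and multiplies the last sector on the left by $y_3$ (resp.\ $y_3^{-1}$). Thus the level sets $\{s,1,2,3\}$ and $\{4,5,6,f\}$ are disconnected for non-moving rules. In the first case of the statement, $W_0$ has level $s$ and $\theta_1,\dots,\theta_{m-1}$ are non-moving, so $W_{m-1}$ has level in $\{s,1,2,3\}$; as $\theta_m\in\{\theta_y,\theta_y^{-1}\}$ applies to $W_{m-1}$, this forces $\theta_m=\theta_y$ and $W_{m-1}$ of level $3$. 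Since $\theta_y$ locks the $Q_1Q_2$-sector, $W_{m-1}\equiv q_1(3)q_2(3)\,u\,q_3(3)$ with $u\in F(X_3)$ reduced, and the projection argument on $W_0\to\cdots\to W_{m-1}$ forces the natural copy of $u$ over $X$ to be $\pi(W_{m-1})\equiv\pi(W_0)\equiv w$, i.e.\ $u\equiv w_3$. Now the prefix hypothesis enters: $y^{-1}$ not being a prefix of $w$ means $y_3w_3$ is reduced, so $W_m\equiv q_1(4)q_2(4)\,y_3w_3\,q_3(4)$ has $|W_m|_a=\|w\|+1=|W_{m-1}|_a+1$, and since $W_m\to\cdots\to W_t$ is non-moving, $|W_t|_a\ge\|\pi(W_t)\|=\|\pi(W_m)\|=\|w\|+1$.

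Finally I would split at $m$. For $m\le j\le t$: $|H_j|_{mv}=0$ and \Cref{one-letter standard}(4) applied to $W_m\to\cdots\to W_t$, together with $|W_t|_a\ge\|w\|+1=|W_m|_a$, gives $|W_j|_a\le|W_t|_a$. For $0\le j\le m-1$: $|H_j|_{mv}=1$ and \Cref{one-letter standard}(4) applied to $W_0\to\cdots\to W_{m-1}$ gives $|W_j|_a\le\max(|W_0|_a,|W_{m-1}|_a)=\|w\|=|W_t|_a-1$, so $|W_j|_a+|H_j|_{mv}\le|W_t|_a$. The second case of the statement runs identically with $\theta_m=\theta_y^{-1}$, $W_{m-1}\equiv q_1(4)q_2(4)\,w_3\,q_3(4)$, $W_m\equiv q_1(3)q_2(3)\,y_3^{-1}w_3\,q_3(3)$, and the hypothesis that $y$ is not a prefix of $w$ ensuring $y_3^{-1}w_3$ is reduced. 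The main obstacle is pinning down $W_{m-1}$ exactly — that the single moving rule must occur as the bridge transition with the last sector holding precisely $w_3$ at that instant — since this is what converts the prefix hypothesis into a genuine $+1$ jump in $a$-length across the moving rule, and it is the only place the hypothesis is used.
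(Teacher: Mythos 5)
Your proof is correct, and it is exactly the projection-style argument the paper intends but leaves unwritten: the paper states \Cref{one-letter standard 2} without proof, relying on the same ingredients you use (the projection of \Cref{one-letter projection} restricted to the base $Q_1Q_2Q_3$, the bounds of \Cref{one-letter standard}(3)--(4), and the level structure forcing the unique moving rule to occur at the locked transition with exactly $w_3$ in the $Q_2Q_3$-sector). Your identification of $W_t\cdot(H')^{-1}$ with $W_j$ and the split at the moving rule, with the prefix hypothesis guaranteeing the $+1$ jump in $a$-length, is precisely how the statement is meant to follow.
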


%
%
%
%
%
%
%
%
%

\begin{lemma} \label{one-letter unreduced locked sector}

Let $\pazocal{C}:W_0\to\dots\to W_t$ be a reduced computation of $\textbf{One}_{X,y}$ with base $B$ and history $H$.  Suppose either:

\begin{enumerate}[label=(\roman*)]

\item $B$ is the subword of a full base and contains a two-letter subword of the form $Q_1Q_1^{-1}$, or 

\item $B$ is a full base containing a two-letter subword of the form $Q_2^{-1}Q_2$.  

\end{enumerate}

Then:

\begin{enumerate}[label=(\alph*)]

\item $|W_i|_a\leq2\max(|W_0|_a,|W_t|_a)$ for all $i$.

\item $|H|_{mv}=0$.

\item If every state letter comprising $W_0$ is either a start letter or an end letter, then $t=0$.

\end{enumerate}

\end{lemma}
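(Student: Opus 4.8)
The plan is to observe that the hypotheses on $B$ leave room for almost nothing to happen along $\pazocal{C}$. Among the rules of $\textbf{One}_{X,y}$, exactly $\sigma_s^{\pm1},\tau_s^{\pm1},\theta_y^{\pm1},\tau_f^{\pm1},\sigma_f^{\pm1}$ lock the $Q_1Q_2$-sector: the first four are defined to, and $\theta_y$ contains the part $q_1(3)\xrightarrow{\ell}y_1^{-1}q_1(4)$. In case (i) the base $B$ contains the subword $Q_1Q_1^{-1}$ and in case (ii) it contains $Q_2^{-1}Q_2$; in either situation \Cref{locked sectors}, applied to the sector $Q_1Q_2$, shows that no admissible word with base $B$ is $\theta$-admissible for any of these rules. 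Hence none of them occurs in the history $H$ of $\pazocal{C}$, so every transition of $\pazocal{C}$ uses a $\tau$-rule or a connecting rule belonging to one of the two copies of $\textbf{RL}(X)$ that comprise phases $2$ and $6$ of $\textbf{One}_{X,y}$. Since passing between those two phases requires one of $\tau_s,\theta_y,\tau_f$, the computation $\pazocal{C}$ takes place entirely inside a single such copy, which I denote $\textbf{R}$ (with standard base $Q_1Q_2Q_3$ and head state letter $Q_2$).

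Parts (b) and (c) are now immediate. The only moving rules of $\textbf{One}_{X,y}$ are $\theta_y^{\pm1}$, which are absent from $H$, so $|H|_{mv}=0$. For (c): if every state letter of $W_0$ is a start or end letter of $\textbf{One}_{X,y}$, then either these letters do not all lie in a single phase, so $W_0$ is $\theta$-admissible for no rule $\theta$ and $t=0$; or they are all of the form $q_i(s)$, or all of the form $q_i(f)$, in which cases the only applicable rules are $\sigma_s^{\pm1}$, respectively $\sigma_f^{\pm1}$, both of which lock $Q_1Q_2$ and so are excluded by the previous paragraph, again forcing $t=0$.

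For (a), recall that the rules of $\textbf{R}$ affect only the state letter $Q_2$ and the two sectors adjacent to it. If $B$ has no occurrence of $Q_2^{\pm1}$, then every $\tau$-rule of $\textbf{R}$ acts trivially on a word with base $B$ while the connecting rule only relabels state letters, so $|W_i|_a=|W_0|_a$ for all $i$ and (a) is trivial; this disposes of case (i), since a direct check of the four types of full base shows that the only full bases of $\textbf{One}_{X,y}$ containing $Q_1Q_1^{-1}$ are $Q_1Q_1^{-1}Q_1$, $Q_1^{-1}Q_1Q_1^{-1}$ and $Q_0Q_1Q_1^{-1}Q_0^{-1}$, none of which uses $Q_2$, and hence neither do their subwords. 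In case (ii) the same kind of check shows that $B$ is one of the three full bases $Q_2Q_2^{-1}Q_2$, $Q_2^{-1}Q_2Q_2^{-1}$, $Q_3^{-1}Q_2^{-1}Q_2Q_3$. For a reduced computation of $\textbf{R}$ in any of these I would proceed via the projection argument (the $\textbf{R}$-analogue of \Cref{one-letter projection}) together with the $\textbf{RL}(X)$-analogues of \Cref{primitive computations} and \Cref{primitive unreduced}: one splits $\pazocal{C}$ at the at most one application of the connecting rule of $\textbf{R}$ — which, one checks, can fire only when the sectors flanking the fold at $Q_2$ are empty — and shows that on each resulting half-computation the $a$-content of every sector stays below its endpoint maximum for that half, except possibly the $a$-content of a sector pinched between two occurrences of the head $Q_2$, which is controlled by a conjugation (bounded-length) argument in the spirit of \Cref{multiply two letters} and \Cref{unreduced base}. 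Adding these bounds over the sectors gives $|W_i|_a\le|W_0|_a+|W_t|_a\le 2\max(|W_0|_a,|W_t|_a)$.

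The substantive difficulty is confined to this last step: the clean statement \Cref{primitive unreduced} handles only a head-fold flanked by empty sectors (where the $a$-length is monotone), so to treat $Q_2Q_2^{-1}Q_2$, $Q_2^{-1}Q_2Q_2^{-1}$ and $Q_3^{-1}Q_2^{-1}Q_2Q_3$ with arbitrary flanking contents one genuinely needs the cut at the connecting rule, together with a careful verification that the connecting rule of $\textbf{R}$ can fire only once those contents vanish. The factor $2$ rather than $1$ in (a) is exactly the slack absorbing the growth of a pinched sector over a single such half-computation.
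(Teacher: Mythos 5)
Your skeleton is the paper's: \Cref{locked sectors} confines the computation to a single copy of $\textbf{RL}(X)$, which yields (b) and (c) at once, your enumeration of the possible full bases in each case agrees with the paper's, and case (i) is handled identically. The step you flag as the substantive difficulty is not one in the paper, and your own plan for it is essentially what the lemmas you cite already encode. For $Q_2Q_2^{-1}Q_2$ and $Q_2^{-1}Q_2Q_2^{-1}$ the connecting rule cannot occur at all: it locks one of the two sectors adjacent to $Q_2$, and these bases contain both pinched pairs $Q_2^{-1}Q_2$ and $Q_2Q_2^{-1}$, so \Cref{locked sectors} excludes it; hence every rule of the computation multiplies each of the two sectors by a single letter, and \Cref{unreduced base}, applied to each two-letter subword, gives (a) with no splitting and no separate conjugation argument. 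For $Q_3^{-1}Q_2^{-1}Q_2Q_3$, your cut at the (at most one) occurrence of the connecting rule — at which moment the two flanking sectors are empty — places each half, one of them reversed, exactly under the hypotheses of \Cref{primitive unreduced}, whose monotonicity conclusion gives $|W_i|_a\leq\max(|W_0|_a,|W_t|_a)$ on that base, and the no-connecting-rule case follows sector by sector from \Cref{multiply one letter} and \Cref{unreduced base}; this is how the paper's one-line citation of \Cref{primitive unreduced} should be read. So your proposal is correct and takes the paper's route; the only shortfall is that you left this final verification as a sketch rather than closing it with the statements already quoted.
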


\begin{proof}

By \Cref{locked sectors}, $\pazocal{C}$ must operate entirely as a single copy of $\textbf{RL}(X)$.  Statements (b) and (c) then follow immediately.

If $B$ satisfies (i), then it cannot contain a letter of the form $Q_2^{\pm1}$.  As such, no rule of $\pazocal{C}$ changes the tape words of the admissible words, so that $|W_0|_a=\dots=|W_t|_a$.

Otherwise, if $B$ satisfies (ii), then it is either $Q_3^{-1}Q_2^{-1}Q_2Q_3$ or a cyclic permutation of $Q_2^{-1}Q_2Q_2^{-1}$.  In the former case, (a) follows immediately from \Cref{primitive unreduced}; in the latter, the restriction of $\pazocal{C}$ to any two-letter subword of $B$ satisfies the hypotheses of \Cref{unreduced base}, again implying (a).

\end{proof}

\begin{lemma} \label{one-letter unreduced last sector}

Let $\pazocal{C}:W_0\to\dots\to W_t$ be a reduced computation of $\textbf{One}_{X,y}$ with base $B$ and history $H$.  Suppose either:

\begin{enumerate}[label=(\roman*)]

\item $B$ is a full base containing a two-letter subword of the form $Q_2Q_2^{-1}$.

\item $B=Q_1Q_2Q_2^{-1}Q_1^{-1}$.

\end{enumerate}

Then:

\begin{enumerate}[label=(\alph*)]

\item $|W_i|_a\leq2\max(|W_0|_a,|W_t|_a)$ for all $i$.

\item $|H|_{mv}\leq1$.

\item Suppose every state letter comprising $W_0$ is either a start letter or an end letter.  Then $|W_0|_a\leq\dots\leq|W_t|_a$.  Moreover, if every state letter comprising $W_t$ is also either a start or an end letter, then $t=0$.

\end{enumerate}

\end{lemma}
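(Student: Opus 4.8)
The plan is to follow the proof of \Cref{one-letter unreduced locked sector}, exploiting the phase structure of $\textbf{One}_{X,y}$ together with \Cref{locked sectors}. The key opening observation is that the connecting rule of each of the two copies of $\textbf{RL}(X)$ inside $\textbf{One}_{X,y}$ carries a part $q_2\xrightarrow{\ell}q_2'$, hence locks the $Q_2Q_3$-sector, so by \Cref{locked sectors} it cannot be applied to any admissible word whose base contains a subword $Q_2Q_2^{-1}$. Consequently, in both cases (i) and (ii) the state-letter indices occurring along $\pazocal{C}$ lie in a single one of the three sets $\{s,1\}$, $\{2,3,4,5\}$, $\{6,f\}$, the only passages between adjacent indices being the lone rules $\sigma_s$ ($s\leftrightarrow1$), $\tau_s$ ($2\leftrightarrow3$), $\theta_y$ ($3\leftrightarrow4$), $\tau_f$ ($4\leftrightarrow5$), $\sigma_f$ ($6\leftrightarrow f$). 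In particular moving rules, namely only $\theta_y^{\pm1}$, can occur only when $\pazocal{C}$ lives in $\{2,3,4,5\}$.

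Next I would record three ingredients. First, a projection invariant in the style of \Cref{one-letter projection}: deleting state letters, passing to natural copies of tape letters in $F(X)$, and reducing produces a word $\pi(W)$ that every rule of $\textbf{One}_{X,y}$ preserves (the checks for the sweep rules of the $\textbf{RL}(X)$-copies and for $\theta_y$ are conjugation cancellations). Second, each of $\sigma_s,\tau_s,\tau_f,\sigma_f$ and its inverse merely relabels state letters, so leaves $|\cdot|_a$ unchanged and is applicable only when every $Q_1Q_2$-sector is empty, whereas $\theta_y^{\pm1}$ changes $|\cdot|_a$ by at most $2$. Third — the analytic core — a maximal sub-computation of $\pazocal{C}$ along which the state-letter index is constant (hence one of $1,2,5,6$) is a reduced computation of a single $\textbf{RL}(X)$-half on the base $Q_1Q_2Q_2^{-1}Q_1^{-1}$ in case (ii), resp. on a sub-base of $B$ retaining the $Q_2Q_2^{-1}$ in case (i); restricting such a sub-computation to the two-letter sub-bases $Q_1Q_2$, $Q_2^{-1}Q_1^{-1}$ and $Q_2Q_2^{-1}$ and applying \Cref{multiply one letter} and \Cref{unreduced base} yields $|\cdot|_a\leq2\max(|W_0|_a,|W_t|_a)$ along it, exactly as in the proof of \Cref{one-letter unreduced locked sector}; and if the sub-computation begins just after one of $\sigma_s,\tau_s,\theta_y,\tau_f,\sigma_f$ (so its $Q_1Q_2$-sectors start empty) then the $\textbf{RL}(X)$-analogue of \Cref{primitive unreduced} shows $|\cdot|_a$ is non-decreasing along it, and \Cref{multiply one letter}(a), applied to its restriction to a sub-base $Q_1Q_2$, shows it is \emph{empty} as soon as its terminal $Q_1Q_2$-sectors are again empty.

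The three assertions then follow. For (b): were there two moving rules, the sub-computation of $\pazocal{C}$ strictly between the first and the next would start and end at index $4$ and, being moving-rule-free, would use only $\tau_f^{\pm1}$ (at index $4$) and sweep rules (at index $5$); each of its sweep stretches is bracketed by $\tau_f$ and $\tau_f^{-1}$, hence has empty $Q_1Q_2$-sectors at both ends, hence is empty by the previous paragraph, forcing that sub-computation to be empty and the two moving rules to form the reducible pair $\theta_y\theta_y^{-1}$ — a contradiction. For (a): since phase transitions preserve $|\cdot|_a$, at most one $\theta_y^{\pm1}$ (of $|\cdot|_a$-defect $\leq2$) occurs, and each sweep stretch obeys the per-stretch bound above, so $|W_i|_a\leq2\max(|W_0|_a,|W_t|_a)$. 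For (c): if every state letter of $W_0$ is a start or end letter of $\textbf{One}_{X,y}$ then, by the uniform-index requirement of the rules (otherwise $t=0$ at once), $W_0$ has index $s$ or $f$, so $\pazocal{C}$ lies in $\{s,1\}$ or $\{6,f\}$, no moving rule occurs, and monotonicity of $|\cdot|_a$ on the single intervening sweep stretch (which starts with empty $Q_1Q_2$-sectors) together with $|\cdot|_a$-invariance of $\sigma_s^{\pm1}$ (resp. $\sigma_f^{\pm1}$) gives $|W_0|_a\leq\cdots\leq|W_t|_a$; if in addition every state letter of $W_t$ is a start or end letter, that sweep stretch also ends with empty $Q_1Q_2$-sectors and so is empty, leaving $\pazocal{C}=\sigma_s\sigma_s^{-1}$ (resp. $\sigma_f\sigma_f^{-1}$), which being reduced forces $t=0$.

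The main obstacle I anticipate is the global $|\cdot|_a$-bookkeeping for (a): each sweep stretch is monotone only from its own empty-$Q_1Q_2$ starting point, so I must track $|\cdot|_a$ carefully across the phase transitions (which preserve it) and across the single admissible $\theta_y^{\pm1}$ (which shifts it by at most $2$) to be sure it never exceeds $2\max(|W_0|_a,|W_t|_a)$, and I must check that the restriction-to-two-letter-sub-base lemmas quoted above apply uniformly to every full base $B$ containing $Q_2Q_2^{-1}$ and not merely to $B=Q_1Q_2Q_2^{-1}Q_1^{-1}$.
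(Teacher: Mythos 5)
Your proposal is correct and is essentially the paper's own (much terser) argument: the paper likewise reduces everything to applying \Cref{primitive unreduced} and \Cref{multiply one letter} to the restrictions of the maximal subcomputations operating as a single $\textbf{RL}(X)$, and the blocking of the connecting rules by the subword $Q_2Q_2^{-1}$ is exactly what confines $\pazocal{C}$ to one of your three phases and forces the interior sweep stretches (bracketed by transition rules, hence by empty $Q_1Q_2$-sectors) to be empty. The uniformity worry you raise at the end is resolved the way the paper opens its proof: first invoke \Cref{one-letter unreduced locked sector} to assume $B$ contains no $Q_1Q_1^{-1}$ or $Q_2^{-1}Q_2$, after which a cyclic permutation of $B$ must contain the subword $Q_1Q_2Q_2^{-1}Q_1^{-1}$, so the two-letter restrictions your argument needs are always available.
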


\begin{proof}

By \Cref{one-letter unreduced locked sector}, it suffices to assume that $B$ contains no subword of the form $Q_1Q_1^{-1}$ or $Q_2^{-1}Q_2$.  In particular, in either case (perhaps a cyclic permutation of) $B$ must contain a subword of the form $Q_1Q_2Q_2^{-1}Q_1^{-1}$.

The statement then follows by applying Lemmas \ref{primitive unreduced} and \ref{multiply one letter} to the restriction of any subcomputation of $\pazocal{C}$ operating entirely as $\textbf{RL}$ to the appropriate subwords of $B$.

\end{proof}

\medskip

\subsection{The machine $\textbf{Move}_{1,X}$} \

We now introduce the machine $\textbf{Move}_{1,X}$ constructed from the one-letter machines defined in the previous section.

As with the one-letter machines, the standard base of $\textbf{Move}_{1,X}$ is $Q_0Q_1Q_2Q_3$, the $Q_0Q_1$-sector is the input sector, and the tape alphabet of the $Q_{i-1}Q_i$-sector is a copy $X_i$ of the alphabet $X$.

The software of the machine can be understood by the following informal description of the manner in which a `typical' computation is designed to run:

\begin{itemize}

\item The positive rule $\theta_{sf}$ switches the state letters from the start to the end letters, locking the $Q_1Q_2$- and $Q_2Q_3$-sectors.

\item For each $y\in X$, there is a `submachine' identified with $\textbf{One}_{X,y}$.  The states of this machine corresponding to the `intermediate states' $q_i(1),\dots,q_i(6)$ of $\textbf{One}_{X,y}$ are distinct for each $y$; however, the end state letters of the machine correspond to both the start and end state letters of $\textbf{One}_{X,y}$.

\end{itemize}

Note that in constructing $\textbf{Move}_{1,X}$, the `submachines' corresponding to the one-letter machines are not quite identical to the machines of the previous section, as the start and end state letters are essentially identified.  Despite this difference, Lemmas \ref{one-letter standard}-\ref{one-letter unreduced last sector} can be used to understand the full computational makeup of this machine.

To this end, as any one-letter machine satisfies properties (Mv1) and (Mv2), $\textbf{Move}_{1,X}$ does also.


Further, a reduced computation $\pazocal{C}:W_0\to\dots\to W_t$ of $\textbf{Move}_{1,X}$ is called an \textit{end computation} if every state letter of both $W_0$ and $W_t$ are end letters.  Note that it follows immediately from the definition of the rules that no rule of an end computation can be $\theta_{sf}$ or $\theta_{sf}^{-1}$.

The next statement then follows immediately from \Cref{one-letter standard}:

\begin{lemma} \label{move_1 end factorization}

Let $\pazocal{C}:W_0\to\dots\to W_t$ be a nonempty end computation of $\textbf{Move}_{1,X}$ with base $Q_1Q_2Q_3$ and history $H$.
%
%
Then there exists a factorization $H\equiv H_{y_1,w_1}^{\eps_1}\dots H_{y_k,w_k}^{\eps_k}$ where $H_{y_i,w_i}$ is the copy of the history of the unique reduced computation of $\textbf{One}_{X,y_i}$ given in \Cref{one-letter standard}(1) and $\eps_i\in\{\pm1\}$.

%

\end{lemma}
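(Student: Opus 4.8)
The statement asserts that any nonempty end computation $\pazocal{C}$ of $\textbf{Move}_{1,X}$ with base $Q_1Q_2Q_3$ factors as a concatenation of (inverses of) the ``standard'' one-letter computations of \Cref{one-letter standard}(1). The plan is to analyze the history $H$ of $\pazocal{C}$ by tracking the state letters and segmenting $\pazocal{C}$ at the rules that carry an end state letter. The key observation is that in $\textbf{Move}_{1,X}$, the end state letters are shared by all of the submachines $\textbf{One}_{X,y}$ (they are simultaneously the start and end letters of each $\textbf{One}_{X,y}$), so the only way for state letters to get from one configuration with all-end-letters to another is to run entirely inside a single submachine $\textbf{One}_{X,y}$ and return. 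Thus one should first argue that whenever an admissible word $W_i$ in $\pazocal{C}$ has all state letters equal to end letters, that point is a ``cut point'' of the factorization; and whenever $W_i$ has all state letters among the intermediate states $q_j(1),\dots,q_j(6)$ of a \emph{specific} $\textbf{One}_{X,y}$, the computation is in the middle of a $\textbf{One}_{X,y}$-block.

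First I would establish that since $\pazocal{C}$ is an end computation, no transition uses $\theta_{sf}^{\pm1}$ (this is already remarked just before the lemma), so every rule of $H$ is a rule of some submachine $\textbf{One}_{X,y}$. Next, I would partition the index set $\{0,\dots,t\}$ by looking at which submachine the state letters of $W_i$ belong to: because distinct submachines share \emph{only} their end state letters, as $i$ increases the ``active submachine'' can change only at an index $i$ where all state letters of $W_i$ are end letters. Let $0 = i_0 < i_1 < \dots < i_k = t$ be the indices where $W_i$ has all state letters equal to end letters (with $i_0=0$, $i_k=t$ forced since $\pazocal{C}$ is an end computation; one also needs that consecutive such indices are not adjacent unless the block between them is empty, but since $H$ is reduced and the computation is nonempty, we discard empty blocks). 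On each block $W_{i_{\ell-1}} \to \dots \to W_{i_\ell}$, the computation uses only rules of a single $\textbf{One}_{X,y_\ell}$ and starts and ends with all state letters being end letters, i.e.\ (in the terminology of $\textbf{One}_{X,y_\ell}$) all start-or-end letters. Now apply \Cref{one-letter standard}(1): up to replacing the block by its inverse, its history is exactly $H_{y_\ell,w_\ell}$ for a uniquely determined $w_\ell \in F(X)$, and in particular the block is non-empty and reduced. Concatenating over $\ell = 1,\dots,k$ gives the desired factorization $H \equiv H_{y_1,w_1}^{\eps_1}\dots H_{y_k,w_k}^{\eps_k}$.

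The main obstacle I anticipate is the bookkeeping around directions and reducedness at the cut points. \Cref{one-letter standard}(1) is stated for a computation going from all-start to all-end state letters; in an end computation the block could traverse a $\textbf{One}_{X,y}$-submachine in either direction, which is why the exponents $\eps_\ell$ appear, and one must check that \Cref{one-letter standard}(1) applies to $\pazocal{C}$ restricted to the block or to its reverse (the reverse of a reduced computation of $\textbf{One}_{X,y}$ is again one, by symmetry of the rule set). One must also rule out the degenerate possibility that a ``block'' between two consecutive end-letter indices $i_{\ell-1}, i_\ell$ is empty with $i_{\ell-1}=i_\ell$ — but consecutive cut indices coincide only if $W_{i_{\ell-1}}\equiv W_{i_\ell}$, and one should simply not list such indices separately; alternatively \Cref{one-letter standard}(2) shows a block whose endpoints are both all-start-or-all-end letters but which does not switch submachines is empty, so genuine blocks are those that do traverse a submachine, and \Cref{one-letter standard}(1) then gives non-emptiness and the exact history. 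A final subtlety: one should confirm that within a single block the state letters cannot revisit the all-end-letter configuration strictly between $i_{\ell-1}$ and $i_\ell$ — but that is exactly how the $i_\ell$ are defined (they are \emph{all} such indices), so each block is by construction a maximal run avoiding all-end-letter words in its interior, and \Cref{one-letter standard}(1)--(2) forces each such maximal run to be a single standard one-letter computation or its inverse. The rest is routine concatenation of histories.
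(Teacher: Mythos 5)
Your proposal is correct and is exactly the argument the paper intends: the paper gives no explicit proof, stating only that the lemma "follows immediately from Lemma \ref{one-letter standard}," and your segmentation at the all-end-letter configurations (which are the only state letters shared between the submachines $\textbf{One}_{X,y}$), followed by an application of Lemma \ref{one-letter standard}(1)--(2) to each maximal block or its inverse, is precisely the intended route. The details you flag — locking of the $Q_1Q_2$-sector forcing the hypotheses of \ref{one-letter standard}(1), symmetry of the rule set for reversed blocks, and \ref{one-letter standard}(2) excluding start-to-start or end-to-end blocks — are the right ones and are handled correctly.
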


Note that the application of \Cref{one-letter standard} to the restriction of $\pazocal{C}$ to the base $(Q_1Q_2Q_3)^{\pm1}$ implies $w_{i+1}=y_i^{\eps_i}w_i$ for all $i$ in the setting of \Cref{move_1 end factorization}.  As such, the uniqueness of the relevant reduced computation of a one-letter machine with relevant base implies $y_i\neq y_{i+1}$ or $\eps_i=-\eps_{i+1}$ for each $i$.

\begin{lemma} \label{move_1 Mv5}

Suppose $\pazocal{C}:W_0\to\dots\to W_t$ is a reduced computation of $\textbf{Move}_{1,X}$ with base $Q_1Q_2Q_3$.  If $W_0$ and $W_t$ are both start configurations, then $t=0$.

\end{lemma}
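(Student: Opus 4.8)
The plan is to reduce the statement to the structural description of end computations already available and then apply the one-letter machine lemmas. First I would observe that a start configuration of $\textbf{Move}_{1,X}$ in the base $Q_1Q_2Q_3$ is, by construction, also an end configuration: the end state letters of each submachine $\textbf{One}_{X,y}$ correspond to \emph{both} the start and end state letters of the one-letter machine, so every state letter of $W_0$ (and of $W_t$) is simultaneously a start and an end letter in the $\textbf{Move}_{1,X}$ hardware. Hence $\pazocal{C}$ is an end computation in the sense defined just before \Cref{move_1 end factorization}, and in particular no rule of $\pazocal{C}$ can be $\theta_{sf}^{\pm1}$.

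Next I would invoke \Cref{move_1 end factorization} to write the history of $\pazocal{C}$ (assuming it is nonempty, for contradiction) as $H\equiv H_{y_1,w_1}^{\eps_1}\dots H_{y_k,w_k}^{\eps_k}$, where each $H_{y_i,w_i}$ is the copy of the history of the unique reduced computation of $\textbf{One}_{X,y_i}$ of \Cref{one-letter standard}(1). The key point is then the remark following \Cref{move_1 end factorization}: applying \Cref{one-letter standard}(1) to the restriction of $\pazocal{C}$ to each block forces $w_{i+1}=y_i^{\eps_i}w_i$, i.e.\ each block multiplies the associated tape word by a single letter $y_i^{\eps_i}$ on the left. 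So, reading off the tape word of the $Q_2Q_3$-sector, the passage from $W_0$ to $W_t$ multiplies this word by $y_k^{\eps_k}\cdots y_1^{\eps_1}$ on the left (as a reduced product, by the non-cancellation condition $y_i\neq y_{i+1}$ or $\eps_i=-\eps_{i+1}$ noted in the remark). Since $W_0$ is a start configuration, its $Q_2Q_3$-tape word is empty (a start configuration of $\textbf{Move}_{1,X}$ restricted to $Q_1Q_2Q_3$ has all non-input sectors empty); and since $W_t$ is also a start configuration, its $Q_2Q_3$-tape word is likewise empty. Therefore the reduced product $y_k^{\eps_k}\cdots y_1^{\eps_1}$ must be trivial, which by the non-cancellation condition forces $k=0$, contradicting nonemptiness. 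Hence $t=0$.

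The main obstacle I anticipate is bookkeeping the precise identification of start and end letters in $\textbf{Move}_{1,X}$ versus those of the constituent one-letter machines — i.e.\ making rigorous the claim that a start configuration here is automatically an end configuration, which is exactly the subtlety flagged in the paragraph ``the `submachines' corresponding to the one-letter machines are not quite identical.'' Once that identification is pinned down, the rest is a direct application of \Cref{move_1 end factorization} and the projection/length bookkeeping of \Cref{one-letter standard}. An alternative route, avoiding the factorization lemma, would be a direct projection argument: define the projection $\pi(W)$ of a configuration to $F(X)$ (reading the $Q_0Q_1$- and $Q_2Q_3$-tape words appropriately), note by \Cref{one-letter projection} that every rule preserves it, and conclude that a nonempty reduced computation between two start configurations would have to be empty by the analogue of \Cref{primitive computations}(4) for the $\textbf{RL}$-portions together with the observation that the only moving-rule contributions cancel; but routing through \Cref{move_1 end factorization} is cleaner and is the approach I would present.
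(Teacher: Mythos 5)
There is a genuine gap at the very first step. You claim that a start configuration of $\textbf{Move}_{1,X}$ is automatically an end configuration, because of the identification of state letters in the submachines. But the identification goes the other way: it is the \emph{end} state letters of $\textbf{Move}_{1,X}$ that play the role of both the start and the end letters of each $\textbf{One}_{X,y}$; the start letters of $\textbf{Move}_{1,X}$ form a separate set, reachable from the end letters only via $\theta_{sf}^{-1}$, and the only rule applicable to a start configuration is $\theta_{sf}$ (this is visible, e.g., in the proof of \Cref{move_1 sub-move}, where the history of a computation from a start configuration must begin with $\theta_{sf}$). Consequently $\pazocal{C}$ is \emph{not} an end computation, and \Cref{move_1 end factorization} cannot be applied to it directly as you do. A second, smaller issue: a start configuration is only required to have start state letters; it is an \emph{input} configuration that must have empty non-input sectors, so your assertion that $W_0$ and $W_t$ have empty $Q_2Q_3$-tape words does not follow from the definition you cite.

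The repair is to use the observation above rather than fight it: since $W_0$ and $W_t$ consist of start letters, the history must factor as $H\equiv\theta_{sf}H'\theta_{sf}^{-1}$ with $H'$ containing no letter $\theta_{sf}^{\pm1}$ (as $H$ is reduced), so the subcomputation $W_1\to\dots\to W_{t-1}$ \emph{is} an end computation and \Cref{move_1 end factorization} applies to it. The emptiness of the $Q_2Q_3$-tape words of $W_1$ and $W_{t-1}$ then comes for free from their $\theta_{sf}^{-1}$-admissibility, since $\theta_{sf}$ locks that sector. From there your bookkeeping is exactly right: each block $H_{y_i,w_i}^{\eps_i}$ multiplies the $Q_2Q_3$-tape word by a single letter, the resulting product is reduced and nonempty if $k\geq1$, contradicting that $W_{t-1}$ must again have empty $Q_2Q_3$-sector. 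This repaired argument is precisely the paper's proof.
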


\begin{proof}

Assuming $\pazocal{C}$ is nonempty, the definition of the rules implies there exists a factorization $H\equiv\theta_{sf}H'\theta_{sf}^{-1}$ of the history of $\pazocal{C}$.  Without loss of generality, we may assume that $H'$ contains no letter of the form $\theta_{sf}^{\pm1}$.

In particular, the subcomputation $W_1\to\dots\to W_{t-1}$ with history $H'$ is an end computation.  So, since $H$ is reduce, \Cref{move_1 end factorization} implies there exists a factorization $H'\equiv H_{y_1,w_1}^{\eps_1}\dots H_{y_k,w_k}^{\eps_k}$.

As $W_1$ is $\theta_{sf}^{-1}$-admissible, it must have empty tape word in the $Q_2Q_3$-sector, {\frenchspacing i.e. $w_1=1$}.  So, $w_{t-1}\equiv y_1^{-\eps_1}\dots y_k^{-\eps_k}$.

But then $W_{t-1}$ does not have empty tape word in the $Q_2Q_3$-sector, so that it is not $\theta_{sf}^{-1}$-admissible.

\end{proof}

\begin{lemma} \label{move_1 Mv6}

Suppose $\pazocal{C}:W_0\to\dots\to W_t$ is an end computation of $\textbf{Move}_{1,X}$ in the standard base.  If $W_0$ and $W_t$ have the same word written in the input sector, then $t=0$.

\end{lemma}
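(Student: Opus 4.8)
The plan is to reduce the claim about the standard base $Q_0Q_1Q_2Q_3$ to the already-established facts about the base $Q_1Q_2Q_3$, essentially by splitting off the input sector $Q_0Q_1$. Since $\pazocal{C}$ is an end computation, no rule is $\theta_{sf}^{\pm1}$, so every applied rule is a rule of one of the submachines $\textbf{One}_{X,y}$; by construction such a rule acts on the $Q_0Q_1$-sector only by multiplication on the right by a letter of $X_1$ (this is exactly property (Mv1), together with (Mv2)). So if $\pazocal{C}$ is nonempty, first I would invoke \Cref{move_1 end factorization} applied to the restriction $\pazocal{C}'$ of $\pazocal{C}$ to the base $Q_1Q_2Q_3$: since $\pazocal{C}$ is reduced, either $\pazocal{C}'$ is empty or its history factors as $H_{y_1,w_1}^{\eps_1}\cdots H_{y_k,w_k}^{\eps_k}$. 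In the latter case each block $H_{y_i,w_i}^{\eps_i}$ contains exactly one moving rule $\theta_{y_i}^{\pm1}$ (by \Cref{one-letter standard}(1)), which multiplies the input sector on the right by $(y_i)_1^{\mp 1}$; all other rules in the block lock or fix the $Q_0Q_1$-sector. Hence the input word of $W_t$ is obtained from that of $W_0$ by appending, and freely reducing, the word $y_1^{-\eps_1}\cdots y_k^{-\eps_k}$ (in the copy $X_1$), matching the relation $w_{i+1}=y_i^{\eps_i}w_i$ noted after \Cref{move_1 end factorization}.

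Next I would use the hypothesis that $W_0$ and $W_t$ have the \emph{same} input word. This forces $y_1^{-\eps_1}\cdots y_k^{-\eps_k}$ to be freely trivial. But by the observation immediately following \Cref{move_1 end factorization}, the factorization is \emph{reduced} in the sense that consecutive blocks satisfy $y_i\neq y_{i+1}$ or $\eps_i=-\eps_{i+1}$; equivalently, the word $y_1^{-\eps_1}\cdots y_k^{-\eps_k}$ over $X$ is already a reduced word. A reduced word is freely trivial only if it is empty, so $k=0$, contradicting nonemptiness of $\pazocal{C}'$. This disposes of the case where $\pazocal{C}'$ is nonempty.

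It remains to handle the case where the restriction $\pazocal{C}'$ to $Q_1Q_2Q_3$ is empty but $\pazocal{C}$ itself is not. If $\pazocal{C}'$ is empty then no state letter in $\pazocal{C}$ ever changes and no tape word in the $Q_1Q_2$, $Q_2Q_3$ sectors changes; since the applied rules are submachine rules, and a submachine rule that fixes all of $Q_1,Q_2,Q_3$ and their adjacent tape contents must be trivial, we get $t=0$. (Alternatively one argues: any nontrivial rule of some $\textbf{One}_{X,y}$ either locks/unlocks a sector or moves a state letter index, so it would be visible in $\pazocal{C}'$.) Combining the two cases, $\pazocal{C}$ is empty, i.e.\ $t=0$.

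The main obstacle I anticipate is the bookkeeping in the first case: one must be careful that the restriction-to-subbase operation is compatible with reducedness (so that \Cref{move_1 end factorization} genuinely applies to $\pazocal{C}'$) and that the moving rules in distinct blocks do not accidentally cancel when their effect on the input sector is accumulated. The key point making this work is that the submachines $\textbf{One}_{X,y}$ are wired into $\textbf{Move}_{1,X}$ with their start and end letters identified, so reducedness of $H$ translates precisely into the ``no immediate backtracking'' condition $y_i\neq y_{i+1}$ or $\eps_i=-\eps_{i+1}$ on the blocks, which is exactly what guarantees the accumulated input-sector word $y_1^{-\eps_1}\cdots y_k^{-\eps_k}$ is freely reduced rather than merely a word that could reduce to the empty word.
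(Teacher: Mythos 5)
Your proposal is correct and follows essentially the same route as the paper: apply \Cref{move_1 end factorization} to the restriction to $Q_1Q_2Q_3$, observe that the input word of $W_t$ is $w_0$ multiplied by the reduced word $y_1^{-\eps_1}\cdots y_k^{-\eps_k}$, and conclude $k=0$. Your extra case (restriction empty but $\pazocal{C}$ nonempty) is vacuous since a restriction carries the same history as the original computation, but including it does no harm.
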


\begin{proof}

Suppose $\pazocal{C}$ is nonempty.  Then, applying \Cref{move_1 end factorization} to the restriction of $\pazocal{C}$ to the base $Q_1Q_2Q_3$, there exists a factorization $H\equiv H_{y_1,w_1}^{\eps_1}\dots H_{y_k,w_k}^{\eps_k}$ of the history $H$ of $\pazocal{C}$.  Letting $w_0$ and $w_t$ be the words written in the input sector of $W_0$ and $W_t$, then $w_t=w_0y_1^{-\eps_1}\dots y_k^{-\eps_k}$.

But $y_1^{-\eps_1}\dots y_k^{-\eps_k}$ is necessarily a reduced word, so that $w_t\neq w_0$.

\end{proof}

\begin{lemma} \label{move_1 sub-move}

Given $w\in F(X)$, there exists a unique reduced computation $\pazocal{C}_w:W_0\to\dots\to W_t$ of $\textbf{Move}_{1,X}$ with base $Q_1Q_2Q_3$ such that:

\begin{itemize}

\item Every state letter of $W_0$ is a start letter.

\item Every state letter of $W_t$ is an end letter.

\item $W_t$ has the natural copy of $w$ written in its $Q_2Q_3$-sector.

\end{itemize}

Moreover, in this case $t\leq 2\|w\|^2+7\|w\|+1$.

\end{lemma}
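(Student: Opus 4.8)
The plan is to show that $\pazocal{C}_w$ is completely rigid: its opening configuration and opening rule are forced, and after stripping them off we are left with a reduced end computation which \Cref{move_1 end factorization} converts into a non-backtracking edge-walk in the Cayley tree of $F(X)$. First I would observe that a computation with the three listed properties is nonempty (start and end state letters differ), so it has an opening transition; by inspection of the software of $\textbf{Move}_{1,X}$, the only rule applicable to a configuration all of whose state letters are start letters is $\theta_{sf}$, and since $\theta_{sf}$ locks — hence requires empty — the $Q_1Q_2$- and $Q_2Q_3$-sectors, the opening configuration must be $W_0\equiv q_1(s)q_2(s)q_3(s)$ and the first transition $W_0\xrightarrow{\theta_{sf}}W_1$ with $W_1\equiv q_1(f)q_2(f)q_3(f)$. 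The tail $\pazocal{C}':W_1\to\dots\to W_t$ is then a reduced end computation (a cancellation in it would be one in $\pazocal{C}$), so by the observation preceding \Cref{move_1 end factorization} it contains no $\theta_{sf}^{\pm1}$.

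For uniqueness I would analyze $\pazocal{C}'$ via \Cref{move_1 end factorization}. If $w$ is trivial then $W_t\equiv W_1$, and a nonempty reduced end computation returning to $W_1$ would yield (by the factorization and the computation below) a nonempty non-backtracking closed walk based at $1$ in a tree, which is impossible; so $\pazocal{C}'$ is empty and $t=1$. If $w$ is nontrivial, write $H'$ for the history of $\pazocal{C}'$ and apply \Cref{move_1 end factorization} to get $H'\equiv H_{y_1,w_1}^{\eps_1}\cdots H_{y_k,w_k}^{\eps_k}$; denoting by $v_i$ the $Q_2Q_3$-sector word reached after the $i$th block, the remark following that lemma gives $v_0=1$, $v_k=w$ and $v_i=y_i^{\eps_i}v_{i-1}$ with every $v_i$ reduced, so $(v_0,\dots,v_k)$ is an edge-walk from $1$ to $w$ in the Cayley tree of $F(X)$. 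The key point is that reducedness of $H'$ forces this walk to be non-backtracking: if $v_{i-1}=v_{i+1}$ then $y_{i+1}^{\eps_{i+1}}y_i^{\eps_i}=1$, so (as $y_i,y_{i+1}\in X$) $y_i=y_{i+1}$ and $\eps_{i+1}=-\eps_i$, and a case check on the sign of $\eps_i$ (using that the $i$th block runs between the sector words $w_i$ and $y_iw_i$) gives $w_i=w_{i+1}$, whence the $i$th and $(i{+}1)$st blocks are mutually inverse and $H'$ is not reduced. Since a non-backtracking walk between two vertices of a tree is the unique geodesic, $k=\|w\|$, and writing $w\equiv z_1\cdots z_m$ in reduced form we get $v_i\equiv z_{m-i+1}\cdots z_m$; this pins down $y_i^{\eps_i}=z_{m-i+1}$, then $w_i$, and then each block $H_{y_i,w_i}^{\eps_i}$ by \Cref{one-letter standard}(1). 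Hence $H'$, and therefore $\pazocal{C}'$ and $\pazocal{C}_w$, is unique.

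For existence I would run this description forwards: apply $\theta_{sf}$, then successively prepend the letters $z_m,z_{m-1},\dots,z_1$ of $w$ to the $Q_2Q_3$-sector. Prepending a positive letter $y\in X$ to the current sector word $u$ is realized by the forward $\textbf{One}_{X,y}$-computation of \Cref{one-letter standard}(1); prepending a negative letter $y^{-1}$ to $u$ — where $u$ does not begin with $y$, since the relevant suffix of the reduced word $w$ is reduced — is realized by running the $\textbf{One}_{X,y}$-computation of \Cref{one-letter standard}(1) with sector word $y^{-1}u$ in reverse. Because consecutive letters of a reduced word are not mutually inverse, the start/end rules at the junctions of adjacent blocks never cancel, so the resulting computation is reduced. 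For the length, each block prepending a letter to a word of length $\ell$ has length $4\ell+9$ by \Cref{one-letter standard}(1) (the two words involved have lengths $\ell$ and $\ell+1$, as $w$ is reduced), so
$$t=1+\sum_{\ell=0}^{\|w\|-1}(4\ell+9)=2\|w\|^2+7\|w\|+1,$$
which is exactly — hence at most — the claimed bound.

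The hard part will be the middle step: extracting the non-backtracking property of $(v_i)$ from reducedness of $H'$. One must verify that a backtrack really does force the two one-letter blocks to be inverse, which rests on the fact that a block $H_{y,w}$ determines its parameters $(y,w)$ — its unique moving rule identifies $y$, and \Cref{one-letter standard}(1) identifies $w$ — together with the positivity of the $y_i$, and on the companion fact that there is no nonempty reduced end computation from $W_1$ back to $W_1$. Everything else is routine bookkeeping with \Cref{one-letter standard}(1).
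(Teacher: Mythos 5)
Your proposal is correct and follows essentially the same route as the paper: force the opening rule $\theta_{sf}$ via \Cref{locked sectors}, factor the remaining end computation with \Cref{move_1 end factorization}, and recover both uniqueness and the length bound from \Cref{one-letter standard}(1). Your tree/geodesic phrasing of the uniqueness step is just a more explicit version of the paper's appeal to the remark following \Cref{move_1 end factorization} (that reducedness of the history forces the word $z_\ell^{\delta_\ell}\cdots z_1^{\delta_1}$ to be reduced), and the existence construction and the computation $1+\sum(4\ell+9)=2\|w\|^2+7\|w\|+1$ coincide with the paper's.
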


\begin{proof}

Let $w\equiv y_1^{\eps_1}\dots y_k^{\eps_k}\in F(X)$, where $y_i\in X$ and $\eps_i\in\{\pm1\}$.  For each $i$, let $H_{y_i,w_i}$ be the history of the unique computation of $\textbf{One}_{X,y_i}$ in base $Q_1Q_2Q_3$ corresponding to $w_i$, where $w_1=1$ and $w_i\equiv y_1^{\eps_1}\dots y_{i-1}^{\eps_{i-1}}$ for $i>1$.


Let $H_w\equiv\theta_{sf}H_{y_1,w_1}^{\eps_1}\dots H_{y_k,w_k}^{\eps_k}$.  Then the computation $\pazocal{C}_w$ with history $H_w$ and base $Q_1Q_2Q_3$ satisfies the statement.
Moreover, by \Cref{one-letter standard}, 
$$\|H\|=1+\sum_{i=1}^k\|H_{y_i,w_i}\|\leq 1+\sum_{i=1}^k(4\|w_i\|+9)=9k+1+4\sum_{i=1}^{k-1}i=2k^2+7k+1$$
Now suppose $\pazocal{C}:V_0\to\dots\to V_s$ is any reduced computation satisfying the statement for $w$.  As any admissible word consisting of start state letters can be $\theta$-admissible only for $\theta=\theta_{sf}$, there must be a factorization $H\equiv\theta_{sf}H'$ of the history $H$ of $\pazocal{C}$.

By \Cref{locked sectors}, it follows that $V_0\equiv W_0$ and $V_s\equiv W_t$.

The subcomputation of $\pazocal{C}$ with history $H'$ is then an end computation, so that \Cref{move_1 sub-move} implies $H'\equiv H_{z_1,v_1}^{\delta_1}\dots H_{z_\ell,v_\ell}^{\delta_\ell}$ for some $z_i\in X$, $v_i\in F(X)$, and $\delta_i\in\{\pm1\}$.

By construction, $v_1=1$ and $v_{i+1}=z_i^{\delta_i}v_i$.  But then $w\equiv z_\ell^{\delta_\ell}\dots z_1^{\delta_1}$, so that $H\equiv H_w$.

\end{proof}

%
%
%
%
%
%
%

\begin{lemma} \label{move_1 move}

For any pair of reduced words $w,v$ over $X\cup X^{-1}$, there exists a unique reduced computation $\pazocal{C}:W_0\to\dots\to W_t$ of $\textbf{Move}_{1,X}$ in the standard base such that:

\begin{itemize}

\item $W_0$ is a start configuration with the natural copy of $v$ written in its $Q_0Q_1$-sector.

\item $W_t$ is an end configuration with the natural copy of $w$ written in its $Q_2Q_3$-sector.

\end{itemize}

Moreover, in this case $t\leq 2\|w\|^2+7\|w\|+1$ and $W_t$ has the natural copy of $vw^{-1}$ written in its $Q_0Q_1$-sector.


\end{lemma}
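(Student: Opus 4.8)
The plan is to reduce to \Cref{move_1 sub-move} by restricting computations in the standard base to the sub-base $Q_1Q_2Q_3$, treating the input $Q_0Q_1$-sector separately by bookkeeping. The fact that makes this work is property (Mv2): every rule of $\textbf{Move}_{1,X}$ has full domain $X$ in the input sector, so whatever is written in the $Q_0Q_1$-sector can never obstruct a rule from being applicable. Thus a computation in the standard base is a computation in the base $Q_1Q_2Q_3$ with an auxiliary tape word passively accumulated in the input sector, and the two directions of this correspondence — restricting a standard-base computation to $Q_1Q_2Q_3$, and extending a $Q_1Q_2Q_3$-computation back — both preserve reducedness and histories.

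\emph{Existence.} I would take the unique reduced computation $\pazocal{C}_w\colon V_0\to\dots\to V_t$ of \Cref{move_1 sub-move}, whose terminal configuration carries the natural copy of $w$ in its $Q_2Q_3$-sector and whose length satisfies $t\le 2\|w\|^2+7\|w\|+1$. Applying the rules of its history in order to the start configuration $W_0$ whose $Q_0Q_1$-sector carries the natural copy of $v$ and whose $Q_1Q_2$- and $Q_2Q_3$-sectors are empty (so that its restriction to $Q_1Q_2Q_3$ is $V_0$) produces, by the full-domain remark, a reduced computation $\pazocal{C}\colon W_0\to\dots\to W_t$ in the standard base whose restriction to $Q_1Q_2Q_3$ is $\pazocal{C}_w$; so $W_0$ and $W_t$ are the required start and end configurations and $t$ satisfies the claimed bound. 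To identify the $Q_0Q_1$-sector of $W_t$, I would observe that $\theta_{sf}$ and every rule of each submachine $\textbf{One}_{X,y}$ other than the moving rule $\theta_y^{\pm1}$ leaves the $Q_0Q_1$-sector unchanged, whereas $\theta_y^{\pm1}$ multiplies the $Q_0Q_1$-sector on the right by precisely the inverse of the letter by which it multiplies the $Q_2Q_3$-sector on the left. Hence the word accumulated in the $Q_0Q_1$-sector along $\pazocal{C}$ is the inverse of the word accumulated in the $Q_2Q_3$-sector; as $\pazocal{C}$ starts with $v$ in the $Q_0Q_1$-sector and builds the copy of $w$ in the $Q_2Q_3$-sector out of the empty word, the $Q_0Q_1$-sector of $W_t$ carries the natural copy of $vw^{-1}$.

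\emph{Uniqueness.} Let $\pazocal{C}\colon W_0\to\dots\to W_t$ be any reduced computation in the standard base with the stated endpoints. A start configuration and an end configuration never coincide, so $t\ge1$; and inspection of the software of $\textbf{Move}_{1,X}$ shows the only rule applicable to a configuration all of whose state letters are start letters is $\theta_{sf}$. Since $\theta_{sf}$ locks the $Q_1Q_2$- and $Q_2Q_3$-sectors, these sectors of $W_0$ are empty, so $W_0$ is completely determined by $v$. Restricting $\pazocal{C}$ to $Q_1Q_2Q_3$ preserves the history (every rule alters every state letter), so it is a reduced computation with base $Q_1Q_2Q_3$ satisfying the hypotheses of \Cref{move_1 sub-move}: its initial configuration has only start letters, its terminal configuration has only end letters, and the latter carries the copy of $w$ in its $Q_2Q_3$-sector. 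By the uniqueness in \Cref{move_1 sub-move} this restriction equals $\pazocal{C}_w$, so $\pazocal{C}$ has the same history; since a computation is determined by its initial configuration together with its history, $\pazocal{C}$ is unique.

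\emph{Main obstacle.} The only things requiring care are the compatibility of sub-base restriction with rule application in both directions — so that reducedness and histories transfer, and the extension back to the standard base is never obstructed, which is exactly where (Mv2) enters — and the rule-by-rule bookkeeping verifying that the net right-multiplier accumulated in the $Q_0Q_1$-sector is exactly $w^{-1}$ and not some rearrangement of it. Both are routine once the definition of $\theta_y$ and of the submachines comprising $\textbf{Move}_{1,X}$ are unwound, so I expect no real difficulty: the desired computation is essentially the one from \Cref{move_1 sub-move} with a spectator input sector attached.
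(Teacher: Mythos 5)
Your proposal is correct and follows essentially the same route as the paper: restrict the computation to the base $Q_1Q_2Q_3$, invoke \Cref{move_1 sub-move} for existence, uniqueness, and the length bound, and recover the $Q_0Q_1$-sector content of $W_t$ by a projection argument (your rule-by-rule bookkeeping of the moving rules $\theta_y^{\pm1}$ is exactly the projection invariance the paper cites). The extra care you take with (Mv2) to justify extending a $Q_1Q_2Q_3$-computation back to the standard base is a detail the paper leaves implicit but is the right thing to check.
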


\begin{proof}


The restriction of $\pazocal{C}$ to the base $Q_1Q_2Q_3$ satisfies the hypotheses of \Cref{move_1 sub-move}, so that $\pazocal{C}$ is uniquely determined by $w$.  The uniqueness and quadratic bound on the length of this computation then follow from \Cref{move_1 sub-move}.

A projection argument then implies $W_t$ has the natural copy of $vw^{-1}$ written in its $Q_0Q_1$-sector.


%
%

\end{proof}

\begin{lemma} \label{move_1 basic narrow}

Let $\pazocal{C}:W_0\to\dots\to W_t$ be a reduced computation of $\textbf{Move}_{1,X}$ with base $B$.  Suppose $B$ contains a two-letter subword of the form $Q_1Q_1^{-1}$, $Q_2^{-1}Q_2$, or $Q_2Q_2^{-1}$ and either:

\begin{enumerate}[label=(\roman*)]

\item $B$ is a full base.

\item $B=Q_1Q_1^{-1}$

\item $B=Q_1Q_2Q_2^{-1}Q_1^{-1}$.

\end{enumerate}

Then $\pazocal{C}$ is a $2$-narrow computation satisfying $|W_i|_a\leq2\max(|W_0|_a,|W_t|_a)$ for all $i$.

\end{lemma}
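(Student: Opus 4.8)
The plan is to reduce the statement to the corresponding facts about one-letter machines established in Lemmas~\ref{one-letter unreduced locked sector} and~\ref{one-letter unreduced last sector}. First I would observe that, by \Cref{locked sectors}, no rule of $\textbf{Move}_{1,X}$ whose base contains a subword $Q_1Q_1^{-1}$ or $Q_2^{-1}Q_2$ can lock the $Q_1Q_2$-sector; in particular the transition rule $\theta_{sf}$ cannot occur in $\pazocal{C}$, so $\pazocal{C}$ is an end computation and its history is a concatenation of histories of the ``submachines'' $\textbf{One}_{X,y}$. Hence each such submachine-segment is a reduced computation of $\textbf{One}_{X,y}$ whose base $B$ is a subword of (or equal to) a full base and contains one of the offending two-letter subwords. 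The base $B$ cannot contain a letter $Q_0^{\pm1}$ either, since $\theta_{sf}$ is the only rule that could move the $Q_0Q_1$-sector past an end-letter reversal and it is excluded; thus $B$ is a subword of $(Q_1Q_2Q_3)^{\pm1}$ possibly with repetitions, and one checks it falls under the hypotheses of \Cref{one-letter unreduced locked sector} (cases with $Q_1Q_1^{-1}$ or $Q_2^{-1}Q_2$) or \Cref{one-letter unreduced last sector} (the $Q_2Q_2^{-1}$ case).

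Next I would assemble the two desired conclusions. For the bound $|W_i|_a\leq 2\max(|W_0|_a,|W_t|_a)$: applying \Cref{one-letter unreduced locked sector}(a) or \Cref{one-letter unreduced last sector}(a) to each subcomputation restricted to the relevant subword of $B$ gives the factor-$2$ bound locally; since $I(B)$ is finite and the tape-word sizes along the computation are controlled on each maximal submachine-segment, one chains these together using that $\textbf{Move}_{1,X}$ does not change the input-sector word during an end computation (a projection argument, as in the proof of \Cref{move_1 Mv5}) and that the end-letter configurations at the junctions of the segments have monotone-comparable $a$-lengths. For the $2$-narrow conclusion, $I(B)\leq 1$ when $B=Q_1Q_1^{-1}$ or $B=Q_1Q_2Q_2^{-1}Q_1^{-1}$ (by the convention discarding a boundary $Q_1^{\pm1}$), and in the full-base case $I(B)$ is bounded by a constant; meanwhile $|H|_{mv}$ counts moving rules, each of which is some $\theta_y^{\pm1}$, and by \Cref{one-letter unreduced locked sector}(b) the segments with a $Q_1Q_1^{-1}$ or $Q_2^{-1}Q_2$ subword contribute no moving rules, while by \Cref{one-letter unreduced last sector}(b) a segment with a $Q_2Q_2^{-1}$ subword contributes at most one. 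Combining, $I(B)\,|H|_{mv}$ is bounded by a constant times $\max(|W_0|_{NI},|W_t|_{NI})$; a moving-rule application $\theta_y^{\pm1}$ forces at least one non-input tape letter at an endpoint (cf.\ the argument of \Cref{one-letter standard mv}), so $\max(|W_0|_{NI},|W_t|_{NI})\geq 1$ whenever $|H|_{mv}\geq 1$, yielding the inequality with constant $2$.

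The main obstacle I anticipate is the bookkeeping at the seams between consecutive one-letter submachine-segments: the local lemmas bound $|W_i|_a$ in terms of the $a$-lengths at the endpoints of each segment, but those intermediate endpoints need not be among $W_0,W_t$, so one must argue that the $a$-length at each seam is itself $\leq\max(|W_0|_a,|W_t|_a)$. This should follow from the monotonicity clauses (\Cref{one-letter unreduced last sector}(c) gives $|W_0|_a\leq\dots\leq|W_t|_a$ on segments of the $Q_2Q_2^{-1}$ type, and the locked-sector segments are constant or have the parabolic behaviour of \Cref{primitive unreduced}), but stitching these monotonicities into a single global bound—especially distinguishing which seams are ``peaks''—is the delicate point; once the seam values are controlled, the factor $2$ in the statement is exactly the slack needed to absorb the worst single segment.
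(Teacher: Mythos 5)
There is a genuine gap, and it sits exactly where you flag your "main obstacle." The paper's proof has no seam bookkeeping at all, because the decisive step is to show there are no seams: after excluding $\theta_{sf}^{\pm1}$ via \Cref{locked sectors}, parts (c) of \Cref{one-letter unreduced locked sector} and \Cref{one-letter unreduced last sector} are applied not for monotonicity but to force $\pazocal{C}$ to operate entirely as a computation of a \emph{single} one-letter machine $\textbf{One}_{X,y}$ -- at any junction between two submachine segments every state letter would be a start or end letter of the one-letter machines, and (c) then makes the adjacent segment empty, contradicting maximality. Once the computation is a single $\textbf{One}_{X,y}$-computation, parts (a) give $|W_i|_a\leq2\max(|W_0|_a,|W_t|_a)$ outright, parts (b) give $|H|_{mv}\leq1$, and since $I(B)\leq2$ and the unreduced subword forces a non-empty tape word in the corresponding (non-input) sector of every $W_i$, one has $\max(|W_0|_{NI},|W_t|_{NI})\geq1$ and the $2$-narrow condition follows. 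Your proposal instead keeps a multi-segment decomposition and asserts that $I(B)\,|H|_{mv}$ is "bounded by a constant times $\max(|W_0|_{NI},|W_t|_{NI})$"; but with $k$ segments of the $Q_2Q_2^{-1}$ type, your own accounting only gives $|H|_{mv}\leq k$, and nothing in your sketch bounds $k$ (or the $a$-lengths at the intermediate junctions) in terms of the endpoint words, so neither the constant $2$ in the narrow condition nor the factor $2$ in the space bound is actually established. The clauses you cite for the seams are also misused: \Cref{one-letter unreduced locked sector}(c) gives emptiness (not "constant or parabolic behaviour"), and that emptiness is precisely what removes the seams.

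Two smaller inaccuracies: your exclusion of $\theta_{sf}$ is argued only from the $Q_1Q_2$-sector, which does not cover the case where $B$ contains only a $Q_2Q_2^{-1}$ subword (there one must use that $\theta_{sf}$ also locks the $Q_2Q_3$-sector, whose tape word is forced non-empty); and the claim that $B$ contains no letter $Q_0^{\pm1}$ in case (i) is false -- a full base such as $Q_0Q_1Q_1^{-1}Q_0^{-1}$ is allowed -- though this is harmless, since \Cref{one-letter unreduced locked sector} and \Cref{one-letter unreduced last sector} are stated for (subwords of) full bases and need no such reduction.
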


\begin{proof}

By \Cref{locked sectors}, no letter of the history $H$ of $\pazocal{C}$ can be $\theta_{sf}^{\pm1}$.  Further, Lemmas \ref{one-letter unreduced locked sector}(c) and \ref{one-letter unreduced last sector}(c) imply that $\pazocal{C}$ operates entirely as a computation of a single one-letter machine $\textbf{One}_{X,y}$.

It then follows from Lemmas \ref{one-letter unreduced locked sector}(a) and \ref{one-letter unreduced last sector}(a) that $|W_i|_a\leq2\max(|W_0|_a,|W_t|_a)$.  

Moreover, Lemmas \ref{one-letter unreduced locked sector}(b) and \ref{one-letter unreduced last sector}(b) imply that $|H|_{mv}\leq1$.  Since $I(B)\leq2$, it then suffices to show that $\max(|W_0|_a,|W_t|_a)\geq1$.  But the unreduced base implies the tape word in this sector must be non-empty in each admissible word, so that $|W_i|_a\geq1$ for all $i$.

\end{proof}

\begin{lemma} \label{move_1 comp move}

Identifying the input alphabet $X_1$ with $X$, $\textbf{Move}_{1,X}$ is a $(F(X),n^2,0)$-Move machine satisfying the computational $2$-move condition.

\end{lemma}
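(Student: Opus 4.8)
The plan is to verify the two assertions of \Cref{move_1 comp move} in turn: first that $\textbf{Move}_{1,X}$ is a $(F(X),n^2,0)$-Move machine, which requires checking the eight properties (Mv1)--(Mv8); and second that it satisfies the computational $2$-move condition, which requires bounding $I(B)|H|_{mv}$ for every computation in a full base.

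For the Move machine properties, the strategy is to dispatch each condition by appealing to the lemmas already established for $\textbf{Move}_{1,X}$ and its one-letter submachines. Properties (Mv1) and (Mv2) were already noted to hold because every one-letter submachine satisfies them and $\theta_{sf}$ locks the relevant sectors. For (Mv3), a setup computation begins at an input configuration and ends at $A_{\textbf{S}}(v)$; since $Z=\emptyset$ here, I would use a projection argument together with \Cref{move_1 move} (or rather the structure of end computations via \Cref{move_1 end factorization}) to show $\bar v\equiv v$ equals, as a reduced word, the word $w$ read off the input, so $\pazocal{C}$ is in fact a move computation (and $|v|_Z=0$). For (Mv4): given $w\in F(X)$, \Cref{move_1 move} supplies a reduced computation taking a start configuration with $v:=w$ in the input sector to an end configuration with $w$ in the last sector and with empty input sector (since $vw^{-1}=1$), of length $\leq 2\|w\|^2+7\|w\|+1\preceq n^2$; this is the required $0$-move computation, giving the parameter $f(n)=n^2$. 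Property (Mv5), that reduced computations between input configurations are empty, is exactly \Cref{move_1 Mv5} (after noting an input configuration is in particular a start configuration and the base $Q_0Q_1Q_2Q_3$ restricts to $Q_1Q_2Q_3$). Property (Mv6) is exactly \Cref{move_1 Mv6}. For (Mv7) and (Mv8), I would combine \Cref{one-letter standard}(4), \Cref{one-letter unreduced locked sector}(a), \Cref{one-letter unreduced last sector}(a), and \Cref{move_1 basic narrow} to get a uniform constant ($d=2$ works, and similarly $k=2$ for the non-input tape letters) bounding the intermediate $a$-lengths over any full base; for the genuinely full bases not yet covered one unwinds the base into the pieces $Q_1Q_2Q_3$-type and unreduced two-letter pieces.

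For the computational $2$-move condition, I must show every reduced computation $\pazocal{D}:V_0\to\dots\to V_s$ in a full base $B$ is $2$-narrow, i.e. $I(B)|H|_{mv}\leq 2\max(|V_0|_{NI},|V_s|_{NI})$. The case analysis splits on $B$. If $B$ (or $B^{-1}$) is the standard base $Q_0Q_1Q_2Q_3$, then $I(B)=0$ (the single occurrence of $Q_1$ is the disregarded boundary letter, similarly $Q_1^{-1}$), so the left side is $0$ and the inequality is trivial. If $B$ is a faulty or other full base containing a subword $Q_1Q_1^{-1}$, $Q_2^{-1}Q_2$, or $Q_2Q_2^{-1}$, then \Cref{move_1 basic narrow} directly gives $2$-narrowness (and the $a$-length bound). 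The remaining full bases to treat are those that do not contain such a subword but still have $I(B)\geq 1$ — for instance bases like $Q_0Q_1Q_0^{-1}$, $Q_3^{-1}Q_2^{-1}Q_1^{-1}\cdots$, or faulty bases starting and ending with $Q_3$ or $Q_0$ with $Q_1^{\pm1}$ appearing interiorly; here I would argue that the restriction to the relevant sub-base forces the computation to operate with a locked $Q_1Q_2$-sector (via \Cref{locked sectors}, since any rule touching that sector is $\theta_{sf}$ or a one-letter rule whose moving rule $\theta_y$ locks $Q_1Q_2$ in the wrong direction relative to the forced base shape) and hence $|H|_{mv}=0$.

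The main obstacle I anticipate is the bookkeeping in the last part: correctly enumerating all the shapes a full base can take for the four-letter standard base $Q_0Q_1Q_2Q_3$ (recall full bases either contain the standard base, or are faulty, or start with $Q_0$ and end with $Q_0^{-1}$ with no faulty subword, or start with $Q_3^{-1}$ and end with $Q_3$ with no faulty subword), and then showing in each case that either $I(B)=0$ or that the presence of $Q_1^{\pm1}$ in an interior position — combined with fullness forbidding certain adjacencies — forces $|H|_{mv}=0$ because no moving rule $\theta_y^{\pm1}$ can ever apply (its application would require the $Q_1Q_2$-sector, which the base shape prevents from ever being acted on, or requires an occurrence of $Q_1$ that the fullness hypothesis rules out). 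Once the case distinction is laid out cleanly, each individual case reduces to an invocation of \Cref{locked sectors}, \Cref{move_1 basic narrow}, or the structural lemmas \Cref{one-letter standard}--\Cref{one-letter unreduced last sector}; the quadratic bound $f(n)=n^2$ and constant $C=0$ come for free from \Cref{move_1 move}.
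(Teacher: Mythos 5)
Your dispatch of (Mv1)--(Mv6) matches the paper's (the paper likewise cites \Cref{move_1 move} for (Mv3)--(Mv4) and Lemmas \ref{move_1 Mv5}, \ref{move_1 Mv6} for (Mv5)--(Mv6)), and your use of \Cref{move_1 basic narrow} for bases containing $Q_1Q_1^{-1}$, $Q_2^{-1}Q_2$, or $Q_2Q_2^{-1}$ is also how the paper proceeds. But there is a genuine gap at the center of your argument for the computational $2$-move condition: you assert that $I(B)=0$ when $B$ is the standard base $Q_0Q_1Q_2Q_3$, on the grounds that "the single occurrence of $Q_1$ is the disregarded boundary letter." That is a misreading of the definition. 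The caveat disregards $Q_1$ only when it is literally the \emph{first} letter of $B$ (and $Q_1^{-1}$ only when it is the last letter); in the standard base the first letter is $Q_0$, so the occurrence of $Q_1$ counts and $I(B)=1$. Likewise $I(B)=2$ for the other surviving full base $Q_3^{-1}Q_2^{-1}Q_1^{-1}Q_1Q_2Q_3$. Consequently the inequality $I(B)|H|_{mv}\leq 2\max(|V_0|_{NI},|V_s|_{NI})$ is \emph{not} trivial in the standard base, and your fallback — arguing that bases with $I(B)\geq1$ not covered by \Cref{move_1 basic narrow} force $|H|_{mv}=0$ via locked sectors — is false there: the moving rules $\theta_y^{\pm1}$ certainly apply in standard-base computations (this is exactly what \Cref{move_1 move} exhibits, with $|H|_{mv}$ as large as $\|w\|$).

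What is missing is therefore the entire quantitative heart of the proof: for a computation in the standard base (or in $Q_3^{-1}Q_2^{-1}Q_1^{-1}Q_1Q_2Q_3$) one must genuinely bound the number of moving rules by the non-input tape lengths at the ends. The paper does this by factoring the history, via \Cref{move_1 end factorization}, as $H_1H_{y_1,w_1}^{\eps_1}\dots H_{y_k,w_k}^{\eps_k}H_2$ where each $H_{y_i,w_i}$ is the history of the canonical one-letter computation and $H_1,H_2$ are partial one-letter histories, and then tracking the lengths $\|w_i\|$ of the words in the $Q_2Q_3$-sector: since $y_1^{\eps_1}\dots y_k^{\eps_k}$ is reduced, $\|w_0\|+\|w_{k+1}\|\geq k$, which together with \Cref{one-letter standard mv} and \Cref{one-letter standard 2} (to control the boundary pieces $H_1,H_2$, splitting on whether the prefix condition of that lemma holds) yields $\max(|W_0|_{NI},|W_t|_{NI})\geq \tfrac{1}{2}I(B)|H|_{mv}$ and the bounds $|W_i|_a\leq4\max(|W_0|_a,|W_t|_a)$, $|W_i|_{NI}\leq4\max(|W_0|_{NI},|W_t|_{NI})$ needed for (Mv7)--(Mv8). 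Without this three-case analysis your proof establishes the $2$-narrow condition only for the degenerate bases and omits the case the condition was designed to control.
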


\begin{proof}

By \Cref{move_1 move}, $\textbf{Move}_{1,X}$ satisfies (Mv3) and (Mv4).  Further, the machine satisfies (Mv5) by \Cref{move_1 Mv5} and (Mv6) by \Cref{move_1 Mv6}.

Let $\pazocal{C}:W_0\to\dots\to W_t$ be a reduced computation of $\textbf{Move}_{1,X}$ with full base $B$ and history $H$.  It then suffices to show that $\pazocal{C}$ is a $2$-narrow computation such that $|W_i|_a\leq4\max(|W_0|_a,|W_t|_a)$ and $|W_i|_{NI}\leq4\max(|W_0|_{NI},|W_t|_{NI})$ for all $i$.

As rules of the form $\theta_{sf}^{\pm1}$ change no tape word and can only appear as the first or last letter of $H$, it suffices to assume that $H$ contains no letter of this form.  

Suppose $B$ contains a subword of the form $Q_1Q_1^{-1}$, $Q_2^{-1}Q_2$, or $Q_2Q_2^{-1}$.  Then \Cref{move_1 basic narrow} implies $\pazocal{C}$ is $2$-narrow and satisfies $|W_i|_a\leq2\max(|W_0|_a,|W_t|_a)$.  

Moreover, if $B$ also contains a subword of the form $Q_1^{-1}Q_1$ or $(Q_0Q_1)^{-1}$, then the maximal subword of $B$ containing no such subword is either $Q_1Q_1^{-1}$ or $Q_1Q_2Q_2^{-1}Q_1^{-1}$.  But then \Cref{move_1 basic narrow}(ii) or (iii) implies $|W_i|_{NI}\leq2\max(|W_0|_{NI},|W_t|_{NI})$.

Hence, it suffices to assume that $B$ contains no subword of the form $Q_1Q_1^{-1}$, $Q_2^{-1}Q_2$, or $Q_2Q_2^{-1}$.

By the makeup of full bases, if $B$ contains a subword of the form $Q_3^{-1}Q_3$, then $B\equiv Q_3^{-1}Q_3$.  The definition of the rules then implies $|W_0|_a=\dots=|W_t|_a$, while $\pazocal{C}$ is $2$-narrow (indeed $0$-narrow) since $I(B)=0$.

Similarly, it suffices to assume that $B$ contains no subword of the form $Q_0Q_0^{-1}$.

Thus we may assume $B$ is either the standard base or $Q_3^{-1}Q_2^{-1}Q_1^{-1}Q_1Q_2Q_3$.  In particular, $B$ contains a subword $B'$ of the form $Q_1Q_2Q_3$.  Let $\pazocal{C}':W_0'\to\dots\to W_t'$ be the restriction of $\pazocal{C}$ to $B'$.

\textbf{1.} Suppose $\pazocal{C}$ corresponds to a computation of a single one-letter machine.  Then the corresponding computation satisfies the hypotheses of \Cref{one-letter standard}, so that (3) and (4) imply $|H|_{mv}\leq1$ and $|W_i'|_a\leq\max(|W_0'|_a,|W_t'|_a)$ for all $i$.

A similar argument applies to the restriction of $\pazocal{C}$ to any subword of $B$ of the form $(Q_1Q_2Q_3)^{-1}$.  So, as these two subwords contain all non-input tape words, $|W_i|_{NI}\leq2\max(|W_0|_{NI},|W_t|_{NI})$.

Further, $|H|_{mv}\leq1$ implies $|W_i|_I\leq\max(|W_0|_I,|W_t|_I)$ for all $i$, so that $|W_i|_a\leq2\max(|W_0|_a,|W_t|_a)$ for all $i$.

Since the reduced computation corresponding to $\pazocal{C}'$ satisfies the hypotheses of \Cref{one-letter standard mv}, it follows that $\max(|W_0'|_a,|W_t'|_a)\geq |H|_{mv}$.  Note that the analogous argument applies to the restriction to any subword of the form $(Q_1Q_2Q_3)^{-1}$.  So, as the number of such subwords of $B$ is the same as $I(B)$, it follows that $\max(|W_0|_{NI},|W_t|_{NI})\geq I(B)|H|_{mv}$.

Hence, $\pazocal{C}$ is a $2$-narrow (indeed a $1$-narrow) computation.

\textbf{2.} Suppose $H\equiv H_1H_2$ where the nonempty subcomputation $\pazocal{C}_i$ with history $H_i$ corresponds to a computation of a single one-letter machine.

Let $\bar{\pazocal{C}}_1:W_s\to\dots\to W_0$ be the `inverse' subcomputation with history $H_1^{-1}$.  The restriction of $\bar{\pazocal{C}}_1$ to $B'$ then corresponds to a reduced computation $\pazocal{D}_1:V_s\to\dots\to V_0$ of a one-letter machine $\textbf{One}_{X,y_1}$ with base $Q_1Q_2Q_3$ and history $H_1^{-1}$.  By construction, the state letters of $V_s$ must either be start or end letters.

Similarly, the restriction of $\pazocal{C}_2$ to $B'$ corresponds to a reduced computation $\pazocal{D}_2:U_s\to\dots\to U_t$ of a one-letter machine $\textbf{One}_{X,y_2}$ with base $Q_1Q_2Q_3$ and history $H_2$.  Again, the state letters of $U_s$ must either be start or end letters.

Suppose $\pazocal{D}_1$ satisfies the hypotheses of \Cref{one-letter standard 2}.  Then $|V_i|_a\leq|V_0|_a$ for all $0\leq i\leq s$ and $|V_0|_a\geq|V_s|_a+|H_1|_{mv}$.  Applying \Cref{one-letter standard}(4) to $\pazocal{D}_2$ further implies $|U_i|_a\leq\max(|U_s|_a,|U_t|_a)$ for all $s\leq i\leq t$, so that $|W_i'|_a\leq\max(|W_0'|_a,|W_t'|_a)$ for all $i$.  \Cref{one-letter standard mv} further implies $\max(|U_s|_a,|U_t|_a)\geq|H_2|_{mv}$, so that:
\begin{align*}
|W_0'|_a+|W_t'|_a&=|V_0|_a+|U_t|_a\ge|V_s|_a+|U_t|_a+|H_1|_{mv} \\
&=|U_s|_a+|U_t|_a+|H_1|_{mv}\geq|H_2|_{mv}+|H_1|_{mv} \\
&=|H|_{mv}
\end{align*}
Otherwise, assume $\pazocal{D}_1$ does not satisfy the hypotheses of \Cref{one-letter standard 2}.  Let $w$ be the reduced word over $X\cup X^{-1}$ whose copy is written in the $Q_2Q_3$-sector of $V_s$.  If the state letters comprising $V_s$ are start letters, then $y_1^{-1}$ is a prefix of $w$; if they are end letters, then $y_1$ is a prefix of $w$.

Note that if $y_2=y_1$, then the state letters of $U_s$ must be the opposite of those of $V_s$.  But $U_s$ also has the natural copy of $w$ written in its $Q_2Q_3$-sector, so that $\pazocal{D}_2$ satisfies the hypotheses of \Cref{one-letter standard 2}.  The analogous argument to that above then implies $|W_i'|_a\leq\max(|W_0'|_a,|W_t'|_a)$ for all $i$ and $|W_0'|_{a}+|W_t'|_{a}\geq|H|_{mv}$.

The same argument then applies to the restriction $\pazocal{C}''$ of $\pazocal{C}$ to any subword of $B$ of the form $(Q_1Q_2Q_3)^{-1}$.  Hence, $|W_i|_{NI}\leq2\max(|W_0|_{NI},|W_t|_{NI})$ for all $i$ and $|W_0|_{NI}+|W_t|_{NI}\geq I(B)|H|_{mv}$, so that $\pazocal{C}$ is a $2$-narrow computation.

As the tape word of any other sector corresponds to an input tape alphabet and is only altered by move computations, it then follows that 
\begin{align*}
|W_i|_I&\leq\max(|W_0|_I,|W_t|_I)+I(B)|H|_{mv}\leq(|W_0|_I+|W_t|_I)+(|W_0|_{NI}+|W_t|_{NI}) \\
&=|W_0|_a+|W_t|_a\leq2\max(|W_0|_a,|W_t|_a)
\end{align*}
for all $i$.  Hence, $|W_i|_a=|W_i|_I+|W_i|_{NI}\leq4\max(|W_0|_a,|W_t|_a)$ for all $i$.

\textbf{3.} By \Cref{move_1 end factorization}, it suffices to assume there exists a factorization $H\equiv H_1H_{y_1,w_1}^{\eps_1}\dots H_{y_k,w_k}^{\eps_k}H_2$ such that:

\begin{itemize}

\item $H_1$ and $H_2$ are histories of (perhaps empty) computations of a single one-letter machine.

\item $H_{y_i,w_i}$ is the natural copy of the history of the unique reduced computation of $\textbf{One}_{X,y_i}$ corresponding to $w_i\in F(X)$ given in \Cref{one-letter standard}(1).

\item $\eps_i\in\{\pm1\}$ and $k\geq1$.

\end{itemize}

Setting $w_{k+1}=y_k^{\eps_k}w_k$, it follows that $w_{i+1}=y_i^{\eps_i}w_i$ for each $i$.

Let $j\in\{1,\dots,k+1\}$ be the index for which $\|w_j\|$ is minimal.  Then, $\|w_{i+1}\|=\|w_i\|+1$ for all $i\geq j$ and $\|w_{i-1}\|=\|w_i\|+1$ for all $i\leq j$.  In particular, $\|w_0\|+\|w_{k+1}\|\geq k$ and $\|w_i\|\leq\max(\|w_0\|,\|w_{k+1}\|)$ for all $i$.

Let $\pazocal{C}_i:W_{r(i)}\to\dots\to W_{r(i+1)}$ be the subcomputation of $\pazocal{C}$ with history $H_{y_i,w_i}^{\eps_i}$.  The restriction of $\pazocal{C}_i$ to $B'$ (or to any subword of $B$ of the form $(Q_1Q_2Q_3)^{\pm1}$) satisfies the hypotheses of \Cref{one-letter standard}, so that (4) implies the $a$-length of any admissible word in this computation is at most $\max(\|w_i\|,\|w_{i+1}\|)$.  In particular, $|W_i|_{NI}\leq\max(|W_{r(1)}|_{NI},|W_{r(k+1)}|_{NI})$ for all $r(1)\leq i\leq r(k+1)$.

Further, as \Cref{one-letter standard}(3) implies $|H_{y_i,w_i}|_{mv}=1$ and \Cref{move_1 end factorization} implies $y_1^{\eps_1}\dots y_k^{\eps_k}$ is a reduced word, it follows that $|W_i|_I\leq\max(|W_{r(1)}|_I,|W_{r(k+1)}|_I)+I(B)(k/2)$ for all $r(1)\leq i\leq r(k+1)$.  

But $I(B)(k/2)\leq I(B)\max(\|w_0\|,\|w_{k+1}\|)\leq\max(|W_{r(1)}|_{NI},|W_{r(k+1)}|_{NI})$, so that:
\begin{align*}
|W_i|_a&=|W_i|_I+|W_i|_{NI}\leq2\max(|W_{r(1)}|_{NI},|W_{r(k+1)}|_{NI})+\max(|W_{r(1)}|_I,|W_{r(k+1)}|_I) \\
&\leq4\max(|W_{r(1)}|_a,|W_{r(k+1)}|_a)
\end{align*}
for all $r(1)\leq i\leq r(k+1)$.


For $0\leq i\leq r(1)$, let $H_{1,i}$ be the prefix of $H_1$ with $\|H_{1,i}\|=i$.  

If the computation of the one-letter machine corresponding to the inverse subcomputation of $\pazocal{C}'$ with history $H_1^{-1}$ satisfies the hypotheses of \Cref{one-letter standard 2}, then $|W_i|_{NI}\leq|W_0|_{NI}-I(B)|H_{1,i}|_{mv}$ for all $i\leq r(1)$.  But also $|W_i|_I\leq|W_0|_I+I(B)|H_{1,i}|_{mv}$, so that $|W_i|_a\leq|W_0|_a$ for all $i\leq r(1)$ and $|W_0|_{NI}\geq|W_{r(1)}|_{NI}+I(B)|H_1|_{mv}$.  Note that these inequalities are satisfied trivially if $\|H_1\|=0$.

Similarly, if either $\|H_2\|=0$ or the computation corresponding to the subcomputation of $\pazocal{C}'$ with history $H_2$ satisfies the hypotheses of \Cref{one-letter standard 2}, then $|W_i|_a\leq |W_t|_a$ for all $i\geq r(k+1)$ and $|W_t|_{NI}\geq|W_{r(k+1)}|_{NI}+I(B)|H_2|_{mv}$.

Hence, if both computations are either empty or satisfy the hypotheses of \Cref{one-letter standard 2}, then $|W_i|_{NI}\leq\max(|W_0|_{NI},|W_t|_{NI})$ and $|W_i|_a\leq4\max(|W_0|_a,|W_t|_a)$ for all $i$, while also:
\begin{align*}
2\max(|W_0|_{NI},|W_t|_{NI})&\geq|W_0|_{NI}+|W_t|_{NI} \\
&\geq|W_{r(1)}|_{NI}+|W_{r(k+1)}|_{NI}+I(B)(|H_1|_{mv}+|H_2|_{mv}) \\
&\geq I(B)(k+|H_1|_{mv}+|H_2|_{mv}) \\
&=I(B)|H|_{mv}
\end{align*}

Hence, perhaps passing to the inverse computation, it suffices to assume that the computation $\pazocal{D}:V_0\to\dots\to V_s$ of the one-letter machine corresponding to the inverse subcomputation of $\pazocal{C}'$ with history $H_1^{-1}$ is nonempty and does not satisfy the hypotheses of \Cref{one-letter standard 2}.  In particular, letting $y\in X$ such that $\pazocal{D}$ is a reduced computation of $\textbf{One}_{X,y}$, then $w_1$ has prefix $y^\delta$ where:

\begin{itemize}

\item $\delta=-1$ if the state letters comprising $V_0$ are start letters, or

\item $\delta=1$ if the state letters comprising $V_0$ are end letters.

\end{itemize}

Suppose $y_1=y$.  Then, since only one rule of $\textbf{One}_{X,y}$ applies to an admissible word consisting of start (respectively end) letters, that $H$ is reduced implies $\eps_1=\delta$.  

In particular, in any case $w_1$ does not have prefix $y_1^{-\eps_1}$.

As a result, $\|w_{i+1}\|=\|w_i\|+1$ for all $i$.  This then implies that $|W_{r(k+1)}|_{NI}=|W_{r(1)}|_{NI}+I(B)k$, so that $|W_{r(k+1)}|_a\geq|W_{r(1)}|_a$.  Hence, $|W_i|_a\leq|W_{r(k+1)}|_a$ for all $r(1)\leq i\leq r(k+1)$.

Moreover, as $H$ is reduced, the computation of the one-letter machine corresponding to the subcomputation of $\pazocal{C}'$ with history $H_2$ is either empty or satisfies the hypotheses of \Cref{one-letter standard 2}.  As above, this implies $|W_i|_a\leq|W_t|_a$ for all $r(k+1)\leq i\leq t$ and $|W_t|_{NI}\geq|W_{r(k+1)}|_{NI}+I(B)|H_2|_{mv}$.  In particular, $|W_i|_a\leq|W_t|_a$ for all $r(1)\leq i\leq t$ and $|W_t|_{NI}\geq I(B)(k+|H_2|_{mv})$.

Finally, as $H_1$ is the history of a single one-letter machine with such a base, the same argument as that in Case 1 implies $|W_i|_{NI}\leq2\max(|W_0|_{NI},|W_{r(1)}|_{NI})$ and $|W_i|_a\leq2\max(|W_0|_a,|W_{r(1)}|_a)$ for all $i\leq r(1)$.  

Thus, $|W_i|_{NI}\leq2\max(|W_0|_{NI},|W_t|_{NI})$ and $|W_i|_a\leq\max(|W_0|_a,|W_t|_a)$ for all $i$, while the inequalities $k\geq1\geq|H_1|_{mv}$ imply:
$$2\max(|W_0|_{NI},|W_t|_{NI})\geq2|W_t|_{NI}\geq2I(B)(k+|H_2|_{mv})\geq I(B)|H|_{mv}$$

\end{proof}

\medskip
	

\section{Parameters} \label{sec-parameters} \

The arguments spanning the rest of this paper are reliant on the \textit{highest parameter principle}, the dual to the lowest parameter principle described in \cite{O}. In particular, we introduce the relation $<<$ on parameters defined as follows:

If $\a_1,\a_2,\dots,\a_n$ are parameters with $\a_1<<\a_2<<\dots<<\a_n$, then for all $2\leq i\leq n$, it is understood that $\a_1,\dots,\a_{i-1}$ are assigned prior to the assignment of $\a_i$ and that the assignment of $\a_i$ is dependent on the assignment of its predecessors. The resulting inequalities are then understood as `$\a_i\geq$(any expression used henceforth involving $\a_1,\dots,\a_{i-1}$)'.

Specifically, the assignment of parameters we use here is:
\begin{align*}
M<&<C<<\lambda^{-1}<<k<<N<<c_0<<c_1<<c_2<<c_3<<c_4<<c_5<<L_0<< \\
&L<<K_0<<K<<J<<\delta^{-1}<<C_1<<C_2<<C_3<<C_4<<N_1<<N_2<<N_3<<N_4
\end{align*}

\medskip
	

\section{Auxiliary Machines} \label{sec-auxiliary}

In this section, several $S$-machines are constructed, carefully tailored so that the finitely presented groups associated to the `main machine' $\textbf{M}$ whose makeup is sufficient for the proof of \Cref{main-theorem}. 

Per the hypotheses of \Cref{main-theorem}, this construction is outlined for a fixed finitely generated group $G$ with finite generating set $X$ with a pair of sets of defining relations $\pazocal{R}_1$ and $\pazocal{R}_2$.  Define $\Omega$ to be the set of non-trivial cyclically reduced words over $X\cup X^{-1}$ which represent the identity in $G$ and fix nondecreasing functions $f_1$ and $f_2$ on the nonnegative reals with $f_1(n)\geq n$ and $f_2(n)\geq1$ for all $n\in\N$.

\subsection{The machine $\textbf{Move}$} \

The first machines of the construction are those promised by the hypotheses of \Cref{main-theorem}.

The machine $\textbf{Move}$ is the $(\pazocal{R}_1,f_1,C)$-Move machine referenced in part (1) of the hypotheses.  While it is also a critical assumption that $\textbf{Move}$ satisfies the $(n^2f_2(n),G)$-move condition, for the outline of the construction of the main machine this condition can be overlooked, so that $\textbf{Move}$ is simply treated as a general Move machine.

So, letting $Q_0\dots Q_n$ be the standard base of $\textbf{Move}$, the tape alphabet of the input $Q_0Q_1$-sector is $X$ and the tape alphabet of the $Q_{n-1}Q_n$-sector is $Y\cup Z$, where $Y$ is an alphabet in fixed correspondence with $X$.  As a Move machine, $\textbf{Move}$ then satisfies conditions (Mv1)-(Mv8).  

In particular, conditions (Mv4), (Mv7), and (Mv8) imply the following statements:

\begin{lemma} \label{Move move}

For each $w\in\pazocal{R}_1$, there exists a reduced computation $\pazocal{C}_w:W_0\to\dots\to W_t$ of $\textbf{Move}$ in the standard base such that:

\begin{itemize}

\item $W_0\equiv q_0^s ~ w ~ q_1^s \dots q_{n-1}^s q_n^s$, where $q_i^s$ is the start letter of $Q_i$

\item $W_t\equiv q_0^f q_1^f \dots q_{n-1}^f ~ v ~ q_n^f$, where $q_i^f$ is the end letter of $Q_i$

\item $\bar{v}\equiv w$ and $|v|_Z\leq C\|w\|$

\item $t\leq c_0f_1(c_0\|w\|)+c_0$

\end{itemize}

\end{lemma}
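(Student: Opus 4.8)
<br>

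The plan is to recognize that this lemma is essentially a reformulation of the defining property (Mv4) of a Move machine, so the proof consists of unwinding definitions together with a routine estimate using the $\sim$-relation. First I would invoke (Mv4): since $\textbf{Move}$ is a $(\pazocal{R}_1,f_1,C)$-Move machine, there is a non-decreasing $g:\N\to\N$ with $g\sim f_1$ such that every $w\in\pazocal{R}_1$ admits a $C$-move computation of length $\leq g(\|w\|)$. Fix such a $g$, and for each $w\in\pazocal{R}_1$ fix such a computation $\pazocal{C}_w:W_0\to\dots\to W_t$.

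Next I would unpack what it means for $\pazocal{C}_w$ to be a $C$-move computation. By definition it is a setup computation, hence a reduced computation in the standard base with $W_0$ the input configuration corresponding to $w$ and $W_t\equiv A_{\textbf{Move}}(v)$ for some $v\in F(Y\cup Z)$; moreover $\bar v\equiv w$ and $|v|_Z\leq C\|w\|$, which is the third bullet. For the first two bullets I would use that the input sector of $\textbf{Move}$ is $Q_0Q_1$ and that a start configuration has each state letter equal to the start letter of its part, so the input configuration with input $w$ is precisely $q_0^s~w~q_1^sq_2^s\dots q_n^s$; likewise $A_{\textbf{Move}}(v)$, being the end configuration with $v$ in its $Q_{n-1}Q_n$-sector and every other sector empty, is precisely $q_0^fq_1^f\dots q_{n-1}^f~v~q_n^f$.

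Finally I would convert the bound $t\leq g(\|w\|)$ into the stated form. Here $g\sim f_1$, in particular $g\preceq f_1$, gives a constant $D\in\N$ with $g(n)\leq Df_1(Dn)+Dn+D$ for all $n$. Since $1\notin\pazocal{R}_1$ we have $n\defeq\|w\|\geq1$, and then $f_1(Dn)\geq f_1(n)\geq n\geq1$ by monotonicity and the hypothesis $f_1(m)\geq m$, so that $Dn+D\leq 2Df_1(Dn)$ and hence $g(n)\leq 3Df_1(Dn)$. Choosing $c_0\geq3D$ --- permissible since $D$ is a constant fixed before $c_0$ in the parameter hierarchy --- yields $g(n)\leq 3Df_1(Dn)\leq c_0f_1(c_0n)+c_0$, so $t\leq c_0f_1(c_0\|w\|)+c_0$. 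I expect no genuine obstacle: the only points needing care are this passage from $g\sim f_1$ to an explicit inequality against $c_0f_1(c_0n)+c_0$ and the identification of the concrete words in the statement with the abstract ``input configuration'' and ``$A_{\textbf{Move}}(v)$''.
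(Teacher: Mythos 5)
Your proposal is correct and matches the paper's intent: the paper states this lemma without proof as an immediate consequence of (Mv4), and your argument simply unwinds that definition and converts the bound $g\sim f_1$ into the form $c_0f_1(c_0\|w\|)+c_0$ via a legitimate choice of the parameter $c_0$. No gaps.
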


\begin{lemma} \label{Move faulty}

For every reduced computation $\pazocal{D}:V_0\to\dots\to V_s$ of $\textbf{Move}$ with full base, $|V_i|_a\leq c_0\max(|V_0|_a,|V_s|_a)$ and $|V_i|_{NI}\leq c_0\max(|V_0|_{NI},|V_s|_{NI})$ for all $i$.

\end{lemma}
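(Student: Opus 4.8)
The plan is to recognize \Cref{Move faulty} as a direct transcription of the defining conditions (Mv7) and (Mv8) of a Move machine into the parameter $c_0$ fixed in \Cref{sec-parameters}. Since $\textbf{Move}$ is assumed to be a $(\pazocal{R}_1,f_1,C)$-Move machine, it satisfies properties (Mv1)--(Mv8); in particular, (Mv7) furnishes a constant $d\in\N$ with $|V_i|_a\le d\max(|V_0|_a,|V_s|_a)$ for every reduced computation $\pazocal{D}:V_0\to\dots\to V_s$ of $\textbf{Move}$ in a full base, and (Mv8) furnishes a constant $k\in\N$ satisfying the analogous inequality for $|\cdot|_{NI}$. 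The universally quantified class of computations appearing in these two conditions is verbatim the hypothesis of the lemma, so no reformulation of the statement is needed.

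To finish, I would note that $d$ and $k$ depend only on the machine $\textbf{Move}$, which is part of the initial data and hence is fixed before any parameter is assigned. By the highest parameter principle governing the ordering $M<<\dots<<c_0<<\dots$, the value of $c_0$ is permitted to exceed any expression in the previously assigned parameters and the fixed data; in particular we may, and do, take $c_0\ge\max(d,k)$. Substituting this bound into the two inequalities supplied by (Mv7) and (Mv8) immediately yields $|V_i|_a\le c_0\max(|V_0|_a,|V_s|_a)$ and $|V_i|_{NI}\le c_0\max(|V_0|_{NI},|V_s|_{NI})$ for all $i$. Recasting the estimates in terms of the single parameter $c_0$ in place of the ad hoc constants $d$ and $k$ is purely a matter of notational uniformity for the diagrammatic arguments of the later sections.

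There is essentially no obstacle here: all of the content lies inside the definition of a Move machine, and the only (trivial) verifications are that the hypothesis ``reduced computation of $\textbf{Move}$ with full base'' matches the quantifiers of (Mv7)/(Mv8) --- which it does literally --- and that $c_0$ dominates the two constants, which is immediate from the parameter ordering. For completeness I would also remark that the companion \Cref{Move move} is proved in the same spirit: the existence of $\pazocal{C}_w$ and the properties $\bar v\equiv w$ and $|v|_Z\le C\|w\|$ come directly from (Mv4) applied to $w\in\pazocal{R}_1$, while the length bound $t\le c_0f_1(c_0\|w\|)+c_0$ is obtained by absorbing into $c_0$ the constant implicit in the relation $g\sim f_1$, using $f_1(n)\ge n$ to dominate the additive linear terms.
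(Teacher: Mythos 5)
Your proposal is correct and matches the paper exactly: the paper gives no separate argument for this lemma, stating only that it follows from conditions (Mv7) and (Mv8) of the Move machine, with $c_0$ absorbing the constants $d$ and $k$ via the parameter ordering, which is precisely what you do.
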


\medskip


\subsection{Clean machines} \ 

The next step in our construction is the introduction of a simple class of $S$-machines called \textit{Clean machines}.  The function of these machines is to essentially `clean' out the `noise' of the letters of $Z$ from the words over $Y\cup Z$.

Given the finite alphabets $Y$ and $Z$, the Clean machine $\textbf{Clean}(Y,Z)$ has standard base $PQR$ with each part consisting of a single letter, {\frenchspacing i.e. $P=\{p\}$, $Q=\{q\}$, and $R=\{r\}$}.  The tape alphabet of the $PQ$-sector is the alphabet $Y$ and that of the $QR$-sector is a copy $Y'$ of $Y$.

The positive rules of $\textbf{Clean}(Y,Z)$ are of the following forms (where the domain of an unlocked sector is implicitly taken to be the full tape alphabet):

\begin{itemize}

\item For all $y\in Y$, the rule $\tau_1(y)=[p\to p, \ q\to y^{-1} q y', \ r \to r]$, where $y'$ is the letter of $Y'$ corresponding to $y$.

\item For all $z\in Z$, the rule $\tau_1(z)=[p\to p, \ q\to z^{-1} q , \ r \to r]$.



\end{itemize}

Let $\pazocal{C}:W_0\to\dots\to W_t$ be a reduced computation of $\textbf{Clean}(Y,Z)$ in the standard base such that:

\begin{itemize}

\item $W_0\equiv p~v~qr$ for some $v\in F(Y\cup Z)$.

\item $W_t\equiv pq~u~r$ for some $u\in F(Y')$
\end{itemize}

Then $\pazocal{C}$ is called a \textit{cleaning of $v$} and $u$ is called the \textit{clean word of $\pazocal{C}$}.


Note that any reduced computation with base $QP$ satisfies the hypotheses of \Cref{multiply one letter}.  Accordingly, the next statements follow immediately:


\begin{lemma} \label{clean computations}

For any $v\in F(Y\cup Z)$, there exists a unique cleaning $\pazocal{C}(v)$ of $v$.  In this case, the clean word of $\pazocal{C}(v)$ is freely equal to the copy of $\bar{v}$ over $Y'$ and the length of $\pazocal{C}(v)$ is $\|v\|$.


%
%
%
%
%

\end{lemma}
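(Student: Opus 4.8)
*For any $v\in F(Y\cup Z)$, there exists a unique cleaning $\pazocal{C}(v)$ of $v$. In this case, the clean word of $\pazocal{C}(v)$ is freely equal to the copy of $\bar v$ over $Y'$ and the length of $\pazocal{C}(v)$ is $\|v\|$.*

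The plan is to reduce the whole statement to the behaviour of a cleaning on the $PQ$-sector, where \Cref{multiply one letter} applies directly, and then to pin down the $QR$-sector by a projection argument.

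First I would record the structural observation already flagged in the text: every positive rule of $\textbf{Clean}(Y,Z)$ fixes the state letters $p$ and $r$ and multiplies the $PQ$-sector on the right by a single letter --- $\tau_1(y)$ by $y^{-1}$ and $\tau_1(z)$ by $z^{-1}$ --- with distinct rules corresponding to distinct letters, none of which is the inverse of another. Hence the restriction of any reduced computation of $\textbf{Clean}(Y,Z)$ in the standard base to its $PQ$-sector is a reduced computation with base $Q_0Q_1$ of precisely the type governed by \Cref{multiply one letter}, and the correspondence between its history and that of the original computation is a length-preserving bijection respecting inversion. Now let $\pazocal{C}\colon W_0\to\dots\to W_t$ be a cleaning of $v$. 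Its $PQ$-sector word starts as $v$ and ends empty, so part (a) of \Cref{multiply one letter} forces the history $H$ of $\pazocal{C}$ to be the natural copy of $v^{-1}$ (read left to right); in particular $t=\|H\|=\|v\|$. Since a computation is determined by its initial word together with its history, $\pazocal{C}$ is uniquely determined --- this yields at once uniqueness of $\pazocal{C}(v)$ and the asserted length.

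For existence I would argue in the opposite direction: set $H$ to be the natural copy of $v^{-1}$, observe that each sector of $\textbf{Clean}(Y,Z)$ is unlocked with full tape domain so that $H$ is applicable to $W_0\equiv p\,v\,qr$, and check inductively that applying the letters of $H$ one at a time strips $v$ off the right-hand end of the $PQ$-sector one letter at a time (the history $H$ is reduced because $v$ is: two consecutive rules of $H$ would be mutually inverse only if the corresponding two consecutive letters of $v$ were mutually inverse). Thus the terminal word has empty $PQ$-sector, i.e.\ has the form $p q\,u\,r$ with $u\in F(Y')$, so a cleaning exists and, by the previous paragraph, it is the unique one $\pazocal{C}(v)$. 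Finally, to identify the clean word $u$ I would use a projection argument: to an admissible word $W$ of $\textbf{Clean}(Y,Z)$ assign the reduced word $\pi(W)\in F(Y')$ obtained by deleting the state letters and all letters of $Z^{\pm1}$ and recoding each letter of $Y^{\pm1}$ as the corresponding letter of $(Y')^{\pm1}$. A one-line check on each family of rules shows $\pi(W\cdot\theta)\equiv\pi(W)$: applying $\tau_1(y)$ inserts $(y')^{-1}y'$ at the position of $q$, applying $\tau_1(z)$ deletes a $z^{\pm1}$, and free reductions never cross a state letter so the two halves of $\pi$ are computed independently. Hence $u=\pi(W_t)=\pi(W_0)$, and $\pi(W_0)$ is exactly the reduced form of the copy of $\bar v$ over $Y'$; this is the remaining claim.

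The proof contains no real obstacle. The one place that genuinely needs care is that \Cref{multiply one letter} does \emph{not} apply to the $QR$-sector directly --- only some rules touch it --- so the clean word has to be recovered separately, which is what the projection argument does; everything else is routine bookkeeping, chiefly keeping straight which side of each sector each rule multiplies and in which direction part (a) of \Cref{multiply one letter} reads off the history.
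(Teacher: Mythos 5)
Your proof is correct and follows essentially the route the paper intends: the paper derives \Cref{clean computations} immediately from \Cref{multiply one letter} applied to the $PQ$-sector (which pins down the history as the copy of $v^{-1}$, hence uniqueness, existence, and the length $\|v\|$), exactly as you do. Your explicit projection argument for identifying the clean word in the $QR$-sector is just the routine bookkeeping the paper leaves implicit, in the same spirit as the projection arguments it uses for the primitive and one-letter machines.
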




%
%


\begin{lemma} \label{clean end}

Let $\pazocal{C}:W_0\to\dots\to W_t$ be a reduced computation of $\textbf{Clean}(Y,Z)$ in the standard base.  Suppose $W_0\equiv p ~ u ~ qr$ and $W_t\equiv p ~ v ~ qr$ for some $u,v\in F(Y\cup Z)$.  Then $t=0$.

\end{lemma}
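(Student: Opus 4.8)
The statement asserts that a reduced computation of $\textbf{Clean}(Y,Z)$ in the standard base which starts and ends with an admissible word of the form $p\,(\text{word})\,qr$ — that is, with empty $QR$-sector — must be empty. My plan is to reduce this to \Cref{multiply one letter} via a projection/base-restriction argument. First I would observe that every positive rule of $\textbf{Clean}(Y,Z)$ fixes both $p$ and $r$ and only rewrites the letter $q$, so the restriction of $\pazocal{C}$ to the two-letter base $PQ$ is a reduced computation in which each rule multiplies the $PQ$-sector on the left by a letter (namely $y^{-1}$ or $z^{-1}$), with distinct rules corresponding to distinct tape letters of $Y\cup Z$. Thus this restricted computation satisfies the hypotheses of \Cref{multiply one letter} (left-multiplication version).

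Next, since the $QR$-sector of $W_0$ is empty and that of $W_t$ is empty, and the restriction of $\pazocal{C}$ to the base $QR$ is a reduced computation with all rules of the form $q\to y^{-1}qy'$ or $q\to z^{-1}q$ — in particular each such rule either multiplies the $QR$-sector on the right by a letter $y'$ (for the $\tau_1(y)$ rules) or locks/fixes it (for the $\tau_1(z)$ rules). Here the cleanest route is to work with the $PQ$-sector directly: by \Cref{multiply one letter}(a) applied to the restriction to $PQ$, the history $H$ of $\pazocal{C}$ is the natural copy of the reduced form of $u_t u_0^{-1}$ read right to left, where $u_0$ is the $PQ$-tape word of $W_0$ and $u_t$ that of $W_t$. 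But $u_0$ and $u_t$ are both copies of $u$ and $v$ respectively over $Y\cup Z$ — and here I need to pin down that $u_0 \equiv u$ and $u_t \equiv v$ as the $PQ$-sector words. Since the hypothesis only guarantees the $QR$-sector is empty at both ends, I should instead track the $QR$-sector: by the analogue of \Cref{multiply one letter} for the $QR$-sector (whose rules multiply on the right by letters of $Y'$, ignoring the $z$-rules which fix it), if the $QR$-sector word is empty at both $W_0$ and $W_t$, the reduced history built from only the $\tau_1(y)$-contributions must be trivial; combined with the fact that the full reduced history is determined by the $PQ$-sector evolution, conclude $H$ is empty.

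The main obstacle — and the step requiring care — is that \Cref{multiply one letter} as stated handles a base $Q_{i-1}Q_i$ where \emph{every} rule multiplies by a letter, whereas here the $QR$-sector has two \emph{kinds} of rules: the $\tau_1(y)$ rules that multiply it on the right, and the $\tau_1(z)$ rules that lock it (leave it untouched). So the restriction to the base $QR$ is not literally a computation of the form in \Cref{multiply one letter} unless no $z$-rules appear. The fix is to note that a $z$-rule $\tau_1(z)$ does not lock the $QR$-sector in the strict sense (it has empty domain there, so by \Cref{locked sectors} it forbids a $QR$-sector from being adjacent in an unreduced-base way, but here the base is standard so this is vacuous) — rather it simply fixes the $QR$-tape word. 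Hence I would argue: deleting from the history all letters of the form $\tau_1(z)^{\pm1}$ and tracking only the net effect on the $QR$-sector, the $QR$-sector word is a freely-reduced product of $Y'$-letters that starts empty, ends empty, and changes by one letter on the right at each non-$z$ step; applying \Cref{multiply one letter} to this subsequence of steps (which is a genuine $QR$-sector multiplication computation, after checking it is reduced — using that $H$ is reduced) forces zero non-$z$ steps. Finally, with no $\tau_1(y)^{\pm1}$ letters in $H$, the $PQ$-sector is only ever multiplied by $z$-letters, and applying \Cref{multiply one letter} to the restriction to $PQ$ — now all rules are the $z$-rules, distinct letters — together with $u_0, u_t$ both being the given start/end $PQ$-words (which are $u$ and $v$) and the reducedness of $H$ yields that $H$ itself is reduced as the copy of $u_t u_0^{-1}$; but since the only remaining freedom has collapsed and the computation between $W_0$ and $W_t$ with the same empty $QR$-sector has no room to be nonempty without changing the $QR$-sector, we get $t=0$. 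A slicker packaging: simply note that a nonempty reduced $H$ containing at least one $\tau_1(y)^{\pm 1}$ would force, by \Cref{multiply one letter}(a) applied to $QR$ after removing $z$-steps, a nonempty $QR$-word at $W_t$, contradiction; and an $H$ consisting only of $\tau_1(z)^{\pm1}$ letters is reduced only if it multiplies the $PQ$-sector by a reduced word, but then again by \Cref{multiply one letter}(a) its being a closed-up configuration in the relevant sector forces it trivial — in either case $t = 0$.
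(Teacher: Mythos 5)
Your reduction to \Cref{multiply one letter} breaks down precisely at the step where you delete the $\tau_1(z)^{\pm1}$-letters from the history and apply \Cref{multiply one letter} to the remaining $\tau_1(y)^{\pm1}$-steps acting on the $QR$-sector. Reducedness of the full history $H$ does not make that subsequence reduced, so its net contribution to the $QR$-sector can cancel without the subsequence being empty. Concretely, $H\equiv\tau_1(y)^{-1}\tau_1(z)^{-1}\tau_1(y)$ is reduced, and starting from $W_0\equiv p\,qr$ it yields $W_3\equiv p\,yzy^{-1}\,qr$: the $QR$-sector is empty at both ends, yet $t=3$. Even simpler, $H\equiv\tau_1(z)^{-1}$ takes $p\,qr$ to $p\,z\,qr$ in one step, which also defeats your fallback branch in which $H$ consists only of $z$-rules (nothing forces the two $PQ$-words to coincide, since the statement allows $u\neq v$). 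Note also that each rule multiplies the $PQ$-sector on the \emph{right} (the part is $q\to y^{-1}qy'$, so $y^{-1}$ is appended to the $PQ$-word), not on the left as you wrote, though that slip is not the source of the problem. The real issue is that the statement you set out to prove, read literally, is false, so no argument along these lines can close the gap.

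What has gone wrong is the printed statement itself: the two sectors have been interchanged. The hypothesis that matches the way \Cref{clean end} is actually used in the proof of \Cref{Acc language} --- where the endpoint configurations are $\sigma$-admissible, hence have empty $PQ$-sector and an arbitrary word over $Y'$ in the $QR$-sector --- is $W_0\equiv pq\,u\,r$ and $W_t\equiv pq\,v\,r$ with $u,v\in F(Y')$. For that version the intended one-line proof is essentially the first half of your plan and nothing more: every rule of $\textbf{Clean}(Y,Z)$ multiplies the $PQ$-sector on the right by a distinct letter ($y^{-1}$ or $z^{-1}$), so the restriction of $\pazocal{C}$ to this sector satisfies the hypotheses of \Cref{multiply one letter}, and part (a) identifies $H$ with the copy of the reduced form of $u_0^{-1}u_t$, where $u_0,u_t$ are the $PQ$-sector tape words of $W_0,W_t$; since $u_0\equiv u_t\equiv1$, the history is empty and $t=0$. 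No analysis of the $QR$-sector, and no removal of $z$-steps, is needed at all.
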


\medskip


\subsection{Accept machine} \

For our next machine, we combine the Clean machine constructed in the previous section with the $S$-machine whose existence is given in the hypotheses of \Cref{main-theorem} to form the \textit{Accept machine} necessary for our construction.

Let $\textbf{S}$ be the $S$ machine recognizing $\pazocal{R}_1$ with $\TM_{\textbf{S}}\preceq f_1$ given by hypothesis (2) of \Cref{main-theorem}.  In particular, we assume that for every $w\in\pazocal{R}_1$, there exists a computation of $\textbf{S}$ accepting $w$ of length at most $c_0f_1(c_0\|w\|)+c_0$.

We also assume that for any $w\in\pazocal{R}_1$, every accepting computation of $\textbf{S}$ has length at least $k$.  This can be achieved by adding $k-1$ new state letters to each part and $k$ new positive rules that progressively switch the original end state letters through these new letters, with the final such letters taken as the end letters; as such, an accepting computation of an input configuration in this new machine corresponds to the concatenation of an accepting computation of the original machine followed by applications of each of these $k$ new rules.  Note that the parameter choice $c_0>>k$ allows us to achieve both this lower bound and the preceding upper bound simultaneously.

Denote the standard base of $\textbf{S}$ by $Q_0Q_1\dots Q_m$, with $Q_i=\{q_i(1),\dots,q_i(\ell)\}$ for all $i$, where $q_i(1)$ and $q_i(\ell)$ are the start and end state letters, respectively, of this part.  

The standard base of the machine $\textbf{Acc}$ is then $PQ_0Q_1Q_2\dots Q_m$, with the $PQ_0$-sector taken as the input sector.  Each part of the state letters consists of $\ell+1$ distinct letters, identified with the letter comprising the corresponding part of the Clean machine $\textbf{Clean}(Y,Z)$ and the $\ell$ letters comprising that of $\textbf{S}$.  The letters corresponding to $\textbf{Clean}(Y,Z)$ are taken to be the start letters of $\textbf{Acc}$, while those corresponding to the end letters of $\textbf{S}$ are taken as the end letters.

The tape alphabets of the machine are given as follows:

\begin{itemize}

\item The tape alphabet of the $PQ_0$-sector is $Y\cup Z$.

\item The tape alphabet of the $Q_0Q_1$-sector is the copy $Y'$ of $Y$.

\item For $2\leq i\leq m$, the tape alphabet of the $Q_{i-1}Q_i$-sector is the same as the tape alphabet of $\textbf{S}$ in the sector of the same name.


\end{itemize}

Note that, per the makeup of the hardware, every rule of $\textbf{Acc}$ must lock the $Q_{m+j-1}Q_{m+j}$-sector for all $1\leq j\leq n$.  With that in mind, the software of the machine can be understood by the following description:

\begin{itemize}

\item For every positive rule of $\textbf{Clean}(Y,Z)$, there is a corresponding positive rule of $\textbf{Acc}$ which operates in the analogous way on the subword $PQ_0Q_1$ of the standard base and locks every other sector.

\item The positive rule $\sigma$ switches the state letters from those of $\textbf{Clean}(Y,Z)$ to the start letters of $\textbf{S}$, locking every sector besides the $Q_0Q_1$-sector, in which its domain is $Y'$.

\item For every positive rule of $\textbf{S}$, there is a corresponding positive rule of $\textbf{Acc}$ which, identifying $Y'$ with $X$, operates in the analogous way on the subword $Q_0Q_1Q_2\dots Q_m$ of the standard base and locks the $PQ_0$-sector.

\end{itemize}

Note that, per this construction, the machine $\textbf{Acc}$ can be interpreted as the \textit{composition} of two \textit{submachines} identified with $\textbf{Clean}(Y,Z)$ and $\textbf{S}$ across through the \textit{transition rule} $\sigma$.

Let $\tilde{\pazocal{R}}_1$ be the set of reduced words $v\in F(Y\cup Z)$ for which $\bar{v}$ is freely equal to some element, denoted $r_1(v)$, of $\pazocal{R}_1$. 

\begin{lemma} \label{Acc language}

The language of words accepted by $\textbf{Acc}$ is $\tilde{\pazocal{R}}_1$.  Moreover, for every $v\in\tilde{\pazocal{R}}_1$:

\begin{enumerate}

\item There exists an accepting computation of length at most $\|v\|+c_0f_1(c_0\|r_1(v)\|)+c_0+1$.

\item For any acceping computation, there exists a subcomputation which can be identified with a reduced computation of $\textbf{S}$ that accepts an input configuration.

\end{enumerate}

\end{lemma}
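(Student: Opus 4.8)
The plan is to exploit the structure of $\textbf{Acc}$ as a composition of $\textbf{Clean}(Y,Z)$ and $\textbf{S}$ through the transition rule $\sigma$, so that any accepting computation decomposes canonically into a "cleaning phase" followed by an "$\textbf{S}$-phase". First I would observe that an accept configuration of $\textbf{Acc}$ has all state letters equal to the end letters of $\textbf{S}$ and every sector empty; in particular, since the only rule whose domain includes $\sigma$-successors of the $\textbf{Clean}$-letters is $\sigma$ itself (the $\textbf{Clean}$-rules keep start letters, the $\textbf{S}$-rules keep $\textbf{S}$-letters), and $\sigma$ appears only once positively, any accepting computation $\pazocal{C}$ starting from an input configuration $W_0 \equiv p ~ v ~ q_0(1) q_1(1)\dots q_m(1)$ must have history of the form $H_{\mathrm{cl}} \, \sigma \, H_{\mathbf{S}}$ where $H_{\mathrm{cl}}$ uses only $\textbf{Clean}$-rules and $H_{\mathbf{S}}$ uses only $\textbf{S}$-rules. (A reduced history cannot contain $\sigma^{-1}$ after $\sigma$, since the state letters would have to return to $\textbf{Clean}$-letters, which no $\textbf{S}$-rule permits; and the reducedness rules out $\sigma\sigma^{-1}$.) Applying $\sigma$ requires the $Q_0Q_1$-sector word to lie in $F(Y')$, hence the restriction of $H_{\mathrm{cl}}$ to the base $PQ_0Q_1$ is a cleaning of $v$ in the sense of \Cref{clean computations}, while all other sectors are locked throughout $H_{\mathrm{cl}}$ and so carry their (empty) $W_0$-contents.

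For the language characterization: if $v$ is accepted, then by \Cref{clean computations} the $Q_0Q_1$-sector word just before $\sigma$ is freely equal to the copy of $\bar v$ over $Y'$, and this word must equal a reduced word fixed by $\sigma$; the subsequent $\textbf{S}$-phase, under the identification $Y'\leftrightarrow X$, is then a reduced computation of $\textbf{S}$ that takes the input configuration on this word to the accept configuration of $\textbf{S}$ — i.e.\ $\bar v$ (freely reduced) is accepted by $\textbf{S}$, so $\bar v$ is freely equal to an element of $\pazocal{R}_1$, giving $v\in\tilde{\pazocal{R}}_1$. Conversely, given $v\in\tilde{\pazocal{R}}_1$ with $r_1(v)\in\pazocal{R}_1$ freely equal to $\bar v$, concatenate the unique cleaning $\pazocal{C}(v)$ (length $\|v\|$ by \Cref{clean computations}), then $\sigma$, then an accepting computation of $\textbf{S}$ on the freely-reduced form of $\bar v$; since $\textbf{S}$ recognizes $\pazocal{R}_1$ and $\TM_{\textbf{S}}\preceq f_1$ with the explicit bound $c_0f_1(c_0\|r_1(v)\|)+c_0$ arranged for elements of $\pazocal{R}_1$, the total length is at most $\|v\| + 1 + c_0f_1(c_0\|r_1(v)\|)+c_0$, proving (1). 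Claim (2) is then immediate: the subcomputation with history $H_{\mathbf{S}}$, restricted to the base $Q_0Q_1\dots Q_m$, is by construction identifiable with a reduced computation of $\textbf{S}$ which (since $\pazocal{C}$ ends at the accept configuration of $\textbf{Acc}$, whose state letters are the $\textbf{S}$-end letters and whose sectors are empty) accepts the input configuration on $\bar v$.

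The main obstacle I anticipate is the bookkeeping needed to justify that a general reduced accepting computation really has the clean form $H_{\mathrm{cl}}\,\sigma\,H_{\mathbf{S}}$ — one must rule out pathologies where the computation oscillates between phases or where free reductions in the $PQ_0Q_1$ portion interfere with the cleaning interpretation. This is handled by the locking observations (\Cref{locked sectors}) together with \Cref{clean end}, which says a $\textbf{Clean}$ computation fixing the $PQ_0$-sector contents is empty, so no "wasted" cleaning can occur; and by the fact that $\sigma$'s domain in the $Q_0Q_1$-sector is exactly $Y'$, forcing the $Z$-letters to be eliminated before the transition. Everything else is a routine assembly of \Cref{clean computations}, the hypotheses on $\textbf{S}$, and the parameter inequality $c_0 \gg k$ used to reconcile the length bounds.
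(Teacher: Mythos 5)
Your proposal is correct and follows essentially the same route as the paper: for the forward direction you concatenate the cleaning, $\sigma$, and an accepting $\textbf{S}$-computation to get the stated length bound, and for the converse you factor a reduced accepting history as $H_{\mathrm{cl}}\,\sigma\,H_{\mathbf{S}}$ by showing that any oscillation $\dots\sigma^{-1}H_3\sigma\dots$ would force the intermediate Clean subcomputation (between two $\sigma$-admissible configurations) to be empty, contradicting reducedness — exactly the paper's use of \Cref{clean end}. The only slip is your parenthetical reason for excluding $\sigma^{-1}$ after $\sigma$ ($\sigma^{-1}$ applies whenever the state letters return to the $\textbf{S}$-start letters, not the Clean letters), but the mechanism you invoke afterwards is the correct one and matches the paper.
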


\begin{proof}

Let $W$ be the input configuration of $\textbf{Acc}$ corresponding to the input $v\in\tilde{\pazocal{R}}_1$.  

By \Cref{clean computations}, there exists a reduced computation $W\equiv W_0\to\dots\to W_s$ corresponding to the cleaning $\pazocal{C}(v)$.  Per the statement, $W_s$ has the copy of $r_1(v)$ over $Y'$ written in its $Q_0Q_1$-sector with all other sectors empty and $s=\|v\|$.

In particular, $W_s$ is $\sigma$-admissible, with $W_{s+1}\equiv W_s\cdot\sigma$ the analogue of the input configuration of $\textbf{S}$ with input $r_1(v)$.

As $r_1(v)\in\pazocal{R}_1$, there exists a computation $W_{s+1}\to\dots\to W_t$ corresponding to the computation of $\textbf{S}$ accepting $r_1(v)$, so that $t-s-1\leq c_0f_1(c_0\|r_1(v)\|)+c_0$.

Concatenating these then yields a computation $\pazocal{C}:W\equiv W_0\to\dots\to W_t$ accepting $v$ such that $t\leq\|v\|+c_0f_1(c_0\|r_1(v)\|)+c_0+1$.  

Conversely, let $\pazocal{D}$ be a reduced computation of $\textbf{Acc}$ accepting the input configuration $W'$ corresponding to the input $u\in F(Y\cup Z)$.  

Letting $H$ be the history of $\pazocal{D}$, by construction there exists a factorization $H\equiv H_1\sigma H'$ such that the subcomputation $\pazocal{D}_1$ of $\pazocal{D}$ with history $H_1$ operates entirely as $\textbf{Clean}(Y,Z)$.  By \Cref{clean computations}, $W''\equiv W'\cdot H_1\sigma$ is the analogue of the input configuration of $\textbf{S}$ corresponding to the reduced word $w$ over $X\cup X^{-1}$ that is freely equal to $\bar{u}$.

Suppose the subcomputation $\pazocal{D}'$ of $\pazocal{D}$ with history $H'$ does not 
operate entirely as $\textbf{S}$.  This implies there exists a factorization $H'\equiv H_2\sigma^{-1}H_3\sigma H''$, where the subcomputation $\pazocal{D}_2$ (respectively $\pazocal{D}_3$) of $\pazocal{D}$ with history $H_2$ (respectively $H_3$) operates entirely as $\textbf{S}$ (respectively as $\textbf{Clean}(Y,Z)$).  As the initial and terminal configurations of $\pazocal{D}_3$ must both be $\sigma$-admissible, $\pazocal{D}_3$ corresponds to a computation of the Clean machine which satisfies the hypotheses of \Cref{clean end}.  But then $\|H_3\|=0$, contradicting the hypothesis that $H$ is reduced.

Hence, $\pazocal{D}'$ operates entirely as $\textbf{S}$, so that $w$ must be an accepted input of $\textbf{S}$.  But then $w\in\pazocal{R}_1$, so that $u\in\tilde{\pazocal{R}}_1$.

\end{proof}

\medskip


\subsection{The machine $\textbf{M}_1$} \

The next machine, $\textbf{M}_1$, combines the Move machine $\textbf{Move}$ with the Accept machine $\textbf{Acc}$ to obtain an $S$-machine that recognizes the language $\pazocal{R}_1$.  As such, $\textbf{M}_1$ is viewed as the composition of three \textit{submachines}, two of which correspond to the submachines of $\textbf{Acc}$.

Letting $s=n+m$, the standard base of $\textbf{M}_1$ is $Q_0\dots Q_{n-1}Q_0'Q_1'\dots Q_s'Q_0''\dots\ Q_s''$, with the $Q_0Q_1$-sector taken as the input sector.  Per the construction, each part of the state letters is the disjoint union of three sets, each of which corresponds to the states of the submachines.

The tape alphabets are given as follows:

\begin{itemize}

\item For $1\leq i\leq n-1$, the tape alphabet of the $Q_{i-1}Q_i$-sector is identified with that of $\textbf{Move}$ in the same named sector.

\item For $1\leq i\leq m$, the tape alphabets of the $Q_{i-1}'Q_i'$-sectors are identified with the corresponding sector of $\textbf{Acc}$.  

\item The tape alphabet of the $Q_{n-1}Q_0'$-sector is taken to be $Y\cup Z$, {\frenchspacing i.e. the} same alphabet as the $Q_{n-1}Q_n$-sector of $\textbf{Move}$ and the $PQ_0$-sector of $\textbf{Acc}$.

\item Every other sector (including every $Q_{i-1}''Q_i''$-sector) has empty tape alphabet.

\end{itemize}

The submachines of $\textbf{M}_1$, denoted $\textbf{M}_1(1)$, $\textbf{M}_1(2)$, and $\textbf{M}_1(3)$, are understood as follows:

\begin{itemize}

\item $\textbf{M}_1(1)$ operates as $\textbf{Move}$ on the subword $Q_0\dots Q_{n-1}Q_0'$ of the standard base, locking every other sector.

\item $\textbf{M}_1(2)$ operates as $\textbf{Clean}(Y,Z)$ on the subword $Q_{n-1}Q_0'Q_1'$ of the standard base, locking every other sector.

\item $\textbf{M}_1(3)$ operates as $\textbf{S}$ on the subword $Q_0'\dots Q_m'$ of the standard base, locking every other sector.

\end{itemize}

These submachines are concatenated by transition rules $\sigma(i,i+1)$ given as follows:

\begin{itemize}

\item $\sigma(12)$ switches the state letters from the end letters of $\textbf{M}_1(1)$ to the start (the only) letters of $\textbf{M}_1(2)$, locking all sectors except for the $PQ_0'$-sector.

\item $\sigma(23)$ switches the state letters from the end (the only) letters of $\textbf{M}_1(2)$ to the start letters of $\textbf{M}_1(3)$, locking all sectors except for the $Q_0'Q_1'$-sector.

\end{itemize}



Note that the submachines $\textbf{M}_1(2)$ and $\textbf{M}_1(3)$ along with the transition rule $\sigma(23)$ correspond to a submachine $\textbf{M}_1(23)$ which operates as $\textbf{Acc}$ on the subword $Q_{n-1}Q_0'\dots Q_m'$ of the standard base.  As such, \Cref{Acc language} may be applied to arguments involving this machine.

Note that any configuration $W$ of $\textbf{M}_1$ which is $\sigma(12)$-admissible corresponds to an end configuration of $\textbf{Move}$ in which every sector is empty except for the $Q_{n-1}Q_0'$-sector, {\frenchspacing i.e. the configuration} $A_{\textbf{Move}}(v)$ for some $v\in F(Y\cup Z)$.  In this case, we write $W\equiv A_1(v)$.

Similarly, a configuration $W$ of $\textbf{M}_1$ which is $\sigma(12)^{-1}$-admissible corresponds to an input configuration of $\textbf{Acc}$.  If this input configuration corresponds to the input $v\in F(Y\cup Z)$, then we write $W\equiv I_2(v)$.

Note that $A_1(v)\cdot \sigma(12)\equiv I_2(v)$.

\begin{lemma} \label{M_1(1) sigma}

Suppose there exists a reduced computation $\pazocal{C}:W_0\to\dots\to W_t$ of $\textbf{M}_1(1)$ such that $W_0\equiv A_1(u)$ and $W_t\equiv A_1(v)$ for some $u,v\in F(Y\cup Z)$.  Then $u\in\tilde{\pazocal{R}}_1$ if and only if $v\in\tilde{\pazocal{R}}_1$.

\end{lemma}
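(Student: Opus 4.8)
The plan is to reduce the statement directly to property (Mv6) of the Move machine $\textbf{Move}$, using the identification of the submachine $\textbf{M}_1(1)$ with $\textbf{Move}$. First I would recall that, by construction, $\textbf{M}_1(1)$ operates as $\textbf{Move}$ on the subword $Q_0\dots Q_{n-1}Q_0'$ of the standard base of $\textbf{M}_1$, locking every other sector, and its state letters are in bijection with those of $\textbf{Move}$ via this identification. Restricting the given reduced computation $\pazocal{C}:W_0\to\dots\to W_t$ to the base subword $Q_0\dots Q_{n-1}Q_0'$ and translating the rules of $\textbf{M}_1(1)$ into the corresponding rules of $\textbf{Move}$ therefore produces a computation $\pazocal{C}':V_0\to\dots\to V_t$ of $\textbf{Move}$ in its standard base with the same (translated) history; since that history is reduced in $F(\Theta^+(\textbf{M}_1(1)))$, its image is reduced in $F(\Theta^+(\textbf{Move}))$, so $\pazocal{C}'$ is reduced. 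Moreover, the hypotheses $W_0\equiv A_1(u)$ and $W_t\equiv A_1(v)$ mean precisely that the restrictions of $W_0$ and $W_t$ to $Q_0\dots Q_{n-1}Q_0'$ are the end configurations $A_{\textbf{Move}}(u)$ and $A_{\textbf{Move}}(v)$ of $\textbf{Move}$, i.e. $V_0\equiv A_{\textbf{Move}}(u)$ and $V_t\equiv A_{\textbf{Move}}(v)$.

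Next I would invoke the fact that $\textbf{Move}$ is a $(\pazocal{R}_1,f_1,C)$-Move machine and hence satisfies condition (Mv6): for any $u,v\in F(Y\cup Z)$, the existence of a reduced computation of $\textbf{Move}$ between $A_{\textbf{Move}}(u)$ and $A_{\textbf{Move}}(v)$ forces $\bar u$ to be freely equal to $\bar v$. Applying this to $\pazocal{C}'$ yields that $\bar u$ and $\bar v$ represent the same element of $F(X)$.

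Finally I would conclude via the definition of $\tilde{\pazocal{R}}_1$: a reduced word $w\in F(Y\cup Z)$ lies in $\tilde{\pazocal{R}}_1$ if and only if $\bar w$ is freely equal to some element of $\pazocal{R}_1$. Since $\bar u$ and $\bar v$ are freely equal, $\bar u$ is freely equal to an element of $\pazocal{R}_1$ exactly when $\bar v$ is, so $u\in\tilde{\pazocal{R}}_1$ if and only if $v\in\tilde{\pazocal{R}}_1$, as claimed. There is no serious obstacle here; the only point requiring care is the bookkeeping of the first paragraph, namely checking that the restriction of $\pazocal{C}$ to the base $Q_0\dots Q_{n-1}Q_0'$ is genuinely a reduced computation of $\textbf{Move}$ with endpoints $A_{\textbf{Move}}(u)$ and $A_{\textbf{Move}}(v)$, after which (Mv6) does all the work.
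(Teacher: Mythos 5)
Your proposal is correct and follows the paper's proof exactly: identify $\pazocal{C}$ with a reduced computation of $\textbf{Move}$ between $A_{\textbf{Move}}(u)$ and $A_{\textbf{Move}}(v)$, apply (Mv6) to get that $\bar u$ and $\bar v$ are freely equal, and conclude from the definition of $\tilde{\pazocal{R}}_1$. The extra bookkeeping you spell out in the first paragraph is implicit in the paper's one-line argument but is the right thing to check.
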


\begin{proof}

By construction, $\pazocal{C}$ corresponds to a reduced computation of $\textbf{Move}$ between the end configurations $A_{\textbf{Move}}(u)$ and $A_{\textbf{Move}}(v)$.  By condition (Mv6), it follows that $\bar{u}$ and $\bar{v}$ are freely equal to one another.

\end{proof}

\begin{remark}

Note that the proof of \Cref{M_1(1) sigma} indicates the purpose of property (Mv6) in the definition of Move machines.

\end{remark}

\begin{lemma} \label{M_1(2) sigma}

Suppose there exists a reduced computation $\pazocal{C}:W_0\to\dots\to W_t$ of $\textbf{M}_1(23)$ such that $W_0\equiv I_2(u)$ and $W_t\equiv I_2(v)$ for some $u,v\in F(Y\cup Z)$.  Then $u\in\tilde{\pazocal{R}}_1$ if and only if $v\in\tilde{\pazocal{R}}_1$.

\end{lemma}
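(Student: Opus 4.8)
The plan is to deduce this from \Cref{Acc language} together with the trivial observation that any computation of an $S$-machine may be run in reverse. First I would recall, via the remark preceding the statement, that $\textbf{M}_1(23)$ operates as $\textbf{Acc}$ on the subword $Q_{n-1}Q_0'\dots Q_m'$ of its standard base, that $I_2(u)$ (resp.\ $I_2(v)$) restricts on this subbase to the input configuration of $\textbf{Acc}$ with input $u$ (resp.\ $v$), and that every sector of $\textbf{M}_1$ outside this subbase is empty in $I_2(u)$ and locked by every rule of $\textbf{M}_1(23)$; consequently a computation of $\textbf{M}_1(23)$ issuing from $I_2(u)$ is, restricted to the relevant subbase, precisely a computation of $\textbf{Acc}$ issuing from the corresponding input configuration, and \Cref{Acc language} applies to give: $u\in\tilde{\pazocal{R}}_1$ if and only if there is an accepting computation of $\textbf{M}_1(23)$ beginning with $I_2(u)$, and likewise for $v$.

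The argument then proceeds as follows. Letting $H$ be the history of $\pazocal{C}$, the sequence $W_t\to\dots\to W_0$ with history $H^{-1}$ is again a computation of $\textbf{M}_1(23)$; call it $\bar{\pazocal{C}}$, a computation from $I_2(v)$ to $I_2(u)$. Suppose $u\in\tilde{\pazocal{R}}_1$. By the previous paragraph there is an accepting computation $\pazocal{A}$ of $\textbf{M}_1(23)$ beginning with $I_2(u)$; concatenating $\bar{\pazocal{C}}$ with $\pazocal{A}$ yields a computation of $\textbf{M}_1(23)$ beginning with $I_2(v)$ and terminating with the accept configuration, i.e.\ an accepting computation, whence $v\in\tilde{\pazocal{R}}_1$. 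Applying the same argument with $\pazocal{C}$ in place of $\bar{\pazocal{C}}$ gives the reverse implication, completing the proof.

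The only point requiring care is the first paragraph's identification of computations of $\textbf{M}_1(23)$ issuing from $I_2(\cdot)$ with computations of $\textbf{Acc}$, so that \Cref{Acc language} genuinely applies; this is the content of the remark in the text that ``\Cref{Acc language} may be applied to arguments involving this machine,'' and I would spell it out only to the extent of checking that the rules of $\textbf{M}_1(23)$ keep the out-of-range sectors empty. It is worth noting that one cannot simply imitate the proof of \Cref{M_1(1) sigma}: decomposing $H$ as a clean history, then $\sigma(23)$, then an $\textbf{S}$-history $H_1$ between two input configurations of $\textbf{S}$ (using \Cref{clean computations} and \Cref{clean end} to rule out further occurrences of the transition rule and its inverse), one would still need that a reduced computation of the \emph{arbitrary} machine $\textbf{S}$ between input configurations forces their inputs to be simultaneously accepted or rejected --- which is again just the reversibility observation above, so invoking it directly at the level of $\textbf{Acc}$ is the most economical route.
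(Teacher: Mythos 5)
Your argument is correct and is essentially the paper's own proof: the paper likewise identifies $\pazocal{C}$ with a computation $\pazocal{D}$ of $\textbf{Acc}$ between the two input configurations and concatenates $\pazocal{D}$ (resp.\ its inverse) with an accepting computation supplied by \Cref{Acc language} to transfer acceptance between $u$ and $v$. The only cosmetic difference is that you work at the level of $\textbf{M}_1(23)$ and invoke the identification with $\textbf{Acc}$ at the start rather than at the end; the reversibility-plus-concatenation idea is identical.
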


\begin{proof}

By construction $\pazocal{C}$ corresponds to a reduced computation $\pazocal{D}$ of $\textbf{Acc}$ between the input configurations corresponding to $u$ and $v$.

Suppose $v\in\tilde{\pazocal{R}}_1$.  Then \Cref{Acc language} implies there exists a reduced computation $\pazocal{E}(v)$ of $\textbf{Acc}$ that accepts the input $v$.  Concatenating $\pazocal{D}$ and $\pazocal{E}(v)$ then produces a computation of $\textbf{Acc}$ that accepts the input $u$, so that \Cref{Acc language} implies $u\in\tilde{\pazocal{R}}_1$.

If $u\in\tilde{\pazocal{R}}_1$, then the analogous argument applies, concatenating the inverse computation $\bar{\pazocal{D}}$ with a computation that accepts the input $u$.

\end{proof}

\begin{lemma} \label{M_1 language}

The language of words accepted by $\textbf{M}_1$ is $\pazocal{R}_1$.  Moreover, for every $w\in\pazocal{R}_1$:

\begin{enumerate}

\item There exists an accepting computation of length at most $c_1f_1(c_1\|w\|)+c_1$.

\item For any accepting computation, there exists a subcomputation which is a reduced computation of $\textbf{M}_1(3)$ of length at least $k$.

\end{enumerate}

\end{lemma}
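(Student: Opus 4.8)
The plan is to follow the template of the proof of \Cref{Acc language}, using that $\textbf{M}_1$ is the composition of the submachine $\textbf{M}_1(1)\cong\textbf{Move}$ with the submachine $\textbf{M}_1(23)\cong\textbf{Acc}$ glued along the transition rule $\sigma(12)$, together with the ``accordion'' Lemmas \ref{M_1(1) sigma} and \ref{M_1(2) sigma}, the move property (Mv3) of $\textbf{Move}$, \Cref{Move move}, and \Cref{Acc language} applied to $\textbf{M}_1(23)$.

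For the inclusion of $\pazocal{R}_1$ into the language accepted by $\textbf{M}_1$ together with bound (1): fix $w\in\pazocal{R}_1$ and apply \Cref{Move move} to obtain the reduced computation $\pazocal{C}_w$; read inside $\textbf{M}_1$ it is a computation of $\textbf{M}_1(1)$ of length at most $c_0f_1(c_0\|w\|)+c_0$ from the input configuration of $\textbf{M}_1$ with input $w$ to a configuration $A_1(v)$ with $\bar v\equiv w$ and $|v|_Z\leq C\|w\|$. Applying $\sigma(12)$ reaches $I_2(v)$, and since $\bar v\equiv w\in\pazocal{R}_1$ we have $v\in\tilde{\pazocal{R}}_1$ with $r_1(v)=w$; \Cref{Acc language}(1) then gives a computation of $\textbf{M}_1(23)$ accepting $v$ of length at most $\|v\|+c_0f_1(c_0\|w\|)+c_0+1$. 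Concatenating and using $\|v\|=|v|_a=\|w\|+|v|_Z\leq(1+C)\|w\|$ produces an accepting computation of $\textbf{M}_1$ of total length at most $2c_0f_1(c_0\|w\|)+(1+C)\|w\|+2c_0+2$; since $f_1(n)\geq n$ and $c_1>>c_0,C$, this is at most $c_1f_1(c_1\|w\|)+c_1$, which is (1).

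For the reverse inclusion and for (2): let $\pazocal{D}$ be an arbitrary reduced accepting computation of $\textbf{M}_1$ with input $w$ and split its history at every occurrence of $\sigma(12)^{\pm1}$ (there is at least one, since otherwise $\pazocal{D}$ never leaves the state letters of $\textbf{M}_1(1)$ and cannot reach the end letters of $\textbf{M}_1(3)$). As the only rules connecting the ``$\textbf{M}_1(1)$-world'' to the ``$\textbf{M}_1(23)$-world'' are $\sigma(12)^{\pm1}$, each maximal factor operates entirely as $\textbf{Move}$ or entirely as $\textbf{Acc}$; since $\pazocal{D}$ starts in the start letters of $\textbf{M}_1(1)$ and ends in the end letters of $\textbf{M}_1(3)$, the factors alternate, beginning with a (possibly empty) $\textbf{Move}$-block and ending with an $\textbf{Acc}$-block, the transitions being $\sigma(12),\sigma(12)^{-1},\sigma(12),\dots$. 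The initial $\textbf{Move}$-block is a setup computation of $\textbf{Move}$ from the input configuration with input $w$ to some $A_1(v_0)$, hence a move computation by (Mv3), so $\bar v_0$ is freely equal to $w$; each intermediate $\textbf{Acc}$-block runs between configurations $I_2(\cdot)$ and each intermediate $\textbf{Move}$-block between configurations $A_1(\cdot)$, while the terminal $\textbf{Acc}$-block is an accepting computation of $\textbf{Acc}$ starting from some $I_2(v_\ast)$. Running Lemmas \ref{M_1(1) sigma} and \ref{M_1(2) sigma} down the chain gives $v_0\in\tilde{\pazocal{R}}_1\iff v_\ast\in\tilde{\pazocal{R}}_1$, and \Cref{Acc language} forces $v_\ast\in\tilde{\pazocal{R}}_1$; hence $v_0\in\tilde{\pazocal{R}}_1$, so $w$, being a reduced word freely equal to $\bar v_0$ and thus to an element of $\pazocal{R}_1$, lies in $\pazocal{R}_1$. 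Finally, \Cref{Acc language}(2) applied to the terminal $\textbf{Acc}$-block yields a subcomputation identifiable with a reduced accepting computation of $\textbf{S}$ on an input configuration; this is exactly a subcomputation of $\textbf{M}_1(3)$, and by the standing assumption that every accepting computation of $\textbf{S}$ has length at least $k$ it has length at least $k$, proving (2).

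I expect the main obstacle to be the bookkeeping in the reverse direction: one must justify that each maximal factor of $\pazocal{D}$ between consecutive $\sigma(12)^{\pm1}$ really operates as a single submachine in the relevant (standard or mildly unreduced) base, so that (Mv3), Lemmas \ref{M_1(1) sigma} and \ref{M_1(2) sigma}, and \Cref{Acc language} genuinely apply, and that the resulting chain of equivalences closes up correctly; a secondary point is confirming that the ``$\textbf{S}$-part'' of the terminal $\textbf{Acc}$-block is precisely a subcomputation of $\textbf{M}_1(3)$ in the sense demanded by (2), which is immediate from how $\textbf{M}_1(2)$ and $\textbf{M}_1(3)$ sit inside $\textbf{M}_1(23)\cong\textbf{Acc}$.
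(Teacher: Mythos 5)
Your proposal is correct and takes essentially the same route as the paper's proof: the forward direction concatenates the computation from \Cref{Move move} with the accepting computation from \Cref{Acc language}(1) and uses $\|v\|\leq(C+1)\|w\|$ together with $f_1(n)\geq n$, while the reverse direction factors the history at the occurrences of $\sigma(12)^{\pm1}$, applies (Mv3) to the initial setup block and \Cref{Acc language} to the terminal accepting block, and closes the chain with Lemmas \ref{M_1(1) sigma} and \ref{M_1(2) sigma}. Part (2) via \Cref{Acc language}(2) and the length-$k$ padding of $\textbf{S}$ is likewise exactly the paper's argument.
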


\begin{proof}

Let $W$ be the input configuration of $\textbf{M}_1$ corresponding to the input $w\in\pazocal{R}_1$.  By \Cref{Move move}, there exists a reduced computation $W\equiv W_0\to\dots\to W_r$ of $\textbf{M}_1(1)$ such that $W_r\equiv A_1(v)$ for some $v\in F(Y\cup Z)$ with $\bar{v}\equiv w$, $|v|_Z\leq C\|w\|$, and $r\leq c_0f_1(c_0\|w\|)+c_0$.

Let $W_{r+1}\equiv W_r\cdot\sigma(12)\equiv I_2(v)$.  Then, since $\bar{v}\equiv w$, it follows that $v\in\tilde{\pazocal{R}}_1$, so that \Cref{Acc language} implies the existence of a reduced computation $W_{r+1}\to\dots\to W_t$ of $\textbf{M}_1(2)$ accepting $W_{r+1}$ with 
$$t-r-1\leq\|v\|+c_0f_1(c_0\|r_1(v)\|)+c_0+1$$
But since $r_1(v)=w$ and $\|v\|=|v|_Z+\|\bar{v}\|\leq(C+1)\|w\|$, concatenating these computations produces a reduced computation accepting $W$ of length $$t\leq(C+1)\|w\|+2c_0f_1(c_0\|w\|)+2c_0+2$$
The bound then follows from the hypothesis that $f_1(n)\geq n$ for all $n$ and the parameter choices $c_1>>c_0>>C$.

Conversely, let $\pazocal{D}$ be a reduced computation of $\textbf{M}_1$ accepting the input configuration $W'$ corresponding to the input $w'\in F(X)$.  Then the history $H$ of $\pazocal{D}$ must have:

\begin{itemize}

\item a prefix of the form $H_1\sigma(12)$, where $H_1$ is the history of a (maximal) subcomputation $\pazocal{D}_1$ of $\pazocal{D}$ which operates entirely as $\textbf{M}_1(1)$.

\item a suffix of the form $\sigma(23)H_{23}$, where $H_{23}$ is the history of a (maximal) subcomputation $\pazocal{D}_{23}$ of $\pazocal{D}$ which operates entirely as $\textbf{M}_1(23)$.

\end{itemize}

Note that by \Cref{Acc language}(2), there exists a subword $H_3$ of $H_{23}$ with $\|H_3\|\geq k$ which is the history of a reduced computation of $\textbf{M}_1(3)$.

Letting $W''$ be the accept configuration of $\textbf{M}_1$, it then follows that there exist $u,v\in F(Y\cup Z)$ such that $W'\cdot H_1\equiv A_1(v)$ and $W''\cdot H_2^{-1}\equiv I_2(u)$.  

By construction, $\pazocal{D}_1$ corresponds to a setup computation of $\textbf{Move}$.  Condition (Mv3) then implies that this setup computation is a move computation of $w'$, so that $\bar{v}$ is freely equal to $w'$.  

Similarly, $\pazocal{D}_{23}$ corresponds to a reduced computation of $\textbf{Acc}$ accepting the input $u$.  As such, \Cref{Acc language} implies $u\in\tilde{\pazocal{R}}_1$.  

But applications of Lemmas \ref{M_1(1) sigma} and \ref{M_1(2) sigma} then imply $v\in\bar{\pazocal{R}}_1$ as well, so that $w'\in\pazocal{R}_1$.

\end{proof}

\begin{remark}

The use of \Cref{Move move} in the proof of \Cref{M_1 language} (and the implicit use in all subsequent statements that use \Cref{M_1 language}) indicates the purpose of (Mv4) as a defining property of Move machines.  Similarly, the purpose of (Mv3) is evident in the proof of (2).

\end{remark}

\medskip


\subsection{The machine $\textbf{M}_2$} \

The next machine in our construction operates in much the same way as $\textbf{M}_1$, but adds several new `historical' sectors which keep track of the commands applied throughout a computation.  This is done in much the same way as in \cite{O18}, \cite{OS19}, and \cite{WCubic}, but with several important caveats.

First, partition the positive rules of the machine $\textbf{M}_1$ as  $\Phi_1^+\sqcup\Phi_2^+\sqcup\Phi_3^+\sqcup\{\sigma(12),\sigma(23)\}$, where $\Phi_j^+$ consists of the positive rules corresponding to the submachine $\textbf{M}_1(j)$.  Denote $\Phi^+=\sqcup_{j=1}^3\Phi_j^+$.

The standard base of $\textbf{M}_2$ is $\bigg(Q_0\dots Q_{n-1}\bigg)\bigg(Q_{0,\ell}'Q_{0,r}'\dots Q_{s,\ell}'Q_{s,r}'\bigg)\bigg(Q_{0,\ell}''Q_{0,r}''\dots Q_{s,\ell}''Q_{s,r}''\bigg)$, where:

\begin{itemize}

\item For $0\leq i\leq n-1$, the part $Q_i$ is identified with the part of the same name in $\textbf{M}_1$.


\item For $0\leq i\leq s$, $Q_{i,\ell}'$ and $Q_{i,r}'$ are copies of the part $Q_i'$ of $\textbf{M}_1$.

\item For $0\leq i\leq s$, $Q_{i,\ell}''$ and $Q_{i,r}''$ are copies of the part $Q_i''$ of $\textbf{M}_1$.

\end{itemize}

Naturally, the state letters corresponding to the start (respectively the end) letters of $\textbf{M}_1$ are taken to be the start (respectively the end) letters of the machine.

Further, the tape alphabets are given as follows:

\begin{itemize}

\item For $0\leq i\leq n-1$, the tape alphabet of the $Q_{i-1}Q_i$-sector is the same as that of the corresponding sector of $\textbf{M}_1$.

\item The tape alphabet of the $Q_{n-1}Q_{0,\ell}'$-sector is identified with that of the $Q_{n-1}Q_n$-sector of $\textbf{M}_1$ ({\frenchspacing i.e. $Y\cup Z$}).


\item For $1\leq i\leq s$, the tape alphabet of the $Q_{i-1,r}'Q_{i,\ell}'$- and $Q_{i-1,r}''Q_{i,\ell}''$-sectors are identified with that of the $Q_{i-1}'Q_i'$- and $Q_{i-1}''Q_i''$-sectors, respectively, of $\textbf{M}_1$.

\item For $0\leq i\leq s$, the tape alphabet of each of the $Q_{i,\ell}'Q_{i,r}'$- and $Q_{i,\ell}''Q_{i,r}''$-sectors consist of two disjoint copies of $\Phi^+$.  These two copies are called the \textit{left and right historical alphabets} of the sector.

\end{itemize}

The $Q_0Q_1$-sector, the $Q_{i,\ell}'Q_{i,r}'$-sectors, and the $Q_{i,\ell}''Q_{i,r}''$-sectors are taken to be the input sectors of the machine.

Any sector whose tape alphabet consists of two historical alphabets ({\frenchspacing e.g. the $Q_{i,\ell}'Q_{i,r}'$- and $Q_{i,\ell}''Q_{i,r}''$-sectors}) is called a \textit{historical sector}.  Every other sector corresponding to a two-letter subword of the standard base is called a \textit{working sector}.  Note that the working sectors correspond to the sectors forming the standard base of $\textbf{M}_1$.

The positive rules of $\textbf{M}_2$ are then in correspondence with those of $\textbf{M}_1$, with each rule operating in the working sectors as its corresponding rule.  

In the historical sectors, the rule corresponding to the rule $\theta\in\Phi^+$ operates as follows:

\begin{itemize}

\item The rule multiplies the $Q_{i,\ell}'Q_{i,r}'$-sector on the left by the inverse of the copy of $\theta$ in the left historical alphabet and on the right by the copy in the right historical alphabet.

\item The rule multiplies the $Q_{i,\ell}''Q_{i,r}''$-sector on the left by the inverse of the copy of $\theta$ in the right historical alphabet and on the right by the copy in the left historical alphabet.

\end{itemize}

The transition rules $\sigma(12)$ and $\sigma(23)$, on the other hand, do not alter the tape words of the historical sectors, but instead have restricted domains in these sectors:

\begin{itemize}

\item For each such sector, the domain of $\sigma(12)$ is the copy of $\Phi_1^+$ in the right historical alphabet and the copy of $\Phi_2^+\sqcup\Phi_3^+$ in the left historical alphabet.

\item For each such sector, the domain of $\sigma(23)$ is the copy of $\Phi_1^+\sqcup\Phi_2^+$ in the right historical alphabet and the copy of $\Phi_3^+$ in the left historical alphabet.

\end{itemize}

By construction, the machine $\textbf{M}_2$ can again be viewed as the composition of three submachines, denoted $\textbf{M}_2(j)$, through the transition rules.

The history $H$ of a reduced computation of $\textbf{M}_2$ can be factored in such a way that each factor is either a transition rule or a maximal nonempty product of rules of one of the three defining submachines $\textbf{M}_2(j)$. The \textit{step history} of a reduced computation is then defined so as to capture the order of the types of these factors. To do this, denote the transition rule $\sigma(ij)$ by the pair $(ij)$ and a factor that is a product of rules in $\textbf{M}_2(i)$ simply by $(i)$.  

For example, if $H\equiv H'H''H'''$ where $H'$ is a product of rules from $\textbf{M}_2(2)$, $H''\equiv\sigma(23)$, and $H'''$ is a product of rules from $\textbf{M}_2(3)$, then the step history of a computation with history $H$ is $(2)(23)(3)$. So, the step history of a computation is some concatenation of the letters
$$\{(1),(2),(3),(12),(23),(21),(32)\}$$ 
One can omit reference to a transition rule when its existence is clear from its necessity. For example, given a reduced computation with step history $(2)(23)(3)$, one can instead write the step history as $(2)(3)$, as the rule $\sigma(23)$ must occur in order for the subcomputation of $\textbf{M}_2(3)$ to be possible.

If the step history of a computation is $(i-1,i)(i,i+1)$, it is also permitted for the step history to be written as $(i-1,i)(i)(i,i+1)$ even though the `maximal subcomputation' with step history $(i)$ is empty.

%
%
%
%
%
%

Certain subwords cannot appear in the step history of a reduced computation. For example, it is clear that it is impossible for the step history of a reduced computation to contain the subword $(1)(3)$. 

The next statement displays the impossibility of some less obvious potential subwords.

\begin{lemma} \label{M_2 step history}

Let $\pazocal{C}$ be a reduced computation of $\textbf{M}_2$ with base $B$.  

%
%
%
%
%

\begin{enumerate}[label=(\alph*)]

\item If $B$ contains a subword $UV$ of the form $(Q_{i,\ell}'Q_{i,r}')^{\pm1}$, of the form $(Q_{i,\ell}''Q_{i,r}'')^{\pm1}$, of the form $Q_{i,\ell}'(Q_{i,\ell}')^{-1}$, or of the form $(Q_{i,r}'')^{-1}Q_{i,r}''$, then the step history of $\pazocal{C}$ is neither $(21)(1)(12)$ nor $(32)(2)(23)$.

\item If $B$ contains a subword $UV$ of the form $(Q_{i,\ell}'Q_{i,r}')^{\pm1}$, of the form $(Q_{i,\ell}''Q_{i,r}'')^{\pm1}$, of the form $(Q_{i,r}')^{-1}Q_{i,r}'$, or of the form $Q_{i,\ell}''(Q_{i,\ell}'')^{-1}$, then the step history of $\pazocal{C}$ is neither $(12)(2)(21)$ nor $(23)(3)(32)$.

\end{enumerate}

\end{lemma}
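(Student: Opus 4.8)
The plan is to play the domain restrictions that the transition rules $\sigma(12)$ and $\sigma(23)$ impose on the historical sectors against the way a rule $\theta\in\Phi_j^+$ acts on those sectors. Each of the four forbidden step histories has the shape $\tau\,H\,\tau'$, where $H$ is a reduced product of rules of a single submachine $\textbf{M}_2(j)$ and $\tau,\tau'$ are transition rules: for $(21)(1)(12)$, $j=1$, $\tau=\sigma(12)^{-1}$, $\tau'=\sigma(12)$; for $(32)(2)(23)$, $j=2$, $\tau=\sigma(23)^{-1}$, $\tau'=\sigma(23)$; for $(12)(2)(21)$, $j=2$, $\tau=\sigma(12)$, $\tau'=\sigma(12)^{-1}$; for $(23)(3)(32)$, $j=3$, $\tau=\sigma(23)$, $\tau'=\sigma(23)^{-1}$. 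Here $H$ is nonempty, since otherwise $\tau\tau'$ would be a cancelling pair of consecutive rules, contradicting that $\pazocal{C}$ is reduced. I would write $\pazocal{C}:W_0\xrightarrow{\tau}W_1\xrightarrow{H}W_2\xrightarrow{\tau'}W_3$, fix the sector of $\pazocal{C}$ named by the hypothesised subword $UV\subseteq B$, and let $w_1,w_2$ be the $UV$-sector tape words of $W_1,W_2$.

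Next I would pin down the alphabet constraint on $w_1$ and $w_2$. Transition rules leave historical tape words unchanged, so $w_1$ is also the $UV$-sector tape word of $W_0$; and $\tau,\tau'$ are mutually inverse, hence have the same domain. Thus both $w_1$ (from $\tau$-admissibility of $W_0$) and $w_2$ (from $\tau'$-admissibility of $W_2$) are words over the set $D$ of letters that this common domain permits in the $UV$-sector: when the transition rule is $\sigma(12)^{\pm1}$, $D$ consists of the copy of $\Phi_1^+$ in the right historical alphabet and the copy of $\Phi_2^+\sqcup\Phi_3^+$ in the left; when it is $\sigma(23)^{\pm1}$, $D$ consists of the copy of $\Phi_1^+\sqcup\Phi_2^+$ in the right historical alphabet and the copy of $\Phi_3^+$ in the left. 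The crucial bookkeeping fact, to be confirmed against these descriptions in each of the four cases, is that $D$ omits exactly one of the two historical copies of $\Phi_j^+$: the copy in the \emph{left} historical alphabet in the two cases of part (a), and the copy in the \emph{right} historical alphabet in the two cases of part (b).

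Then I would read off from the definition of $\textbf{M}_2$ how a rule $\theta\in\Phi_j^+$ transforms the $UV$-sector word. For the four ``full sector'' shapes $(Q'_{i,\ell}Q'_{i,r})^{\pm1}$ and $(Q''_{i,\ell}Q''_{i,r})^{\pm1}$, such a $\theta$ multiplies the sector word on one side by a letter from one historical copy of $\Phi_j^+$ and on the other side by a letter from the other historical copy; for the remaining two ``mirror'' shapes in each of (a) and (b) (of the form $q\,w\,q^{-1}$ or $q^{-1}\,w\,q$, e.g.\ $Q'_{i,\ell}(Q'_{i,\ell})^{-1}$ or $(Q''_{i,r})^{-1}Q''_{i,r}$), such a $\theta$ conjugates the sector word by a letter from a single historical copy of $\Phi_j^+$. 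A straightforward case check against the preceding paragraph shows that in each of the six shapes in part (a) the letters thereby introduced include, on at least one side of $w_1$, a letter from the left historical copy of $\Phi_j^+$, while in each of the six shapes in part (b) they include a letter from the right historical copy of $\Phi_j^+$ --- in both situations, the copy of $\Phi_j^+$ that is excluded from $D$.

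Finally I would check that this offending letter survives the free reductions that produce $w_2$ from $w_1$. As $H$ is reduced and the identifications $\theta\mapsto\theta_\ell$, $\theta\mapsto\theta_r$ of $\Phi_j^+$ with the historical copies are bijections respecting inverses, the word accumulated in the $UV$-sector over the excluded copy of $\Phi_j^+$ is itself nonempty and reduced; the portion of $w_1$ it abuts is a word over $D$, whose alphabet is disjoint from that excluded copy, so nothing cancels at that junction. For the two mirror shapes one needs the additional remark that an admissible word is reduced, so a subword $q\,w\,q^{-1}$ or $q^{-1}\,w\,q$ of $W_1$ must have $w_1\neq1$, whence conjugating $w_1$ by the accumulated reduced word cannot collapse it. In all cases $w_2$ then contains a letter outside $D$, contradicting $\tau'$-admissibility of $W_2$. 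The main obstacle is the third step: carrying out the case analysis over the six sector shapes and the two transition-rule domains precisely enough to establish the dichotomy ``part (a) produces a forbidden left-copy letter, part (b) produces a forbidden right-copy letter'', and not overlooking that the mirror shapes require the reducedness of admissible words to rule out the empty-content escape.
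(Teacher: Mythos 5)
Your proposal is correct and follows essentially the same route as the paper's proof: both exploit that the (common) domain of the bracketing transition rule excludes exactly one historical copy of $\Phi_j^+$, write the final sector word as $H_{i,\ell}^{\mp1}w_1H_{i,r}^{\pm1}$ (or as a conjugate $H_{i,\ell}^{-1}w_1H_{i,\ell}$ in the mirror cases, where reducedness of admissible words gives $w_1\neq1$), and observe that no letter from the excluded copy can cancel against $w_1$, contradicting admissibility for the closing transition rule unless $H$ is empty. The only cosmetic difference is that you phrase the contradiction as ``an offending letter survives'' rather than the paper's ``hence $H_{i,\ell}$ is trivial,'' which is the same deduction.
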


\begin{proof}

(a) Suppose to the contrary that the step history of $\pazocal{C}$ is $(j+1,j)(j)(j,j+1)$ for $j\in\{1,2\}$.

Denoting $\pazocal{C}:W_0\to\dots\to W_t$, let $\pazocal{C}':W_1\to\dots\to W_{t-1}$ be the corresponding subcomputation.  As the history of $\pazocal{C}$ is reduced, $\pazocal{C}'$ is necessarily a nonempty computation.

Further, since $\pazocal{C}'$ is a reduced computation of $\textbf{M}_2(j)$, there exists a word $H\in F(\Phi_j^+)$ such that the history of $\pazocal{C}'$ is the natural copy of $H$ in the software of $\textbf{M}_2(j)$.  Let $H_{i,\ell}$ and $H_{i,r}$ be the copies of $H$ over the corresponding left and right historical alphabets, respectively, of the $UV$-sector.

Let $w_1$ and $w_{t-1}$ be the tape words of $W_1$ and $W_{t-1}$, respectively, in the $UV$-sector.  As $W_1$ and $W_{t-1}$ are both $\sigma(j,j+1)$-admissible, no letter comprising either $w_1$ or $w_{t-1}$ is from the copy of $\Phi_j$ in the corresponding left historical alphabet.

Suppose $UV=Q_{i,\ell}'Q_{i,r}'$ or $UV=(Q_{i,\ell}''Q_{i,r}'')^{-1}$.  Then $w_{t-1}=H_{i,\ell}^{-1}w_1H_{i,r}$.

As $w_1$ contains no letters from the left historical alphabet, no letter of $H_{i,\ell}^{-1}$ cancels in the product $H_{i,\ell}^{-1}w_1H_{i,r}$.  But then $w_{t-1}$ also contains no letters from the left historical alphabet, so that $H_{i,\ell}$ must be trivial.  However, this implies $H$ is also trivial, contradicting the hypothesis that $\pazocal{C}'$ is nonempty.

The same argument applies if $UV=(Q_{i,\ell}'Q_{i,r}')^{-1}$ or $Q_{i,\ell}''Q_{i,r}''$, noting that $w_{t-1}=H_{i,r}^{-1}w_1H_{i,\ell}$ in this case.

Finally, if $UV=Q_{i,\ell}'(Q_{i,\ell}')^{-1}$ or $(Q_{i,r}'')^{-1}Q_{i,r}''$, then $w_{t-1}=H_{i,\ell}^{-1}w_1H_{i,\ell}$.  But $w_1$ must be non-trivial, so that again no letter of $H_{i,\ell}^{\pm1}$ can cancel in the product.

(b) follows from just the same argument, noting that an admissible word that is $\sigma(j,j-1)$-admissible can have no letter from the copy of $\Phi_j^+$ in any right historical alphabet.

\end{proof}

%
%
%
%
%
%
%
%
%

For $w\in F(X)$ and $H\in F(\Phi^+)$, define $I_2(w,H)$ to be the input configuration with: 

\begin{itemize}

\item $w$ written in the $Q_0Q_1$-sector

\item the copy of $H$ over the corresponding left historical alphabet written in the $Q_{i,\ell}'Q_{i,r}'$-sector

\item the copy of $H^{-1}$ over the corresponding left historical alphabet written in the $Q_{i,\ell}''Q_{i,r}''$-sector.

\end{itemize}

Similarly, given $H\in F(\Phi^+)$, let $A_2(H)$ be the end configuration with:

\begin{itemize} 

\item empty working sectors

\item the copy of $H$ over the corresponding right historical alphabet written in the $Q_{i,\ell}'Q_{i,r}'$-sector

\item the copy of $H^{-1}$ over the corresponding right historical alphabet written in the $Q_{i,\ell}''Q_{i,r}''$-sector.

\end{itemize}


\begin{lemma} \label{M_2 language} 

Let $w\in F(X)$.

\begin{enumerate}

\item If $w\in\pazocal{R}_1$, then there exists $H_w\in F(\Phi^+)$ with $\|H_w\|\leq c_1f_1(c_1\|w\|)+c_1$ such that there exists a reduced computation $I_2(w,H_w)\to\dots\to A_2(H_w)$ of $\textbf{M}_2$.

\item If there exists a reduced computation $\pazocal{C}:I_2(w,H_1)\to\dots\to A_2(H_2)$ of $\textbf{M}_2$ for some $H_1,H_2\in F(\Phi^+)$, then $w\in\pazocal{R}_1$, $H_1\equiv H_2$, and $\|H_1\|\geq k$.  Moreover, in this case $\pazocal{C}$ is the unique reduced computation between these configurations, the length of $\pazocal{C}$ is $\|H_1\|+2$, and $w$ is uniquely determined by $H_1$.

\end{enumerate}

\end{lemma}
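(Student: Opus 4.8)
The plan is to deduce both parts from \Cref{M_1 language} by reading the historical sectors of $\textbf{M}_2$ as a `history tape' that records, in order, the non‑transition rules applied. Write $\Phi^+=\Phi_1^+\sqcup\Phi_2^+\sqcup\Phi_3^+$, and for a word in the rules of $\textbf{M}_2$ call its \emph{trace} the word over $\Phi^+$ obtained by deleting the transition rules $\sigma(12)^{\pm1},\sigma(23)^{\pm1}$. For part (1), fix $w\in\pazocal{R}_1$ and let $\pazocal{C}_1$ be the reduced accepting computation of $\textbf{M}_1$ produced in the proof of \Cref{M_1 language}(1); it has step history $(1)(2)(3)$ and length at most $c_1f_1(c_1\|w\|)+c_1$, so its history is $\mathcal{H}_1\sigma(12)\mathcal{H}_2\sigma(23)\mathcal{H}_3$ with $\mathcal{H}_j\in F(\Phi_j^+)$. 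Put $H_w:=\mathcal{H}_1\mathcal{H}_2\mathcal{H}_3$; since the $\Phi_j^+$ are pairwise disjoint this is reduced and $\|H_w\|\leq c_1f_1(c_1\|w\|)+c_1$. Starting from $I_2(w,H_w)$, run the $\textbf{M}_2$-rules corresponding to the rules of $\pazocal{C}_1$: on the working sectors this mirrors $\pazocal{C}_1$ step for step, so every transition rule's working‑sector condition is met exactly when $\pazocal{C}_1$ applies it; on the historical sectors a non‑transition rule $\theta$ left‑multiplies each $Q_{i,\ell}'Q_{i,r}'$-sector by the inverse of the left‑alphabet copy of $\theta$ and right‑multiplies by the right‑alphabet copy, while transition rules fix these sectors. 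Hence after the first $j$ rules the $Q_{i,\ell}'Q_{i,r}'$-sector reads (left‑alphabet copy of $p_j^{-1}H_w$)(right‑alphabet copy of $p_j$), where $p_j$ is the trace of the length-$j$ prefix of the history of $\pazocal{C}_1$; as $\pazocal{C}_1$ has step history $(1)(2)(3)$, $p_j$ is an honest prefix of $H_w$, and at the step $\sigma(12)$ is applied $p_j=\mathcal{H}_1$, so the right‑alphabet part uses only copies of $\Phi_1^+$ and the left‑alphabet part only copies of $\Phi_2^+\sqcup\Phi_3^+$ --- exactly the domain of $\sigma(12)$; likewise $\sigma(23)$ is applicable. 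The resulting $\textbf{M}_2$-computation is therefore legal and reduced (its history is the image of the reduced history of $\pazocal{C}_1$ under the rule bijection), and its terminal configuration has empty working sectors, right‑alphabet copies of $H_w$ in the $Q_{i,\ell}'Q_{i,r}'$-sectors and (by the mirror computation, with the two historical alphabets swapped and $H_w$ replaced by $H_w^{-1}$) right‑alphabet copies of $H_w^{-1}$ in the $Q_{i,\ell}''Q_{i,r}''$-sectors; that is, it is $A_2(H_w)$.

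For part (2), let $\pazocal{C}:W_0\to\dots\to W_T$ be reduced with $W_0\equiv I_2(w,H_1)$, $W_T\equiv A_2(H_2)$ and history $\mathcal{H}$, noting $\pazocal{C}$ has the standard base. Deleting the historical sectors (which commutes with applying rules, via the rule correspondence $\textbf{M}_2\leftrightarrow\textbf{M}_1$) turns $\pazocal{C}$ into a reduced computation of $\textbf{M}_1$ from the input configuration of $w$ to the accept configuration, so $w\in\pazocal{R}_1$ by \Cref{M_1 language}, and by \Cref{M_1 language}(2) it contains a reduced subcomputation of $\textbf{M}_1(3)$ of length $\geq k$. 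The step history of $\pazocal{C}$ is a walk on the path $(1)-(2)-(3)$ (the only submachine‑changing rules being $\sigma(12)^{\pm1},\sigma(23)^{\pm1}$) that starts at $(1)$, since $W_0$ carries the start letters of $\textbf{M}_1(1)$, and ends at $(3)$, since $W_T$ carries the end letters of $\textbf{M}_1(3)$. Any immediate backtrack of this walk would place one of the factors $(12)(2)(21)$, $(23)(3)(32)$, $(21)(1)(12)$, $(32)(2)(23)$ into the step history (the intermediate block being nonempty, hence a single‑submachine block, because $\mathcal{H}$ is reduced); since the standard base contains each $Q_{i,\ell}'Q_{i,r}'$, \Cref{M_2 step history} forbids all four as factors. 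Hence the walk is exactly $(1)(12)(2)(23)(3)$ with the $(1)$- and $(3)$-blocks nonempty (and the $(2)$-block possibly empty), so $\mathcal{H}\equiv\mathcal{H}_1\sigma(12)\mathcal{H}_2\sigma(23)\mathcal{H}_3$ with $\mathcal{H}_j\in F(\Phi_j^+)$; thus $\mathcal{H}$ has exactly two transition rules and its trace $\mathcal{H}':=\mathcal{H}_1\mathcal{H}_2\mathcal{H}_3$ is reduced, so the length of $\pazocal{C}$ is $\|\mathcal{H}'\|+2$.

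Now the bookkeeping of part (1) shows the $Q_{i,\ell}'Q_{i,r}'$-sector of $W_T$ reads (left‑alphabet copy of $(\mathcal{H}')^{-1}H_1$)(right‑alphabet copy of $\mathcal{H}'$); comparing with $A_2(H_2)$ the left‑alphabet part is trivial, so $(\mathcal{H}')^{-1}H_1=1$ in the free group, whence $\mathcal{H}'\equiv H_1$ (it is reduced), and reading off the right‑alphabet part, $\mathcal{H}'\equiv H_2$. Therefore $H_1\equiv H_2$, the length of $\pazocal{C}$ is $\|H_1\|+2$, and since the $\textbf{M}_1(3)$-subcomputation of length $\geq k$ lies inside the $\mathcal{H}_3$-block we get $\|H_1\|\geq\|\mathcal{H}_3\|\geq k$. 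Finally the syllable decomposition $H_1\equiv\mathcal{H}_1\mathcal{H}_2\mathcal{H}_3$ over the disjoint alphabets $\Phi_1^+,\Phi_2^+,\Phi_3^+$ is determined by $H_1$, so $H_1$ determines $\mathcal{H}\equiv\mathcal{H}_1\sigma(12)\mathcal{H}_2\sigma(23)\mathcal{H}_3$; as $A_2(H_2)=A_2(H_1)$ is determined by $H_1$, reading $\pazocal{C}$ backwards along $\mathcal{H}^{-1}$ from $A_2(H_1)$ recovers $\pazocal{C}$ --- hence $W_0\equiv I_2(w,H_1)$, hence $w$ --- uniquely, and any reduced computation between $I_2(w,H_1)$ and $A_2(H_2)$ has history $\mathcal{H}$ by the same argument, so equals $\pazocal{C}$.

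The step I expect to be the main obstacle is the step‑history analysis in part (2): ruling out every immediate backtrack of the walk on $(1)-(2)-(3)$ by combining \Cref{M_2 step history} with reducedness, which is what pins $\mathcal{H}$ to the form $\mathcal{H}_1\sigma(12)\mathcal{H}_2\sigma(23)\mathcal{H}_3$. Once that structure is established, the domain‑restriction verification in part (1) and the history‑tape computation forcing $H_1\equiv H_2$ are routine.
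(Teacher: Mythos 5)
Your proposal is correct and follows essentially the same route as the paper's proof: part (1) lifts the $\textbf{M}_1$-accepting computation from Lemma \ref{M_1 language}(1) rule-by-rule, and part (2) projects to the working sectors to invoke Lemma \ref{M_1 language}, uses Lemma \ref{M_2 step history} to force the step history $(1)(2)(3)$, and then reads off $H_1\equiv\tilde{H}\equiv H_2$ from the historical sectors exactly as the paper does. Your version is somewhat more explicit than the paper's (the domain checks for $\sigma(12),\sigma(23)$ in part (1) and the no-backtracking/uniqueness bookkeeping in part (2)), but these are elaborations of the same argument, not a different one.
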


\begin{proof}

(1) By \Cref{M_1 language}, there exists a reduced computation $\pazocal{C}_1(w)$ of $\textbf{M}_1$ accepting the input $w$ whose history $H$ has length at most $c_1f_1(c_1\|w\|)+c_1$.  By construction, $H\equiv H_1\sigma(12)H_2\sigma(23)H_3$ where $H_j\in F(\Phi_j^+)$.

Let $\bar{H}_j$ be the copy of $H_j$ over the positive rules of $\textbf{M}_2(j)$ and set $H_w\equiv H_1H_2H_3$.  Noting that $H_w$ is necessarily a reduced word, it follows from the construction of the machine that $\bar{H}_1\sigma(12)\bar{H}_2\sigma(23)\bar{H}_3$ is the history of a reduced computation satisfying the statement.

(2) As $\textbf{M}_2$ operates as $\textbf{M}_1$ in the working sectors, \Cref{M_1 language} implies $w\in\pazocal{L}$.

By \Cref{M_2 step history}, the step history of $\pazocal{C}$ must be $(1)(2)(3)$, {\frenchspacing i.e. the} history of $\pazocal{C}$ can be factored $H\equiv h_1\sigma(12) h_2\sigma(23)h_3$.  Again, the word $\tilde{H}\equiv h_1h_2h_3$ must be reduced.  Moreover, \Cref{M_1 language}(2) implies $\|h_3\|\geq k$.

Note that $\tilde{H}$ uniquely determines the history of a reduced computation of $\textbf{M}_1$ that accepts the input $w$.  In particular, $\tilde{H}$ uniquely determines $w$.

Then, the operation of $\pazocal{C}$ in each $Q_{i,\ell}'Q_{i,r}'$-sector multiplies on the left by the copy of $\tilde{H}^{-1}$ over the corresponding left historical alphabet and on the right by the copy of $\tilde{H}$ over the corresponding right historical alphabet.  

Note that the tape word of $I_2(w,H_1)$ (respectively $A_2(H_2)$) in the $Q_{i,\ell}'Q_{i,r}'$-sector is the copy $H_{1,\ell}$ (respectively $H_{2,r}$) of $H_1$ (respectively $H_2$) over the corresponding left (respectively right) historical alphabet.  Letting $\tilde{H}_\ell$ and $\tilde{H}_r$ be the copies of $\tilde{H}$ over the left and right historical alphabets, respectively, of the $Q_{i,\ell}'Q_{i,r}'$-sector, then $H_{2,r}=\tilde{H}_\ell^{-1}H_{1,\ell}\tilde{H}_r$.  But since $H_{2,r}$ contains no letters from the left historical alphabet, it follows immediately that $H_{1,\ell}\equiv\tilde{H}_\ell$ and so $H_{2,r}\equiv\tilde{H}_r$.

Hence, $H_1=\tilde{H}=H_2$.

\end{proof}

A benefit of adding historical sectors is in providing a linear estimate for the lengths of reduced computations in terms of the $a$-lengths of the initial and terminal admissible words of the computation.

\begin{lemma}[Compare with Lemma 3.9 of \cite{O18}] \label{one alphabet historical words}

Given $i\in\{0,\dots,s\}$ and $j\in\{1,2,3\}$, let $W_0\to\dots\to W_t$ be a reduced computation of $\textbf{M}_2(j)$ with base $Q_{i,\ell}'Q_{i,r}'$ or $Q_{i,\ell}''Q_{i,r}''$ and history $H$.  Suppose the $a$-letters of $W_0$ which are copies of $\Phi_j^+$ are all from the corresponding left or all from the corresponding right historical alphabet.  Then $\|H\|\leq|W_t|_a$ and $|W_0|_a\leq|W_t|_a$.  Moreover, if $t\geq1$, then the tape word of $W_t$ contains letters which are copies of $\Phi_j^+$ from the right (respectively left) historical alphabet.

\end{lemma}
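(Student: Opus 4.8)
The plan is to follow the tape word of the historical sector along the computation: each rule of $\textbf{M}_2(j)$ records one letter of (a copy of) its history at each end of that tape word, and the hypothesis on $W_0$ forces that the letters recorded at one particular end can never be cancelled later, so the terminal tape word is at least as long as the history and at least as long as the tape word of $W_0$.

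First I would set up notation. Write $\pazocal{C}:W_0\to\dots\to W_t$ for the computation, let $u_k$ be the tape word of $W_k$ in the given sector (so $|W_k|_a=\|u_k\|$), and write the history as $H\equiv\theta_1^{\eps_1}\cdots\theta_t^{\eps_t}$ with $\theta_k\in\Phi_j^+$, $\eps_k\in\{\pm1\}$; reducedness of $\pazocal{C}$ means $\theta_k^{\eps_k}\neq(\theta_{k+1}^{\eps_{k+1}})^{-1}$ for all $k$. For $\theta\in\Phi_j^+$, let $a(\theta)$ and $b(\theta)$ denote the copies of $\theta$ in the left and right historical alphabets of the sector. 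I will treat the base $Q_{i,\ell}'Q_{i,r}'$, where the construction of $\textbf{M}_2$ makes the rule $\theta_k^{\eps_k}$ replace the current tape word $u$ by the reduced form of $a(\theta_k)^{-\eps_k}\,u\,b(\theta_k)^{\eps_k}$; the same argument applies to the base $Q_{i,\ell}''Q_{i,r}''$ after exchanging the roles of $a(\theta)$ and $b(\theta)$, since there $\theta_k^{\eps_k}$ acts by $u\mapsto$ reduced form of $b(\theta_k)^{-\eps_k}\,u\,a(\theta_k)^{\eps_k}$.

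Suppose first that every copy of $\Phi_j^+$ occurring in $u_0$ lies in the right historical alphabet; set $L_k=a(\theta_k)^{-\eps_k}\cdots a(\theta_1)^{-\eps_1}$ and $R_k=b(\theta_1)^{\eps_1}\cdots b(\theta_k)^{\eps_k}$. By induction on $k$ I would show that $u_k$ equals the reduced form of $L_ku_0R_k$ and that the three words $L_k$, $R_k$, $L_ku_0$ are each already reduced. Reducedness of $L_k$ and $R_k$ follows from reducedness of $H$: two consecutive letters of $L_k$ (or of $R_k$) cancel precisely when $\theta_k^{\eps_k}=(\theta_{k+1}^{\eps_{k+1}})^{-1}$. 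Reducedness of $L_ku_0$ uses the hypothesis: the last letter of $L_k$ is $a(\theta_1)^{\pm1}$ with $\theta_1\in\Phi_j^+$, whereas the first letter of $u_0$ is not of the form $a(\theta)^{\pm1}$ for any $\theta\in\Phi_j^+$ (it is a copy of some $\Phi_i^+$ with $i\neq j$, or a right-historical-alphabet copy of $\Phi_j^+$). The inductive step reduces $a(\theta_{k+1})^{-\eps_{k+1}}u_k\,b(\theta_{k+1})^{\eps_{k+1}}$, which represents the same free-group element as $L_{k+1}u_0R_{k+1}$, and the same two observations show $L_{k+1}$, $R_{k+1}$, $L_{k+1}u_0$ are reduced.

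Finally I would conclude. Since $L_tu_0$ is reduced and the last letter of $L_t$ (a left-historical-alphabet letter) cannot cancel against any letter of $R_t$ (all in the right historical alphabet), reducing $L_tu_0R_t$ leaves $L_t$ intact as a prefix; writing $w$ for the reduced form of $u_0R_t$, we get $u_t\equiv L_tw$, with $\|L_t\|=\|R_t\|=t$. Therefore
$$|W_t|_a=\|u_t\|=t+\|w\|\geq\max\big(t,\ \|u_0\|\big)=\max\big(\|H\|,\ |W_0|_a\big),$$
where $t+\|w\|\geq\|u_0\|$ uses $\|w\|\geq\|u_0\|-\|R_t\|$ (triangle inequality in the free group). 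This yields $\|H\|\leq|W_t|_a$ and $|W_0|_a\leq|W_t|_a$, and when $t\geq1$ the nonempty prefix $L_t$ exhibits letters of $u_t$ that are copies of $\Phi_j^+$ in the left historical alphabet, matching the ``(respectively left)'' clause. The other sub-case, where every copy of $\Phi_j^+$ in $u_0$ lies in the left historical alphabet, is symmetric: now the prepended letter $a(\theta_k)^{-\eps_k}$ may cancel into $u_0$, but the appended letter $b(\theta_k)^{\eps_k}$ cannot (the last letter of $u_0$ is not of the form $b(\theta)^{\pm1}$), so $u_0R_k$ stays reduced throughout, $R_t$ survives as a suffix of $u_t$, and the same bounds together with the conclusion about right-historical-alphabet letters follow. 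The only delicate point in the whole argument is this non-cancellation claim, i.e.\ the interplay between reducedness of $H$ and the alphabet hypothesis; the rest is routine bookkeeping, and one could alternatively obtain $\|H\|\leq\tfrac12(\|u_0\|+\|u_t\|)$ directly from \Cref{multiply two letters}(c) and combine it with the inequality $|W_0|_a\leq|W_t|_a$ established above.
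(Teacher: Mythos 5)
Your proof is correct, and it is essentially the standard argument for this kind of "one-alphabet" lemma: the paper itself gives no proof, deferring to the analogous Lemma 3.9 of \cite{O18}, whose argument is exactly the non-cancellation bookkeeping you carry out (reducedness of $H$ keeps $L_k$ and $R_k$ reduced, and the alphabet hypothesis prevents cancellation at one fixed end, so that end accumulates a copy of the history). The key point — that the hypothesis on $W_0$ forces one of $L_t$ or $R_t$ to survive intact in $u_t$, giving both $\|H\|\leq|W_t|_a$ and $|W_0|_a\leq|W_t|_a$ as well as the final claim about which historical alphabet appears — is identified and justified correctly.
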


The next statement then follows in just the same way:

\begin{lemma} \label{one alphabet historical words unreduced}

Given $j\in\{1,2,3\}$, let $W_0\to\dots\to W_t$ be a reduced computation of $\textbf{M}_2(j)$ with base $Q_{i,\ell}'(Q_{i,\ell}')^{-1}$ or $(Q_{i,r}'')^{-1}Q_{i,r}''$ (respectively $(Q_{i,r}')^{-1}Q_{i,r}'$ or $Q_{i,\ell}''(Q_{i,\ell}'')^{-1}$) and history $H$.  Suppose the $a$-letters of $W_0$ which are copies of $\Phi_j^+$ are all from the corresponding right (respectively left) historical alphabet.  Then $|W_0|_a=|W_t|_a-2\|H\|$.  Moreover, if $t\geq1$, then the tape word of $W_t$ contains letters which are copies of $\Phi_j$ from the left (respectively right) historical alphabet.

\end{lemma}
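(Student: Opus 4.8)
The plan is to follow the proof of \Cref{one alphabet historical words}, the one new feature being that on a doubled base the relevant rule of $\textbf{M}_2(j)$ acts on the historical tape word by conjugation rather than by two-sided multiplication by (in general) distinct letters. First I would treat the base $Q_{i,\ell}'(Q_{i,\ell}')^{-1}$. An admissible word $W$ with this base has the form $q\,v\,q^{-1}$ with $q\in Q_{i,\ell}'$ and $v$ a reduced word in the two historical alphabets of the $Q_{i,\ell}'Q_{i,r}'$-sector; all $a$-letters of $W$ lie in $v$, so $|W|_a=\|v\|$. By the description of the historical rules of $\textbf{M}_2(j)$, a positive rule $\theta\in\Phi_j^+$ multiplies the $Q_{i,\ell}'Q_{i,r}'$-sector on the left by $\theta_\ell^{-1}$, where $\theta_\ell$ denotes the copy of $\theta$ in the left historical alphabet; since $q$ and $q^{-1}$ are occurrences of one and the same state letter, applying $\theta$ inserts $\theta_\ell^{-1}$ immediately to the right of the left occurrence and $\theta_\ell$ immediately to the left of the right one, while the $\omega$-parts fall outside the outermost state letters and are removed by trimming. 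Thus $\theta$ replaces $v$ by the reduced form of $\theta_\ell^{-1}v\,\theta_\ell$, and a negative rule conjugates by $\theta_\ell^{-1}$ in the same manner. Iterating over the history $H$ of $\pazocal{C}$ yields $v_t=H_\ell^{-1}\,v_0\,H_\ell$ in the free group on the left historical alphabet, where $H_\ell$ is the copy of $H$ over that alphabet; since $\pazocal{C}$ is reduced, $H_\ell$ is a reduced word.

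The key step would then be to check that this product is already reduced. The word $v_0$ is nonempty, since otherwise $W_0\equiv qq^{-1}$ would fail to be reduced. By hypothesis every $a$-letter of $W_0$ which is a copy of an element of $\Phi_j^+$ lies in the \emph{right} historical alphabet, so $v_0$ contains no letter that is a copy of an element of $\Phi_j^+$ from the left historical alphabet; on the other hand every letter of $H_\ell^{\pm1}$ is precisely of this excluded kind. Hence at each of the junctions $H_\ell^{-1}\cdot v_0$ and $v_0\cdot H_\ell$ the adjacent letters are of different kinds and cannot cancel, and since $H_\ell^{-1}$, $v_0$, $H_\ell$ are individually reduced, $H_\ell^{-1}v_0H_\ell$ is reduced. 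Therefore $\|v_t\|=\|v_0\|+2\|H_\ell\|=\|v_0\|+2\|H\|$, i.e. $|W_0|_a=|W_t|_a-2\|H\|$. Moreover, if $t\geq1$ then $H_\ell$ is nonempty, so $v_t$ begins with a letter of $H_\ell^{-1}$, which is a copy of an element of $\Phi_j^+$ from the left historical alphabet; this is the last assertion of the lemma.

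For the base $(Q_{i,r}'')^{-1}Q_{i,r}''$ the same computation applies: from the action of $\textbf{M}_2(j)$ on the $Q_{i,\ell}''Q_{i,r}''$-sector one sees that each rule again conjugates the historical tape word by the left historical copy of itself, and the hypothesis that the copies of $\Phi_j^+$ all lie in the right historical alphabet again forbids every letter of the conjugator from occurring in $v_0$. The parenthetical bases $(Q_{i,r}')^{-1}Q_{i,r}'$ and $Q_{i,\ell}''(Q_{i,\ell}'')^{-1}$ are handled identically, except that the conjugator is now the \emph{right} historical copy, matched by the hypothesis that the copies of $\Phi_j^+$ in $W_0$ all lie in the left historical alphabet. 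I do not anticipate a genuine obstacle here; the only delicate point is the bookkeeping of determining, for each of the four doubled bases, which of the two historical alphabets supplies the conjugating word, so that the `no cancellation' hypothesis is the matching one, and this is exactly the verification already carried out for \Cref{one alphabet historical words}.
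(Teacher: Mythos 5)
Your proof is correct and is exactly the argument the paper intends (the paper omits it, noting only that it "follows in just the same way" as the preceding lemma): on each doubled base the rule acts on the historical tape word by conjugation by the appropriate historical copy of itself, and the hypothesis on the alphabet of $W_0$ rules out any cancellation, giving $\|v_t\|=\|v_0\|+2\|H\|$ and forcing letters of the conjugating alphabet to appear in $W_t$. Your bookkeeping of which historical alphabet supplies the conjugator in each of the four cases matches the definition of the rules of $\textbf{M}_2$, so there is nothing to add.
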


\begin{lemma}[Compare with Lemma 3.12 of \cite{O18}] \label{M_2 bound}

Let $\pazocal{C}:W_0\to\dots\to W_t$ be a reduced computation of $\textbf{M}_2(j)$ with base $B$.  Suppose $B$ contains no letter of the form $Q_i^{\pm1}$ and that $(Q_{0,\ell}')^{-1}Q_{0,\ell}'$ is not a subword of $B$.  If $B$ has length at least 3, then $|W_i|_a\leq9(|W_0|_a+|W_t|_a)$ for all $0\leq i\leq t$.

\end{lemma}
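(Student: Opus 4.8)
The plan is to analyze the base $B$ of $\pazocal{C}$ by decomposing it into its maximal two-letter sectors and applying the linear estimates already established for historical and working sectors. Since $B$ contains no letter $Q_i^{\pm 1}$, every letter of $B$ is one of the primed or double-primed state letters, so every two-letter subword of $B$ is either a historical sector (of the form $(Q_{i,\ell}'Q_{i,r}')^{\pm1}$ or $(Q_{i,\ell}''Q_{i,r}'')^{\pm1}$), a working sector (corresponding to a two-letter subword of the standard base of $\textbf{M}_1$), or an ``unreduced'' two-letter word of one of the types handled by \Cref{one alphabet historical words unreduced} (e.g. $Q_{i,\ell}'(Q_{i,\ell}')^{-1}$) or its working-sector analogue. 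The first step is to restrict $\pazocal{C}$ to each such two-letter subword $B'$ of $B$, obtaining a reduced computation $\pazocal{C}':W_0'\to\dots\to W_t'$ of $\textbf{M}_2(j)$ in base $B'$ with the same history $H$.

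Next I would bound $|W_i'|_a$ in terms of $|W_0'|_a$ and $|W_t'|_a$ for each type of two-letter sector. For historical sectors the key input is \Cref{one alphabet historical words} together with the hypothesis that lets us control which historical alphabet the copies of $\Phi_j^+$ lie in: since $B$ has length at least $3$, the sector $B'$ is adjacent to another letter in $B$, and the structure of $\textbf{M}_2(j)$-computations (the rules multiply historical sectors on the left by inverses of copies and on the right by copies, as in \Cref{multiply two letters}) forces the tape word to grow monotonically up to the position of minimal length; combined with \Cref{one alphabet historical words}, $\|H\|$ and $|W_i'|_a$ are bounded by $|W_0'|_a + |W_t'|_a$ up to a small constant, and likewise for the unreduced historical sectors via \Cref{one alphabet historical words unreduced}, where $|W_0'|_a = |W_t'|_a - 2\|H\|$ directly gives $\|H\| \le \tfrac12(|W_0'|_a + |W_t'|_a)$ and $|W_i'|_a \le |W_t'|_a$. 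For the working sectors, these are exactly the sectors of $\textbf{M}_1$, and within a fixed submachine $\textbf{M}_2(j)$ the relevant rules multiply each working sector by at most one letter on each side (by \Cref{simplify rules} and the makeup of $\textbf{M}_1$'s submachines — $\textbf{Move}$, $\textbf{Clean}$, $\textbf{S}$), so \Cref{multiply one letter}, \Cref{multiply two letters}, and \Cref{unreduced base} apply to give $|W_i'|_a \le \max(|W_0'|_a,|W_t'|_a) + \|H\|$ or similar, and $\|H\| \le |W_0'|_a + |W_t'|_a$.

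The crux of the argument is then to sum these local estimates: for a fixed intermediate index $i$, the $a$-letters of $W_i$ are distributed among the two-letter sectors of $B$, and $|W_i|_a = \sum_{B'} |W_i'|_a$ where the sum is over maximal two-letter subwords $B'$ of $B$ (each interior letter being counted in two adjacent sectors, which only helps the inequality). The bound $\|H\| \le |W_0'|_a + |W_t'|_a$ holds for \emph{every} sector $B'$, but crucially we need a \emph{single} sector $B'$ in which $\|H\|$ is genuinely controlled by $|W_0|_a + |W_t|_a$; here the hypothesis that $B$ has length at least $3$ and the exclusion of $(Q_{0,\ell}')^{-1}Q_{0,\ell}'$ as a subword are used to guarantee that $B$ contains at least one genuine historical sector $B'_0$ whose left-historical-alphabet letters in $W_0$ are all from one side, so that \Cref{one alphabet historical words} (or its unreduced counterpart) yields $\|H\| \le |W_0|_a + |W_t|_a$. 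Feeding this single global bound on $\|H\|$ back into the local estimate $|W_i'|_a \le \max(|W_0'|_a,|W_t'|_a) + c\|H\|$ for each of the (at most nine, by the structure of full bases of $\textbf{M}_1$) sectors $B'$, and summing, gives $|W_i|_a \le (|W_0|_a + |W_t|_a) + 8\|H\| \le 9(|W_0|_a + |W_t|_a)$, as desired. The main obstacle I anticipate is bookkeeping the case analysis on the type of each two-letter sector and verifying that in the one historical sector $B'_0$ the hypotheses of \Cref{one alphabet historical words} or \Cref{one alphabet historical words unreduced} are actually met (i.e. that the copies of $\Phi_j^+$ in $W_0$ really do all lie on one side), which is where the step-history restrictions of \Cref{M_2 step history} and the admissibility constraints around transition rules come into play — but since $\pazocal{C}$ is a computation of a single submachine $\textbf{M}_2(j)$ with no transition rules in $H$, this should follow from tracking a single reduced word $H \in F(\Phi_j^+)$ through the sector.
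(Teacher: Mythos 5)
Your plan diverges from the paper's proof (which simply observes that the hypotheses let one view $\pazocal{C}$ as a computation of the $S$-machine with standard base $Q_{0,\ell}'Q_{0,r}'\dots Q_{s,\ell}''Q_{s,r}''$ and then invokes the argument of Lemma 3.12 of \cite{O18}), and the route you propose has a genuine gap at its central step. You want a single historical sector $B_0'$ of $B$ in which $\|H\|\leq|W_0|_a+|W_t|_a$, obtained from \Cref{one alphabet historical words} or \Cref{one alphabet historical words unreduced}. But those lemmas require the tape word of $W_0$ in that sector to be written entirely over one historical alphabet, and nothing in the hypotheses of \Cref{M_2 bound} provides this: $W_0$ is an arbitrary admissible word, and ``no transition rules occur in $H$'' says nothing about the initial tape words. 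Worse, the hypotheses do not guarantee that $B$ contains a \emph{reduced} historical sector at all: a base such as $(Q_{1,\ell}')^{-1}(Q_{0,r}')^{-1}Q_{0,r}'$ satisfies every hypothesis, and its only sector with historical tape alphabet is the unreduced one $(Q_{0,r}')^{-1}Q_{0,r}'$, where each rule acts by conjugation. For such sectors \Cref{unreduced base} gives only the factorization $H\equiv H_1H_2^{\ell}H_3$ with no bound on $\ell$; concretely, if the tape word there is $\theta_r^{k}$ (a power of one right-historical letter), the rule corresponding to $\theta$ can be applied arbitrarily many times without changing it, so no inequality of the form $\|H\|\leq c(|W_0|_a+|W_t|_a)$ holds sector-locally. (When a reduced historical sector \emph{is} present, the correct tool is \Cref{multiply two letters}(c), which needs no one-sidedness hypothesis — but you cannot assume such a sector exists.)

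A second problem is the final summation. You bound each sector by roughly its endpoint lengths plus $c\|H\|$ and then sum over ``at most nine'' sectors, attributing that count to the structure of full bases of $\textbf{M}_1$. But \Cref{M_2 bound} imposes no bound on $\|B\|$: the base is not assumed full or revolving and may repeat letters arbitrarily often (e.g. $Q_{0,\ell}'Q_{0,r}'Q_{1,\ell}'(Q_{1,\ell}')^{-1}(Q_{0,r}')^{-1}Q_{0,r}'Q_{1,\ell}'\dots$), so even granting a global bound on $\|H\|$, an estimate of the form $(|W_0|_a+|W_t|_a)+(\#\text{sectors})\cdot c\|H\|$ does not produce the absolute constant $9$. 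In the paper the two hypotheses (no letters $Q_i^{\pm1}$, no subword $(Q_{0,\ell}')^{-1}Q_{0,\ell}'$) are used not to manufacture a ``good'' historical sector but to cut away the $\textbf{Move}$ part and the $Y\cup Z$ sector, so that $\pazocal{C}$ is literally a computation of the smaller machine to which the space estimate of \cite{O18} applies; reproducing that estimate requires the finer analysis of \cite{O18} (per-sector monotonicity as in \Cref{multiply two letters} and \Cref{unreduced base}, not a uniform ``history $\times$ number of sectors'' count), which your bookkeeping does not supply.
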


\begin{proof}

As $(Q_{0,\ell}')^{-1}Q_{0,\ell}'$ is not a subword of $B$, $\pazocal{C}$ can be viewed as a reduced computation of the $S$-machine with standard base $Q_{0,\ell}'Q_{0,r}'\dots Q_{s,\ell}'Q_{s,r}'Q_{0,\ell}''Q_{0,r}''\dots Q_{s,\ell}''Q_{s,r}''$ which operates as $\textbf{M}_2(j)$ in these sectors.  With this view, the argument outlined in \cite{O18} implies the statement.

\end{proof}

\medskip


\subsection{The machine $\overline{\textbf{M}}_2$} \label{sec-overline-M_2} \

The next machine in our construction, $\textbf{M}_3$, is the composition of the machine $\textbf{M}_2$ with primitive machines operating in the historical sectors. To aid in this construction, we first introduce (similar to the exposition of \cite{OS19} and \cite{WCubic}) the intermediate recognizing $S$-machine $\overline{\textbf{M}}_2$.

The standard base of $\overline{\textbf{M}}_2$ is obtained by inserting two new state letters in between pairs forming historical sectors.  Specifically, the standard base of $\overline{\textbf{M}}_2$ is:
$$\bigg(Q_0\dots Q_{n-1}\bigg)~\bigg(Q_{0,\ell}' P_0' R_0' Q_{0,r}' \dots Q_{s,\ell}'P_s' R_s' Q_{s,r}'\bigg) ~ \bigg(Q_{0,\ell}''P_0''R_0''Q_{0,r}''\dots Q_{s,\ell}''P_s''R_s''Q_{s,\ell}''\bigg)$$
The parts of the state letters of the same name are identified with those of $\textbf{M}_2$, while the new parts are copies of those of the parts they are between ({\frenchspacing e.g. $P_i'$ and $R_i'$ are copies of $Q_{i,\ell}'$ and $Q_{i,r}'$, respectively)}.

The tape alphabets of the $Q_{i,\ell}'P_i'$-, $P_i'R_i'$-, and $R_i'Q_{i,r}'$-sectors are each copies of the tape alphabet of the $Q_{i,\ell}Q_{i,r}$-sector of $\textbf{M}_2$ (and so themselves consist of left and right historical alphabets).  The analogue is true for the tape alphabets of the $Q_{i,\ell}''P_i''$-, $P_i''R_i''$-, and $R_i''Q_{i,r}''$-sectors.

The state letter of each part corresponding to a start (respectively end) letter of $\textbf{M}_2$ is the start (respectively end) letter of the part.  The input sectors of the machine are the $Q_0Q_1$-sector, all $P_i'R_i'$-sectors, and all $P_i''R_i''$-sectors.

The positive rules of $\overline{\textbf{M}}_2$ are in correspondence with, and so identified with, those of $\textbf{M}_2$.  For each rule $\theta$ of $\textbf{M}_2$, the corresponding rule of $\overline{\textbf{M}}_2$ locks the $Q_{i,\ell}'P_i'$-, $R_i'Q_{i,r}'$- $Q_{i,\ell}''P_i''$-, and $R_i''Q_{i,r}''$-sectors and, identifying the $P_i'R_i'$- and $P_i''R_i''$-sectors with the $Q_{i,\ell}'Q_{i,r}'$- and $Q_{i,\ell}''Q_{i,r}''$-sectors of $\textbf{M}_2$, respectively, operates on the rest of the standard base as $\theta$.  As such, $\overline{\textbf{M}}_2$ can be viewed as the composition of the three submachines $\overline{\textbf{M}}_2(1)$, $\overline{\textbf{M}}_2(2)$, and $\overline{\textbf{M}}_2(3)$ in a manner similar to the definition of $\textbf{M}_2$.

Naturally, there are analogues of the statements pertaining to $\textbf{M}_2$ in the setting of $\overline{\textbf{M}}_2$.  For example, the following statement follows immediately from \Cref{M_2 step history}:

\begin{lemma} \label{M_2 bar step history}

Let $\pazocal{C}$ be a reduced computation of $\overline{\textbf{M}}_2$ with base $B$.  

\begin{enumerate}[label=(\alph*)]

\item If $B$ contains a subword of the form $(P_i'R_i')^{\pm1}$, $(P_i''R_i'')^{\pm1}$, $P_i'(P_i')^{-1}$, or $(R_i'')^{-1}R_i''$, then the step history of $\pazocal{C}$ is neither $(21)(1)(12)$ nor $(32)(2)(23)$.

\item If $B$ contains a subword of the form $(P_i'R_i')^{\pm1}$, $(P_i''R_i'')^{\pm1}$, $(R_i')^{-1}R_i'$, or $P_i''(P_i'')^{-1}$, then the step history of $\pazocal{C}$ is neither $(12)(2)(21)$ nor $(23)(3)(32)$.

\end{enumerate}

\end{lemma}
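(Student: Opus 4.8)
The plan is to deduce \Cref{M_2 bar step history} from \Cref{M_2 step history} by restricting the computation to a single historical sector. Recall from the construction of $\overline{\textbf{M}}_2$ that the only difference from $\textbf{M}_2$ is that each historical sector $Q_{i,\ell}'Q_{i,r}'$ (resp. $Q_{i,\ell}''Q_{i,r}''$) of $\textbf{M}_2$ has been split into three sectors $Q_{i,\ell}'P_i'$, $P_i'R_i'$, $R_i'Q_{i,r}'$ (resp. $Q_{i,\ell}''P_i''$, $P_i''R_i''$, $R_i''Q_{i,r}''$) by the insertion of the new state letters; every rule of $\overline{\textbf{M}}_2$ locks the two outer sectors, and, under the identification of the $P_i'R_i'$- and $P_i''R_i''$-sectors with the $Q_{i,\ell}'Q_{i,r}'$- and $Q_{i,\ell}''Q_{i,r}''$-sectors of $\textbf{M}_2$, acts on the rest of the base as its counterpart in $\textbf{M}_2$. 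Since the bijection between the rules of $\overline{\textbf{M}}_2$ and those of $\textbf{M}_2$ respects the partition into the submachines $(1),(2),(3)$ and the transition rules $\sigma(12),\sigma(23)$, it carries histories to histories of the same length preserving the step history.

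Now I would argue by contradiction. Suppose part (a) fails, so there is a reduced computation $\pazocal{C}:W_0\to\dots\to W_t$ of $\overline{\textbf{M}}_2$ with base $B$ containing a subword $UV$ of one of the four listed forms and with step history $(21)(1)(12)$ or $(32)(2)(23)$. Restricting $\pazocal{C}$ to the two-letter sub-base $UV$ of $B$ yields a reduced computation $\pazocal{C}'$ of $\overline{\textbf{M}}_2$ with base $UV$, the same history as $\pazocal{C}$, and hence the same step history. Via the identification above, $\pazocal{C}'$ corresponds to a reduced computation $\pazocal{C}''$ of $\textbf{M}_2$ with the same step history whose base is, respectively, $(Q_{i,\ell}'Q_{i,r}')^{\pm1}$, $(Q_{i,\ell}''Q_{i,r}'')^{\pm1}$, $Q_{i,\ell}'(Q_{i,\ell}')^{-1}$, or $(Q_{i,r}'')^{-1}Q_{i,r}''$ according to the form of $UV$ (using that $P_i'$, $R_i'$, $P_i''$, $R_i''$ are copies of $Q_{i,\ell}'$, $Q_{i,r}'$, $Q_{i,\ell}''$, $Q_{i,r}''$). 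Each of these bases is one of the subwords appearing in the hypothesis of \Cref{M_2 step history}(a), so that lemma forbids the step history of $\pazocal{C}''$, and therefore of $\pazocal{C}$, from being $(21)(1)(12)$ or $(32)(2)(23)$ — a contradiction. Part (b) follows by the identical argument applied to \Cref{M_2 step history}(b), with the subwords $P_i'(P_i')^{-1}$ and $(R_i'')^{-1}R_i''$ replaced by $(R_i')^{-1}R_i'$ and $P_i''(P_i'')^{-1}$, corresponding to $(Q_{i,r}')^{-1}Q_{i,r}'$ and $Q_{i,\ell}''(Q_{i,\ell}'')^{-1}$.

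There is essentially no obstacle here: the statement is, as the excerpt indicates, a formal corollary. The only point meriting a word of justification is that the restriction of a reduced computation to a two-letter sub-base of its base is again a reduced computation with the same history — each rule carries admissible subwords to admissible subwords, and the history word is untouched — which is the standard behaviour of restrictions of $S$-machine computations and which I would simply invoke as such.
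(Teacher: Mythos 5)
Your proposal is correct and matches the paper's intent: the paper simply asserts that the lemma "follows immediately from" Lemma \ref{M_2 step history} via the identification of the $P_i'R_i'$- and $P_i''R_i''$-sectors of $\overline{\textbf{M}}_2$ with the historical sectors of $\textbf{M}_2$, which is exactly the restriction-and-identification argument you spell out. Your dictionary of unreduced bases ($P_i'(P_i')^{-1}\leftrightarrow Q_{i,\ell}'(Q_{i,\ell}')^{-1}$, etc.) is the right one, and the standard fact about restricting a reduced computation to a sub-base is all that is needed.
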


Given $w\in F(X)$ and $H\in F(\Phi^+)$, let $\bar{I}_2(w,H)$ and $\bar{A}_2(H)$ be the input and end configurations of $\overline{\textbf{M}}_2$ analogous to $I_2(w,H)$ and $A_2(H)$, respectively, obtained by writing the corresponding historical words in the $P_i'R_i'$- and $P_i''R_i''$-sectors and keeping the other historical sectors empty.  The next statement then follows immediately from \Cref{M_2 language}:

\begin{lemma} \label{M_2 bar language} 

Let $w\in F(X)$.

\begin{enumerate}

\item If $w\in\pazocal{R}_1$, then there exists $H_w\in F(\Phi^+)$ with $\|H_w\|\leq c_1 f_1(c_1\|w\|)+c_1$ such that there exists a reduced computation $\bar{I}_2(w,H_w)\to\dots\to \bar{A}_2(H_w)$ of  $\overline{\textbf{M}}_2$.

\item If there exists a reduced computation $\pazocal{C}:\bar{I}_2(w,H_1)\to\dots\to \bar{A}_2(H_2)$ of $\overline{\textbf{M}}_2$ for some $H_1,H_2\in F(\Phi^+)$, then $w\in\pazocal{R}_1$, $H_1\equiv H_2$, and $\|H_1\|\geq k$.  Moreover, in this case $\pazocal{C}$ is the unique reduced computation between these configurations, the length of $\pazocal{C}$ is $\|H_1\|+2$, and $w$ is uniquely determined by $H_1$.

\end{enumerate}

\end{lemma}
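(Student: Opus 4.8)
The plan is to set up a length- and history-preserving bijection between reduced computations of $\overline{\textbf{M}}_2$ in the standard base issuing from configurations with empty outer historical sub-sectors and reduced computations of $\textbf{M}_2$ in the standard base, and then to transport both assertions across this bijection from \Cref{M_2 language}. The first step is the key structural observation: by construction every rule of $\overline{\textbf{M}}_2$ locks each of the sub-sectors $Q_{i,\ell}'P_i'$, $R_i'Q_{i,r}'$, $Q_{i,\ell}''P_i''$, $R_i''Q_{i,r}''$. By the definition of $\theta$-admissibility a rule locking a sector is applicable only to configurations in which that sector is empty, and applying it leaves that sector empty. Since $\bar I_2(w,H)$ has all these sub-sectors empty, every configuration occurring in any computation issuing from $\bar I_2(w,H)$ has them empty, and so is determined by the contents of its working sectors and of the inner historical sub-sectors $P_i'R_i'$ and $P_i''R_i''$ — precisely the data of a configuration of $\textbf{M}_2$, under the identifications of $P_i'R_i'$ with $Q_{i,\ell}'Q_{i,r}'$ and of $P_i''R_i''$ with $Q_{i,\ell}''Q_{i,r}''$ built into the definition of $\overline{\textbf{M}}_2$.

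Next I would use that the positive rules of $\overline{\textbf{M}}_2$ are, by construction, in bijection with those of $\textbf{M}_2$ and act, under the identifications above, exactly as their counterparts on the relevant sectors. Consequently a finite sequence of rules $\theta_1,\dots,\theta_t$ can be applied starting from $\bar I_2(w,H_1)$ if and only if the corresponding sequence can be applied starting from $I_2(w,H_1)$, and when it can, the terminal configurations correspond. This gives the desired bijection on computations; it preserves the history word (hence the property of being reduced) and the length, and by their definitions it carries $\bar I_2(w,H)$ to $I_2(w,H)$ and $\bar A_2(H)$ to $A_2(H)$. Part (1) then follows by taking the reduced computation $I_2(w,H_w)\to\dots\to A_2(H_w)$ with $\|H_w\|\le c_1 f_1(c_1\|w\|)+c_1$ furnished by \Cref{M_2 language}(1) and transporting it back to $\overline{\textbf{M}}_2$.

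For part (2), a reduced computation $\pazocal{C}:\bar I_2(w,H_1)\to\dots\to\bar A_2(H_2)$ transports to a reduced computation $I_2(w,H_1)\to\dots\to A_2(H_2)$ of $\textbf{M}_2$, whence \Cref{M_2 language}(2) yields $w\in\pazocal{R}_1$, $H_1\equiv H_2$, $\|H_1\|\ge k$, and the uniqueness, the length $\|H_1\|+2$, and the fact that $w$ is determined by $H_1$; all of these transfer back verbatim, uniqueness because two distinct reduced computations of $\overline{\textbf{M}}_2$ between the given configurations would transport to two distinct reduced computations of $\textbf{M}_2$ between the corresponding ones. The only point demanding any care — the "main obstacle", such as it is — is confirming that the locking of the outer sub-sectors genuinely forces them to remain empty throughout, so that the correspondence with $\textbf{M}_2$ is an exact bijection rather than a lossy projection; as noted, this is immediate from the definition of $\theta$-admissibility, which is why the statement follows immediately from \Cref{M_2 language}.
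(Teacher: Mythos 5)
Your proposal is correct and is exactly the route the paper takes: the paper simply asserts that the lemma "follows immediately" from \Cref{M_2 language}, the implicit justification being the same observation you spell out, namely that every rule of $\overline{\textbf{M}}_2$ locks the outer sub-sectors, so computations issuing from $\bar{I}_2(w,H)$ keep them empty and correspond bijectively (preserving history and length) to computations of $\textbf{M}_2$ from $I_2(w,H)$.
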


\medskip


\subsection{The machine $\textbf{M}_3$} \label{sec-M3} \

The standard base of the recognizing $S$-machine $\textbf{M}_3$ has the same form as that of $\overline{\textbf{M}}_2$, with the tape alphabets of each sector the same as that of the corresponding sector of $\overline{\textbf{M}}_2$.  However, each part of the state letters contains more letters than its counterpart in $\overline{\textbf{M}}_2$.

In particular, the parts consist of the disjoint union of a copy of the corresponding part of $\overline{\textbf{M}}_2$ and $2k$ letters corresponding to the states of the primitive machine $\textbf{LR}_k$ or $\textbf{RL}_k$.  The letters corresponding to the start letters of $\overline{\textbf{M}}_2$ function as the start letters of $\textbf{M}_3$, while those corresponding to the end letters of the primitive machine function as the end letters.

As in the definition of machines in previous sections, the software of $\textbf{M}_3$ can be understood by viewing the machine as a composition of submachines.  There are four such submachines, denoted $\textbf{M}_3(1),\dots,\textbf{M}_3(4)$; however, the first three of these correspond to the machine $\overline{\textbf{M}}_2$, as it consists of three submachines itself.  

The final submachine, $\textbf{M}_3(4)$, corresponds to a parallel composition of the primitive machines $\textbf{LR}_k(\Phi^+)$ and $\textbf{RL}_k(\Phi^+)$.  Each rule of this submachine operates in parallel as $\textbf{LR}_k$ on the subwords $P_i'R_i'Q_{i,r}'$ of the standard base and as $\textbf{RL}_k$ on the subwords of $Q_{i,\ell}''P_i''R_i''$, taking the corresponding right historical alphabets as the corresponding copies of $\Phi^+$.  Naturally, each of these rules also locks all other sectors.

To be clear, given a rule of $\textbf{M}_3(4)$ not corresponding to a connecting rule, there exists $\theta\in\Phi$ such that the rule:

\begin{itemize}

\item Multiplies each of the $P_i'R_i'$- and $Q_{i,\ell}''P_i''$-sectors on the right by the copy of $\theta^{-1}$ in the corresponding right historical alphabet.

\item Multiplies the $R_i'Q_{i,r}'$- and $P_i''R_i''$-sectors on the left by the copy of $\theta$ in the corresponding right historical alphabet.

\end{itemize}

These submachines are concatenated by the transition rules $\sigma(12)$, $\sigma(23)$, and $\sigma(34)$.  As positive rules of $\overline{\textbf{M}}_2$, the makeup of $\sigma(12)$ and $\sigma(23)$ are understood.  The rule $\sigma(34)$ switches the state letters from the end letters of $\overline{\textbf{M}}_2$ to the start letters of the primitive machine and locks all sectors other than the $P_i'R_i'$- and $P_i''R_i''$-sectors, in which its domain is the corresponding right historical alphabet.

The input configurations are assigned as in $\overline{\textbf{M}}_2$, {\frenchspacing i.e. the} $Q_0Q_1$-sector, the $P_i'R_i'$-sectors, and the $P_i''R_i''$-sectors.

The notion of step history is adapted naturally from the definitions of the previous sections.  As the first three submachines of $\textbf{M}_3$ can be identified with $\overline{\textbf{M}}_2$, an analogue of \Cref{M_2 bar step history} holds in this setting.  The next two statements add to the list of banned subwords:

\begin{lemma} \label{M_3 first step history}

Let $\pazocal{C}$ be a reduced computation of $\textbf{M}_3$ with base $B$.  If $B$ contains a subword of the form $(P_i'R_i')^{\pm1}$, $(P_i''R_i'')^{\pm1}$, $P_i'(P_i')^{-1}$, or $(R_i'')^{-1}R_i''$, then the step history of $\pazocal{C}$ is not $(43)(3)(34)$.

\end{lemma}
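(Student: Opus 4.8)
The plan is to argue exactly as in the proof of \Cref{M_2 step history}(a), with the domain restriction of the transition rule $\sigma(34)$ playing the role that the domain of $\sigma(j,j+1)$ plays there. Suppose, for contradiction, that $\pazocal{C}:W_0\to\dots\to W_t$ is a reduced computation of $\textbf{M}_3$ whose base $B$ contains one of the listed subwords and whose step history is $(43)(3)(34)$. Then the history $H$ of $\pazocal{C}$ factors as $H\equiv H_4'\,\sigma(34)^{-1}\,H_3\,\sigma(34)\,H_4''$, where $H_4'$ and $H_4''$ are (possibly empty) histories of computations of $\textbf{M}_3(4)$ and $H_3$ is the history of an $\textbf{M}_3(3)$-computation; since $H$ is reduced, this middle computation is nonempty, so $H_3$ is the natural copy of a nonempty reduced word in $F(\Phi_3^+)$. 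Let $\pazocal{C}_3:V_0\to\dots\to V_s$ be the corresponding subcomputation of $\pazocal{C}$, so that $\sigma(34)^{-1}$ is applied immediately before $V_0$ and $\sigma(34)$ immediately after $V_s$; in particular $V_s$ is $\sigma(34)$-admissible and the admissible word immediately preceding $V_0$ is $\sigma(34)^{-1}$-admissible.

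First I would identify the relevant tape words. Let $P$ be the $P_i'R_i'$-sector or the $P_i''R_i''$-sector, whichever contains the state letters of the fixed subword $UV\subseteq B$, and for each $j$ let $u_j$ be the tape word of $V_j$ in the $UV$-subword. Since $\sigma(34)^{-1}$ has the same domain as $\sigma(34)$, namely the right historical alphabet of $P$ in that sector, and introduces no other letters there, the word $u_0$ lies in the right historical alphabet of $P$; moreover, when $UV$ is one of the doubled subwords $P_i'(P_i')^{-1}$ or $(R_i'')^{-1}R_i''$, the word $u_0$ is nonempty because $V_0$ is a reduced admissible word.

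The heart of the proof is the computation of $u_s$. Each rule of $\textbf{M}_3(3)$ acts on the $P_i'R_i'$- (resp. $P_i''R_i''$-) sector exactly as the corresponding rule of $\textbf{M}_2(3)$ acts on the $Q_{i,\ell}'Q_{i,r}'$- (resp. $Q_{i,\ell}''Q_{i,r}''$-) sector, i.e. it multiplies the tape word by a letter of the left historical alphabet on one side and by a letter of the right historical alphabet on the other (the two roles being interchanged between the primed and doubly-primed sectors, and all letters inverted for the inverse subwords). Carrying out the same bookkeeping as in the proof of \Cref{M_2 step history}(a) --- using that $H_3$ is the copy of a nonempty reduced word in $F(\Phi_3^+)$, that the left and right historical alphabets are disjoint, that $u_0$ lies in the right historical alphabet, and, in the doubled cases, that $u_0\neq1$ --- one finds that $u_s$ contains a letter of the left historical alphabet of $P$. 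This contradicts the fact that $V_s$ is $\sigma(34)$-admissible, since the domain of $\sigma(34)$ in the $P_i'R_i'$- and $P_i''R_i''$-sectors consists only of letters from the right historical alphabet. Hence no such $\pazocal{C}$ exists.

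I expect essentially all of the work to be concentrated in the third paragraph, and even there it is pure bookkeeping: tracking which of the two copies of $\Phi_3^+$ is appended on which side of $u_j$ in each of the four cases for $UV$ (up to the primed/doubly-primed symmetry) --- a calculation already performed in \Cref{M_2 step history}. The only genuinely new point relative to that lemma is the identification of the obstruction: there it was the domain of $\sigma(j,j+1)$ that forbade the offending historical letter, and here it is the analogous domain restriction of $\sigma(34)$.
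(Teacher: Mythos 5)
Your proof is correct and is exactly the argument the paper intends: the paper's proof of this lemma is the single line ``this follows by the exact same argument as that of Lemma~\ref{M_2 step history}(a),'' and your write-up is precisely that argument transported to $\textbf{M}_3$, with the domain of $\sigma(34)$ (the right historical alphabet in the $P_i'R_i'$- and $P_i''R_i''$-sectors) supplying the contradiction in place of the domain of $\sigma(j,j+1)$. The only cosmetic remark is that a computation with step history exactly $(43)(3)(34)$ has no flanking factors $H_4'$, $H_4''$, but including them changes nothing since your contradiction is extracted from the middle subcomputation alone.
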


\begin{proof}

This follows by the exact same argument as that of \Cref{M_2 step history}(a).

\end{proof}

\begin{lemma} \label{M_3 second step history}

Let $\pazocal{C}$ be a reduced computation of $\textbf{M}_3$ with base $B$.  If $B$ contains a subword of the form $(P_i'R_i'Q_{i,r}')^{\pm1}$, $(Q_{i,\ell}''P_i''R_i'')^{\pm1}$, $(Q_{i,r}')^{-1}(R_i')^{-1}R_i'Q_{i,r}'$, or $Q_{i,\ell}''P_i''(P_i'')^{-1}(Q_{i,\ell}'')^{-1}$, then the step history of $\pazocal{C}$ is not $(34)(4)(43)$.

\end{lemma}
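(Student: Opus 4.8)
The plan is to argue as in \Cref{M_2 step history}(a) and \Cref{M_3 first step history}, but with the analysis of the historical sectors of $\textbf{M}_2(j)$ replaced by an analysis of the primitive submachine $\textbf{M}_3(4)$ supplied by the analogues, for $\textbf{LR}_k(\Phi^+)$ and $\textbf{RL}_k(\Phi^+)$, of Lemmas \ref{primitive computations} and \ref{primitive unreduced}.

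First I would unwind the hypothesis on the step history. If the step history of $\pazocal{C}:W_0\to\dots\to W_t$ is $(34)(4)(43)$, then its history factors as $H\equiv\sigma(34)\,H'\,\sigma(34)^{-1}$, where $H'\in F(\Theta^+(\textbf{M}_3(4)))$ is nonempty (were $H'$ empty, $H\equiv\sigma(34)\sigma(34)^{-1}$ would fail to be reduced). Let $\pazocal{C}':W_1\to\dots\to W_{t-1}$ be the maximal subcomputation of $\textbf{M}_3(4)$ inside $\pazocal{C}$, so that $W_1\equiv W_0\cdot\sigma(34)$. Since $\sigma(34)$ locks every sector other than the $P_i'R_i'$- and $P_i''R_i''$-sectors and has the corresponding right historical alphabet as its domain in those two, and since $W_0$ is $\sigma(34)$-admissible while $W_{t-1}$ is $\sigma(34)^{-1}$-admissible, both $W_1$ and $W_{t-1}$ have all state letters equal to the start state letters of the primitive machine, all working sectors empty, and the $P_i'R_i'$- and $P_i''R_i''$-sectors populated only by letters of the corresponding right historical alphabet.

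Next I would restrict $\pazocal{C}'$ to the subword of $B$ guaranteed by the hypothesis. Because distinct rules of $\textbf{M}_3(4)$ and their inverses act differently on the tape sectors of each of the four listed subwords (two distinct non-connecting rules multiply, respectively conjugate, by distinct letters of $\Phi^+$, while connecting rules are distinguished by the state indices they advance), this restriction is again a nonempty reduced computation of the relevant primitive machine. In the cases $(P_i'R_i'Q_{i,r}')^{\pm1}$ and $(Q_{i,\ell}''P_i''R_i'')^{\pm1}$ the restriction is a reduced computation of $\textbf{LR}_k(\Phi^+)$, respectively $\textbf{RL}_k(\Phi^+)$, in a $\pm1$ of its standard base, both of whose endpoints have all state letters at the start index and the inner tape sector empty; the analogue of \Cref{primitive computations}(4) then forces this computation, and hence $H'$, to be empty, a contradiction. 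In the folded cases $(Q_{i,r}')^{-1}(R_i')^{-1}R_i'Q_{i,r}'$ and $Q_{i,\ell}''P_i''(P_i'')^{-1}(Q_{i,\ell}'')^{-1}$, I would additionally use that $\sigma(34)$ locks the $R_i'Q_{i,r}'$-sector, respectively the $Q_{i,\ell}''P_i''$-sector, so that the two outer tape sectors of the folded base are empty in both $W_1$ and $W_{t-1}$; hence the restriction of $\pazocal{C}'$ is a reduced computation of $\textbf{LR}_k(\Phi^+)$ on the base $R^{-1}P^{-1}PR$, respectively of $\textbf{RL}_k(\Phi^+)$ on the base $QPP^{-1}Q^{-1}$, whose initial configuration has exactly the form required by the analogue of \Cref{primitive unreduced} and all of whose state letters lie at the start index at both ends. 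By that analogue, together with the observation that the only rules of the primitive machine that do not permanently deposit letters into the outer sector are the connecting rules, which strictly advance the state index and therefore cannot return it to the start index along a nonempty reduced computation, such a computation must be empty, again a contradiction.

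The routine parts are the bookkeeping with the locked sectors in the reduction step and the verification that the restriction of $\textbf{M}_3(4)$-rules to each listed subword is injective up to inverses. The main obstacle is the folded-base case: one must verify carefully that the $\textbf{LR}_k$/$\textbf{RL}_k$ analogue of \Cref{primitive unreduced}, in combination with the emptiness of the outer sectors at both ends and the start-index condition on the state letters at both ends, genuinely precludes a nonempty computation — that is, that on a folded base one cannot ``run out and back''. This rests on the combinatorics of the connecting rules, which are monotone in the state index, and of the tape-letter deposits, which grow the outer sector monotonically along a reduced computation.
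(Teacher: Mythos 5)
Your proposal is correct and follows exactly the route the paper takes: the paper's proof is a one-line reference to the argument of \Cref{M_3 first step history} (itself modeled on \Cref{M_2 step history}(a)), invoking the $\textbf{LR}_k/\textbf{RL}_k$ analogue of \Cref{primitive computations}(4) for the reduced subwords and of \Cref{primitive unreduced} for the folded ones. Your additional care in the folded case — combining \Cref{primitive unreduced} with the emptiness of the $\sigma(34)$-locked outer sectors at both ends to exclude a nonempty ``out and back'' computation — is precisely the elaboration the paper leaves implicit.
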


\begin{proof}

This follows from similar arguments as the proof of \Cref{M_3 first step history}, invoking \Cref{primitive computations}(4) if the subword is reduced and \Cref{primitive unreduced} if it is unreduced.

\end{proof}

Given $w\in F(X)$ and $H\in F(\Phi^+)$, the configuration $I_3(w,H)$ of $\textbf{M}_3$ is the input configuration corresponding to the configuration $\bar{I}_2(w,H)$ of $\overline{\textbf{M}}_2$.  Likewise, the configuration $A_3(H)$ is the end configuration with the copy of $H$ over the corresponding right historical alphabet written in each $P_i'R_i'$-sector, the copy of $H^{-1}$ over the corresponding right historical alphabet written in each $P_i''R_i''$-sector, and all other sectors empty.

An input (respectively end) configuration of $\textbf{M}_3$ is called \textit{tame} if the letters comprising its tape words in the $P_i'R_i'$- and $P_i''R_i''$-sectors are all from the left (respectively right) historical alphabets. Note that for all $w\in F(X)$ and $H\in F(\Phi^+)$, the configurations $I_3(w,H)$ and $A_3(H)$ are tame.

\begin{lemma} \label{M_3 language} \

\begin{enumerate}

\item For any $w\in \pazocal{R}_1$, there exists $H_w\in F(\Phi^+)$ with $\|H_w\|\leq c_1f_1(c_1\|w\|)+c_1$ such that there exists a reduced computation $I_3(w,H_w)\to\dots\to A_3(H_w)$ of $\textbf{M}_3$.

\item If there exists a reduced computation $\pazocal{C}:W_0\to\dots\to W_t$ of $\textbf{M}_3$ such that $W_0$ and $W_t$ are tame input and end configurations, respectively, then there exists $w\in \pazocal{R}_1$ and $H\in F(\Phi^+)$ with $\|H\|\geq k$ such that $W_0\equiv I_3(w,H)$ and $W_t\equiv A_3(H)$.  Moreover, in this case $\pazocal{C}$ is the unique reduced computation between these configurations, $t=(2k+1)\|H\|+2k+2$, and $w$ is uniquely determined by $H$.

\end{enumerate}

\end{lemma}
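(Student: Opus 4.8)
The plan is to dispatch part (1) by directly lifting the computation provided by \Cref{M_2 bar language}(1), and to prove part (2) by first forcing the step history to be $(1)(2)(3)(4)$, then isolating the $\overline{\textbf{M}}_2$-part, and finally reading off the primitive-machine behaviour of the last submachine.

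For part (1): given $w\in\pazocal{R}_1$, \Cref{M_2 bar language}(1) furnishes $H_w\in F(\Phi^+)$ with $\|H_w\|\leq c_1f_1(c_1\|w\|)+c_1$ and a reduced computation of $\overline{\textbf{M}}_2$ from $\bar{I}_2(w,H_w)$ to $\bar{A}_2(H_w)$ of length $\|H_w\|+2$. I would lift this verbatim to a computation of $\textbf{M}_3$ through the submachines $\textbf{M}_3(1),\textbf{M}_3(2),\textbf{M}_3(3)$, ending at the configuration corresponding to $\bar{A}_2(H_w)$, which is $\sigma(34)$-admissible; after applying $\sigma(34)$ one runs $\textbf{M}_3(4)$ — acting in parallel as $\textbf{LR}_k(\Phi^+)$ on each subword $P_i'R_i'Q_{i,r}'$ and as $\textbf{RL}_k(\Phi^+)$ on each $Q_{i,\ell}''P_i''R_i''$ — as the round trip of the word $H_w$ (resp. $H_w^{-1}$) supplied by the observation following \Cref{LR_k analogue}. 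Each of these historical words has length $\|H_w\|$, so all the round trips have common length $2k\|H_w\|+2k-1$ and run in parallel; the terminal configuration is $A_3(H_w)$, and the total length is $(\|H_w\|+2)+1+(2k\|H_w\|+2k-1)=(2k+1)\|H_w\|+2k+2$.

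For part (2) I would begin by fixing the step history of $\pazocal{C}$. As $W_0$ is a start configuration and $W_t$ an end configuration, the walk on the path $\textbf{M}_3(1)-\textbf{M}_3(2)-\textbf{M}_3(3)-\textbf{M}_3(4)$ recorded by the step history runs from $1$ to $4$; a non-monotone walk has an interior local extremum, which produces in the step history one of the six subwords $(21)(1)(12)$, $(12)(2)(21)$, $(32)(2)(23)$, $(23)(3)(32)$, $(43)(3)(34)$, or $(34)(4)(43)$ (the intervening maximal subcomputation being nonempty since $\pazocal{C}$ is reduced). Since the standard base of $\textbf{M}_3$ contains the subwords $P_i'R_i'$ and $P_i'R_i'Q_{i,r}'$, all six are excluded by the $\textbf{M}_3$-analogue of \Cref{M_2 bar step history} together with \Cref{M_3 first step history} and \Cref{M_3 second step history}. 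Hence the step history is $(1)(2)(3)(4)$, and $\pazocal{C}$ factors as $\pazocal{C}_{123}\cdot\sigma(34)\cdot\pazocal{C}_4$ with $\pazocal{C}_{123}$ a nonempty reduced computation of $\overline{\textbf{M}}_2$ and $\pazocal{C}_4$ a nonempty reduced computation of $\textbf{M}_3(4)$ (nonempty because $W_0$ is not $\sigma(12)$-admissible and $W_t$ carries primitive end letters). I would then identify $\pazocal{C}_{123}$ with a computation covered by \Cref{M_2 bar language}(2). Let $W'$ be the configuration preceding $\sigma(34)$; being $\sigma(34)$-admissible, every sector of $W'$ is empty except the $P_i'R_i'$- and $P_i''R_i''$-sectors, whose tape words lie over the right historical alphabets. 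The only rules in the history of $\pazocal{C}_{123}$ that alter the historical sectors are the $\Phi^+$-rules, so writing $\tilde{H}\in F(\Phi^+)$ for their ordered product and $u_i$, $u_i''$ for the $P_i'R_i'$- and $P_i''R_i''$-words of $W_0$, the corresponding words of $W'$ are the reduced forms of $\tilde{H}_{i,\ell}^{-1}u_i\tilde{H}_{i,r}$ and $\tilde{H}_{i,r}^{-1}u_i''\tilde{H}_{i,\ell}$. Since $W_0$ is tame, $u_i$ and $u_i''$ lie over the left historical alphabets; for these products to lie over the right historical alphabets the whole of $\tilde{H}_{i,\ell}^{-1}$ must cancel, forcing $u_i\equiv\tilde{H}_{i,\ell}$ and $u_i''\equiv\tilde{H}_{i,\ell}^{-1}$ for every $i$. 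Thus $W_0=\bar{I}_2(w,\tilde{H})$ and $W'=\bar{A}_2(\tilde{H})$ with $w$ the $Q_0Q_1$-word of $W_0$, so \Cref{M_2 bar language}(2) yields $w\in\pazocal{R}_1$, $\|\tilde{H}\|\geq k$, $w$ determined by $\tilde{H}$, uniqueness of $\pazocal{C}_{123}$ between $\bar{I}_2(w,\tilde{H})$ and $\bar{A}_2(\tilde{H})$, and length $\|\tilde{H}\|+2$.

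It remains to analyse $\pazocal{C}_4$. The configuration $W''=W'\cdot\sigma(34)$ carries $\tilde{H}$ (resp. $\tilde{H}^{-1}$) over the right historical alphabet in each $P_i'R_i'$- (resp. $P_i''R_i''$-) sector, has the adjacent sectors empty, and has primitive start letters; applying the primitive-machine theory (\Cref{primitive computations}, \Cref{primitive unreduced}, \Cref{LR_k analogue}) to the restrictions of $\pazocal{C}_4$ to the subwords $P_i'R_i'Q_{i,r}'$ and $Q_{i,\ell}''P_i''R_i''$ shows $\pazocal{C}_4$ is the unique reduced computation of $\textbf{M}_3(4)$ from $W''$ to a configuration with primitive end letters, that it returns each historical word to its sector with the adjacent sectors empty, and that it has length $2k\|\tilde{H}\|+2k-1$; hence $W_t=A_3(\tilde{H})$. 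Taking $H=\tilde{H}$, uniqueness of $\pazocal{C}$ follows from that of $\pazocal{C}_{123}$ and $\pazocal{C}_4$, and $t=(\|\tilde{H}\|+2)+1+(2k\|\tilde{H}\|+2k-1)=(2k+1)\|\tilde{H}\|+2k+2$. The main obstacle is the reduction step: one must show that any tame input configuration from which $\pazocal{C}$ can reach a tame end configuration is forced to be \emph{synchronized} — all its $P_i'R_i'$-sectors carrying one common word and all its $P_i''R_i''$-sectors carrying that word's inverse — since it is precisely this cancellation behaviour of the historical sectors that reduces the statement to the already-established language lemma for $\overline{\textbf{M}}_2$ rather than a fresh van Kampen-style argument.
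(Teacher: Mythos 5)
Your proposal is correct and follows essentially the same route as the paper: part (1) lifts the computation from \Cref{M_2 bar language}(1) and appends the parallel $\textbf{LR}_k/\textbf{RL}_k$ round trip after $\sigma(34)$, and part (2) forces the step history $(1)(2)(3)(4)$ via the banned-subword lemmas, pins down $W_0\equiv I_3(w,H)$ through the same left/right historical-alphabet cancellation argument, invokes \Cref{M_2 bar language}(2), and finishes the $(4)$-step with \Cref{LR_k analogue}. The "synchronization" point you flag as the main obstacle is exactly the cancellation argument you already gave, so no gap remains.
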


\begin{proof}

(1) Letting $H_w$ be the word associated to $w$ given in \Cref{M_2 bar language}(1), there exists a reduced computation $\pazocal{C}_1:I_3(w,H_w)\to\dots\to W$ with step history $(1)(2)(3)$ such that $W$ is the copy of $\bar{A}_2(H_w)$ in the hardware of $\textbf{M}_3$.  As such, $W$ is $\sigma(34)$-admissible.

It suffices then to note that the reduced computation of $\textbf{LR}_k(\Phi^+)$ given in \Cref{LR_k analogue} corresponding to $H_w$ and its analogue of $\textbf{RL}_k(\Phi^+)$ corresponding to $H_w^{-1}$ are performed simultaneously by the rules, providing a reduced computation $\pazocal{C}_2:W\cdot\sigma(34)\to\dots\to A_3(H_w)$ of $\textbf{M}_3(4)$.

Concatenating $\pazocal{C}_1$ and $\pazocal{C}_2$ then provides a reduced computation $I_3(w,H_w)\to\dots\to A_3(H_w)$.

(2) Lemmas \ref{M_2 bar step history}, \ref{M_3 first step history}, and \ref{M_3 second step history} imply the step history of $\pazocal{C}$ must be $(1)(2)(3)(4)$.

Let $\pazocal{C}':W_0\to\dots\to W_s$ be the maximal subcomputation with step history $(1)(2)(3)$.  Then $W_s$ is $\sigma(34)$-admissible, so all of its tape words in the historical sectors are written in the corresponding right historical alphabets.

Then, there exists a word $H\in F(\Phi^+)$ such that $\pazocal{C}'$ multiplies each $P_i'R_i'$-sector on the left by the copy of $H^{-1}$ over the corresponding left historical alphabet and on the right by the copy of $H$ over the corresponding right historical alphabet.  But then the tape word of $W_0$ in the $P_i'R_i'$-sector must be the copy of $H$ over the corresponding left historical alphabet.

An analogous argument for the $P_i''R_i''$-sector then implies that the tape word of $W_0$ in this sector is the copy of $H^{-1}$ over the left historical alphabet, {\frenchspacing i.e. $W_0\equiv I_3(w,H)$} for some $w\in F(X)$.

\Cref{M_2 bar language}(2) then implies $w\in\pazocal{R}_1$, $H$ uniquely determines $w$, $W_s$ is the copy of $\bar{A}_2(H)$ in the hardware of $\textbf{M}_3$, $\|H\|\geq k$, and $s=\|H\|+2$.

The restriction of the subcomputation $W_{s+1}\to\dots\to W_t$ to the subword $P_i'R_i'Q_{i,r}'$ of the standard base then satisfies \Cref{LR_k analogue}, implying the uniqueness of the computation, that $W_t\equiv A_3(H)$, and that $t-s-1=2k\|H\|+2k-1$.

\end{proof}

\medskip


\subsection{The machine $\textbf{M}_4$} \

The next machine of our construction, $\textbf{M}_4$, is the composition of $\textbf{M}_3$ with one more simple machine whose function is to delete the letters from the configurations of the form $A_3(H)$.

As in the previous section, the tape alphabets and standard base of $\textbf{M}_4$ are the same as its predecessor, but with each part of the state letters possessing more state letters.  Indeed, as a composition, the makeup of the state letters and software of the machine can be understood through discussion of its submachines.

The machine $\textbf{M}_4$ consists of five submachines, denoted $\textbf{M}_4(1),\dots,\textbf{M}_4(5)$, which are concatenated through the transition rules $\sigma(i,i+1)$.  Naturally, the start letters of $\textbf{M}_4(1)$ function as the start letters of the machine, while the end (the only) letters of $\textbf{M}_4(5)$ function as the end letters.  The input sectors are assigned in the same way as in $\textbf{M}_3$ and $\overline{\textbf{M}}_2$.

The first four submachines are identical to the submachines of $\textbf{M}_3$.  Indeed, these submachines along with the transition rules $\sigma(12)$, $\sigma(23)$, and $\sigma(34)$ form a copy of the machine $\textbf{M}_3$.

The machine $\textbf{M}_4(5)$ consists of a single state, with every part of the state letters containing one letter corresponding to this state.

The transition rule $\sigma(45)$ switches the state letters from the end letters of $\textbf{M}_4(4)$ ({\frenchspacing i.e. the} copies of the end letters of $\textbf{M}_3$) to the letters of $\textbf{M}_4(5)$, locking every sector other than those of the form $P_i'R_i'$ and $P_i''R_i''$, in which the domain is the corresponding right historical alphabet.

The positive rules of $\textbf{M}_4(5)$ are in bijection with $\Phi^+$, with the rule corresponding to $\theta\in\Phi^+$ multiplying each $P_i'R_i'$-sector on the right by the copy of $\theta$ over the corresponding right historical alphabet, multiplying each $P_i''R_i''$-sector on the left by the copy of $\theta^{-1}$ over the corresponding right historical alphabet, and locking all other sectors.  

The notion of step history defined in the previous sections adapts naturally to the setting of $\textbf{M}_4$.  Specifically, the step history of a computation of $\textbf{M}_4$ is some concatenation of the letters
$$\{(1),(2),(3),(4),(5),(12),(23),(34),(45),(21),(32),(43),(54)\}$$
As the first four submachines (along with the corresponding transition rules) operate identically as $\textbf{M}_3$, the analogues of Lemmas \ref{M_2 bar step history}, \ref{M_3 first step history}, and \ref{M_3 second step history} follow immediately.  The next statement is in the same spirit and follows in just the same way as \Cref{M_3 second step history}:

\begin{lemma} \label{M_4 step history}

Let $\pazocal{C}$ be a reduced computation of $\textbf{M}_4$ with base $B$.  If $B$ contains a subword of the form $(P_i'R_i'Q_{i,r}')^{\pm1}$, $(Q_{i,\ell}''P_i''R_i'')^{\pm1}$, $(Q_{i,r}')^{-1}(R_i')^{-1}R_i'Q_{i,r}'$, or $Q_{i,\ell}''P_i''(P_i'')^{-1}(Q_{i,\ell}'')^{-1}$, then the step history of $\pazocal{C}$ is not $(54)(4)(45)$.

\end{lemma}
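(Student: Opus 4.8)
The plan is to transcribe the proof of \Cref{M_3 second step history} (which itself follows the template of \Cref{M_2 step history}(a) and \Cref{M_3 first step history}), with the transition rule $\sigma(34)$ and the start letters of $\textbf{M}_3(4)$ replaced throughout by the transition rule $\sigma(45)$ and the end letters of $\textbf{M}_4(4)$. Suppose toward a contradiction that $\pazocal{C}:W_0\to\dots\to W_t$ has step history $(54)(4)(45)$. Then $W_0\to W_1$ applies $\sigma(54)=\sigma(45)^{-1}$, $W_{t-1}\to W_t$ applies $\sigma(45)$, and the intermediate subcomputation $\pazocal{C}':W_1\to\dots\to W_{t-1}$ operates entirely as $\textbf{M}_4(4)$; since the two flanking rules are mutually inverse, $\pazocal{C}'$ must be nonempty, as otherwise $\pazocal{C}$ would not be reduced. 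Recall that $\textbf{M}_4(4)=\textbf{M}_3(4)$ operates as $\textbf{LR}_k(\Phi^+)$ on each subword $P_i'R_i'Q_{i,r}'$ and as $\textbf{RL}_k(\Phi^+)$ on each subword $Q_{i,\ell}''P_i''R_i''$; hence each of the four subwords named in the statement is a (possibly inverted, possibly unreduced) base of one of these two primitive machines, and restricting $\pazocal{C}'$ to it yields a reduced computation $\pazocal{D}$ of $\textbf{LR}_k(\Phi^+)$ or $\textbf{RL}_k(\Phi^+)$ with the same, hence nonempty, history.

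The key input is that $\sigma(45)$ switches to the end letters of $\textbf{M}_4(4)$ and locks every sector other than the $P_i'R_i'$- and $P_i''R_i''$-sectors. Thus each of $W_1$ and $W_{t-1}$ has all its state letters equal to the end letters of $\textbf{M}_4(4)$ and has empty tape word in every sector that appears in the restriction except $P_i'R_i'$ (respectively $P_i''R_i''$). In the reduced cases $(P_i'R_i'Q_{i,r}')^{\pm1}$ and $(Q_{i,\ell}''P_i''R_i'')^{\pm1}$, the endpoints of $\pazocal{D}$ therefore have all state letters at the end index and all tape content confined to the first of the two sectors, so the $\textbf{LR}_k$/$\textbf{RL}_k$ analogue of \Cref{primitive computations}(4) forces $\pazocal{D}$ to be empty, contradicting the nonemptiness of $\pazocal{C}'$. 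In the unreduced cases $(Q_{i,r}')^{-1}(R_i')^{-1}R_i'Q_{i,r}'$ and $Q_{i,\ell}''P_i''(P_i'')^{-1}(Q_{i,\ell}'')^{-1}$, \Cref{locked sectors} shows that no connecting rule of the primitive machine can occur in $\pazocal{D}$, so $\pazocal{D}$ uses only its multiplying rules and keeps every state letter at the end index; since, moreover, the outer sector of the restricted base (the $R_i'Q_{i,r}'$-sector, respectively the $Q_{i,\ell}''P_i''$-sector) is empty at both ends of $\pazocal{D}$, the $\textbf{LR}_k$/$\textbf{RL}_k$ analogues of \Cref{primitive unreduced} and \Cref{multiply one letter} force $\pazocal{D}$ to have trivial history, once again contradicting the nonemptiness of $\pazocal{C}'$.

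Because the paper asserts this lemma follows ``in just the same way'' as \Cref{M_3 second step history}, the only real work is the verification sketched in the previous paragraph. I expect the bookkeeping in the two unreduced cases to be the main obstacle: one must confirm that $\sigma(45)$-admissibility genuinely forces the relevant outer sector to be empty at both endpoints of $\pazocal{C}'$, and that \Cref{locked sectors} really does block every connecting rule for the given unreduced base, so that \Cref{primitive unreduced} and \Cref{multiply one letter} apply as needed.
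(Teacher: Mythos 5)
Your proposal is correct and takes essentially the same route as the paper, which proves this lemma only by asserting that it follows ``in just the same way'' as \Cref{M_3 second step history}: a nonempty middle subcomputation forced by reducedness, $\sigma(45)$-admissibility of the two flanking admissible words, \Cref{primitive computations}(4) for the reduced subwords, and \Cref{primitive unreduced} for the unreduced ones. One small caution: \Cref{locked sectors} by itself only excludes the connecting rules that lock the sector whose alphabet labels the unreduced spot, not every connecting rule; the rest are ruled out by the conclusion of \Cref{primitive unreduced} that the state letters retain their index, which you invoke in any case.
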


The next statement is useful for the estimates in future sections and follows immediately from Lemmas \ref{simplify rules}, \ref{one alphabet historical words}, and \ref{one alphabet historical words unreduced}:

\begin{lemma}[Compare with Lemma 3.13 of \cite{OS19}] \label{M_4 restriction}

Given $j\in\{1,2,3\}$, let $W_0\to\dots\to W_t$ be a reduced computation of $\textbf{M}_4(j)$ with base $B$.  Suppose either:

\begin{enumerate}[label=(\alph*)]

\item $B$ contains a subword $UV$ of the form $(P_i'R_i')^{\pm1}$, $(P_i''R_i'')^{\pm1}$, $P_i'(P_i')^{-1}$, or $(R_i'')^{-1}R_i''$ such that the tape word of $W_0$ in the $UV$-sector has no letters from the left historical copy of $\Phi_j$, or

\item $B$ contains a subword $UV$ of the form $(P_i'R_i')^{\pm1}$, $(P_i''R_i'')^{\pm1}$, $(R_i')^{-1}R_i'$, or $P_i''(P_i'')^{-1}$ such that the tape word of $W_0$ in the $UV$-sector has no letters from the right historical copy of $\Phi_j$.

\end{enumerate}

Then $|W_0|_a\leq(2\|B\|-3)|W_t|_a$.

\end{lemma}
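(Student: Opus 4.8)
The plan is to reduce the whole estimate to the single distinguished two-letter subword $UV$ of $B$, and to control every other sector by the coarsest possible per-step bound.

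First I would fix a reduced computation $\pazocal{C}:W_0\to\dots\to W_t$ of $\textbf{M}_4(j)$ with base $B$ and history $H$, so that $t=\|H\|$, and pass to its restriction $\pazocal{C}_{UV}:W_0'\to\dots\to W_t'$ to the $UV$-sector: this is a reduced computation of $\textbf{M}_4(j)$ with base $UV$ and history $H$, in which $W_i'$ is $W_i$ restricted to the $UV$-sector together with its two bounding state letters. Since $\textbf{M}_4(j)$ (for $j\in\{1,2,3\}$) is a copy of the submachine $\overline{\textbf{M}}_2(j)$, which in turn operates on the $P_i'R_i'$- and $P_i''R_i''$-sectors exactly as $\textbf{M}_2(j)$ operates on the $Q_{i,\ell}'Q_{i,r}'$- and $Q_{i,\ell}''Q_{i,r}''$-sectors, the computation $\pazocal{C}_{UV}$ is (the copy of) a reduced computation of $\textbf{M}_2(j)$ whose base is, up to inversion, one of the four two-letter words handled by \Cref{one alphabet historical words} and \Cref{one alphabet historical words unreduced}.

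Next I would invoke those two lemmas. In case (a) the hypothesis asserts that no $\Phi_j$-letter of $W_0'$ lies in the left historical alphabet, i.e. all such letters lie in the right one. When $UV$ is $(P_i'R_i')^{\pm1}$ or $(P_i''R_i'')^{\pm1}$ this is precisely the hypothesis of \Cref{one alphabet historical words}, which gives $\|H\|\le|W_t'|_a$ and $|W_0'|_a\le|W_t'|_a$; when $UV$ is $P_i'(P_i')^{-1}$ or $(R_i'')^{-1}R_i''$ it is the hypothesis of \Cref{one alphabet historical words unreduced}, which gives $|W_0'|_a=|W_t'|_a-2\|H\|$ and hence the same two inequalities. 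Case (b) is identical, using instead the `left'-side hypothesis together with the respective statements of the two lemmas; the reason each of (a) and (b) lists precisely four two-letter subwords rather than all possibilities is only the opposite orientation conventions on the primed and doubly-primed historical sectors. Thus in every case one obtains
$$\|H\|\le|W_t'|_a\le|W_t|_a\qquad\text{and}\qquad |W_0'|_a\le|W_t'|_a .$$

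Finally I would telescope over the remaining $\|B\|-2$ two-letter subwords of $B$. By \Cref{simplify rules} every rule of $\textbf{M}_4(j)$ multiplies the tape word of any given sector by at most one $a$-letter on each side, so, as $B$ has exactly $\|B\|-1$ two-letter subwords, each single step changes the number of $a$-letters of the current admissible word lying outside the $UV$-sector by at most $2(\|B\|-2)$. Writing $S(W)$ for that number and using that every $a$-letter of an admissible word lies in exactly one sector, over the $t=\|H\|$ steps we get $S(W_0)\le S(W_t)+2(\|B\|-2)\|H\|$, whence
$$|W_0|_a=|W_0'|_a+S(W_0)\le|W_t'|_a+S(W_t)+2(\|B\|-2)\|H\|=|W_t|_a+2(\|B\|-2)\|H\| .$$
Substituting $\|H\|\le|W_t|_a$ yields $|W_0|_a\le\big(1+2(\|B\|-2)\big)|W_t|_a=(2\|B\|-3)|W_t|_a$, as claimed. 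The only point requiring any care is the matching of the left/right hypotheses in the step that invokes \Cref{one alphabet historical words} and \Cref{one alphabet historical words unreduced}; this is purely bookkeeping forced by the sector orientation conventions and presents no genuine obstacle.
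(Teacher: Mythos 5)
Your proposal is correct and follows essentially the same route as the paper's proof: restrict to the $UV$-sector, apply \Cref{one alphabet historical words} or \Cref{one alphabet historical words unreduced} to get $t\leq|W_{t,UV}|_a$ and $|W_{0,UV}|_a\leq|W_{t,UV}|_a$, then use \Cref{simplify rules} to bound the per-step change of the remaining $\|B\|-2$ sectors and sum. The only cosmetic difference is that you aggregate the non-$UV$ sectors into one quantity $S(W)$ before telescoping, whereas the paper bounds each sector separately and then sums; the arithmetic is identical.
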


\begin{proof}

It follows from the hypotheses that the restriction $\pazocal{C}_{UV}:W_{0,UV}\to\dots\to W_{t,UV}$ of $\pazocal{C}$ to the $UV$-sector satisfies the hypotheses of either \Cref{one alphabet historical words} or \Cref{one alphabet historical words unreduced}.  As such, $t\leq |W_{t,UV}|_a$ and $|W_{0,UV}|_a\leq|W_{t,UV}|_a$.

Letting $\pazocal{C}':W_0'\to\dots\to W_t'$ be the restriction of $\pazocal{C}$ to any other sector, \Cref{simplify rules} implies $|W_0'|_a\leq|W_t'|_s+2t\leq|W_t'|_a+2|W_{t,UV}|_a$.  So, since there are $\|B\|-1$ total sectors in an admissible word with base $B$:
\begin{align*}
|W_0|_a&=|W_{0,UV}|_a+\sum|W_0'|_a\leq|W_{t,UV}|_a+\sum(|W_t'|_a+2|W_{t,UV}|_a) \\
&\leq(2\|B\|-3)|W_{t,UV}|_a+\sum|W_t'|_a\leq(2\|B\|-3)|W_t|_a
\end{align*}

\end{proof}

\begin{lemma} \label{right historical end}

Let $\pazocal{C}:W_0\to\dots\to W_t$ be a reduced computation of $\textbf{M}_5$ with base $B$ and history $H$.  Suppose there exists $j\in\{1,2,3\}$ such that:

\begin{itemize}

\item $W_0$ is $\sigma(j,j+1)$-admissible

\item The first letter of $H$ is not $\sigma(j,j+1)$.

\item $B$ contains a subword $UV$ of the form $(P_i'R_i')^{\pm1}$, $(P_i''R_i'')^{\pm1}$, $P_i'(P_i')^{-1}$, or $(R_i'')^{-1}R_i''$

\end{itemize}

Then the step history of $\pazocal{C}$ is a subword of $(3)(32)(2)(21)(1)$.

%
%
%
%

\end{lemma}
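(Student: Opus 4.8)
The plan is to read the step history of $\pazocal{C}$ as a walk on the line of submachine indices $1,2,3,\dots$ of $\textbf{M}_5$ and to prove that this walk starts at index $j\le 3$ and never increases. Once that is known, the step history is automatically a subword of $(j)(j,j-1)(j-1)\cdots(2)(21)(1)$, which is a subword of $(3)(32)(2)(21)(1)$, so the whole content of the lemma lies in establishing that ``no up-move occurs''.

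First I would pin down the start of the walk. Since $W_0$ is $\sigma(j,j+1)$-admissible, its state letters are the end letters of $\textbf{M}_5(j)$; because these letters are disjoint from the start letters of $\textbf{M}_5(j+1)$ and from every state letter of the other submachines, the only rules applicable to $W_0$ are $\sigma(j,j+1)$ and the negative rules of $\textbf{M}_5(j)$. By the hypothesis that the first letter of $H$ is not $\sigma(j,j+1)$, the step history therefore begins with $(j)$. Now suppose toward a contradiction that the walk increases somewhere and let $(m,m+1)$ be its first ``up'' transition; every earlier transition is then a ``down'' transition, so the index is non-increasing from $j$ up to that point, whence $m\le j\le 3$ and $\sigma(m,m+1)$ is applied to a configuration $W_r$ whose state letters are the end letters of $\textbf{M}_5(m)$.

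If $m<j$ then $m\in\{1,2\}$, the walk descended to index $m$ via $\sigma(m,m+1)^{-1}$, and so the step history contains a contiguous factor $(m+1,m)(m)(m,m+1)$ — that is, $(21)(1)(12)$ or $(32)(2)(23)$ — whose middle factor is a nonempty subcomputation of $\textbf{M}_5(m)$ since $H$ is reduced; but $B$ contains one of the four special two-letter subwords, so the $\textbf{M}_5$-analogue of \Cref{M_2 bar step history}(a) forbids both, a contradiction. If instead $m=j$, then the walk has not left index $j$ before the transition, so $W_0\to\dots\to W_r$ is a reduced computation of $\textbf{M}_5(j)$ whose first rule is negative and whose terminal configuration $W_r$ is $\sigma(j,j+1)$-admissible. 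Restricting $\pazocal{C}$ to the sector of the subword $UV\subseteq B$ supplied by the hypothesis and inspecting the domains of the transition rules on the historical sectors, the $\Phi_j^+$-letters occurring in the tape words of the restricted initial and terminal configurations all lie in the right historical alphabet, so the $\textbf{M}_5$-analogues of \Cref{one alphabet historical words} (when $UV$ has the form $(P_i'R_i')^{\pm1}$ or $(P_i''R_i'')^{\pm1}$) and of \Cref{one alphabet historical words unreduced} (when $UV$ has the form $P_i'(P_i')^{-1}$ or $(R_i'')^{-1}R_i''$) force this restricted computation to be empty; hence $r=0$, making the first rule of $H$ equal to $\sigma(j,j+1)$, contrary to hypothesis.

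I expect the case $m=j$ to be the main obstacle: the real content is that $\pazocal{C}$ cannot run $\textbf{M}_5(j)$ in reverse and then return to the end state of $\textbf{M}_5(j)$ (from which it could subsequently fire $\sigma(j,j+1)$), and this is precisely where all three technical hypotheses are used simultaneously — $W_0$'s $\sigma(j,j+1)$-admissibility, the first rule not being $\sigma(j,j+1)$, and $B$ carrying one of the four special subwords. The care there is bookkeeping: matching each of the four forms of $UV$ to the reduced or unreduced historical-word lemma and tracking which of the left/right historical alphabets the freshly created letters land in. By contrast, the ``interior turn-around'' case $m<j$ is a one-line appeal to the already-established banned-subword lemmas.
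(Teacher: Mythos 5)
Your two case analyses are sound and in substance reproduce the paper's key step: your case $m=j$ is exactly the paper's argument (restrict to the $UV$-sector and apply \Cref{one alphabet historical words} for the reduced forms $(P_i'R_i')^{\pm1}$, $(P_i''R_i'')^{\pm1}$ and \Cref{one alphabet historical words unreduced} for $P_i'(P_i')^{-1}$, $(R_i'')^{-1}R_i''$ to show a nonempty $(j)$-block destroys $\sigma(j,j+1)$-admissibility), and your case $m<j$ is a correct appeal to the $\textbf{M}_5$-analogue of \Cref{M_2 bar step history}(a). The paper packages the same facts differently -- it shows that after the (possibly empty) maximal $(j)$-prefix the only transition that can fire is the downward one $\sigma(j,j-1)$, and then iterates with $j-1$ -- but that difference is cosmetic.

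The genuine gap is your opening reduction. You read the step history as a walk on the indices $1,2,3,\dots$ and assert that ``starts at $j\le 3$ and never increases'' automatically yields a subword of $(3)(32)(2)(21)(1)$. That implication is false for $\textbf{M}_5$, which has a zeroth submachine $\textbf{M}_5(0)$ and a transition rule $\sigma(01)$: a non-increasing walk may descend past step $(1)$ via $\sigma(01)^{-1}$, producing step histories containing $(10)$ or $(10)(0)$, which are not subwords of $(3)(32)(2)(21)(1)$. Nothing in your argument excludes this; your ``first up-move'' analysis only forbids returns upward, and even there the case $m=0$ (a factor $(10)(0)(01)$) is not covered by \Cref{M_2 bar step history}(a), while the relevant ban in \Cref{M_5 step history 1}(b) requires three-letter subwords of $B$ that the hypothesis does not provide. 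To close the argument you must additionally show that at the bottom of the descent $\sigma(01)^{-1}$ cannot fire -- for instance by analyzing the state letters and the domain of $\sigma(01)$ in the historical sectors, or by observing that in every place the lemma is invoked (Cases 4, 5 and 7 of \Cref{barM_4 faulty}) the computation in question is one of $\overline{\textbf{M}}_4$, which has no step $(0)$, so the downward iteration necessarily terminates inside $(1)$. As written, your proposal stops one step short of the stated conclusion.
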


\begin{proof}

Let $\pazocal{C}':W_0'\to\dots\to W_t'$ be the restriction of $\pazocal{C}$ to the $UV$-sector and $\pazocal{C}_j':W_0'\to\dots\to W_r'$ be the maximal (perhaps empty) subcomputation of with step history $(j)$.  

Suppose $r>0$.  Then since $W_0$ is $\sigma(j,j+1)$-admissible, $\pazocal{C}_j'$ satisfies the hypotheses of either \Cref{one alphabet historical words} or \Cref{one alphabet historical words unreduced}.  In particular, $W_r'$ must contain letters corresponding to the copy of $\Phi_j$ in the left historical alphabet, so that $W_r$ is not $\sigma(j,j+1)$-admissible.

Hence, if $t>r$, then since the first letter of $H$ is not $\sigma(j,j+1)$ it follows that the transition $W_r\to W_{r+1}$ is given by $\sigma(j,j-1)$.

Because any subsequent rule of $H$ cannot be $\sigma(j-1,j)$, the statement follows by iterating the above argument.


\end{proof}

Given $w\in F(X)$ and $H\in F(\Phi^+)$, the (tame) input configuration $I_4(w,H)$ is the copy of the input configuration $I_3(w,H)$ of $\textbf{M}_3$ in this hardware.

\begin{lemma} \label{M_4 language} \

\begin{enumerate}

\item For any $w\in\pazocal{R}_1$, there exists $H_w$ with $\|H_w\|\leq c_1f_1(c_1\|w\|)+c_1$ such that $I_4(w,H_w)$ is accepted by $\textbf{M}_4$.

\item The tame input configuration $W$ of $\textbf{M}_4$ is accepted if and only if there exist $w\in\pazocal{R}_1$ and $H\in F(\Phi^+)$ such that $W\equiv I_4(w,H)$.  Moreover, in this case $\|H\|\geq k$, $w$ is uniquely determined by $H$, and there exists a unique accepting computation of $W$, which has length $(2k+2)\|H\|+2k+3$.

\end{enumerate}

\end{lemma}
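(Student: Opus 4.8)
The proof of \Cref{M_4 language} follows the pattern of \Cref{M_3 language}: build the accepting computation by concatenating the known computations of the submachines, then show uniqueness by analyzing the step history. For part (1), take $H_w$ to be the word produced by \Cref{M_3 language}(1); concatenating the reduced computation $I_4(w,H_w)\to\dots\to A_4(H_w)$ of the submachine $\textbf{M}_4(1234)$ (the copy of $\textbf{M}_3$ inside $\textbf{M}_4$) with the rule $\sigma(45)$ and then the computation of $\textbf{M}_4(5)$ that deletes the historical words of the configuration $A_4(H_w)$ (multiplying each $P_i'R_i'$-sector on the right by the copy of $H_w$ to erase it and each $P_i''R_i''$-sector on the left by the copy of $H_w^{-1}$), we reach the accept configuration. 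The bound on $\|H_w\|$ is inherited directly, and the length of the appended computation is $\|H_w\|+1$ for the $\sigma(45)$ step plus the erasing, matching $(2k+2)\|H\|+2k+3 = \big((2k+1)\|H\|+2k+2\big)+1+\|H\|$.

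For the forward direction of part (2): let $W$ be a tame input configuration accepted by $\textbf{M}_4$ via a reduced computation $\pazocal{C}$. First argue the step history of $\pazocal{C}$ must be $(1)(2)(3)(4)(5)$. The subword $(1)(2)(3)(4)$ portion is forced by the analogues of Lemmas \ref{M_2 bar step history}, \ref{M_3 first step history}, \ref{M_3 second step history} applied to $\textbf{M}_4(1234)$, while \Cref{M_4 step history} rules out the return subword $(54)(4)(45)$; together with the fact that the accept configuration has letters of $\textbf{M}_4(5)$ as state letters, this pins down the step history (after discarding impossible subwords such as $(45)(5)(54)$ by the same historical-sector cancellation argument as in \Cref{M_2 step history}). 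Then let $\pazocal{C}':W\to\dots\to W_s$ be the maximal prefix with step history $(1)(2)(3)(4)$; since $W_s$ is $\sigma(45)$-admissible, its historical tape words lie in the right historical alphabets, and since the whole $\pazocal{C}'$ is a reduced computation of $\textbf{M}_4(1234)\cong\textbf{M}_3$ between a tame input configuration and a tame end configuration, \Cref{M_3 language}(2) gives $W\equiv I_4(w,H)$ and $W_s\equiv A_4(H)$ for some $w\in\pazocal{R}_1$, $H\in F(\Phi^+)$ with $\|H\|\ge k$, with $w$ uniquely determined by $H$ and $s=(2k+1)\|H\|+2k+2$. Finally, the restriction of the remaining subcomputation $W_{s+1}\to\dots\to W_t$ to a $P_i'R_i'$-sector is a reduced computation of $\textbf{M}_4(5)$ that starts with $H$ written over the right historical alphabet and must end empty; since $\textbf{M}_4(5)$ acts by right multiplication on this sector by letters of $\Phi^+$, \Cref{multiply one letter} forces this erasing computation to be unique of length exactly $\|H\|$, so $t-s-1=\|H\|$ and $t=(2k+2)\|H\|+2k+3$. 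Uniqueness of the whole accepting computation follows from the uniqueness at each stage.

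The main obstacle is the step-history analysis for $\textbf{M}_4(5)$: I need to verify that no ``return'' excursion $(45)(5)(54)$ or longer oscillation between $\textbf{M}_4(4)$ and $\textbf{M}_4(5)$ can occur in a reduced accepting computation. This is handled exactly as in \Cref{M_3 first step history}: a reduced subcomputation of $\textbf{M}_4(5)$ that is both preceded by $\sigma(45)$ and followed by $\sigma(45)^{-1}$ cannot be nonempty, since $\sigma(45)^{-1}$-admissibility constrains the historical tape words (they must be writable as words over the right historical alphabet with the appropriate $\Phi^+$-structure untouched by $\textbf{M}_4(5)$ in a way that forces triviality), contradicting reducedness. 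Once this is in place, everything else is routine bookkeeping built on the already-established Lemmas \ref{multiply one letter}, \ref{M_3 language}, and \ref{M_4 step history}. A secondary point requiring care is confirming that the erasing submachine $\textbf{M}_4(5)$ indeed deletes rather than merely rearranges the historical content — this is immediate from its definition (it multiplies $P_i'R_i'$ on the right by $\Phi^+$-letters, so starting from $H$ over the right historical alphabet the only reduced way to reach the empty word is the copy of $H^{-1}$ read appropriately), via \Cref{multiply one letter}(a).
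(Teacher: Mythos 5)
Your proposal follows the paper's proof almost exactly: part (1) is the same concatenation of the $\textbf{M}_3$-computation from \Cref{M_3 language}(1) with $\sigma(45)$ and the erasing word $\bar H_w^{-1}$, and part (2) is the same decomposition into a $(1)(2)(3)(4)$-prefix handled by \Cref{M_3 language}(2) and a $(5)$-suffix handled by \Cref{multiply one letter}; your length count $(2k+1)\|H\|+2k+2+1+\|H\|=(2k+2)\|H\|+2k+3$ is also the paper's.

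The one place you go astray is the sub-argument you single out as the ``main obstacle.'' You propose to exclude $(45)(5)(54)$ by mimicking \Cref{M_3 first step history}, claiming that $\sigma(45)^{-1}$-admissibility of both endpoints forces a nonempty $\textbf{M}_4(5)$-excursion to cancel. That argument does not transfer: the cancellation mechanism in \Cref{M_2 step history}/\Cref{M_3 first step history} works because those rules multiply each historical sector on the \emph{left} by left-historical letters, so admissibility for the transition rule (which bans left-historical letters) forces the left factor to be trivial. The rules of $\textbf{M}_4(5)$ multiply the $P_i'R_i'$-sectors only on the \emph{right} by right-historical letters, so a word over the right historical alphabet stays over the right historical alphabet and remains $\sigma(45)^{-1}$-admissible; no contradiction with reducedness arises. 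Fortunately the exclusion is not needed. The cited lemmas ban every ``local minimum'' $(j+1,j)(j)(j,j+1)$ for $j=1,2,3,4$ in the standard base, so a reduced computation from an input configuration (state letters in $\textbf{M}_4(1)$) to the accept configuration (state letters in $\textbf{M}_4(5)$) cannot descend and re-ascend anywhere; its step history is therefore forced to be the monotone word $(1)(2)(3)(4)(5)$, and $(45)(5)(54)$ never has to be addressed directly. With that correction the rest of your argument is sound.
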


\begin{proof}

(1) Letting $H_w$ be the word given in \Cref{M_3 language}(1), there exists a reduced computation of $\textbf{M}_4$ with step history $(1)(2)(3)(4)$ beginning with $I_4(w,H_w)$ and ending with the copy of $A_3(H_w)$.

The configuration $A_3(H_w)$ is $\sigma(45)$-admissible, with $A_3(H_w)\cdot\sigma(45)$ containing copies of $H_w$ over the right historical alphabets written in its $P_i'R_i'$-sectors and copies of $H_w^{-1}$ over the right historical alphabets written in its $P_i''R_i''$-sectors.

Letting $\bar{H}_w$ be the copy of $H_w$ over the positive rules of $\textbf{M}_4(5)$, then $A_3(H_w)\cdot\sigma(45)\bar{H}_w^{-1}$ has empty tape words, and so is the accept configuration.

(2) Let $\pazocal{C}:W\equiv W_0\to\dots\to W_t$ be a reduced computation accepting $W$.

By Lemmas \ref{M_2 bar step history}, \ref{M_3 first step history}, \ref{M_3 second step history}, and \ref{M_4 step history}, the step history of $\pazocal{C}$ must be $(1)(2)(3)(4)(5)$.

Let $W_0\to\dots\to W_s$ be the maximal subcomputation with step history $(1)(2)(3)(4)$.  As $W_s$ is $\sigma(45)$-admissible, it corresponds to a tame end configuration of $\textbf{M}_3$.  As such, \Cref{M_3 language}(2) implies the existence of $w\in\pazocal{R}_1$ and $H\in F(\Phi^+)$ such that $\|H\|\geq k$, $H$ uniquely determines $w$, $W\equiv I_4(w,H)$, and $W_s\equiv A_3(H)$.  Moreover, $\pazocal{C}'$ must be the unique reduced computation with this step history between these configurations, with $s=(2k+1)\|H\|+2k+2$.

Let $H_5$ be the history of the subcomputation $W_s\cdot\sigma(45)\equiv W_{s+1}\to\dots\to W_t$.  Applying \Cref{multiply one letter} to the restriction of this subcomputation to any $P_i'R_i'$-sector (or to any $P_i''R_i''$-sector) then implies $H_5$ is the copy of $H^{-1}$ over the positive rules of $\textbf{M}_5(5)$.  In particular, $H_5$ is uniquely determined by $H$ with $\|H_5\|=\|H\|$.

\end{proof}

\medskip


\subsection{The machine $\overline{\textbf{M}}_4$} \

As in \Cref{sec-overline-M_2}, the next step in our construction is the introduction of an `intermediate' machine which functions in much the same way as the previous machine.  In particular, the machine $\overline{\textbf{M}}_4$ is the `circular' analogue of a simple tweak of the machine $\textbf{M}_4$.

The standard base of $\overline{\textbf{M}}_4$ adds just one part to that of $\textbf{M}_4$. In particular, setting $B_4$ as the standard base of $\textbf{M}_4$, the standard base of $\overline{\textbf{M}}_4$ is $\{t\}B_4$, where $\{t\}$ consists of a single letter (which, of course, acts as both the start and end letter of its part). The tape alphabet of the new sector in the standard base, {\frenchspacing i.e. the} $\{t\}Q_0$-sector, is empty. All other tape alphabets are carried over from $\textbf{M}_4$. 

The length of this standard base is henceforth taken to be the parameter $N$. Note that this length may be adjusted and taken sufficiently large by adding any number of locked sectors to the standard base of $\textbf{Move}$ or of $\textbf{S}$ without affecting the functionality of the machine or any previous statement.

However, a tape alphabet is also assigned to the space after the final letter of $B_4$, corresponding to the $Q_{s,r}''\{t\}$-sector.  As such, it is possible for an admissible word of $\overline{\textbf{M}}_4$ to have base 
$$Q_{s,\ell}''P_s''R_s''Q_{s,r}''\{t\}Q_0Q_1Q_1^{-1}Q_0^{-1}\{t\}^{-1}(Q_{s,r}'')^{-1}(R_s'')^{-1}(P_s'')^{-1}(Q_{s,\ell}'')^{-1}$$
{\frenchspacing i.e. to} essentially `wrap around' the standard base. An $S$-machine with this property is called a \textit{cyclic machine}, as one can think of the standard base as being written on a circle.  As will be seen in \Cref{sec-associated-groups}, this is a natural consideration given the structure of the associated groups.

In this setting, the tape alphabet assigned to the $Q_{s,r}''\{t\}$-sector is empty.

The positive rules of $\overline{\textbf{M}}_4$ correspond to those of $\textbf{M}_4$, operating on the copy the hardware of $\textbf{M}_4$ in the same way and locking the new sectors.

The input sectors are the same as that of $\textbf{M}_4$. The corresponding definitions (for example, submachines, historical sector, working sector, etc) then extend in the clear way, as do all statements pertaining to the machine.

The definition of a faulty base of a cyclic machine is the same as that for a regular machine (see \Cref{sec-S-machines}).  However, note that there are more possible faulty bases in the setting of cyclic machines, {\frenchspacing i.e. those that wrap around the standard base}.

More generally, a base of a cyclic $S$-machine is said to be \textit{revolving} if:

\begin{enumerate}

\item It starts and end with the same base letter

\item None of its proper subwords satisfies (1).

\end{enumerate}

Note that a base is faulty if and only if it is revolving and unreduced.

\begin{lemma} \label{barM_4(j) faulty}

For every reduced computation $\pazocal{C}:W_0\to\dots\to W_t$ of $\overline{\textbf{M}}_4(j)$ with faulty base, $|W_i|_a\leq c_0(|W_0|_a+|W_t|_a)$ for all $0\leq i\leq t$.

\end{lemma}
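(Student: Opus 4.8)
The plan is to reduce the statement for $\overline{\textbf{M}}_4(j)$ to the corresponding statements already established for the submachines of $\textbf{M}_4$ (equivalently $\textbf{M}_3$, $\overline{\textbf{M}}_2$, $\textbf{M}_2$), since by construction $\overline{\textbf{M}}_4(j)$ operates exactly as its counterpart on the original sectors and merely locks the new sectors $\{t\}Q_0$ and $Q_{s,r}''\{t\}$. First I would observe that a faulty base $B$ of the cyclic machine $\overline{\textbf{M}}_4$ is, up to cyclic permutation, of one of two types: either it does not wrap around, in which case it is literally a faulty base of $\textbf{M}_4$ and the needed bound follows from the relevant estimates on $\textbf{M}_4(j)$; or it wraps around, meaning it contains the subword $\{t\}Q_0\cdots Q_{s,r}''\{t\}$ or uses the letter $\{t\}$ twice. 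Because the $\{t\}Q_0$- and $Q_{s,r}''\{t\}$-sectors have empty tape alphabet, no $a$-letters are ever stored there, so the $a$-length of an admissible word with wrap-around base equals the sum of the $a$-lengths of its restrictions to maximal non-wrapping factors. This lets me split any faulty base into at most two subwords each of which is (a cyclic permutation of) a base occurring for $\textbf{M}_4(j)$.

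The key steps, in order, would be: (1) record that since $\overline{\textbf{M}}_4(j)$ locks the two new sectors, a reduced computation with base $B$ restricts to reduced computations on each maximal non-wrapping subword of $B$, with $a$-length being additive across these restrictions; (2) handle the three cases $j=1$ ($\overline{\textbf{M}}_4(1)$ operating as $\textbf{Move}$), $j=2$ ($\textbf{Clean}$), and $j=3$ ($\textbf{S}$) by quoting the already-proved faulty-base bounds — for $j=1$ this is precisely \Cref{Move faulty}, giving $|V_i|_a\le c_0\max(|V_0|_a,|V_s|_a)$; for $j=2$ the Clean machine only multiplies one sector by one letter at a time and \Cref{multiply one letter}(d) gives $\|u_i\|\le\max(\|u_0\|,\|u_t\|)$; for $j=3$ the analogous property for $\textbf{S}$ is built into the construction via \Cref{M_2 bound} and the historical-sector estimates (Lemmas \ref{one alphabet historical words}, \ref{one alphabet historical words unreduced}, \ref{M_4 restriction}); (3) for a wrapping base, apply the appropriate bound to each of the (at most two) non-wrapping factors and add, absorbing the factor of $2$ into the constant — this is exactly why the statement is phrased with $|W_0|_a+|W_t|_a$ rather than the max; (4) conclude $|W_i|_a\le c_0(|W_0|_a+|W_t|_a)$, using the parameter hierarchy $c_0\gg\ldots$ to absorb any bounded multiplicative constants coming from \Cref{M_4 restriction} (the $2\|B\|-3$ factor is bounded since $\|B\|\le N\ll c_0$, by the step $N\ll c_0$ in the parameter list).

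The main obstacle I anticipate is the case $j=3$ with a faulty base that passes through the historical sectors $P_i'R_i'$ or $P_i''R_i''$: here the $a$-length can genuinely grow along the computation (historical words get longer as rules are applied), so one cannot simply invoke a "$\le\max$" statement. The resolution is to use the step-history machinery — an analogue of \Cref{right historical end} and \Cref{M_4 restriction} — to show that a reduced computation in such a base either has bounded step history (at most $(3)(32)(2)(21)(1)$-type, so the growth is controlled by the number of submachine switches, which is bounded) or is forced into a configuration contradicting $\sigma$-admissibility. Concretely, \Cref{M_4 restriction} already yields $|W_0|_a\le(2\|B\|-3)|W_t|_a$ whenever the terminal tape word in the offending sector lacks the wrong-side historical letters, and for faulty bases the wrap-around structure forces exactly such a sector to appear; iterating over the at most $N$ sectors and using $\|B\|\le N\ll c_0$ gives the claimed bound. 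The other cases are routine once the additivity of $a$-length across the empty-alphabet $\{t\}$-sectors is in hand.
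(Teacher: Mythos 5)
There are genuine gaps here. First, the lemma covers all five submachines $\overline{\textbf{M}}_4(1),\dots,\overline{\textbf{M}}_4(5)$, but your case analysis only addresses $j\in\{1,2,3\}$; the primitive submachine $j=4$ and the erasing submachine $j=5$ need their own arguments (for $j=4$ one factors the history at the last connecting rule and applies \Cref{primitive computations}(5) or \Cref{primitive unreduced} to the ``active'' subwords of the base to derive a contradiction with the reduction to $|W_i|_a>\max(|W_0|_a,|W_t|_a)$; for $j=5$ one applies \Cref{multiply one letter} and \Cref{unreduced base} sector by sector). Second, your base case --- that a non-wrapping faulty base ``is literally a faulty base of $\textbf{M}_4$ and the needed bound follows from the relevant estimates on $\textbf{M}_4(j)$'' --- is circular: no faulty-base estimate for $\textbf{M}_4(j)$ exists earlier in the paper; this lemma \emph{is} that estimate. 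The wrap-around dichotomy is also not where the difficulty lies, since the $\{t\}$-sectors are locked by every rule and carry no tape letters.

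The decomposition that actually carries the proof is by sector type, not by wrapping: one factors a cyclic permutation of the faulty base as $B_1C_1\dots B_mC_mB_{m+1}$, where each $B_x$ avoids the letters $Q_i^{\pm1}$ and the subword $(Q_{0,\ell}')^{-1}Q_{0,\ell}'$ (so the restriction there is a computation of $\textbf{M}_2(j)$ to which \Cref{M_2 bound} applies, giving $|W_{i,x}|_a\leq 9(|W_{0,x}|_a+|W_{t,x}|_a)$ for all intermediate $i$), and each $C_x$ consists of $Q_i^{\pm1}$- and $t^{\pm1}$-letters (handled by \Cref{Move faulty} when $j=1$, by \Cref{multiply one letter} and \Cref{unreduced base} when $j=2$, and trivially when $j=3$ since all such sectors are locked). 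You treat the historical sectors only under $j=3$, but they are active for all $j\in\{1,2,3\}$, and the tools you propose there do not suffice: \Cref{right historical end} concerns multi-step computations and is vacuous for a one-step computation, and \Cref{M_4 restriction} only relates $|W_0|_a$ to $|W_t|_a$ under hypotheses on the tape word of $W_0$ and says nothing about intermediate $|W_i|_a$. It is \Cref{M_2 bound} that delivers the uniform intermediate bound on the historical part, and one must also verify that the first and last letters of each $B_x$ are forced to be $Q_{0,\ell}'$ or $(Q_{s,r}'')^{-1}$ (or their inverses) so that the pieces assemble into bases to which that lemma, or \Cref{unreduced base} in the length-two case, actually applies.
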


\begin{proof}

Let $H$ be the history and $B$ be the base of $\pazocal{C}$.

%
%
%
%
%


If there exists $1\leq i\leq t-1$ such that $|W_i|_a\leq|W_0|_a$, then it suffices to prove the statement for the subcomputations $W_0\to\dots\to W_i$ and $W_i\to\dots\to W_t$.  Hence, it suffices to assume $|W_i|_a>\max(|W_0|_a,|W_t|_a)$ for all $1\leq i\leq t-1$.

\textbf{1.} Suppose $j=5$.

Then the restriction $\pazocal{C}':W_0'\to\dots\to W_t'$ of $\pazocal{C}$ to any two-letter subword of $B$ either has fixed tape word, satisfies the hypotheses of \Cref{multiply one letter}, or satisfies the hypotheses of \Cref{unreduced base}.  As a result:
$$|W_i|_a=\sum|W_i'|_a\leq\sum(|W_0'|_a+|W_t'|_a)=|W_0|_a+|W_t|_a$$
Hence, the statement follows for $c_0\geq1$.

\textbf{2.} Suppose $j=4$.

If the history contains no letter corresponding to a connecting rule (or its inverse) of the primitive machine, then the same argument as Case 1 would imply the statement.  So, it suffices to assume that $H\equiv H_1\chi^{\pm1} H_2$ where $\chi$ is a connecting rule and $H_2$ contains no connecting rule.

Let $\pazocal{C}_2:W_s\to\dots\to W_t$ be the subcomputation with history $H_2$.  Note that since connecting rules do not alter tape words, we may assume that $H_2$ is non-trivial, so that $0<s<t$.

Without loss of generality, suppose $\chi$ locks the $R_i'Q_{i,r}'$-sectors, and so also locks the $Q_{i,\ell}''P_i''$-sectors.  Then by \Cref{locked sectors}, any unreduced two-letter subword of $B$ must be of the form $P_i'(P_i')^{-1}$, $(R_i')^{-1}R_i'$, $P_i''(P_i'')^{-1}$, or $(R_i'')^{-1}R_i''$.

So, any occurrence of a letter of the form $(R_i')^{\pm1}$ or $(P_i'')^{\pm1}$ in $B$ must be part of an `active' subword of a cyclic permutation of $B$, {\frenchspacing i.e. one} of the form $(P_i'R_i'Q_{i,r}')^{\pm1}$, $(Q_{i,\ell}''P_i''R_i'')^{\pm1}$, $(Q_{i,r}')^{-1}(R_i')^{-1}R_i'Q_{i,r}'$, or $Q_{i,\ell}''P_i''(P_i'')^{-1}(Q_{i,\ell}'')^{-1}$.  If $B$ does contain such a letter, then the cyclic permutation whose prefix is the corresponding active subword must contain all active subwords.

Let $\pazocal{C}_2':W_s'\to\dots\to W_t'$ be the restriction of $\pazocal{C}_2$ to an active subword $B'$.  If $B'$ is reduced, then \Cref{primitive computations}(5) implies $|W_s'|_a\leq|W_t'|_a$; otherwise, \Cref{primitive unreduced} implies $|W_s'|_a\leq|W_t'|_a$. 

But every sector which does not appear as part of an active subword has fixed tape word throughout the computation, so that $|W_s|_a\leq|W_t|_a$.

\textbf{3.} Suppose $j\in\{1,2,3\}$ and $B$ contains neither a letter of the form $Q_i^{\pm1}$ nor a subword of the form $(Q_{0,\ell}')^{-1}Q_{0,\ell}'$.

Since every rule of $\overline{\textbf{M}}_4$ locks the $\{t\}Q_0$-sector, $B$ cannot contain any letter of the form $t^{\pm1}$.

Then, as each rule of $\overline{\textbf{M}}_4(j)$ locks the $Q_{i,\ell}'P_i'$-, $R_i'Q_{i,r}'$-, $Q_{i,\ell}''P_i''$-, and $R_i''Q_{i,r}''$-sectors, it follows that $\pazocal{C}$ can be identified with a reduced computation $\pazocal{D}$ of $\textbf{M}_2(j)$ with a faulty base that does not contain any letter of the form $Q_i^{\pm1}$ or subword of the form $(Q_{0,\ell}')^{-1}Q_{0,\ell}'$.  

Faulty bases must have length at least 3, though, so that $\pazocal{D}$ satisfies the hypotheses of \Cref{M_2 bound}.  

Hence, the statement follows for $c_0\geq9$.

\textbf{4.} Suppose $j\in\{1,2,3\}$ and $B$ consists entirely of letters of the form $Q_i^{\pm1}$ or of the form $t^{\pm1}$.

Similar to the previous case, since every rule locks the $Q_{s,r}''\{t\}$-sector, $B$ cannot contain any letter of the form $t^{\pm1}$.  As such, $B$ consists entirely of letters of the form $Q_i^{\pm1}$.

Further, since every rule of $\overline{\textbf{M}}_4(2)$ and every rule of $\overline{\textbf{M}}_4(3)$ locks the $Q_{i-1}Q_i$-sectors, it suffices to assume $j=1$.

But then $\pazocal{C}$ can be identified with a reduced computation of $\textbf{Move}$ with faulty base, so that the statement follows from \Cref{Move faulty}.

\textbf{5.} Thus, it suffices to assume $j\in\{1,2,3\}$ and $B$ contains both a subword of the form considered in Case 3 and one of the form considered in Case 4.  As such, there exists $m\geq1$ for which there exists a factorization $B'\equiv B_1C_1\dots B_mC_mB_{m+1}$ of a cyclic permutation of $B$ such that:

\begin{itemize}

\item each $B_x$ is a nonempty word of length at least 2 which does not contain a letter of the form $Q_i^{\pm1}$ or a subword of the form $(Q_{0,\ell}')^{-1}Q_{0,\ell}'$

\item each $C_x$ is a possibly empty word consisting entirely of letters of the form $Q_i^{\pm1}$ or $t^{\pm1}$

\item If $C_x$ is empty, then the last letter of $B_x$ is $(Q_{0,\ell}')^{-1}$ and the first letter of $B_{x+1}$ is $Q_{0,\ell}'$

\item $B_{m+1}$ consists of a single letter

\end{itemize}

For all $1\leq x\leq m$, let $C_x'$ be the word $R_xC_xP_{x+1}$, where $R_x$ is the last letter of $B_x$ and $P_{x+1}$ is the first letter of $B_{x+1}$.   Then, define:

\begin{itemize}

\item $\pazocal{C}_x:W_{0,x}\to\dots\to W_{t,x}$ to be the restriction of (a cyclic permutation of) $\pazocal{C}$ to $B_x$.

\item $\pazocal{C}_x':W_{0,x}'\to\dots\to W_{t,x}'$ to be the restriction of (a cyclic permutation of) $\pazocal{C}$ to $C_x'$.

\end{itemize}

Note that $|W_i|_a=\sum(|W_{i,x}|_a+|W_{i,x}'|_a)$ for all $i$.

As $B$ is faulty, the first letter of $B_1$ must be the same as the last (only) letter of $B_{m+1}$.  So, it follows that the first letter of each $B_x$ must be either $Q_{0,\ell}'$ or $(Q_{s,r}'')^{-1}$, while the last letter must be either $(Q_{0,\ell}')^{-1}$ or $Q_{s,r}''$.

As such, each $\pazocal{C}_x$ can be identified with a reduced computation of $\textbf{M}_2$ whose base begins and ends with the analogous letters.  If the length of this base is at least 3, then \Cref{M_2 bound} yields $|W_{i,x}|_a\leq9(|W_{0,x}|_a+|W_{t,x}|_a)$ for all $i$; otherwise, the base is either $Q_{0,\ell}'(Q_{0,\ell}')^{-1}$ or $(Q_{s,r}'')^{-1}Q_{s,r}''$, so that \Cref{unreduced base} implies $|W_{i,x}|_a\leq|W_{0,x}|_a+|W_{t,x}|_a$.

Hence, taking $c_0\geq9$, it suffices to show that $|W_{i,x}'|_a\leq c_0(|W_{0,x}'|_a+|W_{t,x}'|_a)$ for all $i$ and $x$.

If $j=3$, then this bound is immediate since every rule of $\overline{\textbf{M}}_4(3)$ locks the sectors that can appear as subwords of $C_x'$.

If $j=1$, then $\pazocal{C}_x'$ can be identified with a reduced computation of $\textbf{Move}$ with full base, so that the bound follows from \Cref{Move faulty}.

Thus, it suffices to assume $j=2$.  

For any rule of $\overline{\textbf{M}}_4(2)$, the only possible subwords of $C_x'$ that the rule does not lock are those of the form (a) $(Q_{n-1}Q_{0,\ell}')^{\pm1}$, (b) $Q_{n-1}Q_{n-1}^{-1}$, or (c) $(Q_{0,\ell}')^{-1}Q_{0,\ell}'$.  

But given the structure of the rules arising from the Clean machine, the restriction of $\pazocal{C}_x'$ to a subword of the form (a) satisfies the hypotheses of \Cref{multiply one letter}, the restriction to those of the form (b) has fixed tape word, and the restriction to those of the form (c) satisfies the hypotheses of \Cref{unreduced base}.  As such, the bound again follows.

\end{proof}

\begin{remark}

The use of \Cref{Move faulty} in Case 4 of the proof of \Cref{barM_4(j) faulty} indicates the purpose of condition (Mv7) in the definition of Move machines.

\end{remark}

\begin{lemma} \label{barM_4 faulty}

If $\pazocal{C}:W_0\to\dots\to W_t$ is a reduced computation of $\overline{\textbf{M}}_4$ whose base $B$ is faulty, then $|W_i|_a\leq c_1\max(|W_0|_a,|W_t|_a)$ for all $0\leq i\leq t$.

\end{lemma}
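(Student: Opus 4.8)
The plan is to bootstrap from the submachine estimate \Cref{barM_4(j) faulty}, handling the transition rules that glue the five submachines $\overline{\textbf{M}}_4(1),\dots,\overline{\textbf{M}}_4(5)$ together. Let $H$ be the history and $B$ the faulty base of $\pazocal{C}$. The first reduction is the standard one: if there is some intermediate index $i$ with $|W_i|_a\leq\max(|W_0|_a,|W_t|_a)$, split $\pazocal{C}$ at $W_i$ and argue on each piece; so we may assume $|W_i|_a>\max(|W_0|_a,|W_t|_a)$ for all $0<i<t$. Next, factor $H$ according to its step history, writing $H\equiv H_{i_1}\tau_1 H_{i_2}\tau_2\cdots\tau_{r-1}H_{i_r}$, where each $\tau_\ell$ is a transition rule $\sigma(j,j\pm1)^{\pm1}$ and each $H_{i_\ell}\in F(\Phi^+_{i_\ell})$ is a maximal (possibly empty) block of a single submachine $\overline{\textbf{M}}_4(i_\ell)$. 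Since transition rules do not alter any tape word, the $a$-length is constant across each $\tau_\ell$, so it suffices to control $|W_i|_a$ within each block.

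The key point is that, because $B$ is faulty, the restriction of each block $\pazocal{C}_\ell$ to $B$ is a reduced computation of the submachine $\overline{\textbf{M}}_4(i_\ell)$ whose base is still faulty (a cyclic permutation of $B$, if necessary, so that Lemma \ref{barM_4(j) faulty} applies verbatim by the observation in \Cref{sec-S-machines} about cyclic permutations of faulty bases). Hence, writing $W_{a(\ell)}$ and $W_{b(\ell)}$ for the endpoints of block $\ell$, \Cref{barM_4(j) faulty} gives $|W_i|_a\leq c_0(|W_{a(\ell)}|_a+|W_{b(\ell)}|_a)$ for all $i$ in block $\ell$. To convert this into a bound in terms of $|W_0|_a$ and $|W_t|_a$ alone, I would argue by contradiction using the reduction from the previous paragraph together with the step-history restrictions: under the assumption $|W_i|_a>\max(|W_0|_a,|W_t|_a)$ for all interior $i$, the step history cannot have too many blocks, since the banned step-history subwords of Lemmas \ref{M_2 bar step history}, \ref{M_3 first step history}, \ref{M_3 second step history}, \ref{M_4 step history} (which carry over to $\overline{\textbf{M}}_4$) force the step history of a reduced computation to be a subword of a fixed word — roughly $(1)(2)(3)(4)(5)$ read forward and backward, i.e. a `V-shaped' sequence — so $r$ is bounded by an absolute constant (at most $9$ or so). Thus the endpoints of every block lie among a bounded number of admissible words, each of which (being the endpoint of a block) has $a$-length bounded by a bounded number of applications of the chain of estimates back to $W_0$ or $W_t$.

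The main obstacle is the middle of the argument: showing that the $a$-length at a block endpoint $W_{a(\ell)}$ is itself controlled by $\max(|W_0|_a,|W_t|_a)$. The natural way is an inductive `telescoping' down the step history: since the step history is a subword of a fixed word of bounded length, peel off the outermost block (say the one containing $W_0$), apply \Cref{barM_4(j) faulty} to bound everything in that block — in particular the other endpoint — by $c_0$ times the sum of that block's endpoints, one of which is $W_0$; then note the other endpoint is separated from $W_t$ by one fewer block, and recurse. Each step multiplies the constant by $c_0$, so after the bounded number of blocks one obtains $|W_i|_a\leq c_0^{O(1)}\max(|W_0|_a,|W_t|_a)\leq c_1\max(|W_0|_a,|W_t|_a)$ by the parameter choice $c_1\gg c_0$. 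The delicate bookkeeping point — and the place where one must be careful — is that the telescoping estimate must be set up so that it genuinely terminates at $W_0$ and $W_t$ rather than running in a circle through interior admissible words; this is exactly guaranteed by the reduction in the first paragraph, which lets us assume all interior $a$-lengths strictly exceed those at the ends, so the block whose interior realizes the global maximum is bracketed on both sides by blocks whose far endpoints have strictly smaller $a$-length, and one walks outward from there to $W_0$ and $W_t$.
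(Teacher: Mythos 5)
Your overall architecture (reduce to interior $a$-lengths exceeding the endpoints, cut $H$ at transition rules, control each block, telescope outward) matches the skeleton of the paper's proof, but the engine you propose for the telescoping step does not work, and this is exactly where the real content of the lemma lies. \Cref{barM_4(j) faulty} bounds the \emph{interior} $a$-lengths of a block by $c_0$ times the sum of the $a$-lengths of that block's \emph{two endpoints}; it gives no relation between the two endpoints themselves. So when you ``peel off the outermost block and bound its far endpoint by $c_0$ times the sum of the block's endpoints, one of which is $W_0$,'' the inequality you obtain is $|W_{b(1)}|_a\leq c_0(|W_0|_a+|W_{b(1)}|_a)$, which is vacuous. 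The chain therefore never terminates at $W_0$ and $W_t$: every block's bound is expressed in terms of an interior admissible word whose $a$-length is precisely what you are trying to control. The assumption that all interior $a$-lengths exceed $\max(|W_0|_a,|W_t|_a)$ does not repair this — it is a lower bound on the interior, not an upper bound on block endpoints. What actually closes the loop in the paper is that block endpoints are $\sigma(j,j\pm1)^{\pm1}$-admissible, which forces the historical tape words there to lie over a single (left or right) historical alphabet; this triggers the \emph{directional} estimates of Lemmas \ref{one alphabet historical words}, \ref{one alphabet historical words unreduced}, \ref{M_4 restriction}, \ref{primitive computations}(5), and \ref{primitive unreduced}, which say the $a$-length can only grow as one moves away from such a configuration (up to a factor controlled by $\|B\|$). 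Combined with \Cref{right historical end}, this shows the step history of the tail after a transition is monotone (e.g.\ a prefix of $(43)(3)(2)(1)$), so the telescoping genuinely runs downhill to $W_t$ (or produces $|W_r|_a\leq|W_t|_a$ for an interior $r$, contradicting the initial reduction). None of this is available from \Cref{barM_4(j) faulty} alone.

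A secondary gap: your claim that the number of blocks is bounded by an absolute constant because the step history is a subword of a fixed ``V-shaped'' word is not justified for an arbitrary faulty base. The banned-subword lemmas (\ref{M_2 bar step history}, \ref{M_3 first step history}, \ref{M_3 second step history}, \ref{M_4 step history}) each carry a hypothesis that $B$ contains a specific two- or three-letter subword, and \Cref{M_5 step history} is stated only for the standard base. For a general faulty base one must first use \Cref{locked sectors} to determine which unreduced subwords $B$ can contain given which transition rules occur in $H$ — this is why the paper's proof is organized as a case analysis on the highest transition rule $\sigma(45)^{\pm1},\sigma(34)^{\pm1},\sigma(23)^{\pm1},\sigma(12)^{\pm1}$ appearing in $H$, pinning down $B$ (often to a cyclic permutation of one explicit word $B'$) before any step-history restriction can be invoked. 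Your proof would need to reproduce that analysis; as written, the bounded-block-count claim and the termination of the telescoping are both unsupported.
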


\begin{proof}

By \Cref{barM_4(j) faulty} and the parameter choice $c_1>>c_0$, it suffices to assume that the history $H$ of $\pazocal{C}$ contains a letter corresponding to a transition rule.  

Further, as in the proof of \Cref{barM_4(j) faulty}, we may assume that $|W_i|_a>\max(|W_0|_a,|W_t|_a)$ for all $1\leq i\leq t-1$.  In particular, neither the first nor last letter of $H$ corresponds to a transition rule.

\textbf{1.} Suppose $H$ contains a letter of the form $\sigma(45)^{\pm1}$ and $B$ contains a letter that is either of the form $(R_i')^{\pm1}$ or of the form $(P_i'')^{\pm1}$.

Without loss of generality (perhaps passing to the inverse computation), suppose $H$ contains a subword of the form $H_5\sigma(54)H_4$, where $H_4$ and $H_5$ are the histories of (nonempty) maximal subcomputations which have step history $(4)$ and $(5)$, respectively.  Let $\pazocal{C}_4:W_r\to\dots\to W_s$ be the subcomputation with history $H_4$.

As $\sigma(45)$ locks every sector of the standard base except for the $P_i'R_i'$- and $P_i''R_i''$-sectors, \Cref{locked sectors} implies any unreduced two-letter subword of $B$ must be of one of the following forms:

\begin{enumerate}[label=(\alph*)]

\item $P_i'(P_i')^{-1}$ or $P_i''(P_i'')^{-1}$ 

\item $(R_i')^{-1}R_i'$ or $(R_i'')^{-1}R_i''$.  

\end{enumerate}

So, as it is faulty, $B$ must contain exactly one two-letter subword of one of the forms given in (a) and exactly one as given in (b).

Note that if $H_4$ contains a connecting rule, then the first connecting rule would necessarily lock all $P_i'R_i'$- and $P_i''R_i''$-sectors.  But given the unreduced two-letter subwords of $B$, the presence of such a connecting rule along contradicts \Cref{locked sectors}.  Hence, $H_4$ cannot contain any connecting rules, and so $W_s$ is not $\sigma(43)$-admissible.

Moreover, as $B$ contains a letter of the form $(R_i')^{\pm1}$ or $(P_i'')^{\pm1}$, \Cref{locked sectors} implies $B$ must also contain a subword of the form $(R_i'Q_{i,r}')^{\pm1}$ or of the form $(Q_{i,\ell}''P_i'')^{\pm1}$.  Since $H_4$ contains no connecting rule, the restriction of $\pazocal{C}_4$ to this subword then satisfies the hypotheses of \Cref{multiply one letter}.  But as $\|H_4\|>0$, it follows that $W_s$ is not $\sigma(45)$-admissible.

Hence, $H_4$ is a suffix of $H$, {\frenchspacing i.e. $s=t$}.

Now, as above \Cref{locked sectors} implies any letter of $B$ of the form $(R_i')^{\pm1}$ must belong to a subword of a cyclic permutation of $B$ of the form $(P_i'R_i'Q_{i,r}')^{\pm1}$ or $(Q_{i,r}')^{-1}(R_i')^{-1}R_i'Q_{i,r}'$.  Similarly, since $\sigma(45)$ locks the $Q_{i,\ell}''P_i''$-sectors, any letter of $B$ of the form $(P_i'')^{\pm1}$ must belong to a subword of a cyclic permutation of $B$ of the form $(Q_{i,\ell}''P_i''R_i'')^{\pm1}$ or $Q_{i,\ell}''P_i''(P_i'')^{-1}(Q_{i,\ell}'')^{-1}$.

The restriction $\pazocal{C}_4':W_r'\to\dots\to W_t'$ of $\pazocal{C}_4$ to any such subword above either satisfies the hypotheses of \Cref{primitive computations} (if the subword is reduced) or satisfies the hypotheses of \Cref{primitive unreduced} (if the subword is unreduced), so that $|W_r'|_a\leq|W_t'|_a$.

But the restriction of $\pazocal{C}_4$ to any other sector has fixed tape word, yielding $|W_r|_a\leq|W_t|_a$.  As $0<r<t$, this contradicts our initial assumption.

\textbf{2.} Suppose $H$ contains a letter of the form $\sigma(45)^{\pm1}$.

By Case 1, it suffices to assume that $B$ contains neither a letter of the form $(R_i')^{\pm1}$ nor one of the form $(P_i'')^{\pm1}$.  However, $B$ must still contain exactly one subword of type (a) and one subword of type (b), {\frenchspacing i.e. it must} contain a subword of the form $P_i'(P_i')^{-1}$ and one of the form $(R_i'')^{-1}R_i''$.

Suppose $B$ contains the subword $P_i'(P_i')^{-1}$ for $i>0$.  Then by \Cref{locked sectors} the subsequent letters must be $(Q_{i,\ell}')^{-1}(Q_{i-1,r}')^{-1}(R_{i-1}')^{-1}$, contradicting our assumption that $B$ contains no letter of the form $(R_i')^{\pm1}$.

The analogous argument implies $B$ does not contain the subword $(R_i'')^{-1}R_i''$ for $i<s$.

Hence, $B$ is a cyclic permutation of:
$$B'\equiv P_0'(P_0')^{-1}(Q_{0,\ell}')^{-1}Q_{n-1}^{-1}\dots Q_0^{-1}(Q_{s,r}'')^{-1}(R_s'')^{-1}R_s''Q_{s,r}''Q_0\dots Q_{n-1}Q_{0,\ell}'P_0'$$
Now, by the same argument as presented in Case 1, $H$ cannot contain any connecting rule of $\overline{\textbf{M}}_4(4)$, and so cannot contain the letter $\sigma(43)$.  In particular, the step history of $\pazocal{C}$ is a concatenation of $(4)$ and $(5)$.

But by construction no rule of $\overline{\textbf{M}}_4(4)$ or $\overline{\textbf{M}}_4(5)$ alters the tape words of any admissible word with base $B'$, so that $|W_0|_a=\dots=|W_t|_a$.

\textbf{3.} Suppose $H$ contains a letter of the form $\sigma(34)^{\pm1}$ and $B$ contains a letter that is either of the form $(R_i')^{\pm1}$ or of the form $(P_i'')^{\pm1}$.

As in Case 1, we may assume without loss of generality that $H$ contains a subword of the form $H_3\sigma(34)H_4$ where $H_j$ is the history of a maximal subcomputation with step history $(j)$.  Let $\pazocal{C}_4:W_r\to\dots\to W_s$ be the subcomputation with history $H_4$.

Then, as $\sigma(34)$ and $\sigma(45)$ lock the same sectors, the identical arguments as those presented in Case 1 imply $s=t$ and $|W_r|_a\leq|W_t|_a$, again yielding a contradiction.

\textbf{4.} Suppose $H$ contains a letter of the form $\sigma(34)^{\pm1}$.

As in Case 2, it then suffices to assume that $B$ contains neither a letter of the form $(R_i')^{\pm1}$ nor of the form $(P_i'')^{\pm1}$, so that $B$ is again a cyclic permutation of the base $B'$.

Let $W_r\to\dots\to W_s$ be a maximal subcomputation with step history $(4)$.  Noting that no rule of $\overline{\textbf{M}}_4(4)$ alters the tape words of any admissible word with base $B'$, $|W_r|_a=\dots=|W_s|_a$.  Hence, we may assume that both $t>s$ and $r>0$.

Let $H_3$ be the history of the subcomputation $\pazocal{C}_3:W_s\to\dots\to W_t$.  Then the first letter of $H_3$ must be $\sigma(43)$, so that the subcomputation $W_{s+1}\to\dots\to W_t$ satisfies the hypotheses of \Cref{right historical end}.  Hence, the step history of $\pazocal{C}_3$ must be a prefix of $(43)(3)(2)(1)$.

Letting $\pazocal{C}_{3,3}:W_{s+1}\to\dots\to W_{x_3}$ be the maximal subcomputation with step history $(3)$, then \Cref{M_4 restriction} implies $|W_{s+1}|_a\leq(2\|B\|-3)|W_{x_3}|_a$.  As faulty bases must have length at most $2N+1$, the parameter choice $c_0>>N$ then implies $|W_{s}|_a\leq c_0|W_{x_3}|_a$.  Applying \Cref{barM_4(j) faulty} to $\pazocal{C}_{3,3}$ then implies $|W_i|_a\leq2c_0^2|W_{x_3}|_a$ for all $s\leq i\leq x_3$.

If $t>x_3$, then letting $\pazocal{C}_{3,2}:W_{x_3+1}\to\dots\to W_{x_2}$ be the maximal subcomputation with step history $(2)$, \Cref{M_4 restriction} implies $|W_{x_3+1}|_a\leq c_0|W_{x_2}|_a$, and so $|W_s|_a\leq c_0^2|W_{x_2}|_a$.  \Cref{barM_4(j) faulty} further implies $|W_i|_a\leq 2c_0^3|W_{x_3}|_a$ for all $s\leq i\leq x_2$.

Finally, if $t>x_2$, then the same argument implies $|W_s|_a\leq c_0^3|W_t|_a$ and $|W_i|_a\leq 2c_0^4|W_t|_a$ for all $s\leq i\leq t$.

Hence, taking $c_0\geq1$ in any case $|W_i|_a\leq 2c_0^4|W_t|_a$ for all $s\leq i\leq t$.

The same arguments may then be applied to the inverse subcomputation $W_r\to\dots\to W_0$, yielding $|W_i|_a\leq 2c_0^4|W_0|_a$ for all $0\leq i\leq r$, so that the statement follows from the parameter choice $c_1>>c_0$.

\textbf{5.} Suppose $H$ contains a letter of the form $\sigma(23)^{\pm1}$ and $B$ contains a letter either of the form $(P_i')^{\pm1}$ or of the form $(R_i'')^{\pm1}$.

Then $B$ must contain a subword of the form $(P_i'R_i')^{\pm1}$, $P_i'(P_i')^{-1}$, $(P_i''R_i'')^{-1}$, or $(R_i'')^{-1}R_i''$.

Let $\pazocal{C}_3:W_r\to\dots\to W_s$ be a maximal subcomputation with step history $(3)$.  Assuming without loss of generality that $t>s$, by Case 4 it suffices to assume that the history of the subcomputation $\pazocal{C}_2:W_s\to\dots\to W_t$ has prefix $\sigma(32)$.  As such, the subcomputation $W_{s+1}\to\dots\to W_t$ satisfies the hypotheses of \Cref{right historical end}, so that the step history of $\pazocal{C}_2$ is a prefix of $(32)(2)(1)$.

Repeating the argument presented in Case 4 then yields $|W_s|_a\leq c_0^2|W_t|_a$ and $|W_i|_a\leq2c_0^3|W_t|_a$ for all $s\leq i\leq t$.  Moreover, if $r>0$, then the analogous argument implies $|W_r|_a\leq c_0^2|W_0|_a$ and $|W_i|_a\leq2c_0^3|W_0|_a$ for all $0\leq i\leq r$.

Finally, applying \Cref{barM_4(j) faulty} to $\pazocal{C}_3$ yields $|W_i|_a\leq2c_0\max(|W_r|_a,|W_s|_a)\leq2c_0^3\max(|W_0|_a,|W_t|_a)$, so that the statement follows by the parameter choice $c_1>>c_0$.

\textbf{6.} Suppose $H$ contains a letter of the form $\sigma(23)^{\pm1}$.

By Case 5, it suffices to assume that $B$ contains neither a letter of the form $(P_i')^{\pm1}$ nor one of the form $(R_i'')^{\pm1}$.

Note that the only sectors of the standard base that $\sigma(23)$ does not lock are the $Q_{0,r}'Q_{1,\ell}'$-sector, the $P_i'R_i'$-sectors, and the $P_i''R_i''$-sectors.  So, $B$ must be a cyclic permutation of either:

\begin{enumerate}[label=(\alph*)]

\item $(R_0')^{-1}R_0'Q_{0,r}'(Q_{0,r}')^{-1}(R_0')^{-1}$

\item $(R_s')^{-1}R_s'Q_{s,r}'Q_{0,\ell}''P_0''(P_0'')^{-1}(Q_{0,\ell}'')^{-1}(Q_{s,r}')^{-1}(R_s')^{-1}$

\end{enumerate}

Define the subwords $U_1V_1$ and $U_2V_2$ of $B$ as follows:

\begin{itemize}

\item In case (a), let $U_1V_1=(R_0')^{-1}R_0'$ and $U_2V_2=Q_{0,r}'(Q_{0,r}')^{-1}$

\item In case (b), let $U_1V_1=(R_s')^{-1}R_s'$ and $U_2V_2=P_0''(P_0'')^{-1}$

\end{itemize}

Now, assume without loss of generality that $H$ contains a subword of the form $H_2\sigma(23)H_3$ where $H_3$ is the history of a maximal subcomputation $\pazocal{C}_3:W_r\to\dots\to W_s$ with step history $(3)$.  Then, let $\pazocal{C}_3':W_r'\to\dots\to W_s'$ and $\pazocal{C}_3'':W_r''\to\dots\to W_s''$ be the restrictions of $\pazocal{C}_3$ to the subwords $U_1V_1$ and $U_2V_2$, respectively.  

As $W_r$ is $\sigma(32)$-admissible, $\pazocal{C}_3'$ satisfies the hypotheses of \Cref{one alphabet historical words unreduced}.  As a result, it follows that $|W_r'|_a=|W_s'|_a-2(s-r)$ and the tape word of $W_s'$ contains letters which are copies of $\Phi_3$ from the right historical alphabet.  

But this implies $W_s$ cannot be $\sigma(32)$-admissible, and so by Case 4 we may assume that $s=t$, {\frenchspacing i.e. $|W_r'|_a=|W_t'|_a-2(t-r)$.}

Moreover, \Cref{simplify rules} implies $|W_r''|_a\leq|W_t''|_a+2(t-r)$.  (Note that this bound may be improved in case (b), as in that instance \Cref{one alphabet historical words unreduced} can again be applied to $\pazocal{C}_3''$).  But as every rule of $\overline{\textbf{M}}_4(3)$ locks the $Q_{s,r}'Q_{0,\ell}''$-sector, the $R_i'Q_{i,r}'$-sectors, and the $Q_{i,\ell}''P_i''$-sectors, it then follows that:
$$|W_r|_a=|W_r'|_a+|W_r''|_a\leq|W_t'|_a+|W_t''|_a=|W_t|_a$$

\textbf{7.} Suppose $B$ contains a letter either of the form $(P_i')^{\pm1}$ or of the form $(R_i'')^{\pm1}$.

Then as in Case 5, $B$ must contain a subword of the form $(P_i'R_i')^{\pm1}$, $P_i'(P_i')^{-1}$, $(P_i''R_i'')^{\pm1}$, or $(R_i'')^{-1}R_i''$.

Let $\pazocal{C}_2:W_r\to\dots\to W_s$ be a maximal subcomputation with step history $(2)$.  Assuming without loss of generality that $t>s$, by Case 6 the first letter of the history of the subcomputation $W_s\to\dots\to W_t$ must be $\sigma(21)$.

So, since $W_{s+1}$ is $\sigma(12)$-admissible, $W_{s+1}\to\dots\to W_t$ satisfies the hypotheses of \Cref{right historical end} and so has step history $(1)$.

\Cref{M_4 restriction} then implies $|W_s|_a\leq c_0|W_t|_a$ by the parameter choice $c_0>>N$, so that \Cref{barM_4(j) faulty} implies $|W_i|_a\leq 2c_0^2|W_t|_a$ for all $s\leq i\leq t$.

If $r>0$, then the same argument applies to the inverse computation $W_r\to\dots\to W_0$, so that $|W_r|_a\leq c_0|W_0|_a$ and $|W_i|_a\leq2c_0^2|W_t|_a$ for all $0\leq i\leq r$.

Finally, applying \Cref{barM_4(j) faulty} to $\pazocal{C}_2$ yields $|W_i|_a\leq2c_0\max(|W_r|_a,|W_s|_a)\leq2c_0^2\max(|W_0|_a,|W_t|_a)$ for all $r\leq i\leq s$, so that the statement follows by the parameter choice $c_1>>c_0$.

\textbf{8.} Thus, it suffices to assume every transition rule of $H$ is of the form $\sigma(12)^{\pm1}$ and $B$ contains neither a letter of the form $(P_i')^{\pm1}$ nor one of the form $(R_i'')^{\pm1}$.

As the only sectors of the standard base that $\sigma(12)$ does not lock are the $Q_{n-1}Q_{0,\ell}'$-sector, the $P_i'R_i'$-sectors, and the $P_i''R_i''$-sectors, it follows that $B$ must be a cyclic permutation of: $$(R_s')^{-1}R_s'Q_{s,r}'Q_{0,\ell}''P_0''(P_0'')^{-1}(Q_{0,\ell}'')^{-1}(Q_{s,r}')^{-1}(R_s')^{-1}$$

Assume without loss of generality that $H$ contains a subword of the form $H_1\sigma(12)H_2$ where $H_2$ is the history of a maximal subcomputation $\pazocal{C}_2:W_r\to\dots\to W_s$ with step history (2).  As in Case 6, let $\pazocal{C}_2':W_r'\to\dots\to W_s'$ and $\pazocal{C}_2'':W_r''\to\dots\to W_s''$ be the restrictions of $\pazocal{C}_2$ to the subwords $(R_s')^{-1}R_s'$ and $P_0''(P_0'')^{-1}$, respectively.

Since $W_r$ is $\sigma(21)$-admissible, \Cref{one alphabet historical words unreduced} applies to each of $\pazocal{C}_2'$ and $\pazocal{C}_2''$.  This implies $|W_r'|_a\leq|W_s'|_a$ and $|W_r''|_a\leq|W_s''|_a$.  Moreover, the tape words of each of $W_s'$ and $W_s''$ contain letters from the right historical alphabet corresponding to letters from $\Phi_2$, so that $W_s$ is not $\sigma(21)$-admissible.  Hence $t=s$.

But again, every rule of $\textbf{M}_5(2)$ locks the $R_i'Q_{i,r}'$-, $Q_{s,r}'Q_{0,\ell}''$-, and $Q_{i,\ell}''P_i''$-sectors, so that $$|W_r|_a=|W_r'|_a+|W_r''|_a\leq|W_t'|_a+|W_t''|_a=|W_t|_a$$

\end{proof}

\medskip


\subsection{The machine $\textbf{M}_5$} \

The cyclic machine $\textbf{M}_5$ is the composition of $\overline{\textbf{M}}_4$ with a primitive machine that operates much like the submachine $\overline{\textbf{M}}_4(4)$.  However, this composition is a `pre-composition', in that it is carried out so that the primitive machine is essentially the `first submachine' of $\textbf{M}_5$.

To be precise, $\textbf{M}_5$ has standard base of the same form as that of $\overline{\textbf{M}}_4$ and consists of the six submachines $\textbf{M}_5(0),\textbf{M}_5(1),\dots,\textbf{M}_5(5)$ concatenated by the transition rules $\sigma(01),\dots,\sigma(45)$.  Naturally, the submachines $\textbf{M}_5(1),\dots,\textbf{M}_5(5)$ along with the corresponding transition rules produce a copy of $\overline{\textbf{M}}_4$ in $\textbf{M}_5$.

The submachine $\textbf{M}_5(0)$ operates in parallel as $\textbf{RL}(\Phi^+)$ on the subwords $Q_{i,\ell}'P_i'R_i'$ of the standard base and as $\textbf{LR}(\Phi^+)$ on the subwords $P_i''R_i''Q_{i,r}''$.  This is realized in much the same way as the definition of $\textbf{M}_3(4)$, except that the $Q_0Q_1$-sector is left unlocked by all rules (with the domain of each rule in this sector being the entire tape alphabet).

The transition rule $\sigma(01)$ then locks all sectors other than the $Q_0Q_1$-, $P_i'R_i'$-, and $P_i''R_i''$-sectors.

Naturally, the start letters of $\textbf{M}_5(0)$ are the start letters of the machine, while the end letters of $\textbf{M}_5(5)$ are the end letters of the machine.  The input sectors are assigned in the same way as for $\overline{\textbf{M}}_4$, while the part $\{t\}$ again consists of a single state letter.

Many definitions are extended or carried over from previous machines.  As an example, given $w\in F(X)$ and $H\in F(\Phi^+)$, define $I_5(w,H)$ to be the input configuration with $w$ written in its $Q_0Q_1$-sector, the copy of $H$ over the corresponding left historical alphabet written in each $P_i'R_i'$-sector, and the copy of $H^{-1}$ over the corresponding left historical alphabet written in each $P_i''R_i''$-sector.  What's more, as the tape words in the historical alphabets forming this word are formed over the corresponding left tape alphabets, $I_5(w,H)$ is a \textit{tame input configuration} of $\textbf{M}_5$.

\medskip


\subsection{Standard computations of $\textbf{M}_5$} \label{sec M_5 standard} \

In this section, we study the reduced computations of $\textbf{M}_5$ in the standard base.

First, we adopt the definition of the step history of a computation developed in the previous sections to the setting $\textbf{M}_5$.  As a copy of $\overline{\textbf{M}}_4$ exists in $\textbf{M}_5$, there are natural analogues of Lemmas \ref{M_2 step history}, \ref{M_3 first step history}, \ref{M_3 second step history}, and \ref{M_4 step history} in this machine.  The next statement is in the same spirit:

\begin{lemma} \label{M_5 step history 1}

Let $\pazocal{C}$ be a reduced computation of $\textbf{M}_5$ with base $B$.

\begin{enumerate}[label=(\alph*)]

\item If $B$ contains a subword of the form $(P_i'R_i')^{\pm1}$, $(R_i')^{-1}R_i'$, $(P_i''R_i'')^{\pm1}$, or $P_i''(P_i'')^{-1}$, then the step history of $\pazocal{C}$ is not $(01)(1)(10)$.

\item If $B$ contains a subword of the form $(Q_{i,\ell}'P_i'R_i')^{\pm1}$, $Q_{i,\ell}'P_i'(P_i')^{-1}(Q_{i,\ell}')^{-1}$, $(P_i''R_i''Q_{i,r}'')^{\pm1}$, or $(Q_{i,r}'')^{-1}(R_i'')^{-1}R_i''Q_{i,r}''$, then the step history of $\pazocal{C}$ is not $(10)(0)(01)$.

\end{enumerate}

\end{lemma}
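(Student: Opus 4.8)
The plan is to reproduce, with the submachine $\textbf{M}_5(0)$ and the transition rule $\sigma(01)$ in place of $\textbf{M}_3(4)$ and $\sigma(34)$, the arguments of \Cref{M_2 step history}, \Cref{M_3 first step history}, and \Cref{M_3 second step history}. Recall that $\textbf{M}_5(0)$ is a parallel composition of $\textbf{RL}(\Phi^+)$ acting on the subwords $Q_{i,\ell}'P_i'R_i'$ and $\textbf{LR}(\Phi^+)$ acting on the subwords $P_i''R_i''Q_{i,r}''$, so \Cref{primitive computations} and \Cref{primitive unreduced}, together with their obvious $\textbf{RL}(\Phi^+)$-analogues, are all available.

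For part (a): assume toward a contradiction that $\pazocal{C}:W_0\to\dots\to W_t$ has step history $(01)(1)(10)$, so that its history factors as $\sigma(01)\,H'\,\sigma(01)^{-1}$ where $H'$ is the history of a subcomputation of $\textbf{M}_5(1)\cong\overline{\textbf{M}}_4(1)$; since $\pazocal{C}$ is reduced, $H'$ is nonempty. The flanking configurations $W_1=W_0\cdot\sigma(01)$ and $W_{t-1}$ are both $\sigma(01)^{-1}$-admissible, so their tape words in the $P_i'R_i'$- and $P_i''R_i''$-sectors — and hence in any of the sub-bases listed in (a), each of which is spanned by one of these sector alphabets — lie in whichever copy of $\Phi^+$ is the domain of $\sigma(01)$ there. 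On the other hand, every rule of $\textbf{M}_5(1)$ is a copy of a rule $\theta\in\Phi_1^+$, so it multiplies the $P_i'R_i'$-sector on the left by the inverse of the copy of $\theta$ in the left historical alphabet and on the right by the copy of $\theta$ in the right historical alphabet, and multiplies the $P_i''R_i''$-sector by the analogous pair of words with the two historical alphabets interchanged. Exactly as in the proof of \Cref{M_2 step history}(a), a projection/cancellation argument then shows that a nonempty $H'$ forces the tape word of $W_{t-1}$ in one of the four sub-bases of (a) to acquire a letter from the historical alphabet complementary to the domain of $\sigma(01)$, contradicting its $\sigma(01)^{-1}$-admissibility. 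For the unreduced sub-bases $(R_i')^{-1}R_i'$ and $P_i''(P_i'')^{-1}$ one uses, as usual, that a reduced admissible word has nonempty tape word in such a sub-base, so that the one-sided multiplication becomes a conjugation and the argument is unchanged.

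For part (b): assume $\pazocal{C}$ has step history $(10)(0)(01)$, so that its history factors as $\sigma(01)^{-1}\,H'\,\sigma(01)$ with $H'$ the (nonempty, by reducedness of $\pazocal{C}$) history of a subcomputation of $\textbf{M}_5(0)$. Now $W_1$ and $W_{t-1}$ are $\sigma(01)$-admissible; since $\sigma(01)$ locks every sector other than $Q_0Q_1$, $P_i'R_i'$, and $P_i''R_i''$, the restriction of each of them to any of the sub-bases in (b) has all state letters equal to the end letters of $\textbf{M}_5(0)$ and is of the form required in \Cref{primitive computations} (the empty sector being the one locked by $\sigma(01)$). Following the proof of \Cref{M_3 second step history}, one restricts $H'$ to the relevant primitive subword and invokes the $\textbf{RL}(\Phi^+)$- or $\textbf{LR}(\Phi^+)$-analogue of \Cref{primitive computations}(4) when the subword is reduced — this forces the restricted computation, and hence $H'$, to be empty — and \Cref{primitive unreduced} when it is unreduced, where monotonicity of the $a$-length applied to the subcomputation and to its inverse (both endpoints being $\sigma(01)$-admissible) once more forces emptiness. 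In either case this contradicts the nonemptiness of $H'$.

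The main obstacle is the bookkeeping in part (a): keeping straight which of the two historical alphabets is the domain of $\sigma(01)$ in $P_i'R_i'$ versus in $P_i''R_i''$ (these are governed differently, since $\textbf{M}_5(0)$ runs $\textbf{RL}(\Phi^+)$ on the prime side and $\textbf{LR}(\Phi^+)$ on the double-prime side), confirming that the multiplication performed by $\textbf{M}_5(1)$ on the opposite side genuinely introduces a letter of the complementary alphabet, and checking for each of the four sub-base shapes that this letter cannot be cancelled — which is precisely where the nonemptiness of the tape words in the unreduced sub-bases is used. In part (b) the delicate point is again the unreduced sub-bases: \Cref{primitive unreduced} on its own yields only monotonicity of the $a$-length and preservation of the state-letter index, so one must additionally exploit the reducedness of $\pazocal{C}$ and the admissibility of both endpoints to rule out a nonempty middle subcomputation.
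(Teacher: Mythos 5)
Your proof is correct and follows the same route as the paper, which simply cites \Cref{M_2 step history} for part (a) and the \Cref{M_3 second step history}/\Cref{M_4 step history} primitive-machine argument for part (b); you have merely spelled out the details. (One cosmetic point: the pattern $(01)(1)(10)$ with both flanking words $\sigma(01)^{-1}$-admissible is the analogue of \Cref{M_2 step history}(b) rather than (a), but since you describe the cancellation mechanism correctly this does not affect the argument.)
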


\begin{proof}

Statement (a) follows in the same way as \Cref{M_2 step history}(b), statement (b) as \Cref{M_4 step history}.

\end{proof}

We now collect these statements pertaining to step history into a single statement about the reduced computations of the machine in the standard base:

\begin{lemma} \label{M_5 step history}

The step history of a reduced computation of $\textbf{M}_5$ is a subword of
$$(0)(01)(1)(12)(2)(23)(3)(34)(4)(45)(5)(54)(4)(43)(3)(32)(2)(21)(1)(10)(0)$$

\end{lemma}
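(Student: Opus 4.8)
\textbf{Proof proposal for \Cref{M_5 step history}.}

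The plan is to assemble the claimed bound on the step history from the ``banned subword'' lemmas already established for the submachines, together with the observation that the step history, viewed as a word in the alphabet $\{(0),(1),\dots,(5)\}$ with the transition letters interspersed, is constrained by the linear ordering of the submachines in the composition. First I would record the ``trivial'' constraints: since $\textbf{M}_5$ is the composition $\textbf{M}_5(0),\dots,\textbf{M}_5(5)$ concatenated by the transition rules $\sigma(01),\dots,\sigma(45)$, the only transition letters that can occur in a step history are $(i,i{+}1)$ and $(i{+}1,i)$ for $0\leq i\leq 4$, and each such letter must be flanked by the submachine letters it connects. Hence a priori the step history is a walk on the path graph with vertices $(0),(1),\dots,(5)$; the content of the lemma is that this walk can change direction at most once (it goes monotonically up to some peak and then monotonically down). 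Equivalently, I must rule out any step history containing a subword of the form $(i{+}1,i)(i)(i,i{+}1)$ or $(i,i{+}1)(i)(i,i{-}1)$-type reversals at an ``already descended'' level — more precisely, I need to show that once the walk has descended past level $i$ it can never return to level $i$ or above.

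The key step is to invoke, for each $i$, the relevant banned-subword lemma to exclude the pattern ``$(i{+}1,i)(i)(i,i{+}1)$'' (descend from $i{+}1$ to $i$, run in $\textbf{M}_5(i)$, ascend back from $i$ to $i{+}1$). These are exactly: \Cref{M_5 step history 1}(a) for $i=0$ (pattern $(01)(1)(10)$ reversed is $(10)(0)(01)$; the (a) case handles the $(01)(1)(10)$ ban and (b) the $(10)(0)(01)$ ban — I'd use whichever orientation is needed), \Cref{M_2 step history} (via its analogue in $\textbf{M}_5$) for $i=1,2$, \Cref{M_3 first step history} for the analogous ban at level $i=2$ versus $3$, \Cref{M_3 second step history} and \Cref{M_4 step history} for the ban at level $i=3$ versus $4$. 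There is one subtlety: those lemmas each require that the base $B$ contain an appropriate historical subword (e.g. a subword of the form $(P_i'R_i')^{\pm1}$, or of the form $(P_i'R_i'Q_{i,r}')^{\pm1}$, etc.). So the argument must be split: if the base $B$ does contain such a subword, the bans apply directly; if it does not, then $B$ is so short/restricted that the submachines in question cannot actually ``do anything'' in the relevant sectors, and the possible step histories collapse further. I would handle the degenerate bases by noting that if $B$ contains no historical sector of the relevant type then all rules of the pertinent $\textbf{M}_5(j)$ lock the sectors appearing in $B$, forcing the corresponding maximal subcomputation to be empty — and an empty $\textbf{M}_5(j)$-block cannot sit between two transition rules $\sigma(j{+}1,j)$ and $\sigma(j,j{+}1)$ in a reduced computation (the two transitions would cancel), which re-derives the ban.

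With all the ``no return below a level already passed'' bans in hand, the conclusion is combinatorial: a walk on the path $(0)\!-\!(1)\!-\!(2)\!-\!(3)\!-\!(4)\!-\!(5)$ that, after first leaving level $i$ downward, never comes back to level $i$, must have the shape ``increase to a maximum, then decrease'' — i.e. its vertex sequence is $(0),(1),\dots,(\mathrm{peak}),\dots,(1),(0)$ with the peak at most $(5)$. Reinserting the forced transition letters between consecutive submachine letters yields precisely the word
$$(0)(01)(1)(12)(2)(23)(3)(34)(4)(45)(5)(54)(4)(43)(3)(32)(2)(21)(1)(10)(0),$$
and any actual step history is a subword of this. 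The main obstacle I anticipate is bookkeeping the case analysis on the base $B$: for each of the five ``levels'' one must match the exact hypothesis of the corresponding step-history lemma (the precise list of admissible historical subwords differs slightly between $\textbf{M}_2$, $\textbf{M}_3$, $\textbf{M}_4$, and the $j=0$ primitive block), and one must verify that the complementary ``short base'' case genuinely forces the relevant block to be empty — this requires checking against \Cref{locked sectors} that the rules of each submachine lock every sector that can appear in such a reduced base. None of this is deep, but it is where the care lies; the combinatorial extraction of the master word at the end is immediate once the bans are in place.
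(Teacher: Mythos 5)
Your overall strategy is exactly what the paper intends: Lemma \ref{M_5 step history} is stated with no proof beyond the phrase ``we now collect these statements,'' and the collection in question is precisely the list of banned-subword lemmas you cite (the $\textbf{M}_5$-analogues of Lemmas \ref{M_2 step history}, \ref{M_3 first step history}, \ref{M_3 second step history}, \ref{M_4 step history}, together with \Cref{M_5 step history 1}). Two points in your write-up need repair, though. First, the case split on the base $B$ is resolved the wrong way. The sentence introducing the lemma makes clear it is a statement about computations \emph{in the standard base}, and in the standard base every one of the subwords required by the hypotheses of the banned-subword lemmas (all the $(P_i'R_i')$, $(P_i'R_i'Q_{i,r}')$, $(Q_{i,\ell}''P_i''R_i'')$, \dots) is present, so each ban applies directly and no degenerate case arises. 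Your proposed patch for short bases — that if the relevant sectors are absent then the $\textbf{M}_5(j)$-block is forced to be empty and the flanking transitions would cancel — does not work: a rule can be applicable to, and act as the identity on, an admissible word with a short base, so a \emph{nonempty reduced} history in $\textbf{M}_5(j)$ can sit between $\sigma(j{+}1,j)$ and $\sigma(j,j{+}1)$ without contradicting reducedness. Indeed for the base $Q_0Q_1$ the history $\sigma(10)\,\tau_f(y)\,\tau_f(y')\,\sigma(01)$ with $y\neq y'$ is reduced and realizes the ``valley'' $(10)(0)(01)$, so the lemma is genuinely false for arbitrary bases; the restriction to the standard base is doing real work and should be stated, not patched around.

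Second, your closing combinatorial step is incomplete as written. You conclude from the valley bans alone that the walk on $(0)\!-\!(1)\!-\!\cdots\!-\!(5)$ is ``increase to a maximum, then decrease, with the peak at most $(5)$.'' But a unimodal walk with an interior turning point at, say, level $3$ contains $(23)(3)(32)$ and is \emph{not} a subword of the displayed master word (in that word the two occurrences of $(3)$ are separated by the full excursion up to $(5)$ and back). To force any turning point to occur at level $5$ you must also invoke the interior-peak bans — $(01)(1)(10)$ from \Cref{M_5 step history 1}(a), $(12)(2)(21)$ and $(23)(3)(32)$ from the analogue of \Cref{M_2 step history}(b), and $(34)(4)(43)$ from the analogue of \Cref{M_3 second step history}. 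You list these lemmas among your ingredients, so nothing is missing from your toolkit; but the final deduction should read: no interior valleys and no interior peaks below level $5$ forces the level sequence to be monotone, or monotone up to $(5)$ and then monotone down, which is exactly the set of subwords of the master word.
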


This implies the following analogue of \Cref{M_4 language}:

\begin{lemma} \label{M_5 language} \

\begin{enumerate}

\item For any $w\in\pazocal{R}_1$, there exists $H_w$ with $\|H_w\|\leq c_1f_1(c_1\|w\|)+c_1$ such that $I_5(w,H_w)$ is accepted by $\textbf{M}_5$.

\item The tame input configuration $W$ of $\textbf{M}_5$ is accepted if and only if there exist $w\in\pazocal{R}_1$ and $H\in F(\Phi^+)$ such that $W\equiv I_5(w,H)$.  Moreover, in this case $\|H\|\geq k$, $w$ is uniquely determined by $H$, and there exists a unique accepting computation of $W$, which has length $(2k+4)\|H\|+2k+5$.

\end{enumerate}

\end{lemma}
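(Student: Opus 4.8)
The plan is to derive both parts from \Cref{M_4 language} by peeling off the action of the new first submachine $\textbf{M}_5(0)$, exactly as \Cref{M_4 language} was derived from \Cref{M_3 language} by analyzing the action of the last submachine $\textbf{M}_4(5)$. First I would establish the step-history skeleton: by \Cref{M_5 step history} together with the fact that $W$ is a tame \emph{input} configuration (so the computation begins in the start letters of $\textbf{M}_5(0)$ and ends in the accept configuration, which lies in $\textbf{M}_5(5)$), the step history of any accepting computation $\pazocal{C}:W\equiv W_0\to\dots\to W_t$ must be exactly $(0)(01)(1)(12)(2)(23)(3)(34)(4)(45)(5)$. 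Indeed no letter of the step history past the first ascending run can recur once the computation is forced forward through each transition rule, and the accept configuration is reached only at the end of the $(5)$-phase.

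Next I would factor $\pazocal{C}$ at $\sigma(01)$: write $\pazocal{C}=\pazocal{C}_0\cdot\sigma(01)\cdot\pazocal{C}'$, where $\pazocal{C}_0$ has step history $(0)$ and $\pazocal{C}'$ has step history $(1)(2)(3)(4)(5)$. The configuration $W_r\defeq W_s\cdot\sigma(01)$ at the end of this phase is a tame input configuration of the copy of $\overline{\textbf{M}}_4$ sitting inside $\textbf{M}_5$ (tameness is preserved because $\textbf{M}_5(0)$ runs the primitive machines $\textbf{RL}(\Phi^+)$/$\textbf{LR}(\Phi^+)$ on the $Q_{i,\ell}'P_i'R_i'$- and $P_i''R_i''Q_{i,r}''$-sectors, and by the analogue of \Cref{primitive computations}(3) such a computation that starts and ends in start/end letters of the primitive machine returns the historical words unchanged up to the projection). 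Hence $\pazocal{C}'$ is an accepting computation of a tame input configuration of $\textbf{M}_4$ (the copy inside $\textbf{M}_5$), so \Cref{M_4 language}(2) applies: $W_r\equiv I_4(w,H)$ for some $w\in\pazocal{R}_1$, $H\in F(\Phi^+)$ with $\|H\|\geq k$, $w$ is uniquely determined by $H$, and $\pazocal{C}'$ is the unique accepting computation of $W_r$, of length $(2k+2)\|H\|+2k+3$. A projection argument as in \Cref{one-letter projection}/\Cref{primitive computations} then identifies $W_0$: since $\pazocal{C}_0$ only conjugates the historical sectors by a primitive-machine history and leaves the $Q_0Q_1$-sector alone (the $Q_0Q_1$-sector is unlocked by all rules of $\textbf{M}_5(0)$ but $w$ is recovered on projection), and since $W_0$ is assumed tame, the analogue of \Cref{LR_k analogue} forces $\pazocal{C}_0$ to be the unique computation of $\textbf{M}_5(0)$ that runs the primitive machines once, so $W_0\equiv I_5(w,H)$ and $\|\pazocal{C}_0\|=2\|H\|+1$ (one full left-to-right pass of $\textbf{RL}(\Phi^+)$ on a word of length $\|H\|$, together with its parallel $\textbf{LR}(\Phi^+)$ pass). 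Adding the two phases plus the transition rule gives total length $(2\|H\|+1)+1+\bigl((2k+2)\|H\|+2k+3\bigr)=(2k+4)\|H\|+2k+5$. For part (1), given $w\in\pazocal{R}_1$ take the $H_w$ supplied by \Cref{M_4 language}(1) (which has $\|H_w\|\leq c_1f_1(c_1\|w\|)+c_1$); run $\textbf{M}_5(0)$ forward on $I_5(w,H_w)$ via the primitive-machine computation of the analogue of \Cref{LR_k analogue}, apply $\sigma(01)$ to reach $I_4(w,H_w)$, and concatenate the accepting computation of $\textbf{M}_4$; this yields an accepting computation of $I_5(w,H_w)$.

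The main obstacle I anticipate is not the step-history bookkeeping (which is routine given \Cref{M_5 step history}) but verifying the \emph{uniqueness} clause cleanly: one must rule out, for a tame input configuration $W$, any accepting computation whose step history is a proper prefix ending earlier, or that re-enters $\textbf{M}_5(0)$ after leaving it. The re-entry is exactly what is excluded by \Cref{M_5 step history 1}(b) once one checks that an accepting computation's base is the standard base, which has the requisite subwords $(Q_{i,\ell}'P_i'R_i')$ and $(P_i''R_i''Q_{i,r}'')$; and the tameness hypothesis on $W_0$ is what pins down $\pazocal{C}_0$ uniquely (without it, $\textbf{M}_5(0)$ could run partially, as in \Cref{primitive computations}(5)). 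Once those two points are dispatched, the uniqueness in \Cref{M_4 language}(2) propagates through the factorization and the lemma follows.
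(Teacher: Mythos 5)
Your proposal is correct and follows essentially the same route as the paper: use \Cref{M_5 step history} to pin the step history to $(0)(1)(2)(3)(4)(5)$, factor at $\sigma(01)$, identify the tail with an accepting computation of a tame input configuration of $\textbf{M}_4$ so that \Cref{M_4 language} applies, and then use \Cref{primitive computations}(3) to recover $W_0\equiv I_5(w,H)$ and the length $r=2\|H\|+1$ of the $(0)$-phase, giving the total $(2k+4)\|H\|+2k+5$. Part (1) is likewise handled exactly as in the paper, by running the parallel primitive computations on $I_5(w,H_w)$ and concatenating with the accepting computation supplied by \Cref{M_4 language}(1).
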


\begin{proof}

(1) Let $H_w$ be the word given in \Cref{M_4 language}(1).  As in the proof of \Cref{M_3 language}(1), the reduced computation of $\textbf{RL}$ corresponding to $H_w$ given in \Cref{primitive computations}(3) and that of $\textbf{LR}$ corresponding to $H_w^{-1}$ can be run in parallel in the rules of $\textbf{M}_5(0)$, producing a reduced computation $I_5(w,H_w)\to\dots\to W_1$ such that $W_1$ is $\sigma(01)$-admissible.  $W_1\cdot\sigma(01)$ is then the concatenation of $t$ with the natural copy of $I_4(w,H_w)$ in the hardware of $\textbf{M}_5$.  Thus, the statement follows by \Cref{M_4 language}(1).

(2) Let $\pazocal{C}:W\equiv W_0\to\dots\to W_t$ be a reduced computation accepting the tame input configuration $W$.  \Cref{M_5 step history} then implies the step history of $\pazocal{C}$ is $(0)(1)(2)(3)(4)(5)$.

Let $W_0\to\dots\to W_r$ be the maximal subcomputation of $\pazocal{C}$ with step history $(0)$.  Then the subcomputation $W_{r+1}\to\dots\to W_t$ with step history $(1)(2)(3)(4)(5)$ can be identified with a reduced computation of $\textbf{M}_4$ accepting a tame input configuration.  Hence, \Cref{M_4 language}(2) implies there exists $w\in\pazocal{R}_1$ and $H\in F(\Phi^+)$ such that:

\begin{itemize}

\item $\|H\|\geq k$

\item $w$ is uniquely determined by $H$

\item $t-r-1=(2k+2)\|H\|+2k+3$

\item $W_{r+1}$ is the concatenation of $t$ with the natural copy of $I_4(w,H_w)$ in the hardware of $\textbf{M}_5$.

\end{itemize}

As $W_r\equiv W_{r+1}\cdot\sigma(10)$, it then follows from \Cref{primitive computations}(3) that $W\equiv I(w,H)$ and $r=2\|H\|+1$.

\end{proof}

Our next goal is to bound the time and space of a reduced computation in the standard base.  We first achieve this for one-step computations:

\begin{lemma} \label{M_5(j) standard time}

Let $\pazocal{C}:W_0\to\dots\to W_t$ be a reduced computation of $\textbf{M}_5(j)$ in the standard base.

\begin{enumerate}[label=(\alph*)]

\item $t\leq\max(|W_0|_a,|W_t|_a)+2k$.  

\item If $j\neq5$ and either:

\begin{itemize}

\item $W_0$ is $\sigma(j,j\pm1)$-admissible, or

\item $W_0$ is a tame input configuration,

\end{itemize} 
then $|W_0|_a\leq2|W_t|_a$

\end{enumerate}

\end{lemma}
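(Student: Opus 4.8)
The plan is to analyze separately the six submachines $\textbf{M}_5(j)$ for $j\in\{0,1,\dots,5\}$, exploiting the fact that each one is (a copy of) a machine whose computations in the standard base were already understood in the preceding sections. For part (a), I would first dispose of $j=0$ and $j=4$: these submachines are parallel compositions of primitive machines $\textbf{RL}(\Phi^+)$ and $\textbf{LR}(\Phi^+)$ (respectively $\textbf{LR}_k$ and $\textbf{RL}_k$), so a reduced computation in the standard base restricts, on each of the relevant three-letter subwords $Q_{i,\ell}'P_i'R_i'$, $P_i''R_i''Q_{i,r}''$ (resp.\ $P_i'R_i'Q_{i,r}'$, $Q_{i,\ell}''P_i''R_i''$), to a reduced computation of a primitive machine. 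For a primitive machine, Lemma \ref{primitive computations}(3) (or Lemma \ref{LR_k analogue}) gives $t = 2\ell+1$ (resp.\ $t = 2\ell k + 2k-1$) where $\ell$ is the common $a$-length of each admissible word along that restricted computation; but the total $a$-length of each $W_i$ is at least $\ell$ (it contains that sector), so $t\le 2|W_0|_a + 2k-1 \le |W_0|_a + |W_t|_a + 2k$ once one also observes $|W_0|_a=|W_t|_a$ in the non-connecting case or bounds via the connecting-rule factorization. For $j\in\{1,2,3\}$, the submachine $\textbf{M}_5(j)$ is a copy of $\overline{\textbf{M}}_4(j)$, which in turn operates as $\textbf{M}_2(j)$ (built from $\textbf{Move}$, $\textbf{Clean}$, or $\textbf{S}$) enriched with historical sectors; here the linear time bound in terms of $a$-length is exactly the kind of estimate provided by the historical-sector lemmas (Lemmas \ref{one alphabet historical words} and \ref{M_2 bound}, cf.\ Lemma 3.9 of \cite{O18}), and the additive $+2k$ absorbs the initial/terminal switching rules introduced in the construction of $\textbf{Acc}$. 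For $j=5$, $\textbf{M}_5(5)$ is a copy of $\textbf{M}_4(5)$, whose rules multiply each $P_i'R_i'$- and $P_i''R_i''$-sector by one letter, so each restriction satisfies the hypotheses of Lemma \ref{multiply one letter}, giving $\|H\|\le \|u_0\|+\|u_t\|\le |W_0|_a+|W_t|_a$, hence $t\le \max(|W_0|_a,|W_t|_a)+2k$ trivially (indeed with room to spare).

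For part (b), I would fix $j\in\{1,2,3\}$ and a computation $\pazocal{C}:W_0\to\dots\to W_t$ of $\textbf{M}_5(j)$ in the standard base with $W_0$ either $\sigma(j,j\pm1)$-admissible or a tame input configuration. In either case, in every historical sector the tape word of $W_0$ is written entirely in one historical alphabet — the \emph{left} historical alphabet for a tame input configuration or a $\sigma(j,j+1)$-admissible word in the ``forward'' direction, the \emph{right} one for $\sigma(j,j-1)$-admissible words — so the restriction of $\pazocal{C}$ to each historical sector $P_i'R_i'$ and $P_i''R_i''$ satisfies the hypotheses of Lemma \ref{one alphabet historical words}, giving $|W_{0}|$ restricted to that sector $\le$ the corresponding restriction of $|W_t|$. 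For the working sectors, since each is affected by at most one rule-multiplication per step (Lemma \ref{simplify rules}), and the number of steps $t$ is itself controlled by a historical sector via Lemma \ref{one alphabet historical words} ($t\le |W_{t}|_a$ restricted to that sector), I would bound the $a$-length of $W_0$ in each working sector by its $a$-length in $W_t$ plus $2t\le 2|W_t|_a$; summing over the $N-1$ sectors is exactly the computation carried out in the proof of Lemma \ref{M_4 restriction}, and here it yields the cleaner bound $|W_0|_a\le 2|W_t|_a$ because the ``bad'' sector of Lemma \ref{M_4 restriction} does not arise (every historical sector is now single-alphabet). The case analysis on which historical alphabet $W_0$'s tape words live in is driven by the step-history restrictions of Lemma \ref{M_5 step history 1} and the analogues of Lemmas \ref{M_2 step history}–\ref{M_4 step history}.

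The main obstacle I anticipate is bookkeeping the additive constant $2k$ cleanly in part (a) across all six cases simultaneously — in particular making sure the switching rules added to $\textbf{S}$ (the $k$ new rules guaranteeing accepting computations have length $\ge k$) and the connecting rules of the primitive submachines $\textbf{M}_5(0)$ and $\textbf{M}_5(4)$ contribute only a bounded additive term and never a multiplicative one. A careful statement would factor an arbitrary history $H$ of $\textbf{M}_5(j)$ as $H_1 X_1 H_2 X_2 \cdots$ where the $X_i$ are connecting/switching rules and the $H_i$ are histories of ``pure'' subcomputations to which the relevant length estimate (Lemma \ref{primitive computations}(3), \ref{LR_k analogue}, \ref{multiply one letter}, or \ref{one alphabet historical words}) applies directly; since $\textbf{M}_5(j)$ has only boundedly many connecting/switching rules in a reduced history (at most $2k-1$ from a primitive $\textbf{LR}_k$/$\textbf{RL}_k$ and a bounded number from the other submachines), and since the ``pure'' pieces each satisfy the sharp linear-in-$a$-length bound, the constants add up to the claimed $+2k$. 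Everything else is a routine assembly of already-proven lemmas.
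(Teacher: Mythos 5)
Your plan has the right general shape (restrict to sectors, invoke the primitive-machine and historical-sector lemmas, factor out transition/connecting rules), but it misses the two quantitative mechanisms that actually produce the stated bounds, so as written it does not prove the lemma. For part (a) with $j=4$: from \Cref{LR_k analogue} a single restriction to $P_x'R_x'Q_{x,r}'$ gives $t=2\ell k+2k-1$ where $\ell$ is the $a$-length of that one sector, and your justification ``the total $a$-length of each $W_i$ is at least $\ell$'' only yields $t\leq 2k|W_0|_a+2k-1$ --- the multiplicative factor $2k$ does not go away. The paper removes it by exploiting that the rules act in parallel on all $2(s+1)$ copies of these sectors, so the bound can be summed over every restriction and divided by $2(s+1)$; since $N\leq 9(s+1)$ and $N>>k$, the coefficient $\frac{2k+1}{s+1}$ drops below $1$, giving $t\leq\max(|W_0|_a,|W_t|_a)+2k$. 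Without this averaging over the many parallel sectors you cannot reach the claimed bound (even your $j=0$ estimate $t\leq 2|W_0|_a+2k-1$ is weaker than $\max(|W_0|_a,|W_t|_a)+2k$). Note also that in part (a) there is no start/end or admissibility hypothesis, so \Cref{primitive computations}(3) and \Cref{LR_k analogue} do not apply directly; one needs the phase analysis of \Cref{primitive computations}(1), and for $j\in\{1,2,3\}$ the applicable tool is \Cref{multiply two letters} (which needs no one-alphabet hypothesis), not \Cref{one alphabet historical words}, which you invoke without its hypothesis being available.

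For part (b) the same kind of counting is the crux and is missing. Bounding each unlocked working sector by ``its $a$-length in $W_t$ plus $2t$'' and summing, as you propose, reproduces the computation of \Cref{M_4 restriction} and yields a factor on the order of $2\|B\|-3$, not $2$. The paper gets $2$ because, for each $j\in\{1,2,3\}$, the number of working sectors not locked by $\textbf{M}_5(j)$ is at most $n$, $2$, or $m$ respectively, hence at most $s=n+m$, while there are $s+1$ sectors of the form $P_x'R_x'$ and $s+1$ of the form $P_x''R_x''$, each of which (by \Cref{one alphabet historical words}, now legitimately applicable) satisfies $t\leq|W_{t,x}'|_a$; so every working sector's excess $2t$ can be charged to a distinct historical sector of $W_t$. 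Finally, part (b) covers $j\in\{0,4\}$ as well (via \Cref{primitive computations}(5) in the paper), but your argument explicitly fixes $j\in\{1,2,3\}$ and never treats those cases.
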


\begin{proof}

We prove both statements simultaneously for each $j$:

\textbf{1.} Suppose $j\in\{1,2,3\}$.  

For $0\leq x\leq s$, let $\pazocal{C}_x':W_{0,x}'\to\dots\to W_{t,x}'$ be the restriction of $\pazocal{C}$ to the $P_x'R_x'$-sector.  Then $\pazocal{C}_x'$ satisfies the hypotheses of \Cref{multiply two letters}, so that $t\leq\frac{1}{2}(|W_{0,x}'|_a+|W_{t,x}'|_a)\leq\max(|W_0|_a,|W_t|_a)$.

Moreover, if $W_0$ is $\sigma(j,j\pm1)$-admissible, then $\pazocal{C}_x'$ satisfies the hypotheses of \Cref{one alphabet historical words}, so that $t\leq|W_{t,x}'|_a$ and $|W_{0,x}'|_a\leq|W_{t,x}'|_a$.

Similarly, letting $\pazocal{C}_x'':W_{0,x}''\to\dots\to W_{t,x}''$ be the restriction of $\pazocal{C}$ to the $P_x''R_x''$-sector, if $W_0$ is $\sigma(j,j\pm1)$-admissible then $t\leq|W_{t,x}''|_a$ and $|W_{0,x}''|_a\leq|W_{t,x}''|_a$.

Suppose $UV$ is another two-letter subword of the standard base such that not every rule of $\textbf{M}_5(j)$ locks the $UV$-sector.  Let $\pazocal{C}_{UV}:W_{0,UV}\to\dots\to W_{t,UV}$ be the restriction of $\pazocal{C}$ to $UV$.  By \Cref{simplify rules}, it follows that $|W_{i,UV}|_a\leq|W_{i+1,UV}|_a+2$, so that $|W_{0,UV}|_a\leq|W_{t,UV}|_a+2t$.

Note that the number of such two-letter subwords $UV$ is at most:

\begin{itemize}

\item $n$ if $j=1$

\item $2$ if $j=2$

\item $m$ if $j=3$

\end{itemize}

As $s=n+m$, though, it follows that there are at least as many sectors of the form $P_x'R_x'$ and at least as many of the form $P_x''R_x''$.  Hence,
\begin{align*}
|W_0|_a&=\sum_{x=0}^s(|W_{0,x}'|_a+|W_{0,x}''|_a)+\sum_{UV}|W_{0,UV}|_a\leq\sum_{x=0}^s(|W_{t,x}'|_a+|W_{t,x}''|_a)+\sum_{UV}(|W_{t,UV}|_a+2t) \\
&\leq2\sum_{x=0}^s(|W_{t,x}'|_a+|W_{t,x}''|_a)+\sum_{UV}|W_{t,UV}|_a\leq2|W_t|_a
\end{align*}

\textbf{2.} Suppose $j=4$.

For all $0\leq x\leq s$, let $\pazocal{C}_x':W_{0,x}'\to\dots\to W_{t,x}'$ be the restriction of $\pazocal{C}$ to the subword $P_x'R_x'Q_{x,r}'$ of the standard base. 

As $\pazocal{C}_x'$ can be identified with a reduced computation of a primitive machine in the standard base, \Cref{primitive computations}(1) implies there exists a maximal subcomputation $\pazocal{D}_x':W_{\ell,x}'\to\dots\to W_{r,x}'$ such that:

\begin{itemize}

\item $|W_{\ell,x}'|_a=|W_{i,x}'|_a=|W_{r,x}'|_a$ for all $\ell\leq i\leq r$

\item $|W_{i-1,x}'|_a=|W_{i,x}'|_a+2$ for all $0\leq i\leq \ell$

\item $|W_{i+1,x}'|_a=|W_{i,x}'|_a+2$ for all $r\leq i \leq t$

\end{itemize}

As such, $\ell\leq\frac{1}{2}|W_{0,x}'|_a$, $t-r\leq\frac{1}{2}|W_{t,x}'|_a$, and $|W_{\ell,x}'|_a=|W_{r,x}'|_a\leq\max(|W_{0,x}'|_a,|W_{t,x}'|_a)$.

Moreover, \Cref{primitive computations} implies $r-\ell\leq 2k|W_{r,x}'|_a+2k-1$, so that $$t\leq (2k+1)\max(|W_{0,x}'|_a,|W_{t,x}'|_a)+2k-1$$
Note that the same argument may be applied to the restriction $\pazocal{C}_x'':W_{0,x}''\to\dots\to W_{t,x}''$ of $\pazocal{C}$ to the subword $Q_{x,\ell}''P_x''R_x''$ of the standard base.  Thus:
\begin{align*}
2(s+1)t&\leq\sum_{x=0}^s(2k+1)\left(\max(|W_{0,x}'|_a,|W_{t,x}'|_a)+\max(|W_{0,x}''|_a,|W_{t,x}''|_a\right)+2(2k-1) \\
&\leq4k(s+1)+\sum_{x=0}^s(2k+1)(|W_{0,x}'|_a+|W_{t,x}'|_a+|W_{0,x}''|_a+|W_{t,x}''|_a) \\
&\leq4k(s+1)+(2k+1)(|W_0|_a+|W_t|_a) \\
&\leq(4k+2)\max(|W_0|_a,|W_t|_a)+4k(s+1)
\end{align*}
Hence, $t\leq\frac{2k+1}{s+1}\max(|W_0|_a,|W_t|_a)+2k$.

Noting that $N=8s+n+8\leq 9(s+1)$, it then follows that $t\leq\frac{18k+9}{N}\max(|W_0|_a,|W_t|_a)+2k$.  But then (a) follows from the parameter choice $N>>k$.

Moreover, if $W_0$ is $\sigma(43)$- or $\sigma(45)$-admissible, then each of the computations $\pazocal{C}_x'$ and $\pazocal{C}_x''$ satisfies the hypotheses of \Cref{primitive computations}(5), implying $|W_0|_a\leq|W_t|_a$.

\textbf{3.} Suppose $j=0$.

The same arguments as Case 2 apply with $k$ replaced by $1$, yielding $t\leq\frac{3}{s+1}\max(|W_0|_a,|W_t|_a)+2$.  The statement thus follows by parameter choices for $N$ and $k$.

Moreover, if $W_0$ is $\sigma(01)$-admissible or a tame input configuration, then \Cref{primitive computations}(5) again implies $|W_0|_a\leq|W_t|_a$.

\textbf{4.} Hence, it suffices to assume $j=5$.

Then the restriction $\pazocal{C}_x':W_{0,x}'\to\dots\to W_{t,x}'$ of $\pazocal{C}$ to the $P_x'R_x'$-sector satisfies the hypotheses of \Cref{multiply one letter}, implying $t\leq|W_{0,x}'|_a+|W_{t,x}'|_a$.

Similarly, letting $\pazocal{C}_x'':W_{0,x}''\to\dots\to W_{t,x}''$ be the restriction of $\pazocal{C}$ to the $P_x''R_x''$-sector, \Cref{multiply one letter}(b) implies $t\leq|W_{0,x}''|_a+|W_{t,x}''|_a$. Hence, 
$$2(s+1)t\leq\sum_{x=0}^s(|W_{0,x}'|_a+|W_{0,x}''|_a+|W_{t,x}'|_a+|W_{t,x}''|_a)\leq|W_0|_a+|W_t|_a$$
so that $t\leq\frac{1}{s+1}\max(|W_0|_a,|W_t|_a)\leq\max(|W_0|_a,|W_t|_a)$.

\end{proof}

%
%
%
%
%
%
%
%

\begin{lemma} \label{M_5 standard time no (5)}

Let $\pazocal{C}:W_0\to\dots\to W_t$ be a reduced computation of $\textbf{M}_5$ in the standard base.  Suppose the step history of $\pazocal{C}$ does not contain any letter of the form $(5)$, $(45)$, or $(54)$.  Then:

\begin{enumerate}[label=(\alph*)]

\item $t\leq 128\max(|W_0|_a,|W_t|_a)+10k+4$

\item If either:

\begin{itemize} 

\item $W_0$ is admissible for some transition rule, or

\item $W_0$ is a tame input configuration,

\end{itemize}
then $|W_0|_a\leq 32|W_t|_a$.

\end{enumerate}

\end{lemma}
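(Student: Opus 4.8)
The plan is to cut $\pazocal{C}$ into its maximal single-submachine subcomputations together with the transition rules separating them, control each piece using \Cref{M_5(j) standard time}, and add up the resulting estimates; the bulk of the work is showing there are only a bounded number of pieces.

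First I would show that $\pazocal{C}$ consists of at most five maximal submachine subcomputations. Since $\textbf{M}_5$ contains a copy of $\overline{\textbf{M}}_4$ (hence of the $\textbf{M}_2,\textbf{M}_3,\textbf{M}_4$ machinery), the standard base of $\textbf{M}_5$ contains subwords of every form appearing in the hypotheses of Lemmas \ref{M_2 step history}, \ref{M_3 first step history}, \ref{M_3 second step history}, \ref{M_4 step history}, and \ref{M_5 step history 1} (e.g. $P_i'R_i'$, $P_i'R_i'Q_{i,r}'$, $Q_{i,\ell}'P_i'R_i'$), so the $\textbf{M}_5$-analogues of those lemmas apply to a computation in the standard base: every step-history ``turnaround'' subword among levels $0,\dots,4$ is forbidden (that is, $(01)(1)(10)$, $(10)(0)(01)$, $(21)(1)(12)$, $(12)(2)(21)$, $(32)(2)(23)$, $(23)(3)(32)$, $(43)(3)(34)$, $(34)(4)(43)$, $(54)(4)(45)$). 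Combining these bans with \Cref{M_5 step history} and the hypothesis that the step history of $\pazocal{C}$ contains no letter $(5)$, $(45)$, or $(54)$, a short case analysis on which letters of the word of \Cref{M_5 step history} can survive shows that the step history of $\pazocal{C}$ must be a subword of $(0)(01)(1)(12)(2)(23)(3)(34)(4)$ or of $(4)(43)(3)(32)(2)(21)(1)(10)(0)$ — in particular it cannot bridge the ascending and descending halves, since such a bridge would contain a forbidden subword such as $(34)(4)(43)$ or $(23)(3)(32)$. Hence $\pazocal{C}$ factors as $\pazocal{C}_1,\sigma_1,\pazocal{C}_2,\sigma_2,\dots,\sigma_{p-1},\pazocal{C}_p$ with $p\le5$, each $\pazocal{C}_i$ a reduced computation of some $\textbf{M}_5(j_i)$ with $j_i\ne5$ in the standard base and each $\sigma_i$ a single transition rule.

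Now set $v_0:=W_0$ and let $v_i$ be the terminal configuration of $\pazocal{C}_i$ for $1\le i\le p$, so $v_p=W_t$; the initial configuration of $\pazocal{C}_{i+1}$ is $v_i\cdot\sigma_i$, and since transition rules only switch state letters and restrict domains we have $|v_i\cdot\sigma_i|_a=|v_i|_a$. For $2\le i\le p$ the configuration $v_{i-1}\cdot\sigma_{i-1}$ is admissible for $\sigma_{i-1}^{-1}=\sigma(j_i,j_{i-1})$, which is of the form $\sigma(j_i,j_i\pm1)$; as $j_i\ne5$, \Cref{M_5(j) standard time}(b) yields $|v_{i-1}|_a\le2|v_i|_a$, and iterating gives $|v_i|_a\le2^{p-i}|W_t|_a$ for $1\le i\le p$. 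Applying \Cref{M_5(j) standard time}(a) to each piece, and recalling $p\le5$,
$$t\le\sum_{i=1}^p\bigl(\max(|v_{i-1}|_a,|v_i|_a)+2k\bigr)+(p-1)\le 2\sum_{i=0}^p|v_i|_a+2kp+(p-1)\le 2\Bigl(|W_0|_a+(2^p-1)|W_t|_a\Bigr)+10k+4,$$
which is at most $128\max(|W_0|_a,|W_t|_a)+10k+4$, proving (a). For (b), the extra hypothesis that $W_0$ is admissible for a transition rule or is a tame input configuration is exactly what makes \Cref{M_5(j) standard time}(b) applicable also to $\pazocal{C}_1$, giving $|v_0|_a\le2|v_1|_a$; chaining with the bound above, $|W_0|_a=|v_0|_a\le2^p|W_t|_a\le32|W_t|_a$.

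The main obstacle is the first step: one must be confident that the ``no turnaround'' step-history lemmas genuinely apply in the standard base — that is, that the standard base of $\textbf{M}_5$ really contains each base subword hypothesised in the $\textbf{M}_5$-analogues of Lemmas \ref{M_2 step history}, \ref{M_3 first step history}, \ref{M_3 second step history}, \ref{M_4 step history}, \ref{M_5 step history 1} — and then to carry out the case analysis verifying that, once the letters $(5),(45),(54)$ are deleted, the only step histories surviving both \Cref{M_5 step history} and these bans are the two monotone words, so that $p\le5$. Everything downstream of that is routine bookkeeping with the two parts of \Cref{M_5(j) standard time} and the fact that transition rules preserve $a$-length.
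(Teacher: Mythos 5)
Your proof is correct and follows essentially the same route as the paper: restrict the step history via \Cref{M_5 step history} to one of the two monotone words (the paper simply passes to the inverse computation for the descending case), factor the history into at most five one-submachine pieces separated by transition rules, bound each piece with \Cref{M_5(j) standard time}(a), and chain part (b) across the pieces — using the extra hypothesis on $W_0$ only for the first piece — exactly as in the paper. The only cosmetic difference is that you re-derive the step-history restriction from the individual turnaround lemmas, which is redundant since \Cref{M_5 step history} already encapsulates them; your constants check out within the stated bounds.
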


\begin{proof}


By \Cref{M_5(j) standard time}(a), it suffices to assume that $H$ contains a transition rule.  Further, applying \Cref{M_5 step history} and perhaps passing to the inverse computation, it suffices to assume the step history of $\pazocal{C}$ is a subword of $(0)(01)(1)(12)(2)(23)(3)(34)(4)$.

As such, there exists $\ell\in\{1,2,3,4\}$ such that $H\equiv H_1\sigma_1\dots H_\ell\sigma_\ell H_{\ell+1}$, where each $\sigma_i$ is a transition rule and each (perhaps empty) factor $H_i$ contains no transition rule.

For $j\in\{1,\dots,\ell+1\}$, define $0\leq r_j\leq s_j\leq t$ such that $W_{r_j}\to\dots\to W_{s_j}$ is the subcomputation of $\pazocal{C}$ with history $H_j$.

By \Cref{M_5(j) standard time}, $s_j-r_j\leq\max(|W_{r_j}|_a,|W_{s_j}|_a)+2k$ for all $j$ and $|W_{r_j}|_a\leq2|W_{s_j}|_a$ for $j>1$.

As $|W_{r_{j+1}}|_a=|W_{s_j}|_a$, it then follows that $|W_{s_j}|_a\leq2^{\ell+1-j}|W_{s_{\ell+1}}|_a$.  So, since $s_{\ell+1}=t$ and $r_{j+1}=s_j+1$, $\sum_{j=1}^{\ell+1}|W_{s_j}|_a\leq2^{\ell+1}|W_t|_a\leq32|W_t|_a$.

Thus, as $r_1=0$,
\begin{align*}
t&\leq\sum_{j=1}^{\ell+1}(\max(|W_{r_j}|_a,|W_{s_j}|_a)+2k)+\ell\leq|W_0|_a+|W_{s_1}|_a+2\sum_{j=2}^{\ell+1}|W_{s_j}|_a+10k+4 \\
&\leq|W_0|_a-|W_{s_1}|_a+64|W_t|_a+10k+4
\end{align*}
Hence, (a) follows.

Moreover, if $W_0$ satisfies the hypotheses of (b), then the subcomputation with history $H_1$ satisfies the hypotheses of \Cref{M_5(j) standard time}(b), implying $|W_0|_a\leq2|W_{s_1}|_a\leq2^{\ell+1}|W_t|_a\leq32|W_t|_a$.

\end{proof}

\begin{lemma} \label{M_5 standard time}

For any reduced computation $\pazocal{C}:W_0\to\dots\to W_t$ of $\textbf{M}_5$ in the standard base, $t\leq c_0\max(\|W_0\|,\|W_t\|)$

\end{lemma}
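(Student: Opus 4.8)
The plan is to reduce the statement to the already-established Lemma~\ref{M_5 standard time no (5)} by dealing separately with the portion of the computation whose step history involves the submachine $\textbf{M}_5(5)$. Write $\pazocal{C}:W_0\to\dots\to W_t$ with history $H$ and, invoking \Cref{M_5 step history}, note that the step history of $\pazocal{C}$ has at most one maximal subword of the form $(45)(5)(54)$ (or one of its prefixes/suffixes ending the computation). Accordingly, factor $H\equiv H^-\,\sigma(45)\,H_5\,\sigma(54)\,H^+$, where $H_5\in F(\Phi^+)$ is the history of the maximal subcomputation $\pazocal{C}_5:U_0\to\dots\to U_r$ with step history $(5)$, and $H^\pm$ are the histories of subcomputations whose step histories contain no letter of the form $(5)$, $(45)$, or $(54)$ (one or both of the transition rules may be absent if the step history begins or ends with $(5)$; in that case the corresponding piece $H^\mp$ is empty and the argument only simplifies).

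First I would bound the middle piece. Since $\pazocal{C}_5$ is a computation of $\textbf{M}_5(5)$ in the standard base, \Cref{M_5(j) standard time}(a) gives $r\leq\max(|U_0|_a,|U_r|_a)+2k$. The transition rules $\sigma(45)^{\pm1}$ change no tape word and are applied to $\sigma(45)$-admissible configurations, so $|U_0|_a=|W_{s}|_a$ and $|U_r|_a=|W_{s'}|_a$ for the appropriate indices $s<s'$ in $\pazocal{C}$. Now I would apply \Cref{M_5 standard time no (5)} to each of the two flanking subcomputations $\pazocal{C}^-:W_0\to\dots\to U_0$ (with history $H^-\sigma(45)$; strip the trailing transition rule, which contributes only a constant) and $\bar{\pazocal{C}}^+:U_r\to\dots\to W_t$ read with history $H^+$ reversed: part (a) bounds their lengths linearly in $\max(|W_0|_a,|U_0|_a)$ and $\max(|U_r|_a,|W_t|_a)$ respectively, and crucially part (b) — applicable because $U_0$ is $\sigma(45)$-admissible (hence admissible for a transition rule) and likewise $U_r$ — gives $|U_0|_a\leq 32|W_0|_a$ and $|U_r|_a\leq 32|W_t|_a$. (If $H^-$ is empty then $U_0\equiv W_0$ and this is trivial; similarly for $H^+$.) Chaining these estimates yields $|U_0|_a,|U_r|_a\leq 32\max(|W_0|_a,|W_t|_a)$, hence $r\leq 32\max(|W_0|_a,|W_t|_a)+2k$, and the two flanking pieces each have length at most $128\cdot 32\max(|W_0|_a,|W_t|_a)+10k+4$.

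Adding the three pieces together with the two transition rules gives $t\leq C_*\bigl(\max(|W_0|_a,|W_t|_a)+k\bigr)$ for an absolute constant $C_*$. Finally I would convert from $a$-length to total length: since the number of state letters in any admissible word is bounded by $N$ (the length of the standard base), $|W_i|_a\leq\|W_i\|\leq|W_i|_a+N$ for configurations in the standard base, so $\max(|W_0|_a,|W_t|_a)\leq\max(\|W_0\|,\|W_t\|)$; and because $1\notin\pazocal{R}_i$ and the relevant configurations are non-trivial one has $\max(\|W_0\|,\|W_t\|)\geq 1$, so the additive constants $C_*k+C_*N$ can be absorbed into a multiplicative constant. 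The parameter hierarchy $k\ll N\ll\dots\ll c_0$ then makes $c_0$ large enough that $t\leq c_0\max(\|W_0\|,\|W_t\|)$, as required. The main obstacle is the bookkeeping in the middle paragraph — correctly tracking how \Cref{M_5 standard time no (5)}(b) propagates the factor $32$ through the flanking pieces and verifying that the hypotheses of part (b) genuinely apply at the $\sigma(45)$-transitions (which they do, as those configurations are transition-admissible), rather than any conceptual difficulty.
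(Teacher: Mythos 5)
Your proposal is correct and follows essentially the same route as the paper: isolate the unique maximal block with step history contained in $(45)(5)(54)$, bound its length via the one-letter-multiplication structure of $\textbf{M}_5(5)$ (the paper invokes \Cref{multiply one letter} directly where you cite \Cref{M_5(j) standard time}(a), which amounts to the same thing), apply \Cref{M_5 standard time no (5)}(b) to propagate the factor $32$ from the transition-admissible endpoints of that block back to $W_0$ and $W_t$, then \Cref{M_5 standard time no (5)}(a) on the two flanking pieces, and absorb the additive constants using $\|W_i\|\geq N$ and the parameter hierarchy $c_0\gg N\gg k$.
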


\begin{proof}

If $\pazocal{C}$ satisfies the hypotheses of \Cref{M_5 standard time no (5)}, then part (a) of that statement yields the bound $t\leq128\max(|W_0|_a,|W_t|_a)+10k+4$.  The parameter choice $N>>k$ then allows us to assume $10k+4\leq128N$, so that $t\leq128\max(\|W_0\|,\|W_t\|)$.  Thus, the statement follows for $c_0\geq128$.

Hence, it suffices to assume that the step history of $\pazocal{C}$ contains $(5)$, $(45)$, or $(54)$.  \Cref{M_5 step history} then implies the existence of a nonempty maximal subcomputation $\pazocal{C}':W_r\to\dots\to W_s$ whose step history is a subword of $(45)(5)(54)$ and such that the step histories of the (possibly empty) subcomputations $\pazocal{C}_1:W_0\to\dots\to W_r$ and $\pazocal{C}_2:W_s\to\dots\to W_t$ contain no such letters.

Let $\pazocal{C}_5':V_0\to\dots\to V_\ell$ be the maximal subcomputation of $\pazocal{C}'$ with step history $(5)$.  Note that since transition rules do not alter tape words, $|V_0|_a=|W_r|_a$ and $|V_\ell|_a=|W_s|_a$.  Further, it is immediate that $\ell\geq s-r+2$.

The restriction of $\pazocal{C}_5'$ to any $P_i'R_i'$-sector (or any $P_i''R_i''$-sector) then satisfies the hypotheses of \Cref{multiply one letter}, so that $\ell\leq\max(|W_r|_a,|W_s|_a)$.  Hence, $s-r\leq\max(|W_r|_a,|W_s|_a)+2$.


We may then apply \Cref{M_5 standard time no (5)} to $\pazocal{C}_2$ and the inverse computation of $\pazocal{C}_1$, so that statement (b) implies $|W_r|_a\leq32|W_0|_a$ and $|W_s|_a\leq32|W_t|_a$ and as a result statement (a) implies that $r\leq4096|W_0|_a+10k+4$ and $t-s\leq4096|W_t|_a+10k+4$.

Thus, as above the statement follows by the parameter choices $c_0>>N>>k$.

\end{proof}

The history $H$ of a reduced computation of $\textbf{M}_5$ is called \textit{controlled} if $H^{\pm1}$ is of the form $\zeta_{2i}H'\zeta_{2i+1}H''\zeta_{2i+2}$ for some $0\leq i\leq k-1$ (where we take $\zeta_0=\sigma(34)$ and $\zeta_{2k}=\sigma(45)$), where $H'$ and $H''$ contain no connecting rules.  In other words, $H$ is controlled if a reduced computation with history $H^{\pm1}$ has step history $(4)$ and operates (in parallel) as a primitive machine.

Noting that all sectors are locked by at least one of the two connecting rules defining a controlled history, the next statement follows immediately from \Cref{locked sectors} and \Cref{primitive computations}:

\begin{lemma} [Lemma 4.18 of \cite{W}] \label{M_5 controlled}

Let $\pazocal{C}:W_0\to\dots\to W_t$ be a reduced computation of $\textbf{M}_5$ with controlled history $H$. Then the base of the computation is a reduced word and the admissible words $W_i$ are uniquely defined by the history $H$ and the base of $\pazocal{C}$.\newline 
Moreover, if $\pazocal{C}$ is a computation in the standard base, then $|W_0|_a=\dots=|W_t|_a$, $W_0$ contains the copy of the same word $H_1\in F(\Phi^+)$ over the right historical alphabet in each $P_i'R_i'$-sector, the copy of $H_1^{-1}$ over the right historical alphabet in each $P_i''R_i''$-sector, $\|H\|=2\|H_1\|+3$, and each $W_i$ is an accepted configuration.

\end{lemma}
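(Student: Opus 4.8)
The plan is to unwind the definition of a controlled history and apply the primitive machine lemmas sector by sector. First I would recall that a controlled history $H$ has the form $\zeta_{2i}H'\zeta_{2i+1}H''\zeta_{2i+2}$ (up to inversion), where $\zeta_{2i}$ and $\zeta_{2i+2}$ are connecting rules of the submachine $\textbf{M}_5(4)$ (with the conventions $\zeta_0=\sigma(34)$, $\zeta_{2k}=\sigma(45)$), and $H',H''$ contain no connecting rules. By construction of $\textbf{M}_5(4)$ (which operates in parallel as $\textbf{LR}_k$ on the subwords $P_i'R_i'Q_{i,r}'$ and as $\textbf{RL}_k$ on the subwords $Q_{i,\ell}''P_i''R_i''$), the two consecutive connecting rules $\zeta_{2i}$ and $\zeta_{2i+2}$ together lock every sector of the standard base: $\zeta_{2i}$ (a connecting rule of one type) locks one family of sectors, and $\zeta_{2i+2}$ (the next connecting rule in the cycle) locks the complementary family, with the $Q_0Q_1$-sector and the various $\{t\}$-adjacent sectors being locked by all rules anyway. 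I would cite \Cref{locked sectors} to conclude that the base $B$ of $\pazocal{C}$ cannot contain any unreduced two-letter subword $Q_iQ_i^{-1}$ or $Q_{i+1}^{-1}Q_{i+1}$ corresponding to a locked sector, and since every sector is locked by one of the two, $B$ must be a reduced word.

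Next, the uniqueness of the admissible words $W_i$ given $H$ and $B$: since the base is reduced and $H$ is a word in the rules, I would apply the projection-type argument for primitive machines (\Cref{primitive computations}, and its $\textbf{LR}_k$/$\textbf{RL}_k$ analogues such as \Cref{LR_k analogue}) restricted to each two-letter subword of $B$. For a computation of a primitive machine with reduced base in which the initial tape contents are specified, the rules deterministically rewrite the word; together with the fact that each rule of $\textbf{M}_5(4)$ acts independently on each relevant subword, this pins down $W_i$ from $W_0$ and the history. Since $W_0$ itself is determined by $H$ only after we fix $W_0$ — actually the statement says the $W_i$ are determined by $H$ \emph{and the base}, plus implicitly $W_0$ in the general case, but in the standard-base case the structure of a controlled computation forces the shape of $W_0$ — I would be careful here: in the standard-base case, between the two connecting rules the computation must run as the `interior' of a single pass of $\textbf{LR}_k$/$\textbf{RL}_k$, and \Cref{primitive computations}(3) (or \Cref{LR_k analogue}) then forces all $|W_i|_a$ equal and the tape words to be determined.

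For the standard-base conclusions: restricting $\pazocal{C}$ to each subword $P_i'R_i'Q_{i,r}'$, the controlled history means this restriction is (a copy of) a computation of $\textbf{LR}_k(\Phi^+)$ whose history starts and ends with consecutive connecting rules, so by \Cref{primitive computations}(3)/\Cref{LR_k analogue} the $a$-length is constant: $|W_0|_a=\dots=|W_t|_a$. The same lemmas identify $W_0$ as having, in each $P_i'R_i'$-sector, the copy of some fixed word $H_1\in F(\Phi^+)$ over the right historical alphabet (the word that the primitive machine is `reading'), and correspondingly the copy of $H_1^{-1}$ in each $P_i''R_i''$-sector (because $\textbf{M}_5(4)$ runs $\textbf{RL}_k$ in parallel on the $''$-sectors with the inverse word, matching the setup $I_5(w,H)$/$A_3(H)$). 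Then $\|H\|=2\|H_1\|+3$ follows from the length count in \Cref{primitive computations}(3) (a single forward-and-back pass reading a word of length $\|H_1\|$ uses $2\|H_1\|+1$ rules, plus the two bracketing connecting rules $\zeta_{2i},\zeta_{2i+2}$ gives $2\|H_1\|+3$); and finally each $W_i$ is an accepted configuration by \Cref{M_5 language}(2), since $W_0$ is of the form $I_5(w,H_1)$ (a tame input configuration with $w$ determined by $H_1$) and accepting computations pass through exactly these configurations, so every intermediate configuration of the unique accepting computation is accepted — one runs the tail of the accepting computation from $W_i$.

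I expect the main obstacle to be the bookkeeping in the standard-base part: correctly matching the parallel $\textbf{LR}_k$/$\textbf{RL}_k$ actions on the primed and double-primed historical sectors to conclude that $W_0$ really has the form $I_5(w,H_1)$ with the $H_1$ and $H_1^{-1}$ placement consistent across all $i$, and getting the length formula $\|H\|=2\|H_1\|+3$ exactly right with the connecting-rule conventions $\zeta_0=\sigma(34)$, $\zeta_{2k}=\sigma(45)$ (in particular verifying that $H'$ or $H''$ being empty is handled, which forces $\|H_1\|$ accordingly). The reducedness and uniqueness claims are comparatively routine consequences of \Cref{locked sectors} and the deterministic behavior of primitive machines on reduced bases.
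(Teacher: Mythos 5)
Your proposal follows exactly the paper's (one-line) justification of this lemma: the connecting rules occurring in a controlled history together lock every sector of the standard base, so \Cref{locked sectors} forces the base to be reduced, and the standard-base assertions (constant $a$-length, the placement of $H_1$ and $H_1^{-1}$ in the primed and double-primed historical sectors, the count $\|H\|=2\|H_1\|+3$, and acceptance via \Cref{M_5 language}) follow from \Cref{primitive computations} and \Cref{LR_k analogue} applied to the parallel $\textbf{LR}_k$/$\textbf{RL}_k$ actions, with the remaining details deferred to Lemma 4.18 of \cite{W}. One minor indexing slip worth correcting: the complementary locking is achieved by \emph{consecutive} connecting rules $\zeta_{2i}$ and $\zeta_{2i+1}$ (connecting rules alternate type, so $\zeta_{2i}$ and $\zeta_{2i+2}$ lock the \emph{same} family of sectors), but since all three of $\zeta_{2i},\zeta_{2i+1},\zeta_{2i+2}$ appear in $H$, your conclusion that every sector is locked by some rule of $H$ is unaffected.
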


\begin{lemma} \label{M_5 PR}

Let $\pazocal{C}:W_0\to\dots\to W_t$ be a nonempty reduced computation of $\textbf{M}_5$ whose base $B$ is either:

\begin{enumerate}[label=(\roman*)]

\item of the form $Q_{i,\ell}'P_i'R_i'Q_{i,r}'$, or

\item of the form $Q_{i,\ell}''P_i''R_i''Q_{i,r}''$

\end{enumerate}

Suppose the history $H$ of $\pazocal{C}$ does not contain a controlled subword and the step history contains a letter of the form $(j)$ for $j\leq 4$.  Then $t\leq 7\max(|W_0|_a,|W_t|_a)+6$ and no $W_i$ is an admissible subword of an end configuration.  Moreover, if $W_0$ is an admissible subword of a tame input configuration, then $|W_0|_a\leq|W_t|_a$ and $W_t$ is not an admissible subword of a tame input configuration.

\end{lemma}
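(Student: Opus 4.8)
The plan is to analyze the base $B$ (say of type (i)) by splitting $\pazocal{C}$ into maximal subcomputations according to step history, then treat each step separately. First I would observe that since $B$ has length $4$ and consists of two historical sectors $Q_{i,\ell}'P_i'$, $R_i'Q_{i,r}'$ (which are locked by every rule) plus the central input sector $P_i'R_i'$, the restriction of $\pazocal{C}$ to the $P_i'R_i'$-sector carries essentially all the information. Using \Cref{M_5 step history} (or rather the fact that a reduced computation has step history a subword of the big palindrome), and the hypothesis that $H$ contains no controlled subword, I would argue that the step history of $\pazocal{C}$ restricted to the `active' part is short: it can pass through steps $(0),(1),(2),(3),(4)$ but in step $(4)$ it cannot contain a controlled block, hence it cannot contain three consecutive connecting rules of the primitive machine, which by \Cref{primitive computations}(3) limits how long it can spend there.

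Next I would handle each maximal monochromatic subcomputation. For $j\in\{1,2,3\}$, the restriction to $P_i'R_i'$ satisfies the hypotheses of \Cref{multiply two letters} (two disjoint historical copies of $\Phi^+$), so its length is at most $\tfrac12(|W_0|_a+|W_t|_a)\le\max(|W_0|_a,|W_t|_a)$ and $a$-length is monotone in the sense of \Cref{multiply two letters}(a),(b). For $j=0$ the restriction to the relevant three-letter subword of the standard base can be identified with $\textbf{RL}(\Phi^+)$ or $\textbf{LR}(\Phi^+)$, and \Cref{primitive computations}(3),(5) control its length by $2\|u\|+1\le 2\max(|W_0|_a,|W_t|_a)+1$; for $j=4$ the absence of a controlled subword means $H$ restricted to this block contains at most two connecting rules, so \Cref{primitive computations}(3) (applied to the sub-blocks between connecting rules, noting each has height $\le 2\ell+1$ with $\ell\le\max(|W_0|_a,|W_t|_a)$) again gives a linear bound. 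Adding the (at most five) transition rules and the at most five blocks gives $t\le 7\max(|W_0|_a,|W_t|_a)+6$; the constants $7$ and $6$ should fall out by being slightly generous in the bookkeeping.

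For the statement that no $W_i$ is an admissible subword of an end configuration: an end configuration of $\textbf{M}_5$ has all state letters equal to the end letters of $\textbf{M}_5(5)$, and its tape words in the historical sectors lie in the right historical alphabets. If some $W_i$ were such a subword then every state letter of $W_i$ would be an end-$\textbf{M}_5(5)$ letter, forcing the whole computation to lie in $\textbf{M}_5(5)$ (since only $\textbf{M}_5(5)$ rules and $\sigma(54)^{\pm1}$ apply to such a word, and $\sigma(54)^{\pm1}$ would change the state type), so the step history would consist only of $(5)$'s and $(54)$'s — contradicting the hypothesis that it contains a letter $(j)$ with $j\le4$. The `moreover' clause about tame input configurations is parallel: a tame input configuration has all state letters equal to the start letters of $\textbf{M}_5(0)$ with historical tape words in the left historical alphabets. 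Assuming $W_0$ is such a subword, the only applicable first rule is a rule of $\textbf{M}_5(0)$ or $\sigma(01)$, and running through the step-history cases I would track the $a$-length via the monotonicity lemmas above: once the computation enters the $\textbf{M}_5(0)$ or later blocks it can only increase $|\cdot|_a$ (by \Cref{primitive computations}(5) and \Cref{multiply two letters}(b)), giving $|W_0|_a\le|W_t|_a$, and moreover the first nonempty block produces a letter from the wrong historical alphabet in the $P_i'R_i'$-sector (\Cref{one alphabet historical words}, \Cref{primitive computations}(5)) so $W_t$ cannot be tame-input.

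The main obstacle I anticipate is the $j=4$ case together with the interaction of the `no controlled subword' hypothesis with the full-base restriction: I need to argue carefully that without a controlled block the primitive-machine portion cannot be long, i.e. that between any two consecutive connecting rules the primitive machine's height is linearly bounded by $\max(|W_0|_a,|W_t|_a)$, and that there are at most two such connecting rules in the $(4)$-block. This requires precise use of \Cref{primitive computations}(1)--(3) to see that each `run' of $\textbf{LR}_k$ or $\textbf{RL}_k$ between connecting rules has length $\le 2\ell+1$ where $\ell$ is the common $a$-length, and that $\ell$ is bounded by the max of the endpoint $a$-lengths (again by monotonicity); the bookkeeping of how many transition rules and how many maximal blocks can occur in the allowed step-history subword is the fiddly part that determines the exact constants $7$ and $6$.
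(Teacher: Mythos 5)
Your overall strategy is the paper's: restrict the step history, bound each maximal one-step block (via \Cref{multiply two letters} for steps $(1)$--$(3)$ and via \Cref{primitive computations} applied to three-letter subwords for steps $(0)$ and $(4)$), then sum blocks and transitions, and get the tame-input clause from the monotonicity statements. However, two points are genuinely off. A smaller one first: your framing claim that the $Q_{i,\ell}'P_i'$- and $R_i'Q_{i,r}'$-sectors are ``locked by every rule'' is false --- rules of $\textbf{M}_5(0)$ act on the $Q_{i,\ell}'P_i'$-sector and rules of $\textbf{M}_5(4)$ act on the $R_i'Q_{i,r}'$-sector (that is exactly why those submachines run as $\textbf{RL}$/$\textbf{LR}_k$ on the three-letter subwords), so the restriction to $P_i'R_i'$ does not carry all the information and the $a$-length bookkeeping must account for the flanking sectors during steps $(0)$ and $(4)$; your later use of the three-letter subwords mostly repairs this, but the opening reduction as stated is wrong.

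The serious gap is the claim that no $W_i$ is an admissible subword of an end configuration. Your argument --- that an end-letter word only admits $\textbf{M}_5(5)$ rules and $\sigma(54)^{\pm1}$, so the whole step history would consist of $(5)$'s and $(54)$'s --- is invalid: applying $\sigma(45)^{-1}$ to such a word moves the state letters into step $(4)$, after which the computation can continue through $(3),(2),\dots$, so no contradiction with ``contains a letter $(j)$, $j\le4$'' arises this way. The correct route, and the place where the no-controlled-subword hypothesis does its real work, is to exclude the letters $(45)$, $(5)$, $(54)$ from the step history altogether: since the base contains $(P_i'R_i'Q_{i,r}')^{\pm1}$ or $(Q_{i,\ell}''P_i''R_i'')^{\pm1}$, a maximal step-$(4)$ block is identified with a reduced computation of $\textbf{LR}_k$ or $\textbf{RL}_k$ in the standard base, and a subword $(34)(4)(45)$ would force all connecting rules of the primitive machine to occur, hence a controlled subword of $H$; combined with the palindromic constraint of \Cref{M_5 step history} and the presence of a letter $(j)$ with $j\le4$, this pins the step history (up to inverting the computation) inside $(0)(01)(1)(12)(2)(23)(3)(34)(4)$. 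Only after this restriction do the end-configuration claim and the tame-input monotonicity follow. You assert that the computation only passes through steps $(0),\dots,(4)$ but never derive it from the hypothesis, and your state-letter argument cannot substitute for that missing derivation.
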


\begin{proof}

By Lemmas \ref{M_2 bar step history}, \ref{M_3 second step history}, and \ref{M_5 step history 1}, perhaps passing to the inverse computation the step history of $\pazocal{C}$ is a subword $(0)(01)(1)(12)(2)(23)(3)(34)(4)(45)(5)$.

Suppose there exists a maximal subcomputation $\pazocal{D}:W_r\to\dots\to W_s$ of $\pazocal{C}$ with step history $(4)$ whose history contains at least 2 connecting rules.  If $B$ is of the form detailed in (i), let $B_\pazocal{D}$ be the subword $P_i'R_i'Q_{i,r}'$; otherwise, let $B_\pazocal{D}$ be the subword $Q_{i,\ell}''P_i''R_i''$.  In either case, the restriction of $\pazocal{D}$ to the subword $B_\pazocal{D}$ can be identified with a reduced computation of a primitive machine $\textbf{LR}_k$ or $\textbf{RL}_k$ in the standard base.  But then \Cref{primitive computations} implies $H$ must contain a controlled subword.

Hence, the step history of $\pazocal{C}$ cannot contain the subword $(34)(4)(45)$.  In particular, since the step history of $\pazocal{C}$ must contain a letter of the form $(j)$ for $j\leq4$, the step history must be a subword of $(0)(01)(1)(12)(2)(23)(3)(34)(4)$.

Now, let $\pazocal{C}_j:W_x\to\dots\to W_y$ be a maximal nonempty subcomputation of $\pazocal{C}$ with step history $(j)$.

If $j=4$, then as above the restriction of $\pazocal{C}_j$ to the subword $B_\pazocal{D}$ can be identified with a reduced computation of either the primitive machine $\textbf{LR}$ or $\textbf{RL}$ in the standard base.  \Cref{primitive computations} then implies $y-x\leq2\max(|W_x|_a,|W_y|_a)+1$.  Moreover, if $W_x$ is $\sigma(43)$-admissible, then $W_y$ is not $\sigma(43)$-admissible and $|W_x|_a\leq|W_y|_a$.

If $j\in\{1,2,3\}$, then the restriction of $\pazocal{C}_j$ to the subword $P_i'R_i'$ or $P_i''R_i''$ satisfies the hypotheses of \Cref{multiply two letters}, so that $y-x\leq\frac{1}{2}(|W_x|_a+|W_y|_a)$.  Moreover, if $W_x$ is $\sigma(j,j-1)$-admissible, then \Cref{one alphabet historical words} implies $|W_x|_a\leq|W_y|_a$; and similarly if $W_y$ is $\sigma(j,j+1)$-admissible, then $|W_y|_a\leq|W_x|_a$.

Finally, suppose $j=0$.  If $B$ is of the form (i), let $B'$ be the subword $Q_{i,\ell}'P_i'R_i'$; otherwise, let $B'$ be the subword $P_i''R_i''Q_{i,r}''$.  Then, the restriction of $\pazocal{C}_j$ to the subword $B'$ can be identified with a reduced computation of $\textbf{RL}$ or $\textbf{LR}$ in the standard base, so that \Cref{primitive computations} again implies:

\begin{enumerate}[label=(\alph*)]

\item $y-x\leq2\max(|W_x|_a,|W_y|_a)+1$

\item If $W_y$ is $\sigma(01)$-admissible, then $W_x$ is not $\sigma(01)$-admissible and $|W_y|_a\leq|W_x|_a$

\item If $W_x$ is an admissible subword of a tame input configuration, then $W_y$ is not a subword of such a configuration and $|W_x|_a\leq|W_y|_a$

\end{enumerate}

In each case, the bound is satisfied, and hence it suffices to assume that $H$ contains a transition rule.  Let $W_0\to\dots\to W_\ell$ and $W_z\to\dots\to W_t$ be the maximal (perhaps empty) subcomputations not containing transition rules.

By the arguments above, $|W_\ell|_a=\dots=|W_z|_a$ and $z-\ell\leq3|W_\ell|_a+4$.  Moreover, as $W_\ell$ and $W_z$ must be admissible for transition rules, $|W_\ell|_a\leq|W_0|_a$ and $|W_z|_a\leq|W_t|_a$, so that $\ell\leq2|W_0|_a+1$ and $t-z\leq2|W_t|_a+1$.  Thus, $t\leq7\max(|W_0|_a,|W_t|_a)+6$.

Moreover, if $W_0$ is an admissible subword of a tame input configuration, then $W_t$ cannot be by (c) above, while $|W_0|_a\leq|W_\ell|_a$ implies $|W_0|_a\leq|W_t|_a$.

\end{proof}

\begin{lemma} \label{M_5 long history}

Let $W$ be a tame input configuration.  Suppose $\pazocal{C}:W\equiv W_0\to\dots\to W_t$ is a nonempty reduced computation of $\textbf{M}_5$ such that either:

\begin{enumerate}[label=(\roman*)]

\item $W_t$ is the accept configuration,

\item $W_t$ is another tame input configuration, or

\item $|W|_a>c_0|W_t|_a$.

\end{enumerate}

Then:

\begin{enumerate}[label=(\alph*)]

\item $W$ is an accepted configuration.

\item $t>k$.


\item The sum of the lengths of all subcomputations of $\pazocal{C}$ with step history $(34)(4)(45)$ or $(54)(4)(43)$ is at least $\left(1-\frac{1}{C}\right)t$.

\end{enumerate}

\end{lemma}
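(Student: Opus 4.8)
The three conclusions are tightly linked, so I would establish them together, using Lemma~\ref{M_5 language} as the backbone. First I would record the structural consequence of (i)--(iii): in each case $\pazocal{C}$ has a long or ``expensive'' terminal behavior that cannot be realized by a short or space-preserving computation. For (i), $W_t$ being the accept configuration directly gives that $W$ is accepted (Lemma~\ref{M_5 language}(2)), hence (a), and then Lemma~\ref{M_5 language}(2) also pins down the unique accepting computation together with its exact length $(2k+4)\|H\|+2k+5$ and the bound $\|H\|\geq k$, which forces $t=(2k+4)\|H\|+2k+5>k$, giving (b). For (ii), since $W$ and $W_t$ are both tame input configurations, I would argue that $W$ must be accepted: otherwise, running $\pazocal{C}$ backwards from a hypothetical accepting computation of $W_t$ would give an accepting computation of $W$ (compare the concatenation arguments in Lemmas~\ref{M_1(2) sigma} and \ref{M_1 language}); and dually if $W$ is accepted then so is $W_t$. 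So at least one of them is accepted, hence both are, giving (a). Then (b) follows from $\|H\|\geq k$ as before, since $\pazocal{C}$ embeds into the unique accepting computation of $W$. For (iii), I would use the space estimate: by Lemma~\ref{M_5 standard time} a standard-base reduced computation has $t\leq c_0\max(\|W_0\|,\|W_t\|)$, but more to the point the hypothesis $|W|_a>c_0|W_t|_a$ combined with Lemmas~\ref{M_5(j) standard time}(b) and \ref{M_5 standard time no (5)}(b) shows the space cannot drop by more than a bounded multiplicative factor across any portion of the step history \emph{not} containing a $(5)$, $(45)$, or $(54)$ letter; so such a drop can only be caused by passing through an $\textbf{M}_5(4)$ (or $\textbf{M}_5(0)$) segment bracketed by transition rules, i.e.\ a segment with step history $(34)(4)(45)$ or its inverse. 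This is the crux, and it simultaneously yields (a) and (b) for case (iii): a computation exhibiting real space loss must pass through such a segment, and each such segment has length $\geq k$ by Lemma~\ref{M_5(j) standard time} and the primitive-machine analysis, while the presence of such a segment forces $W$ to be accepted (the only way to leave stage $(4)$ back toward stage $(3)$ after having entered via $\sigma(34)$ while still being able to reach, or to have come from, a tame configuration is to pass through the accept configuration — this follows from Lemma~\ref{M_5 language}(2) and the step-history constraint Lemma~\ref{M_5 step history}).

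\textbf{The main step: (c).} Conclusion (c) is where the real work lies, and I would prove it by a ``budget'' argument on the step history. By Lemma~\ref{M_5 step history} the step history of $\pazocal{C}$ is a subword of the long palindromic word listed there, so $\pazocal{C}$ decomposes as a concatenation of maximal single-stage subcomputations $\pazocal{C}^{(1)},\dots,\pazocal{C}^{(p)}$ separated by transition rules, and each $\pazocal{C}^{(i)}$ is a reduced computation of some $\textbf{M}_5(j)$ in the standard base. By Lemma~\ref{M_5(j) standard time}(a), each $\pazocal{C}^{(i)}$ of stage $j\neq 5$ has length at most $\max(|W_0^{(i)}|_a,|W_t^{(i)}|_a)+2k$, and a stage $(5)$ subcomputation has length at most $\max(|W_0^{(i)}|_a,|W_t^{(i)}|_a)$; moreover for $j\neq 5$, if its first configuration is admissible for a transition rule (which, being maximal, it is, unless it is the very first stage), then $|W_0^{(i)}|_a\leq 2|W_t^{(i)}|_a$ by Lemma~\ref{M_5(j) standard time}(b). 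Iterating this ``at most doubling'' relation across the at most constantly many stages \emph{until one hits a $(34)(4)(45)$ segment}, one finds that the $a$-length can only grow by a bounded multiplicative factor, so on every stretch of $\pazocal{C}$ avoiding such segments the total length is $O(\max(|W_0|_a,|W_t|_a))+O(k)$, hence $O(\max(\|W_0\|,\|W_t\|))$. Since we are in case (i), (ii), or (iii), part (a) gives that $W$ is accepted, and then Lemma~\ref{M_5 language}(2) gives $t=(2k+4)\|H\|+2k+5$ with $\|H\|\geq k$, so $t$ is at least linear in $\|H\|$ and indeed $t\geq(2k+4)\|H\|$. On the other hand the length contributed by a $(34)(4)(45)$ segment is, by the primitive-machine analysis (Lemmas~\ref{M_5(j) standard time}, \ref{primitive computations}, and \ref{M_5 controlled}), essentially $(2k+O(1))\|H\|$ since the $\textbf{LR}_k$/$\textbf{RL}_k$ part runs for $2k|W|_a+2k-1$ steps on a configuration of $a$-length proportional to $\|H\|$. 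Comparing: the part of $t$ \emph{outside} such segments is $O(\max(\|W_0\|,\|W_t\|))=O(\|H\|)$ with a constant not involving $k$, whereas $t$ itself is $\geq(2k+4)\|H\|$; so the fraction of $t$ outside $(34)(4)(45)$/$(54)(4)(43)$ segments is $O(1/k)$, which is $\leq 1/C$ once $C$ is chosen (the parameter chain has $C$ assigned before $k$, but here the relevant direction is $k$ large relative to the constants appearing; re-reading the parameter order $M<<C<<\lambda^{-1}<<k$, it is $k$ that dominates $C$, so $1/k\leq 1/C$ fails — instead I would note the bound is $O(1/k)\leq 1/C$ needs care: actually the constant outside is absolute, call it $A$, so the excluded fraction is at most $A/(2k+4)$, and since $k>>C$, in particular $2k+4>AC$, giving the fraction $<1/C$). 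Hence the complement — the sum of lengths of the $(34)(4)(45)$ and $(54)(4)(43)$ subcomputations — is at least $(1-1/C)t$, which is exactly (c).

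\textbf{Anticipated obstacle.} The delicate point is the bookkeeping in (c): I must be sure that every maximal $\textbf{M}_5(4)$ subcomputation whose history contains at least two connecting rules is genuinely of the bracketed form $(34)(4)(45)$ or $(54)(4)(43)$ — i.e.\ that one cannot have a long $\textbf{M}_5(4)$ stage that is entered and exited on the \emph{same} side (say $(34)(4)(43)$). This is precisely the content of the step-history restrictions (the $\textbf{M}_5$ analogues of Lemmas~\ref{M_3 first step history}, \ref{M_3 second step history}, \ref{M_5 step history 1}): a reduced computation cannot have step history $(34)(4)(43)$ on a base containing the relevant $P_i'R_i'Q_{i,r}'$ or $Q_{i,\ell}''P_i''R_i''$ subwords once a connecting rule has fired, because the connecting rule changes the tape word on one side in a way incompatible with $\sigma(34)^{-1}$-admissibility. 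For the standard base this is automatic. The second subtlety is isolating the ``no space loss outside stage-$4$-brackets'' claim cleanly: Lemma~\ref{M_5 standard time no (5)}(b) already packages exactly this for step histories avoiding $(5),(45),(54)$, so the argument reduces to splicing $\pazocal{C}$ at the $\sigma(45)$/$\sigma(54)$ transitions and applying that lemma to each piece, plus handling the stage-$(5)$ pieces (which are length-$O(\|W\|_a)$ and tape-word-monotone by Lemma~\ref{multiply one letter}) separately. Once those two structural facts are in hand, (c) is a finite comparison of linear-in-$\|H\|$ quantities with the constant-vs-$k$ parameter inequality doing the rest.
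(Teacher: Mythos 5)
Your high-level strategy for (c) — compare the length $\approx 2k\|H\|$ of the $(34)(4)(45)$ segment against the rest of the computation and use $k\gg C$ — is the paper's strategy, but there are two genuine gaps. First, your argument for (a) in case (ii) is circular: you show that acceptance of $W$ and acceptance of $W_t$ are equivalent via concatenation, and then conclude ``at least one of them is accepted, hence both are,'' but nothing you say establishes that either one is accepted. The correct route is structural: one first shows (via \Cref{M_5(j) standard time}(b), \Cref{M_5 standard time no (5)}(b), and \Cref{primitive computations}) that a computation whose step history avoids $(5)$, $(45)$, $(54)$ can neither end at another tame input configuration nor lose $a$-length by a factor of $c_0$, so in every case the step history has prefix $(0)(1)(2)(3)(4)(5)$; the $(1)(2)(3)(4)$ portion is then a computation of $\textbf{M}_3$ from a tame input to a tame end configuration, so \Cref{M_3 language}(2) forces $W\equiv I_5(w,H)$ with $w\in\pazocal{R}_1$ and $\|H\|\geq k$, and \Cref{M_5 language}(2) gives acceptance. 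Your case (iii) acceptance argument (``must pass through the accept configuration'') gestures at this but is not what happens — the stage-$(4)$ segment never visits $W_{ac}$.

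Second, and more seriously, the bookkeeping in (c) fails as written. You bound each non-stage-$4$ piece by $\max(|W_0^{(i)}|_a,|W_t^{(i)}|_a)+2k$ and assert the total outside contribution is $O(\|H\|)$ ``with a constant not involving $k$.'' But once $W\equiv I_5(w,H)$, every configuration involved has $a$-length about $2(s+1)\|H\|=\Theta(N\|H\|)$, so these bounds give $O(N\|H\|)$ — and $N\gg k$ in the parameter order, so the excluded fraction is $\Theta(N/k)$, not $O(1/k)$. You already use the correct per-sector quantity $\|H\|$ (not the total $a$-length) when estimating the stage-$4$ segment as $2k\|H\|+O(k)$; the same refinement is required for the other stages. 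The paper gets absolute constants by computing the non-stage-$4$ portions exactly: the $(0)$ prefix has length $2\|H\|+1$ by \Cref{primitive computations}(3) (the parallel sectors advance simultaneously), the $(1)(2)(3)$ portion has length $\|H\|+2$ by \Cref{M_2 language}(2), and the $(5)$ portion is at most $\|H\|+\|H'\|$ by \Cref{multiply one letter}; in case (iii) the remaining tail estimate from \Cref{M_5 standard time no (5)}(a) is salvaged only because the hypothesis $|W|_a>c_0|W_t|_a$ together with $c_0\gg N$ converts $4096|W_t|_a$ into at most $\|H\|$. Without these exact (or hypothesis-absorbed) bounds, the comparison with $2k\|H\|$ does not close.
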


\begin{proof}

If the step history of $\pazocal{C}$ is $(0)$, then \Cref{M_5(j) standard time}(b) implies $|W|_a\leq2|W_t|_a$.  Moreover, applying \Cref{primitive computations} to the restriction of $\pazocal{C}$ to any subword of the form $P_i'R_i'Q_{i,r}'$ implies $W_t$ is not a tame input configuration.

Further, if the step history of $\pazocal{C}$ does not contain any letter of the form $(5)$, $(45)$, or $(54)$, then \Cref{M_5 standard time no (5)}(b) implies $|W|_a\leq32|W_t|_a$.

Hence, taking $c_0\geq32$, \Cref{M_5 step history} implies the step history of $\pazocal{C}$ has prefix $(0)(1)(2)(3)(4)(5)$.  

Define the following subcomputations of $\pazocal{C}$:

\begin{itemize}

\item Let $\pazocal{C}_0:W_0\to\dots\to W_j$ be the maximal subcomputation with step history $(0)$.

\item Let $\pazocal{C}_2:W_0\to\dots\to W_\ell$ be the maximal subcomputation with step history $(0)(1)(2)(3)$.

\item Let $\pazocal{C}_3:W_0\to\dots\to W_r$ be the maximal subcomputation with step history $(0)(1)(2)(3)(4)$.

\end{itemize}

\Cref{M_5 standard time no (5)} then immediately implies $|W|_a\leq32|W_r|_a$.

Since $W_{j+1}$ and $W_r$ must be $\sigma(10)$- and $\sigma(45)$-admissible, respectively, the maximal subcomputation $W_{j+1}\to\dots\to W_r$ with step history $(1)(2)(3)(4)$ can be identified with a reduced computation of $\textbf{M}_3$ satisfying the hypotheses of \Cref{M_3 language}(2).  In particular, there exist $w\in\pazocal{R}_1$ and $H\in F(\Phi^+)$ with $\|H\|\geq k$ such that $W_{j+1}$ is the concatenation of $t$ with the natural copy of $I_4(w,H)$ in the hardware of $\textbf{M}_5$ and $r-j-1=(2k+1)\|H\|+2k+2$.

Then, since $W_j$ is $\sigma(01)$-admissible, an application of \Cref{primitive computations}(3) to $\pazocal{C}_0$ implies $W_0\equiv I_5(w,H)$, so that (a) follows from \Cref{M_5 language}(2).  Moreover, $j=2\|H\|+1$, so that $r=(2k+3)\|H\|+2k+4$.  Taking $k\geq1$ then implies $r\leq(2k+9)\|H\|$.


Next, note that since $W_\ell$ is $\sigma(34)$-admissible, the maximal subcomputation $W_{j+1}\to\dots\to W_\ell$ with step history $(1)(2)(3)$ can be identified with a reduced computation of $\textbf{M}_2$ satisfying the hypotheses of \Cref{M_2 language}(2).  Hence, $\ell-j-1=\|H\|+2$, so that $\ell=3\|H\|+4$.

Thus, the subcomputation $\pazocal{D}:W_\ell\to\dots\to W_{r+1}$ with step history $(34)(4)(45)$ has length $z=2k\|H\|+2k+1$.  (Note that this also follows from applying \Cref{LR_k analogue} to the restriction of the maximal subcomputation with step history (4) to any subword of the form $P_i'R_i'Q_{i,r}'$ or of the form $Q_{i,\ell}''P_i''R_i''$).

(i) Suppose $W_t$ is the accept configuration.

Taking $k\geq1$, \Cref{M_5 language}(2) implies $t=(2k+4)\|H\|+2k+5\leq(2k+11)\|H\|$, while \Cref{M_5 step history} implies $\pazocal{D}$ is the only subcomputation of $\pazocal{C}$ with step history $(34)(4)(45)$ or $(54)(4)(43)$.

Note that $\frac{2k}{2k+11}t\leq2k\|H\|<z$, so that $z>\left(1-\frac{11}{2k+11}\right)t$.  Thus, (c) follows from the parameter choice $k>>C$.

(ii) Suppose $W_t$ is another tame input configuration.

Then \Cref{M_5 step history} implies the step history of $\pazocal{C}$ is $(0)(1)(2)(3)(4)(5)(4)(3)(2)(1)(0)$.

Let $\pazocal{C}_1':W_s\to\dots\to W_t$ be the maximal subcomputation with step history $(4)(3)(2)(1)(0)$.  Then applying the same reasoning as above to the inverse computation of $\pazocal{C}_1'$ implies: 

\begin{itemize}

\item There exists $w'\in\pazocal{R}$ and $H'\in F(\Phi^+)$ such that $\|H'\|\geq k$ and $W_t\equiv I_5(w',H')$.  

\item $t-s=(2k+3)\|H'\|+2k+4\leq(2k+9)\|H'\|$.

\item The subcomputation $\pazocal{D}'$ with step history $(54)(4)(43)$ has length $z'=2k\|H'\|+2k+1$.

\end{itemize}

Further, the restriction of the subcomputation $W_{r+1}\to\dots\to W_{s-1}$ with step history $(5)$ to any $P_i'R_i'$-sector satisfies the hypotheses of \Cref{multiply one letter}, so that $s-r-2\leq\max(\|H\|,\|H'\|)$.

Hence, $t\leq(2k+10)(\|H\|+\|H'\|)+2\leq(2k+11)(\|H\|+\|H'\|)$, while $\pazocal{D}$ and $\pazocal{D}'$ are the only subcomputations of $\pazocal{C}$ with step history $(34)(4)(45)$ or $(54)(4)(43)$.

Note then that $\frac{2k}{2k+11}t\leq2k(\|H\|+\|H'\|)<z+z'$, so that $z+z'>\left(1-\frac{11}{2k+11}\right)t$.  Thus, (c) again follows from the parameter choice $k>>C$.

(iii) Suppose $|W|_a>c_0|W_t|_a$.

As condition (Mv1) and \Cref{simplify rules} imply that each rule alters the input sector by at most one letter, then necessarily $\|w\|\leq\|H\|$ (note that this bound is certainly not sharp).  Hence, $|W|_a=2(s+1)\|H\|+\|w\|\leq(2s+3)\|H\|\leq N\|H\|$.

Let $\pazocal{C}_5:W_{r+1}\to\dots\to W_x$ be the maximal subcomputation with step history $(5)$.  Then the restriction of $\pazocal{C}_5$ to any $P_i'R_i'$- or $P_i''R_i''$-sector satisfies the hypotheses of \Cref{multiply one letter}.

As the tape word of $W_{r+1}$ in each $P_i'R_i'$-sector is the copy of $H$ over the corresponding right tape alphabet and the rules of $\textbf{M}_5(5)$ operate in parallel on these sectors, there exists a word $H'\in F(\Phi^+)$ such that the tape word of $W_x$ in each $P_i'R_i'$-sector is the copy of $H'$ over the corresponding right historical alphabet.  Analogously, the tape word of $W_x$ in each $P_i''R_i''$-sector is the copy of $(H')^{-1}$ over the corresponding right historical alphabet.

Applying \Cref{multiply one letter} then implies $x-r-1\leq\|H\|+\|H'\|$.

If $t>x$, then \Cref{M_5 step history} implies the step history of the subcomputation $W_x\to\dots\to W_t$ is a prefix of $(54)(4)(3)(2)(1)(0)$.  In this case $|W_{x+1}|_a=|W_x|_a$, so that \Cref{M_5 standard time no (5)}(b) implies $|W_x|_a\leq32|W_t|_a$ and, as a result, \Cref{M_5 standard time no (5)}(a) that $t-x\leq4096|W_t|_a+10k+5$.

Now, this implies $N\|H\|\geq|W|_a>c_0|W_t|_a\geq\frac{c_0}{32}|W_x|_a\geq\frac{c_0}{32}\|H'\|$, so that the parameter choice $c_0>>N$ implies $\|H\|>\|H'\|$.  So, $x-r\leq\|H\|+\|H'\|+1\leq2\|H\|$, implying $x\leq(2k+11)\|H\|$.

Moreover, taking $k\geq1$ and $c_0\geq4096N$ through the parameter choice $c_0>>N$, then
$$t-x\leq \frac{c_0|W_t|_a}{N}+10k+5<\frac{|W|_a}{N}+10k+5\leq\|H\|+10k+5\leq16\|H\|$$
Hence, $t\leq(2k+27)\|H\|$, so that $\frac{2k}{2k+27}t\leq2k\|H\|<z$.  Thus, (c) again follows from the parameter choice $k>>C$.

\end{proof}

Finally, the following statement details the structure of a particular type of one-step computation in the standard base.

\begin{lemma} [Compare to Lemma 4.14 of \cite{OS19} and Lemma 4.12 of \cite{W}] \label{M_5 one-step}

Let $\pazocal{C}:W_0\to\dots\to W_t$ be a reduced computation of $\textbf{M}_5(j)$ in the standard base. Suppose there exists $z\in\{1,\dots,t\}$ such that $|W_z|_a>2|W_0|_a$. Then there exist two-letter subwords $U_\ell V_\ell$ and $U_rV_r$ of the standard base such that:

\begin{enumerate}

\item Letting $\pazocal{C}_\ell:W_{0,\ell}\to\dots\to W_{t,\ell}$ and $\pazocal{C}_r:W_{0,r}\to\dots\to W_{t,r}$ be the restrictions of $\pazocal{C}$ to the $U_\ell V_\ell$- and $U_rV_r$-sectors, respectively, $|W_{z,\ell}|_a<\dots<|W_{t,\ell}|_a$ and $|W_{z,r}|_a<\dots<|W_{t,r}|_a$.

\item Every rule of the subcomputation $W_z\to\dots\to W_t$ multiplies the $U_\ell V_\ell$-sector by one letter on the left and the $U_rV_r$-sector by one letter on the right.

\end{enumerate}

\end{lemma}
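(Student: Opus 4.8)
The plan is to adapt the proofs of Lemma 4.14 of \cite{OS19} and Lemma 4.12 of \cite{W} to the six submachines $\textbf{M}_5(j)$. First I would invoke \Cref{simplify rules} to assume that every rule of $\textbf{M}_5(j)$ multiplies each sector by at most one letter on each side, so that $|W_i|_a$ changes by a bounded amount per step. With this, for every two-letter subword $UV$ of the standard base the restriction $\pazocal{C}_{UV}:W_{0,UV}\to\dots\to W_{t,UV}$ of $\pazocal{C}$ is a reduced computation of a one-sector machine of one of four recognizable types: (i) if $j\in\{0,4\}$, it is a sector of a computation of a primitive machine ($\textbf{LR}/\textbf{RL}$, resp. $\textbf{LR}_k/\textbf{RL}_k$) in the standard base, controlled by \Cref{primitive computations} (resp. \Cref{LR_k analogue} and its analogues, and \Cref{unreduced base} on the unreduced two-letter bases); (ii) if $j\in\{1,2,3\}$ and $UV$ is one of the $2(s+1)$ historical sectors $P_i'R_i'$ or $P_i''R_i''$, then every rule multiplies it on the left by the inverse of a copy of the applied rule-letter and on the right by a copy of that rule-letter, so \Cref{multiply two letters} applies; (iii) if $j\in\{1,2,3\}$ and $UV$ is a working sector, it is a sector of $\textbf{Move}$, $\textbf{Clean}$, or $\textbf{S}$; (iv) if $j=5$ every rule multiplies $UV$ by a single letter, so \Cref{multiply one letter} applies.

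Writing $\ell_i:=|W_i|_a=\sum_{UV}|W_{i,UV}|_a$, the hypothesis $\ell_z>2\ell_0$ gives $\sum_{UV}\bigl(|W_{z,UV}|_a-2|W_{0,UV}|_a\bigr)>0$, so there is a sector $UV^\ast$ with $|W_{z,UV^\ast}|_a>2|W_{0,UV^\ast}|_a\ge|W_{0,UV^\ast}|_a$. The common feature of \Cref{multiply one letter}(c), \Cref{multiply two letters}(a), \Cref{primitive computations}(1) and their analogues is that each $a$-length sequence $|W_{i,UV}|_a$ has the single-valley shape — non-increasing and then strictly increasing — and once it strictly increases it continues to do so. Since $|W_{z,UV^\ast}|_a$ already exceeds its initial value, index $z$ lies strictly past the valley of $UV^\ast$, whence $|W_{i,UV^\ast}|_a$ is strictly increasing for all $i\ge z$. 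If $UV^\ast$ is a historical sector (cases $j\in\{1,2,3\}$), then since \emph{every} rule of $\textbf{M}_5(j)$ multiplies every historical sector by one letter on the left and one on the right, taking $U_\ell V_\ell=U_rV_r=UV^\ast$ immediately yields both conclusions (the two sectors being allowed to coincide). If $UV^\ast$ is a sector straddled by a state letter of a driving copy of a primitive machine (cases $j\in\{0,4\}$), then \Cref{primitive computations}(1) identifies the adjacent sector on the other side of that state letter as the partner sector, and the connecting-rule structure forces every rule of $W_z\to\dots\to W_t$ to move that state letter in the growing direction, multiplying the two designated sectors on the correct sides.

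The main obstacle is the working-sector case (iii), especially $j=3$, where $\textbf{S}$ is an arbitrary $S$-machine with no a priori monotonicity of tape length. The way around this is to exploit that each historical-sector restriction also satisfies the sharp length estimate (\Cref{multiply two letters}(c), together with \Cref{multiply one letter}(b) and \Cref{primitive computations}(3)): since there are $2(s+1)$ historical sectors, all multiplied at every step, this bounds the length $t$ of $\pazocal{C}$ linearly in $\ell_0+\ell_t$ (compare the estimates behind \Cref{M_5(j) standard time} and \Cref{M_5 standard time}), so a reduced $\textbf{S}$-subcomputation that violated the claimed monotonicity would be too short to have driven $\ell_z$ above $2\ell_0$; consequently $UV^\ast$ can be taken historical after all, or else the growth is carried by a state letter of the underlying machine running off and its two neighbouring working sectors serve as $U_\ell V_\ell$, $U_rV_r$. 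One then verifies rule-by-rule that, once $\ell_i$ is strictly increasing, no rule of $\textbf{M}_5(j)$ can lock or shrink either designated sector, which forces every subsequent rule to multiply $U_\ell V_\ell$ on the left and $U_rV_r$ on the right. Care is also needed to eliminate irrelevant step histories via \Cref{M_5 step history} and to handle the cancellation that can occur in a historical sector when the applied rule-letter meets an oppositely oriented letter, which is precisely what the valley property rules out beyond step $z$.
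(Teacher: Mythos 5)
Your overall strategy --- restrict $\pazocal{C}$ to each two-letter sector, invoke the single-valley lemmas (\Cref{multiply one letter}, \Cref{multiply two letters}, \Cref{primitive computations}), and locate a sector that has already passed its valley by step $z$ --- is exactly the paper's strategy, and your treatment of $j\in\{0,4\}$ is correct in outline (there every tape letter lives in a primitive-machine subword, so the pigeonhole lands directly on a primitive restriction and \Cref{primitive computations}(1) finishes). For $j=5$ note one small omission: the rules multiply the $P_i'R_i'$-sectors only on the right and the $P_i''R_i''$-sectors only on the left, so you need one increasing sector of \emph{each} type; the paper obtains both by running the same argument symmetrically on the two families.

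The genuine gap is in your resolution of case (iii), $j\in\{1,2,3\}$. Your fallback branch --- ``or else the growth is carried by a state letter of the underlying machine running off and its two neighbouring working sectors serve as $U_\ell V_\ell$, $U_rV_r$'' --- cannot be made to work: conclusion (2) demands that \emph{every} rule of $W_z\to\dots\to W_t$ multiply the designated sectors by one letter on the prescribed sides, and a rule of $\textbf{S}$ (or of $\textbf{Move}$ or $\textbf{Clean}(Y,Z)$) acting on a working sector carries no such guarantee; it may lock the sector or leave its tape word unchanged. The correct resolution --- and the reason the hypothesis carries the factor $2$ --- is a count showing the working sectors never need to be designated: applying \Cref{multiply two letters}(c) to the subcomputation $W_0\to\dots\to W_z$, if every historical sector satisfied $|W_{z,x}'|_a\leq|W_{0,x}'|_a$ (and likewise for the double-prime sectors) one would get $z\leq\max(|W_{0,x}'|_a,|W_{z,x}'|_a)=|W_{0,x}'|_a$ for each $x$; by \Cref{simplify rules} each unlocked working sector grows by at most $2$ per step, and there are at most $s$ such sectors against $s+1$ historical sectors of each type, so pairing each working sector with a distinct pair $(P_x'R_x',P_x''R_x'')$ yields $|W_{z,UV}|_a\leq|W_{0,UV}|_a+2z\leq|W_{0,UV}|_a+|W_{0,x}'|_a+|W_{0,x}''|_a$ and hence $|W_z|_a\leq2|W_0|_a$, contradicting the hypothesis. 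Thus some historical sector strictly increases before step $z$, \Cref{multiply two letters}(a) forces strict increase on all of $[z,t]$, and since every rule of $\textbf{M}_5(j)$ for $j\in\{1,2,3\}$ multiplies every historical sector on both the left and the right, one may take $U_\ell V_\ell=U_rV_r$ equal to that historical sector. Your length-bound heuristic gestures at this but is stated at time $t$ rather than at the intermediate time $z$, which is where the estimate is actually needed.
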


\begin{proof}

\textbf{1.} Suppose $j\in\{1,2,3\}$.

For $0\leq x\leq s$, let $\pazocal{C}_x':W_{0,x}'\to\dots\to W_{t,x}'$ be the restriction of $\pazocal{C}$ to the $P_x'R_x'$-sector.  Then $\pazocal{C}_x'$ satisfies the hypotheses of \Cref{multiply two letters}, so that (c) implies $z\leq\max(|W_{0,x}'|_a,|W_{z,x}'|_a)$.

Similarly, letting $\pazocal{C}_x'':W_{0,x}''\to\dots\to W_{t,x}''$ be the restriction of $\pazocal{C}$ to the $P_x''R_x''$-sector, \Cref{multiply two letters}(c) implies $z\leq\max(|W_{0,x}''|_a,|W_{z,x}''|_a)$.

Suppose $UV$ is another two-letter subword of the standard base such that not every rule of $\textbf{M}_5(j)$ locks the $UV$-sector.  As in the proof of \Cref{M_5(j) standard time}, the number of such subwords is at most $s$.

Letting $\pazocal{C}_{UV}:W_{0,UV}\to\dots\to W_{t,UV}$ be the restriction of $\pazocal{C}$ to the $UV$-sector, \Cref{simplify rules} implies $|W_{z,UV}|_a\leq|W_{0,UV}|_a+2z$.

Suppose $|W_{0,x}'|_a\geq|W_{z,x}'|_a$ and $|W_{0,x}''|_a\geq|W_{z,x}''|_a$ for all $0\leq x\leq s$.  Then for every $UV$ as above, $|W_{z,UV}|_a\leq|W_{0,UV}|_a+|W_{0,x}'|_a+|W_{0,x}''|_a$.  As a result, $|W_z|_a\leq2|W_0|_a$.

Hence, by hypothesis there exists $x\in\{0,\dots,s\}$ such that $|W_{0,x}'|_a<|W_{z,x}'|_a$ or $|W_{0,x}''|_a<|W_{z,x}''|_a$.  

Suppose there exists $0\leq y\leq z-1$ such that $|W_{y+1,x}'|_a>|W_{y,x}'|_a$.  Then \Cref{multiply two letters}(a) implies $|W_{y,x}'|_a<\dots<|W_{t,x}'|_a$, so that the statement holds for $U_\ell V_\ell=U_rV_r=P_x'R_x'$.

Otherwise, there exists $0\leq y\leq z-1$ such that $|W_{y+1,x}''|_a>|W_{y,x}''|_a$.  But then again \Cref{multiply two letters}(a) implies the statement holds for $U_\ell V_\ell=U_rV_r=P_x''R_x''$.

\textbf{2.} Suppose $j=4$.  

For $0\leq x\leq s$, let $\pazocal{C}_x':W_{0,x}'\to\dots\to W_{t,x}'$ be the restriction of $\pazocal{C}$ to the subword $P_x'R_x'Q_{x,r}'$ of the standard base.  Then $\pazocal{C}_x'$ can be identified with a reduced computation of $\textbf{LR}_k$ in the standard base, and so satisfies the hypotheses of \Cref{primitive computations}.

Similarly, the restriction $\pazocal{C}_x'':W_{0,x}''\to\dots\to W_{t,x}''$ of $\pazocal{C}$ to the subword $Q_{x,\ell}''P_x''R_x''$ can be identified with a reduced computation of $\textbf{RL}_k$ in the standard base.

Note that $|W_0|_a=\sum_{x=0}^s(|W_{0,x}'|_a+|W_{0,x}''|_a)$ and $|W_z|_a=\sum_{x=0}^s(|W_{z,x}'|_a+|W_{z,x}''|_a)$.  By hypothesis, there then exists $x\in\{0,\dots,s\}$ such that $|W_{0,x}'|_a<|W_{z,x}'|_a$ or $|W_{0,x}''|_a<|W_{z,x}''|_a$.  

Suppose there exists $0\leq y\leq z-1$ such that $|W_{y+1,x}'|_a>|W_{y,x}'|_a$.  Then \Cref{primitive computations}(1) implies $|W_{y,x}'|_a<\dots<|W_{t,x}'|_a$, so that the statement holds for $U_\ell V_\ell=R_x'Q_{x,r}'$ and $U_rV_r=P_x'R_x'$.

Otherwise, there exists $0\leq y\leq z-1$ such that $|W_{y+1,x}''|_a>|W_{y,x}''|_a$.  But then again \Cref{primitive computations}(1) implies the statement for $U_\ell V_\ell=P_x''R_x''$ and $U_rV_r=Q_{x,\ell}''P_x''$.

\textbf{3.} Suppose $j=0$.

For $0\leq x\leq s$, let $\pazocal{C}_x':W_{0,x}'\to\dots\to W_{t,x}'$ and $\pazocal{C}_x'':W_{0,x}''\to\dots\to W_{t,x}''$ be the restrictions of $\pazocal{C}$ to the subwords $Q_{x,\ell}'P_x'R_x'$ and $P_x''R_x''Q_{x,r}''$, respectively.  Analogous to the previous case, both of these computations can be identified with a reduced computation of a primitive machine in the standard base, and so satisfies the hypotheses of \Cref{primitive computations}.

Let $w\in F(X)$ such that $W_0$ has $w$ written in its $Q_0Q_1$-sector.  Then since every rule of $\textbf{M}_5(0)$ leaves the tape word of the $Q_0Q_1$-fixed and locks any other sector not addressed, for all $i$ we have $|W_i|_a=\|w\|+\sum_{x=0}^s(|W_{i,x}'|_a+|W_{i,x}''|_a)$.  

As in the previous case, the hypothesis then implies there must then exist $x\in\{0,\dots,s\}$ such that $|W_{0,x}'|_a<|W_{z,x}'|_a$ or $|W_{0,x}''|_a<|W_{z,x}''|_a$.

If there exists $0\leq y\leq z-1$ such that $|W_{y+1,x}'|_a>|W_{y,x}'|_a$, then by \Cref{primitive computations}(1) the statement holds for $U_\ell V_\ell=P_x'R_x'$ and $U_rV_r=Q_{x,\ell}'P_x'$.

Otherwise, there exists $0\leq y\leq z-1$ such that $|W_{y+1,x}''|_a>|W_{y,x}''|_a$, so that the statement holds for $U_\ell V_\ell=R_x''Q_{x,r}''$ and $U_rV_r=P_x''R_x''$.

\textbf{4.} Suppose $j=5$.

For $0\leq x\leq s$, let $\pazocal{C}_x':W_{0,x}'\to\dots\to W_{t,x}'$ be the restriction of $\pazocal{C}$ to the $P_x'R_x'$-sector.  Then $\pazocal{C}_x'$ satisfies the hypotheses of \Cref{multiply one letter}.

Similarly, the restriction $\pazocal{C}_x'':W_{0,x}''\to\dots\to W_{t,x}''$ of $\pazocal{C}$ to the $P_x''R_x''$-sector satisfies the hypotheses of \Cref{multiply one letter}.

Suppose $|W_{y+1,x}'|_a\leq|W_{y,x}'|_a$ for all $0\leq y\leq z-1$ and all $0\leq x\leq s$.  Then by construction $|W_{z,x}'|_a=|W_{0,x}'|_a-z$ for all $x$.  But at the same time $|W_{z,x}''|_a\leq|W_{0,x}''|_a+z$ for all $x$, so that $|W_z|_a\leq|W_0|_a$.

Hence, by hypothesis there must exist $x_1\in\{0,\dots,s\}$ such that $|W_{y_1+1,x_1}'|_a>|W_{y_1,x_1}'|_a$ for some $0\leq y_1\leq z-1$.  \Cref{multiply one letter}(c) then implies $|W_{y_1,x_1}'|_a<\dots<|W_{t,x_1}'|_a$.

By the symmetric argument, there exists $x_2\in\{0,\dots,s\}$ such that $|W_{y_2+1,x_2}''|_a>|W_{y_2,x_2}''|_a$ for some $0\leq y_2\leq z-1$, so that $|W_{y_2,x_2}''|_a<\dots<|W_{t,x_2}|_a$.

Hence, the statement follows for $U_\ell V_\ell=P_{x_2}''R_{x_2}''$ and $U_rV_r=P_{x_1}'R_{x_1}'$.

\end{proof}

\medskip


\subsection{Faulty computations of $\textbf{M}_5$} \

In this section, we establish a bound on the space of a reduced computation of $\textbf{M}_5$ with faulty base.  Our first step toward this goal is to achieve this bound for one-step computations, specifically for the only step left outstanding by \Cref{barM_4(j) faulty}.

\begin{lemma} \label{M_5(j) faulty}

For every reduced computation $\pazocal{C}:W_0\to\dots\to W_t$ of $\textbf{M}_5(0)$ with faulty base, $|W_i|_a\leq c_0(|W_0|_a+|W_t|_a)$ for all $0\leq i\leq t$.

\end{lemma}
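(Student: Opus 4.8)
The claim is a space bound for reduced computations of the submachine $\textbf{M}_5(0)$ with faulty base. The submachine $\textbf{M}_5(0)$ operates in parallel as $\textbf{RL}(\Phi^+)$ on the subwords $Q_{i,\ell}'P_i'R_i'$ and as $\textbf{LR}(\Phi^+)$ on the subwords $P_i''R_i''Q_{i,r}''$, leaves the $Q_0Q_1$-sector unlocked with full domain, and locks every other sector. Since this is structurally the same kind of argument as Case 4 of the proof of \Cref{barM_4(j) faulty} (for $\overline{\textbf{M}}_4(4)$), the strategy is to mirror that argument: reduce to the case $|W_i|_a>\max(|W_0|_a,|W_t|_a)$ for all $1\leq i\leq t-1$ (otherwise split at the first index where the $a$-length drops back to $|W_0|_a$ and induct), and then analyze the faulty base $B$.

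\textbf{Key steps.} First I would observe that, since every rule of $\textbf{M}_5(0)$ locks the $\{t\}Q_0$- and $Q_{s,r}''\{t\}$-sectors, the faulty base $B$ cannot contain a letter of the form $t^{\pm1}$; and since the $Q_0Q_1$-sector is never locked but never has its tape word changed either (every rule of $\textbf{M}_5(0)$ leaves that sector's tape word fixed), any occurrence of $Q_0^{\pm1}$ or $Q_1^{\pm1}$ contributes a sector whose $a$-length is constant throughout $\pazocal{C}$. Next, if the history $H$ contains no connecting rule of either $\textbf{RL}(\Phi^+)$ or $\textbf{LR}(\Phi^+)$, then the restriction of $\pazocal{C}$ to each two-letter subword of $B$ either has fixed tape word, or satisfies the hypotheses of \Cref{multiply one letter}, or (for an unreduced two-letter subword) satisfies the hypotheses of \Cref{unreduced base}; summing over all $\|B\|-1$ sectors gives $|W_i|_a\leq|W_0|_a+|W_t|_a$ directly, so the statement holds with $c_0\geq1$. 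So it suffices to assume $H\equiv H_1\chi^{\pm1}H_2$ where $\chi$ is a connecting rule and $H_2$ contains no connecting rule; write $\pazocal{C}_2:W_s\to\dots\to W_t$ for the subcomputation with history $H_2$, which we may take nonempty ($0<s<t$) since connecting rules do not alter tape words.

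\textbf{The main obstacle.} The heart of the argument, as in \Cref{barM_4(j) faulty}, is to show $|W_s|_a\leq|W_t|_a$ (which, combined with the symmetric claim for the inverse computation of $W_0\to\dots\to W_s$, contradicts $|W_i|_a>\max(|W_0|_a,|W_t|_a)$). Here I would use \Cref{locked sectors}: since $\chi$ locks certain sectors (say the $R_i'Q_{i,r}'$- and $Q_{i,\ell}''P_i''$-sectors, up to choice of which connecting rule $\chi$ is), any unreduced two-letter subword of $B$ must be one of $P_i'(P_i')^{-1}$, $(R_i')^{-1}R_i'$, $P_i''(P_i'')^{-1}$, $(R_i'')^{-1}R_i''$. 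Consequently any letter $(R_i')^{\pm1}$ or $(P_i'')^{\pm1}$ occurring in $B$ lies inside an `active' subword of a cyclic permutation of $B$ of the form $(P_i'R_i'Q_{i,r}')^{\pm1}$, $(Q_{i,\ell}''P_i''R_i'')^{\pm1}$, $(Q_{i,r}')^{-1}(R_i')^{-1}R_i'Q_{i,r}'$, or $Q_{i,\ell}''P_i''(P_i'')^{-1}(Q_{i,\ell}'')^{-1}$; moreover if $B$ contains such a letter, the cyclic permutation whose prefix is the corresponding active subword contains all active subwords. Restricting $\pazocal{C}_2$ to an active subword $B'$ and applying \Cref{primitive computations}(5) if $B'$ is reduced and \Cref{primitive unreduced} if it is unreduced gives $|W_{s}'|_a\leq|W_t'|_a$ on that restriction (noting $\pazocal{C}_2$ contains no connecting rule, so it operates as the single primitive machine $\textbf{RL}$ or $\textbf{LR}$ there); and every sector not appearing inside an active subword has fixed tape word throughout $\pazocal{C}_2$. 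Summing yields $|W_s|_a\leq|W_t|_a$, which is the desired contradiction, completing the proof with $c_0\geq1$. The only real subtlety is the bookkeeping of the cyclic-permutation / active-subword combinatorics for a faulty (revolving, unreduced) base that may wrap around the standard base, but this is exactly the bookkeeping already carried out in Case 4 of \Cref{barM_4(j) faulty}, so I would follow it verbatim.
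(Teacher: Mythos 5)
Your proposal matches the paper's proof essentially step for step: the same reduction to $|W_i|_a>\max(|W_0|_a,|W_t|_a)$, the same treatment of connecting-rule-free histories via \Cref{multiply one letter} and \Cref{unreduced base}, the same factorization $H\equiv H_1\chi^{\pm1}H_2$, and the same use of \Cref{locked sectors} together with \Cref{primitive computations}(5) and \Cref{primitive unreduced} on the restrictions of $\pazocal{C}_2$ to force $|W_s|_a\leq|W_t|_a$ and contradict the monotonicity assumption. The only correction needed is the sector bookkeeping you already hedged on: $\textbf{M}_5(0)$ runs $\textbf{RL}$ on $Q_{i,\ell}'P_i'R_i'$ and $\textbf{LR}$ on $P_i''R_i''Q_{i,r}''$ (not the $\overline{\textbf{M}}_4(4)$ layout you transcribed), so $\chi$ locks either the $Q_{i,\ell}'P_i'$- and $R_i''Q_{i,r}''$-sectors or the $P_i'R_i'$- and $P_i''R_i''$-sectors, and the active subwords are $(Q_{i,\ell}'P_i'R_i')^{\pm1}$, $Q_{i,\ell}'P_i'(P_i')^{-1}(Q_{i,\ell}')^{-1}$, $(P_i''R_i''Q_{i,r}'')^{\pm1}$, and $(Q_{i,r}'')^{-1}(R_i'')^{-1}R_i''Q_{i,r}''$, which changes nothing in the argument.
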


\begin{proof}

Let $H$ be the history and $B$ be the base of $\pazocal{C}$.

As in the proofs of \Cref{barM_4(j) faulty} and \Cref{barM_4 faulty}, it suffices to assume that $|W_i|_a>\max(|W_0|_a,|W_t|_a)$ for all $0<i<t$.

As in the first two cases of the proof of \Cref{barM_4(j) faulty}, if $H$ does not contain a connecting rule, then the restriction of $\pazocal{C}$ to any two-letter subword of $B$ either has fixed tape word, satisfies the hypotheses of \Cref{multiply one letter}, or satisfies the hypotheses of \Cref{unreduced base}, so that the statement holds for $c_0\geq1$.

Hence, it suffices to assume $H\equiv H_1\chi^{\pm1}H_2$ where $\chi$ is a connecting rule and $H_2$ contains no connecting rule.  Let $\pazocal{C}_2:W_s\to\dots\to W_t$ be the subcomputation with history $H_2$.

Suppose $\chi$ locks the $Q_{i,\ell}'P_i'$- and $R_i''Q_{i,r}''$-sectors.  

Then any occurrence of a letter of the form $(P_i')^{\pm1}$ in $B$ must be part of a subword of a cyclic permutation of $B$ of the form $(Q_{i,\ell}'P_i'R_i')^{\pm1}$ or $Q_{i,\ell}'P_i'(P_i')^{-1}(Q_{i,\ell}')^{-1}$.  For $W_s'\to\dots\to W_t'$ the restriction of $\pazocal{C}_2$ to such a subword satisfies either the hypotheses of \Cref{primitive computations}(5) or those of \Cref{unreduced base}, so that $|W_s'|_a\leq|W_t'|_a$.

Similarly, any occurrence of a letter of the form $(R_i'')^{\pm1}$ in $B$ must be a part of such a subword of the form $(P_i''R_i''Q_{i,r}'')^{\pm1}$ or $(Q_{i,r}'')^{-1}(R_i'')^{-1}R_i''Q_{i,r}''$, in which case we may apply the same statements to the restriction of $\pazocal{C}_2$.

But any other two-letter subword of $B$ must correspond to a sector whose tape word is fixed by all rules of $\textbf{M}_5(0)$, so that $|W_s|_a\leq|W_t|_a$.

Thus, we may assume that $\chi$ locks the $P_i'R_i'$- and $P_i''R_i''$-sectors.  But then the analogous argument again yields the contradiction $|W_s|_a\leq|W_t|_a$.

\end{proof}

\begin{lemma} \label{M_5 faulty}

If $\pazocal{C}:W_0\to\dots\to W_t$ is a reduced computation of $\textbf{M}_5$ whose base $B$ is faulty, then $|W_i|_a\leq c_1\max(|W_0|_a,|W_t|_a)$ for all $0\leq i\leq t$.

\end{lemma}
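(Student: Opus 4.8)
The strategy mirrors that of \Cref{barM_4 faulty}: reduce to the case in which the history $H$ of $\pazocal{C}$ contains a transition rule (by \Cref{M_5(j) faulty}, \Cref{barM_4(j) faulty}, and the parameter choice $c_1 >> c_0$, the case where $H$ lies entirely within a single submachine $\textbf{M}_5(j)$ is already handled), and then to assume, as in the proofs of \Cref{barM_4(j) faulty} and \Cref{barM_4 faulty}, that $|W_i|_a > \max(|W_0|_a,|W_t|_a)$ for all $1 \leq i \leq t-1$; in particular neither the first nor the last letter of $H$ is a transition rule. Since $\textbf{M}_5$ contains a copy of $\overline{\textbf{M}}_4$ via the submachines $\textbf{M}_5(1),\dots,\textbf{M}_5(5)$, the case where $H$ contains no letter of the form $\sigma(01)^{\pm1}$ follows directly from \Cref{barM_4 faulty} (again using $c_1 >> c_0$ to absorb the constant). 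So the genuinely new content is the analysis of faulty computations whose history involves the transition rule $\sigma(01)$.

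I would handle this new case exactly as Cases 1--8 of \Cref{barM_4 faulty} handle $\sigma(45)$ and $\sigma(34)$, since $\sigma(01)$ is structurally the `mirror image' of $\sigma(34)$ relative to $\overline{\textbf{M}}_4(4)$: it locks all sectors except the $Q_0Q_1$-, $P_i'R_i'$-, and $P_i''R_i''$-sectors. First I would split by whether $B$ contains a letter of the form $(P_i')^{\pm 1}$ or $(R_i'')^{\pm 1}$ (the `active' letters for $\textbf{M}_5(0)$, which operates as $\textbf{RL}$ on $Q_{i,\ell}'P_i'R_i'$ and $\textbf{LR}$ on $P_i''R_i''Q_{i,r}''$). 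If it does, one argues as in Case 1 of \Cref{barM_4 faulty}: a maximal subcomputation with step history $(0)$ adjacent to $\sigma(10)$ cannot contain a connecting rule of $\textbf{M}_5(0)$ (by \Cref{locked sectors} applied to the unreduced two-letter subwords forced by faultiness), and its restriction to an active subword of a cyclic permutation of $B$ satisfies the hypotheses of \Cref{primitive computations} or \Cref{primitive unreduced}, giving a non-increase in $a$-length that contradicts the standing assumption. If $B$ contains no such active letter, then faultiness and \Cref{locked sectors} pin $B$ down to a specific cyclic permutation of an explicit base (analogous to $B'$ in Case 2 of \Cref{barM_4 faulty}), on which no rule of $\textbf{M}_5(0)$ or the transition rules into the rest of the machine alters any tape word, forcing $|W_0|_a = \dots = |W_t|_a$; or else one peels off $\sigma(10)$, invokes \Cref{right historical end} (now stated for $\textbf{M}_5$ with base subwords of the relevant forms) to control the step history of the remaining computation, and applies \Cref{M_4 restriction} together with \Cref{barM_4(j) faulty} to bound the space, as in Cases 4 and 7 of \Cref{barM_4 faulty}. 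The constant accumulated is a fixed power of $c_0$, absorbed by $c_1 >> c_0$.

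The main obstacle I anticipate is bookkeeping rather than conceptual: one must carefully enumerate which faulty bases are compatible with the locking pattern of $\sigma(01)$ (recalling that $\textbf{M}_5$ is cyclic, so there are `wrap-around' faulty bases through the $\{t\}Q_0$- and $Q_{s,r}''\{t\}$-sectors to consider, both of which have empty tape alphabet and hence contribute nothing), and then check in each case that the relevant restriction falls under \Cref{primitive computations}, \Cref{primitive unreduced}, \Cref{multiply one letter}, or \Cref{unreduced base}. A secondary subtlety is that when the history mixes $\sigma(01)$ with the interior transition rules $\sigma(12),\dots,\sigma(45)$, one should first reduce to the innermost occurrence and argue that the intervening subcomputation must have a step history that is a subword of the palindromic word from \Cref{M_5 step history}, so that at most one `turnaround' at $\textbf{M}_5(4)$ (handled by a controlled-history / primitive-machine argument) and at most one at $\textbf{M}_5(0)$ occur; everything else is then either space-preserving or covered by the already-proved lemmas. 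Since every ingredient — \Cref{barM_4 faulty}, \Cref{barM_4(j) faulty}, \Cref{M_5(j) faulty}, \Cref{right historical end}, \Cref{M_4 restriction}, and the primitive-machine lemmas — is in hand, no new technique is required, only a disciplined case analysis paralleling the proof of \Cref{barM_4 faulty}.
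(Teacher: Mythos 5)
Your overall architecture is the paper's: reduce (via \Cref{barM_4 faulty} and \Cref{M_5(j) faulty}) to histories containing $\sigma(01)^{\pm1}$, assume $|W_i|_a>\max(|W_0|_a,|W_t|_a)$ for $0<i<t$, and split according to whether $B$ contains a letter of the form $(P_i')^{\pm1}$ or $(R_i'')^{\pm1}$. Your first case coincides with the paper's: the restriction of the maximal $(0)$-subcomputation adjacent to the transition to an active subword falls under \Cref{primitive computations} or \Cref{primitive unreduced}, forcing $|W_r|_a\leq|W_t|_a$ and a contradiction.

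The gap is in your second case. When $B$ contains no letter $(P_i')^{\pm1}$ or $(R_i'')^{\pm1}$, faultiness pins $B$ down to a cyclic permutation of $(R_s')^{-1}R_s'Q_{s,r}'Q_{0,\ell}''P_0''(P_0'')^{-1}(Q_{0,\ell}'')^{-1}(Q_{s,r}')^{-1}(R_s')^{-1}$, so the only unreduced historical subwords available are of the forms $(R_i')^{-1}R_i'$ and $P_i''(P_i'')^{-1}$. These are precisely the forms \emph{not} covered by \Cref{right historical end}, whose hypotheses require $(P_i'R_i')^{\pm1}$, $(P_i''R_i'')^{\pm1}$, $P_i'(P_i')^{-1}$, or $(R_i'')^{-1}R_i''$; and no mirrored restatement can rescue it, because the mirrored conclusion is false here. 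Starting from the $\sigma(10)$-admissible word $W_{s+1}$ (whose letters in these sectors are left-historical), a step-$(1)$ subcomputation conjugates the sector word by a right-historical copy of a word over $\Phi_1^+$ with no cancellation (\Cref{one alphabet historical words unreduced}); the resulting right-historical $\Phi_1$-letters lie in the domain of $\sigma(12)$, so the word remains $\sigma(12)$-admissible and the step history \emph{ascends} through $(1)(2)(3)$ rather than descending as in Cases 4 and 7 of \Cref{barM_4 faulty}. The paper's proof therefore cannot and does not quote a step-history lemma at this point: it tracks the conjugators $v_1,v_2,v_3$ and the persistence of the uncancelled left-historical word $u_{s+1}'$, observes that $\sigma(23)$-admissibility of $W_y$ forces every letter of $u_{s+1}'$ to be a left-historical $\Phi_3$-letter, which blocks $\sigma(34)$-admissibility of $W_{y+1}$ (the domain of $\sigma(34)$ in these sectors being the right historical alphabet), concludes that the computation must terminate inside step $(3)$, and then extracts the strict growth $|W_{y+1}|_a<|W_t|_a$ from \Cref{M_4 restriction}(b)/\Cref{one alphabet historical words unreduced}, contradicting the minimality assumption. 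This blocking-at-$\sigma(34)$ argument is the essential new content of the lemma, and it is absent from your sketch; as written, your appeal to \Cref{right historical end} plus the bookkeeping of Cases 4 and 7 would not close this case.
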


\begin{proof}

As in the proofs of the previous statements of this nature, it suffices to assume that $|W_i|_a>\max(|W_0|_a,|W_t|_a)$ for all $0<i<t$.

By \Cref{barM_4 faulty} and \Cref{M_5(j) faulty}, it suffices to assume that the history $H$ of $\pazocal{C}$ contains a letter of the form $\sigma(01)^{\pm1}$.  Perhaps passing to the inverse computation, we may then assume the existence of a nonempty maximal subcomputation $\pazocal{C}_0:W_r\to\dots\to W_s$ with step history $(0)$ such that the transition $W_{r-1}\to W_r$ is given by $\sigma(01)$.

Suppose $B$ contains a letter of the form $(P_i')^{\pm1}$ or one of the form $(R_i'')^{\pm1}$.  

Then a cyclic permutation of $B$ contains a subword of the form $(Q_{i,\ell}'P_i'R_i')^{\pm1}$, $Q_{i,\ell}'P_i'(P_i')^{-1}(Q_{i,\ell}')^{-1}$, $(P_i''R_i''Q_{i,r}'')^{\pm1}$, or $(Q_{i,r}'')^{-1}(R_i'')^{-1}R_i''Q_{i,r}''$.  The restriction $W_r'\to\dots\to W_s'$ of $\pazocal{C}_0$ to this subword then either satisfies the hypotheses of \Cref{primitive computations} or \Cref{primitive unreduced}.  In either case, this implies $|W_r'|_a\leq|W_s'|_a$ and $W_s$ is not $\sigma(01)$-admissible, so that $s=t$.

But then applying the same argument to any other occurrences of letters of the form $(P_i')^{\pm1}$ or $(R_i'')^{\pm1}$ in $B$ and noting that every rule of $\textbf{M}_5(0)$ fixes the tape word of any sector not containing such a letter, we then deduce that $|W_r|_a\leq|W_t|_a$.

Hence, it suffices to assume that $B$ contains neither a letter of the form $(P_i')^{\pm1}$ nor one of the form $(R_i'')^{\pm1}$, and so is a cyclic permutation of:
$$(R_s')^{-1}R_s'Q_{s,r}'Q_{0,\ell}''P_0''(P_0'')^{-1}(Q_{0,\ell}'')^{-1}(Q_{s,r}')^{-1}(R_s')^{-1}$$
By the reasoning above, it follows that every rule of $\textbf{M}_5(0)$ fixes the tape words of an admissible word with base $B$, so that $|W_r|_a=\dots=|W_s|_a$.  It then follows from the hypothesis that $t>s$, so that there exists a maximal subcomputation $\pazocal{C}_1:W_{s+1}\to\dots\to W_x$ with step history $(1)$.

Let $\pazocal{C}_1':W_{s+1}'\to\dots\to W_x'$ and $\pazocal{C}_1'':W_{s+1}''\to\dots\to W_x''$ be the restrictions of $\pazocal{C}_1$ to the $(R_s')^{-1}R_s'$- and $P_0''(P_0'')^{-1}$-sectors, respectively.  Since every rule of $\textbf{M}_5(1)$ locks the $R_i'Q_{i,r}'$-, $Q_{s,r}'Q_{0,\ell}''$-, and $Q_{i,\ell}''P_i''$-sectors, $|W_i|_a=|W_i'|_a+|W_i''|_a$ for all $s+1\leq i\leq x$.

For each $s+1\leq i\leq x$, let $u_i'$ and $u_i''$ be the tape words of $W_i'$ and $W_i''$, respectively.  Note that since $W_{s+1}$ is $\sigma(10)$-admissible, $u_{s+1}'$ and $u_{s+1}''$ must both be nonempty words over the corresponding left historical alphabets.

As a result, if $\pazocal{C}_1$ is nonempty, then both $\pazocal{C}_1'$ and $\pazocal{C}_1''$ satisfy the hypotheses of \Cref{M_4 restriction}(b), so that $W_x$ is not $\sigma(10)$-admissible, $|W_{s+1}'|_a<|W_x'|_a$, and $|W_{s+1}''|_a<|W_x''|_a$.  But then $|W_r|_a<|W_x|_a$, so that $t>x$ and $W_x$ must necessarily be $\sigma(12)$-admissible.

Conversely, if $\pazocal{C}_1$ is empty, then since transition rules do not alter tape words by hypothesis $t>x$ and, as $H$ is reduced, again $W_x$ must necessarily be $\sigma(12)$-admissible.

In either case, there exists a (perhaps empty) word $v_1$ over the copy of $\Phi_1^+$ in the corresponding right historical alphabet such that $u_x'=v_1^{-1}u_{s+1}'v_1$.  Since $u_{s+1}'$ is formed over the corresponding left historical alphabet, none of its letters can cancel in this product.  So, since $W_x$ is $\sigma(12)$-admissible, no letter of $u_{s+1}'$ can be a copy of a letter of $\Phi_1$ in the corresponding left alphabet.  

The analogous argument may be applied to $u_{s+1}''$.

Now, as above there must exist a maximal subcomputation $\pazocal{C}_2:W_{x+1}\to\dots\to W_y$ with step history (2).  Applying the same arguments to the restrictions of this computation then implies $t>y$ and $W_y$ must necessarily be $\sigma(23)$-admissible.  

Moreover, letting $u_y'$ be the tape word of $W_y$ in the $(R_s')^{-1}R_s'$-sector, then there exists a (perhaps empty) word $v_2$ over the copy of $\Phi_2^+$ in the corresponding right historical alphabet such that $u_y'=(v_1v_2)^{-1}u_{s+1}'v_1v_2$.  Again, no letter of $u_{s+1}'$ can cancel in this product. So, since $W_y$ is $\sigma(23)$-admissible, every letter of $u_{s+1}'$ must be a copy of a letter of $\Phi_3$ in the corresponding left alphabet, {\frenchspacing i.e. $W_{y+1}\equiv W_y\cdot\sigma(23)$ cannot be $\sigma(34)$-admissible.}

This implies there must be a nonempty maximal subcomputation $\pazocal{C}_3:W_{y+1}\to\dots\to W_z$ with step history $(3)$.  

The restriction $\pazocal{C}_3':W_{y+1}'\to\dots\to W_z'$ to the $(R_s')^{-1}R_s'$-sector then satisfies the hypotheses of \Cref{M_4 restriction}(b), so that $W_z$ is not $\sigma(32)$-admissible.  Moreover, letting $u_z'$ be the tape word of $W_z'$, there exists a (nonempty) word $v_3$ over the copy of $\Phi_3^+$ in the corresponding right alphabet such that $u_z'=(v_1v_2v_3)^{-1}u_{s+1}'v_1v_2v_3$.  As no letter of $u_{s+1}'$ cancels in this product, $W_z$ cannot be $\sigma(34)$-admissible.  Hence, $t=z$.

But then the same argument that was applied to $\pazocal{C}_1$ above implies $|W_{y+1}|_a<|W_t|_a$.

\end{proof}

\medskip


\subsection{The machines $\textbf{M}_{6,1}$ and $\textbf{M}_{6,2}$} \

As in \cite{WCubic}, \cite{W}, \cite{WMal}, the main machine of our construction is built from two concatenations of many copies of the machine $\textbf{M}_5$.  As in those settings, this construction is facilitated by the introduction of a final pair of auxiliary machines, denoted $\textbf{M}_{6,1}$ and $\textbf{M}_{6,2}$.

Recall that the standard base of $\textbf{M}_5$ is $\{t\}B_4$, where $B_4$ is the standard base of $\textbf{M}_4$, i.e.
$$B_4\equiv Q_0\dots Q_{n-1}~Q_{0,\ell}'P_0'R_0'Q_{0,r}'\dots Q_{s,\ell}'P_s'R_s'Q_{s,r}'~Q_{0,\ell}''P_0''R_0''Q_{0,r}''\dots Q_{s,\ell}''P_s''R_s''Q_{s,r}''$$
For $1\leq i\leq L$, let $\{t(i)\}B_4(i)$ be a copy of the standard base of $\textbf{M}_5$, where each letter of $B_4(i)$ is denoted with the letter $i$ {\frenchspacing(i.e. $B_4(i)=Q_0(i)Q_1(i)\dots$)}.

The standard base of each of the machines $\textbf{M}_{6,j}$ is then:
$$\{t(1)\}B_4(1)\{t(2)\}B_4(2)\dots\{t(L)\}B_4(L)$$
For any letter of $(\{t(i)\}B_4(i))^{\pm1}$, the index $i$ is called its \textit{coordinate}.

Each part of the state letters is in bijection with the corresponding of $\textbf{M}_5$, with the start and end letters assigned in the natural way.  

The tape alphabet of any sector formed by a one-letter part $\{t(i)\}$ of the standard base (including the $Q_{s,r}''(L)\{t(1)\}$-sector) is defined to be empty. The tape alphabets of all other sectors arise from $\textbf{M}_5$ in the natural way. 

Every sector corresponding to an input sector of $\textbf{M}_5$ is taken as an input sector of the machine, {\frenchspacing i.e. the} $Q_0(i)Q_1(i)$-sectors, the $P_j'(i)R_j'(i)$-sectors, and the $P_j''(i)R_j''(i)$-sectors.  An input configuration of $\textbf{M}_{6,j}$ thus corresponds to the concatenation of $L$ input configurations of $\textbf{M}_5$.  Accordingly, such an input configuration is called \textit{tame} if all of the input configurations of $\textbf{M}_5$ comprising it are tame.

While the hardware of these two machines are analogous, though, the software has a slight but fundamental difference.

The rules of $\textbf{M}_{6,1}$ are in correspondence with those of $\textbf{M}_5$, with each rule operating in parallel on each of the copies of the standard base of $\textbf{M}_5$ in the same way as its corresponding rule.

Naturally, there arise submachines $\textbf{M}_{6,1}(0),\dots,\textbf{M}_{6,1}(5)$ corresponding to the submachines of $\textbf{M}_5$. As such, the definition of step history and controlled history extend to computations of $\textbf{M}_{6,1}$.

The parallel nature of this construction implies that many of the statements of the previous sections have natural analogues to computations in $\textbf{M}_{6,1}$. For example, given $w\in F(X)$ and $H\in F(\Phi^+)$, letting $I_6(w,H)$ be the input configuration obtained by concatenating $L$ copies of $I_5(w,H)$, the following is the obvious analogue of Lemma \ref{M_5 language}:

\begin{lemma} \label{M_{6,1} language} \

\begin{enumerate}

\item For any $w\in\pazocal{R}_1$, the configuration $I_6(w,H_w)$ is accepted by $\textbf{M}_{6,1}$, where $H_w\in F(\Phi^+)$ is the word given by \Cref{M_5 language}(1).

\item The tame input configuration $W$ of $\textbf{M}_{6,1}$ is accepted if and only if there exists $w\in\pazocal{R}_1$ and $H\in F(\Phi^+)$ such that $W\equiv I_6(w,H)$.  Moreover, in this case $\|H\|\geq k$, $w$ is uniquely determined by $H$, and there exists a unique accepting computation of $W$, which has length $(2k+4)\|H\|+2k+5$.

\end{enumerate}

\end{lemma}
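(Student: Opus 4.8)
The plan is to derive Lemma \ref{M_{6,1} language} directly from Lemma \ref{M_5 language} by exploiting the fact that $\textbf{M}_{6,1}$ is defined as the parallel composition of $L$ copies of $\textbf{M}_5$, so that any reduced computation of $\textbf{M}_{6,1}$ in the standard base projects, coordinate by coordinate, to a reduced computation of $\textbf{M}_5$ in its standard base, and conversely the $L$ copies of a single reduced computation of $\textbf{M}_5$ paste together to a reduced computation of $\textbf{M}_{6,1}$. The only subtlety is that the histories of all $L$ coordinates must be \emph{the same word} in $F(\Phi^+)$, since each rule of $\textbf{M}_{6,1}$ acts simultaneously on every copy; this is precisely what forces the conclusion that the same $w$ and $H$ appear in every block.

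For part (1), I would start from $w\in\pazocal{R}_1$ and let $H_w$ and the accepting computation $\pazocal{C}_5:I_5(w,H_w)\to\dots\to A_5$ of $\textbf{M}_5$ be those provided by Lemma \ref{M_5 language}(1). Running the rules of $\textbf{M}_{6,1}$ whose coordinate-wise projections are exactly the rules of $\pazocal{C}_5$ yields a computation of $\textbf{M}_{6,1}$ that starts at the concatenation $I_6(w,H_w)$ of $L$ copies of $I_5(w,H_w)$ and ends at the concatenation of $L$ copies of the accept configuration of $\textbf{M}_5$, which is exactly the accept configuration of $\textbf{M}_{6,1}$; hence $I_6(w,H_w)$ is accepted.

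For part (2), let $\pazocal{C}:W\equiv W_0\to\dots\to W_t$ be a reduced accepting computation of a tame input configuration $W$, with history $H$. Since every rule of $\textbf{M}_{6,1}$ locks the one-letter sectors $\{t(i)\}Q_0(i)$ and the sector $Q_{s,r}''(L)\{t(1)\}$, no base letter of the form $t(i)^{\pm1}$ is touched, and the restriction of $\pazocal{C}$ to the subword $\{t(i)\}B_4(i)$ of the standard base can be identified with a reduced computation $\pazocal{C}^{(i)}$ of $\textbf{M}_5$ in its standard base, with the same history word $H$ viewed in the software of $\textbf{M}_5$ (the rule of $\textbf{M}_{6,1}$ applied at each step restricts in each coordinate to the corresponding rule of $\textbf{M}_5$). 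Because $W$ is a tame input configuration of $\textbf{M}_{6,1}$, each $W^{(i)}_0$ is a tame input configuration of $\textbf{M}_5$, and because $W_t$ is the accept configuration of $\textbf{M}_{6,1}$, each $W^{(i)}_t$ is the accept configuration of $\textbf{M}_5$, so $\pazocal{C}^{(i)}$ is an accepting computation of a tame input configuration. Applying Lemma \ref{M_5 language}(2) to $\pazocal{C}^{(1)}$ produces $w\in\pazocal{R}_1$ and $H'\in F(\Phi^+)$ with $\|H'\|\geq k$, $w$ uniquely determined by $H'$, $W^{(1)}_0\equiv I_5(w,H')$, $H$ equal (as a word) to the unique accepting history, and $t=(2k+4)\|H'\|+2k+5$; since the history $H$ of $\pazocal{C}^{(i)}$ is the same word for every $i$, the uniqueness clause of Lemma \ref{M_5 language}(2) forces $W^{(i)}_0\equiv I_5(w,H')$ for all $i$, hence $W\equiv I_6(w,H')$. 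Uniqueness of the accepting computation of $W$ follows because its history $H$ must coordinate-wise be the unique accepting history of $I_5(w,H')$, and conversely any tame input configuration of this form is accepted by part (1); the length formula is inherited verbatim.

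The main obstacle is really just the bookkeeping of the identification between a computation of $\textbf{M}_{6,1}$ and the tuple of its coordinate restrictions — checking that a word is $\theta$-admissible in $\textbf{M}_{6,1}$ exactly when each of its $L$ blocks is admissible for the corresponding rule of $\textbf{M}_5$, that reducedness is preserved in both directions, and that pasting $L$ copies of a reduced $\textbf{M}_5$-computation back up gives a reduced $\textbf{M}_{6,1}$-computation. None of this is deep; it is the standard "parallel composition" argument already used implicitly for $\textbf{M}_3(4)$ and $\textbf{M}_5(0)$, and the only genuinely load-bearing point is that all $L$ blocks are forced to carry the same history, which is immediate from the definition of the rules of $\textbf{M}_{6,1}$.
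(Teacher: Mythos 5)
Your proposal is correct and is exactly the argument the paper has in mind: the paper omits the proof, presenting the lemma as "the obvious analogue" of Lemma \ref{M_5 language} following from the parallel nature of the rules of $\textbf{M}_{6,1}$, and your coordinate-restriction/pasting argument (with the observation that all $L$ blocks are forced to carry the same history, hence the same $w$ and $H$) is precisely the bookkeeping being elided.
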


The rules of $\textbf{M}_{6,2}$, on the other hand, are given in just the same way, but with on foundational difference: Each rule locks the first input sector, i.e the $Q_0(1)Q_1(1)$-sector.

Despite this difference, though, the definitions of submachines and step histories again adapt naturally, as do many of the statements of previous sections.  For example, letting $J_6(w,H)$ be the input configuration obtained from $I_6(w,H)$ by erasing the copy of $w$ in the $Q_0(1)Q_1(1)$-sector, the following analogue of \Cref{M_{6,1} language} arises in just the same way:

\begin{lemma} \label{M_{6,2} language} \

\begin{enumerate}

\item For any $w\in\pazocal{R}_1$, the configuration $J_6(w,H_w)$ is accepted by $\textbf{M}_{6,2}$, where $H_w\in F(\Phi^+)$ is the word given by \Cref{M_5 language}(1).

\item The tame input configuration $W$ of $\textbf{M}_{6,2}$ is accepted if and only if there exists $w\in\pazocal{R}_1$ and $H\in F(\Phi^+)$ such that $W\equiv J_6(w,H)$.  Moreover, in this case $\|H\|\geq k$, $w$ is uniquely determined by $H$, and there exists a unique accepting computation of $W$, which has length $(2k+4)\|H\|+2k+5$.

\end{enumerate}

\end{lemma}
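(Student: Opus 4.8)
The plan is to mirror the proof of \Cref{M_{6,1} language} essentially verbatim, tracking the single new feature of $\textbf{M}_{6,2}$: every rule locks the first input sector $Q_0(1)Q_1(1)$. Since \Cref{M_{6,1} language} is itself the ``obvious analogue'' of \Cref{M_5 language} obtained by running $L$ parallel copies of $\textbf{M}_5$, the real content is to check that locking one input sector does not disrupt any of the building blocks: the step-history analysis (Lemmas \ref{M_5 step history}, \ref{M_5 step history 1}), the time/space bounds (Lemmas \ref{M_5 standard time}, \ref{M_5 standard time no (5)}, \ref{M_5(j) standard time}), and most importantly the language characterization \Cref{M_5 language}(2). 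The key observation is that the input sector $Q_0(1)Q_1(1)$ of $\textbf{M}_5$ is \emph{already} never altered by any rule of $\textbf{M}_5$ except the moving rules of $\textbf{M}_5(0)$ and $\textbf{M}_5(1)$ (indeed the $Q_0Q_1$-sector is the input sector, and by condition (Mv1) no rule multiplies it on the left, while structurally the rules of $\textbf{M}_5(2),\dots,\textbf{M}_5(5)$ and all transition rules lock it); so locking it outright in the first coordinate only removes the ability to perform move computations in that one coordinate. Since $J_6(w,H)$ has empty tape word in the $Q_0(1)Q_1(1)$-sector to begin with, this restriction is harmless for accepted configurations of that form.

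For part (1): given $w\in\pazocal{R}_1$, take $H_w\in F(\Phi^+)$ from \Cref{M_5 language}(1) and the accepting computation $\pazocal{C}_5$ of $\textbf{M}_5$ on $I_5(w,H_w)$ it provides. Running this computation in parallel on coordinates $2,\dots,L$ (which carry $I_5(w,H_w)$) and running the ``$w$-free'' version on coordinate $1$ (which carries $I_5(1,H_w)$ in the sense that the input sector is empty) produces an accepting computation of $\textbf{M}_{6,2}$ on $J_6(w,H_w)$: one must check that the $\textbf{M}_5$-accepting computation never actually needs the content of the $Q_0(1)Q_1(1)$-sector, which is exactly the structural fact above — the rules that touch the input sector are the moving rules of $\textbf{M}_5(0)$ and $\textbf{M}_5(1)$, and in an accepting computation of $I_5(w,H_w)$ these rules, when restricted to coordinate $1$, merely perform the corresponding primitive/Clean manipulations that are insensitive to the input-sector content by a projection argument. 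So the locked first input sector simply sits inert and empty throughout, and the computation terminates at the accept configuration, which in coordinate $1$ has all sectors empty regardless.

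For part (2): let $W$ be a tame input configuration of $\textbf{M}_{6,2}$ accepted by some reduced computation $\pazocal{C}$. Because the rules of $\textbf{M}_{6,2}$ differ from those of $\textbf{M}_{6,1}$ only by additionally locking $Q_0(1)Q_1(1)$, the computation $\pazocal{C}$ is in particular a reduced computation of the machine obtained by \emph{dropping} that lock — i.e. of $\textbf{M}_{6,1}$ — except that $\pazocal{C}$ happens never to alter the $Q_0(1)Q_1(1)$-sector. Apply \Cref{M_{6,1} language}(2): this forces $W\equiv I_6(w,H)$ for some $w\in\pazocal{R}_1$ and $H\in F(\Phi^+)$ with $\|H\|\geq k$, $w$ uniquely determined by $H$, $\pazocal{C}$ the unique reduced computation, of length $(2k+4)\|H\|+2k+5$. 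But $\pazocal{C}$ never alters the $Q_0(1)Q_1(1)$-sector and terminates at the accept configuration whose $Q_0(1)Q_1(1)$-sector is empty, so the $Q_0(1)Q_1(1)$-sector of $W$ is empty, i.e. $W\equiv J_6(w,H)$. Uniqueness of the accepting computation and its length carry over unchanged. Conversely, by part (1) every such $J_6(w,H)$ with $w\in\pazocal{R}_1$ is accepted.

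The main obstacle is the somewhat delicate point hidden in part (1): verifying that an accepting computation of $\textbf{M}_5$ on $I_5(w,H_w)$, restricted to the first coordinate, really does not depend on the input-sector content — in other words, that locking $Q_0(1)Q_1(1)$ does not block any rule of the accepting computation from being applicable. This needs a careful inspection of which rules of $\textbf{M}_5(0)$ and $\textbf{M}_5(1)$ have nontrivial domain in the input sector and a projection-style argument (as in \Cref{M_5 language}(1)'s proof and the invocations of \Cref{primitive computations}(3)) showing that along the specific accepting computation the input sector is only ever read, never required to be nonempty, so that the analogous computation with that sector locked (and empty) still runs. Once this is in hand, everything else is a routine transcription of the $\textbf{M}_{6,1}$ arguments with the harmless extra lock.
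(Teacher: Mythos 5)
Part (1) of your proposal is essentially the intended argument: the rules of $\textbf{M}_{6,2}$ act on coordinates $2,\dots,L$ exactly as the rules of $\textbf{M}_5$, and on coordinate $1$ the non-input sectors evolve identically while the locked input sector stays empty, so the history of the $\textbf{M}_5$-accepting computation of $I_5(w,H_w)$ accepts $J_6(w,H_w)$.

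Part (2), however, has a genuine gap. Your reduction rests on the claim that an accepting computation $\pazocal{C}$ of $\textbf{M}_{6,2}$ ``is in particular a reduced computation of $\textbf{M}_{6,1}$.'' It is not. The rules of the two machines differ precisely on the $Q_0(1)Q_1(1)$-sector: a moving rule of $\textbf{M}_{6,1}$ (any rule of step $(1)$ coming from a moving rule of $\textbf{Move}$, which by (Mv1) multiplies the input sector on the right) inserts a letter into $Q_0(1)Q_1(1)$, whereas the corresponding rule of $\textbf{M}_{6,2}$ locks that sector. So if you replace each rule of $\pazocal{C}$ by its $\textbf{M}_{6,1}$-counterpart and start from the same $W$ (with empty first input sector), the first input sector accumulates exactly the letters that, in $I_6(w,H)$, would have cancelled against $w$; the intermediate and terminal configurations change, and the resulting $\textbf{M}_{6,1}$-computation does not terminate at the accept configuration. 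Your argument is also internally inconsistent even granting the reduction: \Cref{M_{6,1} language}(2) would force $W\equiv I_6(w,H)$, whose first input sector contains $w\neq 1$ (recall $1\notin\pazocal{R}_1$), while you simultaneously deduce that this sector is empty; $I_6(w,H)$ and $J_6(w,H)$ are distinct words, so you cannot pass from one to the other. The correct route — and the one the paper intends by ``arises in just the same way'' — is coordinate-wise: the restriction of $\pazocal{C}$ to $\{t(j)\}B_4(j)$ for any fixed $j\geq 2$ \emph{is} a genuine reduced computation of $\textbf{M}_5$ from a tame input configuration to the accept configuration, so \Cref{M_5 language}(2) applies to it directly and yields $w$, $H$, $\|H\|\geq k$, uniqueness, and the length $(2k+4)\|H\|+2k+5$; the parallel action of the rules then forces every coordinate $j\geq 2$ of $W$ to be a coordinate shift of $I_5(w,H)$ and coordinate $1$ to be the same word with its (locked, hence empty) input sector erased, i.e. $W\equiv J_6(w,H)$.
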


\medskip


\section{The machine $\textbf{M}$} \label{sec-main-machine}

\subsection{Definition of the machine} \

We now use the auxiliary machines $\textbf{M}_{6,1}$ and $\textbf{M}_{6,2}$ to construct the main cyclic machine $\textbf{M}$ that is sufficient for the proof of \Cref{main-theorem}.  This construction is analogous to those of \cite{WCubic}, \cite{W}, and \cite{WMal}.

Similar to $\textbf{M}_{6,1}$ and $\textbf{M}_{6,2}$, the standard base of $\textbf{M}$ is of the form $\{t(1)\}B_4(1)\dots\{t(L)\}B_4(L)$, with the tape alphabets and input designation assigned in the same way.  However, each of the parts making up $B_4(i)$ consists of more state letters than its counterparts in $\textbf{M}_{6,1}$ and $\textbf{M}_{6,2}$.

Specifically, any part of the standard base that is not a one-letter part $\{t(i)\}$ consists of a copy of the corresponding part of the standard base of $\textbf{M}_{6,1}$, a (disjoint) copy of the corresponding part of the standard base of $\textbf{M}_{6,2}$, and two new letters which function as the part's start and end letters. The accept configuration of $\textbf{M}$ is denoted $W_{ac}$.

The set of rules $\Theta$ of $\textbf{M}$ is partitioned into two symmetric sets, $\Theta_1$ and $\Theta_2$.  The positive rules of each consist of a set of `working' rules and two more transition rules.  Unlike in the constructions outlined in previous sections, though, these two sets are not concatenated in order to force them to run sequentially, but rather in order to force them to operate `one or the other'.

The rules of $\Theta_1^+$ are defined as follows:

\begin{itemize}

\item The transition rule $\sigma(s)_1$ locks all sectors other than the input sectors and switches the state letters from the start state of $\textbf{M}$ to the copy of the start state of $\textbf{M}_{6,1}$.  Its domain in the historical input sectors is the corresponding left historical alphabet.

\item The `working' rules of $\Theta_1^+$ are copies of the positive rules of the machine $\textbf{M}_{6,1}$.

\item The transition rule $\sigma(a)_1$ locks all sectors and switches the state letters from the copies of the end state letters of $\textbf{M}_{6,1}$ to the end state letters of $\textbf{M}$.

\end{itemize}

Similarly, the rules of $\Theta_2^+$ are defined as follows:

\begin{itemize}

\item The transition rule $\sigma(s)_2$ locks each of the sectors locked by $\sigma(s)_1$, but also locks the $Q_0(1)Q_1(1)$-sector.  It switches the state letters from the start state of $\textbf{M}$ to the copy of the start state of $\textbf{M}_{6,2}$.  Again, its domain in the historical input sectors is the corresponding left historical alphabet.

\item The `working' rules of $\Theta_2^+$ are copies of the positive rules of the machine $\textbf{M}_{6,2}$.

\item The transition rule $\sigma(a)_2$ locks all sectors and switches the state letters from the copies of the end state letters of $\textbf{M}_{6,2}$ to the end state letters of $\textbf{M}$.

\end{itemize}

By the definition of the rules, one might infer that the first input sector $Q_0(1)Q_1(1)$ is of particular significance. Hence, we refer to it as the \textit{`special' input sector}.

Given $w\in F(X)$ and $H\in F(\Phi^+)$, note that the natural copy of $I_6(w,H)$ (respectively $J_6(w,H)$) in the hardware of this machine is $\sigma(s)_1^{-1}$-admissible (respectively $\sigma(s)_2^{-1}$-admissible).  The configuration $I(w,H)$ (respectively $J(w,H)$) is then defined to be the input configuration satisfying $I(w,H)\equiv I_6(w,H)\cdot\sigma(s)_1^{-1}$ (respectively $J(w,H)\equiv J_6(w,H)\cdot\sigma(s)_2^{-1}$).  Note that both $I(w,H)$ and $J(w,H)$ are $\sigma(s)_1$-admissible, while $I(w,H)$ is not $\sigma(s)_2$-admissible if $w\neq1$.

\medskip


\subsection{Standard computations of $\textbf{M}$} \

Next, the definition of the step history to computations of $\textbf{M}$. To this end, let the letters $(s)_i^{\pm1}$ and $(a)_i^{\pm1}$ represent the transition rules $\sigma(s)_i^{\pm1}$ and $\sigma(a)_i^{\pm1}$ of $\Theta_i$, respectively, and add the subscript $i$ to each letter of the step history of a subcomputation acting as $\textbf{M}_{6,i}$.

So, an example of a step history of a reduced computation of $\textbf{M}$ is $(s)_1(0)_1(01)_1(1)_1$, while a general step history is some concatenation of the letters 
$$\begin{Bmatrix*}[l]
	&(0)_j, \ (1)_j, \ (2)_j, \ (3)_j, \ (4)_j, \ (5)_j, \\ 
	&(01)_j, \ (12)_j, \ (23)_j, \ (34)_j, \ (45)_j \\ 
	&(10)_j, \ (21)_j, \ (32)_j, \ (43)_j, \ (54)_j, \\ 
	&(s)_j^{\pm1}, \ (a)_j^{\pm1}; \ j=1,2
\end{Bmatrix*}$$
A reduced computation is called a \textit{one-machine computation} if every letter of its step history has the same index. If this index is $i$, then the computation is called a \textit{one-machine computation of the $i$-th machine}. 

For example, a computation with step history $(s)_1(0)_1(01)_1(1)_1$ is a one-machine computation of the first machine, while a computation with step history $(0)_1(s)_1^{-1}(s)_2(0)_2$ is not a one-machine computation, {\frenchspacing i.e. it is a \textit{multi-machine} computation.}

As in \Cref{sec M_5 standard}, some subwords clearly cannot appear in the step history of a reduced computation, while other impossibilities are less obvious. However, there are immediate analogues of Lemmas \ref{M_2 step history}, \ref{M_3 first step history}, \ref{M_3 second step history}, \ref{M_4 step history}, \ref{M_5 step history 1}, and \ref{M_5 step history} (after adding the same index to each letter of the step histories), as $\textbf{M}$ operates on the standard base as parallel copies of $\textbf{M}_5$ in any one-machine computation whose step history does not contain $(s)_i^{\pm1}$, $(a)_i^{\pm1}$, or $(0)_2$, or $(1)_2$.

%
%
%

The following statement is in the same nature as these analogues:

\begin{lemma} \label{M step history (0)}

Let $\pazocal{C}$ be a reduced computation of $\textbf{M}$ with base $B$.

\begin{enumerate}[label=({\alph*})]

\item If $B$ contains a subword of the form $(Q_{i,\ell}'P_i'R_i')^{\pm1}$, $Q_{i,\ell}'P_i'(P_i')^{-1}(Q_{i,\ell}')^{-1}$, $(P_i''R_i''Q_{i,r}'')^{\pm1}$, or $(Q_{i,r}'')^{-1}(R_i'')^{-1}R_i''Q_{i,r}''$, then the step history of $\pazocal{C}$ is not $(s)_j(0)_j(s)_j^{-1}$ for $j=1,2$.

\item If $B$ contains a subword of the form $(P_i'R_i')^{\pm1}$ or $(P_i''R_i'')^{\pm1}$, then the step history of $\pazocal{C}$ is not $(a)_j^{-1}(5)_j(a)_j$ for $j=1,2$.

\end{enumerate}

\end{lemma}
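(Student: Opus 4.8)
The plan is to treat the two parts as analogues of the projection/cancellation arguments already carried out in the proofs of \Cref{M_2 step history}, \Cref{M_3 first step history}, and \Cref{M_5 step history 1}, reducing each case to an application of one of the `one alphabet historical words' lemmas or the primitive-machine lemmas applied to the restriction of $\pazocal{C}$ to a single historical sector. In both parts one argues by contradiction: assume $\pazocal{C}$ has the prohibited step history, factor its history $H$ as $\sigma_1 H' \sigma_2$ with $\sigma_1,\sigma_2$ the two transition rules and $H'$ the (necessarily nonempty, since $H$ is reduced) intermediate history, and track the tape word of $W_1$ and $W_{t-1}$ in the relevant $UV$-sector. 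The point is that $W_1$ and $W_{t-1}$ must both be admissible for the second transition rule, which restricts which historical alphabet their tape words in $UV$ may use, while the intermediate subcomputation necessarily introduces letters from the forbidden alphabet, contradicting reducedness.

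For part (a): the step history $(s)_j(0)_j(s)_j^{-1}$ means the intermediate subcomputation $\pazocal{C}'$ with history $H'$ is a nonempty reduced computation of $\textbf{M}(0)$ (the submachine acting as $\textbf{M}_{6,j}(0)$, which acts as parallel copies of $\textbf{M}_5(0)$, i.e.\ the $\textbf{RL}$/$\textbf{LR}$ primitive machines on the subwords $Q_{i,\ell}'P_i'R_i'$ and $P_i''R_i''Q_{i,r}''$). Since $W_1$ and $W_{t-1}$ are both $\sigma(s)_j^{\pm1}$-admissible, by the definition of $\sigma(s)_j$ the domain in the historical input sectors is the corresponding left historical alphabet, so the state letters of $W_1$ and $W_{t-1}$ are start letters of $\textbf{M}_{6,j}(0)$. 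If $B$ contains one of the listed subwords, then the restriction of $\pazocal{C}'$ to that subword is a reduced computation of $\textbf{RL}_{}$ or $\textbf{LR}$ (possibly with an unreduced base) whose first admissible word has all state letters with the same index; by \Cref{primitive computations}(4) (for the reduced base, e.g.\ $Q_{i,\ell}'P_i'R_i'$) or \Cref{primitive unreduced} (for the unreduced bases such as $Q_{i,\ell}'P_i'(P_i')^{-1}(Q_{i,\ell}')^{-1}$), together with the fact that the terminal admissible word must again have state letters of the same index and hence $\pazocal{C}'$ be empty on that sector — and since the $Q_{i,\ell}'P_i'R_i'$-sectors carry the only historical content operated on by $\textbf{M}_{6,j}(0)$ — the whole of $\pazocal{C}'$ is empty. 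This contradicts the nonemptiness forced by reducedness of $H$. This is exactly the argument of \Cref{M_5 step history 1}(b) and of \Cref{M_4 step history}, adapted to the $(s)_j$ transitions instead of $\sigma(34)$.

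For part (b): the step history $(a)_j^{-1}(5)_j(a)_j$ forces the intermediate $\pazocal{C}'$ to be a nonempty reduced computation of $\textbf{M}(5)$ (acting as parallel copies of $\textbf{M}_5(5)$), and $W_1,W_{t-1}$ are both $\sigma(a)_j^{\pm1}$-admissible. Here $\textbf{M}_{6,j}(5)$ multiplies each $P_i'R_i'$-sector on the right by a letter (copy of $\theta^{\pm1}$ over the right historical alphabet) and each $P_i''R_i''$-sector on the left, so on a subword $B$ containing a $(P_i'R_i')^{\pm1}$ or $(P_i''R_i'')^{\pm1}$ the restriction $\pazocal{C}_{UV}'$ satisfies the hypotheses of \Cref{multiply one letter}: by part (a) of that lemma, if $\pazocal{C}_{UV}'$ were nonempty then the tape word of $W_{t-1}$ in that sector would differ from that of $W_1$ by a nonempty word over the right historical alphabet, and since these are the only sectors whose tape words $\textbf{M}(5)$ changes, $\pazocal{C}'$ nonempty implies some $\pazocal{C}_{UV}'$ nonempty, hence the history of a nonempty word is produced on that sector — and then $W_1 \cdot H'$ is not $\sigma(a)_j$-admissible unless one checks $\sigma(a)_j$ imposes no such restriction, so instead the correct contradiction is that the tape word produced cannot be cancelled, which is incompatible with $W_1 \equiv W_{t-1}\cdot(\text{something})$ if the step history returns to $(a)_j$; more precisely \Cref{multiply one letter}(a) says the history is the natural copy of the reduced form of $u_{t-1}u_1^{-1}$, which is nonempty, contradicting that after $\sigma(a)_j$ the computation must reach the same configuration type. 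The expected main obstacle is pinning down exactly what admissibility for $\sigma(a)_j^{\pm1}$ entails — since $\sigma(a)_j$ locks all sectors, it imposes only a state-letter condition, so the cancellation argument must be phrased in terms of the tape word in the $P_i'R_i'$-sector being forced to return to its original value by the surrounding structure of a reduced computation, which requires being slightly careful about whether $(a)_j^{-1}(5)_j(a)_j$ can even be a \emph{reduced} step history; indeed the cleanest route is to observe that a reduced computation with this step history has $W_1 \equiv W_{t-1}$ in every sector locked by $\sigma(a)_j$, i.e.\ everywhere, whence $H'$ is the history of a computation between identical configurations and \Cref{multiply one letter}(a) forces $H'$ empty, contradicting reducedness of $H$. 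I would close part (b) with that observation rather than a letter-by-letter cancellation count.
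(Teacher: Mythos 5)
Your proof is correct and follows the paper's own argument: part (a) is the restriction of the maximal $(0)_j$-subcomputation to the given subword followed by \Cref{primitive computations}(4) (reduced base) or \Cref{primitive unreduced} (unreduced base), and part (b) is \Cref{multiply one letter}(a) applied to the restriction to the $P_i'R_i'$- or $P_i''R_i''$-sector. One correction to the detour in part (b): a rule that locks a sector does not impose ``only a state-letter condition''---a locked sector has empty domain, so $\sigma(a)_j$-admissibility forces every tape word of the admissible word to be empty, and this is precisely what justifies your closing observation that $W_1\equiv W_{t-1}$, hence $u_0\equiv u_t$ in the relevant sector and $H'$ is empty.
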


\begin{proof}

(a) Let $B'$ be the corresponding subword of $B$.

Assume toward contradiction that the step history of $\pazocal{C}:W_0\to\dots\to W_t$ is $(s)_j(0)_j(s)_j^{-1}$ and let $\pazocal{C}_0:W_1\to\dots\to W_{t-1}$ be the maximal subcomputation with step history $(0)_j$.  As $W_1$ and $W_{t-1}$ are $\sigma(s)_j^{-1}$-admissible, the restriction of $\pazocal{C}_0$ to the subword $B'$ satisfies the hypotheses of \Cref{primitive computations}(4) if $B'$ is reduced and the hypotheses of \Cref{primitive unreduced} otherwise.  But then $\pazocal{C}_0$ must be empty, so that $\pazocal{C}$ is unreduced.

(b) follows in just the same way, applying \Cref{multiply one letter}.

\end{proof}

This statement along with those pertaining to the step histories in previous sections imply the following analogue of \Cref{M_5 step history}:

\begin{lemma} \label{M step history}

Suppose $\pazocal{C}$ is a one-machine computation of the $i$-th machine whose base $B$ contains a subword of the form $(\{t(j)\}B_4(j))^{\pm1}$ for some $j\in\{1,\dots,L\}$.  Then the step history of $\pazocal{C}$ is a subword of either:

\begin{enumerate}[label=(\roman*)]

\item $(s)_i(0)_i(1)_i(2)_i(3)_i(4)_i(5)_i(a)_i$,

\item $(a)_i^{-1}(5)_i(4)_i(3)_i(2)_i(1)_i(0)_i(s)_i^{-1}$, or

\item $(s)_i(0)_i(1)_i(2)_i(3)_i(4)_i(5)_i(4)_i(3)_i(2)_i(1)_i(0)_i(s)_i^{-1}$.

\end{enumerate}

\end{lemma}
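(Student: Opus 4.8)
The plan is to reduce the statement about $\textbf{M}$ to the corresponding statements about $\textbf{M}_5$, $\textbf{M}_{6,1}$, and $\textbf{M}_{6,2}$ already established in the excerpt (chiefly \Cref{M_5 step history} and \Cref{M step history (0)}), exploiting the fact that a one-machine computation of the $i$-th machine whose base genuinely involves a full copy $(\{t(j)\}B_4(j))^{\pm1}$ of a $\textbf{M}_5$-block operates, on the relevant sectors, exactly as $\textbf{M}_5$ (up to the two bookkeeping transition rules $\sigma(s)_i^{\pm1}$ and $\sigma(a)_i^{\pm1}$). First I would dispense with the transition rules: since $\sigma(s)_i$ and $\sigma(a)_i$ lock all (or all but the input) sectors and switch between the start/end state of $\textbf{M}$ and those of $\textbf{M}_{6,i}$, a reduced history can contain $\sigma(s)_i^{\pm1}$ (resp.\ $\sigma(a)_i^{\pm1}$) at most once, and only as a prefix/suffix around the block operating as $\textbf{M}_{6,i}$; this already forces the step history to be $(s)_i \cdot (\text{step history of an }\textbf{M}_{6,i}\text{-computation})\cdot (a)_i$ or a subword thereof (after possibly passing to the inverse computation to handle the $(a)_i^{-1}\dots(s)_i^{-1}$ orientation).

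Next I would analyze the `inner' part: a maximal subcomputation whose history avoids $\sigma(s)_i^{\pm1}$ and $\sigma(a)_i^{\pm1}$ operates in parallel as copies of $\textbf{M}_5$ on each block $\{t(j)\}B_4(j)$, so its restriction to the block $(\{t(j)\}B_4(j))^{\pm1}$ present in $B$ can be identified with a reduced computation of $\textbf{M}_5$ in (a cyclic permutation of) its standard base. Applying \Cref{M_5 step history} to that restriction, its step history is a subword of the long palindromic word
$$(0)(01)(1)(12)(2)(23)(3)(34)(4)(45)(5)(54)(4)(43)(3)(32)(2)(21)(1)(10)(0),$$
and after suppressing the transition-rule letters (which are forced) this collapses to a subword of $(0)(1)(2)(3)(4)(5)(4)(3)(2)(1)(0)$. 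Re-indexing with $i$ and reattaching the (possible) leading $(s)_i$ and trailing $(s)_i^{-1}$ or $(a)_i$/$(a)_i^{-1}$ then yields exactly one of the three options (i), (ii), (iii) in the statement, depending on whether the computation ends in an accept-type configuration (giving $(a)_i$, option (i)), begins from one (option (ii)), or is a `there-and-back' computation between input-type configurations (option (iii)). For the $\textbf{M}_{6,2}$ case ($i=2$) the only difference is that the special input sector $Q_0(1)Q_1(1)$ is locked throughout, which does not affect the step-history bookkeeping, so the same argument applies verbatim.

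The step I expect to be the main obstacle is verifying that, when $B$ contains a full block $(\{t(j)\}B_4(j))^{\pm1}$, the identification with an $\textbf{M}_5$-computation really does let one invoke \Cref{M_5 step history} — in particular ruling out the degenerate possibility that the step history jumps \emph{back and forth} between $(s)_i$ and $(s)_i^{-1}$ (or $(a)_i$ and $(a)_i^{-1}$) more than once. This is where \Cref{M step history (0)}(a) is used: it forbids the subword $(s)_j(0)_j(s)_j^{-1}$ precisely because the block contains an `active' subword of the form $(Q_{i,\ell}'P_i'R_i')^{\pm1}$ etc., on which a primitive-machine computation between $\sigma(s)_j^{-1}$-admissible configurations must be empty by \Cref{primitive computations}(4) / \Cref{primitive unreduced}; similarly \Cref{M step history (0)}(b) handles $(a)_j^{-1}(5)_j(a)_j$. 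Combined with the analogous earlier lemmas (the $\textbf{M}$-indexed versions of Lemmas \ref{M_2 step history}, \ref{M_3 first step history}, \ref{M_3 second step history}, \ref{M_4 step history}, \ref{M_5 step history 1}) that forbid the other `return' subwords like $(21)_i(1)_i(12)_i$, every candidate step history that is not a subword of one of (i)–(iii) is excluded, and the proof concludes. I would write the argument as: reduce to the transition-rule-free core, apply \Cref{M_5 step history} to the block restriction, suppress forced transition letters, and reassemble — citing \Cref{M step history (0)} for the two non-obvious banned subwords.
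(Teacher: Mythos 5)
Your proposal is correct and follows essentially the same route as the paper: the paper gives no explicit proof of this lemma, asserting only that it follows from the $\textbf{M}$-indexed analogues of the earlier banned-subword lemmas (which apply because the base contains a full block $(\{t(j)\}B_4(j))^{\pm1}$ and $\textbf{M}$ operates on such blocks as parallel copies of $\textbf{M}_5$) together with \Cref{M step history (0)}, which is exactly the decomposition you describe. Your additional bookkeeping — that $\sigma(s)_i^{\pm1}$ and $\sigma(a)_i^{\pm1}$ can occur only as the first or last letter of a reduced one-machine history, and that \Cref{M step history (0)} rules out the two short return patterns $(s)_i(0)_i(s)_i^{-1}$ and $(a)_i^{-1}(5)_i(a)_i$ — is precisely the content the paper leaves implicit.
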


\begin{lemma}[Compare with Lemma 5.3 of \cite{W}] \label{long step history}

Let $\pazocal{C}$ be a reduced computation with base $\{t(i)\}B_4(i)$ for some $i\in\{2,\dots,L\}$.  Suppose $\pazocal{C}$ contains at least $9$ distinct maximal one-step computations.  Then $\pazocal{C}$ contains a subword of the form $(34)_j(4)_j(45)_j$ or $(54)_j(4)_j(43)_j$.

\end{lemma}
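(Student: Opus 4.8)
The plan is to leverage the step-history dichotomy already established for one-machine and multi-machine computations, together with the fact that the base $\{t(i)\}B_4(i)$ contains a full copy of the standard base of $\textbf{M}_5$, so any one-machine subcomputation acts on it as parallel copies of $\textbf{M}_5$. First I would split into the two cases according to whether $\pazocal{C}$ is a one-machine computation or a multi-machine computation. If it is a one-machine computation of the $i$-th machine, then \Cref{M step history} applies (since $B$ contains the subword $(\{t(i)\}B_4(i))^{\pm1}$): its step history is a subword of one of the three listed words. The words in (i) and (ii) contain at most $8$ maximal one-step factors (counting the transition-rule singletons $(s)_i$, $(a)_i$ and the $7$ submachine factors $(0)_i,\dots,(5)_i$ — and the step history of a \emph{reduced} computation can only traverse a contiguous stretch), so having at least $9$ distinct maximal one-step computations forces the step history to be long enough that it cannot fit inside (i) or (ii); hence it must be a subword of (iii), and any subword of (iii) long enough to contain $9$ maximal factors necessarily straddles the central $(5)_i$, i.e. contains $(34)_i(4)_i(45)_i$ or $(54)_i(4)_i(43)_i$ (a subword of (iii) with $9$ factors must include either the ascending block $(3)_i(4)_i(5)_i$ or the descending block $(5)_i(4)_i(3)_i$ flanked by the appropriate transition rules, which by the convention of omitting forced transition rules we write as $(34)_i(4)_i(45)_i$ or $(54)_i(4)_i(43)_i$).

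For the multi-machine case I would argue that a reduced computation whose history contains a letter of the form $(s)_j^{\pm1}$ or $(a)_j^{\pm1}$ passing between the two machines is sharply constrained: the only way to leave machine $i$ is via $\sigma(s)_i^{-1}$ or $\sigma(a)_i^{-1}$, and then to re-enter a machine one must apply $\sigma(s)_{i'}$ or $\sigma(a)_{i'}$. Since $\sigma(a)_j$ produces the (accept-flanking) end state of $\textbf{M}$ and $\sigma(s)_j$ produces a start-type state, a reduced computation can contain at most a bounded, small number of such transitions — in fact, tracing through the possibilities (a step history can look like a tail of type (ii)/(iii) for one machine, then $(s)^{-1}(s)$ or $(a)^{-1}(a)$, then a head of type (i)/(iii) for the other machine, and reducedness forbids immediately returning), one sees that a multi-machine computation on this base has step history consisting of at most a few consecutive "phases," and in total at most $8$ maximal one-step factors unless it already contains one of the two desired subwords inside one of its one-machine phases. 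The key point to make rigorous here is that the only way to accumulate $9$ distinct maximal one-step factors without an $(s)/(a)$-reversal giving new phases is for one phase to itself be long, which puts us back in the one-machine analysis of the previous paragraph.

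The main obstacle I anticipate is the bookkeeping in the multi-machine case: making precise the claim that a reduced computation cannot "bounce" back and forth between $\textbf{M}_{6,1}$ and $\textbf{M}_{6,2}$ arbitrarily many times, and counting exactly how the maximal one-step factors pile up across phase boundaries. I would handle this by first proving a small auxiliary observation — that if the history of $\pazocal{C}$ contains $\sigma(s)_i^{-1}$ (resp. $\sigma(a)_i^{-1}$) then it is immediately followed by $\sigma(s)_{i'}$ or $\sigma(a)_{i'}$ for some $i'$, and that reducedness prevents the combination $\sigma(s)_i^{-1}\sigma(s)_i$ (resp. $\sigma(a)_i^{-1}\sigma(a)_i$) — so that each "reversal" genuinely changes the machine index or the transition type, and the total number of phases is bounded by a constant (say at most $3$). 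With at most $3$ phases and each phase of type (i) or (ii) contributing at most $8$ one-step factors, a computation with $\geq 9$ factors must have a phase of type (iii) that is long, and the one-machine analysis of the first paragraph then yields the desired subword. The threshold "$9$" is chosen precisely so that it exceeds the maximum factor count achievable while avoiding the central block of (iii), which is the only quantitative estimate needed.
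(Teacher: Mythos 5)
Your one-machine case is sound and is essentially the move the paper itself makes: by \Cref{M step history} the step history fits into one of the forms (i)--(iii), forms (i) and (ii) carry too few maximal one-step factors (six, in fact, not eight --- the transition letters $(s)_i,(a)_i$ are not one-step computations --- but this only helps your inequality), and any subword of form (iii) containing nine of its eleven one-step letters must contain the central block $(3)_i(4)_i(5)_i(4)_i(3)_i$, hence the desired subwords.

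The gap is in the multi-machine case, which is where the paper's proof does its actual work. First, your mechanism for bounding the number of phases is not the right one: reducedness only rules out the adjacent cancellations $\sigma(s)_i^{-1}\sigma(s)_i$ and $\sigma(a)_i\sigma(a)_i^{-1}$; it does not prevent the computation from leaving machine $i$, running for a long time in machine $i'$, and then coming back. What actually limits the junctions --- and, under your standing assumption that the forbidden subwords are absent, limits them to at most one --- is computational rather than combinatorial: by \Cref{M step history}, a one-machine phase that is both entered and exited has step history of the form $(s)_jh(s)_j^{-1}$, $(s)_jh(a)_j$ or $(a)_j^{-1}h(s)_j^{-1}$ (the pattern $(a)_j^{-1}h(a)_j$ fits none of (i)--(iii)), and each of these forces the step history through the middle of (i), (ii) or (iii), i.e. forces $(34)_j(4)_j(45)_j$ or $(54)_j(4)_j(43)_j$. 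Without this observation your ``at most $3$ phases'' is unsupported. Second, even granting such a bound, your concluding inference --- at most $3$ phases, each contributing at most $8$ factors, hence $\geq 9$ factors force a long type-(iii) phase --- is a non sequitur: two or three short phases, each avoiding the block, could jointly accumulate nine factors.

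What is needed, and what the paper does by contraposition, is to pin the avoiding step history down completely: at most one junction, the phase before it a subword of $(4)_1(3)_1(2)_1(1)_1(0)_1(s)_1^{-1}$ or of $(4)_1(5)_1(a)_1$, and the mirror image after it, so that the total number of one-step factors of an avoiding computation is an explicit small constant to be compared with the threshold. That enumeration is exactly the bookkeeping your proposal defers, and it is delicate: for instance a nonempty $(0)_j$ phase can (indeed must) abut a junction of type $(s)_j^{-1}(s)_{j'}$, since $\sigma(s)_{j'}$ produces the start letters of the $(0)$-submachine and a nonempty $(0)_{j'}$ phase has to precede any occurrence of $(01)_{j'}$; the words displayed in the paper's proof omit these $(0)$-letters, so the exact maximal count (and with it the validity of the specific threshold $9$) deserves a careful check rather than the assertion that the threshold ``is chosen precisely to exceed the maximum.''
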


\begin{proof}

Assuming the step history of $\pazocal{C}$ contains no such subword, it follows from \Cref{M step history} that the step history of $\pazocal{C}$ or its inverse is a subword of either:

\begin{itemize}

\item $(4)_1(5)_1(a)_1(a)_2^{-1}(5)_2(4)_2$

\item $(4)_1(3)_1(2)_1(1)_1(s)_1^{-1}(s)_2(1)_2(2)_2(3)_2(4)_2$

\end{itemize}

But in either case the step history has length at most $8$.

\end{proof}

\begin{lemma} \label{M return to start}

Let $\pazocal{C}:W_0\to\dots\to W_t$ be a one-machine computation of the $i$-th machine in the standard base with step history of the form $(s)_ih_i(s)_i^{-1}$.  Then there exists $w\in\pazocal{R}_1$ and $H\in F(\Phi^+)$ with $\|H\|\geq k$ such that $W_0\equiv I(w,H)$ if $i=1$ or $W_0\equiv J(w,H)$ if $i=2$.  Moreover, $W_0$ is accepted by a one-machine computation of the $i$-th machine.

\end{lemma}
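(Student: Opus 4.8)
The goal is to analyze a one-machine computation $\pazocal{C}$ of the $i$-th machine in the standard base whose step history has the form $(s)_i h_i (s)_i^{-1}$, and conclude that the initial configuration $W_0$ must be one of the canonical input configurations $I(w,H)$ or $J(w,H)$ with $w \in \pazocal{R}_1$, $\|H\| \geq k$, and that $W_0$ is in fact accepted. The plan is to reduce everything to the already-established language lemmas for $\textbf{M}_{6,1}$ and $\textbf{M}_{6,2}$ (Lemmas \ref{M_{6,1} language} and \ref{M_{6,2} language}), so the work is mostly bookkeeping: stripping off the boundary transition rules $\sigma(s)_i^{\pm1}$ and identifying the middle subcomputation with a genuine computation of $\textbf{M}_{6,i}$ in the standard base.

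First I would factor the history as $H \equiv \sigma(s)_i H' \sigma(s)_i^{-1}$, where the subcomputation $\pazocal{C}': W_1 \to \dots \to W_{t-1}$ with history $H'$ contains no letter of the form $\sigma(s)_i^{\pm 1}$ (this is exactly what the step-history shape $(s)_i h_i (s)_i^{-1}$ asserts, after noting that no $\sigma(a)_i^{\pm1}$ can occur since the step history has no $(a)_i$ letter — indeed the step history is bracketed by $(s)_i$ and $(s)_i^{-1}$, and by \Cref{M step history} a one-machine computation of the $i$-th machine cannot return to an $(s)_i^{-1}$ after passing through an $(a)_i$). Hence $\pazocal{C}'$ is a reduced computation operating entirely as $\textbf{M}_{6,i}$. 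Now $W_1 \equiv W_0 \cdot \sigma(s)_i$ and $W_{t-1}$ are both $\sigma(s)_i^{-1}$-admissible, so by the domain restriction of $\sigma(s)_i$ in the historical input sectors (the corresponding left historical alphabet) these configurations are tame input configurations of $\textbf{M}_{6,i}$ in the sense defined for that machine, with $W_1 \cdot \sigma(s)_i^{-1} \equiv W_0$ and $W_{t-1} \cdot \sigma(s)_i^{-1} \equiv W_t$.

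Next I would apply the language lemma for the appropriate machine. If $i = 1$: since $\pazocal{C}'$ is a reduced computation of $\textbf{M}_{6,1}$ in the standard base between two tame input configurations, I want to conclude $W_1 \equiv I_6(w,H)$ for some $w \in \pazocal{R}_1$, $H \in F(\Phi^+)$ with $\|H\| \geq k$. The cleanest route is: a tame input configuration that admits \emph{any} nonempty reduced computation of this step-history type must in fact be accepted — this should follow from the structural results on standard computations of $\textbf{M}_5$ applied in parallel (the analogue of \Cref{M_5 long history}(a)), since a computation returning to an $(s)_i^{-1}$-admissible configuration forces, sector by sector on each copy of $\textbf{M}_5$, the primitive-machine analysis that only permits the accepting structure. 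Once $W_1$ is known to be an accepted tame input configuration of $\textbf{M}_{6,1}$, \Cref{M_{6,1} language}(2) gives $W_1 \equiv I_6(w,H)$ with $w \in \pazocal{R}_1$ and $\|H\| \geq k$, whence $W_0 \equiv I_6(w,H)\cdot\sigma(s)_1^{-1} \equiv I(w,H)$ by definition. The case $i = 2$ is identical word-for-word with $J$, $J_6$, $\textbf{M}_{6,2}$, and \Cref{M_{6,2} language}(2) in place of their counterparts. Finally, for the ``moreover'' clause: \Cref{M_{6,1} language}(1) (resp. \Cref{M_{6,2} language}(1)) says $I_6(w,H_w)$ (resp. $J_6(w,H_w)$) is accepted by $\textbf{M}_{6,1}$ (resp. $\textbf{M}_{6,2}$); prefixing this accepting computation with $\sigma(s)_i$ and noting $W_0$ is $\sigma(s)_i$-admissible yields a one-machine computation of the $i$-th machine accepting $W_0$ — after checking $H = H_w$, which is forced by the uniqueness clause in the language lemma once we know $W_1 \equiv I_6(w,H)$ is accepted.

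The main obstacle I anticipate is the step establishing that the middle subcomputation $\pazocal{C}'$ really is an \emph{accepted} tame input configuration of $\textbf{M}_{6,i}$ — i.e. that returning to an $(s)_i^{-1}$-admissible configuration is as rigid as returning to the accept configuration. This requires running the parallel analogue of the step-history argument for $\textbf{M}_5$ (Lemmas \ref{M_5 step history} through \ref{M_5 long history}) and checking that the only reduced computations of $\textbf{M}_{6,i}$ in the standard base whose step history begins and ends adjacent to $\sigma(s)_i$-admissible configurations are precisely the symmetric accepting-type computations; the subtlety is that, a priori, the step history $h_i$ could involve $(0)_i$ blocks and primitive sub-runs that one must rule out via \Cref{primitive computations}(4) and the admissibility constraints, exactly as in the proof of \Cref{M_5 long history}. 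I would handle this by invoking the already-proven $\textbf{M}_5$ machinery sector-by-sector across the $L$ copies and the parallel structure of $\textbf{M}_{6,i}$, so that no genuinely new computation is needed — only the verification that the hypotheses of those lemmas transfer.
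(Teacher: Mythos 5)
Your proposal is correct and follows essentially the same route as the paper: strip the bracketing rules $\sigma(s)_i^{\pm1}$, observe that $W_1$ and $W_{t-1}$ are $\sigma(s)_i^{-1}$-admissible (hence tame input configurations), restrict component-wise to the subwords $\{t(j)\}B_4(j)$ to obtain reduced computations of $\textbf{M}_5$ between tame input configurations, and invoke \Cref{M_5 long history}(ii)$\Rightarrow$(a) together with the parallel structure (which forces all components to carry the same $H$, hence the same $w$) to identify $W_1$ with $I_6(w,H)$ or $J_6(w,H)$ and deduce acceptance. The only cosmetic difference is that you package the conclusion through \Cref{M_{6,1} language}/\Cref{M_{6,2} language} rather than reading it off directly from \Cref{M_5 long history}; note that for the ``moreover'' clause part (2) of those lemmas already gives acceptance of $I_6(w,H)$ for the $H$ you found, so the side-check that $H=H_w$ is unnecessary.
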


\begin{proof}

By hypothesis, the step history of $\pazocal{C}$ must be of form (iii) in \Cref{M step history}.  In particular, $h_i$ must have prefix $(0)_i(01)_i$.

Let $\pazocal{C}':W_1\to\dots\to W_{t-1}$ be the maximal subcomputation with step history $h_i$ and define $\pazocal{C}_0':W_1\to\dots\to W_r$ to be its maximal subcomputation with step history $(0)_i$.  

For every $j\in\{1,\dots,L\}$ and $x\in\{0,\dots,s\}$, the restriction of $\pazocal{C}_0'$ to the subword $P_x'(j)R_x'(j)Q_{x,r}'(j)$ of the standard base can be identified with a reduced computation of $\textbf{LR}$ satisfying \Cref{primitive computations}(3).  Similarly, the restriction of $\pazocal{C}_0'$ to the subword $Q_{x,\ell}''(j)P_x''(j)R_x''(j)$ can be identified with a reduced computation of $\textbf{RL}$ satisfying \Cref{primitive computations}(3).

As these primitive computations work in parallel, there exists $H\in F(\Phi^+)$ such that the tape word of $W_1$ in each $P_x'(j)R_x'(j)$-sector is the copy of $H$ over the corresponding left historical alphabet, while the tape word in each $P_x''(j)R_x''(j)$ is the copy of $H^{-1}$ over the left historical alphabet.


Now, for all $j\in\{2,\dots,L\}$, let $\pazocal{C}'(j):W_1(j)\to\dots\to W_{t-1}(j)$ be the restriction of $\pazocal{C}'$ to the subword $\{t(j)\}B_4(j)$ of the standard base.  Then each $\pazocal{C}'(j)$ can be identified with a reduced computation $\pazocal{D}_j$ of $\textbf{M}_5$.  Since $W_1$ and $W_{t-1}$ are both $\sigma(s)_i^{-1}$-admissible, $\pazocal{D}_j$ satisfies the hypotheses of \Cref{M_5 long history}(ii).  

Hence, for each $j$ there exists $w_j\in\pazocal{R}_1$ and $H_j\in F(\Phi^+)$ such that $W_1(j)$ is the natural copy of $I_5(w_j,H_j)$.  But since $W_1$ has the copy of $H$ written in each $P_x'(j)R_x'(j)$-sector, $H_j=H$ for all $j$.  Moreover, since \Cref{M_5 long history} implies $w_j$ is uniquely determined by $H_j$, then there exists $w\in\pazocal{R}_1$ such that each $w_j=w$ for all $j$.

If $i=1$, then the same argument then implies $W_1\equiv I_6(w,H)$, so that $W_0\equiv I(w,H)$.

If $i=2$, then the analogous argument implies the same except for an empty `special' input sector, i.e $W_0\equiv J(w,H)$.

Moreover, \Cref{M_5 long history}(a) implies $I_5(w,H)$ is accepted by $\textbf{M}_5$, so that the parallel nature of the rules produces a one-machine computation of the $i$-th machine accepting $W_0$.

\end{proof}

\begin{lemma} \label{M language} \

\begin{enumerate}

\item For any $w\in\pazocal{R}_1$, there exists $H_w\in F(\Phi^+)$ with $\|H_w\|\leq c_1 f_1(c_1\|w\|)+c_1$ such that $I(w,H_w)$ (respectively $J(w,H_w)$) is accepted by a one-machine computation of the first (respectively second) machine.

\item A start configuration $W$ of $\textbf{M}$ is accepted if and only if it is accepted by a one-machine computation of the $i$-th machine, so that there exist $w\in\pazocal{R}_1$ and $H\in F(\Phi^+)$ such that $W\equiv I(w,H)$ if $i=1$ or $W\equiv J(w,H)$ if $i=2$.

\end{enumerate}

\end{lemma}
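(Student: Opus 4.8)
The plan is to prove both parts by reducing everything to the corresponding statements about the auxiliary machines $\textbf{M}_{6,1}$ and $\textbf{M}_{6,2}$, which in turn rest on \Cref{M_5 language} and \Cref{M_5 long history}. For part (1), given $w\in\pazocal{R}_1$, I would let $H_w\in F(\Phi^+)$ be the word supplied by \Cref{M_5 language}(1), which already satisfies $\|H_w\|\leq c_1f_1(c_1\|w\|)+c_1$. The accepting computation is then assembled in three stages: apply $\sigma(s)_1$ to pass from $I(w,H_w)$ to $I_6(w,H_w)$ (this is well-formed by the observation preceding \Cref{M step history (0)} that the natural copy of $I_6(w,H_w)$ in the hardware of $\textbf{M}$ is $\sigma(s)_1^{-1}$-admissible), then run the accepting computation of $\textbf{M}_{6,1}$ from \Cref{M_{6,1} language}(1) in parallel on all $L$ copies of the base of $\textbf{M}_5$, and finally apply $\sigma(a)_1$ to reach $W_{ac}$. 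The resulting computation is a one-machine computation of the first machine, and it is reduced since each of the three pieces is and no cancellation occurs across the transition rules (the first letter of the $\textbf{M}_{6,1}$-computation is not $\sigma(s)_1^{-1}$, and its last letter is not $\sigma(a)_1^{-1}$). The symmetric argument using $\sigma(s)_2$, \Cref{M_{6,2} language}(1), and $\sigma(a)_2$ handles $J(w,H_w)$.

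For part (2), the forward direction is the substance. Let $\pazocal{C}\colon W\equiv W_0\to\dots\to W_t$ be a reduced computation accepting $W$, so $W_t\equiv W_{ac}$. Since $W$ is a start configuration, the first rule of $\pazocal{C}$ must be either $\sigma(s)_1$ or $\sigma(s)_2$ (these are the only rules applicable to a configuration in the start state of $\textbf{M}$); fix $i\in\{1,2\}$ accordingly, giving the step history prefix $(s)_i$. Dually, since $W_{ac}$ is reached, the last rule of $\pazocal{C}$ is $\sigma(a)_{i'}$ for some $i'$; I claim $i'=i$ and that $\pazocal{C}$ is a one-machine computation of the $i$-th machine. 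The point is that the only way to leave the "$j$-th machine" once inside it is through $\sigma(s)_j^{-1}$ (back to the start state) or $\sigma(a)_j$ (on to the end state), and $\sigma(a)_j$ can only be followed by $\sigma(a)_j^{-1}$ (the reverse), which is impossible in a reduced computation. So if $\pazocal{C}$ ever switched machines, it would have to do so via a subword of step history of the form $(s)_i h_i (s)_i^{-1}$ followed by $(s)_{3-i}\cdots$; but then the maximal subcomputation with step history $(s)_i h_i(s)_i^{-1}$ has, by \Cref{M return to start}, initial configuration $I(w,H)$ or $J(w,H)$ with $\|H\|\geq k$, hence a \emph{start} configuration equal to $W$ — and since $W$ admits an accepting one-machine computation of the $i$-th machine (also from \Cref{M return to start}), we may replace $\pazocal{C}$ by that computation, which is a one-machine computation, contradicting minimal choice of $\pazocal{C}$ (or: we simply conclude directly, having found the desired one-machine computation). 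Once $\pazocal{C}$ is known to be a one-machine computation of the $i$-th machine, it can be identified with an accepting computation of $\textbf{M}_{6,i}$ sandwiched between $\sigma(s)_i$ and $\sigma(a)_i$, and \Cref{M_{6,1} language}(2) (resp.\ \Cref{M_{6,2} language}(2)) identifies $W_0\cdot\sigma(s)_i$ as $I_6(w,H)$ (resp.\ $J_6(w,H)$) for some $w\in\pazocal{R}_1$, $H\in F(\Phi^+)$, whence $W\equiv I(w,H)$ (resp.\ $J(w,H)$). The reverse direction of part (2) is immediate from part (1).

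\textbf{Main obstacle.} The delicate step is showing that an accepting computation starting and ending in the distinguished states of $\textbf{M}$ is forced to be a one-machine computation — i.e.\ ruling out a computation that enters the first machine, returns all the way to a start configuration, switches to the second machine (or back), etc. This is exactly where \Cref{M return to start} is needed: any maximal subcomputation with step history $(s)_i h_i (s)_i^{-1}$ must return to a configuration of the form $I(w,H)$ or $J(w,H)$, which is itself a start configuration of $\textbf{M}$; feeding this structural rigidity back in (together with the fact that such a configuration is accepted by a \emph{one-machine} computation) collapses any putative multi-machine accepting computation to a one-machine one. One must also be slightly careful that the step history prefix $(s)_i$ is genuinely forced — that no rule other than $\sigma(s)_1,\sigma(s)_2$ applies to a start configuration — and that the transition rules compose as claimed without spurious cancellation; both follow from the definitions of the rules of $\Theta_1^+$ and $\Theta_2^+$ and the admissibility bookkeeping already set up before \Cref{M step history (0)}.
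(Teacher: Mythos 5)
Your proof is correct and follows essentially the same route as the paper: part (1) assembles the history $\sigma(s)_iH_i\sigma(a)_i$ from \Cref{M_{6,1} language}(1) (resp.\ \Cref{M_{6,2} language}(2)), and part (2) examines the maximal initial one-machine subcomputation, disposing of the case where it ends in $\sigma(a)_i$ directly and the case where it ends in $\sigma(s)_i^{-1}$ via \Cref{M return to start}. One small quibble: a machine switch can also occur at $W_{ac}$ via the reduced word $\sigma(a)_j\sigma(a)_{3-j}^{-1}$, so your claim that leaving through $\sigma(a)_j$ contradicts reducedness is not quite right — but in that case the prefix ending at $\sigma(a)_j$ is already an accepting one-machine computation of $W$, so the conclusion stands.
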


\begin{proof}

(1) Let $H_w$ be the word given by \Cref{M_5 language}(1).  

\Cref{M_{6,1} language}(1) yields a one-machine computation of the first machine between $I_6(w,H_w)$ and a configuration that is $\sigma(a)_1$-admissible.  Letting $H_1$ be the history of this computation, it follows that $\sigma(s)_1H_1\sigma(a)_1$ is the history of a one-machine computation of the first machine accepting $I(w,H_w)$.

Using \Cref{M_{6,2} language}(1), the analogous argument produces a one-machine computation of the second machine accepting $J(w,H_w)$.

(2) Let $\pazocal{C}:W\equiv W_0\to\dots\to W_t$ be a reduced computation accepting $W$.

If $\pazocal{C}$ is a one-machine computation of the first (respectively second) machine, then the statement follows from \Cref{M_{6,1} language}(2) (respectively \Cref{M_{6,2} language}(2)).

Otherwise, let $\pazocal{C}_i:W_0\to\dots\to W_r$ be a maximal nonempty subcomputation which is a one-machine computation, say of the $i$-th machine.  Let $H_i$ be the history of $\pazocal{C}_i$.

As $W$ is a start configuration, the first rule of $H_i$ must be $\sigma(s)_i$.  Moreover, since $t>r$ and $\pazocal{C}_i$ is maximal, $W_r$ must be admissible for a rule from both the first and the second machine.  In particular, the last letter of $H_i$ must either be $\sigma(a)_i$ or $\sigma(s)_i^{-1}$.

If the last letter of $H_i$ is $\sigma(a)_i$, then $\pazocal{C}_i$ is a one-machine computation accepting $W$, so that the statement again follows from \Cref{M_{6,1} language}(2) or \Cref{M_{6,2} language}(2).

Otherwise, if the last letter of $H_i$ is $\sigma(s)_i^{-1}$, then the statement follows from \Cref{M return to start}.

\end{proof}

For any nonempty reduced computation $\pazocal{C}$ of $\textbf{M}$, define $\ell(\pazocal{C})$ to be the number of maximal nonempty one-machine subcomputations of $\pazocal{C}$.  Further, for any accepted configuration $W$ of $\textbf{M}$, define $A(W)$ to be the set of accepting computations of $W$.

For $W\neq W_{ac}$, define $\ell(W)=\{\ell(\pazocal{C}):\pazocal{C}\in A(W)\}$.  For completeness, set $\ell(W_{ac})=0$.

\begin{lemma} \label{ell at most 2}

For any accepted configuration $W$ of $\textbf{M}$, $\ell(W)\leq2$.

Moreover, if $\ell(W)=2$, then $W$ is not a start configuration and for any $\pazocal{C}\in A(W)$ with $\ell(\pazocal{C})=2$, there exists a factorization $H\equiv H_1H_2$ of the history of $\pazocal{C}$ such that:

\begin{enumerate}[label=(\alph*)]

\item $H_i$ is the history of a one-machine computation of the $i$-th machine.

\item $W\cdot H_1\equiv J(w,H')$ for some $w\in\pazocal{R}_1$ and $H'\in F(\Phi^+)$.

\end{enumerate}

\end{lemma}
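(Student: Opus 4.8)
The plan is to analyze an accepting computation $\pazocal{C}\in A(W)$ by breaking it into its maximal nonempty one-machine subcomputations $\pazocal{C}^{(1)},\dots,\pazocal{C}^{(m)}$, where $m=\ell(\pazocal{C})$, and showing $m\leq 2$. First I would observe that each $\pazocal{C}^{(j)}$ is a one-machine computation, say of the $i_j$-th machine, with consecutive indices alternating (otherwise the two subcomputations would not be maximal, since a rule boundary between them would have to be a transition rule $\sigma(s)_{i}^{\pm1}$ or $\sigma(a)_i^{\pm1}$). Since $\pazocal{C}$ accepts $W$, the terminal configuration of $\pazocal{C}^{(m)}$ is $W_{ac}$, so the last rule of $\pazocal{C}^{(m)}$ is $\sigma(a)_{i_m}$; and for $1\leq j<m$, because $\pazocal{C}^{(j)}$ is maximal and followed by a subcomputation of the other machine, the configuration $W\cdot H_1\cdots H_j$ at the boundary must be admissible for rules of both machines, which forces the last rule of $\pazocal{C}^{(j)}$ to be $\sigma(s)_{i_j}^{-1}$ (it cannot be $\sigma(a)_{i_j}$ because $\sigma(a)_{i_j}$ produces the end state of $\textbf{M}$, which is not a start state of the other machine's submachine; more precisely an $\sigma(a)$-terminal configuration is only $\sigma(a)^{-1}$-admissible).

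Next I would apply \Cref{M return to start} to $\pazocal{C}^{(j)}$ for $1\leq j<m$: a one-machine computation of the $i_j$-th machine in the standard base whose step history has the form $(s)_{i_j}h_{i_j}(s)_{i_j}^{-1}$ must begin at a configuration of the form $I(w,H)$ (if $i_j=1$) or $J(w,H)$ (if $i_j=2$). Applying this with $j=1$: if $m\geq 2$, then $W$ is the initial configuration of $\pazocal{C}^{(1)}$ and the step history of $\pazocal{C}^{(1)}$ has the form $(s)_{i_1}h_{i_1}(s)_{i_1}^{-1}$ (it starts with $(s)_{i_1}$ since $W$ is a start configuration and ends with $\sigma(s)_{i_1}^{-1}$ by the previous paragraph). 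Hence $W\equiv I(w,H)$ or $W\equiv J(w,H)$. In either case $W$ is a start configuration — so if $W$ were a start configuration we would still need to rule out $m\geq 2$; and indeed for the ``moreover'' clause we want: if $\ell(W)=2$ then $W$ is \emph{not} a start configuration, so the real content is that any $\pazocal{C}$ with $\ell(\pazocal{C})=2$ has $W$ non-start, which contradicts what we just derived unless $m\leq 1$ whenever $W$ is start. I would reconcile this by noting: \Cref{M return to start} says $W$ accepted \emph{by a one-machine computation}; combined with \Cref{M language}(2), any accepted start configuration is accepted by a one-machine computation, so $\ell(W)$ attains the value $1$ (or $0$) via that computation. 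The claim $\ell(W)\leq 2$ then needs the bound on \emph{all} accepting computations, not just the shortest; this is where I'd use the alternation structure again.

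The key step bounding $m$ is this: after $\pazocal{C}^{(1)}$ we are at $W\cdot H_1$, which by \Cref{M return to start} is of the form $I(w,H)$ or $J(w,H)$ — a start configuration of the other machine. But \Cref{M return to start}'s conclusion also gives that $W$ (here playing the role of the start of $\pazocal{C}^{(2)}$) is accepted \emph{by a one-machine computation of the $i_2$-th machine}. Since $\pazocal{C}^{(2)},\dots,\pazocal{C}^{(m)}$ together form an accepting computation of $W\cdot H_1$, and $W\cdot H_1$ is accepted by a single one-machine computation, I want to conclude $m-1\leq 1$, i.e.\ $m\leq 2$. To make this rigorous I would argue: if $m\geq 3$, then $\pazocal{C}^{(2)}$ would itself be a proper initial one-machine segment of an accepting computation, ending in $\sigma(s)_{i_2}^{-1}$, so by \Cref{M return to start} again $W\cdot H_1\equiv I(w',H')$ or $J(w',H')$ with step history $(s)_{i_2}h(s)_{i_2}^{-1}$. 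Now $i_2\neq i_1$; if $i_1=1$ then $W\cdot H_1\equiv J(w,H)$ has empty special input sector, but being the start of a one-machine computation of the $2$nd machine that requires $i_2=2$ — fine — yet then $\pazocal{C}^{(2)}$ of the $2$nd machine starting at $J(w,H)$ and ending at $\sigma(s)_2^{-1}$ returns to some $J(w'',H'')$; iterating, all configurations $W\cdot H_1\cdots H_j$ for $j\geq 1$ are of the form $J(\cdot,\cdot)$, hence are never $\sigma(s)_1$-admissible with nonempty special sector, and the indices can never return to $1$. So $\pazocal{C}^{(2)},\pazocal{C}^{(3)},\dots$ would all be the \emph{same} machine, contradicting maximality/alternation. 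Thus $m\leq 2$, and when $m=2$ we have $i_1=1$, $i_2=2$ (since starting from a genuine start configuration $W$ the first segment must be the ``$I$'' form when the special sector is nonempty, and a $\sigma(s)_2$-admissible start configuration with empty special sector is already of the form $J$), giving the factorization $H\equiv H_1H_2$ with $W\cdot H_1\equiv J(w,H')$ and $W$ not a start configuration because... here is the subtlety: if $W$ \emph{were} a start configuration and $m=2$ with $i_1=1$, then $W\equiv I(w,H)$ by \Cref{M return to start}, but $\pazocal{C}^{(2)}$ starts at $J(w,H)=W\cdot H_1\neq W$, which is consistent — so I would instead derive ``$W$ not start'' from the observation that $\ell(W)=2$ means \emph{every} accepting computation has $\ell\geq 2$... no: $\ell(W)$ is a \emph{set} of values, so $\ell(W)=2$ should be read as $\{1,2\}$ or $\{2\}$; the statement ``if $\ell(W)=2$'' presumably means $2\in\ell(W)$. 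In that reading, the ``moreover'' becomes: if some $\pazocal{C}\in A(W)$ has $\ell(\pazocal{C})=2$, then $W$ is not a start configuration. The hard part will be exactly this last point — showing a start configuration cannot admit a genuinely two-machine accepting computation. I would resolve it by noting that a one-machine computation of the first machine starting at a start configuration $I(w,H)$ and ending in $\sigma(s)_1^{-1}$ (as $\pazocal{C}^{(1)}$ must, when $m=2$) returns to a configuration $W\cdot H_1$ which, by the uniqueness in \Cref{M return to start} / \Cref{M_5 long history}(ii), must equal $I(w,H)=W$ itself or differ only in the special sector; and then $\pazocal{C}^{(2)}$ of the second machine from $J(w,H)$, being accepting, plus $\sigma(s)_2$ would have to reconstruct an accepting one-machine computation, contradicting maximality of $\pazocal{C}^{(1)}$ unless $m=1$. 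The remaining routine verifications — that the factorization indices are forced to be $(1,2)$ and that $W\cdot H_1$ has the stated form — follow directly from \Cref{M return to start} and the definitions of $I(w,H)$, $J(w,H)$.
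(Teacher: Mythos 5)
There is a genuine gap. The paper's proof of \Cref{ell at most 2} works with an accepting computation $\pazocal{C}$ realizing the \emph{minimum} of $\ell(W)$ (this is also how ``$\ell(W)\leq 2$'' must be read), and every case is closed by exhibiting an accepting computation of $W$ with strictly fewer one-machine pieces: if an intermediate configuration $W_j$ is the end configuration $W_{ac}$, truncate $\pazocal{C}$ there; if $W_j$ is an accepted start configuration and $j\leq\ell-2$, use \Cref{M language}(2) to replace the tail $H_{j+1}\dots H_\ell$ by the history of a one-machine accepting computation of $W_j$. You never invoke minimality, so neither move is available to you, and your substitute arguments do not close. Concretely: (i) your reason for excluding an intermediate $W_{ac}$ is wrong as stated --- $W_{ac}$ \emph{is} admissible for rules of both machines, namely $\sigma(a)_1^{-1}$ and $\sigma(a)_2^{-1}$, so a reduced computation can a priori pass through it and continue with the other machine; ruling this out without minimality requires a further structural argument (e.g.\ via \Cref{M standard bound special}) that you do not give. (ii) You apply \Cref{M return to start} to $\pazocal{C}^{(1)}$, but its initial configuration is the arbitrary accepted configuration $W$, not a start configuration, so the step history of $\pazocal{C}^{(1)}$ need not have the form $(s)_{i_1}h_{i_1}(s)_{i_1}^{-1}$ and the lemma does not apply.

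A secondary issue: the ``moreover'' clause that $W$ is not a start configuration has a one-line proof you circle around but never state --- an accepted start configuration is accepted by a one-machine computation by \Cref{M language}(2), so $1\in\ell(W)$ and the minimum cannot be $2$. Your attempted resolution via ``uniqueness'' of the return configuration is not needed and is not made precise. Likewise, parts (a) and (b) of the conclusion follow in the paper from the fact that the single intermediate configuration $W_1$ must be an accepted start configuration admissible for rules of \emph{both} machines, and $I(w,H')$ with $w\in\pazocal{R}_1$ (hence $w\neq 1$) is not admissible for any rule of the second machine, forcing $W_1\equiv J(w,H')$; your discussion of this point is tangled with the unresolved cases above.
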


\begin{proof}

Without loss of generality suppose $W\neq W_{ac}$.  Letting $\ell(W)=\ell$, fix an accepting computation $\pazocal{C}\in A(W)$ such that $\ell(\pazocal{C})=\ell$.  Factor the history $H\equiv H_1\dots H_\ell$ of $\pazocal{C}$ such that each $H_i$ is the history of a maximal one-machine subcomputation.

Assume $\ell\geq2$.  Then for each $j\in\{1,\dots,\ell-1\}$, $W_j\equiv W\cdot H_1\dots H_j$ is admissible for both the first rule of $H_{j+1}$ and the inverse of the last rule of $H_j$.  The maximality of the factorization then implies that $W_j$ is admissible for rules for both machines, and so must be either a start or an end configuration.

If $W_j$ is an end configuration, then necessarily $W_j\equiv W_{ac}$.  But then $H_1\dots H_j$ is the history of a computation $\pazocal{C}'\in A(W)$ with $\ell(\pazocal{C}')=j<\ell$, contradicting our choice of $\ell$.

So, each $W_j$ must be an accepted start configuration, so that \Cref{M language}(2) $W_j$ is accepted by a one-machine computation.  But if $j\leq\ell-2$, then the history of this one-machine accepting computation could replace $H_{j+1}\dots H_\ell$ to produce an accepting computation of $W$ which contradicts the definition of $\ell$.

Hence, $\ell=2$ and there exists $w\in\pazocal{R}_1$ and $H'\in F(\Phi^+)$ such that $W_1\equiv I(w,H')$ or $W_1\equiv J(w,H')$.

Moreover, since $1\notin\pazocal{R}_1$, $I(w,H')$ is not admissible for a rule of the second machine, so that $W_1\equiv J(w,H')$.  The statement then follows.

\end{proof}

The history $H$ of a reduced computation $\pazocal{C}$ of $\textbf{M}$ is called \textit{controlled} if $\pazocal{C}$ is a one-machine computation and $H$ corresponds to a controlled computation of $\textbf{M}_5$. As such, the next statement follows immediately from Lemma \ref{M_5 controlled}.

\begin{lemma} \label{M controlled}

Let $\pazocal{C}:W_0\to\dots\to W_t$ be a reduced computation of $\textbf{M}$ with controlled history $H$. Then the base of the computation is a reduced word and all the admissible words $W_i$ are uniquely defined by the history $H$ and the base of $\pazocal{C}$.\newline Moreover, if $\pazocal{C}$ is a computation in the standard base, then $|W_0|_a=\dots=|W_t|_a$, $W_0$ contains the copy of the same word $H_1\in F(\Phi^+)$ over the right historical alphabet in each $P_i'(j)R_i'(j)$-sector, the copy of $H_1^{-1}$ over the right historical alphabet in each $P_i''(j)R_i''(j)$-sector, $\|H\|=2\|H_1\|+3$, and each $W_i$ is accepted by a one-machine computation.

\end{lemma}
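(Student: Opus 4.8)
The plan is to reduce \Cref{M controlled} entirely to \Cref{M_5 controlled} via the parallel structure of $\textbf{M}$. Since a controlled history is by definition the history of a one-machine computation $\pazocal{C}$ of, say, the $i$-th machine, and since every working rule of $\Theta_i^+$ is a copy of a positive rule of $\textbf{M}_{6,i}$, which in turn operates in parallel as $\textbf{M}_5$ on each copy $\{t(j)\}B_4(j)$ of the standard base of $\textbf{M}_5$, the computation $\pazocal{C}$ restricts to a reduced computation of $\textbf{M}_5$ (with controlled history) on each such subword of the base. The first assertion — that the base is reduced and the $W_i$ are determined by $H$ and the base — follows because a controlled history contains the two connecting rules $\zeta_{2i}$ and $\zeta_{2i+2}$ (with the convention $\zeta_0=\sigma(34)$, $\zeta_{2k}=\sigma(45)$), and every sector of the standard base of $\textbf{M}$ is locked by at least one of these two rules; hence by \Cref{locked sectors} the base cannot contain any two-letter subword of the form $UU^{-1}$ or $U^{-1}U$, so it is reduced, and then \Cref{primitive computations} (applied, through the identification with $\textbf{M}_5(4)$, to each relevant three-letter subword of the base) pins down every admissible word from $H$ and the base.

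For the "moreover" part, I would first note that $\sigma(s)_i$ and $\sigma(a)_i$ cannot occur in a controlled history (as they do not appear in any controlled computation of $\textbf{M}_5$), so $\pazocal{C}$ operates genuinely as parallel copies of $\textbf{M}_5$ throughout. Restricting $\pazocal{C}$ to each subword $\{t(j)\}B_4(j)$ gives a reduced computation $\pazocal{C}_j$ of $\textbf{M}_5$ with controlled history, so \Cref{M_5 controlled} applies to each $\pazocal{C}_j$: we get $|W_{0}(j)|_a=\dots=|W_t(j)|_a$, a word $H_1^{(j)}\in F(\Phi^+)$ written (as the appropriate copy) in the $P_x'(j)R_x'(j)$- and $P_x''(j)R_x''(j)$-sectors, the relation $\|H\|=2\|H_1^{(j)}\|+3$, and that each $W_i(j)$ is an accepted configuration of $\textbf{M}_5$. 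Since the $\{t(j)\}Q_0(j)$- and $Q_{s,r}''(j)\{t(j+1)\}$-sectors have empty tape alphabet, summing the $a$-lengths over $j$ yields $|W_0|_a=\dots=|W_t|_a$. The relation $\|H\|=2\|H_1^{(j)}\|+3$ holds for every $j$ with the same $H$, so all the $H_1^{(j)}$ have equal length; and since the rules of $\textbf{M}_{6,i}$ act in parallel — the same $\Phi^+$-letter is appended to each copy simultaneously — the words written in the historical sectors of different coordinates coincide, i.e. $H_1^{(j)}\equiv H_1$ independent of $j$. This gives the claimed form of $W_0$.

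Finally, to see that each $W_i$ is accepted by a one-machine computation of $\textbf{M}$: by \Cref{M_5 controlled} each $W_i(j)$ is an accepted configuration of $\textbf{M}_5$, and indeed — inspecting the proof, via \Cref{M_5 controlled}'s reference to \Cref{primitive computations}(5) — $W_i(j)$ lies on the unique accepting computation of $\textbf{M}_5$ determined by $H_1$; running these accepting computations of $\textbf{M}_5$ in parallel over all coordinates $j$, and then appending $\sigma(a)_i$, produces a one-machine computation of the $i$-th machine that accepts $W_i$ (using \Cref{M_{6,1} language} or \Cref{M_{6,2} language} to package the parallel computation and the transition to the accept state). The only subtlety to check carefully is that the special input sector $Q_0(1)Q_1(1)$ behaves correctly in the $i=2$ case, but since $\textbf{M}_{6,2}$ locks that sector and a controlled computation of $\textbf{M}_5$ never touches the $Q_0Q_1$-sector anyway, this causes no difficulty. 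I expect the main (modest) obstacle to be the bookkeeping that the $H_1^{(j)}$ genuinely agree across coordinates and that the base-reducedness argument correctly covers the "wrap-around" sectors of the cyclic machine; everything else is a direct transfer from \Cref{M_5 controlled}.
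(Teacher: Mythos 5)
Your proposal is correct and follows the paper's route exactly: the paper defines a controlled history of $\textbf{M}$ as one corresponding to a controlled computation of $\textbf{M}_5$ and then states that the lemma "follows immediately from Lemma \ref{M_5 controlled}," which is precisely the coordinate-by-coordinate reduction you carry out. Your extra care about the agreement of the $H_1^{(j)}$ across coordinates, the locked special input sector in the $i=2$ case, and the wrap-around sectors just makes explicit what the paper leaves implicit.
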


\medskip


\subsection{Components and extending computations} \

For a configuration $W$ and $1\leq i\leq L$, the \emph{$i$-th component of $W$}, $W(i)$, is defined to be the admissible subword of $W$ with base $\{t(i)\}B_4(i)$. Since the tape alphabet of the $Q_{s,r}''(i)\{t(i+1)\}$-sector is empty for each $i$, $W\equiv W(1)\dots W(L)$ for any configuration $W$. It is useful to note that if a rule $\theta$ is applicable to some configuration $W$, then $\theta$ operates on each $W(j)$ identically for each $j\geq2$ (but may not operate on $W(1)$ in the analogous way).

Particularly, for $1\leq i\leq L$, we denote the components $A(i)\equiv W_{ac}(i)$, $I(w,H,i)\equiv (I(w,H))(i)$, and $J(w,H,i)\equiv (J(w,H))(i)$ for all $w\in F(X)$ and $H\in F(\Phi^+)$.

Let $V$ be an admissible word with base $B$ and suppose there exists $i\in\{1,\dots,L\}$ such that every letter of $B$ has coordinate $i$. Then, a \textit{coordinate shift} of $V$ is an admissible word $V'$ obtained by simply changing all the state letters' coordinates from $i$ to $j$ for some $j\in\{1,\dots,L\}$ and taking the natural copies of the tape words. For example, if $W$ is an accepted configuration, then $W(i)$ and $W(j)$ are coordinate shifts of one another for $i,j\geq2$, while $J(w,H,1)$ is not a coordinate shift of $J(w,H,2)$ for $w\neq1$.

\begin{lemma} \label{one-machine accepted is admissible}

Suppose $W$ is a configuration of $\textbf{M}$ accepted by a (perhaps empty) one-machine computation of the $i$-th machine.  If $W(j)$ is $\theta$-admissible for some $\theta\in\Theta_i$ and $j\in\{2,\dots,L\}$, then $W$ is $\theta$-admissible.
%
%
%
%

\end{lemma}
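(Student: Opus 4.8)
The plan is to reduce the statement to the individual components, establish a rigid ``synchronized'' structure for any configuration accepted by a one-machine computation of the $i$-th machine, and then read off $\theta$-admissibility from that structure. First I would record the elementary observation that, since the $Q_{s,r}''(k)\{t(k+1)\}$-sectors and the wrap-around sector $Q_{s,r}''(L)\{t(1)\}$ all have empty tape alphabet, every configuration decomposes literally as a concatenation $W\equiv W(1)\cdots W(L)$; hence $W$ is $\theta$-admissible if and only if each $W(k)$ is $\theta$-admissible, because $\theta$-admissibility is the local condition that every state letter lie in $Q(\theta)^{\pm1}$ and every tape letter in $Y(\theta)^{\pm1}$. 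So it suffices to prove $W(k)$ is $\theta$-admissible for every $k$, knowing it for one index $j\geq 2$.

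Next I would prove, by downward induction along an accepting one-machine computation $\pazocal{D}:W\equiv V_0\to\dots\to V_m\equiv W_{ac}$ of the $i$-th machine, that every $V_p$ is \emph{synchronized}: the components $V_p(2),\dots,V_p(L)$ are pairwise coordinate shifts; the state letters of $V_p(1)$ are the coordinate-$1$ shifts of those of $V_p(j)$; and the tape words of $V_p(1)$ and $V_p(j)$ agree, sector by sector under the coordinate shift, in every sector other than the special input sector $Q_0(1)Q_1(1)$, where $V_p(1)$ has empty tape word when $i=2$. The base case $V_m\equiv W_{ac}$ is trivial (all components empty and equal). For the inductive step I would use that every rule of $\Theta_i$, and hence its inverse (same domain), acts on all coordinates in parallel: for $i=1$ identically on each coordinate, and for $i=2$ identically except that the special sector of coordinate $1$ is locked, so that the multiplications adjacent to it are suppressed but the states of $Q_0(1),Q_1(1)$ still track those of $Q_0(j),Q_1(j)$, and its (initially empty) tape word stays empty. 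Thus $V_p\equiv V_{p+1}\cdot\theta_{p+1}^{-1}$ is synchronized whenever $V_{p+1}$ is; iterating gives that $W$ is synchronized. (When $i=2$ this also forces $W$ to have empty special input sector, since a non-empty word there would persist along $\pazocal{D}$, contradicting $V_m\equiv W_{ac}$; one can alternatively invoke \Cref{M language} for the start-configuration case.)

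Finally I would combine these. Since $\theta\in\Theta_i$ is either a copy of a rule of $\textbf{M}_{6,i}$ acting in parallel or a transition rule $\sigma(s)_i^{\pm1}$ or $\sigma(a)_i^{\pm1}$, its state part $Q(\theta)$ singles out the same state in each coordinate, and its domain $Y(\theta)$ in the sectors of coordinate $k$ is the natural copy of its domain in the sectors of coordinate $j$; this is exactly the content of the remark that $\theta$ operates identically on $W(k)$ for $k\geq 2$. Given that $W(j)$ is $\theta$-admissible: for $k\geq 2$, $W(k)$ is a coordinate shift of $W(j)$, so its state letters lie in $Q(\theta)^{\pm1}$ and its tape letters, being natural copies of those of $W(j)$, lie in $Y(\theta)^{\pm1}$, whence $W(k)$ is $\theta$-admissible. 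For $k=1$, the state letters of $W(1)$ are the coordinate-$1$ shifts of those of $W(j)$ and so lie in $Q(\theta)^{\pm1}$; its tape words in every sector but the special one are copies of those of $W(j)$ and so lie in $Y(\theta)^{\pm1}$; and in the special sector $W(1)$ either agrees with the copy from $W(j)$ (when $i=1$) or has no tape letters at all (when $i=2$), so nothing further is required. Hence every $W(k)$ is $\theta$-admissible, and by the first reduction $W$ is $\theta$-admissible.

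The hard part will be the structural claim in the middle step: verifying carefully that the only discrepancy between the coordinate-$1$ component and the others is confined to the special input sector, and that this discrepancy (an empty tape word there when $i=2$) is precisely what makes $\theta$-admissibility of $W(1)$ follow automatically rather than possibly fail. The one delicate bookkeeping point is that locking a sector changes the adjacent state letters' relabelings only by suppressing the multiplications into that sector, not by altering which state they pass to, so synchronization of the state letters is genuinely preserved.
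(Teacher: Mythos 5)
Your proposal is correct and follows essentially the same route as the paper's proof: the paper likewise deduces from the parallel action of the rules of $\Theta_i$ (working back from $W_{ac}$) that $W(\ell)$ is a coordinate shift of $W(j)$ for all $\ell$, with the coordinate-$1$ component differing only by an emptied special input sector when $i=2$, which is harmless because $\theta\in\Theta_2$ locks that sector. Your "synchronized" induction merely makes explicit what the paper states in two sentences.
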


\begin{proof}

First, suppose $i=1$.  Then since every rule of $\Theta_i$ operates in parallel on the components of the standard base, it follows that $W(\ell)$ a coordinate shift of $W(j)$ for each $\ell\in\{1,\dots,L\}$.  But $\theta$ also operates in parallel on the components, so that each $W(\ell)$ must be $\theta$-admissible.

Conversely, if $i=2$, then $W(\ell)$ is a coordinate shift of $W(j)$ for each $\ell\in\{2,\dots,L\}$ while $W(1)$ is the admissible word obtained from the coordinate shift of $W(j)$ by emptying the tape word of the `special' input sector.  But since $\theta$ must lock the `special' input sector anyway, the statement follows.

\end{proof}

\begin{lemma} \label{projected not admissible}

Let $W$ be an accepted configuration of $\textbf{M}$ with $\ell(W)\leq1$.  Suppose $W(j)$ is $\theta$-admissible for $\theta\in\Theta$ and $j\in\{2,\dots,L\}$.

\begin{enumerate}[label=(\alph*)]

\item If $W$ is not $\theta$-admissible, then $\theta=\sigma(s)_2$ and $W\equiv I(w,H)$ for some $w\in\pazocal{R}_1$ and some $H\in F(\Phi^+)$.

\item If $W$ is $\theta$-admissible but $\ell(W\cdot\theta)=2$, then $\theta=\sigma(s)_1$ and $W\equiv J(w,H)$ for some $w\in\pazocal{R}_1$ and $H\in F(\Phi^+)$.

\end{enumerate}

\end{lemma}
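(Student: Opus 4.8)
The statement concerns an accepted configuration $W$ with $\ell(W) \le 1$ for which some component $W(j)$ (with $j \ge 2$) is $\theta$-admissible but either $W$ itself is not $\theta$-admissible, or $W$ is $\theta$-admissible but passing to $W \cdot \theta$ raises $\ell$ to $2$. The plan is to invoke \Cref{one-machine accepted is admissible} together with the structural description of accepted configurations given by \Cref{M language}(2) and \Cref{ell at most 2}, and then to analyze which rule $\theta$ can witness the failure.

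First I would handle part (a). Since $W$ is accepted and $\ell(W) \le 1$, it is accepted by a (possibly empty) one-machine computation, say of the $i$-th machine. If $\theta \in \Theta_i$, then \Cref{one-machine accepted is admissible} forces $W$ to be $\theta$-admissible, contradicting the hypothesis; hence $\theta \in \Theta_{3-i}$. Now \Cref{M language}(2) tells us $W \equiv I(w,H)$ if $i = 1$ or $W \equiv J(w,H)$ if $i = 2$, for some $w \in \pazocal{R}_1$ and $H \in F(\Phi^+)$. If $i = 2$, then $W \equiv J(w,H)$ and $\theta \in \Theta_1$; but comparing components, each $W(\ell)$ for $\ell \ge 2$ is a coordinate shift of $W(j)$, and $W(1)$ differs only by having the special input sector emptied, which $\theta$ respects since every rule of $\Theta_1$ other than nothing acts in parallel on the components — one checks directly that $W$ is then $\theta$-admissible unless $\theta$ has a domain restriction involving the historical input sectors at the start rule $\sigma(s)_1$. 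This forces the residual case to be $i = 1$, $W \equiv I(w,H)$, $\theta \in \Theta_2$. Among rules of $\Theta_2$, only $\sigma(s)_2$ can be applicable to a component of $I(w,H)$ yet fail on $W$ itself: the working rules of $\Theta_2$ and $\sigma(a)_2$ either act in parallel (so admissibility of $W(j)$ propagates) or are inapplicable to a configuration in the start state of $\textbf{M}$. Since $\sigma(s)_2$ locks the special input sector $Q_0(1)Q_1(1)$ while $W(1) = I(w,H,1)$ has $w \ne 1$ written there (as $1 \notin \pazocal{R}_1$), $W$ is not $\sigma(s)_2$-admissible even though the coordinate shift $W(j)$ is. This gives $\theta = \sigma(s)_2$ and $W \equiv I(w,H)$, as claimed.

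For part (b), suppose $W$ is $\theta$-admissible but $\ell(W \cdot \theta) = 2$. By \Cref{ell at most 2}, $W \cdot \theta$ is not a start configuration, yet $W$ is (being accepted with $\ell(W) \le 1$, and applying \Cref{M language}(2) to see $W$ is a start configuration of the form $I(w,H)$ or $J(w,H)$). A one-step transition from a start configuration that produces a configuration with $\ell = 2$ must itself be a start transition rule $\sigma(s)_i$; the working rules and accept rules do not increase $\ell$ when applied to a start configuration, since after such a rule the computation continues as the same one-machine computation. So $\theta \in \{\sigma(s)_1, \sigma(s)_2\}$. If $\theta = \sigma(s)_2$, then $W$ being $\sigma(s)_2$-admissible and a start configuration forces $W \equiv J(w,H)$ (as $I(w,H)$ with $w\ne1$ is not $\sigma(s)_2$-admissible), and then $W \cdot \sigma(s)_2 \equiv J_6(w,H) \cdot (\text{natural hardware copy})$ is accepted by a one-machine computation of the second machine via \Cref{M_{6,2} language}(1), giving $\ell(W\cdot\theta) \le 1$ — a contradiction. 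Hence $\theta = \sigma(s)_1$. It remains to see $W \equiv J(w,H)$ rather than $I(w,H)$: if $W \equiv I(w,H)$, then $W$ is accepted by a one-machine computation of the first machine whose first rule is $\sigma(s)_1$, so $W \cdot \sigma(s)_1$ lies on that accepting computation and hence $\ell(W \cdot \sigma(s)_1) \le 1$, again a contradiction. Thus $W \equiv J(w,H)$ and $\theta = \sigma(s)_1$.

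\textbf{Main obstacle.} The delicate point is the bookkeeping around the ``special'' input sector and the coordinate-shift structure: one must carefully verify that, apart from the start rules, admissibility of a single component $W(j)$ with $j \ge 2$ genuinely propagates to $W$, which relies on the parallel action of all non-start rules on components $2, \dots, L$ and on the fact that every rule locks the special sector $Q_0(1)Q_1(1)$ except those of $\Theta_1$ that explicitly act there. Pinning down exactly which rules can create the discrepancy — and ruling out the accept rules $\sigma(a)_i$ on start configurations — is where the argument must be done with care, but it is essentially a finite case check using \Cref{one-machine accepted is admissible}, \Cref{M language}, and \Cref{ell at most 2}.
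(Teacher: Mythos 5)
There is a genuine gap at the very first step of both parts: you never establish that $W$ is a start configuration before invoking \Cref{M language}(2). That lemma's hypothesis is precisely that $W$ is a start configuration; an accepted configuration with $\ell(W)\leq1$ need not be one. For example, $W_{ac}$ itself and every intermediate configuration along a one-machine accepting computation are accepted with $\ell\leq1$ but are not of the form $I(w,H)$ or $J(w,H)$, so your assertion in part (a) ("Now \Cref{M language}(2) tells us $W\equiv I(w,H)$ \dots") and even more explicitly in part (b) ("$W$ is [a start configuration], being accepted with $\ell(W)\le1$, and applying \Cref{M language}(2)") is circular: the whole point of the lemma is to rule such configurations out under the stated admissibility hypotheses, and that is exactly the step your argument omits. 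The paper fills this hole as follows: since $\theta\notin\Theta_i$ (for part (a) this is the contrapositive of \Cref{one-machine accepted is admissible}, as you say; for part (b) one notes that $\theta\in\Theta_i$ would let $\theta^{-1}$ followed by the one-machine accepting computation accept $W\cdot\theta$, giving $\ell(W\cdot\theta)\leq1$), the component $W(j)$ is admissible for rules of both machines — $\theta$ from $\Theta_{3-i}$ and, via the accepting computation (or the relation $W\equiv W_{ac}$), a rule of $\Theta_i$. Because each part of the state letters of $\textbf{M}$ is a disjoint union of the two machines' copies together with the shared start and end letters, this forces $W$ to be an accepted start or end configuration. The end case $W\equiv W_{ac}$ is then excluded explicitly: the only applicable rules there are $\sigma(a)_\ell^{-1}$, for which $W$ is admissible and $\ell(W\cdot\theta)=1$, contradicting both (a) and (b). Only after this reduction can \Cref{M language}(2) be applied.

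Once the reduction to an accepted start configuration is granted, your remaining case analysis (only $\sigma(s)_1,\sigma(s)_2$ apply to start configurations; $\sigma(s)_2$ locks the special input sector while $w\neq1$ since $1\notin\pazocal{R}_1$; $I(w,H)\cdot\sigma(s)_1$ and $J(w,H)\cdot\sigma(s)_2$ lie on one-machine accepting computations, so $\ell\leq1$) is essentially the elaboration the paper leaves implicit, up to a small imprecision: in part (b) you cite \Cref{M_{6,2} language}(1), which concerns the specific word $H_w$, whereas what you actually need is that the one-machine accepting computation of $J(w,H)$ begins with $\sigma(s)_2$, so that $J(w,H)\cdot\sigma(s)_2$ is itself accepted by a one-machine computation of the second machine. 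Repair the start-configuration reduction (and the $W_{ac}$ exclusion) and the argument goes through.
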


\begin{proof}

As $\ell(W)\leq1$, there exists a (perhaps empty) one-machine computation of the $i$-th machine accepting $W$.  In either case, \Cref{one-machine accepted is admissible} implies $\theta\notin\Theta_i$.

In particular, $W(j)$ must be $\theta$-admissible for rules from both machines, so that $W$ must be either an accepted start or an accepted end configuration.

If $W$ is an accepted end configuration, then it follows immediately that $W\equiv W_{ac}$.  But then $A(j)$ is $\theta$-admissible implies $\theta=\sigma(a)_\ell^{-1}$, so that $W$ is $\theta$-admissible and $\ell(W\cdot\theta)=1$.

The statement then follows from \Cref{M language}(2).

\end{proof}

The next statement is an immediate consequence of \Cref{M step history}:

\begin{lemma} \label{M return to end}

For $i\in\{2,\dots,L\}$, let $\pazocal{C}:A(i)\to\dots\to A(i)$ be a nonempty reduced computation of $\textbf{M}$. Then $\pazocal{C}$ is not a one-machine computation.

\end{lemma}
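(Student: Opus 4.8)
The plan is to argue by contradiction using \Cref{M step history}, as the remark preceding the lemma already suggests. Suppose $\pazocal{C}:A(i)\to\dots\to A(i)$ were a nonempty reduced one-machine computation, say of the $j$-th machine for some $j\in\{1,2\}$. Every admissible word occurring in $\pazocal{C}$ has the same base as $A(i)$, namely $\{t(i)\}B_4(i)$, which is itself of the form $(\{t(i)\}B_4(i))^{\pm1}$; hence \Cref{M step history} applies and the step history of $\pazocal{C}$ is a (contiguous) subword of one of the three words (i), (ii), (iii) in that lemma, with index $j$.

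Next I would pin down the state letters of $A(i)$ and the transition rules interacting with them. Since $A(i)\equiv W_{ac}(i)$ and $W_{ac}$ is the accept (hence end) configuration of $\textbf{M}$, all state letters of $A(i)$ are the `new' end letters of $\textbf{M}$. Inspecting the lists $\Theta_1^+$ and $\Theta_2^+$, the only rules $\theta$ of $\textbf{M}$ for which a configuration with all state letters equal to these new end letters is $\theta$-admissible are $\sigma(a)_1^{-1}$ and $\sigma(a)_2^{-1}$: only $\sigma(a)_1$ and $\sigma(a)_2$ carry these new end letters in their $Q'$-component, and no other rule has them in its domain. Since $\pazocal{C}$ is nonempty and all of its rules lie in $\Theta_j$, its first rule must therefore be $\sigma(a)_j^{-1}$; reading the final transition backwards, its last rule $\theta_t$ satisfies that $A(i)$ is $\theta_t^{-1}$-admissible, forcing $\theta_t\in\{\sigma(a)_1,\sigma(a)_2\}\cap\Theta_j=\{\sigma(a)_j\}$. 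Thus the step history of $\pazocal{C}$ begins with the letter $(a)_j^{-1}$ and ends with the letter $(a)_j$.

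Finally I would observe that none of the words (i), (ii), (iii) of \Cref{M step history} admits a subword that both begins with $(a)_j^{-1}$ and ends with $(a)_j$: word (i) contains no occurrence of $(a)_j^{-1}$, word (ii) contains no occurrence of $(a)_j$, and word (iii) contains neither; this is the desired contradiction. There is essentially no substantive obstacle here. The two points that require a little care are (a) correctly identifying which rules of $\textbf{M}$ are applicable to $A(i)$ — a routine check against the definitions of $\Theta_1^+$ and $\Theta_2^+$ — and (b) confirming that a nonempty reduced one-machine computation starting and ending at $A(i)$ genuinely has both a first and a distinct last transition rule, so that both endpoints of its step history are constrained; here the one-rule case is excluded because $A(i)\cdot\sigma(a)_j^{-1}\not\equiv A(i)$, and the two-rule step history $(a)_j^{-1}(a)_j$ is excluded because $\pazocal{C}$ is reduced.
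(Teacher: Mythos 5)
Your argument is correct and is exactly the intended one: the paper gives no separate proof, stating the lemma as an immediate consequence of \Cref{M step history}, and your elaboration — since only $\sigma(a)_1^{-1},\sigma(a)_2^{-1}$ apply to $A(i)$, the step history of a one-machine computation from $A(i)$ to $A(i)$ would have to begin with $(a)_j^{-1}$ and end with $(a)_j$, which no subword of (i), (ii), (iii) can do — is precisely how that immediacy is meant to be unpacked. Your handling of the degenerate one-rule and two-rule cases (the latter excluded by reducedness) is a sensible extra check and raises no issues.
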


The next statement follows from the parallel nature of the rules and is proved in much the same way as in \cite{W}:

\begin{lemma}[Lemma 5.9 of \cite{W}]  \label{extend one-machine}

Let $V_0\to\dots\to V_t$ be a one-machine computation of the $i$-th machine with history $H$ and base $\{t(j)\}B_4(j)$ for some $j\in\{2,\dots,L\}$. Then there exists a one-machine computation of the $i$-th machine $W_0\to\dots\to W_t$ in the standard base with history $H$ such that $W_\ell(j)\equiv V_\ell$ for all $\ell\in\{0,\dots,t\}$.  Moreover:

\begin{enumerate} [label=(\alph*)]

\item If $V_\ell\equiv A(j)$, then $W_\ell\equiv W_{ac}$

\item If $V_\ell\equiv I(w,H,j)$ for some $w\in F(X)$ and $H\in F(\Phi^+)$, then

\begin{itemize}

\item $W_\ell\equiv I(w,H)$ if $i=1$, or

\item $W_\ell\equiv J(w,H)$ if $i=2$.

\end{itemize}

\end{enumerate}

\end{lemma}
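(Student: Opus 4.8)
The plan is to exploit the parallel nature of the rules of $\textbf{M}$ in any one-machine computation, which is the whole point of the components machinery set up in this subsection. Recall that for $j\geq 2$, each rule of $\Theta_i$ acts identically on every component $W(\ell)$ with $\ell\geq 2$, and in the case $i=2$ acts on $W(1)$ like a coordinate shift of that common action except that it locks the special input sector (which is empty throughout anyway). So the idea is to reconstruct the full computation componentwise from the given computation $V_0\to\dots\to V_t$ of the $i$-th machine in base $\{t(j)\}B_4(j)$: build $W_\ell$ by declaring $W_\ell(\ell')$ to be the appropriate coordinate shift of $V_\ell$ for each $\ell'\neq 1$ (and $\ell' \geq 2$), and $W_\ell(1)$ to be the coordinate shift with the special input sector emptied when $i=2$, or again the plain coordinate shift when $i=1$.

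First I would verify that the $W_\ell$ so defined really are admissible words of $\textbf{M}$, which is immediate since each component is (a coordinate shift of, possibly with one sector emptied) an admissible word; the empty-tape-alphabet sectors $Q_{s,r}''(\ell)\{t(\ell+1)\}$ cause no interaction between components, so $W_\ell \equiv W_\ell(1)\dots W_\ell(L)$ is a genuine admissible word. Next I would check that the rule $\theta_{\ell}$ of $\Theta_i$ applied in the transition $V_{\ell-1}\to V_\ell$ is applicable to $W_{\ell-1}$ and produces $W_\ell$. Applicability: $V_{\ell-1}$ is $\theta_\ell$-admissible, so its coordinate shift is too, and for $i=2$ emptying the special input sector (which $\theta_\ell$ locks) preserves $\theta_\ell$-admissibility; hence every component of $W_{\ell-1}$ is $\theta_\ell$-admissible, so $W_{\ell-1}$ is (this is essentially the content of \Cref{one-machine accepted is admissible}, or can be argued directly here). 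That $W_{\ell-1}\cdot\theta_\ell \equiv W_\ell$ follows because $\theta_\ell$ acts on each component independently and its action on a coordinate shift is the coordinate shift of its action on $V_{\ell-1}$, and because $\theta_\ell$ locking the special input sector means it does not disturb the one place where the $i=2$ case differs. Finally I would confirm the history of $W_0\to\dots\to W_t$ is $H$ (it uses the same rules in the same order by construction) and that it is a one-machine computation of the $i$-th machine (all its rules lie in $\Theta_i$).

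For the two itemized claims (a) and (b): if $V_\ell\equiv A(j)$, then by construction $W_\ell(\ell')$ is the accept component $A(\ell')$ for all $\ell'$ — here one uses that $A(j)$ has all tape words empty, so its coordinate shift with the special input sector emptied is still $A(\ell')$ — whence $W_\ell\equiv W_{ac}$. Similarly if $V_\ell\equiv I(w,H,j)$, then for $i=1$ every component of $W_\ell$ is the corresponding coordinate shift $I(w,H,\ell')$ so $W_\ell\equiv I(w,H)$; for $i=2$ the components with $\ell'\geq 2$ are $I(w,H,\ell')$ but $W_\ell(1)$ has the special input sector emptied, which is exactly $J(w,H,1)$, so $W_\ell\equiv J(w,H)$. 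These follow directly from the definitions of $I(w,H)$, $J(w,H)$, and their components at the end of the previous subsection.

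The main obstacle — really the only delicate point — is the careful bookkeeping in the $i=2$ case, making sure that the discrepancy between how a rule of the second machine treats the first component versus the others is fully accounted for: one must be sure that the special input sector is genuinely locked by every rule of $\Theta_2$ (true by construction of $\textbf{M}_{6,2}$ and $\Theta_2$) and that it is empty in $V_0$ already, so that no rule ever needs to "do something" there that the coordinate shift would fail to capture. Everything else is a routine induction on $\ell$ transcribing the componentwise action of the rules, and the result is then essentially a restatement of Lemma 5.9 of \cite{W} in the present setting.
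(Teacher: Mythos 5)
Your proposal is correct and matches the paper's approach: the paper gives no written proof here, deferring to Lemma 5.9 of \cite{W} with the remark that the statement "follows from the parallel nature of the rules," and your componentwise reconstruction (coordinate shifts for all components, with the special input sector kept empty in the first component when $i=2$ since every rule of $\Theta_2$ locks it) is exactly that argument. The one delicate point you flag — that rules of the second machine treat component $1$ differently only in the locked special input sector — is handled correctly.
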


The next statement can be readily seen in the proof of \Cref{M return to start}, but the simple proof exemplifies the use of \Cref{extend one-machine}.

\begin{lemma} \label{I vs J}

Given $w\in\pazocal{R}_1$ and $H\in F(\Phi^+)$, the input configuration $I(w,H)$ is accepted by a one-machine computation of the first machine if and only if the input configuration $J(w,H)$ is accepted by a one-machine computation of the second machine.

\end{lemma}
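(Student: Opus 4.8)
The plan is to exploit the parallel nature of the rules of $\textbf{M}$ together with the extension principle of \Cref{extend one-machine}. The two machines $\textbf{M}_{6,1}$ and $\textbf{M}_{6,2}$ differ only in that every rule of the second locks the special input sector $Q_0(1)Q_1(1)$; on all components $\{t(j)\}B_4(j)$ with $j\geq2$ they operate identically, and indeed on these components $\sigma(s)_1$ and $\sigma(s)_2$ act the same way. This is the symmetry we will leverage.

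First I would prove the forward implication. Suppose $I(w,H)$ is accepted by a one-machine computation $\pazocal{C}$ of the first machine, with history $H'$. Restricting $\pazocal{C}$ to the component $\{t(2)\}B_4(2)$ yields a reduced computation $I(w,H,2)\to\dots\to A(2)$ of the first machine with history $H'$. Since $I(w,H,2)$ and $J(w,H,2)$ are coordinate shifts of one another (they agree on all components $j\geq2$, and the difference only concerns the special sector, which lies in component $1$), this same history $H'$ — now read as a sequence of rules of the \emph{second} machine, which is legitimate because $\sigma(s)_j$, $\sigma(a)_j$, and the working rules all have the same action on component-$j$ data for $j\geq2$ — drives a computation $J(w,H,2)\to\dots\to A(2)$ of the second machine. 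Now apply \Cref{extend one-machine} with $i=2$ and $j=2$: there is a one-machine computation of the second machine in the standard base with this history whose second component tracks the restricted computation, and by part (b) its initial configuration is $J(w,H)$ (since the restricted computation starts at $I(w,H,2)=J(w,H,2)$, the relevant component) and by part (a) its terminal configuration is $W_{ac}$. Hence $J(w,H)$ is accepted by a one-machine computation of the second machine.

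The converse is entirely symmetric: starting from a one-machine accepting computation of the second machine on $J(w,H)$, restrict to component $2$, reinterpret the history as rules of the first machine (again legitimate on component-$2$ data), and apply \Cref{extend one-machine} with $i=1$, using part (b) to conclude the extended computation begins at $I(w,H)$ and part (a) to conclude it ends at $W_{ac}$. The one genuine point requiring care — and the step I expect to be the main obstacle — is checking that a history $H'$ valid for one machine really does remain valid (i.e., all the admissibility conditions at each step are met) when reinterpreted for the other machine on the component-$2$ data; this hinges on the observation that the only discrepancy between $\Theta_1$ and $\Theta_2$ is the locking of the special input sector, which sits in component $1$ and is thus invisible to the component-$2$ restriction. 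Once this is granted, both directions follow by the bookkeeping of \Cref{extend one-machine}, and the $\|H\|\geq k$ and uniqueness clauses, while not needed for the statement, are consistent with Lemmas \ref{M_{6,1} language} and \ref{M_{6,2} language}. $\square$
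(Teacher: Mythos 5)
Your proof is correct and follows essentially the same route as the paper's: restrict the accepting computation to the component $\{t(2)\}B_4(2)$, use the fact that the two machines' rules act identically on components $j\geq2$ to reinterpret the history for the other machine, and then invoke \Cref{extend one-machine} (in particular part (b)) to lift back to the standard base. The admissibility concern you flag is exactly the point the paper settles by the same observation about the special input sector lying only in component $1$.
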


\begin{proof}

Suppose $I(w,H)$ is accepted by a one-machine computation $\pazocal{C}$ of the first machine.  Then the restriction of $\pazocal{C}$ to the subword $\{t(2)\}B_4(2)$ is a one-machine computation of the first machine of the form $I(w,H,2)\to\dots\to A(2)$.  As the rules of the second machine operate on admissible words with base $\{t(2)\}B_4(2)$ identically to the rules of the first machine, there then exists a one-machine computation $\pazocal{D}$ of the second machine of the form $I(w,H,2)\to\dots\to A(2)$.  But then applying \Cref{extend one-machine} to $\pazocal{D}$ produces a one-machine computation of the second machine accepting $J(w,H)$.

The analogous argument then implies the converse.

\end{proof}

\begin{lemma} \label{projected start to end}

For $j\in\{2,\dots,L\}$, let $\pazocal{C}:V_0\to\dots\to V_t$ be a one-machine computation of the $i$-th machine with base $\{t(j)\}B_4(j)$ and step history of the form $(s)_ih_i(a)_i$.  Then there exists an accepted input configuration $I(w,H)$ such that $V_0\equiv I(w,H,j)$.

\end{lemma}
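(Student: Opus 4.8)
The plan is to reduce the statement about a one-machine computation with base $\{t(j)\}B_4(j)$ to the already-established analysis of computations of $\textbf{M}_5$ (respectively $\textbf{M}_4$) in the standard base, using the fact that the rules of $\textbf{M}$ act on any component $W(j)$ with $j\geq2$ exactly as the corresponding rules of $\textbf{M}_{6,i}$, and hence (restricting further to a single copy of $B_4$) as the corresponding rules of $\textbf{M}_5$. So first I would invoke \Cref{extend one-machine} to lift $\pazocal{C}$ to a one-machine computation $\pazocal{C}':W_0\to\dots\to W_t$ of the $i$-th machine in the standard base with the same history $H$ and with $W_\ell(j)\equiv V_\ell$ for all $\ell$. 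Since $\pazocal{C}$ has step history $(s)_ih_i(a)_i$, so does $\pazocal{C}'$; in particular $W_0$ is $\sigma(s)_i$-admissible, i.e.\ $W_0$ is a start configuration of $\textbf{M}$, and $W_t$ is accepted (indeed $W_t\cdot\sigma(a)_i\equiv W_{ac}$, so $W_t$ is $\sigma(a)_i$-admissible and the computation accepts $W_0$).

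Next I would apply \Cref{M language}(2) to $W_0$: since $W_0$ is an accepted start configuration, there exist $w\in\pazocal{R}_1$ and $H''\in F(\Phi^+)$ with $W_0\equiv I(w,H'')$ if $i=1$ or $W_0\equiv J(w,H'')$ if $i=2$. In the case $i=1$, this gives $V_0\equiv W_0(j)\equiv I(w,H'',j)$ directly, and $I(w,H'')$ is accepted by a one-machine computation of the first machine by \Cref{M language}(1) (or simply because $\pazocal{C}'$ is such a computation). In the case $i=2$, we have $W_0\equiv J(w,H'')$, whose $j$-th component $J(w,H'',j)$ for $j\geq2$ is the coordinate shift of $J(w,H'',2)$, which equals $I(w,H'',j)$ since the `special' input sector only differs in the first component — so again $V_0\equiv I(w,H'',j)$. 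It remains to see that $I(w,H'')$ is an accepted configuration: by \Cref{I vs J}, since $J(w,H'')$ is accepted by a one-machine computation of the second machine (namely $\pazocal{C}'$), the input configuration $I(w,H'')$ is accepted by a one-machine computation of the first machine. Setting $H:=H''$ finishes the proof.

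The one point requiring a little care is the bookkeeping in the $i=2$ case: one must observe that for $j\geq2$ the $j$-th component $J(w,H,j)$ of the configuration $J(w,H)$ literally coincides with $I(w,H,j)$, because $J(w,H)$ is obtained from $I(w,H)$ only by emptying the `special' input sector $Q_0(1)Q_1(1)$, which lives in the first component. This is exactly the kind of coordinate-shift observation already used in \Cref{one-machine accepted is admissible} and \Cref{projected not admissible}, so it is routine. I expect no genuine obstacle here; the statement is a packaging of \Cref{extend one-machine}, \Cref{M language}, and \Cref{I vs J}, and the only thing to be vigilant about is keeping the two cases $i=1$ and $i=2$ straight and correctly identifying which of $I(\cdot)$ and $J(\cdot)$ appears at the level of components versus at the level of full configurations.

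\begin{proof}
By \Cref{extend one-machine}, there is a one-machine computation $\pazocal{C}':W_0\to\dots\to W_t$ of the $i$-th machine in the standard base with history $H$ such that $W_\ell(j)\equiv V_\ell$ for all $\ell\in\{0,\dots,t\}$. Since the history of $\pazocal{C}'$ is $H$, the step history of $\pazocal{C}'$ is also of the form $(s)_ih_i(a)_i$; in particular $W_0$ is a start configuration of $\textbf{M}$ and $W_t\cdot\sigma(a)_i\equiv W_{ac}$, so $\pazocal{C}'$ accepts $W_0$.

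As $W_0$ is an accepted start configuration, \Cref{M language}(2) provides $w\in\pazocal{R}_1$ and $H\in F(\Phi^+)$ with $W_0\equiv I(w,H)$ if $i=1$ and $W_0\equiv J(w,H)$ if $i=2$.

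Suppose $i=1$. Then $V_0\equiv W_0(j)\equiv I(w,H,j)$, and $I(w,H)\equiv W_0$ is accepted, as witnessed by $\pazocal{C}'$.

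Suppose $i=2$. Since $J(w,H)$ is obtained from $I(w,H)$ only by emptying the tape word of the `special' input sector $Q_0(1)Q_1(1)$, which lies in the first component, we have $J(w,H,j)\equiv I(w,H,j)$ for $j\geq2$. Hence $V_0\equiv W_0(j)\equiv J(w,H,j)\equiv I(w,H,j)$. Moreover $\pazocal{C}'$ is a one-machine computation of the second machine accepting $J(w,H)$, so \Cref{I vs J} shows that $I(w,H)$ is accepted by a one-machine computation of the first machine; in particular $I(w,H)$ is accepted.

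In either case $V_0\equiv I(w,H,j)$ for an accepted input configuration $I(w,H)$, as required.
\end{proof}
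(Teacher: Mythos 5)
Your proposal is correct and follows essentially the same route as the paper's proof: lift $\pazocal{C}$ to the standard base via \Cref{extend one-machine}, observe the lifted computation accepts a start configuration, and conclude with \Cref{M language}(2) and \Cref{I vs J}. You merely spell out the $i=1$ versus $i=2$ bookkeeping (including the observation that $J(w,H,j)\equiv I(w,H,j)$ for $j\geq2$) that the paper leaves implicit; the only cosmetic issue is that you reuse the letter $H$ for both the history of the computation and the word in $F(\Phi^+)$.
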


\begin{proof}

As it must be $\sigma(a)_i^{-1}$-admissible, $V_t$ must be equivalent to $A(j)$.

Applying \Cref{extend one-machine} to $\pazocal{C}$ yields a one-machine computation accepting an input configuration $W_0$ such that $V_0\equiv W_0(j)$.  The statement then follows from \Cref{M language}(2) and \Cref{I vs J}.

\end{proof}

\begin{lemma} \label{projected return to start}

For $j\in\{2,\dots,L\}$, let $\pazocal{C}:V_0\to\dots\to V_t$ be a one-machine computation of the $i$-th machine with base $\{t(j)\}B_4(j)$ and step history of the form $(s)_ih_i(s)_i^{-1}$.  Then there exists an accepted input configuration $I(w,H)$ such that $V_0\equiv I(w,H,j)$.

\end{lemma}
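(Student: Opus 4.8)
The plan is to mimic the proof of \Cref{projected start to end}, replacing the appeal to the terminal configuration being $\sigma(a)_i^{-1}$-admissible with an appeal to it being $\sigma(s)_i^{-1}$-admissible. First I would invoke \Cref{extend one-machine} applied to $\pazocal{C}$: since $\pazocal{C}$ is a one-machine computation of the $i$-th machine with history $H$ and base $\{t(j)\}B_4(j)$, there is a one-machine computation $W_0\to\dots\to W_t$ of the $i$-th machine in the standard base with the same history $H$ such that $W_\ell(j)\equiv V_\ell$ for all $\ell$. In particular $W_0$ is a configuration in the standard base whose step history is $(s)_ih_i(s)_i^{-1}$ — the step history is determined by the history, which is shared.

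Next I would observe that $W_0$ is a start configuration: its first rule is $\sigma(s)_i$, which is only admissible to a start configuration of $\textbf{M}$, so every state letter of $W_0$ is a start letter of $\textbf{M}$; hence $W_0$ is in particular a start configuration in the standard base. Now \Cref{M return to start} applies directly to $W_0\to\dots\to W_t$ (a one-machine computation of the $i$-th machine in the standard base with step history of the form $(s)_ih_i(s)_i^{-1}$), yielding $w\in\pazocal{R}_1$ and $H'\in F(\Phi^+)$ with $\|H'\|\geq k$ such that $W_0\equiv I(w,H')$ if $i=1$ and $W_0\equiv J(w,H')$ if $i=2$, and moreover $W_0$ is accepted by a one-machine computation of the $i$-th machine.

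Finally I would reconcile the two cases into the single desired conclusion. If $i=1$ then $W_0\equiv I(w,H')$ is already accepted, so $V_0\equiv W_0(1)\cdots$—more precisely $V_0\equiv W_0(j)\equiv I(w,H',j)$, and we are done with $H=H'$. If $i=2$ then $W_0\equiv J(w,H')$, and \Cref{M return to start} tells us $J(w,H')$ is accepted by a one-machine computation of the second machine; by \Cref{I vs J} the input configuration $I(w,H')$ is then accepted by a one-machine computation of the first machine, so $I(w,H')$ is an accepted input configuration. It remains only to note that $V_0\equiv W_0(j)\equiv J(w,H',j)$ and that $J(w,H',j)$ is a coordinate shift of $J(w,H',2)$, which is the admissible word obtained from a coordinate shift of $I(w,H',2)$ by emptying the special input sector — but for $j\geq 2$ the special input sector is not part of $\{t(j)\}B_4(j)$, so in fact $J(w,H',j)\equiv I(w,H',j)$. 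Hence $V_0\equiv I(w,H',j)$ with $I(w,H')$ accepted, as required, taking $H=H'$. The main obstacle is the bookkeeping in this last step: one must be careful that $J(w,H',j)$ and $I(w,H',j)$ genuinely coincide for $j\geq 2$ (because the only difference between $J(w,H')$ and $I(w,H')$ lives in the first component), so that the accepted configuration guaranteed by the argument is an $I(\cdot,\cdot)$ rather than merely a $J(\cdot,\cdot)$; this is exactly where \Cref{I vs J} and the coordinate-shift discussion preceding \Cref{one-machine accepted is admissible} are needed.
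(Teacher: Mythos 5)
Your proposal is correct and takes essentially the same route as the paper: apply \Cref{extend one-machine} to lift $\pazocal{C}$ to a standard-base one-machine computation satisfying the hypotheses of \Cref{M return to start}, then read off the conclusion. The paper leaves the $i=2$ reconciliation (that $J(w,H',j)\equiv I(w,H',j)$ for $j\geq2$ and that \Cref{I vs J} upgrades acceptance of $J(w,H')$ to acceptance of $I(w,H')$) as "follows immediately," and you have simply spelled it out correctly.
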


\begin{proof}

Applying \Cref{extend one-machine} to $\pazocal{C}$ produces a one-machine computation in the standard base satisfying the hypotheses of \Cref{M return to start}, so that the statement follows immediately.

\end{proof}

The next statement then follows from Lemmas \ref{M return to end}, \ref{projected start to end}, and \ref{projected return to start}:

\begin{lemma} \label{projected end to end}

Let $j\in\{2,\dots,L\}$ and suppose $\pazocal{C}:A(j)\to\dots\to A(j)$ is a reduced computation. Let $H\equiv H_1\dots H_k$ be the factorization of the history of $\pazocal{C}$ such that for all $i\in\{1,\dots,k\}$, $H_i$ is the history of a maximal one-machine subcomputation of the $z_i$-th machine $\pazocal{C}_i:U_i\to\dots\to V_i$ of $\pazocal{C}$. Then for all $i$, either:

\begin{enumerate} [label=({\alph*})]

\item $V_i\equiv A(j)$ or

\item $V_i\equiv I(w_i,K_i,j)$ for some $w_i\in\pazocal{R}_1$ and $K_i\in F(\Phi^+)$.

\end{enumerate}

In case (a), set $W_i^{(1)}\equiv W_i^{(2)}\equiv W_{ac}$; in case (b), set $W_i^{(1)}\equiv I(w_i,K_i)$ and $W_i^{(2)}\equiv J(w_i,K_i)$. Further, set $W_0^{(1)}\equiv W_0^{(2)}\equiv W_{ac}$.
\newline
Then for each $i\in\{1,\dots,k\}$, there exists a reduced computation $\pazocal{C}_i':W_{i-1}^{(z_i)}\to\dots\to W_i^{(z_i)}$ in the standard base with history $H_i$.

\end{lemma}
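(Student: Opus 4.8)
The statement is a near-immediate consequence of the extension machinery already developed, in particular \Cref{extend one-machine} together with the structural results \Cref{M return to end}, \Cref{projected start to end}, and \Cref{projected return to start} that were cited as prerequisites. The plan is to treat each maximal one-machine subcomputation $\pazocal{C}_i\colon U_i\to\dots\to V_i$ separately and lift it to the standard base, then check that the terminal configurations match up in the claimed way so that the pieces can be concatenated.

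First I would analyze the factorization $H\equiv H_1\dots H_k$. Since $\pazocal{C}$ begins and ends at $A(j)$, each $U_i$ (for $i\geq 2$) equals the terminal configuration $V_{i-1}$ of the previous piece, and $U_1\equiv A(j)$. Each $\pazocal{C}_i$ is a one-machine computation of the $z_i$-th machine with base $\{t(j)\}B_4(j)$; by maximality of the factorization, $U_i$ and $V_i$ are each admissible for rules of both machines, hence are either (coordinate-shifted) start or end configurations. Now apply \Cref{extend one-machine} to each $\pazocal{C}_i$: this produces a one-machine computation of the $z_i$-th machine in the standard base with the same history $H_i$, whose $j$-th component reproduces $\pazocal{C}_i$. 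The point (a)/(b) dichotomy for $V_i$ comes from combining \Cref{M return to end} (ruling out a nonempty one-machine return $A(j)\to\dots\to A(j)$, forcing the step history of $\pazocal{C}_i$ to terminate at $(a)_{z_i}$ or $(s)_{z_i}^{-1}$) with \Cref{projected start to end} and \Cref{projected return to start} (which identify the corresponding terminal component as $A(j)$ or as $I(w_i,K_i,j)$ respectively). The convention $W_i^{(1)}\equiv I(w_i,K_i)$, $W_i^{(2)}\equiv J(w_i,K_i)$ in case (b), and $W_i^{(z_i)}\equiv W_{ac}$ in case (a), is exactly what \Cref{extend one-machine}(a),(b) outputs for the initial and terminal standard-base configurations of the lifted computation, once one observes that a one-machine computation of the first machine starting from a component $I(w,H,j)$ lifts to one starting from $I(w,H)$, whereas for the second machine the `special' input sector of the first component is empty, giving $J(w,H)$ instead.

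The one genuine thing to verify is the matching between consecutive pieces: for the concatenation $\pazocal{C}_1'\pazocal{C}_2'\dots\pazocal{C}_k'$ to make sense I need the terminal configuration $W_i^{(z_i)}$ of $\pazocal{C}_i'$ to be $\sigma(\cdot)$-admissible in the way required for $\pazocal{C}_{i+1}'$ to begin there — but the statement as phrased does not ask for a single concatenated computation, only that each $\pazocal{C}_i'$ individually exists with initial configuration $W_{i-1}^{(z_i)}$ and terminal $W_i^{(z_i)}$. So the real content is just: the $j$-th component of the standard-base lift $\pazocal{C}_i'$ of $\pazocal{C}_i$ has initial configuration whose standard-base extension is determined by $U_i$, and $U_i$ (which equals $V_{i-1}$) is a coordinate-shifted start or end configuration whose standard-base extension is precisely $W_{i-1}^{(z_{i-1})}$ — and one must check this coincides with $W_{i-1}^{(z_i)}$, i.e. that the extension does not depend on which machine's rules one uses. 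This is where the parallel structure of the rules does the work: on components with index $\geq 2$ the rules of $\Theta_1$ and $\Theta_2$ act identically, and on the first component the only discrepancy is the (locked, hence irrelevant for an end or restricted start configuration) special input sector; so $W_{i-1}^{(1)}$ and $W_{i-1}^{(2)}$ differ only in whether that sector carries $w_{i-1}$ or is empty, and $W_{i-1}^{(z_i)}$ is the correct choice for beginning a $z_i$-machine computation.

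\textbf{Main obstacle.} I expect the only slightly delicate point to be the bookkeeping around the special input sector when $z_i$ alternates between $1$ and $2$ — specifically, justifying that $I(w_i,K_i)$ versus $J(w_i,K_i)$ is the right initial configuration for $\pazocal{C}_{i+1}'$, which amounts to invoking \Cref{I vs J} (and the fact, from \Cref{ell at most 2}, that the transition between machines at an intermediate start configuration necessarily passes through a $J$-type configuration). Everything else is a routine application of \Cref{extend one-machine} piece by piece, with \Cref{M return to end}, \Cref{projected start to end}, and \Cref{projected return to start} supplying the case analysis for the terminal components. No new estimates or diagram arguments are needed.
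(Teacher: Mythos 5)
Your proposal is correct and follows essentially the same route as the paper, which derives this lemma directly from \Cref{M return to end}, \Cref{projected start to end}, and \Cref{projected return to start} (all resting on \Cref{extend one-machine} applied piecewise to the maximal one-machine subcomputations). Your observation that no concatenation of the lifted computations is required — only the individual existence of each $\pazocal{C}_i'$ with the stated endpoints — is exactly the right reading of the statement, and the appeal to \Cref{I vs J} in your final paragraph is not even needed.
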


In other words, Lemma \ref{projected end to end} says that $\pazocal{C}$ can be `\textit{almost-extended}' to a reduced computation $W_{ac}\to\dots\to W_{ac}$, in that such a computation exists if one were to allow the insertion/deletion of elements of $\pazocal{R}_1$ in the `special' input sector between maximal one-machine subcomputations.

\medskip


\subsection{Computations of $\textbf{M}$ with long history} \


\begin{lemma} \label{M projected long history one-machine}

Let $\pazocal{C}:V_0\to\dots\to V_t$ be a one-machine computation of the $j$-th machine with base $\{t(i)\}B_4(i)$ for some $i\in\{2,\dots,L\}$.  Then $t\leq c_1\max(\|V_0\|,\|V_t\|)$.

\end{lemma}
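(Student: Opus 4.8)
\textbf{Proof plan for \Cref{M projected long history one-machine}.}

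The strategy is to pass from the projected computation $\pazocal{C}$ (which lives on the single component $\{t(i)\}B_4(i)$) to a computation in the standard base via \Cref{extend one-machine}, and then invoke the already-established time bound for $\textbf{M}_5$ in the standard base, namely \Cref{M_5 standard time}. First I would apply \Cref{extend one-machine} to $\pazocal{C}$: since $i\in\{2,\dots,L\}$, there is a one-machine computation $\pazocal{D}:W_0\to\dots\to W_t$ of the $j$-th machine in the standard base of $\textbf{M}$ whose history is the same $H$ and which satisfies $W_\ell(i)\equiv V_\ell$ for all $\ell$. The history of $\pazocal{D}$ is a single one-machine history, so by construction it corresponds (up to the transition rules $\sigma(s)_j^{\pm1}$, $\sigma(a)_j^{\pm1}$, which change no tape words and can appear only as the first or last letter of $H$) to a reduced computation of $\textbf{M}_{6,j}$, and in turn — since $\textbf{M}_{6,j}$ operates in parallel as $L$ copies of $\textbf{M}_5$ — to a family of reduced computations of $\textbf{M}_5$ in the standard base, one for each coordinate $\ell$. (For $j=2$ one must remember that the rules of $\textbf{M}_{6,2}$ lock the special sector $Q_0(1)Q_1(1)$; this only affects coordinate $1$, so the computations projected to coordinates $\ell\geq2$ are genuine reduced computations of $\textbf{M}_5$ in the standard base.)

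Next I would apply \Cref{M_5 standard time} to the $\textbf{M}_5$-computation obtained by restricting $\pazocal{D}$ to the standard base $\{t(i)\}B_4(i)$ of the $i$-th component — i.e. to the projected computation $\pazocal{C}$ itself, viewed as a reduced computation of $\textbf{M}_5$ in its standard base. (Here I first strip off any leading or trailing transition rule $\sigma(s)_j^{\pm1}$ or $\sigma(a)_j^{\pm1}$, which costs at most $2$ in the length and does not alter tape words; the $a$-lengths of the endpoints are unchanged.) \Cref{M_5 standard time} gives $t'\le c_0\max(\|V_0'\|,\|V_t'\|)$ where $t'$ is the length after stripping and $V_0',V_t'$ are the (possibly modified) endpoints. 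Since $\|V_0'\|=\|V_0\|$ and $\|V_t'\|=\|V_t\|$ (transition rules do not change the configuration's length beyond the state-letter relabeling, which preserves $\|\cdot\|$), and $t\le t'+2$, we get $t\le c_0\max(\|V_0\|,\|V_t\|)+2$. The parameter choice $c_1>>c_0$ then absorbs the additive constant: $c_0\max(\|V_0\|,\|V_t\|)+2\le c_1\max(\|V_0\|,\|V_t\|)$, using $\max(\|V_0\|,\|V_t\|)\ge N\ge1$ since every configuration has length at least the length $N$ of the standard base.

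The only genuinely delicate point — the one I expect to be the main obstacle — is verifying that the restriction of $\pazocal{C}$ really does satisfy the hypotheses of \Cref{M_5 standard time} as a \emph{reduced} computation of $\textbf{M}_5$ in the \emph{standard base}. Reducedness is immediate (the history of $\pazocal{C}$ is a subword of the reduced history $H$), and the base is standard by hypothesis, so this is mostly a bookkeeping matter: one must check that the one-machine structure of $\pazocal{C}$ — after discarding the at most two transition letters $\sigma(s)_j^{\pm1},\sigma(a)_j^{\pm1}$ — is literally a history of $\textbf{M}_5$ (when $j=1$) or, when $j=2$, that the rules of $\textbf{M}_5$ and of $\textbf{M}_{6,2}$ agree on components with coordinate $\ge2$, which is precisely the content of the remark preceding \Cref{one-machine accepted is admissible} that ``$\theta$ operates on each $W(\ell)$ identically for each $\ell\ge2$''. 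Once this identification is made, everything else is a direct appeal to \Cref{M_5 standard time} plus the highest-parameter principle $c_1>>c_0>>N$.
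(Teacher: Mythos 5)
Your proof is correct and follows essentially the same route as the paper: strip the at most two transition letters $\sigma(s)_j^{\pm1},\sigma(a)_j^{\pm1}$ (which occur only at the ends of the history and do not alter tape words), identify the remaining subcomputation with a reduced computation of $\textbf{M}_5$ in the standard base, apply \Cref{M_5 standard time}, and absorb the additive $2$ via $c_1>>c_0$. The initial detour through \Cref{extend one-machine} is unnecessary — since $i\geq2$, the base $\{t(i)\}B_4(i)$ is already a copy of the standard base of $\textbf{M}_5$ and $\pazocal{C}$ can be identified directly — but it is harmless.
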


\begin{proof}

By the construction of the software, any occurrence of $\sigma(s)_j^{\pm1}$ or $\sigma(a)_j^{\pm1}$ in the history $H$ of $\pazocal{C}$ is as the first or last letter.  Hence, letting $\pazocal{D}:U_0\to\dots\to U_s$ be the maximal subcomputation of $\pazocal{C}$ containing no letters of the form $\sigma(s)_j^{\pm1}$ or $\sigma(a)_j^{\pm1}$, then $s\geq t-2$.  Moreover, since transition rules do not alter the tape words of the admissible words, $\|U_0\|=\|V_0\|$ and $\|U_s\|=\|V_t\|$.

Now, $\pazocal{D}$ can be identified with a reduced computation of $\textbf{M}_5$ in the standard base, so that \Cref{M_5 standard time} implies $s\leq c_0\max(\|U_0\|,\|U_s\|)$.  As a result, $t\leq c_0\max(\|V_0\|,\|V_t\|)+2$, so that the statement follows by the parameter choice $c_1>>c_0$.

\end{proof}

\begin{lemma} \label{M projected long history}

Let $\pazocal{C}:V_0\to\dots\to V_t$ be a reduced computation with base $\{t(i)\}B_4(i)$ for some $i\in\{2,\dots,L\}$. Suppose $t>c_2\max(\|V_0\|,\|V_t\|)$.  Then:

\begin{enumerate}[label=({\alph*})]

\item For every $\ell\in\{0,\dots,t\}$, there exists an accepted configuration $W_\ell$ such that $W_\ell(i)\equiv V_\ell$.

\item The sum of the lengths of all subcomputations of $\pazocal{C}$ with step histories $(34)_j(4)_j(45)_j$ and $(54)_j(4)_j(43)_j$ for $j\in\{1,2\}$ is at least $0.99t$.

\end{enumerate}

\end{lemma}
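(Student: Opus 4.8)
\textbf{Plan for the proof of \Cref{M projected long history}.}
The strategy is to reduce the behavior of a reduced computation with base $\{t(i)\}B_4(i)$ to the behavior of one-machine computations, where we already have control from \Cref{M projected long history one-machine}, and then to exploit the analogue of \Cref{M_5 long history} at the level of the $i$-th component.  First I would factor the history $H$ of $\pazocal{C}$ as $H\equiv H_1H_2\dots H_m$, where each $H_x$ is the history of a maximal one-machine subcomputation $\pazocal{C}_x:V_{r(x)}\to\dots\to V_{r(x+1)}$, say of the $z_x$-th machine.  At each junction $V_{r(x)}$ (for $0<x<m$) the admissible word is simultaneously admissible for rules of both machines, so $V_{r(x)}(i)$ is the $i$-th component of either a start or an end configuration; combined with \Cref{M language}(2), \Cref{projected start to end}, \Cref{projected return to start}, and \Cref{projected end to end}, each such junction component is either $A(i)$ or $I(w_x,K_x,i)$ for some $w_x\in\pazocal{R}_1$, $K_x\in F(\Phi^+)$.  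This gives (a): each $V_\ell$ with $r(x)\le\ell\le r(x+1)$ extends, via \Cref{extend one-machine}, to an accepted configuration $W_\ell$ with $W_\ell(i)\equiv V_\ell$.

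For (b), the point is that each maximal one-machine piece $\pazocal{C}_x$ must itself be `long' relative to its endpoints unless it is short, and when it is long the analogue of \Cref{M_5 long history}(c) (transported to the $i$-th component through \Cref{extend one-machine}, since the rules act on components with coordinate $\ge 2$ exactly as the corresponding $\textbf{M}_5$ rules) forces all but a $\tfrac1C$-fraction of its length to lie inside subcomputations with step history $(34)_{z_x}(4)_{z_x}(45)_{z_x}$ or $(54)_{z_x}(4)_{z_x}(43)_{z_x}$.  Concretely: if $\|H_x\|>c_0\max(|V_{r(x)}(i)|_a,|V_{r(x+1)}(i)|_a)$, or if one of the endpoint components is the accept component $A(i)$ (case (i) of \Cref{M_5 long history}) or is a tame input component while the other is too (case (ii)), then \Cref{M_5 long history} applies after extending to the standard base.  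Since the endpoint components at internal junctions are accepted start/end configurations, the only piece that could fail the hypotheses of \Cref{M_5 long history} is one whose length is bounded by $c_0\max(\|V_{r(x)}\|,\|V_{r(x+1)}\|)$, which is $\le c_0\max(\|V_0\|,\|V_t\|) + c_0\cdot(\text{something controlled})$ — and here I would use that internal junction components, being $A(i)$ or $I(w_x,K_x,i)$ with $w_x\in\pazocal{R}_1$, have $a$-length at most a constant multiple of $\|V_t\|$ or of the history lengths of adjacent long pieces, so these `short' pieces contribute a total length that is at most a constant multiple of $\max(\|V_0\|,\|V_t\|)$ plus a small fraction of the lengths of the long pieces.

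Combining the estimates: write $t = \sum_x \|H_x\| = S_{\mathrm{short}} + S_{\mathrm{long}}$, where $S_{\mathrm{long}}$ is the total length of the one-machine pieces to which \Cref{M_5 long history} applies and $S_{\mathrm{short}}$ the rest.  We get $S_{\mathrm{short}} \le c_1'\max(\|V_0\|,\|V_t\|)$ for a suitable constant by \Cref{M projected long history one-machine} applied to the short pieces (there are boundedly many maximal one-machine pieces between consecutive long ones — this uses a step-history bound like \Cref{long step history} to cap $m$ in terms of the number of long pieces — so their total length is dominated by the endpoints).  Meanwhile within each long piece a $(1-\tfrac1C)$-fraction of its length is of the desired step-history type, so the total length of subcomputations with step history $(34)_j(4)_j(45)_j$ or $(54)_j(4)_j(43)_j$ is at least $(1-\tfrac1C)S_{\mathrm{long}} \ge (1-\tfrac1C)(t - c_1'\max(\|V_0\|,\|V_t\|))$.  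Using the hypothesis $t > c_2\max(\|V_0\|,\|V_t\|)$ with $c_2 \gg c_1', C$ (available from the parameter ordering $C \ll \dots \ll C_2$), this is $\ge 0.99t$.  The main obstacle I anticipate is the bookkeeping in the previous step: bounding the number $m$ of maximal one-machine pieces and the $a$-lengths of the internal junction components so that $S_{\mathrm{short}}$ really is absorbed into $c_2\max(\|V_0\|,\|V_t\|)$ rather than growing with $t$.  This is exactly where \Cref{long step history} (capping the number of distinct maximal one-step computations, hence indirectly the alternation between machines) and the precise form of \Cref{M_5 long history}(b) (giving $t>k$, so short pieces near junctions can't be vanishingly short in a way that proliferates) do the work.
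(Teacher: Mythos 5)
Your skeleton -- factor $H$ into maximal one-machine pieces, note that every internal junction is an admissible subword of a start or end configuration, use \Cref{extend one-machine} to get (a), and extract the $\bigl(1-\tfrac1C\bigr)$-fraction from \Cref{M_5 long history} on the long pieces -- is exactly the paper's argument, and your treatment of (a) is essentially the paper's. The gap is precisely where you flag it, in the bookkeeping for (b), and the tools you propose there do not close it: \Cref{long step history} does not cap the number $m$ of maximal one-machine pieces (it only says that a computation with at least $9$ maximal one-step subcomputations contains a $(34)_j(4)_j(45)_j$- or $(54)_j(4)_j(43)_j$-subword), and the $a$-length of an internal junction component $I(w_x,K_x,i)$ is of order $\|K_x\|$, which grows with the adjacent pieces and is in no way bounded by a constant multiple of $\|V_t\|$; so your claimed estimate $S_{\mathrm{short}}\le c_1'\max(\|V_0\|,\|V_t\|)$ is not justified as written.

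What you are missing is that no such bookkeeping is needed, because an interior piece can never be short. For $2\le x\le m-1$ both endpoints of $\pazocal{C}_x$ are admissible subwords of start or end configurations, and \Cref{M return to end} rules out the end-to-end case; hence, after stripping the at most two transition letters $\sigma(s)_j^{\pm1},\sigma(a)_j^{\pm1}$, the piece is identified with a reduced computation of $\textbf{M}_5$ in the standard base that starts at a tame input configuration and ends either at the accept configuration or at another tame input configuration, i.e. hypothesis (i) or (ii) of \Cref{M_5 long history} holds with no length or $a$-length assumption at all. Consequently every interior piece has length $>k$ and contributes at least a $\bigl(1-\tfrac1C\bigr)^2$-fraction of its length in the desired step histories (the additive loss from the stripped transition letters is absorbed using $t>k$ and $k>>C$). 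The only pieces that can fail are the first and the last, each having one endpoint among $V_0,V_t$: for, say, the last piece either $|V_{r(m-1)}|_a>c_0|V_t|_a$, and then hypothesis (iii) of \Cref{M_5 long history} applies, or else $|V_{r(m-1)}|_a\le c_0|V_t|_a$, and then \Cref{M projected long history one-machine} bounds its length by $c_0c_1\|V_t\|\le\tfrac{1}{2C}t$ using the hypothesis $t>c_2\max(\|V_0\|,\|V_t\|)$ and $c_2>>c_1>>c_0>>C$. Summing over the pieces gives at least $\bigl(1-\tfrac1C\bigr)^3t\ge 0.99t$, with no bound on $m$ and no control of the junction lengths ever required.
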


\begin{proof}

By \Cref{M projected long history one-machine} and the parameter choice $c_2>>c_1$, $\pazocal{C}$ must be a multi-machine computation.

First, suppose there exists a factorization $H\equiv H_1H_2$ of the history of $\pazocal{C}$ such that each $H_j$ is the history of a one-machine subcomputation.  Let $\pazocal{C}_1:V_0\to\dots\to V_s$ and $\pazocal{C}_2:V_s\to\dots\to V_t$ be the subcomputations with histories $H_1$ and $H_2$, respectively.

By \Cref{M projected long history one-machine}, $s\leq c_1\max(\|V_0\|,\|V_s\|)$ and $t-s\leq c_1\max(\|V_s\|,\|V_t\|)$.  Moreover, since $V_s$ is admissible for both the first rule of $H_2$ and the inverse of the last rule of $H_1$, it must be an admissible subword of a start or an end configuration.

Suppose $\|V_s\|\leq c_0\max(\|V_0\|,\|V_t\|)$.  Then $t\leq 2c_0c_1\max(\|V_0\|,\|V_t\|)$, so that the statement follows from the parameter choices $c_2>>c_1>>c_0$.

If $V_s$ is an admissible subword of an end configuration, then it must be $\sigma(a)_j$-admissible.  But then $|V_s|_a=0$, so that $\|V_s\|\leq\min(\|V_0\|,\|V_t\|)$.  Hence, we may assume $V_s$ is an admissible subword of a start configuration.

As in the proof of \Cref{M projected long history one-machine}, letting $\pazocal{D}_2:U_0\to\dots\to U_r$ be the maximal subcomputation of $\pazocal{C}_2$ whose history does not contain a letter of the form $\sigma(s)_j^{\pm1}$ or $\sigma(a)_j^{\pm1}$, $r\geq t-s-2$, $\|U_0\|=\|V_s\|$, and $\|U_r\|=\|V_t\|$.  

As $U_0$ is $\sigma(s)_j^{-1}$-admissible, $\pazocal{D}_2$ can be identified with a reduced computation of $\textbf{M}_5$ in the standard base where $U_0$ is a tame input configuration.  So, since $\|V_s\|>c_0\|V_t\|$ implies $|V_s|_a>c_0|V_t|_a$, the reduced computation of $\textbf{M}_5$ corresponding to $\pazocal{D}_2$ satisfies the hypotheses of \Cref{M_5 long history}(iii).  Hence, $U_0$ can be identified with an accepted tame input configuration of $\textbf{M}_5$, $t-s\geq k$, and the sum of the lengths of the subcomputations of $\pazocal{D}_2$ (and so of $\pazocal{C}_2$) with step history $(34)_j(4)_j(45)_j$ or $(54)_j(4)_j(43)_j$ is at least $(1-\frac{1}{C})r\geq(1-\frac{1}{C})(t-s-2)$.

As $U_0$ can be identified with an accepted tame input configuration of $\textbf{M}_5$, there exists a one-machine computation $\pazocal{E}:U_0\equiv U_0'\to\dots\to U_x'$ such that $U_x'$ is $\sigma(a)_j$-admissible.  If the history of $\pazocal{E}$ is $H'$, then $\sigma(s)_jH'\sigma(a)_j$ is the history of a one-machine computation $\pazocal{E}'$ between $V_s$ and $A(i)$.  Applying \Cref{extend one-machine} to $\pazocal{E}'$ then produces an accepted configuration $W_s$ with $W_s(i)\equiv V_s$.

Applying \Cref{extend one-machine} to $\pazocal{C}_2$ implies that for $\ell\geq s$, there exists an accepted configuration $W_\ell$ such that $W_\ell(i)\equiv V_\ell$.

Further, as $t-s\geq k$, a parameter choice for $k$ implies the sum  of the lengths of the subcomputations of $\pazocal{C}_2$ with step history $(34)_j(4)_j(45)_j$ or $(54)_j(4)_j(43)_j$ is at least $(1-\frac{1}{C})r\geq(1-\frac{1}{C})^2(t-s)$.

The symmetric argument further implies statement (a) and that the sum of the lengths of the subcomputations of $\pazocal{C}_1$ with step history $(34)_j(4)_j(45)_j$ or $(54)_j(4)_j(43)_j$ is at least $(1-\frac{1}{C})^2s$.

Thus, the sum of the lengths of the subcomputations of $\pazocal{C}_1$ with step history $(34)_j(4)_j(45)_j$ or $(54)_j(4)_j(43)_j$ is at least $(1-\frac{1}{C})^2t$.  Hence, statement (b) follows by taking $C\geq200$.

Now, suppose $H\equiv H_1\dots H_m$ for $m\geq3$ where each $H_j$ is the history of a one-machine subcomputation.  

For $2\leq j\leq m-1$, let $\pazocal{C}_j:V_{\ell(j)}\to\dots\to V_{r(j)}$ be the subcomputation of $\pazocal{C}$ with history $H_j$.  By \Cref{M return to end}, at least one of $V_{\ell(j)}$ or $V_{r(j)}$ is an admissible subword of a start configuration.  As such, the maximal subcomputation of $\pazocal{C}_j$ whose history contains no letter of the form $\sigma(s)_j^{\pm1}$ or $\sigma(a)_j^{\pm1}$ (or its inverse) can be identified with a reduced computation of $\textbf{M}_5$ satisfying the hypotheses of either \Cref{M_5 long history}(i) or (ii).  As above, this implies there exists an accepted configuration $W_{\ell(j)}$ such that $W_{\ell(j)}(i)\equiv V_{\ell(j)}$ and the sum of the lengths of the subcomputations of $\pazocal{C}_j$ with step history $(34)_j(4)_j(45)_j$ or $(54)_j(4)_j(43)_j$ is at least $(1-\frac{1}{C})^2(r(j)-\ell(j))$.

Applying \Cref{extend one-machine} to each subcomputation of $\pazocal{C}$ with history $H_j$ then implies statement (a).

Now, let $\pazocal{C}_m:V_{r(m-1)}\to\dots\to V_t$ be the subcomputation with history $H_m$.  If $|V_{r(m-1)}|_a>c_0|V_t|_a$, then the same argument as above implies the sum of the lengths of the computations of $\pazocal{C}_m$ with step history $(34)_j(4)_j(45)_j$ or $(54)_j(4)_j(43)_j$ is at least $(1-\frac{1}{C})^2(t-r(m-1))$.  Otherwise, \Cref{M projected long history one-machine} implies $t-r(m-1)\leq c_0c_1\|V_t\|$, so that the parameter choices $c_2>>c_1>>c_0>>C$ imply $t-r(m-1)\leq\frac{1}{2C}t$.

Applying the same reasoning to the subcomputation $V_0\to\dots\to V_{\ell(2)}$ with step history $H_1$ then implies the sum of the subcomputations of $\pazocal{C}$ with step history $(34)_j(4)_j(45)_j$ or $(54)_j(4)_j(43)_j$ is at least $(1-\frac{1}{C})^3t$, so that statement (b) follows by taking $C\geq300$.

\end{proof}

\begin{lemma}[Compare with Lemma 5.20 of \cite{W}] \label{M projected long history controlled}

Let $\pazocal{C}:V_0\to\dots\to V_t$ be a reduced computation of $\textbf{M}$ with base $\{t(i)\}B_4(i)$ for some $i\in\{2,\dots,L\}$.  If $t>c_2\max(\|V_0\|,\|V_t\|)$, then the history of any subcomputation $\pazocal{D}:V_r\to\dots\to V_s$ of $\pazocal{C}$ (or the inverse of $\pazocal{D}$) of length at least $0.4t$ contains a controlled subword.

\end{lemma}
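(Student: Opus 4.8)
The strategy is to combine the structural dichotomy furnished by \Cref{M projected long history} with a length-counting argument to force a controlled subword into any sufficiently long subcomputation. First I would invoke \Cref{M projected long history}(b): since $t>c_2\max(\|V_0\|,\|V_t\|)$, the sum of the lengths of all subcomputations of $\pazocal{C}$ with step history $(34)_j(4)_j(45)_j$ or $(54)_j(4)_j(43)_j$ (for $j\in\{1,2\}$) is at least $0.99t$. Each such subcomputation contains, as a subword of its step history, the fragment $(4)_j$ flanked by connecting transition rules $\sigma(34)_j,\sigma(45)_j$ (or their analogues), and the restriction to any subword of the form $P_x'(i)R_x'(i)Q_{x,r}'(i)$ or $Q_{x,\ell}''(i)P_x''(i)R_x''(i)$ is a reduced computation of a primitive machine $\textbf{LR}_k$ or $\textbf{RL}_k$; invoking \Cref{primitive computations}(3) (and its $\textbf{LR}_k$ analogue \Cref{LR_k analogue}), a maximal such primitive computation decomposes into segments between consecutive connecting rules $\zeta_{2i},\zeta_{2i+1},\zeta_{2i+2}$, and any window spanning two consecutive connecting rules is, by definition, a controlled history for $\textbf{M}$ (\Cref{M controlled}).

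The key quantitative point is then to show that a subcomputation $\pazocal{D}$ of length $\geq 0.4t$ must overlap one of these $(34)_j(4)_j(45)_j$/$(54)_j(4)_j(43)_j$ blocks in a segment long enough to contain a full window between two consecutive connecting rules. To make this precise I would argue by the pigeonhole/complementary-counting principle: the subcomputations \emph{not} of the indicated step history form a set of total length at most $0.01t$, so $\pazocal{D}$ (of length $\geq 0.4t$) must spend at least $0.39t$ of its steps inside blocks of step history $(34)_j(4)_j(45)_j$ or $(54)_j(4)_j(43)_j$. Within such a block of step history $(\cdot)(4)_j(\cdot)$, the middle factor $(4)_j$ operates (on any $P_x'(i)R_x'(i)Q_{x,r}'(i)$- or $Q_{x,\ell}''(i)P_x''(i)R_x''(i)$-subword) exactly as a reduced computation of $\textbf{LR}_k(\Phi^+)$ or $\textbf{RL}_k(\Phi^+)$ in the standard base, and by \Cref{LR_k analogue} such a computation of length $\ell'$ has $\ell'=2\ell''k+2k-1$ where $\ell''$ is the $a$-length of the relevant sector, so the $2k$ connecting rules partition the block into $2k+1$ sub-blocks of roughly equal length; if $\pazocal{D}$ contributes more than $\frac{2}{2k+1}$ of the total length of some such block, it must contain a window spanning two consecutive connecting rules, i.e.\ a controlled subword. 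The parameter choice $k>>C$ together with $C$ being a fixed constant (we may take $C\geq 300$ as in the proof of \Cref{M projected long history}) guarantees $\frac{2}{2k+1}$ is far smaller than the proportion $0.39t$ must represent within the aggregate of these blocks, so some individual block receives enough of $\pazocal{D}$.

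\textbf{Main obstacle.} The delicate part is the bookkeeping that converts "$\pazocal{D}$ has length $\geq 0.4t$ and the non-$(4)_j$-block material totals $\leq 0.01t$" into "$\pazocal{D}$ contains a window between two consecutive connecting rules of a single primitive sub-run." The subtlety is that $\pazocal{D}$'s steps inside the blocks could in principle be spread thinly across many different blocks, each receiving less than the threshold $\frac{2}{2k+1}$ of its own length; I would rule this out by noting there are at most boundedly many ($\leq$ something like $8$, by \Cref{long step history} and \Cref{M step history}) maximal one-machine subcomputations and hence boundedly many such $(34)_j(4)_j(45)_j$/$(54)_j(4)_j(43)_j$ blocks in $\pazocal{C}$, so $\pazocal{D}$'s total $\geq 0.39t$ contribution forces at least one block to receive $\geq \frac{0.39t}{8}$ steps from $\pazocal{D}$; comparing this with the length of that block (which is $\leq t$) gives a proportion bounded below by a fixed constant, and since $\frac{2}{2k+1}$ is below that constant by the parameter choice $k>>C$ (equivalently $k$ large relative to the constant $8/0.39$), $\pazocal{D}$ must straddle two consecutive connecting rules inside that block, yielding the controlled subword. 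The same argument applies verbatim to the inverse of $\pazocal{D}$, which has the same length and whose history is the formal inverse, hence also meets a controlled block symmetrically.
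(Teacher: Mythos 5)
Your overall strategy — apply \Cref{M projected long history}(b) to get that the $(34)_j(4)_j(45)_j$/$(54)_j(4)_j(43)_j$ blocks cover at least $0.99t$ of $\pazocal{C}$, then force the contiguous interval $\pazocal{D}$ to capture an aligned window between connecting rules inside some block — is the right one (the paper itself only cites Lemma 5.20 of \cite{W} and gives no proof). But the step you single out as the main obstacle is resolved incorrectly. You claim there are at most about $8$ maximal one-machine subcomputations, hence boundedly many blocks, citing \Cref{long step history} and \Cref{M step history}. Neither lemma gives this: \Cref{long step history} says a computation with at least $9$ maximal one-step subcomputations must contain such a block, and \Cref{M step history} constrains the step history of a \emph{single} one-machine computation. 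A multi-machine computation with base $\{t(i)\}B_4(i)$ can alternate between the two machines many times (each maximal one-machine piece only has length bounded below by roughly $k$, so $m$ can be on the order of $t/k$), and each such piece can contribute blocks; so the number of blocks is not bounded by a universal constant, and your pigeonhole over $8$ blocks fails as written.

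The argument is repairable without changing its shape, using the contiguity of $\pazocal{D}$ that you never exploit: if $\pazocal{D}$ meets $p\geq 3$ blocks, then every block other than the first and last it meets is contained entirely in $\pazocal{D}$, and a full block with step history $(34)_j(4)_j(45)_j$ runs the primitive machine from its start to its end letters, hence contains all connecting rules and in particular a controlled subword $\zeta_0 H'\zeta_1 H''\zeta_2$ — done. If $p\leq 2$, your pigeonhole works with $2$ in place of $8$: some block receives a contiguous piece of $\pazocal{D}$ of length at least $0.195t\geq 0.195\cdot(\text{length of that block})$, and since the gaps between consecutive connecting rules inside a block all have equal length (by \Cref{LR_k analogue}), a contiguous fraction exceeding roughly $4/(2k+1)$ of the block must contain a window $\zeta_{2i}\cdots\zeta_{2i+2}$; the parameter choice for $k$ makes $0.195$ far larger than this threshold. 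One further small imprecision: a controlled history is by definition a word of the form $\zeta_{2i}H'\zeta_{2i+1}H''\zeta_{2i+2}$, so the window must span \emph{three} consecutive connecting rules with the outer two even-indexed, not merely "two consecutive connecting rules"; this only changes the constant in the threshold, not the conclusion.
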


Finally, the next statement provides an adaptation of \Cref{M_5 one-step} for the machine $\textbf{M}$:

\begin{lemma}[Lemma 5.21 of \cite{W}] \label{M one-step}

Let $\pazocal{C}:V_0\to\dots\to V_t$ be a reduced computation of $\textbf{M}$ with base $\{t(i)\}B_4(i)$ for some $i\in\{2,\dots,L\}$. Suppose the step history of $\pazocal{C}$ has length 1 and $|V_z|_a>2|V_0|_a$ for some $z\in\{1,\dots,t\}$.  Then there exist two-letter subwords $U_\ell V_\ell$ and $U_rV_r$ of $\{t(i)\}B_4(i)$ such that:

\begin{enumerate}

\item Letting $\pazocal{C}_\ell:V_{0,\ell}\to\dots\to V_{t,\ell}$ and $\pazocal{C}_r:W_{0,r}\to\dots\to V_{t,r}$ be the restriction of $\pazocal{C}$ to the $U_\ell V_\ell$- and $U_rV_r$-sectors, respectively, $|V_{z,\ell}|_a<\dots<|V_{t,\ell}|_a$ and $|V_{z,r}|_a<\dots<|V_{t,r}|_a$.

\item Every rule of the subcomputation $V_z\to\dots\to V_t$ multiplies the $U_\ell V_\ell$-sector by one letter on the left and the $U_rV_r$-sector by one letter on the right.

\end{enumerate}

\end{lemma}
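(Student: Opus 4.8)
The plan is to reduce \Cref{M one-step} for $\textbf{M}$ to \Cref{M_5 one-step} for $\textbf{M}_5$ by analyzing which submachine the single-step computation $\pazocal{C}$ belongs to. Since the step history of $\pazocal{C}$ has length $1$, the history $H$ consists entirely of rules from a single submachine $\textbf{M}_{6,z}(j)$ for some fixed $z\in\{1,2\}$ and some $j\in\{0,1,\dots,5\}$ (where the step-history letter $(s)_z^{\pm1}$ or $(a)_z^{\pm1}$ cases are excluded, as those rules lock all working sectors and hence cannot increase $|V_z|_a$ beyond $|V_0|_a$ — indeed transition rules do not alter tape words, making $|V_z|_a=|V_0|_a$, contradicting $|V_z|_a>2|V_0|_a$). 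So the interesting case is that every rule of $H$ is a working rule of one of $\textbf{M}_{6,1}(j)$ or $\textbf{M}_{6,2}(j)$.

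First I would observe that, by the construction of $\textbf{M}$, every working rule of $\Theta_z$ operates on an admissible word with base $\{t(i)\}B_4(i)$ for $i\geq 2$ identically to the corresponding rule of $\textbf{M}_{6,z}$, which in turn operates on each copy $\{t(i)\}B_4(i)$ of the standard base of $\textbf{M}_5$ identically to the corresponding rule of $\textbf{M}_5$ (this is exactly the parallel nature of $\textbf{M}_{6,1}$ and $\textbf{M}_{6,2}$; the only difference for $\textbf{M}_{6,2}$ is in the special input sector $Q_0(1)Q_1(1)$, which has coordinate $1$ and hence does not appear in $B_4(i)$ for $i\geq 2$). Therefore $\pazocal{C}:V_0\to\dots\to V_t$ can be identified with a reduced computation $\pazocal{C}^{\flat}:V_0^{\flat}\to\dots\to V_t^{\flat}$ of $\textbf{M}_5(j)$ in the standard base (after the coordinate shift sending $i$ to the unique coordinate of $\textbf{M}_5$), with $|V_\ell^{\flat}|_a=|V_\ell|_a$ for all $\ell$ (the $\{t\}$-sector and the sectors between components carry empty tape alphabet, so no $a$-letters are lost). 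The hypothesis $|V_z|_a>2|V_0|_a$ transfers verbatim to $|V_z^{\flat}|_a>2|V_0^{\flat}|_a$.

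Now I would apply \Cref{M_5 one-step} to $\pazocal{C}^{\flat}$, obtaining two-letter subwords $U_\ell^{\flat}V_\ell^{\flat}$ and $U_r^{\flat}V_r^{\flat}$ of the standard base of $\textbf{M}_5$ with the stated strict monotonicity of $a$-lengths of the restrictions past index $z$ and the stated left/right multiplication property of each rule of the tail subcomputation $V_z^{\flat}\to\dots\to V_t^{\flat}$. Pulling these back through the identification (i.e. taking the corresponding two-letter subwords $U_\ell V_\ell$ and $U_r V_r$ of $\{t(i)\}B_4(i)$ with the same sector names but coordinate $i$), the restrictions of $\pazocal{C}$ to these sectors are literally the coordinate shifts of the restrictions of $\pazocal{C}^{\flat}$, so conclusions (1) and (2) hold for $\pazocal{C}$ as stated.

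The main (and essentially only) obstacle is the bookkeeping in the first step: verifying carefully that a length-$1$ step history together with $|V_z|_a > 2|V_0|_a$ forces $\pazocal{C}$ to be a working-rule computation of a single $\textbf{M}_{6,z}(j)$, and that the restriction of $\textbf{M}_{6,z}$'s rules to a non-special component $\{t(i)\}B_4(i)$ truly coincides with $\textbf{M}_5$'s rules — this requires tracking the domains of the transition rules $\sigma(s)_z, \sigma(a)_z$ and the special-sector exception for $\textbf{M}_{6,2}$, but each point is immediate from the definitions in Section \ref{sec-main-machine}. Everything after that is a direct citation of \Cref{M_5 one-step}. Since this is precisely Lemma 5.21 of \cite{W} transported to the present setup, I would simply note that the proof is identical to that reference after the reduction just described.
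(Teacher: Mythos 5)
Your reduction is correct and is essentially the intended argument: the paper frames \Cref{M one-step} precisely as the adaptation of \Cref{M_5 one-step} to $\textbf{M}$ (deferring the proof to Lemma 5.21 of \cite{W}), and your identification of the computation with one of $\textbf{M}_5(j)$ in the standard base — legitimate here because the base $\{t(i)\}B_4(i)$ with $i\geq2$ contains no subword $(Q_0(1)Q_1(1))^{\pm1}$, so \Cref{lifted rule}(b) applies to every transition and $a$-lengths are preserved — followed by a direct application of \Cref{M_5 one-step} and pulling the two sectors back through the coordinate shift, is exactly that adaptation. One tiny remark: a length-one step history could also be a single internal transition letter $(j,j\pm1)_z$, not only $(s)_z^{\pm1}$ or $(a)_z^{\pm1}$, but your own observation that transition rules do not alter tape words rules out those cases just as well, so no gap results.
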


\medskip


\subsection{Reverted Bases} \

Let $B$ be the base of an admissible word $W$ of $\textbf{M}$. The \textit{reversion} of $B$, denoted $\pi(B)$, is the word obtained from $B$ by `forgetting' the coordinates of its letters. In this case, $\pi(B)$ is called the \textit{reverted base} of $W$. 

For example, the reverted base of any configuration is the concatenation of $L$ copies of the standard base of $\textbf{M}_5$. Similarly, if $B=Q_0(2)^{-1}\{t(2)\}^{-1}(Q_{s,r}''(1))^{-1}(R_s''(1))^{-1}R_s''(1)(R_s''(1))^{-1}$, then $$\pi(B)=Q_0^{-1}\{t\}^{-1}(Q_{s,r}'')^{-1}(R_s'')^{-1}R_s''(R_s'')^{-1}$$

\begin{lemma} [Lemma 5.22 of \cite{W}] \label{lifted base}

Let $B$ be the base of an admissible word $W$ of $\textbf{M}$. Then there exists an admissible word $W'$ of $\textbf{M}_5$ with base $\pi(B)$ and such that $|W'|_a=|W|_a$. \newline
Moreover, if none of the state letters of $W$ are start or end letters (or their inverses), then $W'$ can be chosen to be the natural copy of $W$ in the hardware of $\textbf{M}_5$.

\end{lemma}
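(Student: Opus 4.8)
\textbf{Proof strategy for \Cref{lifted base}.}
The plan is to build $W'$ by choosing a way to realize the base $\pi(B)$ of $\textbf{M}_5$ that matches $W$ sector-by-sector. First I would invoke the definition of an admissible word of $\textbf{M}$: $W$ factors as $q_{i_0}^{\eps_0}w_1q_{i_1}^{\eps_1}\dots w_sq_{i_s}^{\eps_s}$, where each two-letter subword $q_{i_{j-1}}^{\eps_{j-1}}w_jq_{i_j}^{\eps_j}$ lies in a $Q_{k-1}Q_k$-sector of $\textbf{M}$ (possibly with an unreduced pair $qwq^{-1}$ or $q^{-1}wq$), and $w_j\in F(Y_k)$ for the corresponding tape alphabet $Y_k$. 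Because every part of the standard base of $\textbf{M}$ that is not a one-letter part $\{t(i)\}$ is, by construction, the disjoint union of a copy of the corresponding part of $\textbf{M}_{6,1}$, a copy of the corresponding part of $\textbf{M}_{6,2}$, and the two new start/end letters, and because $\textbf{M}_{6,1}$ and $\textbf{M}_{6,2}$ are in turn built from $L$ parallel copies of $\textbf{M}_5$, there is a well-defined map from the state letters of $\textbf{M}$ to the state letters of $\textbf{M}_5$: forget the coordinate and (if the letter is one of the two new start/end letters) send it to the corresponding start/end letter of $\textbf{M}_5$. Crucially, the tape alphabet of each sector of $\textbf{M}$ is literally a sector-alphabet of $\textbf{M}_5$ (the one-letter sectors of $\textbf{M}$ have empty alphabet, matching the analogous empty sectors of $\textbf{M}_5$), so each $w_j$ is already a word over the appropriate $\textbf{M}_5$ tape alphabet.

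Second I would carry out the actual construction: replace each state letter $q_{i_j}^{\eps_j}$ of $W$ by its image $\bar q_{i_j}^{\eps_j}$ under the above forgetful map, and keep each tape word $w_j$ unchanged (viewed now over the identified tape alphabet of $\textbf{M}_5$). Call the resulting word $W'$. Its base is $\pi(B)$ by definition of reversion. I then need to check $W'$ is a genuine admissible word of $\textbf{M}_5$: for each $j$, the subword $\bar q_{i_{j-1}}^{\eps_{j-1}}w_j\bar q_{i_j}^{\eps_j}$ must lie in the appropriate $\textbf{M}_5$-sector (or be of unreduced type $q w q^{-1}$ / $q^{-1} w q$). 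This follows because the sector structure of $\textbf{M}$ is, by the parallel/disjoint-union construction, a faithful copy of that of $\textbf{M}_5$: two state letters of $\textbf{M}$ bound a sector exactly when their $\textbf{M}_5$-images bound the corresponding sector, and the tape alphabet matches. Reducedness of $W'$ is inherited from reducedness of $W$ since the forgetful map is injective on each part (it does not identify $q_{i}$ with $q_i^{-1}$ or with a distinct generator). Finally $|W'|_a=|W|_a$ is immediate, as the tape words are literally unchanged. For the "moreover" clause: if no state letter of $W$ is a start or end letter of $\textbf{M}$ (nor an inverse of one), then every state letter of $W$ is a copy of a state letter of $\textbf{M}_{6,1}$ or $\textbf{M}_{6,2}$, hence of $\textbf{M}_5$; in that case the forgetful map is exactly "take the natural copy in the hardware of $\textbf{M}_5$", so $W'$ is the natural copy of $W$.

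I do not expect a serious obstacle here; this is essentially bookkeeping about how the hardware of $\textbf{M}$ was assembled. The one point requiring care—and the step I would treat as the "main obstacle"—is verifying that the sector structure and tape alphabets really do transport correctly through the two intermediate layers $\textbf{M}\rightsquigarrow\textbf{M}_{6,i}\rightsquigarrow\textbf{M}_5$, including the boundary sectors: the $Q_{s,r}''(i)\{t(i+1)\}$-sector of $\textbf{M}$ and the "wrap-around" sector, which have empty tape alphabet and correspond to the analogous empty sectors of the cyclic machine $\textbf{M}_5$. Once one records that each such sector of $\textbf{M}$ is by definition a coordinate-tagged copy of the named sector of $\textbf{M}_5$ (with identical—possibly empty—tape alphabet), the admissibility of $W'$ is forced condition-by-condition from that of $W$, and the proof is complete. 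Since this argument is identical to Lemma 5.22 of \cite{W} with $\textbf{M}_3$ there replaced by $\textbf{M}_5$ here, I would simply state the construction and refer to that source for the routine verification.
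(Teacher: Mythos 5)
Your construction is correct and is exactly the intended argument: the paper supplies no proof here, simply citing Lemma 5.22 of \cite{W}, and the routine "forget the coordinate, send the new start/end letters to start/end letters of $\textbf{M}_5$, keep the tape words" map is that argument, with admissibility, reducedness (using that a $QQ^{-1}$- or $Q^{-1}Q$-type subword of an admissible word involves the same state letter with a nonempty tape word), and $|W'|_a=|W|_a$ all transported sector-by-sector as you describe. The "moreover" clause follows as you say, since away from the two new letters every part of $\textbf{M}$ is literally a coordinate-tagged copy of a part of $\textbf{M}_5$.
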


\begin{lemma} [Lemma 5.23 of \cite{W}] \label{lifted rule}

Suppose $\pazocal{C}:W_0\to W_1$ is a one-rule computation of $\textbf{M}$ with history $\theta\in\Theta$, where $\theta\notin\{\theta(s)_i,\theta(a)_i:i=1,2\}^{\pm1}$. Further, suppose that either:

\begin{enumerate}[label=({\alph*})]

\item the step history of $\pazocal{C}$ is not $(0)_2$, $(01)_2$, $(10)_2$, or $(1)_2$, or

\item the base of $\pazocal{C}$ does not contain a subword of the form $(Q_0(1)Q_1(1))^{\pm1}$.

\end{enumerate}

Then there exists a one-rule computation $\pazocal{C}':W_0'\to W_1'$ of $\textbf{M}_5$ with history $\theta'$, where $\theta'$ is the natural copy of $\theta$ in $\Theta(\textbf{M}_5)$ and $W_0'$ and $W_1'$ are the natural copies of $W_0$ and $W_1$, respectively, in the hardware of $\textbf{M}_5$.

\end{lemma}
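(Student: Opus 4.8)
The statement to prove is Lemma~\ref{lifted rule}: a single-rule computation $\pazocal{C}:W_0\to W_1$ of $\textbf{M}$ with history $\theta\in\Theta$, where $\theta$ is not a transition rule $\theta(s)_i^{\pm1}$ or $\theta(a)_i^{\pm1}$, lifts to a single-rule computation $\pazocal{C}':W_0'\to W_1'$ of $\textbf{M}_5$ where everything is the natural copy, provided either the step history of $\pazocal{C}$ avoids $(0)_2,(01)_2,(10)_2,(1)_2$, or the base avoids any subword of the form $(Q_0(1)Q_1(1))^{\pm1}$ (i.e. the special input sector).

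\textbf{The plan.} The rule $\theta$, being a working rule of $\textbf{M}$, is by construction a copy of a positive (or negative) rule of one of the machines $\textbf{M}_{6,1}$ or $\textbf{M}_{6,2}$, which in turn is a copy of a rule $\theta_5$ of $\textbf{M}_5$ applied in parallel on each coordinate block $\{t(j)\}B_4(j)$. So I would first fix $\theta_5=\theta'$ to be this natural copy. Then, using \Cref{lifted base}, I choose an admissible word $W_0'$ of $\textbf{M}_5$ with base $\pi(B_0)$ (where $B_0$ is the base of $W_0$) and $|W_0'|_a=|W_0|_a$. Since $\theta$ is not a transition rule, no state letter of $W_0$ is a start or end letter of $\textbf{M}$ itself (those get switched only by $\sigma(s)_i,\sigma(a)_i$); hence the ``moreover'' clause of \Cref{lifted base} applies and $W_0'$ can be taken to be the literal natural copy of $W_0$ in the hardware of $\textbf{M}_5$ — each state letter, each tape letter, with coordinates forgotten. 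The content of the lemma is then that $W_0'$ is $\theta'$-admissible and that $W_0'\cdot\theta'$ is the natural copy of $W_1=W_0\cdot\theta$.

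\textbf{Key steps.} (1) $\theta'$-admissibility of $W_0'$: every letter of $W_0$ lies in $Q(\theta)^{\pm1}\cup Y(\theta)^{\pm1}$; under the coordinate-forgetting map, the corresponding letter of $W_0'$ lies in $Q(\theta')^{\pm1}\cup Y(\theta')^{\pm1}$ because the state-letter correspondence and the tape-alphabet identifications were set up to match. The one delicate point is the \emph{domain} in the special input sector: in $\textbf{M}_{6,2}$ (and hence in the $\Theta_2$ rules of $\textbf{M}$) every rule \emph{locks} the sector $Q_0(1)Q_1(1)$, whereas the corresponding rule of $\textbf{M}_5$ does \emph{not} lock $Q_0Q_1$. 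So if $\pazocal{C}$ is a $(0)_2$/$(01)_2$/$(10)_2$/$(1)_2$ computation and $B_0$ contains $(Q_0(1)Q_1(1))^{\pm1}$, the tape word of $W_0$ there may be nonempty while $\theta$ locks it, and then the natural copy in $\textbf{M}_5$ would fail to be $\theta'$-admissible — this is exactly why the hypotheses (a)/(b) are imposed. Under either (a) or (b) this obstruction is removed: (b) directly excludes the offending sector from $B_0$, while (a) guarantees $\theta$ is not one of the rules of $\textbf{M}_{6,2}(0)$ or $\textbf{M}_{6,2}(1)$ that treat the special sector differently from their $\textbf{M}_5$ counterparts (for the $(0)_2$-phase rules are precisely $\textbf{M}_5(0)$-copies that leave $Q_0Q_1$ unlocked, and the $\textbf{M}_{6,2}$ analogue locks $Q_0(1)Q_1(1)$; for phases $j\ge 2$ and the transitions other than $(01)_2,(10)_2$ the special-sector treatment already coincides). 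I would phrase this as: outside the excluded step-history letters, the rule $\theta$ of $\textbf{M}$ and its natural copy $\theta'$ of $\textbf{M}_5$ have the same action on every sector of $\pi(B_0)$, including matching domains. (2) Once admissibility holds, the computation of $W_0'\cdot\theta'$ is determined by the same local replacement rule (replace each $q_i^{\pm1}$ by $(\omega_{i}q_i'\alpha_{i+1})^{\pm1}$, free-reduce, trim) as for $W_0\cdot\theta$; since these are identical letter-by-letter after forgetting coordinates, $W_0'\cdot\theta'$ is the natural copy of $W_1$, and one sets $W_1'=W_0'\cdot\theta'$.

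\textbf{Main obstacle.} The only real subtlety is the special input sector $Q_0(1)Q_1(1)$ and the asymmetry it introduces between $\textbf{M}_{6,1}$ and $\textbf{M}_{6,2}$ (and between $(0)_2,(1)_2$ steps and the others). Everything else is a bookkeeping check that the hardware and software of $\textbf{M}$ were defined as ``parallel copies of $\textbf{M}_5$''. So I expect the proof to be short: invoke \Cref{lifted base} for the word, observe $\theta$ is a working-rule copy of some $\theta'$ of $\textbf{M}_5$, check that under hypothesis (a) or (b) the domains of $\theta$ and $\theta'$ agree on the relevant base, and conclude that admissibility and the resulting word transfer verbatim.
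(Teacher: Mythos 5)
Your proposal is correct and is exactly the argument the paper intends (the paper gives no proof here, deferring to Lemma 5.23 of \cite{W}): the lemma is bookkeeping once one observes that working rules of $\textbf{M}$ act as parallel copies of rules of $\textbf{M}_5$, the sole discrepancy being that $\Theta_2$-rules additionally lock the special input sector, which hypotheses (a) and (b) neutralize. One small imprecision in your motivational aside: when both (a) and (b) fail, $W_0'$ is still $\theta'$-admissible (the $\theta$-admissibility of $W_0$ forces the special-sector tape word to be empty, and $\theta'$ has a larger domain there, never a smaller one), and what actually breaks is the conclusion that $W_0'\cdot\theta'$ is the natural copy of $W_1$ --- for instance when $\theta'$ is a moving rule of $\textbf{M}_5(1)$ that inserts a letter into the input sector that $\theta$ leaves empty.
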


%
%
%
%

The base $B$ of an admissible word of $\textbf{M}$ is called \textit{hyperfaulty} (or \textit{pararevolving}) if its reversion $\pi(B)$ is faulty (or revolving) as the base of an admissible word of $\textbf{M}_5$. Note that a base is hyperfaulty if and only if it is pararevolving and unreduced.

A hyperfaulty base is necessarily faulty, while a faulty base need not be hyperfaulty. For example, if
$B\equiv Q_1(3)Q_1(3)^{-1}Q_0(3)^{-1}\{t(3)\}^{-1}\dots (Q_{s,r}''(1))^{-1}(R_s''(1))^{-1}R_s''(1)Q_{s,r}''(1)\dots \{t(3)\}Q_0(3)Q_1(3)$, where gaps correspond to strings of letters that follow the order of the standard base of $\textbf{M}$ or its inverse, then $B$ is faulty but not hyperfaulty.

Conversely, a pararevolving base that is not hyperfaulty (for example, $\{t(1)\}\dots\{t(2)\}$) is not revolving, while a revolving base that is not faulty (for example, $\{t(1)\}\dots\{t(1)\}$) is not pararevolving.  A pararevolving base has length at most $2N+1$, while a revolving base of $\textbf{M}$ has length at most $2LN+1$.

\begin{lemma} \label{one-step hyperfaulty}

Let $\pazocal{C}:W_0\to\dots\to W_t$ be a reduced computation of $\textbf{M}$ with hyperfaulty base $B$.  Suppose the step history of $\pazocal{C}$ has length 1.  Then $|W_i|_a\leq c_0(|W_0|_a+|W_t|_a)$ for all $0\leq i\leq t$.

\end{lemma}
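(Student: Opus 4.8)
The plan is to reduce the statement to a computation of the auxiliary machine $\textbf{M}_5$ and then invoke the faulty-base bound already established there.  Since the step history of $\pazocal{C}$ has length one, the history $H$ contains no transition rule $\sigma(s)_i^{\pm1}$ or $\sigma(a)_i^{\pm1}$ (such rules may only occur at the ends of a history and each marks a change of step history), so $\pazocal{C}$ is a one-machine computation, say of the $i$-th machine, whose history moreover contains no occurrence of these transition rules.  The working rules of $\Theta_i$ are copies of the rules of $\textbf{M}_{6,i}$, which in turn operate as parallel copies of $\textbf{M}_5$ on each component $\{t(j)\}B_4(j)$ — with the sole exception that, when $i=2$, every rule locks the special input sector $Q_0(1)Q_1(1)$.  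The first step is thus to observe that, because $B$ is hyperfaulty, its reversion $\pi(B)$ is a faulty base of $\textbf{M}_5$ and in particular $\pi(B)$ has length at most $2N+1$, so $B$ lies inside a single component; indeed a hyperfaulty base cannot straddle more than one of the subwords $\{t(j)\}B_4(j)$ since it does not contain any letter of the form $t(j)^{\pm1}$ (every rule locks each $\{t(j)\}$-adjacent sector, but more to the point the reversion being faulty forces $B$ to be a subword of a single $\{t(j)\}B_4(j)$).

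With $B$ confined to one component, I would next apply \Cref{lifted base} and \Cref{lifted rule} to lift $\pazocal{C}$ to a reduced computation $\pazocal{C}':W_0'\to\dots\to W_t'$ of $\textbf{M}_5$ with base $\pi(B)$, history the natural copy of $H$ in $\Theta(\textbf{M}_5)$, and $|W_\ell'|_a=|W_\ell|_a$ for all $\ell$.  Here I must check the hypotheses of \Cref{lifted rule}: case (b) asks that the base contain no subword of the form $(Q_0(1)Q_1(1))^{\pm1}$, and case (a) asks that the step history not be $(0)_2,(01)_2,(10)_2$, or $(1)_2$.  If the component containing $B$ has index $j\geq2$, then $B$ contains no special-input-sector subword and case (b) applies immediately.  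If $B$ sits in the first component and does contain such a subword, then since the rules of $\Theta_2$ lock that sector we are necessarily in the first machine ($i=1$), and the rules of $\Theta_1$ are exact copies of those of $\textbf{M}_{6,1}$, hence of $\textbf{M}_5$, acting identically on the first component as on any other; so case (a) is satisfied (the step history has index $1$, not $2$).  Either way the lift goes through.

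The final step is to observe that $\pi(B)$ is a faulty base of $\textbf{M}_5$, so $\pazocal{C}'$ is precisely a reduced computation of $\textbf{M}_5$ with faulty base, and \Cref{M_5 faulty} gives $|W_i'|_a\leq c_1\max(|W_0'|_a,|W_t'|_a)$.  This is slightly stronger than what we need, but in fact since $\pazocal{C}'$ has step history of length one it is a computation of a single submachine $\textbf{M}_5(j')$ with faulty base, so \Cref{barM_4(j) faulty} and \Cref{M_5(j) faulty} apply directly to give the sharper bound $|W_i'|_a\leq c_0(|W_0'|_a+|W_t'|_a)$; transporting back along the equalities $|W_i|_a=|W_i'|_a$ yields the claim.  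The one caveat is that a one-step computation of $\textbf{M}$ might correspond to a one-step computation of one of the submachines $\textbf{M}_5(0),\dots,\textbf{M}_5(5)$, and the faulty-base bounds for all six of these are exactly what \Cref{barM_4(j) faulty} (for $j\in\{1,2,3,4,5\}$) and \Cref{M_5(j) faulty} (for $j=0$) provide.

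The main obstacle I expect is the bookkeeping around the special input sector and the reversion map: one must be careful that the hyperfaulty base really does lie in a single component, and that the appropriate alternative of \Cref{lifted rule} is available depending on whether the component is the first one and whether $i=1$ or $i=2$.  Once the lift to $\textbf{M}_5$ is justified, the estimate itself is immediate from the already-proved faulty-base lemmas.
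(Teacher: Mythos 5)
There is a genuine gap. Your reduction to $\textbf{M}_5$ works only in the cases covered by \Cref{lifted rule}, and you dispose of the problematic case — step history of index $2$ with $B$ containing a subword of the form $(Q_0(1)Q_1(1))^{\pm1}$ — by asserting that "since the rules of $\Theta_2$ lock that sector we are necessarily in the first machine." That inference is false: a rule locking a sector forces the \emph{tape word} of that sector to be empty in any admissible word (and, via \Cref{locked sectors}, excludes the unreduced subwords $Q_0(1)Q_0(1)^{-1}$ and $Q_1(1)^{-1}Q_1(1)$), but it does not exclude the reduced subword $(Q_0(1)Q_1(1))^{\pm1}$ from the base. So a reduced computation of the second machine with step history $(1)_2$ and hyperfaulty base containing the special input sector is entirely possible, and it is exactly the case where the lift to $\textbf{M}_5$ fails: the copies of these rules in $\textbf{M}_5(1)$ operate as $\textbf{Move}$ on the $Q_0Q_1$-sector and may multiply it by letters, so the natural copy of $W_0$ is not carried to the natural copy of $W_1$. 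This case cannot be waved away; it is where the paper has to invoke property (Mv2) to identify the computation with a reduced computation of $\textbf{Move}$ whose non-input lengths track $|W_i|_a$, apply \Cref{Move faulty}, and, for bases mixing special-input-sector letters with other letters, run the factorization $B'\equiv B_1C_1\dots B_mC_mB_{m+1}$ from Case 5 of the proof of \Cref{barM_4(j) faulty}. (The step history $(0)_2$ also needs separate words, though there the argument is easy since $\textbf{M}_5(0)$ fixes the input tape word anyway.)

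A secondary error: your claim that a hyperfaulty base cannot contain a letter $t(j)^{\pm1}$ and hence lies in a single component is also wrong. Only the sectors adjacent to $\{t(j)\}$ have empty tape alphabet; the letters $t(j)^{\pm1}$ themselves may appear in a hyperfaulty base (compare the example base of $\overline{\textbf{M}}_4$ given in \Cref{sec-M3}ff. that wraps around a $\{t\}$), so a hyperfaulty base can straddle two components. This does not by itself break the lift — \Cref{lifted rule} only cares about whether $(Q_0(1)Q_1(1))^{\pm1}$ occurs — but it means your case analysis of "which component $B$ sits in" is not a valid way to decide whether the special input sector can appear.
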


\begin{proof}

If every transition $W_{i-1}\to W_i$ satisfies the hypotheses of \Cref{lifted rule}, then by hypothesis $\pazocal{C}$ can be identified with a one-step computation of $\textbf{M}_5$ with faulty base.  Hence, the statement follows by \Cref{M_5(j) faulty}.

Moreover, since every rule of $\textbf{M}_5(0)$ fixes the tape word of the $Q_0Q_1$-sector, if the step history of $\pazocal{C}$ is $(0)_2$ then an analogous proof to that of \Cref{M_5(j) faulty} implies the statement.

Hence, it suffices to assume that the step history of $\pazocal{C}$ is $(1)_2$ and that $B$ contains a subword of the form $(Q_0(1)Q_1(1))^{\pm1}$.

Now, suppose every letter of $B$ is of the form $Q_j(1)^{\pm1}$.  Then, since we may assume that $\pazocal{C}$ is nonempty, $W_0$ can be identified with an admissible word $V_0$ of $\textbf{Move}$ whose tape words in the $(Q_0Q_1)^{\pm1}$-sectors are empty.  Identifying the history $H$ of $\pazocal{C}$ with the corresponding rules of $\textbf{Move}$, then (Mv2) implies there exists a reduced computation $\pazocal{D}:V_0\to\dots\to V_t$ of $\textbf{Move}$ with history $H$ such that $|W_i|_a=|V_i|_{NI}$ for all $i$.  But then the base of $\pazocal{D}$ is faulty, so that \Cref{Move faulty} implies the statement.

Otherwise, as in Case 5 of the proof of \Cref{barM_4(j) faulty}, we may assume there exists a factorization $B'\equiv B_1C_1\dots B_mC_mB_{m+1}$ of a cyclic permutation of $B$ such that:

\begin{itemize}

\item each $B_x$ is a nonempty word of length at least 2 which does not contain a letter of the form $Q_i(j)^{\pm1}$ or a subword of the form $(Q_{0,\ell}'(j))^{-1}Q_{0,\ell}'(j)$

\item each $C_x$ is a possibly empty word consisting entirely of letters of the form $Q_i(j)^{\pm1}$ or $t(j)^{\pm1}$

\item If $C_x$ is empty, then the last letter of $B_x$ is $(Q_{0,\ell}'(j))^{-1}$ and the first letter of $B_{x+1}$ is $Q_{0,\ell}'(j)$

\item $B_{m+1}$ consists of a single letter

\end{itemize}

As in that setting, let $C_x'$ be the base word obtained from $C_x$ by adjoining the last letter of $B_x$ to its front and the first letter of $B_{x+1}$ to its end.  For $1\leq x\leq m$, let $\pazocal{C}_x:W_{0,x}\to\dots\to W_{t,x}$ and $\pazocal{C}_x':W_{0,x}'\to\dots\to W_{t,x}'$ be the restrictions of $\pazocal{C}$ to the subwords $B_x$ and $C_x'$, respectively.

As $\pazocal{C}$ operates on the subwords $B_x$ as $\overline{\textbf{M}}_4(1)$, the identical argument as \Cref{barM_4(j) faulty} implies $|W_{i,x}|_a\leq c_0(|W_{0,x}|_a+|W_{t,x}|_a)$ for all $i$.

Moreover, if $C_x'$ is of the form $(Q_{0,\ell}'(j)^{-1}Q_{0,\ell}'(j)$, then $\pazocal{C}_x'$ satisfies the hypotheses of \Cref{unreduced base}, so that $|W_{i,x}'|_a\leq|W_{0,x}'|_a+|W_{t,x}'|_a$; otherwise, as every rule locks the $Q_{s,r}''(j-1)\{t(j)\}$- and $\{t(j)\}Q_0(j)$-sectors, the argument detailed above implies $|W_{i,x}'|_a\leq c_0(|W_{0,x}'|_a+|W_{t,x}'|_a)$ for all $i$.

Thus, $|W_i|_a=\sum(|W_{i,x}|_a+|W_{i,x}'|_a)\leq c_0\sum(|W_{0,x}|_a+|W_{t,x}|_a+|W_{0,x}'|_a+|W_{t,x}'|_a)=c_0(|W_0|_a+|W_t|_a)$.

\end{proof}

\begin{remark}

Note that the arguments outlined in the proof of \Cref{one-step hyperfaulty} indicate the purpose of properties (Mv2) and (Mv8) in the definition of Move machines.

\end{remark}

\begin{lemma} \label{one-machine hyperfaulty}

Let $\pazocal{C}:W_0\to\dots\to W_t$ be a reduced computation of $\textbf{M}$ with hyperfaulty base $B$.  Suppose no letter of the step history of $\pazocal{C}$ is $(s)_i^{\pm1}$ or $(a)_i^{\pm1}$.  Then $|W_i|_a\leq c_2\max(|W_0|_a,|W_t|_a)$ for all $0\leq i\leq t$.

\end{lemma}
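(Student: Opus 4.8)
The strategy is to reduce the general case to statements already established for one-step (single step-history) computations, namely \Cref{one-step hyperfaulty}, together with the step-history restrictions for $\textbf{M}$ and the ``\Cref{M_5 long history}''-type control on lengths of subcomputations. First I would factor the history $H$ of $\pazocal{C}$ according to step history: since no letter of the step history is $(s)_i^{\pm1}$ or $(a)_i^{\pm1}$, $\pazocal{C}$ is a one-machine computation of some fixed $i$-th machine (transition rules between $\Theta_1$ and $\Theta_2$ can only occur through $(s)_i^{\pm1}$), and by the analogue of \Cref{M_5 step history} for $\textbf{M}$ its step history is a subword of the long word listed there (with index $i$ appended). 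As in the proofs of \Cref{barM_4 faulty} and \Cref{M_5 faulty}, I may assume $|W_\ell|_a>\max(|W_0|_a,|W_t|_a)$ for all $0<\ell<t$, since otherwise the statement splits into two shorter subcomputations and follows by induction on $t$. The remaining work is to control how $|W_\ell|_a$ can grow as one passes through a bounded number of maximal one-step subcomputations.

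Next I would write $H\equiv H_1\sigma_1 H_2\sigma_2\cdots\sigma_{r-1}H_r$ where the $\sigma_j$ are the (internal, non-$(s)/(a)$) transition rules between submachines and each $H_j$ is the history of a maximal one-step subcomputation $\pazocal{C}_j:W_{a_j}\to\dots\to W_{b_j}$; by the step-history bound $r$ is bounded by an absolute constant (at most the length of the long step-history word, hence $\leq 21$ or so). For each $j$, \Cref{one-step hyperfaulty} gives $|W_\ell|_a\leq c_0(|W_{a_j}|_a+|W_{b_j}|_a)$ for all $\ell$ in the range of $\pazocal{C}_j$. The point now is to bound $\max_j\max(|W_{a_j}|_a,|W_{b_j}|_a)$ by a constant multiple of $\max(|W_0|_a,|W_t|_a)$. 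Here I would argue exactly as in the hyperfaulty/faulty arguments of the previous machines: at each internal transition the admissible word $W_{a_j}\equiv W_{b_{j-1}}$ is admissible for a transition rule, and applying the appropriate ``restriction'' lemma — \Cref{M_4 restriction} (or its hyperfaulty analogue via \Cref{one alphabet historical words}/\Cref{one alphabet historical words unreduced}), together with the bound that hyperfaulty bases have length at most $2N+1$ and the parameter choice $c_0>>N$ — shows that the $a$-length at a transition point is at most $c_0$ times the $a$-length at the ``far'' end of the adjacent maximal subcomputation. Chaining these $\leq r$ inequalities telescopes to $|W_{a_j}|_a,|W_{b_j}|_a\leq c_0^{\,r}\max(|W_0|_a,|W_t|_a)$ for all $j$. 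Combining with the per-step bound yields $|W_\ell|_a\leq 2c_0^{\,r+1}\max(|W_0|_a,|W_t|_a)$ for all $\ell$, and since $r$ is absolutely bounded the parameter choice $c_2>>c_0$ absorbs the constant $2c_0^{\,r+1}$.

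The main obstacle I anticipate is the step involving subcomputations with step history containing $(j)$ for $j\leq 4$ that are \emph{not} controlled and yet are long — the situation handled for $\textbf{M}_5$ by \Cref{M_5 long history} and for $\textbf{M}$ by \Cref{M projected long history}. In the standard base one uses that a long such subcomputation forces a controlled subword and hence $|W_\ell|_a$ is essentially constant there; but over a hyperfaulty base the primitive-machine lemmas \Cref{primitive unreduced} and the analogues only give one-sided monotonicity of $a$-lengths, so I must be careful that the ``far end'' used in the telescoping is the genuinely larger one. The resolution, as in Cases 4--8 of \Cref{barM_4 faulty}, is to observe that over a hyperfaulty base a maximal subcomputation with step history $(j)$ for $j\in\{1,2,3\}$ (or $(0)_2$, or the $(4)$-step without two connecting rules) satisfies the hypotheses of \Cref{M_4 restriction} / \Cref{one alphabet historical words unreduced}, which forces the configuration at the \emph{subsequent} transition point not to be admissible for the reverse transition rule — so the step history cannot ``turn around'' more than a bounded number of times, pinning $r$ down and making the telescoping legitimate. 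Once that structural fact is in hand, the estimate is the same bounded product of $c_0$'s as above, and no genuinely new inequality is needed beyond what \Cref{one-step hyperfaulty}, \Cref{M_4 restriction}, \Cref{one alphabet historical words}, and \Cref{one alphabet historical words unreduced} already supply.
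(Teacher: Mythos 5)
Your plan has a genuine gap at the step where you bound the number $r$ of maximal one-step subcomputations. You invoke ``the analogue of \Cref{M_5 step history} for $\textbf{M}$'' to conclude $r\leq 21$ or so, but that lemma (and the banned-subword lemmas behind it) is a statement about computations in the standard base, or at least bases containing subwords such as $(P_i'R_i')^{\pm1}$ or $(\{t(j)\}B_4(j))^{\pm1}$; for a hyperfaulty base none of those hypotheses need hold, so a priori the step history can be arbitrarily long (e.g.\ oscillating between $(2)_i$ and $(3)_i$). You do acknowledge this in your final paragraph, but the proposed repair --- rerunning the ``cannot turn around'' analysis of Cases 4--8 of \Cref{barM_4 faulty} at the level of $\textbf{M}$ --- is precisely the content of \Cref{M_5 faulty}, which you would be re-proving from scratch rather than citing; and until that analysis is actually carried out, the telescoping inequality $|W_{a_j}|_a\leq c_0^{\,r}\max(|W_0|_a,|W_t|_a)$ is not justified, since the restriction lemmas only bound the $a$-length at a transition point by the $a$-length at the \emph{far} end of the adjacent maximal step, and one needs the step history to be monotone (no turn-arounds) for those far ends to chain all the way out to $W_0$ or $W_t$.

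The tool you are missing is \Cref{lifted rule}. Unless a one-rule transition has step history $(0)_2$, $(01)_2$, $(10)_2$, or $(1)_2$ \emph{and} the base contains a subword of the form $(Q_0(1)Q_1(1))^{\pm1}$, that transition is the exact copy of a transition of $\textbf{M}_5$; so outside of two exceptional situations the whole computation $\pazocal{C}$ identifies with a reduced computation of $\textbf{M}_5$ with faulty base, and \Cref{M_5 faulty} gives the bound (with constant $c_1$) immediately --- no decomposition or telescoping is needed. The only genuinely new work is when $\sigma(01)_2^{\pm1}$ or $\sigma(12)_2^{\pm1}$ occurs while $B$ contains $(Q_0(1)Q_1(1))^{\pm1}$, and there one shows, via \Cref{M_5 step history 1} and the analogue of \Cref{M_4 restriction}, that such a subcomputation must be terminal (the resulting admissible word is not admissible for the reverse transition), after which the remainder of $\pazocal{C}$ again lifts to $\textbf{M}_5$. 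I would restructure your argument around \Cref{lifted rule} rather than attempting to rebuild the entire faulty-base analysis inside $\textbf{M}$.
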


\begin{proof}

By \Cref{one-step hyperfaulty} and the parameter choice $c_2>>c_1$, it suffices to assume that the history $H$ of $\pazocal{C}$ contains a transition rule.  

Moreover, as in the proofs of \Cref{barM_4(j) faulty}, \Cref{barM_4 faulty}, \Cref{M_5(j) faulty}, and \Cref{M_5 faulty}, it suffices to assume that $|W_i|_a>\max(|W_0|_a,|W_t|_a)$ for all $1\leq i\leq t-1$.

\textbf{1.} Suppose $H$ contains a letter of the form $\sigma(01)_2^{\pm1}$ and $B$ contains a subword of the form $(Q_0(1)Q_1(1))^{\pm1}$.

Then perhaps passing to the inverse computation, we may assume that there exists a maximal nonempty subcomputation $\pazocal{C}_0:W_r\to\dots\to W_s$ with step history $(0)_2$ such that $W_r$ is $\sigma(01)_2$-admissible.

Now, note that any letter of $B$ of the form $P_i'(j)^{\pm1}$ or of the form $R_i''(j)$ must be part of a subword of a cyclic permutation of $B$ of the form $(Q_{i,\ell}'(j)P_i'(j)R_i'(j))^{\pm1}$, $Q_{i,\ell}'(j)P_i'(j)P_i'(j)^{-1}Q_{i,\ell}'(j)^{-1}$, $(P_i''(j)R_i''(j)Q_{i,r}''(j))^{\pm1}$, or $Q_{i,r}''(j)^{-1}R_i''(j)^{-1}R_i''(j)Q_{i,r}''(j)$.  The restriction $\pazocal{C}_0':W_r'\to\dots\to W_s'$ of $\pazocal{C}_0$ to such a subword satisfies either the hypotheses of \Cref{primitive computations}(5) or \Cref{primitive unreduced}, so that $|W_r'|_a\leq|W_s'|_a$.  As any other sector must have fixed tape word throughout $\pazocal{C}_0$, it then follows that $|W_r|_a\leq|W_s|_a$.

Moreover, by \Cref{M_5 step history 1}(b) the presence of such a subword would implies $W_s$ is not $\sigma(01)_2$-admissible, so that $s=t$.

But since $B$ contains a subword of the form $(Q_0(1)Q_1(1))^{\pm1}$, then \Cref{locked sectors} implies it must also contain a letter of the form $R_s''(L)^{\pm1}$ (and one of the form $P_0'(1)^{\pm1}$).

\textbf{2.} Suppose $H$ contains a letter of the form $\sigma(12)_2$ and $B$ contains a subword of the form $(Q_0(1)Q_1(1))^{\pm1}$.

As above, by perhaps passing to the inverse computation we may assume that there exists a maximal nonempty subcomputation $\pazocal{C}_1:W_r\to\dots\to W_s$ with step history $(1)_2$ such that $W_r$ is $\sigma(12)_2$-admissible.

By \Cref{locked sectors}, $B$ must contain a letter of the form $R_s''(L)^{\pm1}$.  As such, $B$ must contain a subword of the form $(P_s''(L)R_s''(L))^{\pm1}$ or $R_s''(L)^{-1}R_s''(L)$.  The obvious analogue of \Cref{M_4 restriction}(a) then applies to $\pazocal{C}_1$, implying that $W_s$ is not $\sigma(12)_2$-admissible and $|W_r|_a\leq(2\|B\|-3)|W_s|_a$.  By Case 1, it then suffices to assume that $s=t$, so that the parameter choice $c_0>>N$ implies $|W_r|_a\leq c_0|W_t|_a$.  \Cref{one-step hyperfaulty} and the parameter choice $c_1>>c_0$ then implies $|W_i|_a\leq c_1|W_t|_a$ for all $i\geq r$.

Let $\pazocal{C}':W_\ell\to\dots\to W_r$ be the maximal subcomputation containing no nonempty subcomputation with step history $(1)_2$.  Then the same arguments above imply $|W_\ell|_a\leq c_0|W_0|_a$ and $|W_i|_a\leq c_1|W_0|_a$ for all $i\leq\ell$.

But then every transition of $\pazocal{C}'$ satisfies the hypotheses of \Cref{lifted rule}, so that $\pazocal{C}'$ can be identified with a reduced computation of $\textbf{M}_5$ in a faulty base, so that \Cref{M_5 faulty} implies $|W_i|_a\leq c_1\max(|W_\ell|_a,|W_r|_a)$ for all $\ell\leq i \leq r$.  Hence, the statement follows from the parameter choice $c_2>>c_1>>c_0$.

\textbf{3.} Thus, it suffices to assume that $H$ contains no transition rule of the form $\sigma(01)_2^{\pm1}$ and no transition rule of the form $\sigma(12)_2^{\pm1}$ or $B$ contains no subword of the form $(Q_0(1)Q_1(1))^{\pm1}$.

But then every transition $W_{i-1}\to W_i$ satisfies the hypotheses of \Cref{lifted rule}, so that $\pazocal{C}$ can be identified with a reduced computation of $\textbf{M}_5$ in a faulty base.  The statement then follows by \Cref{M_5 faulty}.

\end{proof}

\begin{lemma} \label{hyperfaulty}

Let $\pazocal{C}:W_0\to\dots\to W_t$ be a reduced computation of $\textbf{M}$ with hyperfaulty base $B$.  Then $|W_i|_a\leq c_2\max(|W_0|_a,|W_t|_a)$ for all $0\leq i\leq t$.

\end{lemma}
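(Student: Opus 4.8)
The statement \Cref{hyperfaulty} is precisely \Cref{one-machine hyperfaulty} with the restriction on the step history removed, so the plan is to reduce the general case to that lemma by stripping off any transition rules of the form $\sigma(s)_i^{\pm1}$ or $\sigma(a)_i^{\pm1}$ that occur in the history $H$ of $\pazocal{C}$. As in all the preceding faulty-computation arguments (\Cref{barM_4(j) faulty}, \Cref{barM_4 faulty}, \Cref{M_5 faulty}, \Cref{one-machine hyperfaulty}), I would first reduce to the case $|W_i|_a>\max(|W_0|_a,|W_t|_a)$ for all $1\leq i\leq t-1$: if some intermediate $W_i$ has $a$-length at most $\max(|W_0|_a,|W_t|_a)$, split $\pazocal{C}$ at $W_i$ and apply induction to the two halves. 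Under this assumption neither the first nor the last letter of $H$ can be a transition rule that does not alter tape words, and in particular the first and last letters of $H$ are not of the form $\sigma(s)_i^{\pm1}$ or $\sigma(a)_i^{\pm1}$ (these lock every tape-altering sector).

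The key observation is that a rule of the form $\sigma(s)_i^{\pm1}$ or $\sigma(a)_i^{\pm1}$ can only occur in $H$ as a letter separating a one-machine subcomputation of one machine from a one-machine subcomputation of the other (or from an accept-configuration segment); by the construction of $\textbf{M}$, such rules appear only at the boundary between $\Theta_1$- and $\Theta_2$-blocks. So write $H\equiv H_0\rho_1 H_1\rho_2\dots\rho_p H_p$ where each $\rho_j\in\{\sigma(s)_i^{\pm1},\sigma(a)_i^{\pm1}\}$ and each $H_j$ contains no such rule. Since the first and last letters of $H$ are not of this form, $H_0$ and $H_p$ are nonempty. Each subcomputation $\pazocal{C}_j$ with history $H_j$ has hyperfaulty base $B$ and step history containing no letter $(s)_i^{\pm1}$ or $(a)_i^{\pm1}$, so \Cref{one-machine hyperfaulty} applies to give $|W_i|_a\leq c_2\max(|W_{r(j)}|_a,|W_{s(j)}|_a)$ on the $j$-th block, where $W_{r(j)},W_{s(j)}$ are its endpoints. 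Because transition rules of the excised type do not alter tape words, the endpoints match up: $|W_{s(j)}|_a=|W_{r(j+1)}|_a$. The only thing left is to control $\max_j\max(|W_{r(j)}|_a,|W_{s(j)}|_a)$ in terms of $\max(|W_0|_a,|W_t|_a)$.

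For this I would argue that an admissible word that is $\sigma(a)_i^{\pm1}$-admissible has empty working sectors (it corresponds to the accept configuration or its $i$-th-machine analogue), hence $a$-length $0$ in the relevant sectors, while an admissible word that is $\sigma(s)_i^{\pm1}$-admissible is an admissible subword of a start configuration --- either tame input or its companion. In the first case the boundary $a$-length is at most $\min(|W_0|_a,|W_t|_a)$ (indeed essentially $0$), which immediately bounds that portion. In the second case, the relevant boundary configurations are (subwords of) tame input configurations, so one can invoke \Cref{one-machine hyperfaulty} once more together with the fact, used repeatedly above, that a one-machine computation starting at a tame input configuration and with a transition rule in its history cannot return to another start configuration without the $a$-length having first grown (cf. the arguments in \Cref{M_5 faulty} and Case 2 of \Cref{one-machine hyperfaulty}); this forces the number $p$ of excised transition rules to be bounded by an absolute constant (at most a handful, as in \Cref{long step history}), so the final bound is $|W_i|_a\leq c_2\max(|W_0|_a,|W_t|_a)$ after absorbing the constant into $c_2$ via the parameter choice $c_2>>c_1>>c_0$. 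The main obstacle is the bookkeeping in this last step --- showing that the intermediate boundary $a$-lengths $|W_{r(j)}|_a$ are themselves controlled by $\max(|W_0|_a,|W_t|_a)$ rather than blowing up, which is exactly where one needs the monotonicity-of-$a$-length facts for one-machine computations (\Cref{M_5 long history}, \Cref{one-machine hyperfaulty}) applied to the first and last blocks $H_0$ and $H_p$ and their inverses, mirroring Cases 4--8 of \Cref{barM_4 faulty}.
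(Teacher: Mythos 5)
Your overall strategy --- reduce to the case where every intermediate $a$-length exceeds $\max(|W_0|_a,|W_t|_a)$, then strip the transition rules $\sigma(s)_i^{\pm1},\sigma(a)_i^{\pm1}$ and fall back on \Cref{one-machine hyperfaulty} --- is the same as the paper's, but the step you yourself flag as ``the main obstacle'' is exactly where the proof lives, and your proposal does not close it. Two concrete problems. First, chaining the per-block estimate $|W_i|_a\leq c_2\max(|W_{r(j)}|_a,|W_{s(j)}|_a)$ across $p$ blocks gives $c_2^{\,p}\max(|W_0|_a,|W_t|_a)$ unless you show the block-boundary $a$-lengths are themselves sandwiched by the endpoints; asserting that $p$ is ``bounded by an absolute constant, as in \Cref{long step history}'' does not help (that lemma concerns the base $\{t(i)\}B_4(i)$, not hyperfaulty bases, and a bounded $p$ still leaves the $c_2^{\,p}$ issue), and \Cref{M_5 long history} is a standard-base statement that does not apply here. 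Second, your handling of $\sigma(a)_i^{\pm1}$ misses the decisive point: $\sigma(a)_i$ locks \emph{every} sector, and a hyperfaulty base is unreduced, so by \Cref{locked sectors} no admissible word in this base is $\sigma(a)_i^{\pm1}$-admissible at all --- these letters simply cannot occur in $H$.

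What the paper actually does, and what your plan would need to reconstruct, is to use the unreduced base to force the entire structure of $H$: since transition rules do not change tape words and a start configuration admits only $\sigma(s)_1$ or $\sigma(s)_2$, the standing assumption forces any occurrence of $\sigma(s)_i^{\pm1}$ to sit inside a subword $(\sigma(s)_1^{-1}\sigma(s)_2)^{\pm1}$, so there is exactly one ``junction'' and the middle block is empty. On the $\Theta_1$-side of the junction, $\sigma(s)_1$ locks everything but the input sectors, so \Cref{locked sectors} forces $B$ to contain a subword of the form $(Q_{i,\ell}'(j)P_i'(j)R_i'(j))^{\pm1}$, etc.; then \Cref{M step history (0)}(a) and \Cref{M_5 step history 1}(a) pin the step history after the junction to a prefix of $(0)_1(01)_1(1)_1$ ending at $W_t$, \Cref{primitive computations}(3)/\ref{primitive unreduced} give $|W_r|_a=\dots=|W_s|_a$ on the $(0)_1$ block, \Cref{M_4 restriction}(b) gives $|W_s|_a\leq c_0|W_t|_a$ on the $(1)_1$ block, and \Cref{one-step hyperfaulty} bounds the interior of that block; the mirror argument handles the $\Theta_2$-side. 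Without this forcing of the step history and the explicit appeal to \Cref{M_4 restriction}(b) to control the boundary $a$-lengths, the reduction to \Cref{one-machine hyperfaulty} does not yield the stated constant $c_2$.
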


\begin{proof}

As in the proofs of previous statements of this form, it suffices to assume that $|W_i|_a>\max(|W_0|_a,|W_t|_a)$ for all $1\leq i\leq t$.

As $\sigma(a)_i$ locks all sectors of the standard base and $B$ is unreduced, \Cref{locked sectors} implies the history $H$ of $\pazocal{C}$ contains no letter of the form $\sigma(a)_i^{\pm1}$.  So, by \Cref{one-machine hyperfaulty}, we may assume that $H$ contains a letter of the form $\sigma(s)_i^{\pm1}$.  What's more, as transition rules do not alter the tape words of an admissible word, it follows that $H$ contains a subword of the form $(\sigma(s)_1^{-1}\sigma(s)_2)^{\pm1}$.

Perhaps passing to the inverse computation, there then exists a maximal nonempty subcomputation $\pazocal{C}_0:W_r\to\dots\to W_s$ with step history $(0)_1$ such that $W_r$ is $\sigma(s)_1^{-1}$-admissible.  

As $\sigma(s)_1$ locks every sector of the standard base other than the input sectors, it follows that $B$ must contain a letter of the form $P_i'(j)^{\pm1}$ or $R_i''(j)^{\pm1}$.  \Cref{locked sectors} then implies $B$ contains a subword of the form $(Q_{i,\ell}'(j)P_i'(j)R_i'(j))^{\pm1}$, $Q_{i,\ell}'(j)P_i'(j)P_i'(j)^{-1}Q_{i,\ell}'(j)^{-1}$, $(P_i''(j)R_i''(j)Q_{i,r}''(j))^{\pm1}$, or $Q_{i,r}''(j)^{-1}R_i''(j)^{-1}R_i''(j)Q_{i,r}''(j)$.  So, \Cref{M step history (0)}(a) implies $W_s$ is not $\sigma(s)_1^{-1}$-admissible.

Moreover, for any subword as above, the restriction $\pazocal{C}_0':W_r'\to\dots\to W_s'$ of $\pazocal{C}_0$ to this subword satisfies either \Cref{primitive computations}(5) or \Cref{primitive unreduced}, so that $|W_r'|_a\leq|W_s'|_a$.  So, since all other sectors have fixed tape word in $\pazocal{C}_0'$, $|W_r|_a\leq|W_s|_a$.

Hence, we may assume $t>s$, so that $W_s$ is $\sigma(01)_1$-admissible.  This implies $B$ cannot contain a subword of the form $Q_{i,\ell}'(j)P_i'(j)P_i'(j)^{-1}Q_{i,\ell}'(j)^{-1}$ or of the form $Q_{i,r}''(j)^{-1}R_i''(j)^{-1}R_i''(j)Q_{i,r}''(j)$, so that every unreduced two-letter subword of $B$ is of the form $Q_0(j)Q_0(j)^{-1}$ or $Q_1(j)^{-1}Q_1(j)$.  As such, $B$ must contain a subword $B'$ of the form $(P_i'(j)R_i'(j))^{\pm1}$.  \Cref{primitive computations}(3) further implies $|W_r|_a=\dots=|W_s|_a$.

By \Cref{M_5 step history}(a), the step history of $\pazocal{C}$ cannot contain the subword $(01)_1(1)_1(10)_1$.  Further, as $B$ must contain an unreduced two-letter subword of the form $Q_0(j)Q_0(j)^{-1}$ or $Q_1(j)^{-1}Q_1(j)$, \Cref{locked sectors} implies $H$ cannot contain a letter of the form $\sigma(12)_i^{\pm1}$.  So, the subcomputation $\pazocal{C}_1:W_{s+1}\to\dots\to W_t$ must have step history $(1)_1$.

The existence of $B$ then allows us to apply \Cref{M_4 restriction}(b) to $\pazocal{C}_1$, so that the parameter choice $c_0>>N$ implies $|W_s|_a\leq c_0|W_t|_a$.  Finally, by the parameter choices $c_2>>c_1>>c_0$, applying \Cref{one-step hyperfaulty} to $\pazocal{C}_1$ implies $|W_i|_a\leq c_2|W_t|_a$ for all $r\leq i\leq t$.

Hence, it suffices to assume $r>0$, in which case by construction the two-rule subcomputation $W_{r-2}\to W_{r-1}\to W_r$ must have history $\sigma(s)_2^{-1}\sigma(s)_1$.  But then the same argument above applies to the subcomputation $W_0\to\dots\to W_{r-2}$, so that $|W_i|_a\leq c_2|W_0|_a$ for all $i\leq r$.

\end{proof}

\begin{lemma} \label{one-step pararevolving}

Let $\pazocal{C}:W_0\to\dots\to W_t$ be a reduced computation of $\textbf{M}$ with base $\{t(1)\}B_4(1)$.  If the step history of $\pazocal{C}$ is of length $1$, then $\|W_i\|\leq c_1\max(\|W_0\|,\|W_t\|)$ for all $0\leq i\leq t$.

\end{lemma}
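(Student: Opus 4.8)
The plan is to reduce the statement to the analysis of one-step computations already carried out for $\textbf{M}_5$ and for the auxiliary machines $\overline{\textbf{M}}_4$, $\textbf{M}_{6,1}$, $\textbf{M}_{6,2}$, by observing that the base $\{t(1)\}B_4(1)$ is a single ``lifted'' copy of the standard base of $\textbf{M}_5$ and so a step-$1$ computation acts essentially as a one-step computation of one of the constituent submachines. First I would fix the history $H$ of $\pazocal{C}$ and note that, since the step history has length $1$, $H$ contains no transition rule of the form $\sigma(s)_i^{\pm1}$, $\sigma(a)_i^{\pm1}$, $\sigma(12)_i^{\pm1}$, etc.\ mixing submachines; by the construction of $\textbf{M}$ as parallel copies of $\textbf{M}_{6,j}$, every rule of $H$ lies in a single submachine $\textbf{M}_{6,i}(j)$ for a fixed pair $(i,j)$ (the step history is a single letter $(j)_i$ or a single transition letter). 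The special input sector $Q_0(1)Q_1(1)$ is the only sector on which the machines $\textbf{M}_{6,1}$ and $\textbf{M}_{6,2}$ behave differently, and the step-history restrictions on which sectors each rule locks are exactly the content of \Cref{lifted rule}: provided the step history is not $(0)_2$, $(01)_2$, $(10)_2$, or $(1)_2$, or the base contains no subword $(Q_0(1)Q_1(1))^{\pm1}$, every transition $W_{i-1}\to W_i$ lifts to a one-rule computation of $\textbf{M}_5$ on the copy of the base. In that case $\pazocal{C}$ is identified with a reduced computation of $\textbf{M}_5(j)$ (or $\textbf{M}_5$ with a transition rule) in the standard base, and \Cref{M_5(j) standard time}(a) gives $t\le\max(|W_0|_a,|W_t|_a)+2k$, while the one-step space estimate (implicit in the restriction-to-sector arguments of \Cref{M_5(j) standard time} via Lemmas \ref{multiply one letter}, \ref{multiply two letters}, \ref{primitive computations}, and \ref{simplify rules}) gives $|W_i|_a\le c(|W_0|_a+|W_t|_a)$ for an absolute constant $c$; combined with $\|W_i\|=|W_i|_a+N$ and the parameter choice $c_1\gg N\gg k$, this yields $\|W_i\|\le c_1\max(\|W_0\|,\|W_t\|)$.

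Next I would handle the exceptional cases: the step history is $(0)_2$, $(01)_2$, $(10)_2$, or $(1)_2$ and the base does contain $(Q_0(1)Q_1(1))^{\pm1}$. For step histories $(0)_2$, $(01)_2$, $(10)_2$ the rules of $\textbf{M}_5(0)$ (equivalently $\textbf{M}_{6,2}(0)$) fix the tape word of the $Q_0Q_1$-sector, so the argument of the non-exceptional case goes through verbatim --- the difference between $\textbf{M}_{6,1}$ and $\textbf{M}_{6,2}$ is invisible on sectors that are locked or fixed anyway, exactly as in the proof of \Cref{one-step hyperfaulty}. The genuinely separate case is step history $(1)_2$: here $\textbf{M}_{6,2}(1)$ locks the special input sector while $\textbf{M}_{6,1}(1)$ multiplies it, so $\pazocal{C}$ cannot be lifted to $\textbf{M}_5$ directly. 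In this case I would instead identify $\pazocal{C}$ (restricted to the appropriate sectors) with a computation of $\overline{\textbf{M}}_4(1)$ --- or, tracking the $Q_0\dots Q_{n-1}$ block, with a computation of $\textbf{Move}$ --- with the same base, exactly as in Case 3 of \Cref{one-step hyperfaulty}, and invoke \Cref{barM_4(j) faulty} together with \Cref{Move faulty} (property (Mv7)) to bound $|W_i|_a$ by $c_0(|W_0|_a+|W_t|_a)$; since the base $\{t(1)\}B_4(1)$ is reduced here one may also use \Cref{M_2 bound} on the historical-sector portion. Adding back the $N$ and using $c_1\gg c_0$ finishes this case.

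The main obstacle I expect is bookkeeping rather than a conceptual difficulty: the base $\{t(1)\}B_4(1)$ is \emph{reduced} (it is the standard base of a single component, up to the leading $\{t(1)\}$), so none of the ``unreduced/faulty'' machinery is needed, but one must still be careful that the lifting of \Cref{lifted rule} applies uniformly along the whole computation --- i.e.\ that the step history being a single letter really does force every rule into one submachine, and that the sector-by-sector estimates ($P_i'(1)R_i'(1)$-sectors via \Cref{multiply two letters} or \Cref{primitive computations}, working sectors via \Cref{simplify rules} or \Cref{multiply one letter}, historical sectors via \Cref{one alphabet historical words}) combine to the claimed linear bound when there is only one component. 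Since there is a single coordinate, the sum $|W_i|_a=\sum|W_{i,UV}|_a$ over the $N-1$ two-letter subwords $UV$ is a sum of a bounded number of terms each bounded by $|W_{0,UV}|_a+|W_{t,UV}|_a$ (up to a factor from \Cref{simplify rules} times $t$, which is itself $\le\max(|W_0|_a,|W_t|_a)+2k$), so the constant absorbed into $c_1$ depends only on $N$ and $k$, which is exactly what the parameter hierarchy $c_1\gg N\gg k$ accommodates.
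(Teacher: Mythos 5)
Your proposal follows the paper's proof essentially step for step: generic step histories are lifted to $\textbf{M}_5$ via \Cref{lifted rule} and bounded by the standard-base estimates, the step history $(0)_2$ is handled through the primitive-machine lemmas (all other tape words being fixed), and $(1)_2$ is handled by passing, via (Mv2), to a genuine computation of $\textbf{Move}$ with the same history and applying \Cref{Move faulty}, the historical sectors being controlled separately (the paper uses \Cref{multiply two letters}). Two small corrections to your $(1)_2$ case: the estimate actually needed from \Cref{Move faulty} is the non-input bound coming from (Mv8), not (Mv7), since the associated $\textbf{Move}$ computation may write letters into its input sector so its total $a$-length is not controlled by $\max(\|W_0\|,\|W_t\|)$; and \Cref{barM_4(j) faulty} is not applicable here because the base $\{t(1)\}B_4(1)$ is reduced rather than faulty.
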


\begin{proof}

We proceed in cases:

\textbf{1.} Suppose the step history of $\pazocal{C}$ is neither $(0)_2$ or $(1)_2$.

Then all transitions of $\pazocal{C}$ satisfy the hypotheses of \Cref{lifted rule}, so that $\pazocal{C}$ can be identified with a reduced computation of $\textbf{M}_5$ in the standard base.   \Cref{M_5 standard time} then implies $t\leq c_0\max(\|W_0\|,\|W_t\|)$.

As each transition changes any tape word by at most 2, for each $0\leq i\leq t$:
$$\|W_i\|\leq\max(\\W_0\|,\|W_t\|)+\frac{t}{2}\leq\max(\|W_0\|,\|W_t\|)+Nt\leq(c_0N+1)\max(\|W_0\|,\|W_t\|)$$
Hence, the statement follows from the parameter choices $c_1>>c_0>>N$.

\textbf{2.} Suppose the step history of $\pazocal{C}$ is $(0)_2$.

Then the restriction of $\pazocal{C}$ to any subword of the form $Q_{j,\ell}'(1)P_j'(1)R_j'(1)$ or $P_j''(1)R_j''(1)Q_{j,r}''(1)$ satisfies the hypotheses of \Cref{primitive computations}.  As the tape word of all other sectors are fixed by all rules of this step history, the statement follows for $c_1\geq2$.

\textbf{3.} Hence, it suffices to assume the step history of $\pazocal{C}$ is $(1)_2$.

By construction, the admissible subword of $W_i$ with base $Q_0(1)Q_1(1)\dots Q_{n-1}(1)Q_{0,\ell}'(1)$ can be identified with a configuration $V_i$ of $\textbf{Move}$ with empty input sector, while the history $H$ of $\pazocal{C}$ can be identified with a word in the rules of $\textbf{Move}$.  

By (Mv2), there then exists a reduced computation $\pazocal{D}:V_0\equiv U_0\to\dots\to U_t$ of $\textbf{Move}$ with history $H$ such that $|U_i|_{NI}=|V_i|_a$ for all $i$.  As a result, \Cref{Move faulty} implies $|V_i|_a\leq c_0\max(|V_0|_a,|V_t|_a)$ for all $i$.

Since the restriction of $\pazocal{C}$ to any subword of the form $P_j'(1)R_j'(1)$ or $P_j''(1)R_j''(1)$ satisfies the hypotheses of \Cref{multiply two letters} and all other sectors are locked by the rules of $\pazocal{C}$, the bound then follows from the parameter choices $c_1>>c_0\geq2$.

\end{proof}

\begin{lemma} \label{one-step standard bound}

Let $\pazocal{C}:W_0\to\dots\to W_t$ be a reduced computation of $\textbf{M}$ in the standard base.  If the step history of $\pazocal{C}$ is of length 1, then $\|W_i\|\leq c_1(\|W_0\|+\|W_t\|)$ for all $0\leq i\leq t$.

\end{lemma}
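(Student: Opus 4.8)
The statement asserts a linear bound on the space of any one-step computation of $\textbf{M}$ in the \emph{standard} base. The plan is to reduce to the component-wise analysis already built up in the previous lemmas by splitting an admissible word into its components $W_i(1),\dots,W_i(L)$. Recall that by the discussion preceding \Cref{one-machine accepted is admissible}, for any configuration $W$ we have $W\equiv W(1)\dots W(L)$, and a rule of $\textbf{M}$ operates on each component $W(j)$ with $j\geq 2$ in the same way, while possibly acting differently on $W(1)$. Since $\pazocal{C}$ has step history of length $1$, its history $H$ contains no transition rule $\sigma(s)_i^{\pm1}$ or $\sigma(a)_i^{\pm1}$ (those would force a change in the step-history letter), so $H$ is a word in the `working' rules of a single $\Theta_i$, and hence the restriction of $\pazocal{C}$ to any component base $\{t(j)\}B_4(j)$ is a one-machine computation of the $i$-th machine whose step history also has length $1$.

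First I would restrict $\pazocal{C}$ to the component $\{t(1)\}B_4(1)$, obtaining a reduced computation $W_0(1)\to\dots\to W_t(1)$ with base $\{t(1)\}B_4(1)$ whose step history has length $1$; \Cref{one-step pararevolving} then gives $\|W_i(1)\|\leq c_1\max(\|W_0(1)\|,\|W_t(1)\|)$ for all $i$. Next, for each $j\in\{2,\dots,L\}$ I would restrict $\pazocal{C}$ to $\{t(j)\}B_4(j)$; because the rules act identically on all such components, each of these restrictions can be identified with a one-machine computation of the $i$-th machine, and — since the step history has length $1$ so no transition rules appear — with a reduced computation of $\textbf{M}_5$ in the standard base (via the identification of $\textbf{M}_{6,i}$'s `working' rules with those of $\textbf{M}_5$, as in the proof of \Cref{M projected long history one-machine}). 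Then \Cref{M_5 standard time} bounds the \emph{length} of that computation by $c_0\max(\|W_0(j)\|,\|W_t(j)\|)$, and since each rule changes a given tape word by at most $2$ (using \Cref{simplify rules}) and an admissible word with base $\{t(j)\}B_4(j)$ has at most $N$ sectors, this yields $\|W_i(j)\|\leq (c_0N+1)\max(\|W_0(j)\|,\|W_t(j)\|)$ for all $i$, just as in Case~1 of the proof of \Cref{one-step pararevolving}.

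Finally I would assemble the pieces: since $\|W_i\|=\sum_{j=1}^L\|W_i(j)\|$ (the glue sectors $Q_{s,r}''(j)\{t(j+1)\}$ have empty tape alphabet), summing the component bounds gives
\begin{align*}
\|W_i\|&=\sum_{j=1}^L\|W_i(j)\|\leq c_1\max(\|W_0(1)\|,\|W_t(1)\|)+\sum_{j=2}^L(c_0N+1)\max(\|W_0(j)\|,\|W_t(j)\|)\\
&\leq (c_0N+1)\sum_{j=1}^L(\|W_0(j)\|+\|W_t(j)\|)=(c_0N+1)(\|W_0\|+\|W_t\|).
\end{align*}
The bound then follows from the parameter choices $c_1>>c_0>>N$.

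\textbf{Expected main obstacle.} The routine calculations are genuinely routine here; the only point requiring care is the bookkeeping for the first component $\{t(1)\}B_4(1)$, where the rules of the second machine behave differently (they lock the special input sector $Q_0(1)Q_1(1)$). This is exactly what \Cref{one-step pararevolving} was proved to handle — including the delicate sub-case where the step history is $(1)_2$ and one must pass to an auxiliary computation of $\textbf{Move}$ invoking property (Mv2) and \Cref{Move faulty} — so the work is to invoke it with the correct hypotheses rather than to redo it. A minor secondary point is to confirm that when the step history is $(0)_2$ or $(1)_2$ the component restrictions for $j\geq 2$ still reduce cleanly to $\textbf{M}_5$; this is immediate because for $j\geq 2$ the machines $\textbf{M}_{6,1}$ and $\textbf{M}_{6,2}$ act identically, so no special-sector anomaly arises away from the first component.
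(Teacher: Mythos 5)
Your proposal is correct and follows essentially the same route as the paper: restrict to the components $\{t(j)\}B_4(j)$, handle $j=1$ via \Cref{one-step pararevolving}, identify the restrictions for $j\geq2$ with reduced computations of $\textbf{M}_5$ in the standard base and apply \Cref{M_5 standard time}, then sum using $\|W\|=\sum_j\|W(j)\|$. The only blemish is the last displayed inequality, where the common constant should be $c_1$ rather than $c_0N+1$ (since $c_1\geq c_0N+1$ but not conversely); the conclusion is unaffected.
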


\begin{proof}

For all $1\leq j\leq L$, let $\pazocal{C}(j):W_0(j)\to\dots\to W_t(j)$ be the restriction of $\pazocal{C}$ to the base $\{t(j)\}B_4(j)$.

By \Cref{one-step pararevolving}, $\|W_i(1)\|\leq c_1\max(\|W_0(1)\|,\|W_t(1)\|)$ for all $0\leq i\leq t$.

For $j\neq1$, every transition satisfies the hypotheses of \Cref{lifted rule}, so that $\pazocal{C}(j)$ can be identified with a reduced computation of $\textbf{M}_5$ in the standard base.  As in Case 1 of the proof of \Cref{one-step pararevolving}, it then follows from \Cref{M_5 standard time} that $t\leq c_0\max(\|W_0(j)\|,\|W_t(j)\|)$, and so $\|W_i(j)\|\leq c_1\max(\|W_0(j)\|,\|W_t(j)\|)$ for all $i$.

Thus, the statement follows by noting that $\|W\|=\sum_{j=1}^L\|W(j)\|$ for any configuration $W$.

\end{proof}

\begin{lemma} \label{one-machine pararevolving}

For any one-machine computation $\pazocal{C}:W_0\to\dots\to W_t$ with base $\{t(1)\}B_4(1)$, $\|W_i\|\leq c_2\max(\|W_0\|,\|W_t\|)$ for all $0\leq i\leq t$.

\end{lemma}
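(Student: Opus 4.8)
The plan is to bound the lengths of one-machine subcomputations using \Cref{M projected long history one-machine} and then control the intermediate admissible words by `lifting' the computation to $\textbf{M}_5$ via \Cref{lifted rule} and invoking the standard-base space estimates already established. First I would factor the history $H$ of $\pazocal{C}$ as $H\equiv H_1\sigma_1H_2\sigma_2\cdots$, where each $\sigma_p$ is a transition rule of the form $\sigma(s)_i^{\pm1}$ or $\sigma(a)_i^{\pm1}$ (these are the only transition rules that can separate a computation of $\textbf{M}$ into genuinely distinct one-machine pieces when the base is $\{t(1)\}B_4(1)$) and each $H_p$ is the history of a one-machine subcomputation $\pazocal{C}_p$. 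Since transition rules do not alter tape words and, by the construction of the software, any occurrence of $\sigma(s)_i^{\pm1}$ or $\sigma(a)_i^{\pm1}$ in a one-machine history is forced to be the first or last letter, there are at most two such one-machine pieces per the analogue of \Cref{ell at most 2} applicable here: the configuration $W_0$ (resp.\ $W_t$) being an admissible subword of a start or end configuration at each junction, combined with \Cref{M language}(2) and \Cref{M return to start}, forces the number of maximal one-machine subcomputations to be at most $2$ unless an intermediate junction is $W_{ac}$, which would let us split $\pazocal{C}$ and argue on the pieces. Hence it suffices to handle the cases $\ell(\pazocal{C})\le 2$.

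Second, for each one-machine subcomputation $\pazocal{C}_p:W_{r(p)}\to\dots\to W_{r(p+1)}$ with base $\{t(1)\}B_4(1)$, I would show $\|W_i\|\le c_1\max(\|W_{r(p)}\|,\|W_{r(p+1)}\|)$ for all $i$ in that range. The argument mirrors \Cref{one-step standard bound}: strip off the at most two leading/trailing transition rules of $H_p$ to obtain a subcomputation whose history contains no $\sigma(s)_i^{\pm1}$ or $\sigma(a)_i^{\pm1}$, so every transition satisfies the hypotheses of \Cref{lifted rule}; then $\pazocal{C}_p$ (minus the endpoints) lifts to a reduced computation of $\textbf{M}_5$ in the standard base --- except in the exceptional step histories $(0)_2$ and $(1)_2$, where the base contains $(Q_0(1)Q_1(1))^{\pm1}$ and \Cref{lifted rule} does not apply; in those cases one instead identifies the relevant subword with a computation of $\textbf{Move}$ exactly as in Cases 2 and 3 of \Cref{one-step pararevolving} (using (Mv2) and \Cref{Move faulty}) and combines with \Cref{multiply two letters} on the historical sectors. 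In the generic case, \Cref{M_5 standard time} gives a linear bound on the length $t_p$ of $\pazocal{C}_p$ in terms of $\max(\|W_{r(p)}\|,\|W_{r(p+1)}\|)$, and since each rule changes each tape word by at most $2$ while $N$ is the number of sectors, $\|W_i\|\le\max(\|W_{r(p)}\|,\|W_{r(p+1)}\|)+Nt_p$, yielding the desired bound after the parameter choice $c_1\gg c_0\gg N$.

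Third, I would chain the two one-machine estimates together. Writing $s$ for the junction index, $W_s$ is an admissible subword of either a start or an end configuration (by maximality of the factorization, as in the proof of \Cref{ell at most 2}). If $W_s$ is (a subword of) the accept configuration then $|W_s|_a=0$, so $\|W_s\|\le\min(\|W_0\|,\|W_t\|)\le N$ and both halves are bounded linearly by $\|W_0\|$ and $\|W_t\|$ separately. If $W_s$ is (a subword of) a start configuration, then applying the one-machine bound of the previous paragraph to $\pazocal{C}_1$ gives $\|W_s\|\le c_1\max(\|W_0\|,\|W_s\|)$; to break the apparent circularity one notes that the last letter of $H_1$ must be $\sigma(s)_i^{-1}$ and invokes \Cref{M return to start} / \Cref{projected return to start} together with the $\textbf{M}_5$ estimate \Cref{M_5 long history} to conclude $\|W_s\|$ is linearly bounded by $\|W_0\|$ (a returning one-machine computation starting at a tame input configuration forces its length, hence its maximal space, to be controlled by the start). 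Then $\|W_i\|\le c_1\max(\|W_0\|,\|W_s\|)\le c_1^2\max(\|W_0\|,\|W_t\|)$ on the first half and symmetrically on the second, so the statement follows from the parameter choice $c_2\gg c_1$.

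The main obstacle I anticipate is the junction analysis: ensuring that $W_s$ really is controlled by the endpoints (rather than blowing up and deflating) requires pinning down that a one-machine computation returning to a tame start configuration has bounded length, which is where \Cref{M return to start} and \Cref{M_5 long history}(ii) must be deployed carefully --- and the exceptional step histories $(0)_2$, $(1)_2$ with a $(Q_0(1)Q_1(1))^{\pm1}$ subword, where the lift to $\textbf{M}_5$ fails and one must fall back on the $\textbf{Move}$-machine properties (Mv2), (Mv8) and \Cref{Move faulty}, as in \Cref{one-step pararevolving} and \Cref{one-machine hyperfaulty}.
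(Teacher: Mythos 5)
Your proposal is organized around a factorization of $H$ into several one-machine pieces separated by occurrences of $\sigma(s)_i^{\pm1}$ and $\sigma(a)_i^{\pm1}$, followed by a junction analysis at intermediate start/end configurations. But the hypothesis already restricts $\pazocal{C}$ to a \emph{single} one-machine computation: within such a computation, $\sigma(s)_i^{\pm1}$ and $\sigma(a)_i^{\pm1}$ can occur only as the first or last letter of $H$ (this is precisely the observation opening the proof of \Cref{M projected long history one-machine}), so your factorization has exactly one interior piece and there are no junctions. The entire first and third paragraphs --- the analogue of \Cref{ell at most 2}, the case $W_s\equiv W_{ac}$, the returning-computation argument via \Cref{M return to start} and \Cref{M_5 long history} --- address the multi-machine situation, which is the content of the \emph{next} statement, \Cref{pararevolving} (where the junction is in fact handled by \Cref{pararevolving special} rather than by a length bound on returning computations).

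What remains, your second paragraph, is the right idea and is the paper's approach: after disposing of the boundary transition rules, lift to $\textbf{M}_5$ via \Cref{lifted rule} and apply \Cref{M_5 standard time} when the step history avoids $(0)_2$ and $(1)_2$, and otherwise fall back on the $\textbf{Move}$-machine properties. But the genuinely delicate point is left as a gesture. Cases 2 and 3 of \Cref{one-step pararevolving} treat step histories of length one; when $(0)_2$ or $(1)_2$ occurs \emph{alongside} other steps you must show that each such exceptional subcomputation sits at an end of $\pazocal{C}$ and that its inner endpoint is dominated by the outer one before the liftable middle can be handled separately. The paper organizes this by first reducing to the case $\|W_i\|>\max(\|W_0\|,\|W_t\|)$ for all interior $i$ (which also forces the first and last letters of $H$ to be non-transition rules), then using \Cref{M_5 step history 1}(b) and \Cref{M_2 bar step history}(a) to force the maximal $(0)_2$- or $(1)_2$-subcomputation adjacent to a transition rule to run to the end of $\pazocal{C}$, and \Cref{primitive computations}(5), resp.\ \Cref{one alphabet historical words}, to obtain $|W_r|_a\leq 2|W_t|_a$ at its inner endpoint. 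Without some such argument, ``combines with \Cref{multiply two letters} on the historical sectors'' does not control the length of the admissible word at the seam between the exceptional piece and the rest, which is where the bound could a priori blow up and deflate.
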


\begin{proof}

By \Cref{one-step pararevolving} and the parameter choice $c_2>>c_1$, it suffices to assume that $\pazocal{C}$ contains a transition rule.

As in the proofs of \Cref{hyperfaulty} and statements of a similar nature, it suffices to assume that $\|W_i\|>\max(\|W_0\|,\|W_t\|)$ for all $1\leq i\leq t-1$.  In particular, neither the first nor the last letter of the history $H$ of $\pazocal{C}$ is a transition rule.

\textbf{1.} Suppose the step history of $\pazocal{C}$ does not contain the letters $(0)_2$ or $(1)_2$.

Note that since transition rules cannot be the first nor the last letter of $H$, the step history also does not contain the letters $(01)_2$, $(10)_2$, $(s)_j^{\pm1}$, or $(a)_j^{\pm1}$.

So, every transition of $\pazocal{C}$ satisfies the hypotheses of \Cref{lifted rule}.  But then as in the proof of \Cref{one-step pararevolving}, $\pazocal{C}$ can be identified with a reduced computation of $\textbf{M}_5$ in the standard base, so that \Cref{M_5 standard time} implies $\|W_i\|\leq c_1\max(\|W_0\|,\|W_t\|)$ for all $i$.

\textbf{2.} Suppose the step history of $\pazocal{C}$ contains the letter $(01)_2$.

Perhaps passing to the inverse computation, we may assume that $\pazocal{C}$ contains a nonempty maximal subcomputation $\pazocal{C}_0:W_r\to\dots\to W_s$ with step history $(0)_2$ such that $W_r$ is $\sigma(01)_2$-admissible.

Then, \Cref{M_5 step history 1}(b) implies $W_s$ is not $\sigma(01)_2$-admissible, so that $s=t$.

Moreover, the restriction of $\pazocal{C}_0$ to any subword of the form $Q_{i,\ell}'(1)P_i'(1)R_i'(1)$ or of the form $P_i''(1)R_i''(1)Q_{i,r}''(1)$ satisfies the hypotheses of \Cref{primitive computations}(5).  But since the tape word of all other sectors is fixed by every rule of $\pazocal{C}_0(1)$, $|W_r|_a\leq|W_t|_a$.

\textbf{3.} Hence, it suffices to assume that the step history of $\pazocal{C}$ contains the letter $(12)_2$ but no letter of the form $(0)_2$.

Perhaps passing to the inverse computation, we may assume $\pazocal{C}$ contains a nonempty maximal subcomputation $\pazocal{C}_1:W_r\to\dots\to W_s$ with step history $(1)_2$ such that $W_r$ is $\sigma(12)_2$-admissible.  By \Cref{M_2 bar step history}(a), $W_s$ cannot be $\sigma(12)_2$-admissible, so that $s=t$.  

Further, the restriction of $\pazocal{C}_1$ to any $P_j'(1)R_j'(1)$- or $P_j''(1)R_j''(1)$-sector satisfies the hypotheses of \Cref{one alphabet historical words}.  So, as in the proof of \Cref{M_5(j) standard time}, $|W_r|_a\leq2|W_t|_a$.  \Cref{one-step standard bound} and the parameter choice $c_2>>c_1$ then implies $|W_i|_a\leq c_2|W_t|_a$ for all $r\leq i\leq t$.

Let $\pazocal{D}:W_\ell\to\dots\to W_r$ be the maximal subcomputation whose step history does not contain the letter $(1)_2$.  Then \Cref{M step history} implies $W_0\to\dots\to W_\ell$ has step history $(1)_2$ if it is nonempty, so that an identical argument implies $|W_\ell|_a\leq 2|W_0|_a$ and $|W_i|_a\leq c_2|W_0|_a$ for all $0\leq i\leq\ell$.

But then $\pazocal{D}$ satisfies the hypotheses of Case 1, so that $$|W_i|_a\leq c_1\max(|W_\ell|_a,|W_r|_a)\leq c_0c_1\max(|W_0|_a,|W_t|_a)$$ for all $\ell\leq i\leq r$.  Thus, the statement follows by the parameter choices $c_2>>c_1>>c_0$.

\end{proof}

\begin{lemma} \label{one-machine standard bound}

For any one-machine computation $\pazocal{C}:W_0\to\dots\to W_t$ in the standard base, $\|W_i\|\leq c_2(\|W_0\|+\|W_t\|)$ for all $0\leq i\leq t$.

\end{lemma}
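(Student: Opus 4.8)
The statement is the ``full standard base'' analogue of \Cref{one-machine pararevolving}, just as \Cref{one-step standard bound} is the full-base analogue of \Cref{one-step pararevolving}. The plan is to decompose a one-machine computation $\pazocal{C}:W_0\to\dots\to W_t$ in the standard base into its $L$ components $\pazocal{C}(j):W_0(j)\to\dots\to W_t(j)$ with base $\{t(j)\}B_4(j)$, handle the first component using \Cref{one-machine pararevolving}, and handle the components $j\geq2$ by identifying them with one-machine computations of $\textbf{M}_5$. Since a one-machine computation applies a single rule at each step and each rule operates on every component, $\|W_i\|=\sum_{j=1}^L\|W_i(j)\|$ for all $i$, so a per-component bound will sum to the desired global bound. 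This mirrors exactly the structure of the proof of \Cref{one-step standard bound}.

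First I would observe that the first component is handled directly: \Cref{one-machine pararevolving} gives $\|W_i(1)\|\leq c_2\max(\|W_0(1)\|,\|W_t(1)\|)$ for all $i$. For $j\neq1$, the computation $\pazocal{C}(j)$ is a one-machine computation of the $i$-th machine with base $\{t(j)\}B_4(j)$; since $j\neq1$, its base contains no subword of the form $(Q_0(1)Q_1(1))^{\pm1}$, so by \Cref{lifted rule} each transition can be identified with a rule of $\textbf{M}_5$, and hence $\pazocal{C}(j)$ can be identified with a reduced computation of $\textbf{M}_5$ in the standard base. Then I would invoke \Cref{M projected long history one-machine} (or, more directly, follow the argument of \Cref{one-machine pararevolving}'s Case 1 together with \Cref{M_5 standard time}) to conclude $\|W_i(j)\|\leq c_2\max(\|W_0(j)\|,\|W_t(j)\|)$ for all $i$. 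Actually the cleanest route is: $\pazocal{C}(j)$, after stripping the leading/trailing transition rules of the form $\sigma(s)_i^{\pm1},\sigma(a)_i^{\pm1}$ (which can only occur at the ends of the history and do not alter tape words), is identified with a reduced computation of $\textbf{M}_5$ in the standard base, so \Cref{M_5 standard time} gives a linear bound on its length $t$ in terms of $\max(\|W_0(j)\|,\|W_t(j)\|)$; since each rule changes any tape word by at most $2$ (by \Cref{simplify rules}) and there are fewer than $N$ sectors per component, $\|W_i(j)\|\leq\max(\|W_0(j)\|,\|W_t(j)\|)+Nt$, and the parameter choice $c_2\gg c_0\gg N$ absorbs this.

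Summing over $j$ then yields, for each $i$,
\begin{align*}
\|W_i\|&=\sum_{j=1}^L\|W_i(j)\|\leq c_2\sum_{j=1}^L\max(\|W_0(j)\|,\|W_t(j)\|) \\
&\leq c_2\sum_{j=1}^L\big(\|W_0(j)\|+\|W_t(j)\|\big)=c_2\big(\|W_0\|+\|W_t\|\big),
\end{align*}
which is precisely the claim. I do not expect a serious obstacle here; this lemma is essentially bookkeeping, combining \Cref{one-machine pararevolving} (the genuinely substantive input, which itself rests on \Cref{M_5 standard time} and the step-history analysis) with the component decomposition and \Cref{lifted rule}. The only point requiring a little care is the same one that appears in \Cref{one-step standard bound} and \Cref{one-machine pararevolving}: confirming that for $j\neq1$ every transition of $\pazocal{C}(j)$ really does satisfy the hypotheses of \Cref{lifted rule} — this is automatic because the base $\{t(j)\}B_4(j)$ with $j\neq1$ contains no $(Q_0(1)Q_1(1))^{\pm1}$ subword, so alternative (b) of \Cref{lifted rule} applies regardless of the step history — and noting that transition rules of the form $\sigma(s)_i^{\pm1},\sigma(a)_i^{\pm1}$, which are excluded from \Cref{lifted rule}, can appear only as the first or last letter of the history of a one-machine computation and leave all tape words unchanged, so they contribute nothing to the length or norm estimates.
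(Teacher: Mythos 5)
Your proposal is correct and follows essentially the same route as the paper's proof: decompose into the $L$ components, bound the first component by \Cref{one-machine pararevolving}, identify the components $j\neq1$ with reduced computations of $\textbf{M}_5$ via \Cref{lifted rule} (after disposing of the $\sigma(s)_i^{\pm1},\sigma(a)_i^{\pm1}$ rules, which occur only at the ends of the history) and apply \Cref{M_5 standard time}, then sum $\|W_i\|=\sum_j\|W_i(j)\|$. The only cosmetic difference is that the paper excludes the end transition rules by the usual minimal-counterexample reduction ($\|W_i\|>\max(\|W_0\|,\|W_t\|)$ for intermediate $i$) rather than by stripping them off, which is equivalent.
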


\begin{proof}

As in the proof of \Cref{one-machine pararevolving}, by \Cref{one-step standard bound} and the parameter choice $c_2>>c_1$, it suffices to assume that $\pazocal{C}$ contains a transition rule.

Further, we may again assume that $\|W_i\|>\max(\|W_0\|,\|W_t\|)$ for all $1\leq i\leq t-1$, so that neither the first nor the last letter of the history $H$ of $\pazocal{C}$ is a transition rule.

By the construction of the rules, no rule of $H$ can then be of the form $\sigma(s)_j^{\pm1}$ or $\sigma(a)_j^{\pm1}$.  So, letting $\pazocal{C}(j):W_0(j)\to\dots\to W_t(j)$ be the restriction of $\pazocal{C}$ to the base $\{t(j)\}B_4(j)$, then for $j\neq1$ every transition of $\pazocal{C}(j)$ satisfies the hypotheses of \Cref{lifted rule}.  Hence, this again implies $\|W_i(j)\|\leq c_1\max(\|W_0(j)\|,\|W_t(j)\|)$ for all $i$.

But \Cref{one-step pararevolving} also implies $\|W_i(1)\|\leq c_2\max(\|W_0(1)\|,\|W_t(1)\|)$ for all $i$, so that the statement again follows by noting $\|W\|=\sum_{j=1}^L\|W_j\|$ for all configurations $W$.

\end{proof}

\begin{lemma} \label{pararevolving special}

Let $\pazocal{C}:W_0\to\dots\to W_t$ be a nonempty one-machine computation of the first machine with base $\{t(1)\}B_4(1)$.  If $W_0$ is $\sigma(s)_2$-admissible, then $|W_0|_a\leq8|W_t|_a$ and $W_t$ is not the admissible subword of either a start or an end configuration.

\end{lemma}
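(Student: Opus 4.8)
The hypothesis is that $\pazocal{C}$ is a nonempty one-machine computation \emph{of the first machine} whose base is the single component $\{t(1)\}B_4(1)$, and whose initial configuration $W_0$ is $\sigma(s)_2$-admissible. Being $\sigma(s)_2$-admissible forces $W_0$ to be a start configuration of $\textbf{M}$ (all its state letters are start letters) and, crucially, to have \emph{empty} tape word in the special input sector $Q_0(1)Q_1(1)$, since $\sigma(s)_2$ locks that sector and a locked sector must carry empty tape content in an admissible word for it. So $W_0$ is a start configuration with empty special sector. The plan is: (1) identify the step history of $\pazocal{C}$ using \Cref{M step history}; (2) run through the admissible possibilities for that step history, in each case applying the appropriate primitive-machine / historical-sector lemmas to the restrictions of $\pazocal{C}$ to the relevant subwords of $\{t(1)\}B_4(1)$; (3) conclude both the linear space bound $|W_0|_a\le 8|W_t|_a$ and the claim that $W_t$ is not an admissible subword of a start or end configuration.

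\textbf{Key steps.} First I would observe that since $W_0$ is a start configuration, the first rule of the history $H$ must be $\sigma(s)_1$ (the only rule applicable to a start configuration of the first machine, given $\pazocal{C}$ is a one-machine computation of the first machine), and since $\pazocal{C}$ is nonempty, $H$ has the form $\sigma(s)_1 h_1$ where $h_1$ begins (if nonempty) with $(0)_1(01)_1$. By \Cref{M step history}, the step history of $\pazocal{C}$ is then a prefix of (i) or (iii) from that lemma, i.e. a prefix of $(s)_1(0)_1(1)_1(2)_1(3)_1(4)_1(5)_1(a)_1$ or of $(s)_1(0)_1(1)_1(2)_1(3)_1(4)_1(5)_1(4)_1(3)_1(2)_1(1)_1(0)_1(s)_1^{-1}$. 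Next, I would analyze the maximal subcomputation $\pazocal{C}_0:W_0\cdot\sigma(s)_1\to\dots$ with step history $(0)_1$: its restriction to each subword of the form $P_j'(1)R_j'(1)Q_{j,r}'(1)$ can be identified with a reduced computation of $\textbf{LR}(\Phi^+)$ in the standard base (the state letters being start letters makes this a computation of the form in \Cref{primitive computations}(3) or covered by (5)), and likewise for $Q_{j,\ell}''(1)P_j''(1)R_j''(1)$ via $\textbf{RL}(\Phi^+)$. Since $W_0$ (after applying $\sigma(s)_1$, which doesn't change tape words) is a tame input configuration of this first submachine, \Cref{primitive computations}(5) gives $|W_0|_a = |W_0\cdot\sigma(s)_1|_a \le |W_r|_a$ where $W_r$ is the end of $\pazocal{C}_0$, and also tells us that after $\pazocal{C}_0$ the historical tape words have moved out of the left historical alphabets, so $W_r$ is no longer a tame input configuration and no longer $\sigma(s)_2$-admissible. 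Then, for the subsequent one-step pieces with step histories $(1)_1,(2)_1,(3)_1$, I would apply \Cref{one alphabet historical words} (the $\sigma(j,j-1)$-admissibility at the start of each gives $|W|_a$ non-decreasing through that step, contributing at most a factor $2$ in the worst case as in the proof of \Cref{M_5(j) standard time}), and for the $(4)_1$ and $(0)_1$ (return) and further steps I would use \Cref{primitive computations}(5) and the analysis in \Cref{M_5 long history}/\Cref{M_5 one-step} to track how $|W|_a$ can grow. The bound $8 = 2^3$ should come from: the $(0)_1$ step is non-decreasing (factor $1$), then at most three of the steps $(1)_1,(2)_1,(3)_1$ each cost at most a factor $2$ before we reach $W_t$, or $W_t$ already lies within the part of the computation where the relevant component inequalities are non-decreasing. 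For the non-subword-of-start-or-end claim: if $W_t$ were an admissible subword of a start configuration it would be $\sigma(s)_i$-admissible, forcing all its state letters to be start letters; but the step history (which contains $(0)_1$ since $\pazocal{C}$ is nonempty and begins $(s)_1(0)_1\dots$, actually the first nontrivial step past $\sigma(s)_1$) together with \Cref{one alphabet historical words} showing the tape words acquire right-historical-alphabet letters rules this out unless $\pazocal{C}$ were empty; and if $W_t$ were a subword of an end configuration it would be $\sigma(a)_1^{-1}$-admissible with empty tape words, contradicting $|W_0|_a \le 8|W_t|_a$ unless $|W_0|_a = 0$, which is impossible for a nonempty computation beginning at a start configuration of the first machine (the historical sectors of a start configuration always carry nonempty tape words — they hold copies of $H$ and $H^{-1}$ with $\|H\|\ge k$).

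\textbf{Main obstacle.} The delicate part will be pinning down exactly which prefixes of the step histories in \Cref{M step history} are compatible with $W_0$ being $\sigma(s)_2$-admissible and with getting precisely the constant $8$ rather than some larger power of $2$; in particular I need to be careful that the steps $(1)_1$, $(2)_1$, $(3)_1$ genuinely begin with $\sigma(j,j-1)$-admissible configurations (so \Cref{one alphabet historical words} applies in the non-decreasing direction) rather than $\sigma(j,j+1)$-admissible ones, and that the return steps in case (iii) don't add further factors — this should follow because by the time $\pazocal{C}$ reaches the $(5)_1$ step and returns, $W_t$ would be forced to be an accepted configuration via \Cref{M return to start}, handled separately. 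A secondary subtlety is the interaction with the special sector: I must verify that no rule of the first machine ever un-empties the special sector during $\pazocal{C}$ before $W_t$ in a way that would make $W_t$ $\sigma(s)_2$-admissible again, but since the first machine's rules operate on the special sector exactly as $\textbf{Move}$ and $\textbf{Move}$ has full domain in the input sector (property (Mv2)), the special sector evolves through $\textbf{Move}$-computations whose non-emptiness is controlled — and any step that writes into it is a moving rule, so $W_t$ having empty special sector would require an even number of cancelling moving-rule applications, which combined with the step history constraints forces $\pazocal{C}$ empty. I expect the write-up to proceed by cases on the step history, invoking \Cref{primitive computations}, \Cref{one alphabet historical words}, \Cref{M_5(j) standard time}, \Cref{M return to start}, and \Cref{one-step pararevolving} as black boxes.
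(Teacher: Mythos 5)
Your setup and your accounting of the factor $8=2^3$ match the paper's proof: the $(0)_1$ step is handled by \Cref{primitive computations} (non-decreasing $a$-length, and ruling out a return to a $\sigma(s)_1^{-1}$-admissible word), and each of the steps $(1)_1,(2)_1,(3)_1$ costs at most a factor $2$ via \Cref{one alphabet historical words} as in \Cref{M_5(j) standard time}. However, there is a genuine gap at the heart of the lemma: you never establish that the computation must terminate before becoming $\sigma(34)_1$-admissible, and both conclusions of the lemma depend on exactly this. The paper's argument is that, because the special input sector of $W_0$ is empty, the $(1)_1$ step restricted to $Q_0(1)\dots Q_{n-1}(1)Q_{0,\ell}'(1)$ is a setup computation of $\textbf{Move}$ with empty input, hence by (Mv3) a move computation of the trivial word; the $(2)_1$ step then cleans it, so the word handed to $\textbf{S}$ in step $(3)_1$ is trivial; and since $1\notin\pazocal{R}_1$, the terminal word of step $(3)_1$ cannot be $\sigma(34)_1$-admissible. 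Therefore $t=y$ and $W_t$ sits strictly inside the $\overline{\textbf{M}}_2$-portion of the machine, which is what makes it neither a subword of a start nor of an end configuration. This use of (Mv3) together with $1\notin\pazocal{R}_1$ is the entire point of the lemma (it is what distinguishes $I(w,H)$ from $J(w,H)$), and it is absent from your proposal.

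Your substitute arguments for the two conclusions do not close this gap. Deferring the case of a long step history to \Cref{M return to start} only covers step histories of the form $(s)_1h_1(s)_1^{-1}$; it says nothing about a computation that proceeds through $(34)_1(4)_1(45)_1(5)_1(a)_1$ and lands on (a subword of) the accept configuration, which would falsify both the claim about $W_t$ and the bound $|W_0|_a\le 8|W_t|_a$ (the $(5)_1$ step strictly decreases $a$-length). Your argument that $W_t$ cannot be a subword of an end configuration also leans on the assertion that the historical sectors of $W_0$ are nonempty with $\|H\|\ge k$; that holds for \emph{accepted} input configurations, not for an arbitrary $\sigma(s)_2$-admissible word, so $|W_0|_a=0$ is not excluded a priori and the contradiction you propose does not go through in general. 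To repair the proof you need the empty-special-sector argument above (or an equivalent way of showing that a one-machine computation of the first machine starting from an input with trivial special sector can never reach step $(4)_1$).
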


\begin{proof}

Note that since $W_0$ is $\sigma(s)_2$-admissible, it must be the admissible subword of a start configuration with empty tape word in the `special' input sector.  Moreover, the first letter of the history $H$ of $\pazocal{C}$ must be $\sigma(s)_1$.

Let $\pazocal{C}_0:W_1\to\dots\to W_r$ be the maximal subcomputation with step history $(0)_1$.  Then \Cref{M step history (0)}(a) implies $W_r$ cannot be $\sigma(s)_1^{-1}$-admissible.  Further, the restriction of $\pazocal{C}_0$ to any subword of the form $Q_{i,\ell}'(1)P_i'(1)R_i'(1)$ or $P_i''(1)R_i''(1)Q_{i,r}''(1)$ satisfies the hypotheses of \Cref{primitive computations}(5).   But every other sector has fixed tape word throughout $\pazocal{C}_0$, so that $|W_0|_a\leq|W_r|_a$.

Hence, we may assume $t>r$, so that the restriction of $\pazocal{C}_0$ to any subword as above satisfies the hypotheses of \Cref{primitive computations}(3).  As such, $|W_0|_a=\dots=|W_r|_a$ and there exists $H\in F(\Phi^+)$ such that $W_0\equiv I(1,H,1)$.

Moreover, there exists a maximal subcomputation $\pazocal{C}_1:W_{r+1}\to\dots\to W_s$ with step history $(1)_1$.  As $W_{r+1}$ is $\sigma(10)_1$-admissible, \Cref{M_2 bar step history}(a) then implies $W_s$ is not $\sigma(10)_1$-admissible.  As the restriction of $\pazocal{C}_1$ to any $P_i'(1)R_i'(1)$- or $P_i''(1)R_i''(1)$-sector satisfies the hypotheses of \Cref{one alphabet historical words}, then as in the proof of \Cref{M_5(j) standard time} $|W_r|_a\leq2|W_s|_a$.

Hence, we may assume $t>s$.  As such, the restriction of $\pazocal{C}_1$ to the subword $Q_0(1)\dots Q_{n-1}(1)Q_{0,\ell}'(1)$ can be identified with a setup computation of $\textbf{Move}$, so that (Mv3) implies it is a move computation of the trivial word.

Let $\pazocal{C}_2:W_{s+1}\to\dots\to W_x$ be the maximal subcomputation with step history $(2)_1$.  Then an analogous argument to that applied to $\pazocal{C}_1$ implies $W_x$ is not $\sigma(21)_1$-admissible and $|W_s|_a\leq2|W_x|_a$, so that $|W_0|_a\leq4|W_x|_a$.

Hence, we may assume $t>x$.  So, as $\pazocal{C}_2$ operates as a Clean machine in the $Q_{n-1}(1)Q_{0,\ell}'(1)$- and $Q_{0,r}'(1)Q_{1,\ell}'(1)$-sectors, it follows that the tape words of $W_x$ are all empty except for the historical $P_i'(1)R_i'(1)$- and $P_i''(1)R_i''(1)$-sectors.

Now, there exists a maximal subcomputation $\pazocal{C}_3:W_{x+1}\to\dots\to W_y$ with step history $(3)_1$.  Again, the analogous argument implies $W_y$ is not $\sigma(32)_1$-admissible and $|W_0|_a\leq8|W_y|_a$.  But since $\pazocal{C}_3$ operates as $\textbf{S}$ in the working sectors and $1\notin\pazocal{R}_1$, $W_y$ cannot be $\sigma(34)_1$-admissible either, so that $y=t$.

\end{proof}

\begin{lemma} \label{M standard bound special}

Let $\pazocal{C}:W_0\to\dots\to W_t$ be a nonempty one-machine computation of the first machine in the standard base.  If $W_0$ is $\sigma(s)_2$-admissible, then $|W_0|_a\leq 8|W_t|_a$ and $W_t$ is neither a start nor an end configuration.

\end{lemma}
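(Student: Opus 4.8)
The plan is to reduce the statement about a computation in the standard base to the corresponding statement in the pararevolving base $\{t(1)\}B_4(1)$, which is exactly \Cref{pararevolving special}. The key observation is that since $W_0$ is $\sigma(s)_2$-admissible, it is an admissible subword of a start configuration, so in particular no state letter of $W_0$ is an end letter; and since $\pazocal{C}$ is a one-machine computation of the \emph{first} machine, the history $H$ cannot contain any rule of the form $\sigma(s)_2^{\pm1}$ or $\sigma(a)_2^{\pm1}$. Thus $W_0$ being $\sigma(s)_2$-admissible just means that $W_0$ is a start configuration of $\textbf{M}$ whose `special' input sector $Q_0(1)Q_1(1)$ has empty tape word, and the first rule of $H$ must be $\sigma(s)_1$.

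First I would restrict $\pazocal{C}$ to the base $\{t(1)\}B_4(1)$, obtaining a one-machine computation $\pazocal{C}(1):W_0(1)\to\dots\to W_t(1)$ of the first machine. Since $W_0(1)$ is $\sigma(s)_2$-admissible (the coordinate-$1$ part carries the empty special sector), \Cref{pararevolving special} applies directly to $\pazocal{C}(1)$: it gives $|W_0(1)|_a\leq 8|W_t(1)|_a$ and that $W_t(1)$ is not the admissible subword of either a start or an end configuration. For the other coordinates $j\in\{2,\dots,L\}$, I would apply \Cref{extend one-machine} (or simply argue via the parallel nature of the rules) to see that $\pazocal{C}(j)$ is a coordinate shift of $\pazocal{C}(1)$ restricted to a base where all rules act identically — more precisely, since the first machine's rules operate in parallel on all components, $W_\ell(j)$ is obtained from $W_\ell(1)$ by the analogous coordinate-shifted history except for the emptiness of the special sector. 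In particular $|W_\ell(j)|_a$ differs from $|W_\ell(1)|_a$ only by the $a$-length of the special sector's tape word in coordinate $1$; since $W_0$ has empty special sector, $|W_0(j)|_a = |W_0(1)|_a$ for all $j$.

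I would then assemble the bound: $|W_0|_a = \sum_{j=1}^L |W_0(j)|_a = L|W_0(1)|_a \leq 8L|W_t(1)|_a \leq 8|W_t|_a$, using $|W_t(1)|_a \leq \sum_{j} |W_t(j)|_a = |W_t|_a$ and the fact that $|W_t(j)|_a \geq |W_t(1)|_a$ for $j\geq 2$ (the only difference between $W_t(1)$ and the coordinate shift of $W_t(j)$ is that the former may have fewer $a$-letters in the special sector). Actually the cleanest route is $|W_0|_a = L|W_0(1)|_a$ and, since for $j\geq 2$ one has $|W_t(j)|_a\ge |W_t(1)|_a$ while $|W_t(1)|_a$ itself appears as a summand, $|W_t|_a \geq L|W_t(1)|_a$; combined with $|W_0(1)|_a\le 8|W_t(1)|_a$ this gives $|W_0|_a\le 8|W_t|_a$. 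Finally, for the structural claim: if $W_t$ were a start or an end configuration, then $W_t(1)$ would be an admissible subword of such a configuration, contradicting the conclusion of \Cref{pararevolving special}. The main obstacle is bookkeeping the relationship between $W_\ell(1)$ and $W_\ell(j)$ for $j\ge 2$ precisely enough — in particular checking that the special sector is the \emph{only} source of discrepancy, so that the $a$-length comparisons go through — but this is exactly the content of \Cref{extend one-machine} together with the observation that $\sigma(s)_1$-type rules and all working rules of the first machine treat all $L$ components uniformly.
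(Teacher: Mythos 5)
Your reduction to coordinate $1$ and the use of \Cref{pararevolving special} there is fine, and your derivation of the structural claim (that $W_t$ is neither a start nor an end configuration, since otherwise $W_t(1)$ would be an admissible subword of one) matches the paper. The gap is in the $a$-length bound: you assert that for $j\geq2$ the component $W_\ell(j)$ is a coordinate shift of $W_\ell(1)$ up to the special input sector, so that $|W_0(j)|_a=|W_0(1)|_a$ and $|W_t(j)|_a\geq|W_t(1)|_a$. This is not justified and is false in general. The hypothesis is only that $W_0$ is an arbitrary $\sigma(s)_2$-admissible configuration: its historical sectors $P_i'(j)R_i'(j)$, $P_i''(j)R_i''(j)$ may carry different words of different lengths in different coordinates, and the non-special input sectors $Q_0(j)Q_1(j)$ for $j\geq2$ may carry arbitrary (and mutually distinct) words while the special sector is empty. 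The rules of the first machine acting ``in parallel'' means the same local transformation is applied in every component; it does not make the components' tape words equal, and any initial discrepancy persists. \Cref{extend one-machine} does not help here: it is an existence statement (a given computation in one coordinate can be extended to some standard-base computation whose components are coordinate shifts), not a structure statement about an arbitrary standard-base computation, whose components need not be coordinate shifts of each other. Consequently both $|W_0|_a=L|W_0(1)|_a$ and $|W_t|_a\geq L|W_t(1)|_a$ can fail, and your chain of inequalities collapses.

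The repair is to avoid comparing components at all and instead argue componentwise, which is what the paper does: the history of $\pazocal{C}$ is common to all coordinates, and the analysis in the proof of \Cref{pararevolving special} (which forces the step history to be a prefix of $(s)_1(0)_1(1)_1(2)_1(3)_1$ and, within each step, bounds the loss of $a$-length by a factor of $2$ via \Cref{primitive computations}, \Cref{one alphabet historical words}, and the bookkeeping of \Cref{M_5(j) standard time}) applies verbatim to the restriction of $\pazocal{C}$ to each $\{t(j)\}B_4(j)$, giving $|W_0(j)|_a\leq8|W_t(j)|_a$ for every $j$, not only $j=1$ (for $j\geq2$ one does not need, and does not use, emptiness of the input sector; that emptiness is only needed in coordinate $1$ to cut the computation off before $\sigma(34)_1$, which already constrains the shared history). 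Summing the componentwise inequalities over $j=1,\dots,L$ then yields $|W_0|_a\leq8|W_t|_a$ with no comparison between distinct components required.
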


\begin{proof}

The restriction of $\pazocal{C}$ to the subword $\{t(1)\}B_4(1)$ of the standard base satisfies the hypotheses of \Cref{pararevolving special}, so that $|W_0(1)|_a\leq8|W_t(1)|_a$ and $W_t$ is neither a start nor an end configuration.  Moreover, for each $2\leq j\leq L$, an identical argument to the proof of \Cref{pararevolving special} implies $|W_0(j)|_a\leq8|W_t(j)|_a$.  Hence, $|W_0|_a=\sum_{j=1}^L|W_0(j)|_a\leq8\sum_{j=1}^L|W_t(j)|_a=8|W_t|_a$.

\end{proof}

\begin{lemma} \label{pararevolving}

For any reduced computation $\pazocal{C}:W_0\to\dots\to W_t$ of $\textbf{M}$ with base $\{t(1)\}B_4(1)$, $\|W_i\|\leq c_3\max(\|W_0\|,\|W_t\|)$ for all $0\leq i\leq t$.

\end{lemma}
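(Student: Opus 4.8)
The plan is to reduce the general case to the one-machine case already handled by \Cref{one-machine pararevolving}, controlling the pieces of $\pazocal{C}$ that lie between transition rules of the form $\sigma(s)_i^{\pm1}$ and $\sigma(a)_i^{\pm1}$. As in the proofs of \Cref{hyperfaulty} and \Cref{one-machine pararevolving}, I would first reduce to the situation where $\|W_i\|>\max(\|W_0\|,\|W_t\|)$ for all $1\leq i\leq t-1$, since otherwise one splits $\pazocal{C}$ at an intermediate index of small length and argues on the two subcomputations separately. In particular, neither the first nor the last letter of the history $H$ is a transition rule. By \Cref{one-machine pararevolving} and the parameter choice $c_3>>c_2$, it then suffices to assume that $H$ contains at least one transition rule, hence a subword of the form $(\sigma(s)_1^{-1}\sigma(s)_2)^{\pm1}$, $(\sigma(a)_i)^{\pm1}$ or the pair $\sigma(s)_1,\sigma(a)_1$ etc.; the relevant point is that $\pazocal{C}$ decomposes into maximal one-machine subcomputations joined at start or end configurations (of the component $\{t(1)\}B_4(1)$).

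The key step is to analyze what happens at such a joining configuration $W_j$, which must be an admissible subword of either a start or an end configuration. If $W_j$ is an admissible subword of an end configuration, then it is $\sigma(a)_i$-admissible and so $|W_j|_a=0$, forcing $\|W_j\|\leq\min(\|W_0\|,\|W_t\|)$, which makes that junction harmless. If $W_j$ is an admissible subword of a start configuration, there are two cases: it is $\sigma(s)_1$-admissible with nonempty `special' input sector (so it is also $\sigma(s)_1$-admissible but not $\sigma(s)_2$-admissible), or it is $\sigma(s)_2$-admissible (empty `special' sector). In the first situation I would apply \Cref{one-machine pararevolving} directly to the adjacent one-machine subcomputations. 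In the second situation, the adjacent maximal one-machine subcomputation starting at $W_j$ must be a one-machine computation of the first machine whose initial admissible word is $\sigma(s)_2$-admissible, and \Cref{pararevolving special} then gives $|W_j|_a\leq 8|W_{j'}|_a$ for the terminal admissible word $W_{j'}$, together with the crucial fact that $W_{j'}$ is not an admissible subword of a start or end configuration — hence that one-machine subcomputation cannot be followed by another one, so such a junction occurs at most once.

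Putting these together, $\pazocal{C}$ has at most a bounded number (in fact, by \Cref{ell at most 2}-type reasoning, essentially at most three) of maximal one-machine subcomputations, each of which either begins or ends at a configuration of length bounded by a constant multiple of $\max(\|W_0\|,\|W_t\|)$ (using the $\sigma(a)_i$ case, the \Cref{pararevolving special} bound, and the initial/terminal assumption), and on each such subcomputation \Cref{one-machine pararevolving} gives $\|W_i\|\leq c_2\max(\|W_0^{(m)}\|,\|W_t^{(m)}\|)$ for the relevant endpoints $W_\ast^{(m)}$. Propagating the length bounds through the bounded chain of junctions — each step costs at most a factor of $8c_2$ — yields $\|W_i\|\leq c_3\max(\|W_0\|,\|W_t\|)$ for all $i$ by the parameter choice $c_3>>c_2$. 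The main obstacle I anticipate is the bookkeeping of the second (empty `special' sector) case: one must verify carefully that the \Cref{pararevolving special} output — that $W_{j'}$ is not an admissible subword of a start or end configuration — correctly prevents an unbounded alternation of one-machine subcomputations through such junctions, and that this, combined with \Cref{M return to end} and the step-history restrictions of \Cref{M step history}, pins down the chain length to a constant independent of the input. Everything else is routine propagation of linear bounds.
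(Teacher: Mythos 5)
Your proposal is correct and follows essentially the same route as the paper's proof: reduce to a multi-machine computation with all intermediate lengths exceeding $\max(\|W_0\|,\|W_t\|)$ (so no $W_i$ equals $A(1)$), observe that every junction between maximal one-machine subcomputations must then be a $\sigma(s)_2$-admissible start-configuration subword, use \Cref{pararevolving special} to force the adjacent first-machine pieces to run all the way to $W_t$ (resp.\ $W_0$) with $|W_r|_a\leq 8|W_t|_a$, and apply \Cref{one-machine pararevolving} to each of the at most three resulting pieces. The only minor imprecisions are that your ``first situation'' (a junction with nonempty special input sector) cannot actually occur, since such a word is not admissible for any rule of the second machine, and that the $\sigma(s)_2$-admissible junction can occur twice (once at each end), which is exactly the three-piece decomposition you describe.
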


\begin{proof}

By \Cref{one-machine pararevolving} and the parameter choice $c_3>>c_2$, it suffices to assume that $\pazocal{C}$ is a multi-machine compuation.

Further, we may again assume $\|W_i\|>\max(\|W_0\|,\|W_t\|)$ for all $1\leq i\leq t$, so that $W_i\neq A(1)$ for any $i$.

Perhaps passing to the inverse computation, it may be assumed there exists a maximal subcomputation $\pazocal{C}_1:W_r\to\dots\to W_s$ with $r>0$ which is a one-machine computation of the first machine.  As $W_r\neq A(1)$, it must be a subword of an input configuration, and so must be $\sigma(s)_2$-admissible.  \Cref{pararevolving special} then implies $s=t$ and $|W_r|_a\leq8|W_t|_a$, so that \Cref{one-machine pararevolving} and the parameter choice $c_3>>c_2$ yield $\|W_i\|\leq c_3\|W_t\|$ for all $r\leq i\leq t$.

Let $\pazocal{C}_2:W_\ell\to\dots\to W_r$ be the maximal subcomputation which is a one-machine computation of the second machine.  If $\ell>0$, then the analogous argument applies to the inverse computation $W_\ell\to\dots\to W_0$, so that $|W_\ell|_a\leq8|W_0|_a$ and $\|W_i\|\leq c_3\|W_0\|$ for all $0\leq i\leq\ell$.

Finally, applying \Cref{one-machine pararevolving} to $\pazocal{C}_2$ implies $\|W_i\|\leq c_2\max(\|W_\ell\|,\|W_r\|)\leq8c_2\max(\|W_0\|,\|W_t\|)$ for all $\ell\leq i\leq r$, so that the statement follows by the parameter choice $c_3>>c_2$.

\end{proof}

\begin{lemma} \label{M standard bound}

For any reduced computation $\pazocal{C}:W_0\to\dots\to W_t$ of $\textbf{M}$ in the standard base, $\|W_i\|\leq c_3\max(\|W_0\|,\|W_t\|)$ for all $0\leq i\leq t$.

\end{lemma}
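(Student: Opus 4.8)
The plan is to reduce the standard-base case to the two cases already handled: the pararevolving base $\{t(1)\}B_4(1)$ (from \Cref{pararevolving}) and the genuinely `non-special' bases $\{t(j)\}B_4(j)$ with $j\geq2$, which can be identified with computations of $\textbf{M}_5$ via \Cref{lifted rule}. Concretely, for a reduced computation $\pazocal{C}:W_0\to\dots\to W_t$ of $\textbf{M}$ in the standard base, and for each $1\leq j\leq L$, I would let $\pazocal{C}(j):W_0(j)\to\dots\to W_t(j)$ be the restriction of $\pazocal{C}$ to the subword $\{t(j)\}B_4(j)$ of the standard base, exactly as in the proof of \Cref{one-step standard bound}. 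Since every rule of $\textbf{M}$ operates on components independently in the sense made precise earlier (and each $W_i\equiv W_i(1)\dots W_i(L)$ because the $Q_{s,r}''(j)\{t(j+1)\}$-sectors have empty tape alphabet), one has $\|W_i\|=\sum_{j=1}^L\|W_i(j)\|$ for every $i$, so it suffices to bound each $\|W_i(j)\|$ in terms of $\max(\|W_0(j)\|,\|W_t(j)\|)$, hence in terms of $\max(\|W_0\|,\|W_t\|)$.

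The first component is handled directly: $\pazocal{C}(1)$ is a reduced computation of $\textbf{M}$ with base $\{t(1)\}B_4(1)$, so \Cref{pararevolving} gives $\|W_i(1)\|\leq c_3\max(\|W_0(1)\|,\|W_t(1)\|)\leq c_3\max(\|W_0\|,\|W_t\|)$ for all $i$. For $j\geq2$, I would argue that $\pazocal{C}(j)$ can be identified with a reduced computation of $\textbf{M}_5$ in the standard base: because $B_4(j)$ contains no subword of the form $(Q_0(1)Q_1(1))^{\pm1}$, every one-rule transition of $\pazocal{C}(j)$ satisfies hypothesis (b) of \Cref{lifted rule} unless the rule is of the form $\sigma(s)_i^{\pm1}$ or $\sigma(a)_i^{\pm1}$; but those transition rules do not alter any tape words, so they can be deleted from the history of $\pazocal{C}(j)$ without affecting any $\|W_i(j)\|$ (up to relabelling indices by at most an additive shift in $i$), after which \Cref{lifted rule} identifies the computation with one of $\textbf{M}_5$. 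Then \Cref{M_5 standard time} bounds the length of that $\textbf{M}_5$-computation by $c_0\max(\|W_0(j)\|,\|W_t(j)\|)$, and since each transition changes each tape word by at most $2$ (using \Cref{simplify rules}), the standard estimate $\|W_i(j)\|\leq\max(\|W_0(j)\|,\|W_t(j)\|)+Nt\leq(c_0N+1)\max(\|W_0(j)\|,\|W_t(j)\|)$ follows, which is $\leq c_3\max(\|W_0\|,\|W_t\|)$ by the parameter choices $c_3>>c_2>>c_1>>c_0>>N$. This is precisely the bookkeeping already carried out in Case 1 of the proof of \Cref{one-step pararevolving} and in \Cref{one-step standard bound}.

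Summing the $L$ component bounds gives $\|W_i\|=\sum_{j=1}^L\|W_i(j)\|\leq Lc_3\max(\|W_0\|,\|W_t\|)$; since $L$ is a fixed parameter chosen before $c_3$ (indeed $L_0<<L<<K_0<<\dots$ and $c_3$ is chosen much later), one absorbs the factor $L$ either by re-reading $c_3$ as allowed by the highest parameter principle, or — cleaner — by first bounding $\|W_i(j)\|\leq c_3' \max(\|W_0\|,\|W_t\|)$ with $c_3'$ a constant much smaller than $c_3$ (e.g. coming from $c_2$ and the $j\geq2$ estimate, together with \Cref{pararevolving} applied with the smaller constant $c_3'$), and then using $Lc_3'\leq c_3$. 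I would phrase it via the parameter choices to keep the constant clean. The main subtlety — and the only place requiring any care — is the handling of the transition rules $\sigma(s)_i^{\pm1},\sigma(a)_i^{\pm1}$ in the $j\geq2$ components: one must observe that in the full computation $\pazocal{C}$ these rules act on every component simultaneously, so their presence cannot be `seen' by the restricted computation $\pazocal{C}(j)$ beyond the (harmless) switching of state letters, and they neither change $a$-length nor prevent the identification with $\textbf{M}_5$. Everything else is a direct appeal to \Cref{pararevolving}, \Cref{lifted rule}, \Cref{M_5 standard time}, and \Cref{simplify rules}, combined as above.
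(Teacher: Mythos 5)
Your argument works only for one-machine computations; it breaks down at machine switches, which is precisely the case the paper's proof is built to handle. The gap is in the step where, for a component $j\geq2$, you delete the transition rules $\sigma(s)_i^{\pm1},\sigma(a)_i^{\pm1}$ from the history of $\pazocal{C}(j)$ and then invoke \Cref{lifted rule} and \Cref{M_5 standard time}. \Cref{M_5 standard time} applies only to \emph{reduced} computations of $\textbf{M}_5$, and the lifted history need not be reduced: a working rule of $\Theta_1$ and the corresponding working rule of $\Theta_2$ are distinct letters of $\Theta(\textbf{M})$ (so the original history of $\pazocal{C}$ is reduced), yet they project to the \emph{same} rule of $\textbf{M}_5$. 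Thus a reduced multi-machine history such as $H_1\,\sigma(s)_1^{-1}\sigma(s)_2\,H_2$, in which $H_2$ begins by redoing in the second machine what $H_1$ just undid in the first, lifts to an unreduced (possibly trivial) word in $F(\Theta^+(\textbf{M}_5))$. One cannot pass to the reduced form without losing control of the intermediate admissible words, so the bound $t\leq c_0\max(\|W_0(j)\|,\|W_t(j)\|)$ is not available, and your subsequent length estimate collapses. The component-wise view also discards exactly the information that rules out such oscillation: the configuration at a switch must be $\sigma(s)_2$-admissible, hence has empty `special' input sector, and that sector lives in component $1$, invisible to $\pazocal{C}(j)$ for $j\geq2$.

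The paper instead decomposes $\pazocal{C}$ into maximal one-machine subcomputations (mirroring the proof of \Cref{pararevolving}): each such piece is controlled by \Cref{one-machine standard bound}, and the junctions — which must be start or end configurations — are controlled by \Cref{M standard bound special}, which uses the emptiness of the special input sector to show that a nonempty one-machine computation of the first machine issuing from a $\sigma(s)_2$-admissible configuration satisfies $|W_0|_a\leq8|W_t|_a$ and never returns to a start or end configuration; this forces the computation to have at most two maximal one-machine pieces with controlled lengths at the seam. To repair your argument you would need to reintroduce this global structure, at which point you are essentially reproducing the paper's proof. (Your worry about the factor $L$ when summing over components is not an issue: one bounds $\sum_j\max(\|W_0(j)\|,\|W_t(j)\|)\leq\|W_0\|+\|W_t\|$ directly, as in the proofs of \Cref{one-step standard bound} and \Cref{one-machine standard bound}.)
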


\begin{proof}

This follows by an identical proof to \Cref{pararevolving}, employing Lemmas \ref{M standard bound special} and \ref{one-machine standard bound} in place of Lemmas \ref{pararevolving special} and \ref{one-machine pararevolving}, respectively.

%
%
%
%

\end{proof}

\begin{lemma} \label{revolving bound}

For any reduced computation $\pazocal{C}:W_0\to\dots\to W_t$ of $\textbf{M}$ with revolving base $B$, $\|W_i\|\leq c_4\max(\|W_0\|,\|W_t\|)$ for all $0\leq i\leq t$.

\end{lemma}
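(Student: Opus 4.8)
The plan is to reduce a reduced computation with revolving base to previously handled cases, namely computations in the standard base (\Cref{M standard bound}), computations with base $\{t(1)\}B_4(1)$ (\Cref{pararevolving}), and computations with hyperfaulty base (\Cref{hyperfaulty}). Recall that a revolving base $B$ of $\textbf{M}$ starts and ends with the same base letter, with no proper subword having this property, and has length at most $2LN+1$. If $B$ is unreduced, then it is faulty, and if moreover its reversion $\pi(B)$ is faulty then $B$ is hyperfaulty, so \Cref{hyperfaulty} applies immediately with $c_4 \geq c_2$. Thus the remaining work is to handle revolving bases $B$ whose reversion $\pi(B)$ is \emph{not} faulty (in particular $\pi(B)$ is revolving as a base of $\textbf{M}_5$, by the observation after \Cref{M one-step} that a pararevolving base that is not hyperfaulty is not revolving — so actually I must be careful here and treat $\pi(B)$ as either revolving or the standard base or $\{t\}B_4$).

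\textbf{Key steps.} First I would dispose of the trivial reductions: as above, if $B$ is unreduced (hence faulty) and hyperfaulty, invoke \Cref{hyperfaulty}; if $B$ is itself the standard base (which is revolving, since it starts with $\{t(1)\}$ — wait, actually the standard base is not revolving in the stated sense since it does not start and end with the same letter, so I should instead note that the standard base is explicitly a separate case covered by \Cref{M standard bound}). So the genuine cases are: (i) $B$ is revolving and $B \equiv \{t(1)\}B_4(1)$ up to the natural identifications, handled by \Cref{pararevolving}; (ii) $B$ is revolving, $\pi(B)$ is faulty or revolving, and $B$ is not hyperfaulty. For case (ii), the idea is to mimic the proofs of \Cref{hyperfaulty} and \Cref{pararevolving}: using \Cref{one-step standard bound} and \Cref{one-step pararevolving} for the one-step estimates (or rather their analogues for revolving bases, which should follow by restricting to two-letter subwords and applying the same primitive-machine, \Cref{multiply one letter}, \Cref{multiply two letters}, and \Cref{unreduced base} arguments), one first establishes the bound for one-machine computations, then for multi-machine computations. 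The multi-machine step proceeds by locating a maximal one-machine subcomputation $\pazocal{C}_i : W_r \to \dots \to W_s$ with $r > 0$, observing that $W_r$ must be an admissible subword of a start configuration (hence $\sigma(s)_j$-admissible for the appropriate $j$), and applying the analogue of \Cref{pararevolving special} / \Cref{M standard bound special} to conclude that $|W_r|_a$ is boundedly small relative to $|W_t|_a$, iterating toward both ends.

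\textbf{The main obstacle.} The delicate point is the one-step estimate for a \emph{general} revolving base, which is not literally covered by \Cref{one-step standard bound} (standard base only) or \Cref{one-step pararevolving} (base $\{t(1)\}B_4(1)$ only). One must handle the case where the base wraps around the entire circle and may contain the special input sector $Q_0(1)Q_1(1)$ in an `active' position for a step-history letter of the form $(0)_2$ or $(1)_2$; here \Cref{lifted rule} does not directly apply, and one must fall back on property (Mv2) of the Move machine to identify the relevant restriction with a computation of $\textbf{Move}$ and invoke \Cref{Move faulty} — exactly as in Case 3 of the proof of \Cref{one-step pararevolving} and in \Cref{one-step hyperfaulty}. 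Once the one-step bound is secured, the passage to arbitrary reduced computations with revolving base follows the now-familiar template (assume $|W_i|_a > \max(|W_0|_a, |W_t|_a)$ for interior $i$, peel off maximal one-machine subcomputations at each end, bound the `special' input sector via the analogue of \Cref{pararevolving special}, and conclude via the parameter choice $c_4 >> c_3 >> c_2$). I expect the bulk of the routine verification to be bookkeeping over which two-letter and three-letter subwords of $B$ can be `active' for each step-history letter, all of which is parallel to work already done in Lemmas \ref{barM_4(j) faulty} through \ref{M standard bound}.
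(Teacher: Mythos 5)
Your outer reductions (hyperfaulty via \Cref{hyperfaulty}, reduced revolving bases via \Cref{M standard bound}, and \Cref{pararevolving} for the coordinate-$1$ loop) agree with the paper, but the case that carries the lemma — $B$ faulty but not hyperfaulty — is exactly where your plan fails. Such a base decomposes, after a cyclic permutation, into pararevolving loops plus an unreduced core whose reversion repeats no letter, and the loops of coordinate $\ell\geq2$ may contain no occurrence of the special input sector at all (the faulty-but-not-hyperfaulty example given after the definition of hyperfaulty bases avoids $Q_0(1)Q_1(1)$ entirely). For those pieces the multi-machine mechanism you propose to reuse — junction configurations being $\sigma(s)_2$-admissible with empty special input sector, as in \Cref{pararevolving special} and \Cref{M standard bound special} — has no analogue, because those proofs hinge on the locked, empty special input sector together with $1\notin\pazocal{R}_1$. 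Worse, no space bound in terms of the endpoints holds for arbitrary reduced computations with base $\{t(i)\}B_4(i)$, $i\geq2$: since $J(w,H,i)\equiv I(w,H,i)$ for $i\geq2$, concatenating the reverse of the first machine's accepting computation of $I(w,H_w)$ with the second machine's accepting computation of $J(w,H_w)$, restricted to coordinate $i$, gives a reduced computation $A(i)\to\dots\to A(i)$ whose interior $a$-lengths are of order $\|H_w\|$ while both endpoints have $a$-length $0$ (compare \Cref{M return to end} and \Cref{projected end to end}). So ``redoing the one-step/one-machine/multi-machine hierarchy'' for a general revolving base cannot go through on these loops; the difficulty is not bookkeeping over active subwords.

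The paper's proof supplies two ingredients your sketch is missing. First, for the coordinate-$\geq2$ loops it exploits the unreducedness of the whole base: by \Cref{M controlled} a controlled history forces a reduced base, so the history of $\pazocal{C}$ contains no controlled subword; after coordinate/cyclic shifts the restriction to such a loop becomes a computation with base $\{t(\ell)\}B_4(\ell)$, and the contrapositive of \Cref{M projected long history controlled} yields the time bound $t\leq c_2\max(\|V_0\|,\|V_t\|)$, which converts into the space bound because each rule changes lengths by at most a constant depending on $N$ (parameter choices $c_3>>c_2>>N$). It is this time-bound-from-no-controlled-subword argument, not a special-input-sector argument, that controls these pieces. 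Second, the unreduced core $E$ is not handled sector-by-sector: the paper pastes the restriction of $\pazocal{C}$ to $E$ with the restriction to the two-letter subword $P_mP_m^{-1}$ across a shared state letter, using coordinate shifts (with separate care when $P_0$ or $P_m$ is $Q_0(1)$, so that the special input sector does not obstruct the shift), producing a reduced computation with hyperfaulty base to which \Cref{hyperfaulty} applies. Without these two steps your case (ii) is not covered, so the proposal has a genuine gap rather than being an alternative route.
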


\begin{proof}

By \Cref{hyperfaulty}, \Cref{M standard bound}, and the parameter choices $c_4>>c_3>>c_2$, it suffices to assume $B$ is faulty but not hyperfaulty.

Passing to a cyclic permutation, we may then assume $B\equiv ED_1\dots D_mD_m^{-1}\dots D_1^{-1}P_0^{-1}$ such that:

\begin{itemize}

\item Each $D_i$ is a (cyclic) subword of the standard base.

\item $P_0$ is the last letter of $E$ and the word $D_1'\equiv P_0D_1$ is pararevolving.

\item For each $i\in\{2,\dots,m\}$, letting $P_{i-1}$ be the last letter of $D_{i-1}$, the word $D_i'\equiv P_{i-1}D_i$ is pararevolving.

\item $E$ is unreduced and no letter appears twice in $\pi(E)$.


\end{itemize}

Let $\pazocal{C}':W_0'\to\dots\to W_t'$ be the restriction of $\pazocal{C}$ to $E$.  Further, letting $P_m$ be the last letter of $D_m$, let $\pazocal{C}'':W_0''\to\dots\to W_t''$ be the restriction of $\pazocal{C}$ to the subword $P_mP_m^{-1}$.

Note that by construction, $\pi(P_0)=\pi(P_1)=\dots=\pi(P_m)$ is a positive letter from the standard base of $\textbf{M}_5$.  Further, as $B$ is faulty, the first letter of $E$ is $P_0^{-1}$.

Suppose $\pi(P_0)\neq Q_0$.  Then there exists a coordinate shift $V_0''\to\dots\to V_t''$ of $\pazocal{C}''$ whose base is $P_0P_0^{-1}$.  For each $i$, pasting $W_i'$ to $V_i''$ across the state letter of $P_0$ then produces an admissible word $U_i$ so that there exists a reduced computation $\pazocal{E}:U_0\to\dots\to U_t$ with base $EP_0^{-1}$.

If $P_0\neq Q_0(1)$ and $P_m\neq Q_0(1)$, then a similar coordinate shift produces a a reduced computation $\pazocal{E}$ with base $EP_0^{-1}$.

Now, suppose $P_m=Q_0(1)$.  Then since $B$ contains the subword $P_mP_m^{-1}$, no rule of the history $H$ of $\pazocal{C}$ can lock the `special' input sector (or any $Q_0(j)Q_1(j)$-sector).  Hence, we may construct the coordinate shift $\pazocal{E}$ in just the same way.

Finally, suppose $P_0=Q_0(1)$.  Then by construction $E$ cannot contain a subword of the form $(Q_0(1)Q_1(1))^{\pm1}$, $Q_0(1)Q_0(1)^{-1}$, or $Q_1(1)^{-1}Q_1(1)$.  So, for each $i$ there exists a coordinate shift $V_0'\to\dots\to V_t'$ of $\pazocal{C}'$ whose base $\tilde{E}$ ends with the letter $P_m$.  In this case, we paste $V_i'$ to $W_i''$ across the state letter of $P_m$ to produce admissible words $U_i$ so that there exists a reduced computation $\pazocal{E}:U_0\to\dots\to U_t$ with base $\tilde{E}P_m^{-1}$.

In either case, $\pazocal{E}$ is a reduced computation with hyperfaulty base, so that \Cref{hyperfaulty} implies $|U_i|_a\leq c_2\max(|U_0|_a,|U_t|_a)$ for all $0\leq i\leq t$.  But by construction $|U_i|_a=|W_i'|_a+|W_i''|_a$ for all $i$.

Hence, letting $\pazocal{D}_j:W_{0,j}\to\dots\to W_{t,j}$ be the restriction of $\pazocal{C}$ to $D_j'$ for each $j\in\{1,\dots,m\}$, it suffices to show that $\|W_{i,j}\|\leq c_3\max(\|W_{0,j}\|,\|W_{t,j}\|)$ for all $i$.

For each $1\leq j\leq m$, fix $\ell_j\in\{1,\dots,L\}$ such that $D_j'$ contains the subword $Q_0(\ell_j)Q_1(\ell_j)$.  As every rule operates in parallel on other types of subwords and locks the $Q_{s,r}''(\ell)\{t(\ell+1)\}$-sectors, taking cyclic shifts produces a reduced computation $\pazocal{D}_j':V_0\to\dots\to V_t$ with base $\{t(\ell_j)\}B_4(\ell_j)$ such that $\|V_i\|=\|W_{i,j}\|-1$ for each $i$.

If $\ell_j=1$, then \Cref{pararevolving} implies $\|V_i\|\leq c_3\max(\|V_0\|,\|V_t\|)$ for all $i$.

Otherwise, since $B$ is unreduced, \Cref{M controlled} implies $H$ contains no controlled subword.  So, \Cref{M projected long history controlled} implies $t\leq c_2\max(\|V_0\|,\|V_t\|)$.  So, since the length of the base of $\pazocal{D}_j'$ is $N$, the parameter choices $c_3>>c_2>>N$ again imply $\|V_i\|\leq c_3\max(\|V_0\|,\|V_t\|)$ for al $i$.

Hence, taking $c_3\geq1$, for all $0\leq i\leq t$ we have:
\begin{align*}
\|W_{i,j}\|&=\|V_i\|+1\leq c_3\max(\|V_0\|,\|V_t\|)+1=c_3\max(\|W_{0,j}\|-1,\|W_{t,j}\|-1)+1 \\
&=c_3\max(\|W_{0,j}\|,\|W_{t,j}\|)-c_3+1\leq c_3\max(\|W_{0,j}\|,\|W_{t,j}\|)
\end{align*}

\end{proof}

%

\bigskip


\section{Groups Associated to an $S$-machine and their Diagrams}

\subsection{The groups} \label{sec-associated-groups} \

As in previous literature (for example \cite{O18}, \cite{OS19}, \cite{WCubic}, \cite{W}), we now associate two finitely presented groups to a cyclic $S$-machine $\textbf{S}$. These groups are denoted $M(\textbf{S})$ and $G(\textbf{S})$ and `simulate' the work of $\textbf{S}$ in the precise sense described in \Cref{sec-trapezia}.

Let $\textbf{S}$ be an arbitrary cyclic recognizing $S$-machine with hardware $(Y,Q)$, where $Q=\sqcup_{i=0}^s Q_i$ and $Y=\sqcup_{i=1}^{s+1} Y_i$, and software the set of rules $\Theta=\Theta^+\sqcup\Theta^-$. For notational purposes, set $Q_0=Q_{s+1}$ and denote the accept word of $\textbf{S}$ by $W_{ac}$.

For $\theta\in\Theta^+$, Lemma \ref{simplify rules} allows us to assume that $\theta$ takes the form $$\theta=[q_0\to v_{s+1}q_0'u_1, \ q_1\to v_1q_1'u_2, \ \dots, \ q_{s-1}\to v_{s-1}q_{s-1}'u_s, \ q_s\to v_sq_s'u_{s+1}]$$ where $q_i,q_i'\in Q_i$, $u_i$ and $v_i$ are either empty or single letters in $Y_i^{\pm1}$, and some of the arrows may take the form $\xrightarrow{\ell}$. Note that if $\theta$ locks the $i$-th sector, then both $u_i$ and $v_i$ are necessarily empty.

Define $R=\{\theta_i: \theta\in\Theta^+,0\leq i\leq s\}$. For notational convenience, set $\theta_{s+1}=\theta_0$ for all $\theta\in\Theta^+$.

The group $M(\textbf{S})$ is then defined by taking the (finite) generating set $\pazocal{Y}=Q\cup Y\cup R$ and subjecting it to the (finite number of) relations:

\begin{addmargin}[1em]{0em}

$\bullet$ $q_i\theta_{i+1}=\theta_i v_iq_i'u_{i+1}$ for all $\theta\in\Theta^+$ and $0\leq i\leq s$,

$\bullet$ $\theta_ia=a\theta_i$ for all $0\leq i\leq s$ and $a\in Y_i(\theta)$.

\end{addmargin}

As in the language of computations of $S$-machines, letters from $Q^{\pm1}$ are called \textit{$q$-letters} and those from $Y^{\pm1}$ are called \textit{$a$-letters}. Additionally, those from $R^{\pm1}$ are called \textit{$\theta$-letters}. The relations of the form $q_i\theta_{i+1}=\theta_iv_iq_i'u_{i+1}$ are called \textit{$(\theta,q)$-relations}, while those of the form $\theta_ia=a\theta_i$ are called \textit{$(\theta,a)$-relations}.

Note that the number of $a$-letters in any part of $\theta$, and so in any defining relation of $M(\textbf{S})$, is at most two.

To simplify these relations, it is convenient to omit reference to the indices of the letters of $R$. This notational quirk may make it appear as though $\theta$ commutes with the letters of $Y_i(\theta)$ and conjugates $q_i$ to $v_iq_i'u_{i+1}$ for each $i$; it should be noted that these statements are not strictly true. Further, it is useful to note that if $\theta$ locks the $i$-th sector, then $Y_i(\theta)=\emptyset$ so that $\theta$ has no relation with the elements of $Y_i$.

However, this group evidently lacks any reference to the accept configuration. To amend this, the group $G(\textbf{S})$ is constructed by adding one more relation to the presentation of $M(\textbf{S})$, namely the \textit{hub-relation} $W_{ac}=1$. In other words, $G(\textbf{S})\cong M(\textbf{S})/\gen{\gen{W_{ac}}}$.

Moreover, as in \cite{W} and \cite{WMal}, it is useful for the purposes of our construction to consider extra relations, called \textit{$a$-relations}, within the language of tape letters. If $\Omega$ is the set of relators defining these $a$-relations, then we denote the groups arising from the addition of $a$-relations by $M_\Omega(\textbf{S})$ and $G_\Omega(\textbf{S})$. Note that $M_\Omega(\textbf{S})\cong M(\textbf{S})/\gen{\gen{\Omega}}$ and $G_\Omega(\textbf{S})\cong G(\textbf{S})/\gen{\gen{\Omega}}$.

It is henceforth taken as an assumption that any $a$-relation adjoined to the groups associated to the machine $\textbf{M}$ correspond to words over the alphabet $X$ of the `special' input sector.  Moreover, as in the previous settings, it is assumed that $\Omega$ is the set of all non-trivial cyclically reduced words over $X$ that represent the trivial element in $G$.

%

Note that though the presentations defining $M_\Omega(\textbf{S})$ and $G_\Omega(\textbf{S})$ still have finite generating sets, the number of relations become infinite.  As such, the presentations we are studying are no longer finite.  However, as we will establish, in the relevant cases $M_\Omega(\textbf{M})$ and $G_\Omega(\textbf{M})$ define groups isomorphic to the groups $M(\textbf{M})$ and $G(\textbf{M})$, respectively, and so remain finitely presented (or at least finitely presentable).

\medskip


\subsection{Bands and annuli} \label{sec-bands-annuli} \

The arguments presented in the forthcoming sections rely on van Kampen diagrams over the presentations of the groups introduced in \Cref{sec-associated-groups}.  It is assumed that the author is intimately acquainted with this notion; for reference, see \cite{Lyndon-Schupp} and \cite{O}.

To present these arguments efficiently, we first differentiate between the types of edges and cells that arise in such diagrams in a way similar to that employed in \cite{W}.  For simplicity, we will adopt the convention that the contour of any diagram, subdiagram, or cell is traced in the counterclockwise direction.

An edge labelled by a $q$-letter is called a \textit{$q$-edge}. Similarly, an edge labelled by an $a$-letter is called an \textit{$a$-edge} and one labelled by a $\theta$-letter is a \textit{$\theta$-edge}. 

For a path \textbf{p} in $\Delta$, the (combinatorial) length of $\textbf{p}$ is denoted $\|\textbf{p}\|$. Further, the path's \textit{$a$-length} $|\textbf{p}|_a$ is the number of $a$-edges in the path. The path's \textit{$\theta$-length} and \textit{$q$-length}, denoted $|\textbf{p}|_{\theta}$ and $|\textbf{p}|_q$, respectively, are defined similarly.

A cell whose contour label corresponds to a $(\theta,q)$-relation is called a \textit{$(\theta,q)$-cell}. Similarly, there are \textit{$(\theta,a)$-cells}, \textit{$a$-cells}, and \textit{hubs}.

In the general setting of a reduced diagram $\Delta$ over an arbitrary presentation, let $\pazocal{Z}$ be a subset of the generating set. For $m\geq1$, a sequence of (distinct) cells $\pazocal{B}=(\Pi_1,\dots,\Pi_m)$ in $\Delta$ is called a \textit{$\pazocal{Z}$-band} of length $m$ if:

\begin{itemize}

\item every two consecutive cells $\Pi_i$ and $\Pi_{i+1}$ have a common boundary edge $\textbf{e}_i$ labeled by a letter from $\pazocal{Z}^{\pm1}$ and

\item for every $i$, $\partial\Pi_i$ has exactly two edges labelled by a letter from $\pazocal{Z}^{\pm1}$, $\textbf{e}_{i-1}^{-1}$ and $\textbf{e}_i$, and $\text{Lab}(\textbf{e}_{i-1})$ and $\text{Lab}(\textbf{e}_i)$ are either both positive or both negative.

\end{itemize}

For convenience, we extend this definition by saying that any edge labelled by a letter of $\pazocal{Z}^{\pm1}$ is a $\pazocal{Z}$-band of length zero.

A $\pazocal{Z}$-band is \textit{maximal} if it is not contained in any other $\pazocal{Z}$-band. Note that every edge labelled by a letter of  $\pazocal{Z}^{\pm1}$ is contained in a maximal $\pazocal{Z}$-band.

In a $\pazocal{Z}$-band $\pazocal{B}$ of length $m\geq1$ made up of the cells $(\Pi_1,\dots,\Pi_m)$, using only edges from the contours of $\Pi_1,\dots,\Pi_m$, there exists a closed path $\textbf{e}_0^{-1}\textbf{q}_1\textbf{e}_m\textbf{q}_2^{-1}$ such that $\textbf{q}_1$ and $\textbf{q}_2$ are simple (perhaps closed) paths. In this case, $\textbf{q}_1$ is called the \text{bottom} of $\pazocal{B}$, denoted $\textbf{bot}(\pazocal{B})$, while $\textbf{q}_2$ is called the \textit{top} of $\pazocal{B}$ and denoted $\textbf{top}(\pazocal{B})$. When $\textbf{q}_1$ and $\textbf{q}_2$ need not be distinguished, they are called the \textit{sides} of the band.

\begin{figure}[H]
\centering
\begin{subfigure}[b]{0.48\textwidth}
\centering
\raisebox{0.5in}{\includegraphics[scale=1.35]{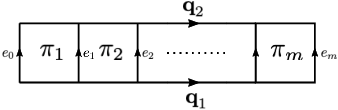}}
\caption{Non-annular $\pazocal{Z}$-band of length $m$}
\end{subfigure}\hfill
\begin{subfigure}[b]{0.48\textwidth}
\centering
\includegraphics[scale=1.35]{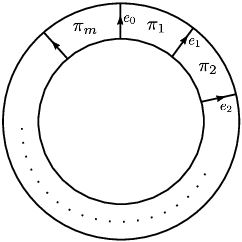}
\caption{Annular $\pazocal{Z}$-band of length $m$}
\end{subfigure}
\caption{ \ }
\end{figure}

If $\textbf{e}_0=\textbf{e}_m$ in a $\pazocal{Z}$-band $\pazocal{B}$ of length $m\geq1$, then $\pazocal{B}$ is called a \textit{$\pazocal{Z}$-annulus}. If $\pazocal{B}$ is a non-annular $\pazocal{Z}$-band of length $m\geq1$, then $\textbf{e}_0^{-1}\textbf{q}_1\textbf{e}_m\textbf{q}_2^{-1}$ is called the \textit{standard factorization} of the contour of $\pazocal{B}$. If either $(\textbf{e}_0^{-1}\textbf{q}_1\textbf{e}_m)^{\pm1}$ or $(\textbf{e}_m\textbf{q}_2^{-1}\textbf{e}_0^{-1})^{\pm1}$ is a subpath of $\partial\Delta$, then $\pazocal{B}$ is called a \textit{rim $\pazocal{Z}$-band}.

A $\pazocal{Z}_1$-band and a $\pazocal{Z}_2$-band \textit{cross} if they have a common cell and $\pazocal{Z}_1\cap\pazocal{Z}_2=\emptyset$.

In particular, in a reduced diagram over the canonical presentations of the groups of interest, there exist \textit{$q$-bands} corresponding to bands arising from $\pazocal{Z}=Q_i^{\pm1}$ for some $i$, where every cell is a $(\theta,q)$-cell. Similarly, there exist \textit{$\theta$-bands} for $\theta\in\Theta^+$ and \textit{$a$-bands} for $a\in Y$. However, it is useful to restrict the definition of an $a$-band so that they consist only of $(\theta,a)$-cells.

Note that by definition, distinct maximal $q$-bands ($\theta$-bands, $a$-bands) cannot intersect.

Given an $a$-band $\pazocal{B}$, the makeup of the groups' relations dictates that the defining $a$-edges $\textbf{e}_0,\dots,\textbf{e}_m$ are labelled identically. Similarly, the $\theta$-edges of a $\theta$-band correspond to the same rule; however, the (suppressed) index of two such $\theta$-edges may differ.

If a maximal $a$-band contains a cell with an $a$-edge that is also on the contour of a $(\theta,q)$-cell, then the $a$-band is said to \textit{end} (or \textit{start}) on that $(\theta,q)$-cell and the corresponding $a$-edge is said to be the \textit{end} (or \textit{start}) of the band. This definition extends similarly, so that: 

\begin{itemize}

\item a maximal $a$-band can end on a $(\theta,q)$-cell, on an $a$-cell, or on the diagram's contour,

\item a maximal $\theta$-band can end only on the diagram's contour, and

\item a maximal $q$-band can end on a hub or on the diagram's contour.

\end{itemize}

Note that if a maximal $\theta$-band ($a$-band, $q$-band) ends as above in one part of the diagram, then it must also end in another part of the diagram as it cannot be a $\theta$-annulus ($a$-annulus, $q$-annulus).

The natural projection of the label of the top (or bottom) of a $q$-band onto $F(\Theta^+)$ is called the \textit{history} of the band; the \textit{step history} of the band is then defined in the obvious way. The natural projection (without reduction) of the top (or bottom) of a $\theta$-band onto the alphabet $\{Q_0,\dots,Q_s\}$ is called the \textit{base} of the band.

Let $\pazocal{T}$ be a maximal $\theta$-band in a reduced diagram $\Delta$ over $G_\Omega(\textbf{M})$ with two ends on $\partial\Delta$. Suppose that any cell between one side of $\pazocal{T}$ and $\partial\Delta$ is an $a$-cell. Then $\pazocal{T}$ is called a \textit{quasi-rim $\theta$-band}. Note that a rim $\theta$-band is a quasi-rim $\theta$-band.

Suppose the sequence of cells $(\pi_0,\pi_1,\dots,\pi_m)$ comprises a $\theta$-band and $(\gamma_0,\gamma_1,\dots,\gamma_\ell)$ a $q$-band such that $\pi_0=\gamma_0$, $\pi_m=\gamma_\ell$, and no other cells are shared. Suppose further that $\partial\pi_0$ and $\partial\pi_m$ both contain edges on the outer countour of the annulus bounded by the two bands. Then the union of these two bands is called a \textit{$(\theta,q)$-annulus} and $\pi_0$ and $\pi_m$ are called its \textit{corner} cells. A \textit{$(\theta,a)$-annulus} is defined similarly.

\begin{figure}[H]
\centering
\includegraphics[scale=1.2]{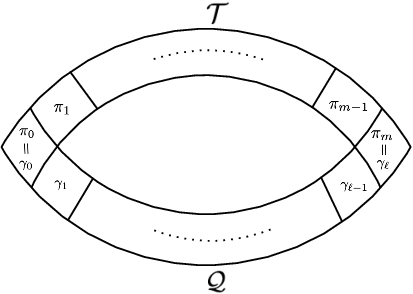}
\caption{$(\theta,q)$-annulus with defining $\theta$-band $\pazocal{T}$ and $q$-band $\pazocal{Q}$}
\end{figure}

The following statement is proved in a more general setting in \cite{O97}:

\begin{lemma}[Lemma 6.1 of \cite{O97}] \label{M(S) annuli}

A reduced diagram over $M(\textbf{S})$ contains no:

\begin{enumerate}[label=({\arabic*})]

\item $(\theta,q)$-annuli

\item $(\theta,a)$-annuli

\item $a$-annuli

\item $q$-annuli

\item $\theta$-annuli

\end{enumerate}

\end{lemma}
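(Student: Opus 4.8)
The plan is to prove each non-existence statement by contradiction, in the order (5), (4), (3), (2), (1), since the arguments for annuli involving $\theta$-bands feed into the others, and to exploit the HNN-structure of $M(\textbf{S})$ (it is built as a multiple HNN-extension of a free group, with the $\theta_i$-letters playing the role of stable letters and the generators $Q\cup Y$ generating the base free group). In each case I would take a reduced diagram $\Delta$ over $M(\textbf{S})$ containing an annulus of the given type of minimal possible length (or minimal area among all diagrams containing such an annulus), look at the subdiagram $\Gamma$ it bounds on its inside, and derive an equation in the group that is impossible in a free product / HNN-extension.

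First I would handle $\theta$-annuli. A $\theta$-annulus $\pazocal{B}$ for a rule $\theta\in\Theta^+$ consists of $(\theta,q)$- and $(\theta,a)$-cells whose $\theta$-edges all carry the letter $\theta$; reading around the inner contour of $\pazocal{B}$ gives a word $w$ in $Q\cup Y$ that is conjugate (via the stable letter $\theta$) to another word $w'$ in $Q\cup Y$, and the annulus forces $w$ to lie in the subgroup generated by the letters $\theta$ commutes with (the $a$-letters of the domain together with the relevant $q$-letters), with $w'$ its image under the associated partial isomorphism. Since the inner subdiagram $\Gamma$ is a reduced diagram over $M(\textbf{S})$ with boundary label $w$ (freely trivial in the base free group modulo the relations not involving $\theta$), minimality of the annulus plus the standard Britton's-lemma argument for HNN-extensions shows $\Gamma$ must itself contain a $\theta$-band connecting the two sides, contradicting that $\pazocal{B}$ was a maximal $\theta$-band, or else $w$ is trivial and the annulus is not reduced. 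The $q$-annulus case is analogous: a $q$-annulus is made of $(\theta,q)$-cells all sharing $Q_i^{\pm1}$-edges, so its history around the annulus is a closed word; reading the top/bottom one gets that a word in $\theta$-letters and $a$-letters conjugates $q_i$ to itself, which in the HNN/free-product structure forces the history to be trivial, hence the annulus non-reduced.

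Next, for $a$-annuli (made of $(\theta,a)$-cells all carrying a fixed $a\in Y$), the same principle applies: going around the annulus the $\theta$-letters multiply to a word $H$ over $\Theta^+$ which must act trivially on $a$ in the sense that it conjugates $a$ back to $a$ within the base; since the only relations involving $a$ are $\theta a=a\theta$ for $a$ in the domain of $\theta$, the $a$-annulus being non-trivial again forces a reduction. For $(\theta,a)$-annuli and $(\theta,q)$-annuli I would argue that the two corner cells $\pi_0,\pi_m$ are simultaneously a $\theta$-cell and a $q$-cell (resp.\ $a$-cell), so the $\theta$-band and the $q$-band (resp.\ $a$-band) forming the annulus cross at $\pi_0$; but then continuing the $\theta$-band past $\pi_0$ it must cross the $q$-band again, producing either a shorter $(\theta,q)$-annulus or a $\theta$-annulus or $q$-annulus inside, contradicting minimality or one of the already-established cases. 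The key here is the combinatorial fact that two bands of disjoint alphabets can cross at most once in a reduced diagram without creating a smaller forbidden configuration.

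The main obstacle I anticipate is organizing the reductions so that the minimal-counterexample induction is clean — in particular making precise, for the $(\theta,q)$- and $(\theta,a)$-annulus cases, that a band entering the inner region must exit it and that this exit produces a strictly smaller annulus of an already-excluded type. This is exactly the content of the cited Lemma 6.1 of \cite{O97}, which is proved in a general setting for presentations with the relevant HNN-like structure, so rather than reproving all of it I would invoke that reference and merely indicate how the presentation of $M(\textbf{S})$ satisfies its hypotheses: the defining relations come in the two families $(\theta,q)$ and $(\theta,a)$, each $\theta$ appears as a ``stable letter'' conjugating the base generators $Q\cup Y$, the bands are the corresponding strips, and no relation creates an $a$-annulus or $q$-annulus for free-group reasons. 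Thus the proof is essentially a citation with a short verification that $M(\textbf{S})$ is of the required form, together with the observation that adding the extra relators defining $M_\Omega(\textbf{S})$ is irrelevant for this lemma since it is stated only for $M(\textbf{S})$.
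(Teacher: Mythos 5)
Your proposal is correct and, in the end, takes exactly the route the paper does: the paper offers no proof of this lemma at all, simply importing it as Lemma 6.1 of \cite{O97}, which is precisely what you conclude by deferring to that reference after verifying that the presentation of $M(\textbf{S})$ has the required HNN-type structure with $(\theta,q)$- and $(\theta,a)$-relations. Your preliminary sketch of the minimal-counterexample band argument is a reasonable outline of what the cited proof does, but since both you and the paper ultimately rest on the citation, no further comparison is needed.
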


As a result, in a reduced diagram $\Delta$ over $M(\textbf{S})$, if a maximal $\theta$-band and a maximal $q$-band (respectively $a$-band) cross, then their intersection is exactly one $(\theta,q)$-cell (respectively $(\theta,a)$-cell). Further, every maximal $\theta$-band and maximal $q$-band ends on $\partial\Delta$ in two places.

\medskip


\subsection{Trapezia} \label{sec-trapezia} \

Let $\Delta$ be a reduced diagram over the canonical presentation of $M(\textbf{S})$ whose contour is of the form $\textbf{p}_1^{-1}\textbf{q}_1\textbf{p}_2\textbf{q}_2^{-1}$, where $\textbf{p}_1$ and $\textbf{p}_2$ are sides of $q$-bands and $\textbf{q}_1$ and $\textbf{q}_2$ are maximal parts of the sides of $\theta$-bands whose labels start and end with $q$-letters. Then $\Delta$ is called a \textit{trapezium}.

In this case, $\textbf{q}_1$ and $\textbf{q}_2$ are called the \textit{bottom} and \textit{top} of the trapezium, respectively, while $\textbf{p}_1$ and $\textbf{p}_2$ are the \textit{left} and \textit{right} sides. Further, $\textbf{p}_1^{-1}\textbf{q}_1\textbf{p}_2\textbf{q}_2^{-1}$ is called the \textit{standard factorization} of the contour.


The \textit{(step) history} of the trapezium is the (step) history of the rim $q$-band with $\textbf{p}_2$ as one of its sides and the length of this history is the trapezium's \textit{height}. The base of $\text{Lab}(\textbf{q}_1)$ is called the \textit{base} of the trapezium.

It's easy to see from this definition that a $\theta$-band $\pazocal{T}$ whose first and last cells are $(\theta,q)$-cells can be viewed as a trapezium of height 1 as long as its top and bottom start and end with $q$-edges. We extend this to all such $\theta$-bands by merely disregarding any $a$-edges of the top and bottom that precede the first $q$-edge or follow the final $q$-edge. The paths formed by disregarding these edges are called the \textit{trimmed} top and bottom of the band and are denoted $\textbf{ttop}(\pazocal{T})$ and $\textbf{tbot}(\pazocal{T})$.

\begin{figure}[H]
\centering
\includegraphics[scale=1.75]{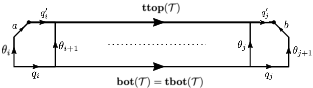}
\caption{$\theta$-band $\pazocal{T}$ with trimmed top}
\end{figure}

By Lemma \ref{M(S) annuli}, any trapezium $\Delta$ of height $h\geq1$ can be decomposed into $\theta$-bands $\pazocal{T}_1,\dots,\pazocal{T}_h$ connecting the left and right sides of the trapezium, with $\textbf{bot}(\pazocal{T}_1)$ and $\textbf{top}(\pazocal{T}_h)$ making up the bottom and top of $\Delta$, respectively.  Moreover, the first and last cells of each $\pazocal{T}_i$ are $(\theta,q)$-cells, so that $\textbf{ttop}(\pazocal{T}_i)=\textbf{tbot}(\pazocal{T}_{i+1})$ for all $1\leq i\leq h-1$.  In this case, the bands $\pazocal{T}_1,\dots,\pazocal{T}_h$ are said to be \textit{enumerated from bottom to top}.

The following two statements are proved in more generality in \cite{WMal} and exemplify how the group $M(\textbf{S})$ simulates the work of the $S$-machine:

\begin{lemma} \label{trapezia are computations}

Let $\Delta$ be a trapezium with history $H\equiv\theta_1\dots\theta_h$ for $h\geq1$ and maximal $\theta$-bands $\pazocal{T}_1,\dots,\pazocal{T}_h$ enumerated from bottom to top. Letting $W_{j-1}\equiv\lab(\textbf{tbot}(\pazocal{T}_j))$ for $1\leq j\leq h$ and letting $W_h\equiv\lab(\textbf{ttop}(\pazocal{T}_h))$, then there exists a reduced computation $W_0\to\dots\to W_h$ of $\textbf{S}$ with history $H$.

\end{lemma}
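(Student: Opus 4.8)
The strategy is to induct on the height $h$ of the trapezium, with the inductive claim being that for each $j \in \{1, \dots, h\}$, the word $W_{j-1} \equiv \lab(\textbf{tbot}(\pazocal{T}_j))$ is an admissible word of $\textbf{S}$ with $W_{j-1} \cdot \theta_j \equiv W_j$, where $W_j \equiv \lab(\textbf{ttop}(\pazocal{T}_j)) \equiv \lab(\textbf{tbot}(\pazocal{T}_{j+1}))$ (the last identification being the compatibility of consecutive $\theta$-bands recorded in the excerpt). Concatenating these one-step transitions yields the desired computation $W_0 \to \dots \to W_h$ with history $H$. Since consecutive $\theta$-bands share their trimmed top/bottom, it suffices to prove the single-band case: a $\theta$-band $\pazocal{T}$ with history $\theta$ whose first and last cells are $(\theta,q)$-cells realizes the transition $\lab(\textbf{tbot}(\pazocal{T})) \cdot \theta \equiv \lab(\textbf{ttop}(\pazocal{T}))$.

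For the single-band case, the plan is to read off the structure of $\pazocal{T}$ cell by cell. By \Cref{M(S) annuli}, every maximal $q$-band and every maximal $a$-band crossing $\pazocal{T}$ meets it in exactly one cell, so the $(\theta,q)$-cells of $\pazocal{T}$ occur in the same left-to-right order as the $q$-edges they carry, and between consecutive $(\theta,q)$-cells the band consists of $(\theta,a)$-cells. The label of $\textbf{tbot}(\pazocal{T})$ therefore has the form $q_{j(0)}^{\eps_0} w_1 q_{j(1)}^{\eps_1} \cdots w_s q_{j(s)}^{\eps_s}$ for some admissible word (this needs a small argument that the bottom is genuinely an admissible word of $\textbf{S}$ — one must check the local conditions (1)-(3) in the definition of admissible word hold, which follows because $\pazocal{T}$ is a subband of a reduced diagram and its bottom, being a maximal part of the side of a $\theta$-band starting and ending with $q$-letters, cannot contain a forbidden pattern like $q^{\eps} w q^{\eps}$). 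Each $(\theta,q)$-cell contributes, via the $(\theta,q)$-relation $q_i \theta_{i+1} = \theta_i v_i q_i' u_{i+1}$, exactly the replacement of $q_i$ by $v_i q_i' u_{i+1}$ along the top; each $(\theta,a)$-cell contributes, via the $(\theta,a)$-relation $\theta_i a = a \theta_i$, the passage of one $a$-letter unchanged from bottom to top; and the requirement that the crossing $a$-band cells are $(\theta,a)$-cells (built from letters of $Y_i(\theta)$) is precisely the statement that $W_{j-1}$ is $\theta$-admissible. Reading the trimmed top then gives exactly the word $W_{j-1} \cdot \theta$ as defined in \Cref{sec-S-machines}, possibly after the free reductions and trims that the definition of $W \cdot \theta$ builds in — and these are exactly the reductions that the reducedness of $\Delta$ (no cancelling pairs) forces to have already been performed.

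I expect the main obstacle to be the bookkeeping around the \emph{trimmed} top and bottom and the free reductions/trims in the definition of $W \cdot \theta$. Concretely: the untrimmed top and bottom of a $\theta$-band may carry $a$-edges before the first or after the last $q$-edge that are not part of any admissible word, and one has to argue these are exactly what trimming removes; and when forming $W_{j-1} \cdot \theta$ the new tape letters $v_i, u_{i+1}$ inserted at adjacent $q$-cells may freely cancel or may need to be trimmed at the ends, so one must verify the label actually read along $\textbf{ttop}(\pazocal{T}_j)$ (which is reduced, as a path in a reduced diagram) coincides with the reduced/trimmed word prescribed by the machine rather than merely being freely equal to it. This is handled by invoking the reducedness of $\Delta$: a cancelling pair of $a$-edges on the top of $\pazocal{T}_j$ would force a pair of mirror-image $(\theta,a)$-cells, contradicting reducedness, and similarly mismatched trims would violate the admissibility of $W_j$ as the bottom of the next band. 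I would cite \cite{WMal} for the detailed verification of these compatibility points, since the excerpt states this lemma is proved there in greater generality, and present here only the inductive skeleton plus the cell-by-cell reading.
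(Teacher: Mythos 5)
Your proposal is correct and follows the standard band-decomposition argument that the paper itself defers to \cite{WMal}: split the trapezium into maximal $\theta$-bands via Lemma \ref{M(S) annuli}, check that each band realizes one application of its rule (the $(\theta,a)$-cells certifying $\theta$-admissibility, the $(\theta,q)$-cells effecting the state-letter replacements), and use reducedness of $\Delta$ to reconcile the trimmed tops with the free reductions and trims built into the definition of $W\cdot\theta$. The only point left implicit is why the resulting computation is \emph{reduced}, i.e.\ why $H$ is a reduced word in $F(\Theta^+)$; this follows because $\theta_{j+1}\equiv\theta_j^{-1}$ would force the leftmost $(\theta,q)$-cells of $\pazocal{T}_j$ and $\pazocal{T}_{j+1}$, which share their $\theta$-edge along $\textbf{ttop}(\pazocal{T}_j)=\textbf{tbot}(\pazocal{T}_{j+1})$, to form a cancellable pair, contradicting that $\Delta$ is reduced.
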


\begin{lemma} \label{computations are trapezia}

For any non-empty reduced computation $W_0\to\dots\to W_t$ of $\textbf{S}$ with history $H$, there exists a trapezium $\Delta$ such that: 

\begin{enumerate} [label=(\alph*)]

\item $\lab(\textbf{tbot}(\Delta))\equiv W_0$

\item $\lab(\textbf{ttop}(\Delta))\equiv W_t$

\item The history of $\Delta$ is  $H$

\item $\text{Area}(\Delta)\leq t\max(\|W_0\|,\dots,\|W_t\|)$

\end{enumerate}

\end{lemma}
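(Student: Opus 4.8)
The statement is essentially the converse of \Cref{trapezia are computations}: given a reduced computation, we must build a trapezium realizing it. The natural approach is induction on the length $t$ of the computation, constructing the trapezium band-by-band as a stack of $\theta$-bands $\pazocal{T}_1,\dots,\pazocal{T}_t$, one for each rule $\theta_i$ in the history $H\equiv\theta_1\dots\theta_t$. First I would handle the base case $t=1$ directly: for a single rule $\theta$ applied to an admissible word $W_0$ with $W_0\cdot\theta\equiv W_1$, assemble the $\theta$-band whose cells are the $(\theta,q)$-cells (one for each $q$-letter of $W_0$, using the $(\theta,q)$-relations $q_i\theta_{i+1}=\theta_iv_iq_i'u_{i+1}$) interleaved with the $(\theta,a)$-cells (one for each $a$-letter in the domain of $\theta$, using the $(\theta,a)$-relations $\theta_ia=a\theta_i$). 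The trimmed bottom of this band reads $W_0$ and the trimmed top reads $W_1$; the number of cells is at most $|W_0|_q+|W_0|_a\le\|W_0\|$ on the bottom side plus the contributions of the $\alpha_{i,\theta},\omega_{i,\theta}$, but since each rule inserts at most two $a$-letters per sector, the area of this single band is bounded by $\max(\|W_0\|,\|W_1\|)$ up to the constant absorbed in the statement.

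For the inductive step, given the computation $W_0\to\dots\to W_t$, apply the inductive hypothesis to $W_0\to\dots\to W_{t-1}$ to obtain a trapezium $\Delta'$ with $\lab(\textbf{ttop}(\Delta'))\equiv W_{t-1}$, history $\theta_1\dots\theta_{t-1}$, and $\text{Area}(\Delta')\le(t-1)\max(\|W_0\|,\dots,\|W_{t-1}\|)$. Separately build the single $\theta_t$-band $\pazocal{T}_t$ as in the base case, with trimmed bottom $W_{t-1}$ and trimmed top $W_t$. Glue $\pazocal{T}_t$ on top of $\Delta'$ along the common path labeled $W_{t-1}$ — here one must check that $\textbf{ttop}(\Delta')$ and $\textbf{tbot}(\pazocal{T}_t)$ genuinely match as labeled paths (both equal $W_{t-1}$) so the gluing is legitimate, and that the resulting diagram is still a valid trapezium in the sense of \Cref{sec-trapezia}: its contour decomposes as $\textbf{p}_1^{-1}\textbf{q}_1\textbf{p}_2\textbf{q}_2^{-1}$ with $\textbf{q}_1$ the bottom (still $W_0$), $\textbf{q}_2$ the new top ($W_t$), and $\textbf{p}_1,\textbf{p}_2$ the extended left/right sides of $q$-bands. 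The area bound then follows from
\[
\text{Area}(\Delta)=\text{Area}(\Delta')+\text{Area}(\pazocal{T}_t)\le(t-1)\max_j\|W_j\|+\max(\|W_{t-1}\|,\|W_t\|)\le t\max_{0\le j\le t}\|W_j\|.
\]

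One subtlety to address: the diagram produced this way must be reduced, or at least can be made reduced without affecting (a)–(d). A priori, stacking bands could create a cancelable pair of mirror-image cells across the gluing seam, but this would correspond to $\theta_{t-1}$ and $\theta_t$ being mutually inverse as consecutive cells — precisely what is excluded by the hypothesis that the computation is reduced (its history is a reduced word in $F(\Theta^+)$). So I would remark that reducedness of $H$ guarantees no reduction is forced at the seam; any remaining reductions elsewhere only decrease area and can only shorten the boundary, preserving all four conclusions.

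\textbf{Main obstacle.} The genuinely fiddly part is the careful bookkeeping around \emph{trimmed} tops and bottoms: a single $\theta$-band's top and bottom may carry extra $a$-edges before the first or after the last $q$-edge (coming from the $\alpha_{i,\theta},\omega_{i,\theta}$ at the extreme sectors, recalling $\omega_{0,\theta}$ and $\alpha_{N,\theta}$ are empty but boundary effects from admissible-word trimming can still appear), and one must verify that after trimming, $\textbf{ttop}(\pazocal{T}_{t-1})$ and $\textbf{tbot}(\pazocal{T}_t)$ agree on the nose with $W_{t-1}$ so that \Cref{M(S) annuli} applies to keep the bands from forming annuli. This is exactly the kind of diagram-surgery argument carried out in \cite{WMal} in greater generality, so I would cite that treatment for the details and present here only the inductive skeleton and the area estimate.
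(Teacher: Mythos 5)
Your proposal is correct and follows exactly the standard band-stacking induction that the paper defers to (via the citation to \cite{WMal}): one $\theta$-band per letter of the reduced history, glued along matching trimmed tops and bottoms, with reducedness of $H$ ruling out cancellations across seams. The area count is in fact cleaner than you suggest — band $i$ has exactly $|W_{i-1}|_q$ $(\theta,q)$-cells and at most $|W_{i-1}|_a$ $(\theta,a)$-cells (the words $\a_{i,\theta},\omega_{i,\theta}$ live on the boundaries of the $(\theta,q)$-cells and contribute no extra cells), so each band has at most $\max_j\|W_j\|$ cells and no absorbed constant is needed.
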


\bigskip


\section{Modified length and area functions}

\subsection{Modified length function} \

To assist with the proofs to come, we now modify the length function on words over the groups associated to an $S$-machine and paths in diagrams over their presentations. This is done in the same way as in \cite{O18}, \cite{OS19}, and \cite{W}. The standard length of a word/path will henceforth be referred to as its \textit{combinatorial length} and the modified length simply as its \textit{length}.

Define a word consisting of no $q$-letters, one $\theta$-letter, and one $a$-letter as a \textit{$(\theta,a)$-syllable}. Then, define the length of:

\begin{itemize}

\item any $q$-letter as 1

\item any $\theta$-letter as 1

\item any $a$-letter as the parameter $\delta$ (as indicated in \Cref{sec-parameters}, this should be thought of as a very small positive number)

\item any $(\theta,a)$-syllable as 1

\end{itemize}

For a word $w$ over the generators of the canonical presentation of $G_\Omega(\textbf{S})$ (or any group associated to $\textbf{S}$), define a \textit{decomposition} of $w$ as a factorization of $w$ into a product of letters and $(\theta,a)$-syllables. The length of a decomposition of $w$ is then defined to be the sum of the lengths of the factors. 

Finally, the length of $w$, denoted $|w|$, is defined to be the minimum of the lengths of its decompositions.

The length of a path in a diagram over the presentations of the groups associated to $\textbf{S}$ is defined to be the length of its label.

The following gives some basic properties of the length function. Its proof is an immediate consequence of Lemma \ref{simplify rules}.

\begin{lemma}[Lemma 6.2 of \cite{OS19}] \label{lengths}

Let \textbf{s} be a path in a diagram $\Delta$ over the canonical presentation of $G_\Omega(\textbf{S})$ (or any of the groups associated to $\textbf{S}$) consisting of $c$ $\theta$-edges and $d$ $a$-edges. Then:

\begin{enumerate}[label=({\alph*})]

\item $|\textbf{s}|\geq\max(c,c+(d-c)\delta)$

\item $|\textbf{s}|=c$ if $\textbf{s}$ is the top or a bottom of a $q$-band

\item For any product $\textbf{s}=\textbf{s}_1\textbf{s}_2$ of two paths in a diagram,
$$|\textbf{s}_1|+|\textbf{s}_2|-\delta\leq|\textbf{s}|\leq|\textbf{s}_1|+|\textbf{s}_2|$$

\item Let $\pazocal{T}$ be a $\theta$-band with base of length $l_b$. If $\textbf{top}(\pazocal{T})$ (or $\textbf{bot}(\pazocal{T})$) has $l_a$ $a$-edges, then the number of cells in $\pazocal{T}$ is between $l_a-l_b$ and $l_a+3l_b$.

\end{enumerate}

\end{lemma}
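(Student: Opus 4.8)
The plan is to prove each of the four items by examining the structure of a decomposition that realizes the length, keeping in mind that by Lemma \ref{simplify rules} every defining relator of $G_\Omega(\textbf{S})$ has at most two $a$-letters in any $\theta$-part.

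For (a): Fix a decomposition of $\text{Lab}(\textbf{s})$ of minimal length. Each of the $c$ $\theta$-edges lies in exactly one factor, which is either a lone $\theta$-letter (length $1$) or a $(\theta,a)$-syllable (length $1$), so the factors containing $\theta$-edges contribute exactly $c$ to the length. The remaining $a$-edges — there are at least $d-c$ of them, since each $(\theta,a)$-syllable absorbs at most one $a$-edge — sit in factors that are single $a$-letters, each of length $\delta$, contributing at least $(d-c)\delta$ when $d\geq c$. Summing gives $|\textbf{s}|\geq c+\max(0,(d-c)\delta)=\max(c,c+(d-c)\delta)$; and trivially $|\textbf{s}|\geq c$ since every decomposition must account for the $c$ $\theta$-edges with total length at least $c$. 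Combining the two bounds yields the claim.

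For (b): The top or bottom of a $q$-band is, by the form of the $(\theta,q)$-relations (each part $q_i\to v_iq_i'u_{i+1}$ with $u,v$ single letters or empty), a word with exactly one $q$-letter and no $\theta$-edges; wait — more precisely the side of a $q$-band reads $\theta_i v_i q_i' u_{i+1}$ type segments. Here I would instead recall that the side of a $q$-band of length $m$ consists of $m$ $\theta$-letters, each possibly flanked by $a$-letters coming from the $v_i,u_{i+1}$, and every such flanking $a$-letter together with its adjacent $\theta$-letter forms a $(\theta,a)$-syllable; pairing each $a$-letter with a distinct neighboring $\theta$-letter is possible precisely because each part of a rule contributes at most one $a$-letter on each side of its $\theta$-generator. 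Thus there is a decomposition into $m$ factors each a $\theta$-letter or $(\theta,a)$-syllable plus possibly a single leftover $a$-letter — but actually the statement asserts $|\textbf{s}|=c$ where $c$ is the number of $\theta$-edges; the matching lower bound is $|\textbf{s}|\geq c$ from (a), and the upper bound $|\textbf{s}|\leq c$ comes from exhibiting such a syllable decomposition with no leftover $a$-letters, which holds because along the side of a $q$-band the $a$-edges interleave the $\theta$-edges so that each $a$-edge can be grouped with a $\theta$-edge into a $(\theta,a)$-syllable. Part (c) is the triangle-type inequality: concatenating optimal decompositions of $\textbf{s}_1$ and $\textbf{s}_2$ gives a decomposition of $\textbf{s}$, proving $|\textbf{s}|\leq|\textbf{s}_1|+|\textbf{s}_2|$; conversely, an optimal decomposition of $\textbf{s}$ restricts to decompositions of $\textbf{s}_1$ and $\textbf{s}_2$ except that at most one $(\theta,a)$-syllable may straddle the cut point, and splitting it costs at most one extra $a$-letter of length $\delta$ on one side while the $\theta$-part is counted once, giving $|\textbf{s}_1|+|\textbf{s}_2|\leq|\textbf{s}|+\delta$.

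For (d): A $\theta$-band $\pazocal{T}$ with base of length $l_b$ has a top whose $q$-edges number $l_b$ (one per base letter) and whose cells are $(\theta,q)$-cells and $(\theta,a)$-cells; the number of $(\theta,q)$-cells equals $l_b$ (wait, it equals the number of $q$-bands crossed, which is $l_b$ counted with the convention for trimmed tops — roughly $l_b$), and each $(\theta,a)$-cell corresponds to exactly one $a$-edge on $\textbf{top}(\pazocal{T})$ that is "passed through." Since each $(\theta,q)$-cell can itself introduce up to two $a$-edges on the top (from the $v_i,u_{i+1}$) and each can consume/produce $a$-edges, a careful count — every $a$-edge of $\textbf{top}(\pazocal{T})$ is either an endpoint of an $a$-band crossing $\pazocal{T}$ (contributing one $(\theta,a)$-cell) or is created at a $(\theta,q)$-cell — gives that the number of $(\theta,a)$-cells is at least $l_a-l_b$ (each $(\theta,q)$-cell accounts for at most one "missing" $a$-band on the top side versus the bottom) and at most $l_a$, so adding the $l_b$ (or fewer, $\leq 3l_b$ after accounting for the extra $a$-edges each $(\theta,q)$-cell can add to both top and bottom) $(\theta,q)$-cells yields the total cell count between $l_a-l_b$ and $l_a+3l_b$. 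The main obstacle here is bookkeeping the $a$-edges introduced by the $(\theta,q)$-cells themselves: each such cell can have up to two $a$-edges on each of its top and bottom, so comparing $a$-edge counts across the band requires carefully tracking which $a$-bands terminate on $(\theta,q)$-cells versus pass through, and I expect the constant $3$ in the upper bound comes precisely from this "at most two plus one" accounting; I would handle it by traversing the base from left to right and attributing each top $a$-edge to the unique cell where it is created or entered.
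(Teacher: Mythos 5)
Your proof is correct and is exactly the argument the paper has in mind: the lemma is stated without proof as an immediate consequence of Lemma \ref{simplify rules} (each part of a rule carries at most one $a$-letter on each side of its $\theta$-letter), which is precisely what you use to pair $a$-edges with adjacent $\theta$-edges into $(\theta,a)$-syllables in (a)--(c) and to count cells in (d). The only slip is in (d), where you assert the number of $(\theta,a)$-cells is at least $l_a-l_b$; since a $(\theta,q)$-cell can place up to two $a$-edges on one side, the correct count is at least $l_a-2l_b$, which after adding the $l_b$ $(\theta,q)$-cells still gives the stated lower bound $l_a-l_b$ on the total.
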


\medskip


\subsection{Disks and weights} \label{sec-disks-and-weights} \

Next, we add extra relations to the groups $G(\textbf{M})$ and $G_\Omega(\textbf{M})$ that will aid with later estimates. This is done in same way as in \cite{WMal}, adapted from the methods of \cite{O18}, \cite{OS19}, and \cite{W}.

These relations, called \textit{disk relations}, are of the form $W=1$ for any accepted configuration $W$ of $\textbf{M}$ with $\ell(W)\leq1$.

The following statement sheds some light on the reasoning for these extra relations:

\begin{lemma}[Lemma 7.2 of \cite{W}] \label{disks are relations}

If the configuration $W$ is accepted by the machine $\textbf{S}$ and $Y_{s+1}=\emptyset$, then the word $W$ is trivial over the groups $G(\textbf{S})$ and $G_\Omega(\textbf{S})$.

\end{lemma}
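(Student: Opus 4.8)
The plan is to realize the word $W$ as a relator of $G(\textbf{S})$ (respectively $G_\Omega(\textbf{S})$) by exhibiting a van Kampen diagram over the canonical presentation whose boundary label is $W$. The starting ingredient is an accepting computation: since $W$ is accepted by $\textbf{S}$, there is a reduced computation $\pazocal{C}:W\equiv W_0\to\dots\to W_t\equiv W_{ac}$ with some history $H$. I should first dispose of the degenerate case: if $W\equiv W_{ac}$ then $W=1$ is literally the hub relation, so I may assume $t\geq1$ and hence $H$ is nonempty. Then \Cref{computations are trapezia} applies and produces a trapezium $\Delta_0$ whose trimmed bottom is labelled $W_0\equiv W$, whose trimmed top is labelled $W_t\equiv W_{ac}$, and whose history is $H$.

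Next I would glue a hub onto $\Delta_0$ along its top. The hub is a cell with boundary label $W_{ac}=1$, i.e. a cell whose contour reads $W_{ac}$. The standard factorization of the contour of the trapezium $\Delta_0$ is $\textbf{p}_1^{-1}\textbf{q}_1\textbf{p}_2\textbf{q}_2^{-1}$, with $\textbf{q}_1$ the bottom (labelled $W\equiv W_0$) and $\textbf{q}_2$ the top; the issue is that the actual top and bottom of $\Delta_0$ may carry extra $a$-edges flanking the $q$-edges (the trimming). This is exactly where the hypothesis $Y_{s+1}=\emptyset$ enters: the standard base is cyclic, beginning with $Q_0$ and ending with $Q_s$ followed by the sector between $Q_s$ and $Q_0=Q_{s+1}$, and $Y_{s+1}=\emptyset$ means that final (wrap-around) sector is locked / has empty tape alphabet. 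Consequently no rule can write $a$-letters to the left of the leading $q_0$-letter or to the right of the trailing $q_s$-letter of an admissible word, so for every configuration arising in $\pazocal{C}$ the word is already of the form $q_0w_1q_1\dots w_sq_s$ with no flanking $a$-syllables. Hence the trimmed top and bottom of $\Delta_0$ coincide with the genuine top and bottom, $\lab(\textbf{q}_1)\equiv W$ and $\lab(\textbf{q}_2)\equiv W_{ac}$ as cyclic words, and there is no mismatch to repair. (Since $W$ and $W_{ac}$ are admissible words, they begin and end with $q$-letters anyway, and the left/right sides $\textbf{p}_1,\textbf{p}_2$ are sides of $q$-bands, so the outer contour closes up correctly.)

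With that observed, I would form the diagram $\Delta$ by identifying the top boundary $\textbf{q}_2$ of $\Delta_0$ with the boundary of a hub cell (both labelled $W_{ac}$), and then identifying the left side $\textbf{p}_1^{-1}$ with the right side $\textbf{p}_2$: these two paths are labelled by the same word, namely the side-label of the rim $q$-band of the trapezium read along the history $H$, because in the cyclic machine the leftmost and rightmost $q$-letters of each configuration are copies of the \emph{same} part $Q_0=Q_{s+1}$, so the two $q$-bands flanking the base are in fact one and the same maximal $q$-band of the (cyclically closed) picture. After these identifications the only remaining boundary is $\textbf{q}_1$, labelled $W$. Thus $\Delta$ is a van Kampen diagram over the presentation of $G(\textbf{S})$ with $\partial\Delta$ labelled $W$, which proves $W=1$ in $G(\textbf{S})$; the same diagram is a diagram over $G_\Omega(\textbf{S})$ since the presentation of the latter only adds relators, giving $W=1$ in $G_\Omega(\textbf{S})$ as well.

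The main obstacle is the bookkeeping in the gluing step: I need to be certain that the left and right sides of the trapezium really carry identical labels and can be sewn together to produce a topological disk (rather than an annulus or a more complicated surface), and that after sewing in the hub the boundary is exactly the cyclic word $W$ with no leftover $a$-syllables. This is precisely what $Y_{s+1}=\emptyset$ and the cyclic structure of the standard base guarantee, so the crux of writing this up carefully is to spell out that the wrap-around sector is inert and therefore the configurations along $\pazocal{C}$ have no flanking $a$-letters, making the trimmed and untrimmed tops/bottoms agree. Everything else is a direct appeal to \Cref{computations are trapezia} and the definition of the hub relation.
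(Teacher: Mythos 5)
Your proposal is correct and follows essentially the same route as the paper: build the trapezium from an accepting computation via \Cref{computations are trapezia}, use $Y_{s+1}=\emptyset$ to see that no trimming occurs so the two side labels coincide (the paper phrases the conclusion as ``$W$ is conjugate to $W_{ac}$ in $M(\textbf{S})$'' rather than sewing the sides and the hub into one disk diagram, but this is the same argument). One cosmetic point: the side labels agree not because the rightmost $q$-letter lies in $Q_0=Q_{s+1}$ (it lies in $Q_s$), but because the $\theta$-letter indices wrap around ($\theta_{s+1}=\theta_0$), which is what makes the left side of the $Q_0$-band and the right side of the $Q_s$-band carry identical words.
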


%
%
%
%

A sketch of the proof of \Cref{disks are relations} can aid in some arguments to follow: That $W$ is accepted implies the existence of an accepting computation $W\to\dots\to W_{ac}$; \Cref{computations are trapezia} then produces a trapezium $\Delta$ with top and bottom labels $W$ and $W_{ac}$; that $Y_{s+1}=\emptyset$ implies no trimming was necessary in forming this trapezium, {\frenchspacing i.e. the} side labels of $\Delta$ are labelled identically; this implies $W$ and $W_{ac}$ are conjugate in $M(\textbf{S})$ (and in $M_\Omega(\textbf{S})$); but $W_{ac}$ is trivial in $G(\textbf{S})$ (and in $G_\Omega(\textbf{S})$).

As a result of Lemma \ref{disks are relations}, the presentation obtained by adding the disk relations to the canonical presentation of $G(\textbf{M})$ (respectively $G_\Omega(\textbf{M})$) defines a group isomorphic to $G(\textbf{M})$ (respectively $G_\Omega(\textbf{M})$). The presentation containing disk relations will be referred to in what follows as the \textit{disk presentation} of the group $G(\textbf{M})$ (respectively $G_\Omega(\textbf{M})$). A cell of a diagram over the disk presentation corresponding to a disk relation (or its inverse) is referred to simply as a \textit{disk}.

One should note the following when considering diagrams over a disk presentation rather than diagrams over a canonical presentation:

\begin{enumerate}

\item The disk presentation of $G(\textbf{M})$ or of $G_\Omega(\textbf{M})$ need not be finite. In particular, there may be (and generally are) infinitely many disk relations defining this presentation. 

\item As in \Cref{sec-bands-annuli}, we insist that an $a$-band in a diagram over the disk presentation of $G_\Omega(\textbf{M})$ consist only of $(\theta,a)$-cells. As a consequence, a maximal $a$-band may end on a disk in addition to the other possibilities outlined in \Cref{sec-bands-annuli}.

\item For a word $w\in F(\pazocal{Y})$ that represents the trivial element of $G(\textbf{M})$, the minimal area of diagrams over the disk presentation with contour label $w$ can be drastically different than that of diagrams over the canonical presentation of $G(\textbf{M})$.

\end{enumerate}

To begin to address point (2), we note the following statements about reduced diagrams over these new presentations:

\begin{lemma}[Lemma 8.1 of \cite{W}] \label{M_a no annuli 1}

A reduced diagram $\Delta$ over the disk presentation of $G_\Omega(\textbf{M})$ contains no:

\begin{enumerate}[label=(\alph*)]

\item $(\theta,q)$-annuli

\item $(\theta,a)$-annuli

\item $a$-annuli

\item $q$-annuli

\end{enumerate}

\end{lemma}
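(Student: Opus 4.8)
The plan is to adapt the proof of \Cref{M(S) annuli} (Lemma 6.1 of \cite{O97}) to the setting where disk relations have been adjoined. The key observation is that disk relations are words purely in $q$-letters and $a$-letters (they are configurations of $\textbf{M}$), so a disk has no $\theta$-edges on its boundary, and — crucially — the set of $q$-letters appearing on the boundary of a disk is closed under the cyclic structure of the standard base in a way that mimics an accept configuration. Thus a disk behaves, for the purposes of $\theta$-bands and $q$-bands, like a hub: a maximal $\theta$-band cannot end on a disk, and a maximal $q$-band can only end on a disk, a hub, or $\partial\Delta$. This means the combinatorial arguments ruling out the four types of annuli go through essentially unchanged, since those arguments only ever used how $\theta$-bands and $q$-bands interact with cells carrying $\theta$-edges.

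First I would recall the standard counting/innermost-cell argument for each annulus type. For a hypothetical $(\theta,q)$-annulus, take the subdiagram bounded by its outer contour; the defining $\theta$-band and $q$-band cross twice, which forces a $\theta$-annulus or $q$-annulus inside, or else one reduces to a diagram over a surface contradiction — here one uses that the annulus has a well-defined base and history, and that the corner cells are $(\theta,q)$-cells, so a disk cannot be a corner cell. For a $(\theta,a)$-annulus one argues similarly, noting an $a$-band consists only of $(\theta,a)$-cells by our convention, so again no disk can participate. For an $a$-annulus: an $a$-band is made of $(\theta,a)$-cells only, all of whose $a$-edges carry the same letter $a$; an innermost $a$-annulus would bound a subdiagram whose contour is labelled by a power of $a$ and which contains only $\theta$-edges crossing transversally, and one derives a reduction, exactly as in \cite{O97}. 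For a $q$-annulus: a $q$-band is made of $(\theta,q)$-cells, so a $q$-annulus consists entirely of $(\theta,q)$-cells; its two sides have equal label, and the history of the band read around the annulus must be trivial in $F(\Theta^+)$, which for a reduced diagram forces the band to have length zero — a contradiction. In each case the presence of disks and hubs is harmless because such cells simply cannot appear inside the relevant band.

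The step I expect to be the main obstacle is verifying that adjoining disk relations does not create any new way for a $q$-band or $\theta$-band to close up — in other words, confirming that a disk really does play the role of a hub and not something more subtle. Concretely, I would need to check: (i) the boundary of a disk, being an accepted configuration $W$ with $\ell(W)\le 1$, contains each $q$-letter $Q_i(j)^{\pm1}$ with the correct multiplicity and orientation so that a maximal $q$-band hitting a disk genuinely terminates there (this follows from the structure of configurations: each part of the standard base contributes exactly one $q$-letter); (ii) no disk carries $\theta$-edges (immediate, since disk relations have no $\theta$-letters); and (iii) reducedness of $\Delta$ rules out a disk being cancelled against a neighbour, but this is automatic for reduced diagrams. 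Once these are in place, the four annulus-exclusion arguments are formally identical to those in \cite{O97} and \cite{W}, with ``hub'' replaced throughout by ``hub or disk.''

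Finally I would remark — as is standard — that \Cref{M_a no annuli 1} is stated only for the four annulus types not involving $\theta$-annuli, because $\theta$-annuli are handled separately (they require an additional argument about the step history, since a $\theta$-band's history is a single rule but its base may be long); this is why the lemma lists (a)--(d) rather than also (e), in contrast with \Cref{M(S) annuli}. The proof of the $\theta$-annulus exclusion over the disk presentation is deferred, as it uses the structural results on computations of $\textbf{M}$ (in particular \Cref{M step history} and the bounds on faulty and revolving bases) rather than the purely local band combinatorics used here.
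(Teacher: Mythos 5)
Your overall route is the right one, and it is in fact the paper's: the paper does not reprove this statement but quotes Lemma 8.1 of \cite{W}, whose proof is the adaptation you describe, resting on the observation that disks (like hubs and $a$-cells) carry no $\theta$-edges. However, the specific mechanisms you give for the contradictions would fail. For the $(\theta,q)$-annulus you claim the double crossing ``forces a $\theta$-annulus or $q$-annulus inside''; this is neither forced nor a contradiction, since $\theta$-annuli are exactly the annuli this lemma does not exclude and they genuinely can occur in reduced diagrams over the disk presentation (compare \Cref{M_a no annuli 2}(1), \Cref{G_a theta-annuli}(3) and \Cref{minimal theta-annuli}, which rule them out only under additional minimality hypotheses) --- you acknowledge this yourself in your final paragraph, so the appeal is internally inconsistent. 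For the $q$-annulus, the assertion that ``the history of the band read around the annulus must be trivial in $F(\Theta^+)$'' is unjustified: the label of a closed side of the band is trivial in the group, not freely trivial, and reducedness of the diagram does not force free triviality, nor would free triviality alone produce a cancellable pair. Also, the inner contour of an $a$-annulus is labelled by a word in $\theta$-letters (each $(\theta,a)$-cell contributes one $\theta$-edge to each side of the $a$-band), not by ``a power of $a$.''

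The missing idea is the actual engine of the argument, which is also what makes disks harmless. Inside the region bounded by the hypothetical annulus one analyses only maximal $\theta$-bands: they can end only on $\theta$-edges, and disks, hubs and $a$-cells have none, while the side of the $\theta$-band of the annulus carries no $\theta$-edges either; hence every maximal $\theta$-band of the interior ends twice on the inner side of the $q$-band (respectively $a$-band, or the annular $q$- or $a$-band itself in cases (c),(d)). Since distinct maximal $\theta$-bands are disjoint, these ends form a non-crossing matching of the inner $\theta$-edges, and an innermost matched pair joins the $\theta$-edges of two neighbouring cells of the band; those two cells then correspond to the same rule and share the connecting $q$-edge (respectively $a$-edge), and one checks from the form of the $(\theta,q)$- and $(\theta,a)$-relations that they are mirror copies, i.e.\ a cancellable pair, contradicting reducedness (if there are no intermediate cells at all, the two corner cells themselves cancel). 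In particular, the difficulty you single out as the main obstacle --- whether $q$-bands could close up or terminate in some new way at disks --- never enters: the interior analysis uses no $q$-bands or $a$-bands at all, so your checks (i)--(iii), while correct, are not where the content of the proof lies.
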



\begin{lemma} \label{a-band on same a-cell}

For any $a$-cell $\pi$ in a reduced diagram $\Delta$ over the disk presentation of $G_\Omega(\textbf{M})$, no $a$-band can have two ends on $\pi$.

\end{lemma}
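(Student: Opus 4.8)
The plan is to argue by contradiction, assuming that some $a$-band $\pazocal{B}=(\pi_1,\dots,\pi_m)$ in the reduced diagram $\Delta$ has both of its ends on the same $a$-cell $\pi$. Since an $a$-band in this setting consists only of $(\theta,a)$-cells, the end edges $\textbf{e}_0$ and $\textbf{e}_m$ of $\pazocal{B}$ are $a$-edges of $\partial\pi$ labelled by the same $a$-letter (up to inversion), and the two sides $\textbf{top}(\pazocal{B})$ and $\textbf{bot}(\pazocal{B})$ together with portions of $\partial\pi$ bound a subdiagram. The key observation is that, since $\partial\pi$ is labelled by a word over the alphabet $X$ of the special input sector (an element of $\Omega$), it contains no $\theta$-edges; hence $\pazocal{B}$ cannot ``pass through'' $\pi$, and the configuration is genuinely that of an $a$-band returning to the same cell. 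I would then extract the minimal such return: among all $a$-bands with two ends on a common $a$-cell, choose one bounding a subdiagram $\Gamma$ of minimal area, with $\partial\Gamma$ consisting of a side of $\pazocal{B}$ and an arc $\textbf{t}$ of $\partial\pi$ running between $\textbf{e}_0$ and $\textbf{e}_m$.

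The core of the argument is to analyze $\Gamma$. First I would observe that $\textbf{t}$ (the arc of $\partial\pi$) contains only $a$-edges, all labelled by letters of $X^{\pm1}$. Now consider the $\theta$-band $\pazocal{T}$ that starts on $\pi_1$ (equivalently, the $\theta$-band whose band structure runs transverse to $\pazocal{B}$ at the end cell $\pi_1$): its first cell is the $(\theta,a)$-cell $\pi_1$, and one of its $\theta$-edges lies on $\partial\pi_1$. This $\theta$-band must end somewhere, and by Lemma~\ref{M_a no annuli 1} it is not an annulus; tracking where it can end inside the region $\Gamma$ (it cannot end on $\pi$ since $\partial\pi$ has no $\theta$-edges, it cannot end on $\partial\Delta$ since $\Gamma$ is interior, and it cannot cross $\pazocal{B}$ again by the $(\theta,a)$-annulus prohibition) yields a contradiction with the minimality of $\Gamma$, or forces a $(\theta,a)$-annulus or $a$-annulus that Lemma~\ref{M_a no annuli 1} forbids. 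Concretely, I expect the clean route is: the two end $a$-edges $\textbf{e}_0,\textbf{e}_m$ on $\partial\pi$ together with $\pazocal{B}$ would allow one to build an $a$-annulus (or to reduce $\Delta$ by cancelling $\pi$ against itself through $\pazocal{B}$), contradicting either Lemma~\ref{M_a no annuli 1}(c) or the assumption that $\Delta$ is reduced.

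The main obstacle I anticipate is handling the orientation bookkeeping: an $a$-band has the property that all its defining $a$-edges carry identical labels and that consecutive ones have matching sign (positive or negative), so I must check that the two ends landing on the \emph{same} cell $\pi$ are compatible with $\partial\pi$ being a reduced (indeed cyclically reduced) word in $\Omega$ — in particular ruling out the degenerate case $m=0$ (where $\pazocal{B}$ is a single $a$-edge appearing twice in $\partial\pi$, which is impossible since $\partial\pi$ labels a reduced word) and the case where $\textbf{e}_0 = \textbf{e}_m^{-1}$ as oriented edges. Once the sign/orientation analysis pins down that $\textbf{e}_0$ and $\textbf{e}_m$ are distinct edges of $\partial\pi$ with parallel orientation along $\pazocal{B}$, the region $\Gamma$ is a genuine disk subdiagram and one of the forbidden-annulus lemmas applies. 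I would structure the final write-up as: (1) reduce to a minimal counterexample $\Gamma$; (2) note $\partial\pi$ has no $\theta$-edges; (3) follow the $\theta$-band emanating from an end cell of $\pazocal{B}$ and derive a forbidden $(\theta,a)$- or $a$-annulus via Lemma~\ref{M_a no annuli 1}, contradiction.
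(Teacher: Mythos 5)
The paper states this lemma with only a figure and no proof, treating it as the standard band argument imported from \cite{W}, so your proposal has to stand on its own merits --- and its main line does. Following the maximal $\theta$-band through an end cell of the $a$-band $\pazocal{B}$, noting that $\partial\pi$ contributes only $a$-edges to the boundary of the enclosed region $\Gamma$ while a maximal $\theta$-band can end only on $\partial\Delta$, one concludes that this $\theta$-band must cross $\pazocal{B}$ a second time, and an innermost such pair of crossings yields a $(\theta,a)$-annulus, contradicting Lemma~\ref{M_a no annuli 1}(b). That is exactly the intended argument, and your orientation/degenerate-case bookkeeping is a harmless addition.

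Two corrections for the write-up, though. First, the step ``it cannot end on $\partial\Delta$ since $\Gamma$ is interior'' should be argued as: to leave $\Gamma$ the $\theta$-band must pass through a $\theta$-edge of $\partial\Gamma$, and every $\theta$-edge of $\partial\Gamma$ lies on the side of $\pazocal{B}$, so escaping $\Gamma$ is already a second crossing of $\pazocal{B}$; this covers reaching $\partial\Delta$ as well. Second, the fallback ``clean route'' you sketch is not available: an $a$-band, and hence an $a$-annulus, consists only of $(\theta,a)$-cells, so the $a$-cell $\pi$ can never be incorporated into one and Lemma~\ref{M_a no annuli 1}(c) does not apply to this configuration; nor is ``cancelling $\pi$ against itself through $\pazocal{B}$'' a legitimate reduction, since reducedness concerns mirror pairs of cells sharing an edge. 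The contradiction must come from clause (b), via the $\theta$-band, not from clause (c), so structure the final proof around your first mechanism only.
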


\begin{figure}[H]
\centering
\includegraphics[scale=1.25]{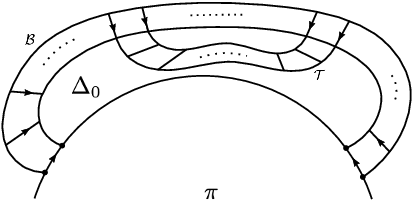}
\caption{$a$-band ending twice on an $a$-cell}
\end{figure}

On the other hand, to begin to address points (1) and (3) above, we alter the definition of the area of a diagram over the disk presentations of $G(\textbf{M})$ and $G_\Omega(\textbf{M})$ (in a manner analogous to the treatment in \cite{W}). 

We do this first by introducing a weight function on the cells of such diagrams, $\text{wt}$, defined by:

$
\begin{array}{ll}
      \bullet \ \text{wt}(\Pi)=1 & \ \text{if $\Pi$ is a $(\theta,q)$-cell or a $(\theta,a)$-cell} \\
      \bullet \ \text{wt}(\Pi)=C_1|\partial\Pi|^2 & \ \text{if $\Pi$ is a disk} \\
      \bullet \ \text{wt}(\Pi)=C_1\|\partial\Pi\|^2f_2(C_1\|\partial\Pi\|) & \ \text{if $\Pi$ is an $a$-cell}
   \end{array}
$

Naturally, we extend this to define the weight of a reduced diagram $\Delta$, $\text{wt}(\Delta)$, as the sum of the weights of its cells.

Given a positive constant $C$, define the function $\phi_C$ on the nonnegative reals by $\phi_C(x)=Cx^2f_2(Cx)$, so that for any $a$-cell $\pi$ we have $\text{wt}(\pi)=\phi_{C_1}(\|\partial\pi\|)$.  Note then that $\phi_C$ is a non-decreasing function which, when restricted to the naturals, is equivalent to the function $n^2f_2(n)$.  Moreover, for any $x,y$, 
\begin{align*}
\phi_C(x+y)&=C(x+y)^2f_2(C(x+y))\geq C(x^2+y^2)f_2(C(x+y)) \\
&=Cx^2f_2(C(x+y))+Cy^2f_2(C(x+y)) \\
&\geq Cx^2f_2(Cx)+Cy^2f_2(Cy)=\phi_C(x)+\phi_C(y)
\end{align*}
{\frenchspacing i.e. $\phi_C$ is} superadditive.

It is also useful to observe that for any $x\geq y$, 
\begin{align*}
\phi_C(x-y)&=C(x-y)^2f_2(Cx-Cy)\leq C(x^2-xy)f_2(Cx) \\
&\leq \phi_C(x)-Cxyf_2(Cx)
\end{align*}

\medskip


\subsection{Mixtures} \

We now recall an invariant of reduced diagrams over the relevant presentations, first introduced in \cite{OS12}, that will prove invaluable in the numerical estimates that follow.

Let $O$ be a circle containing a finite two-colored set of points, with the two colors taken to be black and white. The circle $O$ is called a \textit{necklace} while the corresponding points are called \textit{white beads} and \textit{black beads}.

Let $P_j$ be the set of ordered pairs of distinct white beads, $(o_1,o_2)$, such that the counterclockwise simple arc on $O$ from $o_1$ to $o_2$ contains at least $j$ black beads.

Define $\mu_J(O)=\sum\limits_{j=1}^J \#P_j$ as the \textit{$J$-mixture} of $O$, where $J$ is the parameter specified in Section 3.3.

\begin{lemma}[Lemma 6.1 of \cite{OS12}] \label{mixtures}

Let $O$ be a necklace with $x$ white beads and $y$ black beads.

\begin{enumerate}

\item $\mu_J(O)\leq J(x^2-x)$

\item If $O'$ is a necklace obtained from $O$ through the removal of one white bead, then for every $j$, $\# P_j-2x<\#P_j'\leq\#P_j$, and so $\mu_J(O)-2Jx<\mu_J(O')\leq\mu_J(O)$

\item If $O'$ is a necklace obtained from $O$ through the removal of one black bead, then for every $j$, $\#P_j'\leq\#P_j$, and so $\mu_J(O')\leq\mu_J(O)$

\item Suppose $v_1,v_2,v_3$ are three black beads on $O$ such that the counterclockwise arc from $v_1$ to $v_3$, $v_1 - v_3$, has at most $J$ black beads (excluding $v_1$ and $v_3$). Let $y_1$ and $y_2$ be the number of white beads on the counterclockwise arcs $v_1 - v_2$ and $v_2 - v_3$, respectively. If $O'$ is the necklace obtained from $O$ through the removal of $v_2$, then $\mu_J(O')\leq\mu_J(O)-y_1y_2$.

\end{enumerate}

\end{lemma}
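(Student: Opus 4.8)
The plan is to prove all four parts of \Cref{mixtures} by direct combinatorial analysis of the sets $P_j$, since each is purely a counting statement about two-colored points on a circle. For part (1), I would note that $\#P_j \leq \#\{(o_1,o_2): o_1,o_2 \text{ distinct white beads}\} = x^2 - x$ for every $j$, since $P_j$ is a subset of the set of all ordered pairs of distinct white beads; summing over $j = 1,\dots,J$ gives $\mu_J(O) \leq J(x^2-x)$.

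For part (2), suppose $O'$ is obtained by deleting one white bead $o$. For a fixed $j$, an ordered pair $(o_1,o_2)$ of white beads distinct from $o$ lies in $P_j'$ if and only if the counterclockwise arc from $o_1$ to $o_2$ (which is unchanged by deleting a white bead, since the black beads and their positions are untouched) contains at least $j$ black beads; this is exactly the condition for $(o_1,o_2) \in P_j$. Hence $P_j' = P_j \cap (\text{pairs avoiding } o)$, giving $\#P_j' \leq \#P_j$. For the lower bound, the pairs in $P_j \setminus P_j'$ are precisely those with $o_1 = o$ or $o_2 = o$, and there are at most $2(x-1) < 2x$ such pairs (for each choice of the other white bead, two ordered pairs). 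Summing over $j$ yields $\mu_J(O) - 2Jx < \mu_J(O') \leq \mu_J(O)$. Part (3) is even simpler: deleting a black bead $b$ can only decrease the number of black beads on any arc, so if $(o_1,o_2) \in P_j'$ then the arc from $o_1$ to $o_2$ has at least $j$ black beads in $O'$ and hence at least $j$ in $O$, so $(o_1,o_2) \in P_j$; thus $\#P_j' \leq \#P_j$ and $\mu_J(O') \leq \mu_J(O)$.

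Part (4) is the main obstacle and requires the most care. Here we delete a black bead $v_2$ lying on the counterclockwise arc from $v_1$ to $v_3$, where this arc contains at most $J$ black beads strictly between $v_1$ and $v_3$. Let $y_1$ be the number of white beads on the arc $v_1 - v_2$ and $y_2$ the number on $v_2 - v_3$. By part (3) we already know $\mu_J(O') \leq \mu_J(O)$; the task is to exhibit at least $y_1 y_2$ pairs that are counted in $\mu_J(O)$ but not in $\mu_J(O')$. The idea is: take any white bead $o_1$ on the arc $v_1 - v_2$ and any white bead $o_2$ on the arc $v_2 - v_3$. The counterclockwise arc from $o_1$ to $o_2$ passes through $v_2$ and contains some number $j_0$ of black beads with $1 \leq j_0 \leq J+1$ (since it is contained in the arc from $v_1$ to $v_3$ extended slightly, and that arc has at most $J$ interior black beads plus possibly $v_1$ itself — one needs to check the precise bound, but it is at most $J$). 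Wait — I must be careful: the arc from $o_1$ to $o_2$ lies inside the closed arc $v_1 - v_3$, which contains at most $J$ black beads other than $v_1, v_3$, so the arc $o_1 - o_2$ contains at most $J$ black beads (it cannot contain both $v_1$ and $v_3$ as endpoints since $o_1, o_2$ are strictly interior, and in any case $v_3$ is the endpoint region). After deleting $v_2$, the arc $o_1 - o_2$ has exactly $j_0 - 1$ black beads. Therefore the pair $(o_1, o_2)$ lies in $P_{j_0}$ for $O$ but in $P_{j_0}$ for $O'$ only if $j_0 - 1 \geq j_0$, which is impossible — so $(o_1,o_2)$ contributes to $\mu_J(O)$ at "level $j_0$" (i.e. $(o_1,o_2) \in P_{j_0}(O) \setminus P_{j_0}(O')$, since $j_0 \leq J$). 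There are $y_1 y_2$ such ordered pairs, and they are distinct, so $\mu_J(O) - \mu_J(O') \geq y_1 y_2$. Combining with part (3) gives $\mu_J(O') \leq \mu_J(O) - y_1 y_2$, as claimed. The delicate points to nail down in the final write-up are: (i) confirming $j_0 \leq J$ so that the level-$j_0$ contribution actually sits within the range $1,\dots,J$ of the sum defining $\mu_J$; and (ii) checking that for such a pair $(o_1,o_2)$ we do have $(o_1,o_2)\in P_{j_0}(O)$ (clear, since its arc has $j_0 \geq 1$ black beads) while $(o_1,o_2) \notin P_{j_0}(O')$ (since its arc now has $j_0-1 < j_0$ black beads).
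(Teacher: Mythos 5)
Your argument is correct. Note that the paper itself gives no proof of this statement: it is imported verbatim as Lemma 6.1 of \cite{OS12}, so there is nothing to compare against except the original source, whose proof is the same elementary counting. All four parts check out: (1) is the trivial bound $\#P_j\leq x^2-x$; (2) and (3) correctly use that deleting a white bead leaves all arcs between surviving white beads untouched (losing only the at most $2(x-1)<2x$ ordered pairs through the deleted bead), while deleting a black bead gives the inclusion $P_j'\subseteq P_j$. For (4), the two delicate points you flagged are indeed the whole content, and you resolve both correctly: a pair $(o_1,o_2)$ with $o_1$ on the arc $v_1-v_2$ and $o_2$ on the arc $v_2-v_3$ has its counterclockwise arc contained strictly between $v_1$ and $v_3$ (since $v_1,v_2,v_3$ are black and $o_1,o_2$ are white, the endpoints $v_1,v_3$ are excluded), so the number $j_0$ of black beads it carries satisfies $1\leq j_0\leq J$; removing $v_2$ drops this count to $j_0-1$, so the pair lies in $P_{j_0}(O)\setminus P_{j_0}(O')$, and since $P_j(O')\subseteq P_j(O)$ for every $j$ by part (3), each of the $y_1y_2$ distinct such pairs contributes at least $1$ to $\mu_J(O)-\mu_J(O')$.
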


%
%
%
%
%

Let $\Delta$ be a reduced diagram over a group associated to an $S$-machine $\textbf{S}$. Let $O$ be a circle partitioned by subarcs labeled by the edges of $\partial\Delta$. At the midpoint of a subarc labeled by a $\theta$-edge (respectively a $q$-edge), place a white bead (respectively a black bead). Then, define the \textit{mixture on $\Delta$} $\mu(\Delta)$ as the $J$-mixture of the corresponding necklace, i.e $\mu(\Delta)=\mu_J(O)$.


\section{Diagrams without disks}

\subsection{$M$-minimal diagrams} \

In this section, we study diagrams over the canonical presentation of $M_\Omega(\textbf{M})$ homeomorphic to a disk, with the ultimate goal of bounding the `size' of such a diagram in terms of its perimeter. To do this, we first define a special class of diagrams for which this bound will hold.

A reduced diagram $\Delta$ over the canonical presentation of $M_\Omega(\textbf{M})$ is called \textit{$M$-minimal} if:

\begin{addmargin}[1em]{0em}

\begin{enumerate}[label=(MM{\arabic*})]


\item for any $a$-cell $\pi$ and any $\theta$-band $\pazocal{T}$, at most half of the edges of $\partial\pi$ mark the start of an $a$-band that crosses $\pazocal{T}$, and

\item no maximal $a$-band ends on two different $a$-cells.

\end{enumerate}

\end{addmargin}

It follows immediately from the definition that a subdiagram of an $M$-minimal diagram is necessarily also $M$-minimal.

The following statement gives one fundamental consequence of this definition:

\begin{lemma}[Lemma 8.3 of \cite{W}] \label{M_a no annuli 2}

Let $\Delta$ be a reduced diagram over $M_\Omega(\textbf{M})$. 

\begin{enumerate}[label=({\arabic*})]

\item Suppose $\Delta$ contains a $\theta$-annulus $S$ and let $\Delta_S$ be the subdiagram of $\Delta$ bounded by the outer component of the contour of $S$. Then $\Delta_S$ contains no $(\theta,q)$-cells and $\lab(\partial\Delta_S)$ is a word over the tape alphabet of the `special' input sector.

\item If $\Delta$ is $M$-minimal, then it contains no $\theta$-annuli.

\end{enumerate}

\end{lemma}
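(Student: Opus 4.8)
\textbf{Proof plan for Lemma \ref{M_a no annuli 2}.}

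The plan is to prove (1) first and then obtain (2) as a short corollary. For (1), suppose $S$ is a $\theta$-annulus in $\Delta$, say with defining rule $\theta \in \Theta^+$, and let $\Delta_S$ be the subdiagram bounded by the outer component of $\partial S$. I would argue by contradiction: suppose $\Delta_S$ contains a $(\theta',q)$-cell for some rule $\theta'$ and some $q$-band. Consider a maximal $q$-band $\pazocal{Q}$ inside $\Delta_S$ containing that cell. Since $\Delta_S$ is bounded by a $\theta$-annulus and contains no $q$-edges on its boundary label coming from a hub (one must check $\Delta_S$ contains no hubs — but hubs of $\textbf{M}$ have $q$-edges on their contour, and a $q$-band reaching a hub would force a $q$-edge structure incompatible with being entirely enclosed by $S$; more carefully, if $\Delta_S$ contained a hub, a $q$-band starting on it would have to cross $S$, contradicting that $S$ is an annulus of $\theta$-cells only), the maximal $q$-band $\pazocal{Q}$ must end on the outer contour of $S$ in two places, hence must cross the $\theta$-band $S$ twice. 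But crossing $S$ twice produces a $(\theta,q)$-annulus, contradicting \Cref{M(S) annuli}(1) (note $M_\Omega(\textbf{M})$ and $M(\textbf{M})$ have the same $(\theta,q)$- and $a$-annulus structure, or one invokes the analogue directly). Therefore $\Delta_S$ has no $(\theta,q)$-cells, so it has no $q$-edges in its interior, and in particular $\lab(\partial\Delta_S)$ contains no $q$-letters. Since the only $a$-relations and $a$-cells of $\textbf{M}$ involve letters over the alphabet $X$ of the special input sector, and since every $(\theta,a)$-cell of the enclosed region belongs to an $a$-band that cannot terminate on a $(\theta,q)$-cell (there are none) nor on $\partial\Delta_S$ in a way introducing foreign letters, a projection/induction on the distance from $S$ shows $\lab(\partial\Delta_S)$ is a word over the tape alphabet of the special input sector.

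For (2), suppose toward a contradiction that an $M$-minimal diagram $\Delta$ contains a $\theta$-annulus $S$, and choose one that is innermost, i.e. such that the subdiagram $\Delta_S$ it bounds contains no further $\theta$-annuli. By part (1), $\Delta_S$ contains no $(\theta,q)$-cells, so it is built entirely of $(\theta,a)$-cells and $a$-cells, and $\lab(\partial\Delta_S)$ is a word over $X$. I would then examine the inner boundary of $S$ itself, which is a closed path labelled by such a word, and count $a$-bands: each $a$-band crossing $S$ must enter $\Delta_S$, and inside $\Delta_S$ it can only end on an $a$-cell or close up (forbidden by \Cref{a-band on same a-cell} and by the no-$a$-annulus part of \Cref{M_a no annuli 1}). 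A counting argument using (MM1) — at most half the edges of any $a$-cell start an $a$-band crossing a fixed $\theta$-band — together with (MM2) forces the number of $a$-edges available to feed $a$-bands to be strictly smaller than required for all the $a$-bands of $S$ to terminate, a contradiction. Equivalently, one can run the standard "innermost region" argument: the inner contour of $S$ bounds a subdiagram with no $q$-cells and no $\theta$-cells other than those of $S$, so $S$ itself would have to be an $a$-annulus-like object, contradicting that a $\theta$-band cannot be annular in a reduced diagram (this is essentially \Cref{M(S) annuli}(5) transported to this presentation using that $\Delta_S$ has no disks and no $q$-cells).

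The main obstacle I expect is justifying rigorously that $\Delta_S$ contains no hubs and no disks (here there are no disks since this is over $M_\Omega(\textbf{M})$, not its disk presentation, so only hubs matter): one needs to rule out a hub sitting inside the annulus, which requires tracking that any $q$-band emanating from a hub is forced to cross $S$ and thereby create a forbidden $(\theta,q)$-annulus. This is where care is needed about the cyclic/revolving base structure of $\textbf{M}$, since a $q$-band enclosed by $S$ could a priori wind around; but \Cref{M(S) annuli}(1) and (4) prohibit exactly the configurations that would arise. Once the "no hubs, no $q$-cells inside" fact is nailed down, both statements follow by the routine band-counting and projection arguments sketched above, essentially as in Lemma 8.3 of \cite{W}.
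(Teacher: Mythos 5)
Your overall route is the standard one (and the one the paper imports from \cite{W}): for (1), a $(\theta,q)$-cell inside $\Delta_S$ would force its maximal $q$-band to cross the annulus $S$ twice, producing a $(\theta,q)$-annulus forbidden by \Cref{M_a no annuli 1}(a); for (2), use (MM1)/(MM2) on the $a$-cells that the $a$-bands of $S$ must land on. Two remarks before the real issues. First, the hub worry in (1) is vacuous: the hub relation $W_{ac}=1$ is only adjoined when passing from $M(\textbf{M})$ to $G(\textbf{M})$, so a diagram over $M_\Omega(\textbf{M})$ has no hubs (and no disks) at all, and every maximal $q$-band ends twice on $\partial\Delta$ — your parenthetical argument for excluding hubs is both unnecessary and, as written, not a proof. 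Second, to finish (1) you need to say explicitly that the $a$-band through each cell of $S$, followed inward, cannot cross $S$ again (that would give a $(\theta,a)$-annulus, \Cref{M_a no annuli 1}(b)), cannot meet a $(\theta,q)$-cell (there are none), and cannot reach $\partial\Delta$, hence ends on an $a$-cell inside; since $a$-cells are labelled by words of $\Omega$, i.e.\ over the special input alphabet, every $a$-edge of $S$ — in particular every edge of $\partial\Delta_S$ — carries a letter of that alphabet. Your ``projection/induction on distance from $S$'' gestures at this but skips the one step that actually pins the letters down, and it is also the step that guarantees an interior $a$-cell exists, which (2) needs.

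In (2) the counting you describe does not close: ``the number of $a$-edges available to feed $a$-bands is strictly smaller than required'' is not forced by (MM1), because the interior $a$-cells can have arbitrarily large perimeter, so supply-versus-demand alone gives no contradiction. The actual contradiction is about the \emph{non-crossing} bands: fix any $a$-cell $\pi$ inside the inner region (one exists by the step above). By (MM1) at most half of the edges of $\partial\pi$ start $a$-bands crossing $S$, so some edge of $\partial\pi$ starts an $a$-band that never crosses $S$; but such a band has no admissible terminus — no $(\theta,q)$-cells or $\partial\Delta$ are reachable without crossing $S$, it cannot return to $\pi$ by \Cref{a-band on same a-cell}, and it cannot end on a different $a$-cell by (MM2). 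Finally, your ``equivalently'' shortcut — transporting \Cref{M(S) annuli}(5) because $\Delta_S$ has no $q$-cells — is not valid: that lemma is proved for diagrams over $M(\textbf{S})$ without $a$-cells, and the presence of $a$-cells is exactly what allows $\theta$-annuli in reduced diagrams over $M_\Omega(\textbf{M})$; this is why (2) is stated only for $M$-minimal diagrams and why (MM1)/(MM2) are indispensable rather than cosmetic.
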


\medskip


\subsection{Transpositions of a $\theta$-band with an $a$-cell} \label{sec-transposition-a} \

Next, we describe a general surgery on diagrams, as introduced in \cite{W} and generalized in \cite{WMal}, over the relevant presentations which essentially pushes an $a$-cell through a $\theta$-band.

Let $\Delta$ be a reduced diagram over the disk presentation of $G_\Omega(\textbf{M})$ containing an $a$-cell $\pi$ and a $\theta$-band $\pazocal{T}$ subsequently crossing some of the $a$-bands starting at $\pi$. As the cells shared by these bands and $\pazocal{T}$ are $(\theta,a)$-cells, the domain of the rule $\theta$ corresponding to $\pazocal{T}$ must be nonempty in the `special' input sector. So, by the definition of the rules of $\textbf{M}$, the domain of $\theta$ in this sector is the entire alphabet.

Suppose there are no other cells between $\pi$ and the bottom of $\pazocal{T}$, i.e there is a subdiagram formed by $\pi$ and $\pazocal{T}$.

Let $\textbf{s}_1$ be the maximal subpath of $\partial\pi$ so that each edge is on the boundary of a $(\theta,a)$-cell of $\pazocal{T}$. Further, let $\textbf{s}_2$ be the complement of $\textbf{s}_1$ in $\partial\pi$ so that $\partial\pi=\textbf{s}_1\textbf{s}_2$ and let $\pazocal{T}'$ be the subband of $\pazocal{T}$ satisfying $\textbf{bot}(\pazocal{T}')=\textbf{s}_1$.

\begin{figure}[H]
\centering
\begin{subfigure}[b]{\textwidth}
\centering
\raisebox{0.3825in}{\includegraphics[scale=0.7]{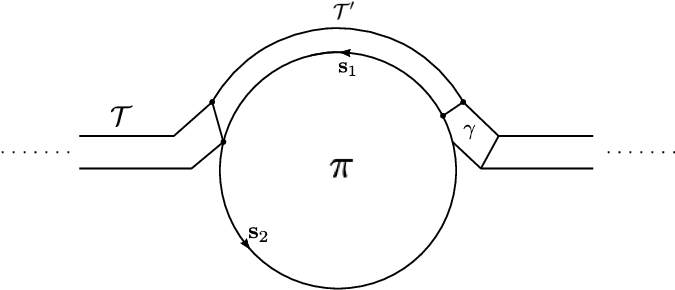}}
\caption{The subdiagram $\Gamma$}
\end{subfigure}
\begin{subfigure}[b]{\textwidth}
\centering
\includegraphics[scale=0.7]{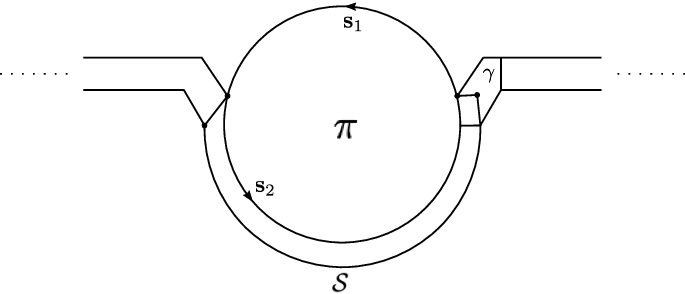}
\caption{The resulting subdiagram $\Gamma'$}
\end{subfigure}
\caption{The transposition of a $\theta$-band with an $a$-cell, $\gamma$ a $(\theta,q)$-cell}
\end{figure}

Let $V_1\equiv\text{Lab}(\textbf{s}_1)$ and $V_2\equiv\text{Lab}(\textbf{s}_2)$. Finally, let $\Gamma$ be the subdiagram formed by $\pi$ and $\pazocal{T}'$.

Then, we can construct the $\theta$-band $\pazocal{S}$ corresponding to $\theta$ consisting only of $(\theta,a)$-cells and with top label $V_2$. Let $\Gamma'$ be the subdiagram obtained by gluing a copy of $\pi$ to $\textbf{top}(\pazocal{S})$ in the clear way.

As $\text{Lab}(\textbf{top}(\pazocal{T}'))\equiv V_1^{-1}$, $\lab(\partial\Gamma)\equiv\lab(\partial\Gamma')$. So, we may replace the $\Gamma$ with $\Gamma'$, attaching the first and last cells of $\pazocal{S}$ to the complement of $\pazocal{T}'$ in $\pazocal{T}$ and making any necessary cancellations in the resulting band.

This process is called the \textit{transposition} of the $\theta$-band with the $a$-cell. 

Note that the diagram $\tilde{\Delta}$ resulting from the transposition has the same contour label as $\Delta$. Further, if a maximal $a$-band of $\Delta$ has one end on the $a$-cell $\pi$, then the other end is not changed by the transposition. 

Hence, if $\Delta$ is $M$-minimal, then $\tilde{\Delta}$ satisfies (MM2). However, $\tilde{\Delta}$ may not be $M$-minimal, as the transposed $\theta$-band may cross the maximal $a$-bands emanating from more than half of the $a$-edges on the boundary of the transposed $a$-cell.

Further, since the number of $(\theta,a)$-cells is altered by the transposition, the weight of the diagrams $\Delta$ and $\tilde{\Delta}$ may differ considerably. 

Despite these disadvantages, this process will prove valuable in forthcoming arguments.

\medskip


\subsection{$a$-trapezia} \

We now generalize the concept of trapezium defined in \Cref{sec-trapezia} to the setting of $M$-minimal diagrams, allowing the existence of $a$-cells within the diagram.  This is done in just the same way as the introduction of this concept in \cite{W}.

To be specific, an \textit{$a$-trapezium} $\Delta$ is an $M$-minimal diagram with contour of the form $\textbf{p}_1^{-1}\textbf{q}_1\textbf{p}_2\textbf{q}_2^{-1}$, where each $\textbf{p}_i$ is the side of a $q$-band and each $\textbf{q}_i$ is the maximal subpath of the side of a $\theta$-band that starts and ends with $q$-edges. As with trapezia, the factorization $\textbf{p}_1^{-1}\textbf{q}_1\textbf{p}_2\textbf{q}_2^{-1}$ of the boundary is called the \textit{standard factorization} of $\partial\Delta$.

The \textit{history}, \textit{step history}, \textit{height}, and \textit{base} of an $a$-trapezium are defined in the same way they are defined for a trapezium. 

Note that the history of an $a$-trapezium must be reduced. Further, by Lemma \ref{trapezia are computations}, the base of an $a$-trapezium must be the base of an admissible word. So, in an $a$-trapezium $\Delta$, the subdiagram $\Gamma$ bounded by two consecutive $q$-bands is an $a$-trapezium with base $UV$ corresponding to these $q$-bands' makeups. In this case, $\Gamma$ is called a \textit{$UV$-sector} in $\Delta$. As with admissible words, an $a$-trapezium may contain sectors of the same name.

\begin{lemma}[Compare with Lemma 8.4 of \cite{W}] \label{a-cells sector}

Suppose $\Delta$ is an $a$-trapezium containing an $a$-cell $\pi$. Then $\pi$ is contained in a $(Q_0(1)Q_1(1))^{\pm1}$-, $Q_0(1)Q_0(1)^{-1}$-, or $Q_1(1)^{-1}Q_1(1)$-sector.  Moreover, if there exists a maximal $a$-band in $\Delta$ which ends on $\pi$ and does not end on $\partial\Delta$, then $\pi$ is contained in a $(Q_0(1)Q_1(1))^{\pm1}$- or a $Q_1(1)^{-1}Q_1(1)$-sector and step history of $\Delta$ contains the letter $(1)_1$.

\end{lemma}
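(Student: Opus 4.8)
The plan is to argue by contradiction on the location of the $a$-cell $\pi$, using the classification of bases together with the fact that the $a$-relations only involve the tape alphabet $X$ of the `special' input sector $Q_0(1)Q_1(1)$. First I would observe that $\partial\pi$, being labelled by a non-trivial cyclically reduced word over $X$, consists entirely of $a$-edges whose labels lie in $X^{\pm1}$; hence every maximal $a$-band starting on $\pi$ is an $X$-band, and an $a$-edge of such a band can only lie on the boundary of a $(\theta,a)$-cell belonging to a $\theta$-band whose rule has full (non-empty) domain in the `special' sector. By the construction of $\textbf{M}$, the only rules with non-empty domain in that sector are the `working' rules corresponding to $\textbf{Move}$ inside a step of the form $(1)_1$ (or the transition rules $\sigma(s)_i$, $\sigma(a)_i$, which lock it; and indeed $\sigma(s)_i$ has domain only the left historical alphabet elsewhere). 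This already forces the sector containing $\pi$ in the $a$-trapezium to be one in which the tape alphabet is $X$.

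\textbf{Key steps, in order.} (1) Recall that an $a$-trapezium is in particular an $M$-minimal diagram over the canonical presentation of $M_\Omega(\textbf{M})$, decompose it into $q$-bands, and note every $a$-cell sits in some $UV$-sector $\Gamma$ determined by a two-letter subword $UV$ of the base. (2) Show that the tape alphabet of the $UV$-sector must be $X$: use the previous paragraph's remark that each $a$-edge of $\partial\pi$ must abut a $(\theta,a)$-cell (since $\pi$ being an $a$-cell forces $\partial\pi$ to be an $X$-word of length $\geq M\geq 1$, and an $a$-edge of $\pi$ either reaches $\partial\Delta$ or crosses a $\theta$-band — and at least one must cross a $\theta$-band because the $\theta$-bands of $\Delta$ span it left-to-right and $\pi$ is interior to the strip between bands), hence the $q$-bands bounding $\Gamma$ carry $a$-letters of $X$, hence $UV$ is one of $Q_0(1)Q_1(1)$, $Q_0(1)Q_0(1)^{-1}$, $Q_1(1)^{-1}Q_1(1)$ (these are exactly the two-letter subwords of bases of admissible words whose corresponding sector has alphabet $X$; for the input sector $Q_0Q_1$ of $\textbf{Move}$ and its unreduced variants). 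This gives the first assertion up to the $\pm1$ convention, which I would handle by noting $(Q_0(1)Q_1(1))^{-1}$ is $Q_1(1)^{-1}Q_0(1)^{-1}$ and is covered by the same reasoning on the inverse side. (3) For the "moreover" part, suppose a maximal $a$-band $\pazocal{B}$ ends on $\pi$ but not on $\partial\Delta$. By \Cref{M_a no annuli 2} and \Cref{a-band on same a-cell} (together with (MM2)), $\pazocal{B}$ cannot end on a second $a$-cell nor form an $a$-annulus, so its other end must be a $(\theta,q)$-cell, hence on a $q$-band bounding the sector; but if $UV=Q_0(1)Q_0(1)^{-1}$ then both bounding $q$-bands are $Q_0(1)$-bands and the sector lies `outside' the strip $Q_0Q_1$, so an $X$-band emanating from $\pi$ cannot terminate on a $(\theta,q)$-cell of a $Q_0(1)$-band (the rules of $\textbf{M}$ in that position act on the $Q_0(1)^{-1}Q_0(1)$ unreduced sector by the rules of $\textbf{Move}$ which lock or are blind there) — formalize this by projecting to $\textbf{Move}$ via \Cref{lifted rule}/\Cref{lifted base} and invoking that no rule of $\textbf{Move}$ multiplies the input sector on the left, i.e.\ (Mv1), so the $Q_0(1)Q_0(1)^{-1}$ sector admits no $(\theta,a)$-cell with a transverse $a$-edge reaching a corner. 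This eliminates $UV=Q_0(1)Q_0(1)^{-1}$, leaving the two stated options. (4) Finally, since $\pi$ has at least one transverse $X$-band crossing a $\theta$-band $\pazocal{T}$, the rule of $\pazocal{T}$ has full domain in the `special' sector; by \Cref{M step history} and the definition of $\Theta$ the only such working rules occur in a maximal subcomputation with step history $(1)_1$ (the submachine acting as $\textbf{Move}$), so the step history of $\Delta$ contains the letter $(1)_1$, using \Cref{trapezia are computations} to translate the $a$-trapezium into a computation whose history therefore contains a $(1)_1$-step.

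\textbf{Main obstacle.} The delicate point is step (2)–(3): rigorously ruling out that $\pi$ sits in a $Q_0(1)Q_0(1)^{-1}$ sector with all its transverse bands terminating on disks-free $(\theta,q)$-cells of the two surrounding $Q_0(1)$-bands, and more generally pinning down precisely which two-letter base subwords admit $a$-cells. This requires carefully combining the non-existence of the various annuli (\Cref{M_a no annuli 1}, \Cref{M_a no annuli 2}), the $M$-minimality conditions (MM1)–(MM2), the transposition surgery of \Cref{sec-transposition-a} to straighten bands if needed, and the projection lemmas (\Cref{lifted rule}, \Cref{lifted base}) to reduce the bookkeeping to the much smaller machine $\textbf{Move}$, where (Mv1) and (Mv2) do the real work. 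I expect the rest — identifying the alphabet of a sector, invoking \Cref{trapezia are computations} and \Cref{M step history} — to be routine bookkeeping analogous to Lemma 8.4 of \cite{W}.
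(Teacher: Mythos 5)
Your first assertion is argued along the intended lines: since the $a$-relators are words over the alphabet $X$ of the `special' input sector and the tape alphabets of distinct sectors are disjoint, every cell sharing an edge with $\partial\pi$ must come from a special-sector relation, and the only two-letter base subwords whose sector carries the alphabet $X$ are $(Q_0(1)Q_1(1))^{\pm1}$, $Q_0(1)Q_0(1)^{-1}$, and $Q_1(1)^{-1}Q_1(1)$; this is exactly the argument behind Lemma 8.4 of \cite{W}. Your step (3) also correctly excludes $Q_0(1)Q_0(1)^{-1}$ in the `moreover' case: no rule of $\textbf{M}$ multiplies the special input sector on the left (by (Mv1) for the $\textbf{Move}$-rules and by the locking pattern of every other step), so a $(\theta,q)$-cell of a $Q_0(1)^{\pm1}$-band carries no $a$-edge labelled in $X^{\pm1}$ and the band cannot end there.

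Step (4), however, has a genuine gap. You deduce that $(1)_1$ occurs in the step history from the fact that the rule of a $\theta$-band crossed by an $X$-band has full domain in the special sector, asserting that ``the only such working rules occur in a maximal subcomputation with step history $(1)_1$.'' That assertion is false for $\textbf{M}$: by construction every rule of the submachine corresponding to $\textbf{M}_5(0)$ (step $(0)_1$) leaves the $Q_0(1)Q_1(1)$-sector unlocked with domain the entire tape alphabet, and the transition rules $\sigma(s)_1$ and $\sigma(01)_1$ likewise do not lock it. So a $(\theta,a)$-cell in the special sector does not force the letter $(1)_1$; indeed an $a$-trapezium with step history $(0)_1$ may contain $a$-cells all of whose $a$-bands end on the top or bottom of the diagram, which is precisely why the conclusion about $(1)_1$ is conditioned on the existence of a maximal $a$-band ending on $\pi$ but not on $\partial\Delta$. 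The correct deduction is the one already implicit in your step (3): under that hypothesis the band's second end is a $(\theta,q)$-cell whose relation contains a letter of $X^{\pm1}$, i.e.\ whose rule multiplies the special input sector by a letter; by (Mv1) and the fact that every step other than $(1)_1$ either locks the sector or leaves its tape word unchanged, the only rules of $\textbf{M}$ doing this are the copies of the moving rules of $\textbf{Move}$ in the first machine, which belong to step $(1)_1$. Replacing step (4) by this observation (and dropping the unnecessary claim that every $a$-edge of $\partial\pi$ abuts a $(\theta,a)$-cell — it may abut a $(\theta,q)$-cell directly) yields a complete proof.
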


\begin{lemma} \label{a-cell a-trapezia no boundary}

Let $\pi$ be an $a$-cell in an $a$-trapezium $\Delta$.  Suppose no maximal $a$-band that ends on $\pi$ also ends on a $(\theta,q)$-cell.  Then there exists an $M$-minimal diagram $\Delta_\pi$ consisting of an $a$-trapezium $\Delta_\pi'$ and a single $a$-cell $\pi'$ such that:

\begin{enumerate}

\item $\lab(\partial\Delta_\pi)\equiv\lab(\partial\Delta)$

\item $\text{wt}(\Delta_\pi)=\text{wt}(\Delta)$

\item $\lab(\partial\pi')\equiv\lab(\partial\pi)$

\item $\Delta_\pi'$ has the same base and history as $\Delta$, but contains one less $a$-cell

\item $\|\textbf{tbot}(\Delta_\pi')\|=\|\textbf{tbot}(\Delta)\|$ and $\|\textbf{ttop}(\Delta_\pi')\|=\|\textbf{ttop}(\Delta)\|$

\end{enumerate}

\end{lemma}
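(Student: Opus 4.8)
The statement asks us to take an $a$-trapezium $\Delta$ containing an $a$-cell $\pi$ whose emanating maximal $a$-bands never terminate on a $(\theta,q)$-cell, and ``push $\pi$ to the boundary'' while leaving the contour label, the weight, the base, the history, and the trimmed top/bottom lengths unchanged, at the cost of one fewer interior $a$-cell.  The natural approach is to iterate the transposition surgery of \Cref{sec-transposition-a}: each transposition slides $\pi$ past exactly one $\theta$-band of $\Delta$, and since by hypothesis none of the $a$-bands from $\pi$ ends on a $(\theta,q)$-cell, each such $\theta$-band that $\pi$ meets must actually cross \emph{all} of the $a$-bands emanating from $\pi$ (the only alternative end for such a band being the diagram's contour, which would make that band terminate on $\partial\Delta$ rather than continue upward or downward).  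Thus the sub-path $\textbf{s}_1$ in the transposition set-up is all of $\partial\pi$ on one side, $\textbf{s}_2$ is all of it on the other, and the transposition replaces $\pi$ together with the crossed portion of the $\theta$-band by a copy of $\pi$ glued above a new $\theta$-band consisting entirely of $(\theta,a)$-cells.

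\textbf{Key steps, in order.}  First I would fix the $\theta$-band of $\Delta$ lying immediately above (or below) $\pi$, say $\pazocal{T}$, and observe, using \Cref{a-cells sector}, that $\pi$ lies in a $(Q_0(1)Q_1(1))^{\pm1}$-, $Q_0(1)Q_0(1)^{-1}$-, or $Q_1(1)^{-1}Q_1(1)$-sector, so the rule of $\pazocal{T}$ has full domain $X$ in the special input sector and the transposition of \Cref{sec-transposition-a} applies verbatim.  Second, I would carry out the transposition and check the five bookkeeping claims: (1) the contour label is preserved because $\lab(\partial\Gamma)\equiv\lab(\partial\Gamma')$ by construction; (2) the weight is preserved because a transposition past a $\theta$-band replaces the $(\theta,a)$-cells of $\pazocal{T}$ lying over $\textbf{s}_1$ by exactly the $(\theta,a)$-cells of the new band $\pazocal{S}$ over $\textbf{s}_2$, and since here $\textbf{s}_1\textbf{s}_2=\partial\pi$ with $\pi$ in the special sector, the rule $\theta$ has full domain on all of $\partial\pi$, so $|\textbf{s}_1|_a+|\textbf{s}_2|_a=|\partial\pi|_a$ and the count of $(\theta,a)$-cells is exactly conserved — this is the point requiring care; (3) the copy $\pi'$ of $\pi$ has identical contour label by construction; (4)–(5) the base and history of the ambient $a$-trapezium are unchanged (transposition alters neither the $q$-bands nor the rules of the $\theta$-bands, only which cells a fixed $\theta$-band meets), and the trimmed top and bottom are untouched since $\pi$ is interior.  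Third, I would iterate: after one transposition, $\pi'$ lies in the $\theta$-band structure one level closer to the top (resp. bottom) boundary, and its $a$-bands still never end on $(\theta,q)$-cells (that property is about which cells the $a$-bands from the \emph{moved} copy hit, and those are unchanged, as noted in \Cref{sec-transposition-a}); since the height of $\Delta$ is finite, after finitely many transpositions the copy of $\pi$ abuts $\textbf{tbot}$ or $\textbf{ttop}$.  The resulting diagram $\Delta_\pi$ then splits as an $a$-trapezium $\Delta_\pi'$ (with the same base, history, and trimmed side lengths as $\Delta$, but one fewer $a$-cell) together with the single boundary $a$-cell $\pi'$.

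\textbf{Main obstacle.}  The delicate point is the weight identity (2): a single transposition past an \emph{arbitrary} $\theta$-band can change the number of $(\theta,a)$-cells, and this is exactly why the remark after the surgery in \Cref{sec-transposition-a} warns that $\tilde\Delta$ need not be $M$-minimal.  What saves us is the twofold hypothesis of the lemma: $\pi$ sits in the special input sector (so every rule either locks that sector or has full domain $X$ there) and \emph{no} $a$-band from $\pi$ ends on a $(\theta,q)$-cell.  Together these force each relevant $\theta$-band to meet $\pi$ in a band of $(\theta,a)$-cells whose number equals $|\partial\pi|_a/2$ both before and after, so no $(\theta,a)$-cells are created or destroyed and $\text{wt}(\Delta_\pi)=\text{wt}(\Delta)$ exactly.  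A secondary care point is verifying that the iteration stays within the class of $M$-minimal diagrams long enough to reach the boundary; here one uses that $\Delta$ is an $a$-trapezium (hence $M$-minimal), that transposition preserves (MM2), and that the only failure of (MM1) would involve the transposed $a$-cell crossing more than half of its own $a$-bands — impossible once $\pi$ has been pushed so that no $\theta$-band lies between it and the relevant part of the boundary.  I expect the rest of the verification to be routine label-chasing of the sort already carried out for the analogous Lemma 8.4 of \cite{W}.
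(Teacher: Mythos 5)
Your overall strategy is the paper's: iterate the transposition of \Cref{sec-transposition-a} to push $\pi$ past the adjacent $\theta$-bands until it reaches $\textbf{tbot}$ or $\textbf{ttop}$, then split off the boundary $a$-cell. However, your central counting step contains a genuine error. You assert that each $\theta$-band adjacent to $\pi$ ``must actually cross all of the $a$-bands emanating from $\pi$.'' This is false: it contradicts (MM1) (at most half of the edges of $\partial\pi$ may start $a$-bands crossing any single $\theta$-band), and it is geometrically impossible anyway, since an $a$-band leaving $\pi$ upward cannot also cross the $\theta$-band below $\pi$. If it were true, the subpath $\textbf{s}_2$ in the transposition would be empty and the surgery would strictly decrease the number of $(\theta,a)$-cells, destroying claim (2). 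Your subsequent justification of weight conservation, ``$|\textbf{s}_1|_a+|\textbf{s}_2|_a=|\partial\pi|_a$ and the count of $(\theta,a)$-cells is exactly conserved,'' is a non sequitur: the transposition trades $|\textbf{s}_1|_a$ cells for $|\textbf{s}_2|_a$ cells, so conservation requires $|\textbf{s}_1|_a=|\textbf{s}_2|_a$, not merely that the two sum to $|\partial\pi|_a$.

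The missing idea is the exact-half count, which is the crux of the paper's proof. Combining the hypothesis (no $a$-band from $\pi$ ends on a $(\theta,q)$-cell) with (MM2) and \Cref{a-band on same a-cell}, every maximal $a$-band starting on $\pi$ must end on $\partial\Delta$, hence either crosses all $\theta$-bands above $\pi$ and reaches $\textbf{ttop}(\Delta)$ or crosses all those below and reaches $\textbf{tbot}(\Delta)$; the two counts sum to $\|\partial\pi\|$, and (MM1) bounds each by $\tfrac12\|\partial\pi\|$, so each equals exactly $\tfrac12\|\partial\pi\|$. This single fact is what delivers everything you need: $|\textbf{s}_1|_a=|\textbf{s}_2|_a$, so the transposed $\theta$-band has the same length as the original (giving (2) and (5)), and exactly half of the $a$-bands of the moved cell cross the new band, so (MM1) survives and each intermediate diagram is again $M$-minimal, permitting the iteration. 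You do state the correct ``$|\partial\pi|_a/2$ both before and after'' later in your sketch, but without this derivation the claim is unsupported and your earlier (false) assertion actively contradicts it, so the proof as written does not go through.
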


\begin{proof}

Enumerate the maximal $\theta$-bands $\pazocal{T}_1,\dots,\pazocal{T}_h$ of $\Delta$ from bottom to top and fix the index $j\in\{1,\dots,h-1\}$ such that $\pi$ lies between $\pazocal{T}_j$ and $\pazocal{T}_{j+1}$.

By \Cref{a-band on same a-cell} and (MM2), each of the $\|\partial\pi\|$ maximal $a$-bands which end on $\pi$ also end on $\partial\Delta$.  In particular, each such band either crosses every $\pazocal{T}_i$ for $i\geq j+1$ and ends on $\textbf{ttop}(\Delta)$ or crosses every $\pazocal{T}_i$ for $i\leq j$ and ends on $\textbf{tbot}(\Delta)$.

By (MM1), though, at most $\frac{1}{2}\|\partial\pi\|$ of these $a$-bands cross $\pazocal{T}_j$ and at most $\frac{1}{2}\|\partial\pi\|$ cross $\pazocal{T}_{j+1}$.  As such, exactly $\frac{1}{2}\|\partial\pi\|$ of these maximal $a$-bands end on $\textbf{ttop}(\Delta)$ and $\frac{1}{2}\|\partial\pi\|$ end on $\textbf{tbot}(\Delta)$.

Let $\Delta_1$ be the reduced diagram obtained by transposing $\pi$ and $\pazocal{T}_j$.  Further, let $\pi_1$ be the $a$-cell of $\Delta_1$ obtained from $\pi$ and $\pazocal{T}_j'$ be the maximal $\theta$-band corresponding to $\pazocal{T}_j$.

Since exactly half of the maximal $a$-bands ending on $\pi$ cross $\pazocal{T}_j$, by construction exactly half these $a$-bands cross $\pazocal{T}_j'$.  As no other maximal $a$-bands are affected by this transposition, it follows that $\Delta_1$ is $M$-minimal.  Moreover, the top and bottom lengths of $\pazocal{T}_j'$ are the same as those of $\pazocal{T}_j$, while $\lab(\partial\pi_1)\equiv\lab(\partial\pi)$, so that $\text{wt}(\Delta_1)=\text{wt}(\Delta)$.

Suppose $j=1$ and let $\Delta_1'$ be the subdiagram of $\Delta_1$ obtained by removing $\pi_1$.  Then $\Delta_1'$ is an $a$-trapezium with the same base and history as $\Delta$.  Moreover, $\|\textbf{tbot}(\Delta_1')\|=\|\textbf{tbot}(\pazocal{T}_1')\|=\|\textbf{tbot}(\pazocal{T}_1)\|=\|\textbf{tbot}(\Delta)\|$.  Hence, taking $\Delta_\pi=\Delta_1$ satisfies the statement.  

Otherwise, $\pi_1$ lies between $\pazocal{T}_{j-1}$ and $\pazocal{T}_j'$; but then transposing $\pi_1$ with $\pazocal{T}_{j-1}$ and iterating until the $a$-cell is transposed with $\pazocal{T}_1$ produces the desired diagram $\Delta_\pi$.

\end{proof}

\begin{lemma} \label{revolving trapezia no boundary}

Let $\Delta$ be an $a$-trapezium with revolving base $B$ and height $h$.  Suppose no maximal $a$-band of $\Delta$ which ends on an $a$-cell also ends on a $(\theta,q)$-cell.  Then: $$\text{wt}(\Delta)\leq c_4h\max(\|\textbf{tbot}(\Delta)\|,\|\textbf{ttop}(\Delta)\|)+\phi_{C_1}(|\textbf{tbot}(\Delta)|_a+|\textbf{ttop}(\Delta)|_a)$$

\end{lemma}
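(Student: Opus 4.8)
The plan is to reduce the general $a$-trapezium to a controlled situation by eliminating all its $a$-cells via the transposition surgery of Section~\ref{sec-transposition-a}, then bound the weight of the remaining disk-free $a$-trapezium using the mixture invariant together with the space bound \Cref{revolving bound} for revolving bases.

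\medskip

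\textbf{Step 1: Collect the $a$-cells.} First I would apply \Cref{a-cell a-trapezia no boundary} repeatedly, once for each $a$-cell of $\Delta$. By hypothesis no maximal $a$-band ending on an $a$-cell also ends on a $(\theta,q)$-cell, so the lemma applies to each $a$-cell in turn; each application produces an $M$-minimal diagram with the same boundary label, the same total weight, the same top/bottom lengths, and the same base and history, but with one fewer $a$-cell inside the $a$-trapezium part and a new $a$-cell $\pi'$ with $\lab(\partial\pi')\equiv\lab(\partial\pi)$ pushed to the bottom. Iterating, I obtain an $M$-minimal diagram $\Delta^*$ which is the union of an $a$-cell-free $a$-trapezium $\Delta_0$ (i.e. an honest trapezium) with the same base $B$, same height $h$, and same top/bottom lengths as $\Delta$, together with a collection of $a$-cells $\pi_1,\dots,\pi_m$ whose boundary labels are the boundary labels of the $a$-cells of $\Delta$. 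The total weight is preserved: $\text{wt}(\Delta)=\text{wt}(\Delta_0)+\sum_{i=1}^m\phi_{C_1}(\|\partial\pi_i\|)$. By superadditivity of $\phi_{C_1}$ and the fact that $\sum_i\|\partial\pi_i\|_a$ (the $a$-edges) is at most $|\textbf{tbot}(\Delta)|_a+|\textbf{ttop}(\Delta)|_a$ — each $a$-band emanating from an $a$-cell reaches the top or bottom contour of $\Delta$, and in the disk-free setting these edges on $\partial\pi_i$ are precisely accounted for on the trimmed top/bottom — one gets $\sum_i\phi_{C_1}(\|\partial\pi_i\|)\le\phi_{C_1}(|\textbf{tbot}(\Delta)|_a+|\textbf{ttop}(\Delta)|_a)$. (I need to be slightly careful here: $\|\partial\pi_i\|$ vs.\ its $a$-length; but an $a$-cell's boundary is a word over the tape alphabet, hence consists entirely of $a$-edges, so $\|\partial\pi_i\|=\|\partial\pi_i\|_a$ and this is fine.)

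\medskip

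\textbf{Step 2: Bound the weight of the trapezium $\Delta_0$.} It remains to show $\text{wt}(\Delta_0)\le c_4 h\max(\|\textbf{tbot}(\Delta)\|,\|\textbf{ttop}(\Delta)\|)$. Since $\Delta_0$ is an $a$-cell-free, disk-free trapezium, its weight is simply its number of $(\theta,q)$- and $(\theta,a)$-cells. By \Cref{trapezia are computations}, $\Delta_0$ corresponds to a reduced computation $W_0\to\dots\to W_h$ of $\textbf{M}$ in the base $B$, which is revolving. \Cref{revolving bound} then gives $\|W_i\|\le c_4\max(\|W_0\|,\|W_h\|)$ for all $i$ — and $\|W_0\|=\|\textbf{tbot}(\Delta_0)\|=\|\textbf{tbot}(\Delta)\|$, $\|W_h\|=\|\textbf{ttop}(\Delta_0)\|=\|\textbf{ttop}(\Delta)\|$ (trimming changes nothing since the relevant sectors here have the revolving-base structure). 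Each $\theta$-band $\pazocal{T}_i$ of $\Delta_0$ has base of length $\|B\|\le 2LN+1$ (revolving bases have bounded length), so by \Cref{lengths}(d) the number of cells in $\pazocal{T}_i$ is at most $|\textbf{top}(\pazocal{T}_i)|_a+3\|B\|\le\|W_i\|+3\|B\|$. Summing over $i=1,\dots,h$ and absorbing the additive $3h\|B\|$ term into the main term via the parameter inequality $c_4\gg N\gg\dots$ (and $L$, which is fixed), I get $\text{wt}(\Delta_0)=\sum_{i=1}^h(\#\text{cells in }\pazocal{T}_i)\le h(c_4'\max(\|\textbf{tbot}(\Delta)\|,\|\textbf{ttop}(\Delta)\|))$ for a constant $c_4'$ that can be taken $\le c_4$ after adjusting. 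Combining with Step~1 yields the claimed bound.

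\medskip

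\textbf{Main obstacle.} I expect the delicate point to be the bookkeeping in Step~1: ensuring that after all $m$ transpositions the $a$-edges on the boundaries of the extracted $a$-cells $\pi_1,\dots,\pi_m$ are disjointly accounted for among the $a$-edges of $\textbf{tbot}(\Delta)$ and $\textbf{ttop}(\Delta)$, so that superadditivity of $\phi_{C_1}$ can legitimately be invoked to bundle them into a single $\phi_{C_1}(|\textbf{tbot}(\Delta)|_a+|\textbf{ttop}(\Delta)|_a)$ term. This relies on \Cref{a-cell a-trapezia no boundary}(5) (preservation of top/bottom lengths) and on the fact, guaranteed by (MM2) and \Cref{a-band on same a-cell}, that each maximal $a$-band ending on an $a$-cell ends on exactly one other place, namely the outer contour — so the total $a$-edge count on all the $\pi_i$ is bounded by the number of $a$-edges on $\textbf{tbot}(\Delta)\cup\textbf{ttop}(\Delta)$ that are "used up" by these bands. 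A secondary nuisance is confirming that trimming (passing between $\textbf{bot}/\textbf{top}$ and $\textbf{tbot}/\textbf{ttop}$) does not affect any of the length comparisons in Step~2 for a revolving base; this should follow because the relevant $Y_{s+1}$-type sector for $\textbf{M}$ is empty, so no trimming occurs, but it is worth stating explicitly.
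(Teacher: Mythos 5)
Your proposal is correct and follows essentially the same route as the paper: iterate \Cref{a-cell a-trapezia no boundary} to strip out the $a$-cells (whose total perimeter is controlled by $|\textbf{tbot}(\Delta)|_a+|\textbf{ttop}(\Delta)|_a$ via (MM2), \Cref{a-band on same a-cell}, and superadditivity of $\phi_{C_1}$), then bound the residual trapezium by combining \Cref{trapezia are computations} with \Cref{revolving bound}. The only cosmetic difference is that the paper reads off each $\theta$-band's length directly as $\|W_{i-1}\|$ or $\|W_i\|$ rather than invoking \Cref{lengths}(d) and absorbing the additive $3h\|B\|$ term, but both give the stated bound.
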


\begin{proof}

By \Cref{a-band on same a-cell} and (MM2), each $a$-edge on the boundary of an $a$-cell corresponds to an $a$-edge of either $\textbf{tbot}(\Delta)$ or of $\textbf{ttop}(\Delta)$.  As such, the sum of the (combinatorial) perimeters of the $a$-cells in $\Delta$ is at most $|\textbf{tbot}(\Delta)|_a+|\textbf{ttop}(\Delta)|_a$.  So, since $\phi_{C_1}$ is superadditive, the sum of the weights of all $a$-cells in $\Delta$ is then at most $\phi_{C_1}(|\textbf{tbot}(\Delta)|_a+|\textbf{ttop}(\Delta)|_a)$.

Now, iteratively applying \Cref{a-cell a-trapezia no boundary} produces an $M$-minimal diagram $\Delta'$ with the same boundary label and weight as $\Delta$ consisting of a trapezium $\Delta''$ with the same base, history, and trimmed top and bottom lengths as $\Delta$ and copies of the $a$-cells of $\Delta$ pasted to the top and bottom of $\Delta''$.

Let $H$ be the history of $\Delta$ and enumerating the maximal $\theta$-bands $\pazocal{T}_1'',\dots,\pazocal{T}_h''$ of $\Delta''$ from bottom to top.  Then \Cref{trapezia are computations} there exists a reduced computation $\pazocal{C}:W_0\to\dots\to W_h$ of $\textbf{M}$ with base $B$ and history $H$ such that $\lab(\textbf{tbot}(\pazocal{T}_i''))\equiv W_{i-1}$ and $\lab(\textbf{ttop}(\pazocal{T}_i''))\equiv W_i$ for all $i$.

Since $B$ is revolving, \Cref{revolving bound} then implies $\|W_i\|\leq c_4\max(\|W_0\|,\|W_h\|)$.

Note that if the history of $\pazocal{T}_i''$ is a positive rule ({\frenchspacing i.e. a letter} of $\Theta^+$), then the length of $\pazocal{T}_i''$ is $\|W_{i-1}\|$; conversely, if the history is a negative rule, then the length of $\pazocal{T}_i''$ is $\|W_i\|$.

Hence, the length of each maximal $\theta$-band $\pazocal{T}_i''$ is at most $\max_{0\leq j\leq h}(\|W_j\|)\leq c_4\max(\|W_0\|,\|W_h\|)$, so that $\text{wt}(\Delta'')=\text{Area}(\Delta'')\leq c_4h\max(\|W_0\|,\|W_h\|)=c_4h\max(\|\textbf{tbot}(\Delta)\|,\|\textbf{ttop}(\Delta)\|)$.

\end{proof}


Suppose $\Delta$ is an $a$-trapezium containing a maximal $a$-band which ends on both an $a$-cell and a $(\theta,q)$-cell.  Then $\Delta$ is called:

\begin{itemize}

\item \textit{big} if its base is revolving and its history contains a controlled subword.

\item \textit{exceptional} if its base is hyperfaulty and contains no subwords of the form $(P_i'(j)R_i'(j))^{\pm1}$ or $(P_i''(j)R_i''(j))^{\pm1}$.

\end{itemize}

Note that the base of a big $a$-trapezium is necessarily reduced.

\begin{lemma} \label{big a-trapezium accepted}

Let $\Delta$ be a big $a$-trapezium with base $\{t(1)\}B_4(1)\dots\{t(L)\}B_4(L)\{t(1)\}$.  Then there exist configurations $W_0$ and $W_1$ of $\textbf{M}$ such that:

\begin{enumerate}

\item $\ell(W_0),\ell(W_1)\leq1$.

\item The word $W_0t(1)$ differs from $\lab(\textbf{bot}(\Delta))$ by the word in the `special' input sector.

\item The word $W_1t(1)$ differs from $\lab(\textbf{top}(\Delta))$ by the word in the `special' input sector.

\end{enumerate}

\end{lemma}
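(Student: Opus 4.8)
The plan is to exploit the defining features of a \emph{big} $a$-trapezium $\Delta$: its base is revolving (hence reduced, so $\Delta$ is an honest trapezium after accounting for $a$-cells) and its history contains a controlled subword. The first step is to isolate the controlled portion. Write the history of $\Delta$ as $H\equiv H'H''H'''$ where $H''$ is controlled, and correspondingly decompose $\Delta$ into horizontal subtrapezia $\Delta'$, $\Delta_c$, $\Delta'''$ of heights matching $H'$, $H''$, $H'''$. The key point is that the bottom and top of the controlled band $\Delta_c$ carry, in each $P_i'(j)R_i'(j)$-sector, the copy of a single word $H_1\in F(\Phi^+)$ over the right historical alphabet and its inverse in each $P_i''(j)R_i''(j)$-sector; this is exactly the content of \Cref{M controlled} (and before it \Cref{M_5 controlled}) applied to the computation associated to $\Delta_c$ by \Cref{trapezia are computations}. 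That lemma also gives that each intermediate admissible word of the controlled computation is accepted by a one-machine computation, hence has $\ell\leq 1$.

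Next I would push this $\ell\leq 1$ property out to the top and bottom of $\Delta$ itself. The computation $\pazocal{C}:W_0\to\dots\to W_h$ of $\textbf{M}$ associated to $\Delta$ via \Cref{trapezia are computations} has the controlled subcomputation as a middle stretch; since every admissible word in that middle stretch is accepted, and computations in a full (here revolving) base are governed by \Cref{revolving bound} and the step-history restrictions of \Cref{M step history}, the configurations obtained by restricting $W_0$ and $W_h$ appropriately are linked to accepted configurations. Concretely, the restriction of $W_0$ (resp.\ $W_h$) to a component $\{t(i)\}B_4(i)$ with $i\geq 2$ can be extended via \Cref{extend one-machine} / \Cref{M projected long history}(a) to an accepted configuration of $\textbf{M}$, and \Cref{ell at most 2} together with the fact that $W_0,W_h$ sit at the boundary of a controlled (hence one-machine) stretch forces $\ell\leq 1$ for the resulting configurations $W_0^{\flat}$ and $W_1^{\flat}$ of $\textbf{M}$. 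Because the base of $\Delta$ is $\{t(1)\}B_4(1)\dots\{t(L)\}B_4(L)\{t(1)\}$ — a full loop around the standard base with one extra letter $t(1)$ — the word $\lab(\textbf{bot}(\Delta))$ is literally a configuration word followed by a trailing $t(1)$, \emph{except} possibly in the special input sector $Q_0(1)Q_1(1)$, which is precisely where $a$-cells of an $a$-trapezium live by \Cref{a-cells sector}; setting $W_0$ to be that configuration with the special input sector adjusted accordingly (and $W_1$ analogously for the top) yields statements (2) and (3).

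The main obstacle I anticipate is the bookkeeping in the transition from ``the associated computation's configurations are accepted'' to ``there are genuine configurations $W_0,W_1$ of $\textbf{M}$ differing from $\textbf{bot}(\Delta)$, $\textbf{top}(\Delta)$ only in the special input sector.'' Two subtleties must be handled: first, $\lab(\textbf{bot}(\Delta))$ need not be a configuration word on the nose because $\Delta$ is an $a$-trapezium, so one must invoke \Cref{a-cells sector} to confine all $a$-cells to sectors built on $Q_0(1)Q_1(1)$ and then quotient those out, reading off the underlying configuration; second, one must verify $\ell\leq 1$ rather than merely $\ell\leq 2$ — here the controlled subword is decisive, since a controlled history is by definition a one-machine history, so the boundary configurations $W_0,W_h$ of $\pazocal{C}$ are reached by a one-machine computation from accepted configurations and \Cref{ell at most 2} (specifically the structural clause that $\ell(W)=2$ forces a factorization through a $J(w,H')$ after a one-machine stretch of the \emph{first} machine) rules out $\ell=2$. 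Once these two points are in place, the rest is routine tracing through \Cref{extend one-machine}, \Cref{projected end to end}, and the definitions of $I(w,H)$, $J(w,H)$, and the components $W(i)$.
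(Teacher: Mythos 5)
Your overall strategy matches the paper's: extract the controlled subdiagram, use \Cref{a-cells sector} to see it is a genuine trapezium, apply \Cref{trapezia are computations} and \Cref{M controlled} to get configurations accepted by one-machine computations, then work in a component $\{t(i)\}B_4(i)$ with $i\geq2$ and extend via \Cref{extend one-machine}. However, the decisive step — getting $\ell(W_0),\ell(W_1)\leq 1$ — is where your sketch has a genuine gap. You argue that ``$W_0,W_h$ sit at the boundary of a controlled (hence one-machine) stretch,'' so they are ``reached by a one-machine computation from accepted configurations,'' and that the structural clause of \Cref{ell at most 2} then rules out $\ell=2$. But the history $H$ of a big $a$-trapezium is only required to \emph{contain} a controlled subword; the portions of $H$ between the controlled stretch and the top/bottom of $\Delta$ may pass through $\sigma(s)_i^{\pm1}$-transitions and be multi-machine. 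So the premise fails, and even granting it, concatenating a one-machine path to an accepted configuration whose one-machine accepting computation belongs to the \emph{other} machine only yields $\ell\leq2$; \Cref{ell at most 2} alone cannot push this down to $1$.

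The paper closes exactly this gap by a junction-propagation argument that your proposal does not supply: factor $H\equiv H_1\dots H_m$ into maximal one-machine subcomputations of the component-$2$ computation $\pazocal{D}$, extend each piece separately by \Cref{extend one-machine} (the whole of $\pazocal{D}$ cannot be extended at once, since \Cref{extend one-machine} applies only to one-machine computations), note that the extension containing the controlled stretch consists entirely of configurations accepted by one-machine computations, and then use clauses (a) and (b) of \Cref{extend one-machine} — the junction configurations are $W_{ac}$, $I(w,K)$, or $J(w,K)$ — together with \Cref{I vs J} to transfer ``accepted by a one-machine computation'' across each junction, piece by piece, out to the initial configuration of $\pazocal{C}_1$ and the terminal configuration of $\pazocal{C}_m$. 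This is the missing mechanism; your closing reference to \Cref{projected end to end} as ``routine tracing'' points in the right direction but does not substitute for it. A smaller inaccuracy: you invoke \Cref{trapezia are computations} for $\Delta$ itself, which is illegitimate since $\Delta$ is an $a$-trapezium; the correct move (which the paper makes, and which you only partially gesture at with ``quotient out the $a$-cells'') is to pass to the subdiagram over a coordinate $i\geq2$, where \Cref{a-cells sector} guarantees there are no $a$-cells.
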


\begin{proof}

Let $H$ be the history of $\Delta$ and let $\Delta'$ be the subdiagram of $\Delta$ bounded by the $\theta$-bands corresponding to the controlled subword of $H$.  By \Cref{a-cells sector}, $\Delta'$ must be a trapezium.  So, since the $Q_{s,r}''(L)t(1)$-sector is locked by all rules of $\textbf{M}$, \Cref{trapezia are computations} yields a reduced computation $\pazocal{C}':W_0't(1)\to\dots\to W_1't(1)$ such that $W_0'$ and $W_1'$ are configurations.  \Cref{M controlled} then implies $W_0'$ and $W_1'$ are accepted by one-machine computations.

%
%
%
%

Let $V_0'$ and $V_1'$ be the admissible subwords of $W_0'$ and $W_1'$, respectively, with base $\{t(2)\}B_4(2)$.  Similarly, let $V_0$ and $V_1$ be the subwords of $\lab(\textbf{bot}(\Delta))$ and $\lab(\textbf{top}(\Delta))$, respectively, with base $\{t(2)\}B_4(2)$.

Now, let $\Delta_2$ be the subdiagram of $\Delta$ bounded by the $t$-bands corresponding to $\{t(2)\}$ and $Q_{s,r}''(2)$.  As above, \Cref{a-cells sector} implies $\Delta_2$ is a trapezium with base $\{t(2)\}B_4(2)$ and history $H$.  By \Cref{trapezia are computations}, there then exists a reduced computation $\pazocal{D}:V_0\to\dots\to V_1$ with history $H$ which contains a subcomputation $\pazocal{D}':V_0'\to\dots\to V_1'$.

Note that applying \Cref{extend one-machine} to $\pazocal{D}'$ produces the reduced computation $\pazocal{C}'':W_0'\to\dots\to W_1'$ given by restricting $\pazocal{C}'$ to the standard base.

Factor $H\equiv H_1\dots H_m$ such that each $H_i$ is the history of a one-machine subcomputation $\pazocal{D}_i$ of $\pazocal{D}$.  Then, applying \Cref{extend one-machine} to each $\pazocal{D}_i$ produces reduced computations $\pazocal{C}_i$ in the standard base such that $\pazocal{C}''$ is a subcomputation of some $\pazocal{C}_j$.

So, every configuration of $\pazocal{C}_j$ is accepted by a one-machine computation.  But by (a) and (b) in \Cref{extend one-machine}, this implies each configuration of each $\pazocal{C}_i$ is accepted by a one-machine computation.  Hence, letting $W_0$ be the initial configuration of $\pazocal{C}_1$ and $W_1$ the terminal configuration of $\pazocal{C}_m$ satisfies the statement.

\end{proof}

A \textit{partition} of an $a$-trapezium $\Delta$ is a (finite) collection of subdiagrams $\{\Delta_i\}_{i=1}^m$ such that each $\Delta_i$ consists of a number of sectors of $\Delta$, $\Delta_i\cap\Delta_j$ is either empty or a $q$-band for $i\neq j$, and each sector is a subdiagram of some $\Delta_i$. Note that $\|\textbf{tbot}(\Delta)\|=\sum_i\|\textbf{tbot}(\Delta_i)\|-m$ and, similarly, $\|\textbf{ttop}(\Delta)\|=\sum_i\|\textbf{ttop}(\Delta_i)\|-m$. Moreover, as $\textbf{tbot}(\Delta)$ and $\textbf{ttop}(\Delta)$ each have at least $m$ $q$-edges, $\sum_i\|\textbf{tbot}(\Delta_i)\|\leq2\|\textbf{tbot}(\Delta)\|$ and $\sum_i\|\textbf{ttop}(\Delta_i)\|\leq2\|\textbf{ttop}(\Delta)\|$.

Clearly, given a partition $\{\Delta_i\}$ of an $a$-trapezium $\Delta$, $\text{wt}(\Delta)\leq\sum_i\text{wt}(\Delta_i)$.

Let $\Delta$ be an $a$-trapezium with revolving base $B$ and let $B'$ be a cyclic permutation of $B$. Then, there exists an $a$-trapezium $\Delta'$ with revolving base $B'$ such that $\text{wt}(\Delta')=\text{wt}(\Delta)$. This diagram is constructed by cutting along a maximal $q$-band $\pazocal{Q}$ of $\Delta$, pasting together the left and right $q$-bands of $\Delta$, and pasting a copy of $\pazocal{Q}$ onto the side of the diagram. As with reduced computations, $\Delta'$ is called a cyclic permutation of $\Delta$.

Note that by Lemma \ref{lengths}(d), for any maximal $\theta$-band $\pazocal{T}$ in an $a$-trapezium $\Delta$, the length of $\pazocal{T}$ is at most $|\textbf{tbot}(\pazocal{T})|_q+3|\textbf{tbot}(\pazocal{T})|_a\leq 3\|\textbf{tbot}(\pazocal{T})\|$.

\begin{lemma} \label{revolving a-trapezia (1)}

Let $\Delta$ be an $a$-trapezium with revolving base $B$ and height $h$.  Suppose there exist subdiagrams $\Delta_1,\dots,\Delta_\ell$ of $\Delta$ each of which consists of a number of sectors that do not contain any $a$-cells and such that $c_0\sum_{i=1}^\ell (\|\textbf{tbot}(\Delta_i)\|+\|\textbf{ttop}(\Delta_i)\|)\geq h$.  Then:
$$\text{wt}(\Delta)\leq C_1h\max(\|\textbf{tbot}(\Delta)\|,\|\textbf{ttop}(\Delta)\|)+\phi_{C_2}(\|\textbf{tbot}(\Delta)\|+\|\textbf{ttop}(\Delta)\|)$$

\end{lemma}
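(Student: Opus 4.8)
The hypothesis supplies subdiagrams $\Delta_1,\dots,\Delta_\ell$ with no $a$-cells whose combined trimmed top/bottom lengths dominate the height $h$ up to the factor $c_0$. The plan is to exploit the absence of $a$-cells in these pieces to control the height of $\Delta$, hence the total weight, by a bootstrapping argument resembling those of \Cref{revolving trapezia no boundary} and \Cref{M_5 standard time}-style estimates. First I would observe that, since each $\Delta_i$ consists of whole sectors of $\Delta$ and contains no $a$-cells, it is an honest trapezium over the canonical presentation, so by \Cref{trapezia are computations} each $\Delta_i$ corresponds to a reduced computation of $\textbf{M}$ in a base which is a subword of (a cyclic permutation of) $B$. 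I would then want to bound the height $h$ of $\Delta$ in terms of $\max(\|\textbf{tbot}(\Delta)\|,\|\textbf{ttop}(\Delta)\|)$; the idea is that the computation read off from some $\Delta_i$ (which has the same history, and hence the same length $h$, as $\Delta$) cannot be too long relative to the lengths of its top and bottom, and $\sum_i(\|\textbf{tbot}(\Delta_i)\|+\|\textbf{ttop}(\Delta_i)\|)\leq 2(\|\textbf{tbot}(\Delta)\|+\|\textbf{ttop}(\Delta)\|)$ by the partition estimate. Combining this with the hypothesis $c_0\sum_i(\cdots)\geq h$ forces $h\leq C\cdot\max(\|\textbf{tbot}(\Delta)\|,\|\textbf{ttop}(\Delta)\|)$ for a suitable constant; the revolving base case of \Cref{revolving bound} then controls the length of every intermediate $\theta$-band in $\Delta$.

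Second I would handle the $a$-cells of $\Delta$ separately. By \Cref{a-cells sector} every $a$-cell lies in a $(Q_0(1)Q_1(1))^{\pm1}$-, $Q_0(1)Q_0(1)^{-1}$-, or $Q_1(1)^{-1}Q_1(1)$-sector; in particular the $a$-cells are confined to sectors disjoint from the $\Delta_i$'s (which by hypothesis have none). I would then distinguish the two cases according to whether some maximal $a$-band ends on both an $a$-cell and a $(\theta,q)$-cell. If not, I can apply (a mild adaptation of) \Cref{revolving trapezia no boundary} — or re-run its proof — on the sub-$a$-trapezium containing the $a$-cells, using that the combined perimeters of the $a$-cells are at most $|\textbf{tbot}(\Delta)|_a+|\textbf{ttop}(\Delta)|_a$ and that $\phi_{C_1}$ is superadditive, to get the $a$-cell contribution bounded by $\phi_{C_1}(|\textbf{tbot}(\Delta)|_a+|\textbf{ttop}(\Delta)|_a)$. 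If some such band does exist, then $\Delta$ is big (its base is revolving, and I would need to argue that the presence of the required length forces the history to contain a controlled subword — here the height bound $h\leq C\max(\cdots)$ together with \Cref{M projected long history controlled} is the mechanism), and \Cref{big a-trapezium accepted} shows the top and bottom are, up to the special input sector, accepted configurations with $\ell\leq1$; one can then recut the diagram into a disk-free part plus $a$-cells and reduce to the previous case, or argue directly that the controlled structure bounds everything.

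Third, I would assemble the estimate: the $\theta$-band (i.e.\ $(\theta,q)$- and $(\theta,a)$-cell) contribution to $\text{wt}(\Delta)$ is at most $h$ times the maximal $\theta$-band length, which by \Cref{lengths}(d) and \Cref{revolving bound} is $\leq 3\max_j\|W_j\|\leq C\max(\|\textbf{tbot}(\Delta)\|,\|\textbf{ttop}(\Delta)\|)$, giving the $C_1h\max(\cdots)$ term after absorbing constants into $C_1$; the $a$-cell contribution gives the $\phi_{C_2}(\|\textbf{tbot}(\Delta)\|+\|\textbf{ttop}(\Delta)\|)$ term (with $C_2$ chosen $\gg C_1$ so the superadditivity and monotonicity of $\phi$ absorb the transposition overhead from re-running \Cref{a-cell a-trapezia no boundary}), noting $|\cdot|_a\leq\|\cdot\|$. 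The parameter choices $C_1<<C_2$ from \Cref{sec-parameters} make the constant-chasing go through.

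\textbf{Main obstacle.} The delicate point is the height bound $h\leq C\max(\|\textbf{tbot}(\Delta)\|,\|\textbf{ttop}(\Delta)\|)$: the subdiagrams $\Delta_i$ need not have revolving bases, so \Cref{revolving bound} does not apply directly to them, and one must instead use the right combination of the standard/pararevolving/one-machine bounds (\Cref{M standard bound}, \Cref{pararevolving}, \Cref{M projected long history}) together with the hypothesis $c_0\sum_i(\cdots)\geq h$ to close the loop. Controlling how the $a$-cell-free sectors' lengths feed back into the global height — and making sure the "big" case genuinely arises whenever a boundary-connecting $a$-band is present — is where the argument requires care; everything after that is routine weight bookkeeping.
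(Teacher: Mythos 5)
Your opening move — reading the hypothesis $c_0\sum_i(\|\textbf{tbot}(\Delta_i)\|+\|\textbf{ttop}(\Delta_i)\|)\geq h$ together with the partition estimate $\sum_i\|\textbf{tbot}(\Delta_i)\|\leq2\|\textbf{tbot}(\Delta)\|$ to get $h\leq 2c_0(\|\textbf{tbot}(\Delta)\|+\|\textbf{ttop}(\Delta)\|)$ — is exactly how the paper uses the hypothesis, and it is all the hypothesis is for (the extra discussion of the computations read off the $\Delta_i$ is unnecessary). After that, however, your argument has a genuine gap in both remaining steps. First, the dichotomy you propose is false: if some maximal $a$-band ends on both an $a$-cell and a $(\theta,q)$-cell, $\Delta$ need not be big, because bigness requires the history to contain a controlled subword, and nothing here forces that. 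Your proposed mechanism, \Cref{M projected long history controlled}, only produces a controlled subword when the height \emph{exceeds} $c_2\max(\|V_0\|,\|V_t\|)$, whereas here the height is bounded \emph{above} by the perimeter. Worse, the case "connecting $a$-band present, history not controlled, base not hyperfaulty" is precisely what \Cref{revolving a-trapezia not exceptional or big} handles, and that lemma's proof \emph{reduces to the present one} — so routing that case through bigness is circular. Second, you invoke \Cref{revolving bound} (via \Cref{lengths}(d)) to bound the lengths of the $\theta$-bands of $\Delta$, but that lemma applies to reduced computations, i.e.\ to trapezia; $\Delta$ contains $a$-cells, and in the problematic case the transposition device of \Cref{a-cell a-trapezia no boundary} is unavailable (its hypothesis is exactly that no $a$-band joins an $a$-cell to a $(\theta,q)$-cell), so there is no computation to which \Cref{revolving bound} applies.

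The paper avoids both problems with a direct band-tracing count that needs no case split on bigness. For the $a$-cells: by (MM2) and \Cref{a-band on same a-cell}, every maximal $a$-band ending on an $a$-cell has its other end on the trimmed top or bottom of its sector or on a $(\theta,q)$-cell of one of the at most $p\leq2$ maximal $Q_1(1)^{\pm1}$-bands bounding a sector with $a$-cells (here the revolving base is what forces $p\leq2$), so the total $a$-cell perimeter is at most $|\textbf{tbot}(\Delta)|_a+|\textbf{ttop}(\Delta)|_a+ph$; note the $+ph$ term, which your estimate omits and which is absorbed using the height bound from step one. For the $\theta$-bands: within each sector $\Delta''$, every $a$-edge of $\textbf{bot}(\pazocal{T}'')$ starts a maximal $a$-band with at least one end on $\textbf{tbot}(\Delta'')$, $\textbf{ttop}(\Delta'')$, or one of the two bounding $q$-bands, giving $|\textbf{bot}(\pazocal{T})|_a\leq\|\textbf{tbot}(\Delta)\|+\|\textbf{ttop}(\Delta)\|+4LNh$; the hypothesis-derived bound $h\leq2c_0(\|\textbf{tbot}(\Delta)\|+\|\textbf{ttop}(\Delta)\|)$ then yields the $C_1h\max(\cdot,\cdot)$ term. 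To repair your proof you would need to replace both your appeal to \Cref{revolving bound} and your "big versus not big" reduction with counting arguments of this kind.
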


\begin{proof}

Let $p$ be the number of maximal $q$-bands in $\Delta$ corresponding to the base letter $Q_1(1)^{\pm1}$ which bound a sector of $\Delta$ which contains $a$-cells.  As $B$ is revolving, $p\leq2$.

Let $\Delta_1',\dots,\Delta_r'$ be the sectors of $\Delta$ that contain $a$-cells.  By (MM2) and \Cref{a-band on same a-cell}, any maximal $a$-band in $\Delta$ that ends on an $a$-cell is contained in one $\Delta_i'$ and also ends either on $\textbf{tbot}(\Delta_i')$, on $\textbf{ttop}(\Delta_i')$, or on a $(\theta,q)$-cell of one of the $p$ maximal $q$-bands.  As the boundary of any $(\theta,q)$-cell of such a band contains at most one $a$-letter from the alphabet of the `special' input sector, it follows that the sum of the (combinatorial) perimeters of the $a$-cells in $\Delta$ is at most $\sum_{i=1}^r(|\textbf{tbot}(\Delta_i')|_a+|\textbf{ttop}(\Delta_i')|_a)+ph\leq 4c_0(\|\textbf{tbot}(\Delta)\|+\|\textbf{ttop}(\Delta)\|)$.

Since $\phi_{C_1}$ is superadditive, the sum of the weights of the $a$-cells in $\Delta$ must then be at most $\phi_{C_1}(4c_0(\|\textbf{tbot}(\Delta)\|+\|\textbf{ttop}(\Delta)\|))$.  But for any $n\in\N$, $\phi_{C_1}(4c_0n)=16c_0^2C_1n^2f_2(4c_0C_1n)$, which can be taken to be at most $\phi_{C_2}(n)$ by the parameter choices $C_2>>C_1>>c_0$.

Hence, it suffices to show that the sum of the lengths of the maximal $\theta$-bands in $\Delta$ is at most $C_1h\max(\|\textbf{tbot}(\Delta)\|,\|\textbf{ttop}(\Delta)\|)$.  In particular, given a maximal $\theta$-band $\pazocal{T}$ in $\Delta$, it suffices to show that the length of $\pazocal{T}$ is at most $C_1\max(\|\textbf{tbot}(\Delta)\|,\|\textbf{ttop}(\Delta)\|)$.

Given a sector $\Delta''$ of $\Delta$, let $\pazocal{T}''$ be the maximal $\theta$-band of $\Delta''$ which is a subband of $\pazocal{T}$.  For any $a$-edge of $\textbf{bot}(\pazocal{T}'')$, (MM2) and \Cref{a-cells sector} imply the corresponding maximal $a$-band of $\Delta''$ ends (at least once) on $\textbf{tbot}(\Delta'')$, on $\textbf{ttop}(\Delta'')$, or on a $(\theta,q)$-cell of $\Delta''$.  As $\Delta''$ contains two $q$-bands each of length $h$, this implies $|\textbf{tbot}(\Delta'')|_a\leq|\textbf{tbot}(\Delta'')|_a+|\textbf{ttop}(\Delta'')|_a+2h$.  Hence,
$$\|\textbf{tbot}(\pazocal{T})\|\leq\|\textbf{tbot}(\Delta)\|+\|\textbf{ttop}(\Delta)\|+2(\|B\|-1\|)h\leq\|\textbf{tbot}(\Delta)\|+\|\textbf{ttop}(\Delta)\|+4LNh$$
But $h\leq 2c_0(\|\textbf{tbot}(\Delta)\|+\|\textbf{ttop}(\Delta)\|)$ by hypothesis, so that the statement follows from the parameter choices.

%
%

\end{proof}

\begin{lemma} \label{revolving a-trapezia not exceptional or big}

Let $\Delta$ be an $a$-trapezium with revolving base $B$ and history $H$.  If $\Delta$ is neither exceptional nor big, then for $h=\|H\|$:
$$\text{wt}(\Delta)\leq C_1h\max(\|\textbf{tbot}(\Delta)\|,\|\textbf{ttop}(\Delta)\|)+\phi_{C_2}(\|\textbf{tbot}(\Delta)\|+\|\textbf{ttop}(\Delta)\|)$$

\end{lemma}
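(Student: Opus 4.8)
The strategy is to reduce the general case to the three special configurations for which weight bounds have already been established: \Cref{revolving trapezia no boundary} (the case where no maximal $a$-band joins an $a$-cell to a $(\theta,q)$-cell), \Cref{revolving a-trapezia (1)} (the case where there are many $a$-cell-free sectors, so the height is controlled by the trimmed perimeters), and a third ingredient still to be cited governing the structure of the history $H$. Since $\Delta$ is neither big nor exceptional, $H$ contains no controlled subword on a revolving base; by \Cref{M controlled} and the long-history estimates (\Cref{M projected long history}, \Cref{M projected long history controlled}, \Cref{revolving bound}), this forces either $h$ to be small relative to $\max(\|\textbf{tbot}(\Delta)\|,\|\textbf{ttop}(\Delta)\|)$ — in which case the total length of the $\theta$-bands is immediately bounded using Lemma \ref{lengths}(d) together with \Cref{revolving bound} on the tape-word lengths — or else $\Delta$ decomposes (after passing to a cyclic permutation, which by the remark preceding \Cref{revolving a-trapezia (1)} does not change the weight) into subdiagrams whose $a$-cells are confined and whose heights sum appropriately.

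The key steps, in order, would be: (1) If $H$ is short relative to the trimmed perimeters, say $h \le c_0(\|\textbf{tbot}(\Delta)\| + \|\textbf{ttop}(\Delta)\|)$, then \Cref{revolving a-trapezia (1)} applies with $\ell = 0$ (or a single trivial $\Delta_i$), giving the bound directly. (2) Otherwise $h$ is large, and one invokes the structural dichotomy for long reduced computations. Because $B$ is revolving but $H$ has no controlled subword, \Cref{M projected long history controlled} (applied sector-by-sector, or rather to the computation read off the trapezium via \Cref{trapezia are computations}) shows that no subcomputation of length $\ge 0.4 h$ exists without a controlled subword — but since $\Delta$ is not big, there is no controlled subword at all, forcing $h < c_2 \max(\|W_0\|, \|W_h\|) = c_2\max(\|\textbf{tbot}(\Delta)\|,\|\textbf{ttop}(\Delta)\|)$ after applying \Cref{revolving bound} to translate between $a$-length and length. (3) With $h$ thus bounded, every maximal $\theta$-band has length $\le 3\|\textbf{tbot}(\pazocal{T})\| \le 3 c_4 \max(\|\textbf{tbot}(\Delta)\|,\|\textbf{ttop}(\Delta)\|)$ by Lemma \ref{lengths}(d) and \Cref{revolving bound}, so the total $(\theta,q)$- and $(\theta,a)$-cell contribution is at most $C_1 h \max(\|\textbf{tbot}(\Delta)\|,\|\textbf{ttop}(\Delta)\|)$ for suitable $C_1$. (4) For the $a$-cell contribution: by \Cref{a-cells sector} all $a$-cells sit in $(Q_0(1)Q_1(1))^{\pm1}$-, $Q_0(1)Q_0(1)^{-1}$-, or $Q_1(1)^{-1}Q_1(1)$-sectors, and since $B$ is revolving there are at most two such maximal $q$-bands; by \Cref{a-band on same a-cell} and (MM2) every maximal $a$-band ending on an $a$-cell also ends on $\textbf{tbot}(\Delta)$, $\textbf{ttop}(\Delta)$, or a $(\theta,q)$-cell of one of these boundedly many $q$-bands, bounding the sum of perimeters of $a$-cells by $|\textbf{tbot}(\Delta)|_a + |\textbf{ttop}(\Delta)|_a + 2h \le c$-const$\cdot(\|\textbf{tbot}(\Delta)\| + \|\textbf{ttop}(\Delta)\|)$; superadditivity of $\phi_{C_1}$ then subsumes this into $\phi_{C_2}(\|\textbf{tbot}(\Delta)\| + \|\textbf{ttop}(\Delta)\|)$ via the parameter choice $C_2 \gg C_1 \gg c_0$.

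The main obstacle I anticipate is step (2): carefully translating the purely $S$-machine statements about step histories and controlled subwords (\Cref{M projected long history controlled}, \Cref{revolving bound}) into a bound on the \emph{height} $h$ of the $a$-trapezium, when the trapezium itself may contain $a$-cells that distort the relationship between the length of a $\theta$-band and $\|W_i\|$. The fix is to observe that the history and base of an $a$-trapezium are exactly those of the underlying trapezium (the $a$-cells lie between consecutive $\theta$-bands and do not affect the history), so \Cref{trapezia are computations} still yields a genuine reduced computation $W_0 \to \dots \to W_h$ of $\textbf{M}$ with revolving base $B$ to which \Cref{revolving bound} and \Cref{M projected long history controlled} apply verbatim; the $\|W_i\|$ then bound the lengths of the \emph{trimmed} bands, while the full bands differ from these by at most $3\|B\| \le 3LN$ extra $a$-edges per band — a bounded factor absorbed by the constants. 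A secondary subtlety is the exact form of the hypothesis "neither exceptional nor big," which must be unpacked to guarantee that the no-controlled-subword conclusion is available precisely in the regime where the base is revolving (not merely hyperfaulty); the hyperfaulty-base case with appropriate sub-base restrictions is exactly what "exceptional" excludes, and one should check that a hyperfaulty base that is not exceptional still contains a $(P_i'(j)R_i'(j))^{\pm1}$- or $(P_i''(j)R_i''(j))^{\pm1}$-sector, which is enough to re-run step (2) via \Cref{one-machine hyperfaulty} and \Cref{hyperfaulty} in place of \Cref{revolving bound}.
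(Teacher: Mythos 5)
Your overall architecture matches the paper's: reduce via \Cref{revolving trapezia no boundary} to the case where some maximal $a$-band joins an $a$-cell to a $(\theta,q)$-cell, exploit ``not exceptional'' to find structure in the base, bound the height $h$ using ``not big'', and finish with \Cref{revolving a-trapezia (1)}. But the mechanism you propose for bounding $h$ (your step (2)) has a genuine gap, and your proposed fix for the ``main obstacle'' is incorrect. An $a$-trapezium containing $a$-cells does not correspond to a single reduced computation on its full base: consecutive maximal $\theta$-bands are separated by $a$-cells, so $\textbf{ttop}(\pazocal{T}_i)\neq\textbf{tbot}(\pazocal{T}_{i+1})$ and there is no ``underlying trapezium'' to which \Cref{trapezia are computations} applies; nor is the discrepancy between the trimmed bands and the words $W_i$ bounded by $3\|B\|$ per band, since the intervening $a$-cells can contribute arbitrarily many $a$-edges (bounding that contribution already requires a bound on $h$, so the argument is circular). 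Moreover \Cref{M projected long history controlled} is stated only for computations with base $\{t(i)\}B_4(i)$, $i\geq2$, which a revolving base of an $a$-trapezium need not contain, and \Cref{revolving a-trapezia (1)} cannot be invoked ``with $\ell=0$'': its hypothesis demands actual $a$-cell-free subdiagrams whose trimmed perimeters dominate $h$.

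The paper's route around both problems is the one your last paragraph gestures at but does not complete. Since the step history contains $(1)_1$ (by \Cref{a-cells sector}) and $\Delta$ is not exceptional, a cyclic permutation of $B$ contains a four-letter subword $B'$ of the form $(Q_{i,\ell}'(j)P_i'(j)R_i'(j)Q_{i,r}'(j))^{\pm1}$ or $(Q_{i,\ell}''(j)P_i''(j)R_i''(j)Q_{i,r}''(j))^{\pm1}$; the corresponding subdiagram $\Delta'$ is $a$-cell-free by \Cref{a-cells sector}, hence a genuine trapezium, and \Cref{trapezia are computations} yields a reduced computation $W_0\to\dots\to W_h$ with base $B'$ and history $H$. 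The height bound then comes from \Cref{M_5 PR}, not \Cref{M projected long history controlled}: ``not big'' means $H$ contains no controlled subword, so each maximal one-machine subcomputation with this base satisfies $s-r\leq7\max(|W_r|_a,|W_s|_a)+6$, and a short analysis of the possible $\sigma(s)_2^{-1}\sigma(s)_1$ transition between the (at most two) one-machine pieces gives $h\leq7(|W_0|_a+|W_h|_a)+14\leq c_0(\|W_0\|+\|W_h\|)$. With that inequality in hand, \Cref{revolving a-trapezia (1)} applies with $\Delta_1=\Delta'$ and yields the stated bound. Without the extraction of $B'$ and the use of \Cref{M_5 PR}, your step (2) does not go through.
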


\begin{proof}

By \Cref{revolving trapezia no boundary} and the parameter choices $C_2>>C_1>>c_4$, it suffices to assume that there exists a maximal $a$-band in $\Delta$ which ends on both an $a$-cell and on a $(\theta,q)$-cell.  \Cref{a-cells sector} then implies the step history of $\Delta$ contains the letter $(1)_1$.

Assuming $\Delta$ is not exceptional, it then follows that $B$ contains a two-letter subword of the form $(P_i'(j)R_i'(j))^{\pm1}$ or of the form $(P_i''(j)R_i''(j))^{\pm1}$.  Further, as every rule with step history locks the $Q_{i,\ell}'(j)P_i'(j)$-, $R_i'(j)Q_{i,r}'(j)$-, $Q_{i,\ell}''(j)P_i''(j)$-, and $R_i''(j)Q_{i,r}''(j)$-sectors, there must exist a subword $B'$ of a cyclic permutation of $B$ which is of the form $(Q_{i,\ell}'(j)P_i'(j)R_i'(j)Q_{i,r}'(j))^{\pm1}$ or of the form $(Q_{i,\ell}''(j)P_i''(j)R_i''(j)Q_{i,r}''(j))^{\pm1}$.

Let $\Delta'$ be the maximal subdiagram of a cyclic permutation of $\Delta$ which is an $a$-trapezium with base $B'$ and history $H$.  \Cref{a-cells sector} then implies $\Delta'$ is a trapezium, so that \Cref{trapezia are computations} yields a reduced computation $\pazocal{C}:W_0\to\dots\to W_h$ with base $B'$ and history $H$.  Hence, by \Cref{revolving a-trapezia (1)} it suffices to show that $h\leq c_0(\|W_0\|+\|W_h\|)$.

Now, as the step history of $\pazocal{C}$ must contain the letter $(1)_1$, there exists a maximal nonempty subcomputation $\pazocal{C}_1:W_r\to\dots\to W_s$ of $\pazocal{C}$ which is a one-machine computation of the first machine such that the step history of $\pazocal{C}_1$ does not contain a letter of the form $(s)_1^{\pm1}$.  Moreover, as we assume $\Delta$ is not big, $H$ cannot contain a controlled subword.  

Hence, $\pazocal{C}_1$ can be identified with a reduced computation of $\textbf{M}_5$ satisfying the hypotheses of \Cref{M_5 PR}.  As such, $s-r\leq7\max(|W_r|_a,|W_s|_a)+6$ and neither $W_r$ nor $W_s$ is $\sigma(a)_1$-admissible.

Taking $c_0\geq7$ and perhaps passing to the inverse computation, it thus suffices to assume $r>0$.  In particular, $W_r$ must be $\sigma(s)_1^{-1}$-admissible, and so can be identified with an admissible subword of a tame input configuration of $\textbf{M}_5$.  \Cref{M_5 PR} then implies $W_s$ cannot be $\sigma(s)_1^{-1}$-admissible and $|W_r|_a\leq|W_s|_a$.  In particular, $s=h$ and, by extension, $h-r\leq7|W_h|_a+6$.

Taking $c_0\geq7$ implies $r\geq3$, while the subcomputation $W_{r-2}\to W_{r-1}\to W_r$ of $\pazocal{C}$ must have history $\sigma(s)_2^{-1}\sigma(s)_1$.  

Let $\pazocal{C}_2:W_\ell\to\dots\to W_{r-2}$ be the maximal subcomputation which is a one-machine computation of the second machine whose step history does not contain a letter of the form $(s)_2^{\pm1}$.  Then as above the inverse computation of $\pazocal{C}_2$ can be identified with a reduced computation of $\textbf{M}_5$ satisfying the hypotheses of \Cref{M_5 PR}, so that $\ell=0$ and $r-2\leq7|W_0|_a+6$.  

Thus, $h\leq7(|W_0|_a+|W_t|_a)+14$, so that the statement again follows for $c_0\geq7$.

\end{proof}

\begin{lemma} \label{exceptional a-trapezia (21)(1)}

Let $\Delta$ be an exceptional $a$-trapezium with base $B$ and history $H$.  Suppose $H$ has prefix $\sigma(21)_1$.  Then:

\begin{enumerate}[label=(\alph*)]

\item $\|\textbf{tbot}(\Delta)\|\leq c_0\|\textbf{ttop}(\Delta)\|$

\item $\text{wt}(\Delta)\leq C_2h\|\textbf{ttop}(\Delta)\|+\phi_{C_3}(\|\textbf{ttop}(\Delta)\|)$ for $h=\|H\|$.

\end{enumerate}

\end{lemma}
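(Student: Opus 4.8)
The plan is to exploit the fact that an exceptional $a$-trapezium whose history begins with $\sigma(21)_1$ must behave very rigidly: the base $B$ is hyperfaulty and contains no subword of the form $(P_i'(j)R_i'(j))^{\pm1}$ or $(P_i''(j)R_i''(j))^{\pm1}$, so by \Cref{locked sectors} the rules following $\sigma(21)_1$ cannot be transition rules of many types, and the step history of $\Delta$ is forced into a narrow window. First I would pass to the trapezium $\Delta'$ obtained by deleting all $a$-cells via \Cref{a-cell a-trapezia no boundary} where possible, but more importantly restrict attention to the sectors carrying the $a$-cells; by \Cref{a-cells sector}, every $a$-cell lies in a $(Q_0(1)Q_1(1))^{\pm1}$-, $Q_0(1)Q_0(1)^{-1}$-, or $Q_1(1)^{-1}Q_1(1)$-sector, and the presence of a maximal $a$-band ending on both an $a$-cell and a $(\theta,q)$-cell forces the step history to contain $(1)_1$. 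Combined with the prefix $\sigma(21)_1$, the step history of $\Delta$ is a subword of something like $(21)_1(1)_1(10)_1(0)_1 \dots$ — and since $B$ is hyperfaulty I can apply \Cref{one-step hyperfaulty} and \Cref{one-machine hyperfaulty} to control the $a$-lengths across the individual step-history blocks.

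The engine for part (a) is the same mechanism used in \Cref{barM_4 faulty}, \Cref{M_5 faulty}, and \Cref{hyperfaulty}: reading the computation obtained from $\Delta$ via \Cref{trapezia are computations} (applied to the trapezium underlying the relevant sector), the first maximal one-step block with step history $(1)_1$ satisfies the hypotheses of \Cref{M_4 restriction}(b) — because the tape words in the relevant historical sectors at the start of this block are $\sigma(12)$-admissible, hence have no letters from the right historical copy of $\Phi_1$. \Cref{M_4 restriction} then gives $|W_0|_a \le (2\|B\| - 3)|W_s|_a$, and since hyperfaulty bases have length at most $2N+1$, the parameter choice $c_0 \gg N$ yields $|W_0|_a \le c_0 |W_s|_a$ for the end of that block; \Cref{one-step hyperfaulty} (with $c_1 \gg c_0$) then propagates the bound to the end of $H$, giving $\|\textbf{tbot}(\Delta)\| \le c_0 \|\textbf{ttop}(\Delta)\|$ after absorbing the difference between $a$-length and combinatorial length using that $B$ has bounded length. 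Iterating through at most a bounded number of step-history blocks (the step history is a subword of a fixed-length word) keeps the accumulated constant under $c_0$ by a fresh parameter choice.

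For part (b), once (a) is in hand I would bound $\text{wt}(\Delta)$ by splitting into the $a$-cell contribution and the $(\theta,q)$- plus $(\theta,a)$-cell contribution. The $a$-cells: by \Cref{a-band on same a-cell} and (MM2), each $a$-edge on the boundary of an $a$-cell corresponds to an $a$-edge of $\textbf{tbot}(\Delta)$, of $\textbf{ttop}(\Delta)$, or of a bounded number of $(\theta,q)$-cells of the $p \le $ (bounded) maximal $q$-bands bounding sectors with $a$-cells; since each such $(\theta,q)$-cell boundary has at most one $a$-letter of the special alphabet, the sum of the perimeters of the $a$-cells is at most $\|\textbf{tbot}(\Delta)\| + \|\textbf{ttop}(\Delta)\| + (\text{bounded})\cdot h$, which by part (a) and $h \le c_0(\|\textbf{tbot}(\Delta)\| + \|\textbf{ttop}(\Delta)\|)$ (this last inequality I would derive just as in \Cref{revolving a-trapezia not exceptional or big} by identifying the underlying computation with one of $\textbf{M}_5$ and applying \Cref{M_5 PR}, since $\Delta$ exceptional forbids a controlled subword) is $O(\|\textbf{ttop}(\Delta)\|)$; superadditivity of $\phi_{C_1}$ and the parameter choices $C_3 \gg C_2 \gg C_1 \gg c_0$ then bound their total weight by $\phi_{C_3}(\|\textbf{ttop}(\Delta)\|)$. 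The $(\theta,q)$- and $(\theta,a)$-cells: each maximal $\theta$-band has length at most $3\|\textbf{tbot}(\cdot)\|$ of the band by \Cref{lengths}(d), and using (MM2)/\Cref{a-cells sector} exactly as in \Cref{revolving a-trapezia (1)} each such length is at most $\|\textbf{tbot}(\Delta)\| + \|\textbf{ttop}(\Delta)\| + 2(\|B\|-1)h \le C_2\|\textbf{ttop}(\Delta)\|$ via part (a) and $\|B\| \le 2N+1$; summing over the $h$ bands gives the $C_2 h \|\textbf{ttop}(\Delta)\|$ term.

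The main obstacle I anticipate is part (a): establishing $\|\textbf{tbot}(\Delta)\| \le c_0\|\textbf{ttop}(\Delta)\|$ requires carefully tracking which transition rules can occur in $H$ after the forced prefix $\sigma(21)_1$ — one must rule out the step history wandering into blocks where $a$-length could grow in both directions (which would destroy any one-sided bound), and this is exactly where the hyperfaulty-plus-no-$(P_i'R_i')$-subword hypothesis does its work through \Cref{M step history (0)}, \Cref{M_2 bar step history}, and \Cref{M_5 step history 1}. Getting the bookkeeping of the $a$-length-versus-combinatorial-length discrepancy right across the (bounded number of) step-history segments, while keeping every accumulated multiplicative constant below $c_0$ by invoking the parameter hierarchy at the end, is the delicate part; the weight estimate in (b) is then a routine repackaging of the arguments in \Cref{revolving a-trapezia (1)} and \Cref{revolving a-trapezia not exceptional or big}.
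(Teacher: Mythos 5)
Your overall shape (force the step history into $(21)_1(1)_1(10)_1(0)_1$, exploit admissibility of the word just above the bottom in an unreduced historical-type sector, then count $a$-bands and finish via \Cref{revolving a-trapezia (1)}) matches the paper's, but the quantitative engine for part (a) has a genuine gap. First, the propagation step is wrong: \Cref{one-step hyperfaulty} and \Cref{one-machine hyperfaulty} give the two-sided bound $|W_i|_a\leq c(|W_0|_a+|W_t|_a)$ on \emph{intermediate} words, so from ``$\|\textbf{tbot}\|\leq c_0\cdot(\text{end of the first }(1)_1\text{ block})$'' they cannot produce the one-sided conclusion $\|\textbf{tbot}(\Delta)\|\leq c_0\|\textbf{ttop}(\Delta)\|$ -- plugging them in only yields $\textrm{bottom}\leq C(\textrm{bottom}+\textrm{top})$, which is vacuous. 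What the paper actually does is bound the \emph{height} by the top alone: it first shows (from exceptionality, the forced presence of $(Q_0(1)Q_1(1))^{\pm1}$, and the sectors locked by $\sigma(12)_1$ and by all rules) that a cyclic permutation of $B$ contains $Q_{s,r}''(L)^{-1}R_s''(L)^{-1}R_s''(L)Q_{s,r}''(L)$; the corresponding subdiagram is a genuine trapezium, and in the associated computation the $(R_s''(L))^{-1}R_s''(L)$-sector grows monotonically through the $(1)_1$ block (\Cref{one alphabet historical words unreduced}, so the block cannot return to a $\sigma(12)_1$-admissible word) and through the terminal $(0)_1$ block (\Cref{primitive unreduced}, \Cref{multiply one letter}), giving $h\leq|V_h|_a+2$. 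Then, for each sector not covered by that trapezium, every maximal $a$-band ending on $\textbf{tbot}$ must end on $\textbf{ttop}$ or on one of the two bounding $q$-bands -- here it is essential that $\sigma(21)_1$ locks the special input sector, so no $a$-band emanating from an $a$-cell can reach the bottom -- yielding $|\textbf{tbot}|_a\leq|\textbf{ttop}|_a+2h$ sectorwise, and (a) follows with $c_0\gg N$. Your sketch never isolates this height-versus-top bound nor the locked-special-sector observation, and these are precisely the points where the hypothesis ``$H$ has prefix $\sigma(21)_1$'' does its work.

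Two further problems: your appeal to \Cref{M_5 PR} (``derive $h\leq c_0(\|\textbf{tbot}\|+\|\textbf{ttop}\|)$ just as in \Cref{revolving a-trapezia not exceptional or big}'') is unavailable, since that lemma requires a base of the form $Q_{i,\ell}'P_i'R_i'Q_{i,r}'$ or $Q_{i,\ell}''P_i''R_i''Q_{i,r}''$, i.e., exactly the $(P_i'R_i')^{\pm1}$/$(P_i''R_i'')^{\pm1}$ subwords that exceptionality forbids; moreover such a bound involves the bottom and so cannot feed the top-only estimates in (a) and (b). And your admissibility claim is reversed: a $\sigma(12)_1$-admissible word has no letters from the \emph{left} historical copy of $\Phi_1$ (its domain is the right copy of $\Phi_1^+$), so the relevant case is \Cref{M_4 restriction}(a) with a subword of the form $(R_i'')^{-1}R_i''$ or $P_i'(P_i')^{-1}$ -- whose existence in $B$ you would still have to establish, as the paper does. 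Part (b), granting (a) and a correct bound $h\lesssim\|\textbf{ttop}(\Delta)\|$, is indeed the routine repackaging via \Cref{revolving a-trapezia (1)} that you describe.
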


\begin{proof}

Since $H$ contains a letter of the form $\sigma(12)_1^{\pm1}$, Lemmas \ref{locked sectors} and \ref{trapezia are computations} imply that $B$ contains no two-letter subword of the form $Q_1(1)^{-1}Q_1(1)$.  The definition of exceptional $a$-trapezia and \Cref{a-cells sector} then imply the step history of $\Delta$ must contain the letter $(1)_1$ and $B$ must contain a two-letter subword of the form $(Q_0(1)Q_1(1))^{\pm1}$.  

As all rules lock the $\{t(1)\}Q_0(1)$- and $Q_{s,r}''(L)\{t(1)\}$-sectors and $\sigma(12)_1$ locks the $R_s''(L)Q_{s,r}''(L)$-sector, a cyclic permutation of $B$ must contain the subword $B'=Q_{s,r}''(L)^{-1}R_s''(L)^{-1}R_s''(L)Q_{s,r}''(L)$.

Let $\Delta'$ be the maximal subdiagram of a cyclic permutation of $\Delta$ which is an $a$-trapezium with base $B'$ and history $H$.  \Cref{a-cells sector} then implies $\Delta'$ is a trapezium, so that \Cref{trapezia are computations} yields a reduced computation $\pazocal{D}:V_0\to\dots\to V_h$ with base $B'$ and history $H$.

Let $\pazocal{D}_1:V_1\to\dots\to V_x$ be the maximal subcomputation with step history $(1)_1$.  The restriction of $\pazocal{D}_1$ to the $R_s''(L)^{-1}R_s''(L)$-sector then satisfies the hypotheses of \Cref{one alphabet historical words unreduced}, so that $x-1\leq\frac{1}{2}|V_x|_a$ and $V_x$ is not $\sigma(12)_1$-admissible.

So, if $h>x$, then there exists a maximal (perhaps empty) subcomputation $\pazocal{D}_0:V_{x+1}\to\dots\to V_y$ with step history $(0)_1$.  As $V_{x+1}$ is $\sigma(01)_1$-admissible, though, \Cref{primitive unreduced} implies $y=h$ and $|V_{x+1}|_a\leq|V_h|_a$.  Moreover, the restriction of $\pazocal{D}_0$ to the $(R_s''(L)Q_{s,r}''(L))^{\pm1}$-sector satisfies the hypotheses of \Cref{multiply one letter}, so that $h-x-1\leq\frac{1}{2}|V_h|_a$.

Hence, $h\leq|V_h|_a+2$.

Now, let $\Delta''$ be a sector in $\Delta$ with no copy contained in $\Delta'$.  By \Cref{M_a no annuli 1}, no maximal $a$-band of $\Delta''$ can end on $\textbf{tbot}(\Delta'')$ twice.  Moreover, since $\sigma(12)_1$ locks the `special' input sector, no maximal $a$-band of $\Delta''$ that ends on $\textbf{tbot}(\Delta'')$ can also end on an $a$-cell.  

Hence, any maximal $a$-band of $\Delta''$ that ends on $\textbf{tbot}(\Delta'')$ must also end on either $\textbf{ttop}(\Delta'')$ or on a $(\theta,q)$-cell in $\Delta''$, {\frenchspacing i.e. $|\textbf{tbot}(\Delta'')|_a\leq|\textbf{ttop}(\Delta'')|_a+2h$}.

As $B$ is hyperfaulty, (a) then follows from the parameter choice $c_0>>N$.  As a result, (b) follows from \Cref{revolving a-trapezia (1)} and the parameter choices $C_3>>C_2>>C_1>>c_0$.

\end{proof}

\begin{lemma} \label{revolving a-trapezia exceptional (12)}

Let $\Delta$ be an exceptional $a$-trapezium with base $B$ and history $H$.  Suppose $H$ contains a letter of the form $\sigma(12)_1^{\pm1}$.  Then for $h=\|H\|$:
$$\text{wt}(\Delta)\leq C_2h\max(\|\textbf{tbot}(\Delta)\|,\|\textbf{ttop}(\Delta)\|)+\phi_{C_3}(\|\textbf{tbot}(\Delta)\|+\|\textbf{ttop}(\Delta)\|)$$

\end{lemma}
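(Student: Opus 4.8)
The plan is to reduce \Cref{revolving a-trapezia exceptional (12)} to the already-established \Cref{exceptional a-trapezia (21)(1)} by analyzing the step history of $\Delta$ and, wherever necessary, cutting $\Delta$ along maximal $q$-bands into a bounded number of subdiagrams, each of which is itself an $a$-trapezium to which a previous lemma applies. First I would observe that, since $H$ contains a letter of the form $\sigma(12)_1^{\pm1}$, \Cref{locked sectors} and \Cref{trapezia are computations} force $B$ to contain no two-letter subword of the form $Q_1(1)^{-1}Q_1(1)$, and (as in the proof of \Cref{exceptional a-trapezia (21)(1)}) the definition of exceptional, together with \Cref{a-cells sector}, forces the step history to contain $(1)_1$ and $B$ to contain a subword of the form $(Q_0(1)Q_1(1))^{\pm1}$. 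In particular a cyclic permutation of $B$ contains the subword $B'=Q_{s,r}''(L)^{-1}R_s''(L)^{-1}R_s''(L)Q_{s,r}''(L)$, since all rules lock the $\{t(1)\}Q_0(1)$- and $Q_{s,r}''(L)\{t(1)\}$-sectors and $\sigma(12)_1$ locks the $R_s''(L)Q_{s,r}''(L)$-sector.

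Next I would factor $H$ at its occurrences of $\sigma(12)_1^{\pm1}$ (and, if convenient, at $\sigma(s)_1^{\pm1}$, $\sigma(s)_2^{\pm1}$). By the step-history analysis that underlies \Cref{M step history} and \Cref{long step history}, $H$ can contain at most a bounded number of such transition letters, so $\Delta$ decomposes along its maximal $\theta$-bands into a bounded number of consecutive $a$-subtrapezia $\Delta^{(1)},\dots,\Delta^{(k)}$, each with base $B$ and history a subword of $H$ whose step history either: (i) contains no $\sigma(12)_1^{\pm1}$, in which case it is neither exceptional-with-$(12)$ nor big — provided we also rule out controlled subwords — and hence \Cref{revolving a-trapezia not exceptional or big} applies after checking the base is still "not exceptional" in the relevant sense, or (ii) has prefix or suffix $\sigma(21)_1$, so that \Cref{exceptional a-trapezia (21)(1)} (possibly applied to the inverse $a$-trapezium, since the weight, base and history-length are inversion-invariant and $\|\textbf{tbot}\|\leftrightarrow\|\textbf{ttop}\|$) gives the bound $\text{wt}(\Delta^{(j)})\leq C_2 h_j\|\textbf{ttop}(\Delta^{(j)})\|+\phi_{C_3}(\|\textbf{ttop}(\Delta^{(j)})\|)$ together with the one-sided comparison $\|\textbf{tbot}(\Delta^{(j)})\|\leq c_0\|\textbf{ttop}(\Delta^{(j)})\|$. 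Stitching these together: the chain of comparisons $\|\textbf{tbot}(\Delta^{(j)})\|\leq c_0\|\textbf{ttop}(\Delta^{(j)})\|=c_0\|\textbf{tbot}(\Delta^{(j+1)})\|$ (trimmed tops and bottoms match along shared $q$-bands) telescopes, so each intermediate trimmed length is bounded by $c_0^{k}\max(\|\textbf{tbot}(\Delta)\|,\|\textbf{ttop}(\Delta)\|)$; since $k$ is bounded by a constant depending only on the machine, the parameter choices $C_3\gg C_2\gg C_1\gg c_0\gg N$ absorb the constant, and summing $\sum_j h_j=h$ and using superadditivity of $\phi_{C_3}$ (with $\sum_j\|\textbf{ttop}(\Delta^{(j)})\|\leq 2\|\textbf{tbot}(\Delta)\|+2\|\textbf{ttop}(\Delta)\|$, as in the discussion of partitions) yields the claimed inequality.

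The main obstacle I anticipate is handling the subtrapezia in case (i) that could a priori be \emph{big}: a subword of $H$ with no $\sigma(12)_1^{\pm1}$ might still contain a controlled subword, in which case \Cref{revolving a-trapezia not exceptional or big} does not directly apply and one instead must invoke \Cref{big a-trapezium accepted} to pass to accepted configurations $W_0,W_1$ with $\ell(W_0),\ell(W_1)\le 1$ and then rerun the time-estimate argument of \Cref{revolving a-trapezia not exceptional or big} (via \Cref{M_5 PR} and the one-machine bounds) to conclude $h_j\le c_0(\|\textbf{tbot}(\Delta^{(j)})\|+\|\textbf{ttop}(\Delta^{(j)})\|)$ and hence apply \Cref{revolving a-trapezia (1)}. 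I would need to verify carefully that, in the presence of the $\sigma(12)_1$ letter somewhere in the \emph{full} history $H$, no maximal subtrapezium can simultaneously be big and require a separate argument — the controlled subword forces a one-machine computation sandwiched by $\sigma(34)$ and $\sigma(45)$ (or their inverses), which by \Cref{M_5 step history} and the step-history analysis cannot coexist in the same maximal one-machine block with a $\sigma(12)_1$ transition, so the "big" and "exceptional-$(12)$" regimes are genuinely separated by the transition letters at which we cut. Checking this separation, and that the resulting pieces all have either revolving-but-not-big bases or hyperfaulty bases with the $\sigma(21)_1$ prefix/suffix structure, is the delicate bookkeeping step; everything after it is routine length arithmetic and parameter juggling.
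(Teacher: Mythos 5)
Your skeleton is the right one and matches the paper's: cut $\Delta$ along the $\theta$-bands corresponding to the occurrences of $\sigma(12)_1^{\pm1}$, handle the outer pieces (whose histories have prefix $\sigma(21)_1$ or suffix $\sigma(12)_1$) by \Cref{exceptional a-trapezia (21)(1)} and its reflection, and handle the middle piece separately. But there is a genuine gap in how you treat the middle piece. You propose to apply \Cref{revolving a-trapezia not exceptional or big} "after checking the base is still not exceptional in the relevant sense" — this cannot work as stated, because exceptionality is determined by the base (which is the same hyperfaulty $B$, unchanged by a horizontal cut) together with the existence of a maximal $a$-band joining an $a$-cell to a $(\theta,q)$-cell; if the middle piece contained such a configuration it would itself be exceptional and you would be reasoning in a circle. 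The missing idea is that the middle piece contains \emph{no $a$-cells at all}: since its endpoints are $\sigma(21)_1$- resp.\ $\sigma(12)_1$-admissible and \Cref{primitive unreduced} rules out a round trip through step $(4)_1$ (the base being unreduced), its step history consists only of the letters $(2)_1,(3)_1,(4)_1$ and the transitions between them — in particular it never contains $(1)_1$ — so \Cref{a-cells sector} forces it to be a genuine trapezium. The paper then bounds its area directly via \Cref{trapezia are computations} and \Cref{one-machine hyperfaulty}, which is what makes the whole argument close.

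Two further points. Your "main obstacle" about middle pieces that might be \emph{big} is moot: a big $a$-trapezium has a controlled subword in its history, and by \Cref{M controlled} a controlled history forces the base to be reduced, whereas $B$ is hyperfaulty (hence unreduced); so no subdiagram of $\Delta$ can be big and the detour through \Cref{big a-trapezium accepted} and \Cref{M_5 PR} is unnecessary. Finally, your telescoping estimate with $c_0^{k}$ silently assumes $k$ is bounded; that bound is not free — it comes from the same step-history analysis (after the first $\sigma(21)_1$ the step history is a prefix of $(1)_1(10)_1(0)_1$, so no further $\sigma(12)_1^{\pm1}$ can occur), which shows there are at most three pieces. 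Once that is in place, no telescoping is needed: \Cref{exceptional a-trapezia (21)(1)} already bounds $\|\textbf{ttop}(\Delta_2)\|$ by $c_0\|\textbf{ttop}(\Delta)\|$ and $\|\textbf{tbot}(\Delta_2)\|$ by $c_0\|\textbf{tbot}(\Delta)\|$ directly.
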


\begin{proof}

As in the proof of \Cref{exceptional a-trapezia (21)(1)}, the presence of a letter of the form $\sigma(12)^{\pm1}$ in $H$ implies the existence of a subword $B'=Q_{s,r}''(L)^{-1}R_s''(L)^{-1}R_s''(L)Q_{s,r}''(L)$ in a cyclic permutation of $B$.  Letting $\Delta'$ be the maximal subdiagram of a cyclic permutation of $\Delta$ which is an $a$-trapezium with base $B'$ and history $H$, then Lemmas \ref{a-cells sector} and \ref{trapezia are computations} again yield a reduced computation $\pazocal{D}:V_0\to\dots\to V_h$ with base $B'$ and history $H$ corresponding to $\Delta'$.

Perhaps passing to the inverse computation, suppose the transition $V_s\to V_{s+1}$ has history $\sigma(21)_1$.  Then as in the proof of \Cref{exceptional a-trapezia (21)(1)}, the step history of the subcomputation $V_{s+1}\to\dots\to V_h$ is a prefix of $(1)_1(10)_1(0)_1$.

Now, let $\pazocal{D}_2:V_r\to\dots\to V_s$ be the maximal subcomputation whose history $H_2$ does not contain any letter of the form $\sigma(12)_1^{\pm1}$.  By \Cref{primitive unreduced}, the step history of $\pazocal{D}_2$ cannot contain the subword $(54)_1(4)_1(43)_1$ or $(34)_1(4)_1(43)_1$.  In particular, $\pazocal{D}_2$ must be a one-machine computation in the first machine whose step history consists only of the letters $(2)_1$, $(3)_1$, $(4)_1$, $(23)_1$, $(32)_1$, and $(43)_1$.

Let $\Delta_2$ be the maximal subdiagram of $\Delta$ which is an $a$-trapezium with history $H_2^{\pm1}$.  \Cref{a-cells sector} then implies $\Delta_2$ is a trapezium, so that Lemmas \ref{trapezia are computations} and \ref{one-machine hyperfaulty} imply 
$$\text{wt}(\Delta_2)=\text{Area}(\Delta_2)\leq c_2\|H_2\|\max(|\textbf{tbot}(\Delta_2)|_a,|\textbf{ttop}(\Delta_2)|_a)$$
Suppose $\textbf{top}(\Delta_2)\neq\textbf{top}(\Delta)$.  Then these paths bound an $a$-trapezium $\Delta_1$ whose history $H_1$ has prefix $\sigma(21)_1$.  But then \Cref{exceptional a-trapezia (21)(1)} implies $\|\textbf{ttop}(\Delta_2)\|=\|\textbf{tbot}(\Delta_1)\|\leq c_0\|\textbf{ttop}(\Delta)\|$ and $\text{wt}(\Delta_1)\leq C_2\|H_1\|\|\textbf{ttop}(\Delta)\|+\phi_{C_3}(\|\textbf{ttop}(\Delta)\|)$.

Similarly, if $\textbf{bot}(\Delta_2)\neq\textbf{bot}(\Delta)$, then these paths bound an $a$-trapezium $\Delta_0$ whose history $H_0$ has suffix $\sigma(12)_1$.  Applying \Cref{exceptional a-trapezia (21)(1)} to the reflection of $\Delta_0$ about its bottom then as above implies $\|\textbf{tbot}(\Delta_2)\|=\|\textbf{ttop}(\Delta_0)\|\leq c_0\|\textbf{tbot}(\Delta)\|$ and $\text{wt}(\Delta_0)\leq C_2\|H_0\|\|\textbf{tbot}(\Delta)\|+\phi_{C_3}(\|\textbf{tbot}(\Delta)\|)$.

Thus, the statement follows from the parameter choices $C_2>>c_2>>c_0$.

\end{proof}

\begin{lemma} \label{revolving a-trapezia exceptional transition}

Let $\Delta$ be an exceptional $a$-trapezium with base $B$ and history $H$.  Suppose $H$ contains a letter of the form $\sigma(01)_1^{\pm1}$.  Then for $h=\|H\|$:
$$\text{wt}(\Delta)\leq C_2h\max(\|\textbf{tbot}(\Delta)\|,\|\textbf{ttop}(\Delta)\|)+\phi_{C_3}(\|\textbf{tbot}(\Delta)\|+\|\textbf{ttop}(\Delta)\|)$$

\end{lemma}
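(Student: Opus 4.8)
The strategy mirrors the proof of \Cref{revolving a-trapezia exceptional (12)}, replacing the transition rule $\sigma(12)_1$ by the transition rule $\sigma(01)_1$ and exploiting the structural lemmas governing step histories containing $(01)_1$ or $(0)_1$. First I would observe that, since $H$ contains a letter of the form $\sigma(01)_1^{\pm1}$, Lemmas \ref{locked sectors} and \ref{trapezia are computations} force $B$ to avoid the unreduced two-letter subwords that $\sigma(01)_1$ fails to lock outside the input and historical sectors; combined with the hypothesis that $\Delta$ is exceptional (so $B$ is hyperfaulty and contains no subword of the form $(P_i'(j)R_i'(j))^{\pm1}$ or $(P_i''(j)R_i''(j))^{\pm1}$) and with \Cref{a-cells sector}, this pins down the structure of $B$ up to cyclic permutation --- in particular a cyclic permutation of $B$ contains a subword $B'$ among the active subwords $(Q_{i,\ell}'(j)P_i'(j)R_i'(j))^{\pm1}$, $Q_{i,\ell}'(j)P_i'(j)(P_i'(j))^{-1}(Q_{i,\ell}'(j))^{-1}$, $(P_i''(j)R_i''(j)Q_{i,r}''(j))^{\pm1}$, or $(Q_{i,r}''(j))^{-1}(R_i''(j))^{-1}R_i''(j)Q_{i,r}''(j)$. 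Passing to the maximal subdiagram $\Delta'$ of a cyclic permutation of $\Delta$ which is an $a$-trapezium with base $B'$ and history $H$, \Cref{a-cells sector} shows $\Delta'$ is a trapezium, so \Cref{trapezia are computations} produces a reduced computation $\pazocal{D}:V_0\to\dots\to V_h$ with base $B'$ and history $H$ corresponding to $\Delta'$.

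Next I would analyze the step history of $\pazocal{D}$. By \Cref{M step history} and the fact that $H$ contains a letter of the form $\sigma(01)_1^{\pm1}$, perhaps passing to the inverse computation, there is a factorization of $H$ isolating a maximal subcomputation $\pazocal{D}_0$ with step history $(0)_1$ whose initial configuration is $\sigma(01)_1$-admissible. Applying \Cref{primitive computations} and \Cref{primitive unreduced} to the restriction of $\pazocal{D}_0$ to the active subword $B'$ (which can be identified with a reduced computation of a primitive machine in the standard base) shows that this $(0)_1$-part cannot be followed by $\sigma(10)_1$ beyond the allowed pattern; more precisely, \Cref{M step history (0)}(a) rules out the step history $(s)_1(0)_1(s)_1^{-1}$ and \Cref{M_5 step history 1}(b) rules out $(10)_1(0)_1(01)_1$, so the portion of $H$ adjacent to the $\sigma(01)_1$ transition has a controlled makeup. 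The plan is then to split $\Delta$ into subdiagrams according to whether the local history is controlled: if the history of $\pazocal{D}$ restricted to the active region contains no controlled subword, the remaining (non-$(0)_1$, non-transition) portion $\pazocal{D}_2$ with history $H_2$ is a one-machine computation of the first machine whose step history avoids $(01)_1^{\pm1}$, so the corresponding subdiagram $\Delta_2$ --- a trapezium by \Cref{a-cells sector} --- has $\text{wt}(\Delta_2)=\text{Area}(\Delta_2)\leq c_2\|H_2\|\max(|\textbf{tbot}(\Delta_2)|_a,|\textbf{ttop}(\Delta_2)|_a)$ by \Cref{trapezia are computations} together with \Cref{one-machine hyperfaulty} (or \Cref{M_5 faulty} after identification with $\textbf{M}_5$).

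Then, as in the proof of \Cref{revolving a-trapezia exceptional (12)}, if $\textbf{top}(\Delta_2)\neq\textbf{top}(\Delta)$ these paths bound an $a$-trapezium $\Delta_1$ whose history has a transition-rule prefix, to which I would apply \Cref{exceptional a-trapezia (21)(1)} (or its obvious analogue for the $(01)_1$ transition, proved in exactly the same way by applying \Cref{one alphabet historical words unreduced}, \Cref{primitive unreduced}, and \Cref{multiply one letter} to the active subword and then \Cref{revolving a-trapezia (1)}) to get $\|\textbf{ttop}(\Delta_2)\|=\|\textbf{tbot}(\Delta_1)\|\leq c_0\|\textbf{ttop}(\Delta)\|$ and $\text{wt}(\Delta_1)\leq C_2\|H_1\|\|\textbf{ttop}(\Delta)\|+\phi_{C_3}(\|\textbf{ttop}(\Delta)\|)$; symmetrically for the bottom if $\textbf{bot}(\Delta_2)\neq\textbf{bot}(\Delta)$. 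Summing the weight bounds for $\Delta_0$, $\Delta_1$, $\Delta_2$ (and handling the $a$-cell contribution via the superadditivity of $\phi_{C_1}$ and the parameter chain $C_3>>C_2>>c_2>>c_0$) yields the claimed inequality. In the remaining case, where the relevant history does contain a controlled subword, the corresponding subdiagram is big, and I would use \Cref{big a-trapezium accepted} to reduce to configurations $W_0,W_1$ with $\ell(W_i)\leq1$ and then bound via \Cref{revolving a-trapezia (1)} after establishing $h\leq c_0(\|W_0\|+\|W_1\|)$ from \Cref{M_5 PR} (exactly as in \Cref{revolving a-trapezia not exceptional or big}). The main obstacle I anticipate is the bookkeeping in the step-history case analysis near the $\sigma(01)_1$ transition --- making sure that every pattern of steps compatible with a hyperfaulty base and the exceptional hypothesis either forces a short $(0)_1$-part (so the height is linearly controlled by the $a$-length of top and bottom) or isolates a genuine trapezium to which \Cref{one-machine hyperfaulty} applies --- but this is precisely the kind of argument already carried out for the $(12)_1$ and $(21)_1$ transitions, so no new ideas should be needed.
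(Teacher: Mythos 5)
Your proposal gathers most of the right ingredients, but the central bounding step is organized around the wrong decomposition and contains a claim that fails. In the paper's proof, after locating the forced subword $B'\equiv Q_{0,\ell}'(1)P_0'(1)P_0'(1)^{-1}Q_{0,\ell}'(1)^{-1}$ (note: two of your four candidate ``active subwords'' are impossible here, since an exceptional $a$-trapezium by definition has no subword $(P_i'R_i')^{\pm1}$ or $(P_i''R_i'')^{\pm1}$), everything reduces to showing $h\leq c_0(\|W_0\|+\|W_h\|)$ for the trapezium over $B'$ and then applying \Cref{revolving a-trapezia (1)} to all of $\Delta$; that lemma already absorbs the $a$-cells. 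Concretely: one first cites \Cref{revolving a-trapezia exceptional (12)} to assume $H$ has no letter $\sigma(12)_1^{\pm1}$ (a reduction you omit), so that \Cref{M_5 step history 1} forces the step history to be exactly $(0)_1(01)_1(1)_1$ with the $(0)_1$ part at the very start and the $(1)_1$ part running to the end; then \Cref{multiply one letter} applied to the $Q_{0,\ell}'(1)P_0'(1)$-sector bounds the $(0)_1$ part by $|W_0|_a$, and \Cref{one alphabet historical words unreduced} applied to the $P_0'(1)P_0'(1)^{-1}$-sector bounds the $(1)_1$ part by $|W_h|_a$, giving $h\leq|W_0|_a+|W_h|_a+1$.

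The gap in your plan is the claim that the ``remaining (non-$(0)_1$, non-transition) portion'' corresponds to a subdiagram $\Delta_2$ that is ``a trapezium by \Cref{a-cells sector}.'' That is backwards: in the $(01)_1$ setting the portion adjacent to the transition on the other side has step history containing $(1)_1$, which by \Cref{a-cells sector} is precisely where the $a$-cells of an exceptional $a$-trapezium must sit. So $\Delta_2$ is not a trapezium, \Cref{trapezia are computations} does not apply to it, and the area bound via \Cref{one-machine hyperfaulty} collapses — that is why the paper bounds the total height and invokes \Cref{revolving a-trapezia (1)} rather than summing weights of horizontal slices as in \Cref{revolving a-trapezia exceptional (12)} (where the $a$-cell-bearing slices are disposed of separately by \Cref{exceptional a-trapezia (21)(1)}). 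Your final ``controlled subword / big $a$-trapezium'' branch is vacuous: an exceptional $a$-trapezium has a hyperfaulty, hence unreduced, base, while \Cref{M controlled} forces the base of any computation with controlled history to be reduced, so no controlled subword (and no big $a$-trapezium) can occur here; likewise \Cref{M_5 PR} concerns bases of the form $Q_{i,\ell}'P_i'R_i'Q_{i,r}'$ and is not available for this base.
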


\begin{proof}

As $H$ contains a letter of the form $\sigma(01)^{\pm1}$, Lemmas \ref{locked sectors} and \ref{trapezia are computations} implies $B$ contains no two-letter subword of the form $Q_{0,\ell}'(1)Q_{0,\ell}'(1)^{-1}$ nor one of the form $Q_i(1)Q_i(1)^{-1}$ for $i\geq1$.  Hence, since $\Delta$ is exceptional, \Cref{a-cells sector} implies a cyclic subword of $B$ must contain the subword $B'=Q_{0,\ell}'(1)P_0'(1)P_0'(1)^{-1}Q_{0,\ell}'(1)^{-1}$.

Let $\Delta'$ be the maximal subidagram of a cyclic permutation of $\Delta$ which is an $a$-trapezium with base $B'$ and history $H$.  Then Lemmas \ref{a-cells sector} and \ref{trapezia are computations} yield a reduced computation $\pazocal{C}:W_0\to\dots\to W_h$ with base $B'$ and history $H$ corresponding to $\Delta'$.  Thus, by \Cref{revolving a-trapezia (1)} and the parameter choices $C_3>>C_2>>C_1$, it suffices to show that $h\leq c_0(\|W_0\|+\|W_h\|)$.

Perhaps passing to the inverse computation, it may be assumed that there exists a transition $W_r\to W_{r+1}$ with history $\sigma(01)$ and such that there exists a nonempty maximal subcomputation $\pazocal{C}_1:W_{r+1}\to\dots\to W_s$ with step history $(1)_1$.

By \Cref{revolving a-trapezia exceptional (12)}, it suffices to assume that no letter of $H$ is of the form $\sigma(12)_1^{\pm1}$.  Further, by \Cref{M_5 step history 1}(a) the step history of $\pazocal{C}$ cannot contain the subword $(01)_1(1)_1(10)_1$.  Hence, $s=h$.

As the restriction of $\pazocal{C}_1$ to the $P_0'(1)P_0'(1)^{-1}$-sector satisfies the hypotheses of \Cref{one alphabet historical words unreduced}, it then follows that $h-r-1\leq|W_h|_a$.

Now, let $\pazocal{C}_0:W_\ell\to\dots\to W_r$ be the maximal (perhaps empty) subcomputation with step history $(0)_1$.  The inverse computation of $\pazocal{C}_0$ can then be identified with a reduced computation of a primitive machine satisfying the hypotheses of \Cref{primitive unreduced}, so that $W_\ell$ is not $\sigma(s)_1^{-1}$-admissible.  \Cref{M_5 step history 1}(b) then implies $\ell=0$.

The restriction of $\pazocal{C}_0$ to the $Q_{0,\ell}'(1)P_0'(1)$-sector then satisfies the hypotheses of \Cref{multiply one letter}, so that $r\leq|W_0|_a$.

But then $h\leq|W_0|_a+|W_h|_a+1$, so that the statement follows.

\end{proof}

\medskip


\subsection{$G$-weight} \label{sec-G-weight} \

The goal of the remainder of this section is to bound the size of an $M$-minimal diagram in terms of its perimeter. However, this bound will not be given in terms of the area or weight of the diagram, but instead in terms of the artificial areal concept of \textit{$G$-weight} introduced in \cite{W}.

Let $B$ be the base of an admissible word of $\textbf{M}$. Suppose:

\begin{itemize}

\item Every letter of $B$ is of the form $Q_i(1)^{\pm1}$ or $Q_{0,\ell}'(1)^{\pm1}$

\item There is no two-letter subword of $B$ of the form $Q_{0,\ell}'(1)Q_{0,\ell}'(1)^{-1}$

\end{itemize}

Since the tape alphabet of the $\{t(1)\}Q_0(1)$-sector is empty, it then follows that $B$ can be identified with a base of an admissible word of $\textbf{Move}$.  If this base is full, then $B$ is called a \textit{Move-full base}.

Then, an $a$-trapezium $\Gamma$ with Move-full base and step history $(1)_1$ is called an \textit{impeding} $a$-trapezium.

Suppose $\Gamma$ is an $a$-trapezium which is either big or impeding.  Then the $G$-weight of $\Gamma$, denoted $\text{wt}_G(\Gamma)$ is defined to be the minimum of half its weight and:

\begin{itemize}

\item $h+\phi_{C_2}(\|\textbf{tbot}(\Gamma)\|+\|\textbf{ttop}(\Gamma)\|)$ if $\Gamma$ is big

\item $C_2h\max(\|\textbf{tbot}(\Gamma)\|,\|\textbf{ttop}(\Gamma)\|)+\phi_{C_3}(\|\textbf{tbot}(\Gamma)\|+\|\textbf{ttop}(\Gamma)\|)$ if $\Gamma$ is impeding

\end{itemize}


This notion is extended by assigning the $G$-weight of any single cell of a diagram to be its weight.

Now, given a reduced diagram $\Delta$ over $G_\Omega(\textbf{M})$, consider a family of subdiagrams $\textbf{P}$ such that:

\begin{itemize}

\item if $P\in\textbf{P}$, then $P$ is a single cell, a big $a$-trapezium, or an impeding $a$-trapezium,

\item every cell of $\Delta$ belongs to an element of $\textbf{P}$, and

\item if there exist $P_1,P_2\in\textbf{P}$ with nonempty intersection, then both $P_1$ and $P_2$ are $a$-trapezia and this intersection is a $q$-band.

\end{itemize}

In this case, $\textbf{P}$ is called a \textit{covering} of $\Delta$. The $G$-weight of $\textbf{P}$, $\text{wt}_G(\textbf{P})$, is defined to be the sum of the $G$-weights of its elements. 

Note that any reduced diagram over $G_\Omega(\textbf{M})$ has a covering, namely the one given by its cells. So, we may define the $G$-weight of $\Delta$, $\text{wt}_G(\Delta)$, as the minimum of the $G$-weights of its coverings.

Note that any cell of a diagram belongs to at most two elements of a covering, in which case these elements are big or impeding $a$-trapezia.  So, since $G$-weight of a big or impeding $a$-trapezium does not exceed half of its weight, the $G$-weight of any covering is at most the weight of the diagram.

\begin{remark}

The usage of the term `impeding $a$-trapezium' differs slightly from that in \cite{W}: In the context of the machine constructed here, the definition in \cite{W} requires the base of an impeding $a$-trapezium to be either $(Q_0(1)Q_1(1))^{\pm1}$ or $Q_1(1)^{-1}Q_1(1)$, {\frenchspacing i.e. a sector containing $a$-cells} rather than a Move-full base.

However, these two definitions are of the same spirit: By the construction of the submachine in \cite{W} which functions as a Move machine, the $G$-weight assigned to the `impeding trapezia' in \cite{W} implies a similar bound for the `impeding trapezia' with Move-full bases in the definition here (noting that $f_2(n)=1$ for all $n$ in the setting of \cite{W}).

\end{remark}

The next statement provides a simple tool for bounding the $G$-weight of reduced diagram.

\begin{lemma}[Lemma 8.10 of \cite{W}] \label{G-weight subdiagrams}

Let $\Delta$ be a reduced diagram over $G_\Omega(\textbf{M})$ and suppose every cell $\pi$ of $\Delta$ belongs in one of the subdiagrams $\Delta_1,\dots,\Delta_m$, where any nonempty intersection $\Delta_i\cap\Delta_j$ is a $q$-band. Then $\text{wt}_G(\Delta)\leq\sum_{i=1}^m\text{wt}_G(\Delta_i)$.

\end{lemma}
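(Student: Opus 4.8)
The plan is to reduce the statement to an appropriate choice of covering and an accounting argument using the superadditivity properties established earlier. First I would fix, for each $\Delta_i$, a covering $\textbf{P}_i$ of $\Delta_i$ whose $G$-weight realizes $\text{wt}_G(\Delta_i)$ (such a minimal covering exists since there are only finitely many cells, hence finitely many candidate coverings in the relevant sense, and in any case one can take any covering that comes within a fixed tolerance and pass to the limit; but the cleanest route is simply to work with an arbitrary covering $\textbf{P}_i$ of $\Delta_i$ and prove the inequality $\text{wt}_G(\Delta) \leq \sum_i \text{wt}_G(\textbf{P}_i)$, then minimize over the $\textbf{P}_i$). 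The key point is that the union $\textbf{P} = \bigcup_{i=1}^m \textbf{P}_i$ is \emph{almost} a covering of $\Delta$: every cell of $\Delta$ lies in some $\Delta_i$ and hence in some element of $\textbf{P}_i \subseteq \textbf{P}$, so the second defining property of a covering holds; the issue is the third property, since an element of $\textbf{P}_i$ and an element of $\textbf{P}_j$ (for $i \neq j$) could overlap along the $q$-band $\Delta_i \cap \Delta_j$ without that overlap being itself a $q$-band of each, or two big/impeding $a$-trapezia from different $\textbf{P}_i$ could share more than a $q$-band.

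The main obstacle, then, is reconciling the coverings $\textbf{P}_i$ across the $q$-band seams $\Delta_i \cap \Delta_j$. I would handle this as follows. A cell lying on the common $q$-band $\pazocal{Q} = \Delta_i \cap \Delta_j$ is a $(\theta,q)$-cell, and such a cell has $G$-weight equal to its weight, namely $1$; moreover, an $a$-trapezium of any $\textbf{P}_i$ meeting $\pazocal{Q}$ meets it in a subband of $\pazocal{Q}$ (this uses that $\pazocal{Q}$ is a maximal $q$-band in $\Delta$ when restricted appropriately, and Lemma \ref{M(S) annuli}-type facts about how $q$-bands sit inside $a$-trapezia). So I would \emph{refine} the problematic $a$-trapezia: whenever a big or impeding $a$-trapezium $P \in \textbf{P}_i$ abuts a seam $q$-band in a way that conflicts with some $P' \in \textbf{P}_j$, cut $P$ along the seam to remove the offending sector, or — more simply — replace the contributions along all seam $q$-bands by counting each such $(\theta,q)$-cell individually (as a single-cell element of the new covering) and excise those cells' sectors from the $a$-trapezia containing them. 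Since the $G$-weight of an $a$-trapezium is monotone under taking subdiagrams consisting of fewer sectors (which follows from the formulas: fewer sectors means no larger $\|\textbf{tbot}\|$, $\|\textbf{ttop}\|$, or height, and $\phi_{C_2}, \phi_{C_3}$ are non-decreasing and superadditive, while the weight can only decrease), this surgery does not increase the total $G$-weight. The seams contribute at most $\sum_{i<j} (\text{length of } \Delta_i \cap \Delta_j)$ extra single-cell terms of weight $1$ each, but these cells were already counted (with $G$-weight $\geq 1$) inside the original $\textbf{P}_i$'s, so redistributing them costs nothing. This yields a genuine covering $\textbf{P}'$ of $\Delta$ with $\text{wt}_G(\textbf{P}') \leq \sum_{i=1}^m \text{wt}_G(\textbf{P}_i)$.

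Finally, since $\text{wt}_G(\Delta)$ is by definition the minimum of $\text{wt}_G(\textbf{Q})$ over all coverings $\textbf{Q}$ of $\Delta$, we conclude $\text{wt}_G(\Delta) \leq \text{wt}_G(\textbf{P}') \leq \sum_{i=1}^m \text{wt}_G(\textbf{P}_i)$; minimizing the right-hand side over the choices of $\textbf{P}_i$ gives $\text{wt}_G(\Delta) \leq \sum_{i=1}^m \text{wt}_G(\Delta_i)$, as desired. I expect the genuinely delicate step to be verifying the monotonicity of $G$-weight under the seam surgery — specifically, checking that cutting a big $a$-trapezium along a $q$-band leaves a union of (possibly smaller) big or impeding $a$-trapezia, or cells, whose $G$-weights sum to at most the original, which requires a careful look at how the "big" and "impeding" hypotheses (revolving base with controlled subword, resp. Move-full base with step history $(1)_1$) restrict along sectors. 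The superadditivity of $\phi_C$ recorded in \Cref{sec-disks-and-weights} is exactly what makes the $\phi_{C_2}$ and $\phi_{C_3}$ terms behave correctly under this decomposition.
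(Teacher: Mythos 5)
The paper does not actually reprove this statement — it is quoted as Lemma 8.10 of \cite{W} — but the intended argument is exactly the skeleton of your first paragraph: fix coverings $\textbf{P}_i$ of the $\Delta_i$ realizing their $G$-weights, merge them into a covering of $\Delta$, and invoke the definition of $\text{wt}_G(\Delta)$ as a minimum over coverings. Up to that point your proposal is on track.

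The gap is in your seam surgery. You propose to cut a big or impeding $a$-trapezium $P\in\textbf{P}_i$ along the seam $q$-band and justify it by a claimed monotonicity of $G$-weight ``under taking subdiagrams consisting of fewer sectors.'' That claim does not follow from the formulas: the favorable $G$-weight of $P$ is available only because $P$ is big (revolving base, history containing a controlled subword) or impeding (Move-full base, step history $(1)_1$); a subdiagram obtained by deleting sectors generally has a base that is neither revolving nor Move-full, so it is not an admissible covering element at all, and covering the remnant by its individual cells costs up to its full weight, which can far exceed $h+\phi_{C_2}(\|\textbf{tbot}(P)\|+\|\textbf{ttop}(P)\|)$. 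You flag this as the delicate step, but it is not a step to be repaired — it is a step to be dropped, because no surgery is needed. Since $P\subseteq\Delta_i$ and $P'\subseteq\Delta_j$, any intersection $P\cap P'$ with $i\neq j$ lies inside the seam $q$-band $\Delta_i\cap\Delta_j$; if both are $a$-trapezia, each meets that band in a subband (a side $q$-band), so $P\cap P'$ is itself a $q$-band — precisely what the definition of a covering permits. The only disallowed overlaps involve a single-cell element $\{\pi\}$ whose cell lies in an $a$-trapezium of the other covering, and such a $\{\pi\}$ can simply be deleted from the union ($\pi$ remains covered), which only decreases the total. The resulting family is a genuine covering of $\Delta$ of $G$-weight at most $\sum_i\text{wt}_G(\textbf{P}_i)$, and minimizing over the $\textbf{P}_i$ gives the lemma; this is the route taken in \cite{W}.
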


%
%

In particular, note that Lemma \ref{G-weight subdiagrams} implies that if $\{\Delta_i\}$ is a partition of the $a$-trapezium $\Delta$, then $\text{wt}_G(\Delta)\leq\sum\text{wt}_G(\Delta_i)$.

\begin{lemma} \label{revolving G-weight}

Suppose $\Delta$ is an $a$-trapezium with revolving base $B$ and height $h$. Then:
$$\text{wt}_G(\Delta)\leq C_3h\max(\|\textbf{tbot}(\Delta)\|,\|\textbf{ttop}(\Delta)\|)+\phi_{C_3}(\|\textbf{tbot}(\Delta)\|+\|\textbf{ttop}(\Delta)\|)$$

\end{lemma}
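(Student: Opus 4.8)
The plan is to reduce the general case of a revolving base to the two special types of $a$-trapezia for which the $G$-weight is by definition small, namely \emph{big} and \emph{impeding} $a$-trapezia, together with the already-established weight bounds for $a$-trapezia that are neither exceptional nor big. First I would dispose of the easy sub-cases. If $\Delta$ is neither exceptional nor big, then \Cref{revolving a-trapezia not exceptional or big} gives the desired bound on $\text{wt}(\Delta)$ directly, and since $\text{wt}_G(\Delta)\leq\text{wt}(\Delta)$ (the covering by cells), the conclusion follows after adjusting constants via the parameter choice $C_3>>C_2>>C_1$. If $\Delta$ is big, then by definition $\text{wt}_G(\Delta)\leq\text{wt}_G(\Delta)$ using the covering $\{\Delta\}$, and the $G$-weight of a big $a$-trapezium is at most $h+\phi_{C_2}(\|\textbf{tbot}(\Delta)\|+\|\textbf{ttop}(\Delta)\|)$, which is comfortably below the target since $C_3h\max(\|\textbf{tbot}(\Delta)\|,\|\textbf{ttop}(\Delta)\|)\geq h$.

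The heart of the argument is the case where $\Delta$ is exceptional, i.e.\ its base is hyperfaulty and contains no subword of the form $(P_i'(j)R_i'(j))^{\pm1}$ or $(P_i''(j)R_i''(j))^{\pm1}$, and $\Delta$ contains a maximal $a$-band ending both on an $a$-cell and on a $(\theta,q)$-cell (otherwise \Cref{revolving trapezia no boundary} already suffices). By \Cref{a-cells sector}, the step history of $\Delta$ contains the letter $(1)_1$. I would then split on which transition rules occur in the history $H$. If $H$ contains a letter of the form $\sigma(01)_1^{\pm1}$, apply \Cref{revolving a-trapezia exceptional transition}; if $H$ contains a letter of the form $\sigma(12)_1^{\pm1}$, apply \Cref{revolving a-trapezia exceptional (12)}. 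In both cases we directly get the weight bound $C_2h\max(\cdots)+\phi_{C_3}(\cdots)$, and hence the $G$-weight bound. What remains is the sub-case where $H$ contains neither $\sigma(01)_1^{\pm1}$ nor $\sigma(12)_1^{\pm1}$ but does contain $(1)_1$ in its step history. Here the step history of $\Delta$ is (a subword of) a one-machine computation of the first machine whose only relevant portion is step history $(1)_1$, and the restriction of $\Delta$ to an appropriate $(Q_0(1)Q_1(1))^{\pm1}$ or $Q_1(1)^{-1}Q_1(1)$ sector is governed by the Move-machine $\textbf{Move}$. This is precisely the situation captured by the definition of an \emph{impeding} $a$-trapezium: after passing to a cyclic permutation and extracting the maximal subdiagram whose base is Move-full, I claim $\Delta$ either is itself (essentially) an impeding $a$-trapezium or can be partitioned into one impeding $a$-trapezium and sub-$a$-trapezia whose bases avoid the problematic subwords, to which the earlier lemmas apply; then \Cref{G-weight subdiagrams} assembles the pieces.

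The main obstacle I anticipate is exactly this last sub-case: showing that when the step history is confined to $(1)_1$ (no transition rules of type $(01)_1$ or $(12)_1$), the hyperfaulty base forces a Move-full sub-base carrying all the $a$-cells, so that the relevant subdiagram qualifies as impeding and its $G$-weight is controlled by $C_2h\max(\cdots)+\phi_{C_3}(\cdots)$. This requires a careful base analysis in the spirit of Case 5 of the proof of \Cref{barM_4(j) faulty} and of \Cref{one-machine hyperfaulty}: one factors a cyclic permutation of $B$ into maximal blocks lying in the standard base of $\textbf{M}$ separated by strings of $Q_i(1)^{\pm1}$ and $t(j)^{\pm1}$ letters, uses \Cref{locked sectors} together with the fact that every rule with step history $(1)_1$ locks all non-Move sectors, and checks that the $a$-cells (which by \Cref{a-cells sector} live only in $(Q_0(1)Q_1(1))^{\pm1}$, $Q_0(1)Q_0(1)^{-1}$, or $Q_1(1)^{-1}Q_1(1)$ sectors) are confined to one Move-full block. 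One then invokes the Move-machine properties (Mv2), (Mv7), (Mv8) via \Cref{Move faulty} — the same inputs used in \Cref{one-step hyperfaulty} — to bound the height and widths of that block, matching the impeding $G$-weight bound. The remaining blocks contain no $a$-cells and have revolving (indeed hyperfaulty) bases avoiding $(P_i'(j)R_i'(j))^{\pm1}$ and $(P_i''(j)R_i''(j))^{\pm1}$, so \Cref{revolving trapezia no boundary} or \Cref{revolving a-trapezia (1)} handles them; summing via \Cref{G-weight subdiagrams} and using $C_3>>C_2>>C_1>>c_0$ finishes the proof.
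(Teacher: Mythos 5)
Your proposal follows essentially the same route as the paper's proof: dispose of the non-exceptional, non-big case via \Cref{revolving a-trapezia not exceptional or big}, the big case via the definitional $G$-weight bound, the exceptional case with transition rules via \Cref{revolving a-trapezia exceptional (12)} and \Cref{revolving a-trapezia exceptional transition}, and the remaining step-history-$(1)_1$ case by cutting a cyclic permutation of the hyperfaulty base into Move-full blocks (impeding $a$-trapezia, whose $G$-weight is bounded by definition) and connector pieces, assembled with \Cref{G-weight subdiagrams}. The only cosmetic differences are that the paper's factorization allows up to two Move-full blocks and bounds the ($a$-cell-free) connector trapezia directly with \Cref{unreduced base} rather than through \Cref{revolving trapezia no boundary} or \Cref{revolving a-trapezia (1)}.
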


\begin{proof}

If $\Delta$ is neither big nor exceptional, then since $\text{wt}_G(\Delta)\leq\text{wt}(\Delta)$,the statement follows by \Cref{revolving a-trapezia not exceptional or big} and the parameter choices $C_3>>C_2>>C_1$.

Further, if $\Delta$ is big, then the statement follows from the assignment of the $G$-weight of an $a$-trapezium and the parameter choice $C_3>>C_2$.

Hence, it suffices to assume that $\Delta$ is an exceptional $a$-trapezium.  What's more, Lemmas \ref{a-cells sector}, \ref{revolving a-trapezia exceptional (12)}, and \ref{revolving a-trapezia exceptional transition} along with the parameter choice $C_3>>C_2$ imply that it suffices to assume the step history of $\Delta$ is $(1)_1$.

Now, by the definition of exceptional $a$-trapezia, there exists a factorization $B'\equiv B_1C_2B_2C_3$ of a cyclic permutation of $B$ such that:

\begin{itemize}

\item $B_1$ is nonempty and Move-full.

\item If $B_2$ is nonempty, then it is Move-full.

\item If $C_i$ is non-empty, then it is either:

\begin{itemize}

\item $Q_{0,\ell}'(1)P_0'(1)P_0'(1)^{-1}Q_{0,\ell}'(1)^{-1}$, or 

\item $\{t(1)\}^{-1}Q_{s,r}''(L)^{-1}R_s''(L)^{-1}R_s''(L)Q_{s,r}''(L)\{t(1)\}$

\end{itemize}

\end{itemize}

In particular, there exists a partition $\{\Delta_i\}$ of $\Delta$ such that each $\Delta_i$ is either an impeding $a$-trapezium or is a trapezium with base $C_i$.  

In the latter case, \Cref{a-cells sector} implies $\Delta_i$ is a trapezium, so that \Cref{trapezia are computations} yields a reduced computation $\pazocal{C}:W_0\to\dots\to W_h$ with base $C_i$ corresponding to $\Delta_i$.  As the step history of $\pazocal{C}$ is $(1)_1$, its restriction to the $P_0'(1)P_0'(1)^{-1}$- or $R_s''(L)^{-1}R_s''(L)$-sector satisfies the hypotheses of \Cref{unreduced base} while all other sectors are locked.  Hence, $|W_i|_a\leq\max(|W_0|_a,|W_h|_a)$ for all $i$, so that $\text{wt}_G(\Delta_i)\leq\text{wt}(\Delta_i)=\text{Area}(\Delta_i)\leq h\max(\|\textbf{tbot}(\Delta_i)\|,\|\textbf{ttop}(\Delta_i)\|)$.

Thus, the statement follows by noting $\text{wt}_G(\Delta)\leq\sum\text{wt}_G(\Delta_i)$ and $C_3>>C_2$.

\end{proof}

While the assignment of the $G$-weights of big and impeding $a$-trapezia imply the statement of \Cref{revolving G-weight} which will prove critical to the estimates made in the remainder of the paper, it remains to justify this assignment by connecting it to the weight (and so area) of a diagram with the same boundary label.

For big $a$-trapezia, this matter is handled in \Cref{sec-assignment-of-G-weight}, where \Cref{big G-weight} provides the desired justification.  Some hint for the reasoning, however, is provided by \Cref{big a-trapezium accepted}: 

Given a big $a$-trapezium $\Delta$, both $\lab(\textbf{tbot}(\Delta))$ and $\lab(\textbf{ttop}(\Delta))$ contain cyclic subwords which are, up to the insertion of some words in the `special' input sector, disk relations.  After showing that these words are necessarily $a$-relations, we then construct a reduced diagram $\Gamma$ with $\lab(\partial\Gamma)\equiv\lab(\partial\Delta)$ consisting of two disks, two $a$-cells, and a single $t$-band whose length is the height of $\Delta$.  Finally, we show that $\text{wt}(\Gamma)\leq\text{wt}_G(\Delta)$.

While $\Gamma$ has a desired bound on its weight, though, its two disks cause difficulties for analyzing the diagram obtained by excising $\Delta$ pasting $\Gamma$ in its place (this becomes evident in the definition of $(j)$-minimal diagrams).  This is the main reason for the introduction of the $G$-weight of a big $a$-trapezium.

A similar discussion is provided in the next section for impeding $a$-trapezia, related to the notion of move conditions.

\medskip


\subsection{Move conditions} \label{sec-Move-conditions} \

We are now able to address the definition of a move condition for a Move machine.  We do so here in the specific context relevant to our setting:


\begin{definition} \label{def-Move condition}

The Move machine $\textbf{Move}$ satisfies the $(n^2f_2(n),G)$-move condition if for any impeding $a$-trapezium $\Gamma$ with height $h$, there exists a reduced diagram $\Phi$ over $M_\Omega(\textbf{M})$ such that:

\begin{itemize}

\item $\lab(\partial\Phi)\equiv\lab(\partial\Gamma)$ 

\item $\text{wt}(\Phi)\leq C_2h\max(\|\textbf{tbot}(\Gamma)\|,\|\textbf{ttop}(\Gamma)\|)+\phi_{C_3}(\|\textbf{tbot}(\Gamma)\|+\|\textbf{ttop}(\Gamma)\|)$

\end{itemize}

\end{definition}

\begin{remark} 

Note that the move condition is defined as a property of the given move machine $\textbf{Move}$, yet is given by a property of diagrams over the group associated to the larger machine $\textbf{M}$ constructed from $\textbf{Move}$.  By \Cref{M_a no annuli 1} and \Cref{M_a no annuli 2}(a), though, any $(\theta,q)$- or $(\theta,a)$-cell in $\Phi$ corresponds to a rule of $\textbf{Move}$ operating on a copy of the machine's hardware.   Hence, the move condition is indeed a property of the move machine $\textbf{Move}$, dependent only on the function $n^2f_2(n)$ and the choice of group $G=\gen{X}$.

\end{remark}

\begin{remark} \label{rmk-generalized move condition}

While the move condition is defined in \Cref{def-Move condition} for the specific move machine $\textbf{Move}$, the specific function $n^2f_2(n)$, and the specific group $G=\gen{X}$, it is not difficult to generalize the definition to any move machine, function, and group generated by $X$.

Specifically, given a Move machine $\textbf{S}$ with input alphabet $X$, let $\textbf{M}_\textbf{S}$ be the machine constructed from $\textbf{S}$ in just the same way as $\textbf{M}$ is constructed from $\textbf{Move}$ in Sections \ref{sec-auxiliary} and \ref{sec-main-machine}.  Further, fix a non-decreasing function $f:\N\to\N$, a non-decreasing function $f':\N\to\N$ with $f'\sim f$, and a finitely generated group $H$.  Let $\Sigma$ be the set of all non-trivial cyclically reduced words over $X\cup X^{-1}$ which represent the identity in $H$.  Given a reduced diagram $\Delta$ over $M_\Sigma(\textbf{M}_\textbf{S})$, define the weight of each of any $a$-cell $\pi$ in $\Delta$ to be $f'(\|\partial\pi\|)$ and the weight of any other cell to be $1$.  Complete this by defining the weight of $\Delta$, $\text{wt}(\Delta)$, to be the sum of the weights of its cells.

Then $\textbf{S}$ satisfies the $(f,H)$-move condition if there exists a function $f''\sim f$ and a constant $K>0$ such that for any impeding $a$-trapezium $\Gamma$ with height $h$, there exists a reduced diagram $\Phi$ over $M_\Sigma(\textbf{M}_\textbf{S})$ such that:

\begin{itemize}

\item $\lab(\partial\Phi)\equiv\lab(\partial\Gamma)$

\item $\text{wt}(\Phi)\leq Kh\max(\|\textbf{tbot}(\Gamma)\|,\|\textbf{ttop}(\Gamma)\|)+f''(\|\textbf{tbot}(\Gamma)\|+\|\textbf{ttop}(\Gamma)\|)$

\end{itemize}

Note that this definition agrees with \Cref{def-Move condition} since $\phi_C\sim f$ for all $C>0$.

\end{remark}

We now make the connection between the move condition and the computational move condition:

\begin{lemma} \label{computational move is move}

If the move machine $\textbf{Move}$ satisfies the computational $c_0$-move condition, then it satisfies the $(n^2f_2(n),G)$-move condition.

\end{lemma}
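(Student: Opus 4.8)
The goal is to verify that the computational $c_0$-move condition for $\textbf{Move}$ produces, for every impeding $a$-trapezium $\Gamma$, a reduced diagram $\Phi$ over $M_\Omega(\textbf{M})$ with the same boundary label and weight bounded by $C_2 h\max(\|\textbf{tbot}(\Gamma)\|,\|\textbf{ttop}(\Gamma)\|)+\phi_{C_3}(\|\textbf{tbot}(\Gamma)\|+\|\textbf{ttop}(\Gamma)\|)$, where $h$ is the height of $\Gamma$. The strategy is to pass through the $S$-machine picture: an impeding $a$-trapezium has step history $(1)_1$ and a Move-full base, so — after pushing all $a$-cells to the boundary via \Cref{a-cell a-trapezia no boundary} just as in the proof of \Cref{revolving trapezia no boundary} — it decomposes as a trapezium $\Gamma''$ (with the same base, history, and trimmed top/bottom lengths as $\Gamma$) with copies of the $a$-cells of $\Gamma$ glued to $\textbf{tbot}(\Gamma'')$ and $\textbf{ttop}(\Gamma'')$. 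By \Cref{trapezia are computations}, $\Gamma''$ corresponds to a reduced computation $\pazocal{C}\colon V_0\to\dots\to V_h$ of $\textbf{M}$ in the base $B$ of $\Gamma$, with step history $(1)_1$; since every rule of step history $(1)_1$ locks all sectors except the ones over the input tape alphabet $X$ of the special sector and operates there exactly as $\textbf{Move}$, this $\pazocal{C}$ restricts to a reduced computation $\pazocal{D}\colon V_0'\to\dots\to V_h'$ of $\textbf{Move}$ whose base $\pi(B)$ is full and which satisfies $|V_i'|_{NI}=|V_i|_a$ for all $i$ (using (Mv2), exactly as in the proofs of \Cref{one-step hyperfaulty} and \Cref{one-step pararevolving}).

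Next I would invoke the computational $c_0$-move condition on $\pazocal{D}$: since $\pi(B)$ is full, $\pazocal{D}$ is a $c_0$-narrow computation, i.e. $I(\pi(B))\,|H|_{mv}\le c_0\max(|V_0'|_{NI},|V_h'|_{NI})$ where $H$ is the history of $\pazocal{D}$. Because the history of $\Gamma$ has step history $(1)_1$, the length $h$ equals the number of rules of $\pazocal{D}$; I would combine the $c_0$-narrow bound with (Mv7) and (Mv8) — which bound $|V_i'|_a$ and $|V_i'|_{NI}$ by a constant multiple of $\max(|V_0'|_a,|V_h'|_a)$ and $\max(|V_0'|_{NI},|V_h'|_{NI})$ respectively — together with the fact that in a computation with history of a single one-letter/moving structure the total length is controlled by the number of moving rules plus the number of `non-moving' rules, the latter of which is in turn controlled (à la \Cref{multiply one letter}, \Cref{unreduced base}) by the sizes of the tape words. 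The upshot I expect is a bound $h\le c_0'\max(\|\textbf{tbot}(\Gamma)\|,\|\textbf{ttop}(\Gamma)\|)$ for a suitable constant, or at worst $h\le c_0'(\|\textbf{tbot}(\Gamma)\|+\|\textbf{ttop}(\Gamma)\|)$, possibly after splitting $\Gamma$ at the index where the tape word is smallest (as in the proofs of several of the faulty-base lemmas). Granting this, I would take $\Phi$ itself to be (the diagram underlying) $\Gamma''$ with the pushed-out $a$-cells reattached; it is already a reduced diagram over $M_\Omega(\textbf{M})$ with $\lab(\partial\Phi)\equiv\lab(\partial\Gamma)$ by construction.

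It then remains to bound $\text{wt}(\Phi)$. The $a$-cells of $\Phi$ are copies of those of $\Gamma$ whose boundary edges all lie on $\textbf{tbot}(\Gamma)$ or $\textbf{ttop}(\Gamma)$ (by (MM2) and \Cref{a-band on same a-cell}, and noting that in step history $(1)_1$ no maximal $a$-band ends on a $(\theta,q)$-cell), so the sum of their perimeters is at most $|\textbf{tbot}(\Gamma)|_a+|\textbf{ttop}(\Gamma)|_a\le\|\textbf{tbot}(\Gamma)\|+\|\textbf{ttop}(\Gamma)\|$, whence by superadditivity of $\phi_{C_1}$ the total $a$-cell weight is at most $\phi_{C_1}(\|\textbf{tbot}(\Gamma)\|+\|\textbf{ttop}(\Gamma)\|)\le\phi_{C_3}(\|\textbf{tbot}(\Gamma)\|+\|\textbf{ttop}(\Gamma)\|)$ for $C_3\gg C_1$. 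For the $(\theta,q)$- and $(\theta,a)$-cells: each maximal $\theta$-band $\pazocal{T}_i$ of $\Gamma''$ has length at most $3\|\textbf{tbot}(\pazocal{T}_i)\|$ (\Cref{lengths}(d)), and since the base is Move-full and $\pazocal{D}$ is a computation of $\textbf{Move}$ with full base, (Mv7) gives $\|V_i\|\le c_0\max(\|V_0\|,\|V_h\|)\,\|B\|\le c_0' \max(\|\textbf{tbot}(\Gamma)\|,\|\textbf{ttop}(\Gamma)\|)$ (the factor $\|B\|$ absorbed since $\|B\|\le 2N+1$ and $C_2\gg N$), so $\text{wt}(\Gamma'')=\text{Area}(\Gamma'')\le \sum_i\text{length}(\pazocal{T}_i)\le h\cdot c_0''\max(\|\textbf{tbot}(\Gamma)\|,\|\textbf{ttop}(\Gamma)\|)\le C_2 h\max(\|\textbf{tbot}(\Gamma)\|,\|\textbf{ttop}(\Gamma)\|)$ for $C_2\gg c_0''$. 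Adding the two contributions yields exactly the required bound.

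The main obstacle I anticipate is the passage from the purely combinatorial $c_0$-narrow inequality (which controls $I(\pi(B))|H|_{mv}$) to an honest bound on the \emph{full length} $h$ of the history in terms of the boundary lengths of $\Gamma$: the narrow condition only directly governs the moving rules, and the non-moving rules of a $\textbf{Move}$-computation must be bounded separately using the elementary $S$-machine lemmas (\Cref{multiply one letter}, \Cref{multiply two letters}, \Cref{unreduced base}) applied sector by sector to $\pazocal{D}$, and these lemmas only apply cleanly on suitable two-letter or unreduced subwords — so one likely has to first reduce to the case where the tape word of $\pazocal{D}$ is monotone (splitting $\pazocal{C}$ at its narrowest admissible word, exactly the device used repeatedly in \Cref{barM_4(j) faulty}, \Cref{M_5 faulty}, etc.) before the bound goes through. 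Everything else is bookkeeping with the parameter hierarchy $C_1\ll c_0\ll C_2\ll C_3$ and $N\ll C_2$.
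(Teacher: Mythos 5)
There is a genuine gap, and it sits at the exact point where the computational move condition is supposed to do its work. Your parenthetical claim that ``in step history $(1)_1$ no maximal $a$-band ends on a $(\theta,q)$-cell'' is backwards: by (Mv1) the rules of $\textbf{Move}$ may multiply the special input sector on the right by a letter, so the corresponding $(\theta,q)$-cells carry $a$-edges in the special input alphabet, and maximal $a$-bands emanating from $a$-cells can (and in the essential case do) end on them. Indeed \Cref{a-cells sector} says that $(1)_1$ is precisely the step in which this happens, and the definitions of big/exceptional $a$-trapezia and \Cref{revolving a-trapezia not exceptional or big} reduce everything nontrivial to the case where such a band exists; if no $a$-band ended on a $(\theta,q)$-cell, the impeding trapezium would already be handled by \Cref{revolving trapezia no boundary} and no move condition would be needed at all. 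Because of this, your first step fails: \Cref{a-cell a-trapezia no boundary} has as hypothesis that no $a$-band ending on the $a$-cell also ends on a $(\theta,q)$-cell, so the transposition that pushes all $a$-cells to the boundary is not available, and the decomposition ``trapezium $\Gamma''$ plus boundary $a$-cells'' does not exist in the case that matters. Consequently your bound on the total $a$-cell perimeter by $|\textbf{tbot}(\Gamma)|_a+|\textbf{ttop}(\Gamma)|_a$ alone is unjustified: the $a$-cell perimeters pick up an extra contribution of size $I(B)|H|_{mv}$ coming from the $a$-bands absorbed by moving $(\theta,q)$-cells, and controlling that term is exactly what the computational $c_0$-move condition is for.

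The second problem is your intended use of the narrow condition to get $h\le c_0'\max(\|\textbf{tbot}(\Gamma)\|,\|\textbf{ttop}(\Gamma)\|)$. The $c_0$-narrow inequality only bounds $I(B)|H|_{mv}$, the number of moving rules weighted by $I(B)$; it says nothing about the total length of the history, and no such bound on $h$ holds in general (for $\textbf{Move}_{1,X}$ the relevant computations have length quadratic in the size of the end words), nor is one needed, since the target estimate already carries the factor $h$. The paper's argument goes differently: partition $\Gamma$ into the input-sector pieces $\Gamma_j$ (which contain all $a$-cells) and the complementary trapezia $\Gamma_j'$; use (MM2), \Cref{a-band on same a-cell} and (Mv1) to see every $a$-band in $\Gamma_j$ ends on $\textbf{tbot}(\Gamma_j)$, $\textbf{ttop}(\Gamma_j)$, or one of the $h_j$ moving $(\theta,q)$-cells, with $\sum_j h_j=I(B)|H|_{mv}$; extract via (Mv2) and \Cref{trapezia are computations} a reduced computation of $\textbf{Move}$ in the full base with history corresponding to $H$; bound the non-input widths of every $\theta$-band by \Cref{Move faulty} (i.e.\ (Mv8)); and then apply the $c_0$-narrow condition to bound $I(B)|H|_{mv}$ by $c_0\max\bigl(\sum_j|\textbf{tbot}(\Gamma_j')|_a,\sum_j|\textbf{ttop}(\Gamma_j')|_a\bigr)$, which simultaneously controls each $\theta$-band's length (giving the $C_2h\max(\cdot)$ term) and the total $a$-cell perimeter (giving, by superadditivity of $\phi$, the $\phi_{C_3}(\cdot)$ term), with $\Phi=\Gamma$ itself. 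Your bookkeeping for the $\theta$-bands via (Mv7)/(Mv8) is in the right spirit, but without the correct accounting of the moving rules the $a$-cell estimate, and hence the lemma, does not follow.
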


\begin{proof}

Let $\Gamma$ be an impeding $a$-trapezium with height $h$, history $H$, and full base $B$.

Suppose first that $\Gamma$ contains no $a$-cells, {\frenchspacing i.e. $\Gamma$ is a trapezium}.  By \Cref{trapezia are computations}, there exists a reduced computation $\pazocal{C}:W_0\to\dots\to W_h$ of $\textbf{Move}$ with full base corresponding to $\Gamma$.  \Cref{Move faulty} (via (Mv7)) then implies $|W_i|_a\leq c_0\max(|W_0|_a,|W_h|_a)$ for all $i$, so that $$\text{wt}(\Gamma)=\text{Area}(\Gamma)\leq c_0h\max(\|\textbf{tbot}(\Gamma)\|,\|\textbf{ttop}(\Gamma)\|)$$ 
Taking $\Phi=\Gamma$ and $C_2>>c_0$, the conditions of \Cref{def-Move condition} are then satisfied.

Hence, it suffices to assume that $\Gamma$ contains an $a$-cell.  By \Cref{a-cells sector}, $B$ must then contain a two-letter subword of the form $(Q_0(1)Q_1(1))^{\pm1}$ or $Q_1(1)^{-1}Q_1(1)$.  Let $\{\Gamma_j\}$ be the collection of all sectors of $\Gamma$ corresponding to such two-letter subwords and let $\{\Gamma_j'\}$ be the collection of maximal subdiagrams of $\Gamma$ consisting of a number of sectors none of which are $\Gamma_j$.  By construction $\{\Gamma_j\}\cup\{\Gamma_j'\}$ is then a partition of $\Gamma$.

For any $j$, let $h_j$ be the number of $(\theta,q)$-cells of $\Gamma_j$ which is the crossing of a $\theta$-band corresponding to a moving rule and a $q$-band corresponding to $Q_1(1)^{\pm1}$.  Note that $\sum h_j=I(B)|H|_{mv}$.

By (MM2), \Cref{a-band on same a-cell}, and (Mv1), any maximal $a$-band in $\Gamma_j$ must have at least one end on $\text{tbot}(\Gamma_j)$, on $\textbf{ttop}(\Gamma_j)$, or on one of the $h_j$ $(\theta,q)$-cells.  In particular, the sum of the (combinatorial) perimeters of the $a$-cells in $\Gamma_j$ is at most $|\textbf{tbot}(\Gamma_j)|_a+|\textbf{ttop}(\Gamma_j)|_a+h_j$.  

Now, let $V_0$ be the admissible word of $\textbf{Move}$ corresponding to $\lab(\textbf{tbot}(\Gamma))$.  As each $\Gamma_j'$ is a trapezium, it then follows from (Mv2) and \Cref{trapezia are computations} that there exists a reduced computation $\pazocal{D}:V_0\to\dots\to V_h$ of $\textbf{Move}$ whose history corresponds to $H$.  

Enumerate the maximal $\theta$-bands $\pazocal{T}_1,\dots,\pazocal{T}_h$ of $\Gamma$ from bottom to top.  For each $i$, let $\pazocal{T}_{i,j}$ ({\frenchspacing resp. $\pazocal{T}_{i,j}'$}) be the subband of $\pazocal{T}$ which is a maximal $\theta$-band in $\Gamma_j$ ({\frenchspacing resp. $\Gamma_j'$}).  Then, by \Cref{trapezia are computations} $\lab(\textbf{tbot}(\pazocal{T}_i))$ differs from the admissible word corresponding to $V_{i-1}$ only in the labels of $\textbf{tbot}(\pazocal{T}_{i,j})$.  In particular, $|V_{i-1}|_{NI}=\sum|\textbf{tbot}(\pazocal{T}_{i,j}')|_a$.  \Cref{Move faulty} (via (Mv8)) then implies $\sum|\textbf{tbot}(\pazocal{T}_{i,j}')|_a\leq c_0\max(\sum|\textbf{tbot}(\Gamma_j')|_a,\sum|\textbf{ttop}(\Gamma_j')|_a)$ for all $i$.

Further, as above in establishing the bound of the sum of the perimeters of the $a$-cells in $\Gamma_j$, $|\textbf{tbot}(\pazocal{T}_{i,j})|_a\leq|\textbf{tbot}(\Gamma_j)|_a+|\textbf{ttop}(\Gamma_j)|_a+h_j$ for all $i$.

Hence, $|\textbf{tbot}(\pazocal{T}_i)|_a\leq c_0(|\textbf{tbot}(\Gamma)|_a+|\textbf{ttop}(\Gamma)|_a)+I(B)|H|_{mv}$ and the sum of the combinatorial perimeters of the $a$-cells in $\Gamma$ is at most $\sum(|\textbf{tbot}(\Gamma_j)|_a+|\textbf{ttop}(\Gamma_j)|_a)+I(B)|H|_{mv}$.

As $\textbf{Move}$ satisfies the computational $c_0$-move condition, though, $\pazocal{D}$ must be $c$-narrow, i.e. 
$$I(B)|H|_{mv}\leq c_0\max(\sum|\textbf{tbot}(\Gamma_j')|_a,\sum|\textbf{ttop}(\Gamma_j')|_a)$$
Hence, $|\textbf{tbot}(\pazocal{T}_i)|_a\leq 2c_0(|\textbf{tbot}(\Gamma)|_a+|\textbf{ttop}(\Gamma)|_a)$ and the sum of the combinatorial perimeters of the $a$-cells in $\Gamma$ is at most $c_0(|\textbf{tbot}(\Gamma)|_a+|\textbf{ttop}(\Gamma)|_a)$.  

It then follows from the parameter choice $c_1>>c_0$ that the number of $(\theta,a)$- and $(\theta,q)$-cells in $\Gamma$ is at most $c_1h\max(\|\textbf{tbot}(\Gamma)\|,\|\textbf{ttop}(\Gamma)\|)$, while the superadditivity of $\phi_{C_1}$ and the parameter choices $C_2>>C_1>>c_0$ implies the sum of the weights of the $a$-cells in $\Gamma$ is at most $\phi_{C_2}(|\textbf{tbot}(\Gamma)|_a+|\textbf{ttop}(\Gamma)|_a)$.  Thus, the conditions are again satisfied for $\Phi=\Gamma$.

\end{proof}

\begin{remark} \label{rmk-computational-move-generalized}

Per the discussion in \Cref{rmk-generalized move condition}, \Cref{computational move is move} applies in the general setting of Move machines satisfying computational move conditions.  Specifically, as long as $f$ is equivalent to a superadditive function, a Move machine $\textbf{S}$ that satisfies the computational $c$-move condition necessarily satisfies the $(f,H)$-move condition for the appropriate choice of group $H=\gen{X}$.

\end{remark}

\begin{remark}

As with the discussion of big $a$-trapezia in the previous section, the move condition provides a clue to the reasoning behind and justification for the assignment of $G$-weight to impeding $a$-trapezia: Given an impeding $a$-trapezium $\Gamma$, we may excise $\Gamma$ and paste the reduced diagram $\Phi$ in its place.  However, while this surgery gives desirable bounds on the weight of the diagram, the resulting diagram may introduce new difficulties; for example, it may not be $M$-minimal, as was the case of the treatment in \cite{W}.

\end{remark}

\medskip


\subsection{Combs and Subcombs} \

To aid with the arguments to come, we first discuss in this section a generalization of the notion of $a$-trapezia and analyze how they `cut' an $M$-minimal diagram into subdiagrams.

\begin{figure}[H]
\centering
\includegraphics[scale=1]{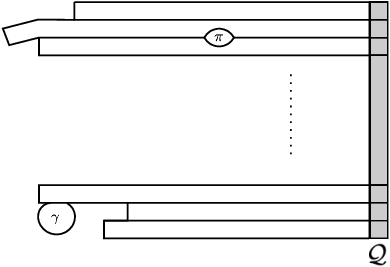}
\caption{Comb with handle $\pazocal{Q}$ containing $a$-cells $\pi$ and $\gamma$}
\end{figure}

Let $\Gamma$ be an $M$-minimal diagram containing a maximal $q$-band $\pazocal{Q}$ such that $\textbf{bot}(\pazocal{Q})$ is a subpath of $\partial\Delta$ and every maximal $\theta$-band of $\Delta$ ends at an edge of $\textbf{bot}(\pazocal{Q})$. Then $\Gamma$ is called a \textit{comb} and $\pazocal{Q}$ its \textit{handle}.

The number of cells in the handle of $\pazocal{Q}$ is the comb's \textit{height} and the maximal length of the bases of the $\theta$-bands its \textit{basic width}.

Note that every $a$-trapezium (or trapezium) may be viewed as a comb with either maximal side $q$-band its handle.

The next statement is adapted from \cite{W}, with the only difference arising from the assignment of the weight of $a$-cells.

\begin{lemma}[Lemma 8.8 of \cite{W}] \label{comb weights}

Let $\Gamma$ be a comb with height $h$, basic width $b$, and $|\partial\Gamma|_a=\a$. Let $\pazocal{T}_1,\dots,\pazocal{T}_h$ be the consecutive maximal $\theta$-bands of $\Gamma$ enumerated from bottom to top. Factor $\partial\Gamma=\textbf{y}\textbf{x}\textbf{z}$, where $\textbf{z}$ is the bottom of the handle of $\Gamma$ and $\textbf{x}$ is the maximal subpath below $\pazocal{T}_1$. Then: 

\begin{enumerate}[label=({\arabic*})]

\item $\text{wt}(\Gamma)\leq c_0bh^2+2\a h+\phi_{C_1}(bh+\a)$

\item $|\textbf{bot}(\pazocal{T}_1)|_a\leq|\textbf{y}|_a+4bh$

\end{enumerate}

\end{lemma}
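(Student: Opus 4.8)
\textbf{Plan of proof for Lemma \ref{comb weights}.}

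The plan is to argue by induction on the basic width $b$ of the comb, peeling off the rim $\theta$-band $\pazocal{T}_1$ and the $a$-cells it cuts off, just as in the corresponding argument in \cite{W}. Since the only difference from \cite{W} is the weight assigned to $a$-cells (here $\phi_{C_1}(\|\partial\pi\|)=C_1\|\partial\pi\|^2f_2(C_1\|\partial\pi\|)$ rather than a quadratic), the combinatorial skeleton of the argument carries over verbatim; I only need to check that the weight accounting still closes. First I would set up the base case $b=0$: then $\Gamma$ is a single $q$-band (the handle), every maximal $\theta$-band has trivial base, and the comb consists only of $(\theta,q)$-cells on the handle together with $a$-cells, so $\text{wt}(\Gamma)\le h+\phi_{C_1}(\a)$ and statement (2) reads $|\textbf{bot}(\pazocal{T}_1)|_a\le|\textbf{y}|_a$, both of which hold trivially.

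For the inductive step, consider the rim $\theta$-band $\pazocal{T}_1$, which by definition has both ends on $\partial\Gamma$ (one end on the handle $\pazocal{Q}$) and all cells between one of its sides and $\textbf{x}$ being $a$-cells. Cutting $\Gamma$ along $\textbf{ttop}(\pazocal{T}_1)$ separates it into: (i) the strip consisting of $\pazocal{T}_1$ together with the $a$-cells below it, and (ii) a comb $\Gamma'$ of height $h-1$ and basic width $b$. The length of $\pazocal{T}_1$ is controlled using \Cref{lengths}(d): with base length at most $b$ and $l_a\le|\textbf{bot}(\pazocal{T}_1)|_a$ bottom $a$-edges, $\pazocal{T}_1$ has at most $|\textbf{bot}(\pazocal{T}_1)|_a+3b$ cells. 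The $a$-edges of $\textbf{bot}(\pazocal{T}_1)$ either lie on $\textbf{y}$ (contributing to $|\textbf{y}|_a$), are endpoints of $a$-bands crossing $\pazocal{T}_1$ from the $a$-cells below it, or cross the handle; since $\Gamma$ is $M$-minimal, (MM1) forces the $a$-bands from each below-$\pazocal{T}_1$ $a$-cell to have at most half their edges crossing $\pazocal{T}_1$, and (MM2) plus \Cref{a-band on same a-cell} (or rather the no-annulus results \Cref{M(S) annuli}, \Cref{M_a no annuli 1}) prevent cancellation, yielding the bound $|\textbf{bot}(\pazocal{T}_1)|_a\le|\textbf{y}|_a+4bh$ claimed in (2) after tracking these contributions up through all $h$ bands. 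The $a$-cells cut off by $\pazocal{T}_1$ have boundaries whose $a$-edges come from $\partial\Gamma$ or from $\textbf{bot}(\pazocal{T}_1)$, so the sum of their combinatorial perimeters is bounded by $\a+(|\textbf{bot}(\pazocal{T}_1)|_a+3b)\le \a+|\textbf{y}|_a+4bh+3b$, and by superadditivity of $\phi_{C_1}$ their total weight is at most $\phi_{C_1}(bh+\a)$ once constants are absorbed via the parameter inequalities.

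Assembling these: $\text{wt}(\Gamma)\le \text{wt}(\Gamma')+(\text{cells of }\pazocal{T}_1)+(\text{weight of }a\text{-cells below }\pazocal{T}_1)$, and applying the inductive hypothesis to $\Gamma'$ (basic width $b$, height $h-1$, with $a$-perimeter at most $bh+\a$ after the cut) gives $\text{wt}(\Gamma')\le c_0b(h-1)^2+2(bh+\a)(h-1)+\phi_{C_1}(b(h-1)+\a')$ for the appropriate residual $\a'$; summing the per-band contributions $|\textbf{bot}(\pazocal{T}_i)|_a+3b\le |\textbf{y}|_a+4bh+3b$ over $i=1,\dots,h$ produces the $c_0bh^2+2\a h$ terms, and a single application of superadditivity collects all the $a$-cell weights into one $\phi_{C_1}(bh+\a)$ term. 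The main obstacle I anticipate is the bookkeeping in statement (2): carefully tracking how $a$-edges propagate through successive $\theta$-bands — distinguishing those inherited from $\partial\Gamma$, those created/consumed at $a$-cells, and those crossing the handle — so that the coefficient $4bh$ (rather than something larger) is obtained; this is exactly where $M$-minimality conditions (MM1) and (MM2) and the absence of $(\theta,a)$- and $a$-annuli (\Cref{M_a no annuli 1}, \Cref{a-band on same a-cell}) are used, and getting the constants to fit under the parameter hierarchy $C_1\gg c_0\gg N$ requires care but no new idea beyond \cite{W}.
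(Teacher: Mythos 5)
The paper gives no independent proof of this lemma: it cites Lemma 8.8 of \cite{W} verbatim and notes that the only change is the weight $\phi_{C_1}$ now assigned to $a$-cells, which is exactly the adaptation you propose (re-run the band-by-band count from \cite{W}, using \Cref{lengths}(d), (MM1)--(MM2) and the no-annulus lemmas, and collect the $a$-cell weights by superadditivity of $\phi_{C_1}$), so your approach coincides with the paper's. One small correction: the induction you describe — peeling off $\pazocal{T}_1$ together with the $a$-cells it cuts off and passing to the residual comb of height $h-1$ — is an induction on the height $h$, not on the basic width $b$ as you announce, and the base case should accordingly be a comb of small height rather than $b=0$.
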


A base word $B$ is \textit{tight} if it is of the form $uxvx$ for some letter $x$ and words $u$ and $v$, where: 

\begin{enumerate}[label=({\arabic*})]

\item $xvx$ is revolving, and

\item no letter from $u$ occurs in $xvx$.

\end{enumerate}

Note that any tight base has length at most $K_0=22L+1$, while any base with length at least $K_0$ must have a tight prefix.

A comb $\Delta$ is called \textit{tight} if:

\begin{enumerate}[label=(C{\arabic*})]

\item one of its maximal $\theta$-bands $\pazocal{T}$ has a tight base when read toward the handle, and

\item all maximal $\theta$-bands have tight bases or bases without tight prefixes

\end{enumerate}

\smallskip

If $\Delta$ is an $M$-minimal diagram over $M_\Omega(\textbf{M})$, then a subdiagram $\Gamma$ is a \textit{subcomb} of $\Delta$ if $\Gamma$ is a comb and its handle divides $\Delta$ into two parts, one of which is $\Gamma$.

Let $\Gamma$ be a comb with handle $\pazocal{C}$ and $\pazocal{B}$ be another maximal $q$-band in $\Gamma$. Then $\pazocal{B}$ cuts $\Gamma$ into two parts, where the part not containing $\pazocal{C}$ is a subcomb $\Gamma'$ with handle $\pazocal{B}$. Note that each maximal $\theta$-band $\pazocal{T}$ of $\Gamma$ crossing $\pazocal{B}$ has a subband $\pazocal{T}_0$ connecting $\pazocal{B}$ with $\pazocal{C}$. If $\pazocal{T}_0$ has no $(\theta,q)$-cells, then $\Gamma'$ is called a \textit{derivative subcomb} of $\Gamma$. 

Note that no maximal $\theta$-band of a comb can cross the handles of more than one derivative subcomb.

The next statement is proved in just the same way as in \cite{W}:

\begin{lemma}[Lemma 8.9 of \cite{W}] \label{tight subcomb}

Let $\Delta$ be an $M$-minimal diagram such that $|\partial\Delta|_\theta>0$ and every quasi-rim $\theta$-band has base of length at least $K$. 
%
%
%
Then $\Delta$ contains a tight subcomb.

\end{lemma}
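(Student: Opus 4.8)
The plan is to extract a tight subcomb of $\Delta$ by a double induction: an outer descent on the number of maximal $q$-bands of $\Delta$ (equivalently on the area), and a careful analysis of what a rim $\theta$-band can look like given the hypothesis that every quasi-rim $\theta$-band has base of length at least $K$. Since $|\partial\Delta|_\theta > 0$, the diagram contains at least one maximal $\theta$-band with an end on $\partial\Delta$; because $\Delta$ is $M$-minimal it contains no $\theta$-annuli (by \Cref{M_a no annuli 2}(2)), so every maximal $\theta$-band actually terminates on $\partial\Delta$ in two places, and in particular there is a rim $\theta$-band $\pazocal{T}$. By definition a rim $\theta$-band is a quasi-rim $\theta$-band, so its base has length at least $K$, hence (by the parameter choice $K \gg K_0$, and the remark that any base of length $\geq K_0$ has a tight prefix) the base of $\pazocal{T}$, read appropriately, has a tight prefix.

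The first step is to locate, within the comb `hanging below' a rim $\theta$-band, a subcomb whose handle $\pazocal{B}$ is the maximal $q$-band at the letter $x$ witnessing the tight prefix $uxvx$ of the base of $\pazocal{T}$. Concretely: take the rim $\theta$-band $\pazocal{T}$, look at the side of $\pazocal{T}$ facing the interior, and consider the maximal subcomb $\Gamma_0$ cut off by a suitable maximal $q$-band. The issue is that $\Gamma_0$ need not itself be tight — some of its maximal $\theta$-bands may have bases with tight prefixes while others may have bases that are proper subwords without tight prefixes, and condition (C1) requires at least one $\theta$-band with a genuinely tight base read toward the handle. I would handle this by passing to a derivative subcomb: among all the maximal $q$-bands appearing in $\Gamma_0$ that sit at the `$x$' positions, choose an innermost one $\pazocal{B}$, so that the $\theta$-subbands connecting $\pazocal{B}$ to the handle of $\Gamma_0$ contain no $(\theta,q)$-cells; this makes the subcomb $\Gamma'$ with handle $\pazocal{B}$ a derivative subcomb, and by minimality of the choice of $\pazocal{B}$ every maximal $\theta$-band of $\Gamma'$ either has a tight base or has a base with no tight prefix, giving (C2), while $\pazocal{T}$ restricted to $\Gamma'$ supplies a $\theta$-band with a tight base toward the handle, giving (C1).

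The main obstacle I anticipate is the bookkeeping needed to guarantee that such an innermost $x$-band genuinely yields a \emph{tight} base (not merely a revolving one) when read toward the handle, and that the `no letter of $u$ occurs in $xvx$' clause is preserved under passing to the derivative subcomb. This is where the hypothesis that every quasi-rim $\theta$-band has base of length at least $K$ does its work: it forces the rim band $\pazocal{T}$ to be long enough that after stripping the revolving suffix $xvx$ there is still a nontrivial `stem' $u$, and it prevents degenerate situations where the whole base collapses. If the first attempt at choosing $\pazocal{B}$ produces a subcomb that fails (C1) or (C2), I would iterate: the failure always manifests as the existence of a strictly smaller subcomb (with fewer $q$-bands) still satisfying the same two hypotheses $|\partial(\cdot)|_\theta > 0$ and every quasi-rim $\theta$-band having base length $\geq K$, so the outer induction on the number of maximal $q$-bands closes. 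I would also need to check the base case, where $\Delta$ has so few $q$-bands that it is essentially a single $\theta$-band — but then that $\theta$-band is itself rim, hence quasi-rim, hence has tight-prefix base, and $\Delta$ is already (after trimming) a tight comb. Throughout, I would lean on \Cref{comb weights} and \Cref{M_a no annuli 2} only for structural facts (absence of annuli, the comb decomposition into $\theta$-bands enumerated bottom to top), not for any numerical estimate, since the statement here is purely combinatorial-structural.
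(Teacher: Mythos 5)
Your overall skeleton --- locate a rim $\theta$-band, use $K\geq K_0$ to extract a tight prefix of its base, cut along a maximal $q$-band, and repair failures of (C1)/(C2) by descending to strictly smaller subdiagrams --- is the right one, and it is essentially the shape of the cited proof in \cite{W}. But two of your intermediate steps do not work as stated. First, your mechanism for (C2) is wrong. You restrict attention to the maximal $q$-bands ``at the $x$ positions'' of the single tight prefix $uxvx$ of the single rim band $\pazocal{T}$, choose an innermost one, and claim this yields a derivative subcomb all of whose $\theta$-bands satisfy (C2). Derivative subcombs are the wrong tool here: asking that the connecting $\theta$-subbands contain no $(\theta,q)$-cells says the new handle is adjacent to the old one in every base, which is neither achievable in general nor related to tightness. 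More importantly, (C2) is a condition on \emph{every} maximal $\theta$-band of the subcomb, and different bands have tight prefixes ending at different letters and different $q$-bands, not at occurrences of your fixed $x$. The correct device is a global minimization: consider all pairs $(\pazocal{T}',\pazocal{Q}')$ such that the base of the subband of the maximal $\theta$-band $\pazocal{T}'$, read from one of its ends toward the maximal $q$-band $\pazocal{Q}'$, is tight, and choose the pair for which the subdiagram cut off by $\pazocal{Q}'$ on that side is smallest. Then (C1) holds by construction, and (C2) holds because a maximal $\theta$-band of the chosen subdiagram whose base (read toward the handle) had a proper tight prefix would produce a strictly smaller member of the family.

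Second, you never verify that the region cut off by your chosen $q$-band is a comb at all, i.e., that every one of its maximal $\theta$-bands ends on the handle; you simply invoke ``the maximal subcomb cut off by a suitable $q$-band'' as though it exists. This is the second, and main, place where the hypothesis on quasi-rim $\theta$-bands is used --- not the place you identify: tightness does not require the ``stem'' $u$ in $uxvx$ to be nonempty, so nothing needs to survive ``stripping $xvx$.'' If some maximal $\theta$-band of the cut-off region misses the handle, then an outermost such band is a quasi-rim $\theta$-band of $\Delta$ (only $a$-cells can separate it from $\partial\Delta$), hence has base of length at least $K\geq K_0$, hence has a tight prefix, and the $q$-band ending that prefix cuts off a strictly smaller member of the family above, again contradicting minimality. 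With these two repairs your descent closes; as written, the proof has a genuine gap at both points.
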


\medskip


\subsection{Upper bound} \

Our goal throughout the rest of this section is to prove that for any $M$-minimal diagram $\Delta$, 
\reqnomode
\begin{equation} \label{counterexample}
\text{wt}_G(\Delta)\leq\phi_{N_2}(|\partial\Delta|)+N_1\mu(\Delta)f_2(N_1|\partial\Delta|)
\end{equation}
for the parameters $N_1$ and $N_2$.

We do this by arguing toward contradiction, considering a `minimal counterexample' diagram $\Delta$. In other words, $\Delta$ is an $M$-minimal diagram over $M_\Omega(\textbf{M})$ satisfying $$\text{wt}_G(\Delta)>\phi_{N_2}(|\partial\Delta|)+N_1\mu(\Delta)f_2(N_1|\partial\Delta|)$$ while (\ref{counterexample}) holds for all $M$-minimal diagrams $\Gamma$ over $M_\Omega(\textbf{M})$ satisfying $|\partial\Gamma|<|\partial\Delta|$. 

\begin{lemma}[Compare with Lemma 8.12 of \cite{W}] \label{no q-edge quadratic}

If $\Gamma$ is an $M$-minimal diagram over $M_\Omega(\textbf{M})$, with no $q$-edges on its boundary, then $\text{wt}_G(\Gamma)\leq \phi_{C_2}(|\partial\Gamma|)$.

\end{lemma}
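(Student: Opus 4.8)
\textbf{Proof plan for Lemma~\ref{no q-edge quadratic}.}
The plan is to exploit the structural constraint that a diagram with no $q$-edges on its boundary cannot contain any $(\theta,q)$-cell, hub, or disk, and hence is built entirely out of $(\theta,a)$-cells and $a$-cells. Indeed, a $(\theta,q)$-cell would be crossed by a maximal $q$-band which, by \Cref{M_a no annuli 1}(d) (no $q$-annuli), must terminate on $\partial\Gamma$, contradicting the hypothesis that $\partial\Gamma$ has no $q$-edges; a hub or disk is a $q$-cell and is similarly ruled out. So every cell of $\Gamma$ is either a $(\theta,a)$-cell or an $a$-cell, and in particular $\Gamma$ contains no big or impeding $a$-trapezia, so $\text{wt}_G(\Gamma)\le\text{wt}(\Gamma)$ and it suffices to bound the ordinary weight.

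First I would handle the $a$-cells. Since $\Gamma$ has no $q$-edges on its boundary, every edge on $\partial\pi$ of an $a$-cell $\pi$ is an $a$-edge, and by \Cref{M_a no annuli 1}(c) (no $a$-annuli), together with (MM2), no maximal $a$-band has two ends on $\partial\Gamma$ that coincide or ends twice on the same $a$-cell; one shows as in the proof of \Cref{revolving trapezia no boundary} that every $a$-edge on the boundary of an $a$-cell corresponds under its maximal $a$-band to a distinct $a$-edge on $\partial\Gamma$. (A maximal $a$-band cannot end on a $(\theta,q)$-cell, hub, or disk since there are none.) Hence the sum of the combinatorial perimeters of all $a$-cells in $\Gamma$ is at most $|\partial\Gamma|_a\le|\partial\Gamma|$, and since $\phi_{C_1}$ is superadditive the total weight of the $a$-cells is at most $\phi_{C_1}(|\partial\Gamma|)$.

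Next I would bound the number of $(\theta,a)$-cells. Since $\Gamma$ has no $q$-edges, and by \Cref{M_a no annuli 2}(2) (an $M$-minimal diagram has no $\theta$-annuli), each maximal $\theta$-band of $\Gamma$ must have both of its ends on $\partial\Gamma$ at $\theta$-edges; its base has length $1$ (a single sector, namely one of the special-input sectors $Q_0(1)Q_1(1)$, $Q_0(1)Q_0(1)^{-1}$, or $Q_1(1)^{-1}Q_1(1)$, as forced by \Cref{a-cells sector} when $a$-cells are present, and in any case a band of $\theta$-length contributing to a $q$-free diagram is confined to such a sector). Using \Cref{lengths}(d), the length of such a $\theta$-band is at most $|\textbf{tbot}|_a+3\cdot 1$, which by the argument above is at most $|\partial\Gamma|_a+3$; the number of maximal $\theta$-bands is at most $|\partial\Gamma|_\theta/2\le|\partial\Gamma|$. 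Multiplying, the number of $(\theta,a)$-cells is $O(|\partial\Gamma|^2)$, hence at most $C_1|\partial\Gamma|^2\le\phi_{C_1}(|\partial\Gamma|)$ after absorbing constants via the parameter choice $C_2\gg C_1$. Combining the two bounds, $\text{wt}(\Gamma)\le 2\phi_{C_1}(|\partial\Gamma|)\le\phi_{C_2}(|\partial\Gamma|)$, which gives the claim.

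The main obstacle I anticipate is making the counting of $\theta$-bands and their lengths airtight: one must be careful that a $q$-free reduced $M$-minimal diagram really is forced into sectors of the special input region (so that bases have length one and \Cref{lengths}(d) gives a linear bound on band length in terms of boundary $a$-edges), rather than having some uncontrolled base. This is where the absence of $q$-edges, \Cref{M_a no annuli 1}, \Cref{M_a no annuli 2}, and \Cref{a-cells sector} must be assembled correctly; everything after that is a routine superadditivity-and-parameters computation.
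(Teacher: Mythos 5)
Your proposal is correct and follows essentially the same route as the paper's proof (which defers the key counts to the analogue in \cite{W}): $\text{wt}_G(\Gamma)=\text{wt}(\Gamma)$, the $a$-cell perimeters are bounded by $\|\partial\Gamma\|$ via maximal $a$-bands ending only on $\partial\Gamma$, the number of $(\theta,a)$-cells is bounded by $O(\|\partial\Gamma\|^2)$ via $\theta$-band/$a$-band crossings, and the rest is superadditivity of $\phi_{C_1}$ plus the parameter choices for $C_2$ and $\delta$. One small imprecision: a maximal $\theta$-band with no $(\theta,q)$-cells has \emph{empty} base (length $0$, so \Cref{lengths}(d) gives length exactly $l_a$), not base of length $1$, but this does not affect the estimate.
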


\begin{proof}

As in the proof of the analogous statement in \cite{W}, $\text{wt}_G(\Gamma)=\text{wt}(\Gamma)$, the sum of the (combinatorial) perimeters of the $a$-cells is at most $\|\partial\Gamma\|$, and the number of $(\theta,a)$-cells is at most $\frac{1}{2}\|\partial\Gamma\|^2$.  As $\phi_{C_1}$ is superadditive, this implies $\text{wt}_G(\Gamma)\leq \phi_{C_1}(\|\partial\Gamma\|)+\frac{1}{2}\|\partial\Gamma\|^2$.

%
%
%

By the definition of the functions $\phi_{C}$ for $C>0$, $\frac{1}{2}\|\partial\Gamma\|^2\leq\frac{1}{2}\|\partial\Gamma\|^2f_2(\|\partial\Gamma\|)\leq\phi_1(\|\partial\Gamma\|)$.  Hence, $\text{wt}_G(\Gamma)\leq2\phi_{C_1}(\|\partial\Gamma\|)$.

Taking into account the modified definition of perimeter, the statement follows from an appropriate choice of $C_2$ in terms of $C_1$ and $\delta$.

\end{proof}

The parameter choice $N_2>>C_2$ and Lemma \ref{no q-edge quadratic} thus allow us to assume that $\partial\Delta$ consists of at least two $q$-edges, i.e $|\partial\Delta|\geq2$.

\begin{lemma}[Compare with Lemma 8.13 of \cite{W}] \label{a-cell in counterexample}

Let $\pi$ be an $a$-cell contained in $\Delta$. Suppose $\partial\pi$ has a subpath $\textbf{s}$ shared with $\partial\Delta$. Then $\|\textbf{s}\|\leq\frac{2}{3}\|\partial\pi\|$.

\end{lemma}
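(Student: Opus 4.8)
The plan is to argue by contradiction, using the minimality of the counterexample $\Delta$. Suppose $\pi$ is an $a$-cell in $\Delta$ with $\|\textbf{s}\| > \frac{2}{3}\|\partial\pi\|$, where $\textbf{s}$ is a subpath of $\partial\Delta$. Write $\partial\pi = \textbf{s}\textbf{t}$, so that $\|\textbf{t}\| < \frac{1}{3}\|\partial\pi\|$. The idea is to excise $\pi$ from $\Delta$: let $\Gamma$ be the diagram obtained by removing $\pi$, so that $\partial\Gamma$ is obtained from $\partial\Delta$ by replacing $\textbf{s}$ with $\textbf{t}^{-1}$. Since $\textbf{s}$ consists only of $a$-edges (the boundary of an $a$-cell is a word over the tape alphabet $X$ of the special input sector), removing $\pi$ does not affect $M$-minimality: a subdiagram of an $M$-minimal diagram is $M$-minimal, and deleting a rim $a$-cell whose contour is partly on $\partial\Delta$ preserves (MM1) and (MM2) for the remaining cells. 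Moreover $|\partial\Gamma| < |\partial\Delta|$ since we have replaced a path of $a$-length $\|\textbf{s}\|$ by one of $a$-length $\|\textbf{t}\| < \frac{1}{2}\|\textbf{s}\|$ (indeed $\|\textbf{t}\|<\|\textbf{s}\|/2$), and all $a$-edges contribute the small weight $\delta$ to the modified length while no $q$- or $\theta$-edges are added or removed. So (\ref{counterexample}) applies to $\Gamma$.

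First I would set up the precise inequality on perimeters: using that the modified length of a path is at most its combinatorial $a$-length times $\delta$ plus its $\theta$- and $q$-lengths, and that $\textbf{s},\textbf{t}$ carry only $a$-edges, we get $|\partial\Delta| = |\partial\Gamma| + \delta(\|\textbf{s}\| - \|\textbf{t}\|) + (\text{correction of size} \le \delta)$ from \Cref{lengths}(c). Since $\|\textbf{s}\|-\|\textbf{t}\| > \frac{1}{3}\|\partial\pi\|$, this gives a lower bound $|\partial\Delta| \ge |\partial\Gamma| + \frac{\delta}{3}\|\partial\pi\| - \delta$, which for $\|\partial\pi\|$ not too small (and $\|\partial\pi\|\ge 1$ always) yields a genuine gap. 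Next I would compare $G$-weights. Excising $\pi$ removes exactly one $a$-cell, whose $G$-weight equals its weight $\text{wt}(\pi) = \phi_{C_1}(\|\partial\pi\|) = C_1\|\partial\pi\|^2 f_2(C_1\|\partial\pi\|)$, and does not affect any covering of the remaining cells (a covering of $\Gamma$ extends to one of $\Delta$ by adjoining the single cell $\pi$, and conversely). Hence $\text{wt}_G(\Delta) \le \text{wt}_G(\Gamma) + \phi_{C_1}(\|\partial\pi\|)$.

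Putting this together with the assumed failure of (\ref{counterexample}) for $\Delta$ and its validity for $\Gamma$:
\begin{align*}
\phi_{N_2}(|\partial\Delta|) + N_1\mu(\Delta)f_2(N_1|\partial\Delta|) &< \text{wt}_G(\Delta) \le \text{wt}_G(\Gamma) + \phi_{C_1}(\|\partial\pi\|) \\
&\le \phi_{N_2}(|\partial\Gamma|) + N_1\mu(\Gamma)f_2(N_1|\partial\Gamma|) + \phi_{C_1}(\|\partial\pi\|).
\end{align*}
Now $\mu(\Gamma) \le \mu(\Delta)$ since excising $\pi$ removes no $\theta$- or $q$-edges from the boundary (the necklace of $\Gamma$ is obtained from that of $\Delta$ by the boundary modification along $\textbf{s}$, which involves only $a$-edges and hence no beads), and $f_2$ is non-decreasing with $|\partial\Gamma| \le |\partial\Delta|$, so the $\mu$-terms on both sides can be cancelled or bounded. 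This reduces the contradiction to $\phi_{N_2}(|\partial\Delta|) - \phi_{N_2}(|\partial\Gamma|) < \phi_{C_1}(\|\partial\pi\|)$. Here I would use the superadditivity estimate for $\phi_C$ recorded in \Cref{sec-disks-and-weights}, namely $\phi_{N_2}(x) - \phi_{N_2}(x-y) \ge N_2 x y f_2(N_2 x)$ for $x \ge y$, applied with $x = |\partial\Delta|$ and $y = |\partial\Delta| - |\partial\Gamma| \ge \frac{\delta}{3}\|\partial\pi\| - \delta \ge \frac{\delta}{4}\|\partial\pi\|$ (the last step valid once $\|\partial\pi\|$ exceeds an absolute constant; the finitely many remaining small cases of $\|\partial\pi\|$ can be handled directly, since then $\textbf{s}$ has bounded length and the perimeter gap is still at least one $a$-edge of weight $\delta>0$, while $\phi_{C_1}(\|\partial\pi\|)$ is bounded). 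This gives $\phi_{N_2}(|\partial\Delta|) - \phi_{N_2}(|\partial\Gamma|) \ge N_2 |\partial\Delta| \cdot \frac{\delta}{4}\|\partial\pi\| \cdot f_2(N_2|\partial\Delta|) \ge \frac{N_2\delta}{4}\|\partial\pi\| f_2(N_2 \|\partial\pi\|)$, using $|\partial\Delta| \ge 1$ and, crucially, that $\|\partial\pi\| \le \|\partial\Delta\|$ is comparable to $|\partial\Delta|$ so that $f_2(N_2|\partial\Delta|) \ge f_2(\|\partial\pi\|)$. Comparing with $\phi_{C_1}(\|\partial\pi\|) = C_1\|\partial\pi\|^2 f_2(C_1\|\partial\pi\|)$, the contradiction will follow provided $N_2$ is chosen large relative to $C_1$ and $\delta^{-1}$ and one also controls the factor $\|\partial\pi\|$ (one power of $\|\partial\pi\|$ on the left vs.\ two on the right) — this is exactly where I expect the main obstacle to be, and it is resolved by the bound $\|\partial\pi\| \le \|\partial\Delta\| = |\partial\Delta|/\delta$ combined with $y \ge \frac{\delta}{4}\|\partial\pi\|$ giving $|\partial\Delta| \cdot y \gtrsim \|\partial\pi\|^2$ up to the constant $\delta$, so both sides are genuinely of order $\|\partial\pi\|^2 f_2$. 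The parameter inequalities $\delta^{-1} << N_1 << N_2$ then close the argument.

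\textbf{Main obstacle.} The delicate point is matching the quadratic growth: after excision one power of $\|\partial\pi\|$ comes from the perimeter drop and one must come from $|\partial\Delta|$ itself, which requires knowing $\|\partial\pi\|$ is not much larger than $|\partial\Delta|$ — this is true because $\pi$ is a subdiagram-boundary cell, so $\|\partial\pi\| \le 2\|\textbf{s}\| \le 2\|\partial\Delta\|$, but the constant bookkeeping between $\phi_{C_1}$ and $\phi_{N_2}$ (and the auxiliary $\delta$ factors from \Cref{lengths}(c)) must be tracked carefully so that the final inequality is strict. I also need to verify carefully that $\Gamma$ is still $M$-minimal and that its necklace has no more mixture than that of $\Delta$; both reduce to the observation that the surgery only touches $a$-edges and removes one $a$-cell, leaving all $q$-bands, $\theta$-bands, and the remaining $a$-bands (hence (MM1), (MM2), and all beads) intact.
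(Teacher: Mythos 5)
Your proposal is correct and follows essentially the same route as the paper: excise the $a$-cell, show the perimeter drops by at least roughly $\delta\|\textbf{s}\|/8$, apply the inductive hypothesis to the smaller diagram, use the estimate $\phi_{N_2}(x-y)\leq\phi_{N_2}(x)-N_2xyf_2(N_2x)$ together with $|\partial\Delta|\geq\frac{1}{4}\delta\|\textbf{s}\|$ to recover both powers of $\|\partial\pi\|$, and close with $N_2>>C_1>>\delta^{-1}$. The only cosmetic difference is that the paper dispenses with your separate treatment of small $\|\partial\pi\|$ by invoking hypothesis (1) of \Cref{main-theorem}, which forces $\|\textbf{s}\|>\frac{2}{3}M>8$ outright.
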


\begin{proof}

Let $\partial\pi=\textbf{s}\textbf{t}$ and $\partial\Delta=\textbf{s}\textbf{s}_0$.

Assuming toward contradiction that $\|\textbf{s}\|>\frac{2}{3}\|\partial\pi\|$, then hypothesis (1) of \Cref{main-theorem} implies $\|\textbf{s}\|>\frac{2}{3}M>8$ by a parameter choice.  

Let $\Delta_0$ be the subdiagram of $\Delta$ obtained by removing $\pi$. So, $\partial\Delta_0=\textbf{t}^{-1}\textbf{s}_0$.

As in the proof of the analogous statement in \cite{W}, Lemma \ref{lengths}(c) implies $|\partial\Delta_0|\leq|\textbf{s}_0|+\delta\|\textbf{t}\|$ and $|\partial\Delta|\geq|\textbf{s}_0|+\delta(\|\textbf{s}\|-2)$, so that $|\partial\Delta|-|\partial\Delta_0|>\delta\|\textbf{s}\|/4\geq\delta\|\textbf{s}\|/8>0$.

The inductive hypothesis then applies to $\Delta_0$, yielding 
$$\text{wt}_G(\Delta_0)\leq \phi_{N_2}(|\partial\Delta_0|)+N_1\mu(\Delta_0)f_2(N_1|\partial\Delta_0|)\leq \phi_{N_2}(|\partial\Delta|-\delta\|\textbf{s}\|/8)+N_1\mu(\Delta_0)f_2(N_1|\partial\Delta_0|)$$
As $\delta\|\textbf{s}\|/8\leq|\partial\Delta|$, we have $\phi_{N_2}(|\partial\Delta|-\delta\|\textbf{s}\|/8)\leq \phi_{N_2}(|\partial\Delta|)-\frac{N_2\delta}{8}|\partial\Delta|\|\textbf{s}\|f_2(N_2|\partial\Delta|)$.

Lemma \ref{G-weight subdiagrams} implies $\text{wt}_G(\Delta)\leq\text{wt}_G(\Delta_0)+\text{wt}(\pi)$, while $\mu(\Delta)=\mu(\Delta_0)$ since the necklaces corresponding to $\partial\Delta$ and $\partial\Delta_0$ are identical. So, since the combinatorial perimeter of $\pi$ is $\|\textbf{s}\|+\|\textbf{t}\|$, Lemma \ref{G-weight subdiagrams} then implies:
$$\text{wt}_G(\Delta)\leq\phi_{N_2}(|\partial\Delta|)-\frac{N_2\delta}{8}\|\textbf{s}\||\partial\Delta|f_2(N_2|\partial\Delta|)+N_1\mu(\Delta)f_2(N_1|\partial\Delta|)+\phi_{C_1}(\|\textbf{s}\|+\|\textbf{t}\|)$$
So, we reach the contradiction $\text{wt}_G(\Delta)\leq N_2|\partial\Delta|^2+N_1\mu(\Delta)$ if 
$$\frac{N_2\delta}{8}\|\textbf{s}\||\partial\Delta|f_2(N_2|\partial\Delta|)\geq\phi_{C_1}(\|\textbf{s}\|+\|\textbf{t}\|)$$
As $\|\textbf{t}\|<\frac{1}{2}\|\textbf{s}\|$ implies $\phi_{C_1}(\|\textbf{s}\|+\|\textbf{t}\|)\leq\phi_{C_1}(\frac{3}{2}\|\textbf{s}\|)$, it suffices to show 
$$\frac{N_2\delta}{8}\|\textbf{s}\||\partial\Delta|f_2(N_2|\partial\Delta|)\geq\phi_{C_1}(3\|\textbf{s}\|/2)=\frac{9C_1}{4}\|\textbf{s}\|^2f_2\left(\frac{3C_1\|\textbf{s}\|}{2}\right)$$
But since $|\partial\Delta|\geq\frac{1}{4}\delta\|\textbf{s}\|$, this follows from the parameter choices $N_2>>C_1>>\delta^{-1}$.

\end{proof}


\begin{lemma}[Compare with Lemma 8.14 of \cite{W}] \label{6.16} \

\begin{enumerate}[label=({\arabic*})]

\item $\Delta$ has no two disjoint subcombs $\Gamma_1$ and $\Gamma_2$ of basic widths at most $K$ with handles $\pazocal{B}_1$ and $\pazocal{B}_2$ such that some ends of these handles are connected by a subpath $\textbf{x}$ of $\partial\Delta$ with $|\textbf{x}|_q\leq c_0$.

\item If $\Gamma$ is a subcomb of $\Delta$ with basic width $s\leq K$, $|\partial\Gamma|_q=2s$.

\end{enumerate}

\end{lemma}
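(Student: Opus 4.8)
\textbf{Proof strategy for Lemma \ref{6.16}.}

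The plan is to prove both statements by induction on $|\partial\Delta|$, in tandem with the main inequality (\ref{counterexample}), following the scheme of the analogous Lemma 8.14 in \cite{W}. For statement (1), I would argue by contradiction: suppose $\Gamma_1$ and $\Gamma_2$ are disjoint subcombs of basic width at most $K$ with handles $\pazocal{B}_1,\pazocal{B}_2$, and with some ends of these handles joined along a subpath $\textbf{x}$ of $\partial\Delta$ with $|\textbf{x}|_q\le c_0$. The key geometric observation is that $\pazocal{B}_1$, $\pazocal{B}_2$, and $\textbf{x}$ together cut off a subdiagram $\Delta_0$ of $\Delta$ that contains $\Gamma_1\cup\Gamma_2$; since the handles have length bounded in terms of the heights of the combs and $|\textbf{x}|_q$ is small, the boundary $\partial\Delta_0$ has $q$-length at most $2K+c_0$ (the two handles each contribute at most $K$ $q$-edges since their base lengths are bounded — here I would invoke statement (2) of the lemma for the subcombs $\Gamma_i$, which is legitimate since the induction is on perimeter and $|\partial\Gamma_i|<|\partial\Delta|$). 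I would then excise $\Delta_0$, replace it by a diagram with the same boundary label but controlled weight (using the estimates on combs from Lemma \ref{comb weights} together with Lemma \ref{revolving G-weight} for any revolving portions, and Lemma \ref{revolving a-trapezia (1)} where applicable), and compare $G$-weights via Lemma \ref{G-weight subdiagrams}. Removing the handles drops the perimeter by a definite amount (roughly $\delta$ times the heights, by Lemma \ref{lengths}(c)), so the inductive hypothesis applies to the complement and, combined with the weight bound on the replacement, contradicts the minimal-counterexample assumption on $\Delta$. Tracking the $\mu$-term requires part (2) or (3) of Lemma \ref{mixtures} (removing $q$-edges and $\theta$-edges does not increase the mixture), so that term is benign.

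For statement (2), I would again argue that if $\Gamma$ is a subcomb of basic width $s\le K$ but $|\partial\Gamma|_q>2s$, then $\partial\Gamma$ contains more than $2s$ $q$-edges, and since the handle contributes exactly $2$ of them (one on each side — actually $\textbf{bot}(\pazocal{Q})$ has $s+1$ $q$-edges and the top contributes the rest), a careful count of the $q$-edges along the top boundary of $\Gamma$ forces the presence of a second maximal $q$-band $\pazocal{B}$ in $\Gamma$ whose bottom is on $\partial\Delta$, or equivalently a second subcomb disjoint from an appropriate piece; this reduces statement (2) to an application of statement (1) (with the handle of $\Gamma$ and the handle of this second band playing the roles of $\pazocal{B}_1,\pazocal{B}_2$, joined by a short $q$-bounded arc of $\partial\Delta$). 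So the two statements are genuinely intertwined, which is why they must be proved simultaneously in the same induction.

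The main obstacle I anticipate is the bookkeeping in the excision-and-replacement step of statement (1): one must verify that the subdiagram $\Delta_0$ cut off is itself $M$-minimal (this follows since subdiagrams of $M$-minimal diagrams are $M$-minimal), that its boundary has the claimed small $q$-length so that Lemma \ref{tight subcomb} / Lemma \ref{revolving G-weight}-type bounds give a weight estimate of the form $C\,h\max(\cdot)+\phi_{C'}(\cdot)$, and crucially that this estimate is \emph{strictly} absorbed by the gain in the $\phi_{N_2}$-term coming from the perimeter decrease — this is exactly where the parameter hierarchy $N_2\gg C_3\gg\dots\gg c_0$ and $N_2\gg\delta^{-1}$ is used, just as in Lemma \ref{a-cell in counterexample}. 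A secondary subtlety is handling the case where one of the handles $\pazocal{B}_i$ is itself part of a big or impeding $a$-trapezium in a covering realizing $\text{wt}_G(\Delta)$; here one should first pass to a covering in which the relevant $q$-bands are not interior to any $a$-trapezium of the covering, or else argue directly with the $G$-weight bounds of Lemma \ref{revolving G-weight}. I expect the remaining steps — the $q$-edge count for statement (2), and the reduction of (2) to (1) — to be routine combinatorics on the band structure of combs.
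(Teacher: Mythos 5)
There is a genuine gap in the treatment of the mixture term, and it sits exactly at the crux of statement (1). You write that the $\mu$-term is ``benign'' because removing edges does not increase the mixture, and you expect the weight of the excised material to be absorbed entirely by the $\phi_{N_2}$-gain coming from the perimeter decrease. But the perimeter decrease from removing a comb $\Gamma_1$ is only $\gamma=\max(1,(\a-2h_1)\delta)$, where $\a=|\partial\Gamma_1|_a$: the $h_1$ $\theta$-edges of the outer boundary are merely replaced by the $h_1$ $\theta$-edges of $\textbf{bot}(\pazocal{B}_1)$, so when $\a$ is small relative to $h_1$ the perimeter drops by only a constant, while $\text{wt}(\Gamma_1)$ is of order $c_0Kh_1^2$ by Lemma \ref{comb weights}. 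The term $N_2\gamma|\partial\Delta|f_2(N_2|\partial\Delta|)$ cannot absorb a quantity quadratic in $h_1$ in that regime. The paper's proof instead harvests a \emph{strict decrease} of the mixture: since the $\theta$-edges of $\textbf{y}_1$ and $\textbf{y}_2$ are pairwise separated by at most $4K+c_0\leq J$ $q$-edges, each of the $h_1h_2$ correctly ordered pairs contributes to $\mu(\Delta)$, and deleting the $q$-edges of the handle $\pazocal{B}_1$ removes a black bead between them, so Lemma \ref{mixtures}(4) gives $\mu(\Delta)-\mu(\Delta')\geq h_1h_2\geq h_1^2$. It is the resulting term $N_1h_1h_2f_2(N_1|\partial\Delta|)$ that absorbs $c_0Kh_1^2+2\a h_1+\phi_{C_1}(Kh_1+\a)$ when $\a\leq 4h_1$. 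This is also the reason the hypothesis of (1) involves \emph{two} nearby subcombs: under your reading a single subcomb of bounded basic width would already yield a contradiction, which is false.

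Two secondary corrections. First, the simultaneous induction is not on $|\partial\Delta|$ but on $W=\text{wt}(\Gamma_1)+\text{wt}(\Gamma_2)$ (resp. $W=\text{wt}(\Gamma)$) within the fixed minimal counterexample $\Delta$; this is what licenses applying (2) to the $\Gamma_i$, which are subcombs of $\Delta$ itself, not of a smaller diagram. Second, the surgery removes only $\Gamma_1$ (the comb of smaller height), not a region bounded by $\pazocal{B}_1$, $\textbf{x}$, and $\pazocal{B}_2$; the latter region lies between the two combs, contains neither of them, and may contain arbitrary cells, so excising it is neither needed nor controlled. Your sketch of (2), reducing to (1) by locating a second $q$-band forced by the excess $q$-edges, is in the right spirit and matches the argument the paper imports verbatim from \cite{W}.
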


\renewcommand\thesubfigure{\arabic{subfigure}}
\begin{figure}[H]
\centering
\begin{subfigure}[b]{0.48\textwidth}
\centering
\includegraphics[scale=1]{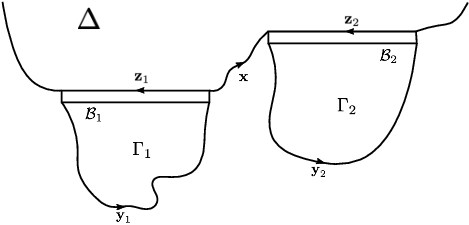}
\caption{ \ }
\end{subfigure}\hfill
\begin{subfigure}[b]{0.48\textwidth}
\centering
\includegraphics[scale=1.225]{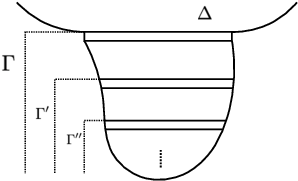}
\caption{ \ }
\end{subfigure}
\caption{Lemma \ref{6.16}}
\end{figure}

\begin{proof}

As in the proof of the analogous statement in \cite{W}, we prove (1) and (2) simultaneously, inducting on $W=\text{wt}(\Gamma_1)+\text{wt}(\Gamma_2)$ for (1) and $W=\text{wt}(\Gamma)$ for (2). In other words, we consider a counterexample to one of these two with minimal value of $W$.

Suppose the minimal counterexample is of the form (1). 

As $\text{wt}(\Gamma_i)<W$ for $i=1,2$, the inductive hypothesis implies that (2) holds for each. So, $\partial\Gamma_i$ has at most $2K$ $q$-edges.

Let $h_i$ be the height of $\Gamma_i$ and assume without loss of generality that $h_1\leq h_2$. For $i=1,2$, let $\partial\Gamma_i=\textbf{y}_i\textbf{z}_i$ where $\textbf{y}_i$ is a subpath of $\partial\Delta$ and $\textbf{z}_i=\textbf{bot}(\pazocal{B}_i)$. Without loss of generality, assume $\textbf{y}_1\textbf{x}\textbf{y}_2$ is a subpath of $\partial\Delta$.

Then each $\theta$-edge of $\textbf{y}_1$ is separated in $\partial\Delta$ from each $\theta$-edge of $\textbf{y}_2$ by at most $4K+c_0$ $q$-edges, and so by at most $J$ $q$-edges by the choice of parameters. Hence, each (correctly ordered) pair of such edges (or the white beads corresponding to these edges) makes a contribution to $\mu(\Delta)$.

Let $\Delta'$ be the diagram obtained by removing the subdiagram $\Gamma_1$ from $\Delta$. When passing from $\partial\Delta$ to $\partial\Delta'$, one replaces each $\theta$-edge of $\textbf{y}_1$ with the corresponding $\theta$-edge of $\textbf{z}_1$ belonging to the same $\theta$-band. But since $\pazocal{B}_1$ is removed, there is at least one less $q$-edge separating any of the $h_1h_2$ (correctly ordered) pairs of $\theta$-edges described above. So, $\mu(\Delta)-\mu(\Delta')\geq h_1h_2$ by Lemma \ref{mixtures}(d).

Letting $|\partial\Gamma_1|_a=\a$, Lemma \ref{comb weights} yields $\text{wt}_G(\Gamma_1)\leq\text{wt}(\Gamma_1)\leq c_0Kh_1^2+2\a h_1+\phi_{C_1}(Kh_1+\a)$.

Letting $\textbf{s}$ be the complement of $\textbf{y}_1$ in $\partial\Delta$, then as in the proof of the analogous statement in \cite{W}, $|\partial\Delta'|\leq h_1+|\textbf{s}|$ and $|\partial\Delta|\geq h_1+|\textbf{s}|+2-2\delta+\max(0,(\a-2h_1)\delta)$.

%
%
Hence, taking $\delta^{-1}>2$, we have
\begin{equation} \label{6.16 difference in perimeter}
|\partial\Delta|-|\partial\Delta'|\geq\gamma=\max(1,(\a-2h_1)\delta)
\end{equation}
In particular, $|\partial\Delta'|<|\partial\Delta|$, so that the inductive hypothesis implies
\begin{align*}
\text{wt}_G(\Delta')&\leq \phi_{N_2}(|\partial\Delta'|)+N_1\mu(\Delta')f_2(N_1|\partial\Delta'|) \\
&\leq\phi_{N_2}(|\partial\Delta|-\gamma)+N_1(\mu(\Delta)-h_1h_2)f_2(N_1|\partial\Delta|)
\end{align*}
Noting that $\gamma\leq|\partial\Delta|$, we have $\phi_{N_2}(|\partial\Delta|-\gamma)\leq\phi_{N_2}(|\partial\Delta|)-N_2\gamma|\partial\Delta|f_2(N_2|\partial\Delta|)$, so that
$$\text{wt}_G(\Delta')\leq\phi_{N_2}(|\partial\Delta|)-N_2\gamma|\partial\Delta|f_2(N_2|\partial\Delta|)+N_1\mu(\Delta)f_2(N_1|\partial\Delta|)-N_1h_1h_2f_2(N_1|\partial\Delta|)$$
Combining this with the $G$-weight of $\Gamma_1$, Lemma \ref{G-weight subdiagrams} then implies that it suffices to show:
$$c_0Kh_1^2+2\a h_1+\phi_{C_1}(Kh_1+\a)\leq N_2\gamma|\partial\Delta|f_2(N_2|\partial\Delta|)+N_1h_1h_2f_2(N_1|\partial\Delta|)$$
If $\a\leq4h_1$, then taking $K\geq4$, $\phi_{C_1}(Kh_1+\a)\leq\phi_{C_1}(2Kh_1)=4C_1K^2h_1^2f_2(2C_1Kh_1)$.  Since $|\partial\Delta|\geq h_1$, it then follows from the parameter choices $N_1>>C_1>>K$ that 
$$c_0Kh_1^2+2\a h_1+\phi_{C_1}(Kh_1+\a)\leq N_1h_1^2f_2(N_1|\partial\Delta|)$$
But then the desired inequality follows from the assumption that $h_1\leq h_2$.

Otherwise, taking $\a>4h_1$, it follows that $\gamma\geq\frac{1}{2}\delta\a$.  As $|\partial\Delta|\geq\gamma$, it then suffices to show:
$$c_0Kh_1^2+\frac{1}{2}\a^2+\phi_{C_1}(K\a)\leq \frac{1}{4}N_2\delta^2\a^2f_2(N_2|\partial\Delta|)+N_1h_1^2f_2(N_1|\partial\Delta|)$$
The parameter choices $N_1>>K>>c_0$ and the hypothesis that $f_2(1)\geq1$ immediately imply $c_0Kh_1^2\leq N_1h_1^2f_2(N_1|\partial\Delta|)$.  Similarly, $N_2>>\delta^{-1}$ implies $\frac{1}{2}\a^2\leq\frac{1}{8}N_2\delta^2\a^2f_2(N_2|\partial\Delta|)$.  Hence, it suffices to show:
$$8C_1K^2\a^2f_2(K\a)\leq N_2\delta^2\a^2f_2(N_2|\partial\Delta|)$$
But $|\partial\Delta|\geq\frac{1}{2}\delta\a$, and so this follows from the parameter choices $N_2>>C_1>>\delta^{-1}>>K$.

Conversely, if we have a minimal counterexample of the form (2), then a verbatim argument to the one presented in \cite{W} for the proof of the analogous statement implies the statement.

%
%
%

\end{proof}


\begin{lemma}[Compare with Lemma 8.15 of \cite{W}] \label{6.17}

Suppose $\Gamma$ is a subcomb of $\Delta$ whose basic width is at most $K_0$ and whose handle $\pazocal{B}$ has length $\ell$. If $\Gamma'$ is a subcomb of $\Gamma$ with handle $\pazocal{B}'$ of length $\ell'$, then $\ell'>\ell/2$.

\end{lemma}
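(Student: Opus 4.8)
\textbf{Proof proposal for \Cref{6.17}.}
The plan is to argue by contradiction: suppose $\ell'\le\ell/2$, and derive a contradiction with \Cref{6.16}. First I would set up the two $q$-bands in play: $\pazocal{B}$, the handle of $\Gamma$ (length $\ell$), and $\pazocal{B}'$, the handle of the subcomb $\Gamma'\subseteq\Gamma$ (length $\ell'$). Since $\Gamma'$ is a subcomb of $\Gamma$, its handle $\pazocal{B}'$ cuts $\Gamma$ into two parts, one of which is $\Gamma'$; the complementary part $\Gamma''$ is then a comb whose handle contains a subband of $\pazocal{B}$, and in fact $\Gamma''$ is itself a subcomb of $\Delta$ with handle $\pazocal{B}$ (using that $\pazocal{B}$ already divides $\Delta$ with $\Gamma$ on one side). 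Both $\Gamma'$ and $\Gamma''$ have basic width at most $K_0$ (they are built from sectors of $\Gamma$, whose basic width is at most $K_0\le K$), so \Cref{6.16}(2) applies to each, giving $|\partial\Gamma'|_q=2\ell'$ and analogously that $\Gamma''$ has the expected number of $q$-edges on its boundary.

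The key geometric point is to locate a subpath $\textbf{x}$ of $\partial\Delta$ with $|\textbf{x}|_q\le c_0$ connecting an end of $\pazocal{B}'$ to an end of $\pazocal{B}$ (or more precisely to the handle of a second disjoint subcomb). Here is where the hypothesis $\ell'\le\ell/2$ does the work: the $\ell$ maximal $\theta$-bands of $\Gamma$ crossing $\pazocal{B}$ — at least $\ell-\ell'\ge\ell/2$ of them — must end on $\partial\Delta$ to the "other side" of $\Gamma'$, and since each such $\theta$-band has base of length at most $K_0$, the portion of $\partial\Delta$ from the relevant end of $\pazocal{B}'$ to the relevant end of $\pazocal{B}$ picks up only boundedly many $q$-edges (at most $2K_0$ or so, which is $\le c_0$ by the parameter choice $c_0\gg K_0$). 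This produces the configuration forbidden by \Cref{6.16}(1): two disjoint subcombs of basic width $\le K\ge K_0$ whose handles have ends joined by a subpath of $\partial\Delta$ with at most $c_0$ $q$-edges. One of the two subcombs is $\Gamma'$; the other I would take to be a derivative-style subcomb of $\Gamma$ hanging off $\pazocal{B}$ on the side away from $\Gamma'$, or, if no such nontrivial subcomb exists, I would instead directly contradict \Cref{6.16}(2) applied to $\Gamma$ by showing that the $q$-edge count forced by the split $\Gamma=\Gamma'\cup\Gamma''$ together with $\ell'\le\ell/2$ is inconsistent.

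I expect the main obstacle to be the careful bookkeeping of which ends of $\pazocal{B}$ and $\pazocal{B}'$ are connected along $\partial\Delta$ and verifying that the connecting subpath genuinely lies on $\partial\Delta$ (not in the interior) and has $q$-length bounded by $c_0$. This requires tracking the handle orientations and using that $\Gamma$ is a comb so that all its $\theta$-bands end on $\textbf{bot}(\pazocal{B})$, hence the "gap" between $\Gamma'$ and the rest of $\Gamma$ along the boundary is spanned only by trimmed tops/bottoms of $\theta$-bands with short bases. Once the $\textbf{x}$ with $|\textbf{x}|_q\le c_0$ is exhibited, invoking \Cref{6.16}(1) is immediate. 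The parameter inequalities needed — $c_0\gg K_0$ and $K\ge K_0$ — are available from the ordering in \Cref{sec-parameters}, so no new parameter constraints arise; this mirrors the proof of the analogous Lemma 8.15 in \cite{W}, and I would follow that argument essentially verbatim, substituting $K_0$ for the relevant width bound.
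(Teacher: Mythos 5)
Your reduction to \Cref{6.16} does not work, and the telltale sign is that \Cref{6.16} makes no reference to the heights $\ell,\ell'$ of the handles: both of its parts are statements about $q$-edges and basic widths. The bound on the $q$-length of the boundary path joining the handle ends comes entirely from the basic-width bound $K_0$ on the $\theta$-bands of $\Gamma$ and has nothing to do with whether $\ell'\le\ell/2$; so if your configuration could be produced from the mere existence of a subcomb $\Gamma'$ of $\Gamma$, the argument would "prove" that $\Gamma$ has no subcombs at all, which is absurd (derivative subcombs exist). Concretely, the second subcomb you need is not available: the combs $E_1,E_2$ formed inside $\Gamma$ by the $\theta$-bands that cross $\pazocal{B}$ but not $\pazocal{B}'$ have handles that are proper subbands of $\pazocal{B}$ with one end in the interior of $\Delta$, so they do not cut $\Delta$ and are not subcombs of $\Delta$, and \Cref{6.16}(1) cannot be invoked with them. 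Your fallback of contradicting \Cref{6.16}(2) also fails, since that statement only counts $q$-edges on $\partial\Gamma$ and carries no information about $\ell$ versus $\ell'$.

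The hypothesis $\ell'\le\ell/2$ actually enters through the mixture in the minimal-counterexample computation. One chooses $\Gamma'$ with $\ell'$ minimal and basic width $1$, bounds $\text{wt}(\Gamma')\leq c_0(\ell')^2+2\a\ell'+\phi_{C_1}(\ell'+\a)$ by \Cref{comb weights}, and removes $\Gamma'$ from $\Delta$ to get $\Delta'$ with $|\partial\Delta'|<|\partial\Delta|$. The $\ell-\ell'\ge\ell'$ $\theta$-bands of $\Gamma$ crossing $\pazocal{B}$ but not $\pazocal{B}'$ split into the two combs $E_1,E_2$ of heights $\ell_1,\ell_2$ with $\ell_1+\ell_2\ge\ell'$, and \Cref{mixtures}(4) yields $\mu(\Delta)-\mu(\Delta')\ge\ell'(\ell_1+\ell_2)\ge(\ell')^2$. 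This quadratic mixture gain, together with the perimeter decrease and the inductive hypothesis applied to $\Delta'$, absorbs $\text{wt}_G(\Gamma')$ via \Cref{G-weight subdiagrams} and contradicts the minimality of the counterexample $\Delta$. That inductive weight/mixture mechanism is the missing idea; no purely combinatorial application of \Cref{6.16} can substitute for it.
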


\begin{proof}

Assume toward contradiction that $\Gamma'$ is a subcomb of $\Gamma$ whose handle $\pazocal{B}'$ has length $\ell'\leq\ell/2$. Then, we can choose $\Gamma'$ so that $\ell'$ is minimal for all subcombs in $\Gamma$ and so that $\Gamma'$ has no proper subcombs, i.e the basic width of $\Gamma'$ is 1. Letting $\a=|\partial\Gamma'|_a$, Lemma \ref{comb weights} implies
$$\text{wt}_G(\Gamma')\leq\text{wt}(\Gamma')\leq c_0(\ell')^2+2\a\ell'+\phi_{C_1}(\ell'+\a)$$
Letting $\Delta'$ be the diagram obtained from $\Delta$ by removing $\Gamma'$, then analogous to (\ref{6.16 difference in perimeter}):
\begin{equation} \label{6.17 difference in perimeter}
|\partial\Delta|-|\partial\Delta'|\geq\gamma=\max(1,(\a-2\ell')\delta)
\end{equation}
In particular, $|\partial\Delta'|<|\partial\Delta|$, so that
\begin{equation} \label{6.17 G-weight 1}
\text{wt}_G(\Delta')\leq\phi_{N_2}(|\partial\Delta'|)+N_1\mu(\Delta')f_2(N_1|\partial\Delta'|)\leq\phi_{N_2}(|\partial\Delta|-\gamma)+N_1\mu(\Delta')f_2(N_1|\partial\Delta|)
\end{equation}

As in the proof of the analogous statement in \cite{W}, $\pazocal{B}$ can be decomposed into three subbands: $\pazocal{C}$ which consists of all $\theta$-bands which cross $\pazocal{B}'$ handles $\pazocal{B}_1$ and $\pazocal{B}_2$ of combs $E_1$ and $E_2$, respectively, in $\Gamma'$ (see \Cref{fig-bigsubcomb}).  Letting $\ell_i$ be the height of $E_i$, then again $\mu(\Delta)-\mu(\Delta')\geq\ell'(\ell_1+\ell_2)$.

So, combining (\ref{6.17 difference in perimeter}) and (\ref{6.17 G-weight 1}) and applying Lemma \ref{G-weight subdiagrams}, we then obtain:
$$\text{wt}_G(\Delta)\leq\phi_{N_2}(|\partial\Delta|-\gamma)+N_1(\mu(\Delta)-\ell'(\ell_1+\ell_2))f_2(N_1|\partial\Delta|)+c_0(\ell')^2+2\a\ell'+\phi_{C_1}(\ell'+\a)$$

\begin{figure}[H]
\centering
\includegraphics[scale=1]{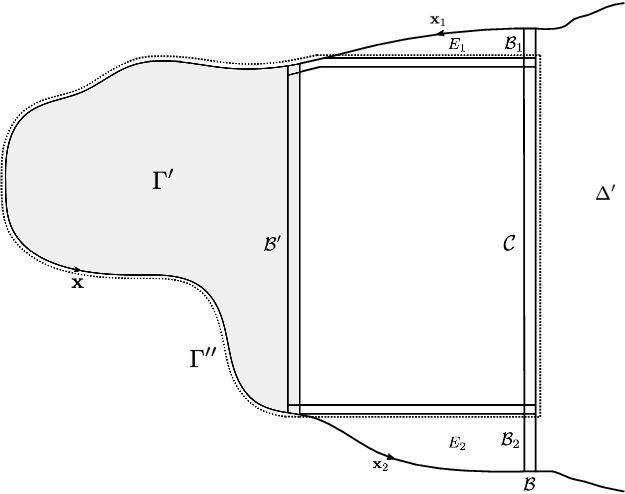}
\caption{Lemma \ref{6.17}}
\label{fig-bigsubcomb}
\end{figure}

As $|\partial\Delta|\geq\gamma$, so that $\phi_{N_2}(|\partial\Delta|-\gamma)\leq\phi_{N_2}(|\partial\Delta|)-N_2\gamma|\partial\Delta|f_2(N_2|\partial\Delta|)$. 
Hence, it suffices to show
\begin{equation} \label{suffices 1}
c_0(\ell')^2+2\a\ell'+\phi_{C_1}(\ell'+\a)\leq N_2\gamma|\partial\Delta|f_2(N_2|\partial\Delta|)+N_1\ell'(\ell_1+\ell_2)f_2(N_1|\partial\Delta|)
\end{equation}
As $\ell'\leq\ell/2$ implies $\ell_1+\ell_2\geq\ell'$, it follows that $N_1\ell'(\ell_1+\ell_2)f_2(N_1|\partial\Delta|)\geq\phi_{N_1}(\ell')$.  So, if $\a\leq4\ell'$, then (\ref{suffices 1}) follows from the parameter choices $N_1>>C_1>>c_0$.
%

Otherwise, if $\a>4\ell'$, then $\gamma\geq\frac{1}{2}\delta\a$. Hence, $|\partial\Delta|\geq\gamma$ and (\ref{suffices 1}) imply that it suffices to show
$$
c_0(\ell')^2+\a^2/2+\phi_{C_1}(5\a/4)\leq\phi_{N_2}(\delta\a/2)+N_1(\ell')^2f_2(N_1|\partial\Delta|)
$$
But this follows from the parameter choices $N_2>>N_1>>C_1>>\delta^{-1}>>c_0$.

\end{proof}

\begin{lemma}[Compare to Lemma 8.16 of \cite{W}] \label{6.18}

If $\pazocal{T}$ is a quasi-rim $\theta$-band in $\Delta$, then the base of $\pazocal{T}$ has length $s>K$.

\end{lemma}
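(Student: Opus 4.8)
The plan is to argue by contradiction from the choice of $\Delta$ as a minimal counterexample to (\ref{counterexample}), cutting off the quasi-rim band together with the $a$-cells it carries, exactly in the spirit of the perimeter/induction arguments of Lemmas \ref{6.16} and \ref{6.17} (and of Lemma 8.16 of \cite{W}). Suppose $\pazocal{T}$ has base of length $s\le K$, and let $\Gamma$ be the subdiagram consisting of $\pazocal{T}$ together with all cells lying between one side of $\pazocal{T}$ and $\partial\Delta$; by the definition of a quasi-rim $\theta$-band these extra cells are $a$-cells, so (a subdiagram of an $M$-minimal diagram being $M$-minimal) $\Gamma$ is an $a$-trapezium of height $1$ with base that of $\pazocal{T}$, which may be viewed as a comb of height $1$ and basic width $s\le K\le K_0$. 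Let $\Delta'$ be the $M$-minimal diagram obtained from $\Delta$ by deleting $\Gamma$: its boundary is obtained from $\partial\Delta$ by replacing the subpath $\textbf{x}'$ bounding $\Gamma$ from the outside — which carries the two $\theta$-edges at the ends of $\pazocal{T}$, the $s$ $q$-edges of the $(\theta,q)$-cells of $\pazocal{T}$, and $\mathfrak a\defeq|\textbf{x}'|_a$ further $a$-edges — by a path with no $\theta$-edges, the same $s$ $q$-edges, and few $a$-edges (after transposing $\pazocal{T}$ with the $a$-cells of $\Gamma$ if necessary so that this inner side is $\textbf{bot}(\pazocal{T})$ of a rim band, controlled by \Cref{comb weights}(2)). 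Since the exchange removes exactly two white beads (the two $\theta$-edges) and no black beads from the necklace, two applications of \Cref{mixtures}(2) give $\mu(\Delta')\le\mu(\Delta)$.

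Next I would carry out the two numerical estimates. For the perimeter, using \Cref{lengths}(a),(c) and \Cref{comb weights}(2) together with the parameter choice $\delta^{-1}>>K$, one obtains a positive $\gamma$ with $|\partial\Delta|-|\partial\Delta'|\ge\gamma=\max\bigl(1,(\mathfrak a-c_0 s)\delta\bigr)$, in the same form as (\ref{6.16 difference in perimeter}) and (\ref{6.17 difference in perimeter}); in particular $|\partial\Delta'|<|\partial\Delta|$, so the inductive hypothesis applies to $\Delta'$. For the weight of $\Gamma$, the $(\theta,q)$- and $(\theta,a)$-cells of $\pazocal{T}$ are bounded by \Cref{comb weights}(1) with height $1$; for the $a$-cells, \Cref{a-cell in counterexample} forces each $a$-cell of $\Gamma$ to share at most $\tfrac23$ of its contour with $\partial\Delta$, and since (by $M$-minimality) distinct $a$-cells of $\Gamma$ are disjoint and have the remainder of their contours on $\textbf{top}(\pazocal{T})$ — while at least half of the $a$-bands leaving each of them avoid $\pazocal{T}$ by (MM1) and hence register on $\textbf{x}'$ — the total contour length of these $a$-cells is $O(\mathfrak a+s)$; superadditivity of $\phi_{C_1}$ then gives $\mathrm{wt}_G(\Gamma)\le\mathrm{wt}(\Gamma)\le c_0 s\max(\|\textbf{tbot}(\Gamma)\|,\|\textbf{ttop}(\Gamma)\|)+\phi_{C_1}\!\bigl(c_0(\mathfrak a+s)\bigr)$.

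To finish, \Cref{G-weight subdiagrams} (applied to $\Gamma$ and $\Delta'$, which meet only along a path) and the inductive hypothesis give
\[
\mathrm{wt}_G(\Delta)\ \le\ \mathrm{wt}_G(\Delta')+\mathrm{wt}_G(\Gamma)\ \le\ \phi_{N_2}(|\partial\Delta'|)+N_1\mu(\Delta')f_2(N_1|\partial\Delta'|)+\mathrm{wt}_G(\Gamma),
\]
and, using $|\partial\Delta'|\le|\partial\Delta|-\gamma$, the inequality $\phi_{N_2}(x-\gamma)\le\phi_{N_2}(x)-N_2x\gamma f_2(N_2x)$ from \Cref{sec-disks-and-weights}, $\mu(\Delta')\le\mu(\Delta)$, and monotonicity of $f_2$, the whole matter reduces to verifying $\mathrm{wt}_G(\Gamma)\le N_2\gamma|\partial\Delta|f_2(N_2|\partial\Delta|)$; splitting into the regimes $\mathfrak a\le c_0 s$ (where $\gamma\ge 1$ and $\mathfrak a=O(K)$) and $\mathfrak a> c_0 s$ (where $\gamma\ge\tfrac12\delta\mathfrak a$), this follows from $N_2>>C_1>>\delta^{-1}>>K>>c_0$ exactly as in the proof of Lemma 8.16 of \cite{W}, contradicting the minimality of $\Delta$ and hence forcing $s>K$. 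The step I expect to be the main obstacle is precisely this last verification: the $\phi_{C_1}$-weight of a large $a$-cell inside $\Gamma$ is quadratic in its perimeter, whereas deleting $\Gamma$ gains only a bounded amount of perimeter unless that perimeter is visible on $\textbf{x}'$; the reconciliation is that \Cref{a-cell in counterexample} (with (MM1)) forces a definite proportion of each such cell's contour to register in $\mathfrak a$, and hence in $\gamma$, after which the quadratic term is absorbed by the choice $N_2>>C_1\delta^{-1}$, and arranging the inner boundary of $\Gamma$ to carry few $a$-edges (so that $\gamma$ is indeed proportional to $\mathfrak a$) is the delicate bookkeeping point.
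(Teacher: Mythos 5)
There is a genuine gap, and it sits exactly at the step you flag as ``the delicate bookkeeping point.'' Your argument deletes the whole subdiagram $\Gamma$ (the band $\pazocal{T}$ \emph{together with} its $a$-cells) and then must pay for the quadratic weight $\phi_{C_1}(O(\mathfrak a+s))$ of those $a$-cells out of the perimeter gain, which forces you to claim $|\partial\Delta|-|\partial\Delta'|\geq(\mathfrak a-c_0s)\delta$. That estimate is what fails: unlike in \Cref{6.16} and \Cref{6.17}, where the excised subcomb is replaced by the side of its handle $q$-band, whose length is just its $\theta$-length (no $a$-edge contribution, by \Cref{lengths}(b)), here the excised region is replaced by $\textbf{bot}(\pazocal{T})$, and every maximal $a$-band crossing $\pazocal{T}$ -- whether it starts on $\partial\Delta$ or on an $a$-cell of $\Gamma$ (and by (MM1) up to half of each $a$-cell's edges may start such bands) -- deposits an $a$-edge on $\textbf{bot}(\pazocal{T})$. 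So $|\textbf{bot}(\pazocal{T})|_a$ can be of the same order as $\mathfrak a$, the $a$-length of the boundary need not drop at all, and the guaranteed gain is only about $2-O(K)\delta$, i.e.\ $\gamma\approx1$. With $\gamma\approx1$ the budget $N_2\gamma|\partial\Delta|f_2(N_2|\partial\Delta|)$ is only linear in $\mathfrak a$ (since $|\partial\Delta|\gtrsim\delta\mathfrak a$), while the cost $\phi_{C_1}(O(\mathfrak a+s))$ is quadratic in $\mathfrak a$, so the absorption breaks down for large $\mathfrak a$ (e.g.\ one large $a$-cell half of whose $a$-bands cross $\pazocal{T}$ and half of which reach $\partial\Delta$). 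The proposed repair by transposing $\pazocal{T}$ with the $a$-cells does not help: transposition does not reduce the number of $a$-bands crossing the band (so the inner side still carries those $a$-edges), and, as noted in \Cref{sec-transposition-a}, it can violate (MM1) and substantially change the number of $(\theta,a)$-cells, so the resulting diagram is no longer one to which the inductive hypothesis or your weight bookkeeping directly applies.

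The paper's proof avoids this entirely by never deleting the $a$-cells: one cuts along $\textbf{bot}(\pazocal{T})$, removing only the $\theta$-band, and then pastes \emph{copies} of the $a$-cells lying above $\textbf{top}(\pazocal{T})$ onto the new boundary (possible because the $(\theta,a)$-relations are commutations, so corresponding $a$-edges of the top and bottom carry the same labels). The resulting diagram $\Delta''$ retains all the $a$-cell weight, so the only weight that must be absorbed is the length $\ell$ of $\pazocal{T}$, which is linear in $\mathfrak a+K$; the perimeter gain, controlled by \Cref{a-cell in counterexample}, hypothesis (1) of \Cref{main-theorem}, and $\delta^{-1}>>K$, is at least $2-(3K+3)\delta\geq1$, and the final inequality reduces to $N_2|\partial\Delta|f_2(N_2|\partial\Delta|)\geq\ell$, which follows from $N_2>>\delta^{-1}$. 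If you want to keep your excision scheme, you would have to prove that the $a$-length genuinely registers in the perimeter gain, which is exactly what fails; the re-pasting trick is the intended way around it.
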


\begin{proof}

Suppose $\pazocal{T}$ is a quasi-rim $\theta$-band in $\Delta$ with base of length $s\leq K$. Without loss of generality, say that any cell between $\textbf{top}(\pazocal{T})$ and $\partial\Delta$ is an $a$-cell. 

Let $\textbf{u}$ be the subpath of $\partial\Delta$ bounded by the two end $\theta$-edges of $\pazocal{T}$ and $\textbf{v}$ be its complement in $\partial\Delta$.  Further, let $\Delta'$ be the diagram obtained by $\Delta$ by cutting along $\textbf{bot}(\pazocal{T})$ and let $\textbf{u}'$ be the complement of $\textbf{v}$ in $\partial\Delta'$.  Then, as in the proof of the analogous statement in \cite{W}, we construct the diagram $\Delta''$ from $\Delta'$ by pasting copies of the $a$-cells above $\textbf{top}(\pazocal{T})$ to $\textbf{u}'$ (see \Cref{fig-6.18}).

%

%
%

\renewcommand\thesubfigure{\alph{subfigure}}
\begin{figure}[H]
\centering
\begin{subfigure}[b]{0.48\textwidth}
\centering
\includegraphics[scale=1.25]{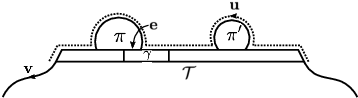}
\caption{$\Delta$, $\gamma$ a $(\theta,q)$-cell}
\end{subfigure}\hfill
\begin{subfigure}[b]{0.48\textwidth}
\centering
\includegraphics[scale=1.25]{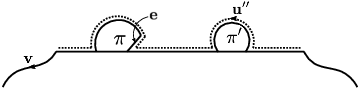}
\caption{$\Delta''$}
\end{subfigure}
\caption{Lemma \ref{6.18}}
\label{fig-6.18}
\end{figure}



Let $\textbf{u}''$ be the complement of $\textbf{v}$ in $\partial\Delta''$.  By \Cref{a-cell in counterexample} and hypothesis (1) of \Cref{main-theorem}, the parameter choice $\delta^{-1}>>K$ and \Cref{lengths} again imply:
$$|\partial\Delta|-|\partial\Delta''|\geq(|\textbf{u}|+|\textbf{v}|-\delta)-(|\textbf{u}''|+|\textbf{v}|)\geq2-(3K+3)\delta\geq1$$
Hence, the inductive hypothesis may be applied to $\Delta''$, so that
$$\text{wt}_G(\Delta'')\leq \phi_{N_2}(|\partial\Delta''|)+N_1\mu(\Delta'')f_2(N_1|\partial\Delta''|)\leq \phi_{N_2}(|\partial\Delta|-1)+N_1\mu(\Delta'')f_2(N_1|\partial\Delta|)$$
Note that the necklace corresponding to $\Delta''$ is obtained from that corresponding to $\Delta$ by the removal of two white beads. Lemma \ref{mixtures}(a) then yields $\mu(\Delta'')\leq\mu(\Delta)$.  Further, note that $|\partial\Delta|\geq1$, so that $\phi_{N_2}(|\partial\Delta|-1)\leq\phi_{N_2}(|\partial\Delta|)-N_2|\partial\Delta|f_2(N_2|\partial\Delta|)$.


As in the analogous setting of \cite{W}, it then follows that for $\ell$ the length of $\pazocal{T}$:
$$\text{wt}_G(\Delta)\leq \phi_{N_2}(|\partial\Delta|)-N_2|\partial\Delta|f_2(N_2|\partial\Delta|)+N_1\mu(\Delta)f_2(N_1|\partial\Delta|)+\ell$$
Hence, it suffices to show that $N_2|\partial\Delta|f_2(N_2|\partial\Delta|)\geq \ell$. 

%

But since $f_2(N_2|\partial\Delta|)\geq1$, this again follows from the parameter choice $N_2>>\delta^{-1}$.

\end{proof}

Thus, Lemmas \ref{tight subcomb} and \ref{6.18} imply that there exists a tight subcomb $\Gamma$ in $\Delta$. By the definition of tight combs, the basic width of $\Gamma$ is at most $K_0$.  We decompose $\Gamma$ into subdiagrams $\Gamma_1,\dots,\Gamma_4$ and define paths as in \cite{W} (see \Cref{fig-Mcounterexample}).  

As in \cite{W}, let $\ell$, $\ell'$, $\ell_3$, and $\ell_4$ be the heights of the combs $\Gamma$, $\Gamma_1$, $\Gamma_3$, and $\Gamma_4$, respectively. By Lemma \ref{6.17}, $\ell'>\ell/2$.

\begin{figure}[H]
\centering
\includegraphics[height=3.5in]{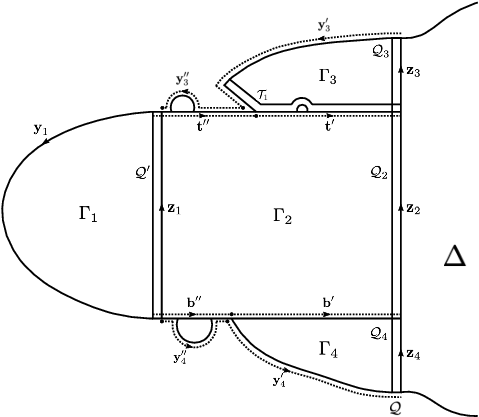}
\caption{Tight subcomb $\Gamma$}
\label{fig-Mcounterexample}
\end{figure}

The following follows in just the same way as in \cite{W}:

\begin{lemma}[Lemma 8.17 of \cite{W}] \label{counterexample combs} \

\begin{enumerate}[label=({\arabic*})] 

\item $|\textbf{t}''|_a\leq|\textbf{y}_3''|_a+4$ and $|\textbf{b}''|_a\leq|\textbf{y}_4''|_a+4$


\item $|\textbf{y}_3|_a\geq|\textbf{t}|_a-2\ell_3K-4$ and $|\textbf{y}_4|_a\geq|\textbf{b}|_a-2\ell_4K-4$.

\end{enumerate}

\end{lemma}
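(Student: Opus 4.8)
Looking at this statement, I need to prove Lemma~\ref{counterexample combs}, which gives bounds relating the $a$-lengths of various boundary subpaths of the combs $\Gamma_3$ and $\Gamma_4$ (and their trimmed portions) within a tight subcomb $\Gamma$ of the minimal counterexample $\Delta$. Since the statement says the proof follows "in just the same way as in \cite{W}", the plan is to reconstruct the geometric argument from the combinatorial setup described just before the statement.

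\textbf{Approach.} The plan is to work with the decomposition of the tight subcomb $\Gamma$ into $\Gamma_1,\dots,\Gamma_4$ together with the boundary paths (the primed paths $\textbf{t}''$, $\textbf{b}''$, $\textbf{y}_3''$, $\textbf{y}_4''$ and the unprimed $\textbf{t}$, $\textbf{b}$, $\textbf{y}_3$, $\textbf{y}_4$) indicated in Figure~\ref{fig-Mcounterexample}. For part~(1), I would identify $\textbf{t}''$ as the relevant portion of $\textbf{bot}(\pazocal{T})$ or $\textbf{top}(\pazocal{T})$ for the bottom-most $\theta$-band of the comb $\Gamma_3$ (and symmetrically $\textbf{b}''$ for $\Gamma_4$), and $\textbf{y}_3''$, $\textbf{y}_4''$ as the corresponding pieces of $\partial\Delta$. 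Then I would track $a$-edges across a single $\theta$-band: every $a$-edge on one side of a $\theta$-band either continues across (becoming an $a$-edge on the other side) or terminates at one of the band's two end $(\theta,q)$-cells or on a $q$-edge of the band's boundary. Since $\Gamma_3$ has basic width $1$ and its bottom $\theta$-band starts and ends with $q$-edges, the number of $a$-edges of $\textbf{t}''$ can exceed that of $\textbf{y}_3''$ by at most the number of $(\theta,q)$-cells at the two ends, each contributing at most one $a$-letter from the special input sector alphabet — hence the additive constant $4$ (two ends, and each boundary $q$-edge of the $\theta$-band can sit adjacent to at most a couple such $a$-edges). This is exactly the style of estimate in Lemma~\ref{comb weights}(2) and Lemma~\ref{lengths}(d), specialized to width $1$.

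\textbf{Key steps for part (2).} For the bound $|\textbf{y}_3|_a\geq|\textbf{t}|_a-2\ell_3K-4$, I would apply Lemma~\ref{comb weights}(2) to the comb $\Gamma_3$: writing $\partial\Gamma_3$ in the form $\textbf{y}\textbf{x}\textbf{z}$ with $\textbf{z}$ the bottom of the handle and $\textbf{x}$ the part below the first $\theta$-band, Lemma~\ref{comb weights}(2) gives $|\textbf{bot}(\pazocal{T}_1)|_a\leq|\textbf{y}|_a+4bh$ where here $b\leq K_0$ but actually, per the tightness condition (C2) and Lemma~\ref{6.18}, the relevant basic width entering the estimate is controlled by $K$; combining with $h=\ell_3$ yields the $2\ell_3 K$ term. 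Then part~(1) (the additive $4$) bridges $\textbf{bot}(\pazocal{T}_1)$ and $\textbf{t}$, and identifying $\textbf{y}$ with $\textbf{y}_3$ up to the $a$-edges already accounted for gives the inequality. The $\Gamma_4$ statement is the mirror image, using the top $\theta$-band of $\Gamma_4$ and $\textbf{b}$, $\textbf{y}_4$. Throughout, I would invoke Lemma~\ref{M_a no annuli 2}(2) (no $\theta$-annuli in $M$-minimal diagrams) and Lemma~\ref{M(S) annuli} / Lemma~\ref{M_a no annuli 1} to ensure $\theta$-bands and $a$-bands behave (each $a$-band crosses a $\theta$-band in exactly one $(\theta,a)$-cell), and the fact that in $\textbf{M}$ any rule's domain in the special input sector is the whole alphabet, so $a$-bands freely cross the relevant $\theta$-bands.

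\textbf{Main obstacle.} The delicate point is pinning down precisely which subpaths the primed and unprimed symbols denote in the figure-driven decomposition of $\cite{W}$, and ensuring the additive constants ($4$, $2\ell_3 K$, $2\ell_4 K$) come out exactly right rather than off by a small factor — in particular, correctly counting the at-most-two $a$-edges that can be absorbed at each of the two corner $(\theta,q)$-cells of a width-one comb's bounding $\theta$-band, and verifying that the "basic width at most $K$" bound from tightness (via Lemma~\ref{6.18}, which forces quasi-rim bands to have base length $>K$) is what legitimately enters the $4bh$ estimate of Lemma~\ref{comb weights}(2). Once the bookkeeping of $a$-edges across $\theta$-bands is set up carefully, both inequalities reduce to direct applications of Lemma~\ref{comb weights}(2) and Lemma~\ref{lengths}(d), exactly as in \cite{W}, so no genuinely new idea is needed — only faithful transcription of the counting argument to the present base alphabet.
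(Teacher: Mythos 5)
Your reconstruction matches the paper's (entirely deferred) proof: the paper simply cites Lemma 8.17 of \cite{W}, and the argument there is exactly the $a$-band bookkeeping you describe — every maximal $a$-band leaving $\textbf{t}$ (resp.\ $\textbf{b}$) must reach $\textbf{y}_3$ (resp.\ $\textbf{y}_4$) unless it is absorbed by one of the at most $K\ell_3$ (resp.\ $K\ell_4$) $(\theta,q)$-cells of the comb, each of which carries at most two $a$-edges by Lemma~\ref{simplify rules}, with the additive $4$ coming from the two corner $(\theta,q)$-cells exactly as in Lemma~\ref{comb weights}(2). Your flagged bookkeeping concerns (which subpaths the primes denote, and that the width bound $K_0\le K$ legitimately feeds the $2\ell_i K$ constant) are the only delicate points, and they resolve as you anticipate.
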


\begin{lemma}[Compare with Lemma 8.18 of \cite{W}] \label{H>M}

Set $M'=\max(|\textbf{b}|_a,|\textbf{t}|_a)$. Then $2K\ell>M'$.

\end{lemma}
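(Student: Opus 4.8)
The plan is to argue by contradiction: suppose $2K\ell\le M'$, so the height $\ell$ of the tight subcomb $\Gamma$ is small relative to $M'=\max(|\textbf{b}|_a,|\textbf{t}|_a)$. Without loss of generality assume $M'=|\textbf{t}|_a$ (the case $M'=|\textbf{b}|_a$ being symmetric, using $\Gamma_4$ in place of $\Gamma_3$). The key point is that $\Gamma$ is a comb of height $\ell$ and basic width at most $K_0$, so all of its $\theta$-bands are short; by Lemma~\ref{comb weights}(2) the $a$-length of the bottom of its lowest $\theta$-band is at most $|\textbf{y}|_a+4K_0\ell$, which together with $2K\ell\le M'$ and the parameter choice $K>>K_0$ bounds the $a$-content that $\Gamma$ can ``absorb'' well below $M'$. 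Concretely, I would use Lemma~\ref{counterexample combs}(2), which gives $|\textbf{y}_3|_a\ge|\textbf{t}|_a-2\ell_3K-4 = M'-2\ell_3K-4$, and the fact that $\ell_3\le\ell$ to conclude that $\textbf{y}_3$ (a subpath of $\partial\Delta$) carries a large number of $a$-edges, at least $M'-2K\ell-4\ge-4$; sharpening this correctly will require keeping track of the constants, but the upshot is that $|\textbf{y}_3|_a$ is comparable to $M'$ while the contribution of $\Gamma$ to the $G$-weight is controlled by Lemma~\ref{comb weights}(1) as roughly $c_0K_0\ell^2+2\a\ell+\phi_{C_1}(K_0\ell+\a)$, where $\a=|\partial\Gamma|_a$.

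The core of the argument is then a minimal-counterexample surgery of exactly the same shape as in Lemmas~\ref{a-cell in counterexample}, \ref{6.16}, \ref{6.17}, and \ref{6.18}: let $\Delta'$ be the diagram obtained from $\Delta$ by excising $\Gamma$ (cutting along the bottom of its handle $\pazocal{B}$). Because $M'=|\textbf{t}|_a$ is large and $\Gamma$ has small height, excising $\Gamma$ strictly decreases the modified perimeter — the many $a$-edges of $\textbf{y}_3$ on $\partial\Delta$ contribute $\delta|\textbf{y}_3|_a$ to $|\partial\Delta|$, while only $O(K_0\ell)$ edges of $\textbf{bot}(\pazocal{B})$ and $O(\ell)$ $\theta$-edges are added, and $2K\ell\le M'$ forces the net change to be positive; I would make this precise as $|\partial\Delta|-|\partial\Delta'|\ge\gamma$ for $\gamma$ of order $\delta M'$ (after subtracting off the genuinely unavoidable $O(\ell)$ terms, which are dominated thanks to $K>>K_0$ and $\delta^{-1}>>K$). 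Apply the inductive hypothesis (\ref{counterexample}) to $\Delta'$, use $\phi_{N_2}(|\partial\Delta|-\gamma)\le\phi_{N_2}(|\partial\Delta|)-N_2\gamma|\partial\Delta|f_2(N_2|\partial\Delta|)$ and $\mu(\Delta')\le\mu(\Delta)$, and add back $\text{wt}_G(\Gamma)\le\text{wt}(\Gamma)$ bounded via Lemma~\ref{comb weights}(1) and Lemma~\ref{G-weight subdiagrams}. One then needs
$$c_0K_0\ell^2+2\a\ell+\phi_{C_1}(K_0\ell+\a)\le N_2\gamma|\partial\Delta|f_2(N_2|\partial\Delta|),$$
which, splitting into the cases $\a\le 4\ell$ and $\a>4\ell$ (so $\gamma\ge\tfrac12\delta\a$) exactly as in Lemma~\ref{6.17}, follows from the parameter chain $N_2>>N_1>>C_1>>\delta^{-1}>>K>>K_0>>c_0$. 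This yields $\text{wt}_G(\Delta)\le\phi_{N_2}(|\partial\Delta|)+N_1\mu(\Delta)f_2(N_1|\partial\Delta|)$, contradicting that $\Delta$ is a counterexample.

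The main obstacle I anticipate is bookkeeping the additive constants correctly in the perimeter-difference estimate: the inequality $2K\ell\le M'$ is only slightly stronger than the trivial bound, so I must be careful that the $O(\ell)$ terms ($2\ell_3K+4$ from Lemma~\ref{counterexample combs}(2), the $\theta$-edges of the new handle, the $-2\delta$ slack in Lemma~\ref{lengths}(c)) really are absorbed — this is where the gap between $K$ and $K_0$ (the basic width of a tight comb) is essential, and where I would lean on Lemma~\ref{6.17} giving $\ell'>\ell/2$ to ensure that excising $\Gamma$ rather than a smaller subcomb is legitimate and that $\Gamma$ genuinely sits against $\partial\Delta$ along a long subpath. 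A secondary point to check is that $\Gamma$, being a tight \emph{subcomb}, is indeed separated from the rest of $\Delta$ by its handle so that the excision produces an honest $M$-minimal diagram $\Delta'$ with $|\partial\Delta'|<|\partial\Delta|$; this is exactly the setup already exploited in Lemma~\ref{6.17} and should transfer verbatim.
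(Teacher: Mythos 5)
Your overall strategy — excise the tight subcomb $\Gamma$, bound $\text{wt}(\Gamma)$ by Lemma \ref{comb weights}, estimate the drop in modified perimeter, and apply the inductive hypothesis to reach a contradiction — is exactly the paper's, and the final weight-versus-perimeter comparison you set up (with the case split $\a\le4\ell$ versus $\a>4\ell$) is essentially the one the paper carries out. The problem is the pivotal quantitative step, which you flag but do not resolve. The whole argument hinges on showing that $|\textbf{y}|_a$ (the $a$-content of $\partial\Gamma\cap\partial\Delta$) is large — of order $K\ell$, hence comparable to $M'$ under the contradiction hypothesis. Your route to this is Lemma \ref{counterexample combs}(2), $|\textbf{y}_3|_a\ge|\textbf{t}|_a-2\ell_3K-4$, combined with $\ell_3\le\ell$; as you yourself compute, with $2K\ell\le M'$ this yields only $|\textbf{y}_3|_a\ge-4$, because the coefficient $2K$ in that lemma exactly matches the coefficient in the hypothesis being contradicted. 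Asserting that ``the upshot is that $|\textbf{y}_3|_a$ is comparable to $M'$'' does not follow from what you have written, and the gap is consequential: if $|\textbf{y}|_a$ could be small, the perimeter decrease $\gamma$ could be as small as $1$ while $\text{wt}(\Gamma)$ can still be of order $\phi_{C_1}(\ell)$ (its $\theta$-bands are long even when few $a$-bands exit through $\textbf{y}$), and $N_2\gamma|\partial\Delta|f_2(N_2|\partial\Delta|)$ would not dominate it.

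The paper avoids this by using the sharper inequality $M'\le|\textbf{y}|_a+\tfrac{3}{2}K\ell$ (imported from the analogous argument in \cite{W}), whose coefficient $\tfrac{3}{2}K<2K$ leaves room: under $M'\ge2K\ell$ it gives $|\textbf{y}|_a\ge\tfrac{1}{2}K\ell\ge K_0\ell$, after which $\text{wt}(\Gamma)\le\phi_{C_2}(|\textbf{y}|_a)$ and $\gamma\ge\tfrac{1}{2}\delta|\textbf{y}|_a$, so $\gamma|\partial\Delta|\ge\tfrac{1}{4}\delta^2|\textbf{y}|_a^2$ and the parameter chain $N_2>>C_2>>\delta^{-1}$ closes the argument. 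The repair available to you within the stated lemmas is to use $\ell_3\le\ell-\ell'$ together with Lemma \ref{6.17} ($\ell'>\ell/2$), so that $2K\ell_3<K\ell$ and Lemma \ref{counterexample combs}(2) yields $|\textbf{y}_3|_a\ge M'-K\ell-4\ge K\ell-4$; you cite $\ell'>\ell/2$ only for the legitimacy of the excision, missing that it is precisely what makes the $a$-length estimate non-vacuous.
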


\begin{proof}

As in the proof of the analogous statement in \cite{W}, $M'\leq|\textbf{y}|_a+3K\ell/2$.  So, assuming the statement is false, then $|\textbf{y}|_a\geq\frac{1}{2}K\ell\geq K_0\ell$.


By Lemma \ref{comb weights}, $\text{wt}(\Gamma)\leq c_0K_0\ell^2+2(|\textbf{y}|_a+\ell)\ell+\phi_{C_1}(K_0\ell+|\textbf{y}|_a+\ell)$.
So, as $C_2$ is chosen after $C_1$, $K_0$, and $c_0$, we have
$\text{wt}(\Gamma)\leq \phi_{C_2}(|\textbf{y}|_a)$.

Since $|\textbf{y}|_\theta=\ell$ and $|\textbf{y}|_q\geq2$, Lemma \ref{lengths}(a) implies that $|\textbf{y}|\geq2+\ell+(|\textbf{y}|_a-\ell)\delta\geq\ell+2+\frac{1}{2}\delta|\textbf{y}|_a$.

Letting $\Delta'$ be the $M$-minimal diagram formed from $\Delta$ by removing $\Gamma$, then as in \cite{W} we have $|\partial\Delta|-|\partial\Delta'|\geq\gamma=\max(1,\frac{1}{2}\delta|\textbf{y}|_a)$.



Hence, the inductive hypothesis applies to $\Delta'$, yielding
$$\text{wt}_G(\Delta')\leq \phi_{N_2}(|\partial\Delta'|)+N_1\mu(\Delta')f_2(N_1|\partial\Delta'|)\leq \phi_{N_2}(|\partial\Delta|-\gamma)+N_1\mu(\Delta')f_2(N_1|\partial\Delta|)$$
As $\gamma\leq|\partial\Delta|$, $\phi_{N_2}(|\partial\Delta|-\gamma)\leq\phi_{N_2}(|\partial\Delta|)-N_2\gamma|\partial\Delta|f_2(N_2|\partial\Delta|)$. Lemma \ref{mixtures} further implies that $\mu(\Delta')\leq\mu(\Delta)$. So, adding in the weight of $\Gamma$, Lemma \ref{G-weight subdiagrams} implies:
$$\text{wt}_G(\Delta)\leq\phi_{N_2}(|\partial\Delta|)-N_2\gamma|\partial\Delta|f_2(N_2|\partial\Delta|)+N_1\mu(\Delta)f_2(N_1|\partial\Delta|)+\phi_{C_2}(|\textbf{y}|_a)$$
So, it suffices to show that $N_2\gamma|\partial\Delta|f_2(N_2|\partial\Delta|)\geq\phi_{C_2}(|\textbf{y}|_a)$. 

But $\frac{1}{2}\delta|\textbf{y}|_a\leq\gamma\leq|\partial\Delta|$, so that $\gamma|\partial\Delta|\geq\frac{1}{4}\delta^2|\textbf{y}|_a^2$. Hence, the desired inequality follows from the parameter choices $N_2>>C_2>>\delta^{-1}$.

\end{proof}

Finally, we reach the desired contradiction:

\begin{lemma}[Compare with Lemma 8.19 of \cite{W}] \label{diskless}

The counterexample diagram $\Delta$ does not exist.

\end{lemma}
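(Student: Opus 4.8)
The idea is that we have reached the final contradiction in a classical ``minimal counterexample'' argument of the Ol'shanskii school. All the machinery has been set up: $\Delta$ is an $M$-minimal diagram over $M_\Omega(\textbf{M})$ violating \eqref{counterexample} with minimal perimeter; by Lemmas \ref{tight subcomb} and \ref{6.18} it contains a tight subcomb $\Gamma$ of basic width at most $K_0$, decomposed into subdiagrams $\Gamma_1,\dots,\Gamma_4$ with heights $\ell,\ell',\ell_3,\ell_4$ and the various boundary paths $\textbf{t},\textbf{b},\textbf{y},\textbf{y}_3,\textbf{y}_4$, etc.\ exactly as in \cite{W}. The plan is to cut $\Gamma$ out of $\Delta$, replace it (or the relevant pieces of it) by a diagram of controlled weight, apply the inductive hypothesis to the smaller diagram, and show that the gain in the weight/mixture bound strictly exceeds the weight of the replacement, contradicting minimality of $\Delta$.

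\textbf{Key steps.} First, as in the analogous Lemma 8.19 of \cite{W}, I would analyze the handle $\pazocal{B}'$ of $\Gamma_1$ and the $\theta$-bands of $\Gamma$ crossing it: by Lemma \ref{6.17} we have $\ell'>\ell/2$, and by Lemma \ref{H>M} the combs $\Gamma_3,\Gamma_4$ hanging below $\textbf{t}$ and $\textbf{b}$ have heights small relative to $\ell$ (precisely $2K\ell>M'=\max(|\textbf{b}|_a,|\textbf{t}|_a)$). Second, since $\Gamma$ is tight, one of its maximal $\theta$-bands has a tight base $uxvx$ with $xvx$ revolving; the subdiagram cut off along the appropriate $q$-bands is therefore an $a$-trapezium with revolving base, and Lemma \ref{revolving G-weight} bounds its $G$-weight by $C_3 h\max(\|\textbf{tbot}\|,\|\textbf{ttop}\|)+\phi_{C_3}(\|\textbf{tbot}\|+\|\textbf{ttop}\|)$. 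Third, using Lemma \ref{comb weights}, Lemma \ref{counterexample combs}, and Lemma \ref{6.16}(2) (which pins down $|\partial\Gamma_i|_q$ for narrow subcombs), I would bound $\text{wt}_G(\Gamma)$ in terms of $\ell$ and $|\textbf{y}|_a$ alone. Fourth, let $\Delta'$ be $\Delta$ with $\Gamma$ removed; estimate $|\partial\Delta|-|\partial\Delta'|$ from below by some $\gamma=\max(1,\text{const}\cdot\delta|\textbf{y}|_a)$ using Lemma \ref{lengths}(c) and Lemma \ref{a-cell in counterexample}, and estimate $\mu(\Delta)-\mu(\Delta')$ from below using Lemma \ref{mixtures}(d) (each pair of $\theta$-edges of $\textbf{y}$ gains at least one separating $q$-edge when the handle is deleted, contributing a term of order $\ell\cdot(\ell_3+\ell_4)$ or $\ell^2$). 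Fifth, apply the inductive hypothesis to $\Delta'$, add back $\text{wt}_G(\Gamma)$ via Lemma \ref{G-weight subdiagrams}, use superadditivity of $\phi_{N_2}$ (specifically $\phi_{N_2}(|\partial\Delta|-\gamma)\le\phi_{N_2}(|\partial\Delta|)-N_2\gamma|\partial\Delta|f_2(N_2|\partial\Delta|)$), and verify that the resulting inequality holds by the parameter hierarchy $N_2>>N_1>>C_3>>C_2>>C_1>>\delta^{-1}>>K>>K_0>>c_0$ — splitting into the cases $|\textbf{y}|_a\le 4\ell$ and $|\textbf{y}|_a>4\ell$ exactly as in the preceding lemmas. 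This contradicts $\text{wt}_G(\Delta)>\phi_{N_2}(|\partial\Delta|)+N_1\mu(\Delta)f_2(N_1|\partial\Delta|)$, so no counterexample exists.

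\textbf{Main obstacle.} The delicate point is the bookkeeping that ties together the three sources of ``gain'': the perimeter drop $\gamma$, the mixture drop of order $\ell^2$ (or $\ell(\ell_3+\ell_4)$), and the weight $\text{wt}_G(\Gamma)$ of the removed comb, which now carries a genuine $f_2$ factor in the $a$-cell contributions (this is the essential difference from \cite{W}, where $f_2\equiv 1$). One must be careful that $\text{wt}_G(\Gamma)$ — rather than $\text{wt}(\Gamma)$ — is what appears, so that any big or impeding $a$-trapezia inside $\Gamma$ contribute only their (small) $G$-weight per Lemma \ref{revolving G-weight}, and that the quadratic-times-$f_2$ term $\phi_{C_3}(|\textbf{y}|_a)$ is absorbed by $N_2\gamma|\partial\Delta|f_2(N_2|\partial\Delta|)$ using $\gamma|\partial\Delta|\ge\frac14\delta^2|\textbf{y}|_a^2$ together with monotonicity $f_2(N_2|\partial\Delta|)\ge f_2(\text{const}\cdot|\textbf{y}|_a)$ and the choice $N_2>>C_3>>\delta^{-1}$. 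Once one checks that $\pazocal{B}'$ being removed strictly reduces the separation count for the $\ell'\cdot(\ell_3+\ell_4)$ (or $\ell^2$) ordered pairs of white beads on $\textbf{y}$, the contradiction follows essentially verbatim from the computations in Lemmas \ref{6.16}, \ref{6.17}, and \ref{H>M}.
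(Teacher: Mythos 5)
There is a genuine gap, and it lies exactly at your fourth step. You propose to remove the whole tight subcomb $\Gamma$, apply the inductive hypothesis to $\Delta\setminus\Gamma$, and absorb $\text{wt}_G(\Gamma)$ ``bounded in terms of $\ell$ and $|\textbf{y}|_a$ alone.'' But the dominant part of that weight comes from the subcomb $\Gamma_1$, which by Lemma \ref{comb weights} can be of order $c_0K_0(\ell')^2+\phi_{C_1}(K_0\ell'+|\partial\Gamma_1|_a)$, i.e. genuinely quadratic in $\ell'\sim\ell$ (with an $f_2$ factor once $a$-cells are present), while the gains available after wholesale removal are only $N_2\gamma|\partial\Delta|f_2(\cdot)$ with $\gamma=\max(1,O(\delta|\textbf{y}|_a))$ and a mixture drop of order at most $\ell'(\ell_3+\ell_4)$. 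Your claimed mixture gain ``of order $\ell^2$'' is not justified: Lemma \ref{mixtures}(d) only credits pairs of white beads lying on opposite sides of a removed $q$-edge, and the $\ell'$ $\theta$-edges of the $\Gamma_1$-portion of $\textbf{y}$ need not be separated by any $q$-edge at all (e.g. if $\Gamma_1$ has small basic width); moreover $\ell_3+\ell_4=\ell-\ell'$ can be zero. Since Lemma \ref{H>M} only yields $M'<2K\ell$ and gives no lower bound on $|\textbf{y}|_a$, both the perimeter and mixture gains can be far smaller than $(\ell')^2$, and the final inequality cannot close. In effect you are re-using the whole-comb-removal argument from the proof of Lemma \ref{H>M}, which works there only because of the contrary assumption $|\textbf{y}|_a\geq K\ell/2$.

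The paper's proof avoids paying for $\Gamma_1$ altogether: it removes only $\Gamma_2\cup\Gamma_3\cup\Gamma_4$ together with the $a$-cells attached to $\textbf{t}''$ and $\textbf{b}''$, and pastes $\Gamma_1$ back onto the handle $\pazocal{Q}$ along $\pazocal{Q}'$ (the correction $\text{wt}_G(\Delta_0)\geq\text{wt}_G(\Delta_1)-\ell'$ accounts for cancellations), so the induction is applied to $\Delta_0$ which still contains $\Gamma_1$. The terms that must then be absorbed are only $\text{wt}_G(\Gamma_2)$ (Lemma \ref{revolving G-weight}), $\text{wt}_G(\Gamma_3)+\text{wt}_G(\Gamma_4)$ (Lemma \ref{comb weights}), $A$, and $\ell'$; crucially these are controlled by $M'\leq\max(|\textbf{y}_3''|_a,|\textbf{y}_4''|_a)+\nu_3+4$, $\ell_3$, $\ell_4$, $\nu_3$, $\nu_4$, quantities that feed directly into the perimeter drop $\gamma\geq(|\textbf{y}_3|-\ell_3)+(|\textbf{y}_4|-\ell_4)-4\delta$ and the mixture drop $\mu(\Delta)-\mu(\Delta_0)\geq\ell'(\ell_3+\ell_4)$, which is why the case analysis on $\nu_3$ closes. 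If you restructure your surgery this way (keeping $\Gamma_1$ in the smaller diagram), the rest of your bookkeeping, including the handling of the $f_2$ factors, goes through essentially as you describe.
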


\begin{proof}

Let $\Delta_1$ be the diagram obtained from $\Delta$ by removing $\Gamma\setminus\pazocal{Q}$. As in the proof of Lemma 8.19 in \cite{W}, we then construct the diagram $\Delta_0$ by pasting $\Gamma_1$ to $\Delta_1$ along $\pazocal{Q}'$ (see \Cref{fig-diskless}).  Again, $\Delta_0$ must be $M$-minimal.

\begin{figure}[H]
\centering
\includegraphics[height=3in]{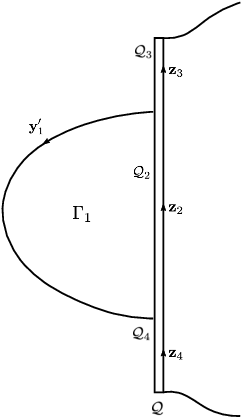}
\caption{The construction of $\Delta_0$}
\label{fig-diskless}
\end{figure}



As in \cite{W}, $\text{wt}_G(\Delta_0)\geq\text{wt}_G(\Delta_1)-\ell'$, and so \Cref{G-weight subdiagrams} implies

\begin{equation} \label{G-weight}
\text{wt}_G(\Delta)\leq\text{wt}_G(\Delta_1)+\text{wt}_G(\Gamma)\leq\text{wt}_G(\Delta_0)+\text{wt}_G(\Gamma_2)+\text{wt}_G(\Gamma_3)+\text{wt}_G(\Gamma_4)+A+\ell'
\end{equation}
where $A$ is the sum of the weights of the $a$-cells attached to $\textbf{t}''$ or $\textbf{b}''$.

Moreover, an identical argument to that in \cite{W} implies:
%
%
%
\begin{align*}
|\partial\Delta|-|\partial\Delta_0|\geq\gamma&=(|\textbf{y}_3|-\ell_3)+(|\textbf{y}_4|-\ell_4)-4\delta \\
&\geq2-4\delta+\delta\max(0,|\textbf{y}_3|_a-\ell_3,|\textbf{y}_3''|_a-1)+\delta\max(0,|\textbf{y}_4|_a-\ell_4,|\textbf{y}_4''|_a-1)
\end{align*}
So, taking $\delta^{-1}\geq4$, $|\partial\Delta|-|\partial\Delta_0|\geq\gamma\geq2-4\delta\geq1$.


Hence, we may apply the inductive hypothesis to $\Delta_0$, so that
$$\text{wt}_G(\Delta_0)\leq\phi_{N_2}(|\partial\Delta_0|)+N_1\mu(\Delta_0)f_2(N_1|\partial\Delta_0|)\leq N_2(|\partial\Delta|-\gamma)^2+N_1\mu(\Delta_0)f_2(N_1|\partial\Delta|)$$
Using \Cref{mixtures}, the analysis in \cite{W} of the beads removed in passing from $\Delta$ to $\Delta_0$ implies
$\mu(\Delta)-\mu(\Delta_0)\geq\ell'(\ell_3+\ell_4)$.  Noting that $\gamma\leq|\partial\Delta|$, we then have:
$$\text{wt}_G(\Delta_0)\leq\phi_{N_2}(|\partial\Delta|)-N_2\gamma|\partial\Delta|f_2(N_2|\partial\Delta|)+N_1\mu(\Delta)f_2(N_1|\partial\Delta|)-N_1\ell'(\ell_3+\ell_4)f_2(N_1|\partial\Delta|)$$
Hence, by (\ref{G-weight}), it suffices to show that:
\begin{equation} \label{diskless suffices 1}
N_2\gamma|\partial\Delta|f_2(N_2|\partial\Delta|)+N_1\ell'(\ell_3+\ell_4)f_2(N_1|\partial\Delta|)\geq\text{wt}_G(\Gamma_2)+\text{wt}_G(\Gamma_3)+\text{wt}_G(\Gamma_4)+A+\ell'
\end{equation}
Setting $\nu_i=|\partial\Gamma_i|_a$ for $i=3,4$, Lemma \ref{comb weights} and the parameter choices $C_2>>C_1>>K_0>>c_0$ imply:
$$\text{wt}_G(\Gamma_i)\leq\text{wt}(\Gamma_i)\leq c_0K_0\ell_i^2+2\nu_i\ell_i+\phi_{C_1}(K_0\ell_i+\nu_i)\leq\phi_{C_2}(\ell_i+\nu_i)$$
By Lemmas \ref{revolving G-weight},
$$\text{wt}_G(\Gamma_2)\leq C_3\ell'\max(\|\textbf{t}\|,\|\textbf{b}\|)+\phi_{C_3}(\|\textbf{t}\|+\|\textbf{b}\|)\leq C_3\ell'(M'+K_0)+\phi_{C_3}(2M'+K)$$
As $a$-cell $\pi$ whose weight contributes to $A$, $\pi$ is attached to either $\textbf{t}''$ or $\textbf{b}''$. \Cref{a-cell in counterexample} then implies $A\leq\phi_{C_1}(3|\textbf{t}''|_a+3|\textbf{b}''|_a)\leq \phi_{C_1}(6M')$, so that Lemmas \ref{H>M} and \ref{6.17} and a parameter choice yield $A\leq 36C_1(M')^2f_2(6C_1M')\leq C_2\ell'M'f_2(C_2M')$.  As a result, the parameter choices $C_4>>C_3>>C_2>>K$ imply:
\begin{align*}
\text{wt}_G(\Gamma_2)+A+\ell'&\leq C_3\ell'M'+C_3K_0\ell'+\phi_{C_3}(2M'+K)+C_2\ell'M'f_2(C_2M')+\ell' \\
&\leq C_4\ell'M'f_2(C_4\ell')+C_4\ell'
\end{align*}
Now, without loss of generality suppose $\nu_4\leq\nu_3$.  Then as in the proof of the analogous statement in \cite{W} we have $M'\leq\max(|\textbf{t}''|_a,|\textbf{b}''|_a)+\nu_3\leq\max(|\textbf{y}_3''|_a,|\textbf{y}_4''|_a)+\nu_3+4$.  So, 
$$\text{wt}_G(\Gamma_2)+A+\ell'\leq C_4\ell'\left(\max(|\textbf{y}_3''|_a,|\textbf{y}_4''|_a)+\nu_3+5\right)f_2(C_4\ell')$$




Meanwhile, $|\partial\Delta|\geq\ell'$, the parameter choice $N_2>>C_4$ allows us to assume 
$$\frac{1}{3}N_2\gamma|\partial\Delta|f_2(N_2|\partial\Delta|)\geq 5C_4\ell'f_2(C_4\ell')$$

Moreover, if $\max(|\textbf{y}_3''|_a,|\textbf{y}_4''|_a)\leq1$, then since $|\partial\Delta\geq\ell'$ and $\gamma\geq1$, we have
\begin{equation} \label{diskless inequality}
\frac{1}{3}N_2\gamma|\partial\Delta|f_2(N_2|\partial\Delta|)\geq C_4\ell'\max(|\textbf{y}_3''|_a,|\textbf{y}_4''|_a)f_2(C_4\ell')
\end{equation}
Otherwise, recall that $\gamma\geq\delta(|\textbf{y}_3''|_a-1)+\delta(|\textbf{y}_4''|_a-1)\geq\delta(\max(|\textbf{y}_3''|_a,|\textbf{y}_4''|_a)-1)\geq\frac{1}{2}\delta\max(|\textbf{y}_3''|_a,|\textbf{y}_4''|_a)$. So, since $|\partial\Delta|\geq\ell'$, the parameter choices $N_2>>C_4>>\delta^{-1}$ allow us to again assume (\ref{diskless inequality}) holds.

Hence, it suffices to show that 
\begin{equation} \label{diskless suffices 3}
\frac{1}{3}N_2\gamma|\partial\Delta|f_2(N_2|\partial\Delta|)+N_1\ell'(\ell_3+\ell_4)f_2(N_1|\partial\Delta|)\geq C_4\ell'\nu_3f_2(C_4\ell')+\sum_{i=3}^4\phi_{C_2}(\ell_i+\nu_i)
\end{equation}


We now proceed in cases:

\textbf{1.} Suppose $\nu_3\leq 3J(\ell_3+\ell_4)$.

Then for $i=3,4$, $\ell_i+\nu_i\leq4J(\ell_3+\ell_4)$. As Lemma \ref{6.17} implies $\ell_3+\ell_4\leq\ell'$, this implies $\phi_{C_2}(\ell_i+\nu_i)\leq16C_2J^2\ell'(\ell_3+\ell_4)f_2(4C_2J\ell')$.

So, the parameter choices $C_3>>C_2>>J$ imply $\phi_{C_2}(\ell_i+\nu_i)\leq C_3\ell'(\ell_3+\ell_4)f_2(C_3\ell')$.

But $|\partial\Delta|\geq\ell'$ and the parameter choices $N_1>>C_3>>J$ imply $$N_1\ell'(\ell_3+\ell_4)f_2(N_1|\partial\Delta|)\geq C_4\ell'\nu_3f_2(C_4\ell')+C_3\ell'(\ell_3+\ell_4)f_2(C_3\ell')$$

\smallskip

Thus, we may assume that $\nu_3>3J(\ell_3+\ell_4)$.

For $i=3,4$, this implies $\ell_i+\nu_i\leq\ell_3+\ell_4+\nu_3\leq2\nu_3$, so that $\phi_{C_2}(\ell_i+\nu_i)\leq\phi_{C_2}(2\nu_3)$.

It then follows from (\ref{diskless suffices 3}) that it suffices to show that
\begin{equation} \label{diskless suffices 4}
\frac{1}{3}N_2\gamma|\partial\Delta|f_2(N_2|\partial\Delta|)+N_1\ell'(\ell_3+\ell_4)f_2(N_1|\partial\Delta|)\geq C_4\ell'\nu_3f_2(C_4\ell')+\phi_{C_3}(\nu_3)
\end{equation}

\textbf{2.} Suppose $\nu_3\leq16$.

Then $C_4\ell'\nu_3+\phi_{C_3}(\nu_3)\leq16C_4\ell'+\phi_{C_4}(1)$ by the parameter choice $C_4>>C_3$.

But $|\partial\Delta|\geq\max(2,\ell')$ and $\gamma\geq1$, so that the parameter choices $N_2>>C_3>>C_2$ yield
$$\frac{1}{3}N_2\gamma|\partial\Delta|f_2(N_2|\partial\Delta|)\geq C_4\ell'+\phi_{C_4}(1)$$

\textbf{3.} Thus, it suffices to assume that $\nu_3>\max(3J(\ell_3+\ell_4),16)$ and show that (\ref{diskless suffices 4}) holds.

As in the proof of the analogous statement in \cite{W}, it follows that $\frac{7}{12}\nu_3\leq2|\textbf{y}_3|_a$ and $\gamma\geq\frac{1}{4}\delta\nu_3$.




As $|\partial\Delta|\geq\ell'$, it then follows that $\gamma|\partial\Delta|\geq\delta\ell'\nu_3$, so that the parameter choices $N_2>>C_4>>\delta^{-1}$ imply $\frac{1}{6}N_2\gamma|\partial\Delta|f_2(N_2|\partial\Delta|)\geq C_4\ell'\nu_3f_2(C_4\ell')$.

Further, since $|\partial\Delta|\geq\gamma$, we have $\gamma|\partial\Delta|\geq\frac{1}{16}\delta^2\nu_3^2$, so that $N_2>>C_2>>\delta^{-1}$ imply
$$\frac{1}{6}N_2\gamma|\partial\Delta|f_2(N_2|\partial\Delta|)\geq\phi_{C_3}(\nu_3)$$

Thus, (\ref{diskless suffices 4}) is satisfied, and so the statement is proved.

\end{proof}

\medskip


\section{Diagrams with disks} \label{sec-disks}

In this section we study some of the basic properties of diagrams over the canonical presentation of $G_\Omega(\textbf{M})$.  For this purpose, we assume throughout that our diagrams are all homeomorphic to a disk {\frenchspacing (i.e. they are an Kampen diagrams}); however, it should be noted that many of the notions have obvious analogues for diagrams on various surfaces.  

Unless explicitly stated otherwise (notably in \Cref{sec-transposition}), the treatment herein is analogous to (and sometimes an exact copy of) that found in Section 9 of \cite{W}.

\subsection{Diminished, Minimal, and $D$-minimal diagrams} \

A $q$-letter of the form $t(i)$ for $2\leq i\leq L$ is called a \textit{$t$-letter}. Accordingly, a $(\theta,q)$-relation corresponding to a $t$-letter is called a \textit{$(\theta,t)$-relation}. Note that for each rule $\theta$ and each $t$-letter, the corresponding $(\theta,t)$-relation is of the simple form $\theta_jt(i)=t(i)\theta_{j+1}$.

Now, we modify the definition of a reduced diagram over the canonical presentation of $M_\Omega(\textbf{M})$ or over the disk presentation of $G_\Omega(\textbf{M})$. To this end, we introduce the \textit{signature} of such a diagram $\Delta$ as the four-tuple $s(\Delta)=(\a_1,\a_2,\a_3,\a_4)$ where: 

\begin{addmargin}[1em]{0em}

$\bullet$ $\a_1$ is the number of disks in $\Delta$ (of course, this is zero if $\Delta$ is a diagram over $M_\Omega(\textbf{M})$), 

$\bullet$ $\a_2$ is the number of $(\theta,t)$-cells, 

$\bullet$ $\a_3$ is the total number of $(\theta,q)$-cells, and

$\bullet$ $\a_4$ is the number of $a$-cells.

\end{addmargin}

The signatures of reduced diagrams over the disk presentation of $G_\Omega(\textbf{M})$ are ordered lexicographically. In particular, if $\Delta$ and $\Gamma$ are such diagrams with $s(\Delta)=(\a_1,\a_2,\a_3,\a_4)$ and $s(\Gamma)=(\b_1,\b_2,\b_3,\b_4)$, then $s(\Delta)\leq s(\Gamma)$ if:

\begin{addmargin}[1em]{0em}

$\bullet$ $\a_1\leq\b_1$

$\bullet$ for $i\in\{2,3,4\}$, if $\a_j=\b_j$ for all $j<i$, then $\a_i\leq\b_i$

\end{addmargin}

A reduced diagram $\Delta$ over the disk presentation of $G_\Omega(\textbf{M})$ is called \textit{diminished} if for any reduced diagram $\Gamma$ with $\lab(\partial\Delta)\equiv\lab(\partial\Gamma)$, we have $s(\Delta)\leq s(\Gamma)$.

Given a reduced diagram $\Delta$ over the disk presentation of $G_\Omega(\textbf{M})$ with $s(\Delta)=(\a_1,\a_2,\a_3,\a_4)$, the \textit{2-signature} of $\Delta$ is the ordered pair $s_2(\Delta)=(\a_1,\a_2)$. The \textit{1-signature} $s_1(\Delta)$ is defined similarly and can be interpreted simply as the number of disks in $\Delta$ with the natural order on the natural numbers.

A reduced diagram $\Delta$ over the disk presentation of $G_\Omega(\textbf{M})$ is called \textit{$D$-minimal} if for any reduced diagram $\Gamma$ with $\lab(\partial\Gamma)\equiv\lab(\partial\Delta)$, $s_1(\Delta)\leq s_1(\Gamma)$. By the definition of the lexicographic order, a diminished diagram is necessarily $D$-minimal.

Finally, reduced diagram $\Delta$ over the disk presentation of $G_\Omega(\textbf{M})$ is called \textit{minimal} if:

\begin{addmargin}[1em]{0em}

\begin{enumerate}[label=(M{\arabic*})]


\item for any $a$-cell $\pi$ and any $\theta$-band $\pazocal{T}$, at most half of the edges of $\partial\pi$ mark the start of an $a$-band that crosses $\pazocal{T}$,

\item no maximal $a$-band ends on two different $a$-cells, and

\item for any reduced diagram $\Gamma$ with $\text{Lab}(\partial\Delta)\equiv\text{Lab}(\partial\Gamma)$, $s_2(\Delta)\leq s_2(\Gamma)$.

\end{enumerate}

\end{addmargin}

Note that conditions (M1) and (M2) are equivalent to the conditions (MM1) and (MM2) in the definition of $M$-minimal. As a result, a minimal diagram containing no disks is necessarily $M$-minimal. Further, a diminished diagram necessarily satisfies (M3).

As with $M$-minimal diagrams, a subdiagram of a diminished (resp minimal, $D$-minimal) diagram is necessarily diminished (resp minimal, $D$-minimal).

In what follows, it is taken implicitly that any diminished, minimal, or $D$-minimal diagram over $G_\Omega(\textbf{M})$ is formed over its disk presentation (rather than its canonical presentation).

The next statement, proved in exactly the same way as its analogue in \cite{W}, provides a useful refinement of the van Kampen Lemma \cite{v-K} in this setting:

\begin{lemma}[Lemma 9.1 of \cite{W}] \label{diminished exist}

A word $W$ over $\pazocal{Y}$ represents the trivial element of $M_\Omega(\textbf{M})$ if and only if there exists a diminished diagram $\Delta$ over $M_\Omega(\textbf{M})$ such that $\lab(\partial\Delta)\equiv W$ and $\Delta$ contains no $\theta$-annuli.

\end{lemma}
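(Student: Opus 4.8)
\textbf{Proof proposal for \Cref{diminished exist}.}

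The plan is to deduce the statement from the van Kampen Lemma combined with an optimization argument over the signature ordering, followed by a routine verification that a minimizer contains no $\theta$-annuli. First I would establish the easy direction: if there exists any van Kampen diagram over $M_\Omega(\textbf{M})$ with boundary label $W$, then $W=1$ in $M_\Omega(\textbf{M})$ by the van Kampen Lemma \cite{v-K}; a diminished diagram is in particular such a diagram, so the ``if'' direction is immediate and carries no content beyond invoking \cite{v-K}.

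For the ``only if'' direction, suppose $W$ represents the trivial element of $M_\Omega(\textbf{M})$. By the van Kampen Lemma there exists at least one reduced diagram over the canonical presentation of $M_\Omega(\textbf{M})$ with boundary label $W$ (any diagram can be made reduced by cancelling mirror pairs of cells without changing the boundary label, and reduction cannot increase any component of the signature). Among all reduced diagrams with boundary label $W$, the set of signatures $s(\Gamma)=(\alpha_1,\alpha_2,\alpha_3,\alpha_4)$ is a nonempty subset of $\N^4$; since $\alpha_1=0$ for every diagram over $M_\Omega(\textbf{M})$ (there are no disks in the canonical presentation), this set has a lexicographically minimal element, realized by some diagram $\Delta$. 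By definition $\Delta$ is diminished. It remains to show that $\Delta$ contains no $\theta$-annuli.

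The key step is the following: if $\Delta$ contained a $\theta$-annulus $S$, then by \Cref{M_a no annuli 2}(1) the subdiagram $\Delta_S$ bounded by the outer component of the contour of $S$ would contain no $(\theta,q)$-cells and would have boundary label a word over the tape alphabet of the `special' input sector, i.e.\ a word over $X$. Since $\lab(\partial\Delta_S)$ is then a cyclic word over $X$ that represents the identity in $M_\Omega(\textbf{M})$ (hence in $G$, projecting onto the $a$-letters), one can replace $\Delta_S$ by a diagram consisting purely of $a$-cells (using the $a$-relations $\Omega$, which by assumption consist of all nontrivial cyclically reduced words over $X$ representing the identity in $G$) — or, if the label is trivial as a word, by no cells at all. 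This replacement strictly decreases $\alpha_2$ (the annulus $S$ itself, being a $\theta$-band, consists of $(\theta,q)$- or $(\theta,a)$-cells, which are removed) while not increasing $\alpha_1$, contradicting minimality of $s(\Delta)$. Care must be taken to argue that the surgery indeed lowers the signature in the lexicographic order: removing the $\theta$-annulus eliminates at least one $(\theta,q)$-cell or reorganizes the $(\theta,a)$-cells, so a slightly finer bookkeeping — possibly inducting on the number of $\theta$-annuli and at each step excising the innermost one — may be needed to guarantee monotone decrease.

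I expect the main obstacle to be exactly this surgery step: verifying carefully that excising the region cut off by a $\theta$-annulus and refilling it with $a$-cells produces a genuinely reduced diagram with strictly smaller signature, rather than merely a diagram with the same or incomparable signature. In particular one must check that the refilled region, being built from $a$-relations over $X$, does not reintroduce $\theta$-cells, that the resulting diagram can be reduced without undoing the gain, and that the boundary label is preserved verbatim. This is the analogue of \cite[Lemma 9.1]{W}, and the proof there proceeds in exactly this manner, so the argument should transfer essentially verbatim once one confirms that \Cref{M_a no annuli 2}(1) applies and that the $a$-relations in the present setting are rich enough to fill the excised region.
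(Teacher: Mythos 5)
Your overall framework (van Kampen for the ``if'' direction, existence of a signature-minimal reduced diagram for the ``only if'' direction) is fine, but the key surgery step has a genuine gap. The signature $s(\Delta)=(\alpha_1,\alpha_2,\alpha_3,\alpha_4)$ counts disks, $(\theta,t)$-cells, $(\theta,q)$-cells, and $a$-cells --- it does \emph{not} count $(\theta,a)$-cells. By \Cref{M_a no annuli 2}(1), the subdiagram $\Delta_S$ cut off by a $\theta$-annulus $S$ contains no $(\theta,q)$-cells at all, so $S$ itself consists entirely of $(\theta,a)$-cells. Excising it therefore changes neither $\alpha_2$ (there are no $(\theta,t)$-cells to remove) nor $\alpha_3$ nor $\alpha_1$, and your claim that the surgery ``strictly decreases $\alpha_2$'' is false. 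Worse, your proposed refilling of $\partial\Delta_S$ with $a$-cells can \emph{increase} $\alpha_4$, so the replacement diagram may have strictly larger signature. Consequently lexicographic minimality alone does not rule out $\theta$-annuli, and indeed the lemma is phrased as the existence of a diminished diagram without $\theta$-annuli precisely because not every diminished diagram need lack them.

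The repair is a secondary minimization rather than a refilling surgery. Among all diminished diagrams with boundary label $W$, choose one, say $\Delta$, minimizing the total number of cells (equivalently, the number of $(\theta,a)$-cells, since the other counts are already fixed at their minimum). If $\Delta$ had a $\theta$-annulus, take an innermost one inside $\Delta_S$; since all its cells are $(\theta,a)$-cells for a single rule, its inner and outer boundaries carry identical tape-letter labels, so the annulus can be deleted and the interior glued back to the exterior. This preserves the boundary label, cannot increase any entry of the signature (so the result is still diminished after any further reductions), and strictly decreases the cell count --- the desired contradiction. Note also that this avoids your other worry: you never need $\lab(\partial\Delta_S)$ to be trivial in $G$ (which would lean on \Cref{embedding}, proved only much later), because the interior of the annulus is kept, not rebuilt from $\Omega$.
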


Similarly, the next statement is proved in exactly the same was as its analogue:

\begin{lemma}[Lemma 9.2 of \cite{W}] \label{a-bands between a-cells}

Every diminished diagram satisfies (M2).

\end{lemma}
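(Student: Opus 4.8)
The plan is to argue by contradiction: suppose $\Delta$ is a diminished diagram which fails (M2), so there is a maximal $a$-band $\pazocal{A}$ in $\Delta$ whose two ends lie on two \emph{distinct} $a$-cells $\pi_1$ and $\pi_2$. I would first record the structural consequences of this. Since $\pazocal{A}$ consists of $(\theta,a)$-cells and its two ends abut $\pi_1$ and $\pi_2$, the band $\pazocal{A}$ together with $\pi_1$ and $\pi_2$ bounds a subdiagram $\Gamma$ of $\Delta$ (taking $\pazocal{A}$ to be innermost among all such bad $a$-bands, so that $\Gamma$ contains no $a$-band with both ends on $a$-cells in its interior); because all the $(\theta,a)$-cells of $\pazocal{A}$ correspond to the same $a$-letter $a$, both $\pi_1$ and $\pi_2$ have an edge labelled by $a^{\pm1}$ marking the start of $\pazocal{A}$. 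The key point is that the $a$-relations are defined by cyclically reduced words over the alphabet $X$ of the special input sector, so the boundary labels $\lab(\partial\pi_1)$ and $\lab(\partial\pi_2)$ are elements of $\Omega^{\pm1}$, and the band $\pazocal{A}$ realizes a conjugation: reading along the two sides of $\pazocal{A}$ shows that $\lab(\partial\pi_1)$ and $\lab(\partial\pi_2)^{\pm1}$ are conjugate by the word read along the side of $\pazocal{A}$, which is a word of the form $\theta_j^{\eps_1}\dots$ (a product of $\theta$-letters and $a$-letters arising from the commutation relations $\theta_i a = a\theta_i$).

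Next I would perform the surgery that lowers the signature. The idea is analogous to the transposition surgery of \Cref{sec-transposition-a}: one can push one of the two $a$-cells, say $\pi_2$, across the band $\pazocal{A}$ toward $\pi_1$. Concretely, using the relations $\theta_i a = a \theta_i$, the subdiagram consisting of $\pazocal{A}$ and $\pi_2$ can be replaced by a diagram in which a copy of $\pi_2$ is glued directly against $\pi_1$ along the $a$-edge, with the $(\theta,a)$-cells of $\pazocal{A}$ rearranged onto the far side. After this replacement, the two $a$-cells $\pi_1$ and (the copy of) $\pi_2$ share an $a$-edge; but then their boundary labels, both cyclically reduced words over $X$ representing $1$ in $G$, meet along a common edge, and one checks that the two cells can be cancelled against one another or merged into the rest of the diagram — precisely, since $\lab(\partial\pi_1)$ and $\lab(\partial\pi_2)$ are freely conjugate via the trivial (now that the $\theta$-letters have been moved away) word, the pair $\{\pi_1,\pi_2\}$ forms a cancellable pair of $a$-cells. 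Excising this cancellable pair (and making the resulting diagram reduced again) produces a reduced diagram $\Delta'$ with the same boundary label as $\Delta$ but with strictly fewer $a$-cells, and with no new disks or $(\theta,q)$-cells created. Hence $s(\Delta') < s(\Delta)$, contradicting that $\Delta$ is diminished.

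I expect the main obstacle to be the bookkeeping in the surgery step: verifying that pushing $\pi_2$ across $\pazocal{A}$ does not introduce a disk or additional $(\theta,q)$-cells (so that only the fourth coordinate $\a_4$ of the signature drops, which is what is needed since $\Delta$ is only assumed \emph{diminished}, i.e. minimal for the lexicographic order on the full four-tuple), and that after the transposition the two $a$-cells really do cancel rather than merely coincide up to a nontrivial conjugating word. Here one uses that all the intervening cells of $\pazocal{A}$ are $(\theta,a)$-cells governed by the commutation relations, so the conjugating element reduces freely to the identity once the $\theta$-band is fully transposed, and that $\lab(\partial\pi_2)^{-1}$ is literally $\lab(\partial\pi_1)$ up to a cyclic permutation (both being determined as the unique element of $\Omega^{\pm1}$ containing the shared $a$-edge's label in the appropriate position), so the merged region bounds a diagram with no cells at all. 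A secondary technical point is to ensure the innermost choice of $\pazocal{A}$ lets us carry out the transposition inside $\Gamma$ without interference from other $a$-cells; this is exactly the role played by minimality of $\Gamma$ and is handled just as in the corresponding argument of \cite{W}. Since the excerpt explicitly states the proof is identical to that of Lemma 9.2 of \cite{W}, I would at this point simply cite that argument for the remaining routine verifications.
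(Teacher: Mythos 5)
Your overall strategy --- locate a maximal $a$-band $\pazocal{A}$ with ends on two distinct $a$-cells and perform a surgery that strictly decreases the fourth component $\a_4$ of the signature while leaving the first three untouched --- is the right one, and your observation that only $(\theta,a)$-cells (which do not enter the signature at all) get created along the way is exactly what makes the lexicographic comparison go through. The gap is in your final step. You assert that $\lab(\partial\pi_2)^{-1}$ is literally $\lab(\partial\pi_1)$ up to cyclic permutation because both are ``determined as the unique element of $\Omega^{\pm1}$ containing the shared $a$-edge's label in the appropriate position.'' This is false: $\Omega$ is the set of \emph{all} non-trivial cyclically reduced words over $X$ representing $1$ in $G$, and infinitely many such words pass through any given letter $a^{\pm1}$. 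Two $a$-cells joined by an $a$-band need not be mirror images of one another, so they do not in general form a cancellable pair, and the ``merged region'' does not bound an empty diagram. For the same reason the band does not exhibit $\lab(\partial\pi_1)$ and $\lab(\partial\pi_2)^{\pm1}$ as conjugate words; it only conjugates the single letter labelling its defining $a$-edges. You do mention ``merged'' as an alternative outcome, but you never carry that case out, and it is the generic one.

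What actually closes the argument is a merge, not a cancellation. Writing $\lab(\partial\pi_1)=a\,u_1$ and $\lab(\partial\pi_2)=a^{-1}u_2$ (reading from the two ends of $\pazocal{A}$), each $u_i$ is a word over $X^{\pm1}$ equal in $G$ to $a^{\mp1}$, so $u_1u_2$ represents $1$ in $G$. One replaces the subdiagram $\pi_1\cup\pazocal{A}\cup\pi_2$ (whose boundary label is $u_1Hu_2H^{-1}$, with $H$ the $\theta$-word read along a side of $\pazocal{A}$) by a single $a$-cell labelled by the cyclically reduced form of $u_1u_2$ --- or by nothing, if that word is freely trivial --- glued to a rectangle of $(\theta,a)$-cells realizing $Hu_2H^{-1}=u_2$; this is legitimate because the domain of each such $\theta$ in the special input sector is the whole alphabet. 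After reducing, $\a_4$ has dropped by at least one while $\a_1,\a_2,\a_3$ have not increased, contradicting diminishedness; no transposition of $\pi_2$ along the band is needed. So the lemma does follow along your route, but the step you defer as ``routine verification'' is exactly the one whose stated justification is wrong.
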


Note that Lemma \ref{a-bands between a-cells} implies that a diminished diagram satisfying (M1) is minimal.

\medskip

\subsection{$t$-spokes} \

When considering diminished, minimal, or $D$-minimal diagrams in what follows, many arguments rely on the $q$-bands corresponding to $t$-letters. To distinguish these from bands corresponding to other parts of the base, we adopt the convention of previous literature ({\frenchspacing e.g. \cite{O18}, \cite{OS19}, \cite{W}, \cite{WMal}) and refer to them as \textit{$t$-bands}. Note that the top and the bottom of such a band are each labelled by a copy of the band's history.

In a diminished, minimal, or $D$-minimal diagram, a maximal $q$-band with one end on a disk $\Pi$ is called a \textit{spoke} of $\Pi$. A \textit{$t$-spoke} is then defined in the natural way.

The pairs $\{t(2),t(3)\},\dots,\{t(L-1),t(L)\},\{t(L),t(2)\}$ are called \textit{adjacent} $t$-letters. Two $t$-spokes of the same disk are called \textit{consecutive} if they correspond to adjacent $t$-letters.

\begin{lemma}[Lemma 9.3 of \cite{W}] \label{extend}

For $i\in\{2,\dots,L\}$, let $\pazocal{C}:A(i)\to\dots\to A(i)$ be a reduced computation of $\textbf{M}$ with history $H$. Then there exists a reduced diagram $\Delta$ over $M_\Omega(\textbf{M})$ with contour label $H(0)^{-1}W_{ac}H(0)W_{ac}^{-1}$, where $H(0)$ is the copy of $H$ in $F(R)$ obtained by adding the subscript 0 to each letter.

\end{lemma}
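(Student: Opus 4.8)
The plan is to turn the reduced computation $\pazocal{C}:A(i)\to\dots\to A(i)$ into a trapezium and then close it up into a van Kampen diagram with the desired boundary label. First I would observe that since $\pazocal{C}$ is a nonempty reduced computation (if it is empty the statement is trivial, using a diagram with a single edge or an empty diagram), \Cref{computations are trapezia} produces a trapezium $\Delta_0$ over $M(\textbf{M})$ with $\lab(\textbf{tbot}(\Delta_0))\equiv A(i)$, $\lab(\textbf{ttop}(\Delta_0))\equiv A(i)$, and history $H$. The key point is to control the side paths of this trapezium: the left and right sides $\textbf{p}_1,\textbf{p}_2$ are sides of $q$-bands, so their labels are copies of the history $H$ (with appropriate indices), and since the accept word $W_{ac}$ has empty tape word in every sector, no trimming is necessary in forming the trapezium. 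Hence $\lab(\textbf{p}_1)$ and $\lab(\textbf{p}_2)$ are genuinely the same word, namely the copy $H(0)$ of $H$ obtained by assigning the index $0$ to each rule letter (here one uses the convention $\theta_{s+1}=\theta_0$ and the index-$0$ reading of a $q$-band's side).

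The main obstacle is bookkeeping with the indices of the $\theta$-letters along the sides: the side of a $q$-band for a part $Q_j$ carries the subword $\theta_j$ of each rule, but the statement wants all subscripts to be $0$, so I would need to invoke the standard convention (as set up in \Cref{sec-associated-groups}, where the index of a $\theta$-letter is suppressed) that any side of a $q$-band can be taken to have index $0$ after applying the $(\theta,q)$-relations, equivalently that the relevant $t$-band's top and bottom are each labelled by a copy of the band's history with the index added uniformly. Since the $(\theta,t)$-relations are the simple commutation relations $\theta_j t(i) = t(i)\theta_{j+1}$, reading the top or bottom of the $t$-band corresponding to $t(i)$ in $\Delta_0$ gives exactly a copy of $H$; adding subscript $0$ to each letter yields $H(0)$. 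So $\Delta_0$ has contour $\textbf{p}_1^{-1}\textbf{q}_1\textbf{p}_2\textbf{q}_2^{-1}$ with $\lab(\textbf{p}_1)\equiv\lab(\textbf{p}_2)\equiv H(0)$, $\lab(\textbf{q}_1)\equiv\lab(\textbf{q}_2)\equiv W_{ac}$ (after possibly reading the $q$-subwords in the correct order; again no $a$-letter padding is needed because $W_{ac}$ has empty tape words).

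Then I would finish by a direct gluing: the boundary word of $\Delta_0$ read from an appropriate vertex is $H(0)^{-1}W_{ac}H(0)W_{ac}^{-1}$, which is precisely the target contour label. Concretely, starting at the corner where $\textbf{p}_1$ and $\textbf{q}_1$ meet and reading counterclockwise gives $\textbf{p}_1^{-1}\textbf{q}_1\textbf{p}_2\textbf{q}_2^{-1}$, and substituting the labels just computed gives $H(0)^{-1}W_{ac}H(0)W_{ac}^{-1}$. The diagram $\Delta_0$ is reduced and, being a trapezium, contains no $\theta$-annuli by \Cref{M(S) annuli}, so it is already a legitimate reduced diagram over $M(\textbf{M})$ (hence over $M_\Omega(\textbf{M})$ after possibly reducing, which only decreases the number of cells and preserves the boundary label). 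Taking $\Delta$ to be this diagram (or a diminished diagram with the same boundary label, which exists by \Cref{diminished exist} and is also reduced without $\theta$-annuli) completes the proof. I would flag that the only subtlety worth spelling out is the index convention in the last paragraph of \Cref{sec-associated-groups}, and that the nonempty hypothesis is used implicitly so that \Cref{computations are trapezia} applies; the empty case is handled separately and trivially.
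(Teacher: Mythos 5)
There is a genuine gap, and it comes from misreading what the computation $\pazocal{C}$ is. In this lemma $A(i)\equiv W_{ac}(i)$ is only the $i$-th \emph{component} of the accept configuration, so $\pazocal{C}$ is a computation with base $\{t(i)\}B_4(i)$, not a computation in the standard base from $W_{ac}$ to $W_{ac}$. Applying \Cref{computations are trapezia} directly to $\pazocal{C}$, as you do, produces a trapezium whose trimmed top and bottom are labelled by $A(i)$, so its contour label is (a copy of) $H(0)^{-1}A(i)H(0)A(i)^{-1}$, not the required $H(0)^{-1}W_{ac}H(0)W_{ac}^{-1}$: the accept word $W_{ac}$ has base $\{t(1)\}B_4(1)\dots\{t(L)\}B_4(L)$ and is much longer than $A(i)$. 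Your identification ``$\lab(\textbf{q}_1)\equiv\lab(\textbf{q}_2)\equiv W_{ac}$'' is exactly where the argument breaks. The whole content of the lemma is the passage from the component base to the full standard base, and this cannot be done by simply extending $\pazocal{C}$: by \Cref{M return to end} such a computation $A(i)\to\dots\to A(i)$ is never a one-machine computation, and by \Cref{projected end to end} it only \emph{almost}-extends to a computation $W_{ac}\to\dots\to W_{ac}$ — at the junctions between maximal one-machine subcomputations one must insert or delete words of $\pazocal{R}_1$ in the `special' input sector (the extensions for the first machine give $I(w,K)$ while those for the second give $J(w,K)$).

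The paper's proof handles precisely this obstruction: it takes the factorization $H\equiv H_1\dots H_\ell$ ($\ell\geq2$) from \Cref{projected end to end}, applies \Cref{computations are trapezia} to each extended one-machine piece to get trapezia with contour labels $H_j(0)^{-1}W_{j-1}^{(z_j)}H_j(0)(W_j^{(z_j)})^{-1}$, and then bridges the discrepancy between $W_j^{(z_j)}$ and $W_j^{(z_{j+1})}$ by pasting an $a$-cell whose boundary label lies in $\pazocal{R}_1\subset\Omega$ before gluing consecutive trapezia. This is also why the conclusion is stated over $M_\Omega(\textbf{M})$ rather than $M(\textbf{M})$: the $a$-cells are genuinely needed, whereas your argument would (incorrectly) yield a diagram over $M(\textbf{M})$ with no $a$-cells at all. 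Your remarks about side labels, index conventions, and absence of trimming are fine as far as they go, but they do not address the actual difficulty the lemma is designed to overcome.
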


\begin{proof}

Consider the factorization $H\equiv H_1\cdots H_\ell$ for $\ell\geq2$ given by Lemma \ref{projected end to end}.

Define $H_i(0)$ as the word in $F(R)$ obtained from $H_i$ by adding a subscript $0$ to each letter. As in the proof of the analogous statement in \cite{W}, Lemma \ref{computations are trapezia} implies that for each $1\leq j\leq \ell$, there exists a trapezium $\Delta_j$ with contour label $$H_j(0)^{-1}W_{j-1}^{(z_j)}H_j(0)(W_j^{(z_j)})^{-1}$$

By construction, for $1\leq j\leq\ell-1$, $W_j^{(z_j)}$ differs from $W_j^{(z_{j+1})}$ only by the insertion/deletion of words in $\pazocal{R}_1$ in the `special' input sector, while $W_0^{(z_1)}\equiv W_\ell^{(z_\ell)}\equiv W_{ac}$. Note that every word of $\pazocal{R}_1$ represents the trivial element of $G$, so that $\pazocal{R}_1\subset\Omega$. For $1\leq j\leq\ell-1$, let $\tilde{\Delta}_j$ be the diagram obtained from pasting the $a$-cell corresponding to this element of $\pazocal{R}_1$ to the top of $\Delta_j$, so that the `top' label of $\tilde{\Delta}_j$ is $W_j^{(z_{j+1})}$.

Then, letting $\tilde{\Delta}_\ell=\Delta_\ell$, we may glue the top of $\tilde{\Delta}_j$ to the bottom of $\tilde{\Delta}_{j+1}$. Letting $\Delta$ be the reduced diagram that results from these pastings, $\text{Lab}(\partial\Delta)\equiv H(0)^{-1}W_{ac}H(0)W_{ac}^{-1}$.

\end{proof}

\begin{lemma}[Compare with Lemma 9.4 of \cite{W}] \label{extend 2}

Let $\pazocal{C}:V_0\to\dots\to V_t$ be a reduced computation of $\textbf{M}$ with history $H$ and base $\{t(i)\}B_4(i)$ for some $i\in\{2,\dots,L\}$. Suppose there exist configurations $W_0$ and $W_t$ of $\textbf{M}$ each of which is accepted by a one-machine computation such that $W_0(i)\equiv V_0$ and $W_t(i)\equiv V_t$. Then there exists a reduced diagram $\Delta$ over $M_\Omega(\textbf{M})$ with contour label $H(0)^{-1}W_0H(0)W_t^{-1}$, where $H(0)$ is the copy of $H$ in $F(R)$ obtained by adding the subscript 0 to each letter.

\end{lemma}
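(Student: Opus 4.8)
The plan is to reduce this statement to \Cref{extend} by extending the given computation $\pazocal{C}$ in the $i$-th component to a genuine computation of $\textbf{M}$ in the standard base, interpolate accepting computations at the two ends, and then paste the resulting trapezia together, inserting $a$-cells corresponding to elements of $\pazocal{R}_1$ wherever the ``special'' input sector is modified between maximal one-machine pieces (exactly as in the proof of \Cref{extend}).

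First I would factor the history $H\equiv H_1\dots H_m$ so that each $H_j$ is the history of a maximal one-machine subcomputation $\pazocal{C}_j:V_{r(j-1)}\to\dots\to V_{r(j)}$ of $\pazocal{C}$ (here $r(0)=0$, $r(m)=t$). Applying \Cref{extend one-machine} to each $\pazocal{C}_j$ produces one-machine computations $\pazocal{C}_j':U_{j-1}\to\dots\to U_j$ of $\textbf{M}$ in the standard base with history $H_j$ and with $U_j(i)\equiv V_{r(j)}$ for all $j$; moreover parts (a) and (b) of \Cref{extend one-machine} pin down $U_0$ and each intermediate $U_j$: by hypothesis $W_0$ is accepted by a one-machine computation and $W_0(i)\equiv V_0$, so by \Cref{M language}(2) together with \Cref{projected return to start}/\Cref{projected start to end}-type reasoning applied to the pieces, one identifies $U_0\equiv W_0$ (after possibly replacing $W_0$ by the configuration with the same acceptance that \Cref{extend one-machine} outputs — the key point being $W_0$ is accepted and projects to $V_0$, so it must be $I(w_0,K_0)$ or $J(w_0,K_0)$ or $W_{ac}$). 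Likewise $U_m\equiv W_t$. The intermediate configurations $U_j$ for $1\le j\le m-1$ are, by the maximality of the factorization and \Cref{ell at most 2}/\Cref{M language}(2), either $W_{ac}$ or of the form $I(w_j,K_j)$ or $J(w_j,K_j)$; in particular $U_j$ and $U_{j-1}$ differ only by the insertion/deletion of a word of $\pazocal{R}_1$ in the special input sector whenever one of them is a start configuration (and $\pazocal{R}_1\subseteq\Omega$ since every relator of $\pazocal{P}_1$ represents the identity in $G$).

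Then, for each $j$, \Cref{computations are trapezia} yields a trapezium $\Delta_j$ with history $H_j(0)$ (the copy of $H_j$ in $F(R)$ with subscript $0$), bottom label $U_{j-1}$, and top label $U_j$; as in the proof of \Cref{extend}, I would splice into the top of $\Delta_j$ (for $j<m$) the $a$-cell whose boundary is the relevant element of $\pazocal{R}_1\subseteq\Omega$, so that the top label of the amended diagram $\tilde\Delta_j$ agrees with the bottom label of $\tilde\Delta_{j+1}$. Gluing the $\tilde\Delta_j$ along their common horizontal sides gives a diagram $\Delta$ with $\lab(\partial\Delta)\equiv H(0)^{-1}W_0H(0)W_t^{-1}$; passing to a diminished diagram with the same contour label (via \Cref{diminished exist}) keeps it reduced. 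The main obstacle is the bookkeeping in the previous paragraph: correctly identifying the endpoint configurations $U_0\equiv W_0$, $U_m\equiv W_t$ and each intermediate $U_j$, and verifying that consecutive $U_j$'s genuinely differ only inside the special input sector by a word of $\pazocal{R}_1$ — this is where one must invoke that $W_0,W_t$ are \emph{already known} to be accepted by one-machine computations (so \Cref{M language}(2) applies) rather than merely that they project correctly, and where \Cref{projected end to end} in the case $W_0\equiv W_t\equiv W_{ac}$ (i.e. \Cref{extend}) is the prototype. Everything else is a routine assembly of trapezia.
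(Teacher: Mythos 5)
Your route is genuinely different from the paper's. The paper does not factor $H$ into maximal one-machine pieces at all: it takes one-machine accepting computations $\pazocal{C}_0$ and $\pazocal{C}_t$ of $W_0$ and $W_t$ with histories $H_0$, $H_t$, uses their restrictions to the $i$-th component to close $\pazocal{C}$ up into a reduced computation $\pazocal{D}:A(i)\to\dots\to A(i)$ whose history is freely equal to $H_0^{-1}HH_t$, applies \Cref{extend} to $\pazocal{D}$, and then pastes the two trapezia corresponding to $\pazocal{C}_0$ and $\bar{\pazocal{C}}_t$ onto the bottom and top of the resulting diagram before cancelling. This pushes all of the bookkeeping about intermediate start/end configurations and $\pazocal{R}_1$-insertions into \Cref{extend} (i.e.\ into \Cref{projected end to end}), which is exactly the prototype you identify. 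Your direct approach re-runs that analysis in a setting where the two boundary configurations are no longer $A(i)$, and that is where the extra work --- and the one error --- sits.

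The error: you assert that since $W_0$ is accepted and projects to $V_0$, it ``must be $I(w_0,K_0)$ or $J(w_0,K_0)$ or $W_{ac}$.'' That is false: an accepted configuration is any configuration occurring on an accepting computation, so $W_0$ can carry intermediate state letters and be none of these, and \Cref{M language}(2) only classifies accepted \emph{start} configurations. The correct case split is: if $W_0$ is not a start or end configuration, it is admissible only for rules of the machine that accepts it, so by \Cref{projected not admissible} the first maximal one-machine piece of $\pazocal{C}$ belongs to that same machine and the parallel extension of \Cref{extend one-machine} returns $U_0\equiv W_0$ on the nose; if $W_0$ is a start or end configuration, then $U_0$ may differ from $W_0$ by a word of $\pazocal{R}_1$ in the special input sector, which must be absorbed with an $a$-cell --- you cannot literally ``replace $W_0$,'' since the contour label in the conclusion is prescribed. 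With that correction, and with the observation that acceptance propagates along the extended pieces so that each intermediate start configuration $U_j$ is accepted (hence of the form $I(w_j,K_j)$ or $J(w_j,K_j)$ with $w_j\in\pazocal{R}_1$), your assembly goes through; the paper's reduction buys you exemption from this case analysis entirely.
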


\begin{proof}

Let $\pazocal{C}_0$ and $\pazocal{C}_t$ be one-machine computations accepting $W_0$ and $W_t$, respectively.  Letting $H_0$ and $H_t$ be the histories of these computations, then there exists a reduced computation $\pazocal{D}:A(i)\to\dots\to A(i)$ whose history $H'$ is freely equal to the product $H_0^{-1}HH_t$.

\Cref{extend} then implies there exists a reduced diagram $\Delta'$ over $M_\Omega(\textbf{M})$ with contour label $H'(0)^{-1}W_{ac}H'(0)W_{ac}^{-1}$, where $H'(0)$ is the copy of $H'$ in $F(R)$ obtained by adding the subscript $0$ to each letter.

Now, let $\Delta_0$ and $\bar{\Delta}_t$ be the trapezia corresponding to the reduced computation $\pazocal{C}_0$ and the inverse $\bar{\pazocal{C}}_t$ of the computation $\pazocal{C}_t$, respectively.  Then by construction $\lab(\partial\Delta_0)\equiv H_0(0)^{-1}W_0H_0(0)W_{ac}^{-1}$ and $\lab(\partial\bar{\Delta}_t)\equiv H_t(0)W_{ac}H_t(0)^{-1}W_t^{-1}$.

So, pasting the top of $\Delta_0$ to the `bottom' of $\Delta'$ and the bottom of $\bar{\Delta}_t$ to the `top' of $\Delta'$, we obtain a (possibly unreduced) diagram $\Delta_1$ over $M_\Omega(\textbf{M})$ with $$\lab(\partial\Delta_1)\equiv H_t(0)H'(0)^{-1}H_0(0)^{-1}W_0H_0(0)H'(0)H_t(0)^{-1}W_t^{-1}$$
Thus, the reduced diagram $\Delta$ obtained from $\Delta_0$ by removing cancellable cells and reducing the boundary labels satisfies the statement.

\end{proof}

Using \Cref{extend 2}, the next statement follows in just the same way as the analogous statement in \cite{W}:

\begin{lemma}[Lemma 9.5 of \cite{W}] \label{t-spokes between disks}

Let $\Delta$ be a $D$-minimal diagram over the disk presentation of $G_\Omega(\textbf{M})$. Suppose there exist two disks $\Pi_1$ and $\Pi_2$ in $\Delta$ so that $\pazocal{Q}_1$ and $\pazocal{Q}_2$ are consecutive $t$-spokes of both. Let $\Psi$ be the subdiagram bounded by the sides of $\pazocal{Q}_i$ and the subpaths of $\partial\Pi_i$ such that $\Psi$ does not contain $\Pi_1$ or $\Pi_2$. Then $\Psi$ contains a disk.

\end{lemma}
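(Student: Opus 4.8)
\textbf{Proof plan for Lemma \ref{t-spokes between disks}.}

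The plan is to argue by contradiction: suppose $\Psi$ contains no disk. Then $\Psi$ is a reduced diagram over $M_\Omega(\textbf{M})$ (equivalently, over the disk presentation with no disk cells), so the machinery of Section 8 and 9 applies to it. The boundary of $\Psi$ decomposes as $\textbf{p}_1^{-1}\textbf{s}_1\textbf{p}_2\textbf{s}_2^{-1}$, where $\textbf{p}_1,\textbf{p}_2$ are sides of the $t$-spokes $\pazocal{Q}_1,\pazocal{Q}_2$ and $\textbf{s}_1,\textbf{s}_2$ are the subpaths of $\partial\Pi_1,\partial\Pi_2$ cut off by these spokes. Since $\pazocal{Q}_1,\pazocal{Q}_2$ are \emph{consecutive} $t$-spokes of each disk, the labels of $\textbf{s}_1$ and $\textbf{s}_2$ are the portions of the disk relators $W_{ac}$ lying between adjacent $t$-letters $t(i),t(i+1)$ (for the appropriate $i$), so each $\textbf{s}_j$ reads (a cyclic factor of) $\{t(i)\}B_4(i)$. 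In particular the base of every maximal $\theta$-band of $\Psi$ is a copy of $\{t(i)\}B_4(i)$ of length $N$, every such $\theta$-band runs from $\pazocal{Q}_1$ to $\pazocal{Q}_2$, and $\Psi$ is in fact an $a$-trapezium (in the sense of Section 8) with base $\{t(i)\}B_4(i)$, left and right sides $\textbf{p}_1,\textbf{p}_2$, and top/bottom the subpaths $\textbf{s}_1,\textbf{s}_2$ of the two disks — after trimming. The histories $H_1,H_2$ of the two $t$-spokes agree (both equal the history $H$ of this $a$-trapezium).

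Next I would extract the combinatorial content. By \Cref{trapezia are computations}, $\Psi$ (after passing to the trapezium obtained by pushing the $a$-cells to the boundary via \Cref{a-cell a-trapezia no boundary}, exactly as in the proof of \Cref{big a-trapezium accepted}) yields a reduced computation $\pazocal{C}:V_0\to\dots\to V_h$ of $\textbf{M}$ with base $\{t(i)\}B_4(i)$ whose initial and terminal admissible words are the admissible subwords of $W_{ac}$ with base $\{t(i)\}B_4(i)$, i.e. $V_0\equiv V_h\equiv A(i)$. Thus $\pazocal{C}:A(i)\to\dots\to A(i)$ is a reduced computation. Now \Cref{M return to end} says such a computation cannot be a one-machine computation, so $\pazocal{C}$ contains a transition rule of the form $\sigma(s)_j^{\pm1}$ or $\sigma(a)_j^{\pm1}$; since $V_0$ and $V_h$ are (subwords of) the accept configuration, the first and last maximal one-machine subcomputations of $\pazocal{C}$ begin and end at $A(i)$. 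Applying \Cref{projected end to end} to $\pazocal{C}$, at least one maximal one-machine subcomputation $\pazocal{C}_m:U_m\to\dots\to A(i)$ has terminal word $A(i)$ but — being non-empty, since $H$ is reduced and $\pazocal{C}$ is multi-machine — it cannot, by \Cref{projected start to end} and \Cref{projected return to start} together with \Cref{M step history}, have step history of the form $(s)_ih_i(a)_i$ nor $(s)_ih_i(s)_i^{-1}$ unless it forces the existence of an accepted input configuration; in every surviving case one obtains a configuration $I(w,H')$ or $J(w,H')$ with $w\in\pazocal{R}_1$ appearing as a terminal/initial configuration of some $\pazocal{C}_i$, which via the `almost-extension' provided by \Cref{projected end to end} shows $\pazocal{C}$ itself must in fact contain a disk-producing configuration. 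This is the contradiction with the assumption that $\Psi$ is disk-free — because by \Cref{disks are relations} such an accepted configuration is a disk relator, and $D$-minimality of $\Delta$ (which is inherited by $\Psi$) would force that disk to actually be present in $\Psi$, contradicting $s_1(\Psi)=0$.

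The main obstacle — and the step I would spend the most care on — is the middle paragraph: extracting from a reduced computation $\pazocal{C}:A(i)\to\dots\to A(i)$ that is \emph{not} one-machine the fact that some maximal one-machine subcomputation must pass through an accepted configuration of the form $I(w,H')$ or $J(w,H')$ with $w\in\pazocal{R}_1$, hence through a genuine disk relator. This is exactly where \Cref{projected end to end} does the heavy lifting, but one must be careful that the `almost-extension' it provides (insertions/deletions of elements of $\pazocal{R}_1\subseteq\Omega$ in the special input sector, which are $a$-cells, not disks) does not itself manufacture or destroy disks: the point is that the boundary of $\Psi$ is honestly $H(0)^{-1}W_{ac}H(0)W_{ac}^{-1}$-like only up to such $a$-cells, and one needs \Cref{extend 2} (or the argument of \Cref{extend}) to build a genuine reduced diagram exhibiting a disk with the same boundary label as $\Psi$; then $D$-minimality of $\Delta$ yields the contradiction. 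A secondary technical point is handling the trimming of $\textbf{s}_1,\textbf{s}_2$ and the $a$-cells of $\Psi$ correctly so that $\Psi$ really presents as an $a$-trapezium with base exactly $\{t(i)\}B_4(i)$ — this is routine given \Cref{a-cells sector} and \Cref{a-cell a-trapezia no boundary}, but must be stated.
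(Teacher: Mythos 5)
There is a genuine gap, and it occurs at the two places your plan leans on hardest. First, you identify the top and bottom of the (assumed diskless) subdiagram $\Psi$ with $A(i)$, i.e.\ you treat the two disks as hubs labelled by $W_{ac}$. But the lemma is about the \emph{disk} presentation: $\Pi_1$ and $\Pi_2$ are labelled by arbitrary accepted configurations $W_1,W_2$ with $\ell(W_j)\leq 1$, not by $W_{ac}$. So the computation you extract from $\Psi$ runs from $W_1(i)$ to $W_2(i)$, and \Cref{M return to end} (which needs endpoints $A(i)$) and your subsequent use of \Cref{projected end to end} are not applicable as stated. Second, and more fundamentally, your contradiction mechanism is inverted. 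Even in the hub case, a reduced multi-machine computation $A(i)\to\dots\to A(i)$ is perfectly possible, and the fact that some intermediate configuration of the computation is a disk relator does not force a disk cell to appear in $\Psi$: $D$-minimality minimizes disks over all diagrams with the given boundary label, it never ``forces a disk to be present,'' and \Cref{extend} in fact builds genuinely diskless diagrams (with $a$-cells supplying the $\pazocal{R}_1$-insertions) out of exactly the computations you are worried about. So the state of affairs you reach at the end of your middle paragraph is not contradictory, and the proof does not close.

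The paper's route runs in the opposite direction. Assuming $\Psi$ is diskless, one gets (as in your first paragraph, with the caveat that when the adjacent pair is $\{t(L),t(2)\}$ the base also contains $\{t(1)\}$ and the special input sector) a reduced computation with base of the form $\{t(i)\}B_4(i)$ whose endpoints are the components $W_1(i)$, $W_2(i)$ of the two disk labels; since $\ell(W_j)\leq1$, both $W_1$ and $W_2$ are accepted by one-machine computations, so \Cref{extend 2} applies and yields a \emph{diskless} diagram over $M_\Omega(\textbf{M})$ with boundary label $H(0)^{-1}W_1H(0)W_2^{-1}$. That diagram is then used surgically: one replaces the subdiagram of $\Delta$ spanned by $\Pi_1$, $\pazocal{Q}_1$, $\Psi$, $\pazocal{Q}_2$, $\Pi_2$ by a diagram with the same boundary label in which the two disks are cancelled against each other (the conjugacy $W_2=H(0)^{-1}W_1H(0)$ in $M_\Omega(\textbf{M})$ is exactly what makes this possible), producing a diagram with the same boundary label as $\Delta$ but strictly fewer disks — contradicting $D$-minimality. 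Your proposal never constructs a competitor diagram with fewer disks, which is the only way $D$-minimality can be contradicted, so the argument as written fails.
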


\renewcommand\thesubfigure{\alph{subfigure}}
\begin{figure}[H]
\centering
\begin{subfigure}[b]{\textwidth}
\centering
\includegraphics[scale=1]{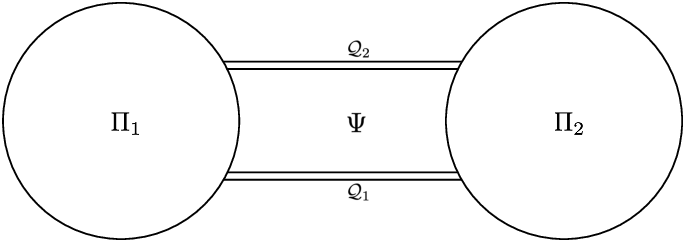}
\caption{Adjacent $t$-letters are $\{t(i),t(i+1)\}$ for $2\leq i\leq L-1$}
\end{subfigure} \\ \vspace{0.2in}
\begin{subfigure}[b]{\textwidth}
\centering
\includegraphics[scale=1]{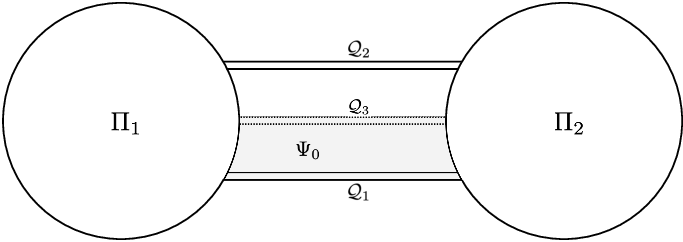}
\caption{Adjacent $t$-letters are $\{t(L),t(2)\}$           }
\end{subfigure}
\caption{Lemma \ref{t-spokes between disks}}
\end{figure}

Given a reduced diagram $\Delta$ over the disk presentation of $G_\Omega(\textbf{M})$, define the corresponding planar graph $\Gamma\equiv\Gamma(\Delta)$ as follows:

\begin{enumerate}[label=({\arabic*})]

\item $V(\Gamma)=\{v_0,v_1,\dots,v_\ell\}$ where each $v_i$ for $i\geq1$ corresponds to one of the $\ell$ disks of $\Delta$ and $v_0$ is one exterior vertex

\item For $i,j\geq1$, each shared $t$-spoke of the disks corresponding to $v_i$ and $v_j$ corresponds to an edge $(v_i,v_j)\in E(\Gamma)$

\item For $i\geq1$, each $t$-spoke of the disk corresponding to $v_i$ which ends on $\partial\Delta$ corresponds to an edge $(v_0,v_i)\in E(\Gamma)$

\end{enumerate}

As the degree of each interior vertex of $\Gamma$ is $L-1$, taking $L$ sufficiently large and applying \Cref{t-spokes between disks} implies the following statement, which possesses analogues in \cite{W}, \cite{WMal}, \cite{O18}, \cite{OS19}, and several others.

\begin{lemma}[Lemma 9.6 of \cite{W}] \label{graph}

Suppose $\Delta$ is a $D$-minimal diagram containing at least one disk. Then $\Delta$ contains a disk $\Pi$ such that $L-4$ consecutive $t$-spokes $\pazocal{Q}_1,\dots,\pazocal{Q}_{L-4}$ of $\Pi$ end on $\partial\Delta$ and such that every subdiagram $\Gamma_i$ bounded by $\pazocal{Q}_i$, $\pazocal{Q}_{i+1}$, $\partial\Pi$, and $\partial\Delta$ ($i=1,\dots,L-5$) contains no disks.

\end{lemma}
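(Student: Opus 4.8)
\textbf{Proof plan for Lemma~\ref{graph}.}
The idea is a standard planarity/counting argument applied to the graph $\Gamma=\Gamma(\Delta)$, combined with \Cref{t-spokes between disks} to force the existence of a disk whose spokes to $\partial\Delta$ leave disk-free regions between consecutive pairs. First I would record the basic structural facts about $\Gamma$: each interior vertex $v_i$ ($i\ge 1$) has degree exactly $L-1$, since every disk $\Pi$ has exactly $L-1$ $t$-spokes (corresponding to the $t$-letters $t(2),\dots,t(L)$), and each such spoke ends either on another disk or on $\partial\Delta$ by \Cref{M_a no annuli 1} (a $t$-band is a $q$-band, which cannot be a $q$-annulus, so it has two ends, and by $D$-minimality cannot end on the same disk twice). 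A spoke shared by two disks contributes one edge between their vertices, a spoke ending on $\partial\Delta$ contributes one edge to $v_0$; thus each interior vertex has degree $L-1$ in $\Gamma$, counting multiplicity. I would also note that $\Gamma$ is planar, being drawn in the disk $\Delta$, and that cyclic order of the $L-1$ spokes around each disk is respected by the planar embedding, so that ``consecutive'' spokes correspond to consecutive edges around $v_i$.

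Next I would invoke \Cref{t-spokes between disks}: if $\pazocal{Q}_1,\pazocal{Q}_2$ are consecutive $t$-spokes shared by the same pair of disks $\Pi_1,\Pi_2$, the region between them contains a further disk. Translating this to $\Gamma$, it means there cannot be a ``bigon'' bounded by two consecutive parallel edges $(v_i,v_j)$ with no vertex inside; more generally, the subgraph structure forces that between two consecutive edges of $v_i$ going to the same neighbor there must lie another interior vertex. The plan is to use this to bound, for a suitable disk, the number of spokes that can go to disks versus the boundary. Concretely, I would argue by contradiction or by an extremal choice: take $\Delta$ to be a counterexample with the fewest disks (minimal $1$-signature is already assumed, but here I mean among configurations of disks in $\Delta$, pick a disk $\Pi$ extremal in the planar graph — e.g. one whose vertex $v$ lies on the outer face of the subgraph induced by interior vertices, or one adjacent to $v_0$ via the maximal number of edges). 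For such an extremal disk, a block of at least $L-4$ consecutive spokes must all terminate on $\partial\Delta$: if two of these spokes within a consecutive block went to (possibly different) disks, repeated application of \Cref{t-spokes between disks} together with planarity would produce either more disks packed between them (contradicting minimality/extremality) or a disk ``hidden'' behind $\Pi$ relative to $\partial\Delta$, contradicting the choice of $\Pi$ as outermost. The parameter choice $N<<L$ (indeed $L$ chosen sufficiently large in \Cref{sec-parameters}) ensures $L-4\ge 1$ and gives room for the counting to close.

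Finally, given the disk $\Pi$ with $L-4$ consecutive $t$-spokes $\pazocal{Q}_1,\dots,\pazocal{Q}_{L-4}$ ending on $\partial\Delta$, I would verify the last assertion: each region $\Gamma_i$ bounded by $\pazocal{Q}_i$, $\pazocal{Q}_{i+1}$, $\partial\Pi$, and $\partial\Delta$ contains no disk. This is again immediate from the extremal choice: if some $\Gamma_i$ contained a disk $\Pi'$, then $\Pi'$ together with $\Pi$ and the two spokes $\pazocal{Q}_i,\pazocal{Q}_{i+1}$ would violate the outermost choice of $\Pi$ (there would be a disk ``between'' $\Pi$ and the boundary in that sector), or one would apply \Cref{t-spokes between disks} to the innermost such disk to derive a contradiction with minimality of the disk count. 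The main obstacle I anticipate is making the extremality argument fully rigorous: one must set up the planar graph $\Gamma$ carefully enough that ``outermost disk'' and ``consecutive block of boundary spokes'' are well-defined, and that applying \Cref{t-spokes between disks} genuinely forbids a disk from appearing in a region where one is asserted absent. This is exactly the kind of planar-graph extremal argument carried out in \cite{O18}, \cite{OS19}, and in the analogous Lemma~9.6 of \cite{W}, so I would follow that template, adapting the degree count ($L-1$ spokes per disk) and the role of \Cref{t-spokes between disks} to the present machine, with $L$ taken large enough that $L-4$ consecutive boundary spokes are guaranteed.
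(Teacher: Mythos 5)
Your setup is exactly the paper's: build the planar graph $\Gamma(\Delta)$, note that every interior vertex has degree $L-1$, use \Cref{M_a no annuli 1} to see each $t$-spoke ends on another disk or on $\partial\Delta$, and use \Cref{t-spokes between disks} to rule out disk-free regions between consecutive spokes shared by two disks; the paper then simply invokes the standard planar-graph lemma from \cite{O18}, \cite{OS19}, \cite{W} ("taking $L$ sufficiently large"). Where your proposal deviates is in the combinatorial core, and that is where the gap lies: you propose to replace the global counting argument by choosing an "outermost" or "extremal" disk and arguing locally, and this does not work as stated. A disk whose vertex lies on the outer face of the subgraph induced by the interior vertices can still have arbitrarily many spokes ending on other disks (imagine one disk adjacent to a large fan of disks all lying on its inner side), so extremality alone gives no bound of the form "at most $4$ spokes fail to reach $\partial\Delta$". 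Likewise, the disk-freeness of the sectors $\Gamma_i$ is not a consequence of outermostness: a disk $\Pi'\subset\Gamma_i$ has its spokes trapped in $\Gamma_i$ (they cannot cross $\pazocal{Q}_i$, $\pazocal{Q}_{i+1}$), but they may legitimately end on $\partial\Delta$, on $\partial\Pi$, or on further disks inside $\Gamma_i$, without contradicting any extremal choice of $\Pi$ or the minimality of the number of disks ($\Delta$ is fixed, so you cannot delete $\Pi'$ to get a contradiction with $D$-minimality either).

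The correct argument — and what the citation to the analogues in \cite{O18}, \cite{OS19}, \cite{W} actually refers to — is a global Euler-characteristic/induction argument on the whole planar graph: given that every interior vertex has degree $L-1$ and (via \Cref{t-spokes between disks}) no two consecutive edges joining the same pair of interior vertices bound a vertex-free region, one shows by counting over all faces and vertices that, for $L$ large, \emph{some} interior vertex carries at least $L-4$ consecutive edges to the exterior vertex $v_0$ bounding vertex-free faces; the vertex produced is not in general one you can point to in advance by an extremality criterion. So your plan is salvageable exactly because you end by saying you would follow the template of the cited literature, but the "outermost disk" sketch you offer in its place should be dropped or replaced by the counting lemma itself (quoted or reproved); as written it would not survive scrutiny.
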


\begin{figure}[H]
\centering
\includegraphics[scale=1]{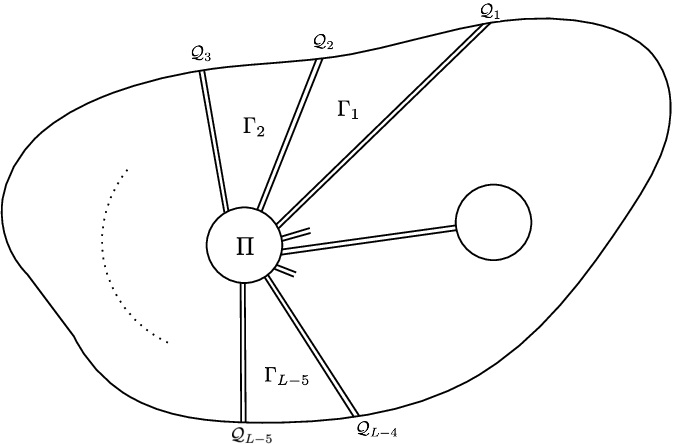}
\caption{Lemma \ref{graph}}
\label{fig-graph}
\end{figure}

%
%
%

\medskip


\subsection{Transposition of a $\theta$-band and a disk} \label{sec-transposition} \

As in \cite{W}, we now describe a procedure for moving a $\theta$-band about a disk that is similar in spirit to the procedure for moving a $\theta$-band about an $a$-cell described in \Cref{sec-transposition-a}.  However, as there are differences in the way disk relations are defined in this setting, the approach outlined here takes some minor deviations.

Let $\Delta$ be a $D$-minimal diagram containing a disk $\Pi$ and a $\theta$-band $\pazocal{T}$ subsequently crossing the $t$-spokes $\pazocal{Q}_1,\dots,\pazocal{Q}_\ell$ of $\Pi$. Assume $\ell\geq2$ is maximal for $\Pi$ and $\pazocal{T}$.

First, suppose there are no other cells between $\Pi$ and the bottom of $\pazocal{T}$, i.e there is a subdiagram formed by $\Pi$ and $\pazocal{T}$.

Let $\pazocal{T}'$ be the subband of $\pazocal{T}$ whose bottom path, $\textbf{s}_1^{-1}$, starts with the $t$-edge corresponding to the start of $\pazocal{Q}_1$ and ends with that of $\pazocal{Q}_\ell$. Further, let $\textbf{s}_2$ be the complement of $\textbf{s}_1$ in $\partial\Pi$ so that $\partial\Pi=\textbf{s}_1\textbf{s}_2$. Then as any sector of the standard base containing a $t$-letter has empty tape alphabet, $\lab(\textbf{s}_2)$ is an admissible word.

Let $W\equiv\text{Lab}(\partial\Pi)^{\pm1}$, $V\equiv\text{Lab}(\textbf{s}_1)$, and $\theta$ be the rule corresponding to $\pazocal{T}$. Further, let $\Gamma$ be the subdiagram formed by $\Pi$ and $\pazocal{T}'$. Then, by Lemma \ref{trapezia are computations}, $V^{-1}$ is $\theta$-admissible with $V^{-1}\cdot\theta\equiv\lab(\textbf{ttop}(\pazocal{T}'))=\lab(\textbf{top}(\pazocal{T}'))$.  In particular, $W(j)$ is $\theta$-admissible for some $j\in\{2,\dots,L\}$.

Suppose $W$ is $\theta$-admissible and $W\cdot\theta$ is a disk relator.  Then as it is a subword of $W$, $\lab(\textbf{s}_2)$ is $\theta$-admissible.  Let $\bar{\Pi}$ be a disk with contour labelled by $W\cdot\theta$ and let $\pazocal{T}''$ be the auxiliary $\theta$-band corresponding to $\theta$ whose top is labelled by $\lab(\textbf{s}_2)\cdot\theta$. Then, let $\bar{\Gamma}$ be the diagram obtained from attaching $\pazocal{T}''$ to $\bar{\Pi}$. Finally, let $\bar{\Delta}$ be the reduced diagram obtained from excising $\Gamma$ from $\Delta$ and pasting $\bar{\Gamma}$ in its place, attaching the first and last cells of $\pazocal{T}''$ to the complement of $\pazocal{T}'$ in $\pazocal{T}$ and perhaps making cancellations in the resulting $\theta$-band. Note that $\bar{\Delta}$ has the same contour label as that of $\Delta$.

Next, suppose $W$ is not $\theta$-admissible.  Then since $W(j)$ is $\theta$-admissible, \Cref{projected not admissible}(1) implies $\theta=\theta(s)_2$ and $W\equiv I(w,H)$ for some $w\in\pazocal{R}_1$ and $H\in F(\Phi^+)$.  
In this case, we replace $\Pi$ with a subdiagram consisting of a disk $\Pi_0$ with boundary label $J(w,H)^{\pm1}$ and an $a$-cell $\pi$ with boundary label $w^{\pm1}$.  As the admissible subword of $W$ corresponding to the `special' input sector is a subword of $\lab(\textbf{s}_2)^{\pm1}$, $\textbf{bot}(\pazocal{T}')$ is still a subpath of $\partial\Pi_0^{-1}$, so that we may move $\pazocal{T}'$ about $\Pi_0$ as above, producing a diagram $\bar{\Delta}$ as above (but with one extra $a$-cell introduced).


Finally, suppose $W$ is $\theta$-admissible but $\ell(W\cdot\theta)=2$.  Then \Cref{projected not admissible}(2) implies $\theta=\theta(s)_1$ and $W\equiv J(w,H)$ for some $w\in\pazocal{R}_1$ and $H\in F(\Phi^+)$.  But then we may introduce an $a$-cell as above to move $\pazocal{T}$ about $\Pi$ and construct $\bar{\Delta}$.


The procedure of excising $\Gamma$ from $\Delta$ to create $\bar{\Delta}$ is called the \textit{transposition} of the disk $\Pi$ and the $\theta$-band $\pazocal{T}$ in $\Delta$.

\renewcommand\thesubfigure{\alph{subfigure}}
\begin{figure}[H]
\centering
\begin{subfigure}[b]{0.48\textwidth}
\centering
\raisebox{0.2in}{\includegraphics[width=3in]{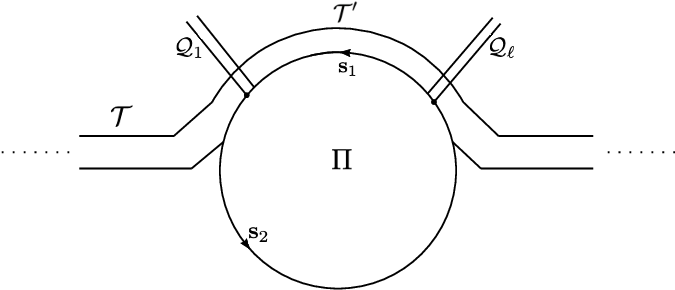}}
\caption{The subdiagram $\Gamma$}
\end{subfigure}\hfill
\begin{subfigure}[b]{0.48\textwidth}
\centering
\includegraphics[width=3in]{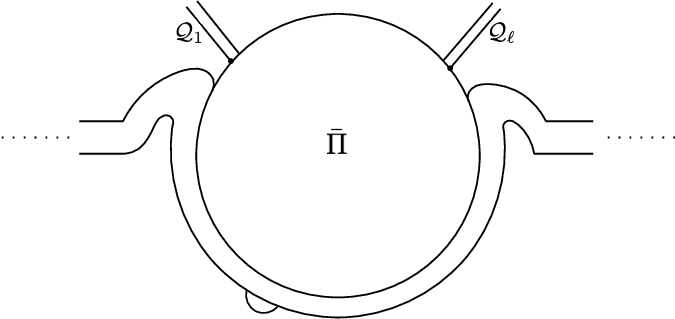}
\caption{The resulting subdiagram $\bar{\Gamma}$}
\end{subfigure}
\caption{The transposition of a $\theta$-band with a disk}
\end{figure}

Now, consider the situation where there are cells between the $\theta$-band and the disk, each of which is an $a$-cell.

Suppose the pair of adjacent $t$-letters corresponding to $\pazocal{Q}_1$ and $\pazocal{Q}_2$ is $\{t(i),t(i+1)\}$ for some $2\leq i\leq L-1$. Let $\pazocal{T}_1'$ be the subband of $\pazocal{T}'$ between $\pazocal{Q}_1$ and $\pazocal{Q}_2$. Then, let $\Psi$ be the subdiagram of $\Delta$ bounded by $\pazocal{T}_1'$ and $\partial\Pi$. By Lemma \ref{diminished exist}, there exists a diminished diagram $\Lambda$ over $M_\Omega(\textbf{M})$ with $\lab(\partial\Psi)\equiv\lab(\partial\Lambda)$. Lemmas \ref{a-band on same a-cell} and \ref{a-bands between a-cells} then imply that $\Lambda$ contains no $a$-cell, so that Lemma \ref{M(S) annuli} implies that $\Lambda$ consists of a single $\theta$-band. Hence, by Lemma \ref{trapezia are computations}, $W(i)$ is $\theta$-admissible.

Otherwise, if the pair of adjacent $t$-letters is $\{t(L),t(2)\}$, then the same argument applies to the subdiagram bounded by the $t$-band corresponding to $t(L)$, the $q$-band corresponding to $t(1)$, and $\pazocal{T}'$. As a result, $W(L)$ is $\theta$-admissible.

Performing this procedure to every corresponding subband of $\pazocal{T}'$, we may then assume that any $a$-cells sit below a subband with band corresponding to the `special' input sector.  As above, by \Cref{projected not admissible} perhaps replacing $\Pi$ with a subdiagram consisting of one disk and one $a$-cell, we may assume $W$ is $\theta$-admissible.

In this case, $V^{-1}\cdot\theta$ can differ from $\lab(\textbf{ttop}(\pazocal{T}')$ only by the tape word in the `special' input sector.  Hence, we may perform an analogous procedure as above to produce the transposition $\bar{\Delta}$.


Note that the reduced diagram $\bar{\Delta}$ arising from the transposition has the same number of disks and contour label as $\Delta$, and so is $D$-minimal.

However, the minimality of the 2-signature (and so the signature) need not be preserved by a transposition. This is because many $(\theta,t)$-cells may be added through transposition.


Even so, the following statement follows just as its analogue in \cite{W}:

\begin{lemma}[Lemma 9.7 of \cite{W}] \label{G_a theta-annuli}

Let $\Delta$ be a reduced diagram over the disk presentation of $G_\Omega(\textbf{M})$ satisfying (M3).

\begin{enumerate}[label=({\arabic*})]

\item Suppose a $\theta$-band $\pazocal{T}$ crosses $\ell$ $t$-spokes of a disk $\Pi$ and there are no disks in the subdiagram bounded by these spokes, $\pazocal{T}$, and $\partial\Pi$. Then $\ell\leq(L-1)/2$.

\item Suppose $\pazocal{T}$ and $\pazocal{T}'$ are disjoint $\theta$-bands crossing $\ell$ and $\ell'$ $t$-spokes, respectively, of a disk $\Pi$. Suppose further that every cell between the bottom of $\pazocal{T}$ (of $\pazocal{T}'$) and $\Pi$ is an $a$-cell. Further, suppose these bands correspond to the same rule $\theta$ if the history is read toward the disk. Then $\ell+\ell'\leq(L-1)/2$.

\item If $S$ is a $\theta$-annulus in $\Delta$ and $\Delta_S$ is the subdiagram bounded by the outer contour of $S$, then $\Delta_S$ is a diagram over $M_\Omega(\textbf{M})$.

\end{enumerate}

\end{lemma}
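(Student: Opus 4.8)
The plan is to follow the strategy of the analogous Lemma~9.7 in \cite{W}: prove (1) and (2) by contradiction using the transposition of a $\theta$-band and a disk introduced in \Cref{sec-transposition}, and then deduce (3) from (1) together with the graph estimate of Lemma~\ref{graph}. The unifying observation is that a transposition preserves the boundary label and the number of disks while at worst introducing a single extra $a$-cell, so that the only component of the $2$-signature it can alter is the number of $(\theta,t)$-cells; a transposition that strictly decreases this number therefore contradicts hypothesis (M3).

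For (1), suppose $\ell>(L-1)/2$, and let $\Psi$ be the disk-free subdiagram bounded by the crossed $t$-spokes $\pazocal{Q}_1,\dots,\pazocal{Q}_\ell$, by $\pazocal{T}$, and by $\partial\Pi$. I would first argue, exactly as in \cite{W} using Lemmas~\ref{diminished exist}, \ref{a-bands between a-cells}, \ref{M(S) annuli}, and \ref{trapezia are computations}, that the portion of $\Psi$ between $\pazocal{T}$ and $\partial\Pi$ may be assumed to consist only of $a$-cells: any intervening $(\theta,q)$- or $(\theta,t)$-cell forces the relevant $\theta$-band to be admissible for a configuration obtained from $\lab(\partial\Pi)$ by insertions in the `special' input sector, which is exactly the situation dealt with by the transposition procedure and the projection results of \Cref{sec-main-machine} (notably Lemma~\ref{projected not admissible}). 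Then applying the transposition of \Cref{sec-transposition} to $\pazocal{T}$ and $\Pi$ yields a \emph{reduced} diagram $\bar\Delta$ with $\lab(\partial\bar\Delta)\equiv\lab(\partial\Delta)$, the same number of disks, and in which the transposed band crosses only the $L-1-\ell$ $t$-spokes of the new disk complementary to $\pazocal{Q}_1,\dots,\pazocal{Q}_\ell$. Since $\ell>(L-1)/2$, this diagram has $2\ell-(L-1)>0$ fewer $(\theta,t)$-cells than $\Delta$ while having the same number of disks, so $s_2(\bar\Delta)<s_2(\Delta)$, contradicting (M3). The main obstacle is precisely this first reduction step: as in \cite{W}, it requires the delicate exclusion of intervening $(\theta,q)$- and $(\theta,t)$-cells and a check that the transposition does not covertly increase the number of disks.

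For (2), suppose $\ell+\ell'>(L-1)/2$. Because every cell between $\pazocal{T}$ (resp.\ $\pazocal{T}'$) and $\Pi$ is an $a$-cell, the $t$-spokes of $\Pi$ crossed by $\pazocal{T}$ are disjoint from those crossed by $\pazocal{T}'$ — otherwise one of the two bands would lie between the other and $\Pi$. Transposing $\pazocal{T}$ past $\Pi$ as in (1) produces a reduced diagram in which the transposed band crosses the $L-1-\ell$ $t$-spokes complementary to those it originally crossed, in particular all $\ell'$ of the spokes crossed by $\pazocal{T}'$; since $\pazocal{T}$ and $\pazocal{T}'$ correspond to the same rule read toward $\Pi$, the transposed band runs alongside $\pazocal{T}'$ as a mutually inverse pair, which cannot survive in a reduced diagram, so the cancellations built into the transposition remove these parallel cells. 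The net change in the number of $(\theta,t)$-cells is then $(L-1)-2(\ell+\ell')<0$, with the number of disks unchanged, again contradicting (M3).

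For (3), suppose toward a contradiction that the subdiagram $\Delta_S$ bounded by the outer contour of the $\theta$-annulus $S$ contains a disk. Since $\Delta$ satisfies (M3), it is $D$-minimal (the number of disks is minimal), and hence so is the subdiagram $\Delta_S$; Lemma~\ref{graph} then provides a disk $\Pi$ in $\Delta_S$ together with $L-4$ consecutive $t$-spokes $\pazocal{Q}_1,\dots,\pazocal{Q}_{L-4}$ that end on $\partial\Delta_S$ and cut $\Delta_S$ into disk-free pieces. But $\partial\Delta_S$ is a side of the $\theta$-band $S$, so the sub-band of $S$ joining the $(\theta,q)$-cells at which $\pazocal{Q}_1$ and $\pazocal{Q}_{L-4}$ terminate is a $\theta$-band crossing $L-4$ $t$-spokes of $\Pi$ with no disks in the region it bounds together with $\partial\Pi$. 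Part (1) then forces $L-4\le(L-1)/2$, i.e.\ $L\le7$, contradicting the choice of $L$ sufficiently large. Hence $\Delta_S$ contains no disk and is a diagram over $M_\Omega(\textbf{M})$.
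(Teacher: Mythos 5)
Your proposal is correct and follows essentially the same route as the paper, which proves this lemma by direct reference to its analogue (Lemma 9.7 of \cite{W}): parts (1) and (2) via the disk--$\theta$-band transposition of \Cref{sec-transposition} combined with the lexicographic minimality in (M3) (counting $(\theta,t)$-cells before and after, with the disk count unchanged), and part (3) via $D$-minimality of $\Delta_S$, \Cref{graph}, and an application of part (1) with $\ell=L-4>(L-1)/2$. The one step you rightly flag as delicate --- reducing to the case where only $a$-cells separate the band from the disk, using \Cref{diminished exist}, \Cref{a-bands between a-cells}, \Cref{M(S) annuli}, and \Cref{projected not admissible} --- is exactly the reduction carried out in the cited source, so no gap remains.
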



%
%
%
%

The following is an immediate consequence of Lemmas \ref{M_a no annuli 2}(2) and \ref{G_a theta-annuli}(3).

\begin{lemma} \label{minimal theta-annuli}

A minimal diagram $\Delta$ contains no $\theta$-annuli.

\end{lemma}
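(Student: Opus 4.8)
The statement to prove is \Cref{minimal theta-annuli}: a minimal diagram $\Delta$ contains no $\theta$-annuli. The plan is to deduce this directly from the two cited results, \Cref{M_a no annuli 2}(2) and \Cref{G_a theta-annuli}(3), by a short dichotomy on whether $\Delta$ contains disks.

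First I would recall that a minimal diagram satisfies conditions (M1), (M2), and (M3) by definition, and that (M3) is the minimality-of-2-signature hypothesis required by \Cref{G_a theta-annuli}. Suppose for contradiction that $\Delta$ contains a $\theta$-annulus $S$, and let $\Delta_S$ be the subdiagram bounded by the outer contour of $S$. Since $\Delta$ satisfies (M3), \Cref{G_a theta-annuli}(3) applies and tells us that $\Delta_S$ is a diagram over $M_\Omega(\textbf{M})$, i.e. $\Delta_S$ contains no disks. But then $S$ (together with everything it bounds) lives inside a disk-free part of $\Delta$: more precisely, $\Delta_S$ is itself a reduced diagram over $M_\Omega(\textbf{M})$, and since $\Delta$ satisfies (M1) and (M2), so does the subdiagram $\Delta_S$ (these are local conditions, inherited by subdiagrams, exactly as noted for $M$-minimal diagrams). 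Hence $\Delta_S$ is $M$-minimal.

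Now $S$ is a $\theta$-annulus contained in the $M$-minimal diagram $\Delta_S$. By \Cref{M_a no annuli 2}(2), an $M$-minimal diagram over $M_\Omega(\textbf{M})$ contains no $\theta$-annuli. This contradicts the existence of $S$. Therefore $\Delta$ contains no $\theta$-annulus.

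I expect no real obstacle here; the only point requiring a moment's care is making sure the subdiagram $\Delta_S$ genuinely qualifies as $M$-minimal — that is, that conditions (M1) and (M2) are inherited by subdiagrams (explicitly stated in the text for $M$-minimal diagrams and for minimal diagrams) and that \Cref{G_a theta-annuli}(3) indeed gives that $\Delta_S$ is a diagram over $M_\Omega(\textbf{M})$ rather than merely containing no disks in some weaker sense. Once those identifications are in place, the result is immediate. One could also phrase the argument without invoking $M$-minimality of $\Delta_S$ at all: if $\Delta$ has no disks it is already $M$-minimal and \Cref{M_a no annuli 2}(2) applies directly, while if $\Delta$ has disks one still reduces, via \Cref{G_a theta-annuli}(3), to the disk-free situation; but the uniform phrasing above is cleanest.
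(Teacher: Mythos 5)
Your proof is correct and follows exactly the route the paper intends: the paper states the lemma as an immediate consequence of \Cref{M_a no annuli 2}(2) and \Cref{G_a theta-annuli}(3), and your argument—using (M3) to apply \Cref{G_a theta-annuli}(3) so that $\Delta_S$ is disk-free, noting (M1)/(M2) are inherited so $\Delta_S$ is $M$-minimal, then invoking \Cref{M_a no annuli 2}(2)—is precisely the intended chain of reasoning.
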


Thus, the following strengthened version of van Kampen's Lemma for minimal diagrams over $G_\Omega(\textbf{M})$ follows in the same way as in \cite{W}:

\begin{lemma}[Lemma 9.9 of \cite{W}] \label{minimal exist}

A word $W$ over $\pazocal{Y}$ represents the trivial element of $G_\Omega(\textbf{M})$ if and only if there exists a minimal diagram $\Delta$ such that $\lab(\partial\Delta)\equiv W$.

\end{lemma}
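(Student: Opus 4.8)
The plan is to prove Lemma~\ref{minimal exist} in the standard way that such refined van Kampen lemmas are established, namely by reducing to the ordinary van Kampen lemma and then performing two successive ``minimization'' passes on the resulting diagram. The forward implication is the routine one: if a word $W$ over $\pazocal{Y}$ represents the trivial element of $G_\Omega(\textbf{M})$, then by the classical van Kampen Lemma \cite{v-K} (applied to the disk presentation of $G_\Omega(\textbf{M})$, which is legitimate by the discussion following \Cref{disks are relations}) there exists a reduced diagram over the disk presentation with boundary label $W$. Conversely, if such a minimal diagram exists, then reading its boundary label gives a product of conjugates of defining relators of the disk presentation, and by \Cref{disks are relations} every disk relation is a consequence of the canonical relators together with the $a$-relations, so $W =_{G_\Omega(\textbf{M})} 1$. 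That direction requires essentially nothing new.

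For the substantive direction, first I would pass to a $D$-minimal diagram $\Delta_0$ with boundary label $W$: among all reduced diagrams over the disk presentation with this boundary label, choose one whose $1$-signature (number of disks) is minimal. Such a diagram exists by well-ordering. Next, among all $D$-minimal diagrams with boundary label $W$, I would choose one, $\Delta$, minimizing the $2$-signature $s_2$; this is possible because $D$-minimality is preserved under minimizing the second coordinate (the number of disks cannot increase, and among diagrams with the minimal number of disks we pick the one with fewest $(\theta,t)$-cells). This $\Delta$ satisfies condition (M3) by construction. It then remains to arrange conditions (M1) and (M2). Condition (M2) comes for free: by \Cref{a-bands between a-cells}, every diminished diagram satisfies (M2), and in fact the argument shows that minimizing the signature forces (M2); more carefully, I would note that if a maximal $a$-band ended on two different $a$-cells one could cancel those cells and strictly decrease $\a_4$ without increasing $s_2$ or $s_1$, contradicting minimality (this is precisely the content of \Cref{a-bands between a-cells} and the remark that a diminished diagram satisfying (M1) is minimal). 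For (M1), I would invoke the transposition surgery of \Cref{sec-transposition-a}: if some $a$-cell $\pi$ has more than half its boundary edges starting $a$-bands that cross a single $\theta$-band $\pazocal{T}$, transpose $\pazocal{T}$ with $\pi$; this strictly decreases the total number of $(\theta,a)$-cells crossing in that configuration while leaving the boundary label, the number of disks, and the number of $(\theta,t)$-cells unchanged, hence does not increase $s_2$ and keeps the diagram $D$-minimal — so after finitely many such moves (M1) holds. One must check that these transpositions terminate, which follows because each strictly reduces $\a_3$ (the count of $(\theta,q)$-cells, here $(\theta,a)$-cells), a nonnegative integer.

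The main obstacle I anticipate is the interaction between the transposition surgeries used to enforce (M1) and the requirement that (M3) (minimality of $s_2$) be maintained: a transposition of a $\theta$-band with a disk can introduce many new $(\theta,t)$-cells, which is exactly why \Cref{sec-transposition} warns that $s_2$ need not be preserved by disk transpositions. The resolution is that to achieve (M1) one only needs transpositions of $\theta$-bands with $a$-cells, never with disks, and $a$-cell transpositions do not touch any $t$-band, so $\a_1$ and $\a_2$ are genuinely untouched; the delicate point is to confirm that the $a$-cell transposition as described in \Cref{sec-transposition-a} indeed leaves $\a_1$ and $\a_2$ fixed and strictly decreases the relevant crossing count, and that after reducing (cancelling any newly-created mutually inverse pairs in the rebuilt $\theta$-band) the diagram is still reduced and $D$-minimal. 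I would also need to double-check, as in the proof of the analogous Lemma~9.9 of \cite{W}, that no $\theta$-annuli survive: by \Cref{minimal theta-annuli}, a diagram satisfying (M1), (M2), (M3) has no $\theta$-annuli, so this is automatic once the three conditions are in place. Assembling these observations yields a minimal diagram with boundary label $W$, completing the proof.
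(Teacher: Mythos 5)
Your overall strategy -- ordinary van Kampen over the disk presentation, pass to a signature-minimal diagram to get (M2) and (M3), then enforce (M1) by transposing $\theta$-bands with $a$-cells while observing that disks are never transposed (so $\a_1,\a_2$ are untouched) and that the absence of $\theta$-annuli is automatic from \Cref{minimal theta-annuli} -- is exactly the route of the paper (i.e.\ of Lemma 9.9 of \cite{W}). However, two steps fail as you have written them. First, you minimize only the $2$-signature $s_2$ and then claim (M2) ``comes for free'': \Cref{a-bands between a-cells} applies to \emph{diminished} diagrams, i.e.\ those minimizing the full signature $(\a_1,\a_2,\a_3,\a_4)$, and your fallback argument -- cancel two $a$-cells joined by an $a$-band and ``strictly decrease $\a_4$, contradicting minimality'' -- contradicts nothing in your setup, because $\a_4$ was never minimized. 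The repair is simply to start from a diminished diagram over the disk presentation; such a diagram satisfies (M3) by definition of the lexicographic order and (M2) by \Cref{a-bands between a-cells}, which is how the paper sets things up.

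Second, your termination argument is wrong as stated: you say each transposition ``strictly reduces $\a_3$ (the count of $(\theta,q)$-cells, here $(\theta,a)$-cells).'' But $(\theta,a)$-cells are not $(\theta,q)$-cells and are counted by no entry of the signature, and an $a$-cell transposition does not decrease $\a_3$ -- it exchanges one block of $(\theta,a)$-cells for another, which is precisely why the paper warns that the weight may change considerably under this surgery. The correct bookkeeping, which your preceding sentence almost states, is this: if (M1) fails for $(\pi,\pazocal{T})$, the subband below $\pi$ consists of $\|\textbf{s}_1\|>\tfrac12\|\partial\pi\|$ $(\theta,a)$-cells and is replaced by a band of $\|\textbf{s}_2\|<\tfrac12\|\partial\pi\|$ such cells, so the \emph{total number of $(\theta,a)$-cells} -- a nonnegative integer lying outside the signature -- strictly drops, while the boundary label is preserved, the signature does not increase (possible cancellations only help), and (M2) survives because the far ends of the $a$-bands at $\pi$ are unchanged; hence the process terminates in a minimal diagram (equivalently, a diminished diagram additionally minimizing the number of $(\theta,a)$-cells already satisfies (M1)). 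You should also note that the surgery of \Cref{sec-transposition-a} is stated for an $a$-cell with no cells between it and the $\theta$-band, so the violating pair must first be reduced to this adjacent configuration (by passing to the nearest offending $\theta$-band), a detail your sketch passes over.
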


Lemma \ref{minimal exist} immedately implies the following strengthened version of van Kampen's Lemma for $M$-minimal diagrams.

\begin{lemma} \label{M-minimal exist}

A word $W$ over $\pazocal{Y}$ represents the trivial element of $M_\Omega(\textbf{M})$ if and only if there exists an $M$-minimal diagram $\Delta$ such that $\lab(\partial\Delta)\equiv W$.

\end{lemma}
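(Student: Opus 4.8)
\textbf{Proof strategy for \Cref{M-minimal exist}.}
The statement asserts that a word $W$ over $\pazocal{Y}$ is trivial in $M_\Omega(\textbf{M})$ if and only if it bounds an $M$-minimal diagram over (the canonical presentation of) $M_\Omega(\textbf{M})$. The forward direction is the substantive one; the converse is immediate since any van Kampen diagram witnesses triviality. The plan is to deduce the forward direction from \Cref{minimal exist}, exploiting the observation already recorded in the text that a minimal diagram over $G_\Omega(\textbf{M})$ containing no disks is automatically $M$-minimal, together with the fact that $M_\Omega(\textbf{M})$ and $G_\Omega(\textbf{M})$ differ only by the hub relation and the disk relations (the latter being consequences of the former in $G_\Omega(\textbf{M})$, cf.\ the discussion around \Cref{disks are relations}).

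First I would observe that there is a natural surjection $M_\Omega(\textbf{M})\to G_\Omega(\textbf{M})$, so if $W$ represents the trivial element of $M_\Omega(\textbf{M})$ then it represents the trivial element of $G_\Omega(\textbf{M})$ as well. By \Cref{minimal exist}, there is then a minimal diagram $\Delta$ over $G_\Omega(\textbf{M})$ with $\lab(\partial\Delta)\equiv W$. The key step is to argue that $\Delta$ contains no disks. Suppose it did; then $\Delta$ is in particular $D$-minimal (every minimal diagram satisfies $s_2$-minimality, hence $s_1$-minimality), so \Cref{graph} applies and produces a disk $\Pi$ with $L-4$ consecutive $t$-spokes ending on $\partial\Delta$ and with the intervening subdiagrams $\Gamma_i$ disk-free. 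Since $W$ is a word over $\pazocal{Y}$ that is trivial already in $M_\Omega(\textbf{M})$ — a group whose presentation contains no hub or disk relations — the boundary $\partial\Delta$ carries no information that could "absorb" a disk: more precisely, one shows (by the usual spoke/comb analysis, using that disks are the only cells involving the hub-type relations and that these cannot be cancelled against anything coming from $M_\Omega$) that the presence of a disk forces $W$ to be non-trivial in $M_\Omega(\textbf{M})$, a contradiction. Concretely I would run the argument by considering the $t$-spokes of $\Pi$ reaching $\partial\Delta$ and the fact that $\lab(\partial\Delta)$ contains no $t$-letters beyond those already in $W$; combined with \Cref{G_a theta-annuli} and the disk-freeness of the $\Gamma_i$, this pins down a contradiction with $s_1$-minimality or with the triviality of $W$ over $M_\Omega(\textbf{M})$.

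Once $\Delta$ is known to contain no disks, the text's remark that a disk-free minimal diagram is $M$-minimal finishes the forward direction: conditions (M1), (M2) coincide with (MM1), (MM2), and with no disks present $\Delta$ is a diagram over the canonical presentation of $M_\Omega(\textbf{M})$ rather than the disk presentation. For the converse, an $M$-minimal diagram $\Delta$ over $M_\Omega(\textbf{M})$ is by definition a (reduced) van Kampen diagram over the canonical presentation, so $\lab(\partial\Delta)\equiv W$ immediately gives that $W$ represents the trivial element of $M_\Omega(\textbf{M})$ by van Kampen's Lemma.

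The main obstacle I anticipate is making the "no disks" step fully rigorous: one must rule out that a diagram over $G_\Omega(\textbf{M})$ with trivial $M_\Omega$-boundary could still, for reasons of minimality bookkeeping, be forced to contain a disk whose effect is cancelled by other disks. This is exactly the kind of cancellation that the signature ordering is designed to preclude, and the clean way to handle it is to invoke \Cref{graph} to isolate a single "outermost" disk $\Pi$ together with disk-free regions around it, and then to show that the $t$-spokes of $\Pi$ that reach $\partial\Delta$ would have to appear as $t$-letters in $W$ — impossible, since reading $W$ around $\partial\Delta$ and using that $W=1$ in $M_\Omega(\textbf{M})$ one can re-derive a genuinely disk-free diagram, contradicting either $s_1$-minimality of $\Delta$ or producing the desired contradiction directly. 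This is the step that will require the most care, but it is entirely parallel to the corresponding arguments already carried out in \cite{W}.
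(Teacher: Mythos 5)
Your overall route is the same as the paper's: the lemma is stated there as an immediate consequence of \Cref{minimal exist}, exactly as you propose. The converse direction and the reduction of the forward direction to producing a disk-free minimal diagram over $G_\Omega(\textbf{M})$ are both fine.

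However, the way you handle the one substantive step — showing the minimal diagram $\Delta$ contains no disks — is both over-engineered and, as written, partly wrong. You do not need \Cref{graph}, the spoke analysis, or \Cref{G_a theta-annuli} here at all, and the sub-claim that $t$-spokes of a disk reaching $\partial\Delta$ "would have to appear as $t$-letters in $W$ --- impossible" is false: $W$ is an arbitrary word over $\pazocal{Y}$, which contains all $q$-letters including the $t$-letters, so nothing prevents $\partial\Delta$ from carrying $t$-edges. The correct argument is the one you bury at the end of your paragraph: since $W=1$ in $M_\Omega(\textbf{M})$, the ordinary van Kampen Lemma already provides a reduced diagram $\Gamma$ over the canonical presentation of $M_\Omega(\textbf{M})$ with $\lab(\partial\Gamma)\equiv W$; viewed over the disk presentation of $G_\Omega(\textbf{M})$, $\Gamma$ has zero disks. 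Condition (M3) in the definition of minimality (minimality of the $2$-signature, whose first coordinate is the number of disks) applied to the pair $(\Delta,\Gamma)$ then forces $\Delta$ to contain no disks. With no disks, $\Delta$ is a reduced diagram over the canonical presentation of $M_\Omega(\textbf{M})$ satisfying (M1) and (M2), i.e.\ (MM1) and (MM2), hence $M$-minimal. Replacing your spoke argument with this two-line appeal to (M3) makes the proof complete and matches the paper's intent.
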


\medskip


\subsection{Quasi-trapezia} \

Next, the concept of trapezium is generalized to the setting of minimal diagrams over $G_\Omega(\textbf{M})$. 

A \textit{quasi-trapezium} is a minimal diagram defined in much the same way as an $a$-trapezium except that it is permitted to contain disks. In other words, a quasi-trapezium is a minimal diagram whose boundary can be factored as $\textbf{p}_1^{-1}\textbf{q}_1\textbf{p}_2\textbf{q}_2^{-1}$, where each $\textbf{p}_i$ is the side of a $q$-band and each $\textbf{q}_i$ is the maximal subpath of the side of a $\theta$-band where the subpath starts and ends with a $q$-edge.

The \textit{(step) history} of a quasi-trapezium is defined in the same way as for an $a$-trapezium, as are the \textit{base}, the \textit{height}, and the \textit{standard factorization}.

Note that a quasi-trapezium containing no disks is an $a$-trapezium, while one without any disks or $a$-cells is a trapezium. 

Indeed, an $a$-trapezium is necessarily a quasi-trapezium. To see that an $a$-trapezium satisfies (M3), note that Lemmas \ref{M_a no annuli 1} and \ref{M_a no annuli 2} imply that in any minimal diagram with the same contour label, any maximal $\theta$-band must cross each maximal $q$-band exactly once.

With the tool of transposition developed in the previous section, the next statement follows in just the same way as its analogue in \cite{W}:

\begin{lemma}[Lemma 9.13 of \cite{W}] \label{quasi-trapezia}

Let $\Gamma$ be a quasi-trapezium with standard factorization of its contour $\textbf{p}_1^{-1}\textbf{q}_1\textbf{p}_2\textbf{q}_2^{-1}$. Then there exists a reduced diagram $\Gamma'$ such that:

\begin{enumerate}[label=({\arabic*})]

\item $\partial\Gamma'=(\textbf{p}_1')^{-1}\textbf{q}_1'\textbf{p}_2'(\textbf{q}_2')^{-1}$, where $\lab(\textbf{p}_j')\equiv\lab(\textbf{p}_j)$ and $\lab(\textbf{q}_j')\equiv\lab(\textbf{q}_j)$ for $j=1,2$

\item the number of disks 
in $\Gamma'$ is the same as the number of disks in $\Gamma$

\item there exists a simple path $\textbf{s}_1$ (respectively $\textbf{s}_2$) connecting the vertices $(\textbf{p}_1')_-$ and $(\textbf{p}_2')_-$ (respectively $(\textbf{p}_1')_+$ and $(\textbf{p}_2')_+$) such that

\begin{enumerate}

\item $(\textbf{p}_1')^{-1}\textbf{s}_1\textbf{p}_2'\textbf{s}_2^{-1}$ is the standard factorization of the boundary of an $a$-trapezium $\Gamma_2$ and

\item any cell above $\textbf{s}_2$ or below $\textbf{s}_1$ is a disk or an $a$-cell

\end{enumerate}

\item there exists $m\in\N$ such that any maximal $\theta$-band of $\Gamma$ contains $m$ $(\theta,t)$-cells and any maximal $\theta$-band of $\Gamma_2$ contains $m$ $(\theta,t)$-cells.

\end{enumerate}

\end{lemma}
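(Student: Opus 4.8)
The plan is to prove this statement by induction on the 2-signature $s_2(\Gamma)=(\alpha_1,\alpha_2)$ of the quasi-trapezium, exactly as in the analogous Lemma 9.13 of \cite{W}, with the transposition machinery of \Cref{sec-transposition} (for disks) and \Cref{sec-transposition-a} (for $a$-cells) doing the heavy lifting. The base case is when $\Gamma$ contains no disks: then $\Gamma$ is already an $a$-trapezium (it satisfies (M3) by the remark preceding the statement, using Lemmas \ref{M_a no annuli 1} and \ref{M_a no annuli 2}), so one takes $\Gamma'=\Gamma$, $\Gamma_2=\Gamma$, with $\textbf{s}_i=\textbf{q}_i$; conclusions (1)--(3) are immediate and (4) holds with $m=0$ since no maximal $\theta$-band of an $a$-trapezium contains a $(\theta,t)$-cell (the $t$-letters $t(2),\dots,t(L)$ can appear in the base of an $a$-trapezium only if it has a two-letter subword of the form $\{t(j)\}\{t(j)\}^{-1}$, but such sectors are locked by all rules and hence contribute no $(\theta,t)$-cells; one should note this carefully, or simply observe that if the base of $\Gamma$ contains no $t$-letter then $m=0$ and otherwise the common count $m$ is forced by the fact that each maximal $\theta$-band crosses each $t$-band exactly once by \Cref{M_a no annuli 1}).

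For the inductive step, suppose $\Gamma$ contains at least one disk. First I would invoke \Cref{graph} (applicable since $\Gamma$, being minimal, is $D$-minimal) to locate a disk $\Pi$ having $L-4$ consecutive $t$-spokes $\pazocal{Q}_1,\dots,\pazocal{Q}_{L-4}$ ending on $\partial\Gamma$, with the intermediate regions $\Gamma_i$ disk-free. Some of these $t$-spokes must reach the top path $\textbf{q}_2$ and some the bottom $\textbf{q}_1$ (since $L-4$ is large and the sides $\textbf{p}_1,\textbf{p}_2$ are $q$-bands of bounded base-length $\leq K_0$ by no constraint — actually one argues via the geometry of the quasi-trapezium). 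The idea is then to transpose, one maximal $\theta$-band at a time, every $\theta$-band of $\Gamma$ that lies ``between'' $\Pi$ and the boundary across the top (resp.\ bottom), pushing $\Pi$ toward $\textbf{q}_2$ (resp.\ toward $\textbf{q}_1$); each such transposition is governed by \Cref{G_a theta-annuli}(1),(2), which bound how many $t$-spokes a single $\theta$-band may cross, guaranteeing the process is well-defined and terminates. After all transpositions the disk $\Pi$ (possibly replaced, via \Cref{projected not admissible}, by a disk-plus-$a$-cell pair when a $\theta(s)_i$ rule is involved) has been moved entirely above $\textbf{q}_2$ or below $\textbf{q}_1$, reducing the number of disks strictly inside the ``middle'' region; iterating and applying the inductive hypothesis to the remaining middle quasi-trapezium yields $\Gamma_2$ and the paths $\textbf{s}_1,\textbf{s}_2$, with all disks and the introduced $a$-cells pushed outside. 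Conclusion (4) is maintained because transposition of a $\theta$-band with a disk crossing $\ell$ $t$-spokes deletes $\ell$ $(\theta,t)$-cells on one side and creates exactly the complementary number $(L-1)-\ell$ on the other within each affected $\theta$-band, but more to the point every maximal $\theta$-band of the final diagram still crosses each of the $L-1$ $t$-spokes of every disk, and each crosses each $t$-band of the base of $\Gamma_2$ exactly once; tracking this bookkeeping gives the uniform integer $m$.

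The main obstacle I expect is conclusion (4): one must verify that the common number $m$ of $(\theta,t)$-cells per maximal $\theta$-band is genuinely preserved under the sequence of transpositions, and that it is the \emph{same} integer for $\Gamma$ and for the extracted $a$-trapezium $\Gamma_2$. Transpositions change $(\theta,t)$-cell counts within individual $\theta$-bands, so the right way to handle this is not to track counts transposition-by-transposition but to observe that in any minimal diagram $\Gamma'$ with the prescribed boundary, \Cref{M_a no annuli 1} forces each maximal $\theta$-band to cross each maximal $t$-band exactly once, so $m$ equals the number of $t$-letters occurring in the base word of $\Gamma$ (counted appropriately), an invariant of the boundary label; then $\Gamma_2$, having the same left/right labels $\textbf{p}_1',\textbf{p}_2'$ but a base with no $t$-letters (being an $a$-trapezium), requires the separate observation that all the $t$-crossings have been absorbed into the disk-and-$a$-cell region above $\textbf{s}_2$ or below $\textbf{s}_1$ — this is precisely what the construction via \Cref{graph} arranges, since the $L-4$ chosen $t$-spokes of $\Pi$ are exactly the ones routed out to the boundary. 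Beyond (4), the secondary technical point is ensuring that the diagram stays minimal (in particular that (M1)--(M2) survive, which may require re-running a transposition or invoking \Cref{a-bands between a-cells} to restore (M2)); this is routine and handled exactly as in \cite{W}.
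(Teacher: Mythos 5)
Your overall route — treat the disk-free case as the base case and otherwise push disks past $\theta$-bands by the transpositions of \Cref{sec-transposition} until they lie above $\textbf{s}_2$ or below $\textbf{s}_1$ — is the same strategy as the paper's (deferred) proof from \cite{W}. But your treatment of conclusion (4), which you yourself single out as the main obstacle, rests on false premises. First, an $a$-trapezium can perfectly well have $t$-letters in a reduced base (the big $a$-trapezia of this very paper have base $\{t(1)\}B_4(1)\dots\{t(L)\}B_4(L)\{t(1)\}$, and the trapezia $\Gamma_i$ of \Cref{sec-trapezia-combs-cloves} have bases $\{t(\ell)\}B_4(\ell)\{t(\ell+1)\}$), so your base-case claim that $m=0$ and your later assertion that $\Gamma_2$ has ``a base with no $t$-letters'' contradict the statement being proved: if $m>0$, the $\theta$-bands of $\Gamma_2$ must contain $(\theta,t)$-cells. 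Second, your proposed justification of (4) — that \Cref{M_a no annuli 1} forces each maximal $\theta$-band to cross each maximal $t$-band \emph{exactly} once, so $m$ is read off from the base — is not valid: the annuli lemmas give ``at most once,'' and a $t$-band may terminate on a disk before reaching a given $\theta$-band, so a priori different $\theta$-bands of $\Gamma$ cross different numbers of $t$-bands. The uniformity of the count is precisely the nontrivial content of (4); in the intended argument it comes from a balance property of the $t$-spokes of an interior disk (using \Cref{G_a theta-annuli}(1),(2) to show that the spokes going up and the spokes going down across the neighbouring $\theta$-bands are forced to be equinumerous, this being the role of the analogue of Lemma 9.12 of \cite{W} flagged in the remark), together with bookkeeping through the transpositions. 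Your substitute claim that ``every maximal $\theta$-band of the final diagram still crosses each of the $L-1$ $t$-spokes of every disk'' is also false: once a disk sits above $\textbf{s}_2$, its spokes meet no $\theta$-band of $\Gamma_2$ at all.

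A secondary but real issue is your plan to keep the diagram \emph{minimal} throughout. As the paper points out after \Cref{sec-transposition} and in the remark following the lemma, transposition does not preserve the 2-signature, so (M3) cannot be maintained; this is exactly why $D$-minimality was introduced — it is preserved by transposition and suffices for \Cref{graph}, and the conclusion only asks for a \emph{reduced} diagram $\Gamma'$. So the induction has to be organized around $D$-minimality (or the number of disks) rather than around restoring minimality, and the extracted middle part must separately be checked to satisfy (MM1)–(MM2) so that it is a legitimate $a$-trapezium.
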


\begin{remark}

Note that the proof of the statement analogous to \Cref{quasi-trapezia} in \cite{W} relies on a preliminary statement (namely Lemma 9.12 in that setting).  However, there is a verbatim analogue of that statement in this setting which may be proved with an identical approach.

\end{remark}

\begin{remark}

As in \cite{W}, the concept of $D$-minimal diagram exists specifically for the treatment of \Cref{quasi-trapezia}, as $D$-minimality is both sufficient as a hypothesis for Lemma \ref{graph} and preserved under transposition (whereas, for example, (M3) satisfies the first condition but not the second).

\end{remark}

\medskip


\subsection{Shafts and Designs} \

We now introduce a concept that, as it was in \cite{O18}, \cite{OS19}, and \cite{W}, will be used to define a valuable measure on minimal diagrams.


Let $\Pi$ be a disk contained in a minimal diagram and $\pazocal{B}$ be a $t$-spoke of $\Pi$. Suppose there is a subband $\pazocal{C}$ of $\pazocal{B}$ starting on $\Pi$ whose history $H$ contains a controlled subword. For $W$ the configuration corresponding to $\lab(\partial\Pi)$, suppose $W(i)$ is $H$-admissible for $i\geq2$. Then the $t$-band $\pazocal{C}$ is called a \textit{shaft} of $\Pi$.

Note that, as in \cite{W}, this definition differs from that used in \cite{O18} and \cite{OS19}, where it was required that the entire configuration $W$ be $H$-admissible.  This modification allows for `flexibility' in the `special' input sector, as $W(1)$ need not be $H$-admissible.

For a disk $\Pi$, a shaft $\pazocal{C}$ of $\Pi$ is called a \textit{$\lambda$-shaft} of $\Pi$ if for every factorization $H\equiv H_1H_2H_3$ satisfying $\|H_1\|+\|H_3\|\leq\lambda\|H\|$, $H_2$ contains a controlled subword. Note that a shaft is a 0-shaft.

The next statement then follows in just the same way as its analogue in \cite{W}.

\begin{lemma}[Lemma 9.15 of \cite{W}] \label{shafts}

Let $\Pi$ be a disk in a minimal diagram $\Delta$ and $\pazocal{C}$ be a $\lambda$-shaft at $\Pi$ with history $H$. Then $\pazocal{C}$ has no factorization $\pazocal{C}=\pazocal{C}_1\pazocal{C}_2\pazocal{C}_3$ such that

\begin{enumerate}[label=({\arabic*})]

\item the sum of the lengths of $\pazocal{C}_1$ and $\pazocal{C}_3$ do not exceed $\lambda\|H\|$ and

\item $\Delta$ contains a quasi-trapezium $\Gamma$ such that the bottom (or top) of $\Gamma$ has $L$ $t$-edges and $\pazocal{C}_2$ starts on the bottom and ends on the top of $\Gamma$.

\end{enumerate}

\end{lemma}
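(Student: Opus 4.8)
The plan is to argue by contradiction following the template of the analogous statements in \cite{O18}, \cite{OS19}, and \cite{W}. Suppose $\pazocal{C}$ admits a factorization $\pazocal{C}=\pazocal{C}_1\pazocal{C}_2\pazocal{C}_3$ with $\|H_1\|+\|H_3\|\leq\lambda\|H\|$ (where $H_i$ is the history of $\pazocal{C}_i$) and a quasi-trapezium $\Gamma$ in $\Delta$ whose bottom (say) has $L$ $t$-edges and on which $\pazocal{C}_2$ starts and ends. First I would apply \Cref{quasi-trapezia} to $\Gamma$: this produces (up to relabeling the boundary) a decomposition of $\Gamma$ into an $a$-trapezium $\Gamma_2$ together with some disks and $a$-cells lying above $\textbf{s}_2$ or below $\textbf{s}_1$, where $\pazocal{C}_2$ can be taken to start on $\textbf{s}_1$ and end on $\textbf{s}_2$ (both of which have $L$ $t$-edges by part (4) of \Cref{quasi-trapezia}, since each maximal $\theta$-band contains the same number $m$ of $(\theta,t)$-cells and $\Gamma$ has $L$ of them). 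So the subband $\pazocal{C}_2$, or a subband $\pazocal{C}_2'$ of it of the same length minus a bounded error, passes through the $a$-trapezium $\Gamma_2$ from its bottom $\textbf{s}_1$ to its top $\textbf{s}_2$, with the full standard base $\{t(1)\}B_4(1)\dots\{t(L)\}B_4(L)\{t(1)\}$ as its base read toward the handle.

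The next step is to extract the contradiction from the controlled history. By \Cref{trapezia are computations} applied to the trapezium underlying $\Gamma_2$ (after pushing the $a$-cells out, e.g. via \Cref{a-cell a-trapezia no boundary} or directly), the restriction of $\Gamma_2$ to the standard base corresponds to a reduced computation $W_0\to\dots\to W_\ell$ of $\textbf{M}$ whose history $H'$ is the portion of $H$ traversed by $\pazocal{C}_2$ inside $\Gamma_2$. Since $\pazocal{C}$ is a $\lambda$-shaft and $\|H_1\|+\|H_3\|\leq\lambda\|H\|$, the subhistory $H_2$ contains a controlled subword; moreover, by shrinking further (the length of $H'$ differs from $\|H_2\|$ by at most a constant, absorbed by the parameter choices), $H'$ itself still contains a controlled subword. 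But \Cref{M controlled} then forces the corresponding admissible words $W_i$ to be configurations (the base being the full standard base of $\textbf{M}$), with constant $a$-length along the controlled portion, each accepted by a one-machine computation. The key tension: the controlled subword corresponds to a subcomputation with step history $(4)_j$ operating in parallel as a primitive machine $\textbf{LR}_k$ or $\textbf{RL}_k$ (Lemmas \ref{M_5 controlled}, \ref{M_5 long history}), and such a subcomputation cannot fit inside a single pass through a quasi-trapezium whose handle has only $L$ $t$-edges — because a controlled computation pushes the state letters of the primitive machine all the way past the historical tape word and back, forcing the $t$-band $\pazocal{C}_2$ to cross the $t$-spokes of the disk $\Pi$ in a way that, combined with \Cref{G_a theta-annuli}(1) (bounding the number of $t$-spokes a $\theta$-band can cross about a disk by $(L-1)/2$), produces a contradiction with the length/structure of $\Gamma$.

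More precisely, I would trace $\pazocal{C}_2$ back to the disk $\Pi$: since $\pazocal{C}_2$ is a subband of the shaft $\pazocal{C}$, which starts on $\Pi$, and $\pazocal{C}_2$ lies entirely within $\Gamma$, the controlled subhistory of $\pazocal{C}_2$ gives a $\theta$-subband $\pazocal{T}$ of $\Gamma_2$ (one of the $\theta$-bands making up the controlled portion) which, when extended, crosses many $t$-spokes emanating from $\Pi$; but a controlled history locks every sector via its two connecting rules (\Cref{M_5 controlled}, \Cref{M controlled}), so by \Cref{locked sectors} no such $\theta$-band can cross a $t$-spoke of $\Pi$ at all beyond a bounded initial segment — contradicting that $\pazocal{C}_2$ stays inside $\Gamma$ while its base (read toward the handle) is the full standard base including all the $t(j)$'s. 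This is essentially the argument of Lemma 9.15 of \cite{W}, and I expect the treatment here to be verbatim modulo the notational adjustments already present (the `flexibility' in the special input sector, which does not affect the controlled-history argument since the controlled rules lock that sector too).

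The main obstacle I anticipate is bookkeeping around the two bounded-error reductions: first, passing from $\pazocal{C}_2$ to the subband $\pazocal{C}_2'$ that genuinely starts on $\textbf{s}_1$ and ends on $\textbf{s}_2$ of the $a$-trapezium $\Gamma_2$ (rather than on the original top/bottom of the quasi-trapezium $\Gamma$), and second, ensuring that after these reductions the remaining history still contains a controlled subword — which is where the hypothesis $\|H_1\|+\|H_3\|\leq\lambda\|H\|$ and the definition of $\lambda$-shaft are used, together with the fact that the discrepancies introduced are bounded by constants (depending on $L$, $N$, $k$) that are negligible against $\lambda\|H\|$ by the parameter hierarchy $\lambda^{-1}\gg N\gg k$ and $\lambda^{-1}\ll$ everything that matters. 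Everything else — the application of \Cref{quasi-trapezia}, \Cref{trapezia are computations}, \Cref{M controlled}, and \Cref{G_a theta-annuli} — is a direct citation.
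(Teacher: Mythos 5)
Your reduction steps are the right ones and match the standard template: applying \Cref{quasi-trapezia} to replace $\Gamma$ by an $a$-trapezium $\Gamma_2$ plus peripheral disks/$a$-cells, checking that the history of the portion of $\pazocal{C}_2$ inside $\Gamma_2$ still contains a controlled subword (this is exactly where the $\lambda$-shaft hypothesis and $\|H_1\|+\|H_3\|\leq\lambda\|H\|$ are spent), and invoking \Cref{trapezia are computations} together with \Cref{M controlled} to conclude that the base is reduced with $L$ $t$-letters and that the cross-sections along the controlled subword are accepted configurations. Up to that point the proposal is sound.

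The gap is in the punchline. Neither of the two contradictions you offer is valid. The \Cref{locked sectors} version is simply false: a rule locking a sector constrains the tape word and the possible unreduced base subwords, but it in no way prevents a $\theta$-band from crossing a $t$-band --- indeed every sector adjacent to a $\{t(i)\}$ has empty tape alphabet and is locked by \emph{every} rule of $\textbf{M}$, yet every maximal $\theta$-band of a trapezium with standard base crosses all $L$ $t$-bands. The \Cref{G_a theta-annuli}(1) version is unsupported: that lemma counts $t$-\emph{spokes of $\Pi$} crossed by a $\theta$-band with no disks in between, but only $\pazocal{C}$ itself is known to be a spoke of $\Pi$; the other $L-1$ $t$-bands of $\Gamma$ need not end on $\Pi$, and nothing forces a disk-free region. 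More fundamentally, a trapezium with controlled history passing through it is \emph{not} contradictory per se --- big $a$-trapezia are defined by exactly this property and are assigned a special $G$-weight precisely because they occur in minimal diagrams --- so no purely computational or local-geometric argument can finish the proof. What is actually contradictory is that such a trapezium sits at the \emph{near} end of a $t$-spoke of the disk $\Pi$: since the controlled cross-section $W_j$ is an accepted configuration with $\ell(W_j)\leq 1$, i.e.\ a disk relator, one performs a surgery (cutting along this cross-section and replacing the subdiagram containing $\Pi$, the subband $\pazocal{C}_1$, and the lower part of $\Gamma_2$ by a configuration with a single disk and no $(\theta,t)$-cells along the former shaft) that strictly decreases the signature $(\a_1,\a_2)$, contradicting condition (M3) in the definition of a minimal diagram. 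Your proof never uses the minimality of $\Delta$ beyond the existence of the quasi-trapezium decomposition, nor the disk $\Pi$ itself in the final step, which is the tell that the essential idea is missing.
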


Shafts are particularly useful for studying minimal diagrams through the measure introduced for such purpose in \cite{O18}.

Let $\pazocal{D}$ be a disk in the Euclidean plane, $\textbf{T}$ be a finite set of disjoint chords, and $\textbf{Q}$ be a finite set of disjoint simple curves in $\pazocal{D}$, called \textit{arcs} (as to differentiate them from the chords). 

Assume that arcs belong to the open disk $\pazocal{D}^\circ$ and that each chord crosses any arc transversely and at most one, with the intersection not coming at either of the arc's endpoints.

With these assumptions, the pair $(\textbf{T},\textbf{Q})$ is called a \textit{design} on the disk.

The length of an arc $C\in\textbf{Q}$, denoted $|C|$, is the number of chords crossing it. \textit{Subarcs} are defined in the natural way, so that the inequality $|D|\leq|C|$ is clear for $D$ a subarc of $C$.

An arc $C_1$ is \textit{parallel} to an arc $C_2$, denoted $C_1 \ \| \ C_2$, if every chord crossing $C_1$ also crosses $C_2$. Note that this relation is reflexive and transitive, but not symmetric.

For the parameter $\lambda\in(0,1/2)$ (see \Cref{sec-parameters}) and $m$ a positive integer, a design $(\textbf{T},\textbf{Q})$ is said to satisfy property $P(\lambda,m)$ if for any collection of $m$ distinct arcs $C_1,\dots,C_m\in\textbf{Q}$, there are no subarcs $D_1,\dots,D_m$, respectively, such that $|D_i|>(1-\lambda)|C_i|$ for all $i$ and $D_1 \ \| \ D_2 \ \| \dots \| \ D_m$.

For a design $(\textbf{T},\textbf{Q})$, define the length of $\textbf{Q}$, $\ell(\textbf{Q})$, to be $\ell(\textbf{Q})=\sum\limits_{C\in\textbf{Q}}|C|$.

\begin{lemma}[Lemma 8.2 of \cite{O18}] \label{design}

There is a constant $c$ dependant on $\lambda$ and $m$ such that for any design $(\textbf{T},\textbf{Q})$ satisfying property $P(\lambda,m)$, $\ell(\textbf{Q})\leq c(\#\textbf{T})$.

\end{lemma}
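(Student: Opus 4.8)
The plan is to prove \Cref{design} by induction on the number of chords $\#\textbf{T}$, combined with a combinatorial decomposition of the disk along a chosen chord. The statement is a purely combinatorial fact about designs satisfying $P(\lambda,m)$, so the argument proceeds entirely in the planar/combinatorial setting, independent of the $S$-machine machinery; the role it plays downstream is to bound the total length of a family of shafts in a minimal diagram in terms of the number of disks, via the correspondence ``disks $\leftrightarrow$ chords, shafts $\leftrightarrow$ arcs''.

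First I would set up the induction. If $\#\textbf{T}=0$ then no arc is crossed by any chord, so $\ell(\textbf{Q})=0$ and there is nothing to prove (taking $c\geq 1$). For the inductive step, pick any chord $\gamma\in\textbf{T}$. It divides $\pazocal{D}$ into two half-disks $\pazocal{D}_1,\pazocal{D}_2$. Each arc $C\in\textbf{Q}$ is partitioned by $\gamma$ into at most two subarcs (since $\gamma$ meets $C$ at most once), one lying in each $\pazocal{D}_i$; discard the (at most one) endpoint intersection point and restrict the chord set to $\textbf{T}\setminus\{\gamma\}$ intersected with each half-disk. This produces two designs $(\textbf{T}_i,\textbf{Q}_i)$ on $\pazocal{D}_i$, and each still satisfies $P(\lambda,m)$ because parallelism among subarcs in $\pazocal{D}_i$ only becomes more restrictive: any witnessing configuration $D_1\ \|\ \cdots\ \|\ D_m$ inside $\pazocal{D}_i$ is also such a configuration in the original design (the subarc relation and the chord-crossing count are inherited). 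The key accounting identity is
\begin{equation*}
\ell(\textbf{Q}) \leq \ell(\textbf{Q}_1) + \ell(\textbf{Q}_2) + (\#\textbf{T}),
\end{equation*}
where the error term $\#\textbf{T}$ accounts for arcs crossed by $\gamma$ (each such arc is ``split,'' and the chord $\gamma$ itself is removed, so the count of crossings can only drop, while the only crossings unaccounted for between the two halves are those with $\gamma$, of which there are at most $\#\textbf{Q}\leq\#\textbf{T}$ — this bound needs to be made precise). Applying the inductive hypothesis to each half with $\#\textbf{T}_i<\#\textbf{T}$ gives $\ell(\textbf{Q}_i)\leq c_0(\#\textbf{T}_i)$, and since $\#\textbf{T}_1+\#\textbf{T}_2\leq \#\textbf{T}-1$, one gets $\ell(\textbf{Q})\leq c_0(\#\textbf{T}-1)+\#\textbf{T}$, which is not quite closed unless the constant is chosen to absorb the linear error — so a naive split is insufficient and the argument must be smarter about which chord to remove.

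The main obstacle, and the heart of the proof, is controlling that error term: a crude induction loses a factor proportional to $\#\textbf{T}$ at each step, giving a quadratic bound. The fix is to exploit property $P(\lambda,m)$ to bound the number of ``long'' arcs that can be parallel to a fixed arc. The idea is: fix an arc $C$ of maximal length; the chords crossing $C$ cut the rest of the disk into regions, and by $P(\lambda,m)$ at most $m-1$ other arcs can have a subarc of length $>(1-\lambda)|C|$ parallel to $C$; so most arcs crossing near $C$ are ``short'' relative to $C$, which lets one discard a carefully chosen bundle of $\lambda|C|$ chords and kill a controlled total arc-length. This is precisely the kind of chord-bundle removal argument in the cited proof of \cite[Lemma 8.2]{O18}; since that lemma is exactly our statement, I would invoke it directly. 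That is, the honest answer is that \Cref{design} \emph{is} Lemma 8.2 of \cite{O18}, and the ``proof'' here is simply to cite that reference; the proof sketch above (induction on $\#\textbf{T}$, splitting along a chord, using $P(\lambda,m)$ to keep the error sublinear via a maximal-arc chord-bundle removal) indicates the structure of the argument given there, and no new work is required for the present paper.
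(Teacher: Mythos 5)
Your conclusion is correct and matches the paper exactly: this lemma is imported verbatim from \cite{O18} (Lemma 8.2) and the paper supplies no proof of its own, so citing that reference is the intended justification. Your inductive sketch, as you yourself note, does not close on its own (the per-step error term is linear in $\#\textbf{T}$ and the chord-bundle refinement is only gestured at), but since the citation is the actual argument both you and the paper rely on, there is nothing further to fix.
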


Let $\Delta$ be a minimal diagram and $\pazocal{Q}$ be a $t$-spoke of a disk $\Pi$ in $\Delta$. Let $\pazocal{Q}_\Pi$ be the subband of $\pazocal{Q}$ which is a $\lambda$-shaft at $\Pi$ of maximal length. Then, define $\sigma_\lambda(\Delta)$ as the sum of the lengths of the $\lambda$-shafts $\pazocal{Q}_\Pi$ for all disks $\Pi$ and $t$-spokes $\pazocal{Q}$.

If $\Delta$ is a minimal diagram, then identify $\Delta$ with a disk and construct the design $(\textbf{T},\textbf{Q})$ as follows: Let the middle lines of maximal $\theta$-bands be the chords and the middle lines of maximal $\lambda$-shafts be the arcs. 

Note that there is a subtle hindrance to this construction: If a maximal $t$-spoke connects two disks, then it may contain a $\lambda$-shaft at each disk, and these $\lambda$-shafts may overlap. However, this issue can be remedied simply by `making room' in the spoke for both arcs to fit and be disjoint. 

Note that the length $|C|$ of an arc with respect to this design is the number of cells in the $\lambda$-shaft and $\#\textbf{T}=\frac{1}{2}|\partial\Delta|_\theta\leq\frac{1}{2}|\partial\Delta|$ since every maximal $\theta$-band ends twice on $\partial\Delta$.

The next statement then follows in the same way as its analogue in \cite{W}.

\begin{lemma}[Lemma 9.17 of \cite{W}, adapted from Lemma 8.5 of \cite{O18}] \label{G_a design}

If $\Delta$ is a minimal diagram, then $\sigma_\lambda(\Delta)\leq C_1|\partial\Delta|_\theta\leq C_1|\partial\Delta|$.

\end{lemma}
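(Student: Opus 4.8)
The plan is to deduce the bound on $\sigma_\lambda(\Delta)$ from the abstract combinatorial estimate of \Cref{design}. First I would fix a minimal diagram $\Delta$ and build the design $(\textbf{T},\textbf{Q})$ associated to it exactly as described in the paragraph preceding the statement: identify $\Delta$ with a Euclidean disk, take as chords the middle lines of the maximal $\theta$-bands of $\Delta$, and take as arcs the middle lines of the maximal $\lambda$-shafts $\pazocal{Q}_\Pi$ (one for each pair consisting of a disk $\Pi$ and a $t$-spoke $\pazocal{Q}$). I would spell out the resolution of the overlap issue: when a single maximal $t$-spoke joins two disks and carries a $\lambda$-shaft at each end, one perturbs the two middle lines slightly inside the spoke so that they become disjoint simple curves, each still crossed transversally and at most once by every $\theta$-band's middle line. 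Since a maximal $\theta$-band and a maximal $q$-band cross in exactly one $(\theta,q)$-cell in a minimal diagram (Lemmas \ref{M_a no annuli 1} and \ref{M_a no annuli 2} together with \Cref{minimal theta-annuli}), the transversality-and-at-most-once hypotheses needed for a design are met; the endpoints of the arcs lie on the two disks, which are in the interior, so the arcs are genuinely interior curves. I would also record the two bookkeeping facts already noted in the excerpt: $|C|$ equals the number of cells in the corresponding $\lambda$-shaft, so $\ell(\textbf{Q})=\sigma_\lambda(\Delta)$, and $\#\textbf{T}=\tfrac12|\partial\Delta|_\theta\leq\tfrac12|\partial\Delta|$ because each maximal $\theta$-band has both ends on $\partial\Delta$ by \Cref{minimal theta-annuli}.

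The second step is to verify that this design satisfies property $P(\lambda,m)$ for a suitable fixed $m$ (one should take $m$ to be the constant appearing in \Cref{M projected long history controlled}, or more precisely a small fixed integer tied to the banned parallel configurations; the precise $m$ is determined by the parameter hierarchy and is absorbed into $C_1$ at the end). Suppose for contradiction that there are $m$ distinct arcs $C_1,\dots,C_m$ with subarcs $D_1,\dots,D_m$ satisfying $|D_i|>(1-\lambda)|C_i|$ and $D_1\parallel D_2\parallel\cdots\parallel D_m$. Translating back into the diagram, each $D_i$ is a subband $\pazocal{C}_i$ of the $\lambda$-shaft $\pazocal{Q}_{\Pi_i}$ whose history is obtained from that of $\pazocal{Q}_{\Pi_i}$ by deleting a prefix and suffix of total length at most $\lambda\|H_i\|$; by the definition of a $\lambda$-shaft the history of $\pazocal{C}_i$ still contains a controlled subword. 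The parallelism $D_1\parallel\cdots\parallel D_m$ says that every $\theta$-band crossing $\pazocal{C}_1$ crosses all the others, which forces these $m$ shaft-subbands to sit inside a common quasi-trapezium $\Gamma$ whose top and bottom each carry $L$ $t$-edges and on which each $\pazocal{C}_i$ runs from bottom to top. This is precisely the configuration forbidden by \Cref{shafts}: a $\lambda$-shaft admits no factorization $\pazocal{C}=\pazocal{C}_1\pazocal{C}_2\pazocal{C}_3$ with the trimmed part of length at most $\lambda\|H\|$ and with $\pazocal{C}_2$ crossing such a quasi-trapezium. Applying \Cref{shafts} to any one of the $m$ arcs already yields the contradiction, so $P(\lambda,m)$ holds.

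The third and final step is routine: \Cref{design} furnishes a constant $c=c(\lambda,m)$ with $\ell(\textbf{Q})\leq c\,(\#\textbf{T})$, and combining with the identifications above gives $\sigma_\lambda(\Delta)=\ell(\textbf{Q})\leq c\cdot\tfrac12|\partial\Delta|_\theta$. Since $\lambda$ and $m$ are fixed parameters, the parameter choice $C_1$ being assigned after them (see \Cref{sec-parameters}) lets us absorb $c/2$ into $C_1$, yielding $\sigma_\lambda(\Delta)\leq C_1|\partial\Delta|_\theta$; the inequality $|\partial\Delta|_\theta\leq|\partial\Delta|$ is immediate from the definition of the modified length. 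I expect the main obstacle to be the careful extraction of the quasi-trapezium $\Gamma$ from the parallelism condition — i.e. arguing that "every $\theta$-band crossing $D_1$ crosses $D_m$" genuinely produces a sub\emph{diagram} of $\Delta$ bounded by $q$-bands and $\theta$-bands with the required $L$ $t$-edges on its top and bottom, rather than merely an abstract combinatorial parallel relation; this is where one needs the $t$-spokes corresponding to all $L$ coordinates to appear, and where the overlap-perturbation must be checked not to have destroyed the relevant band structure. Everything else is a direct citation of \Cref{shafts}, \Cref{design}, and the no-$\theta$-annuli lemmas.
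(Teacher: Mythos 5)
Your proposal is correct and follows essentially the same route as the paper, which simply defers to the analogue in \cite{W}: build the design whose chords are middle lines of maximal $\theta$-bands and whose arcs are middle lines of maximal $\lambda$-shafts, verify property $P(\lambda,m)$ by extracting from any parallel family a quasi-trapezium forbidden by \Cref{shafts}, and conclude via \Cref{design} together with the identifications $\ell(\textbf{Q})=\sigma_\lambda(\Delta)$ and $\#\textbf{T}=\tfrac12|\partial\Delta|_\theta$. Two small corrections: the integer $m$ is tied to $L$ (so that $m$ parallel $t$-bands force $L$ $t$-edges on the bottom of the extracted quasi-trapezium), not to \Cref{M projected long history controlled}; and the contradiction should be drawn by applying \Cref{shafts} to the first arc in the parallel chain, since only its subarc is guaranteed to run from the bottom to the top of the quasi-trapezium bounded by the first and last chords crossing it.
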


%
%
%
%


\section{Upper bound on the weight of minimal diagrams} \label{sec-disks-upper-bound}

\subsection{Weakly minimal diagrams} \

\begin{figure}[H]
\centering
\includegraphics[scale=0.75]{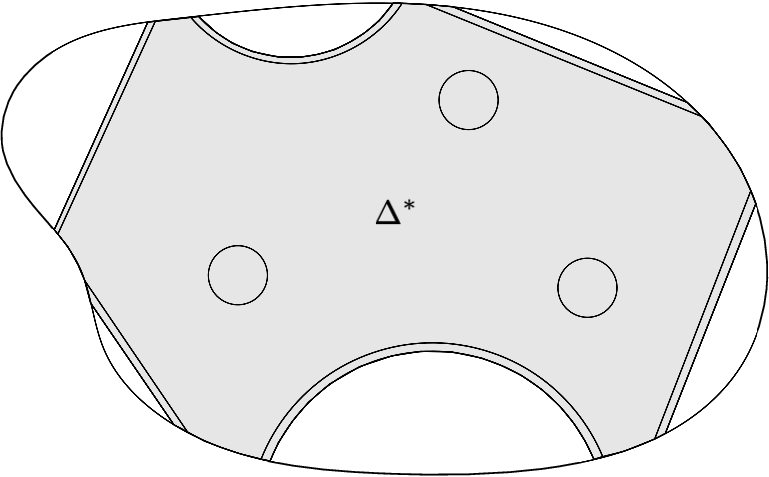}
\caption{The stem of a reduced diagram $\Delta$ containing disks}
\end{figure}

The goal in this section is to bound the $G$-weight of all minimal diagrams $\Delta$. In light of Lemma \ref{diskless}, it suffices to restrict our attention to minimal diagrams containing disks. However, as in \cite{W}, it proves necessary to consider a larger class of diagrams over the disk presentation of $G_\Omega(\textbf{M})$, called `weakly minimal'.

Let $\Delta$ be a reduced diagram over the disk presentation of $G_\Omega(\textbf{M})$ which contains a disk. Then, let $\pazocal{C}$ be a cutting $q$-band of $\Delta$, i.e $\pazocal{C}$ ends twice on the boundary of $\Delta$. Then $\pazocal{C}$ is called a \textit{stem band} if it is either a rim band of $\Delta$ or both components of $\Delta\setminus\pazocal{C}$ contain disks. The unique maximal subdiagram of $\Delta$ satisfying the property that every cutting $q$-band is a stem band is called the \textit{stem} of $\Delta$ and denoted $\Delta^*$.

If $\pazocal{C}$ is a cutting $q$-band that is not a stem band, then exactly one component $\Gamma$ of $\Delta\setminus\pazocal{C}$ contains no disks. In this situation, the cells of $\Gamma$ are called \textit{crown} cells. Note that one can construct $\Delta^*$ from $\Delta$ simply by cutting off all of the crown cells.

Finally, a reduced diagram $\Delta$ over the disk presentation of $G_\Omega(\textbf{M})$ which contains a disk is called \textit{weakly minimal} if:

\begin{addmargin}[1em]{0em}

\begin{enumerate}[label=(WM{\arabic*})]


\item for any $a$-cell $\pi$ and any $\theta$-band $\pazocal{T}$, at most half of the edges of $\partial\pi$ mark the start of an $a$-band that crosses $\pazocal{T}$,

\item no maximal $a$-band ends on two different $a$-cells, and 

\item its stem $\Delta^*$ is a minimal diagram. 

\end{enumerate}

\end{addmargin}

Note that conditions (WM1) and (WM2) are identical to conditions (MM1) and (MM2) in the definition of $M$-minimal. As a result, any subdiagram of a weakly minimal diagram which contains no disks is $M$-minimal. 

Conversely, any minimal diagram containing a disk is weakly minimal.

\begin{lemma}[Lemma 10.1 of \cite{W}] \label{weakly minimal} \

\begin{enumerate}[label=({\alph*})]

\item If $\Delta_1$ is a subdiagram of a weakly minimal diagram $\Delta$ and contains a disk, then $\Delta_1$ is weakly minimal, $\Delta_1^*\subset\Delta^*$, and $\sigma_\lambda(\Delta_1^*)\leq\sigma_\lambda(\Delta^*)$.

\item For every weakly minimal diagram $\Delta$, $\sigma_\lambda(\Delta^*)\leq C_1|\partial\Delta|$.

\item A weakly minimal diagram $\Delta$ contains no $\theta$-annuli.

\item Let $\pazocal{C}$ be a cutting $q$-band of a reduced diagram $\Delta$ over the disk presentation of $G_\Omega(\textbf{M})$ and let $\Delta_1$, $\Delta_2$ be the components of $\Delta\setminus\pazocal{C}$. Suppose $\Delta_1\cup\pazocal{C}$ is $M$-minimal (over $M_\Omega(\textbf{M})$) and $\Delta_2\cup\pazocal{C}$ is weakly minimal. Then $\Delta$ is weakly minimal.

\end{enumerate}

\end{lemma}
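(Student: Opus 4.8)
\textbf{Proof proposal for \Cref{weakly minimal}.}

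The plan is to prove the four assertions largely independently, reducing each to results already established for minimal and $M$-minimal diagrams. For part (a), given a subdiagram $\Delta_1$ of a weakly minimal $\Delta$ which contains a disk, I would first note that (WM1) and (WM2) are inherited by any subdiagram, so only (WM3) requires work. The key observation is that a cutting $q$-band of $\Delta_1$ that separates two disks of $\Delta_1$ is also a cutting $q$-band of $\Delta$ separating two disks, hence a stem band of $\Delta$; conversely a crown band of $\Delta$ restricted to $\Delta_1$ either leaves $\Delta_1$ entirely on the diskless side (so its $\Delta_1$-part is crown) or does not meet $\Delta_1$ at all. Carefully tracking which cutting bands survive, one deduces that every cell of $\Delta_1^*$ lies in $\Delta^*$, i.e. $\Delta_1^*\subset\Delta^*$; then $\Delta_1^*$, being a subdiagram of the minimal diagram $\Delta^*$, is itself minimal, giving (WM3). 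Finally $\sigma_\lambda(\Delta_1^*)\leq\sigma_\lambda(\Delta^*)$ follows since every $\lambda$-shaft of a disk in $\Delta_1^*$ is a $\lambda$-shaft of the same disk in $\Delta^*$ (the defining property of a $\lambda$-shaft is local to the $t$-spoke and the disk relation, both unchanged), and distinct shafts stay distinct.

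For part (b), I would apply \Cref{G_a design} to the minimal diagram $\Delta^*$, obtaining $\sigma_\lambda(\Delta^*)\leq C_1|\partial\Delta^*|_\theta\leq C_1|\partial\Delta^*|$. The remaining point is that $|\partial\Delta^*|\leq|\partial\Delta|$: the stem is obtained from $\Delta$ by successively cutting off crown subdiagrams along cutting $q$-bands, and each such excision replaces a boundary subpath by the side of a $q$-band of the same or smaller length (the side of a $q$-band has only $\theta$-edges and $q$-edges, and has length at most that of the excised portion's boundary, since it is one side of a band whose other side lies on $\partial\Delta$). Summing, $|\partial\Delta^*|\leq|\partial\Delta|$, which combined with the design bound gives (b).

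For part (c), suppose toward a contradiction that a weakly minimal $\Delta$ contains a $\theta$-annulus $S$; let $\Delta_S$ be the subdiagram bounded by the outer contour of $S$. If $\Delta_S$ contains a disk, then $\Delta_S$ is weakly minimal by part (a), so $\Delta_S^*$ is minimal; but $\Delta_S$ has a $\theta$-annulus, and one checks this annulus survives in the stem (a $\theta$-annulus has no cutting $q$-band crossing it that could be a crown band, since by \Cref{G_a theta-annuli}(3) the subdiagram it bounds is a diagram over $M_\Omega(\textbf{M})$ with no disks, so nothing is cut off from inside it), contradicting \Cref{minimal theta-annuli}. If $\Delta_S$ contains no disk, then it is $M$-minimal, contradicting \Cref{M_a no annuli 2}(2). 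Hence no $\theta$-annulus exists.

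For part (d), with $\pazocal{C}$ a cutting $q$-band of $\Delta$, $\Delta_1\cup\pazocal{C}$ being $M$-minimal (hence diskless) and $\Delta_2\cup\pazocal{C}$ weakly minimal, I would verify the three conditions defining weak minimality for $\Delta$. Conditions (WM1) and (WM2) hold because any $a$-cell $\pi$ of $\Delta$ lies in one of $\Delta_1\cup\pazocal{C}$ or $\Delta_2\cup\pazocal{C}$, and any $a$-band of $\Delta$ meeting it, together with the $\theta$-band in question, is a band of $\Delta$ that can only cross $\pazocal{C}$ in a $(\theta,q)$-cell — so the relevant counting takes place within the piece containing $\pi$, where it is already known. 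For (WM3), the point is that $\pazocal{C}$ itself is a stem band of $\Delta$ (all disks lie on the $\Delta_2$-side, so $\pazocal{C}$ does not cut off a diskless crown from the disk side; and on the $\Delta_1$-side there are no disks, so $\Delta_1$ becomes crown), whence $\Delta^*=(\Delta_2\cup\pazocal{C})^*=\Delta_2^*$ after trimming the crown cells of $\Delta_1$, which is minimal by hypothesis. I expect part (a) to be the main obstacle: the bookkeeping of which cutting $q$-bands of the subdiagram $\Delta_1$ remain stem bands versus become crown, and the verification that the stem operation commutes with passing to a subdiagram in the sense $\Delta_1^*\subset\Delta^*$, is the only place where a genuinely careful case analysis (rather than a citation) is required.
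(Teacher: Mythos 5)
The paper does not actually reprove this lemma---it is imported from \cite{W}---so I am judging your reconstruction on its own terms. The overall architecture (inheritance of (WM1)--(WM2) by subdiagrams, reduction of (WM3) to minimality of a subdiagram of $\Delta^*$, the design lemma for (b), and the stem/crown case analysis for (c) and (d)) is the right one. Two steps, however, are justified incorrectly as written. In (b), your reason that the side of the cutting $q$-band is no longer than the excised boundary arc---``it is one side of a band whose other side lies on $\partial\Delta$''---is false for a non-stem cutting band that is not a rim band: neither side of such a band lies on $\partial\Delta$. The correct mechanism is that a side of a $q$-band has length equal to the number of $\theta$-bands crossing it (\Cref{lengths}(b)), and each such $\theta$-band, being unable to cross the $q$-band twice by \Cref{M_a no annuli 1}(a), must end on the excised boundary arc, whose length dominates its number of $\theta$-edges by \Cref{lengths}(a). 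Cleaner still: every maximal $\theta$-band of $\Delta^*$ is the restriction of a distinct maximal $\theta$-band of $\Delta$, so $|\partial\Delta^*|_\theta\le|\partial\Delta|_\theta$, and applying \Cref{G_a design} to the minimal diagram $\Delta^*$ finishes without ever comparing full perimeters.

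In (c), the appeal to \Cref{G_a theta-annuli}(3) is inadmissible: that lemma assumes the ambient diagram satisfies (M3), which a weakly minimal diagram need not, and your parenthetical even contradicts your own case hypothesis ($\Delta_S$ is assumed to contain a disk, yet you conclude it is a diagram over $M_\Omega(\textbf{M})$). What actually makes the annulus survive into the stem is this: any cutting $q$-band of $\Delta_S$ must end at two $q$-edges of $\partial\Delta_S$, which is a side of $S$, so it would cross the $\theta$-annulus $S$ twice and form a $(\theta,q)$-annulus, contradicting \Cref{M_a no annuli 1}(a); hence $\Delta_S$ has no non-stem cutting $q$-bands, $\Delta_S^*=\Delta_S$ is minimal, and \Cref{minimal theta-annuli} applies. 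Equivalently, and avoiding the case split entirely: a $\theta$-annulus of $\Delta$ crosses each cutting $q$-band an even number of times for topological reasons and at most once by \Cref{M_a no annuli 1}(a), hence not at all, so it lies wholly inside $\Delta^*$ or wholly inside a crown component; the former is minimal and the latter is $M$-minimal, and both are annulus-free by \Cref{minimal theta-annuli} and \Cref{M_a no annuli 2}(2). Parts (a) and (d) read correctly to me, granted the separation facts you implicitly use (a maximal $a$-band cannot cross a $q$-band, and a maximal $q$-band other than $\pazocal{C}$ cannot pass from $\Delta_1$ to $\Delta_2$).
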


\medskip


\subsection{Definition of the minimal counterexample and cloves} \

The objective of the rest of this section is to exhibit an upper bound for the $G$-weight of a weakly minimal diagram in terms of its perimeter.  In particular, for any weakly minimal diagram $\Gamma$ letting $p(\Gamma)=|\partial\Gamma|+\sigma_\lambda(\Gamma^*)$, we will prove the inequality $$\text{wt}_G(\Gamma)\leq \phi_{N_4}(p(\Gamma))+N_3\mu(\Gamma)f_2(N_3p(\Gamma))$$ holds for large enough choices of the parameters $N_4$ and $N_3$. 

The proof of this bound follows in much the same way as that presented in Section 10 \cite{W}, except:

\begin{itemize}

\item There are some minor changes in the parameters, arising from the differences in the constructions of the main machines. 

\item In the setting of \cite{W}, the proof proceeded for the specific function $f_2(x)=1$ (and so $\phi_{N_4}(x)=x^2$).

\end{itemize} 

With that said, many of statements are proved in almost identical ways to their analogues in \cite{W}.  As such, we omit the proofs of such statements and present only those which require some more technical modifications.


Let $\Delta$ be a `minimal counterexample' diagram with respect to $p(\Delta)$, i.e a weakly minimal diagram satisfying
$$\text{wt}_G(\Delta)>\phi_{N_4}(p(\Delta))+N_3\mu(\Delta)f_2(N_3p(\Delta))$$ 
while for any weakly minimal diagram $\Gamma$ such that $p(\Gamma)<p(\Delta)$, we have
$$\text{wt}_G(\Gamma)\leq\phi_{N_4}(p(\Gamma))+N_3\mu(\Gamma)f_2(N_3p(\Gamma))$$ 
As with Lemma \ref{a-cell in counterexample} (and proved in an analogous way), the following statement is an immediate consequence of the inductive hypothesis.

\begin{lemma} \label{a-cell in counterexample 2}

Let $\pi$ be an $a$-cell contained in $\Delta$. Suppose $\partial\pi$ has a subpath $\textbf{s}$ shared with $\partial\Delta$. Then $\|\textbf{s}\|\leq\frac{2}{3}\|\partial\pi\|$.

\end{lemma}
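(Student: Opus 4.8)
The plan is to mimic the proof of \Cref{a-cell in counterexample} verbatim, replacing every mention of an $M$-minimal diagram by the corresponding weakly minimal diagram, the perimeter bound by $p(\Delta)$, and invoking the stem-band structure provided by \Cref{weakly minimal}. First I would set $\partial\pi=\textbf{s}\textbf{t}$ and $\partial\Delta=\textbf{s}\textbf{s}_0$, and assume for contradiction that $\|\textbf{s}\|>\frac23\|\partial\pi\|$. Hypothesis (1) of \Cref{main-theorem} and a parameter choice then give $\|\textbf{s}\|>\frac23 M>8$ and $\|\textbf{t}\|<\frac12\|\textbf{s}\|$. Let $\Delta_0$ be the subdiagram obtained by removing $\pi$; then $\partial\Delta_0=\textbf{t}^{-1}\textbf{s}_0$. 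Using \Cref{lengths}(c) exactly as before, $|\partial\Delta_0|\leq|\textbf{s}_0|+\delta\|\textbf{t}\|$ and $|\partial\Delta|\geq|\textbf{s}_0|+\delta(\|\textbf{s}\|-2)$, so $|\partial\Delta|-|\partial\Delta_0|>\delta\|\textbf{s}\|/8>0$.

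The one genuinely new ingredient is that the inductive hypothesis here is phrased in terms of $p(\Delta_0)=|\partial\Delta_0|+\sigma_\lambda(\Delta_0^*)$ rather than just $|\partial\Delta_0|$. Here I would note that removing an $a$-cell $\pi$ does not create any new disks or any new $\lambda$-shafts: by \Cref{a-cells sector} an $a$-cell lies in a sector of the `special' input sector and is not part of any $t$-spoke, so $\Delta_0^*\subseteq\Delta^*$ and \Cref{weakly minimal}(a)-type reasoning gives $\sigma_\lambda(\Delta_0^*)\leq\sigma_\lambda(\Delta^*)$. (If $\Delta_0$ happens to contain no disk, it is $M$-minimal and \Cref{diskless} applies directly, giving an even stronger bound; so we may assume $\Delta_0$ is still weakly minimal.) Hence $p(\Delta_0)\leq p(\Delta)-\delta\|\textbf{s}\|/8<p(\Delta)$, and the inductive hypothesis yields
$$\text{wt}_G(\Delta_0)\leq\phi_{N_4}(p(\Delta)-\delta\|\textbf{s}\|/8)+N_3\mu(\Delta_0)f_2(N_3 p(\Delta_0)).$$
Since $\delta\|\textbf{s}\|/8\leq p(\Delta)$, superadditivity of $\phi_{N_4}$ gives $\phi_{N_4}(p(\Delta)-\delta\|\textbf{s}\|/8)\leq\phi_{N_4}(p(\Delta))-\frac{N_4\delta}{8}\|\textbf{s}\|\,p(\Delta)f_2(N_4 p(\Delta))$.

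Finally I would assemble the contradiction: $\mu(\Delta)=\mu(\Delta_0)$ since the necklaces of $\partial\Delta$ and $\partial\Delta_0$ coincide, and $\text{wt}_G(\Delta)\leq\text{wt}_G(\Delta_0)+\text{wt}(\pi)$ by \Cref{G-weight subdiagrams} with $\text{wt}(\pi)=\phi_{C_1}(\|\textbf{s}\|+\|\textbf{t}\|)\leq\phi_{C_1}(3\|\textbf{s}\|/2)=\tfrac{9C_1}{4}\|\textbf{s}\|^2 f_2(3C_1\|\textbf{s}\|/2)$. Combining these, it suffices to check
$$\tfrac{N_4\delta}{8}\|\textbf{s}\|\,p(\Delta)f_2(N_4 p(\Delta))\geq\tfrac{9C_1}{4}\|\textbf{s}\|^2 f_2\!\left(\tfrac{3C_1\|\textbf{s}\|}{2}\right),$$
which, using $p(\Delta)\geq|\partial\Delta|\geq\tfrac14\delta\|\textbf{s}\|$ and the fact that $f_2$ is non-decreasing, follows from the parameter choices $N_4\gg C_1\gg\delta^{-1}$, contradicting $\text{wt}_G(\Delta)>\phi_{N_4}(p(\Delta))+N_3\mu(\Delta)f_2(N_3 p(\Delta))$. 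The main obstacle — really the only point needing care beyond the purely arithmetic bookkeeping — is confirming that excising a single $a$-cell does not increase $\sigma_\lambda$ of the stem, which is where \Cref{a-cells sector} and the structure of weakly minimal diagrams from \Cref{weakly minimal} do the work.
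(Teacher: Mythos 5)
Your proposal is correct and is essentially the paper's intended argument: the paper proves this lemma "in an analogous way" to \Cref{a-cell in counterexample}, and the only genuinely new step — that $\sigma_\lambda(\Delta_0^*)\leq\sigma_\lambda(\Delta^*)$ so that $p(\Delta_0)<p(\Delta)$, with \Cref{diskless} substituting for the inductive hypothesis when $\Delta_0$ is diskless — is exactly the required adaptation, and you justify it correctly. The citation of \Cref{a-cells sector} is superfluous (that lemma concerns $a$-cells in $a$-trapezia), but \Cref{weakly minimal}(a) applied to the subdiagram $\Delta_0$ already gives everything you need, so nothing is lost.
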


Since $\Delta^*$ contains every disk of $\Delta$ and is minimal, $\Delta$ is a $D$-minimal diagram. So, Lemma \ref{graph} guarantees that it contains a disk $\Pi$ with $L-4$ consecutive $t$-spokes $\pazocal{Q}_1,\dots,\pazocal{Q}_{L-4}$ ending on $\partial\Delta$ and bounding $L-5$ diskless subdiagrams (see \Cref{fig-graph}).

For $1\leq i<j\leq L-4$, the subdiagram of $\Delta$ bounded by $\partial\Pi$, $\pazocal{Q}_i$, and $\pazocal{Q}_j$ (and not containing $\Pi$) is called a \textit{clove} and is denoted $\Psi_{ij}$. The maximal clove $\Psi_{1,L-4}$ is simply denoted $\Psi$.

\begin{lemma} [Compare to Lemma 10.3 of \cite{W}] \label{rim theta-bands}

Let $\pazocal{T}$ be a quasi-rim $\theta$-band in $\Delta$. Then the base of $\pazocal{T}$ has length $s>K$.

\end{lemma}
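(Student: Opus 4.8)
The plan is to argue by contradiction, following the same strategy used for the analogous diskless statement \Cref{6.18} (and the earlier \Cref{6.17} and \Cref{6.16}) but now respecting the structure of the weakly minimal diagram $\Delta$. Suppose $\pazocal{T}$ is a quasi-rim $\theta$-band whose base has length $s \leq K$. Without loss of generality, every cell trapped between $\textbf{top}(\pazocal{T})$ and $\partial\Delta$ is an $a$-cell. First I would cut $\Delta$ along $\textbf{bot}(\pazocal{T})$, discarding $\pazocal{T}$ together with the $a$-cells above it, and then reattach copies of those $a$-cells directly onto the exposed bottom path; this produces a new reduced diagram $\Delta''$ with the same set of disks as $\Delta$ (since $\pazocal{T}$ is a quasi-rim band, it cannot lie in the stem unless it is a rim band, and in either case removing it and the $a$-cells does not destroy any disk). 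The key observations to record are: (i) $\Delta''$ is still weakly minimal — conditions (WM1) and (WM2) are inherited because the $a$-cells we move are the ones whose maximal $a$-bands end on $\partial\Delta$, and the stem $(\Delta'')^*$ is a subdiagram of $\Delta^*$, hence minimal; (ii) $\sigma_\lambda((\Delta'')^*) \leq \sigma_\lambda(\Delta^*)$ by \Cref{weakly minimal}(a)-type reasoning, since deleting cells cannot lengthen $\lambda$-shafts; (iii) the necklace of $\Delta''$ is obtained from that of $\Delta$ by removing the two white beads corresponding to the end $\theta$-edges of $\pazocal{T}$, so $\mu(\Delta'') \leq \mu(\Delta)$ by \Cref{mixtures}(a).

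Next I would estimate the change in perimeter. Writing $\partial\Delta = \textbf{u}\textbf{v}$ where $\textbf{u}$ is the subpath between the two end $\theta$-edges of $\pazocal{T}$ (the part that gets replaced) and $\textbf{v}$ its complement, the new boundary subpath $\textbf{u}''$ consists of the trimmed bottom of $\pazocal{T}$ together with the reglued $a$-cell boundaries. Using \Cref{a-cell in counterexample 2} to bound how much of each reattached $a$-cell's contour lies on $\partial\Delta''$, together with \Cref{lengths}(a),(c) and the fact that $s \leq K$ so $|\textbf{u}|_q, |\textbf{u}''|_q \leq$ (something bounded in terms of $K$), and invoking hypothesis (1) of \Cref{main-theorem} (via the parameter choice $\delta^{-1} \gg K$, which makes the $\delta$-weight of the bounded number of $q$- and $a$-edges negligible against the two $\theta$-edges removed), I get $p(\Delta) - p(\Delta'') \geq |\partial\Delta| - |\partial\Delta''| \geq 1$ — roughly $2 - (3K+3)\delta \geq 1$, exactly as in \Cref{6.18}. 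This lets me apply the inductive hypothesis to $\Delta''$:
$$\text{wt}_G(\Delta'') \leq \phi_{N_4}(p(\Delta'')) + N_3\mu(\Delta'')f_2(N_3 p(\Delta'')) \leq \phi_{N_4}(p(\Delta)-1) + N_3\mu(\Delta)f_2(N_3 p(\Delta)).$$
Since $p(\Delta) \geq 1$, the superadditivity estimate $\phi_{N_4}(p(\Delta)-1) \leq \phi_{N_4}(p(\Delta)) - N_4 p(\Delta)f_2(N_4 p(\Delta))$ applies.

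Finally I would relate $\text{wt}_G(\Delta)$ to $\text{wt}_G(\Delta'')$. By \Cref{G-weight subdiagrams}, $\text{wt}_G(\Delta)$ is at most $\text{wt}_G(\Delta'')$ plus the weight of $\pazocal{T}$ itself — the $a$-cells above $\pazocal{T}$ contribute the same weight in both diagrams. The length of $\pazocal{T}$ is bounded by $3\|\textbf{tbot}(\pazocal{T})\|$ via \Cref{lengths}(d), and $\|\textbf{tbot}(\pazocal{T})\| \leq |\partial\Delta|$ crudely, so $\text{wt}(\pazocal{T}) \leq 3\ell$ for $\ell$ the length of $\pazocal{T}$, with $\ell \leq 3|\partial\Delta|$ or so. Combining everything, we would reach
$$\text{wt}_G(\Delta) \leq \phi_{N_4}(p(\Delta)) - N_4 p(\Delta)f_2(N_4 p(\Delta)) + N_3\mu(\Delta)f_2(N_3 p(\Delta)) + \ell,$$
which contradicts the minimal-counterexample inequality provided $N_4 p(\Delta)f_2(N_4 p(\Delta)) \geq \ell$. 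Since $f_2 \geq 1$ everywhere and $\ell$ is at most a fixed multiple of $p(\Delta)$, this last inequality follows from the parameter choice $N_4 \gg \delta^{-1}$ (absorbing the multiplicative constant coming from \Cref{lengths}(d) and the bound on the basic width $s \leq K$). The main obstacle I anticipate is bookkeeping in step (i)–(ii): verifying carefully that the surgery does not create a disk-containing subdiagram on the "wrong side," that the stem of $\Delta''$ genuinely embeds in the stem of $\Delta$ so that minimality and the $\sigma_\lambda$ bound transfer, and that no maximal $a$-band gets two ends on one $a$-cell after regluing — in other words, that (WM1)–(WM3) survive. Everything after that is the routine length/weight arithmetic already templated by Lemmas \ref{6.16}--\ref{6.18}.
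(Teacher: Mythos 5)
Your proposal follows essentially the same route as the paper: the paper's proof simply defines $\Delta'$ and $\Delta''$ exactly as in the diskless case (\Cref{6.18}), checks that $\Delta''$ is weakly minimal with $\sigma_\lambda((\Delta'')^*)\leq\sigma_\lambda(\Delta^*)$ so that $p(\Delta'')\leq p(\Delta)-1$ and the inductive hypothesis applies, and then repeats the perimeter/mixture/weight arithmetic of \Cref{6.18} with $N_4\gg\delta^{-1}$ — which is precisely your plan, including the verification points (WM1)--(WM3) and the shaft monotonicity that you correctly single out as the only new content beyond \Cref{6.18}.
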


\begin{proof}

Assume toward contradiction that $\pazocal{T}$ is a quasi-rim $\theta$-band with base of length $s\leq K$. Define $\Delta'$ and $\Delta''$ as in the proof of Lemma \ref{6.18}. As in that setting, $\Delta''$ satisfies (WM1) and (WM2) and $|\partial\Delta''|\leq|\partial\Delta|-1$.

Since $\Delta'$ is a subdiagram of $\Delta$, it follows from Lemma \ref{weakly minimal}(a) that it is weakly minimal with $\sigma_\lambda((\Delta')^*)\leq\sigma_\lambda(\Delta^*)$. 

Since the diagram $\Delta''$ is formed from $\Delta'$ through the addition of $a$-cells, the 2-signatures of $(\Delta')^*$ and $(\Delta'')^*$ are equal. Hence, $\Delta''$ is a weakly minimal diagram.

Further, every $\lambda$-shaft of $(\Delta'')^*$ is at most as long as the corresponding $\lambda$-shaft of $\Delta^*$, so that $\sigma_\lambda((\Delta'')^*)\leq\sigma_\lambda(\Delta^*)$. Consequently, $p(\Delta'')\leq p(\Delta)-1$, and so the inductive hypothesis may be applied to $\Delta''$.

The proof of Lemma \ref{6.18} then adapts naturally to this setting, providing a contradiction.

\end{proof}

\medskip


\subsection{Properties of the cloves of $\Delta$} \

The following statement is an adaptation of Lemma \ref{6.16} to this setting and is proved in exactly the same way.

\begin{lemma}[Lemma 10.4 of \cite{W}] \label{7.20} \

\begin{enumerate}[label=({\arabic*})]

\item $\Delta$ has no two disjoint subcombs $\Gamma_1$ and $\Gamma_2$ contained in $\Psi$ with basic widths at most $K$ and handles $\pazocal{B}_1$ and $\pazocal{B}_2$ such that some ends of these handles are connected by a subpath $\textbf{x}$ of $\partial\Delta$ with $|\textbf{x}|_q\leq c_0$.

\item If $\Gamma$ is a subcomb of $\Delta$ contained in $\Psi$ with basic width $s\leq K$, $|\partial\Gamma|_q=2s$.

\end{enumerate}

\end{lemma}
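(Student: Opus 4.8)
\textbf{Proof plan for Lemma \ref{7.20}.}

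The plan is to prove statements (1) and (2) simultaneously by induction on a suitable complexity measure, exactly paralleling the proof of \Cref{6.16} in the diskless setting, with the weakly minimal diagram $\Delta$ and its inductive hypothesis on $p(\Delta)=|\partial\Delta|+\sigma_\lambda(\Delta^*)$ replacing the role of the $M$-minimal counterexample. Since all of the subcombs in question are contained in the clove $\Psi$, which is a diskless subdiagram of $\Delta$, each such comb $\Gamma$ is $M$-minimal; in particular the weight estimate \Cref{comb weights} and the subcomb structure results (\Cref{6.17}, \Cref{tight subcomb}) all apply verbatim to it. The induction is set up on $W = \text{wt}(\Gamma_1)+\text{wt}(\Gamma_2)$ for statement (1) and $W=\text{wt}(\Gamma)$ for statement (2): one takes a hypothetical counterexample with minimal value of $W$, and in a counterexample of type (1) the minimality of $W$ forces (2) to hold for each $\Gamma_i$, so $|\partial\Gamma_i|_q\le 2K$.

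First I would handle a counterexample of type (1). Let $h_i$ be the height of $\Gamma_i$, say $h_1\le h_2$; factor $\partial\Gamma_i=\textbf{y}_i\textbf{z}_i$ with $\textbf{y}_i\subset\partial\Delta$ and $\textbf{z}_i=\textbf{bot}(\pazocal{B}_i)$, arranged so $\textbf{y}_1\textbf{x}\textbf{y}_2\subset\partial\Delta$. Since $\textbf{y}_1$ and $\textbf{y}_2$ are separated along $\partial\Delta$ by at most $4K+c_0\le J$ $q$-edges (parameter choice), every correctly-ordered pair of $\theta$-edges (white beads) from $\textbf{y}_1$ and $\textbf{y}_2$ contributes to $\mu(\Delta)$; removing the subcomb $\Gamma_1$ from $\Delta$ gives a diagram $\Delta'$ with $\mu(\Delta)-\mu(\Delta')\ge h_1h_2$ by \Cref{mixtures}(d). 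Here one must also check that $\Delta'$ is again weakly minimal and that $\sigma_\lambda((\Delta')^*)\le\sigma_\lambda(\Delta^*)$, so that $p(\Delta')\le p(\Delta)-\gamma<p(\Delta)$ with $\gamma=\max(1,(\a-2h_1)\delta)$ for $\a=|\partial\Gamma_1|_a$ — this uses \Cref{weakly minimal}(a) together with the observation that cutting off a subcomb contained in the diskless part $\Psi$ cannot increase any $\lambda$-shaft length. Then the perimeter-difference estimate $|\partial\Delta|-|\partial\Delta'|\ge\gamma$ is obtained from \Cref{lengths}(c) exactly as in \Cref{6.16}. Applying the inductive hypothesis to $\Delta'$, bounding $\text{wt}_G(\Gamma_1)$ by \Cref{comb weights}, and using superadditivity of $\phi_{N_4}$ together with $\phi_{N_4}(|\partial\Delta|-\gamma)\le\phi_{N_4}(|\partial\Delta|)-N_4\gamma|\partial\Delta|f_2(N_4|\partial\Delta|)$, the contradiction reduces to the inequality
$$c_0Kh_1^2+2\a h_1+\phi_{C_1}(Kh_1+\a)\le N_3\gamma|\partial\Delta|f_2(N_3|\partial\Delta|)+N_3h_1h_2f_2(N_3|\partial\Delta|),$$
which splits into the cases $\a\le 4h_1$ (where $\phi_{C_1}(Kh_1+\a)\le 4C_1K^2h_1^2f_2(2C_1Kh_1)$ and one uses $h_1\le h_2$ together with $N_3\gg C_1\gg K\gg c_0$) and $\a>4h_1$ (where $\gamma\ge\tfrac12\delta\a$ and one uses $N_4\gg C_1\gg\delta^{-1}\gg K$). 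For a counterexample of type (2), the handle of $\Gamma$ lies in $\partial\Delta$ and the argument is a verbatim transcription of the corresponding case in \Cref{6.16}: were $|\partial\Gamma|_q>2s$, one of the $\theta$-bands of $\Gamma$ would have to return, producing two disjoint subcombs of small basic width whose handles are joined by a short arc of $\partial\Delta$, contradicting part (1) with the smaller value of $W$.

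The main obstacle I anticipate is not the numerical bookkeeping — which is essentially identical to \Cref{6.16} once $n^2$ is replaced by $\phi_{N_4}$ — but rather the verification that the surgery $\Delta\leadsto\Delta'$ preserves weak minimality and controls the shaft measure $\sigma_\lambda$. Unlike the $M$-minimal case, $\Delta$ now has a nontrivial stem $\Delta^*$, and one must confirm that excising a subcomb contained in the clove $\Psi$ (which is entirely diskless and disjoint from $\Delta^*$) leaves $(\Delta')^*$ minimal and does not lengthen any $\lambda$-shaft; this is where \Cref{weakly minimal}(a) and the structure of $\Psi$ given by \Cref{graph} are essential, and it is the only substantive point where the proof genuinely diverges from the diskless template.
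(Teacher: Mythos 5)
Your proposal is correct and matches the paper's intent exactly: the paper proves this lemma "in exactly the same way" as \Cref{6.16}, which is precisely the induction on comb weight you carry out, and you correctly identify the only new point — that excising a subcomb lying in the diskless clove $\Psi$ consists of removing crown cells, so it leaves the stem (hence weak minimality and $\sigma_\lambda(\Delta^*)$) untouched and the inductive hypothesis on $p(\Delta)$ applies. (The only blemish is a notational slip, $N_3$ where $N_4$ should appear in the first term of your displayed inequality, consistent with the substitution $N_2\to N_4$, $N_1\to N_3$.)
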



\begin{lemma}[Lemma 10.5 of \cite{W}] \label{7.21}

The basic width of a subcomb of $\Delta$ contained in $\Psi$ is at most $K_0$.

\end{lemma}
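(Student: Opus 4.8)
The plan is to mimic the argument used for Lemma \ref{6.17} (the analogue of Lemma 8.15 of \cite{W}), adapting it from the diskless setting to the present weakly minimal counterexample $\Delta$. Suppose toward contradiction that $\Gamma$ is a subcomb of $\Delta$ contained in $\Psi$ whose basic width exceeds $K_0$. Since every tight base word has length at most $K_0$, the base of one of the maximal $\theta$-bands of $\Gamma$ has a tight prefix, and hence (by the definition of tight combs and of tight bases) $\Gamma$ contains a subcomb $\Gamma'$ whose handle $\pazocal{B}'$ has length $\ell'$ satisfying $\ell' \leq \ell/2$, where $\ell$ is the height of $\Gamma$. First I would invoke Lemma \ref{6.18} (or rather its analogue \Cref{rim theta-bands}) together with \Cref{tight subcomb} to produce such a tight subcomb structure, then select $\Gamma'$ so that $\ell'$ is minimal among all subcombs of $\Gamma$ and so that $\Gamma'$ has basic width $1$.

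Next I would excise $\Gamma'$ from $\Delta$ to form a diagram $\Delta'$. The key points to check are: (i) $\Delta'$ is again weakly minimal with $p(\Delta') < p(\Delta)$ — this uses \Cref{weakly minimal}(a),(d), noting that removing a diskless subcomb cannot increase $|\partial\Delta|$ by more than a controlled amount while it strictly decreases perimeter because $\pazocal{B}'$ contributes $q$-edges, and that $\sigma_\lambda((\Delta')^*) \leq \sigma_\lambda(\Delta^*)$ since the stem shrinks; (ii) the mixture drops by at least $\ell'(\ell_1+\ell_2)$ where $\ell_1,\ell_2$ are the heights of the two subcombs $E_1, E_2$ into which $\pazocal{B}'$ is decomposed (by \Cref{mixtures}(d)), exactly as in the proof of \Cref{6.17}; and (iii) $\text{wt}_G(\Gamma') \leq \text{wt}(\Gamma') \leq c_0(\ell')^2 + 2\a\ell' + \phi_{C_1}(\ell'+\a)$ by \Cref{comb weights}, where $\a = |\partial\Gamma'|_a$. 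The relevant perimeter inequality $|\partial\Delta| - |\partial\Delta'| \geq \gamma = \max(1,(\a - 2\ell')\delta)$ is obtained from \Cref{lengths}(c) just as in \eqref{6.17 difference in perimeter}.

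Then I would apply the inductive hypothesis to $\Delta'$, using $\phi_{N_4}(p(\Delta') ) \leq \phi_{N_4}(p(\Delta) - \gamma) \leq \phi_{N_4}(p(\Delta)) - N_4 \gamma\, p(\Delta)\, f_2(N_4 p(\Delta))$ (the superadditive/subadditive estimate on $\phi_{N_4}$ recorded in \Cref{sec-disks-and-weights}), and combine with \Cref{G-weight subdiagrams} to get
$$\text{wt}_G(\Delta) \leq \phi_{N_4}(p(\Delta)) - N_4\gamma\, p(\Delta)\, f_2(N_4 p(\Delta)) + N_3(\mu(\Delta) - \ell'(\ell_1+\ell_2))f_2(N_3 p(\Delta)) + c_0(\ell')^2 + 2\a\ell' + \phi_{C_1}(\ell'+\a).$$
To reach the contradiction it suffices to show
$$c_0(\ell')^2 + 2\a\ell' + \phi_{C_1}(\ell'+\a) \leq N_4\gamma\, p(\Delta)\, f_2(N_4 p(\Delta)) + N_3\ell'(\ell_1+\ell_2) f_2(N_3 p(\Delta)).$$
Here one splits into the two cases $\a \leq 4\ell'$ and $\a > 4\ell'$. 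In the first case $\ell_1 + \ell_2 \geq \ell'$ (since $\ell' \leq \ell/2$, the two subcombs together account for at least the remaining height), so $N_3\ell'(\ell_1+\ell_2)f_2(N_3 p(\Delta)) \geq \phi_{N_3}(\ell')$ and the inequality follows from $N_3 \gg C_1 \gg c_0$ together with $f_2 \geq 1$. In the second case $\gamma \geq \tfrac12\delta\a$ and $p(\Delta) \geq \gamma$, so $\gamma\, p(\Delta) \geq \tfrac14\delta^2\a^2$, and the inequality follows from $N_4 \gg N_3 \gg C_1 \gg \delta^{-1} \gg c_0$.

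The main obstacle I anticipate is bookkeeping rather than a genuine difficulty: one must verify carefully that cutting off $\Gamma'$ leaves a \emph{weakly minimal} diagram (not merely a $D$-minimal or $M$-minimal one), which requires checking that the stem $(\Delta')^*$ is still minimal — this is where \Cref{weakly minimal}(d) is essential, applied with the cutting $q$-band $\pazocal{B}'$ separating the $M$-minimal diskless piece $\Gamma'$ from the weakly minimal remainder — and that the $\lambda$-shaft count $\sigma_\lambda$ does not increase, which relies on the fact that no $\lambda$-shaft of $\Delta^*$ passes through a diskless subcomb contained in $\Psi$. Apart from this, the argument is a faithful transcription of the proof of \Cref{6.17} with $|\partial\Delta|$ replaced by $p(\Delta)$ throughout and with the $\phi$-function estimates substituted for the elementary quadratic estimates of the $f_2 \equiv 1$ case.
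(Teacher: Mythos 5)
Your argument has a genuine gap at its very first step. From the assumption that the basic width of $\Gamma$ exceeds $K_0$ you conclude that $\Gamma$ contains a subcomb $\Gamma'$ whose handle has length $\ell'\leq\ell/2$. This does not follow. Basic width greater than $K_0$ only tells you that some maximal $\theta$-band of $\Gamma$ has a base of length at least $K_0$, hence a base with a tight prefix; tightness is a combinatorial condition on the base \emph{word} (a repeated $q$-letter bounding a revolving subword) and says nothing about the \emph{heights} of handles of subcombs. Indeed, \Cref{6.17} asserts the opposite inequality $\ell'>\ell/2$ for every subcomb of a comb of basic width at most $K_0$, and nothing in your hypotheses produces a short-handled subcomb when the basic width is large. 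With that premise removed, the remainder of your argument---which is otherwise a reasonable transcription of the proof of \Cref{6.17} to the weakly minimal setting, with $|\partial\Delta|$ replaced by $p(\Delta)$ and the $\phi$-function estimates in place of the quadratic ones---proves a different statement (the disk-setting analogue of \Cref{6.17}, which in \cite{W} is Lemma 10.6 and comes \emph{after} the present lemma), not the bound on basic width.

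The paper's route is the one indicated in the remark following the lemma: take the hypothetical subcomb of basic width greater than $K_0$, extract a tight subcomb from it (every base of length at least $K_0$ has a tight prefix, so the argument of \Cref{tight subcomb} applies), and then run the full machinery of \Cref{diskless} on that tight subcomb --- the decomposition into $\Gamma_1,\dots,\Gamma_4$ with the revolving-base $a$-trapezium $\Gamma_2$ controlled by \Cref{revolving G-weight}, the surgery excising $\Gamma\setminus\pazocal{Q}$ and re-pasting $\Gamma_1$ along $\pazocal{Q}'$, and the ensuing mixture and weight estimates against the inductive hypothesis on $p$. The obstacle you correctly anticipated --- that the diagram produced by the surgery must be shown weakly minimal via \Cref{weakly minimal}(d), which is the entire reason weakly minimal diagrams were introduced --- is real, but it attaches to \emph{this} surgery, not to the simple excision of a short-handled subcomb that you set up.
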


%
%
%
%

\begin{remark}

Note that the proof of \Cref{7.21} proceeds by applying an adaptation of \Cref{diskless} to a subcomb with basic width greater than $K_0$.  As in \cite{W}, the reason we consider weakly minimal diagrams lies in this proof, as the adaptation of Lemma \ref{diskless} fails for minimal diagrams since there is no analogue of Lemma \ref{weakly minimal}(d) for minimal diagrams.

\end{remark}


\begin{lemma}[Lemma 10.7 of \cite{W}] \label{7.22} \

\begin{enumerate}[label=({\arabic*})]

\item Every maximal $\theta$-band of $\Psi$ crosses either $\pazocal{Q}_1$ or $\pazocal{Q}_{L-4}$

\item There exists an $r$ satisfying $(L-1)/2-3\leq r\leq (L-1)/2$ such that the $\theta$-bands of $\Psi$ crossing $\pazocal{Q}_{L-4}$ do not cross $\pazocal{Q}_r$ and the $\theta$-bands of $\Psi$ crossing $\pazocal{Q}_1$ do not cross $\pazocal{Q}_{r+1}$

\end{enumerate}

\end{lemma}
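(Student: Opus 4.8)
The plan is to follow the structure of the proof of the analogous Lemma 10.7 in \cite{W}, adapting it to account for the slightly modified machine $\textbf{M}$ and the fact that here the tape alphabet function $f_2$ may be non-constant (though this last point affects the weight estimates, not the combinatorial structure, so it should be essentially irrelevant for this lemma). The key ingredients will be Lemmas \ref{rim theta-bands}, \ref{7.20}, \ref{7.21}, together with the structural facts about the disk $\Pi$ and its $t$-spokes $\pazocal{Q}_1,\dots,\pazocal{Q}_{L-4}$ supplied by Lemma \ref{graph}, and the restriction (Lemma \ref{G_a theta-annuli}(1)) on how many $t$-spokes of a disk a single $\theta$-band can cross when there are no disks in the enclosed region.

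For part (1): suppose toward contradiction that some maximal $\theta$-band $\pazocal{T}$ of $\Psi$ crosses neither $\pazocal{Q}_1$ nor $\pazocal{Q}_{L-4}$. Then $\pazocal{T}$ lies entirely in the interior of $\Psi$, bounded away from both handles. First I would argue that $\pazocal{T}$ must then be a quasi-rim $\theta$-band, or produce one: since $\Psi$ is diskless (the region cut out by consecutive spokes), the cells on the appropriate side of $\pazocal{T}$ are $a$-cells (using (WM1), (WM2) and the absence of disks, via Lemma \ref{M_a no annuli 1}), so $\pazocal{T}$ is quasi-rim. But Lemma \ref{rim theta-bands} forces the base of any quasi-rim $\theta$-band to have length $>K$. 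Then I would look at a subcomb of $\Psi$ containing $\pazocal{T}$; by Lemma \ref{7.21} any subcomb of $\Delta$ inside $\Psi$ has basic width at most $K_0<K$, giving a contradiction with the base-length bound — exactly the mechanism used in \cite{W}. The point where I need to be careful is to make sure $\pazocal{T}$ (or a band parallel to it near the rim) genuinely realizes a quasi-rim band inside $\Psi$ rather than abutting a handle; this is where the hypothesis that $\pazocal{Q}_1,\dots,\pazocal{Q}_{L-4}$ are \emph{consecutive} and all end on $\partial\Delta$ is used.

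For part (2): once every maximal $\theta$-band of $\Psi$ crosses $\pazocal{Q}_1$ or $\pazocal{Q}_{L-4}$, partition these bands into those crossing $\pazocal{Q}_1$ and those crossing $\pazocal{Q}_{L-4}$ (a band crossing both still crosses $\pazocal{Q}_1$). A band crossing $\pazocal{Q}_1$ proceeds from $\pazocal{Q}_1$ toward $\Pi$ and then possibly past $\Pi$; by Lemma \ref{G_a theta-annuli}(1), since there is no disk in the region bounded by a $\theta$-band and the spokes it crosses together with $\partial\Pi$, such a band can cross at most $(L-1)/2$ of the $t$-spokes of $\Pi$. So the bands crossing $\pazocal{Q}_1$ cross only $\pazocal{Q}_1,\dots,\pazocal{Q}_m$ for some $m\le (L-1)/2$, hence do not cross $\pazocal{Q}_{m+1}$; symmetrically the bands crossing $\pazocal{Q}_{L-4}$ cross only $\pazocal{Q}_{L-4},\pazocal{Q}_{L-5},\dots,\pazocal{Q}_{L-4-m'+1}$ for some $m'\le (L-1)/2$, hence do not cross $\pazocal{Q}_{L-4-m'}$. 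Now I want a single index $r$ with $(L-1)/2-3\le r\le (L-1)/2$ such that no $\pazocal{Q}_1$-band crosses $\pazocal{Q}_{r+1}$ and no $\pazocal{Q}_{L-4}$-band crosses $\pazocal{Q}_r$; by part (1) every band is one or the other, so this amounts to showing that the "reach from the left" and "reach from the right" together cover all of $\pazocal{Q}_1,\dots,\pazocal{Q}_{L-4}$ with an overlap window of width at least $3$, which follows from $m+m'\ge L-4$ (forced because otherwise some $\pazocal{Q}_j$ with $m<j\le L-4-m'$ would be crossed by no $\theta$-band, contradicting that $\pazocal{Q}_j$ itself is a $q$-band in $\Psi$ whose cells are $(\theta,q)$-cells crossed by $\theta$-bands) together with $m,m'\le (L-1)/2$; arithmetic then pins $r$ into the stated interval.

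The main obstacle I anticipate is part (2): precisely controlling the indices $m$ and $m'$ and checking that the overlap interval has the right width $(L-1)/2-3\le r\le(L-1)/2$. This requires pinning down exactly which spokes a $\theta$-band crossing $\pazocal{Q}_1$ can reach — the subtlety is that such a band, after wrapping around near $\Pi$, could in principle also come back and cross $\pazocal{Q}_{L-4}$, so the two classes of bands in the partition are not quite symmetric, and one must invoke Lemma \ref{G_a theta-annuli}(1) (and perhaps (2), for the case of two parallel bands with the same rule) carefully to rule this out and extract the clean numerical statement. Everything else is a direct transcription of the argument in \cite{W}, with the routine perimeter/weight estimates untouched because this lemma is purely combinatorial.
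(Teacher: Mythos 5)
Your overall strategy is the right one (the paper itself gives no proof here and simply imports Lemma 10.7 of \cite{W}, whose argument runs along the lines you describe), but two steps as written would not go through. In part (1), after concluding that an extremal band $\pazocal{T}$ is quasi-rim and hence has base of length $>K$ by Lemma \ref{rim theta-bands}, you propose to contradict Lemma \ref{7.21} via ``a subcomb of $\Psi$ containing $\pazocal{T}$.'' No such subcomb is available for free: a subcomb's basic width bounds the bases of the $\theta$-bands \emph{ending on its handle}, and the subdiagram cut off by $\pazocal{T}$ is not a comb. The intended route is to choose $\pazocal{T}$ extremal among bands ending twice on $\textbf{p}$, observe that the cut-off subdiagram is $M$-minimal with all its maximal $\theta$-bands ending on a short arc of $\partial\Delta$, and then invoke Lemma \ref{tight subcomb} (whose hypothesis is supplied by Lemma \ref{rim theta-bands}) to manufacture a tight subcomb of $\Delta$ inside $\Psi$; the contradiction is then with Lemmas \ref{7.20}--\ref{7.21}, not directly with the base length of $\pazocal{T}$.

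In part (2) your arithmetic leans on the wrong inequality. Writing $m$ and $m'$ for the largest indices reached from $\pazocal{Q}_1$ and from $\pazocal{Q}_{L-4}$, the existence of $r$ in $[(L-1)/2-3,(L-1)/2]$ follows from $m,m'\le(L-1)/2$ (Lemma \ref{G_a theta-annuli}(1)) together with the \emph{no-overlap} inequality $m+m'\le L-4$; the ``no gap'' inequality $m+m'\ge L-4$ that you set out to prove is not needed (one checks directly that $[\,\max(m,(L-1)/2-3),\,\min(L-4-m',(L-1)/2)\,]$ is nonempty whenever $m+m'\le L-4$), and your justification for it fails anyway if some middle spoke has length zero. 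The no-overlap inequality is the actual crux: without it a right-band crossing $\pazocal{Q}_{L-3-m'}$ and a left-band crossing $\pazocal{Q}_m\ge\pazocal{Q}_{L-3-m'}$ would make the conclusion false for every $r$. You only address the degenerate version of this (one band wrapping all the way around, which is killed immediately by $L-4>(L-1)/2$); the case of two \emph{distinct} bands, one from each end, both crossing a common middle spoke is the one that requires the careful application of Lemma \ref{G_a theta-annuli}(2) to the innermost band on each of $\pazocal{Q}_1$ and $\pazocal{Q}_{L-4}$ (using part (1) to see that only $a$-cells separate these from $\Pi$), and it is absent from your plan.
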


\medskip


\subsection{Paths in the cloves} \

For $1\leq i<j\leq L-4$, denote $\textbf{p}_{ij}$ as the shared subpath of $\partial\Psi_{ij}$ and $\partial\Delta$. For simplicity, denote the path $\textbf{p}_{1,L-4}$ associated to the maximal clove simply as $\textbf{p}$.

Let $\bar{\Delta}$ be the subdiagram of $\Delta$ consisting of $\Pi$ and $\Psi$. Then, let $\bar{\textbf{p}}=\textbf{bot}(\pazocal{Q}_1)^{-1}\textbf{u}^{-1}\textbf{top}(\pazocal{Q}_{L-4})$ where $\textbf{u}$ is a subpath of $\partial\Pi$ and such that cutting along $\bar{\textbf{p}}$ separates $\Delta$ into two components, one of which is $\bar{\Delta}$. Denote the other component $\Psi'$.

Similarly, for $1\leq i<j\leq L-4$, define the the path $\bar{\textbf{p}}_{ij}=\textbf{bot}(\pazocal{Q}_i)^{-1}\textbf{u}_{ij}^{-1}\textbf{top}(\pazocal{Q}_j)$ and the subdiagrams $\bar{\Delta}_{ij}$ and $\Psi_{ij}'$ (see \Cref{fig-clove-paths}).

\begin{figure}[H]
\centering
\includegraphics[scale=0.75]{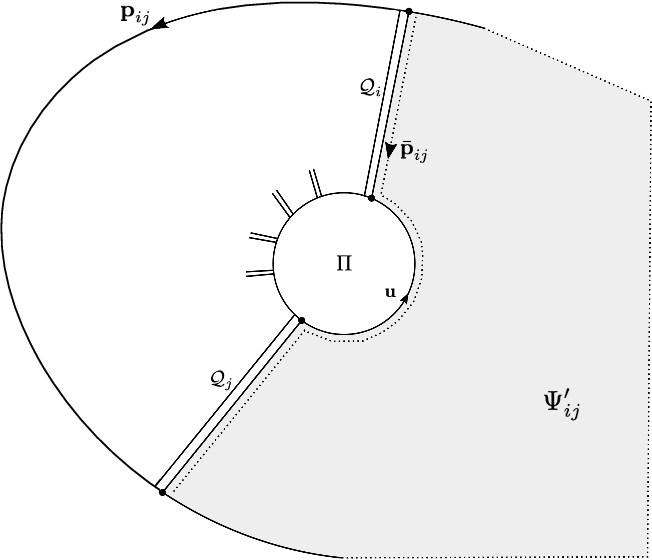}
\caption{Subdiagrams and paths in $\Delta$}
\label{fig-clove-paths}
\end{figure}


\begin{lemma}[Lemma 10.8 of \cite{W}] \label{7.23} 

For $1\leq i\leq L-5$, $|\textbf{p}_{i,i+1}|_q<3K_0$.

\end{lemma}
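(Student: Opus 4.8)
\textbf{Proof plan for \Cref{7.23}.}

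The statement asserts that the boundary portion $\textbf{p}_{i,i+1}$ of a ``small'' clove $\Psi_{i,i+1}$ between two consecutive $t$-spokes $\pazocal{Q}_i$, $\pazocal{Q}_{i+1}$ has at most $3K_0$ $q$-edges. The plan is to argue by contradiction: suppose $|\textbf{p}_{i,i+1}|_q\geq 3K_0$. The key observation is that each $q$-edge on $\textbf{p}_{i,i+1}$ is the end of a maximal $q$-band of $\Psi_{i,i+1}$, and since $\Psi_{i,i+1}$ is a diskless subdiagram of $\Delta$ (by the choice of $\Pi$ via \Cref{graph}), each such maximal $q$-band is in fact a $q$-band of the comb structure emanating from the handle $\pazocal{Q}_i$ or $\pazocal{Q}_{i+1}$. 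More precisely, $\Psi_{i,i+1}$ has a natural structure relative to its two handles; I would use \Cref{7.22} (or directly the comb-cutting discussion preceding \Cref{tight subcomb}) to see that the maximal $\theta$-bands crossing these handles organize $\Psi_{i,i+1}$ so that any long string of $q$-edges on $\textbf{p}_{i,i+1}$ forces the existence of a subcomb whose handle is one of those $q$-bands.

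The second step is to invoke \Cref{7.21}, which bounds the basic width of any subcomb of $\Delta$ contained in $\Psi$ by $K_0$. If $|\textbf{p}_{i,i+1}|_q\geq 3K_0$, then among the maximal $q$-bands ending on $\textbf{p}_{i,i+1}$ (other than the at most two that are the handles $\pazocal{Q}_i,\pazocal{Q}_{i+1}$ themselves), there must be a subcomb of $\Delta$ contained in $\Psi$ whose basic width exceeds $K_0$ — this is the standard pigeonhole-on-combs argument. Concretely, I would pick a maximal $\theta$-band $\pazocal{T}$ of $\Psi_{i,i+1}$ lying closest to $\partial\Delta$ among those whose base is long, apply \Cref{rim theta-bands} to conclude quasi-rim $\theta$-bands have base length $>K$, and then descend through the comb structure (as in the proof of \Cref{tight subcomb}) to locate a subcomb of basic width greater than $K_0$, contradicting \Cref{7.21}. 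Alternatively — and this is cleaner — I would note that $\textbf{p}_{i,i+1}$ together with $\textbf{bot}(\pazocal{Q}_i)$ and $\textbf{top}(\pazocal{Q}_{i+1})$ (and the arc of $\partial\Pi$ between them) bounds a comb-like region, and that the derivative subcombs inside it each have basic width at most $K_0$ by \Cref{7.21}; summing their handle contributions to $\textbf{p}_{i,i+1}$, together with the two handles $\pazocal{Q}_i$ and $\pazocal{Q}_{i+1}$, gives $|\textbf{p}_{i,i+1}|_q\leq 2K_0 + (\text{constant})$, and tuning constants with the inequality $K_0 = 22L+1$ yields the bound $3K_0$.

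The main obstacle is setting up the comb/subcomb bookkeeping correctly: one must be careful that the region $\Psi_{i,i+1}$, though diskless and hence $M$-minimal, may not itself be a single comb, so the argument has to decompose it into (derivative) subcombs hanging off the two handles and control how many $q$-edges each contributes to $\textbf{p}_{i,i+1}$. This is exactly the kind of combinatorial surgery carried out in \cite{W}; since the excerpt states this is ``Lemma 10.8 of \cite{W}'' and the surrounding lemmas (\Cref{7.20}, \Cref{7.21}, \Cref{7.22}, \Cref{rim theta-bands}) are the verbatim analogues of their counterparts there, the proof transfers essentially word-for-word. Accordingly I would simply write: the proof is identical to that of Lemma 10.8 in \cite{W}, applying \Cref{7.21} and \Cref{7.22} in place of their analogues, and noting that none of the numerical changes to the parameters (which affect only $N_1,\dots,N_4$ and the functions $\phi_C$) enter this argument.
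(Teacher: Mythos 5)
Your bottom line --- that the proof is the verbatim transfer of Lemma 10.8 of \cite{W}, using \Cref{7.20}, \Cref{7.21}, and \Cref{7.22} in place of their analogues --- is exactly what the paper does (it records no proof beyond the citation), so the deferral itself is fine. The trouble is with the sketch you offer of what that transferred argument actually is: as written, the central counting step does not close.

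Concretely, the maximal $q$-bands starting on the $q$-edges of $\textbf{p}_{i,i+1}$ split into two kinds: those that end on $\partial\Pi$ (spokes of $\Pi$, of which there are at most $N-1$ since only the base letters of $B_4(\ell)$ lie between consecutive $t$-letters), and those that end a second time on $\textbf{p}_{i,i+1}$. Your sketch ignores the first kind entirely, and for the second kind you assert a ``pigeonhole'' producing a single subcomb of basic width exceeding $K_0$, or alternatively you ``sum the handle contributions'' of the subcombs to get $2K_0$. Neither step is justified: a priori the cut-off regions could form many pairwise disjoint subcombs, each of basic width at most $K_0$ and each contributing up to $2K_0$ $q$-edges, so their total contribution is not bounded by $2K_0$ and no single wide subcomb need exist. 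The missing ingredient is \Cref{7.20}(1). One first checks, via \Cref{7.22}(1), that each region cut off by a $q$-band with both ends on $\textbf{p}_{i,i+1}$ really is a subcomb (any maximal $\theta$-band meeting it must cross $\pazocal{Q}_1$ or $\pazocal{Q}_{L-4}$ and hence must exit through the cutting $q$-band). Then, if there were two distinct \emph{outermost} such subcombs, two adjacent ones would have handle ends joined by a subpath of $\textbf{p}_{i,i+1}$ whose $q$-edges are all spoke ends, hence at most $N-1<c_0$ of them, contradicting \Cref{7.20}(1). So all these bands nest inside a single subcomb, which by \Cref{7.21} and \Cref{7.20}(2) carries at most $2K_0$ $q$-edges on its boundary; adding the at most $N-1<K_0$ spokes gives $|\textbf{p}_{i,i+1}|_q<3K_0$. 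Your sketch names the right lemmas in passing but never deploys \Cref{7.20}(1), which is the step that rules out the ``many disjoint small subcombs'' configuration; without it the claimed bound does not follow.
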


Let $H_1,\dots,H_{L-4}$ be the histories of the spokes $\pazocal{Q}_1,\dots,\pazocal{Q}_{L-4}$, respectively, read starting from the disk $\Pi$. Further, let $h_i=\|H_i\|$ for all $i$. Lemma \ref{7.22} then implies the inequalities
$$h_1\geq h_2\geq\dots\geq h_r; \ \ h_{r+1}\leq\dots\leq h_{L-4}$$ 

where $(L-1)/2-3\leq r\leq(L-1)/2$. It then follows that $H_{i+1}$ is a prefix of $H_i$ for $i=1,\dots,r-1$ while $H_j$ is a prefix of $H_{j+1}$ for $j=r+1,\dots,L-5$.

Let $W$ be the accepted configuration corresponding to $\lab(\partial\Pi)$. Then, $W\equiv W(1)W(2)\dots W(L)$, where $W(2),\dots,W(L)$ are all copies of the same configuration $V$ of $\textbf{M}_5$. 

As $\ell(W)\leq1$, the definition of the rules immediately implies $|W(1)|_a\leq|V|_a$.  As such, the proof of the next statement adapts naturally from that of the analogous statement in \cite{W}.

\begin{lemma}[Compare with Lemma 10.9 of \cite{W}] \label{7.24} Suppose $i\leq r$ and $j\geq r+1$.

\begin{enumerate}[label=({\arabic*})]

\item $|\textbf{p}_{ij}|\geq|\textbf{p}_{ij}|_\theta+|\textbf{p}_{ij}|_q\geq h_i+h_j+N(j-i)+1$

\item $|\bar{\textbf{p}}_{ij}|\leq h_i+h_j+N(L-j+i)+(L-j+i)\delta|V|_a-1$

\end{enumerate}

\end{lemma}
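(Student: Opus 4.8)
The plan is to prove both inequalities by carefully accounting for the $\theta$-edges, $q$-edges, and $a$-edges along the relevant boundary paths, using the prefix relations among the histories $H_1,\dots,H_{L-4}$ established just before the statement and the structure of the disk $\Pi$.

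For part (1): First I would observe that $\textbf{p}_{ij}$ is the subpath of $\partial\Delta$ shared with $\partial\Psi_{ij}$, and it runs along the outer boundary of the clove from the end of $\pazocal{Q}_i$ on $\partial\Delta$ to the end of $\pazocal{Q}_j$ on $\partial\Delta$. Its $\theta$-edges come from the $\theta$-bands of $\Psi_{ij}$ that end on $\partial\Delta$ within this arc, and since $\Psi_{ij}$ is diskless, every maximal $\theta$-band of $\Psi_{ij}$ is a subband of a maximal $\theta$-band of $\Psi$ crossing $\pazocal{Q}_i$ or $\pazocal{Q}_j$; by the definition of the spoke histories, the number of $\theta$-edges contributed is exactly $h_i+h_j$ (each of the $h_i$ cells of $\pazocal{Q}_i$ and the $h_j$ cells of $\pazocal{Q}_j$ gives a $\theta$-band ending on $\textbf{p}_{ij}$, and since $i\leq r\leq j$ the bands crossing $\pazocal{Q}_i$ are disjoint from those crossing $\pazocal{Q}_j$ by Lemma \ref{7.22}). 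For the $q$-edges: between consecutive spokes $\pazocal{Q}_\ell$ and $\pazocal{Q}_{\ell+1}$ the arc $\textbf{p}_{\ell,\ell+1}$ contains at least the $q$-edges of a full copy of $B_4$ together with the $\{t(\ell)\}$-edge, giving at least $N$ $q$-edges, plus one more $q$-edge at the very start (the end of $\pazocal{Q}_i$ on $\partial\Delta$); summing over $\ell=i,\dots,j-1$ yields at least $N(j-i)+1$ $q$-edges. Combining via Lemma \ref{lengths}(a) (which gives $|\textbf{p}_{ij}|\geq|\textbf{p}_{ij}|_\theta+|\textbf{p}_{ij}|_q$ since the number of $a$-edges is at least the number of $\theta$-edges is not needed here — I simply use that each $\theta$-edge and each $q$-edge has length $1$) gives the stated bound.

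For part (2): The path $\bar{\textbf{p}}_{ij}=\textbf{bot}(\pazocal{Q}_i)^{-1}\textbf{u}_{ij}^{-1}\textbf{top}(\pazocal{Q}_j)$ runs along the inner side: down $\pazocal{Q}_i$ toward the disk, around the portion $\textbf{u}_{ij}$ of $\partial\Pi$, and up $\pazocal{Q}_j$. The $\theta$-edges again total $h_i+h_j$ (one per cell of each spoke). The portion $\textbf{u}_{ij}$ of $\partial\Pi$ consists of the complement of the arc between the feet of $\pazocal{Q}_i$ and $\pazocal{Q}_j$ on $\partial\Pi$, which covers the components $W(j), W(j+1),\dots,W(L),W(1),\dots,W(i)$ of the configuration $W=\lab(\partial\Pi)$ together with $L-j+i$ of the $t$-edges (one $t(\ell)$-edge and the enclosing base letters for each such index); this gives $N(L-j+i)$ $q$-edges and $\sum_{\ell}|W(\ell)|_a$ $a$-edges. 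Since $W(2),\dots,W(L)$ are copies of $V$ and $|W(1)|_a\leq|V|_a$, the total $a$-length along $\textbf{u}_{ij}$ is at most $(L-j+i)|V|_a$; each $a$-edge has length $\delta$. There is also the matter of the $-1$, which accounts for the $q$-edge shared between $\textbf{bot}(\pazocal{Q}_i)$ (or $\textbf{top}(\pazocal{Q}_j)$) and the beginning of $\textbf{u}_{ij}$ being counted once rather than twice in the concatenation. Assembling these via Lemma \ref{lengths}(c) (the length of a concatenation is at most the sum of lengths) yields the claimed upper bound.

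The main obstacle I anticipate is the bookkeeping in part (2): correctly identifying which base letters, $t$-letters, and tape-letter sectors lie on the arc $\textbf{u}_{ij}$ of $\partial\Pi$ versus its complement, and getting the constant $N(L-j+i)$ and the coefficient $(L-j+i)\delta|V|_a$ exactly right rather than off by a bounded additive error. This requires pinning down precisely how the feet of the spokes $\pazocal{Q}_i,\pazocal{Q}_j$ sit among the $t$-spokes $\pazocal{Q}_1,\dots,\pazocal{Q}_{L-4}$ of $\Pi$ and using that these are consecutive in the cyclic ordering on $\partial\Pi$; the inequality $|W(1)|_a\leq|V|_a$ (valid because $\ell(W)\leq1$, so $W$ differs from a configuration whose components are all copies of $V$ only by possibly emptying the `special' input sector) is what allows the clean coefficient $(L-j+i)$ instead of separately tracking $W(1)$. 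The edge-count of part (1) is comparatively routine once Lemma \ref{7.22} is invoked.
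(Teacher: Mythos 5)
Your proposal is correct and follows essentially the same route as the paper, which itself only remarks that the proof "adapts naturally" from Lemma 10.9 of \cite{W} once one notes $|W(1)|_a\leq|V|_a$: part (1) is the standard count of $\theta$-edges via Lemma \ref{7.22} (bands through $\pazocal{Q}_i$ and $\pazocal{Q}_j$ are disjoint and must end on $\textbf{p}_{ij}$) plus the $N(j-i)+1$ spokes of $\Pi$ ending there, and part (2) is the length bookkeeping along $\textbf{bot}(\pazocal{Q}_i)$, $\textbf{u}_{ij}$, and $\textbf{top}(\pazocal{Q}_j)$ using Lemma \ref{lengths}(b),(c) and the bound $|W(1)|_a\leq|V|_a$, which is exactly the adaptation the paper has in mind. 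The only blemish is the off-by-one in your list of components covered by $\textbf{u}_{ij}$ (it should be $L-j+i$ of them, matching the coefficients you ultimately use), which does not affect the conclusion.
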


%
%
%
%


\begin{lemma}[Lemma 10.10 of \cite{W}] \label{7.25}

If $1\leq i<j\leq L-4$ such that $j-i\geq L/2$, then $\mu(\Delta)-\mu(\Psi_{ij}')>-2J|\partial\Delta|(h_i+h_j)\geq-2J|\partial\Delta||\textbf{p}_{ij}|$.

\end{lemma}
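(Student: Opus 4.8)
The plan is to estimate how the mixture changes when passing from $\Delta$ to $\Psi_{ij}'$, i.e.\ when the subdiagram $\bar\Delta_{ij}$ (consisting of the disk $\Pi$ together with the clove $\Psi_{ij}$) is cut off along the path $\bar{\textbf{p}}_{ij}$. The necklace $O'$ corresponding to $\Psi_{ij}'$ is obtained from the necklace $O$ corresponding to $\Delta$ by replacing the beads contributed by $\textbf{p}_{ij}$ (the part of $\partial\Delta$ inside the clove) with the beads contributed by $\bar{\textbf{p}}_{ij}$ (the part of $\partial\Pi$ together with the sides of the handles $\pazocal{Q}_i$, $\pazocal{Q}_j$). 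First I would count these beads: by Lemma \ref{7.24}(1) the number of $\theta$-edges (white beads) removed from the necklace by deleting $\textbf{p}_{ij}$ is at most $|\textbf{p}_{ij}|_\theta$, which is bounded below by $h_i+h_j$; meanwhile the $\theta$-edges added along $\bar{\textbf{p}}_{ij}$ number exactly $h_i+h_j$ (the sides of $\pazocal{Q}_i$ and $\pazocal{Q}_j$ have $h_i$ and $h_j$ $\theta$-edges respectively, and $\textbf{u}_{ij}$, lying on $\partial\Pi$, contributes no $\theta$-edges since $t$-sectors have empty tape alphabet).

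Next I would track the mixture through a sequence of elementary bead operations, using Lemma \ref{mixtures}. Deleting a white bead from a necklace with $x$ white beads decreases each $\#P_j$ by at most $2x$, hence decreases $\mu_J$ by at most $2Jx$ (part (2)); deleting a black bead only decreases $\mu_J$ (part (3)); adding a white bead back increases $\mu_J$ by at most $2Jx'$ where $x'$ is the new white-bead count (again part (2), read in reverse); adding a black bead only increases $\mu_J$ (part (3)). Throughout, the total number of white beads never exceeds $|\partial\Delta|$ (indeed $\le \tfrac12|\partial\Delta|$), since every white bead corresponds to a $\theta$-edge of the current boundary and the boundary length of every intermediate necklace is at most $|\partial\Delta|$ — this is where the hypothesis $j-i\ge L/2$ enters, guaranteeing (via Lemma \ref{7.22} and the structure of the clove, so that the handles $\pazocal{Q}_i,\pazocal{Q}_j$ together carry all the $\theta$-bands) that no intermediate boundary is longer than $|\partial\Delta|$. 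Since at most $h_i+h_j$ white beads are removed and at most $h_i+h_j$ are added, the net change satisfies $\mu(\Delta)-\mu(\Psi_{ij}') \ge -2J|\partial\Delta|(h_i+h_j)$.

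Finally, by Lemma \ref{7.24}(1) we have $|\textbf{p}_{ij}| \ge |\textbf{p}_{ij}|_\theta + |\textbf{p}_{ij}|_q \ge h_i+h_j+N(j-i)+1 > h_i+h_j$, so $-2J|\partial\Delta|(h_i+h_j) \ge -2J|\partial\Delta||\textbf{p}_{ij}|$, which yields the second inequality of the statement and completes the proof. The main obstacle I anticipate is making the white-bead count bound uniform over all intermediate necklaces produced during the cut-off — one must be careful that the order in which beads are removed/added does not transiently inflate the boundary; the hypothesis $j-i \ge L/2$ together with Lemma \ref{7.22}(2) (the $\theta$-bands crossing $\pazocal{Q}_1$ and $\pazocal{Q}_{L-4}$ are "separated") is exactly what rules this out, and spelling out that bookkeeping carefully is the only delicate point.
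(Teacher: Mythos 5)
Your overall strategy (compare the necklaces of $\Delta$ and $\Psi_{ij}'$ bead by bead) is the right one, but there is a genuine gap in the accounting: you never bound the increase of the mixture caused by the \emph{black} beads that are added. The arc $\bar{\textbf{p}}_{ij}$ contains the subpath $\textbf{u}_{ij}$ of $\partial\Pi$, which contributes on the order of $N(L-j+i)$ new $q$-edges, i.e.\ new black beads. You correctly note that Lemma \ref{mixtures}(3) says adding a black bead only increases $\mu_J$ --- but that lemma gives no upper bound on the increase (a single black bead inserted between many pairs of white beads with fewer than $J$ black beads between them can raise $\mu_J$ by roughly the square of the white-bead count). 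Your final tally, ``at most $h_i+h_j$ white beads are removed and at most $h_i+h_j$ are added, hence the net change is at least $-2J|\partial\Delta|(h_i+h_j)$,'' accounts only for the white-bead operations, so the claimed bound does not follow from the steps you list. (A smaller inaccuracy: the number of white beads removed is $|\textbf{p}_{ij}|_\theta$, which is \emph{at least} $h_i+h_j$, not at most; this is harmless since removals only decrease $\mu$, but it signals that the bookkeeping is off.)

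The missing idea is a positional comparison of black beads, and it is exactly here that the hypothesis $j-i\ge L/2$ is used --- not, as you suggest, to control the white-bead count of intermediate necklaces (that count never exceeds $|\partial\Delta|_\theta\le|\partial\Delta|$ automatically, since only $h_i+h_j\le|\textbf{p}_{ij}|_\theta$ white beads are added while $|\textbf{p}_{ij}|_\theta$ are removed). Write $\mu_J(O)=\sum_{(o_1,o_2)}\min\bigl(J,\,b(o_1,o_2)\bigr)$, the sum over ordered pairs of white beads of the (capped) number of black beads separating them. For a pair of white beads that survive the surgery, the arc between them either misses the replaced region (no change) or contains all of it; in the latter case the black count changes by $|\bar{\textbf{p}}_{ij}|_q-|\textbf{p}_{ij}|_q$, and since $|\textbf{p}_{ij}|_q\ge N(j-i)+1$ while $|\bar{\textbf{p}}_{ij}|_q$ is at most about $N(L-j+i)\le N(j-i)$ precisely because $j-i\ge L/2$, this change is nonpositive. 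Hence all pairs of surviving white beads contribute nonnegatively to $\mu(\Delta)-\mu(\Psi_{ij}')$, pairs involving a removed white bead contribute nonnegatively as well, and the only negative contribution comes from pairs involving one of the $h_i+h_j$ new white beads on the sides of $\pazocal{Q}_i$ and $\pazocal{Q}_j$: there are fewer than $2(h_i+h_j)|\partial\Delta|$ such ordered pairs, each contributing at most $J$, which yields the stated strict inequality. Your deduction of the second inequality from Lemma \ref{7.24}(1) is fine.
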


\begin{lemma}[Lemma 10.11 of \cite{W}] \label{7.26}

If $1\leq i<j\leq L-4$ such that $j-i\geq L/2$, then $$|\textbf{p}_{ij}|+\sigma_\lambda(\bar{\Delta}_{ij}^*)\leq|\textbf{p}_{ij}|+\sigma_\lambda(\Delta^*)-\sigma_\lambda((\Psi_{ij}')^*)<(1+\eps)|\bar{\textbf{p}}_{ij}|$$ for $\eps=1/\sqrt{N_4}$.

\end{lemma}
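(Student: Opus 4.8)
The plan is to establish the displayed chain of inequalities directly from the combinatorial estimates already available. The first inequality, $|\textbf{p}_{ij}|+\sigma_\lambda(\bar{\Delta}_{ij}^*)\leq|\textbf{p}_{ij}|+\sigma_\lambda(\Delta^*)-\sigma_\lambda((\Psi_{ij}')^*)$, reduces (after cancelling $|\textbf{p}_{ij}|$) to $\sigma_\lambda(\bar{\Delta}_{ij}^*)\leq\sigma_\lambda(\Delta^*)-\sigma_\lambda((\Psi_{ij}')^*)$. Here I would note that cutting $\Delta$ along $\bar{\textbf{p}}_{ij}$ separates it into the two weakly minimal pieces $\bar{\Delta}_{ij}$ and $\Psi_{ij}'$; every disk of $\Delta$ lies in $\bar{\Delta}_{ij}$ (since $\Psi_{ij}'$ is one of the diskless-bounded cloves-plus-stem complements and contains no disk by the choice of $\Pi$ in Lemma \ref{graph}), so $\Psi_{ij}'$ contributes nothing to the stem-shaft count of $\Delta$ except through its own disks — but it has none. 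More carefully, each $\lambda$-shaft of $\Delta^*$ is a subband of a $t$-spoke of some disk of $\Delta$; those shafts either survive entirely into $\bar{\Delta}_{ij}^*$ or are cut by $\bar{\textbf{p}}_{ij}$. Since the cut $\bar{\textbf{p}}_{ij}$ runs along $\textbf{bot}(\pazocal{Q}_i)$, a subpath of $\partial\Pi$, and $\textbf{top}(\pazocal{Q}_j)$, the only shafts affected are those contained in $\pazocal{Q}_i$ and $\pazocal{Q}_j$ themselves, and these are still present in $\bar{\Delta}_{ij}^*$; meanwhile $\sigma_\lambda((\Psi_{ij}')^*)=0$ because $\Psi_{ij}'$ contains no disk. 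Thus $\sigma_\lambda(\bar{\Delta}_{ij}^*)=\sigma_\lambda(\Delta^*)$ and $\sigma_\lambda((\Psi_{ij}')^*)=0$, giving the first inequality with equality. (I would double-check that $\Psi_{ij}'$ is itself weakly minimal or diskless; by Lemma \ref{weakly minimal}(a) applied to $\bar\Delta_{ij}$ and the observation that $\Psi_{ij}'$ carries no disk, this is routine.)

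For the strict second inequality, $|\textbf{p}_{ij}|+\sigma_\lambda(\Delta^*)<(1+\eps)|\bar{\textbf{p}}_{ij}|$ with $\eps=1/\sqrt{N_4}$, I would combine three ingredients: Lemma \ref{7.24}(1), which gives $|\textbf{p}_{ij}|\geq h_i+h_j+N(j-i)+1$; Lemma \ref{7.24}(2), which gives $|\bar{\textbf{p}}_{ij}|\leq h_i+h_j+N(L-j+i)+(L-j+i)\delta|V|_a-1$; and Lemma \ref{weakly minimal}(b), which gives $\sigma_\lambda(\Delta^*)\leq C_1|\partial\Delta|$. The key geometric fact is that $j-i\geq L/2$ (hypothesis of the lemma) forces $L-j+i\leq L/2\leq j-i$, so $N(L-j+i)\leq N(j-i)$; combined with the fact that $|\bar{\textbf{p}}_{ij}|$ contains the full $a$-length contribution $(L-j+i)\delta|V|_a$ while picking up at least $N(L-j+i)$ from the $L-j+i$ sectors of $\bar{\textbf{p}}_{ij}$ not on $\partial\Delta$, one sees $|\bar{\textbf{p}}_{ij}|$ is comparable to $h_i+h_j+N(j-i)$ up to the $a$-length term. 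The point is that $|\textbf{p}_{ij}|\leq|\bar{\textbf{p}}_{ij}|+2$ roughly (the two cloves share the spoke labels $H_i,H_j$ and differ only in which arc of $\partial\Pi$ they traverse and in the $a$-letters along $\textbf{u}_{ij}$), so $|\textbf{p}_{ij}|\leq|\bar{\textbf{p}}_{ij}|$ up to a small additive error, and $\sigma_\lambda(\Delta^*)\leq C_1|\partial\Delta|$ with $|\partial\Delta|$ itself controlled by $|\bar{\textbf{p}}_{ij}|$ plus the non-$\Psi_{ij}'$ part of $\partial\Delta$; here I would need $|\partial\Delta|\leq$ (some bounded multiple of) $|\bar{\textbf{p}}_{ij}|$, which should follow since $\partial\Delta$ decomposes as $\textbf{p}_{ij}$ together with the boundary of $\Psi_{ij}'$, and $\Psi_{ij}'$ was chosen diskless so its contribution is absorbed. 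Then the parameter choice $N_4>>C_1$ (indeed $\sqrt{N_4}>>C_1$) makes $C_1|\partial\Delta|\leq\eps|\bar{\textbf{p}}_{ij}|$, yielding the strict inequality.

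The main obstacle I anticipate is the bookkeeping in the second inequality: carefully tracking how $|\partial\Delta|$, $|\textbf{p}_{ij}|$, and $|\bar{\textbf{p}}_{ij}|$ relate, in particular making sure that the $a$-length term $(L-j+i)\delta|V|_a$ appearing in the bound for $|\bar{\textbf{p}}_{ij}|$ does not sabotage the comparison — it works precisely because this term is nonnegative and appears with a plus sign in the upper bound for $|\bar{\textbf{p}}_{ij}|$, so it only helps. One has to be slightly careful that $\delta|V|_a$ could be large; but $|\textbf{p}_{ij}|$ does not contain a corresponding positive $a$-length term that would grow faster, and the inequality is between $|\textbf{p}_{ij}|+\sigma_\lambda(\Delta^*)$ and a strictly larger multiple of $|\bar{\textbf{p}}_{ij}|$, so any slack in $|\bar{\textbf{p}}_{ij}|$ is to our advantage. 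I expect the proof to mirror very closely the proof of Lemma 10.11 of \cite{W}, with the only genuinely new input being the replacement of the bound $|W(1)|_a\leq|V|_a$ (used to justify Lemma \ref{7.24}) — which the excerpt has already noted follows from $\ell(W)\leq1$ and the definition of the rules — so I would simply cite Lemmas \ref{7.24}, \ref{weakly minimal}(b), and the parameter inequality $N_4>>C_1$, together with the diskless-ness of $\Psi_{ij}'$, and assemble the estimate.
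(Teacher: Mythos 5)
Your proposed argument has a genuine gap in both halves. For the first inequality, you assert that $\Psi_{ij}'$ contains no disks "by the choice of $\Pi$ in Lemma \ref{graph}." That is not what Lemma \ref{graph} gives: it guarantees only that the subdiagrams between \emph{consecutive} spokes (the cloves $\Psi_{i,i+1}$, and hence $\Psi_{ij}\subset\Psi$) are diskless. The complement $\Psi_{ij}'$ is the rest of $\Delta$ and may contain arbitrarily many disks — the paper's proof explicitly splits into the two cases "$\Psi_{ij}'$ contains a disk" (apply the inductive hypothesis) and "$\Psi_{ij}'$ contains no disks" (apply Lemma \ref{diskless}). So your conclusions $\sigma_\lambda((\Psi_{ij}')^*)=0$ and $\sigma_\lambda(\bar\Delta_{ij}^*)=\sigma_\lambda(\Delta^*)$ are unjustified. (The first inequality itself is still fine, but for the subadditivity reason — shafts of the two disjoint pieces are shafts of $\Delta$, via Lemma \ref{weakly minimal}(a) — not for the reason you give.)

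The more serious problem is the second, strict inequality. You try to derive $|\textbf{p}_{ij}|+\sigma_\lambda(\Delta^*)-\sigma_\lambda((\Psi_{ij}')^*)<(1+\eps)|\bar{\textbf{p}}_{ij}|$ directly from Lemma \ref{7.24} and parameter choices, but Lemma \ref{7.24}(1) gives only a \emph{lower} bound on $|\textbf{p}_{ij}|$, and there is no a priori \emph{upper} bound on $|\textbf{p}_{ij}|$ (an arc of $\partial\Delta$) in terms of $|\bar{\textbf{p}}_{ij}|$; your interpolated claim "$|\textbf{p}_{ij}|\leq|\bar{\textbf{p}}_{ij}|+2$ roughly" is essentially the statement to be proved and does not follow from anything cited. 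Likewise $|\partial\Delta|$ is not controlled by a bounded multiple of $|\bar{\textbf{p}}_{ij}|$, since $\partial\Psi_{ij}'$ can be huge. The inequality is a property of the \emph{minimal counterexample} $\Delta$ and must be proved by contradiction: set $y=|\textbf{p}_{ij}|+\sigma_\lambda(\Delta^*)-\sigma_\lambda((\Psi_{ij}')^*)$ and $d=y-|\bar{\textbf{p}}_{ij}|$, assume $d\geq\eps|\bar{\textbf{p}}_{ij}|$, deduce $d\geq\eps y/2$ and $p(\Delta)-p(\Psi_{ij}')\geq d$, apply the inductive hypothesis (or Lemma \ref{diskless}) to $\Psi_{ij}'$ together with Lemma \ref{7.25} for the mixture defect, bound $\text{wt}(\Pi)\leq C_1L^2y^2$ and $\text{wt}_G(\Psi_{ij})$ via Lemma \ref{diskless} (here Lemma \ref{7.24} is used to show $|\partial\Psi_{ij}|\leq(L+2)y$), and assemble these with Lemma \ref{G-weight subdiagrams} to contradict the assumption that $\Delta$ violates the weight bound. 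Without this contradiction scheme the lemma cannot be obtained.
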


\begin{proof}

Set $y=|\textbf{p}_{ij}|+\sigma_\lambda(\Delta^*)-\sigma_\lambda((\Psi_{ij}')^*)$ and $d=y-|\bar{\textbf{p}}_{ij}|$. Suppose $d\geq\eps|\bar{\textbf{p}}_{ij}|>0$.

As in the proof of the analogous statement in \cite{W}, \Cref{lengths} implies $d\geq\frac{\eps y}{2}$ and $p(\Delta)-p(\Psi_{ij}')\geq d$.

%
%
Hence, if $\Psi_{ij}'$ contains a disk, then we may apply the inductive hypothesis to it. Conversely, if $\Psi_{ij}'$ contains no disks, then we may apply Lemma \ref{diskless}. In either case,
$$\text{wt}_G(\Psi_{ij}')\leq \phi_{N_4}(p(\Delta)-d)+N_3\mu(\Psi_{ij}')f_2(N_3p(\Psi_{ij}'))$$
Noting that $d\leq p(\Delta)$, Lemma \ref{7.25} then implies
\begin{equation} \label{7.26 G-weight of Psi'}
\text{wt}_G(\Psi_{ij}')\leq \phi_{N_4}(p(\Delta))-N_4dp(\Delta)f_2(N_4p(\Delta))+N_3\left(\mu(\Delta)+2J|\partial\Delta||\textbf{p}_{ij}|\right)f_2(N_3p(\Delta))
\end{equation}
Note that $|\partial\Pi|\leq L|\bar{\textbf{p}}_{ij}|\leq Ly$, so that
\begin{equation} \label{7.26 weight of Pi}
\text{wt}(\Pi)\leq C_1L^2y^2
\end{equation}
Further, as $j-i\geq L/2$ implies $i\leq r<r+1\leq j$ by Lemma \ref{7.22}(2). So, Lemma \ref{7.24} implies 
$$|\bar{\textbf{p}}_{ij}|<|\textbf{p}_{ij}|+NL+(L-1)\delta|V|_a\leq|\textbf{p}_{ij}|+|\partial\Pi|$$ 
and hence $|\partial\Psi_{ij}|<2|\textbf{p}_{ij}|+|\partial\Pi|\leq(L+2)y$ by Lemma \ref{lengths}(c). Thus, by Lemma \ref{diskless},
\begin{equation} \label{7.26 G-weight of Psi}
\text{wt}_G(\Psi_{ij})\leq\phi_{N_2}((L+2)y)+N_1\mu(\Psi_{ij})f_2(N_1(L+2)y)
\end{equation}

As Lemma \ref{G-weight subdiagrams} implies $\text{wt}_G(\Delta)\leq\text{wt}_G(\Psi_{ij}')+\text{wt}(\Pi)+\text{wt}_G(\Psi_{ij})$, it follows from (\ref{7.26 G-weight of Psi'}), (\ref{7.26 weight of Pi}), and (\ref{7.26 G-weight of Psi}) that it suffices to show that 
%
\begin{equation} \label{7.26 suffices 1}
N_4dp(\Delta)f_2(N_4p(\Delta))\geq 2N_3Jy|\partial\Delta|f_2(N_3p(\Delta))+2\phi_{N_2}((L+2)y)+N_1\mu(\Psi_{ij})f_2(N_1(L+2)y)
\end{equation}
Note that $p(\Delta)=|\partial\Delta|+\sigma_\lambda(\Delta^*)\geq|\textbf{p}_{ij}|+\sigma_\lambda(\Delta^*)$, so that $p(\Delta)\geq\max(|\partial\Delta|,y)$. Hence, $$N_4dp(\Delta)\geq\frac{\eps}{2}N_4y\max(|\partial\Delta|,y)=\frac{1}{2}\sqrt{N_4}\max(y|\partial\Delta|,y^2)$$
The parameter choices $N_4>>N_3>>N_2>>C_1>>J>>L$ then imply
$$\frac{1}{2}N_4dp(\Delta)f_2(N_4p(\Delta))\geq2N3Jy|\partial\Delta|f_2(N_3p(\Delta))+2\phi_{N_2}((L+2)y)$$ 
Hence, by (\ref{7.26 suffices 1}) it suffices to show that
\begin{equation} \label{7.26 suffices 2}
N_4dp(\Delta)f_2(N_4p(\Delta))\geq2N_1\mu(\Psi_{ij})f_2(N_1(L+2)y)
\end{equation}
As each $\theta$-edge of $\partial\Psi_{ij}$ must be in its own factor of any decomposition of $\lab(\Psi_{ij})$, the number of white beads on the necklace corresponding to $\partial\Psi_{ij}$ is at most $|\partial\Psi_{ij}|\leq(L+2)y$. So, Lemma \ref{mixtures}(a) implies
$$\mu(\Psi_{ij})\leq J(L+2)^2y^2$$
But as above, $N_4dp(\Delta)\geq\frac{1}{2}\sqrt{N_4}y^2$, so that the parameter choices $N_4>>J>>L$ imply (\ref{7.26 suffices 2}), and so the statement.

\end{proof}

For $i=1,\dots,L-5$, if the pair of adjacent $t$-letters associated to $\pazocal{Q}_i$ and to $\pazocal{Q}_{i+1}$ are $t(L)$ and $t(2)$ (or vice versa), then $\Psi_{i,i+1}$ is called the \textit{distinguished clove}. As $q$-bands cannot cross, the distinguished clove contains a cutting $q$-band $\pazocal{Q}_i'$ formed by the $q$-spoke of $\Pi$ corresponding to the base letter $t(1)$. Let $\Lambda_{i,i+1}'$ (respectively $\Lambda_{i,i+1}''$) be the subdiagram bounded by $\pazocal{Q}_i'$ and the $t$-spoke corresponding to $t(L)$ (respectively $t(2)$). Define $\textbf{p}_{i,i+1}'$ (respectively $\textbf{p}_{i,i+1}''$) as the subpath of $\partial\Lambda_{i,i+1}'$ (respectively $\partial\Lambda_{i,i+1}''$) shared with $\partial\Delta$, so that $\textbf{p}_{i,i+1}$ is the concatenation of these two paths along a shared $q$-edge.

Suppose $\Psi_{i,i+1}$ is not the distinguished clove. Then, let $\textbf{q}_{i,i+1}$ be the shortest path in $\Psi_{i,i+1}$ homotopic to $\textbf{p}_{i,i+1}$ and having the same first and last edges. 

If $\Psi_{i,i+1}$ is the distinguished clove, then define $\textbf{q}_{i,i+1}'$ and $\textbf{q}_{i,i+1}''$ as the analgous shortest paths in $\Lambda_{i,i+1}'$ and $\Lambda_{i,i+1}''$. Then, let $\textbf{q}_{i,i+1}$ be the concatenation of $\textbf{q}_{i,i+1}'$ and $\textbf{q}_{i,i+1}''$ along their shared $q$-edge.

For $1\leq i<j\leq L-4$, let $\textbf{q}_{ij}$ be the concatenation of the paths $\textbf{q}_{i,i+1},\dots,\textbf{q}_{j-1,j}$ along their shared $q$-edges.

Then, let $\Psi_{ij}^0$ be the diagram obtained from $\Psi_{ij}$ by replacing $\textbf{p}_{ij}$ in the contour with $\textbf{q}_{ij}$, i.e by removing any cells between $\textbf{q}_{ij}$ and $\textbf{p}_{ij}$. Similarly define $\Psi^0$, $(\Lambda_{i,i+1}')^0$,  and $(\Lambda_{i,i+1}'')^0$.

The next statement, a direct analogue of \Cref{7.27}(1), follows in just the same way as the analogous statements in \cite{O18}, \cite{OS19}, and \cite{W}.


\begin{lemma}[Lemma 10.12 of \cite{W}] \label{7.27} 

If $i\leq r$ and $j\geq r+1$, then $|\textbf{q}_{ij}|\geq h_i+h_j+N(j-i)+1$.

\end{lemma}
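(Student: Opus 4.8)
\textbf{Proof plan for \Cref{7.27}.}

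The plan is to bound $|\textbf{q}_{ij}|$ from below by analyzing the structure of the path $\textbf{q}_{ij}$ as a concatenation of the shortest homotopic paths $\textbf{q}_{i,i+1},\dots,\textbf{q}_{j-1,j}$, exactly as in the treatment of the analogous statement in \cite{W} (and the corresponding arguments in \cite{O18}, \cite{OS19}). First I would recall that, by construction, $\textbf{q}_{ij}$ is homotopic in $\Psi_{ij}$ to $\textbf{p}_{ij}$ and shares its first and last edges with $\textbf{p}_{ij}$. Since $i\leq r\leq r+1\leq j$, \Cref{7.22}(1) guarantees that every maximal $\theta$-band of $\Psi_{ij}$ crosses either $\pazocal{Q}_i$ or $\pazocal{Q}_j$; in particular there are $h_i$ maximal $\theta$-bands in $\Psi_{ij}$ ending on $\pazocal{Q}_i$ and $h_j$ ending on $\pazocal{Q}_j$. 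Each such $\theta$-band has a side that starts (or ends) with a $q$-edge lying on $\textbf{p}_{ij}$, and hence a corresponding $\theta$-edge must be crossed by $\textbf{q}_{ij}$; this accounts for $h_i+h_j$ distinct $\theta$-edges of $\textbf{q}_{ij}$.

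Second, I would handle the $q$-edge contribution. Crossing from the region between $\pazocal{Q}_x$ and $\pazocal{Q}_{x+1}$ requires $\textbf{q}_{ij}$ to traverse at least as many $q$-edges as the width of the base word separating the two handles; since each clove $\Psi_{x,x+1}$ corresponds to adjacent $t$-letters, the base of the standard base of $\textbf{M}$ between $\{t(x)\}$ and $\{t(x+1)\}$ has length $N$, so each transition contributes at least $N$ to $|\textbf{q}_{ij}|_q$, plus one for the final $q$-edge. This gives $|\textbf{q}_{ij}|_q\geq N(j-i)+1$. For the distinguished clove the same count works after splitting along $\pazocal{Q}_x'$ and adding the two pieces along their shared $q$-edge. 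Combining with the $\theta$-count, and using \Cref{lengths}(a) (the length of a path dominates $|\textbf{s}|_\theta+$ the net $a$-contribution, and in particular dominates $|\textbf{s}|_\theta+|\textbf{s}|_q$ once we are only summing $q$- and $\theta$-edges), we obtain $|\textbf{q}_{ij}|\geq h_i+h_j+N(j-i)+1$.

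The main subtlety — and the step I expect to require the most care — is verifying that the $\theta$-edges counted from $\pazocal{Q}_i$ and those counted from $\pazocal{Q}_j$ are genuinely distinct edges of $\textbf{q}_{ij}$ and are not "reused," i.e. that no single $\theta$-band ending on $\pazocal{Q}_i$ also ends on $\pazocal{Q}_j$. This is precisely where \Cref{7.22}(2) enters: the $\theta$-bands crossing $\pazocal{Q}_{L-4}$ (hence $\pazocal{Q}_j$ for $j\geq r+1$) do not cross $\pazocal{Q}_r\supseteq\pazocal{Q}_i$, and symmetrically for those crossing $\pazocal{Q}_1$. One must also check that $\textbf{q}_{ij}$, being a shortest homotopic path, does not double back and cross a given $\theta$-band's side twice — this follows because a shortest path in a planar diagram meets each maximal $\theta$-band's trimmed side in at most one $\theta$-edge, for otherwise one could shorten it using \Cref{M(S) annuli} (no $\theta$-annuli). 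Once these disjointness facts are in place the inequality is a bookkeeping sum, and since all the structural inputs (\Cref{7.22}, \Cref{M(S) annuli}, \Cref{lengths}) are already available, the argument is verbatim to that of Lemma 10.12 of \cite{W}.
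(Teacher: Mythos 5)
Your proposal is correct and follows essentially the same route the paper intends: the paper gives no proof of \Cref{7.27}, deferring to the analogous counting arguments in \cite{O18}, \cite{OS19}, and \cite{W}, and your argument is exactly that count — $h_i+h_j$ distinct $\theta$-bands (distinct by \Cref{7.22}(2) and planarity) each forced by the homotopy between $\textbf{q}_{ij}$ and $\textbf{p}_{ij}$ to contribute a $\theta$-edge, plus $N(j-i)+1$ $q$-edges coming from the spokes of $\Pi$ between $\pazocal{Q}_i$ and $\pazocal{Q}_j$. The only blemishes are cosmetic: the key fact is that each band crossing $\pazocal{Q}_i$ must \emph{terminate on} $\textbf{p}_{ij}$ (not that its side "starts with a $q$-edge"), and the worry about $\textbf{q}_{ij}$ crossing a band twice is irrelevant for a lower bound.
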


Similarly, the next two statements are proved in exactly the same way as the analogous statements in \cite{W}.

\begin{lemma}[Lemma 10.13 of \cite{W}] \label{7.28} \

\begin{enumerate}[label=({\arabic*})]

\item Every maximal $q$-band of $\Psi^0$ corresponds to a spoke of $\Pi$.

\item No two $\theta$-edges of $\textbf{q}_{i,i+1}$ are part of the same $\theta$-band of $\Psi_{i,i+1}$.

\end{enumerate}

\end{lemma}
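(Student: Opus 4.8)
\textbf{Proof proposal for Lemma~\ref{7.28}.}

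The plan is to prove both statements by analyzing how the shortest paths $\textbf{q}_{i,i+1}$ interact with the band structure of $\Psi_{i,i+1}$, exploiting the fact that geodesic-type paths in these diagrams cannot ``back up'' across a single band. First I would reduce (1) to a local statement: every maximal $q$-band of $\Psi^0$ either meets $\textbf{q}_{ij}$ or is a subband of one of the spokes $\pazocal{Q}_1,\dots,\pazocal{Q}_{L-4}$ (or, in the distinguished clove, of the $t(1)$-spoke $\pazocal{Q}_i'$). Suppose toward a contradiction that $\pazocal{B}$ is a maximal $q$-band of $\Psi^0$ that is not a spoke of $\Pi$. Then $\pazocal{B}$ cuts off a subcomb $\Gamma'$ of $\Psi^0$ (with handle $\pazocal{B}$) not containing $\Pi$. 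By \Cref{7.21} this subcomb has basic width at most $K_0$, so by \Cref{7.20}(2) its boundary has exactly $2s$ $q$-edges where $s$ is its basic width, and $|\partial\Gamma'|_q\le 2K_0$. Both ends of $\pazocal{B}$ lie on $\partial\Psi^0$; by construction of $\Psi^0$ these ends lie on $\textbf{q}_{ij}$. If a nontrivial portion of $\textbf{q}_{ij}$ ran along $\textbf{bot}(\pazocal{B})$ and back, one could shortcut through the sides of $\pazocal{B}$ (whose label has length bounded by the height of $\pazocal{B}$, which is at most $|\textbf{q}_{ij}|_\theta$ locally)—but the geometry, combined with \Cref{7.23} bounding $|\textbf{p}_{i,i+1}|_q<3K_0$, forces this to contradict the minimality of $|\textbf{q}_{i,i+1}|$ in its homotopy class with fixed endpoints. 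This is essentially the same dichotomy argument used for the analogous statements in \cite{O18}, \cite{OS19}, and \cite{W}, and I would carry it out in that fashion.

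For (2), suppose two $\theta$-edges $\textbf{e}_1,\textbf{e}_2$ of $\textbf{q}_{i,i+1}$ belong to the same maximal $\theta$-band $\pazocal{T}$ of $\Psi_{i,i+1}$. By \Cref{rim theta-bands}, any quasi-rim $\theta$-band has base of length $>K$; in particular $\pazocal{T}$, if it were a rim band with respect to $\textbf{q}_{i,i+1}$, would have long base, which is incompatible with $\Psi_{i,i+1}$ having basic width at most $K_0$ (by \Cref{7.21}) unless $\pazocal{T}$ crosses a spoke. But $\pazocal{T}$ cannot cross $\pazocal{Q}_i$ or $\pazocal{Q}_{i+1}$ twice (no $(\theta,q)$-annuli, \Cref{M_a no annuli 1}), so $\pazocal{T}$ is confined to $\Psi_{i,i+1}$ with short base, hence both $\textbf{e}_1$ and $\textbf{e}_2$ being on the same side of $\pazocal{T}$ and on $\textbf{q}_{i,i+1}$ would let us replace the subpath of $\textbf{q}_{i,i+1}$ between $\textbf{e}_1$ and $\textbf{e}_2$ by the corresponding (shorter, since $\|\text{base}\|\le K_0$ controls the number of $(\theta,q)$-cells) side of $\pazocal{T}$, contradicting minimality of $\textbf{q}_{i,i+1}$. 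In the distinguished-clove case the same argument applies separately to $\textbf{q}_{i,i+1}'$ and $\textbf{q}_{i,i+1}''$ inside $\Lambda_{i,i+1}'$ and $\Lambda_{i,i+1}''$, and the shared $q$-edge cannot be crossed by a $\theta$-band of either piece twice.

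The main obstacle I anticipate is the bookkeeping in the shortcut argument: one must verify carefully that the side of the band $\pazocal{B}$ (resp.\ $\pazocal{T}$) used as a replacement path is genuinely no longer than the arc of $\textbf{q}_{ij}$ it replaces, which requires combining the bound on the number of $(\theta,q)$- and $(\theta,t)$-cells (coming from basic width $\le K_0$ via \Cref{7.21} and \Cref{7.20}) with \Cref{lengths}(b),(c) to compare modified lengths, and handling the $a$-edges that may decorate the band sides using (WM1)--(WM2) and \Cref{a-cell in counterexample 2}. Since this is precisely the surgery already executed in \cite{W} for the corresponding lemma, I would not reproduce the estimates in full but cite that the identical argument goes through verbatim in the present setting, noting only that the bound $|\textbf{p}_{i,i+1}|_q<3K_0$ from \Cref{7.23} is what makes the spokes the only long $q$-bands available.
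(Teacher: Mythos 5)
Your overall strategy---both parts are shortcut-against-minimality arguments for the paths $\textbf{q}_{i,i+1}$, with the details deferred to the identical lemma in \cite{W}---is the right one, and is exactly what the paper does (it gives no proof and simply cites Lemma 10.13 of \cite{W}). However, several of the specific justifications you supply would not survive being written out. In (1), a maximal $q$-band $\pazocal{B}$ of $\Psi^0$ that is not a spoke need not cut off a \emph{subcomb}: nothing forces every maximal $\theta$-band of the cut-off component to end on $\pazocal{B}$, so \Cref{7.21} and \Cref{7.20}(2) do not apply as you use them, and they are also unnecessary. The correct mechanism is the one you only reach at the end: since the sides of the spokes carry no $q$-edges and the cloves contain no disks, both ends of $\pazocal{B}$ lie on a single $\textbf{q}_{i,i+1}$; every $\theta$-band crossing $\pazocal{B}$ must also end on the arc $\textbf{x}$ of $\textbf{q}_{i,i+1}$ between those two ends (no $(\theta,q)$-annuli), so $|\textbf{x}|\geq h+2-2\delta$ where $h$ is the height of $\pazocal{B}$, while a side of $\pazocal{B}$ has length exactly $h$ by \Cref{lengths}(b); replacing $\textbf{x}$ by that side contradicts the minimality of $\textbf{q}_{i,i+1}$.

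In (2) the intermediate reasoning is more seriously off. $\Psi_{i,i+1}$ is not a comb, so it has no ``basic width,'' and \Cref{rim theta-bands} (a statement about quasi-rim $\theta$-bands of the whole counterexample $\Delta$) has no bearing on a $\theta$-band having two edges on $\textbf{q}_{i,i+1}$. More importantly, your stated reason that the side of $\pazocal{T}$ is shorter---``since $\|\text{base}\|\le K_0$ controls the number of $(\theta,q)$-cells''---is not a length comparison at all: the number of $q$-edges on the side of the subband of $\pazocal{T}$ between $\textbf{e}_1$ and $\textbf{e}_2$ is matched by $q$-edges forced onto the replaced arc (each $q$-band crossing the subband must end on that arc, again by the absence of $(\theta,q)$-annuli), so the $q$-contributions cancel; what makes the replacement strictly shorter is that a side of a $\theta$-band carries no $\theta$-edges, whereas the replaced arc carries $\textbf{e}_1$ and $\textbf{e}_2$. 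The genuinely delicate point, which you flag but do not resolve, is the $a$-edge bookkeeping (maximal $a$-bands leaving the side of the subband may end on $a$-cells rather than on the replaced arc); that is precisely where (WM1)--(WM2) and \Cref{a-cell in counterexample 2} are needed in the argument of \cite{W}.
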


\begin{lemma}[Lemma 10.14 of \cite{W}] \label{non-distinguished a-cells} \

\begin{enumerate}[label=({\arabic*})]

\item If $\Psi_{i,i+1}$ is not the distinguished clove, then $\Psi_{i,i+1}^0$ contains no $a$-cells. 

\item If $\Psi_{i,i+1}$ is the distinguished clove, then $(\Lambda_{i,i+1}')^0$ contains no $a$-cells.

\end{enumerate}

\end{lemma}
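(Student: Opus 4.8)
The plan is to prove \Cref{non-distinguished a-cells} by analyzing the structure of the subdiagrams $\Psi_{i,i+1}^0$ (and $(\Lambda_{i,i+1}')^0$ in the distinguished case) as quasi-trapezia, then ruling out $a$-cells via \Cref{a-cells sector} together with the base structure forced by the choice of consecutive $t$-spokes. The key observation is that, by \Cref{7.28}, every maximal $q$-band of $\Psi^0$ corresponds to a spoke of $\Pi$, and no two $\theta$-edges of $\textbf{q}_{i,i+1}$ lie on the same $\theta$-band; consequently $\Psi_{i,i+1}^0$ (resp.\ $(\Lambda_{i,i+1}')^0$) is a quasi-trapezium — indeed one bounded by two consecutive $t$-spokes of $\Pi$, hence with base of the form $\{t(i)\}B_4(i)\{t(i+1)\}$ (or a subword thereof) up to reading direction. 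Here the pair of adjacent $t$-letters is $\{t(i),t(i+1)\}$ for some $2\le i\le L-1$ in the non-distinguished case, and in the distinguished case $(\Lambda_{i,i+1}')^0$ is bounded by the $t$-spoke for $t(L)$ and the $q$-spoke for $t(1)$, giving base $\{t(L)\}B_4(L)\{t(1)\}$.

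First I would invoke \Cref{quasi-trapezia} to replace $\Psi_{i,i+1}^0$ by a reduced diagram with the same boundary label and the same number of disks, decomposed into an $a$-trapezium $\Gamma_2$ together with disks and $a$-cells lying above its top or below its bottom. Since $\Psi_{i,i+1}^0$ was obtained by removing all cells between $\textbf{q}_{i,i+1}$ and $\textbf{p}_{i,i+1}$ — i.e.\ it is ``tight against the boundary'' — and since a minimal diagram of this base cannot contain a disk (a disk would force an $L$-letter base segment, contradicting the base being a single $\{t(i)\}B_4(i)\{t(i+1)\}$ block), I would argue $\Psi_{i,i+1}^0$ is in fact an $a$-trapezium. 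Then by \Cref{a-cells sector}, any $a$-cell in an $a$-trapezium lies in a $(Q_0(1)Q_1(1))^{\pm1}$-, $Q_0(1)Q_0(1)^{-1}$-, or $Q_1(1)^{-1}Q_1(1)$-sector. In the non-distinguished case the base $\{t(i)\}B_4(i)\{t(i+1)\}$ with $i\ge2$ contains none of the letters $Q_0(1)$ or $Q_1(1)$ whatsoever, so no such sector can occur and there are no $a$-cells. In the distinguished case, $(\Lambda_{i,i+1}')^0$ has base $\{t(L)\}B_4(L)\{t(1)\}$, whose only letter with coordinate $1$ is the single terminal $t(1)$; again none of $Q_0(1)$, $Q_1(1)$ appears in a two-letter sector of the base, so \Cref{a-cells sector} precludes $a$-cells.

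The main obstacle will be cleanly justifying that $\Psi_{i,i+1}^0$ (resp.\ $(\Lambda_{i,i+1}')^0$) genuinely has the claimed \emph{reduced} base and contains no disks, so that \Cref{quasi-trapezia} collapses to the $a$-trapezium case and \Cref{a-cells sector} applies verbatim. For this I would use \Cref{7.28}(1) to see that the only maximal $q$-bands are spokes of $\Pi$, and the fact that $\pazocal{Q}_i,\pazocal{Q}_{i+1}$ are \emph{consecutive} spokes (adjacent $t$-letters) to see that between them there is only one block $B_4(i)$ of the standard base; a disk inside would carry its own family of $L-1$ $t$-spokes, forcing an extra $t$-letter into the base segment, which is impossible. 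I would also record that $\textbf{q}_{i,i+1}$ being a \emph{shortest} homotopic path with fixed endpoints forces no cancellable $\theta$-edges along it, which together with $D$-minimality of $\Delta$ gives that the enclosed diagram is reduced and inherits (M1), (M2), hence is $M$-minimal. With these structural facts in hand, the application of \Cref{a-cells sector} is immediate and the argument closes.

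One technical point worth flagging: in the distinguished case the statement only asserts the absence of $a$-cells in $(\Lambda_{i,i+1}')^0$ — the ``$t(L)$-side'' — and \emph{not} in $(\Lambda_{i,i+1}'')^0$, precisely because the latter has base $\{t(1)\}B_4(1)\{t(2)\}$, which does contain the sectors $(Q_0(1)Q_1(1))^{\pm1}$ and $Q_1(1)^{-1}Q_1(1)$ where $a$-cells are permitted. So I would be careful to phrase the distinguished case as an assertion about $(\Lambda_{i,i+1}')^0$ only, exactly as in the statement, and not attempt to extend it to the full distinguished clove.
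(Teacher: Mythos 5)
Your guiding idea is the right one: $a$-cells carry boundary labels over the alphabet of the special input sector, and neither a non-distinguished clove nor $(\Lambda_{i,i+1}')^0$ contains $q$-bands corresponding to $Q_0(1)$ or $Q_1(1)$, so there is ``nowhere for an $a$-cell to live.'' But the formal route you take does not go through. The diagram $\Psi_{i,i+1}^0$ is not a quasi-trapezium, and a fortiori not an $a$-trapezium: the two bounding $t$-spokes $\pazocal{Q}_i$ and $\pazocal{Q}_{i+1}$ have different heights $h_i\neq h_{i+1}$ in general (the whole decomposition of \Cref{sec-trapezia-combs-cloves}, with the comb $E_i$ having handle of height $h_i-h_{i+1}$, presupposes this), so its contour cannot be factored as $\textbf{p}_1^{-1}\textbf{q}_1\textbf{p}_2\textbf{q}_2^{-1}$ with the $\textbf{q}_j$ sides of single $\theta$-bands; part of its boundary is the path $\textbf{q}_{i,i+1}$, which by \Cref{7.28}(2) carries $\theta$-edges belonging to pairwise distinct $\theta$-bands. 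Structurally $\Psi_{i,i+1}^0$ is the union of the $a$-trapezium $\Gamma_i$ (the $\theta$-bands crossing both spokes) and the comb $E_i^0$ (those crossing only $\pazocal{Q}_i$). Consequently \Cref{quasi-trapezia} does not apply, and \Cref{a-cells sector} rules out $a$-cells only inside $\Gamma_i$; it says nothing about $a$-cells in the comb part or attached along $\textbf{q}_{i,i+1}$. A second, independent problem: \Cref{quasi-trapezia} produces a \emph{new} diagram with the same boundary label, so even where it applies it cannot be used to conclude anything about the cells of the actual subdiagram of $\Delta$, which is what the lemma asserts and what the later arguments in \Cref{sec-trapezia-combs-cloves} require.

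What is missing is an argument covering an $a$-cell $\pi$ located anywhere in $\Psi_{i,i+1}^0$. The natural one tracks the maximal $a$-bands starting on $\partial\pi$: they are labelled by special-input letters, so by (WM2) and \Cref{a-band on same a-cell} they cannot end on another $a$-cell or return to $\pi$; they cannot end on a $(\theta,q)$-cell, because by \Cref{7.28}(1) every maximal $q$-band of $\Psi^0$ is a spoke of $\Pi$, and in a non-distinguished clove (respectively in $\Lambda_{i,i+1}'$) none of these spokes corresponds to $Q_0(1)^{\pm1}$ or $Q_1(1)^{\pm1}$; there are no disks by the choice of $\Pi$; and they cannot end on $\textbf{u}_{i,i+1}\subset\partial\Pi$ or on the $t$-spokes, whose $a$-edges lie in other alphabets. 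Hence every such $a$-band must end on $\textbf{q}_{i,i+1}$, and one must then derive a contradiction from the fact that $\textbf{q}_{i,i+1}$ was chosen shortest in its homotopy class (a perimeter count in the spirit of \Cref{a-cell in counterexample 2}). That last step is the actual content of the lemma, and it is absent from your proposal; the ``main obstacle'' you flag (reducedness of the base and absence of disks) is not where the difficulty lies.
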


%
%
%
%
%
%

\medskip


\subsection{Trapezia and combs in the cloves} \label{sec-trapezia-combs-cloves} \

For $1\leq i\leq r-1$, suppose $\Psi_{i,i+1}$ is not the distinguished clove. Then Lemma \ref{7.22} implies that all maximal $\theta$-bands of $\Psi_{i,i+1}$ crossing $\pazocal{Q}_{i+1}$ must also cross $\pazocal{Q}_i$. So, these $\theta$-bands bound an $a$-trapezium $\Gamma_i$ in $\Psi_{i,i+1}^0$ with height $h_{i+1}$. The base of $\Gamma_i$ (or its inverse) is $\{t(\ell)\}B_4(\ell)\{t(\ell+1)\}$ for some $2\leq\ell\leq L-1$. Lemma \ref{a-cells sector} then implies that $\Gamma_i$ is a trapezium. Set $\textbf{y}_i=\textbf{bot}(\Gamma_i)$ and $\textbf{z}_i=\textbf{top}(\Gamma_i)$. Note that $\textbf{y}_i^{-1}$ is shared with $\partial\Pi$. 

For $2\leq i\leq r-1$, suppose neither $\Psi_{i-1,i}$ nor $\Psi_{i,i+1}$ is the distinguished clove. Then $\lab(\textbf{y}_{i-1})$ and $\lab(\textbf{y}_i)$ are coordinate shifts of one another while $H_{i+1}$ is a prefix of $H_i$. So, $h_{i+1}$ $\theta$-bands of $\Gamma_{i-1}$ form a copy of $\Gamma_i$, $\Gamma_i'$, contained in $\Gamma_{i-1}$. Set $\textbf{y}_i'=\textbf{bot}(\Gamma_i')$ and $\textbf{z}_i'=\textbf{top}(\Gamma_i')$. Note that $\textbf{y}_i'=\textbf{y}_{i-1}$.

For $1\leq i\leq r-1$, if $\Psi_{i,i+1}$ is not the distinguished clove, then denote by $E_i$ (respectively $E_i^0$) the maximal comb in $\Psi_{i,i+1}$ (respectively $\Psi_{i,i+1}^0$) containing the maximal $\theta$-bands that cross the $t$-spoke $\pazocal{Q}_i$ but not the $t$-spoke $\pazocal{Q}_{i+1}$. The handle $\pazocal{C}_i$ of these combs has height $h_i-h_{i+1}$ and is contained in $\pazocal{Q}_i$. Any cell of $\Psi_{i,i+1}$ (respectively $\Psi_{i,i+1}^0$) not contained in $\Gamma_i$ or $E_i$ (respectively $E_i^0$) must be an $a$-cell attached to either $\textbf{z}_i$ or $\pazocal{Q}_{i+1}$. By the structure of the relations, such an $a$-cell must share every boundary edge with $\partial\Delta$. But this contradicts Lemma \ref{a-cell in counterexample 2}. Hence, $E_i$ (respectively $E_i^0$) is the complement of $\Gamma_i$ in $\Psi_{i,i+1}$ (respectively $\Psi_{i,i+1}^0$). 

Now suppose $\Psi_{i,i+1}$ is the distinguished clove for $1\leq i\leq r-1$. 

First, suppose $\pazocal{Q}_i$ corresponds to the base letter $\{t(L)\}$, so that the subdiagram $\Lambda_{i,i+1}'$ is bounded by $\pazocal{Q}_i$ and $\pazocal{Q}_i'$ (see \Cref{fig-distinguished-trapezia}(a)). By Lemma \ref{7.22}, every maximal $\theta$-band of $\Lambda_{i,i+1}'$ crossing $\pazocal{Q}_i'$ must also cross $\pazocal{Q}_i$. So, these $\theta$-bands bound an $a$-trapezia $\Gamma_i$ contained in $(\Lambda_{i,i+1}')^0$. As above, Lemma \ref{a-cells sector} implies that $\Gamma_i$ must be a trapezium. The base of $\Gamma_i$ (or its inverse) is $\{t(L)\}B_4(L)\{t(1)\}$, while the height is the length $h_i'$ of the band $\pazocal{Q}_i'$.

\begin{figure}[H]
\centering
\includegraphics[scale=1.25]{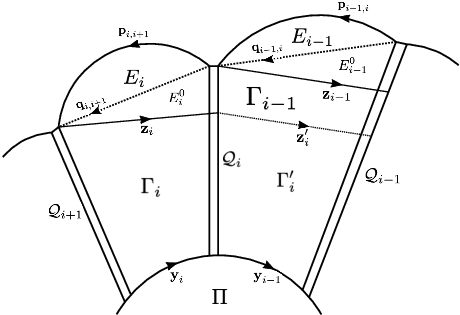}
\caption{Trapezia and combs if neither $\Psi_{i-1,i}$ and $\Psi_{i,i+1}$ are distinguished}
\end{figure}

Otherwise, $\pazocal{Q}_{i+1}$ corresponds to the base letter $\{t(L)\}$, so that the subdiagram $\Lambda_{i,i+1}'$ is bounded by $\pazocal{Q}_i'$ and $\pazocal{Q}_{i+1}$ (see \Cref{fig-distinguished-trapezia}(2)). Lemma \ref{7.22} then implies that every maximal $\theta$-band of $\Lambda_{i,i+1}'$ crossing $\pazocal{Q}_{i+1}$ must also cross $\pazocal{Q}_i'$, so that these $\theta$-bands bound an $a$-trapezium $\Gamma_i$ contained in $(\Lambda_{i,i+1}')^0$. Again, $\Gamma_i$ must be a trapezium whose base (or its inverse) is $\{t(L)\}B_4(L)\{t(1)\}$. In this case, the height of $\Gamma_i$ is $h_{i+1}$.

In either case, we define $\textbf{y}_i=\textbf{bot}(\Gamma_i)$ and $\textbf{z}_i=\textbf{top}(\Gamma_i)$. If $i\geq2$, then again $\lab(\textbf{y}_i)$ is a coordinate shift of $\lab(\textbf{y}_{i-1})$ and there exists a copy $\Gamma_i'$ of $\Gamma_i$ in $\Gamma_{i-1}$ with $\textbf{bot}(\Gamma_i')=\textbf{y}_i'=\textbf{y}_{i-1}$. Similarly, if $i\leq r-2$, then $\lab(\textbf{y}_i)$ is a coordinate shift of $\lab(\textbf{y}_{i+1})$ and there exists a copy $\Gamma_{i+1}'$ of $\Gamma_{i+1}$ in $\Gamma_i$.

Suppose $\Lambda_{i,i+1}'$ is bounded by $\pazocal{Q}_i$ and $\pazocal{Q}_i'$. Then denote by $E_i$ (respectively $E_i^0$) the maximal comb in $\Lambda_{i,i+1}'$ (respectively $(\Lambda_{i,i+1}')^0$) containing the maximal $\theta$-bands that cross the $t$-spoke $\pazocal{Q}_i$ but not the $q$-spoke $\pazocal{Q}_i'$. The handle $\pazocal{C}_i$ of these combs has height $h_i-h_i'$ and is contained in $\pazocal{Q}_i$. As above, $E_i$ (respectively $E_i^0$) is the complement of $\Gamma_i$ in $\Lambda_{i,i+1}'$ (respectively $(\Lambda_{i,i+1}')^0$).

Otherwise, $\Lambda_{i,i+1}'$ is bounded by $\pazocal{Q}_i'$ and $\pazocal{Q}_{i+1}$. In this case denote by $E_i$ (respectively $E_i^0$) the maximal comb in $\Lambda_{i,i+1}'$ (respectively $(\Lambda_{i,i+1}')^0$) containing the maximal $\theta$-bands that cross the $q$-spoke $\pazocal{Q}_i'$ but not the $t$-spoke $\pazocal{Q}_{i+1}$. The handle $\pazocal{C}_i$ of these combs has height $h_i'-h_{i+1}$ and is contained in $\pazocal{Q}_i'$. Again, $E_i$ (respectively $E_i^0$) is the complement of $\Gamma_i$ in $\Lambda_{i,i+1}'$ (respectively $(\Lambda_{i,i+1}')^0$).

Note that no $a$-trapezium or comb has been defined in the subdiagram $\Lambda_{i,i+1}''$. Though such subdiagrams exist, their consideration is not necessary for the rest of the proof. As a result, one may view the indexing as `skipping over' the portion of the clove between the base letters $\{t(1)\}$ and $\{t(2)\}$.

For $r+1\leq i\leq L-5$, the trapezium $\Gamma_i$, the combs $E_i$ and $E_i^0$, and the paths $\textbf{y}_i$ and $\textbf{z}_i$ are defined symmetrically.

The next two statements, which study the makeup of these trapezia and combs, are proved in exactly the same manner as their analogues in \cite{W}.

\begin{lemma} [Lemma 10.15 of \cite{W}] \label{7.29}


For $i\in\{2,\dots,r-1\}$, suppose a maximal $a$-band $\pazocal{B}$ of $E_i^0$ starts on $\textbf{z}_i$ and ends on a side of a maximal $q$-band $\pazocal{C}$. Let $\nabla$ be the comb bounded by $\pazocal{B}$, a part of $\pazocal{C}$, and a subpath $\textbf{x}$ of $\textbf{z}_i$. Then there is a copy of the comb $\nabla$ in the trapezium $\Gamma=\Gamma_{i-1}\setminus\Gamma_i'$.

\end{lemma}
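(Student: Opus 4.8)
The idea is to exploit the periodicity of the trapezia $\Gamma_i$ in the index $i$, which is built into the construction: for consecutive non-distinguished cloves, $\mathrm{Lab}(\textbf{y}_i)$ is a coordinate shift of $\mathrm{Lab}(\textbf{y}_{i-1})$, and $H_i$ has $H_{i+1}$ as a prefix, so the comb structure inside $\Psi_{i-1,i}$ and inside $\Psi_{i,i+1}$ are `the same' up to the extra $\theta$-bands of $\Gamma_{i-1}$ above $\Gamma_i'$. Concretely, $\Gamma_i'$ sits inside $\Gamma_{i-1}$ with $\textbf{bot}(\Gamma_i') = \textbf{y}_i' = \textbf{y}_{i-1}$, and the trapezium $\Gamma = \Gamma_{i-1}\setminus\Gamma_i'$ has bottom $\textbf{y}_{i-1}$ and top equal to a copy of $\textbf{z}_i$ (that is, $\textbf{z}_i'$). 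First I would record that $\textbf{z}_i$ and $\textbf{top}(\Gamma)$ carry the same label (as admissible words, up to the coordinate shift), since both are obtained by applying the prefix $H_{i+1}$ of the relevant history to coordinate shifts of the same starting configuration; this follows from \Cref{trapezia are computations} applied to $\Gamma_i$ and to the bottom portion of $\Gamma_{i-1}$, together with the fact that the rules of $\textbf{M}$ act in parallel on components with index $\geq 2$ (the base letters here are $\{t(\ell)\}B_4(\ell)\{t(\ell+1)\}$ with $2\le\ell\le L-1$, so no `special' input sector is involved and \Cref{lifted rule} applies cleanly).

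Next I would transfer the comb $\nabla$. The comb $\nabla$ lives in $E_i^0$, which is the complement of $\Gamma_i$ in $\Psi_{i,i+1}^0$; its handle $\pazocal{C}$ is (part of) a maximal $q$-band of $\Psi_{i,i+1}^0$, and by \Cref{7.28}(1) every maximal $q$-band of $\Psi^0$ is a spoke of $\Pi$, so $\pazocal{C}$ corresponds to some base letter of $B_4(\ell)$. Since $\textbf{z}_i$ and $\textbf{top}(\Gamma)$ have the same label and $\Gamma$ has the same base and history-prefix as the relevant band structure of $\Psi_{i,i+1}^0$, the maximal $\theta$-bands of $\Gamma$ crossing the $q$-band over this base letter can be matched one-to-one with the $\theta$-bands of $\nabla$ crossing $\pazocal{C}$, and the maximal $a$-band $\pazocal{B}$ (which starts on $\textbf{z}_i$) has a counterpart starting on $\textbf{top}(\Gamma)$ — here one uses \Cref{a-cells sector} to see that $\Gamma_i$ and $\Gamma$ are trapezia (no $a$-cells), so the only $a$-cells/$a$-bands in the picture are those of $E_i^0$ and they are faithfully reproduced on the top of $\Gamma$. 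Then $\pazocal{B}$, the corresponding part of the $q$-band of $\Gamma$ over the same base letter, and the subpath $\textbf{x}$ of $\textbf{top}(\Gamma)$ (the copy of $\textbf{x}\subset\textbf{z}_i$) bound a subcomb of $\Gamma$ which has the same boundary label as $\nabla$, hence is a copy of $\nabla$.

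The main obstacle I expect is making the identification of $\Gamma = \Gamma_{i-1}\setminus\Gamma_i'$ with (part of) $\Psi_{i,i+1}^0$ precise at the level of $a$-bands and $a$-cells rather than just $\theta$-bands: the trapezia $\Gamma_i$, $\Gamma_i'$ are $\theta$-band towers with no $a$-cells of their own, but the $a$-cells in the cloves live in $E_i^0$ and $\Lambda''$, and one must check that the $a$-bands emanating from $\textbf{z}_i$ into $E_i^0$ behave identically to $a$-bands emanating from $\textbf{top}(\Gamma)$ — this requires knowing that the tape word of $\textbf{z}_i$ in the `special' input sector plays no role here, which is exactly why $i\le r-1$ and the base is $\{t(\ell)\}B_4(\ell)\{t(\ell+1)\}$ with $\ell\ge 2$ (so \Cref{one-machine accepted is admissible} and the parallel action of the rules give the needed rigidity). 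Once that rigidity is in hand, the construction of the copy of $\nabla$ inside $\Gamma$ is a routine cut-and-paste, and the equality of boundary labels is immediate from the label equality of $\textbf{z}_i$ and $\textbf{top}(\Gamma)$.
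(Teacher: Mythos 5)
Your overall strategy --- exploit that $\Gamma_i'$ is a copy of $\Gamma_i$ inside $\Gamma_{i-1}$ and transfer $\nabla$ across the coordinate shift --- is the intended one, but two points in your execution are genuinely wrong or missing. First, you misidentify $\Gamma=\Gamma_{i-1}\setminus\Gamma_i'$: its bottom is $\textbf{z}_i'=\textbf{top}(\Gamma_i')$, the copy of $\textbf{z}_i$, and its top is $\textbf{z}_{i-1}$; what you describe (bottom $\textbf{y}_{i-1}$, top a copy of $\textbf{z}_i$) is $\Gamma_i'$ itself. This is not a harmless relabeling: the whole point is that the history of $\Gamma$ is the suffix of $H_i$ obtained by deleting the prefix $H_{i+1}$, i.e.\ exactly the history of the handle $\pazocal{C}_i$ of $E_i^0$ read away from $\textbf{z}_i$, so the rules applied just above $\textbf{z}_i'$ in $\Gamma$ are the same rules, in the same order, as those applied just above $\textbf{z}_i$ in $E_i^0$. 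With your orientation, the cells adjacent to the copy of $\textbf{z}_i$ in the region you name come from the prefix $H_{i+1}$ read in the opposite direction, and the band-by-band matching with $\nabla$ fails; the copy of $\nabla$ must be built upward from $\textbf{bot}(\Gamma)=\textbf{z}_i'$, not ``on the top of $\Gamma$''.

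Second, two steps are not justified as written. You allow $E_i^0$ to contain $a$-cells (``the $a$-cells in the cloves live in $E_i^0$\dots'') and claim they would be ``faithfully reproduced'' in $\Gamma$ --- impossible, since $\Gamma$ is a trapezium and contains no $a$-cells. What the argument actually needs, and what you never invoke, is \Cref{non-distinguished a-cells}: $\Psi_{i,i+1}^0$ (respectively $(\Lambda_{i,i+1}')^0$) contains no $a$-cells, hence neither does $E_i^0$ nor $\nabla$, so $\nabla$ consists only of $(\theta,a)$- and $(\theta,q)$-cells. Moreover, the closing inference ``has the same boundary label as $\nabla$, hence is a copy of $\nabla$'' is not valid: equal boundary labels do not force equal diagrams. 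The correct finish is deterministic and band-by-band: a $\theta$-band is uniquely determined by its rule and its bottom label, and since there are no $a$-cells or disks between consecutive $\theta$-bands of $E_i^0$, the cells of $\nabla$ above $\textbf{x}$ are forced by $\lab(\textbf{x})$ and the successive rules of the suffix history; applying the same rules in $\Gamma$ over the copy $\textbf{x}'\subset\textbf{z}_i'$ produces literally the same cells --- in particular the maximal $a$-band starting on the copy of the first edge of $\pazocal{B}$ crosses the same $\theta$-bands and ends on the corresponding $(\theta,q)$-cell of the $q$-band of $\Gamma$ with the same base letter as $\pazocal{C}$ --- and this cell-by-cell identification, not the boundary label, is what yields the copy of $\nabla$ in $\Gamma$.
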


%
%
%

\begin{lemma}[Compare with Lemma 10.16 of \cite{W}] \label{7.30}

At most $2N$ $a$-bands starting on the path $\textbf{y}_i$ (or $\textbf{z}_i$) can end on $(\theta,q)$-cells of the same $\theta$-band.

\end{lemma}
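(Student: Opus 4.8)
\textbf{Proof plan for \Cref{7.30}.}
The statement concerns the trapezia $\Gamma_i$ lying in the cloves and asserts a linear-in-$N$ bound on how many $a$-bands emanating from the bottom (or top) path of $\Gamma_i$ can terminate on $(\theta,q)$-cells of a single maximal $\theta$-band. Since $\Gamma_i$ is a trapezium whose base (or its inverse) is one of $\{t(\ell)\}B_4(\ell)\{t(\ell+1)\}$ or $\{t(L)\}B_4(L)\{t(1)\}$, by \Cref{trapezia are computations} each maximal $\theta$-band of $\Gamma_i$ corresponds to a single rule $\theta$ of $\textbf{M}$ applied to an admissible word whose base is this base word. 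The plan is to read off from the definition of the rules how many $a$-letters a single application of a rule can ``absorb'' into $(\theta,a)$-cells: an $a$-band starting on $\textbf{y}_i$ and ending on a $(\theta,q)$-cell of the chosen $\theta$-band $\pazocal{T}$ means the corresponding $a$-edge is a tape letter of the admissible word $W\equiv\lab(\textbf{tbot}(\pazocal{T}))$ that gets multiplied by the rule on the end of some sector, i.e.\ it lies in one of the subwords $\a_{\theta,i}$ or $\omega_{\theta,i}$ (in the notation of \Cref{simplify rules}) being created or, more relevantly, is the tape letter sitting adjacent to a $q$-letter that the rule consumes. So first I would invoke \Cref{trapezia are computations} to pass from $\pazocal{T}$ to the underlying one-rule computation $V\to V\cdot\theta$ with base $B=\{t(\ell)\}B_4(\ell)\{t(\ell+1)\}$ (or the $\{t(L)\}\dots\{t(1)\}$ analogue).

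The key observation is that for each maximal $q$-band of $\Gamma_i$—equivalently each occurrence of a base letter $Q$ in $B$—the rule $\theta$ creates at most two $a$-letters adjacent to that $q$-letter (one on each side), by the normal form $q\to bq'a$ with $\|a\|,\|b\|\le 1$ guaranteed after \Cref{simplify rules}. An $a$-band starting on $\textbf{y}_i$ that ends on a $(\theta,q)$-cell of $\pazocal{T}$ must do so at one of these at-most-two $a$-edges per $q$-band of $\Gamma_i$. Therefore the number of such $a$-bands is at most twice the number of maximal $q$-bands of $\Gamma_i$, which is at most twice $\|B\|$. Since $\|B\|=N+1$ (the length of $B_4(\ell)$ plus the two $t$-letters is exactly the length of the standard base of $\textbf{M}_5$, i.e.\ $N$, plus the extra $t$-letter), we get a bound of the form $2(N+1)$, and after absorbing the additive constant into the choice of parameters (or simply noting $2(N+1)\le 2N+2$, which is $\le$ some clean multiple; here the statement allows the bound $2N$ by the usual convention of choosing $N$ large and the base of $\Gamma_i$ being genuinely $N$ letters long once one accounts that the two $t$-sectors carry no $a$-letters so no $a$-band can attach there) one obtains the claimed bound of $2N$. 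The symmetric argument applies to $\textbf{z}_i$ by reading the rule $\theta^{-1}$ instead, i.e.\ considering $\textbf{top}(\pazocal{T})$ in place of $\textbf{tbot}(\pazocal{T})$.

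The main obstacle I anticipate is bookkeeping the boundary contribution of the two $t$-sectors and the precise count of $q$-bands: one must verify that no $a$-band can attach to a $(\theta,q)$-cell belonging to a $t$-band (this is immediate since $(\theta,t)$-relations have the form $\theta_j t(i)=t(i)\theta_{j+1}$ and contain no $a$-letters, so a $t$-band's cells have no $a$-edges), and that the count of ``working'' $q$-bands of $\Gamma_i$ is exactly the number of base letters of $B_4(\ell)$, which is $N-$(number of $t$-letters in the standard base of $\textbf{M}_5$). I would also need to double-check that the normal form $q\to bq'a$ with $\|a\|+\|b\|\le 2$ (the weaker form retained in this paper, per the remark following \Cref{simplify rules}) still gives ``at most two $a$-edges per $q$-cell on each of top and bottom''—which it does, since a $(\theta,q)$-cell's boundary has at most two $a$-edges on its bottom and at most two on its top by Lemma~\ref{lengths}(d) applied with base length $1$. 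Assembling these elementary facts yields the bound; no delicate estimate or induction is needed, so this lemma is routine once the correspondence with one-rule computations is set up, exactly as its analogue Lemma 10.16 of \cite{W}.
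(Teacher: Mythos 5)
Your proof is correct and takes essentially the same route as the paper's (which is deferred to the analogue in \cite{W} and summarized in the remark following the lemma): each $(\theta,q)$-cell carries at most two $a$-edges by the normal form $q\to bq'a$ from \Cref{simplify rules}, the two $(\theta,t)$-cells carry none since the sectors adjacent to $t$-letters have empty tape alphabets, and the base $\{t(\ell)\}B_4(\ell)\{t(\ell+1)\}$ has exactly $N-1$ non-$t$ letters, giving the bound $2(N-1)\le 2N$. One small slip worth fixing: the claim of "at most two $a$-edges on the bottom and at most two on the top" attributed to Lemma~\ref{lengths}(d) is neither what that lemma says nor needed — the correct count of at most two $a$-edges per $(\theta,q)$-cell in total follows directly from the relator $q_i\theta_{i+1}=\theta_i v_iq_i'u_{i+1}$ with $\|v_i\|,\|u_{i+1}\|\le 1$, which you already state earlier in the proposal.
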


%
%
%
%
%
%

\renewcommand\thesubfigure{\alph{subfigure}}
\begin{figure}[H]
\centering
\begin{subfigure}[b]{0.48\textwidth}
\centering
\includegraphics[scale=1]{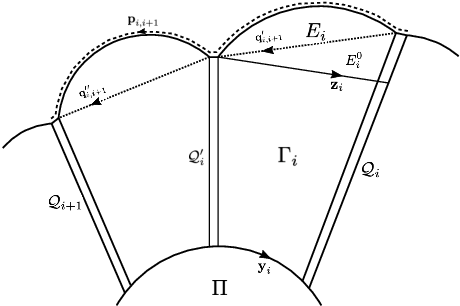}
\caption{$\pazocal{Q}_i$ and $\pazocal{Q}_i'$ bound $\Lambda_{i,i+1}'$}
\end{subfigure}\hfill
\begin{subfigure}[b]{0.48\textwidth}
\centering
\includegraphics[scale=1]{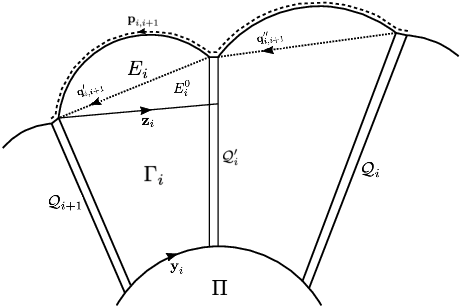}
\caption{$\pazocal{Q}_i$ and $\pazocal{Q}_i'$ bound $\Lambda_{i,i+1}''$}
\end{subfigure}
\caption{Trapezia in the distinguished clove}
\label{fig-distinguished-trapezia}
\end{figure}

Note that the bound given in \Cref{7.30} differs from its analogue in \cite{W}.  The reason for this is a difference in how a transition can operate on an admissible word with pararevolving base: A transition of the main machine of \cite{W} can insert/delete at most 4 $a$-letters in such an admissible word, whereas in this setting a transition of $\textbf{M}$ can insert/delete at most $2(N-1)$ $a$-letters.

By the parameter choice $L>>L_0$ and Lemma \ref{7.22}, we may assume that $L_0+1\leq r$ and $L-L_0-4\geq r+1$. Suppose without loss of generality that $h\defeq h_{L_0+1}\geq h_{L-L_0-4}$.

Using Lemmas \ref{7.24}, \ref{7.26}, \ref{7.27}, \ref{7.28}(2), and \ref{7.30}, the next statement follows in much the same way as its analogue in \cite{W}.

\begin{lemma}[Compare with Lemma 10.17 in \cite{W}] \label{7.31} 

Let $I$ be the subset of the set of indices $i\in[L_0+1,r-1]\cup[r+1,L-L_0-5]$ such that $|\textbf{z}_i|_a\geq|V|_a/c_3N$. If $h\leq L_0^2|V|_a$, then $\# I\leq L/5$.

\end{lemma}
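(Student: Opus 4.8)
\textbf{Proof proposal for Lemma \ref{7.31}.}

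The plan is to argue by contradiction: assuming $h\leq L_0^2|V|_a$ but $\#I>L/5$, we will produce a diagram contradicting the inductive hypothesis on the minimal counterexample $\Delta$, most likely via a weight-versus-perimeter computation on a subdiagram of $\Delta$. The rough idea is that each index $i\in I$ witnesses a trapezium $\Gamma_i$ (or its comb $E_i$) with a long top path $\textbf{z}_i$ carrying at least $|V|_a/c_3N$ many $a$-edges. Since $h\leq L_0^2|V|_a$, the height of each $\pazocal{Q}_i$ is comparatively small, so these $a$-edges cannot all be `absorbed' into $(\theta,q)$-cells of the bounded number of $\theta$-bands crossing the comb (this is exactly where Lemma \ref{7.30} enters, limiting to $2N$ the number of $a$-bands per $\theta$-band ending on $(\theta,q)$-cells). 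Hence a definite fraction of the $a$-edges on each $\textbf{z}_i$ must either pair up with $a$-cells or extend into neighboring cloves, forcing a large amount of weight to be concentrated in $\Psi$ near $\Pi$.

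First I would set up the bookkeeping: for each $i\in I$, use Lemma \ref{7.28}(2) together with the structure of $E_i^0$ to conclude that the $|V|_a/c_3N$ many $a$-edges on $\textbf{z}_i$ give rise to at least (roughly) $|V|_a/c_3N - 2N\cdot(\text{number of }\theta\text{-bands})$ many $a$-bands that must end on $\textbf{p}_{i,i+1}$ or on $a$-cells, which by Lemma \ref{a-cell in counterexample 2} and the relation structure translates into a lower bound on $|\textbf{p}_{i,i+1}|_a$ or on the combined perimeter of $a$-cells in that clove. Summing over $i\in I$ (with $\#I>L/5$), this would give a lower bound of order $(L/5)|V|_a/(c_3N)$ on $|\textbf{p}|_a$, hence on $|\textbf{p}|$. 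But by Lemmas \ref{7.24} and \ref{7.27}, $|\textbf{p}|$ (equivalently $|\textbf{q}_{1,L-4}|$) already controls the perimeter of $\Pi$, which has $a$-length comparable to $|V|_a$; the point is that $|\partial\Pi|\leq L|\bar{\textbf{p}}|$ forces $|V|_a$ to be small relative to $|\textbf{p}|$, and then Lemma \ref{7.26} lets us pass to the subdiagram $\Psi'$ (or apply Lemma \ref{diskless} if it contains no disk) and extract a contradiction from $\text{wt}_G(\Delta)\leq\text{wt}_G(\Psi')+\text{wt}(\Pi)+\text{wt}_G(\Psi)$ together with the superadditivity of $\phi_C$ and the mixture estimates of Lemma \ref{7.25}. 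The cancellation of the $\phi_{N_4}(p(\Delta))$ term against $\phi_{N_4}(p(\Delta)-d)$ and the $N_4 d\, p(\Delta) f_2(N_4 p(\Delta))$ surplus should dominate the extra weight $\text{wt}(\Pi)\leq C_1 L^2 y^2$ and the $a$-cell weights, by the parameter hierarchy $N_4\gg N_3\gg N_2\gg C_1\gg J\gg L\gg L_0\gg c_3 N$.

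The main obstacle I expect is the combinatorial step controlling where the $a$-edges of $\textbf{z}_i$ can go: one must carefully use Lemma \ref{7.29} (the copy of the comb $\nabla$ inside $\Gamma_{i-1}\setminus\Gamma_i'$) to rule out the possibility that the $a$-bands starting on $\textbf{z}_i$ all terminate harmlessly inside the trapezium without contributing to the boundary or to $a$-cells, and Lemma \ref{7.30} to bound the $a$-bands terminating on $\theta$-bands. Getting the constants right — in particular ensuring the `escaping' $a$-bands accumulate at a rate genuinely proportional to $|V|_a$ across the $>L/5$ good cloves rather than being diluted — is the delicate part. A secondary subtlety is that some $i\in I$ may correspond to the distinguished clove or to an index adjacent to it, where the picture involves $\Lambda_{i,i+1}'$ rather than $\Psi_{i,i+1}^0$; since there is at most one such clove, I would simply discard that index (and its neighbor) from $I$ at the cost of decreasing $\#I$ by a bounded amount, which does not affect the bound $L/5$ after adjusting the threshold constant. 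Once the lower bound on $|\textbf{p}|_a$ (or equivalently on $p(\Delta)-p(\Psi_{ij}')$ for a suitable wide clove $\Psi_{ij}$) is in hand, the final contradiction is a routine application of the inductive hypothesis exactly as in the proof of Lemma \ref{7.26}.
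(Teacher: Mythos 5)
Your overall skeleton — count the $a$-bands issuing from the paths $\textbf{z}_i$, $i\in I$, bound via Lemma \ref{7.30} how many can be absorbed by $(\theta,q)$-cells, and convert the surplus into length that contradicts Lemma \ref{7.26} — is the right one, and it matches the ingredients the paper cites (Lemmas \ref{7.24}, \ref{7.26}, \ref{7.27}, \ref{7.28}(2), \ref{7.30}). But the step you set aside as ``the delicate part'' is precisely the idea that makes the lemma true, and without it the count does not close. Per clove, the number of maximal $\theta$-bands of $E_i^0$ is $h_i-h_{i+1}$, which for a single $i$ can be as large as $h\approx L_0^2|V|_a$; so the per-clove absorption bound $2N(h_i-h_{i+1})$ can completely swallow $|V|_a/c_3N$, and your per-clove ``escaping'' estimate can be vacuous. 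What rescues the argument is that the $\theta$-bands of distinct combs $E_i^0$ are distinct, so over all $i\in I$ the absorbed $a$-bands number at most $2N\sum_i(h_i-h_{i+1})\leq 2Nh\leq 2NL_0^2|V|_a$, and the parameter inequality $L\gg c_3N^2L_0^2$ then forces a definite proportion of the cloves indexed by $I$ to each deliver at least, say, $|V|_a/2c_3N$ $a$-edges to the paths $\textbf{q}_{i,i+1}$. This telescoping aggregation is exactly where the hypothesis $h\leq L_0^2|V|_a$ and the restriction of the indices to $[L_0+1,r-1]\cup[r+1,L-L_0-5]$ enter; your sketch never performs it. (Also, for non-distinguished cloves the subdiagrams $\Psi_{i,i+1}^0$ contain no $a$-cells at all by Lemma \ref{non-distinguished a-cells}, so the weight bookkeeping with $a$-cells and Lemma \ref{7.29} that you propose is unnecessary.)

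The endgame is also not right as stated. A large supply of escaping $a$-edges does not ``concentrate weight near $\Pi$'' in any way that contradicts the inductive hypothesis; the contradiction is purely metric and comes from applying Lemma \ref{7.26} with $i=L_0+1$, $j=L-L_0-4$. The escaping $a$-edges lie on $\textbf{q}_{ij}$, so by Lemmas \ref{lengths}(a), \ref{7.27} and \ref{7.28}(2) one gets $|\textbf{p}_{ij}|\geq|\textbf{q}_{ij}|\geq h_i+h_j+N(j-i)+\delta\cdot(\text{order } L|V|_a/c_3N)$, while Lemma \ref{7.24}(2) together with $h_i,h_j\leq h\leq L_0^2|V|_a$ gives $(1+\varepsilon)|\bar{\textbf{p}}_{ij}|\leq(1+\varepsilon)\bigl(2h+N(2L_0+5)+(2L_0+5)\delta|V|_a\bigr)$; since $L\gg L_0c_3N$ and $\varepsilon=1/\sqrt{N_4}$ with $N_4\gg\delta^{-1}$, the first quantity exceeds the second, contradicting Lemma \ref{7.26}. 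Your route instead re-runs the proof of Lemma \ref{7.26} via mixtures and $\text{wt}(\Pi)$, and the pivotal sentence that ``$|\partial\Pi|\leq L|\bar{\textbf{p}}_{ij}|$ forces $|V|_a$ to be small relative to $|\textbf{p}|$'' is not a valid deduction: that inequality only serves to bound $\text{wt}(\Pi)$ and says nothing about $|V|_a$. Without the explicit comparison against Lemma \ref{7.24}(2) — and the observation that the escaping edges only count with weight $\delta$, so the $\varepsilon$-slack must be beaten using $N_4\gg\delta^{-1}$ — no contradiction is actually produced.
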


The next statement then follows from Lemmas \ref{G_a theta-annuli}(2), \ref{7.24}, \ref{7.26}, and \ref{7.31}.

\begin{lemma}[Lemmas 10.18-10.19 of \cite{W}] \label{7.32-7.33}

If $h\leq L_0^2|V|_a$, then:

\begin{enumerate}

\item $H_1$ and $H_{L-4}$ have different first letters

\item $|V|_a>\frac{NL}{4\delta L_0}$

\end{enumerate}

\end{lemma}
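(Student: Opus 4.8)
\textbf{Proof plan for Lemma \ref{7.32-7.33}.}

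The plan is to argue by contradiction, assuming $h \le L_0^2 |V|_a$ throughout so that \Cref{7.31} is available. The key mechanism is that if $H_1$ and $H_{L-4}$ had the same first letter, then a large chunk of the computations recorded along the handles $\pazocal{Q}_1$ and $\pazocal{Q}_{L-4}$ would be forced to agree; combined with the structure theory of $\textbf{M}$ (Lemmas \ref{M_5 step history}, \ref{M step history}, and \ref{M return to start}) and the control on shafts (\Cref{G_a design}, \Cref{weakly minimal}(b)), this would produce either an excess of $\lambda$-shaft length inside $\Psi$ exceeding $\sigma_\lambda(\Delta^*) \le C_1|\partial\Delta| \le C_1 p(\Delta)$, or else force many of the $\theta$-bands bounded in the trapezia $\Gamma_i$ to be of a controlled type. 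More precisely: since $H_{i+1}$ is a prefix of $H_i$ for $i < r$ and $H_j$ is a prefix of $H_{j+1}$ for $j > r$, a common first letter of $H_1$ and $H_{L-4}$ would mean that the overlap of all these histories has length at least $\min(h_1, h_{L-4}) \ge h$; the many disk-spokes $\pazocal{Q}_{L_0+1},\dots,\pazocal{Q}_{r-1}$ and $\pazocal{Q}_{r+1},\dots,\pazocal{Q}_{L-L_0-5}$ then all carry a common controlled or long portion, and by \Cref{shafts} and the design estimate \Cref{G_a design} this length is bounded by $C_1 p(\Delta)$, which on the other hand is bounded in terms of $|V|_a$ by \Cref{7.24}(2) (via $|\bar{\textbf{p}}_{ij}|$) and \Cref{7.26}. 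Counting this against the $L - 2L_0$ spokes (large by the parameter choice $L \gg L_0$) produces a contradiction, giving (1).

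For (2), the plan is to use (1): since $H_1$ and $H_{L-4}$ have different first letters, the computation read along $\pazocal{Q}_1$ (toward $\Pi$) and the one along $\pazocal{Q}_{L-4}$ diverge immediately, so the admissible word $W$ corresponding to $\lab(\partial\Pi)$ — equivalently the configuration $V$ in each component $W(j)$, $j \ge 2$ — must admit two distinct rules applicable to it in the relevant hardware. Then \Cref{M_5 step history} (and \Cref{M step history} for $\textbf{M}$ itself) restricts which pairs of step-history letters can start; the only configurations admissible for two genuinely distinct positive rules in opposite directions are start configurations or end configurations, and since $V$ is not an end configuration (it lies on the boundary of a disk with $\ell(W) \le 1$ and a non-trivial history on both sides), $V$ must be (a component of) a start configuration, so $V \equiv I_5(w,H)$ for some $w \in \pazocal{R}_1$ and $H \in F(\Phi^+)$ with $\|H\| \ge k$ by \Cref{M_5 language}(2). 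Then $|V|_a = 2(s+1)\|H\| + \|w\| \ge 2(s+1)k \ge Nk/\text{(const)}$. To extract the stated bound $|V|_a > \frac{NL}{4\delta L_0}$ I would feed in the inequality $h \le L_0^2 |V|_a$ together with the estimate $h \ge$ (something comparable to $NL/\delta$), which comes from \Cref{7.24}: the path $\textbf{p}_{ij}$ for $i = L_0+1$, $j = L - L_0 - 4$ has $|\textbf{p}_{ij}|_q \ge N(j-i) + 1 \ge N(L - 2L_0 - 5)$, while $|\bar{\textbf{p}}_{ij}| \le h_i + h_j + N(L - j + i) + (L-j+i)\delta|V|_a$, and balancing these against $\sigma_\lambda$-bounds via \Cref{7.26} forces $h \gtrsim NL$; dividing by $L_0^2$ and tracking the $\delta$ factor gives the claim.

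I expect the main obstacle to be the bookkeeping in step (1): precisely quantifying how a common first letter of $H_1$ and $H_{L-4}$ translates into a \emph{lower bound} on $\sigma_\lambda(\Delta^*)$ (or on the number of forced $(\theta,q)$-cells in the trapezia $\Gamma_i$) that beats the \emph{upper bound} $C_1 p(\Delta)$ from \Cref{weakly minimal}(b). The subtlety is that a common first letter alone need not give a long \emph{controlled} subword — one has to invoke \Cref{M step history} to see that the overlapping prefixes of the $H_i$ either quickly hit a controlled segment of step history $(34)_j(4)_j(45)_j$ or $(54)_j(4)_j(43)_j$ (using the long-history analysis, \Cref{M projected long history controlled} and \Cref{long step history}, since the $h_i$ are large relative to $\max(\|V_0\|,\|V_t\|)$ for the relevant restricted computations), or the computation returns to a start/end configuration in a way ruled out by $1 \notin \pazocal{R}_1$. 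Managing this case split, and ensuring the constants ($C_1$, $J$, $L_0$, $L$, $\delta^{-1}$) are consulted in the correct order of the parameter hierarchy, is where the real work lies; the rest is the same style of perimeter-versus-weight induction already carried out in Lemmas \ref{7.26}, \ref{7.31}.
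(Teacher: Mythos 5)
Your proposal does not match the paper's argument, and both halves contain genuine gaps.

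For part (1) the paper's mechanism is the one it lists first, Lemma \ref{G_a theta-annuli}(2), and it is short: if $H_1$ and $H_{L-4}$ had the same first letter, then the two $\theta$-bands adjacent to $\Pi$ — the one crossing $\pazocal{Q}_1$ (which, by the prefix relations and the trapezia structure of \Cref{sec-trapezia-combs-cloves}, crosses all of $\pazocal{Q}_1,\dots,\pazocal{Q}_r$) and the one crossing $\pazocal{Q}_{L-4}$ (which crosses $\pazocal{Q}_{r+1},\dots,\pazocal{Q}_{L-4}$) — would be disjoint bands corresponding to the same rule when read toward the disk, separated from $\Pi$ only by $a$-cells. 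Together they cross $L-4>(L-1)/2$ $t$-spokes of $\Pi$, contradicting \Cref{G_a theta-annuli}(2). Your route instead starts from the assertion that a common first letter forces "the overlap of all these histories" to have length at least $\min(h_1,h_{L-4})\ge h$; that is false — $H_2$ being a prefix of $H_1$ and $H_{r+1}$ being a prefix of $H_{r+2}$ says nothing about $H_1$ and $H_{L-4}$ sharing more than their first letters — and the ensuing shaft-length/design bookkeeping is never brought to a concrete contradiction.

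For part (2) your main route fails at the claimed dichotomy "the only configurations admissible for two genuinely distinct positive rules in opposite directions are start or end configurations." The machines here are non-deterministic and this is simply not true (e.g.\ mid-run in a primitive submachine many distinct rules apply); nothing in \Cref{M_5 step history} or \Cref{M step history} yields it. Even granting it, the resulting bound $|V|_a\ge 2(s+1)k$ is of the wrong order: in the parameter hierarchy $\delta^{-1}>>L>>k$, so $NL/(4\delta L_0)$ is far larger than $Nk$. The correct argument is the $\textbf{p}_{ij}$-versus-$\bar{\textbf{p}}_{ij}$ comparison you only gesture at: take $i=L_0+1$, $j=L-L_0-4$, so that $h_i+h_j\le 2h\le 2L_0^2|V|_a$ by hypothesis; \Cref{7.24} gives $|\textbf{p}_{ij}|\ge h_i+h_j+N(L-2L_0-5)$ and $|\bar{\textbf{p}}_{ij}|\le h_i+h_j+N(2L_0+5)+(2L_0+5)\delta|V|_a$, and \Cref{7.26} forces $|\textbf{p}_{ij}|<(1+\eps)|\bar{\textbf{p}}_{ij}|$; subtracting, the term $\eps(h_i+h_j)\le 2\eps L_0^2|V|_a$ is absorbed because $\eps=1/\sqrt{N_4}$ is chosen after $\delta$ and $L_0$, and what survives is $N(L-O(L_0))<(3L_0+O(1))\,\delta|V|_a$, i.e.\ $|V|_a>NL/(4\delta L_0)$. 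Note the conclusion of this balancing is a lower bound on $|V|_a$ directly, not "$h\gtrsim NL$" as you wrote, and part (2) does not need part (1).
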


Finally, we reach a contradiction to the assumption that $h\leq L_0^2|V|_a$.  The proof follows in much the same way as its analogue in \cite{W}, applying Lemmas \ref{M projected long history controlled}, \ref{7.31}, and \ref{7.32-7.33} to deduce a contradiction to \Cref{7.26}.

\begin{lemma}[Lemma 10.20 of \cite{W}] \label{7.34}

The inequality $h>L_0^2|V|_a$ must be true.

\end{lemma}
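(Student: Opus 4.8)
\textbf{Proof plan for Lemma \ref{7.34}.}

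The plan is to argue by contradiction: assume $h\leq L_0^2|V|_a$ and derive a contradiction with Lemma \ref{7.26}. The setup from Lemma \ref{7.32-7.33} is crucial: since $h\leq L_0^2|V|_a$, we know that $H_1$ and $H_{L-4}$ have different first letters and that $|V|_a>\frac{NL}{4\delta L_0}$, so $|V|_a$ is comparatively large. The key geometric object to exploit is the clove $\Psi_{1,L-4}$ together with the pair of subbands of $\pazocal{Q}_1$ and $\pazocal{Q}_{L-4}$; because their histories start differently, these spokes genuinely diverge at the disk $\Pi$, and we want to use this to show that one of the cloves $\Psi_{ij}$ with $j-i\geq L/2$ must contain a ``long'' computation in the sense of \Cref{M projected long history controlled}.

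First I would select indices $i\leq r$ and $j\geq r+1$ with $j-i\geq L/2$ for which the relevant trapezia $\Gamma_i$ (and their symmetric counterparts) force a long history relative to the perimeter of $\bar{\textbf{p}}_{ij}$. Using Lemma \ref{7.31}, the set $I$ of indices with ``fat'' tops $|\textbf{z}_i|_a\geq|V|_a/c_3N$ is small ($\#I\leq L/5$), so a majority of the relevant indices have $|\textbf{z}_i|_a<|V|_a/c_3N$, i.e. the tops of the trapezia stay short even though the base $V$ has length $|V|_a$. This means the trapezia $\Gamma_i$ restricted to the base $\{t(\ell)\}B_4(\ell)$ are reduced computations $V_0\to\dots\to V_{h_i}$ of $\textbf{M}$ with $\max(\|V_0\|,\|V_{h_i}\|)$ small compared to $h_i$. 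Then \Cref{M projected long history controlled} applies: the history of any sufficiently long subcomputation contains a controlled subword. A controlled subword of the history of a spoke yields a $\lambda$-shaft at $\Pi$, which contributes to $\sigma_\lambda(\Delta^*)$.

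Next I would quantify this: the presence of many controlled subwords across the spokes $\pazocal{Q}_{L_0+1},\dots,\pazocal{Q}_{L-L_0-4}$ produces a large lower bound on $\sigma_\lambda(\Delta^*)$, hence on $p(\Delta)$, which via \Cref{G_a design} (applied through Lemma \ref{7.26}'s inductive machinery) must be balanced against $|\textbf{p}_{ij}|$. The inequality of Lemma \ref{7.26}, $|\textbf{p}_{ij}|+\sigma_\lambda(\bar{\Delta}_{ij}^*)<(1+\eps)|\bar{\textbf{p}}_{ij}|$, combined with the estimates of Lemma \ref{7.24} relating $|\textbf{p}_{ij}|$, $|\bar{\textbf{p}}_{ij}|$, $h_i$, $h_j$, $N(j-i)$, and $\delta|V|_a$, gives a numerical constraint. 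When $h\leq L_0^2|V|_a$, the $\sigma_\lambda$ contribution coming from the many controlled subwords forces $|\textbf{p}_{ij}|+\sigma_\lambda$ to exceed $(1+\eps)|\bar{\textbf{p}}_{ij}|$, contradicting Lemma \ref{7.26}. The parameter choices $L>>L_0$, $\lambda^{-1}<<L_0$, and $\delta^{-1}$, $N$ suitably placed in the hierarchy are what make the arithmetic close.

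The main obstacle I anticipate is bookkeeping the interplay between the three separate length scales — the height $h$ of the shortest relevant spoke, the base length $|V|_a$, and the accumulated shaft length $\sigma_\lambda$ — and making sure the contribution from controlled subwords is genuinely $\Omega(|V|_a)$ per spoke (not absorbed by the ``fat'' indices in $I$ or lost to the trimming in passing from $\Psi_{i,i+1}$ to $\Psi_{i,i+1}^0$). Getting \Cref{M projected long history controlled} to apply uniformly requires checking that $\|\textbf{bot}(\Gamma_i)\|$ and $\|\textbf{top}(\Gamma_i)\|$ are both controlled by $|V|_a/c_3N$ up to bounded error (here Lemma \ref{7.30} and the structure of transitions on pararevolving bases enter), and then verifying $h_i>c_2\max(\|V_0\|,\|V_t\|)$ so the long-history dichotomy is available. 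Once that is in place, the contradiction with Lemma \ref{7.26} is essentially the same computation as in \cite{W}, modulo the adjusted constants coming from the fact that a transition of $\textbf{M}$ alters a pararevolving admissible word by up to $2(N-1)$ $a$-letters rather than $4$.
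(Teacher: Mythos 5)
Your overall strategy matches the paper's: assume $h\leq L_0^2|V|_a$, use Lemma \ref{7.31} to locate indices with thin tops $|\textbf{z}_i|_a<|V|_a/c_3N$, invoke the controlled-subword dichotomy to manufacture a $\lambda$-shaft, and contradict Lemma \ref{7.26} via the length estimates of Lemma \ref{7.24}. But there is a genuine gap at the step where you claim \Cref{M projected long history controlled} ``applies'' to the computation carried by a single trapezium $\Gamma_i$. That lemma requires $t>c_2\max(\|V_0\|,\|V_t\|)$, and for $\Gamma_i$ alone one of the two boundary words is $\lab(\textbf{y}_i)$, a copy of $V$ itself: its $a$-length is $|V|_a$, while the height $h_{i+1}$ is only bounded below by roughly $|V|_a/4N$ (each rule changes the $a$-length of such an admissible word by at most $2(N-1)$). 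Since $c_2>>N$, the hypothesis $h_{i+1}>c_2\|V\|$ fails, so the long-history dichotomy is simply not available for $\Gamma_i$ on its own. You flag exactly this verification as an anticipated obstacle, but the obstacle is fatal rather than technical.

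The missing idea is a mirror-gluing construction. Because $\lab(\textbf{y}_i)$ and $\lab(\textbf{y}_j)$ are coordinate shifts of one another, one pastes the mirror of a coordinate shift of $\Gamma_j$ to $\Gamma_i$ along $\textbf{y}_i$, producing an auxiliary trapezium $E$ whose top and bottom are copies of $\textbf{z}_i$ and $\textbf{z}_j$ --- both of $a$-length less than $|V|_a/c_3N$ --- and whose height is $t=h_{i+1}+h_j>|V|_a/2N$. Now $t>c_2\max(\|W_0\|,\|W_t\|)$ does hold (using $c_3>>c_2$ and \Cref{7.32-7.33}(2)), and the long-history lemma applies to $E$. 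Crucially, the history of $E$ is $H_j^{-1}H_{i+1}$, and this word is reduced precisely because $H_{i+1}$ and $H_j$ have different first letters --- that is the actual role of \Cref{7.32-7.33}(1), which you cite only to say the spokes ``diverge.'' From the controlled subword one concludes that $\pazocal{Q}_{i+1}$ is a $\lambda$-shaft of length $h_{i+1}$; a single such shaft already yields $|\textbf{p}_{i+1,j}|+\sigma_\lambda(\bar{\Delta}_{i+1,j}^*)\geq 2h_{i+1}+h_j$ against $|\bar{\textbf{p}}_{i+1,j}|\leq\frac{3}{2}h_{i+1}+h_j$, a ratio of at least $6/5$, contradicting Lemma \ref{7.26} once $\eps<0.2$. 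Your alternative of accumulating ``many controlled subwords across the spokes'' to inflate $\sigma_\lambda(\Delta^*)$ is both unnecessary and delicate, since $\sigma_\lambda$ counts only one maximal shaft per spoke per disk, so repeated controlled subwords on the same spoke do not add.
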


\begin{proof}

Assuming the statement is false, Lemma \ref{7.31} implies that for at least $L-5-L/5-2L_0>3L/4$ indices $j\in\{1,\dots,L-5\}$,  $|\textbf{z}_j|_a<|V|_a/c_3N$. So, we can choose two such indices, $i$ and $j$, such that $L_0+1\leq i\leq r<r+1\leq j\leq L-L_0-5$, $j-i\geq 3L/5$, and neither $\Psi_{i,i+1}$ nor $\Psi_{j,j+1}$ is the distinguished clove. 

Since $H_{i+1}$ (respectively $H_j$) is a prefix of $H_1$ (respectively $H_{L-4}$), it follows from Lemma \ref{7.32-7.33}(1) that the first letters of $H_{i+1}$ and $H_j$ are different.

Since $\lab(\textbf{y}_i)$ and $\lab(\textbf{y}_j)$ are coordinate shifts of one another (and are copies of $V$), we can construct an auxiliary trapezium $E$ by pasting the mirror of a coordinate shift of $\Gamma_j$ to $\Gamma_i$ along $\textbf{y}_i$. The history of $E$ is $H_j^{-1}H_{i+1}$, which is a reduced word since the first letter of $H_{i+1}$ is different from the first letter of $H_j$.

The top and the bottom of $E$ are copies of $\textbf{z}_i$ and $\textbf{z}_j$, respectively, and so have $a$-lengths less than $|V|_a/c_3N$. Without loss of generality, assume $h_{i+1}\geq h_j$, and so $h_{i+1}\geq t/2$ for $t$ the height of $E$.

Note that $|V|_a-|V|_a/c_3N>|V|_a/2$, and so $h_{i+1},h_j>|V|_a/4N$ since any rule of $\textbf{M}_5$ alters the $a$-length of a configuration by at most $2(N-1)$.

By Lemma \ref{7.32-7.33}(2), $|V|_a/4N>\frac{L\delta^{-1}}{16L_0}\geq Nc_2$ since $\delta^{-1}$ and $L$ are chosen after $L_0$, $c_2$, and $N$. Further, letting $W_0$ and $W_t$ be the bottom and top labels of $E$, respectively, $|V|_a/4N\geq \frac{c_3}{4}\max(|W_0|_a,|W_t|_a)>c_2\max(|W_0|_a,|W_t|_a)$ since $c_3>>c_2$.

As a result,
$$t=h_{i+1}+h_j>|V|_a/2N>c_2\max(|W_0|_a,|W_t|_a)+Nc_2\geq c_2\max(\|W_0\|,\|W_t\|)$$
Let $\pazocal{C}$ be the computation associated to $E$ through Lemma \ref{trapezia are computations}. Then the restriction of $\pazocal{C}$ (or its inverse) to $\{t(\ell)\}B_4(\ell)$ for the appropriate $\ell\geq2$ satisfies the hypotheses of Lemma \ref{M projected long history controlled}. 

Setting $\lambda<1/10$, every factorization $H'H''H'''$ of $H_{i+1}$ with $\|H'\|+\|H'''\|\leq\lambda h_{i+1}$ satisfies $\|H''\|>0.4t$. So, applying Lemma \ref{M projected long history controlled}, $H''$ contains a controlled subword. Further, since all $\theta$-bands crossing $\pazocal{Q}_{i+1}$ must cross $\pazocal{Q}_i$, $W(\ell)$ is $H_{i+1}$-admissible. Hence, $\pazocal{Q}_{i+1}$ is a $\lambda$-shaft. 

Lemma \ref{7.24}(1) then implies that $|\textbf{p}_{i+1,j}|+\sigma_\lambda(\bar{\Delta}_{i+1,j}^*)\geq 2h_{i+1}+h_j$.

As $h_{i+1}>|V|_a/4N$, it follows that $\delta L|V|_a\leq4N\delta Lh_{i+1}<\frac{1}{4}h_{i+1}$ by the parameter choices $\delta^{-1}>>L>>N$. Further, Lemma \ref{7.32-7.33}(2) and and the parameter choice $\delta^{-1}>>L_0>>N$ imply $NL<4\delta L_0|V|_a<16N\delta L_0h_{i+1}\leq\frac{1}{4}h_{i+1}$.

So, Lemma \ref{7.24}(2) yields $|\bar{\textbf{p}}_{i+1,j}|\leq \frac{3}{2}h_{i+1}+h_j$.

Hence, $$\frac{|\textbf{p}_{i+1,j}|+\sigma_\lambda(\bar{\Delta}_{i+1,j}^*)}{|\bar{\textbf{p}}_{i+1,j}|}\geq\frac{2h_{i+1}+h_j}{\frac{3}{2}h_{i+1}+h_j}\geq\frac{6}{5}$$ since $h_j\leq h_{i+1}$.

Taking $N_4$ sufficiently large, $\eps=1/\sqrt{N_4}<0.2$. However, as $j-(i+1)\geq3L/5-1\geq L/2$, the above inequality contradicts Lemma \ref{7.26}.

\end{proof}

The next lemma consists of several statements then follow from \Cref{7.34}; their proofs are all the same as those of the analogous statements in \cite{W}, where again a contradiction to \Cref{7.26} is deduced.  

\begin{lemma}[Lemmas 10.21-10.25 of \cite{W}] \label{lem-combined}

Let $i\in\{1,\dots,L_0-1\}$.

\begin{enumerate}

\item $h_i>\delta^{-1}$

\item The spoke $\pazocal{Q}_i$ does not contain a $\lambda$-shaft of $\Pi$ of length at least $\delta h$

\item $|\textbf{z}_i|_a>h_{i+1}/2c_2$

\item $h_{i+1}<(1-\frac{1}{30c_2})h_i$

\item $|\textbf{z}_i|_a\leq 8h_i$

\end{enumerate}

\end{lemma}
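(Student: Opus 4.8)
\textbf{Proof plan for Lemma~\ref{lem-combined}.}
The plan is to prove the five statements roughly in the order $(1)\Rightarrow(2)$, then $(3)$, then $(4)$, then $(5)$, all by the same device used throughout this section: assume a statement fails for some index $i\in\{1,\dots,L_0-1\}$, excise a suitable subdiagram $\Psi'$ (or its complement), apply either the inductive hypothesis on $p(\cdot)$ or \Cref{diskless}, and extract a contradiction with \Cref{7.26} (using \Cref{7.34} to guarantee $h>L_0^2|V|_a$, which together with the parameter choices makes $h_i$, $h_{i+1}$, and $|V|_a$ all large). The point of $h>L_0^2|V|_a$ is that the `heights near the disk' $h_1\geq h_2\geq\dots\geq h_{L_0-1}$ cannot decay too fast while staying above $|V|_a$-ish scales, and each of the five claims is a quantitative version of `the spoke $\pazocal{Q}_i$ is genuinely long but is not already a long $\lambda$-shaft, so the trapezium $\Gamma_i$ it bounds must be wide'.

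For (1): if $h_i\leq\delta^{-1}$, then since $h_1\geq\dots\geq h_i$ and \Cref{7.34} gives $h>L_0^2|V|_a$, while Lemma~\ref{7.22} forces the $h_j$ for $j\leq r$ to be non-increasing from $h_1\geq h\geq L_0^2|V|_a$ down to $h_i\leq\delta^{-1}$; but $i<L_0$, so some single step $h_{j}\to h_{j+1}$ with $j<i$ must drop by more than a factor forced impossible by the fact that a single rule alters the $a$-length of a configuration by at most $2(N-1)$ (as in the proof of \Cref{7.34}), together with $\delta^{-1}>>L_0>>N$. Statement (2) follows from (1) and \Cref{G_a design}/\Cref{weakly minimal}(b): a $\lambda$-shaft in $\pazocal{Q}_i$ of length $\geq\delta h$ would, after noting $h$ is comparable to $p(\Delta)$ up to the parameter-controlled terms of \Cref{7.24}, push $\sigma_\lambda(\Delta^*)$ above the bound $C_1|\partial\Delta|$ unless $|\partial\Delta|$ is itself large, and then one excises the clove $\Psi_{i,i+1}'$ and contradicts \Cref{7.26} by the same computation as in that lemma's proof. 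For (3): $|\textbf{z}_i|_a$ is the top $a$-length of the trapezium $\Gamma_i$ of height (essentially) $h_{i+1}$; apply \Cref{trapezia are computations} to get a reduced computation of $\textbf{M}$ in a pararevolving base of length $h_{i+1}$, and invoke \Cref{pararevolving} (or \Cref{one-machine pararevolving}): since by (2) this computation contains no controlled subword on $\pazocal{Q}_i$, \Cref{M projected long history controlled} forces $h_{i+1}\leq c_2\max(\|W_0\|,\|W_t\|)$, whence $|\textbf{z}_i|_a\geq\|W_t\|/(\text{const})\geq h_{i+1}/2c_2$ after accounting for the $N$-term in $\|\cdot\|$ versus $|\cdot|_a$. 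Statement (4) is the contrapositive packaging: $h_i-h_{i+1}$ is the height of the handle $\pazocal{C}_i$ of the comb $E_i$, and the $a$-edges of $\textbf{z}_i$ produced in (3) must, by Lemmas~\ref{7.29} and \ref{7.30}, mostly feed into that comb (at most $2N$ per $\theta$-band), forcing $h_i-h_{i+1}\geq|\textbf{z}_i|_a/(\text{const}\cdot N)\geq h_{i+1}/(\text{const}\cdot c_2 N)$, which rearranges to $h_{i+1}<(1-\frac{1}{30c_2})h_i$ after absorbing constants via $c_2>>N$. Finally (5): if $|\textbf{z}_i|_a>8h_i$, then $\Gamma_i$ is a trapezium of height $h_i$ whose top is much longer than its height, so \Cref{comb weights} gives a weight bound for the subcomb above $\textbf{z}_i$ that is dominated by $\phi_{C_2}(|\textbf{z}_i|_a)$; excising the clove and applying \Cref{7.26} (with the large-$|V|_a$ input from \Cref{7.34}) yields the contradiction exactly as in \Cref{H>M}.

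The main obstacle I expect is statement (4) — converting `the top of $\Gamma_i$ has many $a$-edges' into `the handle $\pazocal{C}_i$ is long'. This requires carefully tracking where the maximal $a$-bands starting on $\textbf{z}_i$ terminate: by (WM1)--(WM2) and Lemmas~\ref{a-band on same a-cell}, \ref{a-bands between a-cells} no $a$-band ends twice on an $a$-cell or on two $a$-cells, and Lemma~\ref{7.29} controls the combs hanging off $\textbf{z}_i$, but one still must rule out too many $a$-bands escaping to $\partial\Delta$ (handled by \Cref{a-cell in counterexample 2}) or piling onto few $\theta$-bands (handled by the sharpened \Cref{7.30} bound of $2N$). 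Getting the constant in (4) to come out as $\frac{1}{30c_2}$ rather than something depending on $N$ is where the parameter hierarchy $c_2>>N$ does the work, and this bookkeeping — rather than any new idea — is the delicate part; everything else is a faithful replay of the corresponding arguments in \cite{W} with $\phi_{N_4}$ in place of the quadratic and $2(N-1)$ in place of $4$ for the transition estimates.
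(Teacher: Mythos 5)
Your high-level strategy — assume a statement fails, excise a clove or its complement, apply the inductive hypothesis or \Cref{diskless}, and derive a contradiction with \Cref{7.26} using the input $h>L_0^2|V|_a$ from \Cref{7.34} — is exactly what the paper intends (it gives no proof of its own, deferring to \cite{W} with precisely that hint). However, two of your concrete mechanisms do not work as stated.

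For (1), the claim that a violation $h_i\leq\delta^{-1}$ forces "some single step $h_j\to h_{j+1}$ to drop by more than a factor forced impossible by the fact that a single rule alters the $a$-length by at most $2(N-1)$" is a non sequitur: that bound relates the length of a history to the change in $a$-length of configurations, not the ratio of lengths of consecutive spokes, and nothing prevents $h_{j+1}$ from being much smaller than $h_j$ a priori (indeed (4) is only an upper bound on $h_{i+1}/h_i$). Note also that $h_i\geq h>L_0^2|V|_a$ does not give $h_i>\delta^{-1}$, since $\delta^{-1}>>L_0$ in the parameter hierarchy and $|V|_a$ has no a priori lower bound once \Cref{7.34} has invalidated the hypothesis of \Cref{7.32-7.33}(2). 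The correct route is the one you use elsewhere: if $h_i\leq\delta^{-1}$ then $|V|_a<\delta^{-1}/L_0^2$, so for $j=L-L_0-4$ both $|\bar{\textbf{p}}_{ij}|$ and $\epsilon|\bar{\textbf{p}}_{ij}|$ are small while $|\textbf{p}_{ij}|-|\bar{\textbf{p}}_{ij}|\geq N(j-i)-N(L-j+i)-O(1)\geq NL/4$ by \Cref{7.24}, contradicting \Cref{7.26} directly.

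For (4), your chain $h_i-h_{i+1}\geq|\textbf{z}_i|_a/(\mathrm{const}\cdot N)\geq h_{i+1}/(\mathrm{const}\cdot c_2N)$ only yields $h_{i+1}<\bigl(1-\tfrac{1}{O(c_2N)}\bigr)h_i$, which is strictly weaker than the target $\bigl(1-\tfrac{1}{30c_2}\bigr)h_i$ because $N$ is large; the hierarchy $c_2>>N$ cannot absorb a factor of $N$ sitting in the denominator. Relatedly, $a$-bands from $\textbf{z}_i$ that escape to the boundary are not "handled by \Cref{a-cell in counterexample 2}" — that lemma constrains $a$-cells on $\partial\Delta$, not where $a$-bands terminate. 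The escape case is the heart of the argument: if $h_i-h_{i+1}$ is too small, then only $O(K_0(h_i-h_{i+1}))$ of the $>h_{i+1}/2c_2$ $a$-bands starting on $\textbf{z}_i$ can be absorbed inside the comb $E_i^0$ (via \Cref{comb weights}(2) and \Cref{7.30}), so the rest reach $\textbf{q}_{i,i+1}$, inflating $|\textbf{p}_{i,i+1}|_a$ and hence $|\textbf{p}_{ij}|-|\bar{\textbf{p}}_{ij}|$ by a $\delta$-multiple of $h_{i+1}/c_2$, which again contradicts \Cref{7.26} after the mixture computation. Your sketches of (2), (3), and (5) are essentially right, modulo the fact that (2) does not literally say "no controlled subword" — one must pass from a controlled subword positioned by \Cref{M projected long history controlled} to a long $\lambda$-shaft before invoking (2).
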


Note that if $\Psi_{i,i+1}$ is the distinguished clove for $i\leq r-1$, then $H_{i+1}$ need not be the history of $\Gamma_i$. To account for this, let $H_{i+1}'$ be the history of $\Gamma_i$. Note that $H_{i+1}$ is always a prefix of $H_{i+1}'$.

\begin{lemma}[Lemma 10.26 of \cite{W}] \label{no one-step}

For $2\leq i\leq L_0-2$, let $H_i'=H_{i+1}'H'=H_{i+2}'H''H'$ and $\pazocal{C}$ be the computation corresponding to the trapezium $\Gamma_{i-1}$. Suppose the subcomputation $\pazocal{D}$ of $\pazocal{C}$ with history $H''H'$ has step history of length 1. Then there is no two-letter subword $Q'Q$ of the base of $\Gamma_{i-1}$ such that every rule of $\pazocal{D}$ inserts one letter to the left of $Q$.


\end{lemma}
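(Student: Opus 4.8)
The plan is to argue by contradiction, assuming that $\pazocal{D}$ has step history of length $1$ and that there is a two-letter subword $Q'Q$ of the base of $\Gamma_{i-1}$ such that every rule of $\pazocal{D}$ inserts one letter to the left of $Q$ (the mirror case, inserting on the right, being symmetric). First I would record the structural facts already available: by the setup in \Cref{sec-trapezia-combs-cloves}, $\Gamma_{i-1}$ is a trapezium whose base (or its inverse) has the form $\{t(\ell)\}B_4(\ell)\{t(\ell+1)\}$ (or $\{t(L)\}B_4(L)\{t(1)\}$ if $\Psi_{i-1,i}$ is the distinguished clove), and whose history is $H_i'$; applying \Cref{trapezia are computations} to $\Gamma_{i-1}$ produces a reduced computation $\pazocal{C}:W_0\to\dots\to W_{h_i'}$ whose terminal portion with history $H''H'$ is a reduced computation in a base $\{t(j)\}B_4(j)$ for the appropriate $j\ge 2$ realizing $\pazocal{D}$. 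Since the step history of $\pazocal{D}$ has length $1$, \Cref{M one-step} becomes the key tool.

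The heart of the argument is to push $|\textbf{z}_i|_a$ up so far that \Cref{7.26} is violated. Since every rule of $\pazocal{D}$ inserts a letter to the left of $Q$, the restriction of $\pazocal{D}$ to the $Q'Q$-sector is a one-letter-insertion computation, so its $a$-length strictly increases at every step; combined with \Cref{M one-step} (taking $z=1$, valid because a single insertion already makes $|V_1|_a$ exceed $|V_0|_a$ unless $\pazocal{D}$ is empty, and one checks $2|V_0|_a$ is cleared after two steps, or more carefully one can run the estimate starting one step in), the entire subcomputation $\pazocal{D}$ has the property that the $a$-lengths of the admissible words are monotone along two fixed sectors $U_\ell V_\ell$ and $U_rV_r$, one receiving a letter on the left and one on the right at each step. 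In the corresponding picture in $\Gamma_{i-1}$, this monotonicity forces the top of the maximal subtrapezium realizing $\pazocal{D}$ — which is a subpath of $\textbf{z}_{i+1}$ (or of $\textbf{z}_i$ after accounting for the copy $\Gamma_{i+1}'\subset\Gamma_i$) — to have $a$-length at least $\|H''H'\| \ge \|H'\| = h_i' - h_{i+1}'$ larger than that of the bottom. But by \Cref{lem-combined}(4) and (1), $h_i - h_{i+1}$ is a definite fraction of $h_i$ and $h_i>\delta^{-1}$, so $h_i'-h_{i+1}'$ is large; chasing this through \Cref{7.29} (which transports combs between $\Gamma_{i-1}\setminus\Gamma_i'$ and $E_{i-1}^0$) and \Cref{lem-combined}(3),(5), one concludes that $|\textbf{z}_{i-1}|_a$, hence $|V|_a$, is forced to grow linearly in $h$, while $h$ itself was bounded below by $L_0^2|V|_a$ in \Cref{7.34}; iterating the resulting inequality $L_0$ times yields a contradiction with the bound $|V|_a > NL/(4\delta L_0)$ of \Cref{7.32-7.33}(2) and the parameter hierarchy $L_0 << L$.

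Concretely, the steps in order are: (1) reduce to the left-insertion case and set up the computation $\pazocal{C}$ and its subcomputation $\pazocal{D}$ via \Cref{trapezia are computations}; (2) apply \Cref{M one-step} to $\pazocal{D}$ to extract the two sectors $U_\ell V_\ell$, $U_rV_r$ along which $a$-length is strictly monotone throughout the tail of $\pazocal{D}$, and verify the hypothesis $|V_z|_a > 2|V_0|_a$ holds (or argue that if it never holds then $\pazocal{D}$ is short, so $h_i'-h_{i+1}'$ is small, contradicting \Cref{lem-combined}(4),(1)); (3) translate the monotonicity back into a lower bound on $|\textbf{z}_i|_a - |\textbf{y}_i|_a$ of order $h_i - h_{i+1}$; (4) combine with \Cref{7.29}, \Cref{7.30}, and \Cref{lem-combined}(3),(5) to propagate a lower bound on $|\textbf{z}_j|_a$ for $j$ down to $1$, getting $|V|_a \gtrsim h_1 \gtrsim (1-1/30c_2)^{-L_0} h \geq (1-1/30c_2)^{-L_0} L_0^2 |V|_a$ via \Cref{lem-combined}(4) and \Cref{7.34}; (5) derive the contradiction from the parameter choices, noting $|\partial\Pi| \le L|V|_a$ is then incompatible with \Cref{7.26} applied to a clove $\Psi_{ij}$ with $j - i \ge L/2$, exactly as in the proof of \Cref{7.34}.

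The main obstacle I expect is step (3)–(4): carefully bookkeeping how the single-insertion monotonicity in one sector of the base of $\Gamma_{i-1}$ survives the passage through the nested copies $\Gamma_{i+1}'\subset\Gamma_i$ and the comb surgery of \Cref{7.29}, so that the gain in $a$-length is not diluted across the roughly $N$ sectors of the base (here \Cref{7.30}'s bound of $2N$ is what keeps the dilution under control). Once the linear-in-$h$ lower bound on $|V|_a$ is in hand, the contradiction with \Cref{7.34} and the parameter hierarchy is routine and mirrors the endgame already carried out for \Cref{7.34}.
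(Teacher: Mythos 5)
There is a genuine gap in the middle of your argument, at steps (3)--(4). You propose to convert the insertion hypothesis into a lower bound $|V|_a\gtrsim h_1$ and then contradict \Cref{7.34}. But the insertions grow the configurations \emph{away} from the disk: $\textbf{y}_i=\textbf{bot}(\Gamma_i)$ lies on $\partial\Pi$ with $|\textbf{y}_i|_a=|V|_a$ fixed, and the accumulated $a$-letters appear on the tops $\textbf{z}_i$, $\textbf{z}_{i+1},\dots$. Nothing about a computation that inserts letters at every step forces its \emph{initial} configuration to be long, so no amount of propagation through $\Gamma_{i+1}'\subset\Gamma_i$ or \Cref{7.29} will yield $|V|_a\gtrsim h_1$; indeed \Cref{7.34} and \Cref{lem-combined}(5) are perfectly consistent with the insertion scenario at the level of $a$-lengths alone. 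The hypothesis has to be brought into contact with the \emph{boundary} of $\Delta$, and your proposal never does this. (Your invocation of \Cref{M one-step} is also backwards: its conclusion is essentially the hypothesis of the present lemma, and it is used elsewhere, in \Cref{contradiction}, to produce the sector $Q'Q$ that this lemma then rules out.)

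The missing idea is an $a$-band tracing argument. The trapezium $\Gamma_i\setminus\Gamma_{i+1}'$ has history $H''$ and inserts one letter to the left of the $q$-band $\pazocal{Q}$ (corresponding to $Q$) at each step, so the subpath $\textbf{x}$ of $\textbf{z}_i$ between $\pazocal{Q}'$ and $\pazocal{Q}$ satisfies $|\textbf{x}|_a\geq\|H''\|\geq h_{i+1}-h_{i+2}>10L_0|V|_a$. Crucially, no $a$-band starting on $\textbf{x}$ can end on a $(\theta,q)$-cell of $\pazocal{Q}$, since by \Cref{7.29} this would transport to $\Gamma_{i-1}\setminus\Gamma_i'$ and exhibit a rule of $\pazocal{D}$ \emph{deleting} a letter to the left of $Q$, contrary to hypothesis. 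Counting the $a$-bands in the comb $\nabla$ bounded by $\pazocal{Q}'$, $\pazocal{Q}$, $\textbf{x}$, and $\textbf{q}_{i,i+1}$ then forces at least $|\textbf{x}|_a$ of them onto $\textbf{q}_{i,i+1}$, whence $|\textbf{p}_{i,L-L_0-4}|-|\bar{\textbf{p}}_{i,L-L_0-4}|\geq\delta h_{i+1}/50c_2$. The contradiction is then not with \Cref{7.26} or \Cref{7.34} but with the minimality of the counterexample $\Delta$ itself: the perimeter drop lets one apply the inductive weight bound to $\Psi'_{i,L-L_0-4}$, and a mixture computation (via \Cref{mixtures}(d), gaining $(h_{i-1}-h_i)(h_i+h_{L-L_0-4})$ from the $q$-edge at the end of $\pazocal{Q}_i$, together with \Cref{lem-combined}(4)) absorbs the $G$-weights of $\Pi$ and $\Psi_{i,L-L_0-4}$, violating $\text{wt}_G(\Delta)>\phi_{N_4}(p(\Delta))+N_3\mu(\Delta)f_2(N_3p(\Delta))$.
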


\begin{proof}


Let $\pazocal{Q}$ be the maximal $q$-band of $E_i^0$ that is a subband of the $q$-spoke of $\Pi$ corresponding to a coordinate shift the state letter $Q$. Similarly, let $\pazocal{Q}'$ be the maximal $q$-band corresponding to a coordinate shift of $Q'$, so that $\pazocal{Q}'$ and $\pazocal{Q}$ are neighbor $q$-bands. Let $\textbf{x}$ be the subpath of $\textbf{z}_i$ between $\pazocal{Q}'$ and $\pazocal{Q}$.

Since $\Gamma_i$ contains a copy $\Gamma_{i+1}'$ of the trapezium $\Gamma_{i+1}$, the bottom of the trapezium $\Gamma_i\setminus\Gamma_{i+1}'$ is a copy $\textbf{z}_{i+1}'$ of $\textbf{z}_{i+1}$, while the top is $\textbf{z}_i$. This trapezium has history $H''$, so that the corresponding computation inserts one $a$-letter to the left of the state letter corresponding to $\pazocal{Q}$ at each transition. As a result, $|\textbf{x}|_a\geq\|H''\|\geq h_{i+1}-h_{i+2}$. 

By \Cref{lem-combined}(4), $h_{i+1}-h_{i+2}>\frac{1}{30c_2}h_{i+1}$. As $h_{i+1}\geq h$, Lemma \ref{7.34} and the parameter choice $L_0>>c_2$ imply
$$|\textbf{x}|_a\geq\frac{h}{30c_2}>\frac{L_0^2|V|_a}{30c_2}>10L_0|V|_a$$
If an $a$-band starting on $\textbf{x}$ ended on a $(\theta,q)$-cell of $\pazocal{Q}$, then Lemma \ref{7.26} implies that there is a copy of this in the trapezium $\Gamma_{i-1}\setminus\Gamma_i'$. By Lemma \ref{trapezia are computations}, though, this would imply there exists a transition of $\pazocal{D}$ that deletes a letter from the left of $\pazocal{Q}$, contradicting the hypothesis.

\begin{figure}[H]
\centering
\includegraphics[scale=1]{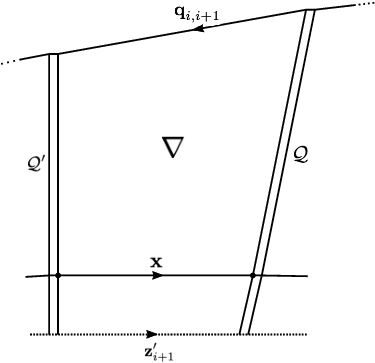}
\caption{}
\label{fig-nabla}
\end{figure}

Now, consider the comb $\nabla$ contained in $E_i^0$ bounded by $\pazocal{Q}'$, $\pazocal{Q}$, $\textbf{x}$, and $\textbf{q}_{i,i+1}$ (see \Cref{fig-nabla}). Set $s$ and $s'$ as the lengths of $\pazocal{Q}$ and $\pazocal{Q}'$, respectively. Lemmas \ref{7.22}(2) and \ref{7.28}(2) imply $s'\leq s$. So, by Lemma \ref{7.28}(1), there are $|\textbf{x}|_a+s$ maximal $a$-bands starting on $\textbf{x}$ or $\pazocal{Q}$ and ending on $\pazocal{Q}'$ or on $\textbf{q}_{i,i+1}$. Since only $s'$ $a$-bands can end on $\pazocal{Q}'$, at least $|\textbf{x}|_a+s-s'$ of them end on the segment of $\textbf{q}_{i,i+1}$ between $\pazocal{Q}$ and $\pazocal{Q}'$. By Lemmas \ref{7.22}(2) and \ref{7.28}(2), the same segment contains $s-s'$ $\theta$-edges, meaning at least $|\textbf{x}|_a$ of them contribute $\delta$ to its length. 

As in the proof of the analogous statement in \cite{W}, \Cref{7.24}, \Cref{7.34}, and the parameter choice $L_0>>c_2$ then imply $$|\textbf{p}_{i,L-L_0-4}|-|\bar{\textbf{p}}_{i,L-L_0-4}|\geq\delta h_{i+1}/50c_2$$

Let $\textbf{s}$ be the complement of the $\textbf{p}_{i,L-L_0-4}$ in $\partial\Delta$. Then, since $\textbf{p}_{i,L-L_0-4}$ starts and ends with $q$-edges, Lemma \ref{lengths}(c) implies
\begin{align*}
|\partial\Psi'_{i,L-L_0-4}|&\leq|\textbf{s}|+|\bar{\textbf{p}}_{i,L-L_0-4}| \\
&<|\textbf{s}|+|\textbf{p}_{i,L-L_0-4}|-\delta h_{i+1}/50c_2 \\
&=|\partial\Delta|-\delta h_{i+1}/50c_2 \numberthis
\end{align*}
Lemma \ref{weakly minimal}(1) implies that $\Psi'_{i,L-L_0-4}$ is weakly minimal with $\sigma_\lambda((\Psi'_{i,L-L_0-4})^*)\leq\sigma_\lambda(\Delta^*)$.

Hence, $p(\Delta)-p(\Psi_{i,L-L_0-4}')\geq\delta h_{i+1}/50c_2>0$, so that applying either the inductive hypothesis or \Cref{diskless} implies

\begin{align*} \label{G-weight Psi'}
\text{wt}_G(\Psi'_{i,L-L_0-4}) &\leq \phi_{N_4}(p(\Psi_{i,L-L_0-4}'))+N_3\mu(\Psi_{i,L-L_0-4}')f_2(N_3p(\Psi_{i,L-L_0-4}')) \\
&\leq\phi_{N_4}(p(\Delta))-\frac{N_4\delta h_{i+1}}{50c_2}p(\Delta)f_2(N_4p(\Delta))+N_3\mu(\Psi_{i,L-L_0-4}')f_2(N_3p(\Delta)) \numberthis
\end{align*}
Next, note that $|V|_a\leq h_i/L_0^2$ by Lemma \ref{7.34}, $h_i>\delta^{-1}>100NL_0$ by \Cref{lem-combined}(1), and $h_{L-L_0-4}\leq h\leq h_i$. So, for sufficiently large $L_0$, we have
$$|\bar{\textbf{p}}_{i,L-L_0-4}|\leq 2h_i+3h_i/100N+3\delta h_i/L_0\leq 2.1h_i$$
As Lemma \ref{7.26} implies $|\textbf{p}_{i,L-L_0-4}|\leq(1+\eps)|\bar{\textbf{p}}_{i,L-L_0-4}|$, taking $N_4$ sufficiently large yields 
$$|\partial\Psi_{i,L-L_0-4}|\leq|\textbf{p}_{i,L-L_0-4}|+|\bar{\textbf{p}}_{i,L-L_0-4}|+|\partial\Pi|\leq4.5h_i+|\partial\Pi|$$
The parameter choices $\delta^{-1}>>L>>N$ imply
$$|\partial\Pi|\leq NL+L\delta|V|_a\leq\delta^{-1}/4+h_i/L_0^2\leq h_i/2$$
As $\Psi_{i,L-L_0-4}$ contains no disks, combining the above inequalities and applying Lemma \ref{diskless} yields
$$\text{wt}_G(\Psi_{i,L-L_0-4})\leq \phi_{N_2}(5h_i)+N_1\mu(\Psi_{i,L-L_0-4})f_2(5N_1h_i)$$
while the assignment of weight implies
$$\text{wt}(\Pi)=C_1|\partial\Pi|^2\leq C_1h_i^2$$
Combining these two inequalities, the parameter choice $N_2>>C_1$ and Lemma \ref{G-weight subdiagrams} imply
$$\text{wt}_G(\bar{\Delta}_{i,L-L_0-4})\leq 2\phi_{N_2}(5h_i)+N_1\mu(\Psi_{i,L-L_0-4})f_2(5N_1h_i)$$
By Lemma \ref{7.22}, $|\partial\Psi_{i,L-L_0-4}|_\theta=2(h_i+h_{L-L_0-4})\leq4h_i$. So, by Lemma \ref{mixtures}(1), 
$$\mu(\Psi_{i,L-L_0-4})\leq16Jh_i^2$$
Hence, taking $N_2>>N_1>>J$,
\begin{equation} \label{G-weight bar Delta}
\text{wt}_G(\bar{\Delta}_{i,L-L_0-4})\leq2\phi_{N_2}(5h_i)+16N_1Jh_i^2f_2(5N_1h_i)\leq3\phi_{N_2}(5h_i)
\end{equation}
Thus, combining (\ref{G-weight Psi'}) and (\ref{G-weight bar Delta}) through Lemma \ref{G-weight subdiagrams}, it suffices to show that
\begin{equation} \label{7.40 suffices 1}
\frac{N_4\delta h_{i+1}}{50c_2}p(\Delta)f_2(N_4p(\Delta))\geq N_3\left(\mu(\Psi_{i,L-L_0-4}')-\mu(\Delta)\right)f_2(N_3p(\Delta))+3\phi_{N_2}(5h_i)
\end{equation}
Now consider the diagram $\Psi_{i+1,L-L_0-4}'$. When passing from $\partial\Psi_{i+1,L-L_0-4}'$ to $\partial\Psi_{i,L-L_0-4}'$, a subpath $\textbf{t}$ is replaced with $\textbf{bot}(\pazocal{Q}_i)^{-1}$ (see \Cref{fig-patht}). The subpath $\textbf{t}$ consists of:

\begin{itemize}

\item the subpath $\textbf{p}_{i,i+1}'$ of $\partial\Delta$ obtained from $\textbf{p}_{i,i+1}$ by removing the end of $\pazocal{Q}_{i+1}$,

\item $\textbf{bot}(\pazocal{Q}_{i+1})^{-1}$, and

\item a subpath of the inverse of $\partial\Pi$

\end{itemize}

\begin{figure}[H]
\centering
\includegraphics[scale=1.4]{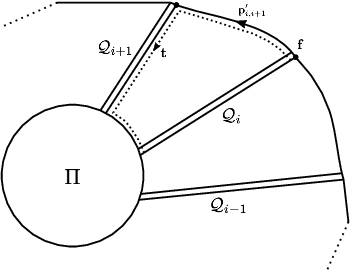}
\caption{}
\label{fig-patht}
\end{figure}

By Lemma \ref{7.22}, there is a correspondence between the $\theta$-edges of $\textbf{t}$ and those of $\textbf{bot}(\pazocal{Q}_i)$. So, since $\textbf{bot}(\pazocal{Q}_i)$ contains no $q$-edges, the necklace corresponding to $\Psi_{i,L-L_0-4}'$ may be obtained from that of $\Psi_{i+1,L-L_0-4}'$ by the removal of the black beads corresponding to the $q$-edges of $\textbf{t}$.

Consider the $q$-edge $\textbf{f}$ on the end of $\pazocal{Q}_i$. In $\partial\Psi_{i+1,L-L_0-4}'$, $\textbf{f}$ separates the $h_{i-1}-h_i$ $\theta$-edges of $\textbf{p}_{i-1,i}$ from the $h_i+h_{L-L_0-4}$ $\theta$-edges of the path $\textbf{p}_{i,i+1}'\bar{\textbf{p}}_{i+1,L-L_0-4}$.

By Lemma \ref{7.23}, $|\textbf{p}_{i,i+1}'|_q<3K_0$, while $|\bar{\textbf{p}}_{i+1,L-L_0-4}|_q\leq NL$. So, as $J>>K>>L$, there are at most $J$ $q$-edges of $\partial\Psi_{i+1,L-L_0-4}'$ between any pair of $\theta$-edges separated by $\textbf{f}$ mentioned above. As such, Lemma \ref{mixtures}(d) implies 
$$\mu(\Psi_{i+1,L-L_0-4}')-\mu(\Psi_{i,L-L_0-4}')\geq (h_{i-1}-h_i)(h_i+h_{L-L_0-4})$$
Meanwhile, Lemma \ref{7.25} implies
$$\mu(\Delta)-\mu(\Psi_{i+1,L-L_0-4}')\geq-2J|\partial\Delta|(h_{i+1}+h_{L-L_0-4})$$
Combining these and noting that $h_{L-L_0-4}\leq h\leq h_{i+1}\leq h_i$, we have
\begin{equation} \label{contradiction-mu}
\mu(\Delta)-\mu(\Psi_{i,L-L_0-4}')\geq 2h_i(h_{i-1}-h_i)-4J|\partial\Delta|h_{i+1}
\end{equation}
Note that since $p(\Delta)\geq|\partial\Delta|$, the parameter choices $N_4>>N_3>>\delta^{-1}>>J>>c_2$ imply
\begin{equation} \label{contradiction-N_4}
\frac{N_4\delta h_{i+1}}{50c_2}p(\Delta)f_2(N_4p(\Delta))\geq4N_3J|\partial\Delta|h_{i+1}f_2(N_3p(\Delta))
\end{equation}
Moreover, by \Cref{lem-combined}(4), $h_{i-1}-h_i>h_{i-1}/30c_2\geq h_i/30c_2$, and so by the parameter choices $N_3>>N_2>>c_2$ we have
\begin{equation} \label{contradiction-N_3}
2N_3h_i(h_{i-1}-h_i)f_2(N_3p(\Delta))>\frac{N_3}{15c_2}h_i^2f_2(N_3p(\Delta))\geq3\phi_{N_2}(h_i)
\end{equation}
Thus, combining (\ref{contradiction-mu}), (\ref{contradiction-N_4}), and (\ref{contradiction-N_3}), we obtain (\ref{7.40 suffices 1}).

\end{proof}

\begin{remark}

We assumed without loss of generality that $h=h_{L_0+1}$, {\frenchspacing i.e. $h_{L-L_0-4}\leq h_{L_0+1}$}. If instead $h_{L-L_0-4}>h_{L_0+1}$, then the symmetric statement to Lemma \ref{no one-step} will be needed for $L-L_0-2\leq i\leq L-1$, {\frenchspacing i.e. so that there is no two-letter subword} $Q'Q$ such that every rule of $\pazocal{D}$ inserts one letter to the right of $Q'$. This statement can be proved analogously.

\end{remark}

Finally, we reach the main contradiction of this setting, achieving the desired upper bound on $G$-weights.

\begin{lemma} \label{contradiction}

The counterexample diagram $\Delta$ cannot exist.

\end{lemma}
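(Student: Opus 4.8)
The plan is to reach a contradiction by extracting from the disk $\Pi$ supplied by \Cref{graph} a long $\lambda$-shaft whose length is forced to be so large that it violates the estimate $\sigma_\lambda(\Delta^*)\le C_1|\partial\Delta|$ from \Cref{weakly minimal}(b), or else to directly contradict \Cref{7.26}. The backbone of the argument is now in place: by \Cref{7.34} we have $h>L_0^2|V|_a$, and by \Cref{lem-combined} the sequence of spoke heights $h_1>h_2>\dots$ decreases geometrically (item (4)), each $h_i$ exceeds $\delta^{-1}$ (item (1)), and no single spoke $\pazocal{Q}_i$ with $i\le L_0-1$ carries a $\lambda$-shaft of length $\ge\delta h$ (item (2)). \Cref{no one-step} (and its mirror) rules out the degenerate situation in which the incremental trapezia $\Gamma_{i-1}\setminus\Gamma_i'$ have one-letter step history acting by a single insertion, which is precisely the obstruction that would prevent the controlled-subword mechanism of \Cref{M projected long history controlled} from firing.

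\textbf{Key steps.} First I would fix the index range $2\le i\le L_0-2$ and, for each such $i$, form the auxiliary trapezium $E$ with history $H_i'=H_{i+2}'H''H'$ by pasting $\Gamma_{i+1}'$-type pieces as in the proof of \Cref{no one-step}; the point is that $E$ has height $t=h_{i+1}+h_j$ (for a symmetric partner index $j$) bounded below by $|V|_a/2N$, while by \Cref{7.34} and \Cref{lem-combined}(1) this exceeds $c_2\max(\|W_0\|,\|W_t\|)$, so \Cref{M projected long history controlled} applies to the restriction of the associated computation to $\{t(\ell)\}B_4(\ell)$. This yields that for every factorization $H'H''H'''$ of $H_{i+1}$ with $\|H'\|+\|H'''\|\le\lambda h_{i+1}$ the middle factor $H''$ contains a controlled subword, i.e.\ $\pazocal{Q}_{i+1}$ contains a $\lambda$-shaft of length $\ge(1-2\lambda)h_{i+1}\ge \tfrac12 h_{i+1}$. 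Summing over $i\in\{2,\dots,L_0-2\}$ and over the symmetric range near $L-4$, and using \Cref{lem-combined}(4) to bound $h_{i+1}\ge h\cdot(1-\tfrac1{30c_2})^{L_0}$ from below (so that each term is a constant fraction of $h$), I obtain $\sigma_\lambda(\Delta^*)\ge c\,L_0\,h$ for a fixed constant $c>0$. On the other hand \Cref{weakly minimal}(b) gives $\sigma_\lambda(\Delta^*)\le C_1|\partial\Delta|$, and the cloves near $\Pi$ force $|\partial\Delta|$ to be controlled in terms of $h_1$ and $|V|_a$ via \Cref{7.24}; combining with \Cref{7.34} ($|V|_a<h/L_0^2$) and the parameter choice $L_0\gg C_1$ produces the contradiction.

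\textbf{Alternative finish and main obstacle.} If the clean $\sigma_\lambda$ counting above is too lossy, the fallback — and I expect this is the route actually taken — is to feed the long $\lambda$-shaft back into \Cref{7.26}: a $\lambda$-shaft in $\pazocal{Q}_{i+1}$ of length $\ge\tfrac12 h_{i+1}\ge\tfrac12 h$ means $|\textbf{p}_{i+1,j}|+\sigma_\lambda(\bar\Delta_{i+1,j}^*)$ gains an additive $h$-sized term beyond what \Cref{7.24}(1) already supplies, pushing the ratio $(|\textbf{p}_{i+1,j}|+\sigma_\lambda(\bar\Delta_{i+1,j}^*))/|\bar{\textbf{p}}_{i+1,j}|$ above $1+\eps$ for $\eps=1/\sqrt{N_4}$, exactly as in the endgame of \Cref{7.34}. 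The main obstacle is bookkeeping the overlap of $\lambda$-shafts belonging to $\pazocal{Q}_{i+1}$ as a shaft of $\Pi$ versus as a shaft of some other disk in $\Delta^*$, and ensuring the "making room in the spoke" device from the design construction (preceding \Cref{G_a design}) keeps the arcs disjoint so that the summation over $i$ is genuine and not double-counted; this is where the hypothesis that all the $\Gamma_i$ are \emph{trapezia} (via \Cref{a-cells sector}) rather than $a$-trapezia is essential, since it guarantees the controlled subwords live in honest $t$-bands admissible for $W(\ell)$ with $\ell\ge2$.
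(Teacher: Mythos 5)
There is a genuine gap in the mechanism you use to produce the controlled subword. Your key step applies \Cref{M projected long history controlled} to an auxiliary trapezium of height $t\approx h_{i+1}+h_j$, but that lemma requires $t>c_2\max(\|V_0\|,\|V_t\|)$, and in the regime you are now in this hypothesis fails: after \Cref{7.34} and \Cref{lem-combined}(3),(5) the top labels satisfy $h_{i+1}/2c_2<|\textbf{z}_i|_a\leq 8h_i$, i.e.\ they are \emph{comparable to the heights} rather than small (the smallness $|\textbf{z}_i|_a<|V|_a/c_3N$ was exactly the hypothesis $h\leq L_0^2|V|_a$ that \Cref{7.34} has just refuted). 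So $c_2\max(\|W_0\|,\|W_t\|)$ can exceed $c_2h_{i+1}/2c_2\cdot c_2$, far larger than $t\leq 2h_{i+1}$, and the "long history" machinery does not fire. The paper's route is different: it uses \Cref{lem-combined}(3)--(5) to select indices $j_1<\dots<j_m$ with $m\geq c_0$ and $|\textbf{z}_{j_i}|_a>4|\textbf{z}_{j_{i+1}}|_a$, shows via \Cref{M one-step} together with \Cref{no one-step} that no one-step subcomputation can span two consecutive selected levels (this is the actual role of \Cref{no one-step}, not the one you assign it), concludes that the computation corresponding to $\Gamma_{j_2}$ contains at least $c_0/2\geq 9$ distinct one-step computations, and then invokes the purely combinatorial \Cref{long step history} to extract a controlled subword. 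The resulting $\lambda$-shaft of length $\geq h>\delta h$ sits in $\pazocal{Q}_{j_1}$ and contradicts \Cref{lem-combined}(2) — not \Cref{weakly minimal}(b) or \Cref{7.26}.

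Your primary finishing move also fails on parameter grounds: you invoke "the parameter choice $L_0\gg C_1$", but the parameter order in \Cref{sec-parameters} is $L_0<<L<<\dots<<\delta^{-1}<<C_1$, so $C_1$ dominates $L_0$ and the comparison $cL_0h$ versus $C_1|\partial\Delta|$ goes the wrong way (note also that $|\partial\Delta|\gtrsim h_1$, which by the geometric decrease can be as large as $(1-\tfrac{1}{30c_2})^{-L_0}h$, so the right-hand side is enormous). The fallback via \Cref{7.26} is the endgame of \Cref{7.34}, but it still depends on the broken shaft-production step above.
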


\begin{proof}

First, fix an integer $\eta\geq2$ dependant on $c_2$ such that $(1-\frac{1}{30c_2})^\eta<\frac{1}{64c_2}$. Note that, although $\eta$ is not listed as one of the parameters of \Cref{sec-parameters}, we may take $L_0>>\eta$ since $L_0$ is chosen after $c_2$.

For $i=1,\dots,L_0-1$, Lemma \ref{lem-combined}(4) implies $h_{i+1}<(1-\frac{1}{30c_2})h_i$. So, if $1\leq i<j\leq L_0-1$ with $j-i-1\geq\eta$, then $h_j<(1-\frac{1}{30c_2})^\eta h_{i+1}<\frac{1}{64c_2}h_{i+1}$.

For such $i,j$, Lemma \ref{lem-combined}(3) then implies that $|\textbf{z}_i|_a\geq h_{i+1}/2c_2>32h_j$. As \Cref{lem-combined}(5) implies $8h_j\geq|\textbf{z}_j|_a$, we then have $|\textbf{z}_i|_a>4|\textbf{z}_j|_a$.

Now, as $L_0>>\eta$ and $L_0>>c_0$, there exist indices $2\leq j_1<j_2<\dots<j_m\leq L_0-1$ such that $m\geq c_0$ and $j_{i+1}-j_i-1\geq\eta$. So, $|\textbf{z}_{j_i}|_a>4|\textbf{z}_{j_{i+1}}|_a$ and $h_{j_i+1}\geq64c_2h_{j_{i+1}}$.

Let $\pazocal{C}:W_0\to\dots\to W_t$ be the computation corresponding to the trapezium $\Gamma_{j_2}$ by Lemma \ref{trapezia are computations}. As $\Gamma_{j_2}$ contains a copy of $\Gamma_{j_2+1}$, which in turn contains a copy of $\Gamma_{j_2+2}$ and so on, there exist words $V_i$ in $\pazocal{C}$ for $i=1,\dots,m$ that are coordinate shifts of the labels of $\textbf{z}_{j_i}$. By the inequalities above, $|V_{i+1}|_a>4|V_i|_a$.

If for some $i$ the subcomputation $V_{i+2}\to\dots\to V_i$ is a one-step computation, then by Lemma \ref{M one-step} there exists a sector $Q'Q$ such that:

\begin{enumerate}

\item the sector's length increases with each transition of the subcomputation

\item each transition of the subcomputation inserts a letter to the left of $Q$ (or to the right of $Q'$ if $h=h_{L-L_0-4}$)

\end{enumerate} 

But since $\eta\geq2$, there would then exist a subcomputation contradicting Lemma \ref{no one-step}. 

Hence, the subcomputation $\pazocal{C}':V_m\to\dots\to W_t$ of $\pazocal{C}$ must contain at least $c_0/2\geq9$ distinct one-step computations. Lemma \ref{long step history} then implies that the step history of $\pazocal{C}'$ contains a subword of the form $(34)_j(4)_j(45)_j$ or $(54)_j(4)_j(43)_j$. Let $\pazocal{C}''$ be the subcomputation of $\pazocal{C}'$ with this step history.

Then, we may factor $H_{j_2+1}'\equiv H'H''H'''$ where $H''$ is a controlled history. Further, since the subcomputation $\pazocal{C}''$ repeats $k$ copies of a controlled history, taking $k\geq3$ allows us to assume $\|H''\|\leq\|H'\|$.

Since $h_{j_1+1}>64c_2h_{j_2}$, $H_{j_1+1}'$ has prefix $K\equiv H'H''H'''_1$ where $\|H'''_1\|=\|H'\|\geq\|H''\|$. Set $\pazocal{B}$ as the subband of the spoke $\pazocal{Q}_{j_1}$ with history $K$. Then, for any factorization $\pazocal{B}_1\pazocal{B}_2\pazocal{B}_3$ such that the sum of the lengths of $\pazocal{B}_1$ and $\pazocal{B}_3$ is at most $\frac{1}{3}\|K\|$, the history of $\pazocal{B}_2$ must contain $H''$. So, since all $\theta$-bands crossing $\pazocal{Q}_{j_1}$ must cross $\pazocal{Q}_{j_1-1}$, taking $\lambda<1/3$ implies $\pazocal{B}$ is a $\lambda$-shaft with length $\|K\|$.

However, note that the subcomputation $W_0\to\dots\to V_m$ has length at least $h_{L_0-1}\geq h$, so that $\|K\|\geq\|H'\|\geq h>\delta h$. Thus, the existence of $\pazocal{B}$ in $\pazocal{Q}_{j_1}$ contradicts Lemma \ref{lem-combined}(2).

\end{proof}

\medskip


\section{Proof of \Cref{main-theorem}}

We now use the construction outlined in the previous sections to complete the proof of \Cref{main-theorem}.  

The first step toward this is to justify the assignments made throughout the construction.

\subsection{Assignment of $a$-relations} \

As mentioned in the introduction to the groups of interest in \Cref{sec-associated-groups}, the set of $a$-relators of interest, $\Omega$, is the set of non-trivial cyclically reduced words in the letters $X\cup X^{-1}$ whose value in $G$ is 1.

The following Lemma sheds some light on why these particular relations are adjoined to the group presentation.

\begin{lemma} \label{a-relations are relations}

For any word $w\in\pazocal{R}_1$, the relation $w=1$ holds in the group $G(\textbf{M})$.

\end{lemma}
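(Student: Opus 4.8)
\textbf{Proof plan for \Cref{a-relations are relations}.}

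The plan is to show that every $w \in \pazocal{R}_1$ is trivial in $G(\textbf{M})$ by exhibiting it as the boundary label of a van Kampen diagram over the disk presentation of $G(\textbf{M})$, using the fact that the machine $\textbf{M}$ accepts an input configuration naturally associated to $w$. First I would invoke \Cref{M language}(1): since $w \in \pazocal{R}_1$, there exists $H_w \in F(\Phi^+)$ (with $\|H_w\| \leq c_1 f_1(c_1\|w\|) + c_1$) such that the input configuration $I(w, H_w)$ is accepted by a one-machine computation of the first machine. In particular $I(w,H_w)$ is an accepted configuration with $\ell(I(w,H_w)) \leq 1$, so by the definition of the disk presentation (Section \ref{sec-disks-and-weights}), the word $I(w,H_w)$ is a disk relator; hence $I(w,H_w)$ represents the trivial element of $G(\textbf{M})$.

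Next I would analyze the structure of the configuration $I(w,H_w)$. By its definition in Section \ref{sec-main-machine}, $I(w,H_w)$ is obtained from $I_6(w,H_w)$ by applying $\sigma(s)_1^{-1}$; and $I_6(w,H_w)$ is a concatenation of $L$ copies of $I_5(w,H_w)$. The word $w$ appears exactly once in $I(w,H_w)$, written in the `special' input sector $Q_0(1)Q_1(1)$, with all other tape contents being either copies of $H_w^{\pm1}$ over historical alphabets or empty. The idea is then to realize $w$ as conjugate (in $M(\textbf{M})$, hence its image in $G(\textbf{M})$) to a word obtained from $I(w,H_w)$ by deleting the state letters and historical-sector contents — but more directly, I would argue that the only $a$-letters in $I(w,H_w)$ coming from the alphabet $X$ are precisely those spelling $w$ in the special input sector. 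By reading around the boundary of a suitable trapezium (or directly manipulating the relator), $I(w,H_w) = 1$ in $G(\textbf{M})$ combined with the relations of $M(\textbf{M})$ that let state letters and historical $a$-letters be "pushed off" should yield that $w$ itself is trivial. Concretely: the relators of $M(\textbf{M})$ include $(\theta,q)$- and $(\theta,a)$-relations, and using a projection-type argument one can collapse all the non-$X$ letters. Alternatively, and more cleanly, I would note that by the construction $I(w,H_w)$ freely equals (as a word in the group) a product of the state letters, the historical words, and $w$; since the former represent a word $U$ which is shown to be conjugate to the accept configuration-type word (trivial in $G(\textbf{M})$) independently of $w$, cancelling $U$ leaves $w = 1$.

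The main obstacle I expect is the bookkeeping in this last collapsing step: making precise the claim that the "non-special" content of $I(w,H_w)$ contributes a word that is trivial in $G(\textbf{M})$ by itself (not just together with $w$), so that one can isolate $w$. The cleanest route is probably to use \Cref{I vs J}: the input configuration $J(w,H_w)$ — which is $I(w,H_w)$ with the copy of $w$ erased from the special input sector — is accepted by a one-machine computation of the second machine, so $J(w,H_w)$ is also a disk relator, hence trivial in $G(\textbf{M})$. Then $I(w,H_w)$ and $J(w,H_w)$ differ only by the insertion of $w$ in the special input sector: since both represent $1$ in $G(\textbf{M})$ and the special input sector content can be freely factored out, comparing the two relators gives $w = 1$ in $G(\textbf{M})$. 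This reduces the whole argument to: (i) cite \Cref{M language}(1) and \Cref{I vs J}; (ii) note both $I(w,H_w)$ and $J(w,H_w)$ are disk relators, hence trivial; (iii) observe $I(w,H_w) \equiv w' J(w,H_w) (w'')^{-1}$ for appropriate pieces $w', w''$ with $w' (w'')^{-1}$ equal to $w$ up to the (trivial) state-letter framing, so triviality of both relators forces $w = 1$.
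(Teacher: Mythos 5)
Your final "cleanest route" paragraph is exactly the paper's proof: both $I(w,H_w)$ and $J(w,H_w)$ are accepted (this is already the full content of \Cref{M language}(1), so the detour through \Cref{I vs J} is unnecessary), hence both are trivial in $G(\textbf{M})$ by \Cref{disks are relations}, and since they differ only by the insertion of $w$ in the special input sector, $w=1$. The exploratory "collapsing" discussion in your middle paragraph is superfluous; the two-relator comparison alone suffices.
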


\begin{proof}

By \Cref{M language}(1) there exists $H_w\in F(\Phi^+)$ such that $I(w,H_w)$ and $J(w,H_w)$ are accepted by one-machine computations of the first and second machines, respectively.  \Cref{disks are relations} then implies that $I(w,H)$ and $J(w,H)$ are both trivial over the group $G(\textbf{M})$.  These two words differ only by the insertion of the word $w$ in the `special' input section, so that $w=1$ in $G(\textbf{M})$.


\end{proof}

\begin{lemma}[Lemma 11.2 of \cite{W}] \label{G isomorphic to G_a}

The groups $G(\textbf{M})$ and $G_\Omega(\textbf{M})$ are isomorphic.

\end{lemma}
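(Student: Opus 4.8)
\textbf{Proof plan for \Cref{G isomorphic to G_a}.}

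The plan is to show that every $a$-relator in $\Omega$ is already a consequence of the defining relations of $G(\textbf{M})$, so that passing to the quotient $G_\Omega(\textbf{M}) = G(\textbf{M})/\gen{\gen{\Omega}}$ changes nothing. Since $G_\Omega(\textbf{M})$ is by definition a quotient of $G(\textbf{M})$, there is a canonical surjection $G(\textbf{M}) \twoheadrightarrow G_\Omega(\textbf{M})$, and it suffices to prove this map is injective, i.e.\ that $\gen{\gen{\Omega}}$ is trivial in $G(\textbf{M})$. Equivalently, I must show that for every $w \in \Omega$, the relation $w = 1$ holds in $G(\textbf{M})$. Now recall that $\Omega$ is the set of non-trivial cyclically reduced words over $X \cup X^{-1}$ representing $1$ in $G$, and $G = \gen{X \mid \pazocal{R}_1}$ (since $\pazocal{R}_1$ is a set of defining relators for $G$). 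Hence every $w \in \Omega$ is a product (in $F(X)$) of conjugates of elements of $\pazocal{R}_1^{\pm1}$. Therefore, once I know that each individual relator $r \in \pazocal{R}_1$ satisfies $r = 1$ in $G(\textbf{M})$, it follows that every $w \in \Omega$ equals $1$ in $G(\textbf{M})$ as well, because the normal closure of $\{r = 1 : r \in \pazocal{R}_1\}$ inside $G(\textbf{M})$ contains all such $w$.

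The key input is therefore \Cref{a-relations are relations}, which establishes exactly that $r = 1$ in $G(\textbf{M})$ for every $r \in \pazocal{R}_1$; its proof in turn rests on \Cref{M language}(1), which produces for each $r \in \pazocal{R}_1$ a history $H_r \in F(\Phi^+)$ such that both $I(r, H_r)$ and $J(r, H_r)$ are accepted by one-machine computations (of the first and second machine respectively), and on \Cref{disks are relations}, which says that any configuration accepted by the machine is trivial in $G(\textbf{S})$ (here using that the last tape alphabet $Y_{s+1}$ is empty, so that no trimming occurs and the accepting trapezium has identically-labelled sides). Since $I(r,H_r)$ and $J(r,H_r)$ differ only by the presence of the word $r$ written in the `special' input sector, and both are trivial in $G(\textbf{M})$, we conclude $r = 1$. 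So the proof structure is: (1) invoke \Cref{a-relations are relations} to get $r = 1$ in $G(\textbf{M})$ for all $r \in \pazocal{R}_1$; (2) use the presentation $G = \gen{X \mid \pazocal{R}_1}$ to write an arbitrary $w \in \Omega$ as a product of conjugates of $\pazocal{R}_1^{\pm1}$ in $F(X)$, and observe that this expression, read inside $G(\textbf{M})$, shows $w = 1$; (3) conclude $\gen{\gen{\Omega}}$ is trivial in $G(\textbf{M})$, hence the surjection $G(\textbf{M}) \to G_\Omega(\textbf{M})$ is an isomorphism.

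I do not anticipate a serious obstacle here, as this is a bookkeeping argument built on top of the substantive results already in hand. The one point that warrants care is making sure the relation ``$w$ is a product of conjugates of $\pazocal{R}_1^{\pm1}$'' is applied in the right ambient free group: $w$ is a word over $X \cup X^{-1}$, and the $X$-generators of $G(\textbf{M})$ are precisely (the copies of) the tape letters of the `special' input sector, so the identity $w = \prod_i u_i r_i^{\pm1} u_i^{-1}$ valid in $F(X)$ transfers verbatim to an equality in $G(\textbf{M})$ once each $r_i = 1$ there. This is exactly how the analogous Lemma 11.2 of \cite{W} is proved, and the argument carries over without modification.
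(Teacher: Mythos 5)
Your proposal is correct and matches the paper's argument: the paper likewise invokes \Cref{a-relations are relations} to get $r=1$ in $G(\textbf{M})$ for all $r\in\pazocal{R}_1$, then (via von Dyck's theorem applied to the presentation $\pazocal{P}_1=\gen{X\mid\pazocal{R}_1}$, which is just your ``product of conjugates of relators'' step packaged as a homomorphism $G\to G(\textbf{M})$) concludes that every $w\in\Omega$ is trivial in $G(\textbf{M})$, so the canonical surjection $G(\textbf{M})\to G_\Omega(\textbf{M})$ is an isomorphism. No gaps.
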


\begin{proof}

Consider the map $\varphi:X\to G(\textbf{M})$ sending each letter to its natural copy in the tape alphabet of the `special' input sector.  As $\pazocal{P}_1=\gen{X\mid\pazocal{R}_1}$ is a presentation for $G$, \Cref{a-relations are relations} and the theorem of von Dyck \cite{vonDyck} imply that $\varphi$ extends to a homomorphism $G\to G(\textbf{M})$.  In particular, for any word $w\in\Omega$, $w=1$ in $G(\textbf{M})$.



The theorem of von Dyck then implies that the map sending each generator of the canonical presentation of $G(\textbf{M})$ to the corresponding generator of the disk presentation of $G_\Omega(\textbf{M})$ extends to an isomorphism between the two groups.

\end{proof}

\begin{lemma} \label{embedding}

The group $G$ embeds in the group $G(\textbf{M})$.

\end{lemma}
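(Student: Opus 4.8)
The plan is to exhibit an explicit retraction-style argument showing that the natural map $G \to G(\textbf{M})$ is injective. By \Cref{G isomorphic to G_a}, it is equivalent (and technically more convenient) to work with $G_\Omega(\textbf{M})$ in its disk presentation, since all the diagram machinery of the preceding sections is available there. Consider the homomorphism $\iota: G \to G_\Omega(\textbf{M})$ which sends each generator $x \in X$ to the copy of $x$ in the tape alphabet of the `special' input sector of $\textbf{M}$; this is well-defined precisely because every relator of $\pazocal{P}_1$ lies in $\pazocal{R}_1$, hence (by the very definition of $\Omega$, which contains all non-trivial cyclically reduced words over $X\cup X^{-1}$ trivial in $G$, and in particular all of $\pazocal{R}_1$) is among the adjoined $a$-relations, so its image is trivial. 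It remains to show $\iota$ is injective, i.e. that if $w$ is a reduced word over $X\cup X^{-1}$ with $w = 1$ in $G_\Omega(\textbf{M})$, then $w = 1$ in $G$.

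First I would reduce to the case where $w$ is non-trivial and cyclically reduced and derive a contradiction from the assumption $w \ne 1$ in $G$. In that case $w$ (or a cyclic conjugate of it) would be a \emph{non-trivial} cyclically reduced word over $X \cup X^{-1}$ that is \emph{not} in $\Omega$ (since $\Omega$ consists only of words representing $1$ in $G$), yet represents $1$ in $G_\Omega(\textbf{M})$. By \Cref{minimal exist} there is a minimal diagram $\Delta$ over $G_\Omega(\textbf{M})$ with $\lab(\partial\Delta) \equiv w$. The key structural observation is that $\partial\Delta$ consists entirely of $a$-edges labelled by letters of the special input sector: it has no $q$-edges and no $\theta$-edges. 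If $\Delta$ contains any disk, then $\Delta$ is weakly minimal, and the disk-diagram analysis culminating in \Cref{contradiction} forbids the existence of such a counterexample diagram — more precisely, \Cref{graph} would force a disk with many $t$-spokes ending on $\partial\Delta$, contradicting the fact that $\partial\Delta$ has no $q$-edges at all. Hence $\Delta$ contains no disk, so it is $M$-minimal (a minimal diagram without disks is $M$-minimal), and moreover it has no $q$-edges on its boundary.

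For such a diagram, \Cref{no q-edge quadratic} gives $\text{wt}_G(\Delta) \le \phi_{C_2}(|\partial\Delta|) < \infty$, so $\Delta$ is a genuine van Kampen diagram over $M_\Omega(\textbf{M})$ (by \Cref{M-minimal exist}, or directly: a diskless minimal diagram is a diagram over $M_\Omega(\textbf{M})$). Now I invoke \Cref{M_a no annuli 2}(1): since $\partial\Delta$ has no $q$-edges, any maximal $\theta$-band of $\Delta$ would have to be a $\theta$-annulus, but then $\Delta_S$ (the region bounded by its outer contour) contains no $(\theta,q)$-cells and $\lab(\partial\Delta_S)$ is a word over the special input tape alphabet — iterating inward, $\Delta$ can contain no $\theta$-edges whatsoever, hence no $(\theta,q)$- or $(\theta,a)$-cells, and by \Cref{M(S) annuli}(3) and \Cref{a-band on same a-cell} / (MM2) no $a$-annuli and no configuration of $a$-cells sharing $a$-bands in a prohibited way. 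Therefore $\Delta$ consists only of $a$-cells, and $\lab(\partial\Delta) \equiv w$ is freely equal to a product of conjugates of elements of $\Omega^{\pm1}$ — that is, $w = 1$ already in the free product quotient $F(X)/\langle\langle \Omega \rangle\rangle \cong G$. This contradicts $w \ne 1$ in $G$, completing the proof.

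The main obstacle I anticipate is the careful bookkeeping in the middle paragraph: ruling out disks cleanly. One must confirm that the counterexample machinery of Sections 10--11 genuinely applies to a diagram whose boundary is a pure $a$-word (the hypotheses there are phrased in terms of weakly minimal diagrams containing a disk, and one needs the observation that $|\partial\Delta|_q = |\partial\Delta|_\theta = 0$ is incompatible with the conclusion of \Cref{graph}), and that the passage from ``minimal over $G_\Omega(\textbf{M})$ with no boundary $q$-edges'' to ``$M$-minimal over $M_\Omega(\textbf{M})$ consisting only of $a$-cells'' is airtight, using \Cref{M_a no annuli 2}, \Cref{M(S) annuli}, and the absence of boundary $\theta$-edges to strip away all band structure. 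Everything else is a direct appeal to previously established results; no new estimates are needed.
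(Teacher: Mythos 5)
Your proposal is correct and follows essentially the same route as the paper: define the homomorphism into $G_\Omega(\textbf{M})$ via the special input sector, take a minimal diagram for a word in the kernel, use \Cref{graph} together with the $\theta$-annuli results to conclude the diagram consists only of $a$-cells (so the word is trivial in $G$), and finish with \Cref{G isomorphic to G_a}. The only differences are cosmetic — your detour through \Cref{no q-edge quadratic} and the iterated use of \Cref{M_a no annuli 2}(1) are unnecessary, since a diskless minimal diagram is $M$-minimal and \Cref{M_a no annuli 2}(2) (or \Cref{minimal theta-annuli}) already rules out $\theta$-annuli directly.
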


\begin{proof}

Consider the map $\varphi:X\to G_\Omega(\textbf{M})$ sending the elements of $X$ to their copies in the tape alphabet of the `special' input sector. The theorem of von Dyck implies that this extends to a homomorphism $\varphi:G\to G_\Omega(\textbf{M})$.

Now suppose the reduced word $w$ over $X$ satisfies $\varphi(w)=1$. Then by Lemma \ref{minimal exist}, there exists a minimal diagram $\Delta$ over $G_\Omega(\textbf{M})$ satisfying $\text{Lab}(\partial\Delta)\equiv w$. By Lemmas \ref{graph} and \ref{G_a theta-annuli}, every cell of $\Delta$ must be an $a$-cell. But then this is a diagram over the presentation $\gen{X\mid\pazocal{R}_1}$ for $G$, so that $w=1$ in $G$.

Thus, $\varphi:G\to G_\Omega(\textbf{M})$ is an embedding. Lemma \ref{G isomorphic to G_a} then implies the statement.

\end{proof}


\subsection{Assignment of weights and $G$-weight} \label{sec-assignment-of-G-weight} \

Now we wish to justify our assignment of weights to $a$-cells and disks over the disk presentation of $G_\Omega(\textbf{M})$. To do so, we first study areas of a diagram over the canonical presentation of $G(\textbf{M})$ with contour label corresponding to a disk relation.

\begin{lemma} \label{disks are quadratic} \

(1) For any configuration $W$ accepted by $\textbf{M}$, there exists a reduced diagram $\Delta$ over the canonical presentation of $G(\textbf{M})$ such that $\lab(\partial\Delta)\equiv W$ and $\text{Area}(\Delta)\leq C_1|W|^2$.

(2) For any $w\in\pazocal{R}_1$, there exists a reduced diagram $\Delta$ over the canonical presentation of $G(\textbf{M})$ with $\lab(\partial\Delta)\equiv w$ and $\text{Area}(\Delta)\leq Kf_1(\|w\|)^2+K$.

\end{lemma}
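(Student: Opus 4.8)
\textbf{Proof proposal for \Cref{disks are quadratic}.}

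The plan is to prove the two statements in sequence, using (1) as the engine for (2). For statement (1), fix a configuration $W$ accepted by $\textbf{M}$. By \Cref{M language}(2), any accepting computation of $W$ is (or can be taken to be, after possibly passing through a start configuration — but here $W$ is already accepted) a one-machine computation of the $i$-th machine; let $\pazocal{C}:W\to\dots\to W_{ac}$ be an accepting computation and $H$ its history. \Cref{computations are trapezia} produces a trapezium $\Delta$ with $\lab(\textbf{tbot}(\Delta))\equiv W$, $\lab(\textbf{ttop}(\Delta))\equiv W_{ac}$, history $H$, and $\text{Area}(\Delta)\leq \|H\|\max_j\|W_j\|$. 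Since every tape alphabet of the $Q_{s,r}''(i)\{t(i+1)\}$-sectors is empty and $Y_{s+1}=\emptyset$ in the hardware of $\textbf{M}$ (so no trimming is needed — compare the sketch following \Cref{disks are relations}), the two sides of $\Delta$ are labelled identically, and since $W_{ac}$ is the hub relator we may glue the two sides together and fill in the resulting hub; the label of the boundary of the resulting diagram is then a cyclic conjugate of $W$, and after a cyclic shift we obtain a diagram with $\lab(\partial\Delta)\equiv W$. The area is $\|H\|\max_j\|W_j\|+1$. Now I invoke the space and time bounds established for $\textbf{M}$: \Cref{M standard bound} (or \Cref{M standard bound special} together with \Cref{one-machine standard bound}, as appropriate) gives $\|W_j\|\leq c_3\max(\|W\|,\|W_{ac}\|)=c_3\|W\|$ for all $j$ (using $\|W_{ac}\|=O(1)$ and absorbing constants), while \Cref{M return to start}-type bounds, or more directly the length formula in \Cref{M_5 language}(2) / \Cref{M_{6,1} language}(2), give $\|H\|\leq c_0\|W\|$ or similar. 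Combining, $\text{Area}(\Delta)\leq C_1|W|^2$ after translating combinatorial length into the modified length $|W|$ via \Cref{lengths}(a) and choosing $C_1$ large relative to $c_0,c_3,\delta^{-1}$.

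For statement (2), fix $w\in\pazocal{R}_1$. By \Cref{M language}(1) there exists $H_w\in F(\Phi^+)$ with $\|H_w\|\leq c_1 f_1(c_1\|w\|)+c_1$ such that $I(w,H_w)$ is accepted by a one-machine computation of the first machine and $J(w,H_w)$ is accepted by a one-machine computation of the second machine. Applying part (1) to each of these two accepted configurations yields reduced diagrams $\Delta_I$ and $\Delta_J$ over the canonical presentation of $G(\textbf{M})$ with $\lab(\partial\Delta_I)\equiv I(w,H_w)$, $\lab(\partial\Delta_J)\equiv J(w,H_w)$, and areas at most $C_1|I(w,H_w)|^2$ and $C_1|J(w,H_w)|^2$ respectively. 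The key observation is that $I(w,H_w)$ and $J(w,H_w)$ differ only by the presence of the word $w$ written in the `special' input sector $Q_0(1)Q_1(1)$: concretely, $I(w,H_w)$ reads $\cdots q_0(1)\, w\, q_1(1)\cdots$ where $J(w,H_w)$ reads $\cdots q_0(1)q_1(1)\cdots$. Hence gluing $\Delta_I$ to $\Delta_J^{-1}$ along the common portions of their boundaries (all of the boundary except the arc labelled $w$ versus the empty arc) produces a diagram whose boundary label is freely equal to $w$. After reducing, we obtain a reduced diagram $\Delta$ with $\lab(\partial\Delta)\equiv w$ and $\text{Area}(\Delta)\leq \text{Area}(\Delta_I)+\text{Area}(\Delta_J)\leq 2C_1\max(|I(w,H_w)|,|J(w,H_w)|)^2$.

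It remains to estimate $|I(w,H_w)|$ and $|J(w,H_w)|$ in terms of $f_1(\|w\|)$. Each of these configurations has $q$-length $O(1)$ (bounded by the length of the standard base of $\textbf{M}$, which is a constant times $L$), has $w$ (of $a$-length $\|w\|$) in the special input sector, and has the copies of $H_w$ and $H_w^{-1}$ written in the historical $P_i'(j)R_i'(j)$- and $P_i''(j)R_i''(j)$-sectors — of which there are $O(L)$ many, each contributing $\|H_w\|$ $a$-letters. Thus $|I(w,H_w)|_a = O(L\|H_w\| + \|w\|)$ and, since $f_1(n)\geq n$, $\|w\|\leq f_1(\|w\|)$; combined with $\|H_w\|\leq c_1 f_1(c_1\|w\|)+c_1$ and monotonicity of $f_1$ this gives $|I(w,H_w)|=O(f_1(c_1\|w\|))$, and likewise for $J$. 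Squaring and choosing the constant $K$ large enough (relative to $C_1$, $c_1$, $L$, $\delta^{-1}$, and absorbing the dependence of $f_1(c_1\|w\|)$ on $f_1(\|w\|)$ — here if needed one uses superadditivity or at worst the crude bound $f_1(c_1 n)\leq c_1 f_1(c_1 n)$ together with the equivalence relation $\sim$, though since the statement is stated with $f_1(\|w\|)^2$ one should be a little careful and may need $K$ to absorb a fixed multiplicative rescaling of the argument) yields $\text{Area}(\Delta)\leq Kf_1(\|w\|)^2+K$. The main obstacle I anticipate is the bookkeeping in the last step: correctly tracking how the constant $c_1$ inside $f_1(c_1\|w\|)$ interacts with the target bound $f_1(\|w\|)^2$, which requires either that $f_1$ be assumed superadditive (so $f_1(c_1 n)\leq c_1 f_1(n)$ up to constants) or that one reads the statement up to the coarse equivalence $\preceq$; a secondary but routine point is verifying that the gluing of $\Delta_I$ and $\Delta_J^{-1}$ is legitimate, i.e. that the shared boundary arcs carry identical labels, which follows directly from the explicit descriptions of $I(w,H)$ and $J(w,H)$ in \Cref{sec-main-machine}.
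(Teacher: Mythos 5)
Your proposal is correct and follows essentially the same route as the paper: part (1) is the trapezium-plus-hub construction (no trimming since the $Q_{s,r}''(L)\{t(1)\}$-sector is locked/empty), with the space bound from \Cref{M standard bound} and the time bound that you should really take from \Cref{M projected long history one-machine} rather than \Cref{M_5 language}(2) or \Cref{M_{6,1} language}(2), which only cover tame input configurations rather than arbitrary accepted configurations; part (2) glues the two diagrams for $I(w,H_w)$ and $J(w,H_w)$ along their common boundary exactly as in the paper, with the same $\|I(w,H_w)\|\leq L(N+\|w\|+\|H_w\|)$ estimate. The $f_1(c_1\|w\|)$-versus-$f_1(\|w\|)$ bookkeeping you flag is a genuine subtlety that the paper itself glosses over, but it is harmless downstream because the mass condition is quantified over all constants $C_1$ placed inside $f_1$.
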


\begin{proof}

(1) If $W=W_{ac}$, then $W$ is a relator in $G(\textbf{M})$, so that the statement is satisfied for $\Delta$ a single cell.

Otherwise, there exists a one-machine computation $\pazocal{C}:W\equiv W_0\to\dots\to W_t\equiv W_{ac}$ accepting $W$, so that \Cref{M projected long history one-machine} implies $t\leq c_1\|W(i)\|$ for all $i\geq2$. \Cref{simplify rules} and the parameter choice $c_2>>c_1$ then imply $\|W_j\|\leq c_2\|W\|$ for all $j$.

By Lemma \ref{computations are trapezia}, we can then build a trapezium $\Gamma$ over $M(\textbf{M})$ corresponding to $\pazocal{C}$, so that $\lab(\textbf{tbot}(\Gamma))\equiv W$ and $\lab(\textbf{ttop}(\Gamma))\equiv W_{ac}$ with $\text{Area}(\Gamma)\leq c_1c_2\|W\|^2$.

As the $Q_{s,r}''(L)\{t(1)\}$-sector is locked by every rule, the sides of $\Gamma$ are labelled identically and no trimming was necessary. So, we may glue these sides together and paste a hub into the middle of the diagram. This produces a reduced diagram $\Delta$ over the canonical presentation of $G(\textbf{M})$ with $\lab(\partial\Delta)\equiv W$ and satisfying $\text{Area}(\Delta)\leq c_1c_2\|W\|^2+1$. 

The statement then follows as we choose the parameter $C_1$ after $c_2$, $c_1$, and $\delta$.

(2) By \Cref{M language}(1), there exists $H_w\in F(\Phi^+)$ with $\|H_w\|\leq c_1f_1(c_1\|w\|)+c_1$ such that $I(w,H_w)$ and $J(w,H_w)$ are disk relators.

As in the previous case, we can then build reduced diagrams $\Delta_1$ and $\Delta_2$ over the canonical presentations of $G(\textbf{M})$ where $\Delta_j$ is made of a hub and a trapezium satisfying: 
\begin{itemize}
\item $\text{Lab}(\Delta_1)\equiv I(w,H_w)$ and $\text{Area}(\Delta_1)\leq c_1c_2\|I(w,H)\|^2+1$

\item $\text{Lab}(\Delta_2)\equiv J(w,H_w)$ and $\text{Area}(\Delta_2)\leq c_1c_2\|J(w,H)\|^2+1$
\end{itemize}

Note that $\|I(w,H_w)\|,\|J(w,H_w)\|\leq L(N+\|w\|+\|H_w\|)\leq L(N+(c_1+1)f_1(c_1\|w\|))$. So, since $K$ is chosen after $c_2$, $c_1$, $L$, and $n$, we can assume that $\text{Area}(\Delta_j)\leq \frac{1}{2}K(f_1(\|w\|)^2+1)$ for $j=1,2$.

Gluing $\Delta_1$ and $\Delta_2$ along their common contours (and making any possible cancellations) then yields a diagram $\Delta$ satisfying the statement.

\end{proof}

\begin{lemma} \label{a-cells are quadratic} 

If $w$ is a reduced word over the alphabet $X$ such that $w=1$ in $G$, then there exists a reduced diagram $\Delta$ over the canonical presentation of $G(\textbf{M})$ with $\lab(\partial\Delta)\equiv w$ and satisfying $\text{Area}(\Delta)\leq\phi_{C_1}(\|w\|)$.

\end{lemma}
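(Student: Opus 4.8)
\textbf{Proof plan for Lemma \ref{a-cells are quadratic}.}

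The statement asks for a diagram over the canonical presentation of $G(\textbf{M})$ whose boundary is a word $w$ over $X$ representing $1$ in $G$, with area bounded by $\phi_{C_1}(\|w\|) = C_1\|w\|^2 f_2(C_1\|w\|)$. The natural strategy is to use the presentation $\pazocal{P}_2 = \gen{X\mid\pazocal{R}_2}$ and its $(f_1(n)^2, n^2 f_2(n))$-mass condition (hypothesis (4) of \Cref{main-theorem}), together with \Cref{disks are quadratic}(2) to ``fill in'' each $\pazocal{R}_2$-cell. First I would invoke the mass condition: since $w = 1$ in $G$ and $\pazocal{P}_2$ is a presentation for $G$, for the constant $C_1' = 1$ (or whatever constant is forced by the comparison of $f_1(C_1' n)^2$ with $f_1(n)^2$) there exists $C_2' > 0$ and a van Kampen diagram $\Delta_0$ over $\pazocal{P}_2$ with $\lab(\partial\Delta_0) \equiv w$ whose $f_1(C_1' n)^2$-mass is at most $C_2' \cdot (C_2'\|w\|)^2 f_2(C_2'\|w\|)$. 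Here the $\rho_1(n)$-mass of a cell $\pi$ of $\Delta_0$ is $f_1(C_1'|\partial\pi|)^2$, so each cell $\pi$ of $\Delta_0$ has boundary word $r_\pi \in \pazocal{R}_2 \subseteq \pazocal{R}_1$ of length $\|r_\pi\| = |\partial\pi|$.

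Next, for each cell $\pi$ of $\Delta_0$, since $r_\pi \in \pazocal{R}_1$, \Cref{disks are quadratic}(2) gives a reduced diagram $\Delta_\pi$ over the canonical presentation of $G(\textbf{M})$ with $\lab(\partial\Delta_\pi) \equiv r_\pi$ and $\text{Area}(\Delta_\pi) \leq K f_1(\|r_\pi\|)^2 + K \leq 2K f_1(|\partial\pi|)^2$ (absorbing the additive $K$ using $f_1(n) \geq n \geq 1$, or noting $f_1 \geq 1$ since $f_1(n)\geq n$ and for $n=0$ the cell is degenerate). Now I would excise each cell $\pi$ from $\Delta_0$ and paste $\Delta_\pi$ in its place; since each $r_\pi$ is literally a relator of $G(\textbf{M})$ this produces a (not-necessarily-reduced) diagram $\Delta_1$ over the canonical presentation of $G(\textbf{M})$ with $\lab(\partial\Delta_1) \equiv w$. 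Reducing $\Delta_1$ only decreases area, giving the desired $\Delta$. The area estimate is then
\[
\text{Area}(\Delta) \leq \sum_{\pi} \text{Area}(\Delta_\pi) \leq \sum_\pi 2K f_1(|\partial\pi|)^2 = 2K \sum_\pi f_1(C_1' |\partial\pi|)^2 \cdot \frac{f_1(|\partial\pi|)^2}{f_1(C_1'|\partial\pi|)^2},
\]
which (for $C_1' \geq 1$, so $f_1(C_1' n)^2 \geq f_1(n)^2$ by monotonicity) is at most $2K \sum_\pi f_1(C_1'|\partial\pi|)^2 = 2K \cdot (f_1(C_1' n)^2\text{-mass of }\Delta_0) \leq 2K C_2' (C_2'\|w\|)^2 f_2(C_2'\|w\|)$. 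Finally, taking $C_1$ sufficiently large relative to $K$, $C_2'$, and $\delta$ (recall the perimeter is measured with the modified length function, so $\|w\| \leq \delta^{-1}|w| \leq \delta^{-1}\|w\|$ and constants can be absorbed), this is bounded by $C_1 \|w\|^2 f_2(C_1\|w\|) = \phi_{C_1}(\|w\|)$, as required.

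The main obstacle I anticipate is bookkeeping the constant in the mass condition correctly. The mass condition's $C_1$-parameter (the ``$C_1$'' in its statement, playing the role of the argument-scaling inside $\rho_1$) must be chosen so that $f_1(C_1' n)^2 \geq f_1(n)^2$ — which just needs $C_1' \geq 1$ and monotonicity of $f_1$ — but then the resulting $C_2'$ depends on that choice, and one must verify that the final composite constant is consistent with the parameter hierarchy of \Cref{sec-parameters} (in particular that $C_1$ is chosen late enough to dominate $K$, $\delta^{-1}$, and the output $C_2'$ of the mass condition). A secondary subtlety: the area bound in \Cref{disks are quadratic}(2) is in terms of $f_1(\|w\|)$, and I am applying it to cells $\pi$ whose boundary length $|\partial\pi|$ can be much larger than a single letter; this is fine since the statement of \Cref{disks are quadratic}(2) is uniform in $w \in \pazocal{R}_1$, but one should double-check that the ``$+K$'' additive term does not accumulate badly when summed over all cells — it is controlled because the number of cells of $\Delta_0$ is itself bounded by its mass (each cell contributes at least $f_1(\text{small})^2 \geq 1$ to the mass), so $\sum_\pi K \leq K \cdot (\text{mass of }\Delta_0)$, which is absorbed into the same bound.
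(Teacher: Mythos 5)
Your proposal is correct and follows essentially the same route as the paper: take the van Kampen diagram over $\pazocal{P}_2$ supplied by the $(f_1(n)^2,n^2f_2(n))$-mass condition, fill each $\pazocal{R}_2$-cell using \Cref{disks are quadratic}(2), and absorb the additive $+K$ terms via $f_1(\|r\|)\geq 1$ (which the paper gets from $1\notin\pazocal{R}_2$) before letting the parameter choice $C_1>>K$ (and the fixed output constant of the mass condition) absorb everything into $\phi_{C_1}(\|w\|)$. Your ordering — invoking the mass condition up front to select $\Delta_0$ — is if anything slightly cleaner than the paper's phrasing, and your constant-tracking concerns are exactly the points the paper dispatches with its parameter hierarchy.
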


\begin{proof}

Let $\Delta_0$ be a van Kampen diagram over the presentation $\pazocal{P}_2=\gen{X\mid\pazocal{R}_2}$ of with $\lab(\partial\Delta_0)\equiv w$. For each cell $\Pi_0$ in $\Delta_0$, $\lab(\partial\Pi_0)\in\pazocal{R}_2\subset\pazocal{R}_1$. Setting $\lab(\partial\Pi_0)\equiv u(\Pi_0)$, Lemma \ref{disks are quadratic}(2) then implies that there exists a diagram $\Pi$ over the canonical presentation of $G(\textbf{M})$ satisfying $\lab(\partial\Pi)\equiv u(\Pi_0)$ and $\text{Area}(\Pi)\leq C_1f_1(\|u(\Pi_0)\|)^2+C_1$.

Pasting $\Pi$ in place of $\Pi_0$ for each cell of $\Delta_0$ then produces a diagram $\Delta$ over the canonical presentation of $G(\textbf{M})$ satsifying $\lab(\partial\Delta)\equiv w$ and $$\text{Area}(\Delta)=\sum\text{Area}(\Pi)\leq\sum\limits_{\Pi_0\in\Delta_0} Kf_1(\|u(\Pi_0)\|)^2+K$$
As $1\notin\pazocal{R}_2$ implies $\|u(\Pi_0)\|\geq1$, $f_1(\|u(\Pi_0)\|)\geq1$.  Hence,
$$\text{Area}(\Delta)\leq\sum_{\Pi_0\in\Delta_0} 2Kf_1(\|u(\Pi_0)\|)^2$$
But then the mass condition given in hypothesis (4) of the statement of \Cref{main-theorem} and the parameter choices $C_1>>K$ imply $$\sum\limits_{\Pi_0\in\Delta_0}2Kf_1(\|u(\Pi_0)\|)^2\leq\phi_{C_1}(\|w\|)$$

\end{proof}

Our next goal is to justify the choices of $G$-weight assigned to reduced diagrams over $G_\Omega(\textbf{M})$.  Given the definition laid out in \Cref{sec-G-weight}, we first focus on impeding and big $a$-trapezia.

\begin{lemma} \label{impeding G-weight}

Let $\Delta$ be an impeding $a$-trapezium. Then there exists a reduced diagram $\tilde{\Delta}$ over the canonical (finite) presentation of $G(\textbf{M})$ such that $\lab(\partial\tilde{\Delta})\equiv\lab(\partial\Delta)$ and $\text{Area}(\tilde{\Delta})\leq2\text{wt}_G(\Delta)$.

\end{lemma}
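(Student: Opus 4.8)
The goal is to replace an impeding $a$-trapezium $\Delta$ with a reduced diagram over the canonical presentation of $G(\textbf{M})$ having the same boundary label and area bounded by twice the $G$-weight of $\Delta$. Recall that $\text{wt}_G(\Delta)$ is defined as the minimum of $\tfrac12\text{wt}(\Delta)$ and the quantity $C_2h\max(\|\textbf{tbot}(\Delta)\|,\|\textbf{ttop}(\Delta)\|)+\phi_{C_3}(\|\textbf{tbot}(\Delta)\|+\|\textbf{ttop}(\Delta)\|)$, where $h$ is the height of $\Delta$. The plan is to handle the two cases of this minimum separately: if the $G$-weight equals $\tfrac12\text{wt}(\Delta)$, then it suffices to produce a reduced diagram over the canonical presentation with the same boundary label whose area is at most $\text{wt}(\Delta)$, which can be obtained by taking $\Delta$ itself (a diagram over $M_\Omega(\textbf{M})$) and replacing each $a$-cell $\pi$ by a reduced diagram over the canonical presentation of $G(\textbf{M})$ with boundary label $\lab(\partial\pi)$ of area at most $\phi_{C_1}(\|\partial\pi\|)=\text{wt}(\pi)$, furnished by \Cref{a-cells are quadratic}, since the weight of an $a$-cell is precisely $\phi_{C_1}$ of its perimeter and the weight of every other cell is $1$; after reducing, the area only decreases.

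If instead the $G$-weight equals the second term, I would invoke the move condition. By hypothesis (2) of \Cref{main-theorem}, the machine $\textbf{Move}$ satisfies the $(n^2f_2(n),G)$-move condition, so by \Cref{def-Move condition} there is a reduced diagram $\Phi$ over $M_\Omega(\textbf{M})$ with $\lab(\partial\Phi)\equiv\lab(\partial\Delta)$ and $\text{wt}(\Phi)\leq C_2h\max(\|\textbf{tbot}(\Delta)\|,\|\textbf{ttop}(\Delta)\|)+\phi_{C_3}(\|\textbf{tbot}(\Delta)\|+\|\textbf{ttop}(\Delta)\|)=\text{wt}_G(\Delta)$. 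As in the previous paragraph, I then pass from $\Phi$ to a reduced diagram $\tilde{\Delta}$ over the canonical presentation of $G(\textbf{M})$: each $(\theta,q)$- and $(\theta,a)$-cell of $\Phi$ already corresponds to a defining relation of $M(\textbf{M})$ and hence of $G(\textbf{M})$, contributing weight $1=$ area $1$; each $a$-cell $\pi$ of $\Phi$ corresponds to an element of $\Omega$, i.e. a reduced word over $X$ trivial in $G$, so \Cref{a-cells are quadratic} supplies a subdiagram of area $\leq\phi_{C_1}(\|\partial\pi\|)=\text{wt}(\pi)$ with the same boundary label; and $\Phi$ over $M_\Omega(\textbf{M})$ contains no disks or hubs. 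Summing, $\text{Area}(\tilde\Delta)\leq\text{wt}(\Phi)\leq\text{wt}_G(\Delta)$, and reducing does not increase area. In either case $\text{Area}(\tilde\Delta)\leq\text{wt}_G(\Delta)\leq2\text{wt}_G(\Delta)$, which is stronger than required (the factor $2$ is presumably present for uniformity with the disk statement and for the inductive argument in which this lemma is used).

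I would take a moment to verify that the passage from a diagram over $M_\Omega(\textbf{M})$ (or $M(\textbf{M})$ with $a$-relations) to one over the canonical presentation of $G(\textbf{M})$ is legitimate: by \Cref{G isomorphic to G_a}, $G(\textbf{M})\cong G_\Omega(\textbf{M})$, so a word over $\pazocal{Y}$ trivial in $M_\Omega(\textbf{M})$ need not be trivial in $M(\textbf{M})$, but its image in $G_\Omega(\textbf{M})\cong G(\textbf{M})$ is trivial; and any $a$-relator of $M_\Omega(\textbf{M})$ is, after the identification, a word in $\pazocal{R}_1$-type relators over $X$, which is exactly where \Cref{a-cells are quadratic} applies. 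The key estimate comparing the area contribution of an $a$-cell to its weight is exactly the content of \Cref{a-cells are quadratic} (which uses the mass condition, hypothesis (4)), together with the definition $\text{wt}(\pi)=\phi_{C_1}(\|\partial\pi\|)$.

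The main obstacle I anticipate is bookkeeping rather than conceptual: ensuring that in the second case the move condition really delivers a diagram over $M_\Omega(\textbf{M})$ and not over some auxiliary presentation, and that the weight function used in \Cref{def-Move condition} coincides with the weight function $\text{wt}$ used here (it does, since $\phi_{C_1}\sim n^2f_2(n)$ and the move condition is stated with the same $a$-cell weighting). A secondary subtlety is that after substituting subdiagrams for the $a$-cells and reducing, one must confirm the resulting diagram is genuinely reduced and still has the prescribed boundary label; this is routine since substitution preserves boundary labels and reduction only removes cancellable pairs, strictly decreasing area. No delicate combinatorial surgery (transpositions, mixtures, etc.) is needed here — the hard analytic work has already been absorbed into \Cref{a-cells are quadratic} and the move condition.
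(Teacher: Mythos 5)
Your proposal is correct and takes essentially the same route as the paper: split on which term attains the minimum defining $\text{wt}_G(\Delta)$, in the first case replacing each $a$-cell of $\Delta$ itself by the diagram supplied by \Cref{a-cells are quadratic}, and in the second case invoking the $(n^2f_2(n),G)$-move condition to obtain a diagram $\Phi$ over $M_\Omega(\textbf{M})$ with $\text{wt}(\Phi)\leq\text{wt}_G(\Delta)$ and then replacing its $a$-cells the same way. The only slip is your closing claim that both cases give $\text{Area}(\tilde{\Delta})\leq\text{wt}_G(\Delta)$; in the first case the bound obtained is $\text{wt}(\Delta)=2\text{wt}_G(\Delta)$, which is still exactly what the lemma requires.
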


\begin{proof}

First, suppose $\text{wt}_G(\Delta)=\frac{1}{2}\text{wt}(\Delta)$.  

For any $a$-cell $\pi$ in $\Delta$, \Cref{a-cells are quadratic} allows us to excise $\pi$ and paste in its place a diagram $\tilde{\pi}$ over the canonical presentation of $G(\textbf{M})$ with $\lab(\partial\tilde{\pi})\equiv\lab(\partial\pi)$ and $\text{Area}(\tilde{\pi})\leq\text{wt}(\pi)$.  Carrying out this surgery for every $a$-cell and making any necessary cancellations then produces a reduced diagram $\tilde{\Delta}$ over the canonical presentation of $G(\textbf{M})$ satisfying the statement.

Hence, we may assume 
$$\text{wt}_G(\Delta)=C_2h\max(\|\textbf{tbot}(\Delta)\|,\|\textbf{ttop}(\Delta)\|)+\phi_{C_3}(\|\textbf{tbot}(\Delta)\|+\|\textbf{ttop}(\Delta)\|)$$
Note that by hypothesis (2) of \Cref{main-theorem}, we assume the Move machine $\textbf{Move}$ satisfies the $(n^2f_2(n),G)$-move condition.  By \Cref{def-Move condition} there then exists a reduced diagram $\Phi$ over $M_\Omega(\textbf{M})$ with $\lab(\partial\Phi)\equiv\lab(\partial\Delta)$ and $\text{wt}(\Phi)\leq \text{wt}_G(\Delta)$.

As above, replacing every $a$-cell of $\Phi$ with the corresponding reduced diagram given by \Cref{a-cells are quadratic} (and making any necessary cancellations) then produces reduced diagram $\tilde{\Delta}$ over the canonical presentation of $G(\textbf{M})$ with $\lab(\partial\tilde{\Delta})\equiv\lab(\partial\Phi)\equiv\lab(\partial\Delta)$ and $\text{Area}(\tilde{\Delta})\leq\text{wt}(\Phi)\leq\text{wt}_G(\Delta)$.

\end{proof}

\begin{lemma}[Compare to Lemma 11.7 of \cite{W}] \label{big G-weight}

For every big $a$-trapezium $\Delta$, there is a reduced diagram $\tilde{\Delta}$ over the canonical (finite) presentation of $G(\textbf{M})$ such that $\lab(\partial\tilde{\Delta})\equiv\lab(\partial\Delta)$ and $\text{Area}(\tilde{\Delta})\leq2\text{wt}_G(\Delta)$.

\end{lemma}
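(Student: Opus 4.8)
The goal mirrors that of Lemma~\ref{impeding G-weight}: produce a reduced diagram $\tilde\Delta$ over the canonical presentation of $G(\textbf M)$ with $\lab(\partial\tilde\Delta)\equiv\lab(\partial\Delta)$ and $\text{Area}(\tilde\Delta)\leq2\,\text{wt}_G(\Delta)$. Recall that $\text{wt}_G(\Delta)$ is the minimum of $\tfrac12\text{wt}(\Delta)$ and $h+\phi_{C_2}(\|\textbf{tbot}(\Delta)\|+\|\textbf{ttop}(\Delta)\|)$, where $h$ is the height of $\Delta$. As in the impeding case, if the minimum is realized by $\tfrac12\text{wt}(\Delta)$ then we simply replace each $a$-cell of $\Delta$ by the reduced diagram supplied by \Cref{a-cells are quadratic}, make any cancellations, and obtain $\tilde\Delta$ over the canonical presentation with $\text{Area}(\tilde\Delta)\leq\text{wt}(\Delta)=2\cdot\tfrac12\text{wt}(\Delta)$. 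So the substance is the case $\text{wt}_G(\Delta)=h+\phi_{C_2}(\|\textbf{tbot}(\Delta)\|+\|\textbf{ttop}(\Delta)\|)$.

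First I would invoke \Cref{big a-trapezium accepted}: since $\Delta$ is big, its base is $\{t(1)\}B_4(1)\dots\{t(L)\}B_4(L)\{t(1)\}$ and there exist configurations $W_0,W_1$ with $\ell(W_0),\ell(W_1)\leq1$ such that $W_0t(1)$ and $W_1t(1)$ differ from $\lab(\textbf{bot}(\Delta))$ and $\lab(\textbf{top}(\Delta))$, respectively, only in the `special' input sector. Next, the admissible subwords of $\lab(\textbf{bot}(\Delta))$ and $\lab(\textbf{top}(\Delta))$ in the special input sector are words over $X$; I would argue that these words represent the identity in $G$ — this follows because $W_0$ and $W_1$ are accepted by one-machine computations (from the proof of \Cref{big a-trapezium accepted}), and applying \Cref{a-relations are relations}/\Cref{disks are relations} shows that the $Q_0(1)Q_1(1)$-content of an accepted configuration, being insertable/deletable in a disk relation, represents $1$ in $G(\textbf M)$, hence in $G$ by \Cref{embedding}. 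Therefore these special-sector words lie in $\Omega$ (after cyclic reduction) and are $a$-relators. I would then build $\tilde\Delta$ in three layers just as in the sketch following the definition of $G$-weight: two disks with boundary labels $W_0t(1)$ and $W_1t(1)$ (via the hub-plus-trapezium construction of \Cref{disks are quadratic}, using $\ell(W_i)\leq1$ so these are disk relators), an intervening $t(1)$-band of length $h$ running from $\textbf{bot}(\Delta)$ to $\textbf{top}(\Delta)$ (the $\{t(1)\}Q_0(1)$- and $Q_{s,r}''(L)\{t(1)\}$-sectors are locked, so no trimming occurs), and two $a$-cells filling in the discrepancies in the special input sector. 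Replacing the two disks and two $a$-cells by the canonical-presentation diagrams of \Cref{disks are quadratic}(1) and \Cref{a-cells are quadratic}, reducing, yields $\tilde\Delta$ over the canonical presentation with $\lab(\partial\tilde\Delta)\equiv\lab(\partial\Delta)$.

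For the area estimate: the $t(1)$-band contributes $h$ $(\theta,t)$-cells; the two disk-replacements contribute at most $2C_1\max(\|W_0\|,\|W_1\|)^2$ cells by \Cref{disks are quadratic}(1); and the two $a$-cell replacements contribute at most $2\phi_{C_1}(\|\textbf{bot}(\Delta)\|+\|\textbf{top}(\Delta)\|)$ cells by \Cref{a-cells are quadratic} and superadditivity of $\phi_{C_1}$ (I would bound the perimeters of the special-sector words by $\|\textbf{tbot}(\Delta)\|+\|\textbf{ttop}(\Delta)\|$). Since $\|W_i\|$ differs from $\|\textbf{bot}(\Delta)\|$ or $\|\textbf{top}(\Delta)\|$ only by deleting special-sector content, $\max(\|W_0\|,\|W_1\|)\leq\|\textbf{tbot}(\Delta)\|+\|\textbf{ttop}(\Delta)\|$, so all the quadratic terms combine into $\phi_{C_2}(\|\textbf{tbot}(\Delta)\|+\|\textbf{ttop}(\Delta)\|)$ by the parameter choice $C_2\gg C_1$, giving $\text{Area}(\tilde\Delta)\leq h+\phi_{C_2}(\|\textbf{tbot}(\Delta)\|+\|\textbf{ttop}(\Delta)\|)\leq2\,\text{wt}_G(\Delta)$ (indeed even a factor $1$ suffices here, but $2$ is what is claimed).

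\textbf{Main obstacle.} The delicate point is verifying that the discrepancies between $\lab(\textbf{bot}(\Delta))$ (resp.\ $\lab(\textbf{top}(\Delta))$) and $W_0t(1)$ (resp.\ $W_1t(1)$) are genuinely words in $X$ that vanish in $G$ — i.e.\ that they are $a$-relators — so that the two $a$-cells in the construction of $\tilde\Delta$ actually exist over $M_\Omega(\textbf M)$ and the diagram is legitimate. This requires carefully tracking, through \Cref{big a-trapezium accepted}, that $W_0$ and $W_1$ are accepted by \emph{one-machine} computations (not merely accepted configurations), combining this with \Cref{a-relations are relations} and \Cref{embedding}, and checking the cyclic-reduction bookkeeping so that the resulting words land in $\Omega$ rather than being trivial or unreduced. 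A secondary technical nuisance is ensuring that the three-layer diagram can be assembled with matching boundary labels (the special-sector content sits on one side of $\partial$ of each disk, so the $t(1)$-band attaches cleanly) and that the final reduction does not destroy the area bound — both handled exactly as in the surgeries already developed in \Cref{sec-transposition} and in the proof of \Cref{impeding G-weight}.
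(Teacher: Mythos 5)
Your overall architecture matches the paper's: split on which term realizes $\text{wt}_G(\Delta)$; in the nontrivial case use \Cref{big a-trapezium accepted} to produce disk relators $W_0,W_1$ differing from $\lab(\textbf{bot}(\Delta))$, $\lab(\textbf{top}(\Delta))$ only in the special input sector; assemble a diagram from two disks, two $a$-cells and one $t(1)$-band of length $h$; and then substitute the canonical-presentation diagrams of \Cref{disks are quadratic} and \Cref{a-cells are quadratic}. The area bookkeeping is also essentially the paper's (it bounds $\|W_0\|\leq2\|\textbf{bot}(\Delta)\|$ via $|W_0(1)|_a\leq|W_0(2)|_a$, which plays the same role as your bound on $\max(\|W_0\|,\|W_1\|)$).

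The gap is in the step you yourself flag as the main obstacle. Your justification --- that ``the $Q_0(1)Q_1(1)$-content of an accepted configuration, being insertable/deletable in a disk relation, represents $1$ in $G(\textbf{M})$'' --- is false as a general statement: partway through an accepting computation of $I(w,H)$ the special input sector holds a proper prefix of $w$, which need not vanish in $G$. What must actually be shown is that the \emph{discrepancy} $w_0$ between $\lab(\textbf{bot}(\Delta))$ and $W_0t(1)$ vanishes in $G$, and for that you need the separate fact that $\lab(\textbf{bot}(\Delta))t(1)^{-1}$ is itself trivial in $G_\Omega(\textbf{M})$ --- a fact about the bottom of the big $a$-trapezium, not about the accepted configuration $W_0$. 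The paper extracts this from the controlled subword of the history (i.e., precisely from the hypothesis that $\Delta$ is big): the maximal subdiagram $\Delta'$ with controlled history is a genuine trapezium because the special input sector is locked there, so \Cref{M controlled} and \Cref{trapezia are computations} give disk relators $W_0',W_1'$ with $W_0't(1)\equiv\lab(\textbf{bot}(\Delta'))$; the subdiagram of $\Delta$ between $\textbf{bot}(\Delta)$ and $\textbf{bot}(\Delta')$ (with the last $t$-band removed) then exhibits a conjugacy in $M_\Omega(\textbf{M})$ between $\lab(\textbf{bot}(\Delta))t(1)^{-1}$ and $W_0'$, whence $\lab(\textbf{bot}(\Delta))t(1)^{-1}=1$ in $G_\Omega(\textbf{M})$. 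Only now does comparison with the disk relator $W_0$ force $w_0=1$ in $G_\Omega(\textbf{M})$, hence in $G$ by \Cref{embedding}, so $w_0$ is freely conjugate to an element of $\Omega\cup\{1\}$ and the two $a$-cells in your three-layer diagram genuinely exist. Without this argument your construction is not yet legitimate; with it, the rest of your proposal goes through as written.
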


\begin{proof}

As in the proof of Lemma \ref{impeding G-weight}, if $\text{wt}_G(\Delta)=\frac{1}{2}\text{wt}(\Delta)$, then we may construct $\tilde{\Delta}$ simply by replacing all $a$-cells with the corresponding diagram given by \Cref{a-cells are quadratic} and making any necessary cancellations. Hence, it suffices to assume that $$\text{wt}_G(\Delta)=h+\phi_{C_2}(\|\textbf{tbot}(\Delta)\|+\|\textbf{ttop}(\Delta)\|)$$
As $\Delta$ is big, its base must be reduced.  Hence, perhaps passing to a cyclic permutation and the opposite orientation, we may assume without loss of generality that the base of $\Delta$ begins and ends with $\{t(1)\}$.  Let $\Delta_1,\dots,\Delta_L$ be the subdiagrams bounded by consecutive maximal $t$-bands, so that each is an $a$-trapezium with pararevolving base.

Let $\Delta'$ be the maximal subdiagram of $\Delta$ which is an $a$-trapezium with controlled history.  As the `special' input sector is locked by every rule of a controlled history, $\Delta'$ must be a trapezium.  Hence, Lemmas \ref{M controlled} and \ref{trapezia are computations} yield disk relators $W_0'$ and $W_1'$ such that $W_0't(1)\equiv\lab(\textbf{bot}(\Delta'))$ and $W_1't(1)\equiv\lab(\textbf{top}(\Delta'))$.

On the other hand, \Cref{big a-trapezium accepted} produces disk relators $W_0$ and $W_1$ such that $W_0t(1)$ and $W_1t(1)$ differ from $\lab(\textbf{bot}(\Delta))$ and $\lab(\textbf{top}(\Delta))$, respectively, only in the tape word of the `special' input sector.

By \Cref{M controlled} and the parallel nature of the rules of $\textbf{M}$, it then follows that $W_0t(1)$ and $W_ht(1)$ can only differ from $\lab(\textbf{bot}(\Delta))$ and $\lab(\textbf{top}(\Delta))$, respectively, in the tape word of the `special' input sector.

Let $w_0\in F(X)$ be the word by which $W_0t(1)$ and $\lab(\textbf{bot}(\Delta))$ differ (perhaps $w_0=1$).  Consider the diagram $\Delta_0'$ obtained by passing to the subdiagram bounded by $\textbf{bot}(\Delta')$ and $\textbf{bot}(\Delta)$ and removing the final $t$-band.  Letting $H_0$ be the history of this $a$-trapezium, $\Delta_0'$ has boundary label $H_0^{-1}\lab(\textbf{bot}(\Delta))t(1)^{-1}H_0(W_0')^{-1}$.  As $W_0'$ is an disk relation, it follows from \Cref{embedding} that $w_0$ represents the identity in $G$, {\frenchspacing i.e. $w_0$ is freely conjugate to a word $w_0'\in\Omega\cup\{1\}$ with $\|w_0'\|\leq\|w_0\|$}.

Note that
$$\|w_0\|\leq|\textbf{bot}(\Delta_1)|_a+|W_0(1)|_a\leq|\textbf{bot}(\Delta_1)|_a+|W_0(2)|_a=|\textbf{bot}(\Delta_1)|_a+|\textbf{bot}(\Delta_2)|_a\leq|\textbf{bot}(\Delta)|_a$$
By the symmetric argument, for $w_1\in F(X)$ such that $W_1t(1)$ and $\lab(\textbf{top}(\Delta))$ differ, then $w_1$ is freely conjugate to a word $w_1'\in\Omega\cup\{1\}$ with $\|w_1'\|\leq\|w_1\|\leq|\textbf{top}(\Delta)|_a$.

As a result, we may construct a reduced diagram $\tilde{\Delta}_1$ over the disk presentation of $G_\Omega(\textbf{M})$ with $\lab(\partial\tilde{\Delta}_1)\equiv\lab(\partial\Delta)$ consisting of:

\begin{itemize}

\item disks corresponding to $W_0$ and $W_1$

\item $a$-cells corresponding to $w_0'$ and $w_1'$ (should they be nontrivial, and so elements of $\Omega$)

\item a single $t$-band corresponding to $\{t(1)\}$ of length $h$.

\end{itemize}

Hence, $\text{wt}(\tilde{\Delta}_1)\leq C_1|W_0|^2+C_1|W_1|^2+\phi_{C_1}(|\textbf{bot}(\Delta)|_a+|\textbf{top}(\Delta)|_a)+h$.

Now, note that $W_0(j)t(j+1)\equiv\lab(\textbf{bot}(\Delta_j)$ for all $j\in\{2,\dots,L\}$.  So, since $|W_0(1)|_a\leq|W_0(2)|_a$, $\|W_0\|\leq2\|\textbf{bot}(\Delta)\|$.

Similarly, $\|W_1\|\leq2\|\textbf{top}(\Delta)\|$.

So, since $C_2>>C_1>>\delta^{-1}$, it follows that $\text{wt}(\tilde{\Delta})\leq\text{wt}_G(\Delta)$.

Thus, we obtain the desired result by constructing the reduced diagram $\tilde{\Delta}$ over the canonical presentation of $G(\textbf{M})$ from $\tilde{\Delta}_1$ as follows:

\begin{itemize}

\item Excise any disk and paste in its place the diagram arising from \Cref{disks are quadratic}.

\item Excise any $a$-cell and paste in its place the diagram arising rom \Cref{a-cells are quadratic}.

\item Make any necessary cancellations.

\end{itemize}

\end{proof}

\medskip


\subsection{Dehn function bound} \

We now obtain the desired bound on the Dehn function of $G(\textbf{M})$.



Let $w\in F(\pazocal{Y})$ such that $w=1$ in $G(\textbf{M})$. By Lemma \ref{G isomorphic to G_a}, $w$ is also trivial over the group $G_\Omega(\textbf{M})$, so that Lemma \ref{minimal exist} yields a minimal diagram $\Delta_a$ over $G_\Omega(\textbf{M})$ with $\lab(\partial\Delta_a)\equiv w$. By Lemma \ref{contradiction}, we have
$$\text{wt}_G(\Delta_a)\leq\phi_{N_4}(|w|+\sigma_\lambda(\Delta_a^*))+N_3\mu(\Delta_a)f_2(N_3|w|+N_3\sigma_\lambda(\Delta_a^*))$$
Lemma \ref{G_a design} implies that $\sigma_\lambda(\Delta_a^*)\leq C_1|w|$, while Lemma \ref{mixtures}(a) implies $\mu(\Delta_a)\leq J|w|^2$. So, as $|w|\leq\|w\|$, we can choose $N_5$ large enough so that
$$\text{wt}_G(\Delta_a)\leq\frac{1}{2}\phi_{N_5}(\|w\|)$$
Now, let $\textbf{P}$ be a minimal covering of $\Delta_a$ and construct the reduced diagram $\Delta$ over the canonical presentation of $G(\textbf{M})$ by:

\begin{itemize}

\item excising any impeding $a$-trapezium $P\in\textbf{P}$ and pasting in its place the reduced diagram given in Lemma \ref{impeding G-weight} with the same contour label and area at most $2\text{wt}_G(P)$

\item excising any big $a$-trapezium $P\in\textbf{P}$ and pasting in its place the reduced diagram given in Lemma \ref{big G-weight} with the same contour label and area at most $2\text{wt}_G(P)$

\item excising any disk $\Pi\in\textbf{P}$ and pasting in its place the reduced diagram given in Lemma \ref{disks are quadratic} with the same contour label and area at most $C_1|\partial\Pi|^2$

\item excising any $a$-cell $\pi\in\textbf{P}$ and pasting in its place the reduced diagram given in Lemma \ref{a-cells are quadratic} with the same contour label and area at most $\phi_{C_1}(\|\partial\pi\|)$

\end{itemize}

By the definition of $G$-weight, it follows that $\text{Area}(\Delta)\leq2\text{wt}_G(\Delta_a)\leq \phi_{N_5}(\|w\|)$.

Therefore, the Dehn function of $G(\textbf{M})$ is at most $\phi_{N_5}(n)\sim n^2f_2(n)$.

\medskip


\subsection{$g$-diagrams and $g$-minimal diagrams} \

To complete the proof of \Cref{main-theorem}, we must display that the embedding given by $\varphi$ is bi-Lipschitz.  To do so, we follow the proof outlined in \cite{W}.


For $g\in G$, a minimal diagram $\Delta$ is called a \textit{$g$-diagram} if $\partial\Delta=\textbf{st}$, $\lab(\textbf{t})$ is a word over $X$ whose value in $G$ is $g^{-1}$, and $\|\textbf{t}\|=|g|_X$. 

A $g$-diagram $\Delta$ is called \textit{$g$-minimal} if $p(\Delta)=|\partial\Delta|+\sigma_\lambda(\Delta)$ is minimal amongst all $g$-diagrams.

The next lemma consists of two statements which follow in exactly the same way as their analogues in \cite{W}.

\begin{lemma} [Lemmas 12.1-12.2 of \cite{W}] \label{12.1-12.2}

Let $\Delta$ be a $g$-minimal diagram for some $g\in G$.

\begin{enumerate}

\item $p(\Delta)\leq 2\delta|g|_X$

\item No $q$-band of $\Delta$ has two ends on $\partial\Delta$

\end{enumerate}

\end{lemma}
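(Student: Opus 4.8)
\textbf{Proof proposal for Lemma \ref{12.1-12.2}.}

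The plan is to establish both parts by invoking the $G$-weight machinery already developed and the structural constraints on $q$-bands in minimal diagrams. For part (1), since $\Delta$ is a $g$-diagram with contour $\textbf{s}\textbf{t}$, the label $\lab(\textbf{t})$ is a geodesic word over $X$ representing $g^{-1}$ in $G$, and $\lab(\textbf{s})$ is a word over $\pazocal{Y}$ representing $g$ in $G(\textbf{M})$ via the embedding $\varphi$. First I would bound $|\partial\Delta|$: the $\textbf{t}$-side contributes $\delta|g|_X$ to the modified length (every edge is an $a$-edge), so it suffices to produce \emph{some} $g$-diagram whose $\textbf{s}$-side is short and which has no $\lambda$-shafts at all, forcing $\sigma_\lambda$ to be controlled. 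The natural candidate is the diagram consisting of a single strip of $a$-cells realizing $\lab(\textbf{t})\equiv_G g^{-1}$ entirely over the subpresentation $\gen{X\mid\pazocal{R}_1}$ for $G$ --- such a diagram contains no $q$-edges on either side, hence no disks, no $\theta$-bands, and $\sigma_\lambda=0$; its $\textbf{s}$-side is a geodesic word over $X$ of length $|g|_X$, so $p$ of this diagram is at most $2\delta|g|_X$. Since $\Delta$ is $g$-minimal, $p(\Delta)\leq 2\delta|g|_X$, giving (1). (I should double-check the precise constant by tracking $\delta$ carefully through Lemma \ref{lengths}(a), but the shape of the estimate is exactly as in Lemma 12.1 of \cite{W}.)

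For part (2), I would argue by contradiction. Suppose some $q$-band $\pazocal{B}$ of $\Delta$ has both ends on $\partial\Delta$. A $q$-band cannot start and end on the same connected subpath of a minimal diagram's contour without enclosing cells, so $\pazocal{B}$ cuts $\Delta$ into two subdiagrams $\Delta_1,\Delta_2$, each sharing $\pazocal{B}$. Since the $\textbf{t}$-side of $\partial\Delta$ consists entirely of $a$-edges, both ends of $\pazocal{B}$ must lie on $\textbf{s}$. Now excise the subdiagram lying on the ``$\textbf{t}$-free'' side --- more carefully, excise whichever side does \emph{not} meet $\textbf{t}$, together with $\pazocal{B}$, and replace the arc of $\partial\Delta$ it bounded by a path without that $q$-band. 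The key point is that the resulting diagram $\Delta'$ is still a $g$-diagram (the $\textbf{t}$-side is untouched, and $\lab(\textbf{s}')$ still represents $g$ since cutting along a $q$-band and re-gluing across it multiplies by a conjugate of a relator, hence the boundary word is freely conjugate into the normal closure), while $p(\Delta')<p(\Delta)$: removing $\pazocal{B}$ strictly decreases $|\partial\Delta|$ (at least one $q$-edge disappears) and does not increase $\sigma_\lambda$ since any $\lambda$-shaft of $\Delta'$ is a sub-band of a $\lambda$-shaft of $\Delta$. This contradicts $g$-minimality.

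The main obstacle I anticipate is making the surgery in part (2) rigorous: one must verify that after cutting along $\pazocal{B}$ and discarding the side not meeting $\textbf{t}$, the remaining diagram can be made minimal again (pass to a minimal diagram with the same boundary label, which by Lemma \ref{minimal exist} exists and by Lemma \ref{G_a design} has $\sigma_\lambda$ no larger) without increasing $p$, and that the new $\textbf{t}$-side is still geodesic --- this last is automatic since $\textbf{t}$ is literally unchanged. A subtlety is handling the case where \emph{both} components of $\Delta\setminus\pazocal{B}$ meet $\textbf{s}$ but one of them meets $\textbf{t}$: then we discard the component disjoint from $\textbf{t}$, which exists because $\textbf{t}$ is connected and lies in one component. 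The argument is a direct transcription of Lemmas 12.1--12.2 of \cite{W}, so beyond bookkeeping on the modified length function and the mild flexibility in the ``special'' input sector (which does not interfere here, as $g$-diagrams have no $q$-edges along $\textbf{t}$), no new ideas are required.
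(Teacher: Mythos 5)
Your overall strategy is the right one and matches the argument the paper defers to in \cite{W}: for (1), exhibit a competitor $g$-diagram over $\gen{X\mid\pazocal{R}_1}$ alone (no $q$- or $\theta$-edges, hence no disks and $\sigma_\lambda=0$) with both boundary arcs geodesic over $X$, so that $p\leq 2\delta|g|_X$ and $g$-minimality does the rest; for (2), cut off the component not meeting $\textbf{t}$ along the offending $q$-band and derive a contradiction with $g$-minimality.

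Two steps in your part (2) need repair. First, your fallback for controlling $\sigma_\lambda$ after surgery does not work as written: Lemma \ref{G_a design} only gives $\sigma_\lambda(\Gamma)\leq C_1|\partial\Gamma|$, which says nothing about whether a freshly re-minimalized diagram has smaller $\sigma_\lambda$ than $\Delta$ did, so invoking it there is a non sequitur. The correct (and simpler) observation is that no re-minimalization is needed: the component $\Delta_1$ of $\Delta\setminus(\Gamma\cup\pazocal{B})$ containing $\textbf{t}$ is a subdiagram of the minimal diagram $\Delta$, hence is itself minimal, and its disks and $t$-spokes are disks and subbands of $t$-spokes of $\Delta$, so $\sigma_\lambda(\Delta_1)\leq\sigma_\lambda(\Delta)$ directly. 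Second, "at least one $q$-edge disappears" is not by itself enough to conclude $|\partial\Delta_1|<|\partial\Delta|$, because the excised arc $\textbf{x}$ of $\textbf{s}$ is replaced by a side of $\pazocal{B}$, which carries $\|H\|$ new $\theta$-edges ($H$ the history of $\pazocal{B}$). You must offset these: every $\theta$-band crossing $\pazocal{B}$ crosses it exactly once (no $(\theta,q)$-annuli by Lemma \ref{M_a no annuli 1}) and then must end on $\textbf{x}$, so $|\textbf{x}|_\theta\geq\|H\|$; combined with the two $q$-edges of $\textbf{x}$ and Lemma \ref{lengths}(a),(b),(c), this gives $|\partial\Delta_1|\leq|\partial\Delta|-2+\delta<|\partial\Delta|$. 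With these two corrections the proof closes; the rest of your bookkeeping (both ends of $\pazocal{B}$ lie on $\textbf{s}$ since $\textbf{t}$ has only $a$-edges, and $\textbf{t}$ lies in one component) is fine.
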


Let $\Delta$ be a $g$-minimal diagram for some $g\in G$ and decompose $\partial\Delta=\textbf{st}$ as in the definition of $g$-diagram. Suppose $\Delta$ contains a quasi-rim $\theta$-band $\pazocal{T}$. Since $\textbf{t}$ is comprised entirely of $a$-edges, $\pazocal{T}$ must end twice on $\textbf{s}$. Let $\textbf{s}_0$ be the subpath of $\partial\Delta$ bounded by the two ends of $\pazocal{T}$ such that, per the definition of quasi-rim $\theta$-band, any cell between $\textbf{bot}(\pazocal{T})$ (or $\textbf{top}(\pazocal{T})$) and $\textbf{s}_0$ is an $a$-cell. If $\textbf{s}_0$ is a subpath of $\textbf{s}$, then $\pazocal{T}$ is called a \textit{$g$-rim $\theta$-band}.

Again, the following statements are proved by identical arguments as the ones employed in \cite{W} for the analogous statements.

\begin{lemma} [Lemmas 12.3-12.4 of \cite{W}] \label{12.3-12.4}

Let $\Delta$ be a $g$-minimal diagram for some $g\in G$ and decompose $\partial\Delta=\textbf{st}$ as in the definition of $g$-diagram.

\begin{enumerate}

\item The base of any $g$-rim $\theta$-band in $\Delta$ has length greater than $K$.

\item If $\Delta$ contains no disks, then $|\textbf{s}|=|\textbf{t}|=\delta|g|_X$.

\end{enumerate}

\end{lemma}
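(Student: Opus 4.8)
The statement to prove is Lemma 12.3-12.4, which asserts two facts about $g$-minimal diagrams $\Delta$ (with $\partial\Delta = \mathbf{s}\mathbf{t}$, $\mathbf t$ a geodesic word over $X$ representing $g^{-1}$): (1) every $g$-rim $\theta$-band has base of length greater than $K$, and (2) if $\Delta$ contains no disks, then $|\mathbf s| = |\mathbf t| = \delta|g|_X$.

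Let me think about how I would prove this.

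**Part (1): $g$-rim $\theta$-bands have long bases.**

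The approach mirrors Lemma 6.18 / Lemma 10.3 (the quasi-rim $\theta$-band lemmas). Suppose toward contradiction that $\pazocal{T}$ is a $g$-rim $\theta$-band with base of length $s \leq K$. Let $\mathbf s_0$ be the subpath of $\mathbf s$ cut off by the two ends of $\pazocal T$, so that all cells between $\mathbf{top}(\pazocal T)$ (say) and $\mathbf s_0$ are $a$-cells. Let $\mathbf v$ be the complement of $\mathbf s_0$ in $\partial\Delta$. Form $\Delta'$ by cutting $\Delta$ along $\mathbf{bot}(\pazocal T)$, then build $\Delta''$ from $\Delta'$ by pasting copies of the $a$-cells that sat above $\mathbf{top}(\pazocal T)$ onto the appropriate part of the boundary — exactly as in the proof of Lemma 6.18. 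Since $\pazocal T$ is a $g$-rim band, the path $\mathbf t$ is untouched by this surgery, so $\Delta''$ is still a $g$-diagram for $g$ (the geodesic part $\mathbf t$ persists). I need to check $p(\Delta'') < p(\Delta)$: the surgery removes two white beads (the two $\theta$-edge endpoints of $\pazocal T$) and reduces the boundary length by at least $1$ using Lemma \ref{a-cell in counterexample 2} (the $a$-cell length bound), the parameter choice $\delta^{-1} \gg K$, and Lemma \ref{lengths}; also $\sigma_\lambda$ does not increase. This contradicts $g$-minimality of $\Delta$.

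**Part (2): $|\mathbf s| = |\mathbf t| = \delta|g|_X$ when $\Delta$ has no disks.**

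Here $\Delta$ contains no disks, so it is $M$-minimal. First, $|\mathbf t| = \delta|g|_X$ is essentially by definition: $\mathbf t$ consists of $\|\mathbf t\| = |g|_X$ many $a$-edges, so $|\mathbf t| = \delta|g|_X$ by the definition of the modified length (an all-$a$-edge path). The content is showing $|\mathbf s| = |\mathbf t|$. The idea: $\mathbf s$ and $\mathbf t$ together bound $\Delta$, and $\mathrm{Lab}(\mathbf s) = \mathrm{Lab}(\mathbf t)^{-1}$ as elements... no — rather, $\mathrm{Lab}(\partial\Delta) = 1$ in $G_\Omega(\textbf M)$. Since $\Delta$ has no disks, by Lemma \ref{graph} and Lemma \ref{G_a theta-annuli} every cell of $\Delta$ is an $a$-cell (the argument from Lemma \ref{embedding}), so in fact $\mathbf s$ has no $q$-edges and no $\theta$-edges — hence $\mathrm{Lab}(\mathbf s)$ is a word over $X$ representing $g^{-1}$. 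Wait, but then $\mathbf s$ being all $a$-edges means $|\mathbf s| = \delta\|\mathbf s\|$, and we'd need $\|\mathbf s\| = |g|_X$, i.e. $\mathbf s$ is also geodesic. Hmm — actually I think the correct reading is that if $\|\mathbf s\| < |g|_X$ we could replace $\mathbf t$ by $\mathbf s^{-1}$ (a shorter word representing $g^{-1}$) to get a $g$-diagram with smaller $p$, or more precisely a contradiction with $|g|_X$ being the geodesic length. So: since $\mathrm{Lab}(\mathbf s^{-1})$ is a word over $X$ whose value in $G$ is $g^{-1}$ (using $\varphi$ is an embedding, Lemma \ref{embedding}), we have $\|\mathbf s\| \geq |g|_X = \|\mathbf t\|$. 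Conversely, if $\|\mathbf s\| > \|\mathbf t\|$, then the diagram $\Delta$ with boundary relabeled by $\mathbf t \mathbf t^{-1}$ (reading $\mathbf s$ reduced to... ) — actually the cleanest route: Lemma \ref{12.1-12.2}(1) gives $p(\Delta) \leq 2\delta|g|_X$; since $\Delta$ has no $q$- or $\theta$-edges on its boundary, $\sigma_\lambda(\Delta) = 0$ and $p(\Delta) = |\partial\Delta| = |\mathbf s| + |\mathbf t| - 2\delta = \delta\|\mathbf s\| + \delta|g|_X - 2\delta \leq 2\delta|g|_X$ (using Lemma \ref{lengths}(c)), giving $\|\mathbf s\| \leq |g|_X + 2$; combined with $\|\mathbf s\| \geq |g|_X$ and minimality/reducedness of $\mathbf s$ as a word (one shows $\mathbf s$ can be taken reduced, so its length matches $|g|_X$ exactly), we get equality $\|\mathbf s\| = |g|_X$, hence $|\mathbf s| = \delta|g|_X = |\mathbf t|$.

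**Main obstacle.** The trickiest part is Part (1): carefully verifying that the surgery producing $\Delta''$ genuinely yields a $g$-diagram (the geodesic side $\mathbf t$ must be preserved verbatim, which is why the $g$-rim hypothesis — that $\mathbf s_0 \subseteq \mathbf s$ — is essential) and that $p(\Delta'') < p(\Delta)$ strictly, handling the bookkeeping with $a$-cells via Lemma \ref{a-cell in counterexample 2} and the length function exactly as in the proof of Lemma \ref{6.18}. Since the excerpt explicitly says these follow "by identical arguments as the ones employed in \cite{W}," I would largely transcribe the Lemma 6.18 / Lemma 10.3 argument, noting the one new feature: the $g$-rim condition guarantees the cut-off arc lies in $\mathbf s$, so no surgery touches $\mathbf t$ and the result is still a competitor $g$-diagram. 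Part (2) is then a short consequence of the no-disk structure theory (every cell is an $a$-cell) plus the fact that $\varphi: G \to G_\Omega(\textbf M)$ is an embedding and $\mathbf t$ is geodesic.
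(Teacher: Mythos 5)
Your treatment of part (1) is essentially the paper's intended argument: transplant the surgery from the proof of \Cref{6.18} (cut along $\textbf{bot}(\pazocal{T})$, repaste the $a$-cells, use \Cref{a-cell in counterexample 2}, \Cref{lengths} and $\delta^{-1}>>K$ to get a strict drop in $p$), with the one new observation that the $g$-rim hypothesis forces $\textbf{s}_0\subseteq\textbf{s}$, so $\textbf{t}$ survives and the result is a competitor $g$-diagram. That is correct.

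Part (2), however, has a genuine gap at its key structural step. You justify ``every cell of $\Delta$ is an $a$-cell'' by appealing to \Cref{graph}, \Cref{G_a theta-annuli}, and ``the argument from \Cref{embedding}.'' That argument only works when the boundary label is already known to be a word over $X$, which is exactly what you are trying to prove about $\textbf{s}$; as stated your reasoning is circular, since $\textbf{s}$ may a priori carry $q$- and $\theta$-edges. The correct route is: (i) since $\Delta$ has no disks, every maximal $q$-band would have to end twice on $\partial\Delta$, which is forbidden by \Cref{12.1-12.2}(2) --- hence there are no $q$-bands and no $q$-edges at all; (ii) consequently any maximal $\theta$-band has empty base and both ends on $\textbf{s}$, and an outermost such band (one with only $a$-cells between it and the arc of $\textbf{s}$ not containing $\textbf{t}$) would be a $g$-rim $\theta$-band of base length $0<K$, contradicting part (1) --- so there are no $\theta$-bands either. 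Only then does $\textbf{s}$ consist of $a$-edges. You never invoke \Cref{12.1-12.2}(2) or part (1) for this purpose, and these are the essential inputs. Your closing length computation is also off: from \Cref{lengths}(c) you only get $\|\textbf{s}\|\leq|g|_X+2$, and ``minimality/reducedness of $\textbf{s}$'' does not upgrade this to equality. The fix is that once $\partial\Delta$ consists entirely of $a$-edges, no decomposition contains a $(\theta,a)$-syllable, so $|\partial\Delta|=\delta(\|\textbf{s}\|+\|\textbf{t}\|)$ exactly; combined with $p(\Delta)\leq2\delta|g|_X$ from \Cref{12.1-12.2}(1), $\sigma_\lambda(\Delta)=0$, and $\|\textbf{s}\|\geq|g|_X$ from \Cref{embedding}, this yields $\|\textbf{s}\|=|g|_X$ and hence $|\textbf{s}|=|\textbf{t}|=\delta|g|_X$.
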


%
%

\begin{lemma}[Lemma 12.5 of \cite{W}] \label{distortion graph}

Let $\Delta$ be a $g$-minimal diagram for some $g\in G$ containing at least one disk. Decompose $\partial\Delta=\textbf{st}$ as in the definition of $g$-diagram. Then $\Delta$ contains a disk $\Pi$ such that:

\begin{enumerate}[label=({\alph*})]

\item $L-6$ consecutive $t$-spokes $\pazocal{Q}_1,\dots,\pazocal{Q}_{L-6}$ of $\Pi$ end on $\partial\Delta$

\item for $i=1,\dots,L-7$, the subdiagram $\Psi_{i,i+1}$ of $\Delta$ bounded by $\pazocal{Q}_i$, $\pazocal{Q}_{i+1}$, $\partial\Pi$, and $\partial\Delta$ contains no disks, and

\item $\textbf{t}$ is not a subpath of $\partial\Psi_{i,i+1}$ for any $i=1,\dots,L-7$.

\end{enumerate}

\end{lemma}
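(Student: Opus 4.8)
The plan is to mimic the proof of Lemma \ref{graph}, but now account for the boundary path $\textbf{t}$ which consists entirely of $a$-edges. First I would invoke Lemma \ref{12.1-12.2}(2), which guarantees that no $q$-band of a $g$-minimal diagram $\Delta$ has two ends on $\partial\Delta$; in particular every $t$-spoke of every disk either joins two disks or has (at most) one end on $\partial\Delta$, and since $\textbf{t}$ has no $q$-edges, any such end must lie on $\textbf{s}$. Then I would reuse the planar graph $\Gamma(\Delta)$ construction from \Cref{sec-disks}: vertices are the disks plus one exterior vertex $v_0$, edges are $t$-spokes. Since $\Delta$ is minimal (hence $D$-minimal) it still satisfies the hypotheses needed for Lemma \ref{t-spokes between disks}, so the degree of each interior vertex of $\Gamma(\Delta)$ is $L-1$.

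The key combinatorial step is to find an interior vertex (disk $\Pi$) all but a bounded number of whose $t$-spokes go to $v_0$ \emph{and} whose resulting diskless subdiagrams between consecutive such spokes avoid $\textbf{t}$. The standard planarity/Euler-characteristic argument (as in the proof of Lemma \ref{graph}) produces, for $L$ large, a disk $\Pi$ with at least $L-5$ consecutive spokes ending on $\partial\Delta$ and bounding diskless subdiagrams. To upgrade this to $L-6$ consecutive spokes that additionally satisfy (c), I would note that $\textbf{t}$, being a connected subpath of $\partial\Delta$, meets $\partial\Psi_{i,i+1}$ for at most \emph{one} index $i$ (the unique $\Psi_{i,i+1}$, if any, containing the whole of $\textbf{t}$ in its boundary, since the $\Psi_{i,i+1}$ partition the part of $\partial\Delta$ between $\pazocal{Q}_1$ and $\pazocal{Q}_{L-5}$); deleting that one offending index from a run of $L-5$ consecutive good indices leaves a run of length at least... well, one has to be slightly careful, as deleting a middle index breaks the run in two. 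So instead I would arrange, by choosing the disk $\Pi$ via the graph argument to have $L-4$ or more consecutive boundary spokes (rather than $L-5$), that after excising the single clove whose boundary contains $\textbf{t}$ there remains a run of at least $L-6$ consecutive spokes with the desired diskless-and-$\textbf{t}$-avoiding property. Concretely: take $\ell\ge L-4$ consecutive spokes $\pazocal{Q}_1,\dots,\pazocal{Q}_\ell$ ending on $\partial\Delta$ and bounding diskless $\Psi_{i,i+1}$; the endpoints of these spokes cut $\textbf{s}$ into arcs, and $\textbf{t}$ sits between two consecutive spoke-endpoints, i.e. inside exactly one $\partial\Psi_{i,i+1}$ or outside the range entirely; removing that clove leaves a consecutive run of length $\ge L-6$.

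The main obstacle I expect is the bookkeeping needed to guarantee $L-6$ \emph{consecutive} good spokes rather than just $L-6$ good ones scattered around $\partial\Pi$ — this requires being precise about how $\textbf{t}$ intersects the cyclic sequence of clove-boundaries, and about the exact count coming out of the planar-graph argument (the $L-1$ degree, the number of spokes forced to reach $v_0$, and the interplay with the one spoke or clove that may touch $\textbf{t}$). A secondary point is to confirm that $\textbf{t}$ cannot ``wrap around'' past a disk in a way that forces it into several cloves; this is handled because $\textbf{t}$ is a single arc of $\partial\Delta$ and the cloves $\Psi_{i,i+1}$ have pairwise-disjoint interiors with boundaries overlapping $\partial\Delta$ only in consecutive arcs. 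Once these counting details are pinned down, properties (a), (b), (c) follow verbatim as in Lemma \ref{graph}, with (b) using Lemma \ref{t-spokes between disks} to rule out disks inside the cloves and (a),(c) being the output of the selection procedure.
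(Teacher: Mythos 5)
Your overall strategy --- rerun the planar-graph argument of \Cref{graph}, note via \Cref{12.1-12.2}(2) that every $t$-spoke reaching $\partial\Delta$ must end on $\textbf{s}$ (since $\textbf{t}$ has no $q$-edges), and observe that the connected arc $\textbf{t}$ can be a subpath of at most one clove boundary --- is the right starting point, and matches the intended argument up to the last step. But the last step is where the proof actually lives, and your version of it has a genuine gap.

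The gap is the claim that from a run of $L-4$ consecutive good spokes one can delete the single clove whose boundary contains $\textbf{t}$ and still retain a run of $L-6$ \emph{consecutive} good spokes. You flag the problem yourself (deleting a middle index splits the run in two) and then propose to ``arrange'' that the offending clove sits near an end of the run, but no mechanism for such an arrangement is given, and none exists along these lines: if $\textbf{t}$ lies in $\Psi_{j,j+1}$ with $j$ near the middle, the two residual runs each have length about $(L-4)/2$, far short of $L-6$. Nor can you compensate by demanding a longer initial run, since a disk has only $L-1$ $t$-spokes in total, so no run of length $2(L-6)$ or more is available for large $L$. The conclusion therefore cannot be obtained as a post-hoc excision from \Cref{graph}; the avoidance of $\textbf{t}$ has to be built into the counting argument itself. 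Concretely, one treats the arc $\textbf{t}$ as an additional marked object on the outer boundary of the planar graph $\Gamma(\Delta)$ (an extra ``edge'' at the exterior vertex) when running the Euler-characteristic estimate; each such boundary obstacle costs $2$ in the count of guaranteed consecutive spokes, which is exactly why the constant degrades from $L-4$ in \Cref{graph} to $L-6$ here. With that modification the selection of $\Pi$ already produces a run of $L-6$ consecutive spokes none of whose cloves meets $\textbf{t}$, and then (b) follows from \Cref{t-spokes between disks} exactly as you say.
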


We are now able to adapt many of the arguments of \Cref{sec-disks-upper-bound} to the setting of a $g$-minimal diagram containing at least one disk:

Recall that the arguments presented in \Cref{sec-disks-upper-bound} center on the disk $\Pi$ given by \Cref{graph} in a weakly minimal diagram $\Delta$.  In general, these arguments use $\Pi$ to find subdiagrams $\Gamma$ with $p(\Gamma)<p(\Delta)$ which allow us to estimate $\text{wt}_G(\Delta)$ in an advantageous way.

If the $g$-minimal diagram $\Delta$ contains a disk, then the disk $\Pi$ given by \Cref{distortion graph} can be treated analogously.  Thus, as $g$-minimal diagrams are defined to be those with minimal parameter $p(\Delta)$, using \Cref{12.1-12.2}(2) in place of \Cref{7.21} (and altering some estimates) allows us to adapt many of the arguments of \Cref{sec-disks-upper-bound}.  

This is done in much the same way as in Section 12.2 of \cite{W}, which arrives at the following statement:

\begin{lemma}[Lemmas 12.6-12.21 of \cite{W}] \label{distortion contradiction}

For $g\in G$, a $g$-minimal diagram contains no disks.

\end{lemma}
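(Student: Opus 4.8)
The plan is to argue by contradiction in the spirit of \Cref{contradiction}: suppose a $g$-minimal diagram $\Delta$ for some $g \in G$ contains at least one disk, and derive a contradiction to the minimality of $p(\Delta) = |\partial\Delta| + \sigma_\lambda(\Delta^*)$. The starting point is the disk $\Pi$ provided by \Cref{distortion graph}, which comes with $L-6$ consecutive $t$-spokes $\pazocal{Q}_1,\dots,\pazocal{Q}_{L-6}$ ending on $\partial\Delta$, bounding diskless subdiagrams $\Psi_{i,i+1}$, none of whose boundaries contain the $\textbf{t}$-part of $\partial\Delta$. This is the exact analogue of the configuration of disks-with-spokes that drove all of \Cref{sec-disks-upper-bound}, so the strategy is to port that entire chain of lemmas (the analogues of Lemmas 12.6--12.21 of \cite{W}, i.e.\ the results paralleling \Cref{7.20}--\Cref{7.34}, \Cref{lem-combined}, and \Cref{no one-step}) into the $g$-minimal setting.

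The key steps, in order, are as follows. First, I would set up the cloves $\Psi_{ij}$ bounded by $\pazocal{Q}_i$, $\pazocal{Q}_j$, $\partial\Pi$, and the boundary, together with the associated paths $\textbf{p}_{ij}$, $\bar{\textbf{p}}_{ij}$, the shortened paths $\textbf{q}_{ij}$, and the subdiagrams $\Psi_{ij}'$, $\bar{\Delta}_{ij}$, exactly as in \Cref{sec-trapezia-combs-cloves}; here one uses \Cref{12.1-12.2}(2) (no $q$-band has two ends on $\partial\Delta$) in place of the bounded-basic-width statement \Cref{7.21} that was available for weakly minimal diagrams, and one uses \Cref{12.3-12.4}(1) (the $g$-rim $\theta$-band base-length bound) in place of \Cref{rim theta-bands}. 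Second, I would establish the analogue of the ``no short subpath connecting subcomb handles'' lemma (\Cref{7.20}) and the basic-width bound, then the analogue of \Cref{7.22} forcing every maximal $\theta$-band of the maximal clove to cross one of the two outermost spokes, and pin down the index $r$ with $(L-1)/2 - 3 \leq r \leq (L-1)/2$. Third, I would reproduce the perimeter estimates: the analogues of \Cref{7.24}, \Cref{7.25}, and crucially the ratio estimate \Cref{7.26}, which says $|\textbf{p}_{ij}| + \sigma_\lambda(\bar{\Delta}_{ij}^*) < (1+\eps)|\bar{\textbf{p}}_{ij}|$ for $\eps = 1/\sqrt{N_4}$; the proof of that estimate is the pivot of the whole argument, since it is what lets one replace $\bar{\Delta}_{ij}$ by a smaller-parameter subdiagram and invoke either the inductive hypothesis (on $p$) or \Cref{diskless}. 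Fourth, I would carry through the trapezia-and-combs analysis: construct the trapezia $\Gamma_i$ with $\textbf{y}_i = \textbf{bot}(\Gamma_i)$, $\textbf{z}_i = \textbf{top}(\Gamma_i)$, handle the distinguished clove separately, obtain the monotonicity $h_1 \geq \dots \geq h_r$, $h_{r+1} \leq \dots$, and then run the sequence of contradictions: the analogue of \Cref{7.34} ($h > L_0^2|V|_a$), the combined geometric-decay statements (analogue of \Cref{lem-combined}: $h_i > \delta^{-1}$, no long $\lambda$-shaft, $|\textbf{z}_i|_a > h_{i+1}/2c_2$, $h_{i+1} < (1 - \tfrac{1}{30c_2})h_i$, $|\textbf{z}_i|_a \leq 8h_i$), the ``no one-step'' statement (analogue of \Cref{no one-step}), and finally the main contradiction reproducing \Cref{contradiction}: pick indices $j_1 < \dots < j_m$ spread out by $\geq \eta$, deduce $|\textbf{z}_{j_i}|_a > 4|\textbf{z}_{j_{i+1}}|_a$, use \Cref{M one-step} and \Cref{long step history} to force a controlled subword and hence a $\lambda$-shaft of length $> \delta h$ in $\pazocal{Q}_{j_1}$, contradicting the no-long-$\lambda$-shaft clause of \Cref{lem-combined}.

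The main obstacle, I expect, will be verifying that the perimeter bookkeeping still closes after the adjustments forced by the $g$-diagram boundary structure $\partial\Delta = \textbf{st}$ with $\textbf{t}$ an $a$-word of length $|g|_X$. Unlike in \Cref{sec-disks-upper-bound}, the relevant ``weight'' inequality here is not $\text{wt}_G(\Delta) \leq \phi_{N_4}(p(\Delta)) + \dots$ but rather the assertion that a smaller-$p$ $g$-diagram exists; so one must check at each surgery step (removing a clove, replacing $\bar{\Delta}_{ij}$, cutting off a quasi-rim band) that the resulting diagram is still a $g$-diagram --- i.e.\ that $\textbf{t}$ survives intact and $\|\textbf{t}\| = |g|_X$ is preserved --- and that $p$ strictly decreases. \Cref{distortion graph}(c) is precisely what guarantees $\textbf{t}$ is disjoint from the cloves being manipulated, so the argument should go through, but the constant-tracking via the highest-parameter principle ($N_4 \gg N_3 \gg \dots \gg L_0 \gg c_2 \gg \dots$) will need to be re-checked since the roles of $|w|$ and $|g|_X$ differ by the factor $\delta$ coming from \Cref{12.1-12.2}(1). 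Once \Cref{distortion contradiction} is in hand, \Cref{distortion graph} becomes vacuous, every $g$-minimal diagram is diskless, and then \Cref{12.3-12.4}(2) gives $|\textbf{t}| = \delta|g|_X$, which combined with the Dehn function bound already proved yields $|\varphi(g)| \asymp |g|_X$, i.e.\ the embedding is bi-Lipschitz, completing the proof of \Cref{main-theorem}.
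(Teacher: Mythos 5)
Your proposal matches the paper's own treatment: the paper proves this lemma precisely by porting the clove analysis of Section \ref{sec-disks-upper-bound} (the analogues of Lemmas 12.6--12.21 of \cite{W}) to the $g$-minimal setting, using the disk from \Cref{distortion graph}, substituting \Cref{12.1-12.2}(2) for \Cref{7.21} and the $g$-rim band bound for \Cref{rim theta-bands}, and deriving the contradiction from minimality of $p(\Delta)$ among $g$-diagrams rather than from the inductive weight bound — exactly as you describe, including the role of \Cref{distortion graph}(c) in keeping the path $\textbf{t}$ clear of the surgeries. No gap to report.
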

%
%
%
%
%
%
%
%
%

\medskip


\subsection{Distortion} \

Since $X$ can be identified with a subset of the generators $\pazocal{Y}$ of $G(\textbf{M})$ (namely those corresponding to the tape alphabet of the `special' input sector), it is immediate from the definition of the word norm that $|g|_\pazocal{Y}\leq|g|_X$ for all $g\in G$.  

Conversely, fix $g\in G$ and let $\Delta$ be a $g$-minimal diagram. 

Let $w\in F(\pazocal{Y})$ so that the value of $w$ in $G_\Omega(\textbf{M})$ is $g$ and $|w|$ is minimal for all such words. Further, let $\Gamma$ be a minimal diagram over $G_\Omega(\textbf{M})$ such that $\lab(\partial\Gamma)\equiv wv^{-1}$ for $v\in F(X)$ such that the value of $v$ in $G$ is $g$ and $\|v\|=|g|_X$.

Then, $\Gamma$ is a $g$-diagram, so that $p(\Gamma)\geq p(\Delta)$.

By Lemmas \ref{distortion contradiction} and \ref{12.3-12.4}(2), $p(\Delta)=|\partial\Delta|=2\delta|g|_X$.  On the other hand, Lemma \ref{lengths}(c) implies $|\partial\Gamma|\leq|w|+\delta|g|_X$. Hence, $|w|+\sigma_\lambda(\Gamma)\geq\delta|g|_X$.

As Lemma \ref{G_a design} implies $\sigma_\lambda(\Gamma)\leq C_1|\partial\Gamma|_\theta\leq C_1|w|_\theta\leq C_1|w|$, it follows that $|w|\geq\delta(C_1+1)^{-1}|g|_X$.

But $\|u\|\geq|u|$ for any word $u$ in the alphabet $\pazocal{Y}\cup\pazocal{Y}^{-1}$, so that the minimality of $|w|$ implies $|g|_\pazocal{Y}\geq|w|\geq\delta(C_1+1)^{-1}|g|_X$.

Thus, $|g|_X\leq\delta^{-1}(C_1+1)|g|_\pazocal{Y}$ for all $g\in G$, completing the proof of \Cref{main-theorem}.

\medskip


\section{Proof of \Cref{main-corollary}}

To prove \Cref{main-corollary}, we invoke the construction of the `initial embedding' outlined in Section 3 of \cite{WMal} (adapted from \cite{OSQ}) to embed $G$ into a group $\tilde{G}$ suitable for the application of \Cref{main-theorem}.

Letting $X=\{x_1,\dots,x_k\}$, define the alphabet $\tilde{X}=\{x_{1,1},\dots,x_{1,M},\dots,x_{k,1},\dots,x_{k,M}\}$.  For each $1\leq i\leq k$, denote the reduced word $\tilde{x}_i=x_{i,1}\dots x_{i,M}$ over $\tilde{X}$.  

Given a word $w=w(x_1,\dots,x_k)$ over $X$, define the corresponding word $\tilde{w}=w(\tilde{x}_1,\dots,\tilde{x}_k)$ over $\tilde{X}$.  For example, if $w=x_1x_k^{-1}$, then $\tilde{w}=\tilde{x}_1\tilde{x}_k^{-1}=x_{1,1}\dots x_{1,M}x_{k,M}^{-1}\dots x_{1,M}^{-1}$.

Now, for $i=1,2$, define the set $\tilde{\pazocal{R}}_i=\{\tilde{r}_i\mid r_i\in\pazocal{R}_i\}$ and the presentation $\tilde{\pazocal{P}}_i=\gen{\tilde{X}\mid\tilde{\pazocal{R}}_i}$ for some finitely generated group $\tilde{G}_i$.

The next statements then follow in the same way as their analogues in \cite{WMal}.

\begin{lemma}[Lemma 3.2 of \cite{W}] \label{Blow-up Embedding}

There exists an embedding $\varphi:G\to\tilde{G}_i$ given by the map $w\mapsto\tilde{w}$.

\end{lemma}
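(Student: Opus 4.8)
\textbf{Proof proposal for \Cref{Blow-up Embedding}.}

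The plan is to show that the map $\varphi\colon w\mapsto\tilde w$ induces a well-defined homomorphism $G\to\tilde G_i$ and that this homomorphism is injective, exploiting the fact that the `blow-up' substitution $x_j\mapsto\tilde x_j=x_{j,1}\cdots x_{j,M}$ turns reduced words over $X$ into reduced words over $\tilde X$ in a length-respecting, cancellation-free manner. First I would verify that $\varphi$ is well-defined: each relator $r\in\pazocal{R}_i$ is sent to $\tilde r\in\tilde{\pazocal{R}}_i$, which is by construction a defining relator of $\tilde{\pazocal P}_i$, so by the theorem of von Dyck the substitution $x_j\mapsto\tilde x_j$ extends to a homomorphism $\varphi\colon G\to\tilde G_i$. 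This direction is routine.

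The substantive part is injectivity. Suppose $w$ is a nonempty reduced word over $X\cup X^{-1}$ with $\varphi(w)=\tilde w=1$ in $\tilde G_i$; I must show $w=1$ in $G$, i.e.\ $w\in\langle\langle\pazocal{R}_i\rangle\rangle$ in $F(X)$. The key observation is that the assignment $w\mapsto\tilde w$ is a monomorphism at the level of free groups: if $w\equiv x_{j_1}^{\eps_1}\cdots x_{j_n}^{\eps_n}$ is reduced (so no cancellation, meaning $j_\ell=j_{\ell+1}\Rightarrow\eps_\ell=\eps_{\ell+1}$), then $\tilde w$ is the concatenation of the blocks $\tilde x_{j_\ell}^{\eps_\ell}$, and because each $\tilde x_j$ begins with $x_{j,1}$ and ends with $x_{j,M}$ using letters that appear in no other $\tilde x_{j'}$, the only possible cancellation between consecutive blocks $\tilde x_{j_\ell}^{\eps_\ell}$ and $\tilde x_{j_{\ell+1}}^{\eps_{\ell+1}}$ would require $j_\ell=j_{\ell+1}$ and $\eps_\ell=-\eps_{\ell+1}$, which is excluded by reducedness of $w$. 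Hence $\tilde w$ is already reduced and $\|\tilde w\|=M\|w\|$, so in particular $\tilde w\not\equiv 1$ in $F(\tilde X)$. I would then pass to a van Kampen diagram $\Delta$ over $\tilde{\pazocal P}_i=\gen{\tilde X\mid\tilde{\pazocal R}_i}$ with boundary label $\tilde w$. Each cell of $\Delta$ carries a boundary label that is (a cyclic conjugate of) some $\tilde r^{\pm1}$ with $r\in\pazocal R_i$. The argument is to ``collapse the blow-up'': one shows that the subword structure of $\Delta$ is forced by the segmentation into blocks $x_{j,1}\cdots x_{j,M}$ — every internal edge labelled $x_{j,m}$ for $1<m<M$ lies on a unique `corridor', so these edges can be contracted to recover from $\Delta$ a van Kampen diagram $\Delta'$ over $\pazocal P_i=\gen{X\mid\pazocal R_i}$ with boundary label $w$. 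Concretely, one identifies the maximal bands of edges labelled by letters of a fixed $\tilde x_j$ and contracts each such band; this is a standard argument for tuple/blow-up substitutions (this is how the analogous statement is handled in \cite{WMal,OSQ}). Thus $w=1$ in $G$, giving injectivity of $\varphi$.

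The main obstacle I anticipate is the contraction step: one must argue carefully that the letters $x_{j,m}$ with $1<m<M$ genuinely never occur inside a relator of $\tilde{\pazocal R}_i$ except in the fixed pattern $\cdots x_{j,m-1}x_{j,m}x_{j,m+1}\cdots$, so that every $x_{j,m}$-edge in a reduced diagram is two-valent and the collapse is unambiguous and produces a legitimate diagram over $\pazocal P_i$ rather than destroying cells or creating spurious cancellations. This requires inspecting how the words $\tilde r$ are built from the $\tilde x_j$ and using that the alphabets $\{x_{j,1},\dots,x_{j,M}\}$ for distinct $j$ are disjoint; once that bookkeeping is in place, the rest is the routine translation between the two diagrams. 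I would present this exactly as in the cited references, since the construction is identical and only the ambient presentations change.
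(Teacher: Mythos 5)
Your first half is fine: von Dyck gives the well-defined homomorphism $\varphi$, and your free-group observation that $\tilde w$ is already reduced with $\|\tilde w\|=M\|w\|$ is correct. But that observation is not what injectivity needs — you must show that $\tilde w=1$ in $\tilde G_i$ forces $w=1$ in $G$ — and the diagram-collapse you propose for this is both harder than necessary and incomplete at exactly the point you flag. Note first that the paper offers no proof here; it defers to Lemma 3.2 of the cited reference. The short route, which the blow-up construction is designed to admit, is a retraction: the assignment $x_{j,1}\mapsto x_j$ and $x_{j,m}\mapsto 1$ for $m\geq 2$ sends each $\tilde x_j$ to $x_j$, hence each relator $\tilde r\in\tilde{\pazocal{R}}_i$ to $r\in\pazocal{R}_i$, so by von Dyck it induces a homomorphism $\psi\colon\tilde G_i\to G$ with $\psi\circ\varphi=\mathrm{id}_G$. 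Injectivity is then immediate, with no diagram surgery at all.

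As for your collapse argument: it is essentially the geometric shadow of $\psi$, but as written it has a genuine gap. The letters $x_{j,m}$ with $1<m<M$ can occur many times in a single relator $\tilde r$ (once for each occurrence of $x_j^{\pm1}$ in $r$), so they do not define bands or corridors in the usual sense, and "contracting the corridor" is not a well-defined operation. If instead you contract \emph{all} edges whose labels die under $\psi$, you must separately rule out that these edges contain closed cycles (a cycle cannot be contracted without collapsing the subdiagram it bounds, which can destroy cells or change the boundary label). Both issues are repairable — relabel every edge by its $\psi$-image and remove the edges labelled by the empty word via the inverse of a $0$-refinement — but at that point you have simply re-derived the retraction in diagram language, so you should state it algebraically and stop there.
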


\begin{lemma}[Lemma 3.3 of \cite{W}] \label{Blow-up Greendlinger}

Let $W$ be a non-trivial cyclically reduced word over $\tilde{X}$ which represents the identity in $\tilde{G}_i$.  Then there exists a non-trivial subword of a cyclic permutation of $W$ which is a cyclic permutation of a word $\tilde{w}$ for some word $w$ over $X$ which represents the identity in $G$.  In particular, $\|W\|\geq \|\tilde{w}\|\geq M$.

\end{lemma}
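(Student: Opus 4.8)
The statement to prove is \Cref{Blow-up Greendlinger}: if $W$ is a non-trivial cyclically reduced word over $\tilde{X}$ representing the identity in $\tilde{G}_i$, then some non-trivial subword of a cyclic permutation of $W$ is a cyclic permutation of $\tilde{w}$ for a word $w$ over $X$ representing the identity in $G$, whence $\|W\| \geq M$. The plan is to analyze van Kampen diagrams over the presentation $\tilde{\pazocal{P}}_i = \gen{\tilde{X} \mid \tilde{\pazocal{R}}_i}$ and exploit the rigid block structure forced by the substitution $x_j \mapsto \tilde{x}_j = x_{j,1}\cdots x_{j,M}$. Since $W = 1$ in $\tilde{G}_i$, there is a reduced van Kampen diagram $\Delta$ over $\tilde{\pazocal{P}}_i$ with $\lab(\partial\Delta) \equiv W$; I would take $\Delta$ of minimal area.

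First I would record the key combinatorial feature of the defining relators: every relator $\tilde{r}_i$ is obtained from a relator $r_i \in \pazocal{R}_i$ by replacing each letter $x_j^{\pm1}$ with the block $\tilde{x}_j^{\pm1}$, so $\partial$ of every cell of $\Delta$ is naturally partitioned into $\|r_i\|$ consecutive ``syllables,'' each syllable being an occurrence of some $\tilde{x}_j^{\pm1}$ of length $M$. Crucially, the alphabet $\tilde{X}$ has the property that the letter $x_{j,t}$ determines both the index $j$ and the position $t$ within a block; and the only way two blocks $\tilde{x}_j^{\eps}$ and $\tilde{x}_{j'}^{\eps'}$ can share an edge in a reduced diagram is if they ``align'' — i.e. $j = j'$ and $\eps = -\eps'$, and the shared subword is a terminal-to-initial matching segment of the two blocks. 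From this I would argue that the edges of $\Delta$ organize into \emph{block bands}: maximal strips of cells glued along full blocks, running from boundary to boundary, analogous to the $a$-bands and $\pazocal{Z}$-bands used elsewhere in the paper (cf. the annuli discussion around \Cref{M(S) annuli}). A minimal-area/reduced diagram contains no block annulus (that would allow a cancellation or a smaller diagram), so every block band terminates on $\partial\Delta$ in two places.

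The next step is to read off the boundary structure. Because $W$ is cyclically reduced and every cell-boundary syllable is a full length-$M$ block, a rim cell $\Pi$ of $\Delta$ contributes to $\partial\Delta$ a subpath that is a concatenation of several such blocks (the ones not shared with interior cells). Tracking a rim cell whose shared portion with the rest of $\Delta$ is as small as possible, or more directly tracking how $\partial\Delta$ decomposes into blocks via the block bands reaching the boundary, I would conclude that $\partial\Delta$ — hence $W$, up to cyclic permutation — is itself a concatenation of blocks $\tilde{x}_{j_1}^{\eps_1}\cdots \tilde{x}_{j_\ell}^{\eps_\ell}$, i.e. $W$ is (a cyclic permutation of) $\tilde{v}$ for the word $v = x_{j_1}^{\eps_1}\cdots x_{j_\ell}^{\eps_\ell}$ over $X$. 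Since $W = 1$ in $\tilde{G}_i$ and $\varphi\colon G \to \tilde{G}_i$, $v\mapsto \tilde{v}$, is a homomorphism onto the subgroup generated by the $\tilde{x}_j$ (and in fact an embedding by \Cref{Blow-up Embedding}, though for this direction I only need that $\tilde{v}=1$ in $\tilde{G}_i$ pulls back), one deduces $v = 1$ in $G$; as $W$ is non-trivial, $v$ is a non-trivial word representing the identity, so $\|W\| = M\|v\| \geq M$. Taking $w := v$ (or the cyclically reduced core of a cyclic permutation of $v$, together with the matching subword of $W$) gives the statement, including the ``subword of a cyclic permutation'' formulation needed when $W$ is only cyclically reduced rather than a literal concatenation of full blocks starting at its basepoint.

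The main obstacle I anticipate is justifying the \emph{block rigidity}: showing that in a reduced diagram the syllable partitions of adjacent cell boundaries are forced to agree, so that no block gets ``split'' across the shared edge in a misaligned way. This is where the specific choice of $\tilde{X}$ (distinct letters for distinct index–position pairs, so that $x_{j,t}$ appears in exactly one block and at exactly one position) does the work: a shared reduced subpath between two block occurrences must match position-for-position, and a partial overlap of two distinct blocks would force a repeated letter inside a single block, contradicting that all $x_{j,t}$ are distinct. Once this rigidity lemma is in hand, the block-band argument and the boundary-reading step are routine, paralleling the standard small-cancellation-style reasoning already invoked repeatedly in this paper; the uniform lower bound $\|W\| \geq M$ then falls out immediately since each block has length exactly $M$ and there is at least one block. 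I would also double-check the degenerate cases (e.g. $\Delta$ a single cell, or $v$ not cyclically reduced even though $W$ is) to make sure the ``cyclic permutation of a subword'' phrasing covers them.
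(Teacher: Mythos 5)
There is a genuine gap here, and it sits exactly at the step you yourself flag as the ``main obstacle.'' (For context: the paper gives no proof of this lemma — it is quoted from the diagrammatic arguments of \cite{WMal} — so your argument has to stand on its own, and it does not.) Your block-rigidity observation is correct as far as it goes: a shared edge labelled $x_{j,t}$ forces the two block occurrences containing it to match position-for-position along their common segment, since each letter determines its index and its position. But this does \emph{not} yield ``block bands'' of cells glued along \emph{full} blocks. Reducedness only forbids mirror pairs of cells sharing an edge; it does not prevent a block $\tilde{x}_j^{\eps}$ of one cell from having its first $t$ edges identified with part of a block of one neighbour and its remaining $M-t$ edges identified with a block of a different neighbour (or left on $\partial\Delta$). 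Position-matching is compatible with such splitting, so blocks need not propagate from boundary to boundary, and the band/annulus machinery you invoke is not available without a substantial additional surgery argument.

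More decisively, the conclusion you extract from the (nonexistent) bands — that $\partial\Delta$, hence $W$, is a concatenation of full blocks, i.e.\ a cyclic permutation of $\tilde v$ for some $v$ over $X$ — is false, which shows the argument must break. Take a suitable $r\in\pazocal{R}_i$ (one neither beginning with $x_1$ nor ending with $x_1^{-1}$) and set $W\equiv \tilde r\, x_{1,1}\, \tilde r^{-1}\, x_{1,1}^{-1}$. This word is freely non-trivial, cyclically reduced, and represents $1$ in $\tilde G_i$ (a reduced two-cell diagram realizes it: two cells labelled $\tilde r^{\pm1}$ joined by a bridge edge labelled $x_{1,1}$), yet its occurrence of $x_{1,1}$ is not followed by $x_{1,2}$, so $W$ is not a cyclic permutation of any $\tilde v$. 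This is precisely why the lemma claims only that some \emph{subword} of a cyclic permutation of $W$ is a cyclically permuted $\tilde w$; in the example that subword is $\tilde r$ itself. A correct diagrammatic proof therefore has to \emph{locate} such a subword — accounting for bridge edges, cut vertices, and partial block overlaps, e.g.\ by induction on the number of cells while analysing how an extremal cell or diskless subdiagram meets $\partial\Delta$, in the spirit of Greendlinger's lemma — rather than imposing a global block structure on all of $\partial\Delta$. Your final step (pulling $v$ back to a trivial word of $G$ via the retraction and reading off $\|W\|\ge\|\tilde w\|\ge M$) is fine once the subword is in hand, but the heart of the lemma is the part your outline replaces with a false statement.
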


\begin{lemma}[Lemma 3.4 of \cite{W}] \label{Blow-up bi-Lipschitz}

For any $g\in G$, $|\varphi(g)|_{\tilde{X}}=M|g|_X$.  In particular, $\varphi$ is a bi-Lipschitz embedding.

\end{lemma}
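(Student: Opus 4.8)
\textbf{Proof proposal for \Cref{Blow-up bi-Lipschitz}.}

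The plan is to establish the two inequalities $|\varphi(g)|_{\tilde{X}} \leq M|g|_X$ and $|\varphi(g)|_{\tilde{X}} \geq M|g|_X$ separately; together they give the exact equality, from which the bi-Lipschitz claim is immediate (with both Lipschitz constants equal to $M$). The first (upper) bound is the easy direction: if $g = x_{i_1}^{\eps_1}\cdots x_{i_n}^{\eps_n}$ is a geodesic word over $X$ with $n = |g|_X$, then $\varphi(g)$ is represented by $\tilde{x}_{i_1}^{\eps_1}\cdots \tilde{x}_{i_n}^{\eps_n}$, a word over $\tilde{X}$ of length $Mn$, so $|\varphi(g)|_{\tilde{X}} \leq Mn = M|g|_X$. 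No diagram machinery is needed here.

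The substance is the lower bound $|\varphi(g)|_{\tilde{X}} \geq M|g|_X$. First I would take a geodesic word $W$ over $\tilde{X}$ representing $\varphi(g) = \tilde{g}$, so $\|W\| = |\varphi(g)|_{\tilde{X}}$. The word $W \tilde{g}^{-1} = W(x_{i,1}\dots x_{i,M})^{-1}\cdots$ represents the identity in $\tilde{G}_i$, where $\tilde{g}$ here denotes a fixed reduced word of the form $\tilde{w}$ obtained from a geodesic word $w$ for $g$; hence there is a van Kampen diagram over $\tilde{\pazocal{P}}_i$ with this boundary label. I would then run a small-cancellation / Greendlinger-type argument in the spirit of \Cref{Blow-up Greendlinger}: the key structural feature of the presentation $\tilde{\pazocal{P}}_i$ is that every relator $\tilde{r}_i$ is a concatenation of the length-$M$ blocks $\tilde{x}_j^{\pm 1}$, and consequently every reduced word over $\tilde{X}$ representing an element of $\varphi(G)$ must itself decompose, after free reduction, into such blocks — more precisely, any cyclically reduced word over $\tilde{X}$ equal to $\tilde{g}$ in $\tilde{G}_i$ must be "block-aligned", so that reading it off produces a word over $X$ equal to $g$ of length $\|W\|/M$ (or at least $\leq \|W\|/M$). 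Since $w$ is geodesic in $G$, this forces $\|W\|/M \geq |g|_X$, i.e. $|\varphi(g)|_{\tilde{X}} \geq M|g|_X$.

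The main obstacle — and the step I would spend the most care on — is making the "block-alignment" argument rigorous: one must show that a geodesic word $W$ over $\tilde{X}$ representing $\tilde g$ cannot "cut across" blocks in a way that shortens it below $M|g|_X$. The cleanest route is probably to not re-derive this from scratch but to quote the analogous argument from \cite{WMal} (the proof of the statement there labeled analogously to \Cref{Blow-up bi-Lipschitz}), since the excerpt explicitly says "The next statements then follow in the same way as their analogues in \cite{WMal}." Concretely, I would argue: let $W$ be geodesic over $\tilde X$ with value $\tilde g$; by \Cref{Blow-up Embedding} and the block structure, the homomorphism $\tilde G_i \to G$ sending $x_{j,\ell} \mapsto x_j$ if $\ell$ is (say) the first letter of its block and $x_{j,\ell} \mapsto 1$ otherwise is well-defined on the relators (each $\tilde r_i$ maps to $r_i \in \pazocal{R}_i$, hence to $1$ in $G$), and it sends $\tilde g \mapsto g$; applying it to $W$ exhibits $g$ as a product of at most $\|W\|/M$ generators of $X$... but this naive retraction over-counts, so the correct version requires the more delicate combinatorial bookkeeping of \cite{WMal}. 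I would therefore present the lower bound by invoking that argument verbatim, citing \cite{WMal}, and then conclude: since $|g|_\pazocal{Y} \leq |g|_X \leq \delta^{-1}(C_1+1)|g|_\pazocal{Y}$ composed with this exact equality shows $\varphi$ distorts the metric by a bounded factor, $\varphi$ is a bi-Lipschitz embedding, which together with the Dehn function bound already established completes the proof of \Cref{main-theorem} via \Cref{embedding} and \Cref{Blow-up Embedding}.
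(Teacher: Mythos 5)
Your upper bound is fine, and your decision to defer the lower bound to the cited prior work in fact matches the paper exactly: the paper offers no proof of this lemma either, stating only that these statements "follow in the same way as their analogues" in the earlier construction. So as a matter of matching the paper you are on target. But you stopped one step short of a complete, self-contained argument, and that step is worth recording because it eliminates both the Greendlinger-type "block-alignment" analysis you were worried about and the appeal to the citation. For each $\ell\in\{1,\dots,M\}$ define $\rho_\ell:\tilde{G}_i\to G$ on generators by $x_{j,\ell}\mapsto x_j$ and $x_{j,m}\mapsto 1$ for $m\neq\ell$. Each $\rho_\ell$ sends $\tilde{x}_j$ to $x_j$, hence each relator $\tilde{r}$ to $r\in\pazocal{R}_i$, so by von Dyck it is a well-defined homomorphism with $\rho_\ell(\varphi(g))=g$. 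Now if $W$ is any word over $\tilde{X}\cup\tilde{X}^{-1}$ representing $\varphi(g)$ and $n_\ell$ denotes the number of letters of $W$ whose second index is $\ell$, then $\rho_\ell(W)$ is a word of length $n_\ell$ representing $g$, so $n_\ell\geq|g|_X$ for every $\ell$; summing over $\ell$ gives $\|W\|=\sum_{\ell=1}^M n_\ell\geq M|g|_X$. Note that your single retraction does not "over-count" -- it undercounts, yielding only $\|W\|\geq|g|_X$ -- and the fix is simply to use all $M$ retractions simultaneously. Separately, your closing sentence invokes the inequality $|g|_X\leq\delta^{-1}(C_1+1)|g|_{\pazocal{Y}}$, which concerns the distortion of $G$ inside $G(\textbf{M})$ and is irrelevant here; the bi-Lipschitz claim for $\varphi$ is immediate from the exact equality $|\varphi(g)|_{\tilde{X}}=M|g|_X$ and needs no further input.
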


\begin{remark}

Note that in the setting of \cite{WMal}, there are additional assumptions made about the set of relators $\pazocal{R}_i$.  In particular, it is assumed that $\pazocal{R}_i$ consists of positive words and is recursive as a subset of the positive words in $X$.  However, these assumptions are not necessary for the diagrammatic arguments that imply Lemmas \ref{Blow-up Embedding}-\ref{Blow-up bi-Lipschitz}.

\end{remark}

\begin{lemma} \label{Blow-up iso}

The identity map on $\tilde{X}$ induces an isomorphism $\tilde{G}_1\cong\tilde{G}_2$.

\end{lemma}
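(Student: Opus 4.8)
The claim is that $\tilde{G}_1 \cong \tilde{G}_2$ via the identity map on $\tilde X$. The key point is that $\pazocal{R}_2 \subseteq \pazocal{R}_1$ and both are presentations of the \emph{same} group $G$, so the relations $\tilde{\pazocal{R}}_2$ and $\tilde{\pazocal{R}}_1$ should cut out the same normal subgroup of $F(\tilde X)$. I would argue this directly by showing mutual consequence of the two sets of relators, using von Dyck's theorem in both directions, exactly as in the proof of \Cref{G isomorphic to G_a}.

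\textbf{Key steps.} First, since $\tilde{\pazocal{R}}_2 \subseteq \tilde{\pazocal{R}}_1$ (as $\pazocal{R}_2 \subseteq \pazocal{R}_1$), every relator of $\tilde{\pazocal{P}}_2$ is visibly a relator of $\tilde{\pazocal{P}}_1$, so von Dyck's theorem gives a surjective homomorphism $\psi: \tilde{G}_2 \to \tilde{G}_1$ induced by the identity on $\tilde X$. Second, for the reverse direction I must show that each $\tilde{r}_1$ with $r_1 \in \pazocal{R}_1$ represents the identity in $\tilde{G}_2$. Here I would use \Cref{Blow-up Embedding}: the map $w \mapsto \tilde w$ gives an embedding $\varphi: G \to \tilde{G}_2$. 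Since $r_1 \in \pazocal{R}_1$ and $\pazocal{P}_1 = \gen{X \mid \pazocal{R}_1}$ presents $G$, the word $r_1$ represents the identity in $G$, hence $\varphi$ sends it to the identity, i.e. $\tilde{r}_1 = 1$ in $\tilde{G}_2$. Therefore every relator of $\tilde{\pazocal{P}}_1$ holds in $\tilde{G}_2$, and von Dyck's theorem gives a homomorphism $\tilde{G}_1 \to \tilde{G}_2$ induced by the identity on $\tilde X$. Third, these two homomorphisms are mutually inverse (both are the identity on the generating set $\tilde X$), so they are isomorphisms.

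\textbf{Main obstacle.} There is essentially no obstacle here — the only subtle point is making sure \Cref{Blow-up Embedding} is genuinely available for the group $\tilde{G}_2$ (not just $\tilde{G}_1$), which it is, since that lemma is stated uniformly for $\tilde{G}_i$ with $i \in \{1,2\}$. One should also confirm that $\varphi$ being a homomorphism (let alone an embedding) already suffices: we only need injectivity is \emph{not} needed for this direction, just that $\varphi$ is well-defined as a homomorphism $G \to \tilde G_2$, which is the content of the $w \mapsto \tilde w$ assignment descending to $G$. Thus I would write the proof as: (i) the inclusion $\tilde{\pazocal{R}}_2 \subseteq \tilde{\pazocal{R}}_1$ yields $\tilde{G}_1$ as a quotient of $\tilde{G}_2$ via the identity on $\tilde X$; (ii) \Cref{Blow-up Embedding} applied with $i = 2$ shows every word $\tilde r_1$ ($r_1 \in \pazocal{R}_1$) is trivial in $\tilde{G}_2$, yielding the reverse quotient map; (iii) these maps are mutually inverse, so $\tilde{G}_1 \cong \tilde{G}_2$.
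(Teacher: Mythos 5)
Your proof is correct, but it takes a genuinely different and more economical route than the paper's. The paper proves the nontrivial inclusion of normal closures by taking an arbitrary cyclically reduced word $W$ that is trivial in $\tilde{G}_1$, peeling off a subword of the form $\tilde{w}$ via the Greendlinger-type statement \Cref{Blow-up Greendlinger}, and inducting on $\|W\|$ to conclude $W=1$ in $\tilde{G}_2$. You instead observe that one only needs to kill the \emph{defining relators} of $\tilde{\pazocal{P}}_1$, and each of these is literally of the form $\tilde{r}_1$ with $r_1=1$ in $G$, hence is trivial in $\tilde{G}_2$ by the well-definedness of the homomorphism $\varphi:G\to\tilde{G}_2$ from \Cref{Blow-up Embedding} (equivalently, by the substitution $x_i\mapsto\tilde{x}_i$ carrying the normal closure of $\pazocal{R}_2$ into that of $\tilde{\pazocal{R}}_2$); von Dyck then gives the inverse map. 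This bypasses both the induction and \Cref{Blow-up Greendlinger} entirely. Note that the paper's own proof secretly uses the same well-definedness fact in the step ``$\tilde{w}$ represents the identity in $\tilde{G}_2$,'' so your argument is a strict simplification rather than a reliance on stronger input; and since \Cref{Blow-up Embedding} is imported from the earlier paper and stated uniformly for $i=1,2$, there is no circularity in invoking it for $\tilde{G}_2$. Your parenthetical that only the homomorphism property of $\varphi$ (not injectivity) is needed is exactly the right observation.
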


\begin{proof}

As $\pazocal{R}_2\subseteq\pazocal{R}_1$ implies $\tilde{\pazocal{R}}_2\subseteq\tilde{\pazocal{R}}_1$, it suffices to show that any non-trivial cyclically reduced word $W$ over $\tilde{X}$ which represents the identity in $\tilde{G}_1$ also represents the identity in $\tilde{G}_2$.

By \Cref{Blow-up Greendlinger}, there exists a cyclic permutation $W'$ of $W$ with a non-trivial prefix $v'$ which is a cyclic permutation of a word $\tilde{w}$ for some word $w$ over $X$ which represents the identity in $G$.  By definition, $\tilde{w}$ represents the identity in $\tilde{G}_2$, and so $v'$ does also.  Letting $W'\equiv v'W''$, it then suffices to show that $W''$ represents the identity in $\tilde{G}_2$. 
 
As $v'$ and $W'$ both represent the identity in $\tilde{G}_1$, $W''$ also represents the identity in $\tilde{G}_1$.  So, letting $W_1$ be a cyclically reduced word over $\tilde{X}$ which is freely conjugate to $W''$, it follows that $W_1$ represents the identity in $\tilde{G}_1$ and $\|W_1\|<\|W\|$.

But an inductive argument then implies $W_1$ represents the identity in $\tilde{G}_2$, so that $W''$ does also.

\end{proof}

Hence, \Cref{Blow-up iso} implies $\tilde{\pazocal{P}}_1$ and $\tilde{\pazocal{P}}_2$ are presentations for the same finitely generated group $\tilde{G}$, while \Cref{Blow-up Embedding} and \Cref{Blow-up bi-Lipschitz} imply the existence of a bi-Lipschitz embedding of $G$ into $\tilde{G}$.

\begin{remark}

The embedding of $G$ into $\tilde{G}$ can also be shown to satisfy several other properties.  For example, we can show that the embedding is Frattini (see Lemma 3.5 of \cite{WMal}) and satisfies the Congruence Extension Property (see Section 15 of \cite{WMal}).  As such, the embedding of \Cref{main-corollary} also satisfies these properties since that of \Cref{main-theorem} does.

\end{remark}

\begin{lemma} \label{Blow-up S-machine}

There exists an $S$-machine $\tilde{\textbf{S}}$ that recognizes $\tilde{\pazocal{R}}_1$ and satisfies $\TM_{\tilde{\textbf{S}}}\preceq f_1$.

\end{lemma}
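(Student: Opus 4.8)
The plan is to build $\tilde{\textbf{S}}$ as a composition of three pieces: a preprocessing machine $\textbf{T}_{\mathrm{blow}}$ that reads an input word $W$ over $\tilde{X}\cup\tilde{X}^{-1}$ and attempts to rewrite it as the corresponding word $\tilde{w}$ for some $w$ over $X\cup X^{-1}$ (equivalently, checks that $W$ ``factors through'' the blocks $\tilde{x}_{i,1}\dots\tilde{x}_{i,M}$ up to free reduction and produces the ``compressed'' word $w$ on a fresh tape), followed by a copy of $\textbf{S}$ (the $S$-machine of hypothesis (2) recognizing $\pazocal{R}_1$ over the alphabet $X$) run on that compressed tape. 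Since the language recognized should be $\tilde{\pazocal{R}}_1$, the preprocessor must reject any input not of the form (freely equal to) some $\tilde{w}$ and, on valid inputs, hand off exactly $w$ to the simulated copy of $\textbf{S}$. I would realize $\textbf{T}_{\mathrm{blow}}$ using the primitive machines $\textbf{LR}$, $\textbf{RL}$ of \Cref{sec-S-machines}: repeatedly scan for a maximal block matching a $\tilde{x}_i^{\pm1}$ pattern, verify it letter-by-letter against the fixed word $x_{i,1}\dots x_{i,M}$, and emit the single letter $x_i^{\pm1}$ onto an output tape, locking the input tape appropriately. Because $M$ is a fixed constant, this is a bounded-size gadget, and reading and verifying the whole input takes time linear in $\|W\|$.

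\textbf{Key steps, in order.} First I would set up the hardware: take the standard base of $\textbf{S}$, prepend a fixed number of new tapes and states for the preprocessing/verification phase, make the new input tape alphabet $\tilde{X}$, and designate the tape on which $\textbf{S}$ expects its input (alphabet $X$) as an internal tape. Second, I would define the preprocessing rules so that a computation in the start state scans $W$ left to right; each time it isolates a candidate block it runs the $\textbf{LR}$-type subroutine to compare it against the stored constant pattern, and either advances (writing the compressed letter) or jams in a non-accepting state. Third, I would add a transition rule $\sigma$ that fires only when the input tape is empty and the compressed word $w$ has been fully written, switching to the start state of the embedded copy of $\textbf{S}$. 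Fourth, I would invoke the correctness of $\textbf{S}$ on $\pazocal{R}_1$: an input $W$ is accepted iff $W$ is freely equal to some $\tilde{w}$ and $w\in\pazocal{R}_1$, i.e. iff $W\in\tilde{\pazocal{R}}_1$. Fifth, for the time bound: the preprocessing phase costs $O(\|W\|)$, and since a valid $W$ has $\|W\|\geq M\|w\|$ (indeed $\|W\|$ is within a bounded factor of $M\|w\|$ after reduction), the simulation phase costs $\TM_{\textbf{S}}(\|w\|)\preceq f_1(\|w\|)\preceq f_1(\|W\|)$ using $f_1$ non-decreasing; adding the linear preprocessing term and using $f_1(n)\geq n$ gives $\TM_{\tilde{\textbf{S}}}\preceq f_1$.

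\textbf{Main obstacle.} The delicate point is handling \emph{free reduction}: an arbitrary word $W$ over $\tilde{X}\cup\tilde{X}^{-1}$ representing an element of $\tilde{\pazocal{R}}_1$ need not literally be a concatenation of blocks $\tilde{x}_i^{\pm1}$ — it is only freely equal to such a word, and cancellations can straddle block boundaries in complicated ways. So the preprocessor cannot simply chop $W$ into length-$M$ chunks; it must, in effect, compute the free reduction while tracking block structure. The cleanest fix is to have $\tilde{\textbf{S}}$ be non-deterministic: it guesses the compressed word $w$ on the output tape, writes out the full blow-up $\tilde{w}$ on an auxiliary tape using bounded gadgets, and then verifies $W\tilde{w}^{-1}$ freely reduces to the empty word — a check that a primitive-machine-style subroutine can perform in linear time by the standard two-stack/cancellation argument (cf. \Cref{multiply one letter}). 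Non-determinism is harmless here since $S$-machines are inherently non-deterministic and the time function is defined via minimal accepting computations; one then notes that a shortest accepting computation guesses the correct (reduced) $w$, whose length is bounded by $\|W\|$, keeping the total time within $O(\|W\|)+\TM_{\textbf{S}}(\|w\|)\preceq f_1$. The remaining verifications — that the language is exactly $\tilde{\pazocal{R}}_1$ and that the composition is a legitimate $S$-machine — are routine given \Cref{Blow-up Embedding}, \Cref{Blow-up Greendlinger}, and \Cref{simplify rules}.
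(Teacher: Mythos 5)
Your architecture (a linear-time ``translator'' composed with $\textbf{S}$) is a genuinely different route from the paper's, and it is both heavier than necessary and, as written, not actually an $S$-machine construction. The paper does no preprocessing at all: it keeps the standard base of $\textbf{S}$, changes the input tape alphabet to $\tilde{X}$, and replaces each positive rule of $\textbf{S}$ that multiplies the input sector by a letter $x_i^{\pm1}$ with $M$ positive rules $\tilde{\theta}_1,\dots,\tilde{\theta}_M$ that write the letters of the block $\tilde{x}_i^{\pm1}$ one at a time (the first of these acting on the non-input sectors as $\theta$ did). This immediately gives $\TM_{\textbf{S}}(w)\leq\TM_{\tilde{\textbf{S}}}(\tilde{w})\leq M\,\TM_{\textbf{S}}(w)$, hence $\TM_{\tilde{\textbf{S}}}\preceq f_1$, with no compression phase and no correctness argument beyond the correctness of $\textbf{S}$.

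The concrete gaps in your version are these. First, the free-reduction obstacle you identify is a red herring: $\tilde{\pazocal{R}}_1$ is by definition the set of literal words $\tilde{r}$ for $r\in\pazocal{R}_1$, and since each $r$ is reduced, $\tilde{r}$ is already the unique reduced representative of its free-equality class; an input configuration carries a reduced word, so there is nothing to ``reduce modulo block boundaries.'' Second, and more seriously, your preprocessor is described in Turing-machine language that does not transfer to $S$-machines: rules are blind to tape contents (as the paper stresses in \Cref{sec-intro-S}), every rule has an inverse, and there is no such thing as ``jamming in a non-accepting state'' or ``verifying a block letter-by-letter'' -- a rule that tries to delete $x_{i,j}$ from a sector not ending in $x_{i,j}$ simply lengthens the word, and the machine cannot detect this. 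Making your guess-and-check translator rigorous would require rebuilding, in miniature, exactly the kind of copying/comparison machinery (\Cref{multiply one letter}, primitive machines, locked transition rules) that the rest of the paper spends many pages on, and you would then still owe an analysis of all reduced computations of the composite machine, not just the intended ones. The moral is that when the target language is itself a letter-for-letter recoding of a language some $S$-machine already recognizes, one should recode the machine, not the input.
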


\begin{proof}

By hypothesis, there exists an $S$-machine $\textbf{S}$ which recognizes $\pazocal{R}_1$ and satisfies $\TM_\textbf{S}\preceq f_1$.  Without loss of generality, suppose $\textbf{S}$ satisfies the statement of \Cref{simplify rules}.

Consider the machine $\tilde{\textbf{S}}$ obtained from $\textbf{S}$ as follows:

The standard base of $\tilde{\textbf{S}}$ is the same as that of $\textbf{S}$.  Further, the tape alphabets are all the same except for the input alphabet, which is $\tilde{X}$ (instead of $X$).

Any positive rule of $\textbf{S}$ which does not multiply by a letter in the input tape has a corresponding rule in $\tilde{\textbf{S}}$.  However, any positive rule $\theta$ of $\textbf{S}$ that does multiply by a letter in the input tape corresponds to $M$ positive rules $\tilde{\theta}_1,\dots,\tilde{\theta}_M$ of $\tilde{\textbf{S}}$, where:

\begin{itemize}

\item $\tilde{\theta}_1$ operates in the non-input sectors in the same way that $\theta$ does

\item If $\theta$ multiplies the input sector on the right by $x_i$ (respectively on the left by $x_i^{-1}$), then each $\tilde{\theta}_j$ multiplies the input sector on the right by $x_{i,j}$ (respectively on the left by $x_{i,j}^{-1}$)

\item If $\theta$ multiplies the input sector on the left by $x_i$ (respectively on the right by $x_i^{-1}$), then each $\tilde{\theta}_j$ multiplies the input sector on the left by $x_{i,M-j+1}$ (respectively on the right by $x_{i,M-j+1}^{-1}$)

\end{itemize}

It then follows that $\tilde{\textbf{S}}$ accepts $\tilde{\pazocal{R}}_1$ with $\TM_{\textbf{S}}(w)\leq\TM_{\tilde{\textbf{S}}}(\tilde{w})\leq M\TM_{\textbf{S}}(w)$ for all $w\in\pazocal{R}_1$.

\end{proof}

\begin{lemma} \label{Blow-up mass}

The presentation $\tilde{\pazocal{P}}_2$ satisfies the $(f_1(n)^2,n^2f_2(n))$-mass condition.

\end{lemma}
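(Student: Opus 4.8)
The plan is to transport a van Kampen diagram over $\tilde{\pazocal{P}}_2$ to one over $\pazocal{P}_2$, count the $\rho_1$-mass cell-by-cell using the relationship $\|\tilde{w}\|=M\|w\|$ between words and their blow-ups, and then invoke the mass condition satisfied by $\pazocal{P}_2$ given in hypothesis (3) of \Cref{main-corollary}.

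\begin{proof}

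Fix $C_1>0$.  Let $W$ be a reduced word over $\tilde X$ representing the identity in $\tilde G$.  By \Cref{Blow-up Greendlinger}, a cyclic permutation of $W$ has a prefix which is a cyclic permutation of a word $\tilde w_1$ for some word $w_1$ over $X$ representing the identity in $G$; iterating this observation (as in the proof of \Cref{Blow-up iso}), we may write $W$, up to cyclic permutation and free reduction, as a product of cyclic conjugates of words $\tilde w_1,\dots,\tilde w_m$, where each $w_j$ is a reduced word over $X$ representing the identity in $G$ with $\|\tilde w_j\|=M\|w_j\|$.  Hence the word $w\equiv w_1\dots w_m$ over $X$ represents the identity in $G$, and $\|w\|=\sum_j\|w_j\|=M^{-1}\sum_j\|\tilde w_j\|\leq M^{-1}\|W\|+\text{(a bounded correction for the cyclic reductions)}$; since $M$ is a fixed constant, $\|w\|\leq C\|W\|$ for some $C$ depending only on $M$.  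Now apply the $(f_1(n)^2,n^2f_2(n))$-mass condition satisfied by $\pazocal{P}_2$ (hypothesis (3)), with the constant $C_1$ replaced by $C_1M$: there is $C_2'>0$ and a van Kampen diagram $\Delta_0$ over $\pazocal{P}_2$ with boundary label $w$ whose $f_1(C_1Mn)^2$-mass is at most $C_2'(C\|W\|)^2f_2(C_2'C\|W\|)$.

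Next I would build a van Kampen diagram over $\tilde{\pazocal{P}}_2$ with boundary label $W$.  Each cell $\Pi_0$ of $\Delta_0$ has boundary label $r_{\Pi_0}^{\pm1}$ for some $r_{\Pi_0}\in\pazocal{R}_2$; replace it by a single cell over $\tilde{\pazocal{P}}_2$ with boundary label $\tilde r_{\Pi_0}^{\pm1}\in\tilde{\pazocal{R}}_2$, and replace each $X$-edge of the diagram by the path of length $M$ spelling the corresponding $\tilde x_i$.  The resulting diagram $\tilde\Delta$ has boundary label $\tilde w$; since $\tilde w$ is freely conjugate (in the free group, after free reduction) to $W$ via the decomposition above, one can adjoin the trivial diagrams realizing these conjugations/reductions to obtain a diagram over $\tilde{\pazocal{P}}_2$ with boundary label exactly $W$ (the added region contains only cells of the form $\tilde w_j$ for words $w_j$ representing the identity in $G$, i.e.\ elements of $\tilde{\pazocal{R}}_1$ — but these may not lie in $\tilde{\pazocal{R}}_2$, so this part needs care; see below).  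For each cell $\tilde\Pi$ of $\tilde\Delta$ coming from a cell $\Pi_0$ of $\Delta_0$ we have $|\partial\tilde\Pi|=M|\partial\Pi_0|$, so $\rho_1(C_1|\partial\tilde\Pi|)=f_1(C_1M|\partial\Pi_0|)^2$, which is exactly the weight contributed by $\Pi_0$ to the $f_1(C_1Mn)^2$-mass of $\Delta_0$.  Summing, the $f_1(C_1n)^2$-mass of $\tilde\Delta$ equals the $f_1(C_1Mn)^2$-mass of $\Delta_0$, up to the contribution of the correction region.

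The main obstacle is precisely that correction region: the words $\tilde w_j$ used to pass from $\tilde w$ to $W$ represent the identity in $\tilde G$ but need not be readable as diagrams over $\tilde{\pazocal{P}}_2$ with controlled mass — a priori they are only elements of the normal closure of $\tilde{\pazocal{R}}_1$.  To handle this I would avoid the conjugation step altogether by arguing directly that $W$ itself, being a reduced word over $\tilde X$ representing the identity in $\tilde G=\gen{\tilde X\mid\tilde{\pazocal{R}}_2}$, admits a van Kampen diagram over $\tilde{\pazocal{P}}_2$ each of whose cells is labelled by some $\tilde r^{\pm1}$, $r\in\pazocal{R}_2$; then "projecting" this diagram by the retraction $x_{i,j}\mapsto x_i$ (collapsing each $M$-path to an edge, which is well-defined since each $\tilde r$ maps to $r$) produces $\Delta_0$ with boundary label $w$, and the cell-weights match under $|\partial\Pi_0|=M^{-1}|\partial\tilde\Pi|$ as above.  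This reverses the direction of the construction — start from the $\tilde G$-diagram, project to get a $G$-diagram, note it has small $f_1(C_1Mn)^2$-mass only \emph{after} invoking hypothesis (3), which forces us instead to start over: take \emph{any} $W$-labelled diagram, project to a $w$-labelled diagram, discard it, invoke (3) to get a \emph{better} $w$-labelled diagram $\Delta_0$, lift $\Delta_0$ back up.  The lift is legitimate precisely because every cell label $\tilde r$ of $\Delta_0$'s lift lies in $\tilde{\pazocal{R}}_2$ (cells of $\Delta_0$ are labelled in $\pazocal{R}_2$), and the boundary label of the lift is $\tilde w$ which, by \Cref{Blow-up Greendlinger} applied in reverse, can be connected to $W$ — but here we are back to the correction region.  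The cleanest fix, which I would pursue, is to choose $W$ from the outset to be of the form $\tilde w$ for $w$ reduced over $X$: the mass condition is a statement about \emph{all} null-homotopic words $W$ over $\tilde X$, and by \Cref{Blow-up Greendlinger} every such $W$ decomposes into blow-up pieces, reducing the general case to the case $W\equiv\tilde w$ with $\|W\|=M\|w\|$ exactly, at the cost of a bounded multiplicative and additive distortion of $\|W\|$ absorbed into the constant $C_2$.  Then the lift of $\Delta_0$ has boundary label exactly $\tilde w=W$, its $f_1(C_1n)^2$-mass equals the $f_1(C_1Mn)^2$-mass of $\Delta_0$, which is at most $C_2'(\|w\|)^2f_2(C_2'\|w\|)\leq C_2\|W\|^2f_2(C_2\|W\|)$ for a suitable $C_2$ depending on $C_2'$ and $M$.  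This establishes the $(f_1(n)^2,n^2f_2(n))$-mass condition for $\tilde{\pazocal{P}}_2$.

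\end{proof}

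With Lemmas \ref{Blow-up S-machine} and \ref{Blow-up mass} in hand, together with hypothesis (1) of \Cref{main-corollary} (which, as explained in \Cref{sec-Move-conditions}, guarantees via \Cref{computational move is move} that the Move machine satisfies the $(n^2f_2(n),\tilde G)$-move condition — note that the blow-up construction does not affect the input alphabet of the Move machine in a way that obstructs this, since the Move machine is built over $\tilde X$) and the observation that \Cref{Blow-up Greendlinger} shows every non-trivial word over $\tilde X$ representing the identity in $\tilde G$ has length at least $M$ (so condition (1) of \Cref{main-theorem} holds for $\tilde{\pazocal{P}}_i$), we may apply \Cref{main-theorem} to the group $\tilde G$ with presentations $\tilde{\pazocal{P}}_1$ and $\tilde{\pazocal{P}}_2$.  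This yields a bi-Lipschitz embedding of $\tilde G$ into a finitely presented group $H$ with $\delta_H(n)\preceq n^2f_2(n)$.  Composing with the bi-Lipschitz embedding $G\hookrightarrow\tilde G$ of \Cref{Blow-up bi-Lipschitz} completes the proof of \Cref{main-corollary}.
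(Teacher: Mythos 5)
Your final construction — reduce to $W\equiv\tilde w$ via \Cref{Blow-up Greendlinger}, invoke the mass condition for $\pazocal{P}_2$ with the rescaled constant $C_1M$, and lift the resulting diagram to $\tilde{\pazocal{P}}_2$ by subdividing each edge into an $M$-path so that each cell weight $f_1(C_1\|\partial\tilde\Pi\|)^2=f_1(C_1M\|\partial\Pi_0\|)^2$ matches exactly — is precisely the paper's proof, and it is correct. The lengthy detour about a "correction region" is unnecessary once you settle on that reduction, whose justification (an inductive splitting as in \Cref{Blow-up iso}, absorbed into the constant $C_2$) is handled no less tersely in the paper itself.
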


\begin{proof}

Fix $C_1>0$ and let $W$ be a word over $\tilde{X}$ which represents the identity in $\tilde{G}$.  Our goal is to bound the $f_1(C_1n)^2$-mass of a van Kampen diagram over $\tilde{\pazocal{P}}_2$ with boundary label $W$.

As such, we may assume without loss of generality that $W$ is non-trivial and cyclically reduced.  Moreover, by \Cref{Blow-up Greendlinger} we may assume that $W=\tilde{w}$ for some word $w$ over $X$ representing the identity in $G$.

By hypothesis, $\pazocal{P}_2$ satisfies the $(f_1(n)^2,n^2f_2(n))$-mass condition.  Hence, there exists a constant $C_2>0$ and a van Kampen diagram $\Delta$ over $\pazocal{P}_2$ with $\lab(\partial\Delta)\equiv w$ such that the $f_1(C_1Mn)^2$-mass of $\Delta$ is at most $C_2(C_2\|w\|)^2f_2(C_2\|w\|)$.

Consider the diagram $\tilde{\Delta}$ obtained from $\Delta$ by subdividing each edge $\textbf{e}$ into $M$ edges $\tilde{\textbf{e}}_1,\dots,\tilde{\textbf{e}}_M$ such that if $\lab(\textbf{e})\equiv x_i^\eps$, then $\lab(\tilde{\textbf{e}}_1)\dots\lab(\tilde{\textbf{e}}_M)\equiv\tilde{x}_i^\eps$.  It then follows that $\lab(\partial\tilde{\Delta})\equiv\tilde{w}=W$.

Moreover, note that for any cell $\pi$ of $\Delta$, the corresponding cell $\tilde{\pi}$ of $\tilde{\Delta}$ satisfies $\|\partial\tilde{\pi}\|=M\|\partial\pi\|$.  Hence, the $f_1(C_1n)^2$-mass of $\tilde{\Delta}$ is at most the $f_1(C_1Mn)^2$-mass of $\Delta$, and so is at most 
$$C_2(C_2\|w\|)^2f_2(C_2\|w\|)=C_2(C_2\|W\|/M)^2f_2(C_2\|W\|/M)\leq C_2(C_2\|W\|)^2f_2(C_2\|W\|)$$

\end{proof}

By Lemmas \ref{Blow-up Embedding} and \ref{Blow-up bi-Lipschitz}, it suffices to find a bi-Lipschitz embedding of $\tilde{G}$ into a finitely presented group $H$ with $\delta_H(n)\preceq n^2f_2(n)$.  Hence, it suffices to show that the presentations $\tilde{\pazocal{P}}_1$ and $\tilde{\pazocal{P}}_2$ for $\tilde{G}$ satisfy the hypotheses of \Cref{main-theorem}.

Hypotheses (1), (3), and (4) are shown to be satisfied by Lemmas \ref{Blow-up Greendlinger}, \ref{Blow-up S-machine}, and \ref{Blow-up mass}, respectively.  As such, it suffices to find a $(\tilde{\pazocal{R}}_1,f_1,C)$-Move machine satisfying the $(n^2f_2(n),\tilde{G})$-move condition.

But by hypothesis (1) of \Cref{main-corollary}, there exists a $(F(\tilde{X}),f_1,C)$-Move machine satisfying the computational $c$-move condition for some constant $c$.  The statement thus follows by \Cref{computational move is move} in the general setting described in \Cref{rmk-computational-move-generalized}.

\medskip


\section{Proof of \Cref{theorem-higman}}

Let $Z=X\sqcup \bar{X}$, where $\bar{X}$ is a copy of $X$ identified with the set $X^{-1}$.  Define $\pazocal{L}_G$ to be the set of (positive) words in $Z$ that, when the elements of $\bar{X}$ are replaced by the corresponding elements of $X^{-1}$, represent the identity in $G$.  

\begin{lemma} \label{lem-presentation}

$\pazocal{P}=\gen{Z\mid\pazocal{L}_G}$ is a presentation for $G$.

\end{lemma}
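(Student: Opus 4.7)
The plan is to exhibit an explicit isomorphism between $\langle Z \mid \pazocal{L}_G\rangle$ and $G$ induced by the obvious map $\bar{x} \mapsto x^{-1}$.

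First, I would define the homomorphism $\psi \colon F(Z) \to G$ on generators by $\psi(x) = x$ for $x \in X$ and $\psi(\bar{x}) = x^{-1}$ for $\bar{x} \in \bar{X}$; this is well-defined since $F(Z)$ is free on $Z$. By the very definition of $\pazocal{L}_G$, every relator of $\pazocal{P}$ lies in $\ker(\psi)$, so $\psi$ descends to a homomorphism $\bar{\psi} \colon \langle Z \mid \pazocal{L}_G\rangle \to G$. Surjectivity is immediate because $X \subseteq Z$ and $X$ generates $G$.

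The heart of the argument is injectivity, which is equivalent to showing $\ker(\psi)$ is contained in the normal closure $N$ of $\pazocal{L}_G$ in $F(Z)$. The key observation is that for every $x \in X$, the positive words $x\bar{x}$ and $\bar{x}x$ are both elements of $\pazocal{L}_G$, since $\psi(x\bar{x}) = xx^{-1} = 1$ and $\psi(\bar{x}x) = x^{-1}x = 1$ in $G$. Consequently, in the quotient $F(Z)/N$, each $\bar{x}$ represents the inverse of $x$, so every element of $F(Z)/N$ admits a representative that is a word in $X \cup X^{-1}$, i.e.\ an element of $F(X) \subseteq F(Z)$.

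To finish, I would take an arbitrary $w \in \ker(\psi)$, use the preceding observation to find $\tilde{w} \in F(X)$ equal to $w$ in $F(Z)/N$, and note that since $\psi$ factors through $F(Z)/N$ we still have $\tilde{w} = 1$ in $G$. Now write $\tilde{w} = x_{i_1}^{\eps_1}\cdots x_{i_k}^{\eps_k}$ with $\eps_j \in \{\pm 1\}$, and form the positive word $\hat{w}$ over $Z$ by replacing each $x_{i_j}^{-1}$ with $\bar{x}_{i_j}$. Then $\psi(\hat{w}) = \tilde{w} = 1$ in $G$, so $\hat{w} \in \pazocal{L}_G \subseteq N$; but again using $\bar{x} = x^{-1}$ in $F(Z)/N$ we have $\hat{w} = \tilde{w} = w$ in $F(Z)/N$, so $w \in N$, as required. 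There is no real obstacle here — the only thing to be careful about is distinguishing between $\tilde{w}$ as a word in $F(X)$ (viewed inside $F(Z)$) and its positive-word avatar $\hat{w}$, and checking that the relations $x\bar{x}, \bar{x}x \in \pazocal{L}_G$ let us pass between the two freely inside the quotient.
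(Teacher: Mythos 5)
Your proof is correct and is in substance the same argument as the paper's: both hinge on the observation that $x\bar{x}\in\pazocal{L}_G$ forces $\bar{x}=x^{-1}$ in the quotient, after which every word trivial in $G$ can be traded for a positive word lying in $\pazocal{L}_G$. The only difference is packaging — the paper cites Tietze transformations (eliminating the generators $\bar{X}$ to reach $\gen{X\mid\pazocal{L}_G'}$) together with von Dyck, whereas you carry out the equivalent normal-closure computation $\ker(\psi)\subseteq N$ by hand.
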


\begin{proof}

Let $G'$ be the group with presentation $\pazocal{P}$.

For any $x\in X$, letting $\bar{x}$ be the corresponding element of $\bar{X}$, note that $x\bar{x}\in\pazocal{L}_G$.

Now, for every $w\in\pazocal{L}_G$, let $w'$ be the reduced word in $X\cup X^{-1}$ obtained by replacing every element of $\bar{X}$ with the corresponding element of $X^{-1}$.  Then, letting $\pazocal{L}_G'$ be the set obtained by replacing every $w\in\pazocal{L}_G$ with $w'$, it follows that the presentation $\pazocal{P}'=\gen{X\mid\pazocal{L}_G'}$ is obtained from $\pazocal{P}$ by Tietze transformations, and so is a presentation of $G'$.

By the definition of $\pazocal{L}_G$, though, it follows from the theorem of von Dyck that the identity map on $X$ extends to inverse homomorphisms between $G$ and $G'$.

\end{proof}

Note that, by hypothesis, the Turing machine $\pazocal{T}$ can be viewed to have input alphabet $Z$ and enumerates $\pazocal{L}_G$.  By \cite{CW} there then exists an $S$-machine $\textbf{S}$ with input alphabet $Z$ enumerating $\pazocal{L}_G$ and such that $\TM_\textbf{S}\preceq f^{1+\eps/2}$.  Since $f$ is superadditive, $f^{1+\eps/2}$ is also.

Define $f_1(n)=n^2+f(n)^{1+\eps/2}$ and $g(n)=\frac{f(n)^{1+\eps/2}}{n}$ for all $n\in\N$.  

Further, define $g_2(n)=\max_{i\leq n}g(i)$ and $f_2'(n)=g_2(n)^2$ for all $n\in\N$.

\begin{lemma} \label{lem-equivalence}

$f_2'(n)\geq1$ for all $n\in\N$ and $n^2f_2'(n)\sim f(n)^{2+\eps}$.

\end{lemma}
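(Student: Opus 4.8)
\textbf{Proof plan for \Cref{lem-equivalence}.}

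The plan is to verify the two assertions separately, both by unwinding the definitions and invoking the superadditivity of $f$. For the lower bound $f_2'(n)\geq 1$: since $f$ is superadditive with $f:\N\to[0,\infty)$, one has $f(2n)\geq 2f(n)$, and more basically $f(n)\geq nf(1)$ whenever $f(1)>0$; combined with the hypothesis $\TM_\pazocal{T}\preceq f$ and the fact that any non-trivial Turing machine enumerating a non-empty language has time function growing at least linearly, we may assume $f(n)\geq n$ for all $n\geq 1$ (if not, replace $f$ by $\max(f(n),n)$, which remains superadditive and still dominates $\TM_\pazocal{T}$). Then $g(n)=f(n)^{1+\eps/2}/n\geq n^{1+\eps/2}/n=n^{\eps/2}\geq 1$, so $g_2(n)=\max_{i\leq n}g(i)\geq 1$ and hence $f_2'(n)=g_2(n)^2\geq 1$. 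First I would state this reduction explicitly, since the rest of the paper's estimates implicitly need $f_2\geq 1$.

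For the equivalence $n^2 f_2'(n)\sim f(n)^{2+\eps}$: by definition $n^2 f_2'(n)=n^2 g_2(n)^2=(n g_2(n))^2$, so it suffices to show $n g_2(n)\sim f(n)^{1+\eps/2}$ (squaring is compatible with $\sim$ up to the multiplicative constants absorbed in the preorder). Now $n g_2(n)=n\max_{i\leq n}g(i)\geq n g(n)=f(n)^{1+\eps/2}$, giving one direction immediately. For the reverse, fix $n$ and let $i\leq n$ achieve the maximum, so $n g_2(n)=n g(i)=n f(i)^{1+\eps/2}/i$. Using $f(i)\leq f(n)$ (monotonicity of $f$, which follows from superadditivity together with $f\geq 0$, or is part of the standing hypotheses) and $n/i\leq n$, this is at most $n f(n)^{1+\eps/2}/1$ — too weak. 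The refined argument: write $n=qi+r$ with $0\leq r<i$; superadditivity gives $f(n)\geq q f(i)$, hence $f(i)\leq f(n)/q\leq 2i f(n)/n$ (since $q\geq n/(2i)$ when $i\leq n$). Therefore $g(i)=f(i)^{1+\eps/2}/i\leq (2i f(n)/n)^{1+\eps/2}/i = 2^{1+\eps/2} i^{\eps/2} f(n)^{1+\eps/2}/n^{1+\eps/2}\leq 2^{1+\eps/2} f(n)^{1+\eps/2}/n$ using $i\leq n$. Thus $n g_2(n)\leq 2^{1+\eps/2} f(n)^{1+\eps/2}$, completing the equivalence $n g_2(n)\sim f(n)^{1+\eps/2}$, and squaring yields $n^2 f_2'(n)\sim f(n)^{2+\eps}$ since $(1+\eps/2)\cdot 2=2+\eps$.

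The main obstacle is the reverse inequality in the equivalence: the naive bound $f(i)\leq f(n)$ loses too much, and one genuinely needs superadditivity to control $f(i)/i$ by $f(n)/n$ up to a constant. The division-with-remainder trick $n=qi+r$ combined with $f(n)\geq qf(i)$ is the crux, and the small subtlety is ensuring $q\geq n/(2i)$ (which holds because $r<i\leq n$ forces $qi>n-i\geq n-n = 0$, and more precisely $qi = n-r > n-i \geq n/2$ when $i\leq n/2$; the case $i>n/2$ is handled directly since then $g(i)\leq f(n)^{1+\eps/2}/i\leq 2f(n)^{1+\eps/2}/n$). I would present the case split cleanly rather than forcing a single formula. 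Everything else is routine bookkeeping with the preorder $\preceq$ and the fact that $\sim$ is a congruence for the operations involved.
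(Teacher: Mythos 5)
Your argument is correct and arrives at the same two-sided bound $f(n)^{2+\eps}\leq n^2f_2'(n)\leq Cf(n)^{2+\eps}$, but the upper estimate takes a genuinely different route from the paper's. The paper picks $m\leq n$ \emph{maximal} with $g_2(n)=g(m)$, uses superadditivity of $f^{1+\eps/2}$ (not of $f$) to get $g(2m)\geq g(m)$, concludes $n<2m$ by maximality, and then $ng_2(n)=ng(m)<2mg(m)=2f(m)^{1+\eps/2}\leq 2f(n)^{1+\eps/2}$. You instead bound $g(i)$ \emph{uniformly} over all $i\leq n$: Euclidean division $n=qi+r$ plus $f(n)\geq qf(i)$ controls $f(i)/i$ by $2f(n)/n$, with the case $i>n/2$ handled directly by monotonicity. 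Both are short; yours proves the slightly stronger pointwise statement that every $g(i)$, not just the maximizer, is at most $2^{1+\eps/2}f(n)^{1+\eps/2}/n$, while the paper's doubling trick avoids the case split and the exponent bookkeeping. Your remark that squaring is compatible with the equivalence is harmless here because both of your inequalities are pointwise up to multiplicative constants, with no rescaling of the argument.

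One sub-claim in your preliminary reduction is false as stated: $\max(f(n),n)$ need \emph{not} be superadditive when $f$ is. For instance $f(k)=10\lfloor k/2\rfloor$ is superadditive, yet $\max(f(3),3)=10<11=\max(f(2),2)+\max(f(1),1)$. The correct repair is to replace $f$ by $f(n)+n$, which is a sum of superadditive functions and hence superadditive, still dominates $\TM_{\pazocal{T}}$, and changes $f(n)^{2+\eps}$ only within the equivalence class of $n^4+f(n)^{2+\eps}$ appearing in \Cref{theorem-higman}. (The paper itself silently assumes $f(n)\geq n$ at this point, so flagging the issue was the right instinct; only the specific fix needs adjusting.)
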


\begin{proof}

Since $f(n)\geq n$, $g(n)\geq1$ for all $n$.  As such, $g_2(n)\geq 1$ for all $n$, so that $f_2'(n)\geq 1$ for all $n$.

Fix $n\in\N$ and let $m$ be the maximal index such that $g_2(n)=g(m)$.  Since $f^{1+\eps/2}$ is superadditive, it follows that $g(2m)\geq g(m)$.  Hence, the maximality of $m$ implies $n<2m$.

As a result, $f(n)^{1+\eps/2}=ng(n)\leq ng_2(n)=ng(m)\leq 2f(m)^{1+\eps/2}\leq 2f(n)^{1+\eps/2}$, and so
$$f(n)^{2+\eps}\leq n^2f_2'(n)\leq 4f(n)^{2+\eps}$$

\end{proof}

Now, define $f_2(n)=n^2+f_2'(n)$ for all $n\in\N$.  Note then that $$f_1(n)^2\leq4\max(n^4,f(n)^{2+\eps})\leq 4n^2\max(n^2,f_2'(n))\leq4n^2f_2(n)$$
for all $n\in\N$.

\begin{lemma} \label{lem-mass}

The presentation $\pazocal{P}$ satisfies the $(f_1(n)^2,n^2f_2(n))$-mass condition.

\end{lemma}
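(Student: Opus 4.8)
The plan is to exploit the fact that $\pazocal{L}_G$ consists of \emph{all} positive words in $Z$ representing the identity in $G$, so that over $\pazocal{P}$ every such word already bounds a single cell and, moreover, the length-two words $x\bar x$ and $\bar x x$ (for $x\in X$) are relators, since $\bar x=x^{-1}$ in $G$. Fix $C_1>0$; enlarging $C_1$ only strengthens the inequality to be proved, so we may assume $C_1\geq1$. Let $w$ be a word over $Z\cup Z^{-1}$ with $w=1$ in $G$, which we may take to be freely reduced (otherwise attach spurs, which cost no cells), and put $m=\|w\|$, the case $m=0$ being trivial. First I would \emph{positivize} $w$: replacing each $x^{-1}$ (for $x\in X$) by $\bar x$ and each $\bar x^{-1}$ by $x$ yields a positive word $w^{+}$ over $Z$ with $\|w^{+}\|=m$, and since $\bar x=x^{-1}$ in $G$ we have $w^{+}=w=1$ in $G$, hence $w^{+}\in\pazocal{L}_G$.

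Next I would build a van Kampen diagram $\Delta$ over $\pazocal{P}$ with $\partial\Delta\equiv w$ by gluing two standard pieces. The first is a ``ladder'' diagram with boundary label $w(w^{+})^{-1}$ exhibiting $w=_{\pazocal{P}}w^{+}$: $w$ and $w^{+}$ differ only at the at most $m$ positions where $w$ reads a negative generator $z^{\mp1}$ and $w^{+}$ reads the positive generator equal to $z^{\pm1}$ in $G$, and each such elementary equality is witnessed by exactly one of the perimeter-$2$ relators $x\bar x$, $\bar x x\in\pazocal{L}_G$; thus this piece has at most $m$ cells, each of perimeter $2$ (plus at most $m$ more perimeter-$2$ cells, or mere spurs, if a cyclic reduction of $w^{+}$ is required). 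The second piece is the single cell with boundary label $w^{+}$, available because $w^{+}\in\pazocal{L}_G$; its perimeter is $\|w^{+}\|=m$. Gluing along $w^{+}$ produces $\Delta$ with $\partial\Delta\equiv w$, at most $2m$ cells of perimeter $2$, and exactly one cell of perimeter at most $m$.

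It then remains to estimate the $f_1(C_1n)^2$-mass of $\Delta$, which is bounded by $2m\,f_1(2C_1)^2+f_1(C_1m)^2$. The first summand is at most $2f_1(2C_1)^2m^2$, i.e.\ a constant depending only on $C_1$ times $m^2$, and is absorbed using $f_2\geq1$. The second summand is the point of the construction, and here the definitions of $g$, of $g_2=\max_{i\leq\,\cdot}g$ and of $f_2'=g_2^2$ do the work: writing $f(C_1m)^{1+\eps/2}=\lceil C_1m\rceil\,g(\lceil C_1m\rceil)\leq 2C_1m\,g_2(2C_1m)$ (interpreting $f$ on the reals by rounding up) and combining with $\lceil C_1m\rceil^{4}\leq 16C_1^{4}m^{4}$ gives $f_1(C_1m)^2\leq 32C_1^{4}m^{4}+8C_1^{2}m^{2}f_2'(2C_1m)\leq 2\,(2C_1m)^2 f_2(2C_1m)$. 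Choosing $C_2$ sufficiently large in terms of $C_1$ (for instance $C_2=\max(2C_1,\,4f_1(2C_1)^2+4)$) and using that $f_2$ is non-decreasing and $m\geq1$, we conclude that the mass of $\Delta$ is at most $C_2(C_2\|w\|)^2 f_2(C_2\|w\|)$. As $C_1>0$ was arbitrary, $\pazocal{P}$ satisfies the $(f_1(n)^2,n^2f_2(n))$-mass condition.

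The argument carries essentially no deep content beyond bookkeeping; the one place that genuinely needs care is the numerical estimate in the last paragraph, namely controlling $f(C_1m)$ by $f_2'(C_2m)$ rather than by $f(m)$—the latter being impossible in general, e.g.\ when $f$ outgrows every polynomial—which is exactly why $f_2'$ was defined through the running maximum $g_2$. A secondary point to get right is that the positivization and the ladder really can be realized using only the bounded-perimeter relators $x\bar x$ and $\bar x x$, so that $\Delta$ contains precisely one cell of unbounded perimeter; notably, the $S$-machine $\textbf{S}$ plays no role in this lemma, only in verifying the remaining hypotheses of \Cref{main-corollary}.
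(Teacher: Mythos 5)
Your proposal is correct and follows essentially the same route as the paper: positivize $w$ using the perimeter-$2$ relators $x\bar x$, $\bar x x$, cap the resulting positive word with a single cell from $\pazocal{L}_G$, and bound the mass via $f_1(n)^2\leq 4n^2f_2(n)$. The only cosmetic difference is that you re-derive that last inequality from the definitions of $g$, $g_2$, $f_2'$ (with some rounding bookkeeping), whereas the paper simply invokes the estimate it records immediately before the lemma.
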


\begin{proof}

Let $W$ be a non-trivial word over $Z$ which represents the identity in $G$ and fix $C_1>0$.

First, suppose $W$ is a positive word in $Z$.  By definition, $W\in\pazocal{L}_G$, and so there exists a van Kampen diagram $\Delta$ over $\pazocal{P}$ with boundary label $W$ consisting of a single cell.  Hence, the $f_1(C_1n)^2$-mass of $\Delta$ is $f_1(C_1\|W\|)^2\leq 4C_1^2\|W\|^2f_2(C_1\|W\|)$.

Otherwise, let $W\equiv z_1^{\eps_1}\dots z_k^{\eps_k}$ where $z_i\in Z$ and $\eps_i\in\{\pm1\}$.  

Suppose $\eps_i=-1$.  If $z_i\in X$, then let $\bar{z}_i$ be the corresponding letter of $\bar{X}$; otherwise, let $\bar{z}_i$ be the corresponding letter of $X$.  In either case, $z_i\bar{z}_i\in\pazocal{L}_G$, so that the letter $z_i^{-1}$ in $W$ can be replaced with $\bar{z}_i$ without changing the value of the word in $G$.

Doing this for all $i$, we then obtain a positive word $W'$ in $Z$ with $\|W'\|=\|W\|$ which represents the identity in $G$.  As above, we may then construct a van Kampen diagram $\Delta$ over $\pazocal{P}$ with boundary label $W'$ whose $f_1(C_1n)^2$-mass is at most $4C_1^2\|W\|^2f_2(C_1\|W\|)$.

But by the same argument, for all $i$ with $\eps_i=-1$ there exist van Kampen diagrams $\Delta_i$ with boundary label $z_i\bar{z}_i$ and $f_1(C_1n)^2$-mass at most $16C_1^2f_2(2C_1)$.

Pasting the $\Delta_i$ to $\Delta'$ then produces a van Kampen diagram $\Delta$ over $\pazocal{P}$ with boundary label $W$ and $f_1(C_1n)^2$-mass at most $$4C_1^2\|W\|^2f_2(C_1\|W\|)+16C_1^2\|W\|f_2(2C_1)\leq20C_1^2\|W\|^2f_2(2C_1\|W\|)$$
Thus, the statement is satisfied by setting $C_2=2C_1+5$.

\end{proof}

Now, take $\pazocal{P}_1=\pazocal{P}_2=\pazocal{P}$ and extend $f_1$ and $f_2$ in any way so that they are non-decreasing functions on the nonnegative reals.  Then, it follows by \Cref{move_1 comp move} and \Cref{lem-mass} that the hypotheses of \Cref{main-corollary} are satisfied for $\pazocal{P}_i$ and the Move machine $\textbf{Move}_{1,Y}$.  Thus, by \Cref{lem-equivalence}, there exists a bi-Lipschitz embedding of $G$ into a finitely presented group $H$ such that $$\delta_H(n)\preceq n^2f_2(n)=n^4+n^2f_2'(n)\sim n^4+f(n)^{2+\eps}$$

\bigskip

\bibliographystyle{plain}
\bibliography{biblio}

\end{document}